\newtheorem{theorem}{Theorem}[section]
\newtheorem{lemma}[theorem]{Lemma}
\newtheorem{proposition}[theorem]{Proposition}
\newtheorem{corollary}[theorem]{Corollary}
\theoremstyle{definition}
\newtheorem{definition}[theorem]{Definition}
\newtheorem{example}[theorem]{Example}
\newtheorem{exercise}[theorem]{Exercise}
\newtheorem{remark}[theorem]{Remark}
\newcommand{\bchi}{{\mbox{\boldmath$\chi$}}}
\newcommand{\Ext}{\text{Ext}}
\newcommand{\Tr}{\text{Tr}}%{\text{Tr}\,}
\newcommand{\id}{\text{id}}
\newcommand{\End}{\text{End}}
\newcommand{\Hom}{\text{Hom}}
\newcommand{\Ad}{\text{Ad}}
\newcommand{\Aut}{\text{Aut}}
\newcommand{\Rep}{\text{Rep}}
\newcommand{\diag}{\text{diag}}
\renewcommand{\O}{\mathcal{O}}
\newcommand{\g}{\mathfrak{g}}
\newcommand{\h}{\mathfrak{h}}
\newcommand{\n}{\mathfrak{n}}
\renewcommand{\b}{\mathfrak{b}}
\newcommand{\ben}{\begin{enumerate}}
\newcommand{\een}{\end{enumerate}}
\newcommand{\ad}{{\text{Ad}}}
\newcommand{\Vect}{{\text{Vect}}}
\theoremstyle{plain}
\newtheorem*{sol}{Solution}
\theoremstyle{definition}
\theoremstyle{remark}
\newcommand{\solu}[1]{\begin{sol}{\bf (\ref{#1})}}
\def\g{\mathfrak{g}}
\def\R{\mathbb{R}}
\def\u{\mathfrak{u}}
\def\Aut{\mathop{\mathrm{Aut}}\nolimits}
\def\h{\mathfrak{h}}
\def\ad{\mathop{\mathrm{ad}}\nolimits}
\def\Ad{\mathop{\mathrm{Ad}}\nolimits}
\def\Vect{\mathrm{Vect}}
\def\z{\mathfrak{z}}
\def\a{\mathfrak{a}}
\def\End{\mathrm{End}}
\def\Hom{\mathrm{Hom}}
\def\n{\mathfrak{n}}
\def\GL{\mathop{GL}}
\def\p{\mathfrak{p}}
\def\b{\mathfrak{b}}
\def\m{\mathfrak{m}}
\def\O{\mathcal{O}}
\def\Ext{\mathop{\mathrm{Ext}}\nolimits}
\def\k{\mathbf{k}}
\def\Rep{\mathop{\mathrm{Rep}}\nolimits}
\numberwithin{equation}{section}
\begin{document}
%\Large

\title{Lie groups and Lie algebras}

\author{Pavel Etingof}

\maketitle 

\centerline{\bf To my father Ilya Etingof on his 95th birthday with admiration}

\tableofcontents

\section*{Introduction}

The purpose of {\bf group theory} is to give a mathematical treatment of {\bf symmetries}. For example, symmetries of a set of $n$ elements form the symmetric group $S_n$, and symmetries of a regular $n$-gon -- the dihedral group $D_n$. Likewise, {\bf Lie group theory} serves to give a mathematical treatment of {\bf continuous symmetries}, i.e., families of symmetries continuously depending on several real parameters. 

The theory of Lie groups was founded in the second half of the 19th century by the Norwegian mathematician {\bf Sophus Lie}, after whom it is named. It was then developed by many mathematicians over the last 150 years, and has numerous applications in mathematics and science, especially physics. 

A prototypical example of a Lie group is the group $SO(3)$ of rotational symmetries of the 2-dimensional sphere; in this case the parameters are the Euler angles $\phi,\theta,\psi$.  

It turns out that unlike ordinary parametrized curves and surfaces, Lie groups are determined by their linear approximation at the identity element. This leads to the notion of the {\bf Lie algebra} of a Lie group. This notion allows one to reformulate the theory of continuous symmetries in purely algebraic terms, which provides an extremely effective way of studying such symmetries. The goal of these notes is to give a detailed study of Lie groups and Lie algebras and interactions between them, with numerous examples. 

These notes are based on a year-long introductory course on Lie groups and Lie algebras given by the author at MIT in 2020-2021 (in particular, they contain no original material). The first half (Sections 1-26) corresponds to the first semester and follows rather closely the excellent book ``An introduction to Lie groups and Lie algebras" by A. Kirillov Jr. (\cite{K}), but also discusses some additional topics. Namely, after a brief review of geometry and topology of manifolds, it covers the basic theory of Lie groups and Lie algebras, including the three fundamental theorems of Lie theory (except the proof of the third theorem, which is given in the second half). Then it proceeds to nilpotent and solvable Lie algebras, theorems of Lie and Engel, representations of $\mathfrak{sl}_2$, enveloping algebras and the Poincar\'e-Birkhoff Witt theorem, free Lie algebras, the Baker-Campbell-Hausdorff formula, and concludes with a detailed study and classification of complex semisimple Lie algebras, their representations, and the Weyl character formula. 

The second half (starting with Section 27) covers representation theory of $GL_n$ and other classical groups, minuscule representations, spin representations and spin groups, representation theory of compact Lie groups (again following \cite{K}) and, more generally, compact topological groups, including existence of the Haar measure and the Peter-Weyl theorem. Then it discusses applications to quantum mechanics (a fairly complete treatment of the hydrogen atom) and proceeds to real forms of semisimple Lie algebras and groups, discussing the classification of such forms in terms of Vogan diagrams, maximal tori and maximal compact subgroups, the polar and Cartan decompositions, and classification of connected compact Lie groups and complex reductive groups. Then we discuss topology of Lie groups and homogeneous spaces (in particular, their cohomology rings), cohomology of Lie algebras, prove the third fundamental theorem of Lie theory and Ado's theorem on the existence of a faithful representation for a finite 
dimensional Lie algebra, and conclude with the study of Borel and parabolic subgroups, the flag manifold of a complex semisimple group and the Iwasawa decomposition for real groups. 

Some other sources covering the same material are \cite{E,FH,Hu,Kn}.

Each section roughly corresponds to one 80-minute lecture. Part I consists of 26 sections, which corresponds to a 1-semester course. Part II consists of 25 sections, 
to allow for a review of Part I. Also, a lot of material is contained in exercises, which are often provided with detailed hints. These exercises were assigned as homework problems.\footnote{During the first semester and at the beginning of the second one homework problems were also assigned 
from \cite{K}.} 

Finally, we note that Lie theory is an inherently synthetic subject. While the main 
technical tools ultimately boil down to various parts of algebra (notably linear algebra and the theory of noncommutative rings and modules, and, at more advanced stages, algebraic geometry), Lie theory 
also relies in important ways on analysis, differential equations, differential geometry and topology. Thus, while we try to recall basic notions from these subjects along the way, the reader 
will need some degree of dexterity with them, which increases as we dig deeper into the material. 

{\bf Acknowledgments.} I'd like to thank David Vogan for inspiring me to write these notes and useful comments, and the students of the MIT courses ``Lie groups and Lie algebras, I,II" for feedback. I am especially grateful to Frank Wang and Atticus Wang for careful reading and many corrections to parts I and II, respectively. This work was partially supported by the NSF grant DMS-2001318.
The latest version was cleaned up using ChatGPT-5.5. 

\newpage

\centerline{\Large\bf Lie groups and Lie algebras, I}

\section{\bf Manifolds} 

\subsection{Topological spaces and groups} Recall that the mathematical notion responsible for describing continuity is that of a {\bf topological space}. Thus, to describe continuous symmetries, we should put this notion together with the notion of a group. This leads to the concept of a {\bf topological group}.

Recall:

$\bullet$ A {\bf topological space} is a set $X$, certain subsets of which (including $\emptyset$ and $X$) are declared to be {\bf open}, so that an arbitrary union and finite intersection of open sets is open. 

$\bullet$ The collection of open sets in $X$ is called {\bf the topology} of $X$. 

$\bullet$ A subset $Z\subset X$ of a topological space $X$ is {\bf closed} if its complement is open.

$\bullet$ If $X,Y$ are topological spaces then the Cartesian product $X\times Y$ has a natural {\bf product topology} in which open sets are (possibly infinite) unions of products $U\times V$, where $U\subset X,V\subset Y$ are open. 

$\bullet$ Every subset $Z\subset X$ of a topological space $X$ carries a natural {\bf induced topology}, in which open sets are intersections of open sets in $X$ with $Z$. 

$\bullet$ A map $f: X\to Y$ between topological spaces is {\bf continuous} if for every open set $V\subset Y$, the preimage $f^{-1}(V)$ is open in $X$.  

For example, the open sets of the usual topology of the real line $\Bbb R$ 
are (disjoint) unions of open intervals $(a,b)$, where $-\infty\le a<b\le \infty$. 

\begin{definition} A {\bf topological group} is a group $G$ which is also a topological space, so that the multiplication map $m: G\times G\to G$ and the inversion map $\iota: G\to G$ are continuous. 
\end{definition} 

For example, the group $(\Bbb R,+)$ of real numbers with the operation of addition and the usual topology of $\Bbb R$ is a topological group, since the functions $(x,y)\mapsto x+y$ and $x\mapsto -x$ are continuous. Also a subgroup of a topological group is itself a topological group, so another example is rational numbers with addition, $(\Bbb Q,+)$. This last example is not a very good model for continuity, however, and shows that general topological groups are not very well behaved. Thus, we will focus on a special class of topological groups called {\bf Lie groups}. 

Lie groups are distinguished among topological groups by the property that as topological spaces they belong to a very special class called {\bf topological manifolds.} So we need to start with reviewing this notion. 

\subsection{Topological manifolds}

Recall:

$\bullet$ A {\bf neighborhood} of a point $x\in X$ in a topological space $X$ is an open set containing $x$. 

$\bullet$ A {\bf base} for a topological space $X$ is a collection $\mathcal B$ of open sets in $X$ such that for every neighborhood $U$ of a point $x\in X$ there exists a neighborhood $V\subset U$ of $x$ which belongs to $\mathcal B$. Equivalently, every open set in $X$ is a union of members of $\mathcal B$. 

For example, open intervals form a base of the usual topology of $\Bbb R$. 
Moreover, we may take only intervals whose endpoints have rational coordinates,
which gives a {\it countable} base for $\Bbb R$. Also if $X,Y$ are topological spaces with bases $\mathcal B_X,\mathcal B_Y$ then products $U\times V$, where $U\in \mathcal B_X,V\in \mathcal B_Y$, form a base of the product topology of $X\times Y$. 
Thus if $X$ and $Y$ have countable bases, so does $X\times Y$; in particular, $\Bbb R^n$ with its usual (product) topology has a countable base (boxes whose vertices have rational coordinates). 

$\bullet$ $X$ is {\bf Hausdorff} if any two distinct points 
have disjoint neighborhoods. 

$\bullet$ If $X$ is Hausdorff, we say that a sequence of points $x_n\in X, n\in \Bbb N$ {\bf converges} to $x\in X$ as $n\to \infty$ (denoted $x_n\to x$) if every neighborhood of $x$ contains almost all terms of this sequence. Then one also says that the {\bf limit} of $x_n$ is $x$ 
and writes $$\lim_{n\to \infty}x_n=x.$$ It is easy to show that the limit is unique when it exists. 
In a Hausdorff space with a countable base, a closed set is one that is closed under taking limits of sequences. 

$\bullet$ A Hausdorff space $X$ is {\bf compact} if every open cover $\lbrace U_\alpha,\alpha\in A\rbrace$ of $X$ (i.e., $U_\alpha\subset X$ for all $\alpha\in A$ and $X=\cup_{\alpha\in A}U_\alpha$) has a finite subcover. 

$\bullet$ A continuous map $f: X\to Y$ is a {\bf homeomorphism} if it is a bijection and $f^{-1}: Y\to X$ is continuous. 

\begin{definition} A Hausdorff topological space $X$ is said to be an {\bf $n$-dimensional topological manifold} if it has a countable base and is {\bf locally Euclidean}, i.e., for every $x\in X$ there is a neighborhood $U\subset X$ of $x$ and a continuous map $\phi: U\to \Bbb R^n$ such that $\phi: U\to \phi(U)$ is a homeomorphism and $\phi(U)\subset \Bbb R^n$ is open. 
\end{definition} 

 The second property is often formulated as the condition that $X$ is {\bf locally homeomorphic to $\Bbb R^n$}. 

It is true (although not immediately obvious) that if a nonempty open set in $\Bbb R^n$ is homeomorphic to one in $\R^m$ then $n=m$. Therefore, the number $n$ is uniquely determined by $X$ as long as $X\ne \emptyset$. It is called {\bf the dimension} of $X$.  
(By convention, $\emptyset$ is a manifold of any integer dimension). 

\begin{example}\label{e1} 1. Obviously $X=\Bbb R^n$ is an $n$-dimensional topological manifold: we can take $U=X$ and $\phi={\rm Id}$. 

2. An open subset of a topological manifold is itself a topological manifold of the same dimension. 

3. The circle $S^1\subset \Bbb R^2$ defined by the equation 
$x^2+y^2=1$ is a topological manifold: for example, the point $(1,0)$ has a neighborhood 
$U=S^1\setminus \lbrace (-1,0)\rbrace$ and a map $\phi: U\to \Bbb R$ given by the stereographic projection: 
$$
\phi(\theta)=\tan(\tfrac{\theta}{2}),\ -\pi<\theta<\pi 
$$
 and similarly for every other point. More generally, the sphere \linebreak $S^n\subset \Bbb R^{n+1}$ defined by the equation $x_0^2+...+x_n^2=1$ is a topological manifold, for the same reason. The stereographic projection for the 2-dimensional sphere is shown in the following picture. 

\includegraphics[scale=0.5]{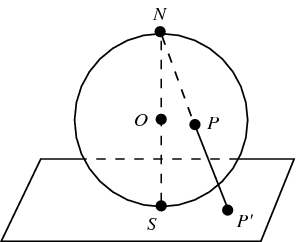}

4. The curve {\Huge$ \infty$}
is not a manifold, since it is not locally homeomorphic to $\Bbb R$ at the self-intersection point (show it!)
 \end{example}
 
A pair $(U,\phi)$ with the above properties is called a {\bf local chart}. An {\bf atlas} 
of local charts is a collection of charts $(U_\alpha,\phi_\alpha),\alpha\in A$ 
such that 
$\cup_{\alpha\in A}U_\alpha=X;$ 
i.e., $\lbrace U_\alpha,\alpha\in A\rbrace$ is an open cover of $X$. Thus any topological manifold $X$ admits an atlas labeled by points of $X$. There are also much smaller atlases. For instance, an open set in $\Bbb R^n$ has an atlas with just one chart, 
while the sphere $S^n$ has an atlas with two charts. Very often $X$ admits an atlas with finitely many charts. For example, if $X$ is compact  then there is a finite atlas, since every atlas has a finite subatlas. Moreover, there is always a countable atlas, due to the following lemma: 

\begin{lemma}\label{counsub} 
If $X$ is a topological space with a countable base then every open cover of $X$ 
has a countable subcover. 
\end{lemma}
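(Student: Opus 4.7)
The plan is to use the countable base to index a countable subcover via the axiom of choice on the natural numbers. Let $\mathcal{B}=\{B_n : n\in\mathbb{N}\}$ be a countable base for $X$, and let $\{U_\alpha : \alpha\in A\}$ be an arbitrary open cover.

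First I would identify the ``useful'' basis elements: let $N\subset\mathbb{N}$ be the set of those $n$ such that $B_n\subset U_\alpha$ for at least one $\alpha\in A$. For each $n\in N$ I pick (choosing one such index) an $\alpha(n)\in A$ with $B_n\subset U_{\alpha(n)}$. This yields a countable collection $\{U_{\alpha(n)}\}_{n\in N}\subset \{U_\alpha\}_{\alpha\in A}$, which is a candidate subcover.

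The key step is to verify that this subcollection actually covers $X$. Take any $x\in X$. Since $\{U_\alpha\}$ covers $X$, there is some $\alpha_0$ with $x\in U_{\alpha_0}$. Because $U_{\alpha_0}$ is a neighborhood of $x$ and $\mathcal{B}$ is a base, the definition of a base gives some $B_n\in\mathcal{B}$ with $x\in B_n\subset U_{\alpha_0}$. This index $n$ lies in $N$ (witnessed by $\alpha_0$), so $x\in B_n\subset U_{\alpha(n)}$, showing $x$ is covered by our countable subcollection.

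There is no real obstacle here; the only subtle point is the appeal to a choice function $n\mapsto \alpha(n)$, which is just countable choice and is harmless. The argument does not even need Hausdorffness, only the defining properties of a base.
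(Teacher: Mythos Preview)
Your proof is correct and follows essentially the same approach as the paper: both arguments select, for each basis element contained in some member of the cover, one such member, and then verify the resulting countable subcollection covers $X$. Your version is in fact slightly more streamlined, going directly to the index set $N$ rather than first passing through points of $X$.
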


\begin{proof}
Let $\lbrace V_i,i\in \Bbb N\rbrace $ be a countable base of $X$. If $\lbrace U_\alpha\rbrace$ 
is an open cover of $X$ then for each $x\in X$ pick indices $i(x)$ and $\alpha(x)$ such that $x\in V_{i(x)}\subset U_{\alpha(x)}$. Let $I\subset \Bbb N$ 
be the image of the map $i$. For each $j\in I$ pick $x\in X$ such that 
$i(x)=j$ and set $\alpha_j:=\alpha(x)$. Then $\lbrace U_{\alpha_j},j\in I\rbrace$ 
is a countable subcover of $\lbrace U_\alpha\rbrace$. 
\end{proof} 

Now let $(U,\phi)$ and $(V,\psi)$ be two charts such that $V\cap U\ne \emptyset$. 
Then we have the {\bf transition map} 
$$
\phi\circ \psi^{-1}: \psi(U\cap V)\to \phi(U\cap V),
$$
which is a homeomorphism between open subsets in $\Bbb R^n$. 
For example, consider the atlas of two charts for the circle $S^1$ (Example \ref{e1}(3)), one missing the point $(-1,0)$ and the other missing the point $(1,0)$. Then $\phi(\theta)=\tan({\theta\over 2})$ and 
$\psi(\theta)=\cot({\theta\over 2})$, $\phi(U\cap V)=\psi(U\cap V)=\Bbb R\setminus 0$, 
and $(\phi\circ \psi^{-1})(x)=\frac{1}{x}$. 

\subsection{$C^k$, real analytic and complex analytic manifolds}

The notion of topological manifold is too general for us, since continuous functions on which it is based in general do not admit a linear approximation. To develop the theory of Lie groups, we need more regularity. So we make the following definition. 

\begin{definition} An atlas on $X$ is said to be {\bf of regularity class $C^k$}, $1\le k\le \infty$, if all transition maps between its charts are of class $C^k$ ($k$ times continuously differentiable). An atlas of class $C^\infty$ is called {\bf smooth}. Also an atlas is said to be {\bf real analytic} if all transition maps are real analytic. Finally, if 
$n=2m$ is even, so that $\Bbb R^n=\Bbb C^m$, then an atlas is called {\bf complex analytic} if all its transition maps are complex analytic (i.e., holomorphic).  
\end{definition} 

\begin{example} The two-chart atlas for the circle $S^1$ defined by stereographic projections (Example \ref{e1}(3)) is real analytic, since the function $f(x)=\frac{1}{x}$ is analytic. The same applies to the sphere $S^n$ for any $n$. For example, for $S^2$ it is easy to see that the transition map $\Bbb R^2\setminus 0\to \Bbb R^2\setminus 0$ is given by the formula 
$$
f(x,y)=\left(\frac{x}{x^2+y^2},\frac{y}{x^2+y^2}\right).
$$
Using the complex coordinate $z=x+iy$, we get 
$$
f(z)=z/|z|^2=1/\overline{z}.
$$
So this atlas is not complex analytic. But it can be easily made complex analytic by 
replacing one of the stereographic projections ($\phi$ or $\psi$) by its complex conjugate. Then we will have $f(z)=\frac{1}{z}$. On the other hand,
it is known (although hard to prove) that $S^n$ admits no complex structure for even $n\neq 2,6$; the case $n=6$ is the famous open problem.
\end{example} 
 
\begin{definition} Two $C^k$, real analytic, or complex analytic atlases $U_\alpha,V_\beta$ are said to be {\bf compatible} if the transition maps between $U_\alpha$ and $V_\beta$ are of the same class ($C^k$, real analytic, or complex analytic). 
\end{definition} 

It is clear that compatibility is an equivalence relation. 

\begin{definition} A {\bf $C^k$, real analytic, or complex analytic structure} on a topological manifold $X$ is an equivalence class of $C^k$, real analytic, or complex analytic atlases. If $X$ is equipped with such a structure, it is said to be a {\bf $C^k$, real analytic, or complex analytic manifold}. Complex analytic manifolds are also called {\bf complex manifolds}, and a $C^\infty$-manifold is also called {\bf smooth}. A {\bf diffeomorphism} (or {\bf isomorphism}) between such manifolds is a homeomorphism which respects the corresponding classes of atlases. 
\end{definition} 

\begin{remark} This is really a {\bf structure} and not a {\bf property}. For example, consider $X=\Bbb C$ and $Y=D\subset \Bbb C$ the open unit disk, with the usual complex coordinate $z$. It is easy to see that $X,Y$ are isomorphic as real analytic manifolds. But they are not isomorphic as complex analytic manifolds: a complex isomorphism would be a holomorphic function $f: \Bbb C\to D$, hence bounded, but by Liouville's theorem any bounded holomorphic function on $\Bbb C$ is a constant. Thus we have two different complex structures on $\Bbb R^2$ (Riemann showed that there are no others). Also, it is true, but much harder to show, that there are uncountably many different smooth structures on $\Bbb R^4$, and there are 28 (oriented) smooth structures on $S^7$. 
\end{remark} 

Note that the Cartesian product $X\times Y$ of manifolds $X,Y$ is naturally a manifold (of the same regularity type) of dimension $\dim X+\dim Y$.  

\begin{exercise}\label{exe1} Let $f_1,...,f_m$ be functions $\Bbb R^n\to \Bbb R$ which are $C^k$ or real analytic. Let $X\subset \Bbb R^n$ be the set of points $P$ such that $f_i(P)=0$ for all $i$ and $df_i(P)$ are linearly independent. Use the implicit function theorem to show that $X$ is a topological manifold of dimension $n-m$ and equip it with a natural $C^k$, respectively real analytic structure. Prove the analogous statement for holomorphic functions $\Bbb C^n\to \Bbb C$, namely that in this case $X$ is naturally a complex manifold of (complex) dimension $n-m$. 
\end{exercise}  

\subsection{Regular functions} 
Now let $P\in X$ and $(U,\phi)$ be a local chart such that $P\in U$ and $\phi(P)=0$. Such a chart is called a {\bf coordinate chart} around $P$. In particular, 
we have {\bf local coordinates} \linebreak $x_1,...,x_n: U\to \Bbb R$ (or $U\to \Bbb C$ for complex manifolds), which are just the components of $\phi$, i.e.,  $\phi(Q)=(x_1(Q),...,x_n(Q))$. Note that $x_i(P)=0$, and $x_i(Q)$ determine $Q$ if $Q\in U$. 

\begin{definition} A {\bf regular function} on an open set $V\subset X$ in a $C^k$, real analytic, or complex analytic manifold $X$ is a function \linebreak $f: V\to\Bbb R,\Bbb C$ such that $f\circ \phi_\alpha^{-1}: \phi_\alpha(V\cap U_\alpha)\to\Bbb R,\Bbb C$ is of the corresponding regularity class, for some (and then any) 
atlas $(U_\alpha,\phi_\alpha)$ defining the corresponding structure on $X$.\footnote{More precisely, for $C^k$ and real analytic manifolds regular functions will
be assumed real-valued, unless specified otherwise. In the complex analytic case
there is, of course, no choice, and regular functions are automatically complex-valued.}
\end{definition} 

In other words, $f$ is regular if it is expressed as a regular function in local coordinates near every point of $V$. Clearly, this is independent of  the choice of coordinates. 

The space (in fact, algebra) of regular functions on $V$ will be denoted by $O(V)$. 

\begin{definition} Let $V,U$ be neighborhoods of $P\in X$. 
Let us say that $f\in O(V)$, $g\in O(U)$ are {\bf equal near $P$} if there exists 
a neighborhood $W\subset U\cap V$ of $P$ such that $f|_W=g|_W$. 
\end{definition} 

It is clear that this is an equivalence relation. 

\begin{definition} A {\bf germ} of a regular function at $P$ is an equivalence class of 
regular functions defined on neighborhoods of $P$ which are equal near $P$. 
\end{definition} 

The algebra of germs of regular functions at $P$ is denoted by $O_P$. Thus we have 
$O_P=\underrightarrow{\lim}\ O(U)$, where the direct limit is taken over neighborhoods of $P$. 

\subsection{Tangent spaces} 
From now on we will only consider smooth, real analytic and complex analytic manifolds. By a {\bf derivation at $P$} we will mean a linear map 
$D: O_P\to \Bbb R$ in the smooth and real analytic case and $D: O_P\to \Bbb C$ in the complex analytic case, satisfying the Leibniz rule 
\begin{equation}\label{leib}
D(fg)=D(f)g(P)+f(P)D(g). 
\end{equation}
Note that for any such $D$ we have $D(1)=0$. 

Let $T_PX$ be the space of all such derivations. 
Thus $T_PX$ is a real vector space for smooth and real analytic manifolds and 
a complex vector space for complex manifolds. 

\begin{lemma}\label{l1} Let $x_1,...,x_n$ be local coordinates at $P$. Then $T_PX$ 
has basis $D_1,...,D_n$, where 
$$
D_i(f):=\frac{\partial f}{\partial x_i}(0).
$$
\end{lemma}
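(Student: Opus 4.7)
The plan is to establish linear independence and spanning separately, using the coordinate functions $x_1, \ldots, x_n$ as test inputs and a Hadamard-style first-order expansion to control arbitrary germs.

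First I would check that $D_1, \ldots, D_n$ are well-defined derivations at $P$ (the Leibniz rule for $D_i$ is just the Leibniz rule for partial derivatives, applied to $f \circ \phi^{-1}$ at $0$), and then verify linear independence by evaluating on the coordinate germs: since $x_i$ is the $i$-th coordinate in the chart, $D_i(x_j) = \delta_{ij}$. Any relation $\sum c_i D_i = 0$ applied to $x_j$ forces $c_j = 0$.

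For spanning, let $D \in T_PX$ be arbitrary and set $a_i := D(x_i)$. I would first note that $D$ annihilates constants: applying the Leibniz rule to $1 = 1 \cdot 1$ gives $D(1) = 2D(1)$, hence $D(1) = 0$, and by linearity $D(c) = 0$ for any constant $c$. The key step is a Hadamard-type lemma: for any germ $f \in O_P$, working in local coordinates centered at $P$, there exist germs $g_1, \ldots, g_n \in O_P$ with $g_i(P) = \frac{\partial f}{\partial x_i}(0)$ such that
\[
f = f(P) + \sum_{i=1}^n x_i \, g_i
\]
near $P$. In the smooth case this comes from the integral identity $f(x) - f(0) = \int_0^1 \frac{d}{dt} f(tx) \, dt = \sum_i x_i \int_0^1 \frac{\partial f}{\partial x_i}(tx)\,dt$, and smoothness of the $g_i$ follows from differentiation under the integral sign on a small convex neighborhood of $0$. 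In the real analytic (resp.\ complex analytic) case, I would instead group the terms of the convergent power series expansion of $f$ at $0$ by the first coordinate that appears, producing convergent series $g_i$ of the required regularity.

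Applying $D$ to this decomposition and using $x_i(P) = 0$, constants-kill, and Leibniz gives
\[
D(f) = \sum_{i=1}^n \bigl( D(x_i)\,g_i(P) + x_i(P)\,D(g_i) \bigr) = \sum_{i=1}^n a_i \, \frac{\partial f}{\partial x_i}(0) = \sum_{i=1}^n a_i\, D_i(f),
\]
so $D = \sum_i a_i D_i$, which completes the proof. The main obstacle is the Hadamard decomposition: in the smooth case one must be slightly careful to work on a convex coordinate neighborhood so the integral representation makes sense, while in the analytic cases one must check that the rearrangement of the Taylor series still converges on a neighborhood of $0$; everything else is formal manipulation with the Leibniz rule.
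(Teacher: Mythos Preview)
Your proof is correct and follows essentially the same route as the paper: both reduce to a chart, establish independence via $D_i(x_j)=\delta_{ij}$, and prove spanning by writing $f = f(0) + \sum_i x_i g_i$ (Hadamard's lemma) and applying the Leibniz rule. The only difference is cosmetic: the paper uses the integral formula $g_i(x)=\int_0^1(\partial_i f)(tx)\,dt$ uniformly and asserts regularity, whereas you treat the analytic cases separately by rearranging the Taylor series.
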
 

\begin{proof} We may assume $X=\Bbb R^n$ or $\Bbb C^n$, $P=0$. Clearly, $D_1,...,D_n$ is a linearly independent set in $T_PX$. 
Also let $D\in T_PX$, $D(x_i)=a_i$, and consider $D_*:=D-\sum_i a_iD_i$. 
Then $D_*(x_i)=0$ for all $i$. Now given a regular function $f$ near $0$, 
for small $x_1,...,x_n$ by the fundamental theorem of calculus and the chain rule we have: 
$$
f(x_1,...,x_n)=f(0)+\int_0^1 \frac{df(tx_1,...,tx_n)}{dt}dt=f(0)+\sum_{i=1}^nx_ih_i(x_1,...,x_n), 
$$
where 
$$
h_i(x_1,...,x_n):=\int_0^1(\partial_if)(tx_1,...,tx_n)dt
$$
are regular near $0$. So by the Leibniz rule 
$$
D_*(f)=\sum_i D_*(x_i)h_i(0,...,0)=0,
$$ 
hence $D_*=0$. 
\end{proof} 

\begin{definition} The space $T_PX$ is called the {\bf tangent space} 
to $X$ at $P$. Elements $v\in T_PX$ are called {\bf tangent vectors} to $X$ at $P$.
\end{definition} 

Observe that every tangent vector $v\in T_PX$ defines a derivation 
$\partial_v: O(U)\to \Bbb R,\Bbb C$ for every neighborhood $U$ of $P$, satisfying \eqref{leib}. The number $\partial_vf$ is called the {\bf derivative of $f$ along $v$}. For usual curves and surfaces in $\Bbb R^3$ these coincide with the familiar notions from calculus. 

\subsection{Regular maps} 

\begin{definition} A continuous map $F: X\to Y$ between manifolds (of the same regularity class) is {\bf regular} if for any regular function $h$ on an open set $U\subset Y$ the function $h\circ F$ on $F^{-1}(U)$ is regular. In other words, $F$ is regular if it is expressed by regular functions in local coordinates.
\end{definition} 

It is easy to see that the composition of regular maps is regular, and that a homeomorphism $F$ such that $F,F^{-1}$ are both regular is the same thing as a diffeomorphism (=isomorphism). 
 
Let $F: X\to Y$ be a regular map and $P\in X$. Then we can define the {\bf differential}  
of $F$ at $P$, $d_PF$, which is a linear map $T_PX\to T_{F(P)}Y$. 
Namely, for $f\in O_{F(P)}$ and $v\in T_PX$, the vector $d_PF\cdot v$ is defined by the formula 
$$
(d_PF\cdot v)(f):=v(f\circ F). 
$$
The differential of $F$ is also denoted by $F_*$; namely, for $v\in T_PX$ 
one writes $dF_P\cdot v=F_*v$. 

Moreover, if $G: Y\to Z$ is another regular map, then we have the usual {\bf chain rule}, 
$$
d(G\circ F)_P=dG_{F(P)}\circ dF_P. 
$$
In particular, if $\gamma: (a,b)\to X$ is a regular {\bf parametrized curve} then 
for $t\in (a,b)$ we can define the {\bf velocity vector} $\gamma'(t)\in T_{\gamma(t)}X$ by
$$
\gamma'(t):=d_t\gamma\cdot 1   
$$
(where $1\in \Bbb R=T_t(a,b)$). 
 
\subsection{Submersions and immersions, submanifolds}  

\begin{definition} A regular map of manifolds $F: X\to Y$ is a {\bf submersion} if 
$dF_P: T_PX\to T_{F(P)}Y$ is surjective for all $P\in X$.    
\end{definition} 

The following proposition is a version of the implicit function theorem for manifolds. 

\begin{proposition}\label{ift} If $F$ is a submersion then for any $Q\in Y$, $F^{-1}(Q)$ 
is a manifold of dimension $\dim X-\dim Y$. 
\end{proposition}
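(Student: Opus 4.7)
The plan is to reduce the statement to the classical implicit function theorem via local coordinates, producing at each point of $F^{-1}(Q)$ a chart whose image is an open subset of $\mathbb{R}^{n-m}$ (or $\mathbb{C}^{n-m}$), and then checking that these charts assemble into an atlas of the correct regularity class.

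Fix $P \in F^{-1}(Q)$, and write $n = \dim X$, $m = \dim Y$; surjectivity of $dF_P$ forces $m \leq n$. Choose coordinate charts $(U,\phi)$ around $P$ with $\phi(P) = 0$ and $(V,\psi)$ around $Q$ with $\psi(Q) = 0$, and shrink $U$ so that $F(U) \subseteq V$. In these charts $F$ is expressed by a regular map $(f_1,\ldots,f_m): \phi(U) \to \psi(V)$, and by Lemma \ref{l1} the condition that $dF_P$ be surjective amounts to the Jacobian matrix $(\partial f_i/\partial x_j)$ having rank $m$ at the origin. After permuting the coordinates $x_1,\ldots,x_n$, we may assume that the upper-left $m \times m$ minor $(\partial f_i/\partial x_j)_{1\le i,j\le m}$ is invertible at $0$.

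Now define $\Phi: \phi(U) \to \mathbb{R}^n$ (respectively $\mathbb{C}^n$) by
$$
\Phi(x_1,\ldots,x_n) = (f_1(x),\ldots,f_m(x),x_{m+1},\ldots,x_n).
$$
Its Jacobian at $0$ is block upper-triangular with an invertible upper-left $m\times m$ block and the identity in the lower-right $(n-m)\times(n-m)$ block, hence invertible. The inverse function theorem in the appropriate category ($C^k$, real analytic, or holomorphic) then yields a neighborhood $U' \subseteq U$ of $P$ on which $\Phi$ is a diffeomorphism onto an open set $W \subseteq \mathbb{R}^n$. In the new coordinates $(y_1,\ldots,y_m,x_{m+1},\ldots,x_n) = \Phi$, the condition $F(x) = Q$ reads simply $y_1 = \cdots = y_m = 0$. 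Therefore $\Phi$ restricts to a homeomorphism from $F^{-1}(Q) \cap U'$ onto the open subset $W \cap (\{0\}\times \mathbb{R}^{n-m})$ of $\mathbb{R}^{n-m}$. Composing with the projection onto the last $n-m$ coordinates gives a chart on $F^{-1}(Q)$ around $P$ of dimension $n-m$.

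It remains to assemble these charts into an atlas of the correct regularity and to verify the global hypotheses. Since $F^{-1}(Q)$ is a closed subset of $X$ with the induced topology, it automatically inherits the Hausdorff property and possesses a countable base. If $(U'_1,\Phi_1)$ and $(U'_2,\Phi_2)$ are two charts constructed as above around points of $F^{-1}(Q) \cap U'_1 \cap U'_2$, the transition map is the restriction of $\Phi_2 \circ \Phi_1^{-1}$ to the slice $\{y_1 = \cdots = y_m = 0\}$, which is of the required class as a restriction of a $C^k$ (respectively real or complex analytic) diffeomorphism. Hence these charts define a manifold structure on $F^{-1}(Q)$ of the correct regularity class and dimension $n - m$. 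The main obstacle is the inverse function theorem step itself, which must be invoked in the correct category; once the straightening $\Phi$ is produced, everything else is bookkeeping.
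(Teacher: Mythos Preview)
Your proof is correct and follows essentially the same approach as the paper, which simply observes that the question is local and reduces it to Exercise~\ref{exe1} (the implicit function theorem in local coordinates). You have in effect written out a solution to that exercise via the standard straightening-by-inverse-function-theorem argument.
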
 

\begin{proof} This is a local question, so it reduces to the case when $X,Y$ are open subsets in Euclidean spaces. In this case it reduces to Exercise \ref{exe1}. 
\end{proof} 

\begin{definition} A regular map of manifolds $F: X\to Y$ is an {\bf immersion} if $d_PF: T_PX\to T_{F(P)}Y$ is injective for all $P\in X$. 
\end{definition} 

\begin{example} The inclusion of the sphere $S^n$ into $\Bbb R^{n+1}$ is an immersion. The map 
$F: S^1\to \Bbb R^2$ given by 
\begin{equation}\label{lemn}
x(\theta)=\frac{\cos \theta}{1+\sin^2 \theta},\ y(\theta)=\frac{\sin\theta\cos\theta}{1+\sin^2\theta}
\end{equation}
is also an immersion; its image is the lemniscate (shaped as {\Huge$ \infty$}). This shows that an immersion need not be injective. On the other hand, the map $F: \Bbb R\to \Bbb R^2$ given by $F(t)=(t^2,t^3)$ parametrizing a semicubic parabola {\Huge $\prec$} is injective, but not an immersion, since $F'(0)=(0,0)$. 
\end{example} 

\begin{definition} An immersion $F: X\to Y$ is an {\bf embedding} if the map $F: X\to F(X)$ is a homeomorphism (where $F(X)$ is equipped with the induced topology from $Y$). In this case, $F(X)\subset Y$ is said to be an {\bf (embedded) submanifold.}\footnote{Recall that a subset $Z$ of a topological space $X$ 
is called {\bf locally closed} if it is a closed subset in an open subset $U\subset X$. It is clear that embedded submanifolds are locally closed. For this reason they are often called locally closed (embedded) submanifolds.}
\end{definition} 

\begin{example} The immersion of $S^n$ into $\Bbb R^{n+1}$ and of $(0,1)$ into $\Bbb R$ are embeddings, but the parametrization of the lemniscate by the circle given by \eqref{lemn} is not. The parametrization of the curve {\Huge $\rho$} by $\Bbb R$ is also not an embedding; it is injective but the inverse is not continuous. 
\end{example} 

\begin{definition} An embedding $F: X\to Y$ of manifolds is {\bf closed} if
$F(X)\subset Y$ is a closed subset. In this case we say that $F(X)$ is a {\bf closed (embedded) submanifold} of $Y$. 
\end{definition} 

\begin{example} The embedding of $S^n$ into $\Bbb R^{n+1}$ is closed but of $(0,1)$ into $\Bbb R$ is not.  Also in Proposition \ref{ift}, $f^{-1}(Q)$ is a closed submanifold of $X$. 
\end{example}

\section{\bf Lie groups, I} 

\subsection{The definition of a Lie group} 
\begin{definition} A $C^k$, real or complex analytic {\bf Lie group} is a manifold $G$ of the same class, with a group structure such that the multiplication map $m: G\times G\to G$ is regular. 
\end{definition} 

Thus, in a Lie group $G$ for any $g\in G$ the left and right translation maps $L_g,R_g: G\to G$, $L_g(x):=gx,R_g(x):=xg$, are diffeomorphisms. 

\begin{proposition} In a Lie group $G$, the inversion map $\iota: G\to G$ is a diffeomorphism, and $d\iota_1=-{\rm Id}$. 
\end{proposition}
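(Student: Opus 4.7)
The plan is to extract everything from the inverse function theorem applied to a suitable map built out of multiplication, and then bootstrap to all of $G$ using the translations $L_g,R_g$, which are diffeomorphisms (being regular bijections with regular inverses $L_{g^{-1}},R_{g^{-1}}$).

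First I would compute the differential of the multiplication $m:G\times G\to G$ at the point $(1,1)$. Since $m$ restricted to $G\times\{1\}$ (resp.\ $\{1\}\times G$) coincides with the identity map of $G$, and since $T_{(1,1)}(G\times G)=T_1G\oplus T_1G$, the chain rule gives
$$
dm_{(1,1)}(v,w)=v+w.
$$
Now consider the regular map $\Phi:G\times G\to G\times G$ defined by $\Phi(x,y)=(x,xy)$, whose differential at $(1,1)$ is $(v,w)\mapsto (v,v+w)$, an invertible linear map. By the inverse function theorem (applied in local coordinates, where $X,Y$ are open subsets of Euclidean space), $\Phi$ is a local diffeomorphism near $(1,1)$. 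Its set-theoretic inverse is $(x,z)\mapsto (x,x^{-1}z)$, so the second coordinate shows that $(x,z)\mapsto x^{-1}z$ is regular on a neighborhood of $(1,1)$; restricting to $z=1$ shows $\iota$ is regular on a neighborhood $U$ of $1$.

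Next I would spread regularity to all of $G$. For any $g\in G$ and $x$ near $g$, the identity
$$
\iota(x)=(g^{-1}x)^{-1}g^{-1}=R_{g^{-1}}\bigl(\iota(L_{g^{-1}}(x))\bigr)
$$
holds. Since $L_{g^{-1}}$ is a diffeomorphism sending a neighborhood of $g$ into $U$, and $R_{g^{-1}},\iota|_U$ are regular, $\iota$ is regular near $g$. Thus $\iota$ is regular on $G$, and because $\iota\circ\iota=\mathrm{id}_G$, it is its own inverse; hence $\iota$ is a diffeomorphism.

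Finally, to compute $d\iota_1$, I would differentiate the identity
$$
m(x,\iota(x))=1
$$
at $x=1$. The left-hand side is the composition of the regular map $x\mapsto(x,\iota(x))$ with $m$; its differential at $1$ applied to $v\in T_1G$ equals $dm_{(1,1)}(v,d\iota_1(v))=v+d\iota_1(v)$ by the first step, while the right-hand side is constant so its differential is zero. Therefore $d\iota_1(v)=-v$ for every $v\in T_1G$, i.e.\ $d\iota_1=-\mathrm{Id}$. The main (and essentially only) technical point is the application of the inverse function theorem to $\Phi$; everything else is formal manipulation with translations and the chain rule.
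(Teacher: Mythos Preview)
Your proof is correct and follows essentially the same approach as the paper: compute $dm_{(1,1)}(v,w)=v+w$, use the inverse/implicit function theorem to get regularity of $\iota$ near $1$, then translate. The only cosmetic difference is that the paper applies the implicit function theorem directly to $m(x,y)=0$ (which yields both regularity and $d\iota_1=-\mathrm{Id}$ in one stroke), whereas you apply the inverse function theorem to $\Phi(x,y)=(x,xy)$ and then compute $d\iota_1$ separately by differentiating $m(x,\iota(x))=1$.
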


\begin{proof} For the first statement it suffices to show that $\iota$ is regular near $1$, the rest follows by translation. So let us pick a coordinate chart near $1\in G$ and 
write the map $m$ in this chart in local coordinates. Note that in these coordinates, $1\in G$ corresponds to $0\in \Bbb R^n$. 
Since $m(x,0)=x$ and $m(0,y)=y$, the linear approximation of $m(x,y)$ at $0$ is $x+y$. Thus by the implicit function theorem, the equation $m(x,y)=0$ is solved near $0$ by a regular function $y=\iota(x)$ with $d\iota(0)=-{\rm Id}$. This proves the proposition.  
\end{proof} 

\begin{remark} A {\bf $C^0$ Lie group} is a topological group which is a topological manifold. The {\bf Hilbert 5th problem} was to show that any such group is actually a real analytic Lie group (i.e., the regularity class does not matter). This problem is solved by the deep {\bf Gleason-Yamabe theorem}, proved in 1950s. So from now on we will not pay attention to regularity class and consider only real and complex Lie groups. 
\end{remark} 

Note that any complex Lie group of dimension $n$ 
can be regarded as a real Lie group of dimension $2n$. Also 
the Cartesian product of real (complex) Lie groups is a real (complex) Lie group. 

\subsection{Homomorphisms} 

\begin{definition} A {\bf homomorphism of Lie groups} $f: G\to H$ is a group homomorphism which is also a regular map. An {\bf isomorphism of Lie groups} is a homomorphism $f$ which is a group isomorphism, such that $f^{-1}: H\to G$ is regular. 
\end{definition}

We will see later that the last condition is in fact redundant. 

\subsection{Examples} \label{exalie} 

\begin{example} 1. $(\Bbb R^n,+)$ is a real Lie group and $(\Bbb C^n,+)$ is a complex Lie group (both $n$-dimensional). 

2. $(\Bbb R^\times,\times)$, $(\Bbb R_{>0},\times)$ are real Lie groups, 
$(\Bbb C^\times,\times)$ is a complex Lie group (all $1$-dimensional).

3. $S^1=\lbrace z\in \Bbb C: |z|=1\rbrace$ is a 1-dimensional real Lie group under multiplication of complex numbers. 

Note that $\Bbb R^\times\cong \Bbb R_{>0}\times \Bbb Z/2$,  $\Bbb C^\times \cong \Bbb R_{>0}\times S^1$ as real Lie groups (trigonometric form of a complex number) and $(\Bbb R,+)\cong (\Bbb R_{>0},\times)$ via $x\mapsto e^x$. 

4. The groups of invertible $n$ by $n$ matrices: $GL_n(\Bbb R)$ is a real Lie group and $GL_n(\Bbb C)$ is a complex Lie group. These are open sets in the corresponding spaces of all matrices and have dimension $n^2$. 

5. $SU(2)$, the special unitary group of size $2$. This is the set of complex $2$-by-$2$ matrices $A$ such that 
$$
AA^\dagger=\bold 1,\ \det A=1.
$$
So writing 
$$
A=\left(\begin{matrix} a& b\\ c& d\end{matrix}\right),\quad A^\dagger=\left(\begin{matrix} \overline a& \overline c\\ \overline b& \overline d\end{matrix}\right), 
$$ 
we get 
$$
a\overline a+b\overline b=1,\ a\overline c+b\overline d=0,\ c\overline c+d\overline d=1.
$$
The second equation implies that $(c,d)=\lambda(-\overline b,\overline a)$. 
Then we have 
$$
1=\det A=ad-bc=\lambda(a\overline a+b\overline b)=\lambda,
$$
so $\lambda=1$. Thus $SU(2)$ is identified with the set of $(a,b)\in \Bbb C^2$ 
such that $a\overline a+b\overline b=1$. Writing $a=x+iy$, $b=z+it$, we have 
$$
SU(2)=\lbrace (x,y,z,t)\in \Bbb R^4: x^2+y^2+z^2+t^2=1\rbrace.
$$
Thus $SU(2)$ is a 3-dimensional real Lie group which as a manifold is the 3-dimensional sphere $S^3\subset \Bbb R^4$.

6. Any countable group $G$ with {\bf discrete topology} (i.e., such that every set is open)  
is a  (real and complex) Lie group. 
\end{example} 

\subsection{The connected component of $1$} 

Recall: 

$\bullet$ A topological space $X$ is {\bf path-connected} if for any $P,Q\in X$ there is a continuous map $x: [0,1]\to X$ such that $x(0)=P,x(1)=Q$ (such $x$ is called  {\bf a path connecting $P$ to $Q$}).  

$\bullet$ If $X$ is any topological space, then for $P\in X$ we can define its {\bf path-connected component} to be the set $X_P$ of $Q\in X$ for which there is a path connecting $P$ to $Q$. Then $X_P$ is the largest path-connected subset of $X$ containing $P$. Clearly, 
the relation that $Q$ belongs to $X_P$ is an equivalence relation, which 
splits $X$ into equivalence classes called {\bf path-connected components.}
The set of such components is denoted $\pi_0(X)$. 

$\bullet$ A topological space $X$ is {\bf connected} if the only subsets of $X$ that are both open and closed are $\emptyset$ and $X$. For $P\in X$, the {\bf connected component} 
of $X$ is the union $X^P$ of all connected subsets of $X$ containing $P$, which is obviously connected itself (so it is the largest connected subset of $X$ containing $P$). A path-connected space $X$ is always connected but not vice versa (the classic counterexample is the graph of the function $y=\sin(\frac{1}{x})$ together with the interval $[-1,1]$ of the $y$-axis); however, a connected {\it manifold} is path-connected (show it!), so for manifolds the notions of connected component and path-connected component coincide.  

$\bullet$ If $Y$ is a topological space, $X$ is a set and $p: Y\to X$ is a surjective map (i.e., $X=Y/\sim$ is the quotient of $Y$ by an equivalence relation) then $X$ acquires a topology called the {\bf quotient topology}, in which open sets are subsets $V\subset X$ such that $p^{-1}(V)$ is open. 

Now let $G$ be a real or complex Lie group, and $G^\circ$ the connected component of $1\in G$. Then the connected component of any $g\in G$ is $gG^\circ$. 

\begin{proposition} (i) $G^\circ$ is a normal subgroup of $G$.

(ii) $\pi_0(G)=G/G^\circ$ with quotient topology is a discrete and countable group. 
\end{proposition}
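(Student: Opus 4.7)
The plan is to handle (i) by repeatedly invoking the principle that a continuous image of a connected set containing $1$ must lie in $G^\circ$, and to handle (ii) by combining local connectedness of manifolds (to get discreteness) with Lemma \ref{counsub} (to get countability).

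For (i), I would first verify closure under the group operation. Multiplication and inversion are regular, hence continuous, so the map $\mu: G^\circ \times G^\circ \to G$ given by $\mu(x,y) = xy^{-1}$ is continuous. Its domain is connected (the product of two connected spaces is connected), so its image is a connected subset of $G$ containing $1 = \mu(1,1)$, and therefore is contained in $G^\circ$. This shows $G^\circ$ is a subgroup. For normality, note that for any $g \in G$ the conjugation map $c_g = L_g \circ R_{g^{-1}}: G \to G$ is a homeomorphism sending $1$ to $1$; it therefore carries the connected component of $1$ into itself, i.e. $gG^\circ g^{-1} \subseteq G^\circ$.

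For (ii), the crux is to show $G^\circ$ is open. Since $G$ is a manifold, every point has a neighborhood homeomorphic to an open subset of $\BR^n$; in particular, every point has a connected (in fact path-connected) neighborhood. Thus $G$ is locally connected, which implies that every connected component is open. So $G^\circ \subseteq G$ is open, and by translation every coset $gG^\circ = L_g(G^\circ)$ is also open. Under the quotient map $p: G \to G/G^\circ$ the preimage of each singleton $\{gG^\circ\}$ is the open set $gG^\circ$, so singletons are open in the quotient topology: $G/G^\circ$ is discrete. The group structure on the quotient is well-defined by (i), and on a discrete space every map is continuous, so $G/G^\circ$ is a (discrete) topological group.

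It remains to see countability. The cosets $\{gG^\circ : g \in G\}$ form an open cover of $G$, and since $G$ is a manifold it has a countable base; by Lemma \ref{counsub} this cover admits a countable subcover. But distinct cosets are disjoint, so any subcover must already contain every nonempty coset, forcing the set of cosets to be countable. The only genuinely nontrivial point in the whole argument is the local connectedness of manifolds that underlies the openness of $G^\circ$; everything else is a direct application of continuity, connectedness, and the countable-base lemma.
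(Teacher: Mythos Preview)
Your proof is correct and follows essentially the same route as the paper. The paper uses path-connectedness explicitly (conjugating a path from $1$ to $a$) while you use general connectedness, but these coincide for manifolds as noted in the text; for (ii) both arguments reduce to ``cosets are open because manifolds are locally connected, hence the quotient is discrete, and countability comes from the countable base.'' If anything, your (i) is slightly more complete than the paper's, which only writes out the normality step and leaves closure under multiplication and inversion implicit.
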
 

\begin{proof} (i) Let $g\in G$, $a\in G^\circ$, and $x: [0,1]\to G$ be a path connecting $1$ to $a$. Then $gxg^{-1}$ is a path connecting $1$ to $gag^{-1}$, so $gag^{-1}\in G^\circ$, hence $G^\circ$ is normal. 

(ii) Since $G$ is a manifold, for any $g\in G$, there is a neighborhood of $g$ contained 
in $G_g=gG^\circ$. This implies that any coset of $G^\circ$ in $G$ is open, hence $G/G^\circ$ is discrete. Also $G/G^\circ$ is countable since $G$ has a countable base.  
\end{proof} 

Thus we see that any Lie group is an extension of a discrete countable group by a connected Lie group. This essentially reduces studying Lie groups to studying connected Lie groups. In fact, one can further reduce to simply connected Lie groups, which is done in the next subsections. 

\section{\bf Lie groups, II} 

\subsection{A crash course on coverings} Now we need to review some more topology. Let $X,Y$ be Hausdorff topological spaces, and \linebreak $p: Y\to X$ a continuous map. Then $p$ is called a {\bf covering} if every point $x\in X$ has a neighborhood $U$ such that $p^{-1}(U)$ is a union of disjoint open sets (called {\bf sheets} of the covering) each of which is mapped homeomorphically onto $U$ by $p$: 

\begin{center}
\includegraphics[scale=0.5]{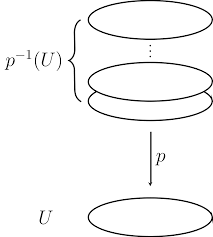}
\end{center}  

In other words, there exists a homeomorphism $h: U\times F\to p^{-1}(U)$ for some discrete space $F$ with $(p\circ h)(u,f)=u$ for all $u\in U$, $f\in F$. I.e., informally speaking, a covering is a map that locally on $X$ looks like the projection $X\times F\to X$ for some discrete $F$. 

We will consider only coverings with countable fibers, and just call them coverings. 
It is clear that a covering of a manifold ($C^k$, real or complex analytic) is a manifold 
of the same type, and the covering map is regular. 

\begin{center}
\includegraphics[scale=0.5]{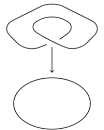}
\end{center}

Two paths $x_0,x_1: [0,1]\to X$ such that $x_i(0)=P, x_i(1)=Q$ are said to be {\bf homotopic} if  there is a continuous map $$x: [0,1]\times [0,1]\to X,$$ called a {\bf homotopy} between $x_0$ and $x_1$, such that $x(t,0)=x_0(t)$ and $x(t,1)=x_1(t)$, $x(0,s)=P,x(1,s)=Q$. See a movie here:

\noindent \url{https://commons.wikimedia.org/wiki/File:Homotopy.gif#/media/File:HomotopySmall.gif}

For example, if $x(t)$ is a path and $g: [0,1]\to [0,1]$ is a change of parameter with $g(0)=0$, $g(1)=1$ then the paths $x_1(t)=x(t)$ and $x_2(t)=x(g(t))$ are clearly homotopic. 

A path-connected Hausdorff space $X$ is said to be {\bf simply connected} if for any $P,Q\in X$, any paths $x_0,x_1: [0,1]\to X$ such that $x_i(0)=P, x_i(1)=Q$ are homotopic. 

\begin{example} $S^1$ is not simply connected but $S^n$ is simply connected for $n\ge 2$. 
\end{example} 
  
It is easy to show that any covering has a {\bf homotopy lifting property}: if $b\in X$ 
and $\widetilde b\in p^{-1}(b)\subset Y$ then any path $\gamma$ starting at $b$ 
admits a unique lift to a path $\widetilde \gamma$ starting at $\widetilde b$, i.e., 
$p(\widetilde \gamma)=\gamma$. Moreover, if $\gamma_1,\gamma_2$ are homotopic 
paths on $X$ then $\widetilde \gamma_1,\widetilde \gamma_2$ are homotopic on $Y$ 
(in particular, have the same endpoint). Thus, if $Z$ is a simply connected space with a point $z$ then any continuous map $f: Z\to X$ with $f(z)=b$ lifts to a unique continuous map $\widetilde f: Z\to Y$ satisfying $\widetilde f(z)=\widetilde b$; i.e., $p\circ \widetilde f=f$. Namely, to compute $\widetilde f(w)$, pick a path $\beta$ from $z$ to $w$, let $\gamma=f(\beta)$ 
and consider the path $\widetilde \gamma$. Then the endpoint of $\widetilde \gamma$ is 
$\widetilde f(w)$, and it does not depend on the choice of $\beta$. 

If $Z,X$ are manifolds (of any regularity type), $Z$ is simply connected, and 
$f: Z\to X$ is a regular map then the lift $\widetilde f: Z\to Y$ is also regular. Indeed, if we introduce local coordinates on $Y$ using the homeomorphism between sheets of the covering and their images then $\widetilde f$ and $f$ will be locally expressed by the same functions.

A covering $p: Y\to X$ of a path-connected space $X$ is called {\bf universal} if $Y$ is simply connected. 

\begin{center}  
\includegraphics[scale=0.5]{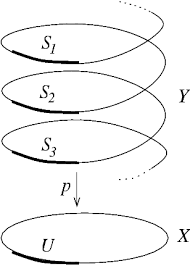}  
\end{center}

If $X$ is a sufficiently nice space, e.g., a manifold, its universal covering can be constructed as follows. Fix $b\in X$ and let $\widetilde X_b$ be the set of homotopy classes of paths on $X$ starting at $b$. We have a natural map $p: \widetilde X_b\to X$, $p(\gamma)=\gamma(1)$. If $U\subset X$ is a small ball around a point $x\in X$ then $U$ is simply connected, so we have a natural identification $h: U\times F\to p^{-1}(U)$ with $(p\circ h)(u,f)=u$, where $F=p^{-1}(x)$ is the set of homotopy classes of paths from $b$ to $x$; namely, $h(u,f)$ is the concatenation of $f$ with any path connecting $x$ with $u$ inside $U$. Here the {\bf concatenation} $\gamma_1\circ \gamma_2$ of paths $\gamma_1,\gamma_2:[0,1]\to X$ with $\gamma_2(1)=\gamma_1(0)$ is the path $\gamma=\gamma_1\circ \gamma_2: [0,1]\to X$ such that $\gamma(t)=\gamma_2(2t)$ for $t\le 1/2$ and $\gamma(t)=\gamma_1(2t-1)$ for $t\ge 1/2$. 

The topologies on all such $p^{-1}(U)$ induced by these identifications glue together into a topology on $\widetilde X_b$, and the map $p: \widetilde X_b\to X$ is then a covering. Moreover, the homotopy lifting property implies that $\widetilde X_b$ is simply connected, so this covering is universal. 

It is easy to see that a universal covering $p: Y\to X$ covers any path-connected covering $p': Y'\to X$, i.e., there is a covering $q: Y\to Y'$ such that $p=p'\circ q$; this is why it is called universal. Therefore a universal covering is unique up to an isomorphism (indeed, if 
$Y,Y'$ are universal then we have coverings $q_1: Y\to Y'$ and $q_2: Y'\to Y$ and $q_1\circ q_2=q_2\circ q_1={\rm Id}$). 

\begin{example} 1. The map $z\mapsto z^n$ defines an $n$-sheeted covering 
$S^1\to S^1$. 

2. The map $x\to e^{ix}$ defines the universal covering $\Bbb R\to S^1$. 
\end{example}  

Now denote by $\pi_1(X,x)$ the set of homotopy classes of {\it closed} paths on a path-connected space $X$, starting and ending at $x$. Then $\pi_1(X,x)$ is a group under concatenation of paths (concatenation is associative since the paths $a(bc)$ and $(ab)c$ differ only by parametrization and are hence homotopic). This group is called the {\bf fundamental group} of $X$ relative to the point $x$. 
It acts on the fiber $p^{-1}(x)$ for every covering $p: Y\to X$ (by lifting $\gamma\in \pi_1(X,x)$ to $Y$), which is called the action by {\bf deck transformations}. This action is transitive iff $Y$ is path-connected and moreover free iff Y is universal. 

Finally, the group $\pi_1(X,x)$ does not depend on $x$ up to an isomorphism. More precisely, conjugation by any path from $x_1$ to $x_2$ defines an isomorphism $\pi_1(X,x_1)\to \pi_1(X,x_2)$ (although two non-homotopic paths may define different isomorphisms 
if $\pi_1$ is non-abelian). 

\begin{example} 1. $\pi_1(S^1)=\Bbb Z$.

2. $\pi_1(\Bbb C\setminus \lbrace z_1,...,z_n\rbrace)={F}_n$ is a free group in $n$ generators. 

3. We have a 2-sheeted universal covering $S^n\to \Bbb R\Bbb P^n$ (real projective space) for $n\ge 2$. Thus $\pi_1(\Bbb R\Bbb P^n)=\Bbb Z/2$ for $n\ge 2$. 
\end{example}

\begin{exercise} Make sure you can fill all the details in this subsection!
\end{exercise} 

\subsection{Coverings of Lie groups} Let $G$ be a connected (real or complex) Lie group and $\widetilde G=\widetilde G_1$ be the universal covering of $G$, consisting of homotopy classes of paths $x: [0,1]\to G$ with $x(0)=1$. Then $\widetilde G$ is a group via $(x\cdot y)(t)=x(t)y(t)$, and also a manifold. 

\begin{proposition}\label{abeli} (i) $\widetilde G$ is a simply connected Lie group. 
The covering $p: \widetilde G\to G$ is a homomorphism of Lie groups. 

(ii) ${\rm Ker}(p)$ is a central subgroup of $\widetilde{G}$ naturally isomorphic to $\pi_1(G)=\pi_1(G,1)$. Thus, $\widetilde G$ is a central extension of $G$ by $\pi_1(G)$. 
In particular, $\pi_1(G)$ is abelian.  
\end{proposition}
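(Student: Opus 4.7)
The plan is to construct the group operation on $\widetilde G$ directly from the pointwise product of paths and then apply the general lifting theory for simply connected covers.

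For part (i), I would first define multiplication by $[x]\cdot[y] := [t\mapsto x(t)y(t)]$, with identity $\tilde 1 := [t\mapsto 1_G]$ and inverse $[x]^{-1} := [t\mapsto x(t)^{-1}]$. Well-definedness on homotopy classes follows because pointwise products of endpoint-fixing homotopies $H_x(t,s)H_y(t,s)$ are again such homotopies (continuity of $m$ and $\iota$); associativity is inherited from $G$. The formula $p([x]\cdot[y]) = x(1)y(1) = p([x])p([y])$ shows $p$ is a homomorphism. To upgrade $\widetilde G$ to a Lie group I would argue that the pointwise multiplication $\tilde m:\widetilde G\times\widetilde G \to \widetilde G$ is the (necessarily unique) lift through $p$ of the regular map $m\circ(p\times p)$ sending $(\tilde 1,\tilde 1)\mapsto \tilde 1$; the lift exists because $\widetilde G\times\widetilde G$ is simply connected (product of simply connected spaces, proved by projecting any homotopy onto each factor), and it is regular by the observation at the end of the covering subsection that lifts of regular maps through coverings are regular. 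Regularity of inversion is then automatic by the proposition on inversion in a Lie group proved earlier.

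For part (ii), the set-level identification $\ker(p) = p^{-1}(1) = \pi_1(G,1)$ is immediate from the construction of $\widetilde G$. To show $\ker(p)$ is central in $\widetilde G$, I would fix $a\in\ker(p)$ and consider the continuous map $\phi_a:\widetilde G\to\widetilde G$, $\phi_a(\tilde g):=\tilde g a\tilde g^{-1}$. Since $p$ is a homomorphism and $p(a)=1$, $\phi_a$ takes values in $\ker(p)$, which is discrete as a fiber of a covering. Since $\widetilde G$ is connected, $\phi_a$ must be constant, and evaluating at $\tilde 1$ gives $\phi_a\equiv a$, so $\tilde g a=a\tilde g$ for all $\tilde g\in\widetilde G$. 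This yields that $\widetilde G$ is a central extension of $G$ by $\ker(p)$.

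The main obstacle is to verify that the group structure on $\ker(p)$ inherited from $\widetilde G$ (pointwise multiplication of loops) agrees with the standard concatenation structure on $\pi_1(G,1)$; only then is the proposition's identification an isomorphism of groups, and only then does centrality imply $\pi_1(G)$ is abelian. For this I would invoke the Eckmann--Hilton argument: on the set of based loops at $1$ one has two unital operations, the pointwise product $\cdot$ and the concatenation $*$, sharing the same unit $\tilde 1$. A direct check, piecewise on $[0,\tfrac12]$ and $[\tfrac12,1]$, gives the interchange law
\[
(\alpha*\beta)\cdot(\gamma*\delta) = (\alpha\cdot\gamma)*(\beta\cdot\delta).
\]
The Eckmann--Hilton lemma then forces $\cdot$ and $*$ to coincide on $\pi_1(G,1)$ and to be commutative, completing the identification $\ker(p)\cong\pi_1(G,1)$ as groups and yielding the abelianness of $\pi_1(G)$ as a corollary (consistent with centrality established above).
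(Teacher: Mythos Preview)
Your proof is correct and, for part (i), essentially identical to the paper's: both define the group law via pointwise multiplication of paths, then identify $\widetilde m$ as the lift of $m\circ(p\times p)$ through the simply connected space $\widetilde G\times\widetilde G$ and invoke regularity of lifts. The paper leaves part (ii) entirely as an exercise; your argument for it---centrality via the connected-to-discrete trick, and the Eckmann--Hilton interchange to identify the pointwise and concatenation products on $\pi_1(G,1)$---is the standard and correct way to complete it.
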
 

\begin{proof} We will only prove (i). 
We only need to show that $\widetilde{G}$ is a Lie group, i.e., 
that the multiplication map $\widetilde m: \widetilde G\times \widetilde G\to \widetilde G$ is regular. But $\widetilde G\times \widetilde G$ is simply connected, and $\widetilde m$ is a lifting of the map 
$$
m':=m\circ (p\times p): \widetilde G\times \widetilde G\to G\times G\to G,
$$
so it is regular.  
In other words, $\widetilde m$ is regular since in local coordinates it is defined 
by the same functions as $m$. 
\end{proof} 

\begin{exercise} Prove Proposition \ref{abeli}(ii). 
\end{exercise} 

\begin{remark} The same argument shows that more generally, the fundamental group of any path-connected topological group is abelian. 
\end{remark} 

\begin{example} 1. The map $z\mapsto z^n$ defines an $n$-sheeted covering 
of Lie groups $S^1\to S^1$. 

2. The map $x\to e^{ix}$ defines the universal covering of Lie groups $\Bbb R\to S^1$. 
\end{example}

\begin{exercise}\label{beltexe} Consider the action of $SU(2)$ on the 3-dimensional real vector space of traceless Hermitian 2-by-2 matrices by conjugation. 

(i) Show that this action preserves the positive inner product $(A,B)={\rm Tr}(AB)$ and has determinant $1$. Deduce that it defines a homomorphism $\phi: SU(2)\to SO(3)$.

(ii) Show that $\phi$ is surjective, with kernel $\pm 1$, and is a universal covering map (use that $SU(2)=S^3$ is simply connected). Deduce that $\pi_1(SO(3))=\Bbb Z/2$ and that $SO(3)\cong \Bbb R\Bbb P^3$ as a manifold. 
\end{exercise}

This is demonstrated by the famous {\bf Dirac belt trick}, which illustrates the notion of a {\bf spinor}; namely, spinors are vectors in $\Bbb C^2$ acted upon by matrices from $SU(2)$. Here are some videos of the belt trick: 

\url{https://www.youtube.com/watch?v=17Q0tJZcsnY}

\url{https://www.youtube.com/watch?v=Vfh21o-JW9Q}
  
\subsection{Closed Lie subgroups}

\begin{definition} A {\bf closed Lie subgroup} of a (real or complex) Lie group $G$ is a subgroup which is also an embedded submanifold. 
\end{definition} 

This terminology is justified by the following lemma. 

\begin{lemma}\label{l2} A closed Lie subgroup of $G$ is closed in $G$. 
\end{lemma}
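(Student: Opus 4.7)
The plan is to reduce the question to a general fact about open subgroups of topological groups, using the footnote observation that every embedded submanifold is locally closed. So I would begin by recording that, since $H$ is an embedded submanifold of $G$, there is an open set $U \subset G$ and a closed set $C \subset G$ with $H = U \cap C$. Taking closures, $\overline H \subset C$, and since $H \subset U \cap \overline H \subset U \cap C = H$, we in fact have $H = U \cap \overline H$. In particular, $H$ is an open subset of $\overline H$ in the subspace topology inherited from $G$.

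Next I would observe that $\overline H$ is itself a subgroup of $G$. This is a standard consequence of the continuity of multiplication and inversion in the topological group $G$: if $a,b \in \overline H$, pick sequences $h_n \to a$ and $k_n \to b$ in $H$; then $h_n k_n^{-1} \to a b^{-1}$ by continuity, and $h_n k_n^{-1} \in H$, so $ab^{-1} \in \overline H$.

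Now I would combine the two facts. The set $H$ is an open subgroup of the topological group $\overline H$. In any topological group, the cosets of an open subgroup are open (as translates of the subgroup by the homeomorphisms $L_g$), so the complement of the open subgroup is a union of nontrivial cosets and is therefore also open. Consequently $H$ is closed in $\overline H$. But $H$ is also dense in $\overline H$ by definition of the closure, and therefore $H = \overline H$, which is exactly the assertion that $H$ is closed in $G$.

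The only point that requires any real care is the very first one: that an embedded submanifold is locally closed. Once that is in hand, the proof is a short, purely topological argument about subgroups of topological groups, so I do not expect a genuine obstacle — the step worth stating carefully is the passage from ``$H$ is open in $\overline H$'' to ``$H$ is closed in $\overline H$'' via the coset decomposition.
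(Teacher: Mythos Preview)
Your proof is correct. The paper does not give its own proof of this lemma; it is left as an exercise, so there is nothing to compare against beyond noting that your argument is exactly the kind of solution intended: use that an embedded submanifold is locally closed (as the paper remarks in a footnote) to see that $H$ is open in $\overline H$, observe that $\overline H$ is a subgroup, and conclude via the standard fact that open subgroups are closed.
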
 

\begin{exercise} Prove Lemma \ref{l2}. 
\end{exercise} 

We also have 

\begin{theorem}\label{closedlie} Any closed subgroup of a real Lie group $G$ is a closed Lie subgroup. 
\end{theorem}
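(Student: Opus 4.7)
The plan is to identify a candidate ``Lie algebra'' $\mathfrak{h}\subset T_1G$ for $H$, show that it is a linear subspace, and then use the exponential map $\exp\colon T_1G\to G$ (to be constructed in general as flows of left-invariant vector fields, or taken on faith for this sketch) to build a submanifold chart for $H$ at $1$. Since left translations $L_h$ for $h\in H$ are diffeomorphisms of $G$, such a chart can then be transported to every point of $H$, establishing that $H$ is an embedded submanifold.

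First I would define
$$
\mathfrak{h}:=\{X\in T_1G \,:\, \exp(tX)\in H\text{ for all }t\in\Bbb R\}.
$$
Closure under scalar multiplication is immediate from $\exp(s(tX))=\exp((st)X)$. Closure under addition is the first technical step: via the Trotter product formula
$$
\exp(t(X+Y))=\lim_{n\to\infty}\bigl(\exp(tX/n)\exp(tY/n)\bigr)^n,
$$
each factor on the right lies in $H$ when $X,Y\in\mathfrak{h}$, and the limit lies in $\overline{H}=H$ because $H$ is closed; hence $X+Y\in\mathfrak{h}$.

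Next, choose a vector space complement $\mathfrak{m}\subset T_1G$ with $T_1G=\mathfrak{h}\oplus\mathfrak{m}$, and consider
$$
\Phi\colon \mathfrak{h}\oplus\mathfrak{m}\longrightarrow G,\qquad \Phi(X,Y):=\exp(X)\exp(Y).
$$
Since $d\Phi_0=\Id$, the inverse function theorem yields a neighborhood $W=W_{\mathfrak{h}}\oplus W_{\mathfrak{m}}$ of $0$ on which $\Phi$ is a diffeomorphism onto a neighborhood $U\subset G$ of $1$. The key claim, which finishes the proof, is that after possibly shrinking $W$ one has
$$
\Phi(W)\cap H=\Phi(W_{\mathfrak{h}}\times\{0\}),
$$
since in the chart $\Phi^{-1}$ this exhibits $H\cap U$ as the zero set of the $\mathfrak{m}$-component, which is exactly the local submanifold condition.

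The main obstacle is this last equality, and I would prove it by contradiction. Suppose sequences $(X_k,Y_k)\in W$ exist with $Y_k\neq 0$, $(X_k,Y_k)\to 0$, and $\Phi(X_k,Y_k)\in H$. Since $X_k\in W_{\mathfrak{h}}\subset\mathfrak{h}$, the definition of $\mathfrak{h}$ gives $\exp(X_k)\in H$, so multiplying on the left by $\exp(-X_k)$ yields $\exp(Y_k)\in H$. Fix a norm on $T_1G$, normalize $\widehat Y_k:=Y_k/\|Y_k\|\in\mathfrak{m}$, and pass to a subsequence so that $\widehat Y_k\to Y\in\mathfrak{m}$ with $\|Y\|=1$. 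For arbitrary $t\in\Bbb R$ set $n_k:=\lfloor t/\|Y_k\|\rfloor$, so that $n_kY_k\to tY$; because $s\mapsto\exp(sY_k)$ is a one-parameter subgroup, $\exp(n_kY_k)=\exp(Y_k)^{n_k}\in H$, and closedness of $H$ gives $\exp(tY)\in H$. Hence $Y\in\mathfrak{h}$, contradicting $0\neq Y\in\mathfrak{m}$. This furnishes the submanifold chart at $1$; left-translating by $h\in H$ gives compatible charts at every point of $H$, and since $\Phi$ is a homeomorphism the subspace topology on $H$ agrees with the manifold topology, so the inclusion is an embedding.
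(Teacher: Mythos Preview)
Your proof is correct and follows essentially the same strategy as the paper's Exercise~\ref{closedlieex}: define $\mathfrak h$ via one-parameter subgroups landing in $H$, use the Trotter product to show it is a linear subspace, and then run the ``normalize and take integer powers'' contradiction to rule out elements of $H$ approaching $1$ in a direction transverse to $\mathfrak h$. The floor trick $n_k=\lfloor t/\|Y_k\|\rfloor$ and the appeal to closedness of $H$ are exactly the same ingredients the paper uses.

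There is one mild organizational difference worth noting. The paper first shows $\mathfrak h$ is a Lie subalgebra (step (iii) of the exercise) and then invokes the first fundamental theorem (Theorem~\ref{first}) to produce the connected Lie subgroup $H_0$ with ${\rm Lie}(H_0)=\mathfrak h$, using a transversal slice $S$ to $H_0$ for the final chart. You bypass both of these: you only need $\mathfrak h$ to be a \emph{linear} subspace to form the chart $\Phi(X,Y)=\exp(X)\exp(Y)$, and you build the submanifold chart directly without ever calling on Theorem~\ref{first}. This makes your argument slightly more self-contained; the fact that $\mathfrak h$ is closed under bracket then falls out a posteriori from $\mathfrak h=T_1H$. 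Conversely, the paper's route has the small advantage of identifying $H^\circ$ with the Lie subgroup $H_0$ explicitly along the way.
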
 

This theorem is rather nontrivial, and we will not prove it at this time (it will be proved much later in Exercise \ref{closedlieex}), but we will soon prove a weaker version which suffices for our purposes. 

\begin{example}\label{wind} 1. $SL_n(\Bbb K)$ is a closed Lie subgroup of $GL_n(\Bbb K)$ for $\Bbb K=\Bbb R,\Bbb C$. 
Indeed, the equation $\det A=1$ defines a smooth hypersurface in the space of matrices (show it!). 

2. Let $\phi: \Bbb R\to S^1\times S^1$ be the irrational torus winding given by the formula 
$\phi(x)=(e^{ix},e^{ix\sqrt{2}})$:

\begin{center}
\includegraphics[scale=0.5]{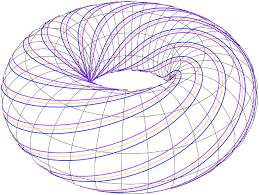}
\end{center} 

\noindent Then $\phi(\Bbb R)$ is a subgroup of $S^1\times S^1$ 
but not a closed Lie subgroup, since it is not an embedded submanifold: although $\phi$ is an immersion, the map $\phi^{-1}: \phi(\Bbb R)\to \Bbb R$ is not continuous. 
\end{example} 

\subsection{Generation of connected Lie groups by a neighborhood of the identity} 

\begin{proposition}\label{generati} (i) If $G$ is a connected Lie group and $U$ a neighborhood of $1$ in $G$ then $U$ generates $G$. 

(ii) If $f: G\to K$ is a homomorphism of Lie groups, $K$ is connected, 
and $df_1: T_1G\to T_1K$ is surjective, then $f$ is surjective.   
\end{proposition}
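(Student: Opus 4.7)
For (i), the plan is the standard open-subgroup argument. First I would replace $U$ by the symmetric neighborhood $V := U \cap \iota(U)$ of $1$; this is open because $\iota$ is a homeomorphism, and of course the subgroup it generates coincides with the one generated by $U$. Let $H := \bigcup_{n \geq 1} V^n$ be this subgroup. The key observation is that $H$ is open in $G$: for any $h \in H$, the left translate $L_h(V) = hV$ is open (because $L_h$ is a diffeomorphism) and entirely contained in $H$, so $H$ is a neighborhood of each of its points. Once $H$ is known to be open, every coset $gH = L_g(H)$ is open too, hence the complement $G \setminus H = \bigcup_{gH \neq H} gH$ is a union of open cosets and therefore open. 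Thus $H$ is clopen and nonempty in the connected space $G$, so $H = G$.

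For (ii), the plan is to reduce to (i) by showing that $f(G)$ contains an open neighborhood of $1_K$. Since $df_1$ is surjective and surjectivity of a linear map is an open condition on the matrix entries, $df_g$ remains surjective for $g$ in some neighborhood of $1$ in $G$; that is, $f$ is a submersion on a neighborhood of $1$. By the local form of submersions (essentially the content of Proposition \ref{ift} or, equivalently, the implicit function theorem applied after choosing local coordinates in which $f$ becomes a linear projection $(x_1,\ldots,x_{\dim G}) \mapsto (x_1,\ldots,x_{\dim K})$), the image $f(W)$ of a small enough open neighborhood $W$ of $1$ in $G$ is an open neighborhood $W'$ of $1_K$ in $K$. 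Then $W' \subseteq f(G)$, and $f(G)$ is a subgroup of $K$, so it contains the subgroup generated by $W'$. Since $K$ is connected, part (i) gives that $W'$ generates $K$, whence $f(G) = K$.

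The only real subtlety, and the part I would need to articulate carefully, is the step in (ii) that promotes surjectivity of $df_1$ at a single point to openness of the image of a neighborhood. The cleanest way is to choose coordinates at $1 \in G$ and $1_K \in K$ in which $f$ is expressed by regular functions $(y_1,\ldots,y_n) = (f_1(x),\ldots,f_n(x))$ with $n = \dim K \leq \dim G$; surjectivity of $df_1$ means the Jacobian has rank $n$ at $0$, so after relabeling coordinates one may apply the implicit function theorem to produce a regular local right inverse $s$ to $f$ with $s(0) = 0$, and then $f \circ s = \operatorname{Id}$ on a neighborhood of $0$ forces $f$ to be locally surjective. Everything else in the proof is essentially formal manipulation of diffeomorphisms and the connectedness argument from (i). A small observation worth recording is that the same open-subgroup argument in (i) also shows that any open subgroup of a topological group is automatically closed, which is exactly why the connectedness hypothesis does all the work.
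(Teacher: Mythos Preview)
Your proof is correct and follows essentially the same approach as the paper. The one difference worth noting is that in (i) you show $H$ is closed directly via the complement-is-a-union-of-open-cosets argument, whereas the paper observes that $H$ is an embedded submanifold, hence a closed Lie subgroup, and then invokes Lemma~\ref{l2}; your route is more elementary and self-contained.
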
 

\begin{proof} (i) Let $H$ be the subgroup of $G$ generated by $U$. Then $H$ is open in $G$ since $H=\cup_{h\in H}hU$. Thus $H$ is an embedded submanifold of $G$, hence a closed Lie subgroup. Thus by Lemma \ref{l2} $H\subset G$ is closed. So $H=G$ since $G$ is connected. 

(ii)  Since $df_1$ is surjective, by the implicit function theorem $f(G)$ contains some neighborhood of $1$ in $K$. Thus it contains the whole $K$ by (i). 
\end{proof} 

\section{\bf Homogeneous spaces, Lie group actions} 

\subsection{Homogeneous spaces} 
A regular map of manifolds $p: Y\to X$ is said to be a {\bf locally trivial fibration} (or {\bf fiber bundle}) with {\bf base} $X$, {\bf total space} $Y$ and {\bf fiber} being a manifold $F$ if every point $x\in X$ has a neighborhood $U$ such that there is a diffeomorphism $h: U\times F\cong p^{-1}(U)$ with $(p\circ h)(u,f)=u$. In other words, locally $p$ looks like the projection $X\times F\to X$ (the trivial fiber bundle with fiber $F$ over $X$), but not necessarily globally so. This generalizes the notion of a covering, in which  case $F$ is $0$-dimensional (discrete). 

\begin{theorem} (i) Let $G$ be a Lie group of dimension $n$ and $H\subset G$ a closed Lie subgroup of dimension $k$. Then the {\bf homogeneous space} $G/H$ has a natural structure of an $n-k$-dimensional manifold, and the canonical map $p: G\to G/H$, $g\mapsto \overline g$ is a locally trivial fibration with fiber $H$. 

(ii) If moreover $H$ is normal in $G$ then $G/H$ is a Lie group. 

(iii) We have a natural isomorphism $T_{\overline 1}(G/H)\cong T_1G/T_1H$. 
\end{theorem}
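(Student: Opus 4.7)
The plan is to build a manifold atlas on $G/H$ by ``slicing'' $G$ transversally to $H$ near the identity and then translating this slice to every coset. All three statements follow from one local trivialization near $1$.

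First I would construct a local transversal. Choose a vector subspace $V \subset T_1 G$ complementary to $T_1 H$, so $\dim V = n - k$, and pick a coordinate chart around $1 \in G$ in which a small open neighborhood of $0$ in $V$ corresponds to a $C^k$ (respectively, analytic) submanifold $S \ni 1$ of $G$ with $T_1 S = V$. Consider the multiplication map
$$\mu: S \times H \to G, \qquad \mu(s,h) = sh.$$
By the chain rule $d\mu_{(1,1)}(v,w) = v + w$, which is an isomorphism $V \oplus T_1 H \xrightarrow{\sim} T_1 G$ by the choice of $V$. By the inverse function theorem, $\mu$ restricts to a diffeomorphism $S_0 \times H_0 \xrightarrow{\sim} W$ for some small neighborhoods $S_0 \subset S$ of $1$, $H_0 \subset H$ of $1$, and $W \subset G$ of $1$. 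The key auxiliary step is to shrink $S_0$ further so that the map $p|_{S_0}: S_0 \to G/H$ is \emph{injective}; equivalently, $S_0^{-1} S_0 \cap H = \{1\}$. If no such shrinking worked, we would obtain a sequence $s_n \to 1$ in $S_0$ with $s_n = h_n \in H$, $h_n \ne 1$; since $H$ is closed in $G$ by Lemma \ref{l2}, such $h_n$ could be chosen to stay inside $H_0$, but then by uniqueness of the decomposition $sh$ in $W$ we would have $s_n = 1$, a contradiction.

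Next I would promote this to an atlas on $G/H$. The image $\ov{S_0} := p(S_0) \subset G/H$ is open in the quotient topology because $p^{-1}(\ov{S_0}) = S_0 \cdot H$ is open in $G$ (it is the union of open translates $S_0 \cdot h$). Declare $(p|_{S_0})^{-1}$, composed with a chart of $S_0$ onto $V$, to be a chart on $\ov{S_0}$ around $eH$. Translate by $L_g$ to obtain a chart around every coset $gH$. For two overlapping charts at $g_1 H$ and $g_2 H$, the transition map sends $g_1 s \in g_1 S_0$ to the unique $g_2 s' \in g_2 S_0$ satisfying $g_1 s H = g_2 s' H$; unwinding via the inverse of $\mu$ near $g_2$, this is a composition of left translations and the inverse diffeomorphism $\mu^{-1}$, hence regular. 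Hausdorffness of $G/H$ follows because the equivalence relation $R = \{(x,y) : x^{-1}y \in H\} \subset G \times G$ is closed, as $H$ is closed in $G$. The local model near $gH$ is $\mu: g S_0 \times H \xrightarrow{\sim} g S_0 \cdot H = p^{-1}(L_g \ov{S_0})$ with $p$ being the projection to the first factor, which exhibits $p$ as a locally trivial fibration with fiber $H$. This proves (i).

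For (ii), when $H$ is normal, the group operations on $G/H$ lift locally through the slice $S_0$: given a chart around $g_1 g_2 H$, one lifts, multiplies in $G$, and projects, each step being regular; thus $G/H$ is a Lie group. For (iii), the differential $dp_1: T_1 G \to T_1(G/H)$ is surjective because $p|_{S_0}$ is a local diffeomorphism onto an open neighborhood of $eH$, so $dp_1$ restricted to $T_1 S = V$ is an isomorphism onto $T_1(G/H)$; and $\Ker(dp_1) \supset T_1 H$ since $p$ is constant on $H$. Comparing dimensions ($n = \dim T_1 G$, $n-k = \dim T_1(G/H)$, $k = \dim T_1 H$) forces equality, giving the canonical isomorphism $T_1(G/H) \cong T_1 G/T_1 H$. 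The main obstacle throughout is the shrinking argument that makes $p|_{S_0}$ injective: this is the one place where the \emph{closedness} of the embedded submanifold $H$ in $G$ (Lemma \ref{l2}) is essential, explaining why an immersed-but-not-embedded subgroup such as the irrational winding of Example \ref{wind} cannot be used here.
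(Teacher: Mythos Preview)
Your approach is the same as the paper's: a transversal slice $S$ to $H$ at $1$, the inverse function theorem applied to $\mu:S\times H\to G$, then translate by $L_g$. You are simply more explicit about the shrinking step and about Hausdorffness, both of which the paper leaves implicit.

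One imprecision in the shrinking argument: you correctly state the target condition $S_0^{-1}S_0\cap H=\{1\}$, but your contradiction sequence ``$s_n=h_n\in H$'' is actually verifying only $S_0\cap H=\{1\}$, which does \emph{not} by itself give injectivity of $p|_{S_0}$. The clean fix is in two steps. First, your argument (with the correction that what you use is that $H$ is \emph{embedded}, i.e.\ carries the subspace topology, so $h_n\to 1$ in $G$ forces $h_n\in H_0$ eventually) gives $S_0\cap H=\{1\}$, and hence $W\cap H=H_0$. Second, shrink $S_0$ further so that $S_0^{-1}S_0\subset W$; then $s^{-1}s'=h\in H$ implies $h\in W\cap H=H_0$, and the two factorizations $s'=s\cdot h=s'\cdot 1$ in $S_0\times H_0$ force $s=s'$ by injectivity of $\mu$. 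Closedness of $H$ enters only in the Hausdorffness check, as you correctly use it there.
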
 

\begin{proof} Let $\overline g\in G/H$ and $g\in p^{-1}(\overline g)$. Then $gH\subset G$ is an embedded submanifold (image of $H$ under left translation by $g$). Pick a sufficiently small transversal submanifold $U$ passing through $g$ (i.e., $T_gG=T_g(gH)\oplus T_gU$). 

\includegraphics[scale=0.6]{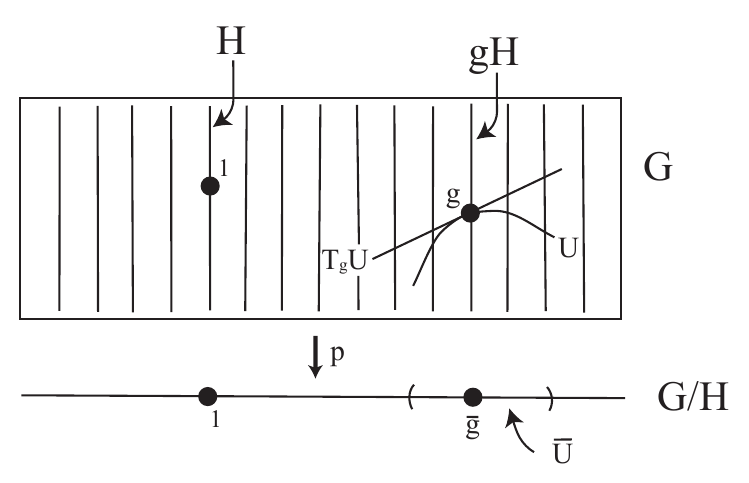}

By the inverse function theorem, the set $UH$ is open in $G$, and the multiplication map 
$U\times H\to UH$ is a diffeomorphism. Let $\overline U$ be the image of $UH$ in $G/H$. Since $p^{-1}(\overline U)=UH$ is open, $\overline U$ is open in the quotient topology. Also it is clear that $p: U\to \overline U$ is a homeomorphism. This defines a local chart near $\overline g\in G/H$, and it is easy to check that transition maps between such charts are regular. So $G/H$ acquires the structure of a manifold, which is easily checked to be independent of  the choices we made. Since the multiplication map $U\times H\to UH$ is a diffeomorphism, the map $p: G\to G/H$ is a locally trivial fibration with fiber $H$, which yields (i). If $H$ is normal then $G/H$ carries a natural group structure with regular multiplication map, so it is a Lie group, which proves (ii). Finally, we have a surjective linear map $T_gG\to T_{\overline g}G/H$ whose kernel is $T_g(gH)$.  So in particular for $g=1$ we get $T_{\overline{1}}(G/H)\cong T_1G/T_1H$, as claimed in (iii). 
\end{proof} 

Recall that a sequence of group homomorphisms $d_i: C^i\to C^{i+1}$ 
is a {\bf complex} if for all $i$, $d_{i}\circ d_{i-1}$ is the trivial homomorphism 
$C^{i-1}\to C^{i+1}$. (One may consider finite complexes, 
semi-infinite to the left or to the right, or infinite in both directions). 
In this case ${\rm Im}(d_{i-1})\subset {\rm Ker}(d_i)$ is a subgroup.
The $i$-th {\bf cohomology} $H^i(C^\bullet)$ of the complex $C^\bullet$ is the quotient ${\rm Ker}(d_i)/{\rm Im}(d_{i-1})$. In general it is just a set 
but if $C^i$ are abelian groups, it is also an abelian group. 
Also recall that a complex $C^\bullet$ is called {\bf exact} in the $i$-th term if 
${\rm Ker}(d_i)={\rm Im}(d_{i-1})$, i.e., if $H^i(C^\bullet)$ is trivial (consists of one element). A complex exact in all its terms (except possibly first and last, where this condition makes no sense) is called an {\bf exact sequence}. 

\begin{corollary}\label{c1} Let $H\subset G$ be a closed Lie subgroup.  

(i) If $H$ is connected then the map $p_0: \pi_0(G)\to \pi_0(G/H)$ is a bijection.

(ii) If also $G$ is connected then there is an exact sequence
$$
\pi_1(H)\to \pi_1(G)\to \pi_1(G/H)\to 1. 
$$
\end{corollary}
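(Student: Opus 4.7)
The previous theorem gives that $p\colon G\to G/H$ is a locally trivial fibration with fiber $H$, and I would extract two consequences of local triviality: \emph{path lifting} (any path in $G/H$ lifts to a path in $G$ starting at any chosen preimage of the initial point) and \emph{homotopy lifting} (a homotopy of a lifted path itself lifts, with the lift extending the given lift of its initial path). Both are proved by covering the image of the path or homotopy by trivializing neighborhoods and gluing the local product structures $p^{-1}(U_\alpha)\cong U_\alpha\times H$---the same idea as for the covering case already discussed in the text, now with fiber $H$ in place of a discrete set. Once these are in hand, (i) and (ii) are simply the tail of the long exact homotopy sequence for the fibration.

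\emph{For (i),} surjectivity is clear from surjectivity of $p$. For injectivity, given $g_1,g_2\in G$ with the same image in $\pi_0(G/H)$, I would pick a path between their images downstairs, lift it to a path upstairs from $g_1$ to some endpoint $g_2h$, and then use connectedness of $H$ to close up: left-translating a path in $H$ from $h$ to $1$ by $g_2$ gives a path in $G$ from $g_2h$ to $g_2$, which concatenates with the lifted path to join $g_1$ to $g_2$.

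\emph{For (ii),} write $i\colon H\hookrightarrow G$ for the inclusion. Triviality of $p_*\circ i_*$ is immediate, as loops in $H$ map to the constant loop at $\bar 1$. Surjectivity of $p_*$: given a loop $\delta$ at $\bar 1$ in $G/H$, I would lift to a path $\tilde\delta$ in $G$ starting at $1$, note that the endpoint lies in $p^{-1}(\bar 1)=H$, and use connectedness of $H$ to append a path inside $H$ back to $1$; the resulting loop in $G$ has image under $p$ equal to $\delta$ followed by a constant loop, hence homotopic to $\delta$.

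Exactness at $\pi_1(G)$ is the key step. Given a loop $\gamma$ at $1$ with $p\circ\gamma$ nullhomotopic via a based homotopy $F\colon[0,1]^2\to G/H$ satisfying $F(t,0)=p(\gamma(t))$, $F(t,1)=\bar 1$, and $F(0,s)=F(1,s)=\bar 1$, I would lift $F$ to $\tilde F\colon[0,1]^2\to G$ with $\tilde F(t,0)=\gamma(t)$. Since $F$ sends the other three sides of the square to $\bar 1$, the images $\tilde F(0,\cdot)$, $\tilde F(\cdot,1)$, $\tilde F(1,\cdot)$ all lie in the fiber $H$; the filled square $\tilde F$ itself then serves as a homotopy (rel the two bottom corners, both at $1$) between $\gamma$ and the concatenation $\tilde F(0,\cdot)\cdot\tilde F(\cdot,1)\cdot\tilde F(1,\cdot)^{-1}$, which is a loop in $H$. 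The main obstacle I anticipate is exactly the homotopy lifting step: the text established the homotopy lifting property only for coverings, so for the fibration $p$ I would have to redo the argument, subdividing the square into small subrectangles each mapping to a trivializing open set and lifting piece by piece, taking care that the local lifts agree along internal boundaries.
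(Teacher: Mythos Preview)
Your proposal is correct and is precisely the approach the paper intends: the paper's proof consists of the single sentence ``This follows from the theory of covering spaces using that $p: G\to G/H$ is a fibration,'' followed by an exercise asking the reader to fill in the details and a remark identifying the statement as the tail of the long exact sequence of homotopy groups. You have carried out exactly that exercise, correctly extracting path lifting and homotopy lifting from local triviality and using them to verify surjectivity of $p_*$ and exactness at $\pi_1(G)$; the only point worth flagging is that the homotopy lifting argument for a general fiber bundle does require the subdivision-and-glue construction you describe, and carrying it out carefully (ensuring agreement on shared boundaries of the subrectangles) is the genuine content here.
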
 

\begin{proof} This follows from the theory of covering spaces using that 
\linebreak $p: G\to G/H$ is a fibration.  
\end{proof} 

\begin{exercise} Fill in the details in the proof of Corollary \ref{c1}. 
\end{exercise} 

\begin{remark}\label{longexx} The sequence in Corollary \ref{c1}(ii) is the end portion of the infinite 
{\bf long exact sequence of homotopy groups of a fibration}, 
$$
...\to\pi_i(H)\to \pi_i(G)\to \pi_i(G/H)\to \pi_{i-1}(H)\to...,
$$ 
where $\pi_i(X)$ is the $i$-th homotopy group of $X$. 
\end{remark} 

\subsection{Lie subgroups} We will call the image of an injective immersion of manifolds {\bf an immersed submanifold}; it has a manifold structure coming from the source of the immersion. 

\begin{definition} A {\bf Lie subgroup} of a Lie group $G$ is a subgroup $H$ which is also an {\it immersed} submanifold (but need not be an {\it embedded} submanifold, nor a closed subset). 
\end{definition} 

It is clear that in this case $H$ is still a Lie group and the inclusion $H\hookrightarrow G$ is a homomorphism of Lie groups. 

\begin{example} 1. The winding of a torus in Example \ref{wind}(2) realizes $\Bbb R$ as a Lie subgroup of $S^1\times S^1$ which is not closed.  

2. Any countable subgroup of $G$ is a $0$-dimensional Lie subgroup, but not always a closed one (e.g., $\Bbb Q\subset \Bbb R$). 
\end{example} 

\begin{proposition}\label{firsthom} Let $f: G\to K$ be a homomorphism of Lie groups. Then 
$H:={\rm Ker}f$ is a closed normal Lie subgroup in $G$ and ${\rm Im}f$ is a Lie subgroup in $K$, closed if and only if it is an embedded submanifold. In the latter case, we have an isomorphism of Lie groups $G/H\cong {\rm Im}f$. 
\end{proposition}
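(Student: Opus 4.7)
The plan is to run the argument in three stages: first show that $H=\ker f$ is a closed Lie subgroup, then construct the Lie group quotient $G/H$ via the previous theorem, then show that the induced map $\bar f: G/H\to \Image f$ is an isomorphism of Lie groups.

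For the first stage, the key point is to show that $f$ has \emph{constant rank}. Using the relation $f\circ L_g = L_{f(g)}\circ f$ and the fact that left translations are diffeomorphisms, differentiation at $1$ gives $df_g = d(L_{f(g)})_1\circ df_1\circ d(L_g)_1^{-1}$, so $\rank df_g = \rank df_1$ for every $g\in G$. Since $H=f^{-1}(1)$ and $\{1\}\subset K$ is closed (Hausdorff), $H$ is closed. By the constant rank theorem, around every $g\in H$ we can choose coordinates in which $f$ becomes a coordinate projection $(x_1,\dots,x_n)\mapsto (x_1,\dots,x_r,0,\dots,0)$, where $r=\rank df_1$. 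In those coordinates $H$ is cut out by $x_1=\dots=x_r=0$, hence $H$ is a closed embedded submanifold of $G$ of dimension $n-r$. Normality of $H$ in $G$ is immediate from group theory, so $H$ is a closed normal Lie subgroup.

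For the second stage, apply the homogeneous-space theorem: since $H$ is closed and normal, $G/H$ acquires the structure of a Lie group of dimension $r$, with $T_1(G/H)\cong T_1G/T_1H$, and the projection $p:G\to G/H$ is a regular submersion. The universal property of $p$ (any regular map $G\to M$ constant on $H$-cosets descends to a regular map $G/H\to M$) is a consequence of the local triviality of $p$.

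For the third stage, assume $\Image f$ is an embedded submanifold of $K$; this makes $\Image f$ a Lie subgroup and, since embedded submanifolds of $K$ have the property that a map into them is regular iff it is regular as a map into $K$, the map $f:G\to\Image f$ is regular. By the universal property of $p$ it factors as $f = i\circ\bar f\circ p$, where $\bar f:G/H\to \Image f$ is a regular group homomorphism and $i:\Image f\hookrightarrow K$ is the inclusion. The map $\bar f$ is an abstract group isomorphism by the first isomorphism theorem. To upgrade it to a diffeomorphism, observe that the constant rank theorem also forces the local image of $f$ to be an $r$-dimensional submanifold, so $\dim\Image f = r=\dim(G/H)$, and the induced map $d\bar f_1: T_1G/T_1H \to T_1(\Image f)$ is a linear isomorphism. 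Hence by the inverse function theorem $\bar f$ is a local diffeomorphism at the identity, and by translating via $L_g$ (in $G/H$) and $L_{f(g)}$ (in $\Image f$) it is a local diffeomorphism everywhere; being a bijection, it is a global diffeomorphism and therefore an isomorphism of Lie groups.

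The main obstacle is the constant-rank argument at the start: everything downstream (the submanifold structure on $H$, the dimension match $\dim\Image f = \dim(G/H)$, and the bijectivity of $d\bar f_1$) rests on the observation that left translations trivialize $df$ and on applying the constant rank theorem carefully to both the kernel and the image. Once constant rank is in hand, the rest is an application of the quotient theorem plus the inverse function theorem.
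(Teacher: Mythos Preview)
Your proof is correct and takes a genuinely different route from the paper's. The paper derives this proposition as a one-line corollary of the orbit--stabilizer theorem for Lie group actions (Theorem~\ref{closedlie1}): it lets $G$ act on $K$ via $g\circ k = f(g)k$, takes $x=1$, and reads off that the stabilizer $G_1=\ker f$ is a closed Lie subgroup with Lie algebra $\ker f_*$ and that $G/H\to K$ is an injective immersion onto the orbit $\Image f$. That theorem, in turn, is proved using the exponential map and the decomposition $\g=\g_x\oplus\mathfrak{u}$.

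Your approach instead goes through the constant rank theorem, using only the translation identity $f\circ L_g=L_{f(g)}\circ f$ to equalize ranks. This is more self-contained and avoids the exponential map entirely; it is the standard differential-geometric argument and works before any Lie-theoretic machinery is in place. The paper's approach, on the other hand, unifies this result with the general orbit--stabilizer picture, so once Theorem~\ref{closedlie1} is available the proposition requires no further work. One small remark: your dimension count $\dim\Image f=r$ is correct but implicitly uses that a surjective constant-rank-$r$ map onto a manifold forces that manifold to have dimension $r$ (else the image would be nowhere dense); this is straightforward but worth making explicit.
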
 

We will prove Proposition \ref{firsthom} in Subsection \ref{proofs}. 

\subsection{Actions and representations of Lie groups} 

Let $X$ be a manifold, $G$ a Lie group, and $a: G\times X\to X$ a set-theoretical left action of $G$ on $X$. 

\begin{definition} This action is called {\bf regular} if the map $a$ is regular.  
\end{definition} 

From now on, by an action of $G$ on $X$ we will always mean a regular action. 

\begin{example} 1. Any Lie subgroup of $GL_n(\Bbb R)$ acts on $\Bbb R^n$ by linear transformations. Likewise, any Lie subgroup of $GL_n(\Bbb C)$ acts on $\Bbb C^n$. 

2. $SO(3)$ acts on $S^2$ by rotations. 
\end{example} 

\begin{definition} A (real analytic) {\bf finite dimensional representation} of a {\it real} Lie group $G$ is a linear action of $G$ on a finite dimensional vector space $V$ over $\Bbb  R$ or $\Bbb C$. Similarly, a (complex analytic) finite dimensional representation of a {\it complex} Lie group $G$ is a linear action of $G$ on a finite dimensional vector space $V$ over $\Bbb C$. 
\end{definition} 

In other words, a representation is a homomorphism of Lie groups $\pi_V: G\to GL(V)$. 

\begin{definition} 
A {\bf (homo)morphism of representations} (or {\bf intertwining operator}) $A: V\to W$ 
is a linear map which commutes with the $G$-action, i.e., 
$A\pi_V(g)=\pi_W(g)A$, $g\in G$. In particular, if $V=W$, such $A$ is called an {\bf endomorphism} of $V$. 
\end{definition} 

As usual, an {\bf isomorphism of representations} is an invertible morphism. With these definitions, finite dimensional representations of $G$ form a {\it category}. 

Note also that we have the operations of {\bf dual and tensor product on representations.} 
Namely, given a representation $V$ of $G$, we can define its representation  on the dual space $V^*$ by 
$$
\pi_{V^*}(g)=\pi_V(g^{-1})^*,
$$
and if $W$ is another representation of $G$ then we can define a representation of $G$ on $V\otimes W$ (the tensor product of vector spaces) by 
$$
\pi_{V\otimes W}(g)=\pi_V(g)\otimes \pi_W(g).
$$

Also if $V\subset W$ is a {\bf subrepresentation} (i.e., a subspace invariant under $G$) then 
$W/V$ is also a representation of $G$, called the {\bf quotient representation}.  

\subsection{Orbits and stabilizers} 

As in ordinary group theory, if $G$ acts on $X$ and $x\in X$ then we can define the {\bf orbit} $Gx\subset X$ of $x$ as the set of $gx$, $g\in G$, and the {\bf stabilizer}, or {\bf isotropy group} 
$G_x\subset G$ to be the group of $g\in G$ such that $gx=x$. 

\begin{proposition}\label{orstab} (The orbit-stabilizer theorem for Lie group actions) The stabilizer $G_x\subset G$ is a closed Lie subgroup, and the natural map $G/G_x\to X$ is an injective immersion whose image is $Gx$.  
\end{proposition}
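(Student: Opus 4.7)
The plan is to realize $G_x$ as a fiber of the orbit map $a_x:G\to X$, $g\mapsto gx$, and to deduce its manifold structure from the constant rank theorem. The crucial observation is that $a_x$ is $G$-equivariant under left translation on $G$: for every $g,h\in G$ we have $a_x(gh)=g\cdot a_x(h)$, i.e.\ $a_x\circ L_g=L_g^X\circ a_x$, where $L_g:G\to G$ and $L_g^X:X\to X$ (the map $y\mapsto gy$) are both diffeomorphisms. Differentiating this identity at $h$ shows that $(da_x)_{gh}$ and $(da_x)_h$ are conjugate by linear isomorphisms, hence have the same rank. Varying $g$ with $h=1$ shows that $a_x$ has constant rank $r:=\rank (da_x)_1$ throughout $G$.

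By the classical constant rank theorem, $G_x=a_x^{-1}(x)$ is then a closed embedded submanifold of $G$ of dimension $\dim G-r$, with $T_1G_x=\Ker(da_x)_1$. Since it is also a subgroup, $G_x$ is a closed Lie subgroup in the sense of the previous subsection. The homogeneous space $G/G_x$ therefore inherits a manifold structure of dimension $r$ by the theorem on homogeneous spaces, and the orbit map factors as $a_x=\bar a_x\circ p$, where $p:G\to G/G_x$ is the quotient and $\bar a_x:G/G_x\to X$ sends $gG_x\mapsto gx$. The image of $\bar a_x$ is by construction $Gx$, and injectivity is tautological: $gx=g'x$ iff $g^{-1}g'\in G_x$ iff $gG_x=g'G_x$.

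To see that $\bar a_x$ is an immersion, I first check injectivity of its differential at the identity coset and then transport by $G$-equivariance. At the identity coset, $T_1(G/G_x)\cong T_1G/T_1G_x$ by part (iii) of the homogeneous space theorem, and $d\bar a_x$ is the map induced on this quotient by $(da_x)_1$; since the constant rank theorem identifies $\Ker(da_x)_1$ with $T_1G_x$, this induced map is injective. For a general coset $gG_x$, the map $\bar a_x$ intertwines left translation on $G/G_x$ with the action of $G$ on $X$, and both actions are by diffeomorphisms, so the differential at $gG_x$ is the composition of $d\bar a_x$ at the identity coset with the tangent maps of these diffeomorphisms, and remains injective.

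The principal technical input is the constant rank theorem from differential topology; the key conceptual step is recognizing that $G$-equivariance forces the rank of $a_x$ to be constant. Once this is in hand, the rest of the argument is a formal exercise: factoring through the quotient and exploiting $G$-equivariance to reduce the immersion check to a single tangent space computation.
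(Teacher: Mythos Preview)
Your argument is correct, but it follows a genuinely different route from the paper's. You invoke the constant rank theorem: equivariance of the orbit map $a_x$ under left translation forces its rank to be constant, so $G_x=a_x^{-1}(x)$ is an embedded submanifold and the rest follows formally. The paper instead postpones the proof until the exponential map is available (Section~\ref{proofs}) and argues directly: it identifies $\g_x=\Ker(a_{*x})$, shows via ODE uniqueness that $\exp(tz)\in G_x$ for $z\in\g_x$, picks a complement $\mathfrak{u}$ to $\g_x$ in $\g$, and uses injectivity of $u\mapsto\exp(u)x$ on small $u\in\mathfrak{u}$ together with the product decomposition $g=\exp(u)\exp(z)$ to exhibit a local chart in which $G_x$ is cut out by $u=0$.

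Your approach has the advantage of being self-contained within differential topology and available immediately after the homogeneous space theorem, without waiting for the exponential map; it is also the argument found in many differential geometry texts. The paper's approach, on the other hand, is more constructive: it builds the local chart for $G_x$ explicitly from the exponential map and makes the identification $T_1G_x=\g_x$ transparent without appealing to the constant rank theorem (which the paper states only in the submersion case, Proposition~\ref{ift}). One small point: you should note that $\bar a_x$ is regular, which follows since $p$ is a submersion and $a_x=\bar a_x\circ p$ is regular.
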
 

Proposition \ref{orstab} will be proved in Subsection \ref{proofs}.

\begin{corollary} The orbit $Gx\subset X$ is an immersed submanifold, and we have a natural isomorphism $T_x(Gx)\cong T_1G/T_1G_x$. If $Gx$ is an embedded submanifold then the map $G/G_x\to Gx$ is a diffeomorphism. 
\end{corollary}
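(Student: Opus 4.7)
The plan is to derive everything from Proposition \ref{orstab} together with the homogeneous space theorem. By Proposition \ref{orstab}, the map $\phi: G/G_x \to X$, $\bar g \mapsto gx$, is an injective immersion with image $Gx$. Since by definition an immersed submanifold is the image of an injective immersion of manifolds, endowed with the manifold structure pushed forward from the source, the first assertion is immediate: $Gx$ is an immersed submanifold of $X$, with manifold structure coming from the bijection with $G/G_x$.

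For the identification of tangent spaces, I would use that the homogeneous space theorem provides a natural isomorphism $T_{\bar 1}(G/G_x) \cong T_1 G / T_1 G_x$. Under the manifold structure on $Gx$ above, $\phi$ restricts to a diffeomorphism $G/G_x \to Gx$, so its differential at $\bar 1$ is a linear isomorphism $T_{\bar 1}(G/G_x) \xrightarrow{\sim} T_x(Gx)$. Composing with the homogeneous-space isomorphism yields the desired natural identification $T_x(Gx) \cong T_1 G / T_1 G_x$.

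The main point to watch is the last claim, where one must compare the intrinsic manifold structure on $Gx$ (transported from $G/G_x$) with the one $Gx$ acquires as an embedded submanifold of $X$. Suppose $Gx$ is embedded. Then $\phi: G/G_x \to X$ factors through the topological embedding $Gx \hookrightarrow X$, so the corestriction $\phi: G/G_x \to Gx$ is continuous for the subspace topology. Both structures on $Gx$ have the same dimension: the intrinsic one has dimension $\dim G - \dim G_x$, while the embedded one has dimension equal to the rank of $d\phi_{\bar 1}$, which is the same by injectivity of the immersion. Thus $\phi: G/G_x \to Gx$ is a smooth bijection between manifolds of the same dimension whose differential at $\bar 1$ is injective, hence a linear isomorphism. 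By the inverse function theorem, $\phi$ is a local diffeomorphism near $\bar 1$; by equivariance under left translation by $G$ this extends to every point, so $\phi$ is a local diffeomorphism everywhere, and being a bijection, a global diffeomorphism.

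The hardest part is precisely this comparison of manifold structures on $Gx$ in the embedded case: once dimension agreement and continuity into the subspace topology are secured, the inverse function theorem plus $G$-equivariance finish the job; without the embeddedness hypothesis the map $\phi: G/G_x \to Gx$ is only a continuous bijection onto $Gx$ with its possibly-finer intrinsic topology, as shown by the irrational winding example (Example \ref{wind}(2)).
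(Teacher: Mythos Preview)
Your approach is correct and matches the paper's implicit derivation: the paper states this corollary without proof, treating it as an immediate consequence of Proposition~\ref{orstab} together with the homogeneous space theorem (the isomorphism $T_1(G/H)\cong T_1G/T_1H$).

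There are, however, two small gaps in your argument for the embedded case. First, you pass from ``the corestriction $\phi:G/G_x\to Gx$ is continuous'' to ``$\phi$ is a smooth bijection'' without justification. What you need here is the universal property of embedded submanifolds: if $N\subset X$ is embedded and $f:M\to X$ is regular with $f(M)\subset N$, then the corestriction $f:M\to N$ is regular. This follows from the existence of slice charts for embedded submanifolds and should be stated explicitly. Second, your claim that ``the embedded one has dimension equal to the rank of $d\phi_{\bar 1}$'' is not justified; a priori, the embedded structure on $Gx$ could have larger dimension than $\dim G/G_x$. Once you know $\phi:G/G_x\to Gx$ is a smooth bijective immersion, you get $\dim(G/G_x)\le \dim Gx$ from injectivity of the differential, but the reverse inequality requires an extra observation: if the inequality were strict, the image of $\phi$ would be a countable union of lower-dimensional submanifolds, hence would have empty interior in $Gx$ by Baire (or measure zero by Sard), contradicting surjectivity. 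With these two points filled in, your inverse-function-theorem-plus-equivariance conclusion is clean.
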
 

\begin{remark}Note that $Gx$ need not be closed in $X$. E.g., let $\Bbb C^\times$ act on $\Bbb C$ by multiplication. The orbit of $1$ is $\Bbb C^\times \subset \Bbb C$, which is not closed. 
\end{remark} 

\begin{example} Suppose that $G$ acts on $X$ transitively. Then we get 
that $X\cong G/G_x$ for any $x\in X$, i.e., $X$ is a {\bf homogeneous space}. 
\end{example} 

\begin{corollary} If $G$ acts transitively on $X$ then the map $p: G\to X$ given by $p(g)=gx$ is a locally trivial fibration with fiber $G_x$. 
\end{corollary}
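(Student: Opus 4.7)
The strategy is to factor the orbit map $p\colon G\to X$ as the composition
$$
G\xrightarrow{\ q\ } G/G_x \xrightarrow{\ \bar p\ } X,
$$
where $q$ is the canonical projection and $\bar p$ sends the coset $gG_x$ to $gx$, and then handle the two factors separately using results already proved.

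First I would invoke Proposition \ref{orstab} to conclude that the isotropy group $G_x$ is a closed Lie subgroup of $G$ and that $\bar p$ is an injective immersion whose image is the orbit $Gx$. By the transitivity hypothesis $Gx=X$, so $\bar p$ is a bijection onto $X$. Since $X$ is trivially an embedded submanifold of itself, the corollary immediately preceding the statement applies and shows that $\bar p\colon G/G_x\to X$ is a diffeomorphism. (Equivalently, one may note that $\bar p$ is an injective immersion onto the full space and appeal to the inverse function theorem to get a local, hence global, diffeomorphism.)

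Next I would apply Theorem 3.1 (the homogeneous-space theorem) to the closed Lie subgroup $G_x\subset G$, which says precisely that the projection $q\colon G\to G/G_x$ is a locally trivial fibration with fiber $G_x$: every point $\bar g\in G/G_x$ has a neighborhood $\overline{U}$ together with a diffeomorphism $h\colon \overline{U}\times G_x \xrightarrow{\ \sim\ } q^{-1}(\overline{U})$ satisfying $q\circ h = \mathrm{pr}_1$. Composing these trivializations with the diffeomorphism $\bar p$ transports them to trivializations of $p$: for any $x'\in X$, pick a trivializing neighborhood $\overline{U}$ of $\bar p^{-1}(x')$ in $G/G_x$, set $V:=\bar p(\overline{U})\subset X$ (an open neighborhood of $x'$), and observe that $p^{-1}(V)=q^{-1}(\overline{U})$ is diffeomorphic to $V\times G_x$ via $h$ composed with $\bar p\times \mathrm{id}$, in a way compatible with the projection to $V$.

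There is no real obstacle here; the content is already contained in Proposition \ref{orstab}, the corollary that an orbit which is an embedded submanifold is diffeomorphic to $G/G_x$, and the local triviality of the quotient map $G\to G/G_x$. The only point that requires a moment's care is observing that transitivity forces the orbit to coincide with $X$ and hence to be (vacuously) an embedded submanifold, so that the corollary applies and $\bar p$ upgrades from an injective immersion to an honest diffeomorphism.
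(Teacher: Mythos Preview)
Your proof is correct and is exactly the argument the paper has in mind: the corollary is left unproved precisely because it follows immediately by combining the preceding results (Proposition \ref{orstab}, its corollary on the diffeomorphism $G/G_x\to Gx$, and Theorem 4.1 on local triviality of $G\to G/H$), which is just what you do.
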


\begin{example} 1. $SO(3)$ acts transitively on $S^2$ by rotations, $G_x=S^1=SO(2)$, so 
$S^2=SO(3)/S^1$. Thus $SO(3)=\Bbb R\Bbb P^3$ fibers over $S^2$ with fiber $S^1$. 

2. $SU(2)$ acts on $S^2=\Bbb C\Bbb P^1$, and the stabilizer is $S^1=U(1)$. 
Thus $SU(2)/S^1=S^2$, and $SU(2)=S^3$ fibers over $S^2$ with fiber $S^1$ (the {\bf Hopf fibration}). Here is D. Richter's keyring model of the Hopf fibration: 

\begin{center}
\includegraphics[scale=0.4]{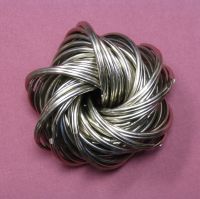}
\end{center}
%\noindent \url{http://mathtourist.blogspot.com/2015/09/key-ring-hypersphere.html} 

3. Let $\Bbb K=\Bbb R$ or $\Bbb C$ and $\mathcal{F}_n(\Bbb K)$ the set of flags $0\subset V_1\subset...\subset V_n=\Bbb K^n$ ($\dim V_i=i$). Then $G=GL_n(\Bbb K)$ acts transitively on $\mathcal{F}_n(\Bbb K)$ (check it!). Also 
let $P\in \mathcal{F}_n(\Bbb K)$ be the flag for which $V_i=\Bbb K^i$ is the subspace of vectors whose all coordinates but the first $i$ are zero. Then $G_P$ is the subgroup $B_n(\Bbb K)\subset GL_n(\Bbb K)$ of invertible upper triangular matrices. Thus $\mathcal{F}_n(\Bbb K)=GL_n(\Bbb K)/B_n(\Bbb K)$ is a homogeneous space of $GL_n(\Bbb K)$, in particular, a $\Bbb K$-manifold. It is called the {\bf flag manifold}. 
\end{example} 

\subsection{Left translation, right translation, and adjoint action} 
Recall that a Lie group $G$ acts on itself by left translations $L_g(x)=gx$ and right translations $R_{g^{-1}}(x)=xg^{-1}$ (note that both are left actions). 

\begin{definition} The {\bf adjoint action} ${\rm Ad}_g: G\to G$ is the action 
${\rm Ad}_g=L_g\circ R_{g^{-1}}=R_{g^{-1}}\circ L_g$; i.e., ${\rm Ad}_g(x)=gxg^{-1}$. 
\end{definition} 

Note this is an action by (inner) automorphisms. Also since ${\rm Ad}_g(1)=1$, 
we have a linear map $d_1{\rm Ad}_g: \g\to \g$, where $\g=T_1G$. We will abuse notation and denote this map just by ${\rm Ad}_g$. This defines a representation of $G$ on $\g$ 
called the {\bf adjoint representation}.  

\section{\bf Tensor fields} 

\subsection{A crash course on vector bundles} Let $X$ be a real manifold. A {\bf vector bundle} on $X$ is, informally speaking, a (locally trivial) fiber bundle on $X$ whose fibers are finite dimensional vector spaces. In other words, it is a family of vector spaces parametrized by $x\in X$ and varying regularly with $x$. More precisely, we have the following definition. 

Let $\Bbb K=\Bbb R$ or $\Bbb C$.  

\begin{definition} A $\Bbb K$-{\bf vector bundle} of rank $n$ on $X$ is a manifold $E$ with a surjective regular map $p: E\to X$ and a $\Bbb K$-vector space structure on each fiber $p^{-1}(x)$ such that every $x\in X$ has a neighborhood $U$ admitting a diffeomorphism $g: U\times \Bbb K^n\to p^{-1}(U)$ with the following properties: 

(i) $(p\circ g)(u,v)=u$, and 

(ii) the map $g$ is $\Bbb K$-linear on the second factor. 
\end{definition} 

In other words, locally on $X$, $E$ is isomorphic to $X\times \Bbb K^n$, but 
not necessarily globally so.  

As for ordinary fiber bundles, $E$ is called the {\bf total  space} and $X$ the {\bf base} of the bundle. 

Note that even if $X$ is a complex manifold and $\Bbb K=\Bbb C$, $E$ need not be a complex manifold. 

\begin{definition} A complex vector bundle $p: E\to X$ on a complex manifold $X$ is said to be {\bf holomorphic} if $E$ is a complex manifold and the diffeomorphisms $g_U$ can be chosen holomorphic.  
\end{definition} 

 From now on, unless specified otherwise, all complex vector bundles on complex manifolds we consider will be holomorphic. 
 
It follows from the definition that if $p: E\to X$ is a vector bundle then 
$X$ has an open cover $\lbrace U_\alpha\rbrace$ such that $E$ trivializes on each $U_\alpha$, i.e., there is a diffeomorphism 
$g_\alpha: U_\alpha\times \Bbb K^n\to p^{-1}(U_\alpha)$ as above. In this case we have {\bf clutching functions} 
$$
h_{\alpha\beta}: U_\alpha\cap U_\beta\to GL_n(\Bbb K)
$$ 
(holomorphic if $E$ is a holomorphic bundle), defined by the formula
$$
(g_\alpha^{-1}\circ g_\beta)(x,v)=(x,h_{\alpha\beta}(x)v)
$$ 
which satisfy the {\bf consistency conditions} 
$$
h_{\alpha\beta}(x)=h_{\beta\alpha}(x)^{-1} 
$$
and 
$$
h_{\alpha\beta}(x)\circ h_{\beta\gamma}(x)=h_{\alpha\gamma}(x)
$$
for $x\in U_\alpha\cap U_\beta\cap U_\gamma$. Moreover, 
the bundle can be reconstructed from this data, starting from the disjoint union 
$\sqcup_\alpha U_\alpha\times \Bbb K^n$ and identifying (gluing) points according 
to 
$$
h_{\alpha\beta}: (x,v)\in U_\beta\times \Bbb K^n\sim (x,h_{\alpha\beta}(x)v)\in U_\alpha\times \Bbb K^n. 
$$ 
The consistency conditions ensure that the relation $\sim$ 
is symmetric and transitive, so it is an equivalence relation, 
and we define $E$ to be the space of equivalence classes 
with the quotient topology. Then $E$ has a natural structure of a vector bundle 
on $X$. 

This can also be used for constructing vector bundles. Namely, the above construction defines a $\Bbb K$-vector bundle on $X$ once we are given a cover $\lbrace U_\alpha\rbrace$ on $X$ and a collection of clutching functions 
$$
h_{\alpha\beta}: U_\alpha\cap U_\beta\to GL_n(\Bbb K)
$$ 
satisfying the consistency conditions. 

\begin{remark} All this works more generally for non-linear fiber bundles if we drop the linearity conditions along fibers.
\end{remark} 

\begin{example} 1. The {\bf trivial bundle} $p: E=X\times \Bbb K^n\to X$, $p(x,v)=x$. 

2. The {\bf tangent bundle} is the vector bundle $p: TX\to X$ constructed as follows. 
For the open cover we take an atlas of charts $(U_\alpha,\phi_\alpha)$ with transition maps 
$$
\theta_{\alpha\beta}=\phi_\alpha\circ \phi_\beta^{-1}: \phi_\beta(U_\alpha\cap U_\beta)\to 
\phi_\alpha(U_\alpha\cap U_\beta),
$$ 
and we set 
$$
h_{\alpha\beta}(x):=d_{\phi_\beta(x)}\theta_{\alpha\beta}. 
$$
(Check that these maps satisfy consistency conditions!) 

Thus the tangent bundle $TX$ is a vector bundle of rank $\dim X$ whose fiber $p^{-1}(x)$ is  
naturally the tangent space $T_xX$ (indeed, the tangent vectors transform under coordinate changes exactly by multiplication by $h_{\alpha\beta}(x)$). In other words, it formalizes 
the idea of ``the tangent space $T_xX$ varying smoothly with $x\in X$". 
\end{example} 

\begin{definition} A {\bf section} of a map $p: E\to X$ is a map $s: X\to E$ such that 
$p\circ s={\rm Id}_X$. 
\end{definition} 

\begin{example} If $p:E=X\times Y\to X$, $p(x,y)=x$ is the trivial bundle then a section 
$s: X\to E$ is given by $s(x)=(x,f(x))$ where $y=f(x)$ is a function $X\to Y$, and the image of $s$ is the graph of $f$. So the notion of a section is a generalization of the notion of a function. 
\end{example} 

In particular, we may consider sections of a vector bundle $p: E\to X$ over an open set $U\subset X$. These sections form a vector space denoted $\Gamma(U,E)$. 

\begin{exercise}\label{basi} Show that a vector bundle $p: E\to X$ is trivial (i.e., globally isomorphic to 
$X\times \Bbb K^n$) if and only if it admits sections $s_1,...,s_n$ which form a basis in every fiber $p^{-1}(x)$. 
\end{exercise} 

\subsection{Vector fields} \label{vefi}

\begin{definition} A {\bf vector field} on $X$ is a section of the tangent bundle $TX$. 
\end{definition} 

Thus in local coordinates a vector field looks like 
$$
\bold v=\sum_i v_i\frac{\partial}{\partial x_i},
$$
$v_i=v_i(\bold x)$, and if $x_i\mapsto x_i'$ is a change of local coordinates then 
the expression for $\bold v$ in the new coordinates is 
$$
\bold v=\sum_i v_i'\frac{\partial}{\partial x_i'}
$$
where 
$$
v_i'=\sum_j \frac{\partial x_i'}{\partial x_j}v_j, 
$$
i.e., the clutching function is the {\bf Jacobi matrix} of the change of variable. 
Thus, every vector field $\bold v$ on $X$ defines a derivation of the 
algebra $O(U)$ for every open set $U\subset X$ compatible with restriction 
maps $O(U)\to O(V)$ for $V\subset U$;\footnote{In other words, using a fancier language, $\bold v$ defines a derivation of the {\bf sheaf} of regular functions on $X$.} in particular, a derivation $O_x\to O_x$ 
for all $x\in X$. Conversely, it is easy to see that such a collection of derivations 
gives rise to a vector field, so this is really the same thing. 

A manifold $X$ is called {\bf parallelizable} if its tangent bundle is trivial. 
By Exercise \ref{basi}, this is equivalent to having a collection of vector fields 
$\bold v_1,...,\bold v_n$ which form a basis in every tangent space (such 
a collection is called a {\bf frame}). For example, the circle $S^1$ 
and hence the torus $S^1\times S^1$ are parallelizable. On the other hand, the sphere 
$S^2$ is not parallelizable, since it does not even have a single nowhere vanishing vector field (the {\bf Hairy Ball theorem}, or {\bf Hedgehog theorem}). The same is true for any even-dimensional sphere $S^{2m}$, $m\ge 1$. 

\subsection{Tensor fields, differential forms}\label{diffor} Since vector bundles are basically just smooth families of vector spaces varying over some base manifold $X$, we can do with them the same things we can do with vector spaces - duals, tensor products, symmetric and exterior powers, etc. E.g., the {\bf cotangent bundle} $T^*X$ is dual to the tangent bundle $TX$. 

More generally, we make the following definition.  

\begin{definition} A {\bf tensor field} of rank $(k,m)$ on a manifold $X$ is a section 
of the tensor product $(TX)^{\otimes k}\otimes  (T^*X)^{\otimes m}$. 
\end{definition} 

For example, a tensor field of rank $(1,0)$ is a vector field. Also, a skew-symmetric tensor field of rank $(0,m)$ is called a {\bf differential $m$-form} on $X$. In other words, a differential $m$-form is a section of the vector bundle $\Lambda^mT^*X$. 

For instance, if $f\in O(X)$ then we have a differential $1$-form $df$ on $X$, called {\bf the differential of $f$} (indeed, recall that $d_xf: T_xX\to \Bbb K$). A general $1$-form can therefore be written in local coordinates as 
$$
\omega=\sum_i a_idx_i. 
$$
where $a_i=a_i(\bold x)$. If coordinates are changed as $x_i\mapsto x_i'$, 
then in new coordinates 
$$
\omega=\sum_i a_i'dx_i'
$$
where 
$$
a_i'=\sum_j \frac{\partial x_j}{\partial x_i'}a_j.
$$
Thus the clutching function is the {\bf inverse of the Jacobi matrix} of the change of variable. 
For instance,
$$
df=\sum_i \frac{\partial f}{\partial x_i}dx_i.
$$

More generally, a differential $m$-form in local coordinates looks like 
$$
\omega=\sum_{1\le i_1<...<i_m\le n}a_{i_1...i_m}(x)dx_{i_1}\wedge...\wedge dx_{i_m}. 
$$ 

\subsection{Left and right invariant tensor fields on Lie groups}

Note that if a Lie group $G$ acts on a manifold $X$, then it automatically acts 
on the tangent bundle $TX$ and thus on vector and, more generally, tensor fields on $X$. 
In particular, $G$ acts on tensor fields on itself by left and right translations; we will denote this action by $L_g$ and $R_g$, respectively. We say that a tensor field $T$ on $G$ is {\bf left invariant} if $L_gT=T$ 
for all $g\in G$, and {\bf right invariant} if $R_gT=T$ for all $g\in G$. 

\begin{proposition}\label{spre} (i) For any $\tau\in \g^{\otimes k}\otimes \g^{*\otimes m}$ 
there exists a unique left invariant tensor field ${\bold L}_\tau$
and a unique right invariant tensor field ${\bold R}_\tau$ whose value at $1$ is $\tau$. 
Thus, the spaces of such tensor fields are naturally isomorphic to 
$\g^{\otimes k}\otimes \g^{*\otimes m}$. 

(ii) ${\bold L}_\tau$ is also right invariant iff ${\bold R}_\tau$ is also left invariant iff $\tau$ 
is invariant under the adjoint representation ${\rm Ad}_g$. 
\end{proposition}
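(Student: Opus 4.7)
\emph{Plan of proof.} For (i), the natural approach is to transport $\tau$ from the tangent space at $1$ to every other point via left (resp.\ right) translation. Since $L_g: G \to G$ is a diffeomorphism with $L_g(1) = g$, its differential $d_1 L_g: \g \to T_g G$ is a linear isomorphism, and extending it and its dual to tensor powers gives a canonical isomorphism $(L_g)_*: \g^{\otimes k}\otimes\g^{*\otimes m} \to (T_g G)^{\otimes k}\otimes(T_g^* G)^{\otimes m}$. I would define
$$
{\bold L}_\tau(g) := (L_g)_*\tau, \qquad {\bold R}_\tau(g) := (R_g)_*\tau.
$$
Uniqueness is immediate: if $T$ is left invariant with $T(1)=\tau$, then the left-invariance identity $T(gx)=(L_g)_*T(x)$ at $x=1$ forces $T(g)=(L_g)_*\tau$. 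Left invariance of ${\bold L}_\tau$ itself is the chain-rule identity $(L_{gh})_* = (L_g)_*\circ (L_h)_*$ applied to $\tau$.

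The one technical point in (i) is showing that ${\bold L}_\tau$ is a \emph{regular} (not merely set-theoretic) section of the tensor bundle. I would handle this by observing that the map
$$
\Phi: G \times \bigl(\g^{\otimes k}\otimes\g^{*\otimes m}\bigr) \to (TG)^{\otimes k}\otimes (T^*G)^{\otimes m}, \qquad (g,\sigma)\mapsto (L_g)_*\sigma,
$$
is a regular global trivialization: in local coordinates around any $g_0 \in G$, the assignment $g\mapsto d_1L_g$ is given by partial derivatives of the multiplication map $m: G\times G\to G$, hence regular. In this trivialization ${\bold L}_\tau$ corresponds to the constant section $g\mapsto(g,\tau)$, which is manifestly regular. (An immediate corollary of this trivialization is that the tensor bundle on $G$ is trivial, so the space of left invariant tensor fields is naturally identified with $\g^{\otimes k}\otimes\g^{*\otimes m}$.) The argument for ${\bold R}_\tau$ is identical with right translations in place of left.

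For (ii), I would argue directly from the definitions, using that $L_g$ and $R_h$ commute as self-maps of $G$ (by associativity of multiplication). Suppose ${\bold L}_\tau$ is also right invariant, i.e.\ ${\bold L}_\tau(xh)=(R_h)_*{\bold L}_\tau(x)$ for all $x,h$. Evaluating at $x=1$ gives ${\bold L}_\tau(h)=(R_h)_*\tau$, while by construction ${\bold L}_\tau(h)=(L_h)_*\tau$; hence $(L_h)_*\tau=(R_h)_*\tau$ in the tensor space over $h$. Composing with $(L_{h^{-1}})_*:T_hG\to T_1G$ (extended to tensors) and noting that $L_{h^{-1}}\circ R_h=\Ad_{h^{-1}}$ as maps $G\to G$ fixing $1$, this yields $\tau=(\Ad_{h^{-1}})_*\tau$, i.e.\ $\Ad$-invariance of $\tau$. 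Conversely, if $\Ad$ fixes $\tau$, the chain runs in reverse:
$$
(R_h)_*{\bold L}_\tau(x)=(R_h)_*(L_x)_*\tau=(L_x)_*(R_h)_*\tau=(L_x)_*(L_h)_*\tau=(L_{xh})_*\tau={\bold L}_\tau(xh),
$$
where the second equality uses $R_h\circ L_x=L_x\circ R_h$ and the third uses $(R_h)_*\tau=(L_h)_*\tau$ (equivalent to $\Ad$-invariance by reversing the step above). The equivalence for ${\bold R}_\tau$ is entirely symmetric.

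The only step I expect to require real care is the regularity claim for ${\bold L}_\tau$ in (i), which amounts to checking smoothness of $g\mapsto d_1L_g$ in local coordinates; the remainder of the proof is a formal manipulation of pushforwards together with the commutation of left and right translations.
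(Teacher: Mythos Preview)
Your proposal is correct and follows exactly the paper's approach: transport $\tau$ from $1$ to each $g$ by the pushforward of $L_g$ (resp.\ $R_g$). The paper's own proof of (i) is just the one-line observation that spreading $\tau$ by translations works, with uniqueness ``clear,'' and (ii) is left entirely as an exercise; your write-up is considerably more detailed, in particular supplying the regularity argument via the trivialization and a full proof of (ii) using the commutation of left and right translations and the identity $L_{h^{-1}}\circ R_h=\Ad_{h^{-1}}$.
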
 

\begin{proof} We only prove (i). Consider the tensor fields ${\bold L}_\tau(g):=L_g\tau$,\linebreak ${\bold R}_\tau(g):=R_{g^{-1}}\tau$
(i.e., we ``spread" $\tau$ from $1\in G$ to other points $g\in G$ by left/right translations). By construction, $R_{g^{-1}}\tau$ is right invariant, while $L_g\tau$ is left invariant, both with value $\tau$ at $1$, 
and it is clear that these are unique. 
\end{proof} 

\begin{exercise} Prove Proposition \ref{spre}(ii). 
\end{exercise}

\begin{corollary} A Lie group is parallelizable. 
\end{corollary}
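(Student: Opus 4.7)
The plan is to invoke Proposition \ref{spre}(i) to produce a global frame on $G$ from a basis of $\g = T_1 G$. First I would choose any basis $e_1, \ldots, e_n$ of the finite-dimensional vector space $\g$, and apply Proposition \ref{spre}(i) with $k=1, m=0$ to each $e_i$, obtaining left-invariant vector fields $\bold{L}_{e_1}, \ldots, \bold{L}_{e_n}$ on $G$.

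Next I would verify that these $n$ vector fields form a basis of $T_g G$ at every point $g \in G$. This is where the key content lies: by construction, $\bold{L}_{e_i}(g) = (dL_g)_1(e_i)$, and since $L_g \colon G \to G$ is a diffeomorphism (its inverse is $L_{g^{-1}}$), the differential $(dL_g)_1 \colon T_1 G \to T_g G$ is a linear isomorphism. Any linear isomorphism carries a basis to a basis, so $\bold{L}_{e_1}(g), \ldots, \bold{L}_{e_n}(g)$ span $T_g G$ and are linearly independent there.

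Finally, I would invoke Exercise \ref{basi}, which states that a vector bundle is trivial if and only if it admits sections forming a basis in each fiber. Applied to $E = TG$ and the sections $s_i = \bold{L}_{e_i}$, this immediately gives a global trivialization $TG \cong G \times \R^n$ (or $G \times \C^n$ in the complex analytic case), so $G$ is parallelizable.

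There is no real obstacle here: the content of the corollary is almost entirely absorbed into Proposition \ref{spre}(i), which already guarantees both the existence and regularity of the left-invariant extension. The only thing one has to notice separately is that left translation is a diffeomorphism, so its differential at $1$ is an isomorphism of tangent spaces, which is immediate from the group axioms. One could just as well use right-invariant vector fields $\bold{R}_{e_i}$, again by Proposition \ref{spre}(i).
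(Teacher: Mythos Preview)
Your proposal is correct and follows exactly the paper's approach: take a basis $e_1,\ldots,e_n$ of $\g$, left-translate to obtain vector fields $\bold{L}_{e_i}$, and observe that these form a frame. The paper's proof is a single sentence to this effect; your version simply spells out the two implicit points (that $(dL_g)_1$ is an isomorphism because $L_g$ is a diffeomorphism, and that a global frame trivializes $TG$ via Exercise~\ref{basi}).
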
 

\begin{proof} Given a basis $e_1,...,e_n$ of $\g=T_1G$, the vector fields
$L_ge_1,...,L_ge_n$ form a frame. 
\end{proof} 

\begin{remark} 
In particular, $S^1$ and $SU(2)=S^3$ are parallelizable. It turns out that $S^n$ 
for $n\ge 1$ is parallelizable if and only if $n=1,3,7$ (a deep theorem in differential topology). So spheres of other dimensions don't admit a Lie group structure. The sphere $S^7$ does not admit one either, although it admits a weaker structure of a ``homotopy Lie group", or $H$-space (arising from octonions) which suffices for parallelizability. Thus the only spheres admitting a Lie group structure are $S^0=\lbrace{1,-1\rbrace}$, $S^1$ and $S^3$. This result is fairly elementary and will be proved in Section \ref{top2}. 
\end{remark} 

\section{\bf Classical Lie groups} 

\subsection{First examples of classical groups} \label{clagrou}
Roughly speaking, {\bf classical groups} are groups of matrices arising from linear algebra. 
More precisely, classical groups are the following subgroups of the {\bf general linear group} $GL_n(\Bbb K)$: 
$GL_n(\Bbb K)$, $SL_n(\Bbb K)$ (the {\bf special linear group}), 
$O_n(\Bbb K)$, $SO_n(\Bbb K)$, $Sp_{2n}(\Bbb K)$,
$O(p,q)$, $SO(p,q)$,  $U(p,q)$, $SU(p,q)$, \linebreak
$Sp(2p,2q):=Sp_{2n}(\Bbb C)\cap U(2p,2q)$ for $p+q=n$ 
(and also some others we'll consider later). 

Namely, 

$\bullet$ The {\bf orthogonal group} 
$O_n(\Bbb K)$ is the group of matrices preserving the nondegenerate 
quadratic form in $n$ variables, $Q=x_1^2+...+x_n^2$ (or, equivalently, the corresponding bilinear form $x_1y_1+...+x_ny_n$);

$\bullet$ The {\bf symplectic group} $Sp_{2n}(\Bbb K)$ is the group of matrices 
preserving a nondegenerate skew-symmetric form 
in $2n$ variables; 

$\bullet$ The {\bf pseudo-orthogonal group}  
$O(p,q)$, $p+q=n$ is the group of real matrices preserving a nondegenerate quadratic form 
of signature $(p,q)$, $Q=x_1^2+...+x_p^2-x_{p+1}^2-...-x_n^2$ (or, equivalently, the corresponding bilinear form); 

$\bullet$ The {\bf pseudo-unitary group} $U(p,q)$, $p+q=n$ is the group of complex matrices 
preserving a nondegenerate Hermitian quadratic form 
of signature $(p,q)$, $Q=|x_1|^2+...+|x_p|^2-|x_{p+1}|^2-...-|x_n|^2$ (or, equivalently, the corresponding sesquilinear form);

$\bullet$ The {\bf special pseudo-orthogonal, pseudo-unitary, and orthogonal groups} 
$SO(p,q)\subset O(p,q)$, $SU(p,q)\subset U(p,q)$, $SO_n(\Bbb K)\subset O_n(\Bbb K)$ are the subgroups of matrices of determinant $1$.

Note that the groups don't change under switching $p,q$ and that 
$(S)O_n(\Bbb R)=(S)O(n,0)$; it is also denoted $(S)O(n)$. Also $(S)U(n,0)$ is denoted by $(S)U(n)$. 

\begin{exercise} Show that the special (pseudo)orthogonal groups are index $2$ subgroups of the (pseudo)orthogonal groups. 
\end{exercise} 

Let us show that they are all Lie groups. For this purpose we'll use the {\bf exponential map} for matrices. Namely, recall from linear algebra that we have an analytic function ${\rm exp}: {\mathfrak{gl}}_n(\Bbb K)\to GL_n(\Bbb K)$ given by the formula 
$$
\exp(a)=\sum_{n=0}^\infty \frac{a^n}{n!}, 
$$
and the matrix-valued analytic function log near $1\in GL_n(\Bbb K)$, 
$$
\log(A)=-\sum_{n=1}^\infty \frac{(1-A)^n}{n}.
$$
Namely, this is well defined if the spectral radius of $1-A$ is $<1$ (i.e., all eigenvalues are in the open unit disk). These maps have the following properties:

(1) They are mutually inverse. 

(2) They are conjugation-invariant. 

(3) $d\exp_0=d\log_1={\rm Id}$.

(4) If $xy=yx$ then $\exp(x+y)=\exp(x)\exp(y)$. If $XY=YX$ 
then $\log(XY)=\log(X)+\log(Y)$ (for $X,Y$ sufficiently close to $1$).  

(5) For $x\in {\mathfrak{gl}}_n(\Bbb K)$ the map $t\mapsto \exp(tx)$ is a homomorphism of Lie groups $\Bbb K\to \GL_n(\Bbb K)$. 

(6) $\det \exp(a)=\exp ({\rm Tr}\ a)$, $\log (\det A)={\rm Tr}(\log A)$. 

Now we can look at classical groups and see what happens to the equations defining them when we apply $\log$. 

1. $G=SL_n(\Bbb K)$. We already showed that it is a Lie group  
in Example \ref{wind}(1) but let us re-do it by a different method. 
The group $G$ is defined by the equation $\det A=1$. So for $A$ close to $1$ we have 
$\log(\det A)=0$, i.e., ${\rm Tr}\log(A)=0$. So $\log(A)\in \mathfrak{sl}_n(\Bbb K)=\mathfrak{g}$, 
the space of matrices with trace $0$. This defines a local chart near $1\in G$, showing that $G$ is a manifold (namely, local charts near other points are obtained by translation), hence a Lie group (multiplication is inherited from the ambient matrix group, hence regular in the induced manifold structure).  

2. $G=O_n(\Bbb K)$. The equation is $A^T=A^{-1}$, thus $\log(A)^T=-\log(A)$, so $\log(A)\in \mathfrak{so}_n(\Bbb K)=\g$, the space of skew-symmetric matrices. 

3. $G=U(n)$. The equation is $\overline{A}^T=A^{-1}$, thus $\overline{\log(A)}^T=-\log(A)$, so $\log(A)\in \mathfrak{u}_n=\g$, the space of skew-Hermitian matrices. 

\begin{exercise} Do the same for all classical groups listed above. 
\end{exercise} 

We see that the logarithm map identifies the neighborhood of $1$ in the group $G$ with a neighborhood of $0$ in a finite-dimensional vector space. Thus we 
obtain

\begin{proposition} Every classical group $G$ from the above list is a Lie group, with 
$\g=T_1G\subset {\mathfrak{gl}}_n(\Bbb K)$. Moreover, if 
$\mathfrak{u}\subset {\mathfrak{gl}}_n(\Bbb K)$ is a small neighborhood of $0$ 
and $U=\exp(\mathfrak{u})$ then $\exp$ and $\log$ define mutually inverse 
diffeomorphisms between $\mathfrak{u}\cap \g$ and $U\cap G$. 
\end{proposition}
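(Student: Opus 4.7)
The plan is to treat all remaining classical groups by a uniform application of the logarithm, exactly as in the three examples immediately preceding the proposition. Each classical group $G$ is cut out of $GL_n(\mathbb{K})$ by equations built from $A$ using the operations $A \mapsto A^{-1}$, $A \mapsto A^T$, $A \mapsto \bar A$, and $A \mapsto J A J^{-1}$ for a fixed form matrix $J$. The key observation is that each of these commutes with $\log$ near $1$: $\log A^{-1} = -\log A$ (property 4), $(\log A)^T = \log A^T$ and $\overline{\log A} = \log \bar A$ (since $\log$ is given by a power series with real coefficients), and $J(\log A)J^{-1} = \log(JAJ^{-1})$ (conjugation invariance, property 2). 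Consequently, applying $\log$ to the defining equation of $G$ on a sufficiently small neighborhood of $1$ converts it into a system of \emph{linear} equations cutting out a vector subspace $\g \subset \mathfrak{gl}_n(\mathbb{K})$.

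Concretely, for $Sp_{2n}(\mathbb{K})$ the equation $A^T J A = J$ becomes $JAJ^{-1} = (A^T)^{-1}$, and applying $\log$ gives $\g = \mathfrak{sp}_{2n} = \{X : X^T J + JX = 0\}$. For $O(p,q)$, the equation $A^T I_{p,q} A = I_{p,q}$ yields $\g = \{X : X^T I_{p,q} + I_{p,q} X = 0\}$, and similarly for $U(p,q)$ using conjugate transpose. The special versions $SO(p,q)$, $SU(p,q)$, and $Sp(2p,2q)$ arise by intersecting with $SL_n$ or with $U(2p,2q)$, imposing the extra linear condition $\mathrm{Tr}\, X = 0$ via property 6.

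Conversely, the same identities reverse: if $X \in \g$ satisfies the linearized equation, then $\exp(X) \in G$ (for example, $X^T = -X$ gives $\exp(X)^T = \exp(-X) = \exp(X)^{-1}$). Hence for a small symmetric neighborhood $\mathfrak{u}$ of $0$ in $\mathfrak{gl}_n(\mathbb{K})$ on which $\exp$ and $\log$ are mutually inverse diffeomorphisms, they restrict to mutually inverse diffeomorphisms $\mathfrak{u} \cap \g \leftrightarrow U \cap G$. This gives a coordinate chart at~$1$; charts near arbitrary $g \in G$ are obtained by left translation, and transition maps are regular because left translation is regular on $GL_n(\mathbb{K})$. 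Hausdorffness and a countable base are inherited from $GL_n(\mathbb{K})$; multiplication and inversion are regular as restrictions of the corresponding maps on $GL_n(\mathbb{K})$. Finally, $T_1 G = \g$ because each $X \in \g$ is realized as the velocity at $t=0$ of the regular curve $t \mapsto \exp(tX) \subset G$ (property 5), so $\g \subset T_1 G$, with equality by dimension count from the local chart.

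The main difficulty is not conceptual but notational: one must check case by case that the defining relations linearize properly under $\log$ and that $\exp$ carries the linearized locus back into $G$. Both directions depend only on the algebraic properties of $\exp$ and $\log$ listed in the text, so there is no real analytic issue; the only point requiring slight care is that the fixed matrix $J$ or $I_{p,q}$ must be treated consistently as an \emph{intertwiner} of $\log$, which is exactly what the conjugation-invariance of $\exp$ and $\log$ gives.
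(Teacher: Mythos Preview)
Your proposal is correct and follows exactly the approach the paper intends: the paper works out $SL_n$, $O_n$, and $U(n)$ explicitly and then leaves the remaining cases as an exercise (``Do the same for all classical groups listed above''), so your systematic treatment using the compatibility of $\log$ with transpose, conjugation, complex conjugation, and inversion is precisely the expected completion. The only minor remark is that your final identification $T_1G=\g$ follows even more directly from the chart itself, since $\exp'(0)=\mathrm{Id}$ carries $\g$ isomorphically onto $T_1G$; the curve-plus-dimension-count argument is unnecessary.
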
 

\begin{exercise} Which of these groups are complex Lie groups? 
\end{exercise} 

\begin{exercise} Use this proposition to compute the dimensions of classical groups: 
$\dim SL_n=n^2-1$, $\dim O_n=n(n-1)/2$, $\dim Sp_{2n}=n(2n+1)$, 
$\dim SU_n=n^2-1$, etc. (Note that for complex groups we give the dimension over $\Bbb C$). 
\end{exercise} 

\subsection{Quaternions} An important role in the theory of Lie groups is played by the {\bf algebra of quaternions}, which is the only noncommutative finite dimensional division algebra over $\Bbb R$, discovered in the 19th century by W. R. Hamilton. 

\begin{definition} The {\bf algebra of quaternions} is the $\Bbb R$-algebra with basis 
$1,\bold i,\bold j,\bold k$ and multiplication rules 
$$
\bold i\bold j=-\bold j\bold i=\bold k,\  \bold j\bold k=-\bold k\bold j=\bold i,\ \bold k\bold i=-\bold i\bold k=\bold j, \bold i^2=\bold j^2=\bold k^2=-1.
$$
\end{definition} 

This algebra is associative but not commutative.

Given a quaternion 
$$
\bold q=a+b\bold i+c\bold j+d\bold k,\ a,b,c,d\in \Bbb R,
$$ 
we define the {\bf conjugate quaternion} by the formula 
$$
\overline{\bold q}=a-b\bold i-c\bold j-d\bold k.
$$ 
Thus 
$$
\bold q\overline{\bold q}=|\bold q|^2=a^2+b^2+c^2+d^2\in \Bbb R,
$$
where $|\bold q|$ is the length of $\bold q$ as a vector in $\Bbb R^4$. So if $\bold q\ne 0$ then it is invertible and 
$$
\bold q^{-1}=\frac{\overline {\bold q}}{|\bold q|^2}.
$$

Thus $\Bbb H$ is a {\bf division algebra} (i.e., a skew-field). One can show that the only 
finite dimensional associative division algebras over $\Bbb R$ are $\Bbb R$, $\Bbb C$ and $\Bbb H$. (See Exercise \ref{quate}).

 In particular, we can do linear algebra over $\Bbb H$ in almost the same way as we do over ordinary fields. 
Namely, every (left or right) module over $\Bbb H$ is free and has a basis; such a module 
is called a (left or right) {\bf quaternionic vector space}. In particular, any 
(say, right) quaternionic vector space of dimension $n$ (i.e., with basis of $n$ elements) 
is isomorphic to $\Bbb H^n$. Moreover, $\Bbb H$-linear maps between such spaces are given by left multiplication by quaternionic matrices. Finally, it is easy to see that Gaussian elimination works the same way as over ordinary fields; in particular, every invertible square matrix over $\Bbb H$ is a product of elementary matrices of the form $1+(\bold q-1)E_{ii}$ and $1+\bold q E_{ij}$, $i\ne j$, where $\bold q\in \Bbb H$ is nonzero.    

Also it is easy to show that 
$$
\overline{\bold q_1\bold q_2}=\overline{\bold q_2}\cdot \overline{\bold q_1},\ |\bold q_1\bold q_2|=|\bold q_1|\cdot |\bold q_2|
$$
(check this!). So quaternions are similar to complex numbers, except they are non-commutative. Finally, note that $\Bbb H$ contains a copy of $\Bbb C$ 
spanned by $1,\bold i$; however, this does not make $\Bbb H$ a $\Bbb C$-algebra since $\bold i$ is not a central element. 

\begin{proposition} The group of unit quaternions 
$\lbrace \bold q\in \Bbb H: |\bold q|=1\rbrace$ under multiplication 
is isomorphic to $SU(2)$ as a Lie group. 
\end{proposition}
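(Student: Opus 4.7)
The plan is to construct an explicit isomorphism by representing quaternions as complex $2\times 2$ matrices. First I would observe that every quaternion can be written uniquely as $q = z + w\mathbf{j}$ with $z, w \in \mathbb{C}$ (where $\mathbb{C}$ is the subalgebra spanned by $1, \mathbf{i}$); indeed, writing $q = a + b\mathbf{i} + c\mathbf{j} + d\mathbf{k}$ and using $\mathbf{k} = \mathbf{i}\mathbf{j}$, one has $q = (a+b\mathbf{i}) + (c+d\mathbf{i})\mathbf{j}$. A direct computation then gives $|q|^2 = q\bar q = |z|^2 + |w|^2$, and the key identity $\mathbf{j}\alpha = \bar\alpha \mathbf{j}$ for $\alpha \in \mathbb{C}$ yields the multiplication rule
\begin{equation*}
(z_1 + w_1\mathbf{j})(z_2 + w_2\mathbf{j}) = (z_1 z_2 - w_1 \bar{w}_2) + (z_1 w_2 + w_1 \bar{z}_2)\mathbf{j}.
\end{equation*}

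Next I would define the map
\begin{equation*}
\phi: \mathbb{H} \longrightarrow M_2(\mathbb{C}), \qquad \phi(z + w\mathbf{j}) = \begin{pmatrix} z & w \\ -\bar{w} & \bar{z} \end{pmatrix},
\end{equation*}
and verify that it is an $\mathbb{R}$-algebra homomorphism by matching the entries of $\phi(q_1)\phi(q_2)$ against the multiplication rule above. Then I would restrict $\phi$ to the group of unit quaternions $S := \{q : |q|=1\}$. For such $q$ one has $\det \phi(q) = |z|^2 + |w|^2 = 1$, and a direct computation of $\phi(q)\phi(q)^\dagger$ shows it equals the identity matrix, so $\phi(q) \in SU(2)$.

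For bijectivity onto $SU(2)$, I would invoke the parametrization given earlier in the text: $SU(2)$ was identified with the set of pairs $(a,b) \in \mathbb{C}^2$ with $|a|^2 + |b|^2 = 1$, realized as matrices $\begin{pmatrix} a & b \\ -\bar b & \bar a \end{pmatrix}$. Our map $\phi$ sends $(z,w)$ with $|z|^2+|w|^2=1$ to exactly the matrix indexed by $(a,b)=(z,w)$, so the restriction $\phi|_S : S \to SU(2)$ is manifestly a bijection, and in fact the identity once both sides are identified with $S^3 \subset \mathbb{R}^4$. Finally, $\phi|_S$ is a diffeomorphism because it is a polynomial (hence real analytic) map between manifolds whose inverse simply reads off matrix entries and is therefore also real analytic.

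There is no substantial obstacle here; the only point requiring care is the sign and conjugation convention in the definition of $\phi$, which must be fixed so that $\phi(q_1 q_2) = \phi(q_1)\phi(q_2)$ — this is essentially a one-line bookkeeping check given the multiplication formula above.
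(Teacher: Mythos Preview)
Your proof is correct and follows essentially the same route as the paper: write $q=z+w\mathbf{j}$, send it to the matrix $\begin{pmatrix} z & w\\ -\bar w & \bar z\end{pmatrix}$, and match against the earlier parametrization of $SU(2)$. The only minor difference is that the paper motivates the homomorphism property conceptually (left multiplication on $\mathbb{H}\cong\mathbb{C}^2$ commutes with right multiplication by $\mathbb{C}$, hence is $\mathbb{C}$-linear and given by a $2\times 2$ matrix), whereas you verify $\phi(q_1q_2)=\phi(q_1)\phi(q_2)$ by direct computation; both are fine.
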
 

\begin{proof} We can realize $\Bbb H$ as $\Bbb C^2$, where $\Bbb C\subset \Bbb H$ 
is spanned by $1,\bold i$; namely, $(z_1,z_2)\mapsto z_1+\bold j z_2$. 
Then left multiplication by quaternions on $\Bbb H=\Bbb C^2$
commutes with right multiplication by $\Bbb C$, i.e., is $\Bbb C$-linear. 
So it is given by complex $2$-by-$2$ matrices. It is easy to compute 
that the corresponding matrix is 
$$
z_1+z_2\bold j\mapsto \begin{pmatrix}z_1& -\overline{z_2}\\  z_2& \overline{z_1}\end{pmatrix}, 
$$
and we showed in Example \ref{exalie}(5) that such matrices (with $|z_1|^2+|z_2|^2=1$) are exactly the matrices from $SU(2)$. 
\end{proof} 

This is another way to see that $SU(2)\cong S^3$ as a manifold (since the set of unit quaternions is manifestly $S^3$). 

\begin{corollary} The map $\bold q\mapsto (\frac{\bold q}{|\bold q|},|\bold q|)$ 
is an isomorphism of Lie groups $\Bbb H^\times\cong SU(2)\times \Bbb R_{>0}$. 
\end{corollary}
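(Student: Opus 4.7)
The plan is to define $\phi(\bold q):=(\bold q/|\bold q|,|\bold q|)$ and to verify it is a bijective group homomorphism whose inverse $\psi(u,r):=ru$ is also regular. The whole statement reduces to a handful of direct checks; the conceptual content is identifying why multiplication on $\Bbb H^\times$ decomposes as a direct product despite $\Bbb H$ being noncommutative.

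First I would verify that $\phi$ is a homomorphism. Given $\bold q_1,\bold q_2\in \Bbb H^\times$, the identity $|\bold q_1\bold q_2|=|\bold q_1|\,|\bold q_2|$ proved in the previous subsection, together with the fact that $1/(|\bold q_1||\bold q_2|)\in\Bbb R$ is central in $\Bbb H$, gives
$$
\frac{\bold q_1\bold q_2}{|\bold q_1\bold q_2|}=\frac{1}{|\bold q_1||\bold q_2|}\,\bold q_1\bold q_2=\left(\frac{\bold q_1}{|\bold q_1|}\right)\left(\frac{\bold q_2}{|\bold q_2|}\right),
$$
so $\phi(\bold q_1\bold q_2)=\phi(\bold q_1)\phi(\bold q_2)$. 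The same centrality shows that $\psi$ is a homomorphism: $\psi(u_1,r_1)\psi(u_2,r_2)=r_1u_1r_2u_2=r_1r_2\,u_1u_2=\psi(u_1u_2,r_1r_2)$. Direct computation then gives $\phi\circ\psi=\mathrm{Id}$ and $\psi\circ\phi=\mathrm{Id}$, so $\phi$ is a bijective homomorphism of abstract groups, and $SU(2)$ appears as the kernel of the norm homomorphism $|\cdot|:\Bbb H^\times\to\Bbb R_{>0}$ via the identification from the preceding proposition.

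Next I would check regularity of $\phi$ and $\psi$. In real coordinates $(a,b,c,d)$ on $\Bbb H\cong\Bbb R^4$, the function $|\bold q|=\sqrt{a^2+b^2+c^2+d^2}$ is real analytic on $\Bbb R^4\setminus\lbrace 0\rbrace=\Bbb H^\times$ (the square root is analytic on $\Bbb R_{>0}$), so both $\bold q\mapsto |\bold q|$ and $\bold q\mapsto \bold q/|\bold q|$ are real analytic, and hence $\phi$ is regular. The inverse $\psi(u,r)=ru$ is the product of a real scalar and a quaternion, hence polynomial in the coordinates of $u$ and $r$, so $\psi$ is regular as well. Combined with the previous paragraph, this shows $\phi$ is an isomorphism of real Lie groups.

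I do not foresee any serious obstacle. The only point requiring care is the noncommutativity of $\Bbb H$, but each time one commutes factors it is a \emph{real} scalar being moved past a quaternion, which is legitimate because $\Bbb R$ is contained in the center of $\Bbb H$. From a higher-level viewpoint, the argument exhibits a split short exact sequence of Lie groups $1\to SU(2)\to \Bbb H^\times\xrightarrow{|\cdot|}\Bbb R_{>0}\to 1$ with the splitting $r\mapsto r$ landing in the center, which is precisely the data of a direct product decomposition.
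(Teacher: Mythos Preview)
Your proof is correct and complete. The paper itself does not give a detailed argument for this corollary, treating it as immediate from the preceding proposition and the multiplicativity of $|\cdot|$; your verification spells out exactly the steps (centrality of real scalars, regularity of both maps) that justify this.
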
 

This is the quaternionic analog of the trigonometric form of complex numbers, except the ``phase" factor $\frac{\bold q}{|\bold q|}$ is now not in $S^1$ but in $S^3=SU(2)$. 

\begin{exercise}\label{quate} Let $D$ be a finite dimensional division algebra 
over $\Bbb R$. 

(i) Show that if $D$ is commutative then $D=\Bbb R$ or $D=\Bbb C$.  

(ii) Assume that $D$ is not commutative. Take $\bold q\in D$, $\bold q\notin \Bbb R$. Show that there exist $a,b\in \Bbb R$ such that $\bold i:=a+b\bold q$ satisfies $\bold i^2=-1$. 

(iii) Decompose $D$ into the eigenspaces $D_\pm$ of the operator of conjugation by $\bold i$ 
with eigenvalues $\pm 1$ and show that $1,\bold i$ is a basis of $D_+$, i.e., $D_+\cong \Bbb C$. 

(iv) Pick $\bold q\in D_-$, $\bold q\ne 0$, and show that $D_-=D_+\bold q$, so 
$\lbrace 1,\bold i,\bold q,\bold i\bold q\rbrace$ is a basis of $D$ over $\Bbb R$. Deduce that 
$\bold q^2$ is a central element of $D$. 

(v) Conclude that $\bold q^2=-\lambda$ where 
$\lambda\in  \Bbb R_{>0}$ and deduce that $D\cong \Bbb H$.    
\end{exercise}  

\subsection{More classical groups} \label{morecla} 
Now we can define a new classical group $GL_n(\Bbb H)$, a real Lie group 
of dimension $4n^2$, called the {\bf quaternionic general linear group}. 
For example, as we just showed, 
$GL_1(\Bbb H)=\Bbb H^\times\cong SU(2)\times \Bbb R_{>0}$. 

For $A\in GL_n(\Bbb H)$, let $\det A$ be the determinant of $A$ 
as a linear operator on $\Bbb C^{2n}=\Bbb H^n$. 

\begin{lemma} We have $\det A>0$. 
\end{lemma}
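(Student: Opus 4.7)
My approach is to exploit the Gaussian elimination decomposition over $\Bbb H$ recalled just above: every $A\in GL_n(\Bbb H)$ is a product of elementary matrices of the form $E_{ij}(\bold q):=1+\bold q E_{ij}$ for $i\ne j$ (shears) and $D_i(\bold q):=1+(\bold q-1)E_{ii}$ for $\bold q\ne 0$ (scalings of one coordinate). Since the complex determinant is multiplicative on $\Bbb C$-linear endomorphisms of $\Bbb C^{2n}$, it suffices to verify that each such elementary matrix has positive complex determinant.

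\textbf{Step 1: shears.} Using the identification $\Bbb H^n=\Bbb C^{2n}$ given by $(q_1,\dots,q_n)\mapsto (z_1^{(1)},z_2^{(1)},\dots,z_1^{(n)},z_2^{(n)})$ with $q_k=z_1^{(k)}+z_2^{(k)}\bold j$, the map $E_{ij}(\bold q)$ acts on $\Bbb C^{2n}$ as the identity on every coordinate-pair except that the pair labeled $i$ is modified by a $\Bbb C$-linear function of the pair labeled $j$. After reordering the $n$ coordinate-pairs so that the $j$-th comes before the $i$-th, this is a block upper-triangular matrix with $2\times 2$ identity blocks on the diagonal, hence has determinant $1$.

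\textbf{Step 2: scalings.} The matrix $D_i(\bold q)$ acts as the identity on every coordinate-pair except the $i$-th, where it acts as left multiplication by $\bold q$ on $\Bbb H\cong \Bbb C^2$. This is exactly the $\Bbb C$-linear map computed in the proof that the unit quaternions coincide with $SU(2)$: if $\bold q=z_1+z_2\bold j$, its matrix is
$$
\begin{pmatrix} z_1 & z_2 \\ -\overline{z_2} & \overline{z_1}\end{pmatrix},
$$
whose complex determinant is $|z_1|^2+|z_2|^2=|\bold q|^2>0$ (since $\bold q\ne 0$).

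\textbf{Step 3: conclusion.} Writing $A$ as a product of such elementary factors and taking complex determinants, $\det A$ is a product of the numbers $1$ and $|\bold q|^2>0$, hence $\det A>0$.

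\textbf{Main obstacle.} There is essentially none: the only nontrivial input is the formula for left multiplication by $\bold q$ as a $2\times 2$ complex matrix, and that was already derived earlier in the section. The only thing to keep track of is the identification $\Bbb H^n=\Bbb C^{2n}$ and the fact that left multiplication by quaternionic matrices is $\Bbb C$-linear with respect to the right $\Bbb C$-action, so that it makes sense to speak of its complex determinant at all.
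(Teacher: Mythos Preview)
Your proof is correct and follows essentially the same approach as the paper's own argument: decompose $A$ into elementary shear and scaling matrices via Gaussian elimination over $\Bbb H$, then compute the complex determinant of each elementary factor (namely $1$ for shears and $|\bold q|^2>0$ for scalings) and use multiplicativity. You simply spell out in more detail why the shears are block upper-triangular with identity diagonal blocks, where the paper just says ``it is easy to see.''
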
 

\begin{proof} For $n=1$, $A=\bold q\in \Bbb H^\times$ and 
$\det \bold q=|\bold q|^2>0$. It follows that 
$\det (1+(\bold q-1)E_{ii})=|\bold q|^2>0$. Also 
it is easy to see that $\det(1+\bold qE_{ij})=1$ for $i\ne j$.
It then follows by Gaussian elimination that for any $A$ we have 
$\det(A)>0$. 
\end{proof} 

Let $SL_n(\Bbb H)\subset GL_n(\Bbb H)$ be the subgroup of matrices $A$ with $\det A=1$, called the {\bf quaternionic special linear group}.   

\begin{exercise} Show that $SL_n(\Bbb H)\subset GL_n(\Bbb H)$ is a normal subgroup, and $GL_n(\Bbb H)\cong SL_n(\Bbb H)\times \Bbb R_{>0}$.  
\end{exercise} 

Thus $SL_n(\Bbb H)$ is a real Lie group of dimension $4n^2-1$. 

We can also define groups of quaternionic matrices preserving various sesquilinear forms. 
Namely, let $V\cong \Bbb H^n$ be a right quaternionic vector space. 

\begin{definition} A {\bf sesquilinear form} on $V$ is a biadditive function 
$(,): V\times V\to \Bbb H$ such that 
$$
(\bold x \alpha,\bold y \beta)=\overline{\alpha}(\bold x,\bold y)\beta,\ \bold x,\bold y\in V,\ \alpha,\beta\in \Bbb H.
$$

Such a form is called {\bf Hermitian} if $(\bold x,\bold y)=\overline{(\bold y,\bold x)}$ 
and {\bf skew-Hermitian} if $(\bold x,\bold y)=-\overline{(\bold y,\bold x)}$. 
\end{definition}

Note that the order of factors is important here!

\begin{proposition}\label{sesq} (i) Every nondegenerate Hermitian form on $V$ 
in some basis takes the form 
$$
(\bold x,\bold y)=\overline{x_1}y_1+...+\overline{x_p}y_p-\overline{x_{p+1}}y_{p+1}-...-\overline{x_n}y_n
$$
for a unique pair $(p,q)$ with $p+q=n$. 

(ii) Every nondegenerate skew-Hermitian form on $V$ 
in some basis takes the form 
$$
(\bold x,\bold y)=\overline{x_1}\bold jy_1+...+\overline{x_n}\bold jy_n. 
$$
\end{proposition}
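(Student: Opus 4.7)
The plan is to reduce both parts to a quaternionic Gram--Schmidt procedure, inducting on $n=\dim_{\Bbb H}V$. The inductive step has three ingredients: (a) produce $\bold v\in V$ with $(\bold v,\bold v)\ne 0$; (b) rescale $\bold v$ by a suitable $\alpha\in\Bbb H^\times$ so that $(\bold v,\bold v)$ takes the desired normal value ($\pm 1$ in case (i), $\bold j$ in case (ii)); (c) descend to $\bold v^\perp:=\{\bold w:(\bold v,\bold w)=0\}$, which is a right $\Bbb H$-submodule of codimension one because $\bold w\mapsto(\bold v,\bold w)$ is right $\Bbb H$-linear and nonzero, and check that the restricted form is still nondegenerate and of the same type so the induction hypothesis applies.

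For (a) I would argue by polarization. Expanding $(\bold x+\bold y\alpha,\bold x+\bold y\alpha)$ using sesquilinearity gives, under the hypothesis that every diagonal value vanishes, the identity $(\bold x,\bold y)\alpha+\overline{\alpha}(\bold y,\bold x)=0$ for all $\bold x,\bold y\in V$ and $\alpha\in\Bbb H$. In the Hermitian case (i) this becomes $\mathrm{Re}((\bold x,\bold y)\alpha)=0$ for every $\alpha$, which forces $(\bold x,\bold y)=0$; in the skew-Hermitian case (ii) it becomes $(\bold x,\bold y)\alpha\in\Bbb R$ for every $\alpha\in\Bbb H$, which again forces $(\bold x,\bold y)=0$. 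Both conclusions contradict nondegeneracy. For (b) in case (i), $(\bold v,\bold v)$ is automatically a nonzero real number (a Hermitian diagonal entry is self-conjugate), so rescaling by $1/\sqrt{|(\bold v,\bold v)|}\in\Bbb R$ normalizes it to $\pm 1$. In case (ii), $\eta:=(\bold v,\bold v)$ is a nonzero purely imaginary quaternion; appealing to Exercise \ref{beltexe} that conjugation by unit quaternions realizes $SO(3)$ on the three-dimensional space of imaginary quaternions, I pick a unit quaternion $\beta$ with $\beta^{-1}\eta\beta=|\eta|\bold j$ and set $\alpha:=\beta/\sqrt{|\eta|}$; then $\overline{\alpha}\eta\alpha=\bold j$, so replacing $\bold v$ by $\bold v\alpha$ achieves $(\bold v,\bold v)=\bold j$.

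Iterating the procedure on $\bold v^\perp$ proves the existence statements. For uniqueness of $(p,q)$ in (i), I would invoke the quaternionic Sylvester law of inertia: since $(\bold x,\bold x)\in\Bbb R$ for every $\bold x$, the integer $p$ is the maximal quaternionic dimension of a subspace on which the form is positive definite, and $q$ is the analogous negative-definite quantity, so both are invariants of the form; the classical argument (intersecting the positive subspace of one diagonalization with the nonpositive subspace of another) carries through verbatim over $\Bbb H$. The step I expect to be subtlest is the normalization in (ii): by the sesquilinearity convention $(\bold x\alpha,\bold y\beta)=\overline{\alpha}(\bold x,\bold y)\beta$, rescaling $\bold v\mapsto \bold v\alpha$ transforms $(\bold v,\bold v)$ into $\overline{\alpha}(\bold v,\bold v)\alpha$ rather than $|\alpha|^2(\bold v,\bold v)$, so the reduction to a single standard value $\bold j$ really does depend on the transitivity of $SU(2)$-conjugation on the sphere of unit imaginary quaternions, and the order of factors must be tracked carefully throughout.
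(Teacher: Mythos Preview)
Your proof is correct and is exactly the standard quaternionic Gram--Schmidt argument the paper expects (the proposition is left as an exercise with no proof given). The only subtlety worth flagging for a reader is the one you already identified: in part (ii) the rescaling $\bold v\mapsto\bold v\alpha$ sends $(\bold v,\bold v)$ to $\overline{\alpha}(\bold v,\bold v)\alpha$, so achieving the value $\bold j$ genuinely requires the transitivity of $SU(2)$-conjugation on unit imaginary quaternions, not just a real rescaling.
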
 

\begin{exercise} Prove Proposition \ref{sesq}. 
\end{exercise} 

In (i), the pair $(p,q)$ is called the {\bf signature} of the quaternionic Hermitian form. 

\begin{exercise}\label{quatgrou} Show that a nondegenerate quaternionic Hermitian form of signature $(p,q)$ can be written as 
$$
(\bold x,\bold y)=B_1(\bold x,\bold y)+\bold jB_2(\bold x,\bold y), 
$$
with $B_1,B_2$ taking values in $\Bbb C=\Bbb R+\Bbb R\bold i\subset \Bbb H$, 
where $B_1$ is a usual nondegenerate Hermitian form of signature $(2p,2q)$ and 
$B_2$ is a nondegenerate skew-symmetric bilinear form on $V$ as a ($2n$-dimensional) $\Bbb C$-vector space. Show that $B_2(\bold x,\bold y)=B_1(\bold x\bold j,\bold y)$. 
Deduce that any complex linear transformation preserving $B_1$ and $B_2$ 
is $\Bbb H$-linear. 
\end{exercise} 

Thus the group of symmetries of a nondegenerate quaternionic Hermitian form of signature $(p,q)$ is $Sp(2p,2q)=Sp_{2n}(\Bbb C)\cap U(2p,2q)$. It is called the {\bf quaternionic pseudo-unitary group}. 

One also sometimes uses the notation $U(p,q,\Bbb R)=O(p,q)$, $U(p,q,\Bbb C)=U(p,q)$, $U(p,q,\Bbb H)=Sp(2p,2q)$, and $U(n,0,\Bbb K)=U(n,\Bbb K)$ for $\Bbb  K=\Bbb R,\Bbb C,\Bbb H$.  

\begin{exercise} Show that a nondegenerate quaternionic skew-Hermitian form
can be written as 
$$
(\bold x,\bold y)=B_1(\bold x,\bold y)+\bold jB_2(\bold x,\bold y), 
$$
with $B_1,B_2$ taking values in $\Bbb C=\Bbb R+\Bbb R\bold i\subset \Bbb H$, 
where $B_1$ is an ordinary skew-Hermitian form, while $B_2$ is a symmetric bilinear form 
(both nondegenerate). Show that $B_2(\bold x,\bold y)=B_1(\bold x\bold j,\bold y)$.
Deduce that any complex linear transformation preserving $B_1$ and $B_2$ 
is $\Bbb H$-linear. Also show that the signature of the Hermitian form $iB_1$ is necessarily $(n,n)$. 
\end{exercise} 

Thus the group of symmetries of a nondegenerate quaternionic skew-Hermitian form
is $O_{2n}(\Bbb C)\cap U(n,n)$. This group is denoted by $O^*(2n)$ and called the {\bf quaternionic orthogonal group}. There is also the subgroup $SO^*(2n)\subset O^*(2n)$ of matrices of determinant $1$ (having index $2$). 

All of these groups are Lie groups, which is shown similarly to Subsection \ref{clagrou}, using the exponential map. 

\begin{exercise} Compute the dimensions of all classical groups introduced above. 
\end{exercise} 

\section{\bf The exponential map of a Lie group} 

\subsection{The exponential map} We will now generalize the exponential and logarithm maps from matrix groups to arbitrary Lie groups. 

Let $G$ be a real Lie group, $\g=T_1G$.
 
\begin{proposition} Let $x\in \g$. There is a unique morphism of Lie groups $\gamma=\gamma_x: \Bbb R\to G$ such that $\gamma'(0)=x$. 
\end{proposition}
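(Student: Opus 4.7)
The plan is to use left-invariant vector fields, which the paper has set up in Proposition \ref{spre}, together with the local existence-uniqueness theorem for ODEs.

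First, I would associate to $x \in \mathfrak{g}$ the left-invariant vector field $\mathbf{L}_x$ on $G$ whose value at $1$ is $x$. A morphism of Lie groups $\gamma: \mathbb{R} \to G$ with $\gamma'(0) = x$ must satisfy $\gamma(s+t) = \gamma(s)\gamma(t)$, and differentiating with respect to $t$ at $t=0$ gives $\gamma'(s) = (dL_{\gamma(s)})_1 \cdot x = \mathbf{L}_x(\gamma(s))$. Thus $\gamma$ must be an integral curve of $\mathbf{L}_x$ starting at $1$. Conversely, any such integral curve (once extended to $\mathbb{R}$) will be shown to be a homomorphism. This observation simultaneously motivates the construction and yields uniqueness via the uniqueness clause of the ODE theorem.

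Next, the local existence theorem for ODEs produces a unique regular solution $\gamma: (-\varepsilon,\varepsilon) \to G$ of $\gamma'(t) = \mathbf{L}_x(\gamma(t))$ with $\gamma(0) = 1$, for some $\varepsilon > 0$. To promote this to a global homomorphism, I would first establish the group law on the domain where $\gamma$ is defined: fix $s$ with $|s| < \varepsilon$ and compare the two curves $t \mapsto \gamma(s+t)$ and $t \mapsto \gamma(s)\gamma(t) = L_{\gamma(s)}\gamma(t)$. Both pass through $\gamma(s)$ at $t=0$, and both are integral curves of $\mathbf{L}_x$ (the second one because $\mathbf{L}_x$ is left-invariant, so the differential of $L_{\gamma(s)}$ intertwines $\mathbf{L}_x$ with itself). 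By local uniqueness they coincide wherever both are defined.

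The main obstacle is global existence: a priori integral curves on a manifold can blow up in finite time. I would resolve this using the homomorphism identity just established. For $t \in \mathbb{R}$, choose $N$ large enough that $|t/N| < \varepsilon$, and define $\gamma(t) := \gamma(t/N)^N$. One checks using the local identity $\gamma(a+b) = \gamma(a)\gamma(b)$ (valid whenever $a, b, a+b \in (-\varepsilon,\varepsilon)$) that this definition is independent of $N$, agrees with the original $\gamma$ on $(-\varepsilon,\varepsilon)$, is regular (since locally it is a composition of the regular map $\gamma|_{(-\varepsilon,\varepsilon)}$ with translations and the regular multiplication of $G$), satisfies $\gamma(s+t) = \gamma(s)\gamma(t)$ for all $s,t \in \mathbb{R}$, and still has derivative $x$ at $0$. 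This yields the required Lie group homomorphism, and uniqueness is forced because any candidate, being a homomorphism with derivative $x$ at $0$, must satisfy the same ODE as $\mathbf{L}_x$ with the same initial condition, hence agrees with $\gamma_x$ locally and therefore globally by the group law.
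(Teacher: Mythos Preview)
Your proof is correct and follows essentially the same route as the paper: reduce the homomorphism condition to the integral-curve ODE for $\mathbf{L}_x$, obtain a local solution by the ODE theorem, prove the local group law by ODE uniqueness, and then extend to all of $\Bbb R$ using that law. The only cosmetic difference is that the paper extends by inductively squaring, $\gamma(t):=\gamma(t/2)^2$, and re-verifies the ODE at each step, whereas you extend in one shot via $\gamma(t):=\gamma(t/N)^N$ and check the homomorphism property directly.
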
 

\begin{proof} For such a morphism we should have
$$
\gamma(t+s)=\gamma(t)\gamma(s),\ t,s\in \Bbb R,
$$
so differentiating by $s$ at $s=0$, we get\footnote{For brevity for $g\in G$, $x\in \g$ we denote $L_gx$ by $gx$ and $R_gx$ by $xg$.}
$$
\gamma'(t)=\gamma(t)x.
$$
Thus $\gamma(t)$ is a solution of the ODE defined by the left-invariant vector field $\bold L_x$ corresponding to $x\in \g$ with initial condition $\gamma(0)=1$. By the existence and uniqueness theorem for solutions of ODE, this equation has a unique solution with this initial condition defined 
for $|t|<\varepsilon$ for some $\varepsilon>0$. 
Moreover, if $|s|+|t|<\varepsilon$, both $\gamma_1(t):=\gamma(s+t)$ and $\gamma_2(t):=\gamma(s)\gamma(t)$ satisfy this differential equation with initial condition $\gamma_1(0)=\gamma_2(0)=\gamma(s)$, so $\gamma_1=\gamma_2$. 
Thus
$$
\gamma(s+t)=\gamma(s)\gamma(t),\ |s|+|t|<\varepsilon;
$$
hence $\gamma(t)x=x\gamma(t)$ for $|t|<\varepsilon$. 

We claim that the solution $\gamma(t)$ extends to all values of $t\in \Bbb R$. Indeed, let us prove that it extends to $|t|<2^n\varepsilon$ for all $n\ge 0$ by induction in $n$. The base of induction ($n=0$) is already known, so we only need to justify the induction step from $n-1$ to $n$. Given $t$ with $|t|<2^n\varepsilon$, we define 
$$
\gamma(t):=\gamma(\tfrac{t}{2})^2. 
$$
This agrees with the previously defined solution for $|t|<2^{n-1}\varepsilon$, and we have  
$$
\gamma'(t)=\tfrac{1}{2}(\gamma'(\tfrac{t}{2})\gamma(\tfrac{t}{2})+\gamma(\tfrac{t}{2})\gamma'(\tfrac{t}{2}))=
\tfrac{1}{2}\gamma(\tfrac{t}{2})x\gamma(\tfrac{t}{2})+\tfrac{1}{2}\gamma(\tfrac{t}{2})^2x=\gamma(\tfrac{t}{2})^2x=\gamma(t)x,
$$
as desired. 

Thus, we have a regular map $\gamma: \Bbb R\to G$ with $\gamma(s+t)=\gamma(s)\gamma(t)$ and $\gamma'(0)=x$, which is unique by the uniqueness of solutions of ODE. 
\end{proof} 

\begin{definition} The {\bf exponential map} 
$\exp: \g\to G$ is defined by the formula 
$\exp(x)=\gamma_x(1)$.
\end{definition} 

Thus $\gamma_x(t)=\exp(tx)$. So we have 

\begin{proposition} The flow defined by the right-invariant vector field $\bold R_x$ 
is given by $g\mapsto \exp(tx)g$, and the flow defined by the left-invariant vector field $\bold L_x$ is given by $g\mapsto g\exp(tx)$. 
\end{proposition}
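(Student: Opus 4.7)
The plan is to verify each claim by showing that the proposed map $t\mapsto\cdot$ is an integral curve of the stated vector field with the correct initial value, and then to invoke uniqueness of solutions of the defining ODE. The only conceptual ingredient beyond what was already used to construct $\exp$ is the elementary observation that a diffeomorphism preserving a vector field carries integral curves to integral curves: if $\phi_*X=X$ and $\gamma'=X\circ\gamma$, then the chain rule gives $(\phi\circ\gamma)'(t)=d\phi\cdot X(\gamma(t))=X(\phi(\gamma(t)))$.

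For the left-invariant field $\bold L_x$, the construction of $\exp$ in the preceding proposition already shows that $t\mapsto\exp(tx)$ is the integral curve of $\bold L_x$ through $1$: differentiating the identity $\exp((s+t)x)=\exp(sx)\exp(tx)$ in $s$ at $s=0$ gives $\tfrac{d}{dt}\exp(tx)=\exp(tx)\cdot x=\bold L_x(\exp(tx))$. Since $\bold L_x$ is left-invariant, the diffeomorphism $L_g$ carries integral curves of $\bold L_x$ to integral curves, so $t\mapsto g\exp(tx)=L_g(\exp(tx))$ is the integral curve of $\bold L_x$ through $g$, and by uniqueness this is the flow evaluated at $g$.

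For the right-invariant field $\bold R_x$, the one genuinely new step is to verify that $t\mapsto\exp(tx)$ is \emph{also} an integral curve of $\bold R_x$ through $1$. I would obtain this by differentiating the same group identity $\exp((s+t)x)=\exp(sx)\exp(tx)$ in $t$ at $t=0$, this time viewing the right-hand side as $R_{\exp(sx)}(\exp(tx))$; this yields $\tfrac{d}{ds}\exp(sx)=x\cdot\exp(sx)=\bold R_x(\exp(sx))$. Then, exactly parallel to the left case, right-invariance of $\bold R_x$ implies $R_g$ preserves $\bold R_x$, so $t\mapsto\exp(tx)g=R_g(\exp(tx))$ is the integral curve of $\bold R_x$ through $g$, giving the stated formula for the flow.

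There is no serious obstacle here; global existence of both flows follows from the fact, already established in the construction, that $\exp(tx)$ is defined for all $t\in\Bbb R$. The only spot needing a moment of care is keeping the two differentiations of the group law straight: differentiating $\gamma(s+t)=\gamma(s)\gamma(t)$ in the \emph{second} factor picks up left translation by $\gamma(s)$ and produces $\bold L_x$, whereas differentiating in the \emph{first} factor picks up right translation by $\gamma(t)$ and produces $\bold R_x$. Once this is observed, both halves of the proposition fall out of exactly the same argument.
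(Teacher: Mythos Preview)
Your proof is correct and is precisely the argument the paper has in mind: the paper does not actually write out a proof of this proposition, treating it as an immediate consequence of the construction of $\exp$ (the sentence ``Thus $\gamma_x(t)=\exp(tx)$. So we have\ldots'' is the entire justification). Your proposal simply spells out the two natural steps---differentiating the one-parameter identity on each side to see that $t\mapsto\exp(tx)$ is an integral curve of both $\bold L_x$ and $\bold R_x$, then using invariance to translate---which is exactly the intended reasoning.
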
 

\begin{example} 1. Let $G=\Bbb K^n$. Then $\exp(x)=x$.  

2. Let $G=GL_n(\Bbb K)$ or its Lie subgroup. Then 
$\gamma_x(t)$ satisfies the matrix differential equation 
$$
\gamma'(t)=\gamma(t)x
$$
with $\gamma(0)=1$, so 
$$
\gamma_x(t)=e^{tx},
$$
the matrix exponential. For example, if $n=1$, this is the usual exponential function.  
\end{example} 

The following theorem describes the basic properties of the exponential map. Let $G$ be a real or complex Lie group.  

\begin{theorem} (i) $\exp: \g\to G$ is a regular map which is a diffeomorphism of a neighborhood of $0\in \g$ onto a neighborhood of $1\in G$, with $\exp(0)=1$, 
$\exp'(0)={\rm Id}_\g$. 

(ii) $\exp((s+t)x)=\exp(sx)\exp(tx)$ for $x\in \g$, $s,t\in \Bbb K$. 

(iii) For any morphism of Lie groups $\phi: G\to K$ and $x\in T_1G$ we have 
$$
\phi(\exp(x))=\exp( \phi_*x);
$$
 i.e., the exponential map commutes with morphisms. 

(iv) For any $g\in G$, $x\in \g$, we have 
$$
g\exp(x)g^{-1}=\exp({\rm Ad}_gx).
$$ 
\end{theorem}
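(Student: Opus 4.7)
The plan is to take the parts in an order that makes each step a short consequence of earlier ones: (ii) first, then the smoothness half of (i), then the derivative computation which gives the inverse-function-theorem half of (i), then (iii) by uniqueness of one-parameter subgroups, and finally (iv) as a special case of (iii).

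Part (ii) is essentially the content of the preceding proposition: the construction produced $\gamma_x\colon \mathbb{R}\to G$ as a Lie group homomorphism with $\gamma_x(1)=\exp(x)$, and from $\gamma_x(s+t)=\gamma_x(s)\gamma_x(t)$ together with $\gamma_x(t)=\gamma_{tx}(1)=\exp(tx)$ one gets the identity $\exp((s+t)x)=\exp(sx)\exp(tx)$ immediately. (For complex $G$, one runs the same ODE argument with a holomorphic right-hand side; this needs to be noted but introduces no essentially new difficulty.)

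For part (i), the key is to view $\exp$ as the time-$1$ value of a parameter-dependent flow. On the product manifold $G\times \g$ consider the vector field $\Xi(g,x):=(\bold L_x(g),0)$, i.e.\ the left-invariant field $\bold L_x$ on each slice $G\times\{x\}$; since $\bold L_x(g)$ depends regularly on $(g,x)$, the field $\Xi$ is regular. Its flow starting at $(1,x)$ is $(\gamma_x(t),x)$, so by smooth (resp.\ holomorphic) dependence of ODE solutions on initial data and parameters, $(t,x)\mapsto\gamma_x(t)$ is regular, and in particular $\exp(x)=\gamma_x(1)$ is a regular map $\g\to G$. To compute $d\exp_0$, take $x\in\g$ and differentiate $t\mapsto\exp(tx)=\gamma_x(t)$ at $t=0$, obtaining $\gamma_x'(0)=x$; hence $d\exp_0(x)=x$, i.e.\ $d\exp_0=\mathrm{Id}_\g$. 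The inverse function theorem now gives the local diffeomorphism statement.

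Part (iii) is the heart of the functorial statement. If $\phi\colon G\to K$ is a Lie group homomorphism and $x\in\g$, then $\phi\circ\gamma_x\colon\mathbb{R}\to K$ is a Lie group homomorphism whose velocity at $0$ is $d\phi_1(\gamma_x'(0))=\phi_* x$. By the uniqueness clause of the preceding proposition, $\phi\circ\gamma_x=\gamma_{\phi_* x}$; evaluating at $t=1$ yields $\phi(\exp(x))=\exp(\phi_* x)$. Finally, part (iv) follows by applying (iii) to the Lie group homomorphism $\mathrm{Ad}_g\colon G\to G$, whose differential at $1$ is the map also denoted $\mathrm{Ad}_g$ on $\g$: then $\mathrm{Ad}_g(\exp(x))=g\exp(x)g^{-1}=\exp(\mathrm{Ad}_g x)$.

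The main obstacle is the smoothness claim in (i): everything else is a short formal manipulation, while smoothness requires invoking the smooth/holomorphic dependence of ODE solutions on initial conditions and parameters on a manifold. Once one packages this as a single vector field on $G\times\g$, the statement is standard, but one should be explicit that the same theorem in the holomorphic category handles the complex case.
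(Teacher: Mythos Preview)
Your proof is correct and follows essentially the same approach as the paper: smooth dependence of ODE solutions on parameters for regularity in (i), the homomorphism property of $\gamma_x$ for (ii), uniqueness of one-parameter subgroups (equivalently, ODE uniqueness) for (iii), and specializing (iii) to conjugation for (iv). Your packaging of the smoothness argument as a single vector field on $G\times\g$ and your explicit remark about the holomorphic case are nice touches, but the substance is the same.
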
 
 
\begin{proof} (i) The regularity of $\exp$ follows from the fact that if a differential equation depends regularly on parameters then so do its solutions. Also $\gamma_0(t)=1$ so $\exp(0)=1$. We have $\exp'(0)x=\frac{d}{dt}\exp(tx)|_{t=0}=x$, so $\exp'(0)={\rm Id}$. 
By the inverse function theorem this implies that $\exp$ is a diffeomorphism near the origin. 

(ii) Holds since $\exp(tx)=\gamma_x(t)$. 

(iii) Both $\phi(\exp(tx))$ and  $\exp(\phi_*(tx))$ 
satisfy the equation $\gamma'(t)=\gamma(t)\phi_*(x)$ 
with the same initial conditions.  

(iv) is a special case of (iii) with $\phi: G\to G$, $\phi(h)=ghg^{-1}$. 
\end{proof} 

Thus $\exp$ has an inverse $\log: U\to \g$ defined on a neighborhood $U$ of 
$1\in G$ with $\log(1)=0$. This map is called  the {\bf logarithm}. For $GL_n(\Bbb K)$
and its Lie subgroups it coincides with the matrix logarithm. The logarithm map defines a canonical coordinate chart on $G$ near $1$, so a choice of a basis 
of $\g$ gives a local coordinate system.   

\begin{proposition} Let $G$ be a connected Lie group and $\phi: G\to K$ a morphism of Lie groups. Then $\phi$ is completely determined by the linear map $\phi_*: T_1G\to T_1K$.
\end{proposition}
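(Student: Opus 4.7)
The plan is to combine the functoriality of the exponential map with the fact that a connected Lie group is generated by any neighborhood of the identity.

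First, I would recall from the preceding theorem that for any Lie group morphism $\phi : G \to K$ and any $x \in \g = T_1 G$, one has $\phi(\exp(x)) = \exp(\phi_* x)$. Thus the restriction of $\phi$ to the image $\exp(\g) \subset G$ is entirely determined by $\phi_*$: if $\psi : G \to K$ is another morphism with $\psi_* = \phi_*$, then $\phi$ and $\psi$ agree on every element of the form $\exp(x)$.

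Next, since $\exp$ is a diffeomorphism from a neighborhood of $0 \in \g$ onto a neighborhood $U$ of $1 \in G$ (part (i) of the preceding theorem), $\phi$ and $\psi$ agree on this whole neighborhood $U$. Now invoke Proposition \ref{generati}(i): because $G$ is connected, $U$ generates $G$ as an abstract group, i.e.\ every $g \in G$ can be written as a finite product $g = u_1^{\varepsilon_1} \cdots u_N^{\varepsilon_N}$ with $u_i \in U$ and $\varepsilon_i = \pm 1$.

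Finally, since $\phi$ and $\psi$ are group homomorphisms agreeing on $U$ (and hence on $U^{-1}$, as $\phi(u^{-1}) = \phi(u)^{-1} = \psi(u)^{-1} = \psi(u^{-1})$), they agree on every such product, hence on all of $G$. Thus $\phi = \psi$, and $\phi$ is uniquely determined by $\phi_*$. There is no real obstacle here; the only thing to be careful about is that one must use connectedness of $G$ at exactly the step where the neighborhood $U$ is promoted to a generating set, as the statement genuinely fails without that hypothesis (for instance, a morphism from a discrete group is not detected by its differential).
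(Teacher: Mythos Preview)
Your proof is correct and follows exactly the same route as the paper's: use $\phi(\exp(x))=\exp(\phi_*x)$ to determine $\phi$ on a neighborhood of the identity, then invoke Proposition~\ref{generati} to extend to all of $G$ by connectedness. Your write-up simply spells out the details more fully.
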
 

\begin{proof} We have $\phi(\exp(x))=\exp(\phi_*(x))$, so 
since $\exp$ is a diffeomorphism near $0$, $\phi$ is determined 
by $\phi_*$ on a neighborhood of $1\in G$. This completely determines $\phi$ 
since this neighborhood generates $G$ by Proposition \ref{generati}. 
\end{proof} 

\begin{exercise} (i) Show that a connected compact complex Lie group is abelian. ({\bf Hint:} consider the adjoint representation and use that a holomorphic function on a compact complex manifold is constant, by the maximum principle.)

(ii) Classify such Lie groups of dimension $n$ up to isomorphism (Show that they are compact complex tori whose isomorphism classes are bijectively labeled by elements of the set $GL_n(\Bbb C)\backslash GL_{2n}(\Bbb R)/GL_{2n}(\Bbb Z)$.) 

(iii) Work out the classification explicitly in the 1-dimensional case (this is the classification of complex elliptic curves). Namely, show that isomorphism classes are labeled by points of $\Bbb H/\Gamma$, where $\Bbb H$ is the upper half-plane
and $\Gamma=SL_2(\Bbb Z)$ acting on $\Bbb H$ by M\"obius transformations 
$\tau\mapsto \frac{a\tau+b}{c\tau+d}$ (where ${\rm Im}(\tau)>0$). 
\end{exercise} 

\subsection{The commutator} 

In general (say, for $G=GL_n(\Bbb K)$, $n\ge 2$),
$\exp(x+y)\ne \exp(x)\exp(y)$. So let us consider the map 
$$
(x,y)\mapsto \mu(x,y)=\log(\exp(x)\exp(y))
$$
which maps $U\times U\to \g$, where $U\subset \g$ is a neighborhood of $0$. 
This map expresses the product in $G$ in the coordinate chart coming from the logarithm 
map. We have $\mu(x,0)=\mu(0,x)=x$ and $\mu_*(x,y)=x+y$. So, since $\mu$ is regular, we have the second Taylor approximation 
$$
\mu(x,y)=x+y+\tfrac{1}{2}\mu_2(x,y)+...
$$
where $\mu_2=d^2\mu_{(0,0)}$ is the quadratic part and $...$ are higher terms. 
Moreover, $\mu_2(x,0)=\mu_2(0,y)=0$, hence 
$\mu_2$ is a bilinear map $\g\times \g\to \g$. It is easy to see that 
$\mu(x,-x)=0$, hence $\mu_2$ is skew-symmetric. 

\begin{definition} The map $\mu_2$ is called the {\bf commutator} and denoted by $x,y\mapsto [x,y]$. 
\end{definition} 

Thus we have 
\begin{equation}\label{prodexp}
\exp(x)\exp(y)=\exp(x+y+\tfrac{1}{2}[x,y]+...).
\end{equation}

\begin{example} Let $G=GL_n(\Bbb K)$. Then 
$$
\exp(x)\exp(y)=(1+x+\tfrac{x^2}{2}+...)(1+y+\tfrac{y^2}{2}+...)=
1+x+y+\tfrac{x^2}{2}+xy+\tfrac{y^2}{2}+...=
$$
$$
1+(x+y)+\tfrac{(x+y)^2}{2}+\tfrac{xy-yx}{2}+...=\exp(x+y+\tfrac{xy-yx}{2}+...)
$$
Thus 
$$
[x,y]=xy-yx.
$$ 
This justifies the term ``commutator": it measures the failure of $x$ and $y$ to commute. 
\end{example} 

\begin{corollary} If $G\subset GL_n(\Bbb K)$ is a Lie subgroup then $\g=T_1G\subset \mathfrak{gl}_n(\Bbb K)$ is closed under the commutator $[x,y]=xy-yx$, 
which coincides with the commutator of $G$.  
\end{corollary}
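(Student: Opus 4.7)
The plan is to exploit the naturality of the exponential map under the inclusion $\iota: G \hookrightarrow GL_n(\Bbb K)$, which is by assumption a morphism of Lie groups, and to combine this with the calculation in the preceding example showing that $\mu_{GL_n}(x,y)=x+y+\tfrac12(xy-yx)+\cdots$.

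First I would apply part (iii) of the theorem on the exponential map to the inclusion $\iota$. Identifying $\g=T_1G$ with its image $\iota_*\g\subset \mathfrak{gl}_n(\Bbb K)$, this gives $\exp_G(x)=\exp_{GL_n}(x)$ for every $x\in\g$. Consequently, for $x,y\in\g$ sufficiently close to $0$,
\[
\exp_G(x)\exp_G(y)=\exp_{GL_n}(x)\exp_{GL_n}(y),
\]
where the product is taken in $GL_n(\Bbb K)$ (and happens to lie in $G$, since the left-hand side is a product in $G$).

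Next, on a neighborhood of $1\in G$ the logarithm $\log_G$ is characterized as the local inverse of $\exp_G$, and similarly $\log_{GL_n}$ is the local inverse of $\exp_{GL_n}$. Because $\exp_G$ is the restriction of $\exp_{GL_n}$ to $\g$, their local inverses satisfy $\log_G=\log_{GL_n}\big|_{G}$ (with values in $\g$) on a sufficiently small neighborhood of $1$ in $G$. Therefore, writing $\mu_G$ and $\mu_{GL_n}$ for the two product-in-log-coordinates maps,
\[
\mu_G(x,y)=\log_G(\exp_G(x)\exp_G(y))=\log_{GL_n}(\exp_{GL_n}(x)\exp_{GL_n}(y))=\mu_{GL_n}(x,y)
\]
as elements of $\mathfrak{gl}_n(\Bbb K)$, for $x,y\in\g$ near $0$. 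Note that the left-hand side a priori takes values in $\g$.

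Finally I would extract the quadratic part. By definition, $[x,y]_G$ is the quadratic part of $\mu_G$ (times $2$, with the appropriate antisymmetrization built in), and the preceding example computes the quadratic part of $\mu_{GL_n}$ to be $\tfrac12(xy-yx)$. The identity $\mu_G=\mu_{GL_n}$ on $\g\times\g$ then forces $[x,y]_G=xy-yx$, and because the left-hand side is an element of $\g$, we conclude $xy-yx\in\g$ whenever $x,y\in\g$, i.e.\ $\g$ is closed under the matrix commutator and that commutator agrees with the intrinsic commutator of $G$. There is no real obstacle here: the only delicate point is the bookkeeping that $\log_G$ and $\log_{GL_n}$ coincide on their common domain, which is immediate from the uniqueness of local inverses.
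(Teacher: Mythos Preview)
Your argument is correct and is exactly the intended one: the paper states this corollary without proof immediately after the $GL_n$ example, expecting the reader to observe (as you do) that $\exp_G$ and $\log_G$ are restrictions of $\exp_{GL_n}$ and $\log_{GL_n}$, so $\mu_G=\mu_{GL_n}|_{\g\times\g}$ and the quadratic parts agree. You have simply written out carefully what the paper leaves implicit.
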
 

For $x\in \g$ define the linear map ${\rm ad} x: \g\to \g$ 
by 
$$
{\rm ad}x(y)=[x,y].
$$
\begin{proposition}\label{commprop} (i) Let $G,K$ be Lie groups and $\phi: G\to K$ a morphism of Lie groups. 
Then $\phi_*: T_1G\to T_1K$ preserves the commutator: 
$$
\phi_*([x,y])=[\phi_*(x),\phi_*(y)].
$$

(ii) The adjoint action preserves the commutator.  

(iii) We have 
$$
\exp(x)\exp(y)\exp(x)^{-1}\exp(y)^{-1}=\exp([x,y]+...)
$$
where $...$ denotes cubic and higher terms. 

(iv) Let $X(t),Y(s)$ be parametrized curves on $G$ such that $X(0)=Y(0)=1$, $X'(0)=x,Y'(0)=y$. Then 
we have 
$$
[x,y]=\lim_{s,t\to 0}\frac{\log(X(t)Y(s)X(t)^{-1}Y(s)^{-1})}{ts}.
$$
In particular, 
$$
[x,y]=\lim_{s,t\to 0}\frac{\log(\exp(tx)\exp(sy)\exp(tx)^{-1}\exp(sy)^{-1})}{ts}
$$
and 
$$
[x,y]=\tfrac{d}{dt}|_{t=0} {\rm Ad}_{X(t)}(y).
$$
Thus ${\rm ad}={\rm Ad}_*$, the differential of ${\rm Ad}$ at $1\in G$. 

(v) If $G$ is commutative (=abelian) then $[x,y]=0$ for all $x,y$. 
\end{proposition}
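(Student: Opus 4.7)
The plan is to derive everything from the Taylor expansion $\mu(x,y)=\log(\exp(x)\exp(y))=x+y+\tfrac{1}{2}[x,y]+O_3(x,y)$ (where $O_3$ denotes total degree $\ge 3$ in $(x,y)$) combined with the intertwining identity $\phi\circ\exp=\exp\circ\phi_*$ already established. For (i), apply $\phi$ to the identity $\exp(\mu(x,y))=\exp(x)\exp(y)$: the left side becomes $\exp(\phi_*\mu(x,y))$ and the right becomes $\exp(\phi_*x)\exp(\phi_*y)=\exp(\mu(\phi_*x,\phi_*y))$. Since $\exp$ is a local diffeomorphism at $0$, this yields $\phi_*\mu(x,y)=\mu(\phi_*x,\phi_*y)$ for small $x,y$; matching the quadratic parts of the two sides gives $\phi_*[x,y]=[\phi_*x,\phi_*y]$. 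Part (ii) is then the special case $\phi=\Ad_g\colon G\to G$.

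For (iii), I compute $\log(\exp(x)\exp(y)\exp(-x)\exp(-y))$ by iterating $\mu$. Setting $z=\mu(x,y)=x+y+\tfrac{1}{2}[x,y]+O_3$, a short calculation using bilinearity and $[x,x]=0$ gives $\mu(z,-x)=y+[x,y]+O_3$, and then $\mu(\mu(z,-x),-y)=[x,y]+O_3$. Part (iv) follows by writing $X(t)=\exp(x(t))$, $Y(s)=\exp(y(s))$ with $x(t)=tx+O(t^2)$ and $y(s)=sy+O(s^2)$ and substituting into (iii): the bracket $[x(t),y(s)]=ts[x,y]+O(t^2s+ts^2)$, so dividing by $ts$ and letting $s,t\to 0$ yields the first limit formula. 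For the adjoint formula, use $X(t)\exp(sy)X(t)^{-1}=\exp(s\,\Ad_{X(t)}y)$ from the conjugation property of $\exp$; expanding $\log(\exp(s\,\Ad_{X(t)}y)\exp(-sy))=s(\Ad_{X(t)}y-y)+O(s^2)$, dividing by $ts$, and sending $s\to 0$ first produces $\tfrac{1}{t}(\Ad_{X(t)}y-y)$, whose $t\to 0$ limit is $\tfrac{d}{dt}|_{t=0}\Ad_{X(t)}y$; this equals $[x,y]$ and in particular shows $\ad=\Ad_*$. Finally, (v) is immediate from (iv): in an abelian group every group commutator is trivial, so every term in the limit defining $[x,y]$ equals $\log(1)=0$.

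The main technical obstacle is the bookkeeping in (iii): one must verify that through two applications of $\mu$ only $[x,y]$ survives at quadratic order, with all other contributions absorbed into $O_3$. This requires tracking orders of smallness in $x$ and $y$ carefully, but is elementary; everything else in the proposition is a formal consequence of (i), (iii), and the basic properties of $\exp$.
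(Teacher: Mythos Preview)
Your proof is correct and follows essentially the same approach as the paper's: both derive everything from the expansion $\mu(x,y)=x+y+\tfrac12[x,y]+O_3$ together with $\phi\circ\exp=\exp\circ\phi_*$. The only cosmetic differences are that for (iii) the paper observes $\mu(x,y)=\mu(y,x)+[x,y]+O_3$ and exponentiates, whereas you iterate $\mu$ explicitly (which works fine), and for (v) the paper cites (iii) while you cite (iv).
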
 

\begin{proof} (i) Follows since $\phi$ commutes with the exponential map. 

(ii) Follows from (i) by setting $\phi={\rm Ad}_g$. 

(iii) By \eqref{prodexp}, modulo cubic and higher terms we have 
$$
\log(\exp(x)\exp(y))=\log(\exp(y)\exp(x))+[x,y]+...,
$$
which implies the statement by exponentiation. 

(iv) Let $\log X(t)=x(t),\ \log Y(s)=y(s)$. Then by (iii) we have 
$$
\log(X(t)Y(s)X(t)^{-1}Y(s)^{-1})=
$$
$$
\log(\exp(x(t))\exp(y(s))\exp(x(t))^{-1}\exp(y(s))^{-1})=
ts([x,y]+o(1)),\ t,s\to 0. 
$$
This implies the first two statements. The last statement follows 
by taking the limit in $s$ first, then in $t$. 

(v) follows from (iii). 
\end{proof} 

\section{\bf Lie algebras} 

\subsection{The Jacobi identity} 

The matrix commutator $[x,y]=xy-yx$ obviously satisfies the identity 
$$
[[x,y],z]+[[y,z],x]+[[z,x],y]=0
$$
called the {\bf Jacobi identity}. Thus it is satisfied for any Lie subgroup of $GL_n(\Bbb K)$. 

\begin{proposition} The Jacobi identity holds for any Lie group $G$. 
\end{proposition}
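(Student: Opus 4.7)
The plan is to reduce the Jacobi identity to the already established fact that the differential of a Lie group homomorphism preserves the commutator (Proposition \ref{commprop}(i)), applied to the adjoint representation $\mathrm{Ad} \colon G \to GL(\g)$. Since $GL(\g)$ is a matrix group, its commutator on the Lie algebra side is the familiar matrix commutator $AB - BA$, which is automatically a Jacobi bracket. Transporting this back to $\g$ via $\mathrm{ad} = \mathrm{Ad}_*$ will force the Jacobi identity to hold in $\g$.

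First I would recall from Proposition \ref{commprop}(iv) that the differential of $\mathrm{Ad} \colon G \to GL(\g)$ at the identity is exactly $\mathrm{ad} \colon \g \to \mathfrak{gl}(\g)$, $\mathrm{ad}(x)(y) = [x,y]$. Applying Proposition \ref{commprop}(i) to the morphism $\mathrm{Ad}$ yields
\[
\mathrm{ad}([x,y]) \;=\; [\mathrm{ad}(x),\,\mathrm{ad}(y)]
\]
where the bracket on the right is the commutator in the Lie algebra $\mathfrak{gl}(\g)$ of the matrix group $GL(\g)$; by the corollary following the commutator definition, this commutator is simply $\mathrm{ad}(x)\mathrm{ad}(y) - \mathrm{ad}(y)\mathrm{ad}(x)$ as endomorphisms of $\g$.

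Next I would evaluate both sides on an arbitrary $z \in \g$. The left side gives $[[x,y], z]$, while the right side gives $[x,[y,z]] - [y,[x,z]]$. Rewriting using the skew-symmetry $[a,b] = -[b,a]$ (already noted in the definition of $\mu_2$), the relation $[[x,y],z] = [x,[y,z]] - [y,[x,z]]$ becomes
\[
[[x,y],z] + [[y,z],x] + [[z,x],y] = 0,
\]
which is the Jacobi identity.

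The only potential subtlety is making sure the ambient bracket on $\mathfrak{gl}(\g) = T_1 GL(\g)$ used in Proposition \ref{commprop}(i) really is the matrix commutator — but this is exactly the content of the corollary computed just after the example $G = GL_n(\k)$, so no new work is needed. Thus the proof amounts to a single application of the functoriality of the bracket to $\mathrm{Ad}$, with the associativity of composition of linear operators on $\g$ providing the Jacobi identity automatically on the $GL(\g)$ side.
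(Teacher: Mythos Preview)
Your proof is correct and uses essentially the same idea as the paper: both exploit the adjoint representation together with Proposition~\ref{commprop} to reduce the Jacobi identity to the matrix commutator in $\mathfrak{gl}(\g)$. The only cosmetic difference is that the paper applies part~(ii) to each $\mathrm{Ad}_{g(t)}$ and then differentiates to obtain the derivation property of $\mathrm{ad}\,x$, whereas you apply part~(i) directly to the homomorphism $\mathrm{Ad}\colon G\to GL(\g)$ to obtain $\mathrm{ad}[x,y]=[\mathrm{ad}\,x,\mathrm{ad}\,y]$ in one stroke---which is precisely the equivalent reformulation the paper records as the Corollary immediately following the Proposition.
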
 

\begin{proof} Let $\g=T_1G$. The Jacobi identity is equivalent to ${\rm ad}x$ being a derivation 
of the commutator: 
$$
{\rm ad}x([y,z])=[{\rm ad}x(y),z]+[y,{\rm ad}x(z)],\ x,y,z\in \g. 
$$
To show that it is indeed a derivation, let $g(t)=\exp(tx)$, then 
$$
{\rm Ad}_{g(t)}([y,z])=[{\rm Ad}_{g(t)}(y),{\rm Ad}_{g(t)}(z)].
$$
The desired identity is then obtained by differentiating this equality by $t$ at $t=0$ and using the Leibniz rule and Proposition \ref{commprop}(iv). 
\end{proof} 

\begin{corollary} We have 
${\rm ad}[x,y]=[{\rm ad}x,{\rm ad}y]$.
\end{corollary}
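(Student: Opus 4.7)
The plan is to deduce the corollary directly from the Jacobi identity, which was just established in the proposition. The assertion $\ad[x,y] = [\ad x, \ad y]$ is an equality of linear endomorphisms of $\g$, so it suffices to evaluate both sides on an arbitrary $z \in \g$ and show the results coincide.

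First I would unfold both sides. By definition, the left-hand side applied to $z$ is $[[x,y],z]$. The right-hand side, where $[\ad x,\ad y]$ denotes the commutator of endomorphisms in $\End(\g)$, gives
\[
(\ad x\circ \ad y)(z) - (\ad y\circ \ad x)(z) = [x,[y,z]] - [y,[x,z]].
\]
So the corollary reduces to the identity $[[x,y],z] = [x,[y,z]] - [y,[x,z]]$, which is nothing other than the Jacobi identity rearranged using skew-symmetry of the bracket: starting from $[[x,y],z] + [[y,z],x] + [[z,x],y] = 0$, apply $[a,b] = -[b,a]$ to the last two terms to get $[[x,y],z] - [x,[y,z]] + [y,[x,z]] = 0$.

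Since the proposition immediately preceding this corollary already gives the Jacobi identity, no additional work is needed. There is no real obstacle here; the only thing worth flagging is being careful with signs when rewriting the Jacobi identity via skew-symmetry, and keeping in mind that on the right-hand side $[\ad x, \ad y]$ is the commutator of operators in $\End(\g)$, not a second application of the Lie bracket on $\g$. Equivalently, one can phrase the argument by observing that the Jacobi identity is, as noted in the proof of the proposition, the statement that $\ad x$ is a derivation of $[\cdot,\cdot]$, i.e., $\ad x([y,z]) = [\ad x(y),z] + [y,\ad x(z)]$, and this rearranged is precisely $\ad[x,y](z) - \ad y(\ad x(z)) = \ad x(\ad y(z))$, which is the desired identity.
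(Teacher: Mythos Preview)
Your proof is correct and follows exactly the paper's approach: the paper simply states that the identity is equivalent to the Jacobi identity, and you have spelled out why. One small slip in your final side remark: rearranging the derivation property $\ad x([y,z]) = [\ad x(y),z] + [y,\ad x(z)]$ gives $\ad[x,y](z) = \ad x(\ad y(z)) - \ad y(\ad x(z))$, not $\ad[x,y](z) - \ad y(\ad x(z)) = \ad x(\ad y(z))$ as you wrote (the sign on $\ad y(\ad x(z))$ is off), but this does not affect your main argument.
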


\begin{proof} This is also equivalent to the Jacobi identity. 
\end{proof} 

\begin{proposition} For $x\in \g$ one has $\exp({\rm ad}x)={\rm Ad}_{\exp(x)}\in GL(\g)$. 
\end{proposition}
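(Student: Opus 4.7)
The plan is to deduce this identity directly from the naturality of the exponential map with respect to Lie group homomorphisms, applied to the adjoint representation ${\rm Ad}: G \to GL(\g)$.

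First I would note that ${\rm Ad}: G \to GL(\g)$ is a morphism of Lie groups (the adjoint representation defined earlier). By part (iv) of the preceding proposition on commutators, its differential at $1 \in G$ is precisely ${\rm ad}: \g \to \mathfrak{gl}(\g)$; that is, ${\rm Ad}_* = {\rm ad}$. This is the crucial input, and it has already been proven.

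Next I would apply the general fact (part (iii) of the theorem on the exponential map) that for any morphism of Lie groups $\phi: G \to K$ and any $x \in T_1 G$, one has $\phi(\exp(x)) = \exp(\phi_* x)$. Taking $\phi = {\rm Ad}$, $K = GL(\g)$, and $x \in \g$, this yields
$$
{\rm Ad}_{\exp(x)} = \exp({\rm ad}\, x),
$$
where the exponential on the right is the exponential map of the Lie group $GL(\g)$, which (as noted in the examples following the definition of $\exp$) agrees with the usual matrix exponential $\sum_{n \ge 0} ({\rm ad}\,x)^n/n!$ acting on $\g$.

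There is essentially no obstacle, since both nontrivial ingredients — the identification ${\rm Ad}_* = {\rm ad}$ and the naturality $\phi \circ \exp = \exp \circ \phi_*$ — have already been established; the argument is a one-line combination of them. The only point worth double-checking is that the exponential on $GL(\g)$ appearing on the right really is the matrix exponential, but this is immediate from the Example computing $\exp$ for $GL_n(\mathbb{K})$, applied to $n = \dim \g$.
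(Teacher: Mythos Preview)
Your proof is correct and essentially the same as the paper's: the paper shows directly that $\gamma_1(t)=\exp(t\,{\rm ad}x)$ and $\gamma_2(t)={\rm Ad}_{\exp(tx)}$ both solve $\gamma'(t)=\gamma(t)\,{\rm ad}x$ with $\gamma(0)=1$, which is exactly the ODE-uniqueness argument underlying the naturality property (iii) you invoke. You simply cite the general result rather than rerunning its proof in this special case.
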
 

\begin{proof} We will show that $\exp(t{\rm ad}x)={\rm Ad}_{\exp(tx)}$ for $t\in \Bbb R$. 
Let $\gamma_1(t)=\exp(t{\rm ad}x)$ and $\gamma_2(t)={\rm Ad}_{\exp(tx)}$. Then 
$\gamma_1,\gamma_2$ both satisfy the differential equation $\gamma'(t)=\gamma(t){\rm ad}x$ and 
equal $1$ at $t=0$. Thus $\gamma_1=\gamma_2$. 
\end{proof} 

\subsection{Lie algebras} 

\begin{definition} A {\bf Lie algebra} over a field ${\bf k}$ is a vector space $\g$ over ${\bf k}$
equipped with bilinear operation $[,]:  \g\times \g\to \g$, called the {\bf commutator} or {\bf (Lie) bracket}  which satisfies the following identities: 

(i) $[x,x]=0$ for all $x\in \g$; 

(ii) the Jacobi identity: $[[x,y],z]+[[y,z],x]+[[z,x],y]=0$. 

A {\bf (homo)morphism of Lie algebras} is a linear map between Lie algebras that preserves the commutator. 
\end{definition} 

\begin{remark} If ${\bf k}$ has characteristic $\ne 2$ then the condition $[x,x]=0$ is equivalent to skew-symmetry $[x,y]=-[y,x]$, but in characteristic 2 it is stronger. 
\end{remark} 

\begin{example} Any subspace of ${\mathfrak{gl}}_n({\bf k})$ closed 
under $[x,y]:=xy-yx$ is a Lie algebra. 
\end{example} 

\begin{example} The map ${\rm ad}: \g\to {\rm End}(\g)$ is a morphism of Lie algebras.  
\end{example} 

Thus we have 

\begin{theorem} If $G$ is a $\Bbb K$-Lie group (for $\Bbb K=\Bbb R,\Bbb C$) then 
$\g:=T_1G$ has a natural structure of a Lie algebra over $\Bbb K$. Moreover, 
if $\phi: G\to K$ is a morphism of Lie groups then $\phi_*: T_1G\to T_1K$ 
is a morphism of Lie algebras.
\end{theorem}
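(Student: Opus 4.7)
The plan is essentially to collect the pieces already assembled in the preceding subsections and verify that they match the axioms of Definition (Lie algebra). All the real work has been done; what remains is bookkeeping.

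First I would recall the construction of the bracket on $\g = T_1G$. Working in a coordinate chart coming from the logarithm, the map $(x,y) \mapsto \mu(x,y) := \log(\exp(x)\exp(y))$ is a regular map from a neighborhood of $(0,0)$ in $\g \times \g$ to $\g$, and its Taylor expansion begins $\mu(x,y) = x + y + \tfrac{1}{2}[x,y] + \cdots$. The previous subsection showed that the second-order term $\mu_2(x,y) = [x,y]$ is bilinear (since $\mu_2(x,0) = \mu_2(0,y) = 0$ forces the quadratic part to be a bilinear pairing), and skew-symmetric (deduced from $\mu(x,-x) = 0$, which gives $[x,x] = 0$). Thus $[,]: \g \times \g \to \g$ is a well-defined $\Bbb K$-bilinear bracket satisfying axiom (i).

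Next I would verify the Jacobi identity, which is the content of the proposition proved just above. Its proof goes through the observation that ${\rm ad}\, x$ acts as a derivation of the bracket, obtained by differentiating the identity ${\rm Ad}_{g(t)}([y,z]) = [{\rm Ad}_{g(t)}(y), {\rm Ad}_{g(t)}(z)]$ at $t = 0$ for $g(t) = \exp(tx)$, combined with the fact (Proposition \ref{commprop}(iv)) that ${\rm ad} = {\rm Ad}_*$ at the identity. This gives axiom (ii), completing the proof that $(\g, [,])$ is a Lie algebra over $\Bbb K$.

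For the second assertion, given a Lie group morphism $\phi: G \to K$ with differential $\phi_*: T_1 G \to T_1 K$, the fact that $\phi_*$ is $\Bbb K$-linear is immediate from its definition as a differential. That it intertwines brackets, $\phi_*([x,y]) = [\phi_*(x), \phi_*(y)]$, is exactly Proposition \ref{commprop}(i), whose proof relies on $\phi \circ \exp = \exp \circ \phi_*$ and the fact that the bracket is encoded in $\mu$, which $\phi$ transports from $G$ to $K$. There is no real obstacle in any of these steps; the only subtlety worth flagging is keeping the chart-dependence of the definition of $\mu$ in check, but since the bracket arises as an intrinsic quadratic invariant $d^2\mu_{(0,0)}$ with $\mu_2(x,0) = \mu_2(0,x) = 0$, and $\phi \circ \exp = \exp \circ \phi_*$ identifies the two $\mu$'s, this is automatic.
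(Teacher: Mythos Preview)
Your proposal is correct and follows exactly the paper's approach: the theorem is stated with the word ``Thus'' and no separate proof, since it is a direct summary of the pieces already established in the preceding subsections (bilinearity and skew-symmetry of $[,]$ from the expansion of $\mu$, the Jacobi identity via ${\rm ad}\,x$ being a derivation, and Proposition \ref{commprop}(i) for the naturality under $\phi_*$). You have simply spelled out the bookkeeping the paper leaves implicit.
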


We will denote the Lie algebra $\g=T_1G$ by ${\rm Lie}G$ or ${\rm Lie}(G)$ and call it the {\bf Lie algebra of $G$}. We see that the assignment $G\mapsto {\rm Lie}G$ is a functor from the category 
of Lie groups to the category of Lie algebras. Thus we have a map 
$\Hom(G,K)\to \Hom({\rm Lie} G,{\rm Lie}K)$, which is injective if $G$ is connected.  

Motivated by Proposition \ref{commprop}(v), a Lie algebra $\g$ is said to be {\bf commutative} or {\bf abelian} if $[x,y]=0$ 
for all $x,y\in \g$. 

\subsection{Lie subalgebras and ideals} 

A {\bf Lie subalgebra} of a Lie algebra $\g$ is a subspace $\h\subset \g$ closed under the commutator. It is called a {\bf Lie ideal} if moreover $[\g,\h]\subset \h$. 

\begin{proposition} Let $H\subset G$ be a Lie subgroup. Then: 

(i) ${\rm Lie}H\subset {\rm Lie}G$ is a Lie subalgebra; 

(ii) If $H$ is normal then ${\rm Lie}H$ is a Lie ideal in ${\rm Lie}G$; 

(iii) If $G,H$ are connected and ${\rm Lie}H\subset {\rm Lie}G$ is a Lie ideal then 
$H$ is normal in $G$. 
\end{proposition}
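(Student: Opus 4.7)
For part (i), I would observe that the inclusion $\iota\colon H\hookrightarrow G$ is, by the definition of a Lie subgroup, a morphism of Lie groups. By the functoriality theorem stated immediately before the proposition, $d\iota_1\colon \Lie H\to \Lie G$ is a Lie algebra morphism; it is injective because $\iota$ is an immersion. Its image is $\Lie H$ viewed inside $\Lie G$, which is therefore closed under $[\,,\,]$.

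For part (ii), assume $H$ is normal in $G$. Then for each $g\in G$, conjugation $\Ad_g\colon G\to G$ restricts to a Lie group automorphism of $H$, so differentiating at $1$ gives $\Ad_g(\Lie H)\subset\Lie H$. Now take $g=\exp(tx)$ for $x\in\Lie G$ and use the identity $\Ad_{\exp(tx)}=e^{t\,\ad x}$ proved earlier. For $y\in\Lie H$ this gives $e^{t\,\ad x}(y)\in\Lie H$ for every $t$; differentiating at $t=0$ yields $[x,y]=\ad x(y)\in\Lie H$, which is the ideal condition.

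Part (iii) is the substantive direction, and I would split it into a local computation and a globalization. For the local step, I claim that for every $x\in\Lie G$ and $y\in\Lie H$ one has $\exp(x)\exp(y)\exp(x)^{-1}\in H$. Indeed,
$$\exp(x)\exp(y)\exp(x)^{-1}=\exp\!\bigl(\Ad_{\exp(x)}y\bigr)=\exp\!\bigl(e^{\ad x}y\bigr),$$
and since $\Lie H$ is an ideal, each iterate $(\ad x)^n y$ lies in $\Lie H$, whence so does $e^{\ad x}y$. Moreover $\exp_G$ sends $\Lie H$ into $H$: for $z\in\Lie H$, the curve $t\mapsto\exp_G(tz)$ is the unique one-parameter subgroup of $G$ with velocity $z\in T_1H=\Lie H$ at $t=0$, and by uniqueness applied inside $H$ it coincides with $\exp_H(tz)\in H$. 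This proves the claim.

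For the globalization I would invoke Proposition~\ref{generati}(i): since $G$ is connected it is generated by any neighborhood of $1$, in particular by $\exp$ of a neighborhood of $0$ in $\Lie G$; likewise $H$ is generated by $\exp$ of a neighborhood of $0$ in $\Lie H$. Writing $g=\exp(x_1)\cdots\exp(x_m)$ and $h=\exp(y_1)\cdots\exp(y_k)$ with $x_i\in\Lie G$, $y_j\in\Lie H$, a telescoping argument (conjugating $h$ by $g$ one factor $\exp(x_i)$ at a time and applying the local claim to each $\exp(y_j)$) shows $ghg^{-1}\in H$. The same reasoning with $g$ replaced by $g^{-1}$ gives $g^{-1}Hg\subset H$, so $gHg^{-1}=H$ and $H$ is normal. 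The principal obstacle is precisely this globalization: one must transfer the infinitesimal ideal condition to a global conjugation-invariance statement, and the key enabler is the closed-form identity $\Ad_{\exp x}=e^{\ad x}$ together with the observation $\exp_G(\Lie H)\subset H$, after which connectedness does the rest.
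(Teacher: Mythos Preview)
Your proof is correct and, for parts (ii) and (iii), follows essentially the same path as the paper: differentiate the conjugation invariance to get $\Ad_g(\Lie H)\subset\Lie H$, then differentiate again in $g$; and for (iii), use the identity $\exp(x)\exp(y)\exp(x)^{-1}=\exp(e^{\ad x}y)$ together with the ideal hypothesis and the fact that $G$ and $H$ are generated by neighborhoods of the identity.

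The one genuine difference is in (i). The paper proves $[x,y]\in\Lie H$ directly from the limit formula
\[
[x,y]=\lim_{t,s\to 0}\frac{\log(\exp(tx)\exp(sy)\exp(-tx)\exp(-sy))}{ts},
\]
observing that the expression under the logarithm lies in $H$. You instead invoke the functoriality theorem (that $\phi_*$ is a Lie algebra map for any Lie group morphism $\phi$) applied to the inclusion $\iota\colon H\hookrightarrow G$. Your route is cleaner and conceptually the ``right'' argument once functoriality is available; the paper's version is more hands-on and self-contained. Your added justification in (iii) that $\exp_G(\Lie H)\subset H$ (via uniqueness of one-parameter subgroups, or equivalently because $\exp$ commutes with the morphism $\iota$) is a nice touch that the paper leaves implicit.
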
 

\begin{proof} (i) If $x,y\in \h$ then $\exp(tx),\exp(sy)\in H$, so by Proposition \ref{commprop}(iv)
$$
[x,y]=\lim_{t,s\to 0}\frac{\log(\exp(tx)\exp(sy)\exp(-tx)\exp(-sy))}{ts}\in \h. 
$$

(ii) We have $ghg^{-1}\in H$ for $g\in G$ and $h\in H$. Thus, 
taking $h=\exp(sy)$, $y\in \h$ and taking the derivative in $s$ at zero, 
we get ${\rm Ad}_g(y)\in \h$. Now taking $g=\exp(tx)$, $x\in \g$ and 
taking the derivative in $t$ at zero, by Proposition \ref{commprop}(iv) we get $[x,y]\in \h$, i.e., $\h$ is a Lie ideal. 

(iii) If $x\in \g$, $y\in \h$ are small then 
$$
\exp(x)\exp(y)\exp(x)^{-1}=
$$
$$
\exp({\rm Ad}_{\exp(x)}y)=\exp(\exp({\rm ad}x)y)=
\exp(\sum_{n=0}^{\infty}\tfrac{({\rm ad}x)^ny}{n!})\in H
$$
since $\sum_{n=0}^{\infty}\tfrac{({\rm ad}x)^ny}{n!}\in \h$. So $G$ acting on itself by conjugation maps a small neighborhood of $1$ in $H$ into $H$ (as $G$ is generated by its neighborhood of $1$ by Proposition \ref{generati}, since it is connected). But $H$ is also connected, so is generated by its neighborhood of $1$, again by Proposition \ref{generati}. Hence $H$ is normal. 
\end{proof} 

\subsection{The Lie algebra of vector fields} 

Recall that a vector field on a manifold $X$ is a compatible family of derivations 
$\bold v: O(U)\to O(U)$ for open subsets $U\subset X$. 

\begin{proposition} If $\bold v,\bold w$ are derivations of an algebra $A$ then so is $[\bold v,\bold w]:=\bold v\bold w-\bold w\bold v$. 
\end{proposition}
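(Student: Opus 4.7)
The plan is to verify directly that $[\mathbf{v},\mathbf{w}] = \mathbf{v}\mathbf{w} - \mathbf{w}\mathbf{v}$ satisfies the Leibniz rule on arbitrary $f,g \in A$. The key point is that, since $\mathbf{v}$ and $\mathbf{w}$ are a priori only linear maps (not algebra homomorphisms), the composition $\mathbf{v}\mathbf{w}$ is generally not a derivation because applying Leibniz twice produces second-order ``cross terms'' of the form $\mathbf{v}(f)\mathbf{w}(g) + \mathbf{w}(f)\mathbf{v}(g)$; but these cross terms are symmetric in $\mathbf{v}$ and $\mathbf{w}$ and therefore cancel in the commutator.

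Concretely, first I would expand $\mathbf{v}\mathbf{w}(fg)$ by applying the Leibniz rule for $\mathbf{w}$ to $fg$, then the Leibniz rule for $\mathbf{v}$ to each resulting summand, obtaining four terms: $\mathbf{v}\mathbf{w}(f)\,g + \mathbf{w}(f)\mathbf{v}(g) + \mathbf{v}(f)\mathbf{w}(g) + f\,\mathbf{v}\mathbf{w}(g)$. Next I would do the analogous expansion for $\mathbf{w}\mathbf{v}(fg)$, giving $\mathbf{w}\mathbf{v}(f)\,g + \mathbf{v}(f)\mathbf{w}(g) + \mathbf{w}(f)\mathbf{v}(g) + f\,\mathbf{w}\mathbf{v}(g)$. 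Subtracting the two, the two middle cross terms match up and cancel, leaving exactly $[\mathbf{v},\mathbf{w}](f)\,g + f\,[\mathbf{v},\mathbf{w}](g)$, which is the Leibniz rule for $[\mathbf{v},\mathbf{w}]$.

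Bilinearity of $[\mathbf{v},\mathbf{w}]$ over the ground field is immediate from linearity of $\mathbf{v}$ and $\mathbf{w}$, so nothing else needs to be checked. There is no real obstacle here; the only thing to be careful about is bookkeeping the four terms correctly and observing that associativity of $A$ is used implicitly when writing expressions like $\mathbf{v}\mathbf{w}(f)\cdot g$ without parentheses. If the paper later wants to apply this to vector fields (sections of $TX$), the statement globalizes automatically because the derivations $\mathbf{v},\mathbf{w}: \mathcal{O}(U) \to \mathcal{O}(U)$ are compatible with restriction, and so is their commutator.
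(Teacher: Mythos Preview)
Your proof is correct and is essentially identical to the paper's: the paper also expands $(\mathbf{v}\mathbf{w}-\mathbf{w}\mathbf{v})(ab)$ by applying the Leibniz rule twice, obtains the same four-plus-four terms, and notes that the cross terms $\mathbf{w}(a)\mathbf{v}(b)+\mathbf{v}(a)\mathbf{w}(b)$ cancel. The only cosmetic difference is that the paper uses $a,b$ for the algebra elements rather than $f,g$.
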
 

\begin{proof} We have 
$$
(\bold v\bold w-\bold w\bold v)(ab)=\bold v (\bold w(a)b+a\bold w(b))-\bold w (\bold v(a)b+a\bold v(b))=
$$
$$
\bold v\bold w(a)b+\bold w(a)\bold v(b)+\bold v(a)\bold w(b)+a\bold v\bold w(b)
$$
$$
-\bold w\bold v(a)b-\bold v(a)\bold w(b)-\bold w(a)\bold v(b)-a\bold w\bold v(b)=
$$
$$
(\bold v\bold w-\bold w\bold v)(a)b+a(\bold v\bold w-\bold w\bold v)(b).
$$
\end{proof} 

Thus, the space ${\rm Vect}(X)$ of vector fields on $X$ is a Lie algebra 
under the operation 
$$
\bold v,\bold w\mapsto [\bold v,\bold w],
$$ 
called the {\bf Lie bracket of vector fields}.\footnote{Note that this Lie algebra is infinite dimensional for all real manifolds and many (but not all) complex manifolds of positive dimension.}

In local coordinates we have 
$$
\bold v=\sum_i v_i\frac{\partial}{\partial x_i},\ \bold w=\sum w_j\frac{\partial}{\partial x_j},
$$
so
$$
[\bold v,\bold w]=\sum_i\left(\sum_j (v_j\tfrac{\partial w_i}{\partial x_j}- w_j\tfrac{\partial v_i}{\partial x_j})\right)\tfrac{\partial}{\partial x_i}.
$$
This implies that if vector fields $\bold v,\bold w$ are tangent to a $k$-dimensional submanifold $Y\subset X$ then so is their Lie bracket 
$[\bold v,\bold w]$. Indeed, in local coordinates $Y$ is given by equations 
$x_{k+1}=...=x_n=0$, and in such coordinates a vector field is tangent to $Y$ iff it does not contain terms with $\tfrac{\partial}{\partial x_j}$ for $j>k$. 

\begin{exercise} Let $U\subset \Bbb R^n$ be an open subset, $\bold v,\bold w\in \Vect(U)$ and $g_t,h_t$ be the associated flows, defined in 
a neighborhood of every point of $U$ for small $t$. 
Show that for any $\bold x\in U$
$$
\lim_{t,s\to 0}\frac{g_th_sg_t^{-1}h_s^{-1}(\bold x)-\bold x}{ts}=[\bold v,\bold w](\bold x).
$$
\end{exercise} 

Now let $G$ be a Lie group and $\Vect_L(G),\Vect_R(G)\subset \Vect(G)$ be the subspaces of left and right invariant vector fields. 

\begin{proposition} $\Vect_L(G),\Vect_R(G)\subset \Vect(G)$ are Lie subalgebras which are both canonically isomorphic to $\g={\rm Lie}G$. 
\end{proposition}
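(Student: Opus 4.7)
The vector-space isomorphisms $\g \cong \Vect_L(G)$ and $\g \cong \Vect_R(G)$ are already furnished by Proposition \ref{spre}(i): the evaluation at $1 \in G$ has inverses $\tau \mapsto \bold L_\tau$ and $\tau \mapsto \bold R_\tau$ respectively. So the proposition really reduces to two separate claims: (a) that $\Vect_L(G)$ and $\Vect_R(G)$ are closed under the Lie bracket of vector fields, and (b) that under these identifications the bracket of vector fields corresponds to the bracket on $\g$ from Proposition \ref{commprop}.

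For (a), I would appeal to the naturality of the bracket of vector fields under diffeomorphisms: if $F : X \to X$ is a diffeomorphism then $F_*[\bold v, \bold w] = [F_*\bold v, F_*\bold w]$, as both sides act on $f \in O(U)$ by the commutator of derivations of the pulled-back algebra. Applied to $F = L_g$, left-invariance of $\bold v$ and $\bold w$ forces $(L_g)_* [\bold v, \bold w] = [\bold v, \bold w]$, so $[\bold v, \bold w] \in \Vect_L(G)$; the same argument with $F = R_{g^{-1}}$ gives the statement for $\Vect_R(G)$.

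For (b), I would show $[\bold L_x, \bold L_y] = \bold L_{[x,y]}$. Both sides are left-invariant vector fields (by (a) and Proposition \ref{spre}), so it suffices to verify equality at $1 \in G$, namely $[\bold L_x, \bold L_y](1) = [x,y]$. The flow of $\bold L_x$ at $g$ is $g \exp(tx)$, as one checks by differentiating $g\exp((t+s)x) = L_g(\exp(tx)) \cdot \exp(sx)$ in $s$ at $s = 0$ and using left-invariance of $\bold L_x$ together with $\gamma_x'(0) = x$; thus the flows of $\bold L_x$ and $\bold L_y$ are right translations by $\exp(tx)$ and $\exp(sy)$. Plugging these into the flow-commutator formula from the exercise preceding this proposition reduces $[\bold L_x, \bold L_y](1)$ to the limit, as $s,t \to 0$, of $(ts)^{-1}\log$ of an expression of the form $\exp(\pm tx)\exp(\pm sy)\exp(\mp tx)\exp(\mp sy)$ evaluated at $1$, which by Proposition \ref{commprop}(iv) is precisely $[x, y]$ (up to the sign fixed by the orientation of the formula). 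The argument for $\bold R_x$ is entirely parallel, noting that its flow is left translation $L_{\exp(tx)}$.

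The main obstacle is sign bookkeeping. The flows of left- and right-invariant vector fields translate on opposite sides, so the four-fold product of exponentials produced by the flow-commutator formula appears with different orderings in the two cases; consequently the induced brackets on $\Vect_L(G)$ and $\Vect_R(G)$ are naturally opposite to one another, and only one of them literally equals $[,]_\g$ while the other equals $-[,]_\g$. The phrase ``canonically isomorphic to $\g$'' must therefore be read with an implicit sign convention (for instance, identifying $\Vect_R(G)$ with $\g$ via $\tau \mapsto \bold R_{-\tau}$, or equivalently using the canonical involution $x \mapsto -x$ of $\g$ with $\g^{\mathrm{op}}$). Once these signs are reconciled, the claimed isomorphisms of Lie algebras follow immediately from the flow computation.
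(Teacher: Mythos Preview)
Your argument is correct and takes a genuinely different route from the paper's. The paper does not invoke the flow-commutator exercise or Proposition~\ref{commprop}(iv); instead it computes $[\bold L_x,\bold L_y](1)(f)$ directly as a mixed second partial derivative, writing $(\bold L_uf)(g)=\tfrac{d}{dt}|_{t=0}f(g\exp(tu))$ and then expanding $f(\exp(tx)\exp(sy))-f(\exp(sy)\exp(tx))$ via $F(\mu(tx,sy))-F(\mu(sy,tx))$ with $F=f\circ\exp$ and $\mu(a,b)=a+b+\tfrac12[a,b]+\ldots$. A short Taylor expansion then isolates $[x,y](f)$. Your approach trades this explicit second-order computation for two black boxes already available in the notes (the flow-commutator limit and Proposition~\ref{commprop}(iv)), which is more conceptual and explains \emph{why} the two brackets agree: both are infinitesimal group commutators. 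The paper's approach is more self-contained and avoids the delicate sign bookkeeping you flagged; indeed the paper simply records at the end that the right-invariant isomorphism is $x\mapsto -\bold R_x$, exactly as you anticipated. One small caution: the flow-commutator exercise is stated for open sets in $\Bbb R^n$, so you should note it passes to manifolds via charts; and since that exercise's sign convention is itself easy to get backwards, your hedge ``up to the sign fixed by the orientation of the formula'' is prudent but would need to be resolved explicitly in a final write-up.
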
  

\begin{proof} The first statement is obvious, so we prove only the second statement. 
Let $\bold x, \bold y\in \Vect_L(G)$. 
Then $\bold x=\bold L_x$, $\bold y=\bold L_y$ for 
$x=\bold x(1),y=\bold y(1)\in \g$, where $\bold L_z$ 
denotes the vector field on $G$ obtained by left translations of $z\in \g$. 
Then $[\bold L_x,\bold L_y]=\bold L_z$, where 
$z=[\bold L_x,\bold L_y](1)$. So let us compute $z$. 

Let $f$ be a regular function on a neighborhood of $1\in G$. We have shown that for $u\in \g$ 
$$
(\bold L_uf)(g)=\tfrac{d}{dt}|_{t=0}f(g\exp(tu)). 
$$
Thus, 
$$
z(f)=x(\bold L_yf)-y(\bold L_xf)=x(\tfrac{\partial}{\partial s}|_{s=0}f(\bullet \exp(sy)))-y(\tfrac{\partial}{\partial t}|_{t=0}f(\bullet\exp(tx)))=
$$
$$
\tfrac{\partial}{\partial t}|_{t=0}\tfrac{\partial}{\partial s}|_{s=0}f(\exp(tx)\exp(sy))-\tfrac{\partial}{\partial s}|_{s=0}\tfrac{\partial}{\partial t}|_{t=0}
f(\exp(sy)\exp(tx))=
$$
$$
\tfrac{\partial^2}{\partial t\partial s}|_{t=s=0} (F(tx+sy+\tfrac{1}{2}ts[x,y]+...)-F(tx+sy-\tfrac{1}{2}ts[x,y]+...)), 
$$
where $F(u):=f(\exp(u))$. It is easy to see by using Taylor expansion that this expression 
equals to $[x,y](f)$. Thus $z=[x,y]$, i.e., the map $\g\to \Vect_L(G)$ given by 
$x\mapsto \bold L_x$ is a Lie algebra isomorphism. Similarly, the map 
$\g\to \Vect_R(G)$ given by $x\mapsto -\bold R_x$ is a Lie algebra isomorphism, as claimed.   
\end{proof} 

\section{\bf Fundamental theorems of Lie theory}

\subsection{Proofs of Theorem \ref{closedlie}, Proposition \ref{orstab}, Proposition \ref{firsthom}} \label{proofs}

Let $G$ be a Lie group with Lie algebra $\g$ and $X$ be a manifold with an action $a: G\times X\to X$. 
Then for any $z\in \g$ we have a vector field $a_*(z)$ on $X$ given by 
$$
(a_*(z)f)(x)=\tfrac{d}{dt}|_{t=0}f(\exp(-tz)x),
$$
where $t\in \Bbb R$, $f\in O(U)$ for some open set $U\subset X$ and $x\in U$. 

\begin{proposition}\label{linea} The map $a_*$ is linear and we have 
$$
a_*([z,w])=[a_*(z),a_*(w)].
$$
In other words, the map $a_*:\g\to \Vect(X)$ is a homomorphism of Lie algebras. 
\end{proposition}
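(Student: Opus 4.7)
Linearity of $a_*$ is immediate, since $(a_*(z)f)(x) = \tfrac{d}{dt}|_{t=0} f(\exp(-tz)\cdot x)$ depends linearly on $z\in\g$. For the bracket identity, the plan is to lift each $a_*(z)$ to a vector field on the product $G\times X$ and reduce the statement on $X$ to the already-established Lie algebra structure on right-invariant vector fields on $G$.

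For $z\in\g$, define $\tilde z\in\Vect(G\times X)$ by $\tilde z(g,x):=(-\bold R_z(g),0)\in T_gG\oplus T_xX$, where $\bold R_z$ is the right-invariant vector field on $G$ with flow $h\mapsto\exp(tz)h$. I first verify that $\tilde z$ is $a$-related to $a_*(z)$, i.e.\ $\tilde z(f\circ a) = (a_*(z)f)\circ a$ for every regular $f$ on an open subset of $X$; this follows from associativity of the action:
\[
\tilde z(g,x)(f\circ a) = \tfrac{d}{dt}\big|_{t=0} f\bigl(\exp(-tz)g\cdot x\bigr) = \tfrac{d}{dt}\big|_{t=0} f\bigl(\exp(-tz)\cdot(gx)\bigr) = (a_*(z)f)(gx).
\]
Next I invoke the standard fact that relatedness is preserved under Lie brackets: if $V_i\sim_\phi V_i'$ for $i=1,2$, then $[V_1,V_2]\sim_\phi[V_1',V_2']$. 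This is a one-line calculation in the derivation formulation: $[V_1,V_2](f\circ\phi) = V_1((V_2'f)\circ\phi) - V_2((V_1'f)\circ\phi) = ([V_1',V_2']f)\circ\phi$. In particular $[\tilde z,\tilde w]$ is $a$-related to $[a_*(z),a_*(w)]$.

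It remains to identify $[\tilde z,\tilde w]$. Since the $X$-components of $\tilde z,\tilde w$ vanish and their $G$-components depend only on the $G$-variable, the product formula gives $[\tilde z,\tilde w] = ([-\bold R_z,-\bold R_w],0)$. By the Proposition in Section 8 that $z\mapsto -\bold R_z$ is a Lie algebra isomorphism $\g\to\Vect_R(G)$, this equals $(-\bold R_{[z,w]},0)=\widetilde{[z,w]}$, which by the first step is $a$-related to $a_*([z,w])$. Since $a$ is surjective (indeed $a(1,x)=x$), two vector fields on $X$ that are both $a$-related to the same vector field on $G\times X$ must coincide, so $[a_*(z),a_*(w)] = a_*([z,w])$. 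The main bookkeeping challenge is keeping the sign conventions consistent — the $-t$ in $\exp(-tz)$, the minus in the lift $-\bold R_z$, and the fact that $z\mapsto -\bold R_z$ (rather than $z\mapsto\bold R_z$) is the Lie algebra isomorphism — but these signs line up coherently; indeed this alignment is essentially the reason for the minus sign in the definition of $a_*$.
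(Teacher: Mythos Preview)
Your proof is correct. The paper leaves this proposition as an exercise without giving any proof, so there is no argument in the text to compare against.

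Your approach via $a$-relatedness is clean and conceptual: lifting to $G\times X$ reduces the bracket identity on $X$ to the already-established fact (Section~8) that $z\mapsto -\bold R_z$ is a Lie algebra isomorphism $\g\to\Vect_R(G)$, and the sign bookkeeping is handled correctly throughout. A more hands-on alternative in the spirit of the surrounding text would be to compute $[a_*(z),a_*(w)]$ directly from flows---the flow of $a_*(z)$ is $x\mapsto\exp(-tz)\cdot x$, and one can invoke the exercise in Section~8 expressing the bracket of vector fields as $\lim_{t,s\to 0}\tfrac{g_th_sg_t^{-1}h_s^{-1}(x)-x}{ts}$ together with Proposition~\ref{commprop}(iv)---but your relatedness argument is cleaner and makes the dependence on the known structure of $\Vect_R(G)$ explicit.
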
 

\begin{exercise} Prove Proposition \ref{linea}.
\end{exercise}

This motivates the following definition. 

\begin{definition} An action of a Lie algebra $\g$ on a manifold $X$ is a homomorphism 
of Lie algebras $\g\to \Vect(X)$. 
\end{definition} 

Thus an action of a Lie group $G$ on $X$ induces an action of the Lie algebra $\g={\rm Lie}G$ on $X$. 

Now let $x\in X$. Then we have a linear map $a_{*x}: \g\to T_xX$ given by 
$a_{*x}(z):=a_*(z)(x)$.

\begin{theorem}\label{closedlie1} (i) The stabilizer $G_x$ is a closed subgroup of $G$ with Lie algebra 
$$
\g_x:={\rm Ker}(a_{*x}). 
$$

(ii) The map $G/G_x\to X$ given by $g\mapsto gx$ is an immersion. So the orbit $Gx$ is an immersed submanifold of $X$, and 
$$
T_x(Gx)\cong {\rm Im}(a_{*x})\cong \g/\g_x. 
$$ 
\end{theorem}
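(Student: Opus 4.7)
The plan is to analyze both parts through the orbit map $\alpha : G \to X$, $\alpha(g) = gx$, and apply the constant rank theorem. Two preliminary observations: $G_x = \alpha^{-1}(x)$ is closed in $G$; and a direct computation from the definition of $a_*$ gives, for $z \in \g$ and $f \in O_x$,
\[
(d\alpha_1(z))(f) = \tfrac{d}{dt}|_{t=0} f(\exp(tz)x) = -(a_*(z)f)(x),
\]
so $d\alpha_1 = -a_{*x}$; in particular $\Ker(d\alpha_1) = \g_x$, and the rank $r$ of $d\alpha_1$ equals $\dim \g - \dim \g_x$.

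The crucial observation is that $\alpha$ has constant rank $r$ on $G$. The equivariance $\alpha \circ L_g = \rho_g \circ \alpha$, where $\rho_g : X \to X$ denotes the action $y \mapsto gy$, yields by differentiation
\[
d\alpha_g = d\rho_g|_x \circ d\alpha_1 \circ (dL_g|_1)^{-1};
\]
since $d\rho_g|_x$ and $dL_g|_1$ are isomorphisms, $\rank d\alpha_g = r$ for every $g \in G$. Next, for $z \in \g_x$ the curve $\gamma(t) := \exp(tz) \cdot x$ is the integral curve through $x$ of the vector field $-a_*(z)$ on $X$; since $a_*(z)(x) = 0$, the constant curve at $x$ is also such an integral curve, and ODE uniqueness forces $\gamma \equiv x$. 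Hence $\exp(\g_x) \subset G_x$.

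By the constant rank theorem, in suitable local coordinates near $1 \in G$ and $x \in X$ the map $\alpha$ is a linear projection of rank $r$, so $\alpha^{-1}(x)$ is a submanifold of dimension $\dim G - r = \dim \g_x$ near $1$. On the other hand $\exp$ carries a small neighborhood $V$ of $0$ in $\g_x$ diffeomorphically onto a submanifold of $G_x$ of the same dimension; two submanifolds through $1$ of equal dimension, one contained in the other, must agree in a neighborhood of $1$ (by the inverse function theorem applied to the inclusion). Hence near $1$, $G_x = \exp(V)$, and left translation makes $G_x$ an embedded submanifold of $G$ globally, with $T_1 G_x = \g_x$, proving (i).

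For (ii), the induced map $\bar\alpha : G/G_x \to X$ is well-defined and injective by definition of the orbit. Its differential at $\bar 1$ is the map $\g/\g_x \to T_x X$ induced by $d\alpha_1 = -a_{*x}$, whose kernel is exactly $\g_x$; hence this induced map is injective with image $\Im(a_{*x})$. Equivariance extends this injectivity to every point of $G/G_x$, so $\bar\alpha$ is an injective immersion, its image $Gx$ is an immersed submanifold, and $T_x(Gx) \cong \Im(a_{*x}) \cong \g/\g_x$. The main obstacle is the third paragraph: verifying that no element of $G_x$ near $1$ escapes $\exp(\g_x)$, which is exactly where the constant rank theorem does its essential work; the rest is equivariant bookkeeping.
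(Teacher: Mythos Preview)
Your proof is correct, and the overall architecture matches the paper's: both show $\exp(\g_x)\subset G_x$ via the ODE uniqueness argument, then establish that $G_x$ is a submanifold near $1$ with tangent space $\g_x$, and finally pass to the quotient for part (ii).

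The packaging differs in one interesting way. The paper picks a linear complement $\mathfrak{u}$ to $\g_x$ in $\g$, observes that $u\mapsto \exp(u)x$ is injective near $0$ (since its differential $a_{*x}|_{\mathfrak{u}}$ is injective), and then uses the product chart $(u,z)\mapsto \exp(u)\exp(z)$ to see directly that $G_x\cap U=\exp(\g_x)\cap U$ for a small neighborhood $U$ of $1$. You instead invoke the constant rank theorem, using the equivariance relation $\alpha\circ L_g=\rho_g\circ\alpha$ to propagate the rank of $d\alpha$ from $1$ to every point of $G$; this gives the submanifold structure of $\alpha^{-1}(x)$ for free, after which a dimension count against $\exp(\g_x)$ finishes. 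Your route is slightly more conceptual (and makes the role of the group structure, via equivariance, more visible), while the paper's is more hands-on and avoids quoting a theorem. Both are standard and essentially equivalent in difficulty.
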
 

Part (i) of Theorem \ref{closedlie1} is the promised weaker version of Theorem \ref{closedlie} 
sufficient for our purposes. Also, part (ii) implies Proposition \ref{orstab}. 

\begin{proof} (i) It is clear that $G_x$ is closed in $G$, but we need to show it is a Lie subgroup 
and compute its Lie algebra.\footnote{Although we claimed in Theorem \ref{closedlie} that a closed subgroup of a Lie group is always a Lie subgroup, we did not prove it, so we need to prove it in this case.} It suffices to show that for some neighborhood $U$ of $1$ in $G$, $U\cap G_x$ 
is a (closed) submanifold of $U$ such that $T_1(U\cap G_x)=\g_x$. 

Note that $\g_x\subset \g$ is a Lie subalgebra, since the commutator of vector fields vanishing at $x$ also vanishes at $x$ (by the formula for commutator in local coordinates). Also, for any $z\in \g_x$, $\exp(-tz)x$ is a solution of the ODE $\gamma'(t)=a_{*\gamma(t)}(z)$ with initial condition $\gamma(0)=x$, and $\gamma(t)=x$ is such a solution, so by uniqueness of ODE solutions 
$\exp(-tz)x=x$, thus $\exp(-tz)\in G_x$. 

Now choose a complement $\mathfrak{u}$ of $\g_x$ in $\g$, so that $\g=\g_x\oplus \mathfrak{u}$. 
Then $a_{*x}: \mathfrak{u}\to T_xX$ is injective. By the implicit function theorem, the map $\mathfrak{u}\to X$ given by $u\mapsto\exp(u)x$ is injective for small $u$, so $\exp(u)\in G_x$ for small $u\in \mathfrak{u}$ if and only if $u=0$. 

But in a small neighborhood $U$ of $1$ in $G$, any element $g$ can be uniquely written as $g=\exp(u)\exp(z)$, where $u\in \mathfrak{u}$ and $z\in \g_x$; this follows from the inverse function theorem applied to the map $\mathfrak u\times \g_x \to G,\ (u,z)\mapsto \exp(u)\exp(z)$,
whose differential at $(0,0)$ is an isomorphism. So we see that $g\in G_x$ iff $u=0$, i.e., $\log(g)\in \g_x$. This shows that $U\cap G_x$ coincides with $U\cap \exp(\g_x)$, as desired. 

(ii) The same proof shows that we have an isomorphism
$T_1(G/G_x)\cong \g/\g_x=\mathfrak{u}$, so the injectivity of $a_{*x}: \mathfrak{u}\to T_xX$ 
implies that the map $G/G_x\to X$ given by $g\mapsto gx$ is an immersion, as claimed. 
\end{proof} 

\begin{corollary} (Proposition \ref{firsthom}) 
Let $\phi: G\to K$ be a morphism of Lie groups and $\phi_*: {\rm Lie}G\to {\rm Lie}K$ 
be the corresponding morphism of Lie algebras. Then $H:={\rm Ker}(\phi)$ is a closed normal Lie subgroup with Lie algebra $\h:={\rm Ker}(\phi_*)$, and the map $\overline{\phi}: G/H\to K$ is an immersion. Moreover, if ${\rm Im}\overline{\phi}$ is a submanifold of $K$ then it is a closed Lie subgroup, and 
we have an isomorphism of Lie groups $\overline{\phi}: G/H\cong {\rm Im}\overline{\phi}$. 
\end{corollary}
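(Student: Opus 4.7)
The plan is to reduce everything to Theorem \ref{closedlie1} by exhibiting $H$ as a stabilizer and $\mathrm{Im}\,\phi$ as an orbit of a suitable $G$-action on $K$. Specifically, define a regular left action $a \colon G \times K \to K$ by $a(g,k) := \phi(g)k$. Since $\phi$ is a Lie group homomorphism and multiplication in $K$ is regular, this action is regular. The stabilizer of the point $1 \in K$ is precisely
$$
G_1 = \{g \in G : \phi(g)\cdot 1 = 1\} = \mathrm{Ker}(\phi) = H,
$$
and the orbit of $1$ is $G\cdot 1 = \phi(G) = \mathrm{Im}\,\phi$.

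Next I would compute the infinitesimal action $a_{*1} \colon \g \to T_1 K = \mathrm{Lie}(K)$. For $z \in \g$, using that $\phi$ commutes with exponentials,
$$
a_{*1}(z) = \tfrac{d}{dt}\big|_{t=0} \phi(\exp(-tz))\cdot 1 = \tfrac{d}{dt}\big|_{t=0} \exp(-t\,\phi_* z) = -\phi_* z.
$$
Hence $\mathrm{Ker}(a_{*1}) = \mathrm{Ker}(\phi_*) = \h$. Now Theorem \ref{closedlie1}(i) applied to this action gives that $H = G_1$ is a closed Lie subgroup of $G$ with $\mathrm{Lie}(H) = \h$; it is normal since it is the kernel of a group homomorphism. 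Theorem \ref{closedlie1}(ii) then says that the induced map $G/H \to K$, $gH \mapsto \phi(g)$, which is exactly $\overline{\phi}$, is an injective immersion, and its image is the immersed submanifold $\mathrm{Im}\,\phi = \mathrm{Im}\,\overline{\phi}$.

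For the final clause, suppose $\mathrm{Im}\,\overline{\phi}$ happens to be an embedded submanifold of $K$. By the definition of closed Lie subgroup (a subgroup which is an embedded submanifold) together with Lemma \ref{l2}, $\mathrm{Im}\,\overline{\phi}$ is then automatically a closed Lie subgroup of $K$. It remains to upgrade the bijective immersion $\overline{\phi}\colon G/H \to \mathrm{Im}\,\overline{\phi}$ to an isomorphism of Lie groups. The domain and target have the same dimension (both equal to $\dim \mathrm{Im}\,\overline{\phi}$, since $\overline{\phi}$ is an immersion and a bijection between them), so the injective differential $d\overline{\phi}_{\bar g}$ is an isomorphism at every point $\bar g$; the inverse function theorem then shows $\overline{\phi}$ is a local diffeomorphism, and being a bijective local diffeomorphism it is a diffeomorphism. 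Combined with the fact that it is a group homomorphism, this gives the desired isomorphism of Lie groups.

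The only step that requires any real care is the computation of $a_{*1}$ and the identification of its kernel with $\h$; once that is in hand, the rest is an application of Theorem \ref{closedlie1} and routine dimension-counting via the inverse function theorem. The hypothesis that $\mathrm{Im}\,\overline{\phi}$ be an embedded submanifold cannot be dropped (cf. the irrational winding of the torus in Example \ref{wind}(2)), which is why the second half of the statement is conditional.
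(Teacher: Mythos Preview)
Your proof is correct and follows exactly the paper's approach: apply Theorem \ref{closedlie1} to the action $g\circ k = \phi(g)k$ of $G$ on $K$ with $x=1$. You have simply filled in the details (the computation of $a_{*1}$, normality of $H$, and the inverse-function-theorem step for the final clause) that the paper's one-sentence proof leaves implicit.
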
 

\begin{proof} Apply Theorem \ref{closedlie1} to the action of $G$ on $X=K$ via $g\circ k=\phi(g)k$, and 
take $x=1$. 
\end{proof} 

\begin{corollary} Let $V$ be a finite dimensional representation of a Lie group $G$, and $v\in V$. Then 
the stabilizer $G_v$ is a closed Lie subgroup of $G$ with Lie algebra $\g_v:=\lbrace z\in \g: zv=0\rbrace$. 
\end{corollary}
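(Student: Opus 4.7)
The plan is to invoke Theorem \ref{closedlie1} directly for the action $a \colon G \times V \to V$ coming from the representation $\pi_V \colon G \to GL(V)$, and then spend the remainder of the argument identifying the abstract kernel $\Ker(a_{*v})$ with the concrete subspace $\g_v = \lbrace z \in \g : zv = 0\rbrace$. Since every piece of structure we need has already been built, there is no deep obstacle; the only thing to be careful about is the canonical identification $T_v V \cong V$ and the sign conventions in the definition of $a_*$.

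First I would observe that $V$, being a finite dimensional $\Bbb K$-vector space, is naturally a $\Bbb K$-manifold, and the action $a(g,x) := \pi_V(g)x$ is regular because it factors as the regular map $\pi_V \times \id_V \colon G \times V \to GL(V) \times V$ composed with the (bi)linear evaluation map $GL(V) \times V \to V$. Thus Theorem \ref{closedlie1}(i) applies and gives immediately that $G_v$ is a closed Lie subgroup of $G$ with $\Lie(G_v) = \Ker(a_{*v})$.

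Second, I would compute $a_{*v} \colon \g \to T_v V$ explicitly. Using $\pi_V(\exp(tz)) = \exp(t \pi_{V*}(z))$ (the naturality of $\exp$ with respect to Lie group homomorphisms) and the defining formula for $a_*$, for any $f \in O_v$ we have
\[
(a_{*v}(z))(f) = \frac{d}{dt}\Big|_{t=0} f(\exp(-t z) v) = \frac{d}{dt}\Big|_{t=0} f\bigl(\exp(-t \pi_{V*}(z)) v\bigr).
\]
Under the canonical isomorphism $T_v V \cong V$ sending $w \in V$ to the derivation $f \mapsto \tfrac{d}{dt}\big|_{t=0} f(v + tw)$, the right hand side is the derivation associated with the vector $-\pi_{V*}(z) v = -zv \in V$. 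Hence $a_{*v}(z) = 0$ if and only if $zv = 0$, so $\Ker(a_{*v}) = \g_v$.

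Combining the two steps yields the statement. The potentially tricky point, which I would highlight as the only real thing to verify, is the sign computation and the identification $T_v V \cong V$ in the second step; once those are pinned down, the corollary is just a transcription of Theorem \ref{closedlie1}(i) applied to the linear action on $V$.
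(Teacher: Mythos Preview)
Your proposal is correct and follows exactly the route the paper intends: the corollary is stated without proof, as an immediate application of Theorem~\ref{closedlie1}(i) to the linear action of $G$ on $V$, and your computation of $a_{*v}$ (including the sign, which indeed does not affect the kernel) is the only content left to supply.
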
 

\begin{example} 
Let $A$ be a finite dimensional algebra (not necessarily associative, e.g. a Lie algebra). 
Then the group $G={\rm Aut}(A)\subset GL(A)$ is a closed Lie subgroup 
with Lie algebra ${\rm Der}(A)\subset \End(A)$ of derivations of $A$, i.e., linear maps 
$d: A\to A$ such that 
$$
d(ab)=d(a)\cdot b+a\cdot d(b).
$$
Indeed, consider the action of $GL(A)$ on $\Hom(A\otimes A,A)$. 
Then $G=G_\mu$ where $\mu: A\otimes A\to A$ is the multiplication  map. 
Also, if $g_t$ is a smooth family of automorphisms of $A$ 
such that $g_0={\rm id}$ (i.e., $g_t(ab)=g_t(a)g_t(b)$) and $d=\frac{d}{dt}|_{t=0}g_t$ then 
$d(ab)=d(a)\cdot b+a\cdot d(b)$,
and conversely, if $d$ is a derivation then $g_t:=\exp(td)$ is an automorphism. 
\end{example} 

\subsection{The center of $G$ and $\g$} 

Let $G$ be a Lie group with Lie algebra $\g$ and $Z=Z(G)$ the center of $G$, i.e. the set of $z\in G$ such that $zg=gz$ for all $g\in G$. 
Also let $\mathfrak{z}=\mathfrak{z}(\g)$ be the set of $x\in \g$ such that $[x,y]=0$ for all $y\in \g$; it is called the {\bf center} of $\g$. 

\begin{proposition} If $G$ is connected then $Z$ is a closed (normal, commutative) Lie subgroup of $G$ with Lie algebra $\mathfrak{z}$. 
\end{proposition}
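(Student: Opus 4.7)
The plan is to realize $Z$ as the kernel of the adjoint representation and then apply the general theory of kernels of Lie group homomorphisms (Corollary to Theorem \ref{closedlie1}, i.e., Proposition \ref{firsthom}).

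First I would identify $Z(G) = \mathrm{Ker}(\mathrm{Ad})$, where $\mathrm{Ad}: G \to GL(\mathfrak{g})$ is the adjoint representation. The inclusion $Z(G) \subseteq \mathrm{Ker}(\mathrm{Ad})$ is immediate: if $g \in Z(G)$ then conjugation by $g$ is the identity on $G$, so its differential $\mathrm{Ad}_g$ is the identity on $\mathfrak{g}$. For the reverse inclusion, suppose $\mathrm{Ad}_g = \mathrm{Id}$. Then for every $x \in \mathfrak{g}$, using that the exponential map commutes with conjugation, $g \exp(x) g^{-1} = \exp(\mathrm{Ad}_g x) = \exp(x)$, so $g$ commutes with every element of $\exp(\mathfrak{g})$. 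Since $\exp$ is a diffeomorphism of a neighborhood of $0 \in \mathfrak{g}$ onto a neighborhood $U$ of $1 \in G$, and $U$ generates $G$ by Proposition \ref{generati} (here is where connectedness of $G$ is used), $g$ commutes with every element of $G$, i.e., $g \in Z(G)$.

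Next, $\mathrm{Ad}$ is a morphism of Lie groups, so by the corollary of Theorem \ref{closedlie1} (Proposition \ref{firsthom}), its kernel $Z = \mathrm{Ker}(\mathrm{Ad})$ is a closed normal Lie subgroup of $G$, with Lie algebra $\mathrm{Ker}(\mathrm{Ad}_*) = \mathrm{Ker}(\mathrm{ad})$, where the identification $\mathrm{Ad}_* = \mathrm{ad}$ was established in Proposition \ref{commprop}(iv). Finally, $\mathrm{Ker}(\mathrm{ad}) = \{x \in \mathfrak{g} : \mathrm{ad}(x) = 0\} = \{x \in \mathfrak{g} : [x,y] = 0 \text{ for all } y \in \mathfrak{g}\} = \mathfrak{z}(\mathfrak{g})$ by definition. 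Commutativity of $Z$ and its being normal are clear from the definition of the center, but both also drop out of the fact that $Z$ is a Lie subgroup with abelian Lie algebra $\mathfrak{z}$.

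The main conceptual step is the inclusion $\mathrm{Ker}(\mathrm{Ad}) \subseteq Z(G)$: it is the place where one must use that $G$ is connected, since for a disconnected group one can only conclude that elements of the kernel commute with the identity component, not with all of $G$. Everything else reduces to bookkeeping with the already-established fundamental theorems.
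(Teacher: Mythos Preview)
Your proof is correct and follows essentially the same approach as the paper: identify $Z=\mathrm{Ker}(\mathrm{Ad})$ using connectedness (via Proposition~\ref{generati}), then apply Proposition~\ref{firsthom} to conclude that $Z$ is a closed normal Lie subgroup with Lie algebra $\mathrm{Ker}(\mathrm{ad})=\mathfrak{z}$. Your write-up is simply a more detailed version of the paper's argument.
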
 

\begin{proof} Since $G$ is connected, an element $g\in G$ belongs to $Z$ iff it commutes with $\exp(tu)$ for all $u\in \g$, i.e., iff ${\rm Ad}_g(u)=u$. Thus $Z={\rm Ker}({\rm Ad})$, where 
${\rm Ad}: G\to GL(\g)$ is the adjoint representation. 
Thus by Proposition \ref{firsthom}, $Z\subset G$ is a closed Lie subgroup with 
Lie algebra ${\rm Ker}({\rm ad})$, as claimed.  
\end{proof} 

\begin{remark} In general (when $G$ is not necessarily connected), it is easy to show that $G/G^\circ$ acts on $\mathfrak{z}$, and $Z$ is a closed Lie subgroup of $G$ with Lie algebra $\mathfrak{z}^{G/G^\circ}$ 
(the subspace of invariant vectors). 
\end{remark}

\begin{definition} For a connected Lie group $G$, the group $G/Z(G)$ is called the {\bf adjoint group} of $G$.
\end{definition} 

It is clear that $G/Z(G)$ is naturally isomorphic to the image of the adjoint representation 
${\rm Ad}: G\to GL(\g)$, which motivates the terminology. 

\subsection{The statements of the fundamental theorems of Lie theory} 

\begin{theorem}\label{first} (First fundamental theorem of Lie theory) For a Lie group $G$, there is a bijection between connected Lie subgroups $H\subset G$ and Lie subalgebras $\h\subset \g={\rm Lie}G$, given by 
$\h={\rm Lie}H$. 
\end{theorem}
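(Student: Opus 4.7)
The plan is to exhibit the map $H \mapsto \mathrm{Lie}\,H$ as a bijection by proving injectivity and surjectivity separately. That $\mathrm{Lie}\,H$ is indeed a Lie subalgebra of $\mathfrak{g}$ for any Lie subgroup $H$ has already been established, so the map is well defined.

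For injectivity, suppose $H_1,H_2 \subset G$ are connected Lie subgroups with $\mathrm{Lie}\,H_1 = \mathrm{Lie}\,H_2 = \mathfrak{h}$. Since $\exp$ commutes with the inclusion $H_i \hookrightarrow G$, the image $\exp(\mathfrak{h})$ lies in both $H_1$ and $H_2$, and contains an open neighborhood of $1$ in each (in the intrinsic topology of $H_i$, coming from the immersed manifold structure). By Proposition \ref{generati}, each $H_i$ is generated by such a neighborhood, so $H_1 = H_2$ as subsets of $G$. The manifold structures agree as well, since in both cases local charts near $1$ are given by $\exp$ on a small open set in $\mathfrak{h}$.

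For surjectivity, given a Lie subalgebra $\mathfrak{h} \subset \mathfrak{g}$, I propose to use the Frobenius integrability theorem. Translate $\mathfrak{h}$ left-invariantly to obtain a rank-$(\dim \mathfrak{h})$ distribution $D \subset TG$ with $D_g = (L_g)_* \mathfrak{h}$; this is smooth since it is globally spanned by the left-invariant vector fields $\mathbf{L}_{x_1},\dots,\mathbf{L}_{x_k}$ attached to any basis $x_1,\dots,x_k$ of $\mathfrak{h}$. Because the map $x \mapsto \mathbf{L}_x$ is a Lie algebra homomorphism $\mathfrak{g} \to \mathrm{Vect}(G)$ and $\mathfrak{h}$ is closed under $[,]$, the bracket of any two sections of $D$ taken from these generators again lies in $D$, so $D$ is involutive. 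Frobenius then produces through each $g \in G$ a unique maximal connected integral immersed submanifold; let $H$ be the leaf through $1$, equipped with its intrinsic manifold structure (of dimension $\dim \mathfrak{h}$), so that $T_1 H = \mathfrak{h}$.

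It remains to check $H$ is a subgroup and that multiplication is regular. For $h \in H$ the diffeomorphism $L_{h^{-1}}: G \to G$ preserves the distribution $D$ (since $D$ is left-invariant), hence sends leaves to leaves; as it maps $h$ to $1$, it must send the leaf $H$ to itself, giving $h^{-1} H = H$. This yields $h^{-1} \in H$ and $H \cdot H \subset H$, so $H$ is a subgroup. The multiplication $H \times H \to H$ is regular because it is the restriction of the regular map $G \times G \to G$ to an immersed submanifold and lands in $H$; regularity in the intrinsic topology of $H$ then follows from the fact that a smooth map into an integral leaf of an involutive distribution is automatically smooth into the leaf (a standard consequence of Frobenius).

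The main obstacle is the Frobenius step, both conceptually (it is the only non-elementary input) and technically: one must verify that the leaf $H$ really is a \emph{group} object (not just a manifold through $1$) and that the resulting immersion is compatible with the Lie group structure. The key leverage making this work is the left-invariance of $D$, which forces leaves to be translates of each other and thereby converts the integrability of a distribution into the existence of a subgroup. A secondary subtlety is that $H$ need only be an immersed, not embedded, submanifold, as the irrational torus winding in Example \ref{wind}(2) already demonstrates at the level of the theorem's statement.
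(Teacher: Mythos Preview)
Your proof is correct and follows essentially the same route as the paper: build the left-invariant distribution $D_g=(L_g)_*\mathfrak h$, check involutivity via $[\mathbf L_{x_i},\mathbf L_{x_j}]\in D$, invoke Frobenius, and take $H$ to be the leaf through $1$. The only cosmetic difference is in verifying that $H$ is a subgroup: the paper observes directly that $H$ consists of all products $\exp(a_1)\cdots\exp(a_m)$ with $a_i\in\mathfrak h$, whereas you argue via left-invariance of $D$ that each $L_{h^{-1}}$ permutes leaves and fixes the leaf through $1$; these are equivalent, and your treatment of the regularity of multiplication is in fact more explicit than the paper's.
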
 

\begin{theorem}\label{second} (Second fundamental theorem of Lie theory)
If $G$ and $K$ are Lie groups with $G$ simply connected 
then the map 
$$
\Hom(G,K)\to \Hom({\rm Lie}G,{\rm Lie} K)
$$ 
given by $\phi\mapsto \phi_*$ is a bijection. 
\end{theorem}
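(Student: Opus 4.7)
The injectivity of $\phi \mapsto \phi_*$ was essentially established earlier: since $G$ is simply connected it is in particular connected, and we have already seen that a morphism of Lie groups out of a connected group is determined by its differential at $1$. So the whole content is surjectivity: given a Lie algebra morphism $\psi : \mathrm{Lie}\,G \to \mathrm{Lie}\,K$, I must construct a Lie group morphism $\phi : G \to K$ with $\phi_* = \psi$.

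The plan is the classical graph construction. Set $\g = \mathrm{Lie}\,G$, $\mathfrak{k} = \mathrm{Lie}\,K$, and consider the subspace
\[
\mathfrak{h} := \{(x, \psi(x)) : x \in \g\} \subset \g \oplus \mathfrak{k} = \mathrm{Lie}(G \times K).
\]
Because $\psi$ is a Lie algebra morphism, $\mathfrak{h}$ is closed under the bracket of $\g \oplus \mathfrak{k}$, hence is a Lie subalgebra. By the First Fundamental Theorem (Theorem \ref{first}), there exists a unique connected Lie subgroup $H \subset G \times K$ with $\mathrm{Lie}\,H = \mathfrak{h}$. Let $p_1 : H \to G$ and $p_2 : H \to K$ be the restrictions of the two projections; both are morphisms of Lie groups. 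At the level of Lie algebras, $(p_1)_* : \mathfrak{h} \to \g$ is the linear isomorphism $(x, \psi(x)) \mapsto x$, and $(p_2)_* : \mathfrak{h} \to \mathfrak{k}$ sends $(x, \psi(x)) \mapsto \psi(x)$.

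The crux is to show that $p_1 : H \to G$ is an isomorphism of Lie groups. Since $(p_1)_*$ is a linear isomorphism, $p_1$ is a local diffeomorphism at $1$, and by translation a local diffeomorphism everywhere; in particular $\ker p_1$ is a discrete normal subgroup of $H$. By Proposition \ref{generati}(ii), $p_1$ is surjective onto $G$ (using that $G$ is connected). A surjective Lie group homomorphism with discrete kernel is a covering map: one checks that any small enough neighborhood $U$ of $1 \in G$ over which $\log$ is defined is evenly covered, namely $p_1^{-1}(gU) = \bigsqcup_{h \in (\ker p_1) \cdot h_0} h \cdot \exp((p_1)_*^{-1}(\log U))$ for any fixed lift $h_0$ of $g$. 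Because $G$ is simply connected and $H$ is (path-)connected, the covering $p_1 : H \to G$ must be a homeomorphism, and since it is a local diffeomorphism it is in fact a diffeomorphism, hence an isomorphism of Lie groups. Now set $\phi := p_2 \circ p_1^{-1} : G \to K$; this is a Lie group morphism, and $\phi_* = (p_2)_* \circ (p_1)_*^{-1}$ sends $x \in \g$ to $\psi(x)$, as desired.

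The step I expect to be the main obstacle is verifying that $p_1 : H \to G$ is genuinely a covering map rather than merely a surjective local diffeomorphism. The subtlety is that $H$ is only an \emph{immersed} submanifold of $G \times K$, so one cannot simply quote local properties of the ambient space; one has to work with $H$'s intrinsic manifold structure as a Lie subgroup and exhibit evenly covered neighborhoods of $G$ using the exponential map of $H$. Once this is done, the rest follows mechanically from the uniqueness part of covering space theory (simply connected base forces a connected covering to be trivial) and from the functoriality $\mathrm{Lie}$ already established.
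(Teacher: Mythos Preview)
Your proof is correct and follows essentially the same graph construction as the paper: both form the Lie subalgebra $\mathfrak{h}=\{(x,\psi(x))\}\subset \g\oplus\mathfrak{k}$, invoke Theorem~\ref{first} to get the connected Lie subgroup $H\subset G\times K$, observe that the first projection $p_1:H\to G$ is a covering since $(p_1)_*$ is an isomorphism, and use simple connectedness of $G$ to conclude $p_1$ is an isomorphism and set $\phi=p_2\circ p_1^{-1}$. You supply more detail than the paper on why $p_1$ is a covering (the paper simply asserts it), which is a reasonable elaboration rather than a different approach.
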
 

\begin{theorem} \label{third} (Third fundamental theorem of Lie theory) Any finite dimensional Lie algebra is the Lie algebra of a Lie group. 
\end{theorem}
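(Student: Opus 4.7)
The plan is to reduce the problem to two heavier-duty results already advertised in the notes: \textbf{Ado's theorem}, asserting that every finite-dimensional Lie algebra admits a faithful finite-dimensional representation, and the First Fundamental Theorem (Theorem \ref{first}). Once Ado gives an embedding $\iota: \g \hookrightarrow \mathfrak{gl}_n(\Bbb R)$ of $\g$ as a Lie subalgebra, Theorem \ref{first} applied to $GL_n(\Bbb R)$ produces a connected Lie subgroup $H \subset GL_n(\Bbb R)$ with ${\rm Lie}(H) = \iota(\g) \cong \g$. Passing to the universal cover $\widetilde H \to H$ then yields a simply connected real Lie group whose Lie algebra is still $\g$, since by Proposition \ref{abeli} the covering map is a local diffeomorphism and hence induces an isomorphism on tangent spaces at the identity.

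I would first dispense with the easy special cases to isolate where the real work lies. If $\g$ has trivial center, the adjoint representation ${\rm ad}: \g \to \mathfrak{gl}(\g)$ is already faithful and no appeal to Ado is needed. If $\g$ is abelian, one may simply take $\g = \Bbb R^n$ with its additive Lie group structure. The genuinely hard case is a Lie algebra with nontrivial center that is not abelian, where the adjoint representation fails to detect $\mathfrak{z}(\g)$. The standard route through Ado uses the Levi decomposition $\g = \r \rtimes \mathfrak{s}$ into solvable radical and Levi subalgebra, and then proceeds by induction along the derived series of the nilpotent radical, using the universal enveloping algebra $U(\g)$ (available from the Poincar\'e--Birkhoff--Witt theorem in Part I) to produce a faithful representation on a suitable finite-dimensional quotient of $U(\g)$.

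An alternative approach bypasses Ado entirely and uses the Baker--Campbell--Hausdorff formula developed in Part I: the BCH series converges on a neighborhood $U$ of $0 \in \g$ and defines a ``local Lie group'' structure on $U$ whose quadratic term at the origin recovers the bracket of $\g$. One then constructs the simply connected global Lie group $\widetilde G$ by a monodromy-style construction, realizing $\widetilde G$ as equivalence classes of ``piecewise BCH'' paths in $U$, in analogy with the construction of the universal covering of a manifold from homotopy classes of paths. The technical content lies in verifying that the glued object is actually a manifold with a regular multiplication map, rather than merely a topological group with compatible local charts.

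The main obstacle is the non-semisimple, non-abelian case: either Ado's theorem (in the first strategy) or the globalization step of the BCH construction (in the second). Ado requires substantial Lie-algebraic structure theory---Levi decomposition, Engel's theorem, and careful bookkeeping with the nilpotent radical---while the BCH route requires delicate analytic control and a general theory of local groups. Once either hurdle is cleared, the remaining passage to the universal cover and the identification ${\rm Lie}(\widetilde G)\cong \g$ are essentially automatic from Theorem \ref{first} and Proposition \ref{abeli}.
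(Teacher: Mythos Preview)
Your main strategy---reduce to Ado's theorem and then invoke Theorem \ref{first} to realize $\g$ as the Lie algebra of a connected subgroup of $GL_n$---is exactly the argument the paper sketches in Subsection \ref{prooffund}. So your proposal is correct.

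It is worth noting, however, that the paper's \emph{detailed} proof (in Section \ref{thirdlie}) reverses the logical order you suggest: rather than proving Ado first and deducing Theorem \ref{third}, the paper proves Theorem \ref{third} directly and then obtains Ado as a corollary in Section \ref{adothm}. Concretely, the paper uses the Levi decomposition $\g \cong \g_{\rm ss} \ltimes {\rm rad}(\g)$, builds a simply connected solvable group $A$ for ${\rm rad}(\g)$ by induction along a chain of codimension-one ideals (Theorem \ref{solvthir}), takes the simply connected semisimple group $G_{\rm ss}$ (which exists via ${\rm Aut}(\g_{\rm ss})$, Corollary \ref{auto}), and forms $G = G_{\rm ss} \ltimes A$. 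This route avoids the hard representation-theoretic work inside Ado and replaces it with an explicit inductive construction; what it buys is a self-contained proof that also yields structural information (e.g., that simply connected solvable groups are diffeomorphic to $\Bbb K^n$). Your BCH/local-group alternative is a third legitimate path, but the paper does not pursue it for ordinary Lie groups---it only uses BCH in the formal-group setting.
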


These theorems hold for real as well as complex Lie groups. Thus we have 

\begin{corollary} For $\Bbb K=\Bbb R,\Bbb C$, the assignment $G\mapsto {\rm Lie}G$ is an equivalence between the category of simply connected $\Bbb K$-Lie groups and the category of finite dimensional $\Bbb K$-Lie algebras. 
Moreover, any connected Lie group $K$ has the form $G/\Gamma$ where $G$ is simply connected and $\Gamma\subset G$ is a discrete central subgroup. 
\end{corollary}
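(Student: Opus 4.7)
The plan is to derive both statements as formal consequences of Theorems \ref{first}--\ref{third} together with the covering-space results of Section~3, so very little new work is required: the content is all in unpacking what ``equivalence of categories'' means and then invoking universal covers.

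First I would establish full faithfulness. The functor $G\mapsto \mathrm{Lie}\,G$ sends a morphism $\phi$ of Lie groups to its differential $\phi_*$. On the full subcategory of \emph{simply connected} $\Bbb K$-Lie groups, Theorem \ref{second} says exactly that for any target Lie group $K$ the map $\Hom(G,K)\to\Hom(\mathrm{Lie}\,G,\mathrm{Lie}\,K)$, $\phi\mapsto\phi_*$, is a bijection; in particular it is a bijection when $K$ is also simply connected. Thus the Lie functor is fully faithful on the category of simply connected $\Bbb K$-Lie groups.

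Next I would establish essential surjectivity. Given a finite dimensional $\Bbb K$-Lie algebra $\g$, Theorem \ref{third} produces some Lie group $H$ with $\mathrm{Lie}\,H\cong\g$. Replacing $H$ by its connected component of the identity, which has the same Lie algebra, I may assume $H$ is connected. Let $G=\widetilde H$ be its universal cover. By Proposition \ref{abeli}(i), $G$ is a simply connected Lie group and the covering map $p\colon G\to H$ is a homomorphism of Lie groups; since $p$ is a local diffeomorphism, $dp_1\colon\mathrm{Lie}\,G\to\mathrm{Lie}\,H$ is an isomorphism, hence $\mathrm{Lie}\,G\cong\g$. (In the complex case, one checks that the complex analytic structure on $H$ pulls back along $p$ to make $G$ a complex Lie group, using that $p$ is a local biholomorphism.) This gives an object of the source category mapping to $\g$, so the functor is essentially surjective, and combined with full faithfulness we get an equivalence of categories.

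For the second statement, let $K$ be an arbitrary connected $\Bbb K$-Lie group. Applying Proposition \ref{abeli} directly to $K$, its universal cover $G:=\widetilde K$ is a simply connected Lie group, the covering map $p\colon G\to K$ is a surjective homomorphism of Lie groups, and $\Gamma:=\Ker(p)$ is a central subgroup of $G$ naturally isomorphic to $\pi_1(K)$. Because $p$ is a covering, $\Gamma=p^{-1}(1)$ is a discrete subset of $G$, hence a discrete central subgroup; and by Proposition \ref{firsthom} (applied to $p$), $K\cong G/\Gamma$ as Lie groups. This gives the desired presentation $K=G/\Gamma$.

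The only nontrivial input is Theorem \ref{third} itself, which is deferred to the second half of the notes; granting it, the argument above is essentially bookkeeping. The main potential pitfall is the complex-analytic case: one must verify that the universal cover of a complex Lie group inherits a (unique) complex structure making it a complex Lie group with the covering a holomorphic homomorphism. This follows because the covering map is a local homeomorphism, so complex charts on the base transport to compatible complex charts upstairs, and the multiplication map lifts to a holomorphic map by the regular-lift argument already used in the proof of Proposition \ref{abeli}(i).
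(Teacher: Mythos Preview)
Your proof is correct and follows essentially the same approach as the paper: full faithfulness from Theorem \ref{second}, essential surjectivity from Theorem \ref{third}, and the presentation $K=G/\Gamma$ from Proposition \ref{abeli}. You add a bit more detail than the paper (explicitly passing to the connected component and then the universal cover for essential surjectivity, and noting the complex-analytic compatibility of the covering), but this is elaboration rather than a different route.
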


\begin{proof} The second fundamental theorem says that the functor $G\mapsto {\rm Lie}G$ is fully faithful, and the third fundamental theorem says that 
it is essentially surjective. Thus it is an equivalence of categories. The last statement follows from Proposition \ref{abeli} ($G$ is the universal covering of $K$). 
\end{proof} 

We will discuss proofs of the fundamental theorems of Lie theory in Subsection \ref{prooffund}. 
The third theorem is the hardest one, and we will give its complete proof only in Section \ref{thirdlie}. 

\subsection{Complexification of real Lie groups and real forms of complex Lie groups} 

Let $\mathfrak{k}$ be a real Lie algebra. Then $\mathfrak{k}_{\Bbb C}:=\mathfrak{k}\otimes_{\Bbb R}\Bbb C$ is a complex Lie algebra. We say that $\g:=\mathfrak{k}_{\Bbb C}$ is the {\bf complexification} of $\mathfrak{k}$, and $\mathfrak{k}$ is a {\bf real form} of $\g$.  
Thus a real form of $\g$ is a real Lie subalgebra $\mathfrak{k}\subset \g$ 
such that the natural map $\mathfrak{k}\otimes_{\Bbb R}\Bbb C\to \g$ is an isomorphism. 

In this case we have an antilinear involution $\sigma: \g\to \g$ given by $\sigma(a+ib)=a-ib$ for $a,b\in \mathfrak{k}$, and 
$\mathfrak{k}:=\g^\sigma$ is the set of fixed points of $\sigma$. 
Conversely, it is easy to see that if $\sigma$ is an antilinear involution of a complex Lie algebra $\g$ (i.e., an automorphism as a real Lie algebra such that $\sigma^2=1$ and $\sigma(\lambda a)=\overline \lambda \sigma(a)$ for $a\in \g,\lambda\in \Bbb C$), then $\mathfrak{k}:=\g^\sigma\subset \g$ is a real form of $\g$.  
Thus real forms of a complex Lie algebra are in natural bijection 
with its antilinear involutions. 

Note that two non-isomorphic real Lie algebras can have isomorphic complexifications; in other words, the same complex Lie algebra can have non-isomorphic real forms. For example, 
$$
\mathfrak{u}(n)_{\Bbb C}\cong \mathfrak{gl}_n(\Bbb R)_{\Bbb C}\cong \mathfrak{gl}_n(\Bbb C)
$$
while for $n>1$, 
$$
\mathfrak{u}(n)\ncong \mathfrak{gl}_n(\Bbb R),
$$
since in the first algebra any element $x$ with nilpotent ${\rm ad}x$ must be zero, while in the second one it does not have to. 

Let us now discuss real forms of complex Lie groups. By analogy with the case of Lie algebras, we make the following definition. 

\begin{definition} Let $G$ be a complex Lie group with Lie algebra $\g$ and $\sigma: G\to G$ be an involutive automorphism of $G$ as a real Lie group such that the induced map $\sigma: \g\to \g$ is antilinear (i.e., $\sigma$ is antiholomorphic).   
Then the fixed point subgroup $K:=G^\sigma$ is called 
a {\bf real form} of $G$ and $G$ is called a {\bf complexification} of $K$.\footnote{Note that this definition is not quite equivalent to Definition 3.51 in \cite{K} of the same notion, which is less conventional. For example, according to the definition of \cite{K}, every complex elliptic curve has a real form, which does not agree with the definition from algebraic geometry (cf. Example \ref{ellcu}).} 
\end{definition} 

Note that a real Lie group $K$ may not admit a complexification. For example, Exercise \ref{unico} shows that this happens if $K^\circ\cong \widetilde{SL_2(\Bbb R)}$, the universal cover of $SL_2(\Bbb R)$. 
On the other hand, 
Example \ref{ellcu} shows that $K$ may admit several
(in fact, infinitely many) non-isomorphic complexifications. 

For example, both $U(n)$ and $GL_n(\Bbb R)$ are real forms of $GL_n(\Bbb C)$, with $\sigma(g)=\overline g$ and $\sigma(g)=(\overline{g}^T)^{-1}$ respectively. Note that $GL_n(\Bbb R)$ is not connected, so a real form of a connected Lie group may be disconnected. 

We see that every real form (i.e., antilinear involution) of $\g$ defines at most one such form for $G$. However, it could be none 
since the involution $\sigma: \g\to \g$ may not lift to $G$. This is demonstrated by the following example. 

\begin{example}\label{ellcu} Let $\Lambda\subset \Bbb C$ be a lattice generated by $1$ and $\tau\in \Bbb C$ with ${\rm Im}\tau>0$, $-\frac{1}{2}<{\rm Re}\tau\le \frac{1}{2}$, and 
let $E:=\Bbb C/\Lambda$ be the corresponding complex elliptic curve
(a 1-dimensional complex Lie group). We have ${\rm Lie}E=\Bbb C$, so the only real form of ${\rm Lie}E$ is defined by the antilinear involution $\sigma(z)=\overline z$. The condition for this involution to lift to $E$
is that $\sigma(\Lambda)=\Lambda$, or, equivalently, $\overline\tau=a\tau+b$ for some $a,b\in \Bbb Z$ coprime. 
Taking imaginary parts, we get that $a=-1$, so $E$ has a real form if and only if $\overline \tau+\tau\in \Bbb Z$. This coincides with the definition of a real elliptic curve in algebraic geometry saying that $E$ can be defined by a Weierstrass equation $y^2=P(x)$ where $P$ is a cubic polynomial with real coefficients (check it!). There are two types of such elliptic curves: $\tau\in i\Bbb R$ ($P$ has one real root) and $\tau\in \frac{1}{2}+i\Bbb R$ ($P$ has three real roots). In the first case the corresponding real group $E^\sigma$ is $\Bbb Z/2\times\Bbb R/\Bbb Z$ (the two components are the images of $\Bbb R$ and $\Bbb R+\frac{1}{2}\tau$), while in the second case it is $\Bbb R/\Bbb Z$ (the image of $\Bbb R$). 
\end{example}  
 
However, if $G$ is a simply connected complex Lie group, then every 
real form of $\g$ necessarily defines one for $G$. Indeed, in this case
by the second fundamental theorem of Lie theory (for real Lie groups), the antilinear involution $\sigma:\g\to \g$ lifts to an antiholomorphic involution $G\to G$. 

\begin{exercise} (i) Classify complex Lie algebras of dimension at most 3, up to isomorphism. 

(ii) Classify real Lie algebras of dimension at most 3.

(iii) Classify connected complex and real Lie groups of dimension at most 3. 
\end{exercise}

\section{\bf Proofs of the fundamental theorems of Lie theory} 

\subsection{Distributions and the Frobenius theorem} 

The proofs of the fundamental theorems of Lie theory are based on the notion of an integrable distribution in differential geometry, and the Frobenius theorem about such distributions. 

\begin{definition} A $k$-dimensional {\bf distribution} on a manifold $X$ is a rank $k$ subbundle $D\subset TX$. 
\end{definition} 

This means that in every tangent space $T_xX$ we fix a $k$-dimensional subspace $D_x$ which varies regularly with $x$. In other words, on some neighborhood $U\subset X$ of every $x\in X$, $D$ is spanned by vector fields $\bold v_1,...,\bold v_k$ linearly independent at every point of $U$. 

\begin{definition} A $k$-dimensional distribution $D$ is {\bf integrable} if every point $x\in X$ has a neighborhood $U$ and local coordinates $x_1,\dots,x_n$ on $U$ such that 
$D$ is defined at every point of $U$ by the equations 
$$
dx_{k+1}=\dots=dx_n=0,
$$
i.e., it is spanned by the vector fields 
$$
\partial_i=\tfrac{\partial}{\partial x_i},\qquad i=1,\dots,k.
$$
\end{definition}  

By definition, every $x\in X$ is contained in a $k$-dimensional disk $B_x\subset X$ tangent to $D$ at all its points. 

Now let $D$ be a $k$-dimensional integrable distribution on a manifold $X$. For $x,y\in X$, let us say that 
$x\sim_D y$ if $x$ can be connected to $y$ by a piecewise smooth curve whose tangent vector at every point where it is defined belongs to $D$. This is clearly an equivalence relation. Denote the equivalence class of $x$ under this relation by $S_x$. 

Choose a countable cover $\{U_i\}_{i\in \Bbb N}$ of $X$ by sufficiently small coordinate balls for $D$ such that every nonempty intersection $U_i\cap U_j$ is connected. 
Then for every $i$, the set $U_i\cap S_x$ is a union of pairwise disjoint $k$-dimensional disks tangent to $D$ (in the chosen coordinates these are the sets with fixed values of $x_{k+1},\dots,x_n$). Call these disks the {\it plaques} of $S_x$ in $U_i$. 

We endow $S_x$ with the topology for which all plaques are open and carry their usual topology. Then the inclusion $S_x\hookrightarrow X$ is continuous (so this topology may be stronger than the induced topology). Then $S_x$ is Hausdorff since so is $X$. 

\begin{lemma} This endows $S_x$ with the structure of a smooth manifold. Thus $S_x\subset X$ is an immersed submanifold. 
\end{lemma} 

\begin{proof} 
The plaques are $k$-dimensional disks, so they define local coordinate charts on $S_x$. If two plaques meet, the transition map between them is the restriction of the corresponding change of coordinates on $X$, hence is smooth. Thus $S_x$ is locally Euclidean of dimension $k$ (with smooth transition maps). So it remains to show that $S_x$ has a countable base. 

For this, it suffices to show that for every $i$, the set $U_i\cap S_x$ contains at most countably many plaques. Fix $i_0$ such that $U_{i_0}\cap S_x\ne \emptyset$, and let $B_0\subset U_{i_0}\cap S_x$ be one of the plaques. Let $B\subset U_i\cap S_x$ be another plaque. Pick points $y_0\in B_0$ and $y\in B$, and let $\gamma:[0,1]\to X$ be a piecewise smooth curve from $y_0$ to $y$ tangent to $D$. By compactness of $[0,1]$, there exist numbers
$$
0=t_0<t_1<\dots<t_N=1
$$
and indices $m_0=i_0,m_1,\dots,m_N=i$ such that
$$
\gamma([t_{r-1},t_r])\subset U_{m_r},\qquad r=0,\dots,N,
$$
and 
$$
\gamma(t_r)\in U_{m_r}\cap U_{m_{r+1}},\qquad r=0,\dots,N-1.
$$
We call the sequence $m_0,\dots,m_N$ an {\it itinerary} of $\gamma$. 

Since each segment $\gamma([t_{r-1},t_r])$ is tangent to $D$ and lies in the distinguished chart $U_{m_r}$, it lies in a single plaque of $U_{m_r}\cap S_x$. Moreover, because $U_{m_r}\cap U_{m_{r+1}}$ is connected, a plaque in $U_{m_r}$ determines uniquely the plaque in $U_{m_{r+1}}$ which meets it. Indeed, in distinguished coordinates on $U_{m_r}$ and $U_{m_{r+1}}$, the transition map has the form
$$
(u,v)\mapsto (F(u,v),G(v)),
$$
where $u=(x_1,\dots,x_k)$ and $v=(x_{k+1},\dots,x_n)$; hence a plaque $v=v_0$ in $U_{m_r}$ can meet only the plaque $v'=G(v_0)$ in $U_{m_{r+1}}$. Therefore, starting from $B_0$, the itinerary uniquely determines the final plaque $B$. 

Since there are only countably many possible itineraries, it follows that $U_i\cap S_x$ contains at most countably many plaques. Hence the collection of all plaques in all $U_i$ is a countable base of the topology of $S_x$. So $S_x$ is a smooth manifold, and the inclusion $S_x\hookrightarrow X$ is an immersion. 
\end{proof} 

The immersed submanifold $S_x\subset X$ is called the {\bf integral submanifold} for $D$ through $x$. 

\begin{remark} An integrable distribution is also called a {\bf foliation}, and the integral submanifolds $S_x$ are called the {\bf leaves} of the foliation. Thus the manifold $X$ falls into a disjoint union of such leaves. But note that the leaves need not be closed (think of the irrational torus winding!).  
\end{remark}

\begin{example} A $1$-dimensional distribution is the same thing as a {\bf direction field.} It is always integrable, as follows from the existence theorem 
for ODE, and its integral submanifolds are called {\bf integral curves}. 
They are geometric realizations of solutions of the corresponding ODE. 
\end{example} 

However, for $k\ge 2$ a distribution is not always integrable. 

\begin{theorem} (The Frobenius theorem) A distribution $D$ is integrable if and only if for every two vector fields $\bold v,\bold w$ contained in $D$, 
their commutator $[\bold v,\bold w]$ is also contained in $D$.
\end{theorem}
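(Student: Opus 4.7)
The easy direction is straightforward. If $D$ is integrable, work in coordinates $x_1,\ldots,x_n$ in which $D$ is cut out by $dx_{k+1}=\cdots=dx_n=0$. Any vector field $\bold v$ contained in $D$ then has the form $\bold v=\sum_{i\le k}f_i\,\partial/\partial x_i$, and the local-coordinate formula for the Lie bracket recalled in Subsection~\ref{diffor} shows that if $\bold w=\sum_{i\le k}g_i\,\partial/\partial x_i$ is another such field, then $[\bold v,\bold w]$ has no $\partial/\partial x_j$ component for $j>k$; hence $[\bold v,\bold w]\in D$.

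For the converse (involutivity implies integrability), I would work locally near a point $p\in X$. Choose coordinates $x_1,\ldots,x_n$ centered at $p$ so that $D_p=\spn(\partial/\partial x_1|_p,\ldots,\partial/\partial x_k|_p)$. By continuity, projection to the first $k$ coordinate directions along the last $n-k$ is an isomorphism from $D_x$ onto $\spn(\partial/\partial x_1|_x,\ldots,\partial/\partial x_k|_x)$ in a neighborhood of $p$, so $D$ admits a unique local frame of the form
$$
\bold v_i=\frac{\partial}{\partial x_i}+\sum_{j=k+1}^{n}a_{ij}(x)\,\frac{\partial}{\partial x_j},\qquad i=1,\ldots,k.
$$
Because the $\partial/\partial x_l$-coefficient of $\bold v_i$ for $l\le k$ is the constant $\delta_{il}$, the bracket formula immediately shows that $[\bold v_i,\bold v_j]$ is a combination of $\partial/\partial x_{k+1},\ldots,\partial/\partial x_n$ only. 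If $D$ is involutive, this combination must also lie in the span of $\bold v_1,\ldots,\bold v_k$, and comparing the $\partial/\partial x_l$ components for $l\le k$ forces $[\bold v_i,\bold v_j]=0$.

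Having produced pairwise commuting vector fields $\bold v_1,\ldots,\bold v_k$ spanning $D$ near $p$, I would invoke the standard fact that commuting vector fields have commuting flows and consider the map
$$
F(t_1,\ldots,t_k,x_{k+1},\ldots,x_n)=\phi_1^{t_1}\circ\cdots\circ\phi_k^{t_k}(0,\ldots,0,x_{k+1},\ldots,x_n),
$$
where $\phi_i^{t}$ denotes the flow of $\bold v_i$. By construction $\partial F/\partial t_i|_0=\bold v_i(p)$ and $\partial F/\partial x_j|_0=\partial/\partial x_j|_p$ for $j>k$, so $dF_0$ is invertible and $F$ is a local diffeomorphism. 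Since the flows commute, translating $t_i$ by $s$ on the source corresponds to applying $\phi_i^s$ on the target, so in the new coordinates $(t_1,\ldots,t_k,x_{k+1},\ldots,x_n)$ one has $\bold v_i=\partial/\partial t_i$. Thus $D$ is spanned by $\partial/\partial t_1,\ldots,\partial/\partial t_k$ and is cut out by $dx_{k+1}=\cdots=dx_n=0$, establishing integrability.

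The main technical obstacle is the commuting-flows lemma: $[\bold v_i,\bold v_j]=0$ implies $\phi_i^s\circ\phi_j^t=\phi_j^t\circ\phi_i^s$ wherever both sides are defined. The cleanest route differentiates $s\mapsto (\phi_i^s)^*\bold v_j$ to obtain the Lie-derivative identity $\tfrac{d}{ds}(\phi_i^s)^*\bold v_j=(\phi_i^s)^*[\bold v_i,\bold v_j]=0$, whence $(\phi_i^s)_*\bold v_j=\bold v_j$; this means $\phi_i^s$ carries integral curves of $\bold v_j$ to integral curves of $\bold v_j$, and uniqueness of ODE solutions then yields the required commutation of flows.
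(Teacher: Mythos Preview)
Your proof is correct, but it follows a different route from the paper. The paper argues by induction on the rank $k$: straighten one vector field to $\partial_z$, subtract its multiples from the others to kill their $\partial_z$-components, then use involutivity to show the remaining fields satisfy a linear ODE $\partial_z A=BA$ in $z$; solving it produces a frame independent of $z$, reducing to a rank-$(k-1)$ distribution on a slice $\{z=\text{const}\}$. Your approach instead builds a normalized frame $\bold v_i=\partial_{x_i}+\sum_{j>k}a_{ij}\partial_{x_j}$ all at once, observes that involutivity forces the $\bold v_i$ to \emph{commute}, and then simultaneously straightens them via their commuting flows.

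Your method is a standard and arguably more conceptual alternative: the key observation $[\bold v_i,\bold v_j]=0$ is clean, and the geometry is transparent. The trade-off is that you must invoke (and, as you note, justify) the commuting-flows lemma, whereas the paper's inductive argument uses only basic ODE existence and uniqueness at each step and is perhaps more self-contained. Both proofs transfer without change to the real-analytic and holomorphic settings.
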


\begin{example} Let $\bold v=\partial_x$, $\bold w=x\partial_y+\partial_z$ in $\Bbb R^3$, and $D$ be the 2-dimensional distribution spanned by 
$\bold v,\bold w$. Then $[\bold v,\bold w]=\partial_y\notin D$. So $D$ is not integrable. 
\end{example} 

\begin{proof} If $D$ is integrable, a vector field is contained in $D$ iff  
it is tangent to integral submanifolds of $D$. But the commutator of two vector fields tangent to a submanifold is itself tangent to this submanifold. This establishes the ``only if" part. 
 
It remains to prove the ``if " part. The proof is by induction in the rank $k$ of $D$. The base case $k=0$ is trivial, so it suffices to establish the inductive step.  
The question is local, so we may work in a neighborhood $U$ of $P\in X$. 
Suppose that $\bold v_1,...,\bold v_k\in \Vect(U)$ is a basis of $D$ in $U$ (on every tangent space). By local existence and uniqueness of solutions of ODE, 
in some local coordinates $x_1,...,x_n=z$, the vector field $\bold v_k$ equals $\partial_z$. By subtracting from $\bold v_i, i<k$ a suitable multiple of $\bold v_k$ we can make sure that $\bold v_i$ has no $\partial_z$-component. Then 
$$
\bold v_i=\sum_{j=1}^{n-1}a_{ij}(x_1,...,x_{n-1},z)\partial_{x_j}.
$$
Thus, since by assumption $[\partial_z,\bold v_i]=[\bold v_k,\bold v_i]$ is a linear combination of $\bold v_m$ with functional coefficients, we have 
$$
[\partial_z,\bold v_i]=\sum_{m=1}^{k-1} b_{im}(x_1,...,x_{n-1},z)\bold v_m
$$
($\bold v_k$ does not occur since there is no $\partial_z$ component on the left hand side). Hence
$$
\partial_z a_{ij}(x_1,...,x_{n-1},z)=\sum_{m=1}^{k-1} b_{im}(x_1,...,x_{n-1},z)a_{mj}(x_1,...,x_{n-1},z).
$$
So, setting $A=(a_{mj}(x_1,...,x_{n-1},z))$ (a $(k-1)\times (n-1)$-matrix) and $B=(b_{im}(x_1,...,x_{n-1},z))$ (a $(k-1)\times (k-1)$ matrix), we have 
$$
\partial_z A=BA. 
$$
Let $A_0$ be the solution of this linear ODE in $(k-1)\times (k-1)$ matrices with $A_0(x_1,...,x_{n-1},0)=1$. Then $A=A_0C$, where $C=C(x_1,...,x_{n-1})$ 
is a $(k-1)\times (n-1)$-matrix which does not depend on $z$. 
So we have a new basis of $D$ given by $\bold w_k=\partial_z$ and
$$
\bold w_i=\sum_j c_{ij}(x_1,...,x_{n-1})\partial_{x_j},\ 1\le i\le k-1.
$$
Thus there is a neighborhood $U$ of $P$ which can be represented as $U=(-a,a)\times U'$, where $\dim U'=n-1$, so that $D=\Bbb R\oplus D'$, where $D'$ is a $k-1$-dimensional 
distribution on $U'$ spanned by $\bold w_i$, $1\le i\le k-1$. It is clear that for any two vector fields $\bold v,\bold w$ on $U'$ contained
in $D'$, so is $[\bold v,\bold w]$. Hence $D'$ is integrable by the induction assumption. Therefore, so is $D$, justifying the inductive step.  
  
\end{proof} 

\subsection{Proofs of the fundamental theorems of Lie theory} \label{prooffund} 

\subsubsection{Proof of Theorem \ref{first}}

Let $G$ be a Lie group with Lie algebra $\g$. Let $\h\subset \g$ be a Lie subalgebra. We need to show that there is a unique (not necessarily closed) connected Lie subgroup $H\subset G$ with Lie algebra $\h$. The proof of existence of $H$ is based on the Frobenius theorem. 

Define the distribution $D$ on $G$ by left-translating $\h\subset \g=T_1G$, i.e., $D_g=L_g\h$. 
So any vector field contained in $D$ is of the form 
$$
\bold v=\sum f_i\bold L_{a_i},
$$
where $a_i$ is a basis of $\h$ and $f_i$ are regular functions. 
Now if 
$$
\bold w=\sum g_j\bold L_{a_j}
$$
is another such field then 
$$
[\bold v,\bold w]=\sum_{i,j}(f_i\bold L_{a_i}(g_j)\bold L_{a_j}-g_j\bold L_{a_j}(f_i)\bold L_{a_i}+f_ig_j[\bold L_{a_i},\bold L_{a_j}]).
$$
But $[a_i,a_j]=\sum_k c_{ij}^ka_k$,
so 
$$
[\bold L_{a_i},\bold L_{a_j}]=\sum_k c_{ij}^k\bold L_{a_k}.
$$
Thus if $\bold v,\bold w$ are contained in $D$ then so is $[\bold v,\bold w]$.
Hence by the Frobenius theorem, $D$ is integrable. 

Now consider the integral (immersed) submanifold $H$ of $D$ going through $1\in G$. We claim that $H$ is a Lie subgroup of $G$ with Lie algebra $\h$. Indeed, it suffices to show that $H$ is a subgroup of $G$. But this is clear since $H$ is the collection of elements of $G$ of the form 
$$
g=\exp(a_1)...\exp(a_m),
$$
where $a_i\in \h$. 

Moreover, $H$ is unique since it has to be generated by the image of the 
exponential map $\exp: \h\to G$. 

\subsubsection{Proof of Theorem \ref{second}}
We need to show that the natural map $\Hom(G,K)\to \Hom({\rm Lie}G,{\rm Lie}K)$ 
is a bijection if $G$ is simply connected. 

We know this map is injective so we only need to establish surjectivity. For any morphism $\psi: {\rm Lie}G\to {\rm Lie}K$, consider the morphism 
$$
\theta=({\rm id},\psi): {\rm Lie}G\to {\rm Lie}(G\times K)={\rm Lie}G\oplus {\rm Lie}K
$$ 
The previous proposition implies that there is a connected Lie subgroup $H\subset G\times K$ whose Lie algebra 
is ${\rm Im}\theta$. We have projection homomorphisms 
$p_1: H\to G$, $p_2: H\to K$, and $(p_1)_*={\rm id}$, so by Proposition \ref{generati}(ii)
$p_1$ is a covering. Since $G$ is simply connected, $p_1$ is an isomorphism, so we can define $\phi:=p_2\circ p_1^{-1}: G\to K$, and it is easy to see that $\psi=\phi_*$.

\subsubsection{Proof of Theorem \ref{third}} 
Finally, let us discuss a proof of Theorem \ref{third}, stating that any finite dimensional Lie algebra $\g$ over $\Bbb K=\Bbb R$ or $\Bbb C$ is the Lie algebra of a Lie group. We will deduce it from the following purely algebraic {\it Ado's theorem}.

\begin{theorem} Any finite dimensional Lie algebra over $\Bbb K$ is a Lie subalgebra of $\mathfrak{gl}_n(\Bbb K)$. 
\end{theorem}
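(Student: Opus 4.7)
The plan is to prove the slightly stronger Ado--Iwasawa form: every finite-dimensional Lie algebra $\g$ over $\Bbb K$ admits a faithful finite-dimensional representation in which the nilradical $\n(\g)$ (the largest nilpotent ideal) acts by nilpotent operators. The extra nilpotency condition is what lets the inductive construction close up. Note that the adjoint representation $\ad : \g \to \mathfrak{gl}(\g)$ has kernel $\mathfrak{z}(\g)$, so the assertion is only nontrivial when $\g$ has nonzero center; the hardest case is $\g$ nilpotent, where $\ad$ can be very far from faithful.

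First I would handle the nilpotent case using the universal enveloping algebra $U(\g)$ and the Poincar\'e--Birkhoff--Witt theorem. The idea is to construct a left ideal $J \subset U(\g)$ of finite codimension such that $J \cap \g = 0$; then $V := U(\g)/J$ is a finite-dimensional left $\g$-module on which $\g$ acts faithfully, and if $J$ is chosen to contain a sufficiently high power of the augmentation ideal $I$, the action of $\g$ is automatically by nilpotent operators. For $\g$ nilpotent of class $c$ with lower central series $\g = \g_1 \supset \g_2 \supset \cdots \supset \g_{c+1} = 0$, one exploits PBW bases adapted to this filtration to exhibit a left ideal $J$ generated by sufficiently high powers of a chosen set of basis vectors, arranged so that the resulting module is finite-dimensional and the embedding $\g \hookrightarrow U(\g) \to V$ is injective.

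Next I would extend to general $\g$ via the Levi decomposition $\g = \mathfrak{s} \ltimes \mathfrak{r}$, where $\mathfrak{s}$ is semisimple and $\mathfrak{r}$ is the radical; note $\n = \n(\g) \subset \mathfrak{r}$. The adjoint representation of $\mathfrak{s}$ is already faithful since $\mathfrak{z}(\mathfrak{s}) = 0$, and a faithful representation of the abelian quotient $\mathfrak{r}/\n$ is trivial to produce. The problem thus reduces to a gluing step: given a faithful nilpotent representation $\rho_{\n} : \n \to \mathfrak{gl}(V_{\n})$ from the previous paragraph, together with a faithful representation of $\g/\n$ (reductive, hence the sum of $\ad_{\mathfrak{s}}$ with a faithful representation of the abelian part works), extend $\rho_{\n}$ to all of $\g$. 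This uses that $\g$ acts on $\n$ by derivations, hence on $U(\n)$ by algebra derivations; if the ideal $J$ built in the nilpotent step can be chosen stable under these derivations, then $V_{\n} = U(\n)/J$ becomes a $\g$-module, and combining this with a faithful representation of $\g/\n$ (e.g.\ by direct sum) yields a faithful representation of $\g$ with the desired nilpotency property on $\n$.

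The main obstacle is the gluing step: one must ensure that the left ideal $J$ used in the nilpotent step can be chosen both (i) of finite codimension with $J \cap \n = 0$, and (ii) stable under all derivations of $\n$ arising from the action of $\g/\n$. Achieving both simultaneously is the technical heart of Ado's theorem and requires a delicate, simultaneous construction of $J$ compatible with a $\g/\n$-invariant filtration of $\n$; this is precisely why the text defers the complete proof to Section~\ref{thirdlie}. Once Ado's theorem is established, the third fundamental theorem (Theorem~\ref{third}) follows immediately: given a faithful representation $\rho : \g \hookrightarrow \mathfrak{gl}_n(\Bbb K)$, the first fundamental theorem (Theorem~\ref{first}) produces a connected Lie subgroup of $GL_n(\Bbb K)$ with Lie algebra $\rho(\g) \cong \g$.
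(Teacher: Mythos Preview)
Your outline is the classical route (essentially the Harish-Chandra/Hochschild refinement of Ado's argument): handle the nilpotent case via a finite-codimension left ideal $J \subset U(\n)$ with $J \cap \n = 0$, then glue by making $J$ stable under the derivations of $\n$ coming from $\g/\n$. That is a correct strategy, and you have correctly located the crux—arranging $J$ to be simultaneously finite-codimensional, faithful on $\n$, and derivation-stable is the hard part.

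The paper, however, runs the logic in the opposite order and sidesteps precisely that difficulty. It first proves the Third Fundamental Theorem \emph{directly} (Section~\ref{thirdlie}), without Ado, by combining Levi decomposition with an explicit inductive construction of simply-connected solvable groups. Only afterwards (Section~\ref{adothm}, not~\ref{thirdlie}) does it deduce Ado's theorem, now with the unipotent group $N$ already available. The nilpotent step is phrased dually, on $\O(N) \cong S\n^*$ (polynomial functions on $N \cong \n$): left-invariant vector fields lower a weighted-degree filtration by finite-dimensional subspaces $V_n$, which is the dual of your quotient $U(\n)/J$. For the gluing step, rather than forcing derivation-stability of an ideal, the paper first embeds $\g$ into an \emph{algebraic} Lie algebra $\widetilde\g = \mathfrak{k} \ltimes \n$ with $\mathfrak{k}$ reductive (Proposition~\ref{embe}); the filtration $V_n$ is then automatically $\mathfrak{k}$-stable, and faithfulness of the $\widetilde\g$-action on some $V_n$ follows by a short argument. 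The enlargement trick—trading a possibly irrational or non-semisimple action of $\g/\n$ on $\n$ for a genuine reductive group action—is what buys the paper a cleaner endgame than the direct derivation-stability argument you sketch. Your approach is purely algebraic and self-contained, but pays for it with the harder ideal construction; the paper imports the Lie group $N$ and gets an invariant filtration essentially for free.
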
  

Ado's theorem in fact holds over any ground field, but it is rather nontrivial and we won't prove it now. A proof can be found, for example, in \cite{J}. But Ado's theorem immediately implies Theorem \ref{third}. Indeed, using Theorem \ref{first}, Ado's theorem implies the following even stronger statement: 

\begin{theorem} Any finite dimensional $\Bbb K$-Lie algebra is the Lie algebra of a Lie subgroup of $GL_n(\Bbb K)$ for some $n$. 
\end{theorem}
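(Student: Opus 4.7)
The plan is to deduce the statement by combining Ado's theorem (stated just above) with the first fundamental theorem of Lie theory (Theorem \ref{first}), which we are allowed to assume. The argument is essentially a one-liner once these two ingredients are in hand, so I will lay out the logical steps rather than any calculation.

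First, I would let $\g$ be an arbitrary finite-dimensional Lie algebra over $\Bbb K$ and apply Ado's theorem to obtain an injective homomorphism of Lie algebras
$$
\iota\colon \g\hookrightarrow \mathfrak{gl}_n(\Bbb K)
$$
for some $n\ge 1$. The image $\iota(\g)\subset \mathfrak{gl}_n(\Bbb K)$ is then a Lie subalgebra isomorphic to $\g$.

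Next I would recall that $GL_n(\Bbb K)$ is a (real or complex) Lie group with Lie algebra $\mathfrak{gl}_n(\Bbb K)$, as established in Subsection \ref{clagrou}. Thus $\iota(\g)$ is a Lie subalgebra of $\mathrm{Lie}\,GL_n(\Bbb K)$. Now I would invoke Theorem \ref{first} with $G=GL_n(\Bbb K)$ and $\h:=\iota(\g)$: the first fundamental theorem produces a unique connected Lie subgroup $H\subset GL_n(\Bbb K)$ with $\mathrm{Lie}\,H=\iota(\g)\cong \g$. This $H$ is by construction a Lie subgroup of $GL_n(\Bbb K)$ whose Lie algebra is $\g$, which is exactly what the statement asks for.

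The only genuine obstacle in this argument is Ado's theorem itself, whose proof is nontrivial and is explicitly deferred to the reference \cite{J}; once Ado's theorem is granted, Theorem \ref{first} does all the remaining work, so there is nothing further to prove. It is worth noting that the Lie subgroup $H$ produced this way need not be closed in $GL_n(\Bbb K)$ (cf.\ the irrational torus winding in Example \ref{wind}), but this is irrelevant to the statement, which only requires $H$ to be a Lie subgroup in the sense of an immersed submanifold subgroup.
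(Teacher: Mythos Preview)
Your proof is correct and follows exactly the same approach as the paper: the text explicitly says that Ado's theorem combined with Theorem \ref{first} yields this stronger statement, and your argument spells out precisely that implication. Your remark that the resulting subgroup need not be closed is a welcome clarification, consistent with the paper's subsequent observation that not every Lie group embeds as a Lie subgroup of $GL_n(\Bbb K)$.
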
 

This implies 

\begin{corollary} Any simply connected Lie group is the universal covering of a linear Lie group, i.e., of a Lie subgroup of $GL_n(\Bbb K)$.  
\end{corollary}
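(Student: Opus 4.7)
The plan is to combine the preceding theorem (the Lie-group version of Ado) with the second fundamental theorem of Lie theory and the covering-space picture from Proposition \ref{abeli}.

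First, let $\widetilde G$ be a simply connected $\Bbb K$-Lie group with Lie algebra $\g={\rm Lie}\widetilde G$. By the theorem just stated, there exists a connected Lie subgroup $H\subset GL_n(\Bbb K)$ whose Lie algebra is (isomorphic to) $\g$; fix such an isomorphism $\psi:\g\xrightarrow{\sim}{\rm Lie}H$. Because $\widetilde G$ is simply connected, Theorem \ref{second} lifts $\psi$ to a morphism of Lie groups
$$
\phi:\widetilde G\longrightarrow H
$$
with $\phi_*=\psi$. Note that $\phi$ is automatically regular when regarded as a map $\widetilde G\to GL_n(\Bbb K)$ (through the immersion $H\hookrightarrow GL_n(\Bbb K)$), so $\phi$ realizes $\widetilde G$ as a Lie-group homomorphism into the linear group.

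Next I would verify that $\phi$ is a covering map onto $H$. Since $\phi_*=\psi$ is an isomorphism of Lie algebras, it is surjective, so Proposition \ref{generati}(ii) gives that $\phi(\widetilde G)=H$. On the other hand, $\phi_*$ is also injective, so the Lie algebra of $N:={\rm Ker}\,\phi$ is zero, and hence $N$ is a discrete (closed, normal) subgroup of $\widetilde G$. A surjective homomorphism of connected Lie groups with discrete kernel is a covering: near the identity, $\phi$ is a local diffeomorphism by the inverse function theorem (as $d\phi_1$ is invertible), and translating by elements of $N$ shows that $\phi^{-1}(\phi(U))=\bigsqcup_{n\in N}nU$ for a small enough neighborhood $U$ of $1$, which is exactly the trivialization required for a covering. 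Thus $\phi:\widetilde G\to H$ is a covering map, and since the total space $\widetilde G$ is simply connected, it is the universal covering of $H$.

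Finally, $H$ is by construction a Lie subgroup of $GL_n(\Bbb K)$, so $\widetilde G$ is exhibited as the universal covering of the linear Lie group $H$, which is the claim. The only nontrivial ingredient is Ado's theorem, packaged here as the preceding theorem; every other step is a direct application of material already established (Theorem \ref{second}, Proposition \ref{generati}, and the covering-theoretic part of Proposition \ref{abeli}). The main potential obstacle is justifying that a surjective homomorphism with discrete kernel is genuinely a covering, but this is immediate from the local-diffeomorphism property at $1$ combined with left translation by elements of $N$.
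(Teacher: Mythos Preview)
Your proof is correct and is exactly the argument the paper has in mind; the corollary is stated there without a separate proof, as an immediate consequence of the preceding theorem together with Theorem \ref{second} and the covering-space description from Proposition \ref{abeli}, and you have spelled out precisely those steps.
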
 

However, it is not true that any Lie group is isomorphic to a Lie subgroup of $GL_n(\Bbb K)$, see Exercise \ref{unico}.

One can also prove Theorem \ref{third} directly and then deduce Ado's theorem as a corollary. We will do this in Sections \ref{thirdlie} and \ref{adothm}. 
We note that Theorem \ref{third} will not be used in proofs of other results 
until that point. 

\section{\bf Representations of Lie groups and Lie algebras} 

\subsection{Representations} 

We have previously defined (finite dimensional) representations of Lie groups and (iso)morphisms between them. We can do the same for Lie algebras: 

\begin{definition} A {\bf representation of a Lie algebra} $\g$ over a field $\bold k$ (or a $\g$-module) is a vector space $V$ over $\bold k$ equipped with a homomorphism of Lie algebras $\rho=\rho_V: \g\to \mathfrak{gl}(V)$. A {\bf (homo)morphism of representations} $A: V\to W$ (also called an {\bf intertwining operator}) is a linear map which commutes with the $\g$-action: $A\rho_V(b)=\rho_W(b)A$ for $b\in \g$. Such $A$ is an isomorphism if it is an isomorphism of vector spaces. 
\end{definition} 

The first and second fundamental theorems of Lie theory imply: 

\begin{corollary} Let $G$ be a Lie group and $\g={\rm Lie}G$. 

(i) Any finite dimensional representation $\rho: G\to GL(V)$
gives rise to a Lie algebra representation 
$\rho_*: \g\to \mathfrak{gl}(V)$, and any morphism of $G$-representations is also a morphism of $\g$-representations. 

(ii) If $G$ is connected then any morphism of $\g$-representations is a morphism of $G$-representations. 

(iii) If $G$ is simply connected then the assignment $\rho\mapsto \rho_*$ 
is an equivalence of categories $\Rep G\to \Rep \g$ between the corresponding categories of finite dimensional representations. In particular, any finite dimensional representation of the Lie algebra $\g$ can be uniquely exponentiated to the group $G$. 
\end{corollary}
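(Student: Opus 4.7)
The plan is to treat each part as a direct consequence of what has been built up already. Observe first that a representation $\rho\colon G\to GL(V)$ is literally a Lie group homomorphism into $GL(V)$, whose Lie algebra is $\mathfrak{gl}(V)$, so everything can be phrased in terms of $\Hom$ sets between Lie groups/algebras and the fundamental theorems applied to $K=GL(V)$.

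For (i), the functoriality of $G\mapsto \Lie G$ (established in the section on Lie algebras) immediately gives $\rho_*\colon \g\to\mathfrak{gl}(V)$ as a Lie algebra homomorphism. For the morphism statement, an intertwiner $A\colon V\to W$ satisfies $A\rho_V(g)=\rho_W(g)A$ for all $g\in G$; substituting $g=\exp(tx)$, $x\in\g$, and differentiating at $t=0$ yields $A\rho_{V*}(x)=\rho_{W*}(x)A$, which is exactly the condition that $A$ intertwines the $\g$-actions.

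For (ii), assume $G$ is connected and let $A$ be a $\g$-intertwiner. Using the naturality of the exponential map (the identity $\phi(\exp x)=\exp(\phi_* x)$), we have $\rho_V(\exp x)=\exp(\rho_{V*}(x))$ and similarly for $W$. The relation $A\rho_{V*}(x)=\rho_{W*}(x)A$ then gives $A\exp(\rho_{V*}(x))=\exp(\rho_{W*}(x))A$, i.e.\ $A\rho_V(g)=\rho_W(g)A$ for all $g$ in a neighborhood $U$ of $1$ of the form $\exp(\mathfrak u)$. The set $H:=\{g\in G : A\rho_V(g)=\rho_W(g)A\}$ is a closed subgroup of $G$, and it contains $U$; by Proposition~\ref{generati}(i), $U$ generates $G$, so $H=G$ and $A$ is a $G$-intertwiner.

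For (iii), assume $G$ is simply connected and apply the second fundamental theorem (Theorem~\ref{second}) to $K=GL(V)$: the map $\rho\mapsto\rho_*$ gives a bijection
\[
\Hom(G,GL(V))\ \xrightarrow{\ \sim\ }\ \Hom(\g,\mathfrak{gl}(V)),
\]
so the functor $\Rep G\to\Rep\g$ is essentially surjective (every $\g$-module structure on a vector space $V$ lifts uniquely to a $G$-module structure). Parts (i) and (ii) together show that the map on morphisms is a bijection (the underlying linear map is unchanged, and the intertwining conditions are equivalent), so the functor is fully faithful. Combining essential surjectivity and full faithfulness yields the desired equivalence of categories, and the final sentence of the corollary is just the essential surjectivity statement restated. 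No step presents real difficulty; the only subtlety worth flagging is the passage in (ii) from a neighborhood of $1$ to all of $G$, which is precisely where connectedness enters via Proposition~\ref{generati}(i).
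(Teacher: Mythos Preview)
Your proof is correct and follows exactly the approach the paper intends: the corollary is stated as an immediate consequence of the first and second fundamental theorems of Lie theory, and you have spelled out precisely how each part follows (functoriality of $G\mapsto\Lie G$ for (i), the exponential map plus Proposition~\ref{generati}(i) for (ii), and Theorem~\ref{second} applied to $K=GL(V)$ for (iii)).
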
  

\begin{example} 1. The trivial representation: $\rho(g)=1, g\in G$, $\rho_*(x)=0$, $x\in \g$. 

2. The adjoint representation: $\rho(g)={\rm Ad}_g, \rho_*(x)={\rm ad}x$. 
\end{example} 

\begin{exercise} Let $\g$ be a complex Lie algebra regarded as a real one. Show that $\g_\Bbb C\cong \g\oplus \g$. Deduce that if $G$ is a simply connected complex Lie group then 
$\Rep_{\Bbb R} G\cong \Rep(\g\oplus \g)$, where $\Rep_{\Bbb R} G$ is the category of finite dimensional representations of $G$ regarded as a real Lie group. 
\end{exercise} 

As usual, a {\bf subrepresentation} of a representation $V$ is a subspace $W\subset V$ invariant under the $G$-action (resp. $\g$-action). In this case the quotient space 
$V/W$ has a natural structure of a representation, called the {\bf quotient representation}. The notion of {\bf direct sum} of representations is defined in an obvious way: 
$$
\rho_{V\oplus W}=\rho_V\oplus \rho_W. 
$$
Also we have the notion of {\bf dual representation}: 
$$
\rho_{V^*}(g)=\rho_V(g^{-1})^*, g\in G;\ \rho_{V^*}(x)=-\rho_V(x)^*, x\in \g,
$$
and {\bf tensor product}: 
$$
\rho_{V\otimes W}(g)=\rho_V(g)\otimes \rho_W(g),\ \rho_{V\otimes W}(x)=\rho_V(x)\otimes 1_W+1_V\otimes \rho_W(x).
$$
Thus we have the notion of {\bf symmetric and exterior powers} $S^mV,\wedge^m V$ of a representation $V$, which can be defined either as quotients or (over a field of characteristic zero) as subrepresentations of $V^{\otimes m}$. Also for representations $V,W$, $\Hom(V,W)$ is a representation 
via
$$
g\circ A=\rho_W(g)A\rho_V(g^{-1}),\ x\circ A=\rho_W(x)A-A\rho_V(x), 
$$ 
so if $V$ is finite dimensional then $\Hom(V,W)\cong V^*\otimes W$. 
Finally, for every representation $V$ we have the notion of invariants:
$$
V^G=\lbrace v\in V: gv=v\ \forall g\in G\rbrace,\ V^\g=\lbrace v\in V: xv=0\ \forall x\in \g\rbrace.
$$
Thus $V^G\subset V^\g$ and $V^G=V^\g$ for connected $G$ (in general, $V^G=(V^\g)^{G/G^\circ}$). Also $\Hom(V,W)^G\cong \Hom_G(V,W)$ 
and $\Hom(V,W)^\g=\Hom_\g(V,W)$, the spaces of intertwining operators. 
Note that in all cases the formula for Lie algebras is determined by the formula for groups by the requirement that these definitions should be consistent with the assignment $\rho\mapsto \rho_*$. 

\begin{definition} A representation $V\ne 0$ of $G$ or $\g$ is {\bf irreducible} if any subrepresentation $W\subset V$ is either $0$ or $V$ and is {\bf indecomposable} if for any decomposition $V\cong V_1\oplus V_2$, we have $V_1=0$ or $V_2=0$. 
\end{definition} 

It is clear that any finite dimensional representation is isomorphic to a direct sum of indecomposable representations (in fact, uniquely so up to order of summands by the {\it Krull-Schmidt theorem}).
However, not any $V$ is a direct sum of irreducible representations, e.g. 
$$
\rho: \Bbb C\to GL_2(\Bbb C),\ \rho(x)=\begin{pmatrix} 1& x\\ 0& 1\end{pmatrix}.
$$ 

\begin{definition}  
A representation $V$ is called {\bf completely reducible} if it is isomorphic to a direct sum of irreducible representations. 
\end{definition} 

Some of the main problems of representation theory are: 

1) Classify irreducible representations; 

2) If $V$ is a completely reducible representation, find its decomposition into irreducibles. 

3) For which $G$ are all representations completely reducible?  

\begin{example}\label{decoo} Let $V$ be a finite dimensional $\Bbb C$-representation of $\g$ or $G$  and $A: V\to V$ be a homomorphism of representations (e.g., defined by a central element). Then we have a decomposition of representations $V=\oplus_\lambda V(\lambda)$, where $V(\lambda)$ is the generalized eigenspace of $A$ with eigenvalue $\lambda$. 
\end{example} 

\begin{example} Let $V$ be the vector representation of $GL(V)$. Then $V$ is irreducible, and more generally so are $S^mV,\wedge^nV$ (show it!). Thus $V\otimes V$ is completely reducible: $V\otimes V\cong S^2V\oplus \wedge^2V$. 
\end{example} 

\subsection{Schur's lemma} 

\begin{lemma} (Schur's lemma) 
Let $V,W$ be irreducible finite dimensional complex representations of $G$ or $\g$. 
Then $\Hom_{G,\g}(V,W)=0$ if $V,W$ are not isomorphic, and every endomorphism 
of the representation $V$ is a scalar. 
\end{lemma}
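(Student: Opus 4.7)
The plan is to use the fact that kernels and images of intertwining operators are subrepresentations, together with the existence of eigenvalues over $\mathbb{C}$. I will write the argument for $G$; the argument for $\g$ is identical since the defining property of an intertwiner is the same in both cases.

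First I would observe that for any intertwiner $A\colon V\to W$, both $\Ker A\subset V$ and $\Image A\subset W$ are subrepresentations: if $v\in \Ker A$ then $A(\rho_V(g)v)=\rho_W(g)A(v)=0$, and similarly $\rho_W(g)A(v)=A(\rho_V(g)v)\in \Image A$. By irreducibility of $V$, $\Ker A$ is either $0$ or all of $V$; by irreducibility of $W$, $\Image A$ is either $0$ or all of $W$. Hence $A$ is either the zero map or a bijective intertwiner, i.e.\ an isomorphism of representations. In particular, if $V\not\cong W$ then no nonzero intertwiner can exist, so $\Hom_{G}(V,W)=0$.

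Next, take $V=W$ and let $A\in \End_G(V)$. Since $V$ is a finite-dimensional complex vector space, $A$ has at least one eigenvalue $\lambda\in\mathbb{C}$. Now $A-\lambda\,\Id_V$ is again an intertwiner (the space $\End_G(V)$ is closed under addition and contains $\Id_V$, since $\Id_V$ obviously commutes with the $G$-action). By construction $\Ker(A-\lambda\,\Id_V)\neq 0$, so by the first paragraph $A-\lambda\,\Id_V$ cannot be an isomorphism; it must therefore be the zero map. This gives $A=\lambda\,\Id_V$, as required.

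There is no real obstacle here; the only essential input beyond the definitions is the existence of an eigenvalue, which is where algebraic closedness of $\mathbb{C}$ (equivalently, the finite-dimensionality assumption combined with the fundamental theorem of algebra) enters. The argument transports verbatim to the Lie algebra case because $\Ker A$ and $\Image A$ of a $\g$-intertwiner are checked to be $\g$-subrepresentations by exactly the same computation with $\rho_V(x),\rho_W(x)$ in place of $\rho_V(g),\rho_W(g)$.
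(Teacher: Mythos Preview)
Your proof is correct and follows essentially the same approach as the paper: show that a nonzero intertwiner must be an isomorphism via the kernel/image argument, then for $V=W$ pick an eigenvalue $\lambda$ and observe that $A-\lambda\,\Id$ is a non-isomorphism intertwiner, hence zero. The only difference is that you spell out a few verifications (that $\Ker A$, $\Image A$ are subrepresentations, that $\Id_V$ is an intertwiner) which the paper takes for granted.
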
 

\begin{proof}  Let $A: V\to W$ be a nonzero morphism of representations. Then ${\rm Im}(A)\subset W$ 
is a nonzero subrepresentation, hence ${\rm Im}(A)=W$. Also ${\rm Ker}(A)\subset V$ is a proper subrepresentation, so ${\rm Ker}(A)=0$. Thus $A$ is an isomorphism, i.e., we may assume that $W=V$.
In this case, let $\lambda$ be an eigenvalue of $A$. Then $A-\lambda \cdot {\rm Id}: V\to V$ is a morphism of representations but not an isomorphism, hence it must be zero, so $A=\lambda \cdot {\rm Id}$.  
\end{proof} 

Note that the second statement of 
Schur's lemma (unlike the first one) does not hold over $\Bbb R$. For example, consider 
the rotation group $SO(2)$ (or any of its finite subgroups of order $>2$) acting on $V=\Bbb R^2$ by rotations. Then $\End(V)=\Bbb C\ne \Bbb R$. Similarly, if $V$ is the representation of $SU(2)$ on $\Bbb H$ defined by right multiplication by unit quaternions then $V$ is an irreducible real representation but 
$\End(V)=\Bbb H\ne \Bbb R$. For this reason, in representation theory of Lie groups 
and Lie algebras one usually considers complex representations. Thus from now on all representations we consider will be assumed complex unless specified otherwise.\footnote{An exception is the adjoint representation of a real Lie group and associated tensor representations, which are real.}

\begin{corollary} The center of $G,\g$ acts on an irreducible representation by a scalar. In particular, 
if $G$ or $\g$ is abelian then every irreducible representation of $G$ or $\g$
is 1-dimensional. 
\end{corollary}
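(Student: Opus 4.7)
The plan is to reduce the statement to Schur's lemma by showing that the action of any central element intertwines the representation with itself, so that Schur's lemma forces it to be scalar.

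First I would handle the group case. Let $V$ be an irreducible representation of $G$ with $\rho=\rho_V\colon G\to GL(V)$, and let $z\in Z(G)$. For every $g\in G$, centrality gives $zg=gz$, so applying $\rho$ yields $\rho(z)\rho(g)=\rho(g)\rho(z)$. Thus $\rho(z)\in\End(V)$ is a homomorphism of representations $V\to V$. Since $V$ is irreducible and finite dimensional over $\mathbb C$, Schur's lemma says $\rho(z)=\lambda(z)\cdot\Id_V$ for some scalar $\lambda(z)\in\mathbb C$.

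The Lie algebra case is analogous, with commutators replacing conjugation. If $z\in\mathfrak{z}(\g)$ then $[z,x]=0$ for all $x\in\g$, hence
\[
\rho_V(z)\rho_V(x)-\rho_V(x)\rho_V(z)=\rho_V([z,x])=0,
\]
so $\rho_V(z)$ commutes with the entire image of $\rho_V$ and is therefore an intertwining operator $V\to V$. Schur's lemma again gives $\rho_V(z)=\lambda(z)\cdot\Id_V$.

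For the ``In particular'' clause, suppose $G$ is abelian, so $Z(G)=G$. By what was just shown, every $\rho(g)$ acts on an irreducible $V$ as a scalar. Hence every one-dimensional subspace of $V$ is $G$-invariant, and irreducibility forces $\dim V=1$. The argument for abelian $\g$ is the same, using $\mathfrak{z}(\g)=\g$. No step here is a real obstacle: the whole proof is essentially a two-line application of Schur's lemma, and the only thing worth being careful about is noting that we are working over $\mathbb C$ (as stipulated just before the statement), so that Schur's lemma in its second, ``scalar'' form is available.
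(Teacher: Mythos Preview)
Your proof is correct and is exactly the intended argument: the paper leaves this corollary without an explicit proof because it is an immediate consequence of Schur's lemma, and your write-up spells out precisely that deduction.
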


\begin{example} Irreducible representations of $\Bbb R$ are $\chi_s$ given by $\chi_s(a)=\exp(sa)$, $s\in \Bbb C$. Irreducible representations of $\Bbb R^\times=\Bbb R_{>0}\times \Bbb Z/2$ 
are $\chi_{s,+}(a)=|a|^s$, $\chi_{s,-}(a)=|a|^s{\rm sign}(a)$. Irreducible representations 
of $S^1$ are $\chi_n(z)=z^n$, $n\in \Bbb Z$. Irreducible representations of the real group $\Bbb C^\times=\Bbb R_{>0}\times S^1$ are 
$\chi_{s,n}(z)=|z|^s(z/|z|)^n$, $s\in \Bbb C$, $n\in \Bbb Z$.  
\end{example} 

\begin{corollary} Let $V_i$ be irreducible and $V=\oplus_i n_iV_i, W=\oplus_i m_iV_i$ be completely reducible complex representations 
of $G$ or $\g$. Then we have a natural linear isomorphism
$$
\Hom_{G,\g}(V,W)\cong \oplus_i {\rm Mat}_{m_i,n_i}(\Bbb C).
$$ 
Moreover, if $V=W$ then this is an isomorphism of algebras. 
\end{corollary}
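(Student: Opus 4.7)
The plan is to apply Schur's lemma blockwise to the decompositions of $V$ and $W$. First, I would write $V=\bigoplus_i V_i^{\oplus n_i}$ and $W=\bigoplus_j V_j^{\oplus m_j}$ and use the elementary fact that
$$
\Hom_{G,\g}(V,W)\cong \bigoplus_{i,j}\Hom_{G,\g}(V_i,V_j)^{\oplus n_i m_j}
$$
(more invariantly, $\Hom_{G,\g}(V_i^{\oplus n_i},V_j^{\oplus m_j})\cong \Hom_{\mathbb{C}}(\mathbb{C}^{n_i},\mathbb{C}^{m_j})\otimes_{\mathbb{C}}\Hom_{G,\g}(V_i,V_j)$, where $\mathbb{C}^{n_i}$ and $\mathbb{C}^{m_j}$ are the multiplicity spaces).

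Next I would invoke Schur's lemma. Since the $V_i$ are irreducible, Schur gives $\Hom_{G,\g}(V_i,V_j)=0$ for $i\ne j$, while $\Hom_{G,\g}(V_i,V_i)=\mathbb{C}\cdot\mathrm{Id}_{V_i}$. Substituting, only diagonal terms $i=j$ survive, and each contributes
$$
\Hom_{\mathbb{C}}(\mathbb{C}^{n_i},\mathbb{C}^{m_i})\otimes\mathbb{C}\cdot\mathrm{Id}_{V_i}\;\cong\;\mathrm{Mat}_{m_i,n_i}(\mathbb{C}).
$$
Summing over $i$ yields the desired linear isomorphism. The isomorphism is natural in the sense that, having chosen bases of the multiplicity spaces $\mathbb{C}^{n_i}$ and $\mathbb{C}^{m_i}$, an intertwiner $A$ is sent to the tuple of matrices $(A_i)$ where $A_i$ describes the induced map $V_i^{\oplus n_i}\to V_i^{\oplus m_i}$ via the Schur identification of its $(k,\ell)$-block with a scalar multiple of $\mathrm{Id}_{V_i}$.

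For the algebra statement when $V=W$ (so $n_i=m_i$), I would observe that under the identification above, composition of intertwiners corresponds to block matrix multiplication, because the off-diagonal $\Hom$'s vanish and the scalar multiplication on $\mathbb{C}\cdot\mathrm{Id}_{V_i}$ is the usual one. Concretely, if $A,B\in\End_{G,\g}(V)$ correspond to $(A_i),(B_i)$, then the $(k,\ell)$-block of $AB$ acting on the $i$-th isotypic component $V_i^{\oplus n_i}$ is $\sum_p A_{i,kp}B_{i,p\ell}\cdot\mathrm{Id}_{V_i}$, matching matrix multiplication in $\mathrm{Mat}_{n_i}(\mathbb{C})$. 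The identity of $V$ corresponds to the tuple of identity matrices, so this is a unital algebra isomorphism.

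There is no serious obstacle here; the only mildly delicate point is keeping the direction of the isomorphism $\Hom(\mathbb{C}^{n_i},\mathbb{C}^{m_i})\cong\mathrm{Mat}_{m_i,n_i}(\mathbb{C})$ straight (i.e.\ the convention that the target dimension gives the number of rows), which one fixes once and for all via a choice of bases of the multiplicity spaces. Everything else is a direct application of Schur's lemma and bilinearity of $\Hom$.
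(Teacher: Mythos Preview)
Your proof is correct and is exactly the argument the paper has in mind: the corollary is stated without proof as an immediate consequence of Schur's lemma, and your blockwise application of Schur together with the bilinearity of $\Hom$ is precisely how one fills in the details.
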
 

\subsection{Unitary representations} 

A finite dimensional representation $V$ of $G$ is said to be {\bf unitary} if it is equipped with a positive definite Hermitian inner product $B(,)$ invariant under $G$, i.e., $B(gv,gw)=B(v,w)$ for
$v, w\in V$, $g\in G$.  

\begin{proposition} Any unitary representation can be written as an orthogonal direct sum of irreducible unitary representations. In particular, it is completely reducible. 
\end{proposition}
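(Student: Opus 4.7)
The plan is to argue by induction on $\dim V$, using that the invariant Hermitian form $B$ makes the orthogonal complement of any subrepresentation automatically a subrepresentation. The base case $\dim V = 0$ is trivial (empty direct sum) and $\dim V = 1$ is immediate since a nonzero 1-dimensional representation is irreducible.

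For the inductive step, suppose the result holds for all unitary representations of dimension $<\dim V$. If $V$ is irreducible, there is nothing to prove. Otherwise, pick a proper nonzero subrepresentation $W \subset V$, and consider
$$
W^\perp = \lbrace v\in V : B(v,w)=0 \text{ for all } w\in W\rbrace.
$$
The main step is to verify that $W^\perp$ is itself $G$-invariant. For $v\in W^\perp$, $g\in G$, and any $w\in W$, the invariance of $B$ gives $B(gv,w) = B(v,g^{-1}w)$; since $W$ is $G$-invariant we have $g^{-1}w\in W$, so $B(v,g^{-1}w)=0$, hence $gv\in W^\perp$. Thus $W^\perp$ is a subrepresentation.

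Since $B$ is positive definite (hence nondegenerate), standard linear algebra gives the orthogonal decomposition $V = W \oplus W^\perp$, and this is a direct sum of subrepresentations. Restricting $B$ to each summand produces a $G$-invariant positive definite Hermitian form, so both $W$ and $W^\perp$ are unitary representations in their own right. Since $W$ is a proper nonzero subrepresentation, both $\dim W$ and $\dim W^\perp$ are strictly less than $\dim V$, so by the induction hypothesis each decomposes as an orthogonal direct sum of irreducible unitary subrepresentations. Concatenating these two decompositions produces the desired orthogonal decomposition of $V$, and in particular $V$ is completely reducible.

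The only nontrivial point is the invariance of $W^\perp$, and that is essentially immediate from the $G$-invariance of $B$; no real obstacle arises. (If one prefers to argue for a Lie algebra representation, the same computation works: for $x\in\g$, $v\in W^\perp$, $w\in W$, one has $B(\rho(x)v,w) = -B(v,\rho(x)w)$ because $\rho(x)$ is skew-Hermitian with respect to $B$, and $\rho(x)w\in W$, so $B(\rho(x)v,w)=0$.)
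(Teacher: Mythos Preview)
Your proof is correct and follows essentially the same approach as the paper: split off a subrepresentation using the orthogonal complement with respect to the invariant positive definite form, then induct on dimension. The only cosmetic difference is that the paper picks $W$ to be irreducible from the start (so induction is applied only to $W^\perp$), whereas you pick any proper nonzero $W$ and apply the induction hypothesis to both summands; either variant works.
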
 

\begin{proof} If $W\subset V$ is a subrepresentation of a unitary representation $V$ then 
let $W^\perp$ be its orthogonal complement under $B$. Then $W^\perp$ is also a subrepresentation since $B$ is invariant, and $V=W\oplus W^\perp$ since $B$ is positive definite. 

Now we can prove that $V$ is an orthogonal direct sum of irreducible unitary representations by induction in $\dim V$. The base $\dim V=1$ is clear so let us make the inductive step. Pick an irreducible $W\subset V$. Then $V=W\oplus W^\perp$, and $W^\perp$ 
is a unitary representation of dimension smaller than $\dim V$, so is an orthogonal direct sum of irreducible unitary representations by the induction assumption. 
\end{proof} 

\begin{proposition} Any finite dimensional representation $V$ of a finite group $G$ is unitary.  
Moreover, if $V$ is irreducible, the unitary structure is unique up to a positive factor. 
\end{proposition}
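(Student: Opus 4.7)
The plan is to use the standard averaging trick. Start with any positive definite Hermitian inner product $B_0$ on $V$, which exists because $V$ is just a finite dimensional complex vector space (e.g., fix a basis and take the standard Hermitian form). Then define
$$
B(v,w) := \frac{1}{|G|}\sum_{g \in G} B_0(gv, gw).
$$
The finiteness of $G$ is essential here to make the sum well-defined. One checks immediately that $B$ is Hermitian (since each term is), positive definite (it is a positive real combination of positive definite forms, and in fact $B(v,v) \geq \frac{1}{|G|} B_0(v,v) > 0$ for $v \neq 0$), and $G$-invariant: for any $h \in G$,
$$
B(hv, hw) = \frac{1}{|G|}\sum_{g \in G} B_0(ghv, ghw) = \frac{1}{|G|}\sum_{g' \in G} B_0(g'v, g'w) = B(v,w),
$$
where we substituted $g' = gh$ and used that left multiplication by $h$ is a bijection of $G$.

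For the uniqueness statement, suppose $V$ is irreducible and $B_1, B_2$ are two $G$-invariant positive definite Hermitian forms on $V$. Since both are nondegenerate, there is a unique $\mathbb{C}$-linear operator $A \colon V \to V$ such that
$$
B_2(v,w) = B_1(Av, w) \quad \text{for all } v, w \in V.
$$
The $G$-invariance of both $B_1$ and $B_2$ forces $A$ to commute with the $G$-action: for all $g$ and all $v, w$, we have $B_1(Agv, w) = B_2(gv, w) = B_2(v, g^{-1}w) = B_1(Av, g^{-1}w) = B_1(gAv, w)$, and nondegeneracy of $B_1$ yields $Ag = gA$. By Schur's lemma (applicable since $V$ is an irreducible complex representation), $A = \lambda \cdot \mathrm{Id}$ for some $\lambda \in \mathbb{C}$. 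Plugging a nonzero vector $v$ into $B_2(v,v) = \lambda B_1(v,v)$, both sides are positive real numbers, so $\lambda \in \mathbb{R}_{>0}$, proving uniqueness up to a positive scalar.

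The only mild subtlety is verifying that $A$ really is $\mathbb{C}$-linear rather than merely conjugate-linear. This is automatic from the convention that $B_1, B_2$ are Hermitian of the same type (say, linear in the first argument and conjugate-linear in the second), so the relation $B_2(v,w) = B_1(Av,w)$ does define a $\mathbb{C}$-linear $A$. No serious obstacle arises; the averaging argument and Schur's lemma do all the work, and the proof extends verbatim to compact Lie groups once the Haar measure is available (as will be developed later in the notes).
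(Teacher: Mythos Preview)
Your proof is correct and follows essentially the same route as the paper: average an arbitrary positive Hermitian form over $G$ to obtain an invariant one, and for uniqueness relate two invariant forms by an intertwiner $A$ and apply Schur's lemma to get $A=\lambda\cdot\mathrm{Id}$ with $\lambda>0$. The only cosmetic difference is the normalization factor $\tfrac{1}{|G|}$, which the paper omits.
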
 

\begin{proof} Let $B$ be any positive definite inner product on $V$. 
Let 
$$
\widehat{B}(v,w):=\sum_{g\in G}B(gv,gw). 
$$
Then $\widehat{B}$ is positive definite and invariant, so $V$ is unitary. 

If $V$ is irreducible and $B_1,B_2$ are two unitary structures on $V$ then 
$B_1(v,w)=B_2(Av,w)$ for some homomorphism $A: V\to V$. Thus by Schur's lemma 
$A=\lambda\cdot {\rm Id}$, and $\lambda>0$ since $B_1,B_2$ are positive definite.  
\end{proof} 

\begin{corollary} Every finite dimensional complex representation of a finite group $G$ is completely reducible. 
\end{corollary}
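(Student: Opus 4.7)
The plan is to simply chain together the two propositions immediately preceding the corollary. Given a finite dimensional complex representation $V$ of the finite group $G$, the second-to-last proposition produces a positive definite $G$-invariant Hermitian inner product on $V$ by averaging an arbitrary positive definite Hermitian form over $G$; this exhibits $V$ as a unitary representation. Then the proposition before that says every unitary representation is an orthogonal direct sum of irreducible unitary subrepresentations, hence in particular completely reducible.

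So the argument is essentially a single sentence: $V$ is unitary (by averaging), therefore completely reducible (by the orthogonal complement argument). First I would invoke the unitarizability proposition to equip $V$ with an invariant positive definite Hermitian form $\widehat{B}$. Then I would invoke the decomposition proposition applied to $(V,\widehat{B})$ to conclude.

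There is no real obstacle here, since both ingredients have already been established in the excerpt. The only thing worth noting explicitly is that the averaging procedure requires the sum $\sum_{g \in G} B(gv,gw)$ to be finite, which is exactly where finiteness of $G$ enters, and that the orthogonal complement construction requires only an invariant \emph{positive definite} form (not merely nondegenerate) so that $V = W \oplus W^\perp$ as vector spaces. Both conditions are already built into the statements being invoked, so the corollary follows without further work.
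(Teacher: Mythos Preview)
Your proposal is correct and is exactly the intended argument: the paper states this as an immediate corollary of the two preceding propositions, and chaining ``finite group $\Rightarrow$ unitary'' with ``unitary $\Rightarrow$ completely reducible'' is precisely what is meant.
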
 

\subsection{Representations of $\mathfrak{sl}_2$} \label{sl2rep} 

The Lie algebra $\mathfrak{sl}_2=\mathfrak{sl}_2(\Bbb C)$ has basis 
$$
e=\begin{pmatrix} 0& 1\\ 0& 0\end{pmatrix},\ 
h=\begin{pmatrix} 1& 0\\ 0& -1\end{pmatrix},\
f=\begin{pmatrix} 0& 0\\ 1& 0\end{pmatrix}
$$
with commutator 
$$
[e,f]=h,\ [h,e]=2e,\ [h,f]=-2f. 
$$
Since 2-by-2 matrices act on variables $x,y$, they also act on the space $V=\Bbb C[x,y]$ of polynomials 
in $x,y$. Namely, this action is given by the formulas
$$
e=x\partial_y,\ f=y\partial_x,\ h=x\partial_x-y\partial_y. 
$$
This infinite-dimensional representation has the form 
$V=\oplus_{n\ge 0}V_n$, where $V_n$ 
is the space of polynomials of degree $n$. 
The space $V_n$ is invariant under $e,f,h$, so 
it is an $n+1$-dimensional representation of $\mathfrak{sl}_2$. 
It has basis $v_{pq}=x^py^q$, such that 
$$
hv_{pq}=(p-q)v_{pq},\ ev_{pq}=qv_{p+1,q-1},\ fv_{pq}=pv_{p-1,q+1}.
$$
Thus $V_0$ is the trivial representation, and $V_1$ is the tautological representation by 2-by-2 matrices. 
Also it is easy to see that $V_2$ is the adjoint representation. 

\begin{theorem}\label{sl2repth} (i) $V_n$ is irreducible. 

(ii) If $V\ne 0$ is a finite dimensional representation of $\mathfrak{sl}_2$ then $e|_V$ and $f|_V$ are nilpotent, so $U:={\rm Ker}(e)\ne 0$. Moreover, $h$ preserves $U$ and acts diagonalizably on it, with nonnegative integer eigenvalues.  

(iii) Any irreducible finite dimensional representation $V$ of $\mathfrak{sl}_2$ is isomorphic to $V_n$ for some $n$. 

(iv) Any finite dimensional representation $V$ of $\mathfrak{sl}_2$ is 
completely reducible. 
\end{theorem}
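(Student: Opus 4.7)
The overall strategy is to exploit the $h$-weight grading on any $\mathfrak{sl}_2$-module and to use the compact form $SU(2)\subset SL_2(\Bbb C)$ for the semisimplicity statements. Part (i) is immediate from the explicit model: $V_n$ has the weight basis $v_{k,n-k}$ ($0\le k\le n$) with distinct $h$-eigenvalues $n-2k$, so any subrepresentation is $h$-invariant and hence a sum of the one-dimensional weight spaces. Since $ev_{k,n-k}=(n-k)v_{k+1,n-k-1}$ and $fv_{k,n-k}=kv_{k-1,n-k+1}$ connect all weight lines with nonzero scalars whenever the indices are in range, any nonzero subrepresentation must contain the entire chain and thus equal $V_n$.

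The hinge of (ii) is to show that $h$ acts diagonalizably with integer eigenvalues on any finite-dimensional $V$. I would do this by the unitary trick. The group $SL_2(\Bbb C)$ is simply connected (its action on $\Bbb C^2\setminus\{0\}$ is transitive with simply connected fiber $\Bbb C$, and $\Bbb C^2\setminus\{0\}$ retracts onto $S^3$; alternatively, polar decomposition exhibits it as $SU(2)\times\Bbb R^3$), so by Theorem \ref{second} any $\rho:\mathfrak{sl}_2\to\mathfrak{gl}(V)$ integrates to a representation of $SL_2(\Bbb C)$. Restricting to the circle subgroup $T=\{\mathrm{diag}(e^{it},e^{-it}):t\in\Bbb R\}=\exp(t\cdot ih)$, which is compact of period $2\pi$, gives a representation of $U(1)$, whose irreducibles are $z\mapsto z^n$, $n\in\Bbb Z$; hence $ih$ and therefore $h$ act diagonalizably with integer eigenvalues. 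Consequently $V=\bigoplus_\lambda V[\lambda]$ with only finitely many nonzero weight spaces, and since $e,f$ shift weights by $\pm 2$, some power of each annihilates $V$, so both are nilpotent. Thus $U:=\ker(e|_V)\ne 0$; the relation $eh=he-2e$ gives $h(U)\subset U$, and $h|_U$ inherits diagonalizability with integer eigenvalues. Nonnegativity is the standard $f$-chain computation: for $v\in U$ with $hv=\lambda v$, one proves inductively that $ef^kv=k(\lambda-k+1)f^{k-1}v$, and taking $n$ maximal with $f^nv\ne 0$ and applying this at $k=n+1$ forces $\lambda=n\ge 0$.

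For (iii), let $V$ be irreducible and pick $v\in U$ with $hv=nv$, $n\ge 0$. By the identity above, $W:=\mathrm{span}(v,fv,\ldots,f^nv)$ is stable under $e,f,h$ (with $f^{n+1}v=0$ forced by the maximality of $n$ together with the $f$-chain relation), so it is a nonzero subrepresentation; irreducibility gives $W=V$. Matching the $h$-weights $n,n-2,\ldots,-n$ and the actions of $e,f$ against the model $V_n$ then exhibits an explicit isomorphism $V\cong V_n$.

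For (iv), I would again use the unitary trick: lift $V$ to an $SL_2(\Bbb C)$-module and restrict to $SU(2)$; averaging any positive definite Hermitian form on $V$ over $SU(2)$ against its Haar measure produces an $SU(2)$-invariant positive definite Hermitian inner product $B$. Any $\mathfrak{sl}_2$-subrepresentation $W$ is $\mathfrak{su}(2)$-invariant and hence, by connectedness of $SU(2)$ with Lie algebra $\mathfrak{su}(2)$, $SU(2)$-invariant; its $B$-orthogonal complement $W^\perp$ is therefore $SU(2)$-invariant, hence $\mathfrak{su}(2)$-invariant, and since $\mathfrak{sl}_2=\mathfrak{su}(2)\oplus i\mathfrak{su}(2)$ also $\mathfrak{sl}_2$-invariant, so $V=W\oplus W^\perp$. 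Induction on $\dim V$ finishes complete reducibility. The main obstacle I anticipate is the diagonalizability of $h$ in (ii): this cannot be obtained by manipulation purely inside $\mathfrak{sl}_2$ and essentially forces an appeal to the compact real form. A secondary point is that the unitary-trick proof of (iv) uses Haar measure on $SU(2)$, which is developed later in the text; if it is not yet available, one can substitute Weyl's algebraic argument via the Casimir element $C=ef+fe+\tfrac12 h^2$ to split off invariant complements.
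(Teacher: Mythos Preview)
Your arguments for (i) and (iii) match the paper's, but your approach to (ii) and (iv) is genuinely different. The paper proves (ii) purely algebraically: for $v\in U$ it sets $v_m:=e^mf^mv$ and uses the identity $ef^mv=f^{m-1}m(h-m+1)v$ to obtain $v_m=m!\,h(h-1)\cdots(h-m+1)v$; since $f$ is nilpotent, $v_m=0$ for large $m$, forcing $h|_U$ to satisfy a polynomial with distinct nonnegative integer roots, hence to be diagonalizable with those eigenvalues. For (iv) the paper uses the Casimir $C=2fe+\tfrac12 h^2+h$ to split $V$ into generalized $C$-eigenspaces and then a direct weight-space argument. So your assertion that diagonalizability of $h$ ``cannot be obtained by manipulation purely inside $\mathfrak{sl}_2$ and essentially forces an appeal to the compact real form'' is simply wrong: the paper does exactly this without any Lie group input.

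Your unitary-trick route is correct in substance, and for (ii) it can even be made independent of Haar measure: once the representation integrates to $SL_2(\Bbb C)$, the relation $\exp(2\pi i\rho(h))=\rho(\exp(2\pi i h))=\rho(1)=\mathrm{Id}$ forces $\rho(h)$ to be diagonalizable with integer eigenvalues (a nilpotent part would make $\exp(2\pi i N)\ne\mathrm{Id}$). The trade-off is that your proof imports the second fundamental theorem plus the simple connectedness of $SL_2(\Bbb C)$, and for (iv) Haar measure on $SU(2)$, all of which are established only later in the text; the paper's algebraic proof is entirely self-contained at this point and has the further advantage of working over any algebraically closed field of characteristic zero.
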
 

\begin{proof} (i) Let $W\subset V_n$ be a nonzero subrepresentation. Since it is $h$-invariant, it must be spanned by vectors $v_{p,n-p}$ for $p$ from a nonempty subset $S\subset [0,n]$. Since $W$ is $e$-invariant and $f$-invariant, if $m\in S$ then so are $m+1,m-1$ (if they are in $[0,n]$). Thus $S=[0,n]$ and $W=V_n$. 

(ii) Let $V$ be a finite dimensional representation of $\mathfrak{sl}_2$. 
We can write $V$ as a direct sum of generalized eigenspaces of $h$: 
$V=\oplus_\lambda V(\lambda)$. Since $he=e(h+2)$, $hf=f(h-2)$, we have $e: V(\lambda)\to V(\lambda+2)$, $f: V(\lambda)\to V(\lambda-2)$. 
Thus $e|_V$, $f|_V$ are nilpotent, so $U\ne 0$. 

If $v\in U$ then $e(hv)=(h-2)ev=0$, so $hv\in U$, i.e., $U$ is $h$-invariant. 

Given $v\in U$, consider the vector $v_m:=e^mf^mv$. We have 
\begin{gather}\label{efm}
ef^mv=fef^{m-1}v+hf^{m-1}v=fef^{m-1}v+f^{m-1}(h-2(m-1))v=...\\ \nonumber
=
f^{m-1}m(h-m+1)v. 
\end{gather}
Thus 
$$
v_m=e^{m-1}f^{m-1}m(h-m+1)v=m(h-m+1)v_{m-1}. 
$$
Hence
$$
v_m=m! h(h-1)...(h-m+1)v. 
$$
But for large enough $m$, $v_m=0$, since $f$ is nilpotent, so 
$$
h(h-1)...(h-m+1)v=0.
$$ 
Thus $h$ acts diagonalizably on $U$ with nonnegative integer eigenvalues. 

(iii) Let $v\in U$ be an eigenvector of $h$, i.e., 
$hv=\lambda v$. Let $w_m=f^mv$. Then 
$$
fw_m=w_{m+1}, hw_m=(\lambda-2m)w_m. 
$$
Also, it follows from \eqref{efm} that 
$$
ew_m=m(\lambda-m+1)w_{m-1}.
$$
Thus if $w_m\ne 0$ and $\lambda\ne m$ then $w_{m+1}\ne 0$. 
Also the nonzero vectors $w_m$ are linearly independent since they have different eigenvalues of $h$.  
Thus $\lambda=n$ must be a nonnegative integer (as also follows from (ii)), and $w_{n+1}=0$. So $V$, being irreducible, has a basis $w_m$, $m=0,...,n$. Now it is easy to see 
that $V\cong V_n$, via the assignment 
$$
w_m\mapsto n(n-1)...(n-m+1)x^my^{n-m}.
$$ 

(iv) Consider the {\bf Casimir operator} 
$$
C=2fe+\frac{h^2}{2}+h.
$$
It is easy to check that $[C,e]=[C,f]=[C,h]=0$, so $C: V\to V$ is a homomorphism. 
Thus $C|_{V_n}=\frac{n(n+2)}{2}$ (it is a scalar by Schur's lemma, and acts with such eigenvalue on 
$v_{n0}\in V_n$); note that these are different for different $n$. For a general representation, we have $V=\oplus_c V_c$, the direct sum of generalized eigenspaces of $C$. 

Assume $V$ is indecomposable. Then by Example \ref{decoo} 
$C$ has a single eigenvalue $c$ on $V$. 
Fix a {\bf Jordan-H\"older filtration} on $V$, i.e. a filtration 
$$
0=F_0V\subset F_1V\subset...\subset F_mV=V
$$ 
such that $Y_i:=F_{i}V/F_{i-1}V$ are irreducible for all $i$. 
By (iii), for each $i$ we have $Y_i\cong V_n$ for some $n$, so $c=\frac{n(n+2)}{2}$ and thus this $n$ is the same for all $i$. Thus $V(k)$ has dimension $m$, with $h$ acting on it by $k\cdot {\rm Id}$ for $k=n,n-2,...,-n$ and $V(k)=0$ otherwise, by (ii); in particular, $\dim V=m(n+1)$. Let $u_1,...,u_m$ be a basis of $V(n)$. As in (iii), we define subrepresentations $W_i\subset V$ generated by $u_i$. It is easy to see that $W_i\cong V_n$ and the natural morphism $W_1\oplus...\oplus W_m\to V$ is injective. Hence it is an isomorphism by dimension count, i.e., $V$ is completely reducible. 
\end{proof} 

\begin{corollary} (The Jacobson-Morozov lemma for $GL(V)$) Let $V$ be a finite dimensional complex 
vector space and $N: V\to V$ be a nilpotent operator. Then there is, up to an isomorphism, a unique 
 action of $\mathfrak{sl}_2$ on $V$ for which $e$ acts by $N$. 
\end{corollary}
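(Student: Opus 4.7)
The plan is to derive both existence and uniqueness from the classification of finite-dimensional $\sl_2$-representations (Theorem \ref{sl2repth}). The key preliminary observation is that on the irreducible $V_n$, in the basis $v_{pq}=x^py^q$ with $p+q=n$, the operator $e$ acts by $ev_{pq}=qv_{p+1,q-1}$. Starting from $v_{0,n}$ and applying $e$ repeatedly produces nonzero scalar multiples of $v_{1,n-1},v_{2,n-2},\ldots,v_{n,0}$ before landing on $0$; thus $e|_{V_n}$ is similar to a single Jordan block of size $n+1$. Consequently, if $V\cong\bigoplus_n V_n^{\oplus m_n}$ as an $\sl_2$-representation, then $e$ on $V$ has exactly $m_n$ Jordan blocks of size $n+1$ for each $n$.

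For existence, given a nilpotent $N$ on $V$ with Jordan block sizes $k_1,\ldots,k_r$, I would decompose $V=\bigoplus_j U_j$ into $N$-invariant subspaces with each $N|_{U_j}$ a single Jordan block of size $k_j$, and fix a linear isomorphism $U_j\cong V_{k_j-1}$ under which $N|_{U_j}$ corresponds to $e$ on $V_{k_j-1}$ (using the observation above together with an appropriate rescaling of the Jordan basis of $N|_{U_j}$). Pulling back the $\sl_2$-action from $V_{k_j-1}$ to $U_j$ and taking the direct sum then yields the desired $\sl_2$-action on $V$ with $e\mapsto N$.

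For uniqueness, let $\rho$ be any $\sl_2$-action on $V$ with $\rho(e)=N$. By Theorem \ref{sl2repth}(iv), the representation decomposes as $V\cong\bigoplus_n V_n^{\oplus m_n}$, and by the preliminary observation the multiplicities $m_n$ are forced to equal the number of Jordan blocks of $N$ of size $n+1$. Thus the isomorphism class of $V$ as an $\sl_2$-representation is determined by the Jordan type of $N$, proving uniqueness up to isomorphism. If one reads the statement more strictly as requiring the existence of an intertwiner between two such actions (which must automatically commute with $N$), one upgrades the argument by matching the highest-weight subspaces $\ker e$ of the two decompositions inside each isotypic component $V_n^{\oplus m_n}$ via any linear isomorphism of those $m_n$-dimensional spaces, and then extending by the $f$-action to get an intertwiner on all of $V$.

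The main (minor) obstacle is really just the bookkeeping in the existence step: arranging the identifications $U_j\cong V_{k_j-1}$ so that $N|_{U_j}$ matches $e$ on the nose rather than merely up to conjugacy, which requires choosing scalars in the Jordan basis of $N|_{U_j}$ so as to absorb the coefficients $q$ appearing in $ev_{pq}=qv_{p+1,q-1}$. Once this rescaling is set up, the rest follows immediately from Theorem \ref{sl2repth}.
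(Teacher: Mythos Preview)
Your proposal is correct and is exactly the approach the paper intends: the paper's proof is a single sentence invoking Theorem \ref{sl2repth} together with the Jordan normal form theorem, and you have simply unpacked what that means. The key points---that $e|_{V_n}$ is a single Jordan block of size $n+1$, that existence comes from identifying each Jordan block of $N$ with some $V_{k-1}$, and that uniqueness follows because complete reducibility forces the multiplicities $m_n$ to match the Jordan data of $N$---are precisely the content of that one-line proof.
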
 

\begin{proof} This follows from Theorem \ref{sl2repth} and the Jordan normal form theorem for operators on $V$. 
\end{proof} 

For a representation $V$ define its {\bf character} by 
$$
\chi_V(z)={\rm Tr}_V(z^h)=\sum_m \dim V(m)z^m.
$$
Thus 
$$
\chi_{V_n}(z)=z^n+z^{n-2}+...+z^{-n}=\frac{z^{n+1}-z^{-n-1}}{z-z^{-1}}.
$$
 It is easy to see that 
$$
\chi_{V\oplus W}=\chi_V+\chi_W, \chi_{V\otimes W}=\chi_V\chi_W. 
$$
Since the functions $\chi_{V_n}$ are linearly independent, we see that a finite dimensional representation of $\mathfrak{sl}_2$ is determined by its character.  

\begin{theorem} (The Clebsch-Gordan rule) We have 
$$
V_m\otimes V_n\cong \oplus_{i=0}^{\min(m,n)}V_{|m-n|+2i}.
$$
\end{theorem}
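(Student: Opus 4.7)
The plan is to prove the Clebsch-Gordan rule by comparing characters, exploiting two facts established in the excerpt: finite-dimensional representations of $\mathfrak{sl}_2$ are completely reducible (Theorem \ref{sl2repth}(iv)), and a representation is determined up to isomorphism by its character (since the $\chi_{V_n}$ are linearly independent). Since $\chi_{V \otimes W} = \chi_V \chi_W$, it suffices to verify the identity
\[
\chi_{V_m}(z)\,\chi_{V_n}(z) \;=\; \sum_{i=0}^{\min(m,n)} \chi_{V_{|m-n|+2i}}(z).
\]

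Without loss of generality I would assume $m \ge n$, so that $\min(m,n)=n$ and $|m-n|=m-n$. Using the closed form $\chi_{V_k}(z) = (z^{k+1}-z^{-k-1})/(z-z^{-1})$, I would multiply out the left-hand side:
\[
\chi_{V_m}(z)\,\chi_{V_n}(z) \;=\; \frac{(z^{m+1}-z^{-m-1})(z^{n+1}-z^{-n-1})}{(z-z^{-1})^2}
\;=\; \frac{z^{m+n+2} - z^{m-n} - z^{-(m-n)} + z^{-(m+n+2)}}{(z-z^{-1})^2}.
\]
On the right-hand side, multiplying through by $z-z^{-1}$ gives a telescoping sum
\[
\sum_{i=0}^{n}\bigl(z^{m-n+2i+1}-z^{-(m-n+2i+1)}\bigr)(z-z^{-1}),
\]
whose positive-power part collapses to $z^{m+n+2}-z^{m-n}$ and whose negative-power part collapses to $z^{-(m+n+2)}-z^{-(m-n)}$. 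Matching these against the numerator computed above confirms the identity.

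Step-by-step the plan is: (i) reduce to $m\ge n$; (ii) write both sides over the common denominator $(z-z^{-1})^2$; (iii) expand the left-hand numerator as a four-term Laurent polynomial; (iv) observe the right-hand sum telescopes to the same four terms; (v) invoke complete reducibility plus the character-determines-representation principle to conclude the isomorphism. There is no real obstacle here beyond the telescoping bookkeeping, which is routine; the only thing to be careful about is the indexing of summands so that the extreme weights $\pm(m+n)$ and $\pm(m-n)$ appear with the correct multiplicity (once each), while intermediate weights cancel between consecutive terms. A sanity check is the dimension count: $\sum_{i=0}^{n}(m-n+2i+1) = (n+1)(m+1)$, matching $\dim(V_m\otimes V_n)$.
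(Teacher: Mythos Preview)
Your proof is correct and follows exactly the approach in the paper, which simply asserts the character identity $\chi_{V_m}\chi_{V_n}=\sum_{i=0}^{\min(m,n)}\chi_{V_{|m-n|+2i}}$ and invokes the fact that characters determine representations. You have supplied the explicit telescoping verification of that identity that the paper leaves to the reader.
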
 
 
\begin{proof} It suffices to note that we have the corresponding character identity: 
 $$
\chi_{V_m}\chi_{V_n}=\sum_{i=0}^{\min(m,n)}\chi_{V_{|m-n|+2i}}.
$$
\end{proof} 

\begin{exercise} Show that $V_n$ has an invariant nondegenerate inner product (i.e., such that $(av,w)+(v,aw)=0$ for $a\in \mathfrak{sl}_2$, $v,w\in V_n$) which is symmetric for even $n$ and skew-symmetric for odd $n$. In particular, $V_n^*\cong V_n$. 
\end{exercise} 

\begin{exercise}\label{unico} Let $G$ be the universal cover of $SL_2(\Bbb R)$. Show that 
$G$ is not isomorphic to a Lie subgroup of $GL_n(\Bbb R)$ for any $n$ and that moreover, the only quotients of $G$ that are such subgroups are $SL_2(\Bbb R)$ and $PSL_2(\Bbb R)$. 
\end{exercise} 

\section{\bf The universal enveloping algebra of a Lie algebra} 

\subsection{The definition of the universal enveloping algebra} 

Let $V$ be a vector space over a field $\bf k$. 
Recall that the {\bf tensor algebra} of $V$ is the $\Bbb Z$-graded associative algebra $TV:=\oplus_{n\ge 0}V^{\otimes n}$ (with $\deg(V^{\otimes n})=n$),  with multiplication given by 
$a\cdot b=a\otimes b$ for $a\in V^{\otimes m}$ and $b\in V^{\otimes n}$. 
If $\lbrace x_i\rbrace$ is a basis of $V$ then 
$TV$ is just the free algebra with generators 
$x_i$ (i.e., without any relations). Its basis consists 
of various words in the letters $x_i$.  

Let $\g$ be a Lie algebra over ${\bf k}$. 

\begin{definition} The {\bf universal enveloping algebra} of $\g$, denoted $U(\g)$, 
is the quotient of $T\g$ by the ideal $I$ generated by 
the elements $xy-yx-[x,y]$, $x,y\in \g$. 
\end{definition}  

Recall that any associative algebra $A$ is also a Lie algebra with operation 
$[a,b]:=ab-ba$. The following proposition follows immediately from the definition of $U(\g)$. 

\begin{proposition} (i) Let $J\subset T\g$ be an ideal, and $\rho: \g\to T\g/J$ 
the natural linear map. Then $\rho$ is a homomorphism of Lie algebras if and only if 
$J\supset I$, so that $T\g/J$ is a quotient of $T\g/I=U(\g)$. In other words, 
$U(\g)$ is the largest quotient of $T\g$ for which $\rho$ is a homomorphism of Lie algebras. 

(ii) (universal property of $U(\g)$) 
Let $A$ be any associative algebra over ${\bf k}$. Then the map 
$$
\Hom_{\rm associative}(U(\g),A)\to \Hom_{\rm Lie}(\g,A)
$$ 
given by $\phi\mapsto \phi\circ \rho$ is a bijection. 
\end{proposition}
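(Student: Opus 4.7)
The plan is to prove both parts directly from the construction of $U(\g)$ as a quotient of the tensor algebra, using the fact that $T\g$ is the free associative algebra on the vector space $\g$.

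For part (i), I would unravel the definitions. By construction, $T\g/J$ is an associative algebra, and hence a Lie algebra with commutator $[a,b] := ab - ba$. The map $\rho: \g \to T\g/J$ is a Lie algebra homomorphism precisely when $\rho([x,y]) = \rho(x)\rho(y) - \rho(y)\rho(x)$ for all $x,y \in \g$. Pulling this back to $T\g$, this says exactly that $xy - yx - [x,y] \in J$ for all $x, y \in \g$. Since $I$ is, by definition, the two-sided ideal of $T\g$ generated by these very elements, the condition $J \supset I$ is equivalent to $\rho$ being a Lie homomorphism. Once $J \supset I$, the canonical projection $T\g \to T\g/J$ factors through $U(\g) = T\g/I$.

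For part (ii), I would construct the inverse of the given map. Given a Lie algebra homomorphism $f: \g \to A$, I would first use the universal property of the tensor algebra: any ${\bf k}$-linear map $\g \to A$ extends uniquely to an associative algebra homomorphism $\widetilde f: T\g \to A$. Taking $J := \ker \widetilde f$ and applying part (i) to the induced map $\g \to T\g/J \hookrightarrow A$, which is a Lie homomorphism because $f$ is, I conclude that $I \subset J$. Hence $\widetilde f$ descends to an associative algebra homomorphism $\phi: U(\g) \to A$ satisfying $\phi \circ \rho = f$. Uniqueness is automatic because $\rho(\g)$ generates $U(\g)$ as an associative algebra (its image spans everything modulo the relations, since $T\g$ is generated by $\g$), so any two extensions of $f$ must agree.

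Checking that $\phi \mapsto \phi \circ \rho$ and the assignment $f \mapsto \phi$ just described are mutually inverse is then immediate from the constructions. There isn't really a hard step here: the statement is the universal property of $U(\g)$, and the only subtlety is being careful to distinguish the roles of $\g$ as a Lie algebra and $A$ (or $T\g/J$) as both an associative and a Lie algebra, and to invoke the universal property of the tensor algebra as the free associative algebra on $\g$.
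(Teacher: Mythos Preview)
Your proof is correct and is precisely the standard unpacking of the claim; the paper simply states that the proposition ``follows immediately from the definition of $U(\g)$'' and gives no further argument, so your write-up is the elaboration of that immediate verification.
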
 

Part (ii) of this proposition implies that any Lie algebra map \linebreak $\psi: \g\to A$ 
can be uniquely extended to an associative algebra map $\phi: U(\g)\to A$ 
so that $\psi=\phi\circ\rho$. This is the universal property of $U(\g)$ which 
justifies the term ``universal enveloping algebra". 

In particular, it follows that a representation of $\g$ on a vector space $V$ is the same thing as an algebra map $U(\g)\to {\rm End}(V)$ (i.e., a representation of $U(\g)$ on $V$). Thus, to understand the representation theory of $\g$, it is helpful to understand the structure of $U(\g)$; for example, every central element 
$C\in U(\g)$ gives rise to a morphism of representations $V\to V$ (note that this has already come in handy in studying representations of $\mathfrak{sl}_2$).  

In terms of the basis $\lbrace x_i\rbrace$ of $\g$, we can write the bracket 
as 
$$
[x_i,x_j]=\sum_k c_{ij}^kx_k,
$$
where $c_{ij}^k\in {\bf k}$ are the {\bf structure constants}. Then 
the algebra $U(\g)$ can be described as the quotient of 
the free algebra ${\bf k}\langle \lbrace x_i\rbrace\rangle$ by 
the relations 
$$
x_ix_j-x_jx_i=\sum_k c_{ij}^kx_k.
$$

\begin{example} 1. If $\g$ is abelian (i.e., $c_{ij}^k=0$) then 
$U(\g)=S\g={\bf k}[\lbrace x_i\rbrace]$ is the symmetric algebra 
of $\g$, $S\g=\oplus_{n\ge 0}S^n\g$, which in terms of the basis 
is the polynomial algebra in $x_i$. 

2. $U(\mathfrak{sl}_2(\bf k))$ is generated by $e,f,h$ with defining relations
$$
he-eh=2e,\ hf-fh=-2f,\ ef-fe=h.
$$
\end{example} 

Recall that $\g$ acts on $T\g$ by derivations via the adjoint action. 
Moreover, using the Jacobi identity, we have 
$$
{\rm ad}z(xy-yx-[x,y])=[z,x]y+x[z,y]-[z,y]x-y[z,x]-[z,[x,y]]=
$$
$$
([z,x]y-y[z,x]-[[z,x],y])+(x[z,y]-[z,y]x-[x,[z,y]]).
$$
Thus ${\rm ad}z(I)\subset I$, and hence the action of $\g$ on $T\g$ 
descends to its action on $U(\g)$ by derivations (also called the adjoint action). It is easy to see that these derivations are in fact inner: 
$$
{\rm ad}z(a)=za-az
$$
for $a\in U(\g)$ (although this is not so for $T\g$). Indeed, it suffices to note that this holds for $a\in \g$ 
by the definition of $U(\g)$. 

Thus we get 

\begin{proposition} The center $Z(U(\g))$ of $U(\g)$ coincides 
with the subalgebra of invariants $U(\g)^{{\rm ad}\g}$.
\end{proposition}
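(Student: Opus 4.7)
The plan is to establish the two inclusions $Z(U(\g)) \subset U(\g)^{\mathrm{ad}\,\g}$ and $U(\g)^{\mathrm{ad}\,\g} \subset Z(U(\g))$ separately, both using only the formula $\mathrm{ad}\,z(a) = za - az$ noted just before the statement, together with the fact that $\g$ generates $U(\g)$ as an associative algebra (which is immediate from the construction of $U(\g)$ as a quotient of the tensor algebra $T\g$).

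For the first inclusion, if $a \in Z(U(\g))$ then $a$ commutes with every element of $U(\g)$, and in particular with every $z \in \g \subset U(\g)$; hence $\mathrm{ad}\,z(a) = za - az = 0$ for all $z \in \g$, so $a \in U(\g)^{\mathrm{ad}\,\g}$. This direction is essentially definitional and requires no real work.

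For the reverse inclusion, suppose $a \in U(\g)^{\mathrm{ad}\,\g}$, so that $za = az$ for every $z \in \g$. I would then consider the centralizer
\[
C(a) := \{b \in U(\g) : ba = ab\}.
\]
A one-line check shows $C(a)$ is a linear subspace closed under multiplication (if $b_1, b_2$ commute with $a$, so do $b_1 + b_2$ and $b_1 b_2$), hence a unital subalgebra of $U(\g)$. By hypothesis $C(a) \supset \g$, and since $\g$ generates $U(\g)$ as an associative algebra, we conclude $C(a) = U(\g)$, i.e.\ $a \in Z(U(\g))$.

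There is no real obstacle here: the statement is essentially the observation that the center of an associative algebra equals the set of elements commuting with a chosen generating subset. The only thing one needs to be careful about is to distinguish the adjoint action of $\g$ on $T\g$ (where the formula $\mathrm{ad}\,z(a) = za - az$ fails) from the one on $U(\g)$ (where it holds, precisely because the defining relations of $U(\g)$ force $zx - xz = [z,x]$ for $x \in \g$), a point the excerpt has already made explicit.
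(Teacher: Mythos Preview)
Your proposal is correct and is exactly the argument the paper intends: once the formula $\mathrm{ad}\,z(a)=za-az$ is established, the statement reduces to the elementary fact that the center of an associative algebra coincides with the centralizer of any generating set, and you have spelled this out cleanly.
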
 

\begin{example} The Casimir operator $C=2fe+\frac{h^2}{2}+h$ which we used 
to study representations of $\g=\mathfrak{sl}_2$ is in fact a central element of $U(\g)$. 
\end{example} 

\subsection{Graded and filtered algebras} 

Recall that a $\Bbb Z_{\ge 0}$-{\bf filtered} algebra is an algebra $A$ 
equipped with a filtration 
$$
0=F_{-1}A\subset F_0A\subset F_1A\subset...\subset F_nA\subset...
$$
such that $1\in F_0A$, $\cup_{n\ge 0} F_nA=A$ and 
$F_iA\cdot F_jA\subset F_{i+j}A.$
In particular, if $A$ is generated by $\lbrace{x_\alpha\rbrace}$ 
then a filtration on $A$ can be obtained by declaring $x_\alpha$ to be of degree $1$; i.e.,  $F_nA=(F_1A)^n$ is the span of all words in $x_\alpha$ of degree $\le n$. 

If $A=\oplus_{i\ge 0}A_i$ 
is $\Bbb Z_{\ge 0}$-graded then we can define a filtration on 
$A$ by setting $F_nA:=\oplus_{i=0}^n A_i$; however, not any filtered algebra is obtained in this way, and having a filtration is a weaker condition than having a grading. Still, if $A$ is a filtered algebra, we can define its {\bf associated graded algebra} ${\rm gr}(A):=\oplus_{n\ge 0}{\rm gr}_n(A)$ (also denoted ${\rm gr}A$), where ${\rm gr}_n(A):=F_nA/F_{n-1}A$. The multiplication in ${\rm gr}(A)$ is given by the ``leading terms" of multiplication in $A$: for $a\in {\rm gr}_i(A),\ b\in {\rm gr}_j(A)$, pick their representatives 
$\widetilde a\in F_iA,\ \widetilde b\in F_jA$ and let 
$ab$ be the projection of $\widetilde a\widetilde b$ to ${\rm gr}_{i+j}(A)$. 

\begin{proposition}\label{doma} If ${\rm gr}(A)$ is a domain (has no zero divisors) then 
so is $A$. 
\end{proposition}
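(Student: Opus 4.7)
My plan is to prove the contrapositive: I will take nonzero $a, b \in A$ and show $ab \neq 0$ by passing to leading symbols in $\mathrm{gr}(A)$.

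First I would set up the basic bookkeeping. Because $A = \bigcup_{n \ge 0} F_n A$ and $F_{-1}A = 0$, every nonzero $x \in A$ has a well-defined \emph{degree} $\nu(x) := \min\{n \ge 0 : x \in F_n A\}$, and the \emph{leading symbol} $\sigma(x) := x + F_{\nu(x)-1}A \in \mathrm{gr}_{\nu(x)}(A)$ is nonzero by minimality of $\nu(x)$.

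The key step, which is essentially just unwinding definitions, is that the multiplication on $\mathrm{gr}(A)$ was built precisely so that if $a, b \in A$ are nonzero with $i := \nu(a)$ and $j := \nu(b)$, then $\sigma(a)\sigma(b) \in \mathrm{gr}_{i+j}(A)$ coincides with the image of $ab$ under the projection $F_{i+j}A \twoheadrightarrow \mathrm{gr}_{i+j}(A)$. Since $\mathrm{gr}(A)$ is a domain and $\sigma(a), \sigma(b)$ are nonzero, we get $\sigma(a)\sigma(b) \neq 0$; hence $ab \notin F_{i+j-1}A$, and in particular $ab \neq 0$.

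There is no real obstacle here: the only point that needs any care is checking that $\nu(x)$ exists for every nonzero $x$, which is exactly the content of the hypotheses $F_{-1}A = 0$ and $A = \bigcup F_n A$. So once those conventions are recorded, the proof is a one-line leading-symbol argument.
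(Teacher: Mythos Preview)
Your argument is correct and is exactly the standard leading-symbol argument that the paper has in mind; note that the paper actually leaves this proposition as an exercise rather than giving a proof, so there is nothing further to compare against.
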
 

\begin{exercise} Prove Proposition \ref{doma}. 
\end{exercise} 

\begin{example}\label{mapphi}  Let $\g$ be a Lie algebra over $\bold k$. Define a filtration\footnote{The grading on $T\g$ does not descend to $U(\g)$, in general, since the relation $xy-yx=[x,y]$ is not homogeneous: the right hand side has degree 1 while the left hand side has degree 2. So $U(\g)$ is not graded but is only filtered.} on $U(\g)$
by setting $\deg(\g)=1$. Thus $F_nU(\g)$ is the image of $\oplus_{i=0}^n\g^{\otimes i}\subset T\g$. Note that since 
$$
xy-yx=[x,y],\ x,y\in \g,
$$ 
we have $[F_iU(\g),F_jU(\g)]\subset F_{i+j-1}U(\g)$. Thus, ${\rm gr}U(\g)$ is commutative; in other words, we have a surjective algebra morphism 
$$
\phi: S\g\to {\rm gr}U(\g).  
$$
\end{example} 

\subsection{The coproduct of $U(\g)$} \label{copro} 

For a vector space $\g$ define the algebra homomorphism $\Delta: T\g\to T\g\otimes T\g$ given for $x\in \g\subset T\g$ by $\Delta(x)=x\otimes 1+1\otimes x$ (it exists and is unique since $T\g$ is freely generated by $\g$). 

\begin{lemma} If $\g$ is a Lie algebra then 
the kernel $I$ of the map $T\g\to U(\g)$ satisfies the property 
$\Delta(I)\subset I\otimes T\g+T\g\otimes I\subset T\g\otimes T\g$. 
Thus $\Delta$ descends to an algebra homomorphism 
$U(\g)\to U(\g)\otimes U(\g)$. 
\end{lemma}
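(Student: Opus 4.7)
The plan is to reduce to a direct calculation on the generators of $I$. Recall that $I\subset T\g$ is the two-sided ideal generated by the elements $r(x,y):=xy-yx-[x,y]$ for $x,y\in \g$. A basic observation is that $I\otimes T\g+T\g\otimes I$ is a two-sided ideal of the algebra $T\g\otimes T\g$, because $I$ is a two-sided ideal of $T\g$. Since $\Delta\colon T\g\to T\g\otimes T\g$ is an algebra homomorphism, the preimage $\Delta^{-1}(I\otimes T\g+T\g\otimes I)$ is a two-sided ideal of $T\g$. Therefore, to show that $\Delta(I)\subset I\otimes T\g+T\g\otimes I$, it suffices to check that each generator $r(x,y)$ of $I$ is sent into $I\otimes T\g+T\g\otimes I$.

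First I would compute $\Delta(xy)$ and $\Delta(yx)$ using the fact that $\Delta$ is multiplicative and that $\Delta(x)=x\otimes 1+1\otimes x$ for $x\in\g$. Expanding,
\[
\Delta(xy)=(x\otimes 1+1\otimes x)(y\otimes 1+1\otimes y)=xy\otimes 1+x\otimes y+y\otimes x+1\otimes xy,
\]
and similarly $\Delta(yx)=yx\otimes 1+y\otimes x+x\otimes y+1\otimes yx$. Subtracting, the mixed terms cancel, giving $\Delta(xy-yx)=(xy-yx)\otimes 1+1\otimes(xy-yx)$. Combined with $\Delta([x,y])=[x,y]\otimes 1+1\otimes[x,y]$, this yields
\[
\Delta(r(x,y))=r(x,y)\otimes 1+1\otimes r(x,y),
\]
which is manifestly an element of $I\otimes T\g+T\g\otimes I$.

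By the reduction above, this establishes $\Delta(I)\subset I\otimes T\g+T\g\otimes I$. Composing $\Delta$ with the canonical surjection $T\g\otimes T\g\to U(\g)\otimes U(\g)$ (whose kernel equals $I\otimes T\g+T\g\otimes I$), we obtain an algebra homomorphism $T\g\to U(\g)\otimes U(\g)$ that vanishes on $I$, and therefore factors through a well-defined algebra homomorphism $\Delta\colon U(\g)\to U(\g)\otimes U(\g)$, as required. There is no real obstacle here; the only thing to verify carefully is that $I\otimes T\g+T\g\otimes I$ is indeed the kernel of the surjection $T\g\otimes T\g\to U(\g)\otimes U(\g)$, which follows from right-exactness of the tensor product over the ground field ${\bf k}$.
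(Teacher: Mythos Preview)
Your proof is correct and follows essentially the same approach as the paper: both compute directly that $\Delta(r(x,y))=r(x,y)\otimes 1+1\otimes r(x,y)$ for the generators $r(x,y)=xy-yx-[x,y]$ of $I$, and then use that $I$ is generated by these elements. You simply spell out in more detail why checking on generators suffices (via the ideal $\Delta^{-1}(I\otimes T\g+T\g\otimes I)$) and why the map factors through $U(\g)$.
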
 

\begin{proof} For $x,y\in \g$ and $a=a(x,y):=xy-yx-[x,y]$ we have 
$\Delta(a)=a\otimes 1+1\otimes a$. The lemma follows since the ideal $I$
is generated by elements of the form $a(x,y)$.  
\end{proof} 

The homomorphism $\Delta$ is called the {\bf coproduct} (of $T\g$ or $U(\g)$). 

\begin{example} Let $\g=V$ be abelian (a vector space). Then $U(\g)=SV$, which for $\dim V<\infty$ can be viewed as the algebra of polynomial functions on $V^*$. Similarly, $SV\otimes SV$ is the algebra of polynomial functions on $V^*\times V^*$. In terms of this identification, we have $\Delta(f)(x,y)=f(x+y)$. 
\end{example} 

\subsection{Differential operators on manifolds and Lie groups}\label{diffope}

We have seen in Subsection \ref{vefi} that a vector field on a manifold $X$  
is the same thing as a derivation of the algebra $O(U)$ for every open set $U\subset X$ compatible with restriction maps $O(U)\to O(V)$ for $V\subset U$; in particular, 
on every $U$ we have $[\bold v,m_f]=m_{\bold v(f)}$ where $f\in O(U)$ and 
$m_f: O(U)\to O(U)$ is the operator of multiplication by $f\in O(U)$. Thus if also $g\in O(U)$ then $[[\bold v,m_f],m_g]=0$. Conversely, if $A$ is an endomorphism of the space $O(U)$ for every open $U\subset X$ compatible with restriction maps and $[[A,m_f],m_g]=0$ for any $f,g\in O(U)$ 
then $A=\bold v+m_h$ for a unique vector field $\bold v$ and regular function $h$ on $X$ (check this!). This gives rise to the following generalization of the notion of a vector field. 

\begin{definition} (Grothendieck) A {\bf differential operator} of order $\le N$ on $X$ 
is an endomorphism of the space $O(U)$ for every open set $U\subset X$ compatible with restriction maps $O(U)\to O(V)$ for $V\subset U$ such that for any $f_0,...,f_N\in O(U)$ one has 
$$
[...[[A,m_{f_0}],m_{f_1}],...,m_{f_N}]=0.
$$
\end{definition} 

It is easy to show that the latter condition is equivalent to the classical condition for a differential operator of order $\le N$: in local coordinates $(x_i)$ on a chart $U\subset X$ the operator $A$ looks like 
$$
A=\sum_{k=0}^N \sum_{i_1\le...\le i_k}F_{i_1,...,i_k}\frac{\partial^k}{\partial x_{i_1}...\partial x_{i_k}},
$$
where $F_{i_1,...,i_k}\in O(U)$ (check this!). The space of such operators 
is denoted by $D_N(X)$. Thus we have a nested sequence of spaces 
$$
O(X)=D_0(X)\subset D_1(X)\subset...\subset D_N(X)\subset...
$$
The nested union $\cup_{N\ge 0}D_N(X)$ is a filtered associative algebra called the 
{\bf algebra of differential operators on $X$} and denoted by $D(X)$. 

Now suppose that a Lie group $G$ with Lie algebra $\g$ acts on $X$. Then we have a homomorphism of Lie algebras $\g\to {\rm Vect}(X)$, which can be viewed 
as a Lie algebra homomorphism $\g\to D(X)$. Thus by the universal property 
of the universal enveloping algebra, we obtain an associative algebra 
homomorphism $\xi: U(\g)\to D(X)$. Moreover, this homomorphism 
preserves filtrations. 

For example, if $X=G$ and $G$ acts by right translations, then the corresponding map $\g\to {\rm Vect}(G)$ identifies 
$\g$ with the Lie algebra ${\rm Vect}_L(G)$ of left-invariant vector fields on $G$. Thus 
the map $\xi: U(\g)\to D(G)$ lands in the subalgebra $D_L(G)$ 
of left-invariant differential operators on $G$. 

\begin{exercise}\label{isomoenvel} Show that the map $\xi: U(\g)\to D_L(G)$ is a filtered algebra isomorphism. 
\end{exercise} 

\section{\bf The Poincar\'e-Birkhoff-Witt theorem} 

\subsection{The statement of the Poincar\'e-Birkhoff-Witt theorem} 

Let $\g$ be a Lie algebra over a field $\bold k$. 
Recall from Example \ref{mapphi} that we have a surjective algebra homomorphism 
$$
\phi: S\g\to {\rm gr}U(\g).  
$$

\begin{theorem}\label{PBWth} (Poincar\'e-Birkhoff-Witt theorem) The homomorphism $\phi$ is an isomorphism. 
\end{theorem}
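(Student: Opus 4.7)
The plan is to fix a totally ordered basis $\{x_i\}_{i\in I}$ of $\g$ and to show that the \emph{ordered monomials} $x_{i_1}x_{i_2}\cdots x_{i_n}$ with $i_1\le i_2\le\cdots\le i_n$ form a basis of $U(\g)$. Since $\phi:S\g\to \mathrm{gr}\,U(\g)$ is already surjective, this is equivalent to the PBW statement: the images of these monomials, which are the $\phi$-images of the standard monomial basis of $S\g$, must be linearly independent in $\mathrm{gr}\,U(\g)$.

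The spanning statement is the easy half. Any word $x_{j_1}\cdots x_{j_n}$ in the generators can be rearranged into ordered form by repeatedly applying $x_jx_i=x_ix_j+[x_j,x_i]$; the leading term has one fewer inversion, while the correction $[x_j,x_i]$ lies in $F_1U(\g)$ and, once inserted in place of $x_jx_i$, contributes a term of strictly smaller total degree. A double induction on degree and number of inversions therefore writes any element of $F_nU(\g)$ as a sum of ordered monomials of degree $\le n$, confirming that ordered monomials span $U(\g)$.

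The serious content of the theorem is the linear independence of the ordered monomials. The standard strategy, which I would follow, is to construct a faithful enough representation directly: I would build an action $\rho:\g\to\End(V)$ on the vector space $V:=S\g$ (with basis the ordered monomials $x_J$) such that for every ordered $J=(j_1,\dots,j_n)$ one has $\rho(x_{j_1})\rho(x_{j_2})\cdots\rho(x_{j_n})(1)=x_J$. Once such $\rho$ is constructed, it extends uniquely to an algebra homomorphism $\tilde\rho:U(\g)\to\End(V)$, and a linear relation among ordered monomials in $U(\g)$ would, upon evaluating at $1\in V$, yield the same linear relation among the basis vectors $x_J$ of $V$ — a contradiction.

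To define $\rho(x_i)$ on a basis element $x_J$ with $J=(j_1,\dots,j_n)$ ordered, I would proceed by induction on $n$ and, within fixed $n$, on the lexicographic position of $J$. If $i\le j_1$, set $\rho(x_i)(x_J):=x_ix_J$, which is already ordered. Otherwise, write $J=(j_1,J')$ and define, using the bracket relation we want to hold,
\[
\rho(x_i)(x_J):=\rho(x_{j_1})\bigl(\rho(x_i)(x_{J'})\bigr)+\rho([x_i,x_{j_1}])(x_{J'}),
\]
where each term on the right is already defined by the inductive setup (the first by the induction on $|J'|<|J|$, the second after extending $\rho$ linearly in its argument). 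The key obstacle, and the heart of the proof, is then to verify that this recursive definition actually yields a Lie algebra representation, i.e.\ that
\[
\rho(x_i)\rho(x_j)(x_J)-\rho(x_j)\rho(x_i)(x_J)=\rho([x_i,x_j])(x_J)
\]
for every $i,j$ and every ordered $J$. This holds by construction when $\min(i,j)\le j_1$ (the first index of $J$); the general case is reduced to this one by a careful induction in which the Jacobi identity for $x_i,x_j$, and $x_{j_1}$ is used exactly once to collapse the triple commutator terms. Once this compatibility is checked, $\rho$ extends to $U(\g)$, the recursive definition guarantees $\tilde\rho(x_J)(1)=x_J$, and the linear independence of the $x_J$ in $V$ transports to $U(\g)$, completing the proof that $\phi$ is an isomorphism.
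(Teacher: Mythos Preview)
Your proposal is correct and follows a well-known route to PBW, but it is organized differently from the paper's proof. Both arguments ultimately produce the same linear map $U(\g)\to S\g$ sending ordered monomials to themselves; the difference is in how the well-definedness is packaged.

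The paper works directly on the tensor algebra: it defines a linear map $\varphi:T\g\to S\g$ by choosing, for each (not necessarily ordered) monomial, a decomposition of the sorting permutation into adjacent transpositions, and then checks that the result is independent of the decomposition by verifying compatibility with the three Coxeter relations of $S_n$. The relation $s_j^2=1$ uses skew-symmetry of the bracket, the commuting relation $s_js_k=s_ks_j$ for $|j-k|\ge 2$ is bookkeeping, and the braid relation $s_js_{j+1}s_j=s_{j+1}s_js_{j+1}$ is exactly where the Jacobi identity enters. This makes the role of each Lie algebra axiom completely explicit. Your approach instead builds a $\g$-module structure on $S\g$ recursively and checks the bracket relation $\rho(x_i)\rho(x_j)-\rho(x_j)\rho(x_i)=\rho([x_i,x_j])$; the Jacobi identity appears in that verification when three generators $x_i,x_j,x_{j_1}$ interact. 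What your approach buys is an actual $\g$-action on $S\g$ (a somewhat stronger statement), while the paper's approach is more transparently combinatorial and isolates precisely which relation uses which axiom.

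One small remark on your write-up: the secondary ``lexicographic position of $J$'' induction you mention is not really needed for the \emph{definition} of $\rho(x_i)(x_J)$. Once you know the recursion at degree $<n$, the degree-$n$ part of $\rho(x_i)(x_{J'})$ is the single sorted monomial on the indices $(i,j_2,\dots,j_n)$, whose first index is $\ge j_1$ (since $i>j_1$ and $j_2\ge j_1$), so the outer $\rho(x_{j_1})$ hits the base case ``prepend $x_{j_1}$''. The secondary induction is only needed inside the verification of the bracket relation, and there it is typically on the triple $(i,j,j_1)$ rather than on $J$.
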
 

We will prove Theorem \ref{PBWth} in Subsection \ref{proofPBW}. Now let us discuss its reformulation in terms of a basis and corollaries. 

Given a basis $\lbrace x_i\rbrace$ of $\g$, fix an ordering on this basis and consider 
ordered monomials $\prod_i x_i^{n_i}$, where the product is ordered according to the ordering of the basis. The statement that $\phi$ is surjective is equivalent to saying that ordered monomials span $U(\g)$. This is also easy to see directly: any monomial can be ordered using the commutation relations at the cost of an error of lower degree, so proceeding recursively, we can write any monomial as a linear combination of ordered ones. Thus the PBW theorem can be formulated as follows: 

\begin{theorem}\label{PBWth1} The ordered monomials are linearly independent, hence form a basis of $U(\g)$. 
\end{theorem}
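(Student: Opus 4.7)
The plan is to prove linear independence of the ordered monomials by constructing a faithful ``realization'' of $U(\g)$ on a polynomial space where ordered monomials act by visibly independent operators. Concretely, let $P$ denote the vector space spanned by the formal ordered monomials $e_I = x_{i_1} x_{i_2} \cdots x_{i_n}$ with $i_1 \le i_2 \le \cdots \le i_n$ (where $\lbrace x_i \rbrace$ is the chosen ordered basis of $\g$); so $P$ is just a copy of the symmetric algebra $S\g$ with a distinguished basis. I will define a linear action $\rho: \g \to \End(P)$, then extend it to an algebra homomorphism $\pi: T\g \to \End(P)$, and then check it factors through $U(\g)$.

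The action is defined recursively on the degree of monomials. Writing $I = (i_1, \ldots, i_n)$ and $I' = (i_2, \ldots, i_n)$, I set
\[
\rho(x_i)(e_I) \;=\; e_{(i,I)} \quad \text{if } i \le i_1,
\]
and otherwise
\[
\rho(x_i)(e_I) \;=\; \rho(x_{i_1})\bigl(\rho(x_i) e_{I'}\bigr) \;+\; \rho([x_i,x_{i_1}])(e_{I'}),
\]
where the second term is defined by linearity from the recursion on lower-degree monomials, and $\rho([x_i,x_{i_1}])$ is interpreted via the expansion $[x_i,x_{i_1}] = \sum_k c_{i i_1}^k x_k$. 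By induction on $n$ this recipe defines $\rho(x_i)$ on all of $P$, and by construction $\rho(x_i)(1) = x_i$ (the degree-one basis element), and more generally $\rho(x_{i_1}) \cdots \rho(x_{i_n})(1) = e_I$ whenever $i_1 \le \cdots \le i_n$.

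The main obstacle, and the heart of the argument, is verifying that $\rho$ is a Lie algebra homomorphism, i.e.\ that
\[
\rho(x_i)\rho(x_j) \;-\; \rho(x_j)\rho(x_i) \;=\; \rho([x_i,x_j])
\]
holds as operators on $P$. This is straightforward when applied to $e_I$ with $i, j \le i_1$ (by the symmetric definition), and the recursion immediately handles the case where one of $i,j$ is $\le i_1$. The delicate case is when both $i,j > i_1$; there one has to unwind both sides using the recursion, and the identity reduces, after some bookkeeping, to the Jacobi identity in $\g$ together with the inductive hypothesis that the commutation relation already holds on monomials of strictly smaller degree. This double induction (on degree of the monomial, and within that on some measure of ``how far $i,j$ are from the head of $I$'') is the only nontrivial computation in the proof.

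Once $\rho$ is established as a Lie algebra homomorphism, the universal property of $U(\g)$ extends it to an algebra map $\pi: U(\g) \to \End(P)$. Now suppose a linear combination $\sum_I c_I\, x_{i_1}\!\cdots x_{i_n}$ (sum over ordered multi-indices $I$) vanishes in $U(\g)$. Applying $\pi$ and evaluating at $1 \in P$ yields $\sum_I c_I\, e_I = 0$ in $P$, and since the $e_I$ are linearly independent in $P$ by construction, all $c_I = 0$. Thus the ordered monomials are linearly independent in $U(\g)$; combined with the spanning statement already observed (ordering a monomial via $xy = yx + [x,y]$ introduces only lower-degree terms, so induction on degree shows spanning), they form a basis. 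Equivalently, the surjection $\phi: S\g \to \gr U(\g)$ is injective, completing the proof of Theorem~\ref{PBWth}.
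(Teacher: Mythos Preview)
Your proposal is correct and follows a standard route to PBW, but it is organized differently from the paper's proof. The paper does not build a representation of $\g$ on $S\g$; instead it constructs directly a linear map $\varphi: T\g \to S\g$ that fixes ordered monomials and kills the defining ideal $I$. It defines $\varphi$ on an arbitrary monomial $s(X)$ (with $X$ ordered and $s\in S_n$) via a chosen decomposition of $s$ into simple transpositions, then checks independence of the decomposition using the presentation of $S_n$: the relation $s_j^2=1$ comes from antisymmetry of the bracket, the commuting relation $s_js_k=s_ks_j$ for $|j-k|\ge 2$ from the inductive hypothesis, and the braid relation $s_js_{j+1}s_j=s_{j+1}s_js_{j+1}$ from the Jacobi identity. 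Your map is the same object in disguise, since $\varphi(y_1\cdots y_m)=\rho(y_1)\cdots\rho(y_m)(1)$; the difference is that you package the well-definedness check as the Lie-bracket identity $[\rho(x_i),\rho(x_j)]=\rho([x_i,x_j])$ rather than as consistency under Coxeter moves. Both hinge on Jacobi at the crucial step. Your formulation has the advantage of producing a genuine $\g$-module structure on $S\g$ as a byproduct; the paper's formulation makes the role of the symmetric group combinatorics more explicit.

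Two small points worth tightening. First, in your recursive definition you should note explicitly why the term $\rho(x_{i_1})\bigl(\rho(x_i)e_{I'}\bigr)$ is already defined: the degree-$n$ part of $\rho(x_i)e_{I'}$ is the ordered monomial with multiset $\{i,i_2,\dots,i_n\}$, whose least index is $\ge i_1$, so applying $\rho(x_{i_1})$ to it falls into your base case (prepending), while the lower-degree terms are covered by the induction on degree. Second, you correctly identify the verification of $[\rho(x_i),\rho(x_j)]=\rho([x_i,x_j])$ on $e_I$ with both $i,j>i_1$ as the heart of the matter and say it ``reduces, after some bookkeeping, to the Jacobi identity,'' but you do not carry out that bookkeeping. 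That computation is genuinely the nontrivial content of the theorem and should be written out; the paper does the analogous verification in full.
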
 

For instance, if $\bold k=\Bbb R$ or $\Bbb C$ and $\g={\rm Lie}(G)$ where $G$ is a Lie group, this theorem is easy to deduce from Exercise \ref{isomoenvel} (do this!).

\begin{corollary}\label{coroPBW} The map $\rho: \g\to U(\g)$ is injective. Thus $\g\subset U(\g)$. 
\end{corollary}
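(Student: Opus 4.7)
The plan is to derive this as an immediate consequence of the Poincaré-Birkhoff-Witt theorem (Theorem \ref{PBWth}), using the filtration on $U(\g)$ in which $\deg(\g) = 1$. The key point is that $\rho(\g) \subset F_1 U(\g)$, and the associated graded contains enough information to distinguish elements of $\g$ from the scalars $F_0 U(\g) = \bold k$.

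More explicitly, I would proceed as follows. First, observe that the composition
\[
\g \xrightarrow{\ \rho\ } F_1 U(\g) \twoheadrightarrow \gr_1 U(\g) = F_1 U(\g)/F_0 U(\g)
\]
agrees with the restriction of $\phi \colon S\g \to \gr U(\g)$ to $S^1 \g = \g$. This is essentially by construction of $\phi$: $\phi$ is defined by sending a symmetric tensor to the class of the corresponding element of $T\g$ in $\gr U(\g)$, and in degree one this is just the canonical inclusion of $\g$ into $F_1 U(\g)$ followed by the projection to the quotient.

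Now by Theorem \ref{PBWth}, $\phi$ is an isomorphism, so in particular its restriction to $S^1 \g$ is injective. Therefore the composition displayed above is injective, which forces $\rho$ itself to be injective: if $\rho(x) = 0$ for some $x \in \g$, then the image of $x$ in $\gr_1 U(\g)$ is $0$, and hence $x = 0$.

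There is no real obstacle here beyond PBW itself; the whole content is packaged into Theorem \ref{PBWth}, and the corollary is essentially a bookkeeping statement about the degree-one component of the associated graded. The only thing to take care of is the comparison between $\phi|_{S^1\g}$ and the map $\g \to \gr_1 U(\g)$ induced by $\rho$, which is immediate from how $\phi$ was defined.
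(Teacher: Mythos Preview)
Your proof is correct and is essentially the paper's intended argument: the corollary is stated without proof immediately after the PBW theorem and its basis reformulation, so it is meant to follow directly from PBW. The paper places the corollary right after the statement that ordered monomials form a basis of $U(\g)$, suggesting the one-line derivation that in particular the degree-one monomials $x_i$ are linearly independent; your argument via the degree-one piece of $\phi: S\g \to \gr U(\g)$ is the same idea phrased in the language of the associated graded, and is equally valid.
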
 

\begin{remark} Let $\g$ be a vector space equipped with a bilinear map
$[,]: \g\times \g\to \g$. Then one can define the algebra $U(\g)$ as above. 
However, if the map $\rho: \g\to U(\g)$ is injective then 
we clearly must have $[x,x]=0$ for $x\in \g$ and the Jacobi identity, i.e., $\g$ has to be a Lie algebra. Thus the PBW theorem and even Corollary \ref{coroPBW} fail without the axioms of a Lie algebra. 
\end{remark} 

\begin{corollary}\label{tenpr} Let $\g_i$, $1\le i\le n$, be Lie subalgebras of $\g$ such that 
$\g=\oplus_i \g_i$ as a vector space (but $[\g_i,\g_j]$ need not be zero). Then the multiplication map 
$\otimes_i U(\g_i)\to U(\g)$ in any order is a linear isomorphism. 
\end{corollary}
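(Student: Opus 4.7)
The plan is to deduce this directly from the PBW theorem by choosing compatible bases.

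First I would fix an ordered basis $B_i = \{x^{(i)}_\alpha\}$ of each Lie subalgebra $\g_i$. The multiplication map in the statement specifies some order on the factors $U(\g_i)$; without loss of generality say the order is $1,2,\dots,n$. I then concatenate the bases in this order, obtaining an ordered basis $B = B_1 \sqcup B_2 \sqcup \cdots \sqcup B_n$ of $\g = \bigoplus_i \g_i$, with all elements of $B_i$ preceding all elements of $B_j$ whenever $i<j$.

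By the PBW theorem (Theorem \ref{PBWth}) applied to $\g$ with the ordered basis $B$, the ordered monomials in the elements of $B$ form a basis of $U(\g)$. Because of how $B$ was ordered, each such ordered monomial factors uniquely as a product $m_1 m_2 \cdots m_n$ where $m_i$ is an ordered monomial in $B_i$. By the PBW theorem applied separately to each $\g_i$, the ordered monomials in $B_i$ form a basis of $U(\g_i)$, so the pure tensors $m_1 \otimes m_2 \otimes \cdots \otimes m_n$ form a basis of $U(\g_1) \otimes \cdots \otimes U(\g_n)$. The multiplication map sends this basis bijectively onto the PBW basis of $U(\g)$, so it is a linear isomorphism.

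For a general order prescribed by a permutation $\sigma$, I simply repeat the argument but concatenate the bases in the order $B_{\sigma(1)}, B_{\sigma(2)}, \dots, B_{\sigma(n)}$; the corresponding PBW basis of $U(\g)$ is then precisely the image of tensor products $m_{\sigma(1)} \otimes \cdots \otimes m_{\sigma(n)}$ under multiplication in that order, and the same bijection-on-bases argument applies.

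The main (and really the only) step requiring care is the observation that the ordering on a basis of $\g$ in the PBW theorem can be chosen freely; once we exploit that freedom to make the ordering compatible with the direct sum decomposition, everything reduces to matching up bases. There is no genuine obstacle here beyond invoking PBW correctly; the hypothesis that each $\g_i$ is a Lie subalgebra is used to ensure that each $U(\g_i)$ is a well-defined subalgebra of $U(\g)$ (via the injection of Corollary \ref{coroPBW}) and that PBW can be applied to it.
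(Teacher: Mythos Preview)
Your proof is correct and follows exactly the approach indicated in the paper, which simply says the corollary follows immediately from the PBW theorem by choosing a basis of each $\g_i$. You have spelled out in detail precisely what that one-line remark means.
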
 

\begin{proof} The corollary follows immediately from the PBW theorem by choosing a basis of each $\g_i$. 
\end{proof} 

\begin{remark} 1. Corollary \ref{tenpr} applies to the case of infinitely many $\g_i$ 
if we understand the tensor product accordingly: the span of tensor products 
of elements of $U(\g_i)$ where almost all of these elements are equal to $1$.  

2. Note that if $\dim \g_i=1$, this recovers the PBW theorem itself, so Corollary \ref{tenpr} is in fact a generalization of the PBW theorem. 
\end{remark} 

Let ${\rm char}(\bold k)=0$. Define the {\bf symmetrization map} $\sigma: S\g\to U(\g)$ given by 
$$
\sigma(y_1\otimes...\otimes y_n)=\frac{1}{n!}\sum_{s\in S_n}y_{s(1)}...y_{s(n)}.
$$
It is easy to see that this map commutes with the adjoint action of $\g$. 

\begin{corollary} $\sigma$ is an isomorphism. 
\end{corollary}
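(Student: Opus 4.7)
The plan is to deduce this from the Poincar\'e--Birkhoff--Witt theorem by showing that $\sigma$ respects the natural filtrations and induces the identity on associated graded algebras.

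First I would observe that $\sigma$ is well defined: the formula $\frac{1}{n!}\sum_{s\in S_n}y_{s(1)}\cdots y_{s(n)}$ is manifestly symmetric in the $y_i$, so the map $\g^{\otimes n}\to U(\g)$ it defines factors through $S^n\g$. Next, equip $S\g=\oplus_n S^n\g$ with the filtration $F_n S\g:=\oplus_{i\le n}S^i\g$, and note that $\sigma$ is a filtered map: $\sigma(S^n\g)\subset F_n U(\g)$ by construction. Hence $\sigma$ induces a graded linear map
\[
\gr(\sigma)\colon S^n\g\longrightarrow \gr_n U(\g).
\]

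The key calculation is that, under the PBW isomorphism $\phi\colon S\g\xrightarrow{\sim}\gr U(\g)$ from Theorem \ref{PBWth}, the map $\gr(\sigma)$ is the identity. For this, I would use the fact that $[F_iU(\g),F_jU(\g)]\subset F_{i+j-1}U(\g)$, which was already noted before the statement of the PBW theorem. Consequently, for any permutation $s\in S_n$ and any $y_1,\dots,y_n\in\g$,
\[
y_{s(1)}\cdots y_{s(n)}\equiv y_1\cdots y_n\pmod{F_{n-1}U(\g)},
\]
since one passes between the two monomials by successive transpositions of adjacent factors, each producing an error in $F_{n-1}U(\g)$. Averaging over $s\in S_n$ gives $\sigma(y_1\cdots y_n)\equiv y_1\cdots y_n\pmod{F_{n-1}U(\g)}$, whose class in $\gr_n U(\g)$ corresponds under $\phi$ to $y_1\cdots y_n\in S^n\g$. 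Thus $\gr(\sigma)=\phi$, which is an isomorphism by PBW.

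Finally, I would invoke the standard principle that a filtered linear map between filtered vector spaces (with exhaustive filtrations starting from $0$) whose associated graded is an isomorphism must itself be an isomorphism: surjectivity and injectivity on each $F_n$ follow by induction on $n$ using the five lemma applied to the short exact sequences $0\to F_{n-1}\to F_n\to\gr_n\to 0$, or equivalently by lifting representatives degree by degree. Since $\gr(\sigma)$ is an isomorphism in every degree, $\sigma\colon S\g\to U(\g)$ is an isomorphism. The essential input, and the only place that could present difficulty, is the identification $\gr(\sigma)=\phi$, which relies on the commutativity of $\gr U(\g)$ (so that reordering is free modulo lower filtration) together with characteristic zero, which ensures $n!$ is invertible so that averaging is meaningful.
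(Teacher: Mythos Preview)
Your proof is correct and follows exactly the same approach as the paper: show that $\gr\sigma$ coincides with the PBW isomorphism $\phi$, then conclude. The paper's proof is a one-liner asserting this is ``easy to see,'' whereas you have carefully spelled out why reordering factors in $U(\g)$ only produces lower-filtration terms and why a filtered map with isomorphic associated graded is itself an isomorphism.
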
 

\begin{proof} It is easy to see that ${\rm gr}\sigma$ (the induced map on the associated graded algebra) coincides with $\phi$, so the result follows from the PBW theorem.  
\end{proof} 

Let $Z(U(\g))$ denote the center of $U(\g)$. 

\begin{corollary} The map $\sigma$ defines a filtered vector space isomorphism  $\sigma_0: (S\g)^{{\rm ad}\g}\to Z(U(\g))$ whose associated graded is 
the algebra isomorphism $\phi|_{(S\g)^{{\rm ad}\g}}: (S\g)^{{\rm ad}\g}\to {\rm gr} Z(U(\g))$. 
\end{corollary}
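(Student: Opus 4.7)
The plan is to exploit $\g$-equivariance of $\sigma$ with respect to the adjoint action and then compare associated graded pieces, bootstrapping via induction on filtration degree.

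First I would verify that $\sigma: S\g \to U(\g)$ intertwines the adjoint actions. Since $\ad\, z$ acts on both $S\g$ and $U(\g)$ by derivations (on $U(\g)$, inner ones, as noted in the text), and since $\sigma$ is manifestly $S_n$-symmetric in its inputs, a direct calculation shows $\ad\, z \circ \sigma = \sigma \circ \ad\, z$ on each $S^n\g$. In particular, $\sigma$ sends $(S\g)^{\ad\g}$ into $U(\g)^{\ad\g} = Z(U(\g))$, which defines the restriction $\sigma_0$. Injectivity of $\sigma_0$ is immediate from the injectivity (indeed, bijectivity) of $\sigma$ established in the previous corollary.

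For surjectivity of $\sigma_0$, I would argue by induction on filtration degree. Observe that the filtration $F_\bullet U(\g)$ is stable under $\ad\g$, and the induced action of $\ad\g$ on $\gr U(\g)$ corresponds under the algebra isomorphism $\phi: S\g \xrightarrow{\sim} \gr U(\g)$ to the standard adjoint action on $S\g$ (this is because $\ad z$ acts on $\g \subset \gr_1 U(\g)$ by $[z,\cdot]$, and both actions are by derivations). Consequently, if $z \in Z(U(\g)) \cap F_n U(\g)$, then its symbol $\bar z \in \gr_n U(\g)$ corresponds to an element of $(S^n\g)^{\ad\g}$. The element $z - \sigma_0(\phi^{-1}(\bar z))$ then lies in $Z(U(\g)) \cap F_{n-1}U(\g)$, so by induction it lies in the image of $\sigma_0$, and hence so does $z$. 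The base case $n=0$ is trivial. This yields the bijection $\sigma_0: (S\g)^{\ad\g} \xrightarrow{\sim} Z(U(\g))$.

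Finally, for the assertion about the associated graded: by construction, $\gr \sigma = \phi$, and the argument above shows that $\gr Z(U(\g)) = \phi((S\g)^{\ad\g})$ as graded subspaces. Since $(S\g)^{\ad\g}$ is a subalgebra of the commutative algebra $S\g$ and $Z(U(\g))$ is a (commutative) subalgebra of $U(\g)$, both associated graded objects are graded algebras, and $\phi$ restricts to an algebra isomorphism between them.

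I do not expect any serious obstacle here: the $\g$-equivariance of $\sigma$ and the PBW identification $\gr U(\g) = S\g$ do essentially all the work. The only mildly delicate point is checking that the $\ad\g$-action on $\gr U(\g)$ matches the standard action on $S\g$ under $\phi$, but this is a routine consequence of the fact that both are derivation actions determined by their values on generators $\g \subset \gr_1 U(\g)$.
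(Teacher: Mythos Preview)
Your proof is correct, and since the paper states this corollary without proof, there is nothing to compare against in detail. That said, your induction for surjectivity is doing more work than necessary: once you have established that $\sigma$ is a $\g$-equivariant linear isomorphism (equivariance is stated just before the previous corollary, bijectivity is that corollary), it is automatic that $\sigma$ restricts to a bijection between the invariant subspaces $(S\g)^{\ad\g}$ and $U(\g)^{\ad\g} = Z(U(\g))$, since for any equivariant bijection $f$ and any invariant element $b$ in the target, $f^{-1}(b)$ is invariant. Your induction is not wrong, and it has the minor virtue of making the identification $\gr Z(U(\g)) = \phi\bigl((S\g)^{\ad\g}\bigr)$ explicit along the way, but the one-line argument suffices. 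Also note a small direction mismatch: the corollary writes $\sigma_0: Z(U(\g)) \to (S\g)^{\ad\g}$, i.e.\ the inverse of the restriction you define; this is harmless since you have shown the restriction is a bijection.
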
 

In the case when $\g={\rm Lie}G$ for a connected Lie group $G$, we thus obtain
a filtered vector space isomorphism of the center of $U(\g)$ with $(S\g)^{{\rm Ad} G}$. 

\begin{remark} The map $\sigma_0$ is not, in general, an algebra homomorphism; however, 
a nontrivial theorem of M. Duflo says that if $\g$ is finite dimensional then there exists
a canonical filtered {\it algebra isomorphism} $\eta: Z(U(\g))\to (S\g)^{{\rm ad}\g}$ 
(a certain twisted version of $\sigma_0$) whose associated graded is $\phi|_{Z(U(\g))}$. A construction of the Duflo isomorphism can be found in \cite{CR}. 
\end{remark} 

\begin{example} Let $\g=\mathfrak{sl}_2=\mathfrak{so}_3$. Then 
$\g$ has a basis $x,y,z$ with $[x,y]=z$, $[y,z]=x$, $[z,x]=y$, and $G=SO(3)$ 
acts on these elements by ordinary rotations of the $3$-dimensional space. 
So the only $G$-invariant polynomials of $x,y,z$ are polynomials 
of $r^2=x^2+y^2+z^2$. Thus we get that $Z(U(\g))=\Bbb C[x^2+y^2+z^2]$. 
In terms of $e,f,h$, we have 
$$
x^2+y^2+z^2=-fe-\frac{h^2+2h}{4}=-\frac{C}{2},
$$
where $C$ is the Casimir element. 
\end{example} 

\subsection{Proof of the PBW theorem} \label{proofPBW}

The proof of Theorem \ref{PBWth} is based on the following key lemma. 

\begin{lemma}\label{pbwlemma} There exists a unique linear map 
$\varphi: T\g\to S\g$ such that 

(i) for an {\bf ordered} monomial $X:=x_{i_1}...x_{i_m}\in \g^{\otimes m}$ one has \linebreak $\varphi(X)=X$; 

(ii) one has $\varphi(I)=0$; in other words, $\varphi$ descends to a linear map 
$\overline\varphi: U(\g)\to S\g$. 
\end{lemma}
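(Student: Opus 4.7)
The plan is to construct $\varphi$ by a recursive ``straightening'' procedure on monomials, relative to a fixed well-ordering of a basis $\lbrace x_i\rbrace$ of $\g$. Call a basis monomial $w=x_{i_1}\otimes\cdots\otimes x_{i_m}\in T\g$ \emph{ordered} if $i_1\leq\cdots\leq i_m$, and define its number of inversions to be the cardinality of $\lbrace(p,q):p<q,\ i_p>i_q\rbrace$. I will define $\varphi$ on basis monomials by induction on the pair (length, number of inversions) in the lexicographic order, and then extend by linearity. On ordered monomials (including the empty word $1$), let $\varphi(x_{i_1}\otimes\cdots\otimes x_{i_m})$ be the corresponding commutative product in $S\g$. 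Otherwise choose an inversion $(k,k+1)$ in $w$ and set
$$\varphi(w):=\varphi(w')+\varphi(w''),$$
where $w'$ is obtained from $w$ by transposing the $k$-th and $(k{+}1)$-st letters (same length, one fewer inversion), and $w''$ by replacing the pair $x_{i_k}\otimes x_{i_{k+1}}$ with $[x_{i_k},x_{i_{k+1}}]\in\g$ expanded in the chosen basis (length $m-1$). Both have strictly lower complexity, so the recursion terminates.

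The main obstacle, and the only nontrivial point, is showing the definition is independent of the choice of inversion at each step. Given two distinct inversions of $w$, I would verify that processing them in either order gives the same value, which by a standard local confluence argument reduces to two cases. If the two inversions occur at non-overlapping positions, the two swaps commute on the nose and the resulting commutator corrections pair up term-for-term, so equality follows from the inductive hypothesis. The essential case is when the inversions are adjacent, say at positions $(k,k+1)$ and $(k+1,k+2)$, involving three consecutive letters $x_a\otimes x_b\otimes x_c$ with $a>b>c$. Running each route through the recursion down to the common state where these positions read $x_c\otimes x_b\otimes x_a$ produces three length-$(m-1)$ commutator corrections per route. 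Using the inductive hypothesis at length $m-1$ (which permits one to swap adjacent entries at the cost of the usual commutator term), the difference of the two routes simplifies to $\varphi$ applied to a length-$(m-2)$ monomial whose distinguished letter equals
$$[[x_a,x_b],x_c]+[[x_b,x_c],x_a]+[x_b,[x_a,x_c]],$$
which vanishes by the Jacobi identity. This is precisely where and why the Lie algebra axioms are needed.

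Given well-definedness of $\varphi$, uniqueness is immediate: property~(i) fixes $\varphi$ on ordered monomials, and the sorting relation $\varphi(w)=\varphi(w')+\varphi(w'')$ forced by~(ii) recursively pins down every other value. It remains to establish property~(ii), that $\varphi$ annihilates every element $u\otimes(x\otimes y-y\otimes x-[x,y])\otimes v$ with $u,v\in T\g$ and $x,y\in\g$. By bilinearity in $x,y$ and linearity in $u,v$ it suffices to take $x=x_i$, $y=x_j$ basis vectors and $u,v$ basis monomials. The case $i=j$ is trivial, and when $i\neq j$ one of the words $u\otimes x_i\otimes x_j\otimes v$ or $u\otimes x_j\otimes x_i\otimes v$ has an inversion at the designated position; by the established independence we may evaluate $\varphi$ on that word using precisely this inversion, which immediately yields the required identity and completes the proof.
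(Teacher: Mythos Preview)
Your proposal is correct and follows essentially the same straightening strategy as the paper. The paper organizes the argument by writing each monomial as $s(X)$ for an ordered $X$ and a permutation $s$, choosing a decomposition of $s$ into simple transpositions, and verifying independence of that decomposition via the three defining relations of $S_n$; you instead recurse on (length, inversion count) and check local confluence for any pair of available inversions. These are the same computation in different packaging: your non-overlapping case is the paper's commuting relation $s_js_k=s_ks_j$, your adjacent case is the braid relation, and since you only ever process genuine inversions you bypass the $s_j^2=1$ check and the separate ``independence on $s$'' step (the paper's use of $[x,x]=0$ when letters repeat), which in your setup is absorbed into the trivial $i=j$ case. The crucial step---reducing the discrepancy in the adjacent case to the Jacobi identity at length $m-2$---is identical in both.
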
 

\begin{remark} The map $\varphi$ is not canonical and depends on the choice of the ordered basis $x_i$ of $\g$.  
\end{remark} 

Note that Lemma \ref{pbwlemma} immediately implies the PBW theorem, since by this lemma the images of ordered monomials 
under $\varphi$ are linearly independent in $S\g$, implying that these monomials themselves are linearly independent in $U(\g)$. 

\begin{proof} It is clear that $\varphi$ is unique if exists since ordered monomials span $U(\g)$. We will construct $\varphi$ by defining it inductively on $F_nT\g$ for $n\ge 0$. 

Suppose $\varphi$ is already defined on $F_{n-1}T\g$ and let us extend it to $F_nT\g=F_{n-1}T\g\oplus \g^{\otimes n}$. So we should define $\varphi$ on $\g^{\otimes n}$. Since $\varphi$ is already 
defined on ordered monomials $X$ (by $\varphi(X)=X$), we need to extend this definition 
to all monomials. 

Namely, let $X$ be an ordered monomial of degree $n$, and let us define $\varphi$ 
on monomials of the form $s(X)$ for $s\in S_n$, where 
$$
s(y_1...y_n):=y_{s(1)}...y_{s(n)}.
$$ 
To this end, fix a decomposition $D$ of $s$ into a product of 
transpositions of neighbors: 
$$
s=s_{j_r}...s_{j_1},
$$
and define $\varphi(s(X))$ by the formula 
$$
\varphi(s(X)):=X+\Phi_D(s,X),
$$
where 
$$
\Phi_D(s,X):=\sum_{m=0}^{r-1} \varphi([,]_{j_{m+1}}(s_{j_m}...s_{j_1}(X))),
$$
and 
$$
[,]_j(y_1...y_jy_{j+1}...y_n):=y_1...[y_j,y_{j+1}]...y_n.
$$

We need to show that $\varphi(s(X))$ is well defined, i.e., $\Phi_D(s,X)$ does not really 
depend on the choice of $D$ and $s$ but only on $s(X)$. We first show that $\Phi_D(s,X)$ is 
independent of  $D$. 

To this end, recall that the symmetric group $S_n$ is generated by $s_j,1\le j\le n-1$
with defining relations 
$$
s_j^2=1;\ s_js_k=s_ks_j, |j-k|\ge 2;\ s_js_{j+1}s_j=s_{j+1}s_js_{j+1}. 
$$
Thus any two decompositions of $s$ into a product of transpositions of neighbors 
can be related by a sequence of applications of 
these relations somewhere inside the decomposition. 

Now, the first relation does not change 
the outcome by the identity $[x,y]=-[y,x]$.

For the second relation, suppose that $j<k$ and 
we have two decompositions $D_1,D_2$ of $s$ given by
$s=ps_js_kq$ and $s=ps_ks_jq$, where $q$ is a product of $m$ transpositions of neighbors. Let $q(X)=YabZcdT$ where $a,b,c,d\in \g$ 
stand in positions $j,j+1,k,k+1$. Let $\Phi_1:=\Phi_{D_1}(s,X)$, $\Phi_2:=\Phi_{D_2}(s,X)$. Then 
the sums defining $\Phi_1$ and $\Phi_2$ differ only in the $m$-th and $m+1$-th term, so we get  
$$
\Phi_1-\Phi_2=
$$
$$
\varphi(YabZ[c,d]T)+\varphi(Y[a,b]ZdcT)-\varphi(Y[a,b]ZcdT)-\varphi(YbaZ[c,d]T),
$$
which equals zero by the induction assumption. 

For the third relation, suppose that we have two decompositions $D_1,D_2$ of $s$ given by
$s=ps_js_{j+1}s_jq$ and $s=ps_{j+1}s_js_{j+1}q$, where $q$ is a product of $k$ transpositions of neighbors. Let $q(X)=YabcZ$ where $a,b,c\in \g$ 
stand in positions $j,j+1,j+2$. Let $\Phi_1:=\Phi_{D_1}(s,X)$, $\Phi_2:=\Phi_{D_2}(s,X)$. Then 
the sums defining $\Phi_1$ and $\Phi_2$ differ only in the $k$-th, $k+1$-th, and $k+2$-th terms, so we get  
$$
\Phi_1-\Phi_2=  
$$
$$
\left(\varphi(Y[a,b]cZ)+\varphi(Yb[a,c]Z)+\varphi(Y[b,c]aZ)\right)-
$$
$$
\left(\varphi(Ya[b,c]Z)+\varphi(Y[a,c]bZ)+\varphi(Yc[a,b]Z)\right).
$$
So the Jacobi identity 
$$
[[b,c],a]+[b,[a,c]]+[[a,b],c]=0
$$
combined with property (ii) in degree $n-1$
implies that $\Phi_1-\Phi_2=0$, i.e., $\Phi_1=\Phi_2$, as claimed. 
Thus we will denote $\Phi_D(s,X)$ just by $\Phi(s,X)$. 

It remains to show that $\Phi(s,X)$ does not depend on the choice of $s$ and only depends on $s(X)$. Let $X=x_{i_1}...x_{i_n}$; then $s(X)=s'(X)$ if and only if $s=s't$, where $t$ is the product of transpositions 
$s_k$ for which $i_k=i_{k+1}$. Thus, it suffices to show that $\Phi(s,X)=\Phi(ss_k,X)$ for such $k$. But this follows from the fact that $[x,x]=0$. 

Now, it follows from the construction of $\varphi$ that for any monomial $X$ of degree $n$ (not necessarily ordered), $\varphi(s_j(X))=\varphi(X)+\varphi([,]_j(X))$. Thus $\varphi$ satisfies property (ii) in degree $n$. This concludes the proof of 
Lemma \ref{pbwlemma} and hence Theorem \ref{PBWth}. 
\end{proof} 

\section{\bf Free Lie algebras, the Baker-Campbell-Hausdorff formula} 

\subsection{Primitive elements}\label{primi} Let $\g$ be a Lie algebra over a field $\bold k$. Let us say that $x\in U(\g)$ is {\bf primitive} if $\Delta(x)=x\otimes 1+1\otimes x$. 
It is clear that if $x\in\g\subset U(\g)$ then $x$ is primitive.  

\begin{lemma}\label{primel} If the ground field $\bold k$ has characteristic zero 
then every primitive element of $U(\g)$ is contained in $\g$. 
\end{lemma}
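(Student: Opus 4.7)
The plan is to pass to the associated graded algebra with respect to the PBW filtration, where the analogous statement for $S\g$ is easy. First I would check that the coproduct $\Delta: U(\g) \to U(\g) \otimes U(\g)$ is filtered, meaning $\Delta(F_n U(\g)) \subset \sum_{i+j=n} F_i U(\g) \otimes F_j U(\g)$. This is immediate: $\Delta$ is an algebra map, $\g$ generates $U(\g)$, and $\Delta(x) = x \otimes 1 + 1 \otimes x$ lies in filtered degree $1$ for $x \in \g$. Using PBW to identify $\gr U(\g) \cong S\g$, the induced algebra map $\gr \Delta: S\g \to S\g \otimes S\g$ sends each $x \in \g = S^1\g$ to $x \otimes 1 + 1 \otimes x$, and by the universal property of the symmetric algebra it therefore coincides with the standard Hopf algebra coproduct on $S\g$.

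Now suppose $x \in U(\g)$ is a nonzero primitive element, let $n$ be minimal with $x \in F_n U(\g)$, and let $\bar x \in S^n\g$ be its symbol. Taking symbols of both sides of $\Delta(x) - x \otimes 1 - 1 \otimes x = 0$ inside $\gr_n(U(\g) \otimes U(\g)) = \bigoplus_{i+j=n} S^i\g \otimes S^j\g$ yields $(\gr \Delta)(\bar x) = \bar x \otimes 1 + 1 \otimes \bar x$; that is, $\bar x$ is primitive in $S\g$. So it suffices to prove that the primitive elements of $S\g$ are exactly $\g$.

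For the $S\g$ statement, I would use the following slick argument: the multiplication $\mu: S\g \otimes S\g \to S\g$ is an algebra map because $S\g$ is commutative, so the composition $\mu \circ \Delta: S\g \to S\g$ is an algebra endomorphism that sends each $x \in \g$ to $2x$. Hence on a homogeneous element $y \in S^k\g$ one has $(\mu \circ \Delta)(y) = 2^k y$. If however $y$ is primitive, then $(\mu \circ \Delta)(y) = 2y$, so $(2^k - 2)y = 0$. In characteristic zero this forces $y = 0$ unless $k = 1$; since the $S\g$-coproduct is graded, an arbitrary primitive element decomposes into homogeneous primitive summands, all of which lie in $\g$.

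Returning to $U(\g)$, we conclude $\bar x \in \g$, so $n = 1$ and $x \in F_1 U(\g) = \bold k \cdot 1 \oplus \g$. Writing $x = c \cdot 1 + y$ with $c \in \bold k$ and $y \in \g$, primitivity reads $c(1 \otimes 1) + y \otimes 1 + 1 \otimes y = 2c(1 \otimes 1) + y \otimes 1 + 1 \otimes y$, forcing $c = 0$ and hence $x = y \in \g$. The main step, though not difficult, is the identification of $\gr \Delta$ with the standard symmetric-algebra coproduct; this is the bridge that turns a noncommutative question about $U(\g)$ into a commutative one about $S\g$, after which the rest is essentially Euler's identity in disguise.
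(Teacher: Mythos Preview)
Your proof is correct and follows essentially the same route as the paper: pass to the symbol in $S\g$ via the PBW filtration, observe that it is primitive there, and use the $2^n=2$ argument (the paper writes it as $f_0(2x)=2^n f_0(x)=2f_0(x)$, you phrase it via $\mu\circ\Delta$). Your write-up is a bit more explicit about why $\Delta$ is filtered and why $\gr\Delta$ agrees with the symmetric-algebra coproduct, but the strategy is identical.
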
 

\begin{proof} Let $0\ne f\in U(\g)$ be a primitive element. Suppose that the filtration degree 
of $f$ is $n$. Let $f_0\in S^n\g$ be the leading term of $f$ (it is well defined by the PBW Theorem). Then $f_0$ is primitive in $S\g$, 
and in fact in $SV$ for some finite dimensional subspace $V\subset \g$. So 
$f_0(x+y)=f_0(x)+f_0(y)$, $x,y\in V^*$. In particular, $2^nf_0(x)=f_0(2x)=2f_0(x)$, so 
$2^n-2=0$, which implies that $n=1$ as ${\rm char}(\bold k)=0$. Thus $f=c+f_0$ where $f_0\in \g$, $c\in \bold k$ and $c=0$ since $f$ is primitive. 
\end{proof} 

\begin{remark} Note that the assumption of characteristic zero is essential. 
Indeed, if the characterictic of $\bold k$ is $p>0$ and $x\in \g$ then $x^{p^i}\in U(\g)$
is primitive for all $i$.  
\end{remark}

\subsection{Free Lie algebras} 

Let $V$ be a vector space over a field $\bold k$. The {\bf free Lie algebra} $L(V)$ generated by $V$ is the Lie subalgebra of $TV$ generated by $V$. Note that $L(V)$ 
is a $\Bbb Z_{>0}$-graded Lie algebra: $L(V)=\oplus_{m\ge 1}L_m(V)$, 
with grading defined by $\deg V=1$; thus $L_m(V)$ is spanned by commutators of $m$-tuples of elements of $V$ inside $TV$. 

\begin{example} The free Lie algebra $FL_2=L({\bold k}^2)$ in two generators $x,y$ is generated by $x,y$ with $FL_2[1]$ having basis $x,y$, $FL_2[2]$ having basis $[x,y]$, $FL_2[3]$ having basis $[x,[x,y]]$, $[y,[x,y]]$, etc. 
Similarly, $FL_3=L({\bold k}^3)$ is generated by $x,y,z$ with $FL_3[1]$ having basis $x,y,z$, 
$FL_3[2]$ having basis $[x,y],[x,z],[y,z]$, $FL_3[3]$ having basis $[x,[x,y]]$, $[y,[x,y]]$, $[y,[y,z]]$, $[z,[y,z]]$, $[x,[x,z]]$, $[z,[x,z]]$, $[x,[y,z]]$, $[y,[z,x]]$ (note that $[z,[x,y]]$ expresses in terms of the last two using the Jacobi identity). 
\end{example} 

The Lie algebra embedding $L(V)\hookrightarrow TV$ 
gives rise to an associative algebra homomorphism 
$\psi: U(L(V))\to TV$. 

\begin{proposition}\label{freelieprop} (i) $\psi$ is an isomorphism, so $U(L(V))\cong TV$. 

(ii) $\psi$ preserves the coproduct.  

(iii) (The universal property of free Lie algebras) 
If $\g$ is any Lie algebra over $\bold k$ then restriction to $V$ 
defines an isomorphism 
$$
{\bf res}: \Hom_{\rm Lie}(L(V),\g)\cong \Hom_{\bold k}(V,\g).
$$ 
\end{proposition}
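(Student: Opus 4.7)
The whole proposition will follow from the universal properties of the tensor algebra and of the enveloping algebra, together with the PBW theorem. The key observation, which I will use throughout, is that $V$ generates $U(L(V))$ as an associative algebra: $V$ generates $L(V)$ as a Lie algebra by construction, and inside $U(L(V))$ Lie brackets coincide with commutators, so the associative subalgebra of $U(L(V))$ generated by $V$ contains $L(V)$, and $L(V)$ in turn generates $U(L(V))$ associatively.

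For (i), I will exhibit a two-sided inverse to $\psi$. The universal property of the tensor algebra applied to the linear map $V\hookrightarrow L(V)\subset U(L(V))$ yields an algebra homomorphism $\phi\colon TV\to U(L(V))$. Then $\psi\circ\phi\colon TV\to TV$ is an algebra endomorphism fixing $V$ pointwise, hence equals $\mathrm{id}_{TV}$. Conversely, $\phi\circ\psi\colon U(L(V))\to U(L(V))$ is an algebra endomorphism fixing $V$; since it preserves commutators, it fixes all of $L(V)$, and since $L(V)$ generates $U(L(V))$ as an algebra, it equals $\mathrm{id}_{U(L(V))}$.

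For (ii), the two maps $\Delta_{TV}\circ\psi$ and $(\psi\otimes\psi)\circ\Delta_{U(L(V))}$ from $U(L(V))$ to $TV\otimes TV$ are both algebra homomorphisms, and by the definition of the coproducts in Subsection \ref{copro} they agree on $V$, sending $v\mapsto v\otimes 1+1\otimes v$. Since $V$ generates $U(L(V))$ as an algebra, they coincide. For (iii), uniqueness of the extension is immediate since $V$ generates $L(V)$ as a Lie algebra. For existence, given $f\colon V\to\g$, I compose with the embedding $\g\hookrightarrow U(\g)$ (injective by Corollary \ref{coroPBW}) and extend by the universal property of $TV$ to an algebra map $F\colon TV\to U(\g)$. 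Via (i) we may view $F$ as an algebra map $U(L(V))\to U(\g)$; since algebra maps preserve commutators, $F(L(V))$ lies in the Lie subalgebra of $U(\g)$ generated by $f(V)\subset\g$, which is contained in $\g$. Hence $F|_{L(V)}$ factors through $\g$, yielding the desired Lie algebra extension $\widetilde f\colon L(V)\to\g$.

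No step is a genuine obstacle: once the PBW theorem is in hand, each part is a clean application of universal properties together with the fact that algebra homomorphisms automatically preserve commutators. The one point demanding care is making explicit that $V$ generates $U(L(V))$ associatively, as this is precisely what powers the rigidity arguments in (i), (ii), and (iii) that identify two algebra maps by checking their agreement on $V$.
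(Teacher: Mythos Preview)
Your proof is correct and follows essentially the same approach as the paper: both exploit that $V$ generates $U(L(V))$ associatively to identify algebra maps by their values on $V$, and both prove (iii) by passing through $U(\g)$ and using that the image of $L(V)$ lands in $\g\subset U(\g)$. One small remark: parts (i) and (ii) do not actually require PBW at all, so your closing comment slightly overstates its role---PBW enters only in (iii), via the injectivity of $\g\hookrightarrow U(\g)$ that you correctly cite.
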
 

\begin{proof} (i) By definition, $U(L(V))$ is generated by $V$ as an associative algebra, so 
$U(L(V))=TV/J$ for some 2-sided ideal $J$. Moreover, the map 
$\psi: TV/J\to TV$ restricts to the identity on the space $V$ of generators. 
Thus $J=0$ and $\psi={\rm Id}$. 

(ii) is clear since the two coproducts agree on generators.

(iii) Let $a: V\to \g$ be a linear map. Then $a$ can be viewed as a linear map $V\to U(\g)$. 
So it extends to a map of associative algebras $\widetilde a: TV\to U(\g)$
which restricts to a Lie algebra map $\widehat a: L(V)\to U(\g)$. Moreover, since $\widehat a(V)\subset \g\subset U(\g)$ and $L(V)$ is generated by $V$ as a Lie algebra, we obtain that 
$\widehat a: L(V)\to \g$. It is easy to see that the assignment $a\mapsto \widehat a$ 
is inverse to ${\bf res}$, implying that ${\bf res}$ is an isomorphism. 
\end{proof} 

\begin{exercise} Let $\dim V=n$ and $d_m(n)=\dim L_m(V)$. Use the PBW theorem 
to show that $d_m(n)$ are uniquely determined from the identity 
$$
\prod_{m=1}^\infty (1-q^m)^{d_m(n)}=1-nq. 
$$
\end{exercise} 

\subsection{The Baker-Campbell-Hausdorff formula} \label{bchf}

We have defined the commutator $[x,y]$ on $\g={\rm Lie}G$ as the quadratic part of 
$\mu(x,y)=\log(\exp(x)\exp(y))$. So one may wonder if taking higher order terms in the Taylor explansion of $\mu(x,y)$,
\begin{equation}\label{CHser}
\mu(x,y)\sim \sum_{n=1}^\infty\frac{\mu_n(x,y)}{n!}
\end{equation} 
would yield new operations on $\g$. It turns out, however, 
that all these operations express via the commutator. Namely, we have 

\begin{theorem} For each $n\ge 1$, $\mu_n(x,y)$ may be written as a $\Bbb Q$-Lie polynomial of $x,y$ (i.e., a $\Bbb Q$-linear combination of Lie monomials, obtained by taking successive commutators of $x,y$), which is universal (i.e., independent of  $G$). 
\end{theorem}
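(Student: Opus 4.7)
The plan is to pass to a universal formal setting, establish the result there via the coproduct, and then transfer back. Let $V = \Bbb Q x \oplus \Bbb Q y$ and let $\widehat T$ be the completion of the tensor algebra $TV$ with respect to its grading, i.e., the algebra of formal power series in the noncommuting variables $x, y$. Define $\exp(x), \exp(y) \in \widehat T$ by their usual power series, set $M := \exp(x)\exp(y)$, and let $\mu^{\rm univ} := \log(M) \in \widehat T$ (defined since $M - 1$ has no constant term). The degree-$n$ component of $\mu^{\rm univ}$ is a priori a noncommutative $\Bbb Q$-polynomial of degree $n$ in $x, y$; the goal is to show it is in fact a Lie polynomial, and that substituting any Lie algebra elements for $x, y$ reproduces the analytic $\mu_n$.

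The core step is showing $\mu^{\rm univ}$ lies in the completion of the free Lie algebra $L(V) \subset TV = U(L(V))$ (Proposition \ref{freelieprop}). I would use the coproduct $\Delta$ from Subsection \ref{copro}. Since $\Delta(x) = x \otimes 1 + 1 \otimes x$ with commuting summands, $\Delta(\exp(x)) = \exp(x \otimes 1)\exp(1 \otimes x) = \exp(x) \otimes \exp(x)$, and similarly for $\exp(y)$. Hence $\Delta(M) = M \otimes M = (M \otimes 1)(1 \otimes M)$, a product of commuting elements of $\widehat T \otimes \widehat T$. Taking $\log$ converts this into
\[
\Delta(\mu^{\rm univ}) = \mu^{\rm univ} \otimes 1 + 1 \otimes \mu^{\rm univ},
\]
so $\mu^{\rm univ}$ is primitive. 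By Lemma \ref{primel} applied in each degree (characteristic zero is essential here), each $\mu^{\rm univ}_n/n!$ lies in the degree-$n$ piece of $L(V)$, i.e., is a $\Bbb Q$-Lie polynomial in $x, y$.

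Finally, to transfer this universal formula to a given Lie group $G$ with Lie algebra $\g$, the universal property of $L(V)$ (Proposition \ref{freelieprop}(iii)) provides a Lie algebra homomorphism $L(V) \to \g$ sending $x \mapsto x_0, y \mapsto y_0$ for any chosen $x_0, y_0 \in \g$; applying it degree by degree yields a formal series $\widetilde \mu(x_0, y_0) \in \widehat\g$ whose pieces are the universal Lie polynomials evaluated on $x_0, y_0$. The main obstacle is then matching $\widetilde \mu(x_0, y_0)$ with the analytic $\mu(x_0, y_0) = \log(\exp(x_0)\exp(y_0)) \in \g$ computed in $G$. The cleanest resolution is Ado's theorem, which embeds $\g \hookrightarrow \mathfrak{gl}_n(\Bbb K)$ and reduces the identity to the tautological matrix computation, where formal and analytic exp/log literally coincide. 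Without Ado (not yet available in the text), I would instead differentiate $\exp(\mu(tx_0, y_0)) = \exp(tx_0)\exp(y_0)$ in $t$ to derive an ODE for $\mu(tx_0, y_0)$ whose right-hand side is a universal Lie-algebraic expression in $\mathrm{ad}$, verify the same ODE is satisfied by $\widetilde \mu(tx_0, y_0)$ by a direct formal computation in $\widehat\g$, and conclude by uniqueness of solutions with initial condition $y_0$ at $t=0$.
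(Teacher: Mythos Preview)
Your proof is correct and takes essentially the same approach as the paper: both compute $\log(\exp(x)\exp(y))$ formally in the completed free algebra, show it is primitive via the coproduct, and then invoke Lemma \ref{primel} to conclude each homogeneous piece lies in the free Lie algebra. You are in fact more careful than the paper about the transfer from the formal setting to the analytic $\mu$ on a given $G$, which the paper simply asserts without comment (``These $\mu_n$ are the desired universal expressions'').
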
 

\begin{proof}Expansion \eqref{CHser} is equivalent to the equality 
\begin{equation}\label{CHser1}
\exp(tx)\exp(ty)=\exp\left(\sum_{n=1}^\infty\frac{t^n\mu_n(x,y)}{n!}\right)
\end{equation}
inside $U(\g)[[t]]\subset D(G)[[t]]$ for $x,y\in \g$ (see Subsection \ref{diffope}).
Let $T\Bbb C^2=\Bbb C\langle x,y\rangle$ be the free noncommutative algebra in the letters $x,y$. The series $X=\exp(tx):=\sum_{n=0}^\infty \frac{t^nx^n}{n!}$ can be viewed as 
an element of $\Bbb C\langle x,y\rangle[[t]]$, and similarly for $Y:=\exp(ty)$. Thus we may define 
$$
\mu:=\log(XY)\in \Bbb C\langle x,y\rangle[[t]],
$$
where 
$$
\log A:=-\sum_{n=1}^{\infty}\frac{(1-A)^n}{n}.
$$
Then $\mu=\sum_{n=1}^\infty\frac{t^n\mu_n}{n!}$ where $\mu_n\in \Bbb C\langle x,y\rangle$ 
is homogeneous of degree $n$. These $\mu_n$ are the desired universal expressions, and 
it remains to show that they are Lie polynomials, i.e., can be expressed solely in terms of commutators.  

To this end, note that since $\Delta(x)=x\otimes 1+1\otimes x$, the element $X$ is {\bf grouplike}, i.e., $\Delta(X)=X\otimes X$ (where we extend the coproduct to the completion by continuity). The same property is shared by $Y$ and hence by $Z:=XY$, i.e., we have 
$\Delta(Z)=Z\otimes Z$. Thus 
$$
\Delta(\log Z)=\log \Delta(Z)=\log(Z\otimes Z)=\log((Z\otimes 1)(1\otimes Z))
$$
$$
=\log Z\otimes 1+1\otimes \log Z. 
$$
Thus $\mu=\log Z$ is primitive, hence so is $\mu_n$ for each $n$. Thus by 
Lemma \ref{primel}, $\mu_n\in FL_2=L(\Bbb C^2)$, where $FL_2\subset \Bbb C\langle x,y\rangle$ is the free Lie algebra generated by $x,y$. This implies the statement. 
\end{proof} 

\begin{example} 
$$
\mu_3(x,y)=\tfrac{1}{2}([x,[x,y]]+[y,[y,x]]).
$$
Thus 
$$
\mu(x,y)=x+y+\tfrac{1}{2}[x,y]+\tfrac{1}{12}([x,[x,y]]+[y,[y,x]])+...
$$
\end{example} 

\begin{remark} 1. The universal expressions $\mu_n$ are unique, see Example \ref{uniqness} below. 

2. E. Dynkin derived an explicit formula for $\mu(x,y)$ 
making it apparent that it expresses solely in terms of commutators. 
Several proofs of this formula may be found in the expository paper \cite{Mu}.
\end{remark} 

\section{\bf Solvable and nilpotent Lie algebras, theorems of Lie and Engel} 

\subsection{Ideals and commutant} 

Let $\g$ be a Lie algebra. Recall that an ideal in $\g$ is a subspace $\h$ such that $[\g,\h]\subset \h$. 
If $\h\subset \g$ is an ideal then $\g/\h$ has a natural structure of a Lie algebra. 
Moreover, if $\phi: \g_1\to \g_2$ is a homomorphism of Lie algebras then 
${\rm Ker}\phi$ is an ideal in $\g_1$, ${\rm Im}\phi$ is a Lie subalgebra in $\g_2$, and $\phi$ 
induces an isomorphism $\g_1/{\rm Ker}\phi\cong {\rm Im}\phi$ (check it!). 

\begin{lemma}\label{ideals} If $I_1,I_2\subset \g$ are ideals then so are $I_1\cap I_2,I_1+I_2$ and $[I_1,I_2]$ (the set of linear combinations of $[a_1,a_2]$, $a_m\in I_m, m=1,2$). 
\end{lemma}

\begin{exercise} Prove Lemma \ref{ideals}. 
\end{exercise} 

\begin{definition} The commutant of $\g$ is the ideal $[\g,\g]$. 
\end{definition} 

\begin{lemma}\label{ideals1} The quotient $\g/[\g,\g]$ is abelian; moreover, if $I\subset \g$ is an ideal such that $\g/I$ is abelian then $I\supset [\g,\g]$. 
\end{lemma}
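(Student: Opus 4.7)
The plan is straightforward since both parts reduce to unwinding the definitions of the bracket on a quotient Lie algebra and of the commutant $[\g,\g]$.

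For the first assertion, I would begin by recalling that since $[\g,\g]$ is an ideal (it is so by Lemma \ref{ideals} applied with $I_1=I_2=\g$), the quotient $\g/[\g,\g]$ inherits a well-defined Lie bracket, namely $[\bar x,\bar y]:=\overline{[x,y]}$, where $\bar x$ denotes the image of $x\in\g$. Now for any two elements $\bar x,\bar y\in \g/[\g,\g]$, pick representatives $x,y\in\g$; then $[x,y]\in[\g,\g]$ by definition, so $[\bar x,\bar y]=\overline{[x,y]}=0$. Hence $\g/[\g,\g]$ is abelian.

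For the second assertion, suppose $I\subset\g$ is an ideal such that $\g/I$ is abelian, and let $\pi:\g\to\g/I$ be the canonical projection (a Lie algebra homomorphism). For any $x,y\in\g$, the fact that $\g/I$ is abelian gives
\[
\pi([x,y])=[\pi(x),\pi(y)]=0,
\]
so $[x,y]\in\mathrm{Ker}(\pi)=I$. Since $[\g,\g]$ is, by definition, the linear span of all such commutators $[x,y]$, we conclude $[\g,\g]\subset I$.

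I do not anticipate any genuine obstacle: both implications are immediate consequences of the definitions, together with the already-established fact (Lemma \ref{ideals}) that $[\g,\g]$ is itself an ideal, so the quotient Lie algebra structure on $\g/[\g,\g]$ is available. The only ``care'' needed is to make sure the bracket on the quotient is well-defined, which is exactly what being an ideal guarantees, and to remember that $[\g,\g]$ is spanned by commutators rather than just consisting of them, so that the containment $[x,y]\in I$ for all $x,y$ promotes to $[\g,\g]\subset I$ by linearity.
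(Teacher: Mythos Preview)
Your proof is correct and complete; both parts are exactly the standard argument from the definitions. The paper leaves this lemma as an exercise without giving its own proof, so there is nothing further to compare.
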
 

\begin{exercise} Prove Lemma \ref{ideals1}. 
\end{exercise} 

\begin{example} The commutant of $\mathfrak{gl}_n(\bf k)$ is $\mathfrak{sl}_n(\k)$ (check it!). 
\end{example} 

\begin{exercise} (i) Prove that if $G$ is a connected Lie group with Lie algebra $\g$ then the group commutant $[G,G]$ (the subgroup of $G$ generated by elements $ghg^{-1}h^{-1}$, $g,h\in G$) is a Lie subgroup of $G$ with Lie algebra $[\g,\g]$. 

(ii) Let $\widetilde G=\Bbb R\times H$, where $H$ is the {\bf Heisenberg group}
of real matrices of the form 
$$
M(a,b,c):=\begin{pmatrix} 1&a&b\\ 0&1&c\\ 0&0&1\end{pmatrix},\ a,b,c\in \Bbb R.
$$
Let $\Gamma\cong \Bbb Z^2\subset \widetilde G$ be the (closed) central subgroup generated by the pairs $(1,M(0,0,0)={\rm Id})$ and $(\sqrt{2},M(0,0,1))$. 
Let $G=\widetilde G/\Gamma$. Show that $[G,G]$ is not closed in $G$ 
(although by (i) it is a Lie subgroup).  

(iii) Does $[G,G]$ have to be closed in $G$ if $G$ is simply connected? (Consider ${\rm Hom}(G,\Bbb R)$ and apply the second fundamental theorem of Lie theory).  
\end{exercise} 

\subsection{Solvable Lie algebras} 

For a Lie algebra $\g$ define its {\bf derived series} recursively by the formulas 
$D^0(\g)=\g$, $D^{n+1}(\g)=[D^n(\g),D^n(\g)]$. This is a descending sequence of ideals in $\g$. 

\begin{definition} A Lie algebra $\g$ is said to be {\bf solvable} if $D^n(\g)=0$ for some $n$. 
\end{definition} 

\begin{proposition} The following conditions on $\g$ are equivalent: 

(i) $\g$ is solvable; 

(ii) There exists a sequence of ideals 
$\g=\g_0\supset \g_1\supset...\supset \g_m=0$ 
such that $\g_i/\g_{i+1}$ is abelian.  
\end{proposition}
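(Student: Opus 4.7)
The plan is to prove both implications directly, using Lemma \ref{ideals} and Lemma \ref{ideals1} as the main technical inputs.

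For (i) $\Rightarrow$ (ii), I would take $\g_i := D^i(\g)$ as the required filtration, so that $\g_m = 0$ for some $m$ by assumption. The first thing to verify is that each $\g_i$ is an ideal in $\g$ (not merely in the previous term), which I would do by induction on $i$: the base $\g_0 = \g$ is trivial, and if $D^i(\g)$ is an ideal in $\g$, then $D^{i+1}(\g) = [D^i(\g), D^i(\g)]$ is an ideal by Lemma \ref{ideals} (since the bracket of two ideals is an ideal). The quotient $\g_i/\g_{i+1} = D^i(\g)/[D^i(\g),D^i(\g)]$ is abelian by Lemma \ref{ideals1} applied to the Lie algebra $D^i(\g)$.

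For (ii) $\Rightarrow$ (i), I would show by induction on $i$ that $D^i(\g) \subset \g_i$; then $D^m(\g) \subset \g_m = 0$ gives solvability. The base case $D^0(\g) = \g = \g_0$ is immediate. For the inductive step, assuming $D^i(\g) \subset \g_i$, since $\g_i/\g_{i+1}$ is abelian we have $[\g_i,\g_i] \subset \g_{i+1}$ by Lemma \ref{ideals1}, hence $D^{i+1}(\g) = [D^i(\g), D^i(\g)] \subset [\g_i,\g_i] \subset \g_{i+1}$.

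There is no real obstacle here; the only subtlety worth flagging is in (i) $\Rightarrow$ (ii), where one must be careful that the $D^i(\g)$ are ideals in all of $\g$, not just in $D^{i-1}(\g)$ (the latter is obvious, the former requires the inductive use of Lemma \ref{ideals}). Once this is observed, both directions are one-line applications of the two lemmas on ideals and commutants.
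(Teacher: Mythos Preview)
Your proposal is correct and follows exactly the same approach as the paper's proof, which simply states that one takes $\g_i = D^i\g$ for (i)$\Rightarrow$(ii) and shows $D^i\g \subset \g_i$ by induction for (ii)$\Rightarrow$(i). You have filled in the details carefully, including the point that each $D^i(\g)$ is an ideal in $\g$ (not just in $D^{i-1}(\g)$), which the paper leaves implicit.
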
 

\begin{proof} It is clear that (i) implies (ii), since we can take $\g_i=D^i\g$. Conversely, 
by induction we see that $D^i\g\subset \g_i$, as desired.  
\end{proof} 

\begin{proposition}\label{ideals2} (i) Any Lie subalgebra or quotient of a solvable Lie algebra is solvable. 

(ii) If $I\subset \g$ is an ideal and $I,\g/I$ are solvable then $\g$ is solvable. 
\end{proposition}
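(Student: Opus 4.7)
The plan is to prove both parts by tracking how the derived series $D^n$ behaves under the natural operations of taking subalgebras, quotients, and extensions. Everything reduces to simple inductive computations using only the definition $D^{n+1}(\g) = [D^n(\g), D^n(\g)]$ and the elementary properties of ideals (Lemma \ref{ideals}). There is no real obstacle; the argument is bookkeeping, and the main thing to get right is the identification of $D^n$ applied to the appropriate ideal.

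For part (i), I would first treat subalgebras: given a Lie subalgebra $\h \subset \g$, I claim $D^n(\h) \subset D^n(\g)$ for all $n$. The base case $n=0$ is the assumption, and the inductive step is immediate, since $D^{n+1}(\h) = [D^n(\h), D^n(\h)] \subset [D^n(\g), D^n(\g)] = D^{n+1}(\g)$. Hence if $D^N(\g) = 0$ then $D^N(\h) = 0$. For quotients, let $\pi: \g \to \g/I$ be the projection and show inductively that $\pi(D^n(\g)) = D^n(\g/I)$: the base case is clear, and since $\pi$ is a Lie algebra homomorphism, $\pi([D^n(\g), D^n(\g)]) = [\pi(D^n(\g)), \pi(D^n(\g))]$, which gives the inductive step. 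So $D^N(\g) = 0$ forces $D^N(\g/I) = 0$.

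For part (ii), suppose $I$ and $\g/I$ are solvable, with $D^m(\g/I) = 0$ and $D^n(I) = 0$. By the identity $\pi(D^m(\g)) = D^m(\g/I) = 0$ established above, we conclude $D^m(\g) \subset I$. Now observe that $D^m(\g)$ is itself a Lie subalgebra of $I$, and applying part (i) to the inclusion $D^m(\g) \subset I$ (or arguing directly: a trivial induction shows $D^k(D^m(\g)) = D^{k+m}(\g)$, and a second application of the subalgebra statement shows $D^{k+m}(\g) = D^k(D^m(\g)) \subset D^k(I)$ for all $k$), we obtain
\[
D^{m+n}(\g) = D^n(D^m(\g)) \subset D^n(I) = 0,
\]
so $\g$ is solvable, as claimed.
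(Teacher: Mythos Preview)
Your proof is correct and is the standard argument. The paper itself does not give a proof of this proposition; it is left to the reader as an exercise, so there is nothing to compare against beyond noting that your approach is exactly the one intended.
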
 

\begin{exercise} Prove Proposition \ref{ideals2}. 
\end{exercise} 

\subsection{Nilpotent Lie algebras}

For a Lie algebra $\g$ define its {\bf lower central series} recursively by the formulas 
$D_0(\g)=\g$, $D_{n+1}(\g)=[\g,D_n(\g)]$. This is a descending sequence of ideals in $\g$. 

\begin{definition} A Lie algebra $\g$ is said to be {\bf nilpotent} if $D_n(\g)=0$ for some $n$. 
\end{definition} 

\begin{proposition} The following conditions on $\g$ are equivalent: 

(i) $\g$ is nilpotent; 

(ii) There exists a sequence of ideals 
$\g=\g_0\supset \g_1\supset...\supset \g_m=0$ 
such that $[\g,\g_i]\subset \g_{i+1}$.  
\end{proposition}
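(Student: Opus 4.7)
The plan is to mirror the argument just given for the solvable case, adapting it to the lower central series. The two implications are essentially symmetric and both short.

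For (i) $\Rightarrow$ (ii), I would take $\g_i := D_i(\g)$ directly. I first need to check that each $D_i(\g)$ is in fact an ideal of $\g$, which I would do by induction on $i$: $D_0(\g) = \g$ is an ideal, and if $D_i(\g)$ is an ideal, then $D_{i+1}(\g) = [\g, D_i(\g)]$ is an ideal by Lemma \ref{ideals}. With this, the containment $[\g, \g_i] = [\g, D_i(\g)] = D_{i+1}(\g) = \g_{i+1}$ holds by definition, and nilpotency of $\g$ gives $\g_m = D_m(\g) = 0$ for some $m$.

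For (ii) $\Rightarrow$ (i), I would show by induction on $i$ that $D_i(\g) \subset \g_i$. The base case $D_0(\g) = \g = \g_0$ is immediate. For the inductive step, assuming $D_i(\g) \subset \g_i$, the hypothesis on the filtration gives
$$
D_{i+1}(\g) = [\g, D_i(\g)] \subset [\g, \g_i] \subset \g_{i+1}.
$$
Applying this at $i = m$ yields $D_m(\g) \subset \g_m = 0$, so $\g$ is nilpotent.

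Neither direction presents a real obstacle: the only thing one has to be slightly careful about, as compared to the solvable case, is verifying that the terms $D_i(\g)$ of the lower central series are genuine ideals of $\g$ (not merely of $D_{i-1}(\g)$), which is why I would explicitly invoke Lemma \ref{ideals} in the first direction. Everything else is a direct unwinding of definitions.
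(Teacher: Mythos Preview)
Your proposal is correct and takes essentially the same approach as the paper: set $\g_i = D_i(\g)$ for one direction and show $D_i(\g) \subset \g_i$ by induction for the other. Your extra care in verifying via Lemma \ref{ideals} that each $D_i(\g)$ is an ideal of $\g$ is a welcome detail that the paper leaves implicit.
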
 

\begin{proof} It is clear that (i) implies (ii), since we can take $\g_i=D_i\g$. Conversely, 
by induction we see that $D_i\g\subset \g_i$, as desired.  
\end{proof} 

\begin{remark} Any nilpotent Lie algebra is solvable since $[\g,\g_i]\subset \g_{i+1}$ implies 
$[\g_i,\g_i]\subset \g_{i+1}$, hence $\g_i/\g_{i+1}$ is abelian.  
\end{remark} 

\begin{proposition}\label{ideals3} Any Lie subalgebra or quotient of a nilpotent Lie algebra is nilpotent. 
\end{proposition}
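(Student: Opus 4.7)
The plan is to mimic the two-part structure of the analogous proof for solvable Lie algebras (Proposition \ref{ideals2}), but using the lower central series $D_\bullet$ in place of the derived series $D^\bullet$. The key observation is that both operations $\h \mapsto D_n(\h)$ and $I \mapsto \pi(D_n(\g))$ behave well under inclusions and surjections, since $D_{n+1}$ is built by a single bracket against the whole algebra on the left.

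For the subalgebra case, let $\h \subset \g$ be a Lie subalgebra. I would prove by induction on $n$ that $D_n(\h) \subset D_n(\g)$. The base case $n=0$ is just $\h \subset \g$. For the inductive step, if $D_n(\h) \subset D_n(\g)$, then
$$
D_{n+1}(\h) = [\h, D_n(\h)] \subset [\g, D_n(\g)] = D_{n+1}(\g).
$$
So if $D_N(\g) = 0$ for some $N$, then also $D_N(\h) = 0$, and $\h$ is nilpotent.

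For the quotient case, let $I \subset \g$ be an ideal and $\pi : \g \to \g/I$ the projection. I would prove by induction that $\pi(D_n(\g)) = D_n(\g/I)$. The base case is $\pi(\g) = \g/I$. For the inductive step, since $\pi$ is a surjective homomorphism of Lie algebras,
$$
D_{n+1}(\g/I) = [\g/I, D_n(\g/I)] = [\pi(\g), \pi(D_n(\g))] = \pi([\g, D_n(\g)]) = \pi(D_{n+1}(\g)).
$$
Hence $D_N(\g) = 0$ forces $D_N(\g/I) = \pi(0) = 0$, so $\g/I$ is nilpotent.

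There is no real obstacle here; the only thing to be careful about is not to confuse the lower central series with the derived series, and to note that unlike the solvable case there is no analogue of part (ii) of Proposition \ref{ideals2} (it is false that $I$ and $\g/I$ nilpotent implies $\g$ nilpotent — consider a non-split extension such as the two-dimensional non-abelian Lie algebra, where both $I$ and $\g/I$ are abelian but $\g$ is only solvable, not nilpotent). So the proposition is genuinely weaker than its solvable counterpart, which is why only one statement is being claimed.
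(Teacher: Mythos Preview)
Your proof is correct and is exactly the standard argument the paper has in mind; the paper in fact leaves this proposition as an exercise, and your induction on the lower central series (showing $D_n(\h)\subset D_n(\g)$ for subalgebras and $\pi(D_n(\g))=D_n(\g/I)$ for quotients) is the intended solution. Your closing remark about the failure of the analogue of Proposition~\ref{ideals2}(ii) is also correct and a useful observation.
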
 

\begin{exercise} Prove Proposition \ref{ideals3}. 
\end{exercise} 

\begin{example} (i) The Lie algebra of upper triangular matrices of size $n$ 
is solvable, but it is not nilpotent for $n\ge 2$. 

(ii) The Lie algebra of strictly upper triangular matrices is nilpotent.

(iii) The Lie algebra of all matrices of size $n\ge 2$ is not solvable. 
\end{example} 

\subsection{Lie's theorem} 

One of the main technical tools of the structure theory of finite dimensional Lie algebras is {\bf Lie's theorem} for solvable Lie algebras. Before stating and proving this theorem, we will prove the following auxiliary lemma, which will be used several times. 

\begin{lemma}\label{liethlem} 
Let $\g={\bold k}x\oplus \h$ be a Lie algebra 
over a field $\bold k$ in which $\h$ is an ideal (but $[x,\h]$ need not be $0$). 
Let $V$ be a finite dimensional $\g$-module 
and $v\in V$ a common eigenvector of $\h$: 
$$
av=\lambda(a)v,\ a\in \h
$$
where $\lambda: \h\to \bold k$ is a character. Then:

(i) $W:=\bold k[x]v$ is a $\g$-submodule of $V$
on which $a-\lambda(a)$ is nilpotent for all $a\in \h$.

(ii) If in addition $\lambda$ vanishes on $[\g,\h]$ (i.e., $\lambda([a,x])=0$ for all $a\in \h$) 
then every $a\in \h$ acts on $W$ by the scalar $\lambda(a)$. Thus the common eigenspace $V_\lambda\subset V$ of $\h$ is a $\g$-submodule. 

(iii) The assumption (hence the conclusion) of (ii) always holds if 
${\rm char}({\bold k})=0$. 
\end{lemma}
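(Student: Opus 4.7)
The plan is to work with the flag $W_n := \text{span}(v, xv, x^2 v, \ldots, x^{n-1}v)$, which gives an increasing filtration of $W = \bold k[x]v$. Since $V$ is finite-dimensional, $W$ is too, so this filtration stabilizes at some $W_N = W$.

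For (i), by construction $W$ is $x$-invariant. To show each $W_n$ is $\h$-invariant and that $a$ acts as the scalar $\lambda(a)$ on $W_n/W_{n-1}$ for $a\in\h$, I would induct on $n$. The base $n=1$ is the hypothesis $av=\lambda(a)v$. For the step, I compute
$$a\cdot x^n v = x\cdot a x^{n-1}v + [a,x]\cdot x^{n-1}v.$$
Since $\h$ is an ideal, $[a,x]\in\h$, so by induction applied to both $a$ and $[a,x]$, the right-hand side lies in $W_n$ and reduces to $\lambda(a)\,x^n v$ modulo $W_{n-1}$. Hence $W_n$ is $\h$-invariant and the matrix of $a|_{W_n}$ is upper triangular with $\lambda(a)$ on the diagonal; in particular $(a-\lambda(a))|_W$ is nilpotent.

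For (iii), the key point is the trace argument. Since $W=W_N$ is finite-dimensional and both $a,x$ preserve it, $[a,x]|_W = [a|_W, x|_W]$ is a commutator of endomorphisms, so $\tr([a,x]|_W)=0$. On the other hand, $[a,x]\in\h$ acts as the scalar $\lambda([a,x])$ on each $W_i/W_{i-1}$ by (i), so $\tr([a,x]|_W)=N\lambda([a,x])$. Thus $N\lambda([a,x])=0$, and in characteristic zero this forces $\lambda([a,x])=0$. Since $[\g,\g]$ is spanned by $[\h,\h]$ (on which $\lambda$ vanishes automatically because $\lambda([a,b])v = (ab-ba)v = 0$) together with $[\h,x]$, we conclude $\lambda([\g,\g])=0$.

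For (ii), assuming $\lambda$ vanishes on $[\g,\g]$, set $V_\lambda := \{w\in W : aw=\lambda(a)w\ \forall\, a\in\h\}$. Clearly $v\in V_\lambda$, so it suffices to show $V_\lambda$ is $x$-invariant, since then $V_\lambda$ contains $\bold k[x]v = W$. But for $w\in V_\lambda$ and $a\in\h$,
$$a(xw) = x(aw) + [a,x]w = \lambda(a)\,xw + \lambda([a,x])\,w = \lambda(a)\,xw,$$
using $\lambda([a,x])=0$. Hence $xw\in V_\lambda$, and we are done; the final assertion about $V_\lambda\subset V$ being a $\g$-submodule follows by applying the same derivation identity to any $w$ in the $\lambda$-eigenspace of $\h$ in $V$.

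The main obstacle is the compatibility between the action of $\h$ and the generator $x$ in part (i): one must carefully exploit the fact that $\h$ is an ideal (so that $[a,x]\in\h$) to run the inductive computation and get the triangular form. Once (i) is in place, both (ii) and (iii) are essentially one-line arguments — a derivation identity and a trace identity respectively.
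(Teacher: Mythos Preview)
Your proof is correct and follows essentially the same approach as the paper: the same recursion $a\cdot x^n v = x\cdot a x^{n-1}v + [a,x]\cdot x^{n-1}v$ for (i), the trace-of-a-commutator argument for (iii), and the direct verification that $\h$ acts by $\lambda$ on $W$ for (ii). Your presentation of (ii) via $x$-invariance of the $\lambda$-eigenspace is a slightly cleaner packaging of the paper's induction, but the content is identical.
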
 

\begin{proof} (i) For $a\in \h$ we have 
\begin{equation}\label{recur}
ax^iv=xax^{i-1}v+[a,x]x^{i-1}v.
\end{equation}
Therefore, it follows by induction in $i$ that $ax^iv$ is a linear combination of $v,xv,...,x^iv$, hence $W\subset V$ is a submodule.   

Let $n$ be the smallest integer such that $x^nv$ is a linear combination of $x^iv$ with $i<n$. 
Then $v_i:=x^{i-1}v$ for $i=1,...,n$ is a basis of $W$ and $\dim W=n$. It follows from \eqref{recur} that the element $a$ acts in this basis by an upper triangular matrix with all diagonal entries equal $\lambda(a)$, as claimed. 

(ii) It follows from \eqref{recur} by induction in $i$  that for every 
$a\in \h$, $ax^iv=\lambda(a)x^iv$, as desired. 

(iii) By (i), ${\rm Tr}(a|_W)=n\lambda(a)$ for all $a\in \h$. 
On the other hand, if $a\in [\g,\g]$ then ${\rm Tr}(a|_W)=0$, thus $n\lambda(a)=0$ in ${\bf k}$. 
Since ${\rm char}({\bf k})=0$, this implies that $\lambda(a)=0$.
\end{proof} 

\begin{theorem}\label{liethm} (Lie's theorem) Let ${\bf k}$ be an algebraically closed field of characteristic zero, and $\g$ a finite dimensional solvable Lie algebra over ${\bf k}$. Then any irreducible 
finite dimensional representation of $\g$ is 1-dimensional. 
\end{theorem}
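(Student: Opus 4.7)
The plan is to induct on $\dim \g$, using Lemma \ref{liethlem} at the crucial step. The base cases $\dim \g = 0, 1$ are immediate: a $0$-dimensional Lie algebra acts trivially and an irreducible representation of a $1$-dimensional Lie algebra is $1$-dimensional since $\bf k$ is algebraically closed (every operator has an eigenvector, which spans a submodule).

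For the inductive step, I first need to find a codimension-$1$ ideal $\h \subset \g$. Since $\g$ is solvable, $[\g,\g]\subsetneq \g$, so $\g/[\g,\g]$ is a nonzero abelian Lie algebra; any codimension-$1$ subspace of $\g/[\g,\g]$ is automatically an ideal, and pulling it back gives a codimension-$1$ ideal $\h\subset \g$. Choose any $x\in \g\setminus \h$, so that $\g=\bold k x\oplus \h$. Since $\h$ is a subalgebra of a solvable Lie algebra, it is solvable (Proposition \ref{ideals2}(i)).

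Next I want to produce a common eigenvector for $\h$ on $V$. Pick any nonzero $\h$-submodule $W\subset V$ of minimal dimension; then $W$ is an irreducible representation of $\h$, so by the inductive hypothesis applied to $\h$ we have $\dim W = 1$. Thus there exists $0\ne v\in V$ and a character $\lambda: \h\to \bold k$ with $av=\lambda(a)v$ for all $a\in \h$. Now consider the common eigenspace $V_\lambda\subset V$. By Lemma \ref{liethlem}(iii), $\lambda$ vanishes on $[\g,\g]\cap \h$ (in particular on all commutators $[a,x]$ with $a\in \h$), so Lemma \ref{liethlem}(ii) applies and tells us that $V_\lambda$ is a $\g$-submodule of $V$. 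Since $V_\lambda\ne 0$ and $V$ is irreducible, $V_\lambda = V$, i.e., every $a\in \h$ acts on $V$ by the scalar $\lambda(a)$.

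With $\h$ acting by scalars, the $\g$-action on $V$ is determined by the single operator $x$: any $x$-eigenvector in $V$ (which exists as $\bold k$ is algebraically closed) spans a $1$-dimensional $\g$-submodule, and irreducibility forces $\dim V = 1$. The main obstacle is the step showing $V_\lambda$ is $x$-stable, since a priori $x$ could permute different weights of $\h$; this is precisely what Lemma \ref{liethlem}(iii) controls, and it is the only place where characteristic zero enters (via the trace argument $n\lambda(a)=0$). The algebraic closedness of $\bold k$ is used twice: once in the base case to guarantee an eigenvector of $x$, and implicitly when asserting that $\h$ has a character $\lambda$ of its minimal submodule.
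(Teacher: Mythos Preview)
Your proof is correct and follows essentially the same approach as the paper: induct on $\dim\g$, find a codimension-one ideal $\h\supset[\g,\g]$, use the inductive hypothesis to get a common $\h$-eigenvector, and invoke Lemma~\ref{liethlem}(ii),(iii) to show the $\lambda$-eigenspace of $\h$ is $\g$-stable. Your conclusion is slightly more streamlined than the paper's (you use irreducibility to get $V=V_\lambda$ directly and then pick an $x$-eigenvector, whereas the paper passes through the submodule $W=\bold k[x]v$ and the abelian quotient $\g/[\g,\g]$), but the substance is the same.
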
 

\begin{proof} Let $V$ be a finite dimensional representation of $\g$. It suffices to show that 
$V$ contains a common eigenvector of $\g$. The proof is by induction in $\dim\g$. 
The base is trivial so let us justify the induction step. Since $\g$ is solvable, $\g\ne [\g,\g]$, so fix a subspace $\h\subset \g$ of codimension $1$ containing $[\g,\g]$. Since $\g/[\g,\g]$ is abelian, $\h$ is an ideal in $\g$, hence solvable. Thus by the induction assumption, 
there is a nonzero common eigenvector $v\in V$ for $\h$, i.e., 
there is a linear functional $\lambda: \h\to {\bf k}$ such that 
$av=\lambda(a)v$ for all $a\in \h$. 

Let $x\in \g$ be an element not belonging to $\h$ 
and $W$ be the subspace of $V$ spanned by $v,xv,x^2v,...$. 
By Lemma \ref{liethlem}(i), $W$ is a $\g$-submodule of $V$ and $a-\lambda(a)$ 
is nilpotent on $W$. 
Thus by Lemma \ref{liethlem}(ii),(iii) every $a\in \h$ acts 
on $W$ by $\lambda(a)$, in particular $[\g,\g]$ acts by zero. Hence 
$W$ is a representation of the abelian Lie algebra 
$\g/[\g,\g]$. Now the statement follows since every finite dimensional representation of an abelian Lie algebra has a common eigenvector. 
\end{proof} 

\begin{remark}\label{remm} Lemma \ref{liethlem}(iii) and Lie's theorem do not hold in characteristic $p>0$. Indeed, let $\g$ be the Lie algebra 
with basis $x,y$ and $[x,y]=y$, and let $V$ be the space with basis $v_0,...,v_{p-1}$ 
and action of $\g$ given by 
$$
xv_i=iv_i,\ yv_i=v_{i+1},
$$
where $i+1$ is taken modulo $p$. It is easy to see that $V$ is irreducible. 
\end{remark} 

Here is another formulation of Lie's theorem: 

\begin{corollary}\label{Liethm2} Every finite dimensional representation $V$ of a finite dimensional solvable Lie algebra 
$\g$ over an algebraically closed field ${\bf k}$ of characteristic zero has a basis 
in which all elements of $\g$ act by upper triangular matrices. 
In other words, there is a sequence of subrepresentations 
$0=V_0\subset V_1\subset...\subset V_n=V$ such that $\dim(V_{k+1}/V_k)=1$. 
\end{corollary}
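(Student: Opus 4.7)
The plan is to derive this corollary from Lie's theorem by a straightforward induction on $\dim V$, using the standard device of pulling back a filtration from a quotient representation.

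First I would handle the base cases: if $\dim V = 0$ the statement is vacuous, and if $\dim V = 1$ we take $V_0 = 0$, $V_1 = V$. For the inductive step, suppose the corollary holds for all $\g$-representations of dimension less than $n = \dim V$. Pick an irreducible $\g$-subrepresentation $W \subset V$ (which exists since $V$ is finite dimensional — take any subrepresentation of minimal positive dimension). By Lie's theorem, since $\bf k$ is algebraically closed of characteristic zero and $\g$ is solvable, every irreducible finite dimensional $\g$-representation is one-dimensional. Hence $\dim W = 1$; set $V_1 := W$.

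Next I would pass to the quotient representation $\bar V := V/V_1$, which has dimension $n-1$ and is still a representation of the same solvable Lie algebra $\g$. By the induction hypothesis, there is a filtration by subrepresentations
\[
0 = \bar V_0 \subset \bar V_1 \subset \cdots \subset \bar V_{n-1} = \bar V
\]
with $\dim(\bar V_{k+1}/\bar V_k) = 1$. Let $\pi\colon V \to \bar V$ be the quotient map and define $V_{k+1} := \pi^{-1}(\bar V_k)$ for $k = 0, 1, \dots, n-1$. Each $V_{k+1}$ is a $\g$-subrepresentation of $V$ (preimages of submodules under a module map are submodules), $V_1 = \pi^{-1}(0)$ agrees with our chosen line, $V_n = V$, and the second isomorphism theorem gives $V_{k+1}/V_k \cong \bar V_k/\bar V_{k-1}$, which is one-dimensional for $k \geq 1$. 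This produces the required flag $0 = V_0 \subset V_1 \subset \cdots \subset V_n = V$.

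Finally, to obtain the basis statement, I would choose $v_k \in V_k \setminus V_{k-1}$ for each $k = 1, \dots, n$; then $v_1, \dots, v_n$ is a basis of $V$, and the condition $\g \cdot V_k \subset V_k$ translates exactly to the matrix of every $x \in \g$ in this basis being upper triangular. There is no real obstacle here: the only nontrivial ingredient is Lie's theorem itself, which has already been established in the excerpt, so the induction goes through without difficulty. The subtlety to watch for is simply the hypothesis that $\bf k$ is algebraically closed of characteristic zero, which is needed to invoke Lie's theorem (and fails in positive characteristic, as shown in Remark \ref{remm}).
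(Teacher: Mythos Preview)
Your proof is correct and follows essentially the same route as the paper: induct on $\dim V$, use Lie's theorem to find a one-dimensional subrepresentation (the paper phrases this as finding a common eigenvector $v_0$), pass to the quotient, and lift. The only difference is cosmetic—you frame it as pulling back a filtration while the paper lifts a basis directly.
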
 

In the case $\dim\g=1$, this recovers the well known theorem in linear algebra that 
any linear operator on a finite dimensional ${\bf k}$-vector space is upper triangular in some basis (which is actually true in any characteristic). 

\begin{proof} The proof is by induction in $\dim V$ (where the base is obvious). 
By Lie's theorem, there is a common eigenvector $v_0\in V$ for $\g$. 
Let $V':=V/{\bf k}v_0$. Then by the induction assumption $V'$ has a basis 
$v_1',...,v_n'$ in which $\g$ acts by upper triangular matrices. 
Let $v_1,...,v_n$ be any lifts of $v_1',...,v_n'$ to $V$. Then $v_0,v_1,...,v_n$ is a basis of $V$ 
in which $\g$ acts by upper triangular matrices.     
\end{proof} 

\begin{corollary}\label{coroo} Over an algebraically closed field of characteristic zero, the following hold. 

(i) A solvable finite dimensional Lie algebra $\g$ admits a sequence of ideals 
$0=I_0\subset I_1\subset...\subset I_n=\g$ such that $\dim(I_{k+1}/I_k)=1$. 

(ii) A finite dimensional Lie algebra $\g$ is solvable if and only if $[\g,\g]$ is nilpotent. 
\end{corollary}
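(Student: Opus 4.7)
For part (i), the plan is to apply Corollary \ref{Liethm2} to the adjoint representation $\ad: \g \to \mathfrak{gl}(\g)$, which is well-defined since $\g$ is solvable. The corollary produces a flag of subrepresentations $0 = V_0 \subset V_1 \subset \dots \subset V_n = \g$ with $\dim(V_{k+1}/V_k) = 1$. Because subrepresentations of the adjoint representation are by definition the $\ad\g$-invariant subspaces of $\g$, i.e.\ the Lie ideals of $\g$, this flag is the desired chain of ideals. No further work is needed here.

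For part (ii), the easy direction ($[\g,\g]$ nilpotent $\Rightarrow \g$ solvable) is immediate: a nilpotent Lie algebra is solvable (as noted after the definition of nilpotent), $\g/[\g,\g]$ is abelian hence solvable, so $\g$ is an extension of solvable by solvable and Proposition \ref{ideals2}(ii) gives solvability of $\g$.

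For the hard direction, assume $\g$ is solvable. Apply part (i) to obtain a chain of ideals $0 = I_0 \subset I_1 \subset \dots \subset I_n = \g$ with $\dim(I_{k+1}/I_k) = 1$, and pick a basis $e_1, \dots, e_n$ of $\g$ with $I_k = \spn(e_1, \dots, e_k)$. Since each $I_k$ is an ideal, every $\ad x$ (for $x \in \g$) is upper triangular in this basis. The key observation is then that for $x \in [\g,\g]$ the operator $\ad x$ is a sum of commutators of upper triangular operators, hence \emph{strictly} upper triangular; equivalently, $[x, I_k] \subset I_{k-1}$ for all $k$ and all $x \in [\g,\g]$.

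From this one shows by induction on $m$ that $D_m[\g,\g] \subset I_{n-m}$: the base $m=0$ is trivial, and for the inductive step $D_{m+1}[\g,\g] = [[\g,\g], D_m[\g,\g]] \subset [[\g,\g], I_{n-m}] \subset I_{n-m-1}$ by the observation above. Thus $D_n[\g,\g] \subset I_0 = 0$, so $[\g,\g]$ is nilpotent. The only mildly subtle point is the passage from ``$\ad x$ is strictly upper triangular on $\g$'' to nilpotency of $[\g,\g]$ as an abstract Lie algebra; this is handled by the induction above, which bypasses any need for Engel's theorem and uses only that the chain of ideals in (i) exists and that commutators of upper triangular matrices are strictly upper triangular.
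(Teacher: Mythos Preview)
Your proof is correct and follows essentially the same approach as the paper. For (i) both you and the paper apply Corollary~\ref{Liethm2} to the adjoint representation; for (ii) both use that elements of $[\g,\g]$ act by strictly upper triangular matrices in the resulting basis, with your explicit induction $D_m[\g,\g]\subset I_{n-m}$ merely spelling out what the paper summarizes as ``which implies the statement.''
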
 

\begin{proof} (i) Apply Corollary \ref{Liethm2} to the adjoint representation of $\g$. 

(ii) If $[\g,\g]$ is nilpotent then it is solvable and $\g/[\g,\g]$ is abelian, so $\g$ is solvable. 
Conversely, if $\g$ is solvable then by Corollary \ref{Liethm2} elements of $[\g,\g]$ act on $\g$, hence on $[\g,\g]$ by strictly upper triangular matrices, which implies the statement. 
\end{proof} 

\begin{example}\label{semdi} Let $\g,V$ be as in Remark \ref{remm} and $\h=\g\ltimes V$ be the semidirect product, 
i.e. $\h=\g\oplus V$ as a space with 
$$
[(g_1,v_1),(g_2,v_2)]=([g_1,g_2],g_1v_2-g_2v_1). 
$$
Then $\h$ is a counterexample to Corollary \ref{coroo} both (i) and (ii) in characteristic $p>0$. 
\end{example} 

\subsection{Engel's theorem} 

Another key tool of the structure theory of finite dimensional Lie algebras is {\bf Engel's theorem}. Before stating and proving this theorem, we prove an auxiliary result.  

\begin{theorem}\label{auxth} Let $V\ne 0$ be a finite dimensional vector space 
over any field ${\bf k}$, and $\g\subset {\mathfrak {gl}}(V)$ be a Lie algebra consisting of nilpotent operators. Then there exists a nonzero vector $v\in V$ such that $\g v=0$. 
\end{theorem}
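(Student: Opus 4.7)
The plan is to prove Engel's theorem by induction on $\dim\g$, with the base case $\dim\g=0$ (or $\dim\g=1$, handled by the fact that a single nilpotent operator has a nonzero kernel) being clear. The inductive step requires two ingredients: (1) a preparatory observation that $\ad x : \mathfrak{gl}(V)\to\mathfrak{gl}(V)$ is nilpotent whenever $x\in\mathfrak{gl}(V)$ is nilpotent, and (2) the construction of a codimension-one ideal $\h\subset\g$.

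For (1), I would write $\ad x = L_x - R_x$, where $L_x$ (left multiplication) and $R_x$ (right multiplication) are commuting endomorphisms of $\mathfrak{gl}(V)$; since both are nilpotent ($L_x^N = L_{x^N}$ and similarly for $R_x$), so is their difference. For (2), pick a subalgebra $\h\subset\g$ that is proper and of \emph{maximal} dimension. Then $\h$ acts on the vector space $\g/\h$ via $\ad$, and by (1) this action is by nilpotent operators on a space of dimension $\dim\g/\h \ge 1$. Applying the induction hypothesis to the image of $\h$ in $\mathfrak{gl}(\g/\h)$ (which is a quotient of $\h$, hence of strictly smaller dimension than $\g$ provided $\h\ne 0$; the case $\h=0$ means $\dim\g=1$ and is trivial), we obtain $y\in\g\setminus\h$ with $[\h,y]\subset\h$. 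Then $\h+\k y$ is a subalgebra strictly containing $\h$, so by maximality it equals $\g$, and $\h$ is an ideal of codimension $1$.

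Now apply the induction hypothesis to $\h$ acting on $V$ itself to get a nonzero subspace
\[
W := \{v\in V : \h v = 0\} \ne 0.
\]
The crucial check is that $W$ is stable under $\g$: for any $y\in\g$, $v\in W$, and $a\in\h$, the ideal property gives $[a,y]\in\h$, so $a(yv) = y(av) + [a,y]v = 0$, hence $yv\in W$. Picking $y\in\g\setminus\h$ so that $\g = \h\oplus \k y$, the operator $y$ acts nilpotently on $W$, so its kernel on $W$ contains a nonzero vector $w$. Then $\h w = 0$ and $yw=0$, so $\g w = 0$, completing the induction.

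The main obstacle is clean bookkeeping of the induction: the hypothesis is applied \emph{twice} in the inductive step — once to $\h$ acting on $\g/\h$ (to produce the codimension-one ideal) and once to $\h$ acting on $V$ (to produce $W$) — and one must verify that in each application the acting Lie algebra is of strictly smaller dimension than $\g$ and still consists of nilpotent operators (this is where step (1) is essential for the first application). Note that no hypothesis on $\k$ is needed, since the argument nowhere invokes eigenvalues or characteristic-zero phenomena.
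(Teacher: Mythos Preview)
Your proof is correct and follows essentially the same route as the paper: induction on $\dim\g$, use of a maximal proper subalgebra $\h$ (shown to be a codimension-one ideal by applying the induction hypothesis to $\h$ acting on $\g/\h$), a second application of the induction hypothesis to $\h$ acting on $V$ to produce $W=V^{\h}\ne 0$, and then killing a single complementary element $y$ on $W$. The only cosmetic differences are that you verify the $\g$-stability of $W$ by a direct one-line computation whereas the paper invokes its Lemma~\ref{liethlem}(ii) with $\lambda=0$, and you spell out the $L_x-R_x$ argument for nilpotency of $\ad x$ which the paper leaves implicit.
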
 

\begin{proof} The proof is by induction on the dimension of $\g$. The base case $\g=0$ is trivial and we assume the dimension of $\g$ is positive.

First we find an ideal $\h$ of codimension one in $\g$. Let $\h$ be a maximal (proper) subalgebra of $\g$, which exists by finite-dimensionality of $\g$. We claim that $\h\subset \g$ is an ideal and has codimension one. 

Indeed,  for each $a\in \h$, the operator ${\rm ad}a$ induces a linear operator $\g / \h \to \g / \h$, and this operator is nilpotent (since $a$ acts nilpotently on $V$, 
it also acts nilpotently on $\mathfrak{gl}(V)=V\otimes V^*$, hence the operator 
${\rm ad}a: \g\to \g$ is nilpotent). Thus, by the inductive hypothesis, there exists a nonzero element $\overline x$ in $\g / \h$ such that ${\rm ad}a\cdot \overline x  = 0$  for each $a \in \h$. Let $x$ be a lift of $\overline x$ to $\g$. Then $[a,x]\in \h$ for all $a\in \h$. Let $\h'$ be the span of $\h$ and $x$. Then $\h'\subset \g$ is a Lie subalgebra in which $\h$ is an ideal. Hence, by maximality, $\h' = \g$. This proves the claim.

Now let $W=V^\h\subset V$. By the inductive hypothesis, $W\ne 0$. Also 
by Lemma \ref{liethlem}(ii) (with $\lambda=0$), $W$ is a $\g$-subrepresentation of $V$. 

Now take $w\ne 0$ in $W$. Let $k$ be the smallest positive integer such that $x^kw=0$; it exists since $x$ acts nilpotently on $V$. Let $v=x^{k-1}w\in W$. Then $v\ne 0$ but $\h v=0,xv=0$, so $\g v=0$, as desired. 
\end{proof} 

\begin{definition} An element $x\in \g$ is said to be {\bf nilpotent} if the operator ${\rm ad}x: \g\to \g$ is nilpotent.   
\end{definition} 

\begin{corollary} (Engel's theorem)  A finite dimensional Lie algebra $\g$ is nilpotent if and only if every 
element $x\in \g$ is nilpotent. 
\end{corollary}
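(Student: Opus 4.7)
The plan is to prove the two implications separately, with the forward direction being trivial from the definition and the reverse direction following by induction on $\dim\g$ using Theorem \ref{auxth}.

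First I would handle the easy direction: if $\g$ is nilpotent, say $D_n(\g)=0$, then for any $x\in\g$ and any $y\in\g$, the iterated bracket $(\ad x)^n(y)=[x,[x,\ldots,[x,y]\ldots]]$ lies in $D_n(\g)=0$, so $\ad x$ is nilpotent. This requires no further work.

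For the nontrivial direction, assume every $x\in\g$ is nilpotent, i.e. $\ad x$ is a nilpotent operator on $\g$. I would prove $\g$ is nilpotent by induction on $\dim\g$, with the base case $\dim\g=0$ being vacuous. For the inductive step, consider the image $\ad(\g)\subset\mathfrak{gl}(\g)$: this is a Lie subalgebra (since $\ad$ is a Lie algebra homomorphism) and, by hypothesis, it consists entirely of nilpotent operators on the finite-dimensional space $V:=\g$. Applying Theorem \ref{auxth} to this subalgebra acting on $V=\g$ (assuming $\g\ne 0$), I obtain a nonzero vector $v\in\g$ annihilated by all of $\ad(\g)$. This means $[x,v]=0$ for every $x\in\g$, i.e. $v\in\mathfrak{z}(\g)$. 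Hence the center $\mathfrak{z}(\g)$ is nonzero.

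Now form the quotient $\bar\g:=\g/\mathfrak{z}(\g)$, which has strictly smaller dimension. Since $\ad_{\bar\g}(\bar x)$ is induced by $\ad_\g(x)$, it is also nilpotent for every $\bar x\in\bar\g$. By the inductive hypothesis, $\bar\g$ is nilpotent, so $D_n(\bar\g)=0$ for some $n$. Pulling back to $\g$, this means $D_n(\g)\subset\mathfrak{z}(\g)$, and therefore
$$
D_{n+1}(\g)=[\g,D_n(\g)]\subset[\g,\mathfrak{z}(\g)]=0,
$$
so $\g$ is nilpotent, completing the induction.

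The main content of the argument is really packaged inside Theorem \ref{auxth}, so the only potential subtlety here is verifying that its hypotheses apply to $\ad(\g)\subset\mathfrak{gl}(\g)$: one must check that $\ad(\g)$ is a Lie subalgebra (which follows from $\ad[x,y]=[\ad x,\ad y]$) and that the hypothesis ``every element of $\g$ is nilpotent'' translates directly into the condition that every operator in $\ad(\g)$ is nilpotent. Both of these are automatic, so no further obstacle arises.
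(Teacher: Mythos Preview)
Your proof is correct. The approach differs from the paper's: after applying Theorem~\ref{auxth} once to find a nonzero central element, you pass to $\g/\mathfrak{z}(\g)$ and induct on dimension. The paper instead iterates Theorem~\ref{auxth} to produce a full flag $0\subset I_1\subset\cdots\subset I_n=\g$ with $[\g,I_{m+1}]\subset I_m$, making every $\ad x$ strictly upper triangular in the resulting basis, and reads off nilpotence directly from this chain. Both arguments are standard; yours has the minor advantage of invoking Theorem~\ref{auxth} only once and packaging the rest as a clean induction, while the paper's version exhibits the descending sequence of ideals explicitly without needing to check that ad-nilpotence passes to quotients.
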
 

\begin{proof} The ``only if" direction is easy. To prove the ``if" direction, note that by Theorem \ref{auxth}, 
in some basis $v_i$ of $\g$ all elements ${\rm ad}x$ act by strictly upper triangular matrices. Let $I_m$ be the subspace of $\g$ spanned by the vectors $v_1,...,v_m$. Then $I_m\subset I_{m+1}$ and $[\g,I_{m+1}]\subset I_m$, hence $\g$ is nilpotent. 
\end{proof} 

\section{\bf Semisimple and reductive Lie algebras, the Cartan criteria}

\subsection{Semisimple and reductive Lie algebras, the radical} 

Let $\g$ be a finite dimensional Lie algebra over a field $\k$. 

\begin{proposition} The sum of all solvable ideals of $\g$ is a solvable ideal. 
\end{proposition} 

\begin{definition} This ideal is 
called {\bf the radical} of $\g$ and denoted ${\rm rad}(\g)$.  
\end{definition} 

\begin{proof} Let $I,J$ be solvable ideals of $\g$. Then $I+J\subset \g$ 
is an ideal, and $(I+J)/I=J/(I\cap J)$ is solvable, so $I+J$ is solvable. 
Thus the sum of finitely many solvable ideals is solvable. Hence 
the sum of all solvable ideals in $\g$ is a solvable ideal, as desired. 
\end{proof} 

\begin{definition} (i) $\g$ is called {\bf semisimple} if ${\rm rad}(\g)=0$, i.e., $\g$ does not contain 
nonzero solvable ideals.

(ii) A non-abelian $\g$ is called {\bf simple} if it contains no ideals other than $0,\g$. In other words, a non-abelian $\g$ is simple if its adjoint representation is irreducible (=simple). 
\end{definition} 

Thus if $\g$ is both solvable and semisimple then $\g=0$. 

\begin{proposition} (i) We have ${\rm rad}(\g\oplus \h)={\rm rad}(\g)\oplus {\rm rad}(\h)$. In particular, 
the direct sum of semisimple Lie algebras is semisimple. 

(ii) A simple Lie algebra is semisimple. Thus a direct sum of simple Lie algebras is semisimple. 
\end{proposition}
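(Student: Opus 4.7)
The plan is to establish part (i) by a two-sided inclusion argument and then derive part (ii) from the ideal structure of a simple Lie algebra together with part (i).

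For part (i), I would first prove the inclusion $\mathrm{rad}(\g)\oplus \mathrm{rad}(\h)\subset \mathrm{rad}(\g\oplus\h)$. Since $[\g,\h]=0$ inside $\g\oplus\h$, an ideal of $\g$ summed with an ideal of $\h$ is automatically an ideal of $\g\oplus\h$, so $\mathrm{rad}(\g)\oplus \mathrm{rad}(\h)$ is an ideal. It is solvable because a direct sum of two solvable Lie algebras is solvable (the derived series of the sum is the direct sum of the derived series, so it terminates at the maximum of the two lengths). Hence by the maximality property defining $\mathrm{rad}$, this ideal is contained in $\mathrm{rad}(\g\oplus\h)$.

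For the reverse inclusion, I would use the two projection homomorphisms $p_\g\colon \g\oplus\h\to\g$ and $p_\h\colon \g\oplus\h\to\h$. Let $I:=\mathrm{rad}(\g\oplus\h)$. Because $p_\g$ is a surjective Lie algebra homomorphism, $p_\g(I)$ is an ideal of $\g$, and being a homomorphic image of the solvable algebra $I$ it is solvable (by Proposition \ref{ideals2}(i)); therefore $p_\g(I)\subset \mathrm{rad}(\g)$, and similarly $p_\h(I)\subset \mathrm{rad}(\h)$. Every element $x\in I\subset \g\oplus\h$ satisfies $x=(p_\g(x),p_\h(x))\in p_\g(I)\oplus p_\h(I)$, which gives the desired inclusion. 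The statement about direct sums of semisimple algebras is then immediate, since $\mathrm{rad}=0$ on each summand forces $\mathrm{rad}=0$ on the sum.

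For part (ii), let $\g$ be simple. By definition $\g$ is non-abelian, so $[\g,\g]\ne 0$; since $[\g,\g]$ is an ideal of $\g$ and the only ideals are $0$ and $\g$, we conclude $[\g,\g]=\g$. Now $\mathrm{rad}(\g)$ is an ideal, so it equals $0$ or $\g$. If $\mathrm{rad}(\g)=\g$ then $\g$ is solvable, but then the derived series $D^n(\g)$ must eventually vanish, contradicting $D^1(\g)=[\g,\g]=\g$. Hence $\mathrm{rad}(\g)=0$, i.e.\ $\g$ is semisimple. The claim about direct sums of simple algebras then follows from part (i).

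There is no real obstacle here; the only subtlety is to remember that in the direct sum $\g\oplus\h$ the bracket between the two summands vanishes (so anything of the form $I\oplus J$ with $I,J$ ideals is itself an ideal), and to use that images and quotients of solvable algebras are solvable when running the projection argument.
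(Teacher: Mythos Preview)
Your proof is correct and follows essentially the same approach as the paper: for (i) you establish both inclusions via the projection maps and the fact that $\mathrm{rad}(\g)\oplus\mathrm{rad}(\h)$ is a solvable ideal, and for (ii) you use $[\g,\g]=\g$ to rule out $\mathrm{rad}(\g)=\g$. The paper's argument is the same, only more terse.
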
 

\begin{proof} (i) The images of ${\rm rad}(\g\oplus \h)$ in $\g$ and in $\h$ are solvable, hence contained in ${\rm rad}(\g)$, respectively ${\rm rad}(\h)$. Thus 
$$
{\rm rad}(\g\oplus \h)\subset {\rm rad}(\g)\oplus {\rm rad}(\h).
$$
But ${\rm rad}(\g)\oplus {\rm rad}(\h)$ 
is a solvable ideal  in $\g\oplus \h$, so 
$$
{\rm rad}(\g\oplus \h)={\rm rad}(\g)\oplus {\rm rad}(\h).
$$

(ii) The only nonzero ideal in $\g$ is $\g$, and $[\g,\g]=\g$ since $\g$ is not abelian. Hence $\g$ is not solvable. Thus $\g$ is semisimple. 
\end{proof} 

\begin{example} The Lie algebra $\mathfrak{sl}_2(\k)$ is simple if ${\rm char}(\k)\ne 2$. 
Likewise, $\mathfrak{so}_3(\k)$ is simple. 
\end{example} 

\begin{theorem}\label{wld} (weak Levi decomposition) The Lie algebra $\g_{\rm ss}=\g/{\rm rad}(\g)$ is semisimple. Thus any $\g$ can be included 
in an exact sequence 
$$
0\to {\rm rad}(\g)\to \g\to \g_{\rm ss}\to 0,
$$
where ${\rm rad}(\g)$ is a solvable ideal and $\g_{\rm ss}$ is semisimple. Moreover, if $\h\subset \g$ 
is a solvable ideal such that $\g/\h$ is semisimple then $\h={\rm rad}(\g)$. 
\end{theorem}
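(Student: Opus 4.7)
The plan is to derive both parts essentially by unwinding definitions and invoking Proposition \ref{ideals2}(ii), which says that if $I$ is an ideal in a Lie algebra with both $I$ and the quotient solvable, then the ambient algebra is solvable. The key observation is that the preimage under a surjection $\pi\colon \g\to\g/{\rm rad}(\g)$ of an ideal is again an ideal.

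First, I would prove semisimplicity of $\g_{\rm ss}=\g/{\rm rad}(\g)$. Suppose $\bar I\subset \g_{\rm ss}$ is a solvable ideal; let $I=\pi^{-1}(\bar I)\subset \g$. Since $\pi$ is a surjective Lie algebra homomorphism, $I$ is an ideal of $\g$. Now $I$ contains ${\rm rad}(\g)$ as an ideal, with $I/{\rm rad}(\g)\cong \bar I$ solvable and ${\rm rad}(\g)$ solvable. By Proposition \ref{ideals2}(ii) (applied to the Lie algebra $I$ and its ideal ${\rm rad}(\g)$), $I$ itself is solvable. Hence $I$ is a solvable ideal of $\g$, so by the defining property of the radical $I\subseteq {\rm rad}(\g)$, which forces $\bar I=\pi(I)=0$. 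This shows $\g_{\rm ss}$ has no nonzero solvable ideals.

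For the uniqueness statement, let $\h\subset \g$ be a solvable ideal with $\g/\h$ semisimple. Since ${\rm rad}(\g)$ is the largest solvable ideal, $\h\subseteq {\rm rad}(\g)$. Consider the natural surjection $\g/\h\twoheadrightarrow \g/{\rm rad}(\g)$; its kernel is ${\rm rad}(\g)/\h$, which is a quotient of ${\rm rad}(\g)$ and hence solvable by Proposition \ref{ideals2}(i). But ${\rm rad}(\g)/\h$ is an ideal in the semisimple Lie algebra $\g/\h$, so it must vanish. Therefore $\h={\rm rad}(\g)$.

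There is no real obstacle here: the argument is purely formal and relies only on (a) the correspondence between ideals in $\g$ containing ${\rm rad}(\g)$ and ideals in $\g/{\rm rad}(\g)$, and (b) the two-out-of-three property for solvability already established in Proposition \ref{ideals2}. The only thing one must be careful about is applying Proposition \ref{ideals2}(ii) with the right choice of ambient algebra, namely $I$ rather than $\g$, so that ${\rm rad}(\g)$ is viewed as an ideal of $I$ (which is automatic since it is an ideal of the larger $\g$).
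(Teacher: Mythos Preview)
Your proof is correct and follows essentially the same approach as the paper's. The paper's proof is terser---it simply asserts that the preimage $\widetilde I$ of a solvable ideal $I\subset \g_{\rm ss}$ is a solvable ideal of $\g$ and concludes $\widetilde I={\rm rad}(\g)$---whereas you spell out the reason (Proposition \ref{ideals2}(ii)) and also supply the argument for the uniqueness clause, which the paper leaves implicit.
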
 

\begin{proof} Let $I\subset \g_{\rm ss}$ be a solvable ideal, and let $\widetilde I$ be its preimage in 
$\g$. Then $\widetilde I$ is a solvable ideal in $\g$. Thus $\widetilde I={\rm rad}(\g)$ and $I=0$.  
\end{proof} 

In fact, in characteristic zero there is a stronger statement, which says that the extension in Theorem \ref{wld} splits.
Namely, given a Lie algebra $\h$ and another Lie algebra $\a$ acting on $\h$ by derivations, we may form the {\bf semidirect product} Lie algebra
$\a\ltimes \h$ which is $\a\oplus \h$ as a vector space with 
commutator defined by 
$$
[(a_1,h_1),(a_2,h_2)]=([a_1,a_2],a_1\circ h_2-a_2\circ h_1+[h_1,h_2]).
$$
Note that a special case of this construction has already appeared in Example \ref{semdi}. 

\begin{theorem}\label{levi} (Levi decomposition) If ${\rm char}({\bf k})=0$ then 
we have $\g\cong {\rm rad}(\g)\oplus \g_{\rm ss}$ as vector spaces, 
where $\g_{\rm ss}\subset \g$ is a semisimple subalgebra (but not necessarily an ideal); i.e., 
$\g$ is isomorphic to the semidirect product $\g_{\rm ss}\ltimes {\rm rad}(\g)$. 
In other words, the projection $p: \g\to \g_{\rm ss}$ admits an (in general, 
non-unique) splitting $q: \g_{\rm ss}\to \g$, i.e., a Lie algebra map such that $p\circ q={\rm Id}$. 
\end{theorem}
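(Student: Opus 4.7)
The plan is to prove the theorem by induction on $d := \dim {\rm rad}(\g)$; the crux will be the case where ${\rm rad}(\g)$ is abelian, which I will recast as the vanishing of a second Lie algebra cohomology group. If $d=0$ there is nothing to prove. Suppose $d>0$ and $\r := {\rm rad}(\g)$ is not abelian; then $[\r,\r]$ is a nonzero ideal of $\g$ (by Lemma \ref{ideals}) strictly contained in $\r$ (since $\r$ is solvable, $[\r,\r]\ne \r$). The quotient $\g/[\r,\r]$ has radical $\r/[\r,\r]$, of dimension strictly less than $d$, so by induction there is a Lie algebra splitting $\bar q : \g_{\rm ss} \to \g/[\r,\r]$. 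Let $\h\subset \g$ be the preimage of $\bar q(\g_{\rm ss})$ under $\pi: \g\to \g/[\r,\r]$; this is a Lie subalgebra with $\h/[\r,\r]\cong \g_{\rm ss}$, so ${\rm rad}(\h)=[\r,\r]$, whose dimension is strictly less than $d$. A second application of the inductive hypothesis, now to $\h$, produces a Lie algebra embedding $q:\g_{\rm ss}\hookrightarrow \h\subset \g$ splitting $p$. We may therefore assume $\r$ is abelian.

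When $\r$ is abelian it acts trivially on itself, so the adjoint action of $\g$ on $\r$ descends to a $\g_{\rm ss}$-module structure on $\r$. Pick any vector space section $\sigma:\g_{\rm ss}\to \g$ of $p$ and define
\[
\omega:\g_{\rm ss}\times\g_{\rm ss}\to \r,\qquad \omega(x,y):=[\sigma(x),\sigma(y)]-\sigma([x,y]).
\]
The Jacobi identity in $\g$ forces $\omega$ to satisfy the Chevalley--Eilenberg $2$-cocycle identity, and a short calculation using that $\r$ is abelian shows that the modified section $\sigma-\tau$ (for a linear map $\tau:\g_{\rm ss}\to \r$) is a Lie algebra homomorphism precisely when
\[
\omega(x,y)=x\cdot\tau(y)-y\cdot\tau(x)-\tau([x,y]),
\]
i.e.\ precisely when $\omega$ is a $2$-coboundary. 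Thus the existence of a Levi splitting is equivalent to the vanishing of the class $[\omega]\in H^2(\g_{\rm ss},\r)$.

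The main obstacle is therefore Whitehead's second lemma: $H^2(\g_{\rm ss},V)=0$ for every finite dimensional module $V$ over a semisimple Lie algebra $\g_{\rm ss}$ in characteristic zero. The essential input is Weyl's theorem on complete reducibility of finite dimensional representations of $\g_{\rm ss}$, which itself rests on the nondegeneracy of the Killing form (the Cartan criterion for semisimplicity, to be developed in this section) and the resulting Casimir element of $U(\g_{\rm ss})$. Given complete reducibility, one interprets $\omega$ as an abelian Lie algebra extension $0\to \r\to \widetilde\g\to\g_{\rm ss}\to 0$ and constructs the required $\tau$ by splitting $\r$ off as a direct summand inside a suitable auxiliary $\widetilde\g$-representation. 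In this way all the real difficulty is packaged into Weyl's theorem, the rest of the argument being essentially homological bookkeeping.
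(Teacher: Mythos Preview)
Your argument is correct and follows essentially the same route as the paper: both proofs inductively strip off an abelian piece of the radical (you use $[\r,\r]$, the paper uses the last nonzero term $D^n$ of the derived series of $\r$) to reduce to the case where the obstruction is a class in $H^2(\g_{\rm ss},V)$ for an abelian $\g_{\rm ss}$-module $V$, and then invoke Whitehead's second lemma. One caveat: your final paragraph gestures at a proof of $H^2(\g_{\rm ss},V)=0$ via complete reducibility but does not actually carry it out; the paper likewise defers this, proving it later (for nontrivial irreducible $V$ via the Casimir/compact group argument, and for trivial $V$ separately), so you are not missing anything relative to the paper's proof of Levi decomposition itself, but you should be aware that ``homological bookkeeping'' undersells what remains.
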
 

Theorem \ref{levi} will be proved in Subsection \ref{levide}. 

\begin{example} Let $G$ be the group of motions of the Euclidean space $\Bbb R^3$ (generated by rotations and translations). Then $G=SO_3(\Bbb R)\ltimes \Bbb R^3$, so 
$\g={\rm Lie}G={\mathfrak{so}}_3(\Bbb R)\ltimes \Bbb R^3$, hence ${\rm rad}(\g)=\Bbb R^3$ (abelian Lie algebra) and $\g_{\rm ss}={\mathfrak{so}}_3(\Bbb R)$.  
\end{example} 

\begin{proposition} Let ${\rm char}(\k)=0$, $\k$ algebraically closed, and $V$ be an irreducible representation of $\g$. Then ${\rm rad}(\g)$ acts on $V$ by scalars, and 
$[\g,{\rm rad}(\g)]$ by zero. 
\end{proposition}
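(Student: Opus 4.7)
The plan is to use Lie's theorem to produce a common eigenvector for ${\rm rad}(\g)$ on $V$, then upgrade this local information to a global statement by showing the corresponding common eigenspace is stable under all of $\g$, so equals $V$ by irreducibility.

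First I would apply Lie's theorem to the solvable Lie algebra ${\rm rad}(\g)$ acting on $V$: since $\k$ is algebraically closed of characteristic zero, there exists a nonzero common eigenvector $v_0\in V$ and a linear functional $\lambda:{\rm rad}(\g)\to\k$ with $a v_0=\lambda(a)v_0$ for all $a\in{\rm rad}(\g)$. Let
\[
V_\lambda:=\{v\in V:\, av=\lambda(a)v\text{ for all }a\in{\rm rad}(\g)\}\ne 0.
\]
The whole argument now reduces to showing $V_\lambda$ is a $\g$-submodule; once this is known, irreducibility forces $V_\lambda=V$, so ${\rm rad}(\g)$ acts on $V$ by the scalar character $\lambda$.

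To check $\g$-invariance of $V_\lambda$, take any $x\in\g$ and any $v\in V_\lambda$. If $x\in{\rm rad}(\g)$, then $xv=\lambda(x)v\in V_\lambda$ trivially. Otherwise, set $\h:={\rm rad}(\g)$ and consider the subalgebra $\k x\oplus\h\subset\g$, in which $\h$ is an ideal because ${\rm rad}(\g)$ is an ideal in $\g$. This is precisely the situation of Lemma~\ref{liethlem}, and part (iii) (valid because ${\rm char}(\k)=0$) together with part (ii) guarantees that every $a\in\h$ acts on $W:=\k[x]v$ by the scalar $\lambda(a)$; in particular $xv\in W\subset V_\lambda$. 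Thus $V_\lambda$ is $\g$-invariant, and by irreducibility $V=V_\lambda$.

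Finally, for the statement about $[\g,{\rm rad}(\g)]$: running the same lemma for each $x\in\g$ and each $a\in{\rm rad}(\g)$, part (iii) gives $\lambda([a,x])=0$ (the proof there computes ${\rm Tr}(a|_W)=n\lambda(a)$ via the upper-triangular form from part (i), while $[a,x]$ has trace zero on $W$, forcing $\lambda([a,x])=0$ in characteristic zero). Hence $\lambda$ vanishes on $[\g,{\rm rad}(\g)]$, and since ${\rm rad}(\g)$ acts on $V$ through $\lambda$, the ideal $[\g,{\rm rad}(\g)]$ acts by zero, completing the proof. The only subtle point is the second paragraph, where one must observe that Lemma~\ref{liethlem} can be applied one element $x\in\g$ at a time because ${\rm rad}(\g)$ is already an ideal in $\g$, so $\k x\oplus{\rm rad}(\g)$ is automatically a subalgebra of the required shape; no stronger statement (e.g., that ${\rm rad}(\g)$ has codimension one in some enclosing subalgebra) is needed.
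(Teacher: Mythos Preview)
Your proof is correct and follows essentially the same route as the paper: obtain a common eigenvector for ${\rm rad}(\g)$ via Lie's theorem, then for each $x\in\g$ form the subalgebra $\k x\oplus{\rm rad}(\g)$ and apply Lemma~\ref{liethlem}(iii) and (ii) to conclude that the common eigenspace $V_\lambda$ is $\g$-stable, hence equals $V$ by irreducibility. The only cosmetic difference is that the paper first extracts $\lambda([x,a])=0$ and then invokes the $\g_x$-invariance of $V_\lambda$, whereas you verify $xv\in V_\lambda$ for each $v\in V_\lambda$ directly; both are the same argument.
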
 

\begin{proof} By Lie's theorem, there is a nonzero $v\in V$ and $\lambda\in {\rm rad}(\g)^*$ such that $av=\lambda(a)v$ for $a\in {\rm rad}(\g)$. Let $x\in \g$ and $\g_x\subset \g$ be the Lie subalgebra spanned by ${\rm rad}(\g)$ and $x$. Let $W$ be the span of $x^nv$ for $n\ge 0$. By Lemma \ref{liethlem}(i), $W$ is a $\g_x$-subrepresentation of $V$ on which $a\in {\rm rad}(\g)$ has the only eigenvalue $\lambda(a)$. 
 Thus by Lemma \ref{liethlem}(iii), for $a\in {\rm rad}(\g)$ we have $\lambda([x,a])=0$, so the $\lambda$-eigenspace $V_\lambda$ of ${\rm rad}(\g)$ in $V$ is a $\g$-subrepresentation of $V$, 
which implies that $V_\lambda=V$ since $V$ is irreducible. 
\end{proof} 

\begin{definition} $\g$ is called {\bf reductive} if ${\rm rad}(\g)$ coincides with the center $\mathfrak{z}(\g)$ of $\g$. 
\end{definition} 

In other words, $\g$ is reductive if $[\g,{\rm rad}(\g)]=0$. 

The Levi decomposition theorem implies that a reductive Lie algebra in characteristic zero is a direct sum of a semisimple Lie algebra and an abelian Lie algebra (its center). We will also prove this in Corollary \ref{redualg}. 

\subsection{Invariant inner products} 

Let $B$ be a bilinear form on a Lie algebra $\g$. Recall that $B$ is invariant if 
$B([x,y],z)=B(x,[y,z])$ for any $x,y,z\in \g$. 

\begin{example} If $\rho: \g\to {\mathfrak{gl}}(V)$ is a finite dimensional representation of $\g$ 
then the form
$$
B_V(x,y):={\rm Tr}(\rho(x)\rho(y))
$$
is an invariant symmetric bilinear form on $\g$. Indeed, the symmetry is obvious and 
$$
B_V([x,y],z)=B_V(x,[y,z])={\rm Tr}|_V(\rho(x)\rho(y)\rho(z)-\rho(x)\rho(z)\rho(y)). 
$$
\end{example} 

\begin{proposition}\label{bilform} If $B$ is a symmetric invariant bilinear form on $\g$ and $I\subset \g$ is an ideal then 
the orthogonal complement $I^\perp\subset \g$ is also an ideal. In particular, $\g^\perp={\rm Ker}(B)$ is an ideal in $\g$. 
\end{proposition}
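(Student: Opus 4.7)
The plan is to deduce the statement directly from the definition of invariance, with essentially one line of computation. Let me outline the approach.

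First I would fix $x \in I^\perp$ and $z \in \g$, and verify that $[z,x] \in I^\perp$. Take an arbitrary $y \in I$; then by symmetry and invariance of $B$ I would compute
\[
B([z,x],y) \;=\; -B([x,z],y) \;=\; -B(x,[z,y]).
\]
The key point is that $I$ being an ideal forces $[z,y] \in I$, and since $x \in I^\perp$ by assumption, the right-hand side vanishes. Hence $B([z,x],y)=0$ for every $y \in I$, which is exactly the statement that $[z,x] \in I^\perp$. This shows $I^\perp$ is an ideal.

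For the second assertion, I would simply specialize to $I=\g$, in which case $I^\perp = \{x \in \g : B(x,y)=0 \text{ for all } y \in \g\} = \mathrm{Ker}(B)$, and the first part of the proposition immediately gives that this is an ideal.

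There is no real obstacle here: the result is a direct consequence of the invariance identity $B([x,y],z) = B(x,[y,z])$, and the only thing one must be careful about is the placement of the bracket so that it lands inside $I$ (which requires using invariance in the form that moves the bracket off of $x$ and onto $y \in I$, using symmetry of $B$ once). So the proof is essentially a two-line calculation, and the statement about $\mathrm{Ker}(B)$ is just the special case $I=\g$.
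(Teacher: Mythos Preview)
Your proof is correct and is exactly the standard argument. The paper does not actually give a proof of this proposition; it is left as an exercise to the reader, so there is nothing to compare against beyond noting that your computation is the intended one.
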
 

\begin{exercise} Prove Proposition \ref{bilform}. 
\end{exercise} 

\begin{proposition}\label{reducri} If $B_V$ is nondegenerate for some $V$ then $\g$ is reductive. 
\end{proposition}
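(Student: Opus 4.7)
The plan is to show $[\g, I] = 0$, where $I := \rad(\g)$, which is the defining condition of reductivity. By Proposition \ref{bilform}, $\Ker(B_V)$ is an ideal of $\g$, and the assumed nondegeneracy forces it to be $0$; so it suffices to prove $[\g, I] \subset \Ker(B_V)$, i.e.\ that $\tr_V(\rho(u)\rho(x)) = 0$ for every $u \in [\g, I]$ and $x \in \g$.

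The first key step is to show that $\rho(u)$ acts nilpotently on $V$ for every $u \in [\g, I]$. First I would apply Lie's theorem (Corollary \ref{Liethm2}) to the solvable ideal $I$ acting on $V$: pick a basis $v_1, \dots, v_n$ of $V$ in which $\rho(I)$ is upper triangular, with diagonal characters $\lambda_i : I \to \k$. Then the wedge $\omega_k := v_1 \wedge \cdots \wedge v_k \in \Lambda^k V$ is a genuine common eigenvector for $I$ on $\Lambda^k V$ with eigenvalue $\mu_k := \lambda_1 + \cdots + \lambda_k$, because for $z \in I$ the off-diagonal contributions of $\rho(z)v_j$ lie in $\spn(v_1,\dots,v_{j-1})$ and vanish after wedging into $\omega_k$. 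Next, for each $x \in \g$ with $x \notin I$, I would apply Lemma \ref{liethlem} to the subalgebra $I \oplus \k x$ acting on $\Lambda^k V$ with eigenvector $\omega_k$: part (iii), valid in characteristic zero, forces $\mu_k$ to vanish on $[I \oplus \k x, I \oplus \k x]$, and in particular on $[x, I]$. Since $x$ was arbitrary, $\mu_k([\g, I]) = 0$ for every $k$, and successive differencing yields $\lambda_k([\g, I]) = 0$ for every $k$. Hence $\rho(u)$ is strictly upper triangular in this basis, so nilpotent, for every $u \in [\g, I]$.

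The second key step is to deduce the trace vanishing by induction on $\dim V$. Set $N := [\g, I]$, so $\rho(N) \subset \mathfrak{gl}(V)$ consists of nilpotent operators. By Theorem \ref{auxth} (Engel), the common kernel $W := \{v \in V : \rho(u)v = 0\ \forall u \in N\}$ is nonzero. Moreover $W$ is $\g$-invariant: for $v \in W$, $x \in \g$, $u \in N$, one has $\rho(u)\rho(x)v = \rho(x)\rho(u)v + \rho([u,x])v = 0$, since $[u,x] \in [\g, N] \subset N$ also kills $v$. The short exact sequence $0 \to W \to V \to V/W \to 0$ of $\g$-modules splits the trace as $\tr_V(\rho(u)\rho(x)) = \tr_W(\rho(u)\rho(x)) + \tr_{V/W}(\rho(u)\rho(x))$. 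The first term vanishes because $\rho(u)|_W = 0$, and the second vanishes by induction on $\dim V/W < \dim V$ (the hypotheses transfer to the quotient, where $\rho(N)$ still consists of nilpotent operators forming an ideal of $\rho(\g)$). The base case $W = V$, where $N$ acts by zero, is trivial.

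The main obstacle is the first step: the characters $\lambda_i$ furnished by Lie's theorem live on successive quotients of an $I$-stable flag that is typically not $\g$-stable, so Lemma \ref{liethlem}(iii) cannot be applied directly to $\lambda_i$ as a character of $I$ on $V$ (except for $i=1$). The exterior-power trick circumvents this by exhibiting genuine common eigenvectors $\omega_k \in \Lambda^k V$ for the partial sums $\mu_k$, to which the lemma does apply, and the individual $\lambda_k$ are recovered afterwards by differencing.
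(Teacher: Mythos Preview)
Your proof is correct but takes a different path from the paper's. The paper's argument is a two-liner that leans on the immediately preceding proposition (that $[\g,\rad(\g)]$ acts by zero on every irreducible finite-dimensional $\g$-module): pick a composition series $0=F_0V\subset\cdots\subset F_nV=V$ with simple quotients $V_i$, observe $B_V=\sum_i B_{V_i}$, and note that each $B_{V_i}(u,-)$ vanishes for $u\in[\g,\rad(\g)]$ because $u$ already acts by zero on $V_i$. Your route is more self-contained: you first establish nilpotency of $\rho([\g,I])$ on $V$ itself via the exterior-power device (which cleverly manufactures genuine common eigenvectors $\omega_k\in\Lambda^kV$ for $I$ to which Lemma~\ref{liethlem}(iii) applies, recovering the individual diagonal characters $\lambda_k$ by differencing the partial sums $\mu_k$), and then build a $\g$-stable filtration by iterated common kernels (Engel) to get the trace vanishing. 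Both approaches ultimately rest on Lemma~\ref{liethlem}(iii); the paper's is shorter because it has already packaged the key fact as a separate proposition about irreducibles, while yours avoids the passage to composition factors altogether. Your second step, incidentally, is essentially reproving that $[\g,I]$ acts trivially on each factor of \emph{some} $\g$-stable filtration---morally the same conclusion the paper draws from the Jordan--H\"older filtration, just arrived at through a different (coarser) filtration built from Engel.
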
 

\begin{proof} Let $V_1,...,V_n$ be the simple composition factors of $V$; i.e., $V$ has a filtration 
by subrepresentations such that $F_{i}V/F_{i-1}V=V_i$, $F_0V=0$ and $F_nV=V$. Then 
$B_V(x,y)=\sum_i B_{V_i}(x,y)$. Now, if $x\in [\g,{\rm rad}(\g)]$ then 
$x|_{V_i}=0$, so $B_{V_i}(x,y)=0$ for all $y\in \g$, hence $B_V(x,y)=0$. 
\end{proof} 

\begin{example} It is clear that if $\g={\mathfrak{gl}}_n(\bf k)$ and $V={\bf k}^n$ 
then the form $B_V$ is nondegenerate, as $B_V(E_{ij},E_{kl})=\delta_{il}\delta_{jk}$. 
Thus $\g$ is reductive. Also if $n$ is not divisible by the characteristic of ${\bf k}$ 
then ${\mathfrak{sl}}_n(\bf k)$ is semisimple, since it is orthogonal to scalars under $B_V$ (hence reductive), and has trivial center. In fact, it is easy to show that in this case ${\mathfrak{sl}}_n(\bf k)$ is a simple Lie algebra (another way to see that it is semisimple). 
\end{example}  

In fact, we have the following proposition. 

\begin{proposition} All classical Lie algebras over $\Bbb K=\Bbb R$ and $\Bbb C$ are reductive. 
\end{proposition}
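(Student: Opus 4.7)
The plan is to apply Proposition \ref{reducri}: it suffices to exhibit, for each classical Lie algebra $\g$, a finite-dimensional representation $V$ on which the trace form $B_V(x,y)=\Tr(\rho(x)\rho(y))$ is nondegenerate. In every case the natural choice of $V$ is the defining (vector) representation: $\Bbb K^n$ for the matrix algebras over $\Bbb K=\Bbb R,\Bbb C$, $\Bbb K^{2n}$ for symplectic algebras, and $\Bbb C^{2n}\cong \Bbb H^n$ (viewed as a complex vector space) for quaternionic algebras.

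First I would dispose of the complex cases $\g\subset\mathfrak{gl}_n(\Bbb C)$. For $\mathfrak{gl}_n(\Bbb C)$ itself, the relation $B_V(E_{ij},E_{kl})=\delta_{il}\delta_{jk}$ makes nondegeneracy manifest. For $\mathfrak{sl}_n(\Bbb C)$, the restriction of $B_V$ remains nondegenerate because $\mathfrak{sl}_n$ is the orthogonal complement of the (nondegenerate) line of scalars. For $\mathfrak{so}_n(\Bbb C)$ and $\mathfrak{sp}_{2n}(\Bbb C)$, I would pair off the standard bases (elementary skew or symplectic matrix units) and show $B_V$ has an explicit invertible Gram matrix; e.g., for $X_{ij}=E_{ij}-E_{ji}\in\mathfrak{so}_n(\Bbb C)$ one computes $B_V(X_{ij},X_{kl})=-2(\delta_{ik}\delta_{jl}-\delta_{il}\delta_{jk})$, evidently nondegenerate.

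Second, I would pass to the real forms $\mathfrak{sl}_n(\Bbb R)$, $\mathfrak{so}(p,q)$, $\mathfrak{u}(p,q)$, $\mathfrak{su}(p,q)$, $\mathfrak{sp}(2p,2q)$, $\mathfrak{gl}_n(\Bbb H)$, $\mathfrak{sl}_n(\Bbb H)$, $\mathfrak{o}^*(2n)$, etc. The clean route is to note that a real Lie algebra $\g$ is reductive iff its complexification $\g_{\Bbb C}=\g\otimes_{\Bbb R}\Bbb C$ is reductive: indeed $\mathfrak{z}(\g_{\Bbb C})=\mathfrak{z}(\g)\otimes_{\Bbb R}\Bbb C$ and, in characteristic zero, $\rad(\g_{\Bbb C})=\rad(\g)\otimes_{\Bbb R}\Bbb C$ (extension of scalars preserves solvability and any $\Bbb C$-solvable ideal is contained in the complexification of its $\Bbb R$-image via complex conjugation). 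The complexifications in question are precisely the classical complex algebras already handled, namely $\mathfrak{gl}_n(\Bbb C)$, $\mathfrak{sl}_n(\Bbb C)$, $\mathfrak{so}_{p+q}(\Bbb C)$, $\mathfrak{gl}_n(\Bbb C)$, $\mathfrak{sl}_n(\Bbb C)$, $\mathfrak{sp}_{2(p+q)}(\Bbb C)$, $\mathfrak{gl}_{2n}(\Bbb C)$, $\mathfrak{sl}_{2n}(\Bbb C)$, $\mathfrak{so}_{2n}(\Bbb C)$ respectively (most of these complexifications are computed by inspection or via Exercises in Subsection \ref{morecla}). Alternatively, one checks nondegeneracy directly: for instance on $\mathfrak{u}(n)$ acting on $\Bbb C^n$, the form $(X,Y)\mapsto \Tr(XY)$ is real-valued and equals $-\Tr(HK)$ on $X=iH,Y=iK$ with $H,K$ Hermitian, which is the negative of the (nondegenerate) Killing-type form on Hermitian matrices.

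The main obstacle is ensuring that the trace form actually survives restriction to each concrete subalgebra. This is not automatic: an invariant form on $\mathfrak{gl}_n$ can in principle degenerate on a subalgebra, and one must verify by an explicit computation (choosing a convenient basis of each classical $\g$) that this does not happen. This is where most of the real work lies, but in every case on the list the basis is standard enough that either a direct Gram matrix computation or the complexification shortcut resolves it.
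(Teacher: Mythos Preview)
Your proposal is correct and follows the same approach as the paper: apply Proposition \ref{reducri} to the defining representation $V$ and check that $B_V$ is nondegenerate. The paper simply asserts this nondegeneracy is ``easy to check,'' while you supply more detail; your complexification shortcut for the real forms is a valid alternative route (relying on the elementary fact that $\rad(\g_{\Bbb C})=\rad(\g)\otimes_{\Bbb R}\Bbb C$) that the paper does not take, but the core strategy is identical.
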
 

\begin{proof} Let $\g$ be a classical Lie algebra and $V$ its standard matrix representation. 
It is easy to check that the form $B_V$ on $\g$ is nondegenerate, which implies that 
$\g$ is reductive. 
\end{proof} 

For example, the Lie algebras $\mathfrak{so}_n(\Bbb K)$, $\mathfrak{sp}_{2n}(\Bbb K)$, ${\mathfrak{su}}(p,q)$  have trivial center and therefore are semisimple. 

\subsection{The Killing form and the Cartan criteria} 

\begin{definition} The {\bf Killing form} of a Lie algebra $\g$ is the form 
$B_\g(x,y)={\rm Tr}({\rm ad} x \cdot {\rm ad} y)$.  
\end{definition} 

The Killing form is denoted by $K_\g(x,y)$ or shortly by $K(x,y)$. 

\begin{theorem}\label{cartsolv} (Cartan criterion of solvability) A Lie algebra 
$\g$ over a field $\bf k$ of characteristic zero 
is solvable if and only if $[\g,\g]\subset {\rm Ker}(K)$. 
\end{theorem}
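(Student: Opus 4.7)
The plan is to prove the two directions separately. The forward direction is a relatively direct application of Lie's theorem, while the reverse direction reduces (via Engel's theorem) to showing that every element of $\mathrm{ad}\,[\g,\g]$ is nilpotent, and this requires a purely linear-algebraic lemma that constitutes the technical core of the argument.

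For the forward direction, suppose $\g$ is solvable. Since the Killing form is compatible with scalar extension, I may pass to the algebraic closure $\bar{\bf k}$ and assume ${\bf k}$ is algebraically closed. Applying Corollary \ref{Liethm2} to the adjoint representation, there is a basis of $\g$ in which every $\mathrm{ad}\,z$, $z\in\g$, is upper triangular. Consequently, for any $x,y\in\g$, $\mathrm{ad}\,[x,y]=[\mathrm{ad}\,x,\mathrm{ad}\,y]$ is strictly upper triangular. Hence for $x\in[\g,\g]$ and any $y\in\g$, the product $\mathrm{ad}\,x\cdot\mathrm{ad}\,y$ is strictly upper triangular, so $K(x,y)=0$.

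For the reverse direction, I would first establish the following linear-algebra lemma (this is the main obstacle and the deepest step of the proof): let $V$ be a finite-dimensional vector space over ${\bf k}$ (characteristic zero), let $A\subset B\subset \mathfrak{gl}(V)$ be subspaces, and set
\[
M:=\{x\in\mathfrak{gl}(V)\ :\ [x,B]\subset A\}.
\]
If $x\in M$ satisfies $\mathrm{Tr}(xy)=0$ for all $y\in M$, then $x$ is nilpotent. The proof goes by passing to $\bar{\bf k}$, writing the Jordan decomposition $x=s+n$ of $x$, and considering a ``conjugate'' element $\bar s$ defined in an eigenbasis of $s$ by conjugating the eigenvalues via a suitable $\Bbb Q$-linear functional $\Bbb Q\langle \text{eigenvalues}\rangle\to\Bbb Q$. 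One checks that $\mathrm{ad}\,\bar s$ is a polynomial in $\mathrm{ad}\,s$ without constant term, so $[\bar s,B]\subset[s,B]\subset A$, whence $\bar s\in M$. Then $0=\mathrm{Tr}(x\bar s)=\sum_i \lambda_i\overline{\lambda_i}$ where $\lambda_i$ are the eigenvalues of $s$; applying the $\Bbb Q$-linear functional to this rational combination gives $\sum|\lambda_i|^2=0$ in $\Bbb Q$, forcing all $\lambda_i=0$, so $s=0$ and $x=n$ is nilpotent.

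Granted this lemma, I apply it with $V=\g$, $B=\mathrm{ad}\,\g$, and $A=\mathrm{ad}\,[\g,\g]$. Let $x\in[\g,\g]$ and write $x=\sum_i[y_i,z_i]$. For any $T\in M$, the trace identity
\[
\mathrm{Tr}(\mathrm{ad}\,x\cdot T)=\sum_i\mathrm{Tr}([\mathrm{ad}\,y_i,\mathrm{ad}\,z_i]T)=\sum_i\mathrm{Tr}(\mathrm{ad}\,y_i\cdot[\mathrm{ad}\,z_i,T])
\]
holds, and since $T\in M$ we have $[\mathrm{ad}\,z_i,T]=\mathrm{ad}\,w_i$ for some $w_i\in[\g,\g]$. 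The hypothesis then gives $\mathrm{Tr}(\mathrm{ad}\,y_i\cdot\mathrm{ad}\,w_i)=K(y_i,w_i)=0$ (using $w_i\in[\g,\g]$ and symmetry of $K$), so $\mathrm{Tr}(\mathrm{ad}\,x\cdot T)=0$ for all $T\in M$. The lemma yields that $\mathrm{ad}\,x$ is nilpotent for every $x\in[\g,\g]$. Therefore $\mathrm{ad}\,[\g,\g]\subset\mathfrak{gl}(\g)$ consists of nilpotent operators; restricting to the invariant subspace $[\g,\g]$ and invoking Engel's theorem, $[\g,\g]$ is a nilpotent Lie algebra. Since $\g/[\g,\g]$ is abelian, $\g$ is solvable.
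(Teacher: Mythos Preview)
Your proof is correct and follows essentially the same route as the paper: the forward direction via Lie's theorem is identical, and the reverse direction uses the same Jordan-decomposition/$\Bbb Q$-linear-functional trick followed by Engel's theorem. The only cosmetic difference is that the paper packages the key linear-algebra step as a lemma about a Lie subalgebra $\g\subset\mathfrak{gl}(V)$ with $\Tr(xy)=0$ for $x\in[\g,\g]$, $y\in\g$ (applied to $\mathrm{ad}\,\g\subset\mathfrak{gl}(\g)$), whereas you use the more general ``$A\subset B\subset\mathfrak{gl}(V)$'' formulation; the underlying arguments are the same.
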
 

\begin{theorem}\label{cartsem} (Cartan criterion of semisimplicity) A Lie algebra 
$\g$ over a field $\bf k$ of characteristic zero is semisimple if and only if 
its Killing form is nondegenerate. 
\end{theorem}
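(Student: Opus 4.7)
I will deduce both implications from the Cartan criterion of solvability (Theorem \ref{cartsolv}), using the Killing form to produce appropriate ideals. The key technical observation I will need is: if $I \subset \g$ is an ideal, then the Killing form $K_I$ of $I$ (viewed as a Lie algebra in its own right) coincides with the restriction $K_\g|_{I \times I}$. This follows by choosing a basis of $\g$ extending one of $I$: for $x, y \in I$, the operator $\ad x \circ \ad y$ sends $\g$ into $I$ (since $\ad y$ already lands in $I$ as $I$ is an ideal), so in the chosen basis its matrix is block triangular with a zero lower-right block, and the trace on $\g$ equals the trace of its restriction to $I$.

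For the forward direction $\g$ semisimple $\Rightarrow K$ nondegenerate, I would set $I := \Ker(K_\g)$, which is an ideal by Proposition \ref{bilform}. The observation above gives $K_I \equiv 0$; in particular $[I, I] \subset \Ker(K_I)$, so Theorem \ref{cartsolv} implies $I$ is solvable. Since $\g$ is semisimple, $I = 0$, i.e., $K$ is nondegenerate.

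For the converse $K$ nondegenerate $\Rightarrow \g$ semisimple, I would argue by contradiction: if $\rad(\g) \ne 0$, then the last nonzero term of the derived series of $\rad(\g)$ is a nonzero abelian ideal $I$ of $\g$. For any $x \in I$ and $y \in \g$, set $T := \ad x \circ \ad y$. Because $I$ is an ideal with $x \in I$, we have $T(\g) \subset I$; because $I$ is abelian, $T(I) \subset [x, I] = 0$. Thus $T^2 = 0$, so $T$ is nilpotent and $K(x, y) = \tr T = 0$. This shows $0 \ne I \subset \Ker(K)$, contradicting nondegeneracy.

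The main obstacle lies in the forward direction: it rests both on the identification $K_I = K_\g|_{I \times I}$ for ideals and on invoking Cartan's solvability criterion, which is itself the substantive ingredient. By contrast, the converse is a short trace-vanishing argument exploiting $T^2 = 0$ when $x$ lies in an abelian ideal.
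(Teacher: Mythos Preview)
Your proof is correct. The forward direction is exactly the paper's argument: take $I = \Ker K_\g$, use that $K_I = K_\g|_{I\times I}$ vanishes, and apply Theorem~\ref{cartsolv} to conclude $I$ is solvable, hence zero.

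For the converse you take a genuinely different (and more elementary) route than the paper. The paper invokes Proposition~\ref{reducri}: since $K_\g = B_\g$ is the trace form of the adjoint representation, its nondegeneracy forces $\g$ to be reductive; then the center lies in $\Ker K_\g$, so the center is trivial and $\g$ is semisimple. That route passes through Lie's theorem (hidden inside the proof of Proposition~\ref{reducri}). Your argument bypasses reductivity entirely: you extract a nonzero abelian ideal $I$ from $\rad(\g)$ and show $I \subset \Ker K$ by the clean $T^2 = 0$ trick. This is self-contained and works directly over any field of characteristic zero without appeal to Lie's theorem or scalar extension, whereas the paper's converse leans on the machinery of \S16.2. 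The trade-off is that the paper's route simultaneously establishes the stronger statement that nondegeneracy of \emph{any} trace form $B_V$ implies reductivity, which is useful elsewhere.
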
 

Theorems \ref{cartsolv} and \ref{cartsem} will be proved in the next section. 

\begin{corollary}\label{uniscal} On a complex simple Lie algebra, the Killing form is the unique  invariant bilinear form up to scaling.  
\end{corollary} 

\begin{proof} Let $\g$ be a simple Lie algebra. 
Then the Killing form is a nonzero (in fact, nondegenerate) invariant bilinear form on $\g$. Also any invariant bilinear form $B$ on $\g$ can be viewed as a homomorphism of representations $B: \g\to \g^*$. 
Thus by Schur's lemma it is unique up to scaling. 
\end{proof} 

\subsection{Jordan decomposition} To prove the Cartan criteria, we will use the Jordan decomposition of a square matrix. Let us recall it. 

\begin{proposition}\label{jdec} A square matrix $A\in {\mathfrak{gl}}_N({\bf k})$ over a field ${\bf k}$ of characteristic zero can be uniquely written as $A_s+A_n$, where $A_s\in {\mathfrak{gl}}_N({\bf k})$ is semisimple (i.e. diagonalizes over the algebraic closure of ${\bf k}$) and $A_n\in {\mathfrak{gl}}_N({\bf k})$ is nilpotent in such a way that $A_sA_n=A_nA_s$. 
Moreover, $A_s=P(A)$ for some $P\in {\bf k}[x]$. 
\end{proposition}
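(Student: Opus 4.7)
The plan is to produce a single polynomial $P \in \mathbf{k}[x]$ and set $A_s := P(A)$, $A_n := A - A_s$; commutativity is then automatic, and the task reduces to choosing $P$ so that $A_s$ is semisimple and $A_n$ nilpotent. Over the algebraic closure $\bar{\mathbf{k}}$, with distinct eigenvalues $\lambda_i$ of algebraic multiplicities $m_i$ and characteristic polynomial $f(x) = \prod_i (x-\lambda_i)^{m_i}$, a direct check on generalized eigenspaces shows that a polynomial $P$ achieves this if and only if $P(x) \equiv \lambda_i \pmod{(x-\lambda_i)^{m_i}}$ for every $i$; such a $P \in \bar{\mathbf{k}}[x]$ is produced at once by the Chinese Remainder Theorem in $\bar{\mathbf{k}}[x]/f(x)$.

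The main obstacle, and where the assumption $\mathrm{char}(\mathbf{k})=0$ enters, is upgrading this to a $P$ with coefficients in $\mathbf{k}$. I would factor $f = \prod_j p_j^{n_j}$ into distinct monic irreducibles over $\mathbf{k}$ and use CRT in the form
\[
\mathbf{k}[x]/f(x) \;\cong\; \prod_j \mathbf{k}[x]/p_j(x)^{n_j} \;=:\; \prod_j R_j.
\]
In the Artinian local ring $R_j$ with nilpotent maximal ideal $\mathfrak{m}_j = (p_j)/(p_j^{n_j})$, the image $\bar x$ of $x$ reduces to a root of $p_j$ in the residue field $R_j/\mathfrak{m}_j$. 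Because $\mathrm{char}(\mathbf{k})=0$, the irreducible polynomial $p_j$ is separable, so $p_j'(\bar x)$ is a unit modulo $\mathfrak{m}_j$, and Hensel's lemma lifts $\bar x$ to the unique $y_j \in R_j$ with $p_j(y_j) = 0$ and $y_j \equiv \bar x \pmod{\mathfrak{m}_j}$. The tuple $(y_j)_j$ pulls back under CRT to a polynomial $P \in \mathbf{k}[x]$. The construction gives $p_j(P(x)) \equiv 0 \pmod{p_j^{n_j}}$ for every $j$, hence the squarefree polynomial $\prod_j p_j$ annihilates $P(A)$, so $P(A)$ is semisimple; simultaneously $P(x) - x \in \mathfrak{m}_j$ for every $j$ implies that $(P(x) - x)^{\max_j n_j}$ is divisible by each $p_j^{n_j}$ and therefore by $f$, so $A - P(A)$ is nilpotent by Cayley--Hamilton.

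For uniqueness, suppose $A = A_s' + A_n'$ is another such decomposition. The condition $A_s' A_n' = A_n' A_s'$ forces both $A_s'$ and $A_n'$ to commute with $A = A_s' + A_n'$, hence with $A_s = P(A)$ and with $A_n = A - A_s$. Then
\[
A_s - A_s' \;=\; A_n' - A_n,
\]
where the left side is a difference of two commuting semisimple operators (simultaneously diagonalizable over $\bar{\mathbf{k}}$, hence itself semisimple) and the right side is a difference of two commuting nilpotent operators (hence nilpotent). An operator that is both semisimple and nilpotent is zero, so $A_s = A_s'$ and $A_n = A_n'$.
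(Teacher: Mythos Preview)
Your proof is correct. The existence construction and the uniqueness argument are both sound; the Hensel lift in each $R_j = \mathbf{k}[x]/p_j^{n_j}$ works exactly because separability of $p_j$ (guaranteed by $\mathrm{char}(\mathbf{k})=0$) makes $p_j'(\bar x)$ a unit, and your verifications that $\prod_j p_j$ kills $P(A)$ and that $(P(x)-x)^{\max_j n_j}\equiv 0\pmod f$ go through as written.

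The paper's proof differs in the descent step. It works entirely over $\bar{\mathbf{k}}$: choose $P\in\bar{\mathbf{k}}[x]$ with $P(x)\equiv\lambda\pmod{(x-\lambda)^N}$ for each eigenvalue $\lambda$ by CRT, set $A_s=P(A)$, and then argue that because the decomposition is unique, $A_s$ and $A_n$ are fixed by $\mathrm{Gal}(\bar{\mathbf{k}}/\mathbf{k})$ and hence have entries in $\mathbf{k}$. Your route instead manufactures $P\in\mathbf{k}[x]$ from the start via Hensel, which is a little heavier machinery but has the advantage of delivering the clause ``$A_s=P(A)$ for some $P\in\mathbf{k}[x]$'' directly, without a further argument that the $\bar{\mathbf{k}}$-polynomial can be replaced by a $\mathbf{k}$-polynomial. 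The uniqueness arguments are identical.
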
 

\begin{proof} 
By the Chinese remainder theorem, 
there exists a polynomial $P\in \overline {\bf k}[x]$ 
such that for every eigenvalue $\lambda$ of $A$
we have $P(x)=\lambda$ modulo $(x-\lambda)^N$, 
i.e., 
$$
P(x)-\lambda=(x-\lambda)^N Q_\lambda(x)
$$ 
for some polynomial $Q_\lambda$. 
Then on the generalized eigenspace $V(\lambda)$ for $A$, we have   
$$
P(A)-\lambda=(A-\lambda)^NQ_\lambda(A)=0,
$$ 
so 
$A_s:=P(A)$ is semisimple and $A_n=A-P(A)$ is nilpotent, with $A_nA_s=A_sA_n$. 
If $A=A_s'+A_n'$ is another such decomposition then $A_s',A_n'$ commute with $A$, hence with $A_s$ and $A_n$. Also we have 
$$
A_s-A_s'=A_n'-A_n.
$$
Thus this matrix is both semisimple and nilpotent, so it is zero. Finally, since $A_s,A_n$ are unique, they are invariant under the Galois group of $\overline{\bf k}$ over ${\bf k}$ 
and therefore have entries in ${\bf k}$. 
\end{proof}    

\begin{remark} 1. If ${\bf k}$ is algebraically closed, then $A$ admits a basis in which it is upper triangular, and $A_s$ is the diagonal part while $A_n$ is the off-diagonal part of $A$. 

2. Proposition \ref{jdec} holds with the same proof in characteristic $p$ if 
the field ${\bf k}$ is perfect, i.e., the Frobenius map $x\to x^p$ is surjective on ${\bf k}$. 
However, if ${\bf k}$ is not perfect, the proof fails: the fact that $A_s$, $A_n$ are Galois invariant does not imply that their entries are in ${\bf k}$. Also the statement fails: 
if ${\bf k}=\Bbb F_p(t)$ and $Ae_i=e_{i+1}$ for $i=1,..,p-1$ 
while $Ae_p=te_1$ then $A$ has only one eigenvalue $t^{1/p}$, so $A_s=t^{1/p}\cdot {\rm Id}$, i.e., does not have entries in ${\bf k}$. 
\end{remark} 

\section{\bf Proofs of the Cartan criteria, properties of semisimple Lie algebras} 

\subsection{Proof of the Cartan solvability criterion} 

It is clear that $\g$ is solvable if and only if so is $\g\otimes_{\bf k}\overline{\bf k}$, 
so we may assume that ${\bf k}$ is algebraically closed. 

For the ``only if" part, note that by Lie's theorem, $\g$ has a basis in which the operators ${\rm ad}x$, $x\in \g$, are upper triangular. Then $[\g,\g]$ acts in this basis by strictly upper triangular matrices, so $K(x,y)=0$ for $x\in [\g,\g]$ and $y\in \g$.

To prove the ``if" part, let us prove the following lemma. 

\begin{lemma}\label{lee} Let $\g\subset {\mathfrak{gl}}(V)$ be a Lie subalgebra such that 
for any $x\in [\g,\g]$ and $y\in \g$ we have ${\rm Tr}(xy)=0$. Then $\g$ is solvable. 
\end{lemma}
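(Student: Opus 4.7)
The plan is to reduce the statement to an application of Engel's theorem (Theorem \ref{auxth}) by showing that every element of $[\g,\g]$ acts as a nilpotent operator on $V$. Once this is established, $[\g,\g]$ sits inside strictly upper triangular matrices in some basis, hence is a nilpotent Lie algebra, and since $\g/[\g,\g]$ is abelian, $\g$ is solvable. First I would extend scalars to the algebraic closure (this does not affect the hypothesis or the conclusion), so I may assume $\bf k$ algebraically closed.

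The heart of the matter is showing that an arbitrary $x \in [\g,\g]$ is nilpotent on $V$. Write $x = \sum_i [a_i,b_i]$ with $a_i, b_i \in \g$, and let $x = x_s + x_n$ be its Jordan decomposition as an operator on $V$ (Proposition \ref{jdec}). Let $\lambda_1,\dots,\lambda_k$ be the distinct eigenvalues of $x_s$ on $V$, and let $E \subset {\bf k}$ be their $\Bbb Q$-linear span. The goal is to prove $E = 0$, which forces $x_s = 0$. Suppose for contradiction that $E \neq 0$, and pick a nonzero $\Bbb Q$-linear functional $f: E \to \Bbb Q$. I would then define $y \in \mathfrak{gl}(V)$ to act by the scalar $f(\lambda_i)$ on the generalized eigenspace $V_{\lambda_i}$ of $x_s$.

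The key technical step is the claim that $\mathrm{ad}(y)$, acting on $\mathfrak{gl}(V)$, is a polynomial in $\mathrm{ad}(x)$ with zero constant term. This goes via two applications of Lagrange interpolation: first, since $(\mathrm{ad}\,x)_s = \mathrm{ad}(x_s)$ is a polynomial in $\mathrm{ad}(x)$ (and $0$ is an eigenvalue of $\mathrm{ad}(x)$, so the polynomial has no constant term); second, $\mathrm{ad}(y)$ acts on the $(i,j)$-block of $\mathfrak{gl}(V)$ by $f(\lambda_i - \lambda_j) = f(\lambda_i) - f(\lambda_j)$, which agrees with a polynomial in the eigenvalues $\lambda_i - \lambda_j$ of $\mathrm{ad}(x_s)$ vanishing at $0$. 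Since $x \in \g$, the operator $\mathrm{ad}(x)$ sends $\g$ into $[\g,\g]$, and iterating, every positive power of $\mathrm{ad}(x)$ maps $\g$ into $[\g,\g]$; combined with the claim, this yields $\mathrm{ad}(y)(\g) \subset [\g,\g]$.

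Now compute $\mathrm{Tr}(xy) = \sum_i \mathrm{Tr}([a_i,b_i]\,y) = \sum_i \mathrm{Tr}(a_i\,[b_i,y])$ using the cyclic identity $\mathrm{Tr}([a,b]y) = \mathrm{Tr}(a[b,y])$. Since $[b_i,y] = -\mathrm{ad}(y)(b_i) \in [\g,\g]$ and $a_i \in \g$, the hypothesis yields $\mathrm{Tr}(xy) = 0$. On the other hand, $y$ commutes with $x_s$ and with $x_n$ (both preserve each eigenspace of $x_s$, on which $y$ is scalar), so $\mathrm{Tr}(x_n y) = \sum_i f(\lambda_i)\,\mathrm{Tr}(x_n|_{V_{\lambda_i}}) = 0$ and hence $\mathrm{Tr}(xy) = \mathrm{Tr}(x_s y) = \sum_i (\dim V_{\lambda_i})\, f(\lambda_i)\,\lambda_i = 0$ in $\bf k$. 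Applying the $\Bbb Q$-linear functional $f$ to this equation (which lies in $E$) gives $\sum_i (\dim V_{\lambda_i})\, f(\lambda_i)^2 = 0$ in $\Bbb Q$, forcing $f(\lambda_i) = 0$ for every $i$; since $f$ was arbitrary, $E = 0$, a contradiction. Thus $x_s = 0$ and $x$ is nilpotent on $V$, completing the proof.

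The main obstacle is the polynomial-in-$\mathrm{ad}(x)$ claim for $\mathrm{ad}(y)$, which requires careful use of the Jordan decomposition of $\mathrm{ad}(x)$ on $\mathfrak{gl}(V)$ together with the observation that $\mathrm{ad}(x)(\g)\subset [\g,\g]$; this is what makes the hypothesis applicable to the trace computation, and it is the only place where $x \in [\g,\g]$ (rather than merely $x \in \g$) is used in an essential way.
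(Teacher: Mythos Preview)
Your proof is correct and follows essentially the same argument as the paper's: construct the auxiliary operator $y$ via a $\Bbb Q$-linear functional on the eigenvalue span, show $\mathrm{ad}(y)=R(\mathrm{ad}\,x)$ with $R(0)=0$ by two interpolations, use this to see $[\g,y]\subset [\g,\g]$ so the trace hypothesis applies, and conclude nilpotence of each $x\in[\g,\g]$ via the vanishing of $\sum d_i f(\lambda_i)^2$, then finish with Engel's theorem. The only minor quibble is your closing remark: the assumption $x\in[\g,\g]$ is used not in the polynomial claim (that needs only $x\in\g$) but in writing $x=\sum[a_i,b_i]$ to unfold the trace.
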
 

\begin{proof} Let $x\in [\g,\g]$. Let $\lambda_i, i=1,...,m$, be the distinct eigenvalues 
of $x$. Let $E\subset \bf k$ be a $\Bbb Q$-span of $\lambda_i$. Let $b: E\to \Bbb Q$ be a linear functional. There exists an interpolation polynomial $Q\in {\bf k}[t]$ such that 
$Q(\lambda_i-\lambda_j)=b(\lambda_i-\lambda_j)=b(\lambda_i)-b(\lambda_j)$ for all $i,j$. 

By Proposition \ref{jdec}, we can write $x$ as $x=x_s+x_n$. Then the operator ${\rm ad} x_s$ is diagonalizable with eigenvalues $\lambda_i-\lambda_j$. So 
$$
Q({\rm ad}x_s)={\rm ad}b,
$$ 
where $b: V\to V$ is the operator acting by $b(\lambda_j)$ on the generalized 
$\lambda_j$-eigenspace of $x$. 

Also we have 
$$
{\rm ad}x={\rm ad}x_s+{\rm ad}x_n
$$
a sum of commuting semisimple and nilpotent operators. Thus 
$$
{\rm ad}x_s=({\rm ad}x)_s=P({\rm ad}x),
$$
and $P(0)=0$ since $0$ is an eigenvalue of ${\rm ad}x$. 
Thus 
$$
{\rm ad}b=R({\rm ad}x),
$$
where $R(t)=Q(P(t))$ and $R(0)=0$. 

Let $x=\sum_j [y_j,z_j]$, $y_j,z_j\in \g$, and $d_j$ be the dimension of the generalized $\lambda_j$-eigenspace of $x$. Then 
$$
\sum_j d_jb(\lambda_j)\lambda_j={\rm Tr}(b x)=
$$
$$
{\rm Tr}(\sum_j b[y_j,z_j])={\rm Tr}(\sum_j [b,y_j]z_j)={\rm Tr}(\sum_j R({\rm ad}x)(y_j)z_j). 
$$
Since $R(0)=0$, we have $R({\rm ad}x)(y_j)\in [\g,\g]$, so by assumption we get 
$$
\sum_j d_jb(\lambda_j)\lambda_j=0.
$$
Applying $b$, we get 
$\sum_j d_jb(\lambda_j)^2=0$. Thus $b(\lambda_j)=0$ 
for all $j$. Hence $b=0$, so $E=0$. 

Thus, the only eigenvalue of $x$ is $0$, i.e., $x$ is nilpotent. But then by Engel's theorem, $[\g,\g]$ is nilpotent. Thus $\g$ is solvable. Thus proves the lemma. 
\end{proof} 
 
Now the ``if" part of the Cartan solvability criterion follows easily by applying Lemma \ref{lee} to the faithful representation $V=\g$ of the Lie algebra $\g/\mathfrak{z}(\g)$. 

\subsection{Proof of the Cartan semisimplicity criterion} 

Assume that $\g$ is semisimple, and let $I={\rm Ker}(K_\g)$, an ideal in $\g$. Then $K_I=(K_\g)|_I=0$. Thus by Cartan's solvability criterion $I$ is solvable. Hence $I=0$.  

Conversely, suppose $K_\g$ is nondegenerate. Then $\g$ is reductive. 
Moreover, the center of $\g$ is contained in the kernel of $K_\g$, so it must be trivial.  
Thus $\g$ is semisimple. 

\subsection{Properties of semisimple Lie algebras} 

\begin{proposition} Let ${\rm char}({\bf k})=0$ and $\g$ be a finite dimensional Lie algebra over $\k$. Then $\g$ is semisimple iff $\g\otimes_{\bf \k} \overline{\bf k}$ is semisimple. 
\end{proposition}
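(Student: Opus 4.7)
The plan is to deduce this immediately from the Cartan semisimplicity criterion (Theorem \ref{cartsem}), which reduces semisimplicity to a linear-algebraic statement about the Killing form that is manifestly compatible with base change.

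First, I would observe that the Killing form behaves functorially under extension of scalars. Concretely, if $\bar\g := \g\otimes_{\bf k}\overline{\bf k}$, then the adjoint representation of $\bar\g$ is just ${\rm ad}_{\bar\g}(x\otimes \lambda)=({\rm ad}_\g x)\otimes \lambda$ (extended $\overline{\bf k}$-linearly), since the bracket on $\bar\g$ is the $\overline{\bf k}$-linear extension of the bracket on $\g$. Taking traces on the finite-dimensional ${\bf k}$-space $\g$ versus on the finite-dimensional $\overline{\bf k}$-space $\bar\g$ gives the same scalar, so
$$
K_{\bar\g}(x\otimes \lambda,\, y\otimes \mu)=\lambda\mu\cdot K_\g(x,y).
$$
In other words, $K_{\bar\g}$ is the $\overline{\bf k}$-bilinear extension of $K_\g$.

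Next I would invoke the standard linear-algebra fact that a symmetric bilinear form on a finite-dimensional vector space is nondegenerate if and only if its base change to any field extension is nondegenerate. Indeed, fixing a basis $\{x_i\}$ of $\g$ (which is automatically a $\overline{\bf k}$-basis of $\bar\g$), the Gram matrices of $K_\g$ and $K_{\bar\g}$ coincide, so their determinants agree, and this determinant is nonzero in ${\bf k}$ iff it is nonzero in $\overline{\bf k}$.

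Putting these together: by Theorem \ref{cartsem}, $\g$ is semisimple iff $\det K_\g\neq 0$, iff $\det K_{\bar\g}\neq 0$, iff $\bar\g$ is semisimple (applying Cartan's criterion again, now over the algebraically closed field $\overline{\bf k}$, which is still of characteristic zero). There is no real obstacle here; the only thing one must take care of is confirming the base-change compatibility of the Killing form, which is routine once the trace calculation above is spelled out. Note that invoking the Cartan criterion is what forces the characteristic-zero hypothesis; a more elementary but less clean alternative would be to argue directly that a solvable ideal of $\g$ extends to a solvable ideal of $\bar\g$ (easy), and that a solvable ideal of $\bar\g$ descends via Galois averaging to a solvable ideal of $\g$ (also straightforward but messier).
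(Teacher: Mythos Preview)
Your argument is correct and matches the paper's proof, which simply says the result follows immediately from Cartan's semisimplicity criterion. You even anticipate the paper's alternative argument for the nontrivial direction via Galois-averaging a solvable ideal of $\g\otimes_{\bf k}\overline{\bf k}$ down to a solvable ideal of $\g$.
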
 

\begin{proof} 
Immediately follows from Cartan's criterion of semisimplicity. Here is another proof (of the nontrivial direction): if $\g$ is semisimple and $I$ is a nonzero solvable ideal in $\g\otimes_{\bf \k} \overline{\bf k}$ then it has a finite Galois orbit $I_1,...,I_n$ and $I_1+...+I_n$ is a 
Galois invariant solvable ideal, so it comes from a solvable ideal in $\g$. 
\end{proof} 

\begin{remark} This theorem fails if we replace the word ``semisimple'' by ``simple":
e.g., if $\g$ is a simple complex Lie algebra regarded as a real Lie algebra then 
$\g_{\Bbb C}\cong \g\oplus \g$ is semisimple but not simple. 
\end{remark} 

\begin {theorem}\label{dirsu} Let $\g$ be a semisimple Lie algebra and $I \subset \g$ an ideal. Then
there is an ideal $J\subset \g$ such that $\g = I \oplus J$.
\end{theorem}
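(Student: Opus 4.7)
The plan is to use the Killing form as an orthogonality device: since $\g$ is semisimple, Cartan's semisimplicity criterion (Theorem \ref{cartsem}) gives that $K_\g$ is nondegenerate, and I would set
\[
J := I^\perp = \{x\in \g : K_\g(x,y)=0\ \forall y\in I\}.
\]
By Proposition \ref{bilform}, the orthogonal complement of an ideal under an invariant symmetric bilinear form is again an ideal, so $J$ is an ideal of $\g$. Nondegeneracy of $K_\g$ forces $\dim I+\dim J=\dim \g$, so the only remaining task is to prove $I\cap J=0$, after which $\g=I\oplus J$ follows by a dimension count.

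To handle $I\cap J$, the key auxiliary observation is that for any ideal $H\subset \g$, the restriction of $K_\g$ to $H$ coincides with $K_H$. Indeed, for $x,y\in H$, the operator $\ad_\g y$ sends $\g$ into $H$ (since $H$ is an ideal), and then $\ad_\g x$ sends $H$ into $H$; in a basis of $\g$ extending a basis of $H$, the matrix of $\ad_\g x\circ \ad_\g y$ is block upper triangular with zero lower-right block and upper-left block equal to $\ad_H x\circ \ad_H y$, so the traces agree.

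Applying this to the ideal $I\cap J\subset \g$, I obtain $K_{I\cap J}=K_\g|_{I\cap J}=0$ by the very definition of $J=I^\perp$ (note $I\cap J\subset I$ and $I\cap J\subset J$). In particular $K_{I\cap J}$ vanishes on $[I\cap J,I\cap J]\times (I\cap J)$, so Cartan's solvability criterion (Theorem \ref{cartsolv}) shows that $I\cap J$ is a solvable ideal of $\g$. Since $\g$ is semisimple, $\mathrm{rad}(\g)=0$, hence $I\cap J=0$, completing the proof.

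The only mildly nonroutine step is the identity $K_\g|_H = K_H$ for an ideal $H$, which is the pivot that lets me convert the orthogonality condition into solvability of $I\cap J$; everything else is a direct application of the two Cartan criteria and Proposition \ref{bilform}. I do not anticipate serious obstacles, but if one wanted a fully self-contained write-up, the block-matrix trace computation justifying $K_\g|_H=K_H$ is the place where a little care is required.
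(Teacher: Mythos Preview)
Your proof is correct and follows essentially the same route as the paper: take $J=I^\perp$ with respect to the Killing form, observe it is an ideal, and use Cartan's solvability criterion to kill $I\cap I^\perp$ inside a semisimple algebra. Your explicit verification that $K_\g|_H=K_H$ for an ideal $H$ is a helpful addition---the paper invokes this fact implicitly when it says $I\cap I^\perp$ ``has zero Killing form.''
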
 

\begin{proof} Let $I^\perp$ be the orthogonal complement of $I$ with respect to the Killing form, an ideal in $\g$. Consider the intersection $I \cap I^\perp$. It is an ideal
in $\g$ with the zero Killing form (as the Killing form of an ideal in $\g$ is the restriction of the Killing of $\g$). Thus, by the Cartan solvability criterion, it is
solvable. By definition of a semisimple Lie algebra, this means that $I \cap I^\perp$ = 0,
so we may take $J= I^\perp$. 
\end{proof} 

We will see below (in Proposition \ref{dirsu2}) 
that $J$ is in fact unique and must equal $I^\perp$. 

\begin{corollary}\label{dirsu1} A Lie algebra $\g$ is semisimple iff it is a direct sum of simple Lie
algebras.
\end{corollary}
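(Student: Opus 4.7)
The proof has two directions. The easy direction is $(\Leftarrow)$: if $\g = \g_1 \oplus \cdots \oplus \g_n$ with each $\g_i$ simple, then since any simple Lie algebra is semisimple and ${\rm rad}(\g \oplus \h) = {\rm rad}(\g) \oplus {\rm rad}(\h)$ (established in the excerpt), we get ${\rm rad}(\g) = 0$. So I would dispose of this in one line.

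For $(\Rightarrow)$, my plan is to argue by induction on $\dim \g$, using Theorem \ref{dirsu} as the engine. Choose any nonzero ideal $I \subset \g$ of minimal dimension. By Theorem \ref{dirsu}, there is an ideal $J \subset \g$ with $\g = I \oplus J$, and in particular $[I,J] \subset I \cap J = 0$. The key claim is that $I$ is simple. Indeed, any Lie ideal $I' \subset I$ automatically satisfies $[\g, I'] = [I,I'] + [J, I'] \subset I'$, so $I'$ is an ideal of $\g$; by minimality $I' = 0$ or $I' = I$. This shows $I$ has no proper nonzero ideals, so either $I$ is simple or $I$ is abelian. But an abelian ideal would be a nonzero solvable ideal of the semisimple algebra $\g$, contradicting semisimplicity. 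Hence $I$ is simple.

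Next I would check that $J$ is itself semisimple, so that the induction hypothesis applies. If $K \subset J$ is a solvable ideal of $J$, then $[\g, K] = [I, K] + [J, K] \subset K$ (since $[I,K] \subset [I,J] = 0$), so $K$ is a solvable ideal of $\g$, forcing $K = 0$. Thus $J$ is semisimple, and by induction $J = \g_2 \oplus \cdots \oplus \g_n$ with each $\g_i$ simple. Combining with $I$ (which I would rename $\g_1$) gives the desired decomposition.

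The only subtle step is the verification that the minimal ideal $I$ is simple rather than merely minimal; the cleanliness of the argument relies crucially on the fact that in a direct sum decomposition $\g = I \oplus J$ of Lie algebras by ideals, the two summands commute, which is what lets ideals of $I$ propagate to ideals of $\g$ and what lets the induction step go through. I do not expect any real obstacle beyond correctly assembling these observations; Theorem \ref{dirsu} (whose proof already invokes the Cartan solvability criterion) is doing the heavy lifting.
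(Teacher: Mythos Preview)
Your proof is correct and is precisely the natural expansion of the paper's argument, which simply says the forward direction ``easily follows by induction from Theorem~\ref{dirsu}'' without spelling out the details. Your careful verification that a minimal nonzero ideal is simple (not merely abelian) and that the complementary ideal $J$ is again semisimple are exactly the points one needs to check to make that induction go through.
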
 

\begin{proof} We have already shown that a direct sum of simple Lie algebras is semisimple. 
The opposite direction easily follows by induction from Theorem \ref{dirsu}.
\end{proof} 

\begin{corollary}\label{gperf} If $\g$ is a semisimple Lie algebra, then $[\g, \g] = \g$.
\end{corollary}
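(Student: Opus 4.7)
The plan is to reduce the statement to the case of a simple Lie algebra and then handle that case directly from the definition of simplicity. The key input is Corollary \ref{dirsu1}, which decomposes any semisimple Lie algebra $\g$ as a direct sum $\g = \bigoplus_{i=1}^m \g_i$ of simple ideals. Since $[\g_i, \g_j] \subset \g_i \cap \g_j = 0$ for $i \ne j$, the commutant decomposes compatibly as $[\g,\g] = \bigoplus_i [\g_i, \g_i]$, so it suffices to prove the statement when $\g$ is simple.

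So suppose $\g$ is simple. First I would observe that $[\g,\g]$ is an ideal of $\g$ (this is Lemma \ref{ideals}, or immediate from the Jacobi identity). By the definition of simplicity, the only ideals of $\g$ are $0$ and $\g$, so $[\g,\g]$ is either $0$ or $\g$. The former case is ruled out by the definition of simplicity: simple Lie algebras are required to be non-abelian, so $[\g,\g] \ne 0$. Therefore $[\g,\g] = \g$, as desired.

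Assembling the two steps, $[\g,\g] = \bigoplus_i [\g_i,\g_i] = \bigoplus_i \g_i = \g$. The argument is essentially formal once Corollary \ref{dirsu1} is in hand, so there is no real obstacle; the only subtle point is remembering that the convention in the paper reserves ``simple'' for non-abelian Lie algebras (thereby excluding the one-dimensional abelian case, which would otherwise satisfy $[\g,\g] = 0$), and this is precisely what guarantees $[\g_i,\g_i] = \g_i$ in each summand.
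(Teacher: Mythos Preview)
Your proof is correct and follows essentially the same route as the paper: reduce to the simple case via Corollary~\ref{dirsu1}, then note that $[\g,\g]$ is a nonzero ideal in a simple algebra and hence equals $\g$. The only extra detail you spell out is $[\g_i,\g_j]=0$ for $i\ne j$, which the paper leaves implicit.
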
 

\begin{proof} For a simple Lie algebra it is clear because $[\g, \g]$ is an ideal in $\g$ which
cannot be zero (otherwise, $\g$ would be abelian). So the result follows from Corollary \ref{dirsu1}.\end{proof} 

\begin{proposition}\label{dirsu2} Let $\g = \g_1\oplus...\oplus \g_k$ be a semisimple Lie algebra, with $\g_i$
being simple. Then any ideal $I$ in $\g$ is of the form $I = \oplus_{i\in S} \g_i$ 
for some subset $S \subset \lbrace 1,..., k\rbrace$.
\end{proposition}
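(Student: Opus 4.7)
The plan is to exploit the fact that each $\g_i$ is itself an ideal in $\g$ (since $[\g_j,\g_i]\subset \g_j\cap \g_i = 0$ for $j\ne i$, so $\g_i$ is central with respect to the other summands), and use simplicity of $\g_i$ to force the intersection $\g_i \cap I$ to be either $0$ or all of $\g_i$. Define
\[
S:=\{\,i\in\{1,\dots,k\}\;:\;\g_i\subset I\,\}.
\]
I will show $I=\bigoplus_{i\in S}\g_i$ by proving the two inclusions.

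First I would consider, for each $i$, the subspace $[\g_i,I]$. This is an ideal of $\g$ contained in $\g_i$ (since $\g_i$ is an ideal) and in $I$ (since $I$ is an ideal), so $[\g_i,I]\subset \g_i\cap I$. The intersection $\g_i\cap I$ is an ideal of the simple Lie algebra $\g_i$, so by simplicity it is either $0$ or $\g_i$. In the latter case $i\in S$ by definition, giving the inclusion $\bigoplus_{i\in S}\g_i\subset I$.

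For the reverse inclusion, fix $i\notin S$, so $\g_i\cap I=0$ and hence $[\g_i,I]=0$. Write an arbitrary element $x\in I$ as $x=\sum_{j=1}^k x_j$ with $x_j\in\g_j$. For any $y\in\g_i$, since $[\g_i,\g_j]=0$ for $j\ne i$, we get $[y,x]=[y,x_i]$; but $[y,x]\in[\g_i,I]=0$, so $x_i$ is centralized by all of $\g_i$, i.e.\ $x_i\in\mathfrak{z}(\g_i)$. Because $\g_i$ is simple, its center is a proper ideal and hence zero (else $\g_i$ would be abelian, contradicting simplicity). Therefore $x_i=0$ for every $i\notin S$, which shows $x\in\bigoplus_{i\in S}\g_i$ and completes the proof.

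There is no real obstacle here; the only point that needs slight care is the assertion $\mathfrak{z}(\g_i)=0$ for a simple Lie algebra, which I would note explicitly since simplicity by convention excludes the abelian case. Everything else is a direct application of the decomposition $\g=\bigoplus_i\g_i$ and the definition of an ideal.
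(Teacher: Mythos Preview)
Your proof is correct. The argument is clean: intersect $I$ with each simple factor, use simplicity to force $\g_i\cap I\in\{0,\g_i\}$, and then kill the components $x_i$ with $i\notin S$ by showing they land in the (trivial) center of $\g_i$.

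The paper takes a genuinely different route: it argues by induction on $k$ via the projection $p_k:\g\to\g_k$. Since $p_k(I)$ is an ideal of the simple algebra $\g_k$, either $p_k(I)=0$ (and one recurses on $\g_1\oplus\cdots\oplus\g_{k-1}$) or $p_k(I)=\g_k$, in which case $[\g_k,I]=[\g_k,p_k(I)]=[\g_k,\g_k]=\g_k$ forces $\g_k\subset I$, and again one recurses. So the paper leans on perfectness $[\g_k,\g_k]=\g_k$, while you lean on the dual fact $\mathfrak{z}(\g_i)=0$; both are immediate from simplicity. Your direct approach handles all factors at once and avoids induction, which is arguably tidier; the paper's projection argument has the mild advantage of not needing to decompose a general element of $I$ into components.
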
 

\begin{proof} The proof goes by induction in $k$. Let $p_k : \g\to \g_k$ be the projection.
Consider $p_k(I) \subset \g_k$. Since $\g_k$ is simple, either $p_k(I)=0$, in which case
$I\subset \g_1\oplus...\oplus \g_{k-1}$  and we can use the induction assumption, 
or $p_k (I ) = \g_k$.
Then $[\g_k , I] = [\g_k , p_k (I )] = \g_k$. Since $I$ is an ideal, $I \supset \g_k$, so 
$I =I'\oplus \g_k$ for some subspace $I'\subset \g_1 \oplus· · ·\oplus \g_{k-1}$. 
It is immediate that then $I'$ is an ideal in
$\g_1\oplus· · ·\oplus \g_{k-1}$ and the result again follows from the induction assumption.\end{proof} 

\begin{corollary} Any ideal in a semisimple Lie algebra is semisimple. Also, any
quotient of a semisimple Lie algebra is semisimple.
\end{corollary}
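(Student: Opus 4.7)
The plan is to deduce both statements directly from the structural results already established, namely Theorem \ref{dirsu}, Corollary \ref{dirsu1}, and Proposition \ref{dirsu2}. Since the heavy lifting has been done in those results, the argument reduces to bookkeeping with the decomposition $\g = \g_1 \oplus \cdots \oplus \g_k$ into simple ideals.

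For the first assertion, let $I \subset \g$ be an ideal. By Proposition \ref{dirsu2}, there is a subset $S \subset \{1,\dots,k\}$ such that $I = \bigoplus_{i \in S} \g_i$. Each $\g_i$ with $i \in S$ is simple (in particular semisimple), and one easily checks that the bracket on $I$ induced from $\g$ agrees with the direct sum bracket of the $\g_i$'s, since $[\g_i, \g_j] = 0$ for $i \neq j$ (as each $\g_i \cap \g_j$ is an ideal in the simple $\g_i$ and is proper). Thus Corollary \ref{dirsu1} applied to $I$ itself gives that $I$ is semisimple.

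For the second assertion, given an ideal $I \subset \g$, apply Theorem \ref{dirsu} to obtain an ideal $J \subset \g$ with $\g = I \oplus J$. The projection $\pi : \g \to J$ along $I$ is a Lie algebra homomorphism: indeed, for $x = x_I + x_J$ and $y = y_I + y_J$ the bracket $[x,y]$ decomposes as $[x_I,y_I] + [x_J,y_J] + ([x_I,y_J] + [x_J,y_I])$, where the first term lies in $I$, the second in $J$, and the cross terms lie in $I \cap J = 0$ (since each cross term is simultaneously in $I$ and in $J$, both being ideals). Hence $\pi([x,y]) = [x_J, y_J] = [\pi(x), \pi(y)]$. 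Since $\ker \pi = I$, the first isomorphism theorem for Lie algebras gives $\g/I \cong J$ as Lie algebras. But $J$ is an ideal in $\g$, so by the first assertion it is semisimple, whence $\g/I$ is semisimple.

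I do not anticipate a significant obstacle: the only point that requires a small verification is that the complement $J$ from Theorem \ref{dirsu} yields an honest Lie algebra isomorphism $\g/I \cong J$ rather than merely a vector space one, and this is immediate from the fact that $I$ and $J$ commute with each other inside $\g$ because $I \cap J = 0$ and both are ideals.
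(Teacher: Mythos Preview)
Your proof is correct and follows the intended route: the paper states this corollary without an explicit proof, leaving it as an immediate consequence of Proposition~\ref{dirsu2}, which is exactly what you unpack. Your treatment of the quotient via the complement $J$ from Theorem~\ref{dirsu} is a minor variant of simply observing that $\g/I \cong \bigoplus_{i \notin S}\g_i$ directly from Proposition~\ref{dirsu2}, but the content is the same.
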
 

Let ${\rm Der}\g$ be the Lie algebra of derivations of a Lie algebra $\g$. We have a homomorphism
${\rm ad}: \g\to {\rm Der}\g$ whose kernel is the center $\mathfrak{z}(\g)$. 
Thus if $\g$ has trivial center (e.g., is semisimple) then the map ${\rm ad}$ is injective and identifies $\g$ with a Lie subalgebra of ${\rm Der}\g$.  
Moreover, for $d\in {\rm Der}\g$ and $x\in \g$, we have 
$$
[d,{\rm ad}x](y)=d[x,y]-[x,dy]=[dx,y]={\rm ad}(dx)(y). 
$$
Thus $\g\subset {\rm Der}\g$ is an ideal.  

\begin{proposition}\label{g=derg} If $\g$ is semisimple then $\g={\rm Der}\g$. 
\end{proposition}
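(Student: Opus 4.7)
The plan is to show $\g^\perp=0$ inside $\mathfrak{d}:=\mathrm{Der}\,\g$ with respect to the Killing form of $\mathfrak{d}$, using that $\g$ sits in $\mathfrak{d}$ as an ideal with trivial centralizer.

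First I would recall what is already established in the excerpt: since $\g$ is semisimple it has trivial center, so the inclusion $\mathrm{ad}:\g\hookrightarrow\mathfrak{d}$ is injective, and the computation $[d,\mathrm{ad}\,x]=\mathrm{ad}(dx)$ for $d\in\mathfrak{d}$, $x\in\g$ shows that $\g$ is an ideal in $\mathfrak{d}$.

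Next, I would compare Killing forms. The key general lemma is that if $I\subset L$ is an ideal in any finite-dimensional Lie algebra, then $K_L|_I = K_I$. The proof is standard and short: for $x,y\in I$, the operator $\mathrm{ad}_L(x)\mathrm{ad}_L(y)$ maps $L$ into $I$ (because $\mathrm{ad}_L(y)L\subset I$, then $\mathrm{ad}_L(x)I\subset I$), so in a basis of $L$ adapted to $I\subset L$ its matrix is block-upper-triangular with a zero block, and its trace over $L$ equals its trace over $I$, i.e.\ $K_I(x,y)$. Applying this with $L=\mathfrak{d}$, $I=\g$ gives $K_{\mathfrak{d}}|_\g = K_\g$, which is nondegenerate by the Cartan semisimplicity criterion (Theorem \ref{cartsem}).

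Now let $\g^{\perp}\subset\mathfrak{d}$ denote the orthogonal complement with respect to $K_{\mathfrak{d}}$. By Proposition \ref{bilform}, $\g^{\perp}$ is an ideal in $\mathfrak{d}$. The intersection $\g\cap\g^{\perp}$ lies in the kernel of $K_{\mathfrak{d}}|_{\g}=K_\g$, hence is zero, and consequently $\mathfrak{d}=\g\oplus\g^{\perp}$ as a vector space. Since both summands are ideals of $\mathfrak{d}$, we get $[\g,\g^{\perp}]\subset\g\cap\g^{\perp}=0$. Finally, for $d\in\g^{\perp}$ and $x\in\g$, the identity $[d,\mathrm{ad}\,x]=\mathrm{ad}(dx)$ forces $\mathrm{ad}(dx)=0$, so $dx\in\mathfrak{z}(\g)=0$; thus $d$ kills $\g$, i.e.\ $d=0$. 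Hence $\g^{\perp}=0$ and $\mathfrak{d}=\g$.

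The only step requiring actual work is the identity $K_{\mathfrak{d}}|_{\g}=K_{\g}$; once that is in hand, everything else is formal manipulation of the orthogonal decomposition and the two previously proved facts ($\g$ is an ideal, $\mathfrak{z}(\g)=0$). I do not anticipate a real obstacle, since the argument is a direct application of Cartan's criterion together with the orthogonal-complement trick already used in Theorem \ref{dirsu}.
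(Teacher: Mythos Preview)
Your proof is correct and follows the same overall structure as the paper's: take the orthogonal complement of $\g$ in $\mathrm{Der}\,\g$ with respect to a suitable invariant form, show it is an ideal intersecting $\g$ trivially, deduce it centralizes $\g$, and conclude it is zero via $[d,\mathrm{ad}\,x]=\mathrm{ad}(dx)$ and $\mathfrak{z}(\g)=0$.

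The one difference worth noting is the choice of form. You use the Killing form $K_{\mathfrak d}$ of $\mathfrak d=\mathrm{Der}\,\g$, which then requires the (standard) lemma that the Killing form of a Lie algebra restricts to the Killing form of any ideal. The paper instead uses the trace form $K(a,b)=\mathrm{Tr}|_\g(ab)$ of the tautological representation of $\mathrm{Der}\,\g$ on $\g$; this form restricts to $K_\g$ by definition, so no auxiliary lemma is needed. Both choices work equally well and the rest of the argument is identical.
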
 

\begin{proof} 
Consider the invariant symmetric bilinear form 
$$
K(a,b)={\rm Tr}|_\g(ab)
$$
on ${\rm Der}\g$. This is an extension of the Killing form of $\g$ to ${\rm Der}\g$, so its restriction to $\g$ is nondegenerate. Let $I=\g^\perp$ be the orthogonal complement of $\g$ in ${\rm Der}\g$ under $K$. 
It follows that $I$ is an ideal, $I\cap \g=0$, and $I\oplus \g={\rm Der}\g$. Since both $I$ and $\g$ are ideals, we have $[\g,I]=0$. Thus for $d\in I$ and $x\in \g$, $[d,{\rm ad}x]={\rm ad}(dx)=0$, so 
$dx$ belongs to the center of $\g$. Thus $dx=0$, i.e., $d=0$. It follows that $I=0$, as claimed.  
\end{proof} 

\begin{corollary}\label{auto} Let $\g$ be a real or complex semisimple Lie algebra, and $G={\rm Aut}(\g)\subset GL(\g)$. Then $G$ is a Lie group with 
$$
{\rm Lie}G={\rm Der}\g\cong \g.
$$ 
Thus $G$ acts on $\g$ by the adjoint action. 
\end{corollary}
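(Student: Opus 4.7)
The plan is to deduce this immediately by combining two results already established in the excerpt: the general fact that the automorphism group of any finite dimensional (not necessarily associative) algebra is a closed Lie subgroup of its general linear group with Lie algebra equal to the derivation algebra, and Proposition \ref{g=derg} identifying $\g$ with $\mathrm{Der}(\g)$ in the semisimple case.

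First I would recall that $\g$, viewed merely as a finite dimensional vector space equipped with a bilinear multiplication $[\cdot,\cdot]:\g\otimes\g\to\g$, is a finite dimensional (non-associative) algebra. The example following Theorem \ref{closedlie1} was stated in exactly this generality: for any finite dimensional algebra $A$, the stabilizer of the multiplication map $\mu:A\otimes A\to A$ under the natural $GL(A)$-action on $\mathrm{Hom}(A\otimes A,A)$ is precisely $\mathrm{Aut}(A)$, and it is a closed Lie subgroup of $GL(A)$ with Lie algebra $\mathrm{Der}(A)$. Applying this with $A=\g$, I conclude that $G=\mathrm{Aut}(\g)$ is a closed Lie subgroup of $GL(\g)$ with Lie algebra $\mathrm{Der}(\g)$.

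Next I would invoke Proposition \ref{g=derg}: since $\g$ is semisimple, $\g$ has trivial center (as the center is a solvable, in fact abelian, ideal), so $\mathrm{ad}:\g\to\mathrm{Der}(\g)$ is injective, and Proposition \ref{g=derg} asserts that this embedding is surjective, hence an isomorphism $\g\xrightarrow{\sim}\mathrm{Der}(\g)$. Composing with the previous step gives $\mathrm{Lie}(G)\cong\g$, and the adjoint action of $G$ on $\g$ arises tautologically from the inclusion $G\subset GL(\g)$.

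There is no real obstacle here: the corollary is a formal consequence of the two ingredients above, and all the substantive work has already been carried out. The only point worth being careful about is that the earlier ``$\mathrm{Aut}(A)$ is a Lie group'' result was formulated for general (possibly non-associative) finite dimensional algebras, so applying it to the Lie algebra $\g$ is legitimate; this is why the example was stated in that generality in the first place.
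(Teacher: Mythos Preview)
Your proof is correct and follows essentially the same approach as the paper: invoke the general fact that $\mathrm{Aut}(A)$ is a Lie group with Lie algebra $\mathrm{Der}(A)$ for any finite dimensional algebra $A$, then apply Proposition~\ref{g=derg} to identify $\mathrm{Der}(\g)$ with $\g$ via the adjoint map. The paper's proof is a one-liner citing exactly these two ingredients, and you have supplied the same argument with slightly more detail.
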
 

\begin{proof} It is easy to show that for any finite dimensional real or complex Lie algebra $\g$, ${\rm Aut}(\g)$ is a Lie group with Lie algebra ${\rm Der}(\g)$, so the statement follows from Proposition \ref{g=derg}. 
\end{proof}  

\section{\bf Extensions of representations, Whitehead's theorem, complete reducibility}

\subsection{Extensions} \label{extee}
Let $\g$ be a Lie algebra and $U,W$ be representations of $\g$. 
We would like to classify all representations $V$ which fit into a short exact sequence 
\begin{equation}\label{seseq}
0\to U\to V\to W\to 0,
\end{equation} 
i.e., $U\subset V$ is a subrepresentation such that the surjection $p: V\to W$ has kernel 
$U$ and thus defines an isomorphism $V/U\cong W$. In other words, $V$ is endowed with a 2-step filtration with $F_0V=U$ and $F_1V=V$ such that $F_1V/F_0V=W$, so ${\rm gr}(V)=U\oplus W$.
To do so, pick a splitting of this sequence as a sequence of vector spaces, i.e. an injection $i: W\to V$ (not a homomorphism of representations, in general) such that $p\circ i={\rm Id}_W$. This defines a linear isomorphism $\widetilde i: U\oplus W\to V$ given by $(u,w)\mapsto u+i(w)$, which allows us to rewrite the action of $\g$ on $V$ as an action on $U\oplus W$. Since $\widetilde i$ is not in general a morphism of representations, this action is given by 
$$
\rho(x)(u,w)=(xu+a(x)w,xw)
$$ 
where $a: \g\to {\rm Hom}_{\bf k}(W,U)$ is a linear map, 
and $\widetilde i$ is a morphism of representations iff $a=0$. 

What are the conditions on $a$ to give rise to a representation? We compute: 
$$
\rho([x,y])(u,w)= ([x,y]u+a([x,y])w,[x,y]w),
$$
$$
[\rho(x),\rho(y)](u,w)=([x,y]u+([x,a(y)]+[a(x),y])w,[x,y]w).
$$
Thus the condition to give a representation is the Leibniz rule
$$
a([x,y])=[x,a(y)]+[a(x),y]=[x,a(y)]-[y,a(x)].
$$
In general, if $E$ is a representation of $\g$ then a linear function 
$a: \g\to E$ such that 
$$
a([x,y])=x\circ a(y)-y\circ a(x)
$$
is called a ${\bf 1-cocycle}$ of $\g$ with values in $E$. 
The space of 1-cocycles is denoted by $Z^1(\g,E)$. 

\begin{example} We have $Z^1(\g,{\bf k})=(\g/[\g,\g])^*$ 
and $Z^1(\g,\g)={\rm Der}\g$. 
\end{example} 

Thus we see that in our setting $a: \g\to {\rm Hom}_{\bf k}(W,U)$ 
defines a representation if and only if $a\in Z^1(\g, {\rm Hom}_{\bf k}(W,U))$. 
Denote the representation $V$ attached to such $a$ by $V_a$. 
Then we have a natural short exact sequence 
$$
0\to U\to V_a\to W\to 0.
$$
It may, however, happen that some $a\ne 0$ defines a trivial extension $V\cong U\oplus W$, 
i.e., $V_a\cong V_0$, and more generally $V_a\cong V_b$ for $a\ne b$. Let us determine when this happens. More precisely, let us look for isomorphisms $f: V_a\to V_b$ preserving the structure of the short exact sequences, i.e., such that ${\rm gr}(f)={\rm Id}$. Then 
$$
f(u,w)=(u+Aw,w)
$$
where $A: W\to U$ is a linear map. Then we have 
$$
xf(u,w)=x(u+Aw,w)=(xu+xAw+b(x)w,xw)
$$
and 
$$
fx(u,w)=f(xu+a(x)w,xw)=(xu+a(x)w+Axw,xw),
$$
so we get that $xf=fx$ iff
$$
[x,A]=a(x)-b(x). 
$$
In particular, setting $b=0$, we see that $V$ is a trivial extension if and only if 
$a(x)=[x,A]$ for some $A$. 

More generally, if $E$ is a $\g$-module, the linear function $a: \g\to E$ given by $a(x)=xv$ for some $v\in E$ is called the {\bf 1-coboundary} of $v$, and one writes $a=dv$. The space of 1-coboundaries is denoted by $B^1(\g,E)$; it is easy to see that it is a subspace of $Z^1(\g,E)$, i.e., a 1-coboundary is always a 1-cocycle. Thus in our setting $f: V_a\to V_b$ is an isomorphism of representations iff 
$$
a-b=dA, 
$$ 
i.e., there is an isomorphism $f: V_a\cong V_b$ with ${\rm gr}(f)={\rm Id}$ 
if and only if $a=b$ in the quotient space 
$$
{\rm Ext}^1(W,U):=Z^1(\g,{\rm Hom}_{\bf k}(W,U))/B^1(\g,{\rm Hom}_{\bf k}(W,U)).
$$
The notation is justified by the fact that this space parametrizes extensions 
of $W$ by $U$. More precisely, every short exact sequence \eqref{seseq} gives rise to a class $[V]\in {\rm Ext}^1(W,U)$, and the extension defined by this sequence is trivial iff $[V]=0$. 

More generally, for a $\g$-module $E$ the space 
$$
H^1(\g,E):=Z^1(\g,E)/B^1(\g,E)
$$
is called the {\bf first cohomology} of $\g$ with coefficients in $E$. 
Thus, 
$$
\Ext^1(W,U)=H^1(\g, {\rm Hom}_{\bf k}(W,U)).
$$

\begin{lemma}\label{longex} A short exact sequence $0\to U\to V\to W\to 0$ 
gives rise to an exact sequence 
$$
H^1(\g,U)\to H^1(\g,V)\to H^1(\g,W).
$$ 
\end{lemma}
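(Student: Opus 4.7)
The plan is to construct the two maps, check that the composition is zero, and then verify exactness in the middle by the standard diagram-chase argument. Let $i : U \hookrightarrow V$ and $p : V \twoheadrightarrow W$ be the given morphisms of representations. First I would observe that for any morphism of $\g$-modules $\phi : E \to E'$, postcomposition $a \mapsto \phi \circ a$ carries $Z^1(\g, E)$ to $Z^1(\g, E')$ (because $\phi$ commutes with the $\g$-action, so the cocycle relation is preserved) and $B^1(\g, E)$ to $B^1(\g, E')$ (since $\phi(dv) = d(\phi v)$). Applying this to $i$ and $p$ gives well-defined maps $i_* : H^1(\g, U) \to H^1(\g, V)$ and $p_* : H^1(\g, V) \to H^1(\g, W)$.

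Next I would check that $p_* \circ i_* = 0$. Given $a \in Z^1(\g, U)$, the composition $p_* i_* [a]$ is represented by $p \circ i \circ a$, and $p \circ i = 0$ by exactness of the original sequence, so $p_* i_* [a] = 0$.

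For exactness at $H^1(\g, V)$, suppose $[a] \in H^1(\g, V)$ satisfies $p_* [a] = 0$, i.e., $p \circ a = dw$ for some $w \in W$. Since $p$ is surjective as a linear map, choose a preimage $v \in V$ with $p(v) = w$. Then $dv : \g \to V$ is a coboundary and $p \circ (dv) = d(p(v)) = dw = p \circ a$, so $a - dv$ takes values in $\ker p = i(U)$. Hence $a - dv = i \circ a'$ for a unique linear map $a' : \g \to U$, and $a'$ is a cocycle because $i$ is an injective morphism of representations and $a - dv \in Z^1(\g, V)$. Therefore $[a] = [a - dv] = i_*[a']$ lies in the image of $i_*$, as required.

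The only place where care is needed is the lift $w \mapsto v$: this is chosen as a lift of vector spaces, not of representations (which would generally be impossible), but this is harmless because the cocycle condition for $a' = i^{-1}(a - dv)$ only uses that $a - dv$ is a cocycle in $V$ whose image lies in the subrepresentation $i(U)$, and because any two lifts of $w$ differ by an element of $U$, which only changes $a'$ by an element of $B^1(\g, U)$ and hence does not affect $[a'] \in H^1(\g, U)$.
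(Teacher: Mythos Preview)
Your proof is correct and is exactly the standard argument. The paper itself leaves this lemma as an exercise rather than giving a proof, so there is nothing to compare against beyond noting that your functoriality-plus-diagram-chase is the intended approach: define $i_*$ and $p_*$ by postcomposition, observe $p_*i_*=0$ since $p\circ i=0$, and for exactness at the middle term subtract off a coboundary $dv$ (with $p(v)=w$) to push the cocycle into $i(U)$.
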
 

\begin{exercise} Prove Lemma \ref{longex}.  
\end{exercise}

\subsection{Whitehead's theorem} \label{Whi}

We have shown in Corollary \ref{gperf} and Proposition \ref{g=derg} 
that for a semisimple $\g$ over a field of characteristic zero, 
$H^1(\g,{\bf k})=(\g/[\g,\g])^*=0$, and $H^1(\g,\g)={\rm Der}\g/\g=0$. In fact, these are special cases of a more general theorem. 

\begin{theorem}\label{vanish} (Whitehead) If $\g$ is semisimple over a field $\k$ characteristic zero then for every finite dimensional representation
$V$ of $\g$, $H^1(\g,V)=0$. 
\end{theorem}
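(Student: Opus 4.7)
The plan is to prove $H^1(\g, V) = 0$ by induction on $\dim V$, reducing to the case where $V$ is irreducible, and then handling that case using a Casimir element.

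First I would reduce to irreducible $V$. If $V$ has a proper nonzero subrepresentation $U$, applying Lemma~\ref{longex} to $0 \to U \to V \to V/U \to 0$ yields an exact sequence
\[ H^1(\g, U) \to H^1(\g, V) \to H^1(\g, V/U), \]
whose outer terms vanish by induction on dimension, forcing the middle term to vanish. So it suffices to treat irreducible $V$. If $V \cong \mathbf{k}$ is trivial, then $H^1(\g, \mathbf{k}) = (\g/[\g,\g])^*$, which is zero by Corollary~\ref{gperf}. The remaining case is $V$ a nontrivial irreducible representation.

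Next I would reformulate the vanishing as a splitting problem. A class in $H^1(\g, V)$ is represented by a $1$-cocycle $a: \g \to V$ corresponding to an extension $0 \to V \to E_a \to \mathbf{k} \to 0$, where $E_a = V \oplus \mathbf{k}$ with action $x(v,\lambda) = (xv + \lambda a(x), 0)$; the class is zero precisely when this extension splits as $\g$-modules. So the goal becomes: every extension $0 \to V \to E \to \mathbf{k} \to 0$ with $V$ nontrivial irreducible splits.

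To construct the splitting I would use the Casimir element of $E$. Let $\rho_E: \g \to \mathfrak{gl}(E)$ be the action and $B_E(x,y) := \mathrm{Tr}_E(\rho_E(x)\rho_E(y))$, an invariant symmetric bilinear form on $\g$. Its kernel $K := \ker B_E$ is an ideal (Proposition~\ref{bilform}) containing $\ker\rho_E$; conversely, since $B_E|_K = 0$, the Cartan solvability criterion (Theorem~\ref{cartsolv}) applied to the image $\rho_E(K) \subset \mathfrak{gl}(E)$ shows $\rho_E(K)$ is solvable, and as a quotient of the semisimple ideal $K \subset \g$ it must be zero. Hence $B_E$ descends to a nondegenerate invariant form on the semisimple quotient $\bar\g := \g/\ker\rho_E$, producing a Casimir element $C_E = \sum_i x_i x^i \in U(\bar\g) \subset U(\g)$, where $\{x_i\}$ and $\{x^i\}$ are bases of $\bar\g$ dual under $B_E$. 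The standard invariance calculation shows $C_E$ is central in $U(\bar\g)$, and therefore commutes with the $\g$-action on every module.

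Finally I would analyze $C_E$ acting on $E$. It acts as zero on the trivial quotient $\mathbf{k}$, so $C_E(E) \subset V$; on the irreducible $V$ it acts by a scalar $\lambda$ by Schur's lemma, so $C_E(C_E - \lambda) = 0$ on all of $E$. Taking traces gives $\lambda \dim V = \mathrm{Tr}_E(C_E) = \sum_i B_E(x_i, x^i) = \dim \bar\g$, and since $V$ is nontrivial irreducible, $\bar\g \ne 0$ and hence $\lambda \ne 0$. Therefore $C_E$ is diagonalizable with distinct eigenvalues $0$ and $\lambda$, and the eigenspace decomposition $E = \ker C_E \oplus V$ is a decomposition into $\g$-submodules; the one-dimensional summand $\ker C_E$ maps isomorphically to $\mathbf{k}$, yielding the desired splitting. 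The main obstacle will be setting up the Casimir element: checking that $B_E$ becomes nondegenerate on the quotient $\bar\g$ (which relies on the decomposition of semisimple Lie algebras together with Cartan's solvability criterion) and that the scalar $\lambda$ is nonzero precisely because $V$ is nontrivial; both steps depend essentially on characteristic zero and on semisimplicity.
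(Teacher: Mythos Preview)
Your proof is correct and follows essentially the same route as the paper: reduce to irreducible $V$ via Lemma~\ref{longex}, dispose of the trivial case by $\g=[\g,\g]$, and for nontrivial irreducible $V$ build a Casimir from the trace form and use it to split the extension $0\to V\to E\to\mathbf{k}\to 0$. Two small points: the result you invoke to conclude $\rho_E(K)$ is solvable from $B_E|_K=0$ is really Lemma~\ref{lee} (about the trace form of a representation) rather than Theorem~\ref{cartsolv} (about the Killing form); and writing $U(\bar\g)\subset U(\g)$ is imprecise since $\bar\g$ is a quotient—either work entirely in $U(\bar\g)$ acting on $E$ through the factorization of $\rho_E$, or (as the paper does) pass to the complementary ideal $\g'\cong\bar\g$ inside $\g$ via Proposition~\ref{dirsu2} and form the Casimir in $U(\g')\subset U(\g)$.
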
 

\subsection{Proof of Theorem \ref{vanish}}\label{vani} By extension of scalars, we may assume without loss of generality that $\k$ is algebraically closed. We will use the following lemma, which actually holds over any field. 

\begin{lemma}\label{leem1} Let $E$ be a representation of a Lie algebra $\g$ and 
$C\in U(\g)$ be a central element which acts 
by $0$ on the trivial representation of $\g$ and by some scalar $\lambda\ne 0$
on $E$. Then $H^1(\g,E)=0$. 
\end{lemma}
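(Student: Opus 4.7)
The plan is to show that any $1$-cocycle $a \in Z^1(\g,E)$ is a coboundary by constructing an explicit splitting of an associated extension, using $C$ as a ``separator'' of eigenvalues.

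First, I would encode the cocycle as an extension. Given $a\in Z^1(\g,E)$, form the vector space $E_+ := E \oplus {\bf k}$ and define a $\g$-action by
$$ x\cdot(e,t) = (xe + t\,a(x),\, 0), \qquad x\in \g. $$
The cocycle condition for $a$ is exactly the statement that this is a Lie algebra action (this is the standard extension-of-representations construction from Subsection~10.1 applied with $U=E$, $W={\bf k}$). By construction we have a short exact sequence of $\g$-modules $0 \to E \to E_+ \to {\bf k} \to 0$, and producing a $\g$-module splitting $s\colon {\bf k}\to E_+$ amounts to finding $v\in E$ with $s(1)=(-v,1)$ satisfying $x\cdot(-v,1)=0$, i.e. $a(x)=xv$; equivalently, $a=d(v)\in B^1(\g,E)$.

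Next I would use centrality of $C$ to split this sequence. Since $C\in U(\g)$ is central, its action on $E_+$ is a $\g$-module endomorphism; in particular $C$ preserves the submodule $E$. By hypothesis $C$ acts on $E$ by the scalar $\lambda$ and on the quotient $E_+/E \cong {\bf k}$ (the trivial representation) by $0$. Thus in block form with respect to $E_+ = E \oplus {\bf k}$,
$$ C|_{E_+} = \begin{pmatrix} \lambda\cdot\mathrm{Id}_E & * \\ 0 & 0 \end{pmatrix}, $$
which has the two distinct eigenvalues $\lambda\ne 0$ and $0$. Hence $E_+$ decomposes as a direct sum of generalized eigenspaces $E_+ = E_+^{(\lambda)} \oplus E_+^{(0)}$, and because $C$ is central, both summands are $\g$-subrepresentations.

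Now I would identify the two summands. The generalized $\lambda$-eigenspace contains $E$ (on which $C-\lambda$ is already zero) and must project to $0$ in ${\bf k}$ (where $C-\lambda$ acts invertibly as $-\lambda$), so $E_+^{(\lambda)} = E$. Therefore $E_+^{(0)}$ is a one-dimensional $\g$-submodule mapping isomorphically onto ${\bf k}$, providing the desired splitting $s$; unwinding the construction of $s$ yields the vector $v\in E$ with $a = dv$, so $H^1(\g,E)=0$.

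There is no real obstacle here once the setup is right; the only thing to be careful about is justifying that $C$ does act by $0$ on the quotient (which is immediate from the hypothesis applied to the trivial representation $E_+/E$) and that the generalized eigenspaces are $\g$-invariant (which follows from centrality of $C$ in $U(\g)$, so that the operator polynomials $(C-\lambda)^N$ and $C^N$ commute with every $x\in\g$).
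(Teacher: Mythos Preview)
Your proof is correct and follows essentially the same approach as the paper: both encode the cocycle as an extension $0\to E\to V\to {\bf k}\to 0$ and use the central element $C$ (with distinct eigenvalues $\lambda$ and $0$) to produce a $\g$-invariant complement to $E$. The only cosmetic difference is that the paper writes down the splitting vector explicitly as $v=w-\lambda^{-1}Cw$ for any lift $w$ of $1$, whereas you phrase it as taking the $0$-generalized eigenspace of $C$; these are the same construction.
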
 
 
\begin{proof} We have seen that $H^1(\g,E)={\rm Ext}^1({\bf k},E)$, so 
our job is to show that any extension 
$$
0\to E\to V\to {\bf k}\to 0
$$
splits. Let $p: V\to {\bf k}$ be the projection. We claim that 
there exists a unique vector $v\in V$ such that $p(v)=1$ and 
$Cv=0$. Indeed, pick some $w\in V$ with $p(w)=1$. 
Then $Cw\in E$, so set $v=w-\lambda^{-1}Cw$. Since $C^2w=\lambda Cw$, 
we have $Cv=0$. Also if $v'$ is another such vector then $v-v'\in  E$ so $C(v-v')=\lambda(v-v')=0$, hence $v=v'$. 

Thus ${\bf k}v\subset V$ is a $\g$-invariant complement to $E$ (as $C$ is central), which implies the statement. 
\end{proof} 

It remains to construct a central element of $U(\g)$ for a semisimple Lie algebra $\g$ to which we can apply Lemma \ref{leem1}. This can be done as follows. 
Let $a_i$ be a basis of $\g$ and $a^i$ the dual basis under an invariant inner product on $\g$ (for example, the Killing form). Define the {\bf (quadratic) Casimir element} 
$$
C:=\sum_i a_ia^i.
$$
It is easy to show that $C$ is independent of  the choice of the basis (although it depends on the choice of the inner product). 
Also $C$ is central: for $y\in \g$, 
$$
[y,C]=\sum_i ([y,a_i]a^i+a_i[y,a^i])=0
$$
since 
$$
\sum_i ([y,a_i]\otimes a^i+a_i\otimes [y,a^i])=0
$$
(this is seen by taking the inner product of the first tensorand with $a^j$ and using the invariance of the inner product).
Finally, note that for $\g=\mathfrak{sl}_2$, $C$ is proportional to the Casimir element 
$2fe+\frac{h^2}{2}+h=ef+fe+\frac{h^2}{2}$ considered previously, 
as the basis $f,e,\frac{h}{\sqrt{2}}$ is dual to the basis 
$e,f,\frac{h}{\sqrt{2}}$ under an invariant inner product of $\g$. 

The key lemma used in the proof of Theorem \ref{vanish} is the following. 

\begin{lemma}\label{leem2} Let $\g$ be semisimple and 
$V$ be a nontrivial finite dimensional irreducible $\g$-module. Then 
there is a central element $C\in U(\g)$ such that $C|_{\bf k}=0$ and $C|_V\ne 0$.  
\end{lemma}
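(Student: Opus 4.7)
The plan is to construct the explicit Casimir element $C_V$ associated to the trace form $B_V(x,y) = \mathrm{Tr}(\rho_V(x)\rho_V(y))$ on $\g$, and to show it does the job. The main work is to verify that some version of $B_V$ is nondegenerate so that the Casimir construction goes through, and this rests on Cartan's solvability criterion together with the structural results we already have for semisimple Lie algebras.

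Concretely, I would begin by noting that the kernel $\mathfrak{k} := \ker \rho_V \subset \g$ is an ideal in $\g$. Applying Theorem \ref{dirsu} to $\mathfrak{k}$, we decompose $\g = \mathfrak{k} \oplus \g'$ as a direct sum of ideals, where $\g'$ is therefore semisimple and acts faithfully on $V$. Because $V$ is nontrivial, $\mathfrak{k} \ne \g$, so $\g' \ne 0$. Next I claim that the restriction $B := B_V|_{\g'}$ is nondegenerate. Its kernel $J := \{x \in \g' : B(x,y) = 0 \ \forall y \in \g'\}$ is an ideal in $\g'$ by Proposition \ref{bilform}, and for every $a \in [J,J] \subseteq J$ and $b \in J$ we have $\mathrm{Tr}(\rho_V(a)\rho_V(b)) = 0$. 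Hence $\rho_V(J) \subset \mathfrak{gl}(V)$ satisfies the hypothesis of Lemma \ref{lee}, so $\rho_V(J)$ is solvable, and since $\rho_V$ is injective on $\g'$, $J$ itself is solvable. But $\g'$ is semisimple, so $J = 0$.

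With nondegeneracy established, pick a basis $a_1,\dots,a_n$ of $\g'$ and let $a^1,\dots,a^n$ be the dual basis with respect to $B$. Define
$$
C_V := \sum_i a_i a^i \in U(\g').
$$
Regard $C_V$ as an element of $U(\g)$ via the inclusion $U(\g') \hookrightarrow U(\g)$ (which is an embedding by Corollary \ref{tenpr} applied to $\g = \mathfrak{k} \oplus \g'$). To check centrality in $U(\g)$, note that $[\mathfrak{k}, \g'] = 0$ since $\mathfrak{k}$ and $\g'$ are ideals with zero intersection, so every $y \in \mathfrak{k}$ commutes with $C_V$; and for $y \in \g'$ the usual calculation
$$
[y, C_V] = \sum_i \bigl([y,a_i]a^i + a_i[y,a^i]\bigr) = 0
$$
holds exactly as in Subsection \ref{vani}, using the invariance of $B$ under $\mathrm{ad}\,\g'$.

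Finally, $C_V$ acts on the trivial representation by $0$ since each $a_i$ does. On $V$, Schur's lemma forces $C_V|_V = \lambda \cdot \mathrm{Id}$ for some $\lambda \in {\bf k}$, and taking traces gives
$$
\lambda \dim V = \mathrm{Tr}_V(C_V) = \sum_i \mathrm{Tr}_V(\rho_V(a_i)\rho_V(a^i)) = \sum_i B(a_i, a^i) = \dim \g' \ne 0,
$$
so $\lambda \ne 0$, as required. The only genuinely nontrivial step is the nondegeneracy of $B_V$ on $\g'$; everything else is bookkeeping. I expect the subtlety in exposition to be the need to pass from $\g$ to $\g'$, since in general $B_V$ on $\g$ itself is degenerate precisely along $\mathfrak{k}$, so one cannot write the Casimir naively using a basis of all of $\g$.
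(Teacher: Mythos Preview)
Your proof is correct and follows essentially the same strategy as the paper: pass to a semisimple summand $\g'$ on which $B_V$ is nondegenerate, build the associated Casimir, and compute its trace on $V$. The only difference is in which ideal you split off first: you take $\g'$ to be the complement of $\ker\rho_V$ and then invoke Lemma~\ref{lee} to show $B_V|_{\g'}$ is nondegenerate, whereas the paper takes $\g'$ to be the complement of $\ker B_V$ (so nondegeneracy is automatic) and uses Lemma~\ref{lee} instead to show $B_V\ne 0$. In fact your argument implicitly proves these two kernels coincide, so the resulting $\g'$ and $C$ are literally the same; the paper's ordering just saves you the separate nondegeneracy check.
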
 

\begin{proof} Consider the invariant symmetric bilinear form on $\g$
$$
B_V(x,y)={\rm Tr}|_V(xy). 
$$
We claim that $B_V\ne 0$. Indeed, let $\bar\g\subset \mathfrak{gl}(V)$ be the image of $\g$. 
By Lemma \ref{lee}, if $B_V=0$ then $\overline \g$ is solvable, so, being the quotient of a semisimple Lie algebra $\g$, it must be zero, 
hence $V$ is trivial, a contradiction. 

Let $I={\rm Ker}(B_V)$. Then $I\subset \g$ is an ideal, so by Proposition \ref{dirsu2}, 
$\g=I\oplus \g'$ 
for some semisimple Lie algebra $\g'$, and $B_V$ is nondegenerate on $\g'$. 
Let $C$ be the Casimir element of $U(\g')$ corresponding to the inner product $B_V$. Then ${\rm Tr}_V(C)=\sum_i B_V(a_i,a^i)=\dim\g'$, so 
$C|_V=\frac{\dim \g'}{\dim V}\ne 0$. Also it is clear that $C|_\k=0$, so the lemma follows. 
\end{proof} 

\begin{corollary}\label{coo1} For any irreducible finite dimensional representation $V$ of a semisimple Lie algebra $\g$ over ${\bf k}$, we have $H^1(\g,V)=0$. 
\end{corollary}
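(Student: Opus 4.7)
The plan is to reduce the statement to the two lemmas (\ref{leem1} and \ref{leem2}) already established, splitting into two cases according to whether $V$ is the trivial representation.

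First I would handle the trivial case $V = \mathbf{k}$ separately. Here every coboundary $dv$ vanishes because $xv = 0$ for all $x \in \g$, so $B^1(\g, \mathbf{k}) = 0$. On the other hand, the cocycle condition $a([x,y]) = x\circ a(y) - y\circ a(x) = 0$ reduces to the statement that $a: \g \to \mathbf{k}$ annihilates $[\g,\g]$, so $Z^1(\g,\mathbf{k}) = (\g/[\g,\g])^*$. By Corollary \ref{gperf}, semisimplicity gives $[\g,\g] = \g$, hence $H^1(\g,\mathbf{k}) = 0$.

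Next I would handle the case when $V$ is a nontrivial irreducible representation. By Lemma \ref{leem2}, there exists a central element $C \in U(\g)$ such that $C|_{\mathbf{k}} = 0$ and $C|_V \ne 0$. Since $V$ is irreducible and $C$ is central, $C$ defines an endomorphism of the representation $V$, which by Schur's lemma (applied after passing to the algebraic closure if necessary, or using the fact that $C|_V$ is a nonzero scalar multiple of the identity as computed in the proof of Lemma \ref{leem2}) acts by a nonzero scalar $\lambda$. Now Lemma \ref{leem1} directly applies and yields $H^1(\g, V) = 0$.

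There is essentially no obstacle at this stage: all the substantive work has been done in constructing the Casimir element and in Lemma \ref{leem1}. The only mild subtlety is making sure the scalar action of $C$ on the irreducible $V$ is genuinely nonzero rather than merely nontrivial as an operator; this is taken care of because in the proof of Lemma \ref{leem2} one shows $C|_V = \frac{\dim \g'}{\dim V} \cdot \mathrm{Id}$, which is manifestly a nonzero scalar in characteristic zero. Together the two cases cover all finite dimensional irreducible representations, completing the proof of Corollary \ref{coo1}.
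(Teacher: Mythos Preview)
Your proof is correct and follows exactly the same approach as the paper: split into the trivial and nontrivial cases, invoke $(\g/[\g,\g])^*=0$ via Corollary~\ref{gperf} for the former, and combine Lemmas~\ref{leem1} and~\ref{leem2} for the latter. Your write-up simply unpacks the details more explicitly than the paper's two-line version.
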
 

\begin{proof} If $V$ is nontrivial, this follows from Lemmas \ref{leem1} and \ref{leem2}. 
On the other hand, if $V={\bf k}$ then $H^1(\g,V)=(\g/[\g,\g])^*=0$.  
\end{proof} 

Now we can prove Theorem \ref{vanish}. By Lemma \ref{longex}, it suffices to prove the theorem for irreducible $V$, which is guaranteed by Corollary \ref{coo1}.

\subsection{Complete reducibility of representations of semisimple Lie algebras} 

\begin{theorem}\label{comred} Every finite dimensional representation of a semisimple Lie algebra $\g$
over a field of characteristic zero is completely reducible, i.e., isomorphic to a direct sum of irreducible representations.  
\end{theorem}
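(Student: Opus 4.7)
The plan is to deduce Theorem \ref{comred} directly from Whitehead's theorem (Theorem \ref{vanish}) and the identification of extensions of representations with first cohomology from Subsection \ref{Whi}.

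First, I would argue by induction on $\dim V$. The base case $\dim V = 0$ is trivial, and if $V$ is irreducible there is nothing to prove, so assume $V$ is reducible and pick a nonzero proper subrepresentation $U \subsetneq V$. Setting $W := V/U$, I obtain a short exact sequence of $\g$-modules
\[
0 \to U \to V \to W \to 0.
\]
By the inductive hypothesis applied to $U$ and $W$ (both of strictly smaller dimension than $V$), each of them is a direct sum of irreducibles. Thus it suffices to prove that the above short exact sequence splits, since then $V \cong U \oplus W$ is completely reducible.

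For the splitting, I invoke the description of extensions from Subsection \ref{Whi}: the isomorphism class of the extension corresponds to a class in
\[
\Ext^1(W,U) = H^1(\g,\Hom_{\bf k}(W,U)).
\]
Since $W$ and $U$ are finite dimensional, so is the representation $\Hom_{\bf k}(W,U)$. Whitehead's theorem (Theorem \ref{vanish}) then gives $H^1(\g,\Hom_{\bf k}(W,U)) = 0$, so the extension class vanishes and the sequence splits.

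The main point where care is needed is making sure the reduction is actually well-founded: I must use Whitehead's theorem to produce the splitting \emph{before} invoking the inductive hypothesis on the summands, since the inductive hypothesis gives complete reducibility of $U$ and $W$ only once they are identified as genuine subquotients of smaller dimension. There is no serious obstacle here, as Whitehead's theorem handles arbitrary finite dimensional coefficients, not just irreducible ones, so it applies directly to $\Hom_{\bf k}(W,U)$ without any further preparation. All the real work has already been done in Subsection \ref{vani}; this theorem is essentially a formal consequence.
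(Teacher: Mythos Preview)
Your proof is correct and follows essentially the same approach as the paper: both deduce the splitting of any short exact sequence $0\to U\to V\to W\to 0$ from Whitehead's theorem via $\Ext^1(W,U)=H^1(\g,\Hom_{\bf k}(W,U))=0$, and then complete reducibility follows immediately. The paper's version is slightly terser (it does not spell out the induction explicitly), but the content is the same.
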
 

\begin{proof} Theorem \ref{vanish} implies that 
for any finite dimensional representations $W,U$ of $\g$ one has 
${\rm Ext}^1(W,U)=0$. Thus any short exact sequence 
$$
0\to U\to V\to W\to 0
$$
splits, which implies the statement. 
\end{proof} 

\begin{corollary}\label{redualg} A reductive Lie algebra $\g$ in characteristic zero is uniquely a direct sum of a semisimple and abelian Lie algebra. 
\end{corollary}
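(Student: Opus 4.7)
The plan is to deduce this from Whitehead's theorem (Theorem~\ref{vanish}) applied to the semisimple quotient $\bar\g := \g/\mathfrak{z}(\g)$, which is semisimple because for reductive $\g$ we have $\mathrm{rad}(\g)=\mathfrak{z}(\g)$, so Theorem~\ref{wld} applies. The key idea is that although splittings of Lie-algebra extensions are in general classified by $H^2$, our extension $0\to \mathfrak{z}(\g)\to \g\to \bar\g\to 0$ is \emph{central}, and its splitting can be extracted from an $H^1$-vanishing statement instead.

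First I would regard $\g$ as a $\bar\g$-module via the adjoint action: since $\mathfrak{z}(\g)$ acts trivially on $\g$, the adjoint representation descends to $\bar\g$. The displayed sequence then becomes a short exact sequence of finite-dimensional $\bar\g$-modules, with $\mathfrak{z}(\g)$ carrying the trivial action. Whitehead's theorem gives $H^1(\bar\g,\mathrm{Hom}(\bar\g,\mathfrak{z}(\g)))=0$, i.e.\ $\mathrm{Ext}^1_{\bar\g}(\bar\g,\mathfrak{z}(\g))=0$, so the sequence splits as $\bar\g$-modules, yielding a $\bar\g$-submodule $\g'\subset\g$ complementary to $\mathfrak{z}(\g)$.

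Next I would upgrade this module splitting to a Lie-algebra splitting. Being a $\bar\g$-submodule of the adjoint representation of $\g$ means exactly that $\g'$ is an ideal, so in particular $[\g',\g']\subset\g'$ and $\g'$ is a Lie subalgebra; the quotient map then restricts to a Lie-algebra isomorphism $\g'\xrightarrow{\sim}\bar\g$. Because $\mathfrak{z}(\g)$ commutes with $\g'$, the decomposition $\g=\mathfrak{z}(\g)\oplus\g'$ is a direct sum of Lie algebras, abelian plus semisimple, giving existence.

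For uniqueness, suppose $\g=\mathfrak{a}\oplus\mathfrak{s}$ with $\mathfrak{a}$ abelian and $\mathfrak{s}$ semisimple. Both summands are ideals, so $[\mathfrak{a},\mathfrak{s}]\subset\mathfrak{a}\cap\mathfrak{s}=0$, forcing $\mathfrak{a}\subset\mathfrak{z}(\g)$; conversely, $\mathfrak{z}(\g)=\mathfrak{z}(\mathfrak{a})\oplus\mathfrak{z}(\mathfrak{s})=\mathfrak{a}$ since semisimple Lie algebras have trivial center. Then Corollary~\ref{gperf} gives $\mathfrak{s}=[\mathfrak{s},\mathfrak{s}]=[\g,\g]$, pinning down both summands canonically. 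The step I expect to be the main obstacle is justifying that the $\mathrm{Ext}^1$-theoretic splitting really produces a \emph{Lie-subalgebra} complement and not merely a vector-space one; this reduces to the clean observation that $\bar\g$-submodules of the adjoint representation are automatically ideals, which is exactly what lets $H^1$-vanishing accomplish a job that would usually demand $H^2$-vanishing.
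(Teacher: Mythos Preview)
Your proof is correct and follows essentially the same route as the paper: view the short exact sequence $0\to\mathfrak{z}(\g)\to\g\to\bar\g\to 0$ as a sequence of $\bar\g$-modules via the adjoint action, split it using Whitehead's theorem (the paper phrases this as ``by complete reducibility''), and observe that a $\bar\g$-submodule of the adjoint representation is an ideal. Your uniqueness argument is more explicit than the paper's ``clearly unique,'' and your closing remark about $H^1$ doing the work of $H^2$ here is a nice gloss on why centrality of the extension is what makes this approach succeed.
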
 

\begin{proof} Consider the adjoint representation of $\g$. It is a representation of $\g'=\g/\mathfrak{z}(\g)$, which fits into a short exact sequence 
$$
0\to \mathfrak{z}(\g)\to \g\to \g'\to 0.
$$
By complete reducibility, this sequence splits, i.e. we have a decomposition 
$\g=\g'\oplus \mathfrak{z}(\g)$ as a direct sum of ideals, and it is 
clearly unique. 
\end{proof} 

\section{\bf Structure of semisimple Lie algebras, I} 

\subsection{Semisimple elements} 
Let ${\bf k}$ be an algebraically closed field, and $\g$ be a finite dimensional Lie algebra over ${\bf k}$. 
Let $x\in \g$. Let $\g_\lambda\subset \g$ be the generalized eigenspace of ${\rm ad}x$ with eigenvalue $\lambda$. 
Then $\g=\oplus_\lambda \g_\lambda$. 

\begin{lemma}\label{gra} We have $[\g_\lambda,\g_\mu]\subset \g_{\lambda+\mu}$.
\end{lemma}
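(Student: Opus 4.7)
The plan is to use the fact that $\operatorname{ad} x$ is a derivation of the Lie bracket (which is just a restatement of the Jacobi identity, as noted in the proof of the Jacobi identity earlier in the excerpt) and then apply a Leibniz-type formula.

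Set $D := \operatorname{ad} x$. Since $D$ is a derivation, the operators $D - \lambda \cdot \mathrm{Id}$ and $D - \mu \cdot \mathrm{Id}$ satisfy a twisted Leibniz rule, and by induction one obtains, for any $y,z \in \g$ and any $n \ge 0$,
$$
(D - (\lambda+\mu))^n [y,z] \;=\; \sum_{k=0}^{n} \binom{n}{k}\, \bigl[(D-\lambda)^k y,\ (D-\mu)^{n-k} z\bigr].
$$
This is the key computation; it is a direct induction on $n$ starting from the derivation identity $D[y,z] = [Dy, z] + [y, Dz]$, rewritten as $(D - (\lambda+\mu))[y,z] = [(D-\lambda)y, z] + [y, (D-\mu)z]$.

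Now suppose $y \in \g_\lambda$ and $z \in \g_\mu$, so there exist $N, M$ with $(D-\lambda)^N y = 0$ and $(D-\mu)^M z = 0$. Take $n \ge N + M - 1$. For each $k$ in the sum above, either $k \ge N$ (killing the first factor) or $n - k \ge M$ (killing the second factor). Hence every term vanishes and $(D - (\lambda+\mu))^n [y,z] = 0$, so $[y,z] \in \g_{\lambda+\mu}$, as required.

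The only real content is the Leibniz-type identity; the rest is bookkeeping. I do not anticipate any obstacle, since $D$ being a derivation is exactly the statement of the Jacobi identity, which was established earlier in the text. If one wants to avoid induction, an alternative is to work with $D \otimes 1 + 1 \otimes D$ acting on $\g \otimes \g$ and restrict along the bracket map $[,] \colon \g \otimes \g \to \g$, which is $\g$-equivariant; then the generalized $(\lambda+\mu)$-eigenspace of $D \otimes 1 + 1 \otimes D$ on $\g \otimes \g$ contains $\g_\lambda \otimes \g_\mu$, and its image under the bracket lies in $\g_{\lambda+\mu}$.
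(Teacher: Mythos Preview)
Your proof is correct and follows essentially the same approach as the paper: both establish the identity $(\operatorname{ad} x - \lambda - \mu)^N[y,z] = \sum_{k+\ell=N}\binom{N}{k}[(\operatorname{ad} x - \lambda)^k y, (\operatorname{ad} x - \mu)^\ell z]$ and then choose $N$ large enough. The only cosmetic difference is that the paper derives this identity by first expanding $(\operatorname{ad} x - \lambda - \mu)^N$ multinomially and regrouping, whereas you obtain it by a cleaner one-line induction from the shifted Leibniz rule.
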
 

\begin{proof} Let $y\in \g_\lambda,z\in \g_\mu$. We have 
$$
({\rm ad}x-\lambda-\mu)^N([y,z])=
$$
$$
\sum_{p+q+r+s=N}(-1)^{r+s}\frac{N!}{p!q!r!s!}\lambda^r\mu^s[({\rm ad}x)^p(y),({\rm ad}x)^q(z)]=
$$
$$
\sum_{k+\ell=N}\frac{N!}{k!\ell!}[({\rm ad}x-\lambda)^k(y),({\rm ad} x-\mu)^\ell(z)].
$$
Thus if $({\rm ad}x-\lambda)^n(y)=0$ and $({\rm ad}x-\mu)^m(z)=0$ then 
$$
({\rm ad}x-\lambda-\mu)^{m+n}([y,z])=0,
$$ 
so $[y,z]\in \g_{\lambda+\mu}$.
\end{proof} 

\begin{definition} An element $x$ of a Lie algebra $\g$ is called {\bf semisimple} if the operator ${\rm ad}x$ is semisimple and {\bf nilpotent} if this operator is nilpotent. 
\end{definition} 

It is clear that any element which is both semisimple and nilpotent is central, so for a semisimple Lie algebra it must be zero. Note also that for $\g=\mathfrak{sl}_n({\bf k})$ this coincides with the usual definition.  

\begin{proposition}\label{jordecLie} (Jordan decomposition in a semisimple Lie algebra) 
Let $\g$ be a semisimple Lie algebra and ${\rm char}(\k)=0$. Then every element 
$x\in \g$ has a unique decomposition as $x=x_s+x_n$, where $x_s$ is semisimple,
$x_n$ is nilpotent and $[x_s,x_n]=0$. Moreover, if $y\in \g$ and $[x,y]=0$ then $[x_s,y]=[x_n,y]=0$.  
\end{proposition}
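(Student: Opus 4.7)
The plan is to transfer the Jordan decomposition of the linear operator $\ad(x) \in \gl(\g)$ to a decomposition inside $\g$ itself, exploiting semisimplicity of $\g$ in two ways: every derivation of $\g$ is inner by Proposition \ref{g=derg}, and the map $\ad\colon \g \to \Der(\g)$ is injective (as the center of a semisimple Lie algebra is trivial).

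I would start by applying Proposition \ref{jdec} to $\ad(x)\colon \g \to \g$ to obtain $\ad(x) = S + N$ with $S$ semisimple, $N$ nilpotent, $[S,N] = 0$, and $S = P(\ad(x))$, $N = Q(\ad(x))$ for some polynomials $P,Q \in \k[t]$. The hard part will be showing that $S$ (and hence $N = \ad(x) - S$) is a derivation of $\g$. Since this is a linear-algebraic assertion, I may extend scalars to $\bar\k$ and use the generalized eigenspace decomposition $\g = \bigoplus_\lambda \g_\lambda$ of $\ad(x)$. Lemma \ref{gra} provides the graded structure $[\g_\lambda, \g_\mu] \subset \g_{\lambda+\mu}$, and since $S$ acts on $\g_\lambda$ by the scalar $\lambda$, for $y \in \g_\lambda$ and $z \in \g_\mu$ one immediately has
\[
S[y,z] = (\lambda+\mu)[y,z] = [Sy, z] + [y, Sz],
\]
establishing the derivation property.

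Once $S$ and $N$ are derivations, Proposition \ref{g=derg} combined with injectivity of $\ad$ produces unique elements $x_s, x_n \in \g$ with $\ad(x_s) = S$ and $\ad(x_n) = N$. Then $\ad(x_s + x_n) = \ad(x)$ forces $x = x_s + x_n$; semisimplicity of $x_s$ and nilpotence of $x_n$ follow from the corresponding properties of $S$ and $N$; and $\ad([x_s, x_n]) = [S,N] = 0$ gives $[x_s, x_n] = 0$. Uniqueness of the decomposition reduces under $\ad$ to the uniqueness part of Proposition \ref{jdec}: any other decomposition $x = x_s' + x_n'$ with the stated properties yields $\ad(x) = \ad(x_s') + \ad(x_n')$ with commuting semisimple and nilpotent parts, forcing $\ad(x_s') = S$ and $\ad(x_n') = N$, hence $x_s' = x_s$, $x_n' = x_n$.

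For the final assertion, if $[x,y] = 0$ then $\ad(y)$ commutes with $\ad(x)$, and therefore with every polynomial in $\ad(x)$; in particular $\ad(y)$ commutes with $S = P(\ad(x))$ and $N = Q(\ad(x))$. This means $\ad([x_s, y]) = [\ad(x_s), \ad(y)] = [S,\ad(y)] = 0$, so by injectivity of $\ad$ we conclude $[x_s, y] = 0$, and similarly $[x_n, y] = 0$. To summarize the balance: the only substantive step is verifying that $S$ is a derivation via Lemma \ref{gra}; the remainder is formal bookkeeping, powered entirely by Proposition \ref{g=derg} and the triviality of $Z(\g)$.
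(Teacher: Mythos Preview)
Your proof is correct and follows essentially the same route as the paper: take the Jordan decomposition of $\ad(x)$, use Lemma \ref{gra} to verify that the semisimple part is a derivation, and then invoke Proposition \ref{g=derg} together with the triviality of the center to pull $S$ and $N$ back to elements of $\g$. The only cosmetic differences are that the paper argues the centralizer claim via ``$\ad y$ preserves each $\g_\lambda$'' rather than ``$\ad y$ commutes with $P(\ad x)$'', and it derives uniqueness directly inside $\g$ (using the already-proved centralizer property) rather than by reducing to uniqueness of the matrix Jordan decomposition under $\ad$; both variants are equally valid.
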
 

\begin{proof} Recall that $\g\subset {\mathfrak{gl}}(\g)$ via the adjoint representation. So 
we can consider the Jordan decomposition $x=x_s+x_n$, with $x_s,x_n\in {\mathfrak{gl}}(\g)$. We have $x_s(y)=\lambda y$ for $y\in \g_\lambda$. Thus $y\mapsto x_s(y)$ is a derivation of $\g$ by Lemma \ref{gra}. But by Proposition \ref{g=derg} every derivation of $\g$ is inner, which implies that $x_s\in \g$, hence $x_n\in \g$. It is clear that $x_s$ is semisimple, $x_n$ is nilpotent, and $[x_s,x_n]=0$. Also if $[x,y]=0$ then ${\rm ad}y$ preserves $\g_\lambda$ for all $\lambda$, hence $[x_s,y]=0$ as linear operators on $\g$ and thus as elements of $\g$. This also implies that the decomposition is unique since 
if $x=x_s'+x_n'$ then $[x_s,x_s']=[x_n,x_n']=0$, so $x_s-x_s'=x_n'-x_n$ is both semisimple and nilpotent, hence zero. 
\end{proof} 

\begin{corollary} Any semisimple Lie algebra $\g\ne 0$ over a field of characteristic zero contains nonzero semisimple elements.
\end{corollary}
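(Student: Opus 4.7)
The plan is to argue by contradiction using the Jordan decomposition (Proposition \ref{jordecLie}) together with Engel's theorem. Suppose, for the sake of contradiction, that $\g$ contains no nonzero semisimple elements. Then for every $x \in \g$, the semisimple part $x_s$ of the Jordan decomposition $x = x_s + x_n$ must vanish, so $x = x_n$ is nilpotent, i.e., $\operatorname{ad} x$ is a nilpotent operator on $\g$.

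Next I would invoke Engel's theorem: since every element of $\g$ acts nilpotently in the adjoint representation, $\g$ is itself a nilpotent Lie algebra. In particular, $\g$ is solvable, so $\operatorname{rad}(\g) = \g$. Since $\g$ is assumed semisimple, $\operatorname{rad}(\g) = 0$, forcing $\g = 0$, contrary to hypothesis.

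One small point to be careful about: Proposition \ref{jordecLie} producing the Jordan decomposition inside $\g$ is stated (and proved) over an algebraically closed field. To handle a general characteristic zero field $\bf k$, I would pass to the algebraic closure: $\g \otimes_{\bf k} \overline{\bf k}$ is still semisimple (as noted in the excerpt), and the same contradiction argument applies there. Since $\operatorname{ad} x$ being semisimple as an operator on $\g$ is equivalent to it being semisimple on $\g \otimes_{\bf k} \overline{\bf k}$, producing a nonzero semisimple element over $\overline{\bf k}$ is enough — but more cleanly, one just observes that the nilpotency of every $\operatorname{ad} x$ on $\g$ is detected after base change, hence Engel's theorem applied to $\g \otimes_{\bf k} \overline{\bf k}$ still gives the same contradiction.

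The main obstacle is essentially none — the proof is short once one has both Engel's theorem and Jordan decomposition in $\g$ available. The only thing to watch is that the statement of Jordan decomposition in the semisimple Lie algebra (Proposition \ref{jordecLie}) was proved assuming $\bf k$ is algebraically closed, so either we reduce to that case by extending scalars, or we use only the fact that a Lie algebra all of whose elements are $\operatorname{ad}$-nilpotent is nilpotent (Engel), which already conflicts with semisimplicity directly.
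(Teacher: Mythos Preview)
Your proof is correct and is essentially identical to the paper's own argument: assume every element is nilpotent, apply Engel's theorem to conclude $\g$ is nilpotent hence solvable, contradicting semisimplicity. Your extra care about the algebraically closed hypothesis is reasonable but unnecessary here, since Proposition \ref{jordecLie} is stated for arbitrary fields of characteristic zero.
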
 

\begin{proof} Otherwise, by Proposition \ref{jordecLie}, every element $x\in \g$ is nilpotent, which by Engel's theorem would imply that $\g$ is nilpotent, hence solvable, hence zero. 
\end{proof} 

\subsection{Toral subalgebras} From now on we assume that ${\rm char}(\bold k)=0$ unless specified otherwise.  

\begin{definition} An abelian Lie subalgebra $\h\subset \g$ 
is called a {\bf toral subalgebra} if it consists of semisimple elements.\footnote{In fact, we will see later that over an algebraically closed field of characteristic zero, a finite dimensional Lie algebra consisting of semisimple elements is automatically abelian.} 
\end{definition} 

\begin{proposition}\label{toral}
Let $\g$ be a semisimple Lie algebra, $\h\subset \g$ a toral subalgebra, and $B$ a nondegenerate invariant symmetric bilinear form on $\g$ (e.g., the Killing form). 

(i) We have a decomposition $\g=\oplus_{\alpha\in \h^*}\g_\alpha$,
where $\g_\alpha$ is the subspace of $x\in \g$ such that for $h\in \h$ we have 
$[h,x]=\alpha(h)x$, and $\g_0\supset \h$. 

(ii) We have $[\g_\alpha,\g_\beta]\subset\g_{\alpha+\beta}$. 

(iii) If $\alpha+\beta\ne 0$ then $\g_\alpha$ and $\g_\beta$ are orthogonal under $B$. 

(iv) $B$ restricts to a nondegenerate pairing $\g_\alpha\times \g_{-\alpha}\to {\bf k}$. 
\end{proposition}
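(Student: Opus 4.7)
The overall plan is to handle the four parts essentially independently, with (i) being the only one requiring nontrivial input (simultaneous diagonalization), and (ii)--(iv) following formally from the definitions, the Jacobi identity, and invariance/nondegeneracy of $B$.

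For part (i), the key observation is that the family of operators $\{\mathrm{ad}(h) : h \in \h\} \subset \mathfrak{gl}(\g)$ is a commuting family of semisimple (hence diagonalizable, since we are over an algebraically closed field) operators: commutation follows from $[\mathrm{ad}(h_1),\mathrm{ad}(h_2)] = \mathrm{ad}([h_1,h_2]) = 0$ because $\h$ is abelian. I would then invoke the standard linear algebra fact that any commuting family of diagonalizable operators on a finite-dimensional vector space is simultaneously diagonalizable. Applied here, this yields $\g = \bigoplus_{\alpha \in \h^*} \g_\alpha$ where $\g_\alpha$ is the simultaneous eigenspace on which each $h \in \h$ acts by $\alpha(h)$. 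The inclusion $\h \subset \g_0$ is immediate from the fact that $[h,h']=0$ for all $h,h' \in \h$. Strictly speaking, only finitely many $\g_\alpha$ are nonzero, so the direct sum is finite.

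For part (ii), I would apply the Jacobi identity directly: for $x \in \g_\alpha$, $y \in \g_\beta$, and $h \in \h$,
\[
[h,[x,y]] = [[h,x],y] + [x,[h,y]] = (\alpha(h)+\beta(h))[x,y],
\]
so $[x,y] \in \g_{\alpha+\beta}$. For part (iii), I would use invariance of $B$ in the form $B([h,x],y) = -B(x,[h,y])$: for $x \in \g_\alpha, y \in \g_\beta$ this gives $(\alpha(h)+\beta(h)) B(x,y) = 0$ for all $h \in \h$. If $\alpha + \beta \ne 0$ as an element of $\h^*$, there exists $h$ with $(\alpha+\beta)(h) \ne 0$, forcing $B(x,y)=0$. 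For part (iv), suppose $x \in \g_\alpha$ satisfies $B(x,\g_{-\alpha})=0$. By (iii), $x$ is automatically orthogonal to every $\g_\beta$ with $\beta \ne -\alpha$, so by the decomposition in (i), $x$ is orthogonal to all of $\g$. Nondegeneracy of $B$ then forces $x=0$, establishing nondegeneracy of the restricted pairing.

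The only step requiring any thought is the simultaneous diagonalization in (i); the other parts are formal. There is no serious obstacle: the proof is essentially an elementary application of eigenspace decomposition and bilinear form duality, and it is independent of the deeper structure theory (no appeal to Cartan criteria, Jordan decomposition in $\g$, or Whitehead's theorem is needed). One small subtlety worth flagging is that the statement tacitly assumes $\bf k$ is algebraically closed so that $\mathrm{ad}(h)$ is actually diagonalizable (not merely ``semisimple'' in the sense of diagonalizable over $\overline{\bf k}$); this is consistent with the conventions set in the preceding subsection on semisimple elements.
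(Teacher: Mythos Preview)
Your proof is correct and follows essentially the same approach as the paper, which likewise describes (i) as the joint eigenspace decomposition for the commuting semisimple operators $\mathrm{ad}(h)$, notes that (ii) is an easy special case of the bracket-eigenspace compatibility (Lemma~\ref{gra}), and says (iii)--(iv) follow from invariance and nondegeneracy of $B$. Your direct Jacobi-identity argument for (ii) is in fact slightly simpler than invoking Lemma~\ref{gra}, since here we have genuine eigenspaces rather than generalized ones.
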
 

\begin{proof} (i) is just the joint eigenspace decomposition for $\h$ acting in $\g$. (ii) is a very easy special case of Lemma \ref{gra}. (iii) and (iv) follow from the fact that $B$ is nondegenerate and invariant. 
\end{proof} 

\begin{corollary}\label{cororedu} (i) The Lie subalgebra $\g_0\subset \g$ is reductive.

(ii) if $x\in \g_0$ then $x_s,x_n\in \g_0$. 
\end{corollary}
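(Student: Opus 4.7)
My plan is to handle (ii) first since it is quick, and then use Proposition~\ref{toral}(iv) together with Proposition~\ref{reducri} to deduce (i).

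For (ii): an element $x\in \g$ belongs to $\g_0$ precisely when $[h,x]=0$ for every $h\in \h$. Given such an $x$, form its Jordan decomposition $x=x_s+x_n$ in $\g$ provided by Proposition~\ref{jordecLie}. The last assertion of that proposition says that any $y\in \g$ commuting with $x$ also commutes with both $x_s$ and $x_n$. Applying this to each $h\in \h$ gives $[h,x_s]=[h,x_n]=0$, so $x_s,x_n\in \g_0$.

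For (i): I would take $B=K_\g$ to be the Killing form, which is nondegenerate on $\g$ by Theorem~\ref{cartsem}. By Proposition~\ref{toral}(iv) applied with $\alpha=0$, the restriction $K_\g|_{\g_0\times\g_0}$ is nondegenerate. Now observe that for $x,y\in \g_0$,
\[
K_\g(x,y)=\operatorname{Tr}_\g(\operatorname{ad}x\cdot\operatorname{ad}y),
\]
and the right-hand side is precisely $B_V(x,y)$ for the $\g_0$-representation $V=\g$ (obtained by restricting the adjoint representation of $\g$ to the subalgebra $\g_0$). Thus $\g_0$ admits a finite-dimensional representation $V$ for which the associated trace form $B_V$ is nondegenerate, and Proposition~\ref{reducri} then forces $\g_0$ to be reductive.

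The proof is essentially a direct assembly of previously established facts, so there is no serious obstacle; the only subtle point is that one must resist the temptation to argue via the Killing form \emph{of} $\g_0$ (which is not in general the restriction of $K_\g$, and in fact is typically degenerate here), and instead invoke the more flexible criterion of Proposition~\ref{reducri}, which allows the trace form to come from \emph{any} finite-dimensional representation of $\g_0$. The adjoint action of $\g_0$ on the ambient $\g$ supplies exactly such a representation.
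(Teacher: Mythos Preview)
Your proof is correct and follows essentially the same approach as the paper: for (i) you use the nondegeneracy of $K_\g|_{\g_0}$ from Proposition~\ref{toral}(iv) together with Proposition~\ref{reducri} applied to the $\g_0$-module $V=\g$, and for (ii) you invoke the commutation clause of Proposition~\ref{jordecLie}, exactly as the paper does. Your closing remark distinguishing $K_\g|_{\g_0}$ from $K_{\g_0}$ is a worthwhile clarification.
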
 

\begin{proof} (i) This follows from Proposition \ref{reducri} and the fact that the form $(x,y)\mapsto {\rm Tr}|_\g(xy)$ on $\g_0$ is nondegenerate (Proposition \ref{toral}(iv) for the Killing form of $\g$). 

(ii) We have $[h,x]=0$ for $h\in \h$, so $[h,x_s]=0$, hence $x_s\in \g_0$. 
\end{proof} 

\subsection{Cartan subalgebras} 

\begin{definition} A {\bf Cartan subalgebra} of a semisimple Lie algebra $\g$ 
is a toral subalgebra $\h\subset \g$ such that $\g_0=\h$. 
\end{definition} 

\begin{example}\label{carsub} Let $\g={\mathfrak{sl}}_n(\bf k)$. Then the subalgebra $\h\subset \g$ 
of diagonal matrices is a Cartan subalgebra.
\end{example} 

It is clear that any Cartan subalgebra is a maximal toral subalgebra of $\g$. 
The following theorem, stating the converse, shows that Cartan subalgebras exist. 

\begin{theorem} Let $\h$ be a maximal toral subalgebra of $\g$. Then 
$\h$ is a Cartan subalgebra. 
\end{theorem}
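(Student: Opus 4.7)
The only containment at issue is $\g_0 \subset \h$; the reverse inclusion is immediate. Throughout I will freely use Corollary \ref{cororedu}, which tells me that $\g_0$ is reductive and that Jordan decomposition of any $x \in \g_0$ keeps both parts $x_s, x_n$ inside $\g_0$, together with Proposition \ref{toral} and Cartan's criterion (Theorem \ref{cartsem}) that $K_\g$ is nondegenerate on $\g$. My basic strategy is the standard one (after Humphreys): first prove that every semisimple element of $\g_0$ lies in $\h$, then that $\g_0$ is nilpotent, then that $K_\g$ is nondegenerate on $\h$, then that $\g_0$ is abelian, and finally knock out the nilpotent parts of elements of $\g_0$.

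\textbf{First step: semisimple elements of $\g_0$ lie in $\h$, and $\g_0$ is nilpotent.} If $x \in \g_0$ is semisimple, then $\h + \bold k x$ is abelian (since $x$ commutes with $\h$) and every element is semisimple, because commuting semisimple operators in characteristic zero diagonalize simultaneously over $\overline{\bold k}$ and sums of diagonalizable commuting operators are diagonalizable. Hence $\h + \bold k x$ is a toral subalgebra containing $\h$, so equals $\h$ by maximality, and $x\in\h$. For the second claim, given any $x \in \g_0$ with $x = x_s + x_n$ as in Corollary \ref{cororedu}(ii), we have $x_s \in \h$ and so $\ad_{\g_0}(x_s) = 0$; therefore $\ad_{\g_0}(x) = \ad_{\g_0}(x_n)$ is nilpotent, and Engel's theorem forces $\g_0$ to be nilpotent.

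\textbf{Second step: $K_\g|_\h$ is nondegenerate.} Let $h \in \h$ satisfy $K_\g(h, \h) = 0$. By Proposition \ref{toral}(iii), $h$ is already orthogonal to every $\g_\alpha$ with $\alpha \ne 0$, so it suffices to see $K_\g(h, \g_0) = 0$. For $y \in \g_0$, write $y = y_s + y_n$ with $y_s \in \h$; then $K_\g(h, y_s) = 0$ by hypothesis, while $K_\g(h, y_n) = \Tr(\ad(h)\ad(y_n)) = 0$ because $\ad(h)$ and $\ad(y_n)$ are commuting semisimple and nilpotent operators (so their product is nilpotent). Thus $K_\g(h, \g) = 0$, and nondegeneracy of $K_\g$ gives $h = 0$.

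\textbf{Third step: $\g_0$ is abelian --- this is where I expect the main obstacle.} Suppose $[\g_0,\g_0] \ne 0$. Since $\g_0$ is nilpotent, some nonzero $z$ lies in $[\g_0, \g_0] \cap Z(\g_0)$ (the last nonzero term of the lower central series). The idea is to show $z$ is orthogonal to all of $\g$ under $K_\g$, contradicting nondegeneracy. Writing $z = \sum[y_i, z_i]$ with $y_i, z_i \in \g_0$, for any $h \in \h$ the invariance of $K_\g$ and the fact that $[h,y_i]=0$ give $K_\g(h,z)=0$; applying the commuting-semisimple-nilpotent trick to $z_n$ as above then yields $K_\g(h, z_s) = 0$, so $z_s = 0$ by step two, i.e. $z$ is nilpotent. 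Now for any $w \in \g$ I claim $K_\g(z, w) = 0$: if $w \in \g_\alpha$ with $\alpha \ne 0$ use Proposition \ref{toral}(iii); if $w \in \g_0$, decompose $w = w_s + w_n$ and note that $z$ commutes with both $w_s$ and $w_n$ (since $z \in Z(\g_0)$), so each of $\ad(z)\ad(w_s)$ and $\ad(z)\ad(w_n)$ is a product of commuting operators with $\ad(z)$ nilpotent, hence nilpotent with trace $0$. This forces $z = 0$, a contradiction. The delicate point here is assembling these Jordan-decomposition and trace-vanishing arguments in the right order; everything else is routine.

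\textbf{Conclusion.} For any $x \in \g_0$ with $x = x_s + x_n$ and $x_s \in \h$, I need $x_n = 0$. Since $\g_0$ is abelian, $x_n$ commutes with all of $\g_0$, and the same case analysis as in the previous paragraph (splitting $w \in \g$ into $\g_0$ and the root spaces, and decomposing $w \in \g_0$ into semisimple and nilpotent parts) shows $K_\g(x_n, \g) = 0$, so $x_n = 0$. Hence $x = x_s \in \h$ and $\g_0 = \h$, as desired.
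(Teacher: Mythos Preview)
Your proof is correct, but it takes a longer route than the paper's at the key step. You cite Corollary~\ref{cororedu}(i) (that $\g_0$ is reductive) but never actually use it; the paper does, and this is what makes its argument short. Once $\g_0$ is known to be both reductive and nilpotent, abelianness is immediate: nilpotent implies $\rad(\g_0)=\g_0$, reductive means $\rad(\g_0)=Z(\g_0)$, so $\g_0=Z(\g_0)$. Your Step~3 reproves this by hand via a center-plus-Killing-form argument, which in turn forces you to establish the auxiliary Step~2 (nondegeneracy of $K_\g|_\h$). Similarly, in your conclusion the paper uses that $K_\g$ is already nondegenerate on $\g_0$ (Proposition~\ref{toral}(iv)): once $\g_0$ is abelian, $\ad x_n$ commutes with $\ad y$ for every $y\in\g_0$, so $K_\g(x_n,y)=\Tr(\ad x_n\cdot\ad y)=0$ and $x_n=0$ --- no need to split $w=w_s+w_n$ or to pass to all of $\g$. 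Your version is essentially the Humphreys-style argument that avoids the reductivity shortcut; it buys independence from Corollary~\ref{cororedu}(i) at the cost of length, while the paper exploits structure it has already set up.
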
 

\begin{proof} Let $x\in \g_0$, then by Corollary \ref{cororedu}(ii) $x_s\in \g_0$, so 
$x_s\in \h$ by maximality of $\h$. Thus ${\rm ad}x|_{\g_0}={\rm ad}x_n|_{\g_0}$ 
is nilpotent. So by Engel's theorem $\g_0$ is nilpotent. But it is also reductive, hence abelian. 

Now let us show that every $x\in \g_0$ which is nilpotent in $\g$ must be zero. Indeed, in this case, 
for any $y\in \g_0$, the operator ${\rm ad}x\cdot {\rm ad}y: \g\to \g$ is nilpotent (as $[x,y]=0$), so ${\rm Tr}|_\g({\rm ad}x\cdot {\rm ad}y)=0$. But this form is nondegenerate on $\g_0$, which implies that $x=0$. 

Thus for any $x\in \g_0$, $x_n=0$, so $x=x_s$ is semisimple. Hence $\g_0=\h$ and $\h$ is a Cartan subalgebra. 
\end{proof} 

We will show in Theorem \ref{conjcar} 
that all Cartan subalgebras of $\g$ are conjugate under ${\rm Aut}(\g)$, 
in particular they all have the same dimension, which is called the {\bf rank} of $\g$. 

\subsection{Root decomposition} 

\begin{proposition}\label{toral1}
Let $\g$ be a semisimple Lie algebra, $\h\subset \g$ a Cartan subalgebra, and $B$ a nondegenerate invariant symmetric bilinear form on $\g$ (e.g., the Killing form). 

(i) We have a decomposition $\g=\h\oplus\bigoplus_{\alpha\in R}\g_\alpha$,
where $\g_\alpha$ is the subspace of $x\in \g$ such that for $h\in \h$ we have 
$[h,x]=\alpha(h)x$, and $R$ is the (finite) set of $\alpha\in \h^*$, $\alpha\ne 0$, such that 
$\g_\alpha\ne 0$. 

(ii) We have $[\g_\alpha,\g_\beta]\subset \g_{\alpha+\beta}$. 

(iii) If $\alpha+\beta\ne 0$ then $\g_\alpha$ and $\g_\beta$ are orthogonal under $B$. 

(iv) $B$ restricts to a nondegenerate pairing $\g_\alpha\times \g_{-\alpha}\to {\bf k}$. 
\end{proposition}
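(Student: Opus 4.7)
The proof plan is essentially to observe that Proposition \ref{toral1} is just Proposition \ref{toral} specialized to the case where the toral subalgebra $\h$ is in fact a Cartan subalgebra; all four parts will then be immediate, once one has unpacked the definitions.

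First I would apply Proposition \ref{toral} verbatim to the toral subalgebra $\h \subset \g$. This already gives a direct sum decomposition
\[
\g = \bigoplus_{\alpha \in \h^*} \g_\alpha, \qquad \g_\alpha := \{ x \in \g : [h,x] = \alpha(h) x \text{ for all } h \in \h \},
\]
together with the bracket relation $[\g_\alpha, \g_\beta] \subset \g_{\alpha+\beta}$, the $B$-orthogonality of $\g_\alpha$ and $\g_\beta$ when $\alpha + \beta \neq 0$, and the nondegenerate pairing $\g_\alpha \times \g_{-\alpha} \to \bold k$ coming from $B$. This handles (ii), (iii), (iv) with no further work.

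For (i), the only new ingredient is to identify the $\alpha = 0$ summand. By Proposition \ref{toral} we have $\g_0 \supset \h$, but by the \emph{definition} of a Cartan subalgebra, we have exactly $\g_0 = \h$. Separating off this summand and collecting the remaining nonzero weights into the set
\[
R := \{ \alpha \in \h^* \setminus \{0\} : \g_\alpha \neq 0 \}
\]
yields the decomposition $\g = \h \oplus \bigoplus_{\alpha \in R} \g_\alpha$. Finiteness of $R$ is automatic: it is a subset of the index set of a direct sum decomposition of the finite-dimensional vector space $\g$, and each $\g_\alpha$ is nonzero, so $|R| \leq \dim \g$.

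There is no real obstacle here; the work has already been done in Proposition \ref{toral} and in the definition of a Cartan subalgebra. The only conceptual point worth emphasizing in writing up the argument is that the existence of the nondegenerate pairing in (iv) forces $-\alpha \in R$ whenever $\alpha \in R$, so in particular the set $R$ of roots is symmetric about the origin in $\h^*$; this is a small but useful observation that will be needed immediately when analyzing the structure of $R$ in the next section.
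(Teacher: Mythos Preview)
Your proposal is correct and matches the paper's approach exactly: the paper's proof is the single sentence ``This immediately follows from Theorem \ref{toral},'' i.e., Proposition \ref{toral} specialized via the Cartan-subalgebra condition $\g_0=\h$. Your write-up just makes explicit the one-line identifications (and the incidental observations on finiteness of $R$ and its symmetry $R=-R$) that the paper leaves implicit.
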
 

\begin{proof} This immediately follows from Proposition \ref{toral}. 
\end{proof} 

\begin{definition} The set $R$ is called the {\bf root system} of $\g$ and its elements are called {\bf roots}. 
\end{definition} 

\begin{proposition} Let $\g_1,...,\g_n$ be simple Lie algebras and let $\g =\oplus_i\g_i$.

(i) Let $\h_i \subset \g_i$ be Cartan subalgebras of $\g_i$ and $R_i \subset \h_i^*$
the corresponding root systems of $\g_i$. Then $\h =\oplus_i \h_i$ is a Cartan subalgebra in $\g$ and the corresponding root system $R$ is the disjoint union of $R_i$.

(ii) Each Cartan subalgebra in $\g$ has the form $\h=\oplus_i \h_i$ where $\h_i \subset \g_i$
is a Cartan subalgebra in $\g_i$.
\end{proposition}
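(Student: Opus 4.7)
For part (i), the first step is to verify that $\h=\oplus_i\h_i$ is toral in $\g$. Abelian is immediate. To check semisimplicity of each $h\in\h$, observe that since each $\g_j$ is an ideal and $[\g_i,\g_j]=0$ for $i\ne j$ (as any $[\g_i,\g_j]\subseteq\g_i\cap\g_j=0$), the operator $\ad_\g(h)$ preserves each $\g_i$ and in fact decomposes as the block-diagonal operator $\oplus_i\ad_{\g_i}(h_i)$, where $h=\sum_i h_i$. Each $\ad_{\g_i}(h_i)$ is semisimple because $h_i\in\h_i$ is semisimple in $\g_i$, so $\ad_\g(h)$ is semisimple. To identify $\g_0$ with $\h$, take $x=\sum_i x_i\in\g_0$ and use that $[h,x]=\sum_i[h_i,x_i]$ vanishes for all choices of $h=\sum h_i$; fixing one summand and varying the rest shows $[h_i,x_i]=0$ for every $h_i\in\h_i$, i.e.\ $x_i\in(\g_i)_0=\h_i$, so $x\in\h$.

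Next, for the root system in (i), take a root $\alpha\in R$ and a nonzero $x\in\g_\alpha$, written $x=\sum_i x_i$ with $x_i\in\g_i$. The identity $[h,x]=\alpha(h)x$ with $h=\sum h_j$ again decomposes into $[h_i,x_i]=\alpha(h)x_i$ for each $i$. If $x_i\ne 0$, then the left side depends only on $h_i$, so $\alpha$ must vanish on all $\h_j$ with $j\ne i$; hence if two components $x_i,x_k$ were both nonzero for $i\ne k$ we would force $\alpha=0$, contradicting $\alpha\in R$. Therefore every root vector lies in a single $\g_i$, $\alpha$ is supported on $\h_i$, and the restriction $\alpha|_{\h_i}$ is a nonzero root of $\g_i$. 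Conversely every $\alpha_i\in R_i$ extends by zero on the other factors to an element of $R$. This gives the disjoint union $R=\sqcup_i R_i$.

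For part (ii), let $\h$ be a Cartan of $\g$, which by the preceding theorem is the same as a maximal toral subalgebra. For each $x\in\h$ write $x=\sum_i x_i$; then $\ad_\g(x)=\oplus_i\ad_{\g_i}(x_i)$ is semisimple, which forces each $\ad_{\g_i}(x_i)$ to be semisimple, i.e.\ each $x_i$ is a semisimple element of $\g_i$. Let $p_i\colon\g\to\g_i$ be the projection. Then $p_i(\h)\subset\g_i$ is an abelian subspace consisting of semisimple elements, hence a toral subalgebra; extend it to a maximal toral subalgebra $\h_i'\subset\g_i$, which by the theorem above is a Cartan of $\g_i$. Set $\h':=\oplus_i\h_i'$; by part (i), $\h'$ is a Cartan of $\g$, hence itself maximal toral in $\g$. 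Since $x=\sum_i p_i(x)\in\oplus_i\h_i'=\h'$ for every $x\in\h$, we have $\h\subset\h'$; the maximality of $\h$ forces $\h=\h'=\oplus_i\h_i'$, and then $\h_i'=\h\cap\g_i$ is a Cartan of $\g_i$, which is exactly the required decomposition.

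\textbf{Main obstacle.} The one step that requires the most care is extracting from the semisimplicity of $x\in\h\subset\g$ the semisimplicity of each component $x_i$ in $\g_i$ for part (ii); this hinges on the fact that each $\g_i$ is an ideal and the summands commute, so that $\ad_\g(x)$ is literally block-diagonal with blocks $\ad_{\g_i}(x_i)$. Once that bridge is in place, everything else is a routine componentwise computation together with the already proven equivalence ``maximal toral $=$ Cartan.''
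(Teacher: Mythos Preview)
Your proof is correct and follows essentially the same route as the paper. The paper dispatches (i) as ``obvious'' and for (ii) projects $\h$ to each factor, asserts the projections $\h_i$ are Cartan, and then uses $\h\subset\oplus_i\h_i$ together with maximality of $\h$ to conclude; your version is the same argument spelled out more carefully, with the minor variation that you only claim the projections are toral and then extend to maximal toral $\h_i'$ before invoking maximality---this is arguably cleaner, since the paper's direct assertion that $p_i(\h)$ is already Cartan is not evident until after one knows $\h=\oplus_i\h_i$.
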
 

\begin{proof} (i) is obvious. To prove (ii), given a Cartan subalgebra $\h\subset \g$, let 
$\h_i$ be the projections of $\h$ to $\g_i$. It is easy to see that $\h_i\subset \g_i$ are Cartan subalgebras. Also $\h\subset \oplus_i \h_i$ and the latter is toral, which implies that $\h=\oplus_i \h_i$ since $\h$ is a Cartan subalgebra. 
\end{proof} 

\begin{example}\label{Anm1} Let $\g=\mathfrak{sl}_n(\bf k)$. Then the subspace of diagonal matrices $\h$ is a Cartan subalgebra (cf. Example \ref{carsub}), and it can be naturally identified with the space of vectors $\bold x=(x_1,...,x_n)$ 
such that $\sum_i x_i=0$. Let ${\bold e}_i$ be the linear functionals on this space given by ${\bold e}_i(\bold x)=x_i$. 
We have $\g=\h\oplus\bigoplus_{i\ne j}{\bf k}E_{ij}$ and $[\bold x,E_{ij}]=(x_i-x_j)E_{ij}$. 
Thus the root system $R$ consists of vectors ${\bold e}_i-{\bold e}_j\in \h^*$ for $i\ne j$ (so there are $n(n-1)$ roots). 
\end{example} 

Now let $\g$ be a semisimple Lie algebra and $\h\subset \g$ a Cartan subalgebra. Let $(,)$ 
be a nondegenerate invariant symmetric bilinear form on $\g$, for example the Killing form. 
Since the restriction of $(,)$ to $\h$ is nondegenerate, it defines an isomorphism $\h\to \h^*$ given by $h\mapsto (h,?)$. The inverse of this isomorphism will be denoted by $\alpha\mapsto H_\alpha$. We also have the inverse form on $\h^*$ which we also will denote by $(,)$; it is given by $(\alpha,\beta):=\alpha(H_\beta)=(H_\alpha,H_\beta)$. 

\begin{lemma}\label{comef} For any $e\in \g_\alpha,f\in \g_{-\alpha}$ we have 
$$
[e,f]=(e,f)H_\alpha. 
$$
\end{lemma}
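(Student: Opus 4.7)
The plan is to pin down $[e,f]$ by pairing it against an arbitrary element of $\h$ and exploiting the invariance of the form $(,)$. First I would observe that by Proposition \ref{toral1}(ii), $[\g_\alpha,\g_{-\alpha}]\subset \g_0=\h$, so $[e,f]$ already lies in $\h$. Since the restriction of $(,)$ to $\h$ is nondegenerate (this is how $H_\alpha$ was defined in the first place), to prove the identity $[e,f]=(e,f)H_\alpha$ it suffices to verify that $([e,f],h)=(e,f)(H_\alpha,h)$ for every $h\in\h$.

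Next I would compute the left-hand side by moving $h$ across the bracket using invariance: for $h\in\h$,
$$
([e,f],h) = (e,[f,h]) = -(e,[h,f]) = -(e,-\alpha(h)f) = \alpha(h)\,(e,f),
$$
where I used that $f\in\g_{-\alpha}$, so $[h,f]=-\alpha(h)f$. By the very definition of $H_\alpha$ via the isomorphism $\h\to\h^*$ induced by $(,)$, we have $\alpha(h)=(H_\alpha,h)$. Hence
$$
([e,f],h) = (e,f)(H_\alpha,h) = ((e,f)H_\alpha,h)
$$
for every $h\in\h$.

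Finally, since $[e,f]\in\h$ and $(e,f)H_\alpha\in\h$, and they pair identically against all of $\h$ under a nondegenerate form, we conclude $[e,f]=(e,f)H_\alpha$. There is no real obstacle here; the only subtlety to keep track of is the sign in $[h,f]=-\alpha(h)f$ (since $f$ lies in the $-\alpha$ weight space), and the fact that the identification $\alpha\mapsto H_\alpha$ is defined precisely so that $(H_\alpha,h)=\alpha(h)$. The proof is essentially one line once these normalizations are in place.
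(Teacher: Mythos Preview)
Your proof is correct and follows essentially the same approach as the paper: both reduce to pairing $[e,f]\in\h$ against an arbitrary $h\in\h$, use invariance to rewrite $([e,f],h)=(e,[f,h])=\alpha(h)(e,f)$, and then invoke the definition of $H_\alpha$. Your version simply spells out the sign bookkeeping and the role of nondegeneracy more explicitly.
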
 

\begin{proof} We have $[e,f]\in \h$ so it is enough to show that the inner product of both sides with any $h\in \h$ is the same. We have 
$$
([e,f],h)=(e,[f,h])=\alpha(h)(e,f)=((e,f)H_\alpha,h),
$$ 
as desired. 
\end{proof} 

\begin{lemma} (i) If $\alpha$ is a root then $(\alpha,\alpha)\ne 0$. 

(ii) Let $e\in \g_\alpha$, $f\in \g_{-\alpha}$ be such that $(e,f)=\frac{2}{(\alpha,\alpha)}$, and let 
$h_\alpha:=\frac{2H_\alpha}{(\alpha,\alpha)}$. Then $e,f,h_\alpha$ satisfy the commutation relations of the Lie algebra $\mathfrak{sl}_2$. 

(iii) $h_\alpha$ is independent of  the choice of $(,)$. 
\end{lemma}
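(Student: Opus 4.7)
The plan is to prove (i) by contradiction via Lie's theorem applied to a small solvable subalgebra, and then to verify (ii) and (iii) by direct computation. For (i), I would assume $(\alpha,\alpha)=0$ and derive a contradiction. Using the nondegenerate pairing $\g_\alpha\times\g_{-\alpha}\to{\bf k}$ from Proposition \ref{toral1}(iv), pick $e\in\g_\alpha$ and $f\in\g_{-\alpha}$ with $(e,f)\ne 0$; by Lemma \ref{comef}, $[e,f]=(e,f)H_\alpha$. The hypothesis $(\alpha,\alpha)=\alpha(H_\alpha)=0$ says that $H_\alpha$ commutes with both $e$ and $f$, so the span $\mathfrak{a}$ of $\{e,f,H_\alpha\}$ is a three-dimensional solvable (in fact $2$-step nilpotent) Lie subalgebra with $H_\alpha\in[\mathfrak{a},\mathfrak{a}]$. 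Applying Corollary \ref{Liethm2} to the adjoint representation of $\mathfrak{a}$ on $\g$ (after extending scalars to $\overline{\bf k}$ if necessary), one gets a basis in which elements of $[\mathfrak{a},\mathfrak{a}]$ act by strictly upper triangular matrices, so $\ad H_\alpha$ is nilpotent on $\g$.

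However $H_\alpha\in\h$, so $\ad H_\alpha$ is also semisimple; being simultaneously semisimple and nilpotent, it must vanish, which means $H_\alpha$ lies in the center of $\g$. But $\g$ is semisimple and hence has trivial center, forcing $H_\alpha=0$ and then $\alpha=0$, contradicting the hypothesis that $\alpha$ is a root. For (ii), I would just compute directly: $[h_\alpha,e]=\alpha(h_\alpha)e=\tfrac{2}{(\alpha,\alpha)}\alpha(H_\alpha)e=2e$, similarly $[h_\alpha,f]=-2f$, and by Lemma \ref{comef} together with the normalization $(e,f)=2/(\alpha,\alpha)$ one gets $[e,f]=(e,f)H_\alpha=h_\alpha$, which are exactly the defining relations of $\mathfrak{sl}_2$.

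For (iii), I would observe that rescaling the form $(,)\mapsto c(,)$ on $\g$ replaces $H_\alpha$ by $c^{-1}H_\alpha$ (from the defining relation $(H_\alpha,{-})=\alpha$) and the induced form on $\h^*$ by $(\alpha,\alpha)\mapsto c^{-1}(\alpha,\alpha)$, so the ratio $h_\alpha=2H_\alpha/(\alpha,\alpha)$ is preserved. To handle arbitrary nondegenerate invariant symmetric forms, decompose $\g=\bigoplus_i\g_i$ into simple summands using Corollary \ref{dirsu1}; the root $\alpha$ is supported on a single factor $\g_i$, and on a simple Lie algebra any two nondegenerate invariant symmetric forms are proportional (by a standard Schur-type argument applied to the $\g$-linear map $\g_i\to\g_i^*$ they induce), so the scaling computation applies in $\g_i$ and shows that $h_\alpha$ does not depend on the chosen form. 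The main obstacle is part (i): the key point is the clash between Lie's theorem, which forces $\ad H_\alpha$ to be nilpotent because it lies in the commutant of a solvable subalgebra, and the semisimplicity of $H_\alpha\in\h$, which together with triviality of the center of $\g$ collapses $H_\alpha$ to zero.
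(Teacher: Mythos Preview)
Your proposal is correct and follows essentially the same approach as the paper: for (i) you build the same three-dimensional subalgebra, invoke Lie's theorem to force $\ad H_\alpha$ to be nilpotent, and contrast this with semisimplicity of $H_\alpha\in\h$ and triviality of the center; for (ii) you do the same direct computation; and for (iii) you reduce to the simple case and use uniqueness of the invariant form up to scaling, exactly as the paper does. Your write-up is slightly more explicit (noting the subalgebra is even nilpotent, and spelling out the scaling and Schur-type argument in (iii)), but there is no substantive difference.
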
 

\begin{proof} (i) Pick $e\in \g_\alpha,f\in \g_{-\alpha}$ with $(e,f)\ne 0$. Let $h:= [e, f] = (e, f )H_\alpha$ 
(by Lemma \ref{comef}) and consider the Lie algebra $\mathfrak{a}$ 
generated by $e, f , h$. Then we see that 
$$
[h, e] =\alpha(h)e=(\alpha,\alpha)(e,f)e,\ [h,f]=-\alpha(h)f=-(\alpha,\alpha)(e,f)f.
$$
Thus if $(\alpha,\alpha)=0$ then $\mathfrak{a}$ is a solvable Lie algebra. 
By Lie's theorem, we can choose a basis in $\g$ such that operators ${\rm ad}e$, ${\rm ad}f$, ${\rm ad}h$ are upper triangular. Since $h = [e, f ]$, ${\rm ad}h$ will be strictly upper-triangular and
thus nilpotent. But since $h\in \h$, it is also semisimple. Thus, ${\rm ad}h = 0$, so $h=0$ as $\g$ is semisimple. On the other hand, $h = (e, f )H_\alpha\ne 0$. This contradiction proves the first part of the theorem.

(ii) This follows immediately from the formulas in the proof of (i). 

(iii) It's enough to check the statement for a simple Lie algebra, and in this case this is easy since $(,)$ is unique up to scaling by Corollary \ref{uniscal}. 
\end{proof} 

The Lie subalgebra of $\g$ spanned by $e,f,h_\alpha$, which we've shown to be isomorphic to 
${\mathfrak{sl}}_2(\bold k)$, will be denoted by ${\mathfrak{sl}}_2(\bold k)_\alpha$ (we will see that $\g_\alpha$ are 1-dimensional so it is independent of  the choices). 

\begin{proposition} Let $\mathfrak{a}_\alpha={\bf k}H_\alpha\oplus \bigoplus_{k\ne 0}\g_{k\alpha}\subset \g$. 
Then $\mathfrak{a}_\alpha$ is a Lie subalgebra of $\g$.   
\end{proposition}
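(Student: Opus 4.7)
The plan is to verify that $\mathfrak{a}_\alpha$ is closed under the Lie bracket by checking each type of bracket that can arise from its direct-sum decomposition $\mathbf{k}H_\alpha \oplus \bigoplus_{k\ne 0}\g_{k\alpha}$. The bracket $[H_\alpha,H_\alpha]$ is zero since $\h$ is abelian. For $x\in \g_{k\alpha}$, the bracket $[H_\alpha,x]=(k\alpha)(H_\alpha)x = k(\alpha,\alpha)x\in \g_{k\alpha}\subset \mathfrak{a}_\alpha$, so brackets of $H_\alpha$ with the graded pieces stay inside $\mathfrak{a}_\alpha$.

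Next I would handle brackets between the graded pieces $\g_{k\alpha}$ and $\g_{l\alpha}$ for $k,l\ne 0$. By Proposition~\ref{toral1}(ii), $[\g_{k\alpha},\g_{l\alpha}]\subset \g_{(k+l)\alpha}$. If $k+l\ne 0$, this lies in one of the summands $\g_{m\alpha}$ with $m\ne 0$, hence in $\mathfrak{a}_\alpha$. If $k+l=0$, then the bracket lands in $\g_0=\h$, so the issue is whether it actually lands inside the one-dimensional subspace $\mathbf{k}H_\alpha$ rather than somewhere else in $\h$.

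The key step is to settle this last case, and it is handled by Lemma~\ref{comef} applied to the root $k\alpha$: for $e\in \g_{k\alpha}$ and $f\in \g_{-k\alpha}$ we have $[e,f]=(e,f)H_{k\alpha}$. It remains to observe that $H_{k\alpha}=kH_\alpha$, which follows immediately from the defining relation $(H_{k\alpha},h)=(k\alpha)(h)=k(H_\alpha,h)$ for all $h\in \h$ together with nondegeneracy of $(,)$ on $\h$. Hence $[e,f]=k(e,f)H_\alpha\in \mathbf{k}H_\alpha\subset \mathfrak{a}_\alpha$, completing the verification.

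I do not expect any substantive obstacle: the argument is a direct bookkeeping exercise using Proposition~\ref{toral1}(ii), Lemma~\ref{comef}, and the linearity $H_{k\alpha}=kH_\alpha$. The only mildly delicate point is remembering to treat the $k+l=0$ case separately, since that is precisely where the bracket could leave the sum of root spaces and land in $\h$, and checking that it stays on the line $\mathbf{k}H_\alpha$ rather than producing an arbitrary element of $\h$.
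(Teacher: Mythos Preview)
Your proposal is correct and follows the same approach as the paper: the key step in both is applying Lemma~\ref{comef} to get $[e,f]=(e,f)H_{k\alpha}=k(e,f)H_\alpha$ for $e\in\g_{k\alpha}$, $f\in\g_{-k\alpha}$. The paper's proof is a one-liner stating exactly this identity, while you have spelled out the remaining (trivial) cases explicitly.
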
 

\begin{proof} This follows from the fact that for $e\in \g_{k\alpha},f\in \g_{-k\alpha}$ we have 
$[e,f]=(e,f)H_{k\alpha}=k(e,f)H_\alpha$. 
\end{proof} 

\begin{corollary} (i) The space $\g_\alpha$ is 1-dimensional for each root $\alpha$ of $\g$. 

(ii) If $\alpha$ is a root of $\g$ and $k\ge 2$ is an integer then $k\alpha$ is not a root of $\g$. 
\end{corollary}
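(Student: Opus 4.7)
The plan is to regard $\a_\alpha$ as a finite-dimensional representation of $\mathfrak{sl}_2(\k)_\alpha$ under the adjoint action, and then extract everything from the classification of $\mathfrak{sl}_2$-modules in Theorem~\ref{sl2repth}.

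First I would record the $h_\alpha$-weights on each piece of $\a_\alpha$. By construction $h_\alpha=\frac{2H_\alpha}{(\alpha,\alpha)}$ satisfies $\alpha(h_\alpha)=2$, so for a nonzero integer $k$ with $k\alpha\in R$ and any $x\in\g_{k\alpha}$ one has $[h_\alpha,x]=k\alpha(h_\alpha)x=2k\,x$, while $[h_\alpha,H_\alpha]=0$. Thus every weight of $h_\alpha$ on $\a_\alpha$ is an even integer, and the zero weight space is exactly $\k H_\alpha$, which is one-dimensional.

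Next, by complete reducibility (Theorem~\ref{sl2repth}(iv)) I would decompose $\a_\alpha$ as a direct sum of irreducible $\mathfrak{sl}_2(\k)_\alpha$-modules. Since only even weights occur, each summand is of the form $V_{2m}$, and each such $V_{2m}$ contributes exactly one vector to the weight-zero space. Hence the one-dimensionality of that weight-zero space forces $\a_\alpha\cong V_{2m}$ for a single integer $m\ge 0$.

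The crux is then the observation that $\mathfrak{sl}_2(\k)_\alpha$, being a Lie subalgebra of $\a_\alpha$, is automatically a nonzero $\mathfrak{sl}_2(\k)_\alpha$-submodule under the adjoint action. By irreducibility of $V_{2m}$ this submodule must equal all of $\a_\alpha$, and comparing dimensions ($3$ versus $2m+1$) forces $m=1$; thus $\a_\alpha=V_2=\mathfrak{sl}_2(\k)_\alpha$. Part~(i) now drops out as the statement that the weight-$2$ subspace of $V_2$ is one-dimensional, and part~(ii) as the statement that $V_2$ has no vectors of weight $2k$ for $|k|\ge 2$.

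I do not foresee any real obstacle: the only step that requires any thought is the observation that $\mathfrak{sl}_2(\k)_\alpha\subset\a_\alpha$ is an $\mathfrak{sl}_2(\k)_\alpha$-submodule, and this follows immediately from it being a Lie subalgebra. It is precisely this inclusion that pins down $m=1$ rather than permitting larger irreducibles.
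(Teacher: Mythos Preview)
Your proof is correct and follows essentially the same route as the paper: both regard $\a_\alpha$ as an $\mathfrak{sl}_2(\k)_\alpha$-module, note that all weights are even with a one-dimensional zero weight space, and conclude from Theorem~\ref{sl2repth} that $\a_\alpha$ is a single irreducible. The only cosmetic difference is in the final step pinning down the irreducible: you invoke the inclusion $\mathfrak{sl}_2(\k)_\alpha\subset\a_\alpha$ as a submodule to force $m=1$, whereas the paper observes that $[e,e]=0$ kills the map $\g_\alpha\to\g_{2\alpha}$, forcing $\g_\alpha$ to be the top weight space.
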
 

\begin{proof} For a root $\alpha$ the Lie algebra $\mathfrak{a}_\alpha$ contains $\mathfrak{sl}_2({\bf k})_\alpha$, so it is a finite dimensional representation of this Lie algebra. Also the kernel of $h_\alpha$ on this representation is spanned by $h_\alpha$, hence 1-dimensional, and eigenvalues of $h_\alpha$ are even integers since $\alpha(h_\alpha)=2$. 
Thus by the representation theory of $\mathfrak{sl}_2$ (Subsection \ref{sl2rep}), this representation is irreducible, 
i.e., eigenspaces of $h_\alpha$ (which are $\g_{k\alpha}$ and ${\bf k}H_\alpha$) are 1-dimensional. Therefore the map $[e,?]:  \g_{\alpha}\to \g_{2\alpha}$ is zero (as $\g_\alpha$ is spanned by $e$). So again by representation theory of $\mathfrak{sl}_2$ we have 
$\g_{k\alpha}=0$ for $|k|\ge 2$.  
\end{proof} 

\begin{theorem}\label{rootde} Let $\g$ be a semisimple Lie algebra with Cartan subalgebra
$\h$ and root decomposition $\g = \h\oplus\bigoplus_{\alpha \in R}\g_\alpha$. Let $( , )$ be a non-degenerate symmetric invariant bilinear form on $\g$.

(i) $R$ spans $\h^*$ as a vector space, and elements $h_
\alpha$, $\alpha \in R$ span
$\h$ as a vector space.

(ii) For any two roots $\alpha,\beta$, the number 
$a_{\alpha,\beta}:=\beta(h_\alpha)=\frac{2(\alpha,\beta)}{(\alpha,\alpha)}$ is an integer.

(iii) For $\alpha \in R$, define the {\bf reflection operator} $s_\alpha : \h^*\to 
\h^*$ by
$$
s_\alpha(\lambda) = \lambda-\lambda(h_\alpha)\alpha = \lambda - 2\frac{(\lambda, \alpha)}{
(\alpha,\alpha)}\alpha. 
$$
Then for any roots $\alpha$, $\beta$, $s_\alpha(\beta)$ is also a root.

(iv) For roots $\alpha, \beta \ne \pm \alpha$, the subspace
$V_{\alpha,\beta} =\oplus_{k\in \Bbb Z}\g_{\beta+k\alpha}\subset \g$ 
is an irreducible representation of $\mathfrak{sl}_2({\bf k})_\alpha$.
\end{theorem}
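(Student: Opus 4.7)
For part (i), I would argue by semisimplicity. If $R$ did not span $\h^*$, there would exist a nonzero $h \in \h$ with $\alpha(h) = 0$ for every $\alpha \in R$; such $h$ would commute with each $\g_\alpha$ (since $[h,x_\alpha] = \alpha(h)x_\alpha = 0$) and with $\h$ itself, hence lie in the center of $\g$, which vanishes by semisimplicity. The second assertion then follows by duality, since under the isomorphism $\h \cong \h^*$ induced by $(,)$ the element $h_\alpha$ is proportional to the preimage $H_\alpha$ of $\alpha$, so $\{h_\alpha\}_{\alpha \in R}$ spans $\h$ if and only if $R$ spans $\h^*$.

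For parts (ii)--(iv), the plan is to analyze $V_{\alpha,\beta} = \bigoplus_{k \in \Bbb Z} \g_{\beta+k\alpha}$ as a finite-dimensional module over $\mathfrak{sl}_2(\k)_\alpha = \langle e, f, h_\alpha\rangle$. Stability under this action is immediate from Proposition \ref{toral1}(ii). Crucially, because $\beta \ne \pm \alpha$ and no integer multiple $m\alpha$ with $|m|\ge 2$ is a root, every $\beta + k\alpha$ is nonzero, so each $\g_{\beta+k\alpha}$ is either zero or a $1$-dimensional root space; moreover $h_\alpha$ acts on it by the scalar $\beta(h_\alpha) + 2k$. Part (ii) then drops out at once: $\beta(h_\alpha)$ is an $h_\alpha$-eigenvalue on $V_{\alpha,\beta}$, and all such eigenvalues are integers by the representation theory of $\mathfrak{sl}_2$ developed in Subsection \ref{sl2rep}.

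The main step, and where I expect the argument to require the most care, is the irreducibility assertion (iv). The key observation is that all $h_\alpha$-weights occurring in $V_{\alpha,\beta}$ lie in $\beta(h_\alpha) + 2\Bbb Z$, and therefore share a common parity. Suppose, for contradiction, that $V_{\alpha,\beta}$ decomposes with at least two nonzero irreducible $\mathfrak{sl}_2(\k)_\alpha$-summands; pick two of them, say $V_n$ and $V_m$. If $n$ and $m$ have the same parity, then both contain the weight $0$ (when that parity is even) or the weight $\pm 1$ (when it is odd), forcing some weight space of $V_{\alpha,\beta}$ to have dimension at least $2$; this contradicts the $1$-dimensionality of each $\g_{\beta+k\alpha}$. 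If $n$ and $m$ have opposite parities, then $V_{\alpha,\beta}$ would contain $h_\alpha$-weights of both parities, contradicting the constant-parity observation. Hence only one irreducible summand can appear and $V_{\alpha,\beta}$ is irreducible.

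Finally, (iii) falls out of (iv): since $V_{\alpha,\beta} \cong V_N$ for some $N \ge 0$, the set of $h_\alpha$-weights occurring in $V_{\alpha,\beta}$ is symmetric about $0$. Because $\beta(h_\alpha)$ occurs (on $\g_\beta$), so does $-\beta(h_\alpha)$; the unique $k$ satisfying $\beta(h_\alpha) + 2k = -\beta(h_\alpha)$ is $k = -\beta(h_\alpha)$, and hence $\g_{s_\alpha(\beta)} = \g_{\beta - \beta(h_\alpha)\alpha} \ne 0$, so $s_\alpha(\beta) \in R$ as required.
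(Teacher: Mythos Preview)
Your proof is correct and follows essentially the same route as the paper: part (i) is identical, and parts (ii) and (iv) both rest on the observation that the $h_\alpha$-weight spaces in $V_{\alpha,\beta}$ are all one-dimensional and of a single parity, forcing irreducibility. The one genuine difference is in (iii): the paper constructs a nonzero vector in $\g_{s_\alpha(\beta)}$ directly by applying $f_\alpha^{\beta(h_\alpha)}$ or $e_\alpha^{-\beta(h_\alpha)}$ to a nonzero $x\in\g_\beta$, whereas you deduce (iii) from (iv) by invoking the symmetry of the weight set of an irreducible $\mathfrak{sl}_2$-module. Both are standard; your route is slightly cleaner but requires (iv) first, while the paper's is self-contained and more explicit. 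A minor omission: your argument for (ii) and (iii) via $V_{\alpha,\beta}$ assumes $\beta\ne\pm\alpha$, but the excluded cases $\beta=\pm\alpha$ are trivial ($\beta(h_\alpha)=\pm 2$ and $s_\alpha(\pm\alpha)=\mp\alpha$).
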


\begin{proof} (i) Suppose $h\in \h$ is such that $\alpha(h)=0$ for all roots $\alpha$. Then 
${\rm ad}h=0$, hence $h=0$ as $\g$ is semisimple. This implies both statements. 

(ii) $a_{\alpha,\beta}$ is the eigenvalue of $h_\alpha$ on $e_\beta$, hence an integer by the representation theory of $\mathfrak{sl}_2$ (Subsection \ref{sl2rep}). 

(iii) Let $x\in \g_\beta$ be nonzero. If $\beta(h_\alpha)\ge 0$ then let $y=f_\alpha^{\beta(h_\alpha)}x$. 
If $\beta(h_\alpha)\le 0$ then let $y=e_\alpha^{-\beta(h_\alpha)}x$. 
Then by representation theory of $\mathfrak{sl}_2$, $y\ne 0$. We also have 
$[h,y]=s_\alpha(\beta)(h)y$. This implies the statement. 

(iv) It is clear that $V_{\alpha,\beta}$ is a representation. Also all $h_\alpha$-eigenspaces in $V_{\alpha,\beta}$ are 1-dimensional, and the eigenvalues are either all odd or all even. 
This implies that it is irreducible.   
\end{proof} 

\begin{corollary} Let $\k=\Bbb C$ and $\h_{\Bbb R}$ be the $\Bbb R$-span of all $h_\alpha$. Then $\h=\h_{\Bbb R}\oplus i\h_{\Bbb R}$ and the restriction of 
the Killing form to $\h_{\Bbb R}$ is real-valued and positive definite. 
\end{corollary}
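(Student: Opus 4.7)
The plan is to first derive an explicit formula for the restriction of the Killing form $K$ to $\h$ using the root decomposition of Proposition \ref{toral1}, and then exploit the integrality of the numbers $a_{\alpha,\beta}=\beta(h_\alpha)$ established in Theorem \ref{rootde}(ii). For $h,h'\in\h$, the operator ${\rm ad}(h){\rm ad}(h')$ acts by $0$ on $\h$ and by the scalar $\alpha(h)\alpha(h')$ on each root space $\g_\alpha$ (which is $1$-dimensional), so
$$
K(h,h')=\sum_{\alpha\in R}\alpha(h)\alpha(h').
$$

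Next, I would check that each root $\alpha$ takes real (even integer) values on $\h_{\Bbb R}$. By definition $\h_{\Bbb R}=\sum_{\beta\in R}\Bbb R\, h_\beta$, and $\alpha(h_\beta)=a_{\beta,\alpha}\in\Bbb Z$ by Theorem \ref{rootde}(ii), so $\alpha|_{\h_{\Bbb R}}$ is $\Bbb R$-valued. Combined with the formula above, this shows that $K|_{\h_{\Bbb R}\times\h_{\Bbb R}}$ is real-valued and $K(h,h)=\sum_{\alpha\in R}\alpha(h)^2\ge 0$ for $h\in\h_{\Bbb R}$. Equality forces $\alpha(h)=0$ for all $\alpha\in R$, and since $R$ spans $\h^*$ by Theorem \ref{rootde}(i), we get $h=0$. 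Thus $K|_{\h_{\Bbb R}}$ is positive definite.

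Finally, to obtain the decomposition $\h=\h_{\Bbb R}\oplus i\h_{\Bbb R}$, I would argue that $\dim_{\Bbb R}\h_{\Bbb R}=\dim_{\Bbb C}\h$. By Theorem \ref{rootde}(i), the $h_\alpha$ span $\h$ over $\Bbb C$, so it suffices to choose $\alpha_1,\dots,\alpha_n\in R$ with $n=\dim_{\Bbb C}\h$ such that $h_{\alpha_1},\dots,h_{\alpha_n}$ are $\Bbb R$-linearly independent in $\h_{\Bbb R}$. Positive definiteness of $K$ on $\h_{\Bbb R}$ and the fact that the $h_\alpha$ span $\h$ over $\Bbb C$ together give this: any maximal $\Bbb C$-linearly independent subset $h_{\alpha_1},\dots,h_{\alpha_n}$ of $\{h_\alpha\}$ is also $\Bbb R$-linearly independent (otherwise a nontrivial real relation would give an element $h\in\h_{\Bbb R}$ with $K(h,h)=0$, hence $h=0$, contradicting $\Bbb C$-independence). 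Their Gram matrix under $K$ is then a real symmetric positive definite matrix, which certifies that they form an $\Bbb R$-basis of $\h_{\Bbb R}$ of size $n=\dim_{\Bbb C}\h$, so $\h_{\Bbb R}\otimes_{\Bbb R}\Bbb C\to\h$ is an isomorphism.

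The main obstacle, such as it is, lies in the last paragraph: one must be slightly careful to go from ``$K$ is positive definite on $\h_{\Bbb R}$'' and ``the $h_\alpha$ span $\h$ over $\Bbb C$'' to the precise dimension count ensuring $\h_{\Bbb R}\cap i\h_{\Bbb R}=0$ and $\h_{\Bbb R}+i\h_{\Bbb R}=\h$. The computational heart, however — the identity $K(h,h)=\sum_\alpha\alpha(h)^2$ combined with the integrality $\alpha(h_\beta)\in\Bbb Z$ — is immediate from results already proved.
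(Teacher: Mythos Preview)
Your computation of $K(h,h')=\sum_{\alpha\in R}\alpha(h)\alpha(h')$ and the deduction that $K|_{\h_{\Bbb R}}$ is real and positive definite is exactly the paper's argument (the paper phrases it as ``$K(h,h)=\sum_i\lambda_i^2$ where $\lambda_i$ are the eigenvalues of ${\rm ad}\,h$''). So the heart of the proof is correct and matches the paper.

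The last paragraph, however, has a genuine (if easily repaired) gap. You claim that exhibiting $n=\dim_{\Bbb C}\h$ elements $h_{\alpha_1},\dots,h_{\alpha_n}$ that are $\Bbb R$-linearly independent suffices to conclude $\dim_{\Bbb R}\h_{\Bbb R}=n$; but this only gives $\dim_{\Bbb R}\h_{\Bbb R}\ge n$, and a positive definite Gram matrix certifies linear independence, not spanning. What is missing is the inequality $\dim_{\Bbb R}\h_{\Bbb R}\le n$, i.e.\ that the $h_{\alpha_i}$ actually span $\h_{\Bbb R}$ over $\Bbb R$. You can salvage your Gram-matrix idea: for any $\beta\in R$ write $h_\beta=\sum c_ih_{\alpha_i}$ with $c_i\in\Bbb C$; applying $K(-,h_{\alpha_j})$ gives a linear system with real right-hand side and real invertible coefficient matrix, forcing $c_i\in\Bbb R$. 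The paper's route is shorter: since each $\alpha$ is real on $\h_{\Bbb R}$ and hence purely imaginary on $i\h_{\Bbb R}$, any $h\in\h_{\Bbb R}\cap i\h_{\Bbb R}$ satisfies $\alpha(h)=0$ for all $\alpha$, whence $h=0$ by Theorem~\ref{rootde}(i); together with $\h_{\Bbb R}+i\h_{\Bbb R}\supset\sum\Bbb C h_\alpha=\h$ this gives the decomposition immediately.
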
 

\begin{proof} It follows from the previous theorem that the eigenvalues of ${\rm ad}h$, $h\in \h_{\Bbb R}$, are real. So $\h_{\Bbb R}\cap i\h_{\Bbb R}=0$, which implies the first statement. Now, $K(h,h)=\sum_i \lambda_i^2$ where $\lambda_i$ are the eigenvalues of ${\rm ad}h$ (which are not all zero if $h\ne 0$). Thus $K(h,h)>0$ if $h\ne 0$. 
\end{proof} 

\newpage

\section{\bf Structure of semisimple Lie algebras, II} 

\subsection{Strongly regular (regular semisimple) elements}\label{strreg} In this section we will discuss another way of constructing Cartan subalgebras. First consider an example. 

\begin{example} Let $\g=\mathfrak{sl}_n(\Bbb C)$ and $x\in \g$ be a diagonal matrix with distinct eigenvalues. Then the centralizer $\h=C(x)$ is the space of all diagonal matrices of trace $0$, which is a Cartan subalgebra.  Thus the same applies to any diagonalizable matrix with distinct eigenvalues, i.e., a generic matrix (one for which the discriminant of the characteristic polynomial is nonzero). 
\end{example}

So we may hope that if we take a generic element $x$ in a semisimple Lie algebra then its centralizer is a Cartan subalgebra. But for that we have to define what we mean by generic.  

\begin{definition} The {\bf nullity} $n(x)$ of an element $x\in \g$ is the multiplicity of the eigenvalue $0$ for the operator ${\rm ad}x$ (i.e., the dimension of the generalized $0$-eigenspace). The {\bf rank} ${\rm rank}\g$ of $\g$ is the minimal value of $n(x)$. An element $x$ is {\bf strongly regular} if $n(x)={\rm rank}\g$.
\end{definition} 

\begin{example} It is easy to check that for $\g={\mathfrak{sl}}_n$, $x$ is strongly regular if and only if 
its eigenvalues are distinct. 
\end{example} 

We will need the following auxiliary lemma. 

\begin{lemma}\label{cdo0} Let $P(z_1,...,z_n)$ be a nonzero complex polynomial, and $U\subset \Bbb C^n$ 
be the set of points $(z_1,...,z_n)\in \Bbb C^n$ such that $P(z_1,...,z_n)\ne 0$. Then $U$ is path-connected, dense and open. 
\end{lemma}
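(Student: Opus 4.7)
The plan is to dispose of the three properties in order of difficulty: openness, density, and path-connectedness, using in each case the fact that a nonzero polynomial in one complex variable has only finitely many zeros.

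First, openness is immediate: the polynomial $P: \Bbb C^n \to \Bbb C$ is continuous, and $U = P^{-1}(\Bbb C \setminus \lbrace 0\rbrace)$ is the preimage of an open set.

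Next, for density, I would fix an arbitrary point $z^0 \in \Bbb C^n$ and show that every neighborhood of $z^0$ meets $U$. For a direction vector $v \in \Bbb C^n$, consider the restriction $p_v(t) := P(z^0 + tv)$, which is a polynomial in the single complex variable $t$. Expanding, one has $p_v(t) = \sum_k t^k Q_k(v)$, where the $Q_k$ are polynomials in $v$, and since $P \ne 0$ at least one $Q_k$ is a nonzero polynomial in $v$. Hence for generic $v$ (in the complement of the zero set of this $Q_k$) the polynomial $p_v$ is nonzero in $t$, so it has only finitely many zeros in $\Bbb C$, and we can find arbitrarily small $t$ with $p_v(t) \ne 0$; i.e., $z^0 + tv \in U$ is arbitrarily close to $z^0$.

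Finally, for path-connectedness, I would connect two given points $a, b \in U$ by a path inside $U$ using the complex line through them. Specifically, consider the affine embedding $\iota: \Bbb C \to \Bbb C^n$ defined by $\iota(\lambda) := (1-\lambda)a + \lambda b$. The pullback $P \circ \iota$ is a polynomial in $\lambda$ taking the nonzero value $P(a)$ at $\lambda=0$, so it is not identically zero and thus has a finite zero set $F \subset \Bbb C$. Since $F$ is finite, $\Bbb C \setminus F$ is path-connected (indeed it deformation retracts onto a graph), so we can choose a continuous path $\gamma: [0,1] \to \Bbb C \setminus F$ with $\gamma(0) = 0$ and $\gamma(1) = 1$; then $\iota \circ \gamma$ is a path in $U$ connecting $a$ to $b$. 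The only delicate point is the density step — one must genuinely use the polynomial (not merely continuous) nature of $P$ to guarantee that a generic one-dimensional slice is not swallowed by the zero locus — but it presents no real obstacle, reducing to the fact that a nonzero polynomial in several variables is nonzero on a dense set of directions.
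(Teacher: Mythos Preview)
Your proof is correct and follows essentially the same approach as the paper: openness by continuity, path-connectedness by restricting $P$ to the complex line through the two points and using that $\Bbb C$ minus a finite set is path-connected, and density by the fact that a nonzero polynomial cannot vanish on an open ball. The paper's density argument is terser (it simply asserts that the zero set cannot contain a ball), whereas you supply a concrete mechanism via generic one-dimensional slices; both are fine.
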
 

\begin{proof} It is clear that $U$ is open, since it is the preimage of the open set $\Bbb C^\times\subset \Bbb C$ under a continuous map. It is also dense, as its complement, the hypersurface $P=0$, cannot contain a ball. Finally, to see that it is path-connected, take $\bold x,\bold y\in U$, and consider 
the polynomial $Q(t):=P((1-t)\bold x+t\bold y)$. It has only finitely many zeros, hence the entire complex line $\bold z=(1-t)\bold x+t\bold y$ except finitely many points is contained in $U$. Clearly, $\bold x$ and $\bold y$ can be connected by a path inside this line avoiding this finite set of points. 
\end{proof} 

\begin{lemma}\label{cdo} Let $\g$ be a complex semisimple Lie algebra. Then the set $\g^{\rm sr}$ of strongly regular elements is connected, dense and open in $\g$.
\end{lemma}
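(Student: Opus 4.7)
The plan is to realize $\g^{\rm sr}$ as the complement of a proper algebraic hypersurface in $\g$ (viewed as a finite-dimensional complex vector space), and then apply Lemma \ref{cdo0}.

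First I would consider the characteristic polynomial of the adjoint operator, viewed as a polynomial in $t$ with coefficients that are polynomial functions on $\g$:
$$
\det(tI - {\rm ad}\,x) = \sum_{i=0}^{n} c_i(x)\, t^i, \qquad n = \dim \g,
$$
with $c_n \equiv 1$ and each $c_i: \g \to \mathbb{C}$ a polynomial function (they are elementary symmetric functions of eigenvalues, hence polynomial in the entries of ${\rm ad}\,x$, which depend linearly on $x$). The nullity $n(x)$ is by definition the multiplicity of $0$ as a root of this polynomial, so
$$
n(x) \ge k \ \Longleftrightarrow \ c_0(x) = c_1(x) = \cdots = c_{k-1}(x) = 0.
$$
Note $c_0 \equiv 0$ since $x \in \ker({\rm ad}\,x)$ for every $x$, so ${\rm rank}(\g) \ge 1$.

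Next I would let $r$ be the smallest index such that $c_r$ is not identically zero as a polynomial on $\g$. Then ${\rm rank}(\g) = r$, because $n(x) \ge r$ for all $x$ (as $c_0, \ldots, c_{r-1}$ vanish identically), while $n(x) = r$ precisely when $c_r(x) \neq 0$. Therefore
$$
\g^{\rm sr} \;=\; \{\, x \in \g \,:\, c_r(x) \ne 0 \,\}.
$$
Since $c_r$ is a nonzero polynomial on the complex vector space $\g \cong \mathbb{C}^{\dim \g}$, Lemma \ref{cdo0} applies directly: the complement of the hypersurface $\{c_r = 0\}$ is open, dense, and path-connected (hence connected) in $\g$.

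I do not expect a serious obstacle here; the only point to check carefully is that $c_r$ is genuinely a polynomial function on $\g$ (which follows from linearity of $x \mapsto {\rm ad}\,x$ and the fact that coefficients of a characteristic polynomial are polynomial in the matrix entries), and that ${\rm rank}(\g)$ is indeed $r$ as defined, which is immediate from the equivalence displayed above. Everything else is a direct invocation of the previous lemma.
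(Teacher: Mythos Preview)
Your proof is correct and is essentially identical to the paper's: the paper factors the characteristic polynomial of ${\rm ad}\,x$ as $t^{{\rm rank}(\g)}(t^m + a_{m-1}(x)t^{m-1} + \cdots + a_0(x))$ with $a_0$ a nonzero polynomial, observes that $x$ is strongly regular iff $a_0(x)\ne 0$, and invokes Lemma~\ref{cdo0}. Your $c_r$ is exactly the paper's $a_0$, and you even spell out more carefully why the minimal nonvanishing index equals ${\rm rank}(\g)$.
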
 

\begin{proof} Consider the characteristic polynomial $P_x(t)$ of ${\rm ad}x$. We have 
$$
P_x(t)=t^{{\rm rank}\g}(t^m+a_{m-1}(x)t^{m-1}+...+a_0(x)), 
$$
where $m=\dim \g-{\rm rank}\g$ and $a_i$ are some polynomials of $x$, with $a_0\ne 0$. 
Then $x$ is strongly regular if and only if $a_0(x)\ne 0$. This implies the statement by Lemma \ref{cdo0}. 
\end{proof} 

\begin{proposition}\label{dimeq} Let $\g$ be a complex semisimple Lie algebra and $\h\subset \g$ a Cartan subalgebra. Then 

(i) $\dim\h={\rm rank}\g$; and 

(ii) the set $\h^{\rm reg}:=\h\cap \g^{\rm sr}$ coincides with the set 
$$
V:=\lbrace h\in \h: \alpha(h)\ne 0\ \forall \alpha\in R\rbrace.
$$ 
In particular, $\h^{\rm reg}$ is open and dense in $\h$. 
\end{proposition}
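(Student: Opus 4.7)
The plan is a two-step argument: first I would compute the nullity function directly on $\h$ using the root decomposition, which gives one inequality and identifies $V$ as the regular locus inside $\h$; then I would transfer this to all of $\g$ via a submersion argument supplied by $\Ad$-conjugation and the density of $\g^{\rm sr}$, avoiding any appeal to the (as yet unproved) conjugacy of Cartan subalgebras.

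For the first step, Proposition \ref{toral1}(i) shows that on $\g = \h \oplus \bigoplus_{\alpha \in R} \g_\alpha$ the operator $\ad(h)$ acts as $0$ on $\h$ and as the scalar $\alpha(h)$ on the one-dimensional space $\g_\alpha$. Hence $\ad(h)$ is diagonalizable, so its generalized $0$-eigenspace coincides with its kernel, giving
$$
n(h) = \dim\h + \#\{\alpha \in R : \alpha(h) = 0\}.
$$
This equals $\dim\h$ exactly when $h \in V$, and since $V$ is the complement of the finite union of hyperplanes $\{\alpha = 0\}$ in $\h$, it is nonempty, open and dense. This already gives ${\rm rank}(\g) \leq \dim\h$.

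For the reverse inequality, I would let $G = \Aut(\g)^\circ$, which by Corollary \ref{auto} is a Lie group with Lie algebra $\g$, and consider the regular map $\Phi \colon G \times \h \to \g$, $(g, h) \mapsto \Ad(g)h$. The differential at $(1, h_0)$, for any $h_0 \in V$, is
$$
d\Phi_{(1, h_0)}(X, H) = [X, h_0] + H, \qquad X \in \g,\ H \in \h,
$$
and since $\ad(h_0)$ has kernel $\h$ and is invertible on $\bigoplus_\alpha \g_\alpha$ (by definition of $V$), the image of $d\Phi$ is $\h + \bigoplus_\alpha \g_\alpha = \g$. So $\Phi$ is a submersion at $(1, h_0)$ and sends some open neighborhood $U \subset G \times V$ of $(1, h_0)$ onto an open subset of $\g$. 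Density of $\g^{\rm sr}$ (Lemma \ref{cdo}) then gives $(g, h) \in U$ with $y := \Ad(g) h \in \g^{\rm sr}$; the automorphism $\Ad(g)$ conjugates $\ad(h)$ into $\ad(y)$, so $n(y) = n(h) = \dim\h$, whereas $y \in \g^{\rm sr}$ forces $n(y) = {\rm rank}(\g)$. Thus ${\rm rank}(\g) = \dim\h$, proving (i). Part (ii) is then immediate from the nullity formula: $h \in \h^{\rm reg}$ iff $n(h) = {\rm rank}(\g) = \dim\h$ iff $h \in V$.

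The main obstacle, to the extent there is one, is arranging that the submersion argument delivers a strongly regular element actually on the $\Ad$-orbit of $V$, rather than just some element in a $G$-invariant open neighborhood of $h_0$. This is exactly what density of $\g^{\rm sr}$ in Lemma \ref{cdo} guarantees, and it is the only nontrivial input beyond the root decomposition; notably, the whole argument sidesteps conjugacy of Cartan subalgebras entirely.
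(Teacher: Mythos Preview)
Your proof is correct and follows essentially the same approach as the paper: compute $n(h)$ on $\h$ via the root decomposition, then use the $\Ad$-conjugation map $G\times V\to\g$, check it is a submersion at $(1,h_0)$ for $h_0\in V$, and intersect its open image with the dense set $\g^{\rm sr}$ to conclude $\dim\h={\rm rank}(\g)$. The only cosmetic differences are the order of presentation (you do the nullity formula first, the paper saves it for part (ii)) and that the paper packages the submersion step as a separate lemma showing the full image is open, whereas you work locally near a single point---either version suffices.
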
 

\begin{proof} (i) Let $G$ be a connected Lie group with Lie algebra $\g$ (we know it exists, e.g. 
we can take $G$ to be the connected component of the identity in ${\rm Aut}(\g)$).  

\begin{lemma}\label{ope}
Let $\phi: G\times \h\to \g$ be the map defined by 
$\phi(g,x):={\rm Ad} g\cdot x$. Then the set $U:=\phi(G\times V)\subset \g$
is open.  
\end{lemma} 

\begin{proof} 
Let us compute the differential $\phi_*: \g\oplus \h\to \g$ 
at the point $(1,x)$ for $x\in \h$. 
We obtain 
$$
\phi_*(y,h)=[y,x]+h.
$$
The kernel of this map is identified with the set of $y\in \g$ such that $[y,x]\in \h$. But then 
$K([y,x],z)=K(y,[x,z])=0$ for all $z\in \h$, so $[y,x]=0$. Thus ${\rm Ker}\phi_*=C(x)$. 

Now let $x\in V$. Then $C(x)=\h$. Thus $\phi_*$ is surjective by dimension count, hence $\phi$ is a submersion 
at $(1,x)$. This means that $U:={\rm Im}\phi$ contains $x$ together with its neighborhood in $\g$. 
Hence the same holds for ${\rm Ad}g\cdot x$, which implies that $U$ is open. 
\end{proof} 

Since $\g^{\rm sr}$ is open and dense and $U$ is open by Lemma \ref{ope} and non-empty, we see that $U\cap \g^{\rm sr}\ne \emptyset$. But
$$
n({\rm Ad}g\cdot x)=n(x)=\dim C(x)=\dim \h
$$
for $x\in V$. This implies that ${\rm rank}\g=\dim\h$, which yields (i). 

(ii) It is clear that for $x\in \h$, we have 
$$
n(x)=\dim {\rm Ker}({\rm ad}x)=\dim \h+\# \lbrace \alpha\in R: \alpha(x)=0\rbrace.
$$ 
This implies the statement. 
\end{proof} 

\subsection{Conjugacy of Cartan subalgebras} \label{ccsub}

\begin{theorem}\label{conscar} (i) Let $\g$ be a complex semisimple Lie algebra and let $x\in \g$ be a strongly regular semisimple element (which exists by Proposition \ref{dimeq}). Then the centralizer $C(x)$ of $x$ in $\g$ is a Cartan subalgebra of $\g$.

(ii) Any Cartan subalgebra of $\g$ is of this form.  
\end{theorem}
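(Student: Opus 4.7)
The plan is to reduce both parts of the theorem to Proposition \ref{dimeq}, which already establishes that $\dim\h={\rm rank}(\g)$ for every Cartan subalgebra $\h$ and that $\h^{\rm reg}$ is the set of $h\in\h$ with $\alpha(h)\ne 0$ for all $\alpha\in R$. The key auxiliary observation is that if $x\in \g$ is semisimple, then $\mathrm{ad}(x)$ is diagonalizable on $\g$, so the generalized $0$-eigenspace coincides with the honest $0$-eigenspace; hence $n(x)=\dim C(x)$. This is what lets us translate ``strongly regular'' into a statement about $\dim C(x)$.

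For (i), I would first observe that since $x$ is semisimple, the one-dimensional subspace $\Bbb C x\subset \g$ is a toral subalgebra, so by finite-dimensionality it extends to a maximal toral subalgebra $\h\supset \Bbb C x$. By the previous theorem, every maximal toral subalgebra is a Cartan subalgebra, so $\h$ is Cartan. Because $\h$ is abelian and contains $x$, we have $\h\subset C(x)$. On the other hand, by Proposition \ref{dimeq}(i), $\dim \h={\rm rank}(\g)$, and by the strong regularity of $x$ together with the observation above,
$$\dim C(x)=n(x)={\rm rank}(\g)=\dim\h.$$
Therefore $C(x)=\h$ is a Cartan subalgebra, which proves (i).

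For (ii), let $\h\subset \g$ be any Cartan subalgebra. By Proposition \ref{dimeq}(ii), the subset $\h^{\rm reg}=\{h\in \h: \alpha(h)\ne 0 \ \forall \alpha\in R\}$ is open and dense in $\h$, in particular nonempty; pick any $x\in \h^{\rm reg}$. Elements of a toral subalgebra are semisimple, so $x$ is semisimple. Using the root decomposition $\g=\h\oplus\bigoplus_{\alpha\in R}\g_\alpha$ from Proposition \ref{toral1}, the element $\mathrm{ad}(x)$ acts on $\g_\alpha$ by the nonzero scalar $\alpha(x)$ and by $0$ on $\h$, so $C(x)=\h$. Consequently $n(x)=\dim\h={\rm rank}(\g)$, i.e., $x$ is strongly regular and $C(x)=\h$, proving (ii).

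There is essentially no hard step here, since Proposition \ref{dimeq} has already done the analytic/geometric work of identifying ${\rm rank}(\g)$ with $\dim\h$. The one place that could plausibly be an obstacle is the implicit claim $n(x)=\dim C(x)$ for semisimple $x$; but this is immediate from diagonalizability of $\mathrm{ad}(x)$, so no genuine difficulty arises. The only conceptual point worth flagging explicitly in the write-up is that the maximal toral subalgebra containing $x$ produced in (i) is automatically Cartan only because we have already established that maximal toral equals Cartan in characteristic zero; without that result the argument would collapse.
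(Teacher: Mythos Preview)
Your proof is correct, and for part (i) it takes a genuinely different route from the paper's. The paper argues ``from the inside out'': it shows directly that $\g_0=C(x)$ is reductive (via Corollary \ref{cororedu}), then uses a perturbation argument---considering ${\rm ad}(x+ty)$ for small $t$---together with Engel's theorem to show $\g_0$ is nilpotent, hence abelian; it then kills all nilpotent elements of $\g_0$ via the nondegeneracy of the Killing form restricted to $\g_0$, concluding that $\g_0$ is toral and maximal. Your argument instead works ``from the outside in'': you embed $\Bbb C x$ in a maximal toral (hence Cartan) subalgebra $\h$, observe $\h\subset C(x)$, and finish with the dimension count $\dim\h={\rm rank}(\g)=\dim C(x)$ from Proposition \ref{dimeq}. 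Your route is shorter and leverages the earlier theorem ``maximal toral $=$ Cartan'' together with Proposition \ref{dimeq} more fully; the paper's route, while longer, exhibits the internal structure of $C(x)$ directly and showcases the perturbation-plus-Engel technique, which is instructive in its own right. For part (ii) the two arguments are essentially identical, though you spell out via the root decomposition why $C(x)=\h$ whereas the paper leaves that step implicit.
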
 

\begin{proof} Consider the eigenspace decomposition of ${\rm ad}x$: 
$\g=\oplus_\lambda \g_\lambda$. Since $\Bbb C x$ is a toral subalgebra, 
the Lie algebra $\g_0=C(x)$ is reductive, with $\dim(\g_0)={\rm rank}\g$. 

We claim that $\g_0$ is also nilpotent. 
By Engel's theorem, to establish this, it suffices to show that the restriction of ${\rm ad}y$ to $\g_0$ is nilpotent for $y\in \g_0$. But ${\rm ad} (x+ty)={\rm ad}x+t{\rm ad y}$ is invertible on $\g/\g_0$ 
for small $t$, since it is so for $t=0$ and the set of invertible matrices is open. Thus ${\rm ad}(x+ty)$ must be nilpotent on $\g_0$, as the multiplicity of the eigenvalue $0$ for this operator must be (at least) ${\rm rank}\g=\dim \g_0$. 
But ${\rm ad}(x+ty)=t{\rm ad}y$ on $\g_0$, which implies that ${\rm ad}y$ is nilpotent on $\g_0$, as desired. 

Thus $\g_0$ is abelian. Moreover, for $y,z\in \g_0$ the operator ${\rm ad}y_n\cdot {\rm ad}z$ is nilpotent on $\g$ (as the product of two commuting operators one of which is nilpotent), so $K_\g(y_n,z)=0$, which implies that $y_n=0$, as $K_\g$ restricts to a nondegenerate form on $\g_0$ and $z$ is arbitrary. It follows that any $y\in \g_0$ is semisimple, so $\g_0$ is a toral subalgebra. Moreover, it is maximal since any element commuting with $x$ is in $\g_0$. Thus $\g_0$ is a Cartan subalgebra. 

(ii) Let $\h\subset \g$ be a Cartan subalgebra. By Proposition \ref{dimeq} it contains a strongly regular element $x$, which is automatically semisimple. Then $\h=C(x)$.  
\end{proof} 

\begin{corollary}\label{resi} (i) Any strongly regular element $x\in \g$ is semisimple. 

(ii) Such $x$ is contained in a unique Cartan subalgebra, namely $\h_x=C(x)$. 
\end{corollary}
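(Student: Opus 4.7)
My plan is to deduce both parts from the Jordan decomposition in $\g$ (Proposition \ref{jordecLie}) combined with Theorem \ref{conscar} and the dimension identity ${\rm rank}\g = \dim\h$ (Proposition \ref{dimeq}).

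For part (i), write $x = x_s + x_n$ with $x_s$ semisimple, $x_n$ nilpotent, and $[x_s,x_n]=0$. Since ${\rm ad}\,x_s$ and ${\rm ad}\,x_n$ commute and the latter is nilpotent, the generalized $0$-eigenspace of ${\rm ad}\,x$ coincides with the honest $0$-eigenspace of ${\rm ad}\,x_s$, namely $C(x_s)$. Hence $n(x)=\dim C(x_s)={\rm rank}\g$. Now $x_s\ne 0$ (otherwise $\g$ would be abelian, so zero), so $\Bbb C x_s$ is a toral subalgebra; extend it to a maximal toral subalgebra $\h\ni x_s$, which is a Cartan subalgebra. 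Then $\h\subset C(x_s)$, and since $\dim\h={\rm rank}\g=\dim C(x_s)$ by Proposition \ref{dimeq}(i), we get $C(x_s)=\h$. But $x_n\in C(x_s)=\h$, and $\h$ consists of semisimple elements, while $x_n$ is nilpotent. A simultaneously semisimple and nilpotent element of a semisimple Lie algebra is central, hence zero, so $x_n=0$ and $x=x_s$ is semisimple.

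For part (ii), existence of a Cartan subalgebra containing $x$ is immediate from Theorem \ref{conscar}(i): $\h_x := C(x)$ is a Cartan subalgebra (applicable now because $x$ is semisimple by (i) and strongly regular by assumption). For uniqueness, suppose $\h'$ is any Cartan subalgebra with $x\in\h'$. Since $\h'$ is abelian, $\h'\subset C(x)=\h_x$; but $\dim\h'={\rm rank}\g=\dim\h_x$ by Proposition \ref{dimeq}(i), so $\h'=\h_x$.

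The only subtle point is the identification of the generalized $0$-eigenspace of ${\rm ad}\,x$ with $C(x_s)$, which I would state carefully using the fact that commuting semisimple and nilpotent operators admit a common generalized eigenspace decomposition determined by the semisimple part. Everything else is a direct application of the structural results already proved.
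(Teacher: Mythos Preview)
Your proof is correct and follows essentially the same route as the paper. Both arguments hinge on the observation that $n(x)=\dim C(x_s)$, which shows $x_s$ is itself strongly regular (equivalently, $\dim C(x_s)={\rm rank}\g$), and then on the fact that $x\in C(x_s)$ together with $C(x_s)$ being a Cartan subalgebra forces $x_n=0$.

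The only difference is in how you verify that $C(x_s)$ is a Cartan subalgebra. The paper simply invokes Theorem~\ref{conscar}(i) applied to the strongly regular semisimple element $x_s$. You instead extend $\Bbb C x_s$ to a maximal toral (hence Cartan) subalgebra $\h$ and use the dimension identity $\dim\h={\rm rank}\g=\dim C(x_s)$ from Proposition~\ref{dimeq}(i) to conclude $\h=C(x_s)$. Your argument is slightly more self-contained for part (i), since it avoids the more involved proof of Theorem~\ref{conscar} (which analyzes the reductivity and nilpotency of $\g_0$), but you do need Theorem~\ref{conscar} anyway for existence in part (ii). Part (ii) is identical to the paper's argument (modulo what appears to be a typo there).
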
 

\begin{proof} (i) It is clear that if $x$ is strongly regular then so is $x_s$. Since $x\in C(x_s)$ and as shown above $C(x_s)$ is a Cartan subalgebra, it follows that $x$ is semisimple. 

(ii) Let $\h\subset \g$ be a Cartan subalgebra containing $x$. Then $\h\supset \h_x$, thus by dimension count $\h=\h_x$. 
\end{proof} 

We note that there is also a useful notion of a {\bf regular element}, which is an $x\in \g$ for which the {\bf ordinary} (rather than generalized) $0$-eigenspace of ${\rm ad}x$ (i.e., the centralizer $C(x)$ of $x$) has dimension ${\rm rank}\g$. Such elements don't have to be semisimple, e.g. the nilpotent Jordan block in $\mathfrak{sl}_n$ is regular. It follows from Corollary \ref{resi}(i) that an element is strongly regular if and only if it is both regular and semisimple. For this reason, from now on we will follow standard terminology and call strongly regular elements {\bf regular semisimple}.

\begin{theorem}\label{conjcar} 
Any two Cartan subalgebras of a complex semisimple Lie algebra $\g$ are conjugate. 
I.e., if $\h_1,\h_2\subset \g$ are two Cartan subalgebras and $G$ a connected Lie group with Lie algebra $\g$ then there exists an element $g\in G$ such that ${\rm Ad}g\cdot \h_1=\h_2$. 
\end{theorem}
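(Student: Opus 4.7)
The plan is to show that, letting $G$ act on $\g$ by the adjoint action, the regular semisimple locus $\g^{\rm sr}$ decomposes as a disjoint union, indexed by $G$-conjugacy classes of Cartan subalgebras, of open subsets; since $\g^{\rm sr}$ is connected by Lemma \ref{cdo}, there must then be only one such class, which is exactly the statement of the theorem.

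For each Cartan subalgebra $\h\subset\g$, set $\h^{\rm reg}:=\h\cap\g^{\rm sr}$, which is open and dense in $\h$ by Proposition \ref{dimeq}(ii), and consider the map
$$
\phi_\h: G\times\h^{\rm reg}\to\g,\qquad \phi_\h(g,x):={\rm Ad}\,g\cdot x.
$$
First I would check that $\phi_\h$ is a submersion. At $(1,x)$ with $x\in\h^{\rm reg}$ its differential sends $(y,h)$ to $[y,x]+h$, and by the root decomposition of Proposition \ref{toral1}, ${\rm ad}\,x$ acts on $\g_\alpha$ as the nonzero scalar $\alpha(x)$, so $[\g,x]=\bigoplus_{\alpha\in R}\g_\alpha$ and therefore $[\g,x]\oplus\h=\g$; by homogeneity under $G$ this extends to all $(g,x)$. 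Hence $U_\h:={\rm Im}(\phi_\h)=G\cdot\h^{\rm reg}$ is open in $\g$, and clearly $U_\h\subset\g^{\rm sr}$.

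Next I would verify the partition property. First, $\g^{\rm sr}=\bigcup_\h U_\h$: for any $y\in\g^{\rm sr}$, the centralizer $C(y)$ is a Cartan subalgebra containing $y$ by Theorem \ref{conscar}(i), so $y\in U_{C(y)}$. Second, if $U_{\h_1}\cap U_{\h_2}\neq\emptyset$, then $\h_1,\h_2$ are $G$-conjugate: a common element $y={\rm Ad}\,g_i\cdot x_i$ with $x_i\in\h_i^{\rm reg}$ forces $x_2={\rm Ad}(g_2^{-1}g_1)\cdot x_1$, whence $\h_2=C(x_2)={\rm Ad}(g_2^{-1}g_1)\cdot C(x_1)={\rm Ad}(g_2^{-1}g_1)\cdot\h_1$ by the uniqueness of the Cartan subalgebra through a regular semisimple element (Corollary \ref{resi}(ii)). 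Thus the $U_\h$ group into pairwise disjoint open sets indexed by $G$-conjugacy classes of Cartan subalgebras and cover the connected set $\g^{\rm sr}$, so only one class appears.

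The main nontrivial input beyond what has already been established is the connectedness of $\g^{\rm sr}$ (Lemma \ref{cdo}), and this is the step that makes the argument intrinsically complex-analytic: the complement of a proper complex algebraic hypersurface in $\g$ is connected, whereas its real analogue typically is not. The submersion calculation itself essentially repeats the one already carried out in Lemma \ref{ope}, so the real work is just assembling these pieces and identifying the $G$-orbits on $\g^{\rm sr}$ with conjugacy classes of Cartan subalgebras.
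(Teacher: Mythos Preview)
Your proof is correct and follows essentially the same approach as the paper: partition the connected set $\g^{\rm sr}$ into open pieces indexed by $G$-conjugacy classes of Cartan subalgebras (the paper phrases this as an equivalence relation $x\sim y$ iff $\h_x$ is conjugate to $\h_y$ and shows equivalence classes are open via Lemma~\ref{ope}), then invoke connectedness. Your submersion calculation is indeed a reprise of Lemma~\ref{ope}, and your use of Theorem~\ref{conscar} and Corollary~\ref{resi}(ii) to identify the pieces with conjugacy classes matches the paper's argument exactly.
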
 

\begin{proof} By Corollary \ref{resi}(ii), every element $x\in \g^{\rm sr}$ is contained in a unique Cartan subalgebra $\h_x$. Introduce an equivalence relation on $\g^{\rm sr}$ by setting $x\sim y$ if $\h_x$ is conjugate to $\h_y$. It is clear that if $x,y\in \h$ are regular elements in a Cartan subalgebra $\h$ then $\h_x=\h_y=\h$, so for any $g\in G$, ${\rm Ad}g\cdot x\sim y$, and any element equivalent to $y$ has this form. So by Lemma \ref{ope} the equivalence class $U_y$ of $y$ is open. However, by Lemma \ref{cdo}, $\g^{\rm sr}$ is connected. Thus there is only one equivalence class. Hence any two Cartan subalgebras of the form $\h_x$ for regular $x$ are conjugate. This implies the result, since by Theorem \ref{conscar} any Cartan subalgebra is of the form $\h_x$. 
\end{proof} 

\begin{remark} The same results and proofs apply over any algebraically closed field ${\bf k}$ of characteristic zero if we use the Zariski topology instead of the usual topology of $\Bbb C^n$ when working with the notions of a connected, open and dense set. 
\end{remark} 

\subsection{Root systems of classical Lie algebras}

\begin{example} Let $\g$ be the symplectic Lie algebra $\mathfrak{sp}_{2n}(\bf k)$. 
Thus $\g$ consists of square matrices $A$ of size $2n$ such that 
$$
AJ+JA^T=0
$$
where $J=\begin{pmatrix} 0& \bold 1\\ -\bold 1& 0\end{pmatrix}$, with blocks being of size $n$. 
So we get 
$A=\begin{pmatrix} a& b\\ c& -a^T\end{pmatrix}$, where 
$b,c$ are symmetric. A Cartan subalgebra $\h$ is then spanned by matrices $A$
such that $a={\rm diag}(x_1,...,x_n)$ and $b=c=0$. So $\h\cong {\bf k}^n$. 
In this case we have roots coming from the $a$-part, which are simply the roots ${\bold e}_i-{\bold e}_j$ of 
${\mathfrak{gl}}_n\subset {\mathfrak{sp}}_{2n}$ (defined by the condition that $b=c=0$)
and also the roots coming from the $b$-part, which are ${\bold e}_i+{\bold e}_j$ (including $i=j$, when we get 
$2{\bold e}_i$), and the $c$-part, which gives the negatives of these roots, 
$-{\bold e}_i-{\bold e}_j$, including $-2{\bold e}_i$. 

This is the {\bf root system of type $C_n$.}
\end{example} 
 
\begin{example}  Let $\g$ be the orthogonal Lie algebra $\mathfrak{so}_{2n}(\bf k)$, 
preserving the quadratic form $Q=x_1x_{n+1}+...+x_nx_{2n}$. Then the story is almost the same. 
The Lie algebra $\g$ consists of square matrices $A$ of size $2n$ such that 
$$
AJ+JA^T=0
$$
where $J=\begin{pmatrix} 0& \bold 1\\ \bold 1& 0\end{pmatrix}$, with blocks being of size $n$. 
So we get 
$A=\begin{pmatrix} a& b\\ c& -a^T\end{pmatrix}$, where 
$b,c$ are now skew-symmetric. A Cartan subalgebra $\h$ is again spanned by matrices $A$
such that $a={\rm diag}(x_1,...,x_n)$ and $b=c=0$. So $\h\cong {\bf k}^n$. 
In this case we again have roots coming from the $a$-part, which are simply the roots ${\bold e}_i-{\bold e}_j$ of 
${\mathfrak{gl}}_n\subset {\mathfrak{so}}_{2n}$ (defined by the condition that $b=c=0$)
and also the roots coming form the $b$-part, which are ${\bold e}_i+{\bold e}_j$ (but now excluding $i=j$, so only for $i\ne j$), and the $c$-part, which gives the negatives of these roots, 
$-{\bold e}_i-{\bold e}_j$, $i\ne j$. 

This is the {\bf root system of type $D_n$.} 
\end{example}

\begin{example} Let $\g$ be the orthogonal Lie algebra $\mathfrak{so}_{2n+1}(\bf k)$, 
preserving the quadratic form $Q=x_0^2+x_1x_{n+1}+...+x_nx_{2n}$. 
Then the Lie algebra $\g$ consists of square matrices $A$ of size $2n+1$ such that 
$$
AJ+JA^T=0
$$
where 
$$
J=\begin{pmatrix} \bold 1_1& 0 & 0\\ 0& 0 &\bold 1_n\\ 0& \bold 1_n& 0\end{pmatrix},
$$
So 
we get 
$$
A=\begin{pmatrix} 0& u& -u\\ w &a& b\\ -w & c & -a^T\end{pmatrix},
$$ 
where $b,c$ are skew-symmetric. A Cartan subalgebra $\h$ is spanned by matrices $A$
such that $a={\rm diag}(x_1,...,x_n)$ and $b=c=0$, $u=w=0$. So $\h\cong {\bf k}^n$. 
In this case we again have roots coming from the $a$-part, which are simply the roots ${\bold e}_i-{\bold e}_j$ of 
${\mathfrak{gl}}_n\subset {\mathfrak{so}}_{2n+1}$ (defined by the condition that $b=c=0$, $u=w=0$)
and also the roots coming form the $b$-part, which are ${\bold e}_i+{\bold e}_j$, $i\ne j$, 
and the $c$-part, which gives the negatives of these roots, $-{\bold e}_i-{\bold e}_j$, $i\ne j$. 
But we also have the roots coming from the $w$-part, which are ${\bold e}_i$, and from the $u$ part, which are $-{\bold e}_i$. 

This is the {\bf root system of type $B_n$.}
\end{example} 

\section{\bf Root systems}

\subsection{Abstract root systems} Let $E\cong \Bbb R^r$ be a Euclidean space with a positive definite inner product.  

\begin{definition} An {\bf abstract root system} is a finite set $R\subset E\setminus 0$ satisfying the following axioms: 

(R1) $R$ spans $E$; 

(R2) For all $\alpha,\beta\in R$ the number $n_{\alpha\beta}:=\frac{2(\alpha,\beta)}{(\alpha,\alpha)}$ is an integer; 

(R3) If $\alpha,\beta\in R$ then $s_\alpha(\beta):=\beta-n_{\alpha\beta}\alpha\in R$. 

Elements of $R$ are called {\bf roots}. The number $r=\dim E$ is called the {\bf rank} of $R$. 
\end{definition} 

In particular, taking $\beta=\alpha$ in R3 yields that $R$ is centrally symmetric, i.e., $R=-R$. 
Also note that $s_\alpha$ is the reflection with respect to the hyperplane $(\alpha,x)=0$, so 
R3 just says that $R$ is invariant under such reflections. 

Note also that if $R\subset E$ is a root system, $\overline E\subset E$ a subspace, and $R'=R\cap \overline E$ then $R'$ is also a root system inside $E'={\rm Span}(R')\subset \overline E$.  

For a root $\alpha$ the corresponding {\bf coroot} $\alpha^\vee\in E^*$ 
is defined by the formula $\alpha^\vee(x)=\frac{2(\alpha,x)}{(\alpha,\alpha)}$. 
Thus $\alpha^\vee(\alpha)=2$, $n_{\alpha\beta}=\alpha^\vee(\beta)$ and 
$s_\alpha(\beta)=\beta-\alpha^\vee(\beta)\alpha$. 

\begin{definition} A root system $R$ is {\bf reduced} if for $\alpha,c\alpha\in R$, we have $c=\pm 1$. 
\end{definition} 

\begin{proposition} If $\g$ is a semisimple Lie algebra and $\h\subset \g$ a Cartan subalgebra then the corresponding set of roots $R$ is a reduced root system, and $\alpha^\vee=h_\alpha$. 
\end{proposition}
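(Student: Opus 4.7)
The plan is to take $E := \h^*_{\Bbb R}$ (the real span of the roots in $\h^*$) equipped with the positive definite inner product induced from the restriction of the Killing form to $\h_{\Bbb R}$, and then verify the three axioms (R1)--(R3) together with the reduced condition essentially by assembling facts already established in Theorem \ref{rootde} and its consequences. First I would check that $R$ really sits inside $E \setminus 0$: for any two roots $\alpha,\beta$ we have $\alpha(h_\beta) = n_{\beta\alpha}\in \Bbb Z$ by Theorem \ref{rootde}(ii), so each root takes integer (in particular real) values on the $\Bbb R$-span $\h_{\Bbb R}$ of the $h_\beta$, and hence $R \subset \h^*_{\Bbb R} = E$. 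Axiom (R1) is immediate: by Theorem \ref{rootde}(i), $R$ spans $\h^*$ over $\Bbb C$, so the real span of $R$ is an $\Bbb R$-form of $\h^*$, which is precisely $E$.

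Axioms (R2) and (R3) are exactly the content of Theorem \ref{rootde}(ii),(iii), once we note that the inner product on $E$ coming from the Killing form on $\h_{\Bbb R}$ is dual to the inner product $(\alpha,\beta) = \alpha(H_\beta)$ appearing there, so that $n_{\alpha\beta}$ in the abstract definition equals $\beta(h_\alpha)$ and $s_\alpha$ in the abstract definition coincides with the reflection $\lambda \mapsto \lambda - \lambda(h_\alpha)\alpha$. To check that $R$ is reduced, suppose $\alpha\in R$ and $c\alpha\in R$ for some $c\in\Bbb R$. By (R2) applied to the pair $(\alpha, c\alpha)$ we get $2c \in \Bbb Z$, and applied to $(c\alpha,\alpha)$ we get $2/c \in \Bbb Z$, forcing $c \in \{\pm 1,\pm 2,\pm\tfrac{1}{2}\}$. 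The values $c = \pm 2$ are excluded by the corollary in Section 11.4 stating that $k\alpha$ is not a root for integer $|k|\ge 2$; applying the same corollary to the root $c\alpha$ with $c = \pm\tfrac{1}{2}$ then excludes those cases as well. Hence $c = \pm 1$.

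Finally, the identification $\alpha^\vee = h_\alpha$ is just a matter of transporting the isomorphism $\h \xrightarrow{\sim} \h^*$ induced by the invariant form: under the identification $E^* \cong E \cong \h_{\Bbb R}$, the functional $\alpha^\vee \in E^*$ defined by $\alpha^\vee(x) = \tfrac{2(\alpha,x)}{(\alpha,\alpha)}$ corresponds to the element $\tfrac{2H_\alpha}{(\alpha,\alpha)} \in \h_{\Bbb R}$, which is precisely $h_\alpha$, and then $\alpha^\vee(\beta) = \beta(h_\alpha) = n_{\alpha\beta}$ matches on the nose. The main ``work'' in the argument has already been done inside Theorem \ref{rootde}; the only genuine point to verify here, and hence the main (mild) obstacle, is to confirm that the abstract $E$ is the correct real form of $\h^*$ and that the inner product constructed from the Killing form is positive definite on it, both of which follow from results already recorded in the previous subsection.
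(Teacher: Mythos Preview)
Your proposal is correct and follows the same approach as the paper, which simply states that the proposition ``follows immediately from Theorem~\ref{rootde}.'' You have unpacked this one-line proof carefully: (R1)--(R3) come from parts (i)--(iii) of Theorem~\ref{rootde}, the reduced condition from the Corollary just before it (that $k\alpha$ is not a root for integers $|k|\ge 2$), and the identification $\alpha^\vee = h_\alpha$ from the definitions.
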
 

\begin{proof} This follows immediately from Theorem \ref{rootde}.
\end{proof} 

\begin{example} 1. The root system of $\mathfrak{sl}_n$ is called $A_{n-1}$. In this case, as we have seen in Example \ref{Anm1}, the roots are ${\bold e}_i-{\bold e}_j$, and $s_{{\bold e}_i-{\bold e}_j}=(ij)$, the transposition of the $i$-th and $j$-th coordinates. 

2. The subset $\lbrace 1,2,-1,-2\rbrace $ of $\Bbb R$ is a root system which is not reduced. 
\end{example} 

\begin{definition} Let $R_1\subset E_1,R_2\subset E_2$ be root systems. An {\bf isomorphism of root systems} $\phi: R_1\to R_2$ is an isomorphism $\phi: E_1\to E_2$ 
which maps $R_1$ to $R_2$ and preserves the numbers $n_{\alpha\beta}$. 
\end{definition} 

So an isomorphism does not have to preserve the inner product, e.g. it may rescale it. 

\subsection{The Weyl group}

\begin{definition} The {\bf Weyl group} of a root system $R$ is the group of automorphisms of $E$ 
generated by $s_\alpha$. 
\end{definition} 

\begin{proposition} $W$ is a finite subgroup of $O(E)$ which preserves $R$. 
\end{proposition}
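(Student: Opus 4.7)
The plan is to verify the three assertions in turn: orthogonality of each generator, preservation of $R$, and finiteness.

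First I would check that each reflection $s_\alpha$ lies in $O(E)$. This is a direct computation: for any $x,y \in E$,
$$
(s_\alpha(x), s_\alpha(y)) = \bigl(x - \tfrac{2(\alpha,x)}{(\alpha,\alpha)}\alpha,\ y - \tfrac{2(\alpha,y)}{(\alpha,\alpha)}\alpha\bigr),
$$
and expanding the right-hand side, the cross terms cancel the quadratic term, leaving $(x,y)$. Since $W$ is generated by the $s_\alpha$ and $O(E)$ is a group, $W \subset O(E)$.

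Next, the fact that $W$ preserves $R$ is essentially axiom (R3): each generator $s_\alpha$ maps $R$ into $R$, so any finite word in the $s_\alpha$ does as well, yielding a map $W \to \operatorname{Sym}(R)$.

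The main (still easy) point is finiteness. I would argue that the homomorphism $\rho: W \to \operatorname{Sym}(R)$ defined by the action on $R$ is injective. Indeed, if $w \in \ker \rho$ then $w$ fixes every element of $R$ pointwise; but by (R1) the set $R$ spans $E$, so $w$ fixes a spanning set and hence $w = \operatorname{Id}_E$. Therefore $W$ embeds into the finite group $\operatorname{Sym}(R)$, and so $W$ is finite. There is no serious obstacle here; the only thing to be careful about is to invoke (R1) rather than simply the fact that $R$ generates $E$ as a group, which is why the injectivity of $\rho$ holds.
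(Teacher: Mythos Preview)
Your proof is correct and follows exactly the same approach as the paper: orthogonality of the generating reflections gives $W\subset O(E)$, axiom (R3) gives preservation of $R$, and axiom (R1) makes the restriction map $W\to\operatorname{Sym}(R)$ injective, forcing $W$ finite. The paper's version is simply more terse.
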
 

\begin{proof} Since $s_\alpha$ are orthogonal reflections, $W\subset O(E)$. By R3, $s_\alpha$ preserves $R$. By R1 an element of $W$ is determined by its action on $R$, hence $W$ is finite.  
\end{proof}   

\begin{example} For the root system $A_{n-1}$, $W=S_n$, the symmetric group. Note that 
for $n\ge 3$, the automorphism $x\mapsto -x$ of $R$ is not in $W$, so $W$ is, in general, a proper subgroup of ${\rm Aut}(R)$. 
\end{example} 

\subsection{Root systems of rank $2$} 

If $\alpha,\beta$ are linearly independent roots in $R$ and $E'\subset E$ is spanned by $\alpha,\beta$ then $R'=R\cap E'$ is a root system in $E'$ of rank $2$. So to classify reduced root systems, it is important to classify reduced root systems of rank $2$ first. 

\begin{theorem}\label{rank2rs} Let $R$ be a reduced root system and $\alpha,\beta\in R$ be two linearly independent roots with $|\alpha|\ge |\beta|$. Let $\phi$ be the angle between $\alpha$ and $\beta$. Then we have one of the following possibilities: 

(1) $\phi=\pi/2$, $n_{\alpha\beta}=n_{\beta\alpha}=0$; 

(2a) $\phi=2\pi/3$, $|\alpha|^2=|\beta|^2$, $n_{\alpha\beta}=n_{\beta\alpha}=-1$; 

(2b) $\phi=\pi/3$, $|\alpha|^2=|\beta|^2$, $n_{\alpha\beta}=n_{\beta\alpha}=1$; 

(3a) $\phi=3\pi/4$, $|\alpha|^2=2|\beta|^2$, $n_{\alpha\beta}=-1$, $n_{\beta\alpha}=-2$; 

(3b) $\phi=\pi/4$, $|\alpha|^2=2|\beta|^2$, $n_{\alpha\beta}=1$, $n_{\beta\alpha}=2$;

(4a) $\phi=5\pi/6$, $|\alpha|^2=3|\beta|^2$, $n_{\alpha\beta}=-1$, $n_{\beta\alpha}=-3$;

(4b) $\phi=\pi/6$, $|\alpha|^2=3|\beta|^2$, $n_{\alpha\beta}=1$, $n_{\beta\alpha}=3$.
\end{theorem}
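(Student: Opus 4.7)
The plan is to reduce the entire classification to a single numerical observation and then do a short case analysis. The key identity is
\[
n_{\alpha\beta}\,n_{\beta\alpha} \;=\; \frac{2(\alpha,\beta)}{(\alpha,\alpha)}\cdot\frac{2(\beta,\alpha)}{(\beta,\beta)} \;=\; \frac{4(\alpha,\beta)^2}{|\alpha|^2|\beta|^2} \;=\; 4\cos^2\phi.
\]
By axiom (R2) both factors are integers, so the left-hand side is a non-negative integer. Since $\alpha,\beta$ are linearly independent we have $\cos^2\phi<1$, hence $4\cos^2\phi\in\{0,1,2,3\}$. Moreover, $n_{\alpha\beta}$ and $n_{\beta\alpha}$ have the same sign (both equal to the sign of $(\alpha,\beta)$), and dividing the two relations gives
\[
\frac{n_{\beta\alpha}}{n_{\alpha\beta}} \;=\; \frac{|\alpha|^2}{|\beta|^2} \;\ge\; 1,
\]
using the hypothesis $|\alpha|\ge|\beta|$. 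Thus $|n_{\alpha\beta}|\le|n_{\beta\alpha}|$.

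Next I would split into cases according to the value of the product $n_{\alpha\beta}n_{\beta\alpha}\in\{0,1,2,3\}$. If the product is $0$, then $(\alpha,\beta)=0$, giving case (1). If the product is $1$, then $|n_{\alpha\beta}|=|n_{\beta\alpha}|=1$; the ratio computation forces $|\alpha|^2=|\beta|^2$ and $\cos\phi=\pm\tfrac12$, giving the two subcases (2a) and (2b) according to sign. If the product is $2$, then the ordering forces $|n_{\alpha\beta}|=1$ and $|n_{\beta\alpha}|=2$, so $|\alpha|^2=2|\beta|^2$ and $\cos\phi=\pm\tfrac{\sqrt{2}}{2}$, giving (3a), (3b). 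If the product is $3$, then $|n_{\alpha\beta}|=1$ and $|n_{\beta\alpha}|=3$, so $|\alpha|^2=3|\beta|^2$ and $\cos\phi=\pm\tfrac{\sqrt{3}}{2}$, giving (4a), (4b). In each case the angle $\phi$ is read off directly from $\cos\phi$.

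I would also note why no other possibilities arise: the value $4\cos^2\phi=4$ is excluded because it would force $\beta=\pm\tfrac{|\beta|}{|\alpha|}\alpha$, and reducedness of $R$ combined with linear independence of $\alpha,\beta$ rules this out. Values $>4$ are impossible since $\cos^2\phi\le 1$. Finally, the case $|n_{\alpha\beta}|\ge 2$ with product $\le 3$ would force $|n_{\beta\alpha}|=0$ or $|n_{\beta\alpha}|=1<|n_{\alpha\beta}|$, contradicting the ordering $|n_{\alpha\beta}|\le|n_{\beta\alpha}|$ (or giving product $0$).

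There is no real obstacle here; the entire proof is a short piece of Euclidean geometry once one writes down $n_{\alpha\beta}n_{\beta\alpha}=4\cos^2\phi$. The only thing to be careful about is bookkeeping the signs consistently, so that the assignment of $n_{\alpha\beta}, n_{\beta\alpha}$, the angle $\phi$, and the length ratio all match up in each subcase. The reducedness hypothesis is used only to exclude the degenerate $4\cos^2\phi=4$ case, and linear independence is what keeps $\cos^2\phi$ strictly below $1$.
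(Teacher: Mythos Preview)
Your proof is correct and follows essentially the same route as the paper: compute $n_{\alpha\beta}n_{\beta\alpha}=4\cos^2\phi$, observe this is an integer in $\{0,1,2,3\}$ by (R2) and linear independence, use the ratio $n_{\beta\alpha}/n_{\alpha\beta}=|\alpha|^2/|\beta|^2$ to fix the lengths, and then read off each subcase. One minor point: linear independence alone already forces $\cos^2\phi<1$, so reducedness plays no role in this particular theorem (it enters later, in the classification of rank~$2$ systems).
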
 

\begin{proof} We have $(\alpha,\beta)=|\alpha|\cdot |\beta|\cos\phi$, so $n_{\alpha\beta}=2\frac{|\beta|}{|\alpha|}\cos \phi$. Thus $n_{\alpha\beta}n_{\beta\alpha}=4\cos^2\phi$. Hence this number can only take values $0,1,2,3$ (as it is an integer by R2) and $\frac{n_{\alpha\beta}}{n_{\beta\alpha}}=\frac{|\alpha|^2}{|\beta|^2}$ if $n_{\alpha\beta}\ne 0$. The rest is obtained by analysis of each case.  
\end{proof} 

In fact, all these possibilities are realized. Namely, we have root systems $A_1\times A_1$, 
$A_2$, $B_2=C_2$ (the root system of the Lie algebras $\mathfrak{sp}_4$ and $\mathfrak{so}_5$, which are in fact isomorphic, consisting of the vertices and midpoints of edges of a square), and $G_2$, generated by $\alpha,\beta$ with $(\alpha,\alpha)=6$, $(\beta,\beta)=2$, $(\alpha,\beta)=-3$, and roots being $\pm \alpha,\pm \beta$, $\pm (\alpha+\beta)$, $\pm (\alpha+2\beta)$, $\pm (\alpha+3\beta)$, $\pm (2\alpha+3\beta)$. 

\begin{theorem} Any reduced rank $2$ root system $R$ is of the form $A_1\times A_1$, $A_2$, $B_2$ or $G_2$. 
\end{theorem}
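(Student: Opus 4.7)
The idea is to pin down $R$ by its minimum inter-root angle and then close up under the Weyl group. Let $\phi$ denote the smallest angle between any two linearly independent roots of $R$, where we orient each such pair so the angle lies in $(0,\pi/2]$. Applying Theorem~\ref{rank2rs} to a minimizing pair, only the acute cases (1), (2b), (3b), (4b) can occur, so
$$
\phi\in\{\pi/2,\ \pi/3,\ \pi/4,\ \pi/6\}.
$$
Fix roots $\alpha,\beta\in R$ realizing this angle $\phi$. Theorem~\ref{rank2rs} forces the length ratio $|\alpha|^2/|\beta|^2$ to be $1$ (if $\phi=\pi/3$), $2$ (if $\phi=\pi/4$), $3$ (if $\phi=\pi/6$), or unconstrained (if $\phi=\pi/2$).

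Next I would exploit the dihedral subgroup $\langle s_\alpha,s_\beta\rangle\subset W$. Since $s_\alpha s_\beta$ is a plane rotation through $2\phi=2\pi/n$ for $n=2,3,4,6$ respectively, this subgroup has order $2n$. Its orbit on $\{\alpha,\beta\}$ produces $2n$ roots at angular positions $k\phi$, $k=0,1,\dots,2n-1$, with lengths alternating between $|\alpha|$ and $|\beta|$ in the unequal-length cases (and all equal otherwise). A case-by-case inspection of the four possibilities shows that this orbit is closed under every reflection $s_\gamma$ it contains and is isomorphic (as a root system, where the notion of isomorphism allows rescaling) to $A_1\times A_1$, $A_2$, $B_2$, or $G_2$, matching the four values of $\phi$.

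Finally I would rule out extra roots using the minimality of $\phi$. Suppose $\gamma\in R$ is not in the above orbit. The orbit roots occupy $2n$ equally spaced directions in $E$, so the direction of $\gamma$ lies strictly between two adjacent orbit directions $\mu,\nu$ with $\angle(\mu,\nu)=\phi$. Then $\gamma$ makes an angle strictly less than $\phi$ with at least one of $\mu,\nu$. By the minimality of $\phi$ this angle cannot be nonzero (else we would contradict the definition of $\phi$), so $\gamma$ is proportional to $\mu$ (say); since $R$ is reduced, $\gamma=\pm\mu$, contradicting $\gamma\notin$ orbit.

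The main obstacle is the middle step: one must verify in each of the four cases that the dihedral orbit of $\{\alpha,\beta\}$ really is a root system of the claimed type, in particular that it is already closed under reflections in all of its own roots and that the length ratios predicted by Theorem~\ref{rank2rs} remain consistent throughout the orbit. For $B_2$ and $G_2$ this means confirming that reflecting a long root in a short root (and vice versa) lands exactly in the opposite length class at the correct angular offset, so that the orbit closes to precisely $8$ (respectively $12$) roots. Once this case-by-case check is in hand, the minimality argument of the final paragraph closes the classification with no further input.
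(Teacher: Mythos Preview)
Your argument is correct and follows essentially the same strategy as the paper's proof: pick an extremal angle between two independent roots, invoke Theorem~\ref{rank2rs} to pin down the four possibilities, and then inspect each case. The only difference is that the paper chooses the \emph{maximal} angle $\phi$ (so $\phi\ge\pi/2$, landing in cases 1, 2a, 3a, 4a), whereas you choose the minimal acute angle (landing in cases 1, 2b, 3b, 4b); these are dual choices, and the paper then simply says ``the statement follows by inspection of each case'' where you spell out the dihedral-orbit construction and the extremality argument for ruling out extra roots. Your version is more explicit, but neither approach avoids the case-by-case verification you flag as the main obstacle.
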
 

\begin{proof} Pick independent roots $\alpha,\beta\in R$ such that the angle $\phi$ is as large as possible. Then $\phi\ge \pi/2$  (otherwise can replace $\alpha$ with $-\alpha$), so we are in one of the cases $1,2a,3a,4a$. Now the statement follows by inspection of each case, giving 
$A_1\times A_1$, $A_2$, $B_2$ and $G_2$ respectively. 
\end{proof} 

\begin{corollary}\label{neg} If $\alpha,\beta\in R$ are independent roots with $(\alpha,\beta)<0$ then $\alpha+\beta\in R$. 
\end{corollary}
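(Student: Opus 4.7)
The plan is to reduce the statement to an inspection of the rank 2 classification carried out in Theorem \ref{rank2rs}. Let $E' \subset E$ be the $\Bbb R$-span of $\alpha$ and $\beta$ (a 2-dimensional subspace since $\alpha,\beta$ are independent), and set $R' := R \cap E'$. Axioms (R2), (R3) are inherited from $R$, and (R1) holds by construction, so $R'$ is a reduced root system of rank $2$ containing both $\alpha$ and $\beta$.

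Next, since $(\alpha,\beta) < 0$, the angle between $\alpha$ and $\beta$ is obtuse, so by Theorem \ref{rank2rs} we must be in one of the cases (2a), (3a), or (4a) (the orthogonal case (1) being excluded). Without loss of generality assume $|\alpha| \ge |\beta|$ (otherwise swap the roles of $\alpha$ and $\beta$). In each of these three cases the integer
$$
n_{\alpha\beta} = \frac{2(\alpha,\beta)}{(\alpha,\alpha)}
$$
equals $-1$: this is immediate from the tables in (2a), (3a), (4a) of Theorem \ref{rank2rs}, where the coefficient involving the \emph{longer} root in the denominator is always $\pm 1$.

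Now I apply axiom (R3): the reflection
$$
s_\alpha(\beta) = \beta - n_{\alpha\beta}\,\alpha = \beta + \alpha
$$
lies in $R$, which is exactly the claim. The only subtle point—and the one that requires the rank 2 classification rather than a purely formal argument—is ruling out the possibility that $n_{\alpha\beta}$ and $n_{\beta\alpha}$ are both strictly less than $-1$; this is precisely what Theorem \ref{rank2rs} excludes, since $n_{\alpha\beta}n_{\beta\alpha} = 4\cos^2\phi \le 4$ forces at least one of the two integers to have absolute value $1$ whenever the roots are independent. No further work is needed.
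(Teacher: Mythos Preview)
Your proof is correct and takes essentially the same approach as the paper, which simply says the result ``is easy to see from the classification of rank $2$ root systems.'' Your argument makes this explicit: once $|\alpha|\ge|\beta|$, Theorem \ref{rank2rs} forces $n_{\alpha\beta}=-1$ in each obtuse case, and then axiom (R3) gives $s_\alpha(\beta)=\alpha+\beta\in R$ directly. Note that the preliminary reduction to $R'=R\cap E'$ is not actually needed, since Theorem \ref{rank2rs} already applies to any pair of independent roots in $R$ and you invoke (R3) in $R$ anyway; but it does no harm.
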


\begin{proof} This is easy to see from the classification of rank $2$ root systems. 
\end{proof} 

The root systems of rank $2$ are shown in the following picture. 

\includegraphics[scale=0.4]{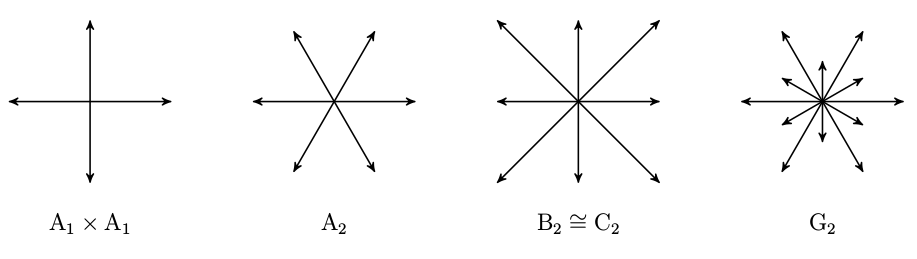}

\subsection{Positive and simple roots} 

Let $R$ be a reduced root system and 
$t\in E^*$ be such that $t(\alpha)\ne 0$ for any $\alpha\in R$. 
We say that a root is {\bf positive} (with respect to $t$) if $t(\alpha)>0$ and 
{\bf negative} if $t(\alpha)<0$. The set of positive roots is denoted by $R_+$ and of negative ones by $R_-$, so $R_+=-R_-$ and $R=R_+\cup R_-$ (disjoint union). This decomposition is called a {\bf polarization} of $R$; it depends on the choice of $t$. 

\begin{example} Let $R$ be of type $A_{n-1}$. Then for $t=(t_1,...,t_n)$ 
we have $t(\alpha)\ne 0$ for all $\alpha$ iff $t_i\ne t_j$ for any $i$, $j$. 
E.g. suppose $t_1>t_2>...>t_n$, then we have ${\bold e}_i-{\bold e}_j\in R_+$ iff $i<j$. We see that polarizations are in bijection with permutations in $S_n$, i.e., with elements of the Weyl group, which acts simply transitively on them. We will see that this is, in fact, the case for any 
reduced root system. 
\end{example} 

\begin{definition} A root $\alpha\in R_+$ is {\bf simple} if it is not a sum of two other positive roots. 
\end{definition} 

\begin{lemma} Every positive root is a sum of simple roots. 
\end{lemma}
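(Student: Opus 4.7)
The plan is to induct on the value of the linear functional $t$ defining the polarization. Since $R$ is a finite set, the values $\{t(\alpha) : \alpha \in R_+\}$ form a finite collection of positive real numbers, so induction on $t(\alpha)$ (ordered by magnitude) makes sense and will terminate.

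For the base case, I would consider a positive root $\alpha$ minimizing $t(\alpha)$ among $R_+$. Such $\alpha$ must be simple: if instead $\alpha = \beta + \gamma$ with $\beta, \gamma \in R_+$, then $t(\beta), t(\gamma) > 0$ and $t(\beta) + t(\gamma) = t(\alpha)$, forcing $t(\beta) < t(\alpha)$, contradicting minimality. So the claim holds (trivially, as a one-term sum) for such $\alpha$.

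For the inductive step, suppose the claim holds for all positive roots $\beta$ with $t(\beta) < t(\alpha)$, and let $\alpha \in R_+$. If $\alpha$ is simple, we are done. Otherwise, by the definition of simplicity, we can write $\alpha = \beta + \gamma$ with $\beta, \gamma \in R_+$. Since $t(\beta), t(\gamma) > 0$ and they sum to $t(\alpha)$, each is strictly less than $t(\alpha)$, so by the inductive hypothesis both $\beta$ and $\gamma$ are sums of simple roots; hence so is $\alpha$.

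This argument is essentially self-contained and does not present a serious obstacle: the only structural input needed is the finiteness of $R$ (to make the induction well-founded) and the strict positivity of $t$ on $R_+$ (to ensure the decomposition $\alpha = \beta + \gamma$ produces terms strictly smaller in the $t$-ordering). No use of axioms R2, R3, or of Corollary \ref{neg} is required for this particular statement; those enter later when one wants to show further properties of the simple root system, such as linear independence of simple roots or the expression of positive roots as sums with \emph{nonnegative integer} coefficients of simple roots.
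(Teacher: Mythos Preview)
Your proof is correct and takes essentially the same approach as the paper: both argue by descent on the value $t(\alpha)$, using that a non-simple positive root decomposes as $\alpha=\beta+\gamma$ with $t(\beta),t(\gamma)<t(\alpha)$, and that finiteness of $R$ guarantees termination. Your presentation is simply a slightly more formal induction, whereas the paper phrases it as an iterative process.
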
 

\begin{proof} If $\alpha$ is not simple then $\alpha=\beta+\gamma$ where $\beta,\gamma\in R_+$. 
We have $t(\alpha)=t(\beta)+t(\gamma)$, so $t(\beta),t(\gamma)<t(\alpha)$. If $\beta$ or $\gamma$ is not simple, we can continue this process, and it will terminate since $t$ has finitely many values on $R$. 
\end{proof} 

\begin{lemma}\label{nonpos} If $\alpha,\beta\in R_+$ are simple roots then $(\alpha,\beta)\le 0$. 
\end{lemma}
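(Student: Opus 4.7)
My plan is to argue by contradiction, assuming $(\alpha,\beta)>0$ for two distinct simple roots $\alpha,\beta\in R_+$ and producing a decomposition of one of them as a sum of two positive roots.

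First I would note that $\alpha\ne \beta$ is implicit (otherwise $(\alpha,\alpha)>0$ would make the inequality impossible), and that $\alpha$ and $\beta$ are linearly independent: since $R$ is reduced, the only scalar multiples of $\alpha$ in $R$ are $\pm\alpha$, and $-\alpha\in R_-$ cannot equal the positive root $\beta$. Thus $\alpha$ and $-\beta$ are independent roots satisfying $(\alpha,-\beta)<0$, so Corollary \ref{neg} applies and yields $\alpha-\beta\in R$.

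Next I would split into cases according to the polarization. Since $\alpha-\beta\in R=R_+\sqcup R_-$, either $\alpha-\beta\in R_+$ or $\beta-\alpha\in R_+$. In the first case $\alpha=(\alpha-\beta)+\beta$ writes $\alpha$ as a sum of two positive roots, contradicting simplicity of $\alpha$. In the second case $\beta=\alpha+(\beta-\alpha)$ writes $\beta$ as a sum of two positive roots, contradicting simplicity of $\beta$. Either way we reach a contradiction, so $(\alpha,\beta)\le 0$.

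This is essentially a one-step argument once Corollary \ref{neg} is in hand, so I do not expect any real obstacle; the only thing to be careful about is the preliminary check that $\alpha$ and $\beta$ are linearly independent, which uses reducedness of $R$ and the disjointness $R_+\cap R_-=\emptyset$.
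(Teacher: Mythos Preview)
Your proof is correct and matches the paper's argument essentially verbatim: both assume $(\alpha,\beta)>0$, invoke Corollary \ref{neg} (applied to one of the roots and the negative of the other) to deduce $\alpha-\beta\in R$, and then contradict simplicity by the two-case split on the sign of $\alpha-\beta$. Your explicit verification of linear independence is a small bonus the paper leaves implicit.
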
 

\begin{proof} Assume $(\alpha,\beta)>0$. Then $(-\alpha,\beta)<0$ so by Lemma \ref{neg} $\gamma:=\beta-\alpha$ is a root. 
 If $\gamma$ is positive then $\beta=\alpha+\gamma$ is not simple. If $\gamma$ is negative then 
 $-\gamma$ is positive so $\alpha=\beta+(-\gamma)$ is not simple. 
\end{proof} 

\begin{theorem}\label{simbasis} The set $\Pi\subset R_+$ of simple roots is a basis of $E$. 
\end{theorem}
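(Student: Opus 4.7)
The plan is to verify the two defining properties of a basis, namely spanning and linear independence, using the tools already established.

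First I would dispose of the spanning claim, which is essentially immediate. By axiom R1 the set $R$ spans $E$, and since $R = R_+ \cup R_-$ with $R_- = -R_+$, the set $R_+$ alone already spans $E$. The lemma just proven (every positive root is a sum of simple roots) then shows that $\Pi$ spans $E$.

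The substantive part is linear independence, and here I would use the inner product positivity together with Lemma \ref{nonpos}. Suppose we have a nontrivial linear relation $\sum_{\alpha \in \Pi} c_\alpha \alpha = 0$. Rearranging so that all coefficients on each side are strictly positive, I would write this as
$$v := \sum_{\alpha \in I} c_\alpha \alpha = \sum_{\beta \in J} d_\beta \beta,$$
where $I, J \subset \Pi$ are disjoint subsets and $c_\alpha, d_\beta > 0$. Then
$$(v,v) = \sum_{\alpha \in I,\, \beta \in J} c_\alpha d_\beta (\alpha,\beta) \le 0$$
by Lemma \ref{nonpos}, since $\alpha \ne \beta$ for $\alpha \in I,\ \beta \in J$. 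Positive definiteness of the inner product on $E$ then forces $v = 0$.

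Finally, to conclude I would apply the functional $t$: since $v = 0$, we get $\sum_{\alpha \in I} c_\alpha t(\alpha) = 0$, but $t(\alpha) > 0$ for every simple root $\alpha$ and every $c_\alpha > 0$, so $I$ must be empty, and symmetrically $J = \emptyset$. Hence all coefficients in the original relation vanish, proving that $\Pi$ is linearly independent. The main (and really only) subtlety is the sign-separation trick that turns the hypothesis $(\alpha,\beta) \le 0$ on distinct simple roots into a strict inequality on an inner product, which is the content of the key lemma already in hand; everything else is bookkeeping.
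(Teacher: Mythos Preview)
Your proof is correct and essentially the same as the paper's: the paper packages the linear-independence step as a separate linear algebra lemma (vectors with pairwise nonpositive inner products lying in an open half-space are independent), but the argument inside that lemma is exactly your sign-separation trick combined with the functional $t$, just with the two steps in the opposite order (first use $t$ to force $v\ne 0$, then derive $0<(v,v)\le 0$). The paper leaves spanning implicit from the earlier lemma that every positive root is a sum of simple roots, which you make explicit.
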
 

\begin{proof} We will use the following linear algebra lemma: 

\begin{lemma}\label{linnalg} Let $v_i$ be vectors in a Euclidean space $E$ such that $(v_i,v_j)\le 0$ 
when $i\ne j$ and $t(v_i)>0$ for some $t\in E^*$. Then $v_i$ are linearly independent. 
\end{lemma} 

\begin{proof} Suppose we have a nontrivial relation 
$$
\sum_{i\in I}c_iv_i=\sum_{i\in J}c_iv_i
$$
where $I,J$ are disjoint and $c_i>0$ (clearly, every nontrivial relation can be written in this form). 
Evaluating $t$ on this relation, we deduce that both sides are nonzero. Now let us compute the square of the left hand side: 
$$
0< |\sum_{i\in I}c_iv_i|^2=(\sum_{i\in I}c_iv_i,\sum_{j\in J}c_jv_j)\le 0.
$$
This is a contradiction. 
\end{proof} 

Now the result follows from Lemma \ref{nonpos} and Lemma \ref{linnalg}. 
\end{proof}  

Thus the set $\Pi$ of simple roots has $r$ elements: $\Pi=(\alpha_1,...,\alpha_r)$. 

\begin{example}\label{simroocla} Let us describe simple roots for classical root systems. Suppose the polarization is given by $t=(t_1,...,t_n)$ with decreasing coordinates. Then: 

1. For type $A_{n-1}$, i.e., $\g=\mathfrak{sl}_n$, the simple roots are $\alpha_i:={\bold e}_i-{\bold e}_{i+1}$, $1\le i\le n-1$. 

2. For type $C_n$, i.e., $\g=\mathfrak{sp}_{2n}$, the simple roots are 
$$
\alpha_1=\bold e_1-\bold e_2,...,\ \alpha_{n-1}=\bold e_{n-1}-\bold e_n,\ \alpha_n=2\bold e_n.
$$ 

3. For type $B_n$, i.e., $\g=\mathfrak{so}_{2n+1}$, we have the same story as for $C_n$ except $\alpha_n=\bold e_n$ rather than $2\bold e_n$. 
Thus the simple roots are 
$$
\alpha_1=\bold e_1-\bold e_2,...,\ \alpha_{n-1}=\bold e_{n-1}-\bold e_n,\ \alpha_n=\bold e_n.
$$ 

4. For type $D_n$, i.e., $\g=\mathfrak{so}_{2n}$, the simple roots are
$$
\alpha_1=\bold e_1-\bold e_2,...,\ \alpha_{n-2}=\bold e_{n-2}-\bold e_{n-1},\ \alpha_{n-1}=\bold e_{n-1}-\bold e_n,\ \alpha_n=\bold e_{n-1}+\bold e_n.
$$ 
\end{example}

We thus obtain 

\begin{corollary} Any root $\alpha\in R$ can be uniquely written as 
$\alpha=\sum_{i=1}^r n_i\alpha_i$, where $n_i\in \Bbb Z$. If $\alpha$ is positive then $n_i\ge 0$ for all $i$ and if $\alpha$ is negative then $n_i\le 0$ for all $i$. 
\end{corollary}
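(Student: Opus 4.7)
The plan is to derive this corollary as a direct synthesis of the two results just established: Theorem~\ref{simbasis}, which asserts that $\Pi = (\alpha_1, \ldots, \alpha_r)$ is a basis of $E$, and the lemma preceding Lemma~\ref{nonpos}, which asserts that every positive root is a sum (with multiplicities) of simple roots. Neither step requires new geometric input; the work is in combining them correctly.

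First I would establish uniqueness: since $\Pi$ is a basis of $E$ (Theorem~\ref{simbasis}) and $R \subset E$, any $\alpha \in R$ admits a unique expansion $\alpha = \sum_{i=1}^r c_i \alpha_i$ with $c_i \in \mathbb{R}$. So as soon as we exhibit one expansion with integer coefficients of the required sign, uniqueness and the sign statement both follow.

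Next, for $\alpha \in R_+$, I would invoke the lemma that every positive root is a sum of simple roots. Written out, this gives $\alpha = \alpha_{i_1} + \alpha_{i_2} + \cdots + \alpha_{i_k}$ for some (not necessarily distinct) indices, which, upon collecting terms, is an expression $\alpha = \sum_i n_i \alpha_i$ with $n_i \in \mathbb{Z}_{\ge 0}$. By the uniqueness observed above, this is \emph{the} expansion of $\alpha$ in the basis $\Pi$, so the coefficients $c_i = n_i$ are indeed nonnegative integers. For $\alpha \in R_-$, apply the positive case to $-\alpha \in R_+$ to obtain $-\alpha = \sum_i m_i \alpha_i$ with $m_i \in \mathbb{Z}_{\ge 0}$, whence $\alpha = \sum_i (-m_i)\alpha_i$ with nonpositive integer coefficients.

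There is no real obstacle here — the corollary is essentially a repackaging of facts already in hand. The only point worth being careful about is that the lemma guaranteeing a positive root is a sum of simple roots does not, on its face, say the decomposition is unique; that uniqueness (and the fact that no cancellations between positive and negative multiples of simple roots can occur) is exactly what Theorem~\ref{simbasis} supplies by making $\Pi$ a basis.
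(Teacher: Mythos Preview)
Your proposal is correct and matches the paper's intended approach: the corollary is stated immediately after Theorem~\ref{simbasis} with the phrase ``We thus obtain,'' meaning it is meant to follow directly from combining that theorem (basis, hence uniqueness) with the earlier lemma that every positive root is a sum of simple roots, exactly as you do.
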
 

For a positive root $\alpha$, its {\bf height} $h(\alpha)$ is the number $\sum n_i$. 
So simple roots are the roots of height $1$, and the height of ${\bold e}_i-{\bold e}_j$ in $R=A_{n-1}$ 
is $j-i$.  

\subsection{Dual root system} 

For a root system $R$, the set $R^\vee\subset E^*$ of $\alpha^\vee$ for all $\alpha\in R$
is also a root system, such that $(R^\vee)^\vee=R$. It is called the {\bf dual root system} to $R$. 
For example, $B_n$ is dual to $C_n$, while $A_{n-1}$, $D_n$ and $G_2$ are self-dual. 

Moreover, it is easy to see that any polarization of $R$ gives rise to a polarization 
of $R^\vee$ (using the image $t^\vee$ of $t$ under the isomorphism $E\to E^*$ 
induced by the inner product), and the corresponding system $\Pi^\vee$ of simple roots 
consists of $\alpha_i^\vee$ for $\alpha_i\in \Pi$. 

\subsection{Root and weight lattices} 

Recall that a {\bf lattice} in a real vector space $E$ is a subgroup $Q\subset E$ generated by a basis of $E$. Of course, every lattice is conjugate to $\Bbb Z^n\subset \Bbb R^n$ by an element of $GL_n(\Bbb R)$. Also recall that for a lattice 
$Q\subset E$ the {\bf dual lattice} $Q^*\subset E^*$ is the set of $f\in E^*$ such that $f(v)\in \Bbb Z$ 
for all $v\in Q$. If $Q$ is generated by a basis ${\bold e}_i$ of $E$ then $Q^*$ is generated by 
the dual basis ${\bold e}_i^*$. 

In particular, for a root system $R$ we can define the {\bf root lattice} $Q\subset E$, which is generated by the simple roots $\alpha_i$ with respect to some polarization of $R$. Since $Q$ is also generated by all roots in $R$, it is independent of  the choice of the polarization. Similarly, we can define the {\bf coroot lattice} $Q^\vee\subset E^*$ generated by $\alpha^\vee,\alpha\in R$, which is just the root lattice of $R^\vee$. 

Also we define the {\bf weight lattice} $P\subset E$ to be the dual lattice to $Q^\vee$: $P=(Q^\vee)^*$, and the {\bf coweight lattice} $P^\vee\subset E^*$ to be the dual lattice to $Q$: $P^\vee=Q^*$, so $P^\vee$ is the weight lattice of $R^\vee$. 
Thus 
$$
P=\lbrace \lambda\in E: (\lambda,\alpha^\vee)\in \Bbb Z\ \forall \alpha\in R\rbrace,\ P^\vee=\lbrace \lambda\in E^*: (\lambda,\alpha)\in \Bbb Z\ \forall \alpha\in R\rbrace. 
$$

Since for $\alpha,\beta\in R$ we have $(\alpha^\vee,\beta)=n_{\alpha\beta}\in \Bbb Z$, we have 
$Q\subset P$, $Q^\vee\subset P^\vee$. 

Given a system of simple roots $\Pi=\lbrace \alpha_1,...,\alpha_r\rbrace$, 
we define {\bf fundamental coweights} $\omega_i^\vee$ to be the dual basis 
to $\alpha_i$ and {\bf fundamental weights} $\omega_i$ to be the dual basis 
to $\alpha_i^\vee$: $(\omega_i,\alpha_j^\vee)=(\omega_i^\vee,\alpha_j)=\delta_{ij}$. 
Thus $P$ is generated by $\omega_i$ and $P^\vee$ by $\omega_i^\vee$. 

\begin{example} Let $R$ be of type $A_1$. Then $(\alpha,\alpha^\vee)=2$ 
for the unique positive root $\alpha$, so $\omega=\frac{1}{2}\alpha$, thus $P/Q=\Bbb Z/2$. 
More generally, if $R$ is of type $A_{n-1}$ and we identify $Q\cong Q^\vee, P\cong P^\vee$, then 
$P$ becomes the set of $\lambda=(\lambda_1,...,\lambda_n)\in \Bbb R^n$ 
such that $\sum_i \lambda_i=0$ and $\lambda_i-\lambda_j\in\Bbb Z$.  
So we have a homomorphism $\phi: P\to \Bbb R/\Bbb Z$ given by $\phi(\lambda)=\lambda_i$ mod $\Bbb Z$ (for any $i$). Since $\sum_i \lambda_i=0$, we have $\phi: P\to \Bbb Z/n$, and ${\rm Ker}\phi=Q$ (integer vectors with sum zero). Also it is easy to see that $\phi$ is surjective 
(we may take $\lambda_i=\frac{k}{n}$ for $i\ne n$ and $\lambda_n=\frac{k}{n}-k$, then $\phi(\lambda)=\frac{k}{n}$). Thus $P/Q\cong \Bbb Z/n$. 
\end{example} 

\section{\bf Properties of the Weyl group} 

\subsection{Weyl chambers} 

Suppose we have two polarizations of a root system $R$ defined by $t,t'\in E^*$, and $\Pi,\Pi'$ are the corresponding systems of simple roots. Are $\Pi,\Pi'$ equivalent in a suitable sense? The answer turns out to be yes. To show this, we will need the notion of a Weyl chamber. 

Note that the polarization defined by $t$ depends only on the signs of $t(\alpha)$, so does not change when $t$ is continuously deformed without crossing the hyperplanes $t(\alpha)=0$. This motivates the following definition: 

\begin{definition} A {\bf Weyl chamber} is a connected component of the complement of the root hyperplanes $L_\alpha$ given by the equations $(\alpha,x)=0$ in $E$ ($\alpha\in R$). 
\end{definition} 

Thus a Weyl chamber is defined by a system of strict homogeneous linear inequalities $\pm (\alpha,x)=0$, $\alpha\in R$. 
More precisely, the set of solutions of such a system is either empty or a Weyl chamber. 

Thus the polarization defined by $t$ depends only on the Weyl chamber containing $t$. 

The following lemma is geometrically obvious. 

\begin{lemma}  (i) The closure $\overline{C}$ of a Weyl chamber $C$ is a convex cone. 

(ii) The boundary of $\overline{C}$ is a union of codimension $1$ faces $F_i$ which are 
convex cones inside one of the root hyperplanes defined inside it 
by a system of non-strict homogeneous linear inequalities.   
\end{lemma}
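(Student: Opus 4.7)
The plan is to first pin down the structure of $C$ as the solution set of a system of strict linear inequalities, from which (i) is immediate, and then read off (ii) by examining which inequalities can become equalities.

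First I would observe that for each root $\alpha \in R$, the function $x \mapsto (\alpha, x)$ is continuous and nonvanishing on $C$, hence has constant sign by connectedness. Define $\varepsilon_\alpha \in \{\pm 1\}$ so that $\varepsilon_\alpha(\alpha, x) > 0$ for $x \in C$ (note $\varepsilon_{-\alpha} = -\varepsilon_\alpha$). The key claim is that
\[
C = \{x \in E : \varepsilon_\alpha (\alpha, x) > 0 \text{ for all } \alpha \in R\}.
\]
The inclusion $\subset$ is by definition. For $\supset$, the right-hand side is an open convex (as intersection of open half-spaces) subset of $E \setminus \bigcup_\alpha L_\alpha$, hence contained in a single connected component of this complement; since it meets $C$, it equals $C$. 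This gives (i): $C$ is an open convex cone (closed under positive scaling and addition), so $\overline{C}$ is a closed convex cone, explicitly
\[
\overline{C} = \{x \in E : \varepsilon_\alpha (\alpha, x) \geq 0 \text{ for all } \alpha \in R\}.
\]

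For (ii), the boundary consists of points in $\overline{C}$ for which at least one defining inequality is an equality: $\partial \overline{C} = \bigcup_{\alpha \in R_+^C} F_\alpha$, where $R_+^C = \{\alpha : \varepsilon_\alpha = 1\}$ and
\[
F_\alpha := \{x \in \overline{C} : (\alpha, x) = 0\} = L_\alpha \cap \overline{C}.
\]
Each $F_\alpha$ sits inside the root hyperplane $L_\alpha$ and is cut out there by the non-strict homogeneous inequalities $\varepsilon_\beta (\beta, x) \geq 0$ for $\beta \neq \pm\alpha$, so it is a closed convex cone in $L_\alpha$. To obtain the faces $F_i$ of codimension exactly $1$, one keeps only those $\alpha$ for which $F_\alpha$ has nonempty relative interior in $L_\alpha$ (the "walls" of $C$); the remaining $F_\alpha$ have strictly smaller dimension and are contained in intersections of walls, hence in the union of the codimension $1$ faces.

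The main technical point — and the one I would need to justify with some care — is that the union of these codimension $1$ faces already exhausts the entire boundary, rather than only a dense subset of it. I would argue this as follows: if $x \in \partial \overline{C}$, let $S(x) = \{\alpha \in R_+^C : (\alpha, x) = 0\}$, which is nonempty. Pick any $\alpha_0 \in S(x)$ and a point $y \in C$; then for small $t > 0$ the segment from $x$ to $ty$ lies in $\overline{C}$, and for generic choice of $y$ one enters the relative interior of $F_{\alpha_0}$ before entering $C$, showing $x$ lies in the closure of the interior of $F_{\alpha_0}$, hence in $F_{\alpha_0}$ itself. The rest is bookkeeping: relabel the resulting maximal faces as $F_1,\dots,F_k$.
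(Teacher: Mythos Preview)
The paper gives no proof here---it simply calls the lemma ``geometrically obvious'' and moves on---so your write-up already does more than the text. Your description of $C$ as the solution set of the strict inequalities $\varepsilon_\alpha(\alpha,x)>0$, the deduction of (i), and the identification $\partial\overline C=\bigcup_\alpha(\overline C\cap L_\alpha)$ are all correct.

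The gap is in your last paragraph. The segment from $x\in\partial\overline C$ to any $y\in C$ lies in $C$ for every parameter in $(0,1]$: since $\varepsilon_\alpha(\alpha,x)\ge 0$ and $\varepsilon_\alpha(\alpha,y)>0$, the convex combination is strictly positive for every $\alpha$ as soon as $y$ carries any weight. So the segment enters $C$ immediately and never passes through the relative interior of any boundary face; in particular your argument cannot place $x$ in a codimension-$1$ face when your chosen $\alpha_0$ is a redundant inequality. A clean repair is to pass to an irredundant subsystem: discard inequalities until you have a minimal set $R'\subset R$ still cutting out $\overline C$. For each $\alpha\in R'$, irredundancy provides a point $z$ violating only the inequality for $\alpha$; the segment from $z$ to any $y\in C$ crosses $L_\alpha$ at a point where all other inequalities in $R'$ are strict, so $F_\alpha=\overline C\cap L_\alpha$ has nonempty relative interior in $L_\alpha$ and hence codimension $1$. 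Since $R'$ alone defines $\overline C$, its boundary is exactly $\bigcup_{\alpha\in R'}F_\alpha$, the desired decomposition.
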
 

The root hyperplanes containing the faces $F_i$ are called the {\bf walls} of $C$. 

We have seen above that every Weyl chamber defines a polarization of $R$. 
Conversely, every polarization defines the corresponding {\bf positive Weyl chamber}
$C_+$ defined by the conditions $(\alpha,x)>0$ for $\alpha\in R_+$ (this set 
is nonempty since it contains $t$, hence is a Weyl chamber). Thus $C_+$ 
is the set of vectors of the form $\sum_{i=1}^r c_i\omega_i$ with $c_i>0$. So $C_+$ has $r$ faces 
$L_{\alpha_1}\cap \overline C_+,...,L_{\alpha_r}\cap \overline C_+$.

\begin{lemma}\label{mutinv} These assignments are mutually inverse bijections between polarizations of $R$ and Weyl chambers.
\end{lemma}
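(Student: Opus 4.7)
The plan is to verify both compositions yield the identity, using that each Weyl chamber is a convex open cone (being the solution set of a system of strict homogeneous linear inequalities $\pm(\alpha, x) > 0$, $\alpha \in R$), hence path-connected.

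First I would check that starting from a polarization $R = R_+ \sqcup R_-$ defined by $t \in E$, forming the positive Weyl chamber $C_+ = \{x : (\alpha, x) > 0\ \forall \alpha \in R_+\}$, and then recovering a polarization from any $t' \in C_+$ reproduces $R_+$. By the very definition of $C_+$, the original $t$ lies in $C_+$, so $C_+$ is a nonempty open convex cone. For any $t' \in C_+$ and $\alpha \in R_+$ one has $(t', \alpha) > 0$; for $\alpha \in R_-$, $-\alpha \in R_+$ gives $(t', \alpha) < 0$. Thus the polarization determined by $t'$ coincides with $R_+$, showing the composition (polarization $\to C_+ \to$ polarization) is the identity.

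Next I would verify the other direction: starting from a Weyl chamber $C$, picking any $t \in C$, forming the polarization $R_+(t) = \{\alpha \in R : (t, \alpha) > 0\}$, and then producing $C_+ = \{x : (\alpha, x) > 0\ \forall \alpha \in R_+(t)\}$ recovers $C$. Clearly $t \in C \cap C_+$. Since $C_+$ lies entirely in the complement of the root hyperplanes and is connected (being convex), it is contained in a single Weyl chamber, which must be $C$ because of the common point $t$; hence $C_+ \subseteq C$. Conversely, given $x \in C$, the straight-line segment from $t$ to $x$ stays inside the convex set $C$, hence avoids every root hyperplane, so for each $\alpha \in R_+(t)$ the sign of $(\alpha, \cdot)$ along the segment never changes; evaluating at the endpoints gives $(\alpha, x) > 0$, so $x \in C_+$ and $C \subseteq C_+$.

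The only step requiring any thought is the convexity/path-connectedness of a Weyl chamber and the observation that this forces the sign of each linear form $(\alpha, \cdot)$ to be constant on it. Everything else is a direct unwinding of definitions, so I expect no serious obstacle; the argument is essentially a single convexity observation applied twice.
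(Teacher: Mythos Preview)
Your proof is correct. The paper leaves this lemma as an exercise, so there is no detailed argument to compare against, but your approach---verifying both compositions are the identity via the convexity of Weyl chambers---is exactly the kind of direct unwinding the exercise invites.

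One minor simplification in your second direction: the paper has already observed just before the lemma that $C_+$ is itself a Weyl chamber (being a nonempty connected component of the complement of the root hyperplanes). Since Weyl chambers are by definition the connected components of this complement, two Weyl chambers sharing a point must coincide. So once you note $t \in C \cap C_+$, the equality $C = C_+$ is immediate without separately proving the two inclusions. Your segment argument for $C \subseteq C_+$ is correct but not needed given this.
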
 

\begin{exercise} Prove Lemma \ref{mutinv}.
\end{exercise} 

Since the Weyl group $W$ permutes the roots, it acts on the set of Weyl chambers. 

\begin{theorem}\label{trans} $W$ acts transitively on the set of Weyl chambers. 
\end{theorem}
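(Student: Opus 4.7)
The plan is to show that for any Weyl chamber $C'$, there exists $w \in W$ such that $w(C_+) = C'$, where $C_+$ is the positive Weyl chamber for the fixed polarization. Rather than inducting on the number of root hyperplanes separating $C_+$ from $C'$, I would use a direct extremal argument based on maximizing a linear functional over a $W$-orbit; this sidesteps the need to verify separately that some simple reflection always decreases a ``wall-count'' statistic.

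Concretely, pick reference points $t \in C_+$ and $t' \in C'$. Since both points lie in open Weyl chambers, $(t,\alpha) \ne 0$ and $(t',\alpha) \ne 0$ for every $\alpha \in R$. The Weyl group $W$ is finite, so the orbit $Wt'$ is finite, and we may choose $w \in W$ that maximizes the quantity $(t, wt')$.

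The key computation is that for any positive root $\alpha \in R_+$, the element $s_\alpha w$ also lies in $W$, and
$$
(t, s_\alpha w t') \;=\; (t, wt') \;-\; \tfrac{2(wt',\alpha)}{(\alpha,\alpha)}\,(t,\alpha).
$$
Maximality of $w$ forces $(t, s_\alpha w t') \le (t, wt')$, which after rearranging gives $(wt',\alpha)(t,\alpha) \ge 0$. But $t \in C_+$ implies $(t,\alpha) > 0$ for all $\alpha \in R_+$, so $(wt',\alpha) \ge 0$ for every positive root $\alpha$. Because $w$ permutes the root hyperplanes and $t'$ lies on none of them, $wt'$ lies on none either; hence the inequalities are strict, i.e., $(wt',\alpha) > 0$ for all $\alpha \in R_+$. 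This places $wt'$ in the open chamber $C_+$.

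Finally, $wt'$ also lies in $w(C')$, and distinct Weyl chambers are disjoint open sets; therefore $w(C') = C_+$, giving $w^{-1}(C_+) = C'$, which proves transitivity. There is no real technical obstacle in this route: the only potential pitfall would be forgetting that $wt'$ avoids all root hyperplanes, which is what upgrades the weak inequality $(wt',\alpha) \ge 0$ to strict and ensures $wt'$ lands in the \emph{open} chamber rather than merely its closure.
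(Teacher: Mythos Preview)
Your proof is correct. The paper takes a different, more geometric route: it observes that two Weyl chambers sharing a codimension-one face $F \subset L_\alpha$ are swapped by the reflection $s_\alpha$, then connects any two chambers $C$ and $C'$ by a generic straight segment that crosses a finite sequence of walls, producing a chain $C = C_0, C_1, \dots, C_m = C'$ of pairwise adjacent chambers, hence a product of reflections carrying $C$ to $C'$. Your extremal argument---maximizing $(t, wt')$ over the finite $W$-orbit---is slicker and more self-contained for the bare transitivity statement, and it avoids any appeal to the geometry of walls and faces. The paper's ``gallery'' approach, on the other hand, is doing double duty: the same chain-of-adjacent-chambers picture is reused in the very next lemma to show that the \emph{simple} reflections generate $W$, and again to interpret $\ell(w)$ as the minimal number of walls separating $C_+$ from $w(C_+)$. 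Your route delivers transitivity cleanly but does not by itself set up those later results.
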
 

\begin{proof} Let us say that Weyl chambers $C,C'$ are {\bf adjacent} if they share a common face 
$F\subset L_\alpha$. In this case it is easy to see that $s_\alpha(C)=C'$. 
Now given any Weyl chambers $C,C'$, pick generic $t\in C,t'\in C'$ and connect them with a straight segment. This will define a sequence of Weyl chambers visited by this segment: $C_0=C,C_1,...,C_m=C'$, and $C_i,C_{i+1}$ are adjacent for each $i$. So $C_i,C_{i+1}$ lie in the same $W$-orbit. Hence so do $C,C'$.  
\end{proof} 

\begin{corollary} Every Weyl chamber has $r$ walls. 
\end{corollary}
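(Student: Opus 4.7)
The plan is to first show that the distinguished positive Weyl chamber $C_+$ associated to some polarization has exactly $r$ walls, namely the hyperplanes $L_{\alpha_1},\ldots,L_{\alpha_r}$ corresponding to the simple roots $\alpha_1,\ldots,\alpha_r$. Then I will transport this statement to an arbitrary Weyl chamber by Theorem \ref{trans}.

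First, I would pick any polarization with simple roots $\Pi=\lbrace \alpha_1,\ldots,\alpha_r\rbrace$ (a basis of $E$ by Theorem \ref{simbasis}). By definition $C_+$ is cut out by the strict inequalities $(\alpha,x)>0$ for all $\alpha\in R_+$, but I claim the simple-root inequalities $(\alpha_i,x)>0$ for $i=1,\ldots,r$ already suffice. Indeed, every $\alpha\in R_+$ can be written as a nonnegative integer combination $\alpha=\sum n_i\alpha_i$, so $(\alpha_i,x)>0$ for all $i$ immediately forces $(\alpha,x)>0$. Hence the walls of $C_+$ are contained in $\lbrace L_{\alpha_1},\ldots,L_{\alpha_r}\rbrace$.

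Next I need to show each $L_{\alpha_i}$ actually supports a codimension-one face of $\overline{C_+}$, i.e.\ is essential. Since $\alpha_1,\ldots,\alpha_r$ form a basis of $E$, there exist $v_1,\ldots,v_r\in E$ with $(v_j,\alpha_k)=\delta_{jk}$. For any fixed $i$ and small $\varepsilon>0$, the vector $x_\varepsilon:=\sum_{j\ne i}v_j+\varepsilon v_i$ satisfies $(\alpha_j,x_\varepsilon)=1$ for $j\ne i$ and $(\alpha_i,x_\varepsilon)=\varepsilon$, so $x_\varepsilon\in C_+$, while $x_0\in \overline{C_+}\cap L_{\alpha_i}$ lies in the relative interior of a codimension-one face (it lies in no other $L_{\alpha_j}$ for $j\ne i$). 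Thus $L_{\alpha_i}$ is a genuine wall, and $C_+$ has exactly $r$ walls.

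Finally, by Theorem \ref{trans} any Weyl chamber $C$ is of the form $C=wC_+$ for some $w\in W$. Since $W\subset O(E)$ acts by isometries and permutes the set $R$ (hence permutes the set of root hyperplanes $\lbrace L_\alpha:\alpha\in R\rbrace$), the walls of $wC_+$ are precisely $w(L_{\alpha_1}),\ldots,w(L_{\alpha_r})$, which are $r$ distinct root hyperplanes. Hence every Weyl chamber has exactly $r$ walls. The only genuinely nontrivial step is the essentiality argument in the second paragraph, which is where the simple roots being a basis (rather than just a spanning set) is used in an essential way; the rest is formal.
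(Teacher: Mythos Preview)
Your proof is correct and follows the same approach as the paper: establish that $C_+$ has exactly $r$ walls (the $L_{\alpha_i}$), then transport via the transitive $W$-action from Theorem \ref{trans}. The paper had already noted, just before the corollary, that $C_+=\lbrace\sum c_i\omega_i:c_i>0\rbrace$ has the $r$ faces $L_{\alpha_i}\cap \overline{C_+}$; your essentiality argument with the dual basis $v_j$ (which are essentially the fundamental coweights $\omega_j^\vee$) is just a more explicit version of this observation.
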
 

\begin{proof} This follows since it is true for the positive Weyl chamber and by Theorem \ref{trans} 
the Weyl group acts transitively on the Weyl chambers. 
\end{proof} 

\begin{corollary} Any two polarizations of $R$ are related by the action of an element $w\in W$. 
Thus if $\Pi,\Pi'$ are systems of simple roots corresponding to two polarizations then there is 
$w\in W$ such that $w(\Pi)=\Pi'$. 
\end{corollary}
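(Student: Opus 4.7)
The plan is to reduce the corollary directly to the two preceding results: Lemma \ref{mutinv}, which sets up a bijection between polarizations of $R$ and Weyl chambers of $E$, and Theorem \ref{trans}, which asserts that $W$ acts transitively on the set of Weyl chambers. Under this bijection, a polarization $R = R_+ \cup R_-$ corresponds to the positive Weyl chamber $C_+ = \{x \in E : (\alpha, x) > 0\ \forall \alpha \in R_+\}$, and conversely every Weyl chamber recovers its polarization via $R_+ = \{\alpha \in R : (\alpha, t) > 0\}$ for any $t$ in the chamber.

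First I would take two polarizations $R_+, R_+'$ and pass to their positive Weyl chambers $C_+, C_+'$. By Theorem \ref{trans}, there is $w \in W$ with $w(C_+) = C_+'$. Then I would check that $w$ carries the first polarization to the second: since $W$ acts on $E$ by orthogonal transformations preserving the set of roots, for any $\alpha \in R$ and $t \in C_+$ we have $(w\alpha, wt) = (\alpha, t)$, so $\alpha \in R_+$ iff $(\alpha, t) > 0$ iff $(w\alpha, wt) > 0$ iff $w\alpha \in R_+'$ (using that $wt \in C_+'$). Hence $w(R_+) = R_+'$.

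Finally, to transfer the statement to simple roots, I would recall that $\Pi \subset R_+$ is intrinsically characterized as the subset of positive roots that cannot be written as a sum of two other positive roots. This characterization involves only the additive structure of $E$ and the set $R_+$, both of which $w$ preserves (since $w$ is $\mathbb{R}$-linear and $w(R_+) = R_+'$). Therefore $w(\Pi) = \Pi'$, completing the proof.

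There is essentially no obstacle here: the corollary is a formal consequence of Lemma \ref{mutinv} and Theorem \ref{trans}. The only thing worth being a little careful about is the last step, namely confirming that the ``not a sum of two positive roots'' condition is preserved by $w$, which is immediate from linearity together with $w(R_+) = R_+'$.
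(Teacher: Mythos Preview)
Your proposal is correct and is exactly the intended argument: the paper does not spell out a proof, but the corollary is meant to follow immediately from Lemma \ref{mutinv} (polarizations $\leftrightarrow$ Weyl chambers) together with Theorem \ref{trans} (transitivity of $W$ on chambers), which is precisely what you do. Your extra care in verifying $w(R_+)=R_+'$ via orthogonality and that the ``not a sum of two positive roots'' characterization of simple roots is preserved by a linear bijection is appropriate and complete.
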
 

\subsection{Simple reflections} 

Given a polarization of $R$ and the corresponding system of simple roots $\Pi=\lbrace \alpha_1,...,\alpha_r\rbrace$, the {\bf simple reflections} are the reflections $s_{\alpha_i}$, denoted by $s_i$. 

\begin{lemma} For every Weyl chamber $C$ there exist $i_1,...,i_m$ such that 
$C=s_{i_1}...s_{i_m}(C_+)$. 
\end{lemma}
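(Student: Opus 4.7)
The plan is to refine the gallery argument used in the proof of Theorem \ref{trans} so that each step of the gallery corresponds to a \emph{simple} reflection rather than an arbitrary one. First I would pick a generic $t_+ \in C_+$ and a generic $t \in C$, and connect them by a straight segment that avoids every codimension-$\ge 2$ intersection of root hyperplanes. This produces a sequence of Weyl chambers $C_0 = C_+, C_1, \ldots, C_m = C$ such that $C_k$ and $C_{k+1}$ share a codimension-one wall contained in some root hyperplane $L_{\beta_k}$, and $C_{k+1} = s_{\beta_k}(C_k)$.

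Next I would prove by induction on $k$ that each $C_k$ has the form $s_{i_1}\cdots s_{i_k}(C_+)$ for some simple reflections. The base $k = 0$ is trivial. For the inductive step, assume $C_k = w(C_+)$ with $w = s_{i_1}\cdots s_{i_k}$. The walls of $C_+$ are exactly the hyperplanes $L_{\alpha_j}$ for $\alpha_j \in \Pi$ (since $C_+ = \{x : (\alpha_j, x) > 0,\ j = 1, \ldots, r\}$ and this set has $r$ walls, one per simple root). Applying $w$, the walls of $C_k = w(C_+)$ are precisely $w(L_{\alpha_j}) = L_{w(\alpha_j)}$ for $j = 1, \ldots, r$. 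Since $C_{k+1}$ is adjacent to $C_k$ across one of these walls, there exists $j = i_{k+1}$ with $\beta_k = \pm w(\alpha_{i_{k+1}})$. Using the conjugation identity
\[
s_{w(\alpha)} = w\, s_\alpha\, w^{-1}
\]
(valid because $w$ is an orthogonal transformation), we obtain
\[
C_{k+1} = s_{\beta_k}(C_k) = s_{w(\alpha_{i_{k+1}})}\bigl(w(C_+)\bigr) = w\, s_{i_{k+1}}\, w^{-1}\, w(C_+) = w\, s_{i_{k+1}}(C_+),
\]
so $C_{k+1} = s_{i_1}\cdots s_{i_k} s_{i_{k+1}}(C_+)$, completing the induction. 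Taking $k = m$ yields the claim.

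The main obstacle, such as it is, lies in the first step: ensuring the segment from $t_+$ to $t$ actually gives a well-defined gallery. This requires the segment to cross root hyperplanes one at a time and not pass through any intersection $L_\alpha \cap L_{\alpha'}$ with $\alpha, \alpha'$ independent. Since such intersections have codimension $\ge 2$ in $E$ and there are finitely many pairs, a generic perturbation of the endpoints achieves this; everything else is a routine application of the reflection conjugation formula and the description of the walls of $C_+$.
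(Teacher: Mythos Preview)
Your argument is correct and is essentially the same as the paper's: both connect $C_+$ and $C$ by a generic segment, induct along the resulting gallery, and use that the walls of $w(C_+)$ are the images $w(L_{\alpha_j})$ of the simple-root hyperplanes together with the conjugation identity $s_{w(\alpha)}=w s_\alpha w^{-1}$. The only cosmetic difference is that you induct forward from $C_+$ while the paper inducts backward from $C$.
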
 

\begin{proof} Pick $t\in C,t_+\in C_+$ generically and connect them with a straight segment as before. 
Let $m$ be the number of chamber walls crossed by this segment. The proof is by induction in $m$ (with obvious base). Let $C'$ be the chamber entered by our segment from $C$ and $L_\alpha$ the wall 
separating $C,C'$, so that $C=s_\alpha(C')$. By the induction assumption 
$C'=u(C_+)$, where $u=s_{i_1}...s_{i_{m-1}}$. So $L_\alpha=u(L_{\alpha_j})$ for some $j$. 
Thus $s_\alpha=u s_j u^{-1}$. Hence $C=s_\alpha(C')=s_\alpha u(C_+)=u s_j(C_+)$, so we get the result with $i_m=j$. 
\end{proof} 

\begin{corollary} (i) The simple reflections $s_i$ generate $W$; 

(ii) $W(\Pi)=R$.
\end{corollary}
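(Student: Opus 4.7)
The plan is to deduce both (i) and (ii) from the lemma just proved, which realizes every Weyl chamber as $s_{i_1}\cdots s_{i_m}(C_+)$, together with the description of the walls of $C_+$ as $L_{\alpha_1},\dots,L_{\alpha_r}$. Let $W'\subset W$ denote the subgroup generated by the simple reflections $s_i$. Since $W$ is generated, by definition, by all reflections $s_\alpha$ for $\alpha\in R$, part (i) reduces to showing $s_\alpha\in W'$ for every $\alpha\in R$, and part (ii) reduces to showing $\alpha\in W(\Pi)$ for every $\alpha\in R$.

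First I would observe that every root hyperplane $L_\alpha$ is a wall of some Weyl chamber. Indeed, the intersections $L_\alpha\cap L_\beta$ for $\beta\in R$ with $\beta\ne\pm\alpha$ are finitely many proper subspaces of $L_\alpha$, so I may pick a point $t\in L_\alpha$ not lying on any other root hyperplane. A small perturbation of $t$ off of $L_\alpha$ lies in a Weyl chamber $C$, and by construction $L_\alpha$ is one of the walls of $C$.

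Next, applying the lemma to this chamber $C$, I may write $C=w(C_+)$ for some $w\in W'$. Since the walls of $C_+$ are precisely $L_{\alpha_1},\dots,L_{\alpha_r}$, the walls of $C$ are $w(L_{\alpha_1}),\dots,w(L_{\alpha_r})$. Therefore $L_\alpha=w(L_{\alpha_j})$ for some $j$, which gives $\alpha=\pm w(\alpha_j)$. For (i), this immediately yields $s_\alpha=ws_jw^{-1}\in W'$ (since $s_\alpha$ and $s_{-\alpha}$ coincide). For (ii), either $\alpha=w(\alpha_j)\in W(\Pi)$ directly, or else $\alpha=-w(\alpha_j)=ws_j(\alpha_j)\in W(\Pi)$; so in all cases $\alpha$ lies in the Weyl group orbit of $\Pi$.

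There is no serious obstacle here: the geometric content has already been packaged by the previous lemma and by the classification of the walls of $C_+$. The only small point to verify carefully is the preliminary observation that every root hyperplane actually occurs as a wall of some Weyl chamber, so that the lemma can be applied to it; once that is in place, both conclusions drop out by a single conjugation argument.
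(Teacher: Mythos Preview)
Your proof is correct and follows essentially the same approach as the paper: both use the lemma to identify $L_\alpha$ with a wall $w(L_{\alpha_j})$ of some chamber $w(C_+)$, and conclude $s_\alpha=ws_jw^{-1}$ and $\alpha=\pm w(\alpha_j)$. You have simply spelled out in detail what the paper states tersely (in particular, the paper leaves implicit the observation that every root hyperplane is a wall of some chamber, and derives (ii) from the same argument rather than giving it a separate treatment).
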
  

\begin{proof} (i) This follows since for any root $\alpha$, the hyperplane $L_\alpha$ is a wall of some Weyl chamber, so $s_\alpha$ is a product of $s_i$. 

(ii) Follows from (i).  
\end{proof} 

Thus $R$ can be reconstructed from $\Pi$ as $W(\Pi)$, where $W$ is the subgroup of $O(E)$ generated by $s_i$. 

\begin{example} For root system $A_{n-1}$ part (i) says that any element of $S_n$ is a product of transpositions of neighbors. 
\end{example} 

\subsection{Length of an element of the Weyl group} 

Let us say that a root hyperplane $L_\alpha$ {\bf separates} two Weyl chambers $C,C'$ if they lie on different sides of $L_\alpha$. 

\begin{definition} The {\bf length} $\ell(w)$ of $w\in W$ is the number of root hyperplanes separating $C_+$ and $w(C_+)$. 
\end{definition} 

We have $t\in C_+, w(t)\in w(C_+)$, so $\ell(w)$ is the number of roots 
$\alpha$ such that $(t,\alpha)>0$ but $(w(t),\alpha)=(t,w^{-1}\alpha)<0$. 
Note that if $\alpha$ is a root satisfying this condition then $\beta=-w^{-1}\alpha$ 
satisfies the conditions $(t,\beta)>0$, $(t,w\beta)<0$. Thus $\ell(w)=\ell(w^{-1})$ and $\ell(w)$ is the number of positive roots which are mapped by $w$ to negative roots. 
Note also that the notion of length depends on the polarization of $R$ (as 
it refers to the positive chamber $C_+$ defined using the polarization). 

\begin{example} Let $s_i$ be a simple reflection. Then $s_i(C_+)$ is adjacent to $C_+$, with the only separating hyperplane being $L_{\alpha_i}$. Thus $\ell(s_i)=1$. 
It follows that the only positive root mapped by $s_i$ to a negative root
is $\alpha_i$ (namely, $s_i(\alpha_i)=-\alpha_i$), and thus $s_i$ permutes 
$R_+\setminus \lbrace \alpha_i\rbrace$. 
\end{example} 

\begin{proposition} Let $\rho=\frac{1}{2}\sum_{\alpha\in R_+}\alpha$. Then 
$(\rho,\alpha_i^\vee)=1$ for all $i$. Thus $\rho=\sum_{i=1}^r \omega_i$. 
\end{proposition}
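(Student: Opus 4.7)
My plan is to use the example preceding the proposition, which established that the simple reflection $s_i$ sends $\alpha_i$ to $-\alpha_i$ and permutes the remaining positive roots $R_+ \setminus \{\alpha_i\}$ among themselves. This is the crucial input, and with it the proof reduces to a short two-line computation.

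First, I would compute $s_i(\rho)$ directly using this permutation property. Splitting the sum defining $\rho$ into the $\alpha_i$ term and the rest, I get
\[
s_i(\rho) = \tfrac{1}{2} s_i(\alpha_i) + \tfrac{1}{2}\sum_{\alpha \in R_+ \setminus \{\alpha_i\}} s_i(\alpha) = -\tfrac{1}{2}\alpha_i + \tfrac{1}{2}\sum_{\alpha \in R_+ \setminus \{\alpha_i\}} \alpha,
\]
since $s_i$ merely reshuffles the set $R_+ \setminus \{\alpha_i\}$. Rewriting the right-hand side by reinserting and subtracting $\tfrac{1}{2}\alpha_i$ gives $s_i(\rho) = \rho - \alpha_i$.

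Second, I would compare this with the general formula for a reflection, which by definition of $s_i = s_{\alpha_i}$ reads $s_i(\rho) = \rho - (\rho,\alpha_i^\vee)\alpha_i$. Equating the two expressions and using that $\alpha_i \ne 0$ yields $(\rho, \alpha_i^\vee) = 1$, as desired.

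Finally, since $\{\omega_i\}$ is by definition the basis of $E$ dual to $\{\alpha_i^\vee\}$, and since $\{\alpha_i\}$ is a basis of $E$ by Theorem \ref{simbasis} (so $\{\alpha_i^\vee\}$ is a basis of $E^*$ and $\{\omega_i\}$ really is a basis of $E$), any vector $v \in E$ can be expanded as $v = \sum_i (v,\alpha_i^\vee)\omega_i$. Applying this to $v = \rho$ immediately gives $\rho = \sum_{i=1}^r \omega_i$. There is no real obstacle here; the only subtlety is remembering to invoke the key lemma about $s_i$ permuting $R_+ \setminus \{\alpha_i\}$, which was already proved in the preceding example.
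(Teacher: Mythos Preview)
Your proof is correct and follows essentially the same approach as the paper: split off the $\alpha_i$ term, use that $s_i$ permutes $R_+\setminus\{\alpha_i\}$ to get $s_i\rho=\rho-\alpha_i$, and compare with the reflection formula $s_i\rho=\rho-(\rho,\alpha_i^\vee)\alpha_i$. The paper's proof is terser but identical in substance.
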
 

\begin{proof} We have $\rho=\frac{1}{2}\alpha_i+\frac{1}{2}\sum_{\alpha\in R_+,\alpha\ne \alpha_i}\alpha$. Since $s_i$ permutes $R_+\setminus \lbrace \alpha_i\rbrace$, we get 
$s_i\rho=\rho-\alpha_i$. But for any $\lambda$, $s_i\lambda=\lambda-(\lambda,\alpha_i^\vee)\alpha_i$. This implies the statement. 
\end{proof} 

The weight $\rho$ plays an important role in representation theory of semisimple Lie algebras. For instance, it occurs in the Weyl character formula for these representations which we will soon derive. 

\begin{theorem} Let $w=s_{i_1}...s_{i_l}$ be a representation of $w\in W$ as a product of simple reflections that has minimal possible length. Then $l=\ell(w)$. 
\end{theorem}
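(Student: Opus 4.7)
The plan is to prove $\ell(w) \le l$ and $l \le \ell(w)$ separately, using as the single main tool a length-change lemma: for any $w\in W$ and simple reflection $s_i$, one has $\ell(ws_i) = \ell(w)+1$ if $w(\alpha_i)\in R_+$, and $\ell(ws_i) = \ell(w)-1$ if $w(\alpha_i)\in R_-$. This is immediate from the preceding observation that $s_i$ permutes $R_+\setminus\{\alpha_i\}$ and sends $\alpha_i$ to $-\alpha_i$: counting $\#\{\alpha\in R_+ : ws_i(\alpha)\in R_-\}$ by substituting $\beta=s_i(\alpha)$ shows the difference $\ell(ws_i)-\ell(w)$ equals $+1$ or $-1$ depending solely on the sign of $w(\alpha_i)$.

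The upper bound $\ell(w)\le l$ is then immediate by induction on $l$: if $w' = s_{i_1}\cdots s_{i_{l-1}}$ satisfies $\ell(w')\le l-1$, then $\ell(w) = \ell(w's_{i_l}) \le \ell(w')+1 \le l$.

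For the lower bound I argue by contradiction, using an exchange-style cancellation. Suppose $w = s_{i_1}\cdots s_{i_l}$ is a shortest expression but $\ell(w) < l$. Setting $w_k := s_{i_1}\cdots s_{i_k}$, the sequence $\ell(w_0)=0,\, \ell(w_1),\,\ldots,\,\ell(w_l)=\ell(w)$ moves in steps of $\pm 1$ by the key lemma; since it ends strictly below $l$, there must exist $k$ with $\ell(w_k) < \ell(w_{k-1})$, equivalently $w_{k-1}(\alpha_{i_k})\in R_-$. Now track the signs along the intermediate roots $\eta_j := s_{i_j} s_{i_{j+1}}\cdots s_{i_{k-1}}(\alpha_{i_k})$ for $j = 1,\ldots,k$, so that $\eta_k = \alpha_{i_k}\in R_+$ while $\eta_1 = w_{k-1}(\alpha_{i_k})\in R_-$. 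There is a first sign flip: some index $j$ with $\eta_{j+1}\in R_+$ and $\eta_j = s_{i_j}(\eta_{j+1})\in R_-$. Since $s_{i_j}$ turns a positive root into a negative one only when applied to $\alpha_{i_j}$ itself, this forces $\eta_{j+1} = \alpha_{i_j}$, i.e.\ $u(\alpha_{i_k}) = \alpha_{i_j}$ where $u := s_{i_{j+1}}\cdots s_{i_{k-1}}$. Using the conjugation identity $s_{u(\alpha)} = u s_\alpha u^{-1}$, this reads $s_{i_j}\, u = u\, s_{i_k}$. Substituting into the original word and cancelling the resulting consecutive pair $s_{i_k} s_{i_k}$,
\[
s_{i_1}\cdots s_{i_l} = s_{i_1}\cdots s_{i_{j-1}}\cdot s_{i_{j+1}}\cdots s_{i_{k-1}}\cdot s_{i_{k+1}}\cdots s_{i_l},
\]
a word of length $l-2$, contradicting the minimality of $l$. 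Hence $\ell(w) = l$.

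The only delicate step is the exchange argument in the last paragraph; conceptually it is routine, but the bookkeeping of indices (and the point that the sign flip is detected precisely when one hits a simple root) is the place where one has to be careful. Once the length-change lemma is set up, the rest is forced.
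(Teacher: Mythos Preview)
Your proof is correct, but it takes a genuinely different route from the paper's. The paper argues geometrically throughout: for $\ell(w)\le l$ it observes that the chain of chambers $C_k=s_{i_1}\cdots s_{i_k}(C_+)$ gives a zigzag path from $C_+$ to $w(C_+)$ crossing at most $l$ hyperplanes, and for $l\le\ell(w)$ it takes a generic straight segment from $C_+$ to $w(C_+)$, which crosses exactly $\ell(w)$ hyperplanes and (via the adjacency argument of the previous lemma) exhibits $w$ as a product of $\ell(w)$ simple reflections. Your argument is the purely combinatorial one: the length-change lemma $\ell(ws_i)=\ell(w)\pm 1$ (derived from the fact that $s_i$ permutes $R_+\setminus\{\alpha_i\}$) followed by the exchange/deletion trick to shorten any non-reduced word by two. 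The paper's approach is more visual and reuses the chamber machinery already in place; yours is self-contained and in fact proves something stronger along the way---the exchange condition, which is the key structural fact for general Coxeter groups. One cosmetic point: the conjugation identity $s_{u(\alpha)}=u\,s_\alpha\,u^{-1}$ you invoke is elementary (it holds for any orthogonal $u$) but is not stated explicitly earlier in the text, so you might add a one-line justification.
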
 

\begin{proof} As before, define a chain of Weyl chambers 
$C_k=s_{i_1}...s_{i_k}(C_+)$, so that $C_0=C_+$ and $C_l=w(C_+)$. 
We have seen that $C_k$ and $C_{k-1}$ are adjacent. So 
there is a zigzag path from $C_+$ to $w(C_+)$ that intersects at most 
$l$ root hyperplanes (namely, the segment from $C_{k-1}$ to $C_k$ intersects only one hyperplane). Thus $\ell(w)\le l$. On the other hand, pick generic points 
in $C_+$ and $w(C_+)$ and connect them with a straight segment. This segment intersects every separating root hyperplane exactly once and does not intersect other root hyperplanes, so produces an expression of $w$ as a product of $\ell(w)$ simple reflections. This implies the statement.
\end{proof} 

An expression $w=s_{i_1}...s_{i_l}$ is called {\bf reduced} if $l=\ell(w)$. 

\begin{proposition} The Weyl group $W$ acts simply transitively on Weyl chambers. 
\end{proposition}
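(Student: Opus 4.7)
The plan is to use the length function $\ell$ and the preceding theorem identifying $\ell(w)$ with the minimal number of simple reflections in any expression for $w$. Since transitivity of the $W$-action on Weyl chambers is already established (Theorem \ref{trans}), the remaining task is to prove freeness, which (by transitivity) reduces to showing that the stabilizer of the positive Weyl chamber $C_+$ in $W$ is trivial.

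First I would observe that if $w \in W$ satisfies $w(C_+) = C_+$, then $\ell(w) = 0$. Indeed, $\ell(w)$ was defined as the number of root hyperplanes separating $C_+$ from $w(C_+)$, and clearly no hyperplane separates $C_+$ from itself. Alternatively, one can use the reformulation given just after the definition of length: $\ell(w)$ equals the number of positive roots $\alpha$ sent by $w^{-1}$ (equivalently, by $w$) to negative roots. But if $w$ preserves $C_+$, then for any generic $t \in C_+$ one has $w(t) \in C_+$, so for every positive root $\alpha$ we have $(w(t),\alpha) = (t, w^{-1}\alpha) > 0$, whence $w^{-1}\alpha \in R_+$. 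Thus $w$ permutes the positive roots, confirming $\ell(w) = 0$.

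Next I would invoke the theorem that any minimal expression $w = s_{i_1}\cdots s_{i_l}$ has $l = \ell(w)$, together with the fact (from the corollary to the previous subsection) that the simple reflections generate $W$, so such an expression exists. Applied to $w$ with $\ell(w)=0$, the minimal expression has length zero, i.e., it is the empty product, forcing $w = 1$. Combined with transitivity, this proves simple transitivity: given two chambers $C, C'$, any two elements $w_1, w_2 \in W$ with $w_1(C_+) = w_2(C_+) = C$ (which exist by Theorem \ref{trans}) satisfy $w_2^{-1}w_1 \in \mathrm{Stab}(C_+) = \{1\}$, so $w_1 = w_2$.

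There is no real obstacle here; the argument is a short deduction from results already in hand. The only point requiring mild care is the equivalence between the two descriptions of $\ell(w)$ (chambers separated versus positive roots sent to negative roots), but this was recorded immediately after the definition of length, so it may be cited without further work.
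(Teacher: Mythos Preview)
Your proof is correct and follows essentially the same approach as the paper: reduce to showing the stabilizer of $C_+$ is trivial, observe that $w(C_+)=C_+$ forces $\ell(w)=0$, and conclude $w=1$ from the characterization of $\ell(w)$ as the minimal length of a product of simple reflections. You have simply written out a few more details than the paper does.
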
 

\begin{proof} By Theorem \ref{trans} the action is transitive, so we just have to show that if $w(C_+)=C_+$ then $w=1$. But in this case $\ell(w)=0$, so $w$ has to be a product of zero simple reflections, i.e., indeed $w=1$. 
\end{proof} 

Thus we see that $\overline C_+$ is a {\it fundamental domain} of the action of $W$ on $E$. 

Moreover, we have 

\begin{proposition} $E/W=\overline C_+$, i.e., every $W$-orbit on $E$ has a unique representative in $\overline C_+$. 
\end{proposition}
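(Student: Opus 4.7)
The plan is to prove the two claims hidden in the statement separately: existence of a representative in $\overline C_+$ for every $W$-orbit, and uniqueness of such a representative.

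For existence, given $v \in E$, I would use that the orbit $Wv$ is finite and pick $u \in Wv$ that maximizes $(u,\rho)$, where $\rho = \sum_i \omega_i$. For each simple reflection $s_i$ one has $(s_i u, \rho) \le (u,\rho)$, which rewrites as $(u, s_i\rho) \le (u,\rho)$. Since $s_i\rho = \rho - \alpha_i$, this forces $(u,\alpha_i) \ge 0$ for every $i$, i.e. $u \in \overline C_+$.

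For uniqueness, suppose $x,y \in \overline C_+$ and $y = w(x)$ for some $w \in W$. I would induct on $\ell(w)$. The base case $\ell(w)=0$ gives $w=1$, so $y=x$. For the inductive step, assume $\ell(w) \ge 1$ and take a reduced expression $w = s_{i_1}\cdots s_{i_\ell}$; then $s_{i_1}w = s_{i_2}\cdots s_{i_\ell}$ has length $\ell(w)-1$. The length-reduction step $\ell(s_i w) = \ell(w)-1$ is equivalent (by the description $\ell(w) = \#\{\alpha \in R_+ : w(\alpha)\in R_-\}$) to $w^{-1}(\alpha_i) \in R_-$, so set $i:=i_1$ and write $w^{-1}(\alpha_i) = -\beta$ with $\beta \in R_+$. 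Then
$$
(\alpha_i, y) \;=\; (\alpha_i, w(x)) \;=\; (w^{-1}(\alpha_i), x) \;=\; -(\beta,x) \;\le\; 0,
$$
because $x\in\overline C_+$ and $\beta$ is positive. On the other hand $y \in \overline C_+$ gives $(\alpha_i,y) \ge 0$, so $(\alpha_i,y) = 0$ and consequently $s_i y = y$. Setting $w' := s_i w$ we get $w'(x) = y$ with $\ell(w') = \ell(w)-1$, and the inductive hypothesis yields $y = x$.

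The main obstacle will be the length-reduction characterization used above: one needs the equivalence $\ell(s_i w) = \ell(w)-1 \iff w^{-1}(\alpha_i)\in R_-$. This is not stated explicitly in the excerpt as a separate lemma, so I would record it as a short preliminary step, deducing it from the definition $\ell(w) = \#\{\alpha \in R_+ : w(\alpha)\in R_-\}$ together with the fact that $s_i$ permutes $R_+ \setminus \{\alpha_i\}$ and negates $\alpha_i$ (already noted in the text after the definition of length). Once this is in hand, both halves of the statement follow by the brief arguments above.
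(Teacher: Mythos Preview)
Your proof is correct and follows essentially the same strategy as the paper: reduce the length of $w$ by showing that an appropriate reflection fixes one of the two points. The paper argues by minimality (take $w$ shortest with $\lambda=w\mu$, derive a contradiction), peels off the \emph{last} letter of a reduced word, and shows $s_\gamma\mu=\mu$ for $\gamma=s_{i_1}\cdots s_{i_{l-1}}\alpha_{i_l}$; you argue by induction, peel off the \emph{first} letter, and show $s_{i_1}y=y$. These are symmetric variants of the same idea.

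Two small remarks. First, your existence argument via maximizing $(u,\rho)$ is a pleasant self-contained addition; the paper does not reprove existence here, since it follows from the transitivity of $W$ on Weyl chambers established earlier (any $v$ lies in $\overline C$ for some chamber $C$, and $w(C)=C_+$ gives $w(v)\in\overline C_+$). Second, for the length lemma you only need the implication $\ell(s_iw)=\ell(w)-1\Rightarrow w^{-1}(\alpha_i)\in R_-$, and your sketch (using that $s_i$ permutes $R_+\setminus\{\alpha_i\}$) is exactly right; you could also note that $\ell(s_iw)\ne\ell(w)$ since $\det$ gives a parity constraint, so the reduced-word observation $\ell(s_iw)\le\ell(w)-1$ immediately yields equality.
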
 

\begin{proof} Suppose $\lambda,\mu\in \overline C_+$ and $\lambda=w\mu$, where $w\in W$ is shortest possible. Assume the contrary, that $w\ne 1$. Pick a reduced decomposition $w=s_{i_l}...s_{i_1}$. Let $\gamma$ be the positive root 
which is mapped to a negative root by $w$ but not by $s_{i_l}w$, i.e., 
$\gamma=s_{i_1}...s_{i_{l-1}}\alpha_{i_l}$. Then $0\le (\mu,\gamma)=(\lambda,w\gamma)\le 0$. 
So $(\mu,\gamma)=0$. Thus 
$$
\lambda=w\mu=s_{i_l}...s_{i_1}\mu=s_{i_{l-1}}...s_{i_1}s_\gamma\mu=
s_{i_{l-1}}...s_{i_1}\mu
$$
which is a contradiction since $w$ was the shortest possible. 
\end{proof} 

\begin{corollary}\label{w0} Let $C_-=-C_+$ be the {\bf negative Weyl chamber}. Then there exists a unique $w_0\in W$ such that $w_0(C_+)=C_-$. We have $\ell(w_0)=|R_+|$ 
and for any $w\ne w_0$, $\ell(w)<\ell(w_0)$. Also $w_0^2=1$.  
\end{corollary}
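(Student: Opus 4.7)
The plan is to deduce all four assertions directly from simple transitivity of $W$ on the set of Weyl chambers (the proposition established just above) together with the characterization of $\ell(w)$ as the number of root hyperplanes separating $C_+$ from $w(C_+)$. Existence and uniqueness of $w_0$ is immediate: $C_- = -C_+$ is itself a Weyl chamber (it satisfies the inequalities $(\alpha,x) < 0$ for $\alpha \in R_+$), so by simple transitivity there is a unique element $w_0 \in W$ sending $C_+$ to $C_-$.

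Next I would compute $\ell(w_0)$. The key geometric observation is that every root hyperplane $L_\alpha$ passes through the origin, while $C_+$ and $C_- = -C_+$ lie on opposite sides of any such hyperplane (since $x \mapsto -x$ reverses the sign of $(\alpha,x)$ for $x \in C_+$, and no point of $C_+$ lies on $L_\alpha$). Thus every root hyperplane separates $C_+$ from $C_-$, and since distinct root hyperplanes are indexed by pairs $\{\pm\alpha\}$, there are exactly $|R_+|$ of them. Hence $\ell(w_0) = |R_+|$.

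For maximality among elements of $W$, note that for any $w \in W$, $\ell(w)$ is bounded above by the total number $|R_+|$ of root hyperplanes. If equality holds then $w(C_+)$ is separated from $C_+$ by \emph{every} root hyperplane, which forces $w(C_+) \subset \bigcap_{\alpha \in R_+}\{(\alpha, x) < 0\} = C_-$, hence $w(C_+) = C_-$, so by uniqueness $w = w_0$. Consequently $\ell(w) < \ell(w_0)$ for all $w \ne w_0$.

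Finally, for $w_0^2 = 1$: since $w_0$ is linear we have $w_0(C_-) = w_0(-C_+) = -w_0(C_+) = -C_- = C_+$, so $w_0^2(C_+) = w_0(C_-) = C_+$. By simple transitivity of the $W$-action on Weyl chambers, this forces $w_0^2 = 1$. I do not anticipate any real obstacle here; the argument is almost formal once simple transitivity is in hand, and the only point requiring a moment's thought is the geometric fact in the second paragraph that every root hyperplane separates $C_+$ from $-C_+$.
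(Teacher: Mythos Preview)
Your argument is correct and complete; the paper itself leaves this corollary as an exercise without supplying a proof, so there is nothing to compare against. Your approach—reading off all four claims from simple transitivity on chambers plus the separating-hyperplane definition of length—is exactly the intended one.
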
 

\begin{exercise} Prove Corollary \ref{w0}. 
\end{exercise} 

The element $w_0$ is therefore called the {\bf longest element} of $W$. 

\begin{example} For the root system $A_{n-1}$ the element $w_0$ is the order reversing 
involution: $w_0(1,2,...,n)=(n,...,2,1)$.  
\end{example} 

\section{\bf Dynkin diagrams} 

\subsection{Cartan matrices and Dynkin diagrams} 

Our goal now is to classify reduced root systems, which is a key step in the classification of semisimple Lie algebras. We have shown that classifying root systems is equivalent to classifying sets $\Pi$ of simple roots. So we need to classify such sets $\Pi$. 
Before doing so, note that we have a nice notion of {\bf direct product} of root systems.

Namely, let $R_1\subset E_1$ and $R_2\subset E_2$ be two root systems. 
Let $E=E_1\oplus E_2$ (orthogonal decomposition) and $R=R_1\sqcup R_2$
(with $R_1\perp R_2$). If $t_1\in E_1,t_2\in E_2$ define polarizations of $R_1,R_2$ with systems of simple roots $\Pi_1,\Pi_2$ then $t=t_1+t_2$ defines a polarization of $R$ with $\Pi=\Pi_1\sqcup \Pi_2$ (with $\Pi_1\perp \Pi_2$ and $\Pi_i=\Pi\cap R_i$). 

\begin{definition} A root system $R$ is {\bf irreducible} if it cannot be written (nontrivially) in this way.  
\end{definition} 
 
\begin{lemma} If $R$ is a root system with system of simple roots $\Pi=\Pi_1\sqcup \Pi_2$ with $\Pi_1\perp \Pi_2$ then $R=R_1\sqcup R_2$ where $R_i$ is the root system generated by $\Pi_i$. 
\end{lemma}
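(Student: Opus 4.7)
The plan is to exploit the fact, established just above in the excerpt, that $R = W(\Pi)$, together with the orthogonality $\Pi_1 \perp \Pi_2$ which will force the Weyl group to split as a direct product.

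First I would set $E_i := \mathrm{Span}(\Pi_i)$, so that by Theorem \ref{simbasis} applied in the ambient $E$ and by the hypothesis $\Pi_1 \perp \Pi_2$, we have an orthogonal decomposition $E = E_1 \oplus E_2$. Let $W_i \subset W$ be the subgroup generated by the simple reflections $s_\alpha$ with $\alpha \in \Pi_i$. The key observation is that for $\alpha \in \Pi_1$ and any $v \in E_2$, since $(\alpha, v) = 0$, the reflection $s_\alpha$ fixes $v$; thus every element of $W_1$ acts as the identity on $E_2$, and symmetrically for $W_2$ on $E_1$. In particular, $W_1$ and $W_2$ commute and intersect trivially, so the natural map $W_1 \times W_2 \to W$ is an injective homomorphism. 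Since the $s_i$ for $\alpha_i \in \Pi_1 \sqcup \Pi_2$ generate $W$, this map is also surjective, giving $W = W_1 \times W_2$.

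Next I would use $R = W(\Pi)$: every $\beta \in R$ has the form $\beta = w(\alpha)$ for some $w \in W$ and $\alpha \in \Pi$. Decompose $w = w_1 w_2$ with $w_j \in W_j$. If $\alpha \in \Pi_1$, then $w_2(\alpha) = \alpha$ (since $w_2$ fixes $E_1$ pointwise), so $\beta = w_1(\alpha) \in W_1(\Pi_1) \subset E_1$. By the characterization $R_1 = W_1(\Pi_1)$ (applying the already-proven statement $R = W(\Pi)$ inside the root system generated by $\Pi_1$ in $E_1$), we get $\beta \in R_1$. Analogously, if $\alpha \in \Pi_2$, then $\beta \in R_2$. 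Hence $R \subset R_1 \cup R_2$, and the reverse inclusion is immediate since each $R_i$ is generated (under the Weyl group action) by simple roots of $R$. Finally, $R_1 \cap R_2 \subset E_1 \cap E_2 = \{0\}$ shows the union is disjoint, and $R_1 \perp R_2$ since $R_i \subset E_i$.

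There is no real obstacle here; the only thing that requires a little care is the assertion that $R_i$, defined intrinsically as the root system generated by $\Pi_i$, really coincides with $W_i(\Pi_i)$ inside $E_i$. This follows by applying the structural results already established (in particular $R = W(\Pi)$ and the fact that simple roots form a basis) to the smaller Euclidean space $E_i$ with simple root system $\Pi_i$, once one checks that $\Pi_i$ satisfies the axioms required to play the role of a set of simple roots for $R_i$ — which it does because the integrality conditions (R2) and reflection-closure (R3) for $R_i$ are inherited from $R$.
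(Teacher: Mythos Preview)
Your proof is correct and follows essentially the same approach as the paper: you split the Weyl group as $W = W_1 \times W_2$ using the orthogonality $\Pi_1 \perp \Pi_2$, then apply $R = W(\Pi)$ to conclude $R = W_1(\Pi_1) \sqcup W_2(\Pi_2) = R_1 \sqcup R_2$. Your version is more explicit (setting up the orthogonal decomposition $E = E_1 \oplus E_2$, checking disjointness, and justifying $R_i = W_i(\Pi_i)$), but the core argument is identical to the paper's.
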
 

\begin{proof} If $\alpha\in \Pi_1,\beta\in \Pi_2$ then $s_\alpha(\beta)=\beta$, $s_\beta(\alpha)=\alpha$ and $s_\alpha$ and $s_\beta$ commute. So if $W_i$ is the group generated by $s_\alpha,\alpha\in \Pi_i$ then $W=W_1\times W_2$, with $W_1$ acting trivially on $\Pi_2$ and $W_2$ on $\Pi_1$. Thus 
$$
R=W(\Pi)=W(\Pi_1\sqcup\Pi_2)=W_1(\Pi_1)\sqcup W_2(\Pi_2)=R_1\sqcup R_2. 
$$
\end{proof} 

\begin{proposition} Any root system is uniquely a union of irreducible ones. 
\end{proposition}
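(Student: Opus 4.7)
The plan is to define irreducible components via the natural connectivity relation on $R$ coming from non-orthogonality, and then verify that this decomposition has the required properties.

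First I would define an equivalence relation $\sim$ on $R$: declare $\alpha \sim \beta$ if there is a chain $\alpha = \gamma_0, \gamma_1, \ldots, \gamma_k = \beta$ of roots with $(\gamma_{i-1}, \gamma_i) \ne 0$ for each $i$. Let $R_1, \ldots, R_s$ be the equivalence classes and set $E_i := \mathrm{Span}(R_i) \subset E$. By construction $R_i \perp R_j$ for $i \ne j$, hence $E_i \perp E_j$, and since $R$ spans $E$ (axiom R1) we have $E = E_1 \oplus \cdots \oplus E_s$ as an orthogonal direct sum.

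Next I would check that each $R_i$ is a root system in $E_i$, the main point being closure under the reflections $s_\alpha$ for $\alpha \in R_i$. If $\alpha, \beta \in R_i$, then $s_\alpha(\beta) = \beta - n_{\alpha\beta}\alpha \in R$ by R3, and it lies in $E_i$. To see it lies in $R_i$: if $(\alpha,\beta) = 0$ then $s_\alpha(\beta)= \beta$; otherwise $(s_\alpha(\beta), \alpha) = -(\beta,\alpha) \ne 0$, so $s_\alpha(\beta) \sim \alpha$, hence belongs to $R_i$. Axioms R1 (for $E_i$) and R2 are then immediate. Moreover, for $\alpha \in R_i$ and $\beta \in R_j$ with $i \ne j$ we have $(\alpha,\beta)=0$, so $s_\alpha$ acts trivially on $E_j$; hence the reflection $s_\alpha \in W(R)$ restricts to $s_\alpha \in W(R_i)$ acting on $E_i$ and to the identity on $E_j$. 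This confirms the decomposition $R = R_1 \sqcup \cdots \sqcup R_s$ with the $R_i \subset E_i$ pairwise orthogonal.

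I then need irreducibility of each $R_i$. If $R_i = R_i' \sqcup R_i''$ with $R_i' \perp R_i''$ (in $E_i$), then no root of $R_i'$ is non-orthogonal to any root of $R_i''$, contradicting the fact that $R_i$ is a single equivalence class (unless one of $R_i', R_i''$ is empty). Finally, for uniqueness, suppose $R = S_1 \sqcup \cdots \sqcup S_m$ is another orthogonal decomposition into irreducible root systems. Within each $S_k$, any two roots must be connected by a chain of non-orthogonal roots entirely inside $S_k$: otherwise $S_k$ would split further as an orthogonal disjoint union of the non-empty connectivity classes of $S_k$, violating irreducibility. Combined with the orthogonality $S_k \perp S_\ell$ for $k \ne \ell$, this shows that the $S_k$ are exactly the $\sim$-equivalence classes of $R$, so $\{S_k\} = \{R_i\}$ up to reordering.

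The main obstacle (although a mild one) is the verification in step two that $s_\alpha(\beta) \in R_i$ when $\alpha, \beta \in R_i$; the rest is essentially bookkeeping once the connectivity equivalence relation is set up. Everything else is a direct consequence of the axioms R1--R3 and of orthogonality between different components.
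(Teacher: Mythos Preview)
Your argument is correct. The paper takes a different, more concise route: having already established the theory of simple roots, it simply decomposes the set $\Pi$ of simple roots into its maximal mutually orthogonal pieces $\Pi_1,\dots,\Pi_s$ and then applies the preceding lemma (if $\Pi=\Pi_1\sqcup\Pi_2$ with $\Pi_1\perp\Pi_2$ then $R=R_1\sqcup R_2$) to obtain the decomposition of $R$. Your approach instead works directly on $R$ via the connectivity relation generated by non-orthogonality of roots, bypassing simple roots entirely. The advantage of your route is that it is self-contained and does not rely on the existence or properties of a polarization; the advantage of the paper's route is brevity, since once $\Pi$ and the lemma are in hand the decomposition is a one-line observation. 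The two decompositions of course agree: the connectivity classes of $\Pi$ under non-orthogonality are exactly the intersections $\Pi\cap R_i$ with your classes $R_i$.
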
 

\begin{proof} The decomposition is given by the maximal decomposition of $\Pi$ 
into mutually orthogonal systems of simple roots. 
\end{proof} 

Thus it suffices to classify irreducible root systems. 

As noted above, a root system is determined by pairwise inner products of positive roots. However, it is more convenient to encode them by the {\bf Cartan matrix} $A$ defined by   
$$
a_{ij}=n_{\alpha_j\alpha_i}=(\alpha_i^\vee,\alpha_j). 
$$
The following properties of the Cartan matrix follow immediately from Lemma \ref{nonpos}, Theorem \ref{rank2rs} and Theorem \ref{simbasis}: 

\begin{proposition}\label{carmat} (i) $a_{ii}=2$; 

(ii) $a_{ij}$ is a nonpositive integer; 

(iii) for any $i\ne j$, $a_{ij}a_{ji}=4\cos^2\phi\in \lbrace 0,1,2,3\rbrace$,
where $\phi$ is the angle between $\alpha_i$ and $\alpha_j$; 

(iv) Let $d_i=|\alpha_i|^2$. Then the matrix $d_ia_{ij}$ is symmetric and 
positive definite. 
\end{proposition}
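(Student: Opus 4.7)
The plan is to simply unwind the definition $a_{ij} = (\alpha_i^\vee, \alpha_j) = \frac{2(\alpha_i,\alpha_j)}{(\alpha_i,\alpha_i)}$ and cite the three already-established inputs: the axiom (R2), Lemma \ref{nonpos} on non-positivity of inner products of simple roots, and Theorem \ref{simbasis} that $\Pi$ is a basis of $E$. Each of the four assertions should then drop out with essentially one line of computation.

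First I would verify (i) by direct substitution into the definition, which gives $a_{ii} = \frac{2(\alpha_i,\alpha_i)}{(\alpha_i,\alpha_i)} = 2$. For (ii), integrality of $a_{ij}$ is just axiom (R2) applied to $\alpha = \alpha_i, \beta = \alpha_j$, while non-positivity is Lemma \ref{nonpos}: since $\alpha_i, \alpha_j$ are simple and distinct, $(\alpha_i,\alpha_j) \le 0$, so $a_{ij} \le 0$.

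For (iii), compute
\[
a_{ij} a_{ji} = \frac{2(\alpha_i,\alpha_j)}{(\alpha_i,\alpha_i)} \cdot \frac{2(\alpha_j,\alpha_i)}{(\alpha_j,\alpha_j)} = \frac{4(\alpha_i,\alpha_j)^2}{(\alpha_i,\alpha_i)(\alpha_j,\alpha_j)} = 4\cos^2\phi.
\]
This is a non-negative integer by (ii), and it is strictly less than $4$ because $\alpha_i, \alpha_j$ are linearly independent (being distinct elements of the basis $\Pi$ from Theorem \ref{simbasis}), so $\phi \ne 0, \pi$ and $\cos^2\phi < 1$. Hence $a_{ij}a_{ji} \in \{0,1,2,3\}$ — this is also exactly the list of cases enumerated in Theorem \ref{rank2rs}, which one could alternatively invoke directly.

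For (iv), note that $d_i a_{ij} = |\alpha_i|^2 \cdot \frac{2(\alpha_i,\alpha_j)}{(\alpha_i,\alpha_i)} = 2(\alpha_i,\alpha_j)$. Symmetry is immediate from symmetry of the inner product. Positive definiteness is the statement that the Gram matrix of $\alpha_1,\ldots,\alpha_r$ with respect to the positive definite inner product on $E$ is positive definite, which holds because by Theorem \ref{simbasis} the simple roots form a basis of $E$. Honestly there is no real obstacle here; the only step that requires any thought is confirming $\cos^2\phi < 1$ in (iii), and even that is immediate from the fact that $\Pi$ is a basis.
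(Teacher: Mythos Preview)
Your proposal is correct and matches the paper's approach exactly: the paper simply asserts that the proposition follows immediately from Lemma~\ref{nonpos}, Theorem~\ref{rank2rs}, and Theorem~\ref{simbasis}, and you have spelled out precisely how each part drops out of those inputs (together with axiom (R2)). There is nothing to add.
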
 

We will see later that conversely, any such matrix defines a root system. 

\begin{example} 1. Type $A_{n-1}$: $a_{ii}=2,a_{i,i+1}=a_{i+1,i}=-1$, $a_{ij}=0$ otherwise. 

2. Type $B_n$: $a_{ii}=2$, $a_{i,i+1}=a_{i+1,i}=-1$ except that $a_{n,n-1}=-2$. 

3. Type $C_n$: transposed to $B_n$. 

4. Type $D_n$: same as $B_n$ but $a_{n-1,n-2}=a_{n,n-2}=a_{n-2,n}=a_{n-2,n-1}=-1$, 
$a_{n,n-1}=a_{n-1,n}=0$. 

5. Type $G_2$: $A=\begin{pmatrix} 2 & -1\\ -3 & 2\end{pmatrix}$. 
\end{example} 

It is convenient to encode such matrices by {\bf Dynkin diagrams:}  

$\bullet$ Indices $i$ are vertices;  

$\bullet$ Vertices $i$ and $j$ are connected by $a_{ij}a_{ji}$ lines; 

$\bullet$ If $a_{ij}\ne a_{ji}$, i.e., $|\alpha_i|^2\ne |\alpha_j|^2$, then the arrow on the lines goes from long root to short root (``less than" sign). 

It is clear that such a diagram completely determines the Cartan matrix (if we fix the labeling of vertices), and vice versa. Also it is clear that the root system is irreducible if and only if 
its Dynkin diagram is connected. 

\begin{proposition} The Cartan matrix determines the root system uniquely.  
\end{proposition}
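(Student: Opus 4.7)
The plan is to show that if $R \subset E$ and $R' \subset E'$ are two reduced root systems whose Cartan matrices agree under some labeling of the simple roots $\Pi = \{\alpha_1,\ldots,\alpha_r\}$ and $\Pi' = \{\alpha_1',\ldots,\alpha_r'\}$, then there is an isomorphism of root systems $\phi : E \to E'$ carrying $R$ to $R'$ and preserving the integers $n_{\alpha\beta}$. Since the Cartan matrix is defined relative to a choice of polarization, and by the previous subsection any two polarizations of a given root system are related by the Weyl group action (hence yield the same matrix up to simultaneous relabeling), this suffices.

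The key observation is that the simple reflections act on the basis of simple roots by an explicit formula involving only the Cartan matrix: $s_i(\alpha_j) = \alpha_j - a_{ij}\alpha_i$, and the same formula computes $s_i'(\alpha_j')$. By Theorem \ref{simbasis}, $\Pi$ is a basis of $E$ and $\Pi'$ is a basis of $E'$ (and $|\Pi| = r = |\Pi'|$ since the Cartan matrix has size $r\times r$), so there is a unique linear isomorphism $\phi : E \to E'$ with $\phi(\alpha_i) = \alpha_i'$. First I would check, directly from the formula above, that $\phi \circ s_i = s_i' \circ \phi$ on each basis vector, hence on all of $E$. Consequently $\phi$ intertwines the subgroups of $GL(E)$ and $GL(E')$ generated by the simple reflections; since those subgroups coincide with the Weyl groups $W$ and $W'$, this yields a group isomorphism $W \cong W'$, $s_i \mapsto s_i'$, compatible with $\phi$.

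Next, I would use the identity $R = W(\Pi)$ established in the previous subsection: every root of $R$ has the form $w(\alpha_i)$ for some $w \in W$ and some $i$, and therefore $\phi(R) = \phi(W\cdot \Pi) = W'\cdot \Pi' = R'$. To finish, I need to verify that $\phi$ preserves the integers $n_{\alpha\beta}$ for arbitrary pairs of roots, not merely simple ones. For this I would exploit $W$-equivariance once more: writing $\alpha = w(\alpha_i)$ gives $s_\alpha = w s_i w^{-1}$, so $\phi s_\alpha \phi^{-1} = s_{\phi(\alpha)}'$. Applying both sides to $\phi(\beta)$ and using the defining relation $s_\alpha(\beta) = \beta - n_{\alpha\beta}\alpha$ on one side and $s_{\phi(\alpha)}'(\phi(\beta)) = \phi(\beta) - n_{\phi(\alpha)\phi(\beta)}\phi(\alpha)$ on the other, and comparing coefficients (using that $\phi(\alpha) \neq 0$), yields $n_{\alpha\beta} = n_{\phi(\alpha)\phi(\beta)}$.

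The only subtle point I expect to encounter is bookkeeping around the definition of isomorphism of root systems: it is an isomorphism of ambient spaces preserving $R$ and the numbers $n_{\alpha\beta}$, and it does \emph{not} require preservation of the inner product itself. So I would not try to arrange $\phi$ to be an isometry (which would anyway only be possible after rescaling the inner product on each irreducible component of $E'$ to match that on $E$). With this caveat understood, the argument above is essentially formal, and the Cartan matrix determining the action of the generators of $W$ on $\Pi$ does all the work.
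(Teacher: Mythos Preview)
Your proof is correct, but it takes a genuinely different route from the paper. The paper argues more geometrically: reducing to the irreducible case, it observes that the Cartan matrix determines, for every pair of simple roots, the angle between them and (when non-orthogonal) the ratio of their lengths, via the rank~$2$ classification; this pins down the inner product on $E$ up to a scalar, and hence the root system. Your argument instead builds the isomorphism $\phi$ directly on the basis $\Pi$, checks that it intertwines the simple reflections purely from the formula $s_i(\alpha_j)=\alpha_j-a_{ij}\alpha_i$, and then transports everything through the identity $R=W(\Pi)$. The paper's approach is shorter and extracts a slightly stronger intermediate fact (the inner product itself is recovered up to scaling), but it leans on the rank~$2$ classification; your approach is more self-contained and makes the isomorphism completely explicit, at the cost of a bit more bookkeeping with the Weyl group.
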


\begin{proof} We may assume the Dynkin diagram is connected. The Cartan matrix determines, for any pair of simple roots, the angle between them (which is right or obtuse) and the ratio of their lengths if they are not orthogonal. By the classification of rank 2 root systems, this determines the inner product on simple roots, up to scaling, which implies the statement. 
\end{proof} 

\subsection{Classification of Dynkin diagrams} 

The following theorem gives a complete classification of irreducible root systems. 

\begin{theorem}\label{classdyn} (i) Connected Dynkin diagrams are classified by the list given in the picture below, i.e., they are $A_n,B_n,C_n,D_n,G_2$ which we have already met, along with four more: $F_4,E_6,E_7,E_8$.  

(ii) Every matrix satisfying the conditions of Proposition \ref{carmat} is a Cartan matrix of some root system. 
\end{theorem}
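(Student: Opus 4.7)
The plan is to follow the classical approach via \emph{admissible configurations of unit vectors}, which extracts from a system of simple roots the minimal combinatorial data needed for the classification, and then to verify existence case by case.

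First I would reduce the problem to a combinatorial question about finite sets of unit vectors. Let $\Pi=\{\alpha_1,\dots,\alpha_r\}$ be a system of simple roots of an irreducible reduced root system and set $f_i=\alpha_i/|\alpha_i|$. By Lemma \ref{nonpos} together with Theorem \ref{rank2rs}, the Gram matrix $G_{ij}=(f_i,f_j)$ is positive definite, has $1$'s on the diagonal, and for $i\neq j$ satisfies $(f_i,f_j)\in\{0,-\tfrac12,-\tfrac{\sqrt2}{2},-\tfrac{\sqrt3}{2}\}$, corresponding to $0,1,2,3$ edges between $i$ and $j$ in the Dynkin diagram. Call such a finite set of unit vectors an \emph{admissible set}, and its associated labeled graph an \emph{admissible diagram}. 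The key point is that the classification of irreducible reduced root systems reduces, via Proposition \ref{carmat}, to classifying connected admissible diagrams, since from the diagram one recovers the Cartan matrix and hence $\Pi$ up to isomorphism.

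Next I would prove a short list of structural lemmas about admissible configurations:
\begin{enumerate}
\item Deleting any subset of vertices from an admissible diagram yields an admissible diagram (positive definiteness is inherited by principal submatrices).
\item The total number of bonds (counted with multiplicity) is strictly less than $r$. This follows by expanding $0<|f_1+\cdots+f_r|^2=r+2\sum_{i<j}(f_i,f_j)$ and using that each nonzero $(f_i,f_j)$ satisfies $-2(f_i,f_j)^2\ge \tfrac12\cdot(\text{number of bonds between }i,j)$.
\item An admissible diagram contains no cycles (by applying (2) to the cycle itself).
\item No vertex is incident to more than three bonds (counted with multiplicity): if $f_i$ has neighbors $f_{j_1},\dots,f_{j_k}$ (mutually orthogonal by (3)), then $1=|f_i|^2>\sum_s(f_i,f_{j_s})^2$, and each summand is $\tfrac14$ times the bond multiplicity.
\item \emph{Chain contraction}: if $f_{i_1},\dots,f_{i_k}$ is a simple-bonded chain inside an admissible diagram, replacing it by a single unit vector $f=\sum f_{i_s}/\sqrt{k}$ produces another admissible diagram (one checks by direct inner-product computation that $f$ has unit length, is orthogonal to the non-adjacent vertices, and forms admissible angles with the two vertices that were adjacent to the endpoints of the chain).
\end{enumerate}

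From these lemmas the classification is a finite case analysis:
\begin{enumerate}
\item If the diagram contains a triple bond, then (4) forces it to be exactly $G_2$.
\item If the diagram contains a double bond and no triple bond, then (4) and (3) force it to be a chain of single bonds with one double bond somewhere in the middle or at the end. Applying chain contraction on either side of the double bond reduces to a small case on $\le 4$ vertices, which one checks by hand to allow only $B_n$, $C_n$, or $F_4$ (the diagram with the double bond \emph{not} at the end and with two single bonds on each side fails positive definiteness, leaving only $F_4$ when the extra length is $2+2$).
\item If all bonds are simple, then (3) and (4) say the diagram is a tree with at most one vertex of degree three. If there is no branching it is $A_n$; if there is one trivalent vertex with three emanating chains of lengths $p,q,r$ (including the branch vertex), then applying chain contraction to each branch reduces to showing $1/p+1/q+1/r>1$, an inequality one gets by explicitly computing the determinant of the associated Gram matrix and demanding it be positive. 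Solving this in positive integers with $p\le q\le r$ yields only $(1,q,r)$, $(2,2,n)$, $(2,3,3)$, $(2,3,4)$, $(2,3,5)$, i.e., $D_n$, $E_6$, $E_7$, $E_8$.
\end{enumerate}

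Finally, for part (ii), I would exhibit for each diagram on the list an explicit reduced root system realizing the given Cartan matrix. The classical types $A_n,B_n,C_n,D_n$ are realized by the root systems of the classical Lie algebras computed in Example \ref{Anm1} and Subsection on classical root systems. For $G_2,F_4,E_6,E_7,E_8$ one writes explicit vector configurations in Euclidean spaces of dimension $2,4,8,8,8$ respectively (for instance $E_8$ consists of the vectors $\pm e_i\pm e_j$ for $i<j$ together with $\tfrac12\sum_{i=1}^8\varepsilon_i e_i$ with $\varepsilon_i=\pm1$ and $\prod\varepsilon_i=1$, and $E_7,E_6,F_4$ are suitable subsystems) and verifies axioms (R1)--(R3) and simple-root computation directly.

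The main obstacle will be the combinatorial case analysis in step (3) of part (i): ruling out all trees with a trivalent vertex other than the listed ones requires a careful determinant computation showing positive definiteness fails precisely outside the list $D_n, E_6, E_7, E_8$. The other steps are structural and follow uniformly from the five lemmas above; the existence step in (ii) is routine verification once the candidate root systems are written down.
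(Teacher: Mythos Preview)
Your approach via admissible configurations of unit vectors is correct in outline and is the classical route (as in Humphreys), though two of your stated formulas need fixing: in lemma~(2) the quantity $-2(f_i,f_j)^2$ is negative so the inequality cannot hold as written, and in any case the conclusion ``total bonds $<r$'' is false already for $G_2$ (three bonds, $r=2$) --- what you actually need for lemma~(3) is that the number of \emph{connected pairs} is $<r$, which follows from $-2(f_i,f_j)\ge 1$ for each such pair; and in lemma~(5), $\sum_s f_{i_s}$ is already a unit vector for a simple-bonded chain, so drop the factor $1/\sqrt{k}$. With these repairs the argument goes through.

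The paper takes a genuinely different route: instead of your structural lemmas, chain contraction, and the inequality $\tfrac1p+\tfrac1q+\tfrac1r>1$, it argues by \emph{forbidden subdiagrams}. The key observation is that every (twisted or untwisted) affine Dynkin diagram has a \emph{degenerate} symmetrized Cartan matrix --- one exhibits an explicit null vector for each --- hence none can occur inside a positive-definite diagram. Excluding $\widetilde A_n$ gives ``no cycles'', excluding $\widetilde D_4$ gives ``valence $\le 3$'', excluding $\widetilde D_n$ gives ``at most one branch point'', excluding $\widetilde E_{6,7,8}$ cuts the simply-laced branched case down to $D_n,E_{6,7,8}$, and excluding the affine diagrams with multiple edges ($\widetilde G_2,D_4^{(3)},A_{2k}^{(2)},A_{2k-1}^{(2)},D_{k+1}^{(2)},\widetilde B_n,\widetilde C_n,\widetilde F_4,E_6^{(2)}$) handles the $B,C,F,G$ cases. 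This trades your norm and determinant computations for a short catalogue of null-vector checks; it is entirely mechanical once you have the affine list, but presupposes that list, whereas your argument is self-contained. For part~(ii) the two proofs coincide: both exhibit the exceptional root systems explicitly, and the paper does this in the subsections on $F_4,E_8,E_7,E_6$ preceding the proof.
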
 

\includegraphics[scale=0.1]{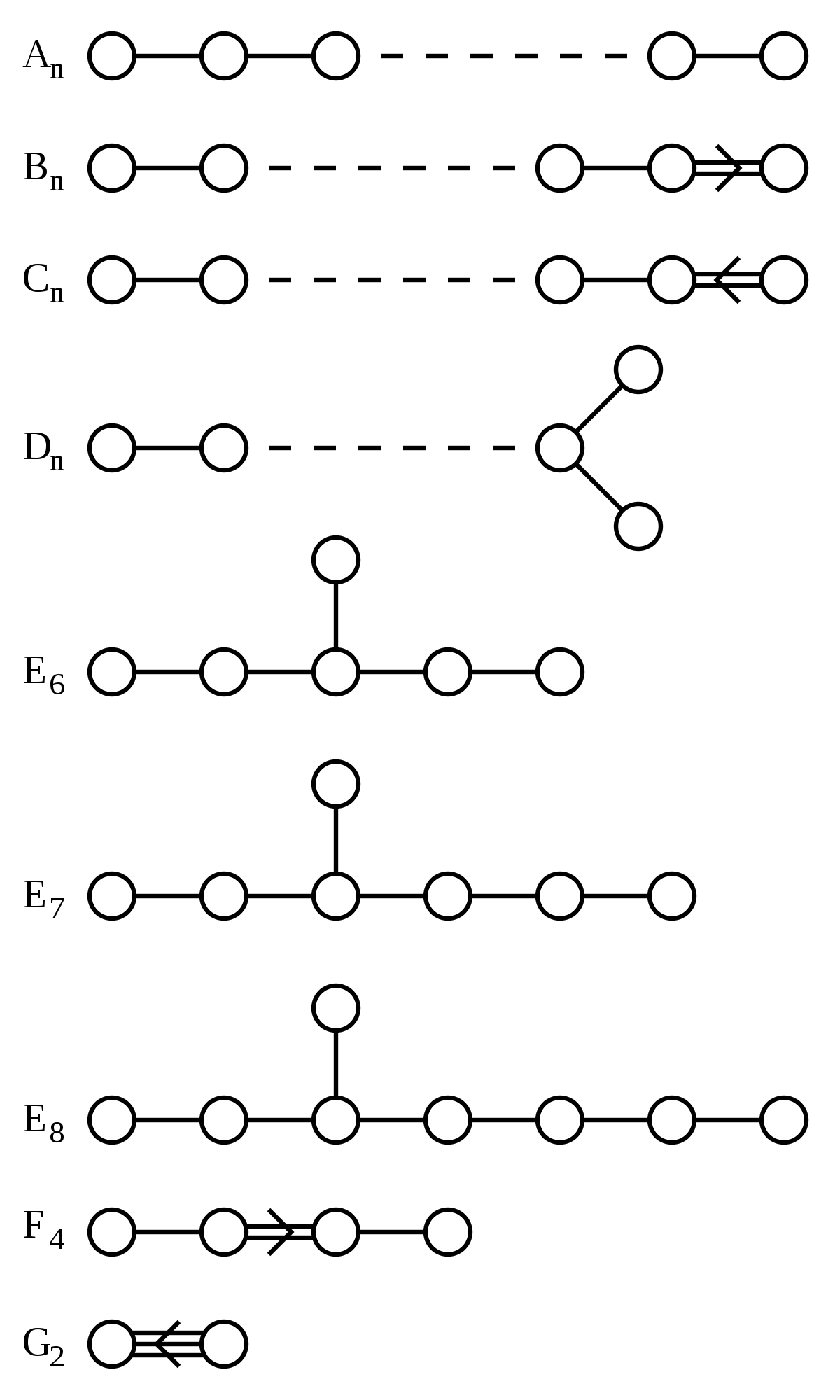}

The proof of Theorem \ref{classdyn} is rather long but direct. It consists of several steps. The first step is construction of the remaining root systems 
$F_4,E_6,E_7,E_8$. 

\subsection{The root system $F_4$}\label{F4r}

\begin{definition} The root system $F_4$ is the union of the root system $B_4\subset \Bbb R^4$ with the vectors 
$$
\left(\pm \tfrac{1}{2},\pm \tfrac{1}{2},\pm \tfrac{1}{2},\pm \tfrac{1}{2}\right)=\sum_{i=1}^4 (\pm \tfrac{1}{2}{\bold e}_i),
$$
for all choices of signs. 
\end{definition} 

Thus besides the roots of $B_4$, which are $\pm {\bold e}_i\pm {\bold e}_j$ (24 of them, squared length $2$) and $\pm {\bold e}_i$ (8 of them, squared length $1$), we have the 16 new roots $\sum_{i=1}^4 (\pm \frac{1}{2}{\bold e}_i)$ (squared length $1$); this gives a total of 48. 

\begin{exercise} Check that this is an irreducible root system.
\end{exercise} 

To give a polarization of the $F_4$ root system, pick $t=(t_1,t_2,t_3,t_4)$ 
with $t_1\gg t_2\gg t_3\gg t_4$. 

\begin{exercise} Check that for this polarization, the simple positive roots are, $\alpha_1=\frac{1}{2}({\bold e}_1-{\bold e}_2-{\bold e}_3-{\bold e}_4)$, $\alpha_2={\bold e}_4$,
$\alpha_3={\bold e}_3-{\bold e}_4$, $\alpha_4={\bold e}_2-{\bold e}_3$. Thus $\alpha_1^\vee={\bold e}_1-{\bold e}_2-{\bold e}_3-{\bold e}_4$, $\alpha_2^\vee=2{\bold e}_4$,
$\alpha_3^\vee={\bold e}_3-{\bold e}_4$, $\alpha_4^\vee={\bold e}_2-{\bold e}_3$.
So the Cartan matrix has the form 
$$
A=\begin{pmatrix} 2& -1 &0&0\\-1&2&-2&0\\0&-1&2&-1\\ 0&0 &-1&2
\end{pmatrix} 
$$
which gives the Dynkin diagram of $F_4$. 
\end{exercise}  

\subsection{The root system $E_8$} \label{E8r}

\begin{definition} The root system $E_8$ is the union of the root system $D_8\subset \Bbb R^8$ with the vectors $\sum_{i=1}^8 (\pm \frac{1}{2}{\bold e}_i)$,
for all choices of signs with even number of minuses. 
\end{definition} 

Thus besides the roots of $D_8$, $\pm {\bold e}_i\pm {\bold e}_j$ (112 of them), we have 
128 new roots $\sum_{i=1}^8 (\pm \frac{1}{2}{\bold e}_i)$. So in total we have $240$ roots. All roots have squared length $2$. 

\begin{exercise} Show that it is an irreducible root system. 
\end{exercise} 

To give a polarization of the $E_8$ root system, pick $t$ so that $t_i\gg t_{i+1}$. 

\begin{exercise} Check that for this polarization, the simple positive roots are, $\alpha_1=\frac{1}{2}({\bold e}_1+{\bold e}_8-\sum_{i=2}^7{\bold e}_i)$, $\alpha_2={\bold e}_7+{\bold e}_8$ and $\alpha_i={\bold e}_{10-i}-{\bold e}_{11-i}$ for $3\le i\le 8$. Thus the roots $\alpha_2,...,\alpha_8$ generate the root system $D_7$, while 
$a_{13}=-1$ and $a_{1i}=0$ for all $i\ne 1,3$. In other words, the Cartan matrix 
has the form 
$$
A=\begin{pmatrix} 2& 0 & -1& 0& 0& 0& 0& 0\\ 0& 2 & 0& -1& 0& 0& 0& 0\\ -1& 0 & 2& -1& 0& 0& 0& 0\\ 0& -1 & -1& 2& -1& 0& 0& 0\\0& 0& 0 & -1& 2& -1& 0& 0\\
0& 0& 0& 0 & -1& 2& -1& 0&\\ 0&0& 0& 0& 0 & -1& 2& -1\\ 0&0&0& 0& 0& 0 & -1& 2\end{pmatrix}
$$
This recovers the Dynkin diagram $E_8$. 
\end{exercise}  

\subsection{The root system $E_7$}\label{E7r}

\begin{definition} The root system $E_7$ is the subsystem of $E_8$
generated by $\alpha_1,...,\alpha_7$. 
\end{definition}  

Note that these roots (unlike $\alpha_8={\bold e}_2-{\bold e}_3$) satisfy the 
equation $x_1+x_2=0$. Thus $E_7$ is the intersection of 
$E_8$ with this subspace. So it includes the roots 
$\pm {\bold e}_i\pm {\bold e}_j$ with $3\le i,j\le 8$ distinct (60 roots), $\pm ({\bold e}_1-{\bold e}_2)$
(2 roots) and $\sum_{i=1}^8 (\pm \frac{1}{2}{\bold e}_i)$ with even number of minuses and the opposite signs for ${\bold e}_1$ and ${\bold e}_2$ (64 roots). 
Altogether we get 126 roots. The Cartan matrix is the upper left corner 7 by 7 submatrix of the Cartan matrix of $E_8$, so it is
$$
A=\begin{pmatrix} 2& 0 & -1& 0& 0& 0& 0\\ 0& 2 & 0& -1& 0& 0& 0\\ -1& 0 & 2& -1& 0& 0& 0\\ 0& -1 & -1& 2& -1& 0& 0\\0& 0& 0 & -1& 2& -1& 0\\
0& 0& 0& 0 & -1& 2& -1\\ 0&0& 0& 0& 0 & -1& 2\end{pmatrix}
$$

\subsection{The root system $E_6$}\label{E6r}

\begin{definition} The root system $E_6$ is the subsystem of $E_8$ and $E_7$ generated by $\alpha_1,...,\alpha_6$. 
\end{definition}  

Note that these roots (unlike $\alpha_8={\bold e}_2-{\bold e}_3$ and $\alpha_7={\bold e}_3-{\bold e}_4$) satisfy the 
equations $x_1+x_2=0, x_2-x_3=0$. Thus $E_6$ is the intersection of 
$E_8$ with this subspace. So it includes the roots 
$\pm {\bold e}_i\pm {\bold e}_j$ with $4\le i,j\le 8$ distinct (40 roots), and $\sum_{i=1}^8 (\pm \frac{1}{2}{\bold e}_i)$ with even number of minuses and the opposite signs for ${\bold e}_1$ and ${\bold e}_2$ and for ${\bold e}_1$ and ${\bold e}_3$ (32 roots). 
Altogether we get 72 roots. 
The Cartan matrix is the upper left corner 6 by 6 submatrix of the Cartan matrix of $E_8$, so it is
$$
A=\begin{pmatrix} 2& 0 & -1& 0& 0& 0\\ 0& 2 & 0& -1& 0& 0\\ -1& 0 & 2& -1& 0& 0\\ 0& -1 & -1& 2& -1& 0\\0& 0& 0 & -1& 2& -1\\
0& 0& 0& 0 & -1& 2\end{pmatrix}
$$
This recovers the Dynkin diagram $E_6$.

\subsection{The elements $\rho$ and $\rho^\vee$}\label{rhorhov} 

Recall that the elements $\rho\in \h^*$ and $\rho^\vee\in \h$
for a simple Lie algebra $\g$ are defined by the conditions 
$(\rho,\alpha_i^\vee)=(\rho^\vee,\alpha_i)=1$ for all $i$
(note that $\rho$ is not a root in general, and $\rho^\vee$ 
is not an instance of the assignment $\alpha\mapsto \alpha^\vee$ for roots $\alpha$). 
So for classical Lie algebras they can be computed from 
Example \ref{simroocla}. Namely, we get 
$$
\rho_{A_{n-1}}=\rho_{A_{n-1}}^\vee=(\tfrac{n-1}{2},\tfrac{n-3}{2},...,-\tfrac{n-1}{2}),
$$
$$
\rho_{B_n}=\rho_{C_n}^\vee=(\tfrac{2n-1}{2},...,\tfrac{3}{2},\tfrac{1}{2}),
$$
$$
\rho_{C_n}=\rho_{B_n}^\vee=(n,n-1,...,1),
$$
$$
\rho_{D_n}=\rho_{D_n}^\vee=(n-1,n-2,...,0).
$$

\begin{exercise} Show that the elements $\rho$ and $\rho^\vee$ 
for exceptional root systems (in the above realizations) are as follows: 
$$
\rho_{G_2}=3\alpha+5\beta,\ \rho^\vee_{G_2}=5\alpha^\vee+3\beta^\vee,
$$
$$
\rho_{F_4}=(\tfrac{11}{2},\tfrac{5}{2},\tfrac{3}{2},\tfrac{1}{2}),\ \rho_{F_4}^\vee=(8,3,2,1),
$$
$$
\rho_{E_8}=\rho_{E_8}^\vee=(23,6,5,4,3,2,1,0),
$$
$$
\rho_{E_7}=\rho_{E_7}^\vee=(\tfrac{17}{2},-\tfrac{17}{2},5,4,3,2,1,0),
$$
$$
\rho_{E_6}=\rho_{E_6}^\vee=(4,-4,-4,4,3,2,1,0).
$$
(recall that we realized $E_6,E_7,E_8$ inside $\Bbb R^8$). 
\end{exercise} 

\subsection{Proof of Theorem \ref{classdyn}}
Now that we have shown that there exist root systems attached to all Cartan matrices, it remains to classify Cartan matrices (or Dynkin diagrams), i.e. show that there are no others than those we have considered. For this purpose we consider Dynkin diagrams as graphs with certain kind of special edges (with one, two or three lines and a possible orientation). 
Note first that any subgraph of a Dynkin diagram must itself be a Dynkin diagram, since a principal submatrix of a positive definite symmetric matrix is itself positive definite. On the other hand, consider {\bf untwisted and twisted affine Dynkin diagrams} depicted on the first picture at \url{https://en.wikipedia.org/wiki/Affine_Lie_algebra}. These are not Dynkin diagrams since the corresponding matrix $A$ is degenerate, hence not positive definite. 

\begin{exercise} Prove this by showing that in each case there exists a nonzero vector $v$ such that $Av=0$. For example, in the simply laced case (only simple edges), this amounts to finding a labeling of the vertices by nonzero numbers such that the sum of labels of the neighbors to each vertex is twice the label of that vertex, and in the non-simply laced case it's a weighted version of that. 
\end{exercise}  

Thus they cannot occur inside a Dynkin diagram (as the restriction of a positive definite inner product to a subspace must be positive definite). 

We conclude that a Dynkin diagram is a tree. Indeed, 
it cannot have a loop with simple edges, since this is the affine diagram $\widetilde A_{n-1}$, which has a null vector $(1,...,1)$. If there is a loop with non-simple edges, this is even worse - this vector will have a negative inner product with itself. 

Further, it cannot have vertices with more than four simple edges coming out since it cannot have a subdiagram $\widetilde D_4$ (and for non-simple edges it is even worse, as before). Thus all the vertices of our tree 
are i-valent for $i\le 3$. 

Also we cannot have a subdiagram $\widetilde D_n$, $n\ge 5$, which implies that there is at most one trivalent vertex. 

Further, if there is a triple edge then the diagram is $G_2$. 
There is no way to attach any edge to the $G_2$ diagram because $D_4^{(3)}$ and $\widetilde G_2$ are forbidden. 

Next, if there is a trivalent vertex then there cannot be a non-simple edge anywhere in the diagram (as we have forbidden affine diagrams $A_{2k-1}^{(2)},\widetilde B_n$). So in this case the diagram is simply laced, so it must be on our list ($D_n,E_6,E_7,E_8$) since it cannot contain affine diagrams $\widetilde E_6,\widetilde E_7,\widetilde E_8$. 

It remains to consider chain-shaped diagrams. They can't contain two double edges (affine diagrams $A_{2k}^{(2)},D_{k+1}^{(2)},\widetilde C_n$). 
Thus if the double edge is at the end, we can only get $B_n$ and $C_n$.  

Finally, if the double edge is in the middle, we can't have affine subdiagram  
$\widetilde F_4$ or $E_6^{(2)}$, so our diagram must be $F_4$. 
Theorem \ref{classdyn} is proved. 

\begin{remark} Note that we have {\bf exceptional isomorphisms} $D_2\cong A_1\times A_1$, $D_3\cong A_3$, $B_2\cong C_2$. Otherwise the listed root systems are distinct. 
\end{remark} 

\subsection{Simply laced and non-simply laced diagrams} 

As we already mentioned, a Dynkin diagram (or the corresponding root system) is called {\bf simply laced} if all the edges are simple, i.e. $a_{ij}=0,-1$ for $i\ne j$. This is equivalent to the Cartan matrix being symmetric, or to all roots having the same length. The connected simply-laced diagrams are $A_n,n\ge 1; D_n, n\ge 4; E_6,E_7,E_8$. The remaining diagrams $B_n,C_n,F_4,G_2$ are not simply laced, but they contain roots of only two squared lengths, whose ratio is $2$ for double edge ($B_n,C_n,F_4$) and $3$ for triple edge 
($G_2$). The roots of the bigger length are called {\bf long} and of the smaller length are called {\bf short}. 

It is easy to see that long and short roots form a root system of the same rank (but not necessarily irreducible). For instance, in $G_2$ both form a root system of type $A_2$, and in $B_2$ both are $A_1\times A_1$. In $B_3$ long roots form $D_3$ and short ones form $A_1\times A_1\times A_1$. However, only long roots form a root subsystem, since a long positive root can be the sum of two short ones, but not vice versa. 

\section{\bf Construction of a semisimple Lie algebra from a Dynkin diagram} 

\subsection{Serre relations} 

Let $\bf k$ be an algebraically closed field of characteristic zero. We would like to show that any reduced root system gives rise to a semisimple Lie algebra over $\bf k$, and moreover a unique one. To this end, it suffices to show that any reduced {\it irreducible} root system gives rise to a unique (finite dimensional) {\it simple} Lie algebra. 

Let $\g$ be a finite dimensional simple Lie algebra over ${\bf k}$ with Cartan subalgebra $\h\subset \g$ and root system $R\subset \h^*$ (which is thus reduced and irreducible). Fix a polarization of $R$ with the set of simple roots $\Pi=(\alpha_1,...,\alpha_r)$, and let $A=(a_{ij})$ be the Cartan matrix of $R$. We have a decomposition $\g= \n_+\oplus \h\oplus \n_-$, where $\n_\pm:=\oplus_{\alpha\in R_{\pm}}\g_\alpha$ are the Lie subalgebras spanned by 
positive, respectively negative root vectors. 
Pick elements $e_i\in \g_{\alpha_i}$, $f_i\in \g_{-\alpha_i}$ so that $e_i,f_i,h_i=[e_i,f_i]$ form an ${\mathfrak{sl}}_2$-triple. 

\begin{theorem}\label{serrel} (Serre relations) (i) The elements $e_i,f_i,h_i$, $i=1,...,r$ generate $\g$. 

(ii) These elements satisfy the following relations: 
$$
[h_i,h_j]=0,\ [h_i,e_j]=a_{ij}e_j,\ [h_i,f_j]=-a_{ij}f_j,\ [e_i,f_j]=\delta_{ij}h_i,
$$
$$
 ({\rm ad}e_i)^{1-a_{ij}}e_j=0,\  ({\rm ad}f_i)^{1-a_{ij}}f_j=0,\ i\ne j.
 $$
\end{theorem}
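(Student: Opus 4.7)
The plan is to establish (ii) first, since these relations are almost tautological once we unpack the definitions, and then to deduce (i) by bootstrapping on the $\mathfrak{sl}_2$-triples acting on root strings.

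For (ii), the bracket relations $[h_i,h_j]=0$ and $[h_i,e_j]=a_{ij}e_j$, $[h_i,f_j]=-a_{ij}f_j$ are immediate from the facts that $\h$ is abelian and $\alpha_j(h_i)=\alpha_j(h_{\alpha_i})=\langle \alpha_j,\alpha_i^\vee\rangle=a_{ij}$. The identity $[e_i,f_i]=h_i$ is the defining property of our $\mathfrak{sl}_2$-triple, and for $i\ne j$ the vector $[e_i,f_j]$ lies in $\g_{\alpha_i-\alpha_j}$, which vanishes because $\alpha_i-\alpha_j$ is not a root: if it were positive, then $\alpha_i=\alpha_j+(\alpha_i-\alpha_j)$ would decompose the simple root $\alpha_i$ as a sum of two positive roots, contradicting simplicity, and the negative case is symmetric.

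For the Serre relations, fix $i\ne j$ and consider the $\mathfrak{sl}_2({\bf k})_{\alpha_i}$-root string $V_{\alpha_i,\alpha_j}=\bigoplus_k \g_{\alpha_j+k\alpha_i}$, which is an irreducible finite-dimensional representation by Theorem \ref{rootde}(iv). Since $\alpha_j-\alpha_i$ is not a root (by the simplicity argument above), the string begins at $k=0$, so $\g_{\alpha_j}$ is the lowest weight line with $h_i$-eigenvalue $\alpha_j(h_i)=a_{ij}\le 0$. By the $\mathfrak{sl}_2$-representation theory of Subsection \ref{sl2rep}, the highest weight is $-a_{ij}$, so $\alpha_j+k\alpha_i$ is a root precisely for $0\le k\le -a_{ij}$. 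Hence $({\rm ad}\,e_i)^{1-a_{ij}}e_j\in \g_{\alpha_j+(1-a_{ij})\alpha_i}=0$, and the relation for the $f$'s follows by the symmetric argument using negative roots.

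For (i), the simple coroots form a basis of $\h$ (dual to the basis $\{\omega_i\}$ of $\h^*$ that exists because simple roots are a basis by Theorem \ref{simbasis}), so the $h_i$ span $\h$, and thus $\h$ lies in the subalgebra $\g'\subseteq \g$ generated by the $\{h_i,e_i,f_i\}$. I then induct on the height $h(\alpha)=\sum n_i$ to show $\g_\alpha\subset \g'$ for every positive root $\alpha$ (the negative case being symmetric). The base $h(\alpha)=1$ is the defining case. For the inductive step, I first find a simple root $\alpha_i$ with $(\alpha,\alpha_i)>0$: such an $i$ must exist, since $(\alpha,\alpha)>0$ and $\alpha$ is a nonnegative combination of the $\alpha_j$'s. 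Then the $\mathfrak{sl}_2({\bf k})_{\alpha_i}$-string through $\alpha$ ensures that $\beta:=\alpha-\alpha_i$ is a root, which is positive because $h(\alpha)\ge 2$. By induction $\g_\beta\subset \g'$, and the irreducibility of $V_{\alpha_i,\beta}$ guarantees that ${\rm ad}\,e_i$ maps $\g_\beta$ onto $\g_\alpha$ (as $\beta$ is not the top of the string — the space $\g_{\beta+\alpha_i}=\g_\alpha$ is nonzero), so $\g_\alpha=[e_i,\g_\beta]\subset \g'$.

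The main obstacle is the surjectivity $[e_i,\g_\beta]=\g_\alpha$ in the inductive step: a priori one only knows it is contained in $\g_\alpha$. The key leverage is Theorem \ref{rootde}(iv), which provides irreducibility of the full root string as an $\mathfrak{sl}_2$-module, so that ${\rm ad}\,e_i$ is nonzero on every non-top weight space. Everything else in the argument is routine bookkeeping with heights, root strings, and the definition of the Cartan matrix; it is precisely this irreducibility, proved earlier using the classification of $\mathfrak{sl}_2$-representations, that makes the recursion close.
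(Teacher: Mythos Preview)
Your proof is correct. Part (ii) is essentially identical to the paper's argument: both hinge on the observation that $\alpha_j-\alpha_i$ is not a root, so that the $(\mathfrak{sl}_2)_i$-module through $e_j$ (or $f_j$) starts at weight $a_{ij}$ and has dimension $1-a_{ij}$.

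For part (i) you take a slightly different route. The paper argues by contradiction: it picks a positive root $\alpha$ of minimal height not in the generated subalgebra, observes that then $[e_i,\g_{\alpha-\alpha_i}]=0$ for every $i$, and uses the invariant form to translate this into $[e_i,x]=0$ for $0\ne x\in\g_{-\alpha}$; the $\mathfrak{sl}_2$-theory then forces $(\alpha,\alpha_i^\vee)\le 0$ for all $i$, contradicting $(\alpha,\alpha)>0$. Your version is a direct induction on height that bypasses the invariant form entirely by citing the irreducibility of the root string (Theorem~\ref{rootde}(iv)) to get surjectivity of ${\rm ad}\,e_i:\g_\beta\to\g_\alpha$. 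The underlying combinatorial point is the same---a positive root must pair positively with some simple root---but your packaging is more constructive and self-contained, while the paper's uses one extra ingredient (the form) to avoid appealing to the full string irreducibility.
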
 

The last two sets of relations are called {\bf Serre relations}. Note that if $a_{ij}=0$ then the Serre relations just say that $[e_i,e_j]=[f_i,f_j]=0$. 

\begin{proof} (i) We know that $h_i$ form a basis of $\h$, so it suffices to show that 
$e_i$ generate $\n_+$ and $f_i$ generate $\n_-$. We only prove the first statement, the second being 
the same for the opposite polarization.  

Let $\n_+'\subset \n_+$ be the Lie subalgebra generated by $e_i$. It is clear that $\n_+'=\oplus_{\alpha\in R_+'} \g_\alpha$ where $R_+'\subset R_+$. Assume the contrary, that $R_+'\ne R_+$. Pick $\alpha\in R_+\setminus R_+'$ with the smallest height (it is not a simple root). 
Then $\g_{\alpha-\alpha_i}\subset \n_+'$, so $[e_i,\g_{\alpha-\alpha_i}]=0$. 
Let $x\in \g_{-\alpha}$ be a nonzero element. We have 
$$
([x,e_i],y)=(x,[e_i,y])=0
$$
for any $y\in \g_{\alpha-\alpha_i}$. Thus $[x,e_i]=0$ for all $i$, which implies, by the representation theory
of $\mathfrak{sl}_2$ (Subsection \ref{sl2rep}), that $(\alpha,\alpha_i^\vee)\le 0$ for all $i$, hence $(\alpha,\alpha_i)\le 0$ for all $i$. This would imply that $(\alpha,\alpha)\le 0$, a contradiction. This proves (i).   

(ii) All the relations except the Serre relations follow from the definition and properties of root systems. 
So only the Serre relations require proof. We prove only the relation involving $f_i$, the other one being the same for the opposite polarization. Consider the $(\mathfrak{sl}_2)_i$-submodule $M_{ij}$ of $\g$ 
generated by $f_j$. It is finite dimensional and 
we have $[h_i,f_j]=-a_{ij}f_j$, $[e_i,f_j]= 0$. 
Thus by the representation theory of $\mathfrak{sl}_2$ (Subsection \ref{sl2rep}) we must have $M_{ij}\cong V_{-a_{ij}}$.
Hence $({\rm ad}f_i)^{-a_{ij}+1}f_j=0$. 
\end{proof} 

\subsection{The Serre presentation for semisimple Lie algebras}

Now for any reduced root system $R$ let $\g(R)$ be the Lie algebra generated by $e_i,f_i,h_i,i=1,...,r$, 
with {\bf defining relations} being the relations of Theorem \ref{serrel}. Precisely, this means 
that $\g(R)$ is the quotient of the free Lie algebra $FL_{3r}$ with generators $e_i,f_i,h_i$ 
modulo the Lie ideal generated by the differences of the left and right hand sides of these relations. 

\begin{theorem} (Serre) (i) The Lie subalgebra $\n_+$ of $\g(R)$ generated by $e_i$ 
has the Serre relations  $({\rm ad}e_i)^{1-a_{ij}}e_j=0$ as the defining relations. Similarly, 
the Lie subalgebra $\n_-$ of $\g(R)$ generated by $f_i$ 
has the Serre relations  $({\rm ad}f_i)^{1-a_{ij}}f_j=0$ as the defining relations.
In particular, $e_i,f_i\ne 0$ in $\g(R)$. Moreover, $h_i$ are linearly independent. 

(ii) $\g(R)$ is a sum of finite dimensional modules over every simple root subalgebra $({\mathfrak{sl}_2})_i=(e_i,f_i,h_i)$. 

(iii) $\g(R)$ is finite dimensional. 

(iv) $\g(R)$ is semisimple and has root system $R$. 
\end{theorem}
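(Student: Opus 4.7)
The strategy is to work first with the auxiliary Lie algebra $\widetilde{\g}(R)$ defined by only the four ``Chevalley'' relations in the statement (everything except the two Serre relations), establish a triangular decomposition for it, then impose the Serre relations and use $\mathfrak{sl}_2$-integrability to deduce the finite-dimensional structure. I would first construct, for each $\lambda$ in the dual of $\widetilde{\h} := \bigoplus_i {\bf k}\, h_i$, a ``universal Verma module'' $M(\lambda)$ on which the $f_i$ act freely on a highest-weight vector $v_\lambda$, the $h_i$ act by their weights, and the $e_i$ are defined inductively by $[e_i,f_j] = \delta_{ij} h_i$ together with the Chevalley relations. A direct check that this is consistent makes $M(\lambda)$ into a $\widetilde{\g}(R)$-module, and the free action of the $f_i$ on $v_\lambda$ shows that $\widetilde{\n}_-$, the subalgebra generated by the $f_i$, is the free Lie algebra on $f_1,\ldots,f_r$; symmetrically $\widetilde{\n}_+$ is free on $e_1,\ldots,e_r$. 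Varying $\lambda$ shows the $h_i$ act by linearly independent functionals, so they are linearly independent in $\widetilde{\g}(R)$, giving the triangular decomposition $\widetilde{\g}(R) = \widetilde{\n}_- \oplus \widetilde{\h} \oplus \widetilde{\n}_+$.

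Next I would prove Serre's key lemma: the elements $y_{ij}^- := (\ad f_i)^{1-a_{ij}} f_j$ satisfy $[e_k, y_{ij}^-] = 0$ for every $k$, and symmetrically for $y_{ij}^+ := (\ad e_i)^{1-a_{ij}} e_j$. The cases $k \neq i,j$ are immediate since $[e_k,f_i] = [e_k,f_j] = 0$. For $k = i$, the $\mathfrak{sl}_2$-identity $[e, f^m z] = m f^{m-1}(h - m + 1)z$ (valid for any $z$ with $ez=0$) applied with $z = f_j$, which is a highest-weight vector for $(\mathfrak{sl}_2)_i$ of weight $-a_{ij}$, yields the factor $(1-a_{ij})(-a_{ij} + a_{ij}) = 0$ at $m = 1-a_{ij}$. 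The case $k=j$ is a short direct computation using $[h_j,f_i] = -a_{ji} f_i$ and the fact that $a_{ij} = 0 \iff a_{ji} = 0$. Consequently the two-sided ideal $I \subset \widetilde{\g}(R)$ generated by the $y_{ij}^\pm$ splits as $I = I^- \oplus I^+$ with $I^\pm \subset \widetilde{\n}_\pm$, so $\g(R) = \widetilde{\g}(R)/I$ inherits the triangular decomposition $\n_- \oplus \h \oplus \n_+$ with $\widetilde{\h}$ mapping isomorphically onto $\h$. Combined with the freeness of $\widetilde{\n}_\pm$, this proves (i).

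For (ii), the Chevalley--Serre relations make $\ad e_i$ and $\ad f_i$ nilpotent on all generators, and local nilpotency is preserved under the Lie bracket by the Leibniz rule, so every element of $\g(R)$ lies in a finite-dimensional $(\mathfrak{sl}_2)_i$-invariant subspace. Local nilpotency then lets me define the automorphisms $\tau_i := \exp(\ad e_i)\exp(-\ad f_i)\exp(\ad e_i)$ of $\g(R)$, which act on $\h$ as the simple reflection $s_i$, giving a $W$-action on $\g(R)$ that permutes weight spaces according to the $W$-action on $\h^*$. From the triangular decomposition, every nonzero weight $\alpha$ lies in $Q_+ \cup (-Q_+)$; by $W$-equivariance $\dim \g(R)_\alpha = \dim \g(R)_{w\alpha}$, so nonzero supports must consist of $\alpha$ whose entire $W$-orbit lies in $Q_+ \cup (-Q_+)$, which for a finite-type root system forces $\alpha \in R$. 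Since $\dim \g(R)_{\alpha_i} = 1$ is obvious from the relations, $W$-equivariance gives $\dim \g(R)_\alpha = 1$ for all $\alpha \in R$, so $\dim \g(R) = r + |R|$, proving (iii). For (iv), any ideal of $\g(R)$ is $\h$-stable and so a sum of weight spaces; the nondegenerate pairing $\g(R)_\alpha \times \g(R)_{-\alpha} \to \h$ furnished by an invariant symmetric form (descended from $\widetilde{\g}(R)$ using the symmetrizer of $A$), together with nondegeneracy of that form on $\h$ coming from positive-definiteness of the symmetrized Cartan matrix, rules out nontrivial solvable ideals and identifies the root system as $R$.

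\emph{Main obstacle.} The delicate point is the characterization ``$\alpha \in Q \setminus\{0\}$ has its entire $W$-orbit in $Q_+ \cup (-Q_+)$ if and only if $\alpha \in R$'' used in Step 4 to bound the weight support of $\g(R)$. For finite-type root systems this is true, but a clean proof requires the positive-definiteness of the symmetrized Cartan matrix (equivalently, finiteness of $W$), which is where the finite-type classification from Theorem~\ref{classdyn} enters essentially. An alternative route that bypasses this characterization is to construct the invariant bilinear form on $\g(R)$ directly, verify that its radical intersects $\h$ trivially, and then use positive-definiteness of the symmetrized Cartan matrix to conclude the radical is zero; this yields finite-dimensionality and semisimplicity in a single stroke but makes equally essential use of the finite-type hypothesis.
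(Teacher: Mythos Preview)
Your overall plan matches the paper's closely through part (ii): the auxiliary algebra $\widetilde{\g}(R)$, the Verma-module construction to get freeness of $\widetilde{\n}_\pm$ and linear independence of the $h_i$, the key lemma $[e_k,y_{ij}^-]=0$, and the splitting $I=I^+\oplus I^-$ are all exactly what the paper does.

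The genuine gap is in your argument for (iii). The characterization ``$\alpha\in Q\setminus\{0\}$ with $W\alpha\subset Q_+\cup(-Q_+)$ forces $\alpha\in R$'' is \emph{false}: for any root $\beta$ and any integer $m\ge 2$, the element $m\beta$ has $W$-orbit $\{mw\beta:w\in W\}\subset Q_+\cup(-Q_+)$, yet $m\beta\notin R$. So the $W$-stability of the support alone does not bound it. The paper fixes this by an induction on height that uses one additional input you omitted: $\g(R)_{m\alpha_i}=0$ for $m\ge 2$, because $\n_+$ is generated by the $e_i$. Given $\alpha\in Q_+$ with $\g(R)_\alpha\ne 0$ and at least two nonzero coefficients, positive-definiteness gives $(\alpha,\alpha)>0$, hence some $(\alpha,\alpha_i^\vee)>0$; then $s_i\alpha\in Q_+$ has strictly smaller height and $\g(R)_{s_i\alpha}\ne 0$ by $\mathfrak{sl}_2$-theory, so by induction $s_i\alpha\in R$, hence $\alpha\in R$. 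The base case ``only one nonzero coefficient'' is exactly where $\g(R)_{m\alpha_i}=0$ for $m\ge 2$ is used.

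Your proposed alternative route via the invariant form also does not close this gap: nondegeneracy of the invariant form does \emph{not} imply finite-dimensionality (affine and indefinite Kac--Moody algebras carry a nondegenerate invariant form but are infinite-dimensional). Positive-definiteness of the symmetrized Cartan matrix enters the finiteness proof precisely through the height induction above, not through the form. For part (iv), your form-based argument is fine once (iii) is established, though the paper instead gives a direct and more elementary argument: any nonzero ideal is $\h$-stable, hence contains some $e_i$, hence $h_i$ and $f_i$, and then connectivity of the Dynkin diagram (via $[h_j,e_k]=a_{jk}e_k$) forces it to contain all generators.
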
 

\begin{proof} It is easy to see that $\g(R_1\sqcup R_2)=\g(R_1)\oplus \g(R_2)$, 
so it suffices to prove the theorem for irreducible root systems. 

(i) Consider the (in general, infinite dimensional) Lie algebra $\widetilde {\g(R)}$ 
generated by $e_i,f_i,h_i$ with the defining relations of Theorem \ref{serrel} without the Serre relations. This Lie algebra is $\Bbb Z$-graded, with $\deg(e_i)=1$, $\deg(f_i)=-1$, $\deg(h_i)=0$. Thus we have a decomposition 
$$
\widetilde{\g(R)}=\widetilde{\n_+}\oplus \widetilde{\h}\oplus \widetilde{\n_-},
$$ 
where $\widetilde{\n_+}$, $\widetilde{\h}$ and $\widetilde{\n_-}$ are Lie subalgebras spanned by 
elements of positive, zero and negative degree, respectively. Moreover, it is easy to see that 
$\widetilde{\n_+}$ is generated by $e_i$, $\widetilde{\n_-}$ is generated by $f_i$, and $\widetilde{\h}$ is spanned by $h_i$ (indeed, any commutator can be simplified to have only $e_i$, only $f_i$, or only a single $h_i$). 

\begin{lemma}\label{free} (i) The Lie algebra $\widetilde{\n_+}$ is free on the generators $e_i$
and $\widetilde{\n_-}$ is free on the generators $f_i$. 

(ii) $h_i$ are linearly independent in $\widetilde \h$ (i.e., $\widetilde \h\cong \h$). 
\end{lemma} 

\begin{proof} (i) We prove only the second statement, the first one being the same for the opposite polarization. Let $\h'$ be a vector space with basis $h_i'$, $i=1,...,r$ 
and consider the Lie algebra $\mathfrak{a}:=\h'\ltimes FL_r$, where $FL_r$ 
is freely generated by $f_1',...,f_r'$ and 
$$
[h_i',f_j']=-a_{ij}f_j',\  [h_i',h_j']=0. 
$$
Consider the universal enveloping algebra 
$$
U=U(\mathfrak{a})={\bf k}[h_1',...,h_r']\ltimes {\bf k}\langle f_1',...,f_r'\rangle,
$$ 
which as a vector space is naturally identified with the tensor product 
${\bf k}\langle f_1,...,f_r\rangle\otimes {\bf k}[h_1',...,h_r']$, via $f\otimes h\mapsto fh$ 
(by Proposition \ref{freelieprop}). 
Now define an action of $\widetilde{\g(R)}$ on the space $U$ as follows. For $P\in {\bf k}[h_1',...,h_r']$ and $w$ a word in $f_i'$ of weight $-\alpha$, we set 
$$
h_i(w\otimes P)=w\otimes (h_i'-\alpha(h_i))P,\ f_i(w\otimes P)=f_i'w\otimes P,\
$$
$$
e_i(f_{j_1}'...f_{j_s}'\otimes P)=\sum_{k: j_k=i} f'_{j_1}....\widehat{f_{j_k}'}...f_{j_s}'\otimes (h_i'-(\alpha_{j_{k+1}}+...+\alpha_{j_s})(h_i))P
$$
(where the hat means that the corresponding factor is omitted). 
It is easy to check that this indeed defines an action, i.e., the relations of $\widetilde{\g(R)}$ are satisfied (check it!). 
Thus we have a linear map $\widetilde{\g(R)}\to U$ given by $x\mapsto x(1)$. The restriction of this map 
to the Lie subalgebra $\widetilde{\n_-}$ is a map $\phi: \widetilde{\n}_-\to FL_r$ which sends every iterated commutator of $f_i$ to 
itself. This implies that $\phi$ is an isomorphism, i.e., $\widetilde{\n_-}$ is free. 

(ii) The elements $h_i(1)=h_i'$ are linearly independent, hence so are $h_i$. 
\end{proof} 

Now consider the element $S_{ij}^+:=({\rm ad}e_i)^{1-a_{ij}}e_j$ in $\widetilde{\n_+}$ and 
$S_{ij}^-:=({\rm ad}f_i)^{1-a_{ij}}f_j$ in $\widetilde{\n_-}$. 
It is easy to check that $[f_k,S_{ij}^+]=0$ (this follows easily from the representation theory of $\mathfrak{sl}_2$, Subsection \ref{sl2rep},--check it!). Therefore, setting $I_+$ to be the ideal in the Lie algebra $\widetilde{\n_+}$ generated by $S_{ij}^+$, and $I_-$ to be the ideal in the Lie algebra $\widetilde{\n_-}$ generated by $S_{ij}^-$, we see that the ideal of Serre relations in $\widetilde{\g(R)}$ is $I_+\oplus I_-$. Lemma \ref{free} now implies (i). 

(ii) The Serre relations imply that $e_j$ generates the representation $V_{-a_{ij}}$ 
of $({\mathfrak{sl}_2})_i$ for $j\ne i$, and so does $f_j$. Also any element of $\h$ generates 
$V_0$ or $V_2$ or the sum of the two, and $e_i,f_i$ generate $V_2$. This implies (ii)  
since $\g(R)$ is generated by $e_i,f_i,h_i$, and if $x$ generates a representation $X$ of $({\mathfrak{sl}_2})_i$ and $y$ generates a representation $Y$ then $[x,y]$ generates a quotient of $X\otimes Y$. 

(iii) We have $\g(R)=\oplus_{\alpha\in Q} \g_\alpha$, where $\g_\alpha$ are the subspaces of $\g(R)$ of weight $\alpha$, and $\g_0=\h$.  Let $Q_+$ be the $\Bbb Z_{\ge 0}$-span 
of $\alpha_i$. Then $\g_\alpha$ is zero unless $\alpha\in Q_+$ or $-\alpha\in Q_+$, and 
is finite dimensional for any $\alpha$. 

We will now show that if $\g_\alpha\ne 0$ then $\alpha\in R$ or $\alpha=0$, which implies (iii). 
It suffices to consider $\alpha\in Q_+$. 
We prove the statement by induction in the height ${\rm ht}(\alpha)=\sum_i k_i$ where $\alpha=\sum_i k_i\alpha_i$. 
The base case (height $1$) is obvious, so we only need to justify the inductive step. 
We have $(\alpha,\omega_i^\vee)=k_i\ge 0$ for all $i$. If there is only one $i$ with $k_i\ge 0$ 
then the statement is clear since $\g_{m\alpha_i}=0$ if $m\ge 2$ (as $\n_+$ is generated by $e_i$). 
So assume that there are at least two such indices $i$. Since $(\alpha,\alpha)>0$, 
there exists $i$ such that $(\alpha,\alpha_i^\vee)>0$. By the representation theory of $\mathfrak{sl}_2$ (Subsection \ref{sl2rep}), 
$\g_{s_i\alpha}\ne 0$. Clearly, $s_i\alpha=\alpha-(\alpha,\alpha_i^\vee)\alpha_i\notin -Q_+$ (since $k_j>0$ for at least two indices $j$), so $s_i\alpha\in Q_+$ but has height smaller than $\alpha$ (as $(\alpha,\alpha_i^\vee)>0$). 
So by the induction assumption $s_i\alpha\in R$, which implies $\alpha\in R$. This proves (iii). 

(iv) We see that $\g(R)=\h\oplus\bigoplus_{\alpha\in R}\g_\alpha$, where $\g_\alpha$ are 1-dimensional (this follows from (ii),(iii) since every root can be mapped to a simple root by a composition of simple reflections). Let $I$ be a nonzero ideal in $\g$. Since $R$ spans $\h^*$, if $0\ne h\in I\cap \h$ then there exists a root $\alpha$ with $\alpha(h)\ne 0$. So $[h,\g_\alpha]=\g_\alpha\subset I$. Thus in any case $I$ contains $\g_\alpha$ for some root $\alpha$.
Also, by the representation theory of 
$\mathfrak{sl}_2$, $I_\beta:=I\cap \g_\beta\ne 0$ implies $I_{w\beta}\ne 0$ for all $w\in W$. Thus $I_{\alpha_i}\ne 0$ for some $i$, i.e., $e_i\in I$. Hence $h_i,f_i\in I$. Now let $J$ be the set of indices $j$ for which $e_j,f_j,h_j\in I$ (or, equivalently, just $e_j\in I$); we have shown it is nonempty. Since $[h_j,e_k]=a_{jk}e_k$, we find that if $j\in J$ and $a_{jk}\ne 0$ (i.e., $k$ is connected to $j$ in the Dynkin diagram) then $k\in J$. Since the Dynkin diagram is connected, $J=[1,...,r]$ and $I=\g$. Thus $\g$ is simple and clearly has root system $R$. This proves (iv) and completes the proof of Serre's theorem. 
\end{proof} 

\begin{corollary} Isomorphism classes of simple Lie algebras over ${\bf k}$ are in bijection 
with Dynkin diagrams $A_n$, $n\ge 1$, $B_n$, $n\ge 2$, $C_n$, $n\ge 3$, 
$D_n$, $n\ge 4$, $E_6,E_7,E_8$, $F_4$ and $G_2$.  
\end{corollary}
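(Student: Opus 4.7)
The plan is to construct maps in both directions and verify they are mutually inverse. From a simple Lie algebra $\g$ we have produced a Cartan subalgebra $\h\subset\g$ (Theorem \ref{conscar}), the root system $R\subset\h^*$ (Proposition \ref{toral1}), and after fixing a polarization a system of simple roots $\Pi$ with Cartan matrix $A$ and Dynkin diagram $D(\g)$. First I would check that $D(\g)$ depends only on $\g$ up to isomorphism: any two Cartan subalgebras are conjugate under ${\rm Aut}(\g)$ by Theorem \ref{conjcar} (combined with Corollary \ref{auto}), any two polarizations of $R$ are related by the Weyl group action, and an isomorphism of Lie algebras sends Cartan subalgebras to Cartan subalgebras and respects root systems, so $R$, and hence $A$ and $D(\g)$, are well-defined invariants of the isomorphism class of $\g$.

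Next I would show that if $\g$ is simple then $D(\g)$ is connected and appears on the list of Theorem \ref{classdyn}. Membership on the list is immediate from that theorem once we know $A$ is a Cartan matrix in the sense of Proposition \ref{carmat}, which has already been verified. For connectedness, suppose $\Pi=\Pi_1\sqcup\Pi_2$ with $\Pi_1\perp\Pi_2$; then $R=R_1\sqcup R_2$ with $R_1\perp R_2$, and the subspaces $\h_i$ spanned by $h_\alpha$ for $\alpha\in R_i$ together with $\bigoplus_{\alpha\in R_i}\g_\alpha$ form nonzero ideals of $\g$ with zero intersection (using that $[\g_\alpha,\g_\beta]\subset\g_{\alpha+\beta}=0$ when $\alpha\in R_1$, $\beta\in R_2$, since $\alpha+\beta\notin R$). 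This contradicts simplicity, so $D(\g)$ is connected.

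In the other direction, for each connected Dynkin diagram on the list, Theorem \ref{classdyn}(ii) and the constructions in Subsections \ref{F4r}--\ref{E6r} produce an irreducible reduced root system $R$, and Serre's theorem then produces a finite dimensional simple Lie algebra $\g(R)$ whose root system is $R$. So $D(\g(R))=D(R)$ tautologically; this handles one composition. For the other composition, I would argue that if $\g$ is a simple Lie algebra with Dynkin diagram $D$ and associated root system $R$, then $\g\cong\g(R)$. Indeed, by Theorem \ref{serrel}(i),(ii) the Chevalley generators $e_i,f_i,h_i$ of $\g$ satisfy the defining relations of $\g(R)$, so there is a surjective Lie algebra homomorphism $\g(R)\twoheadrightarrow\g$. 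Since $\g(R)$ is simple by Serre's theorem and this map is nonzero, it must be an isomorphism.

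The main obstacle here is the last verification that $\g\cong\g(R)$, or equivalently that the Serre relations are defining (not merely satisfied). This is exactly what Serre's theorem delivers, together with the dimension count implicit in the proof that $\g(R)=\h\oplus\bigoplus_{\alpha\in R}\g_\alpha$ with each $\g_\alpha$ one-dimensional, matching the root space decomposition of the original $\g$. Finally, I would note that the diagrams on our list are pairwise non-isomorphic (as graphs with multi-edges and orientations), so no two distinct diagrams give isomorphic Lie algebras, completing the bijection.
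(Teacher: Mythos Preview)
Your proof is correct and follows exactly the approach the paper sets up; the corollary is stated in the paper without a separate proof, as it is an immediate consequence of Serre's theorem, Theorem \ref{classdyn}, Theorem \ref{conjcar}, and Theorem \ref{serrel}, and you have simply spelled out the implicit argument.
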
 

\section{\bf Representation theory of semisimple Lie algebras} 

\subsection{Representations of semisimple Lie algebras} 

We will now develop representation theory of complex semisimple Lie algebras. The representation theory of semisimple Lie algebras over an algebraically closed field of characteristic zero is completely parallel, so we will stick to the complex case. So all representations will be over $\Bbb C$. 
We will mostly be interested in finite dimensional representations; as we know, they can be exponentiated to holomorphic representations of the corresponding simply connected Lie group $G$, which defines a bijection between isomorphism classes of such representations of $\g$ and $G$. 

Let $\g$ be a semisimple Lie algebra. Recall that by Theorem \ref{comred}, every finite dimensional representation of $\g$ is completely reducible, so to classify finite dimensional representations it suffices to classify irreducible representations. 

As in the simplest case of $\mathfrak{sl}_2$, a crucial tool is the decomposition of a representation in a direct sum of eigenspaces of a Cartan subalgebra $\h\subset \g$. 

\begin{definition} Let $\lambda\in \h^*$, and $V$ a representation of $\g$ (possibly infinite dimensional). 
Then a vector $v\in V$ is said to have {\bf weight} $\lambda$ if $hv=\lambda(h)v$ for all $h\in \h$; such vectors are called {\bf weight vectors}. 
The subspace of such vectors is called the {\bf weight subspace of $V$ of weight $\lambda$} and denoted by $V[\lambda]$. If $V[\lambda]\ne 0$, we say that $\lambda$ is a weight of $V$, and the set of weights of $V$ is denoted by $P(V)$. 
\end{definition} 

It is easy to see that $\g_\alpha V[\lambda]\subset V[\lambda+\alpha]$. 

Let $V'\subset V$ be the span of all weight vectors in $V$. Then it is clear that 
$V'=\oplus_{\lambda\in \h^*}V[\lambda]$. 

\begin{definition} We say that $V$ {\bf has a weight decomposition} (with respect to a Cartan subalgebra $\h\subset \g$) if $V'=V$, i.e., if $V=\oplus_{\lambda\in \h^*}V[\lambda]$. 
\end{definition} 

Note that not every representation of $\g$ has a weight decomposition (e.g., for $V=U(\g)$ with $\g$ acting by left multiplication all weight subspaces are zero).  

\begin{proposition} Any finite dimensional representation $V$ of $\g$ has a weight decomposition. 
Moreover, all weights of $V$ are {\bf integral}, i.e., $P(V)$ is a finite subset of the weight lattice $P\subset \h^*$ of $\g$.  
\end{proposition}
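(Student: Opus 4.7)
The plan is to deduce both statements from the representation theory of $\mathfrak{sl}_2$ applied to each simple root subalgebra $(\mathfrak{sl}_2)_{\alpha_i} = \langle e_i, f_i, h_i\rangle \subset \g$. Fix a Cartan subalgebra $\h\subset \g$, a polarization of the root system, and the associated simple coroots $h_1,\dots,h_r$, which by Theorem \ref{rootde}(i) form a basis of $\h$.

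First I would restrict $V$ to each $(\mathfrak{sl}_2)_{\alpha_i}$. Since $\dim V<\infty$, Theorem \ref{sl2repth}(iv) says this restriction is completely reducible, and each irreducible summand is isomorphic to some $V_n$ on which $h$ acts diagonalizably with integer eigenvalues in $\{-n,-n+2,\dots,n\}$. Therefore each $h_i$ is a diagonalizable operator on $V$ with integer eigenvalues.

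Next, the operators $h_1,\dots,h_r$ pairwise commute (they all lie in the abelian subalgebra $\h$), and a commuting family of diagonalizable operators on a finite dimensional space is simultaneously diagonalizable. This gives a decomposition $V=\bigoplus_\mu V_\mu$ indexed by joint eigenvalue tuples $\mu=(\mu_1,\dots,\mu_r)\in \Bbb Z^r$, where $V_\mu = \{v\in V : h_i v = \mu_i v\text{ for all }i\}$. Defining $\lambda\in\h^*$ by $\lambda(h_i)=\mu_i$ (well-defined because the $h_i$ are a basis of $\h$), we rewrite this as $V=\bigoplus_{\lambda\in\h^*} V[\lambda]$, which is the desired weight decomposition.

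Finally, for integrality: if $V[\lambda]\ne 0$ then $\lambda(h_i)=\mu_i\in\Bbb Z$ for every $i$, i.e.\ $(\lambda,\alpha_i^\vee)\in\Bbb Z$ for all simple coroots. By definition of the weight lattice $P=\{\lambda\in\h^*:(\lambda,\alpha^\vee)\in\Bbb Z\ \forall\alpha\in R\}$ and since $P$ equals the dual lattice to $Q^\vee$ (generated by the simple coroots $\alpha_i^\vee=h_i$), we conclude $\lambda\in P$. Finiteness of $P(V)$ is immediate from $\dim V<\infty$. I do not anticipate any real obstacle here; the only subtle point to state carefully is that diagonalizability of each $h_i$ requires complete reducibility of the $(\mathfrak{sl}_2)_{\alpha_i}$-action, which is exactly Theorem \ref{sl2repth}(iv).
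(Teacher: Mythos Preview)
Your proof is correct and follows essentially the same approach as the paper's: restrict to each root $\mathfrak{sl}_2$-subalgebra to see that every $h_i$ acts diagonalizably with integer eigenvalues, then use that the $h_i$ commute and span $\h$ to obtain the weight decomposition and integrality. The paper's argument is more terse (it simply says ``$h_i$ acts semisimply'' and ``$\h$ acts semisimply''), but the content is identical; you have just spelled out the simultaneous-diagonalization step and the identification $\alpha_i^\vee=h_i$ more explicitly.
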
 

\begin{proof} For each $i=1,...,r$, $V$ is a finite dimensional representation of the root subalgebra $(\mathfrak{sl}_2)_i$, so its element $h_i$ acts semisimply on $V$. Thus $\h$ acts semisimply on $V$, hence $V$ has a weight decomposition. Also eigenvalues of $h_i$ are integers, so 
for any $\lambda\in P(V)$ we have $\lambda(h_i)=(\lambda,\alpha_i^\vee)\in \Bbb Z$, hence $\lambda\in P$.   
\end{proof} 

\begin{definition} A vector $v$ in $V[\lambda]$ is called a {\bf highest weight vector of weight $\lambda$} if $e_iv=0$ for all $i$, i.e., if $\n_+v=0$. A representation $V$ of $\g$ is a {\bf highest weight representation with highest weight $\lambda$} if it is generated by such a nonzero vector. 
\end{definition} 

\begin{proposition} Any finite dimensional representation $V\ne 0$ contains a nonzero highest weight vector of some weight $\lambda$. Thus every irreducible finite dimensional representation of $\g$ is a highest weight representation. 
\end{proposition}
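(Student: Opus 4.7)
The plan is to produce a highest weight vector by maximizing a linear functional on the (finite) weight set $P(V)$ of the representation, and then to deduce the irreducible case by invoking irreducibility directly.

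First I would use the previous proposition to write $V = \bigoplus_{\mu \in \h^*} V[\mu]$ with $P(V) = \{\mu : V[\mu] \neq 0\}$ a finite subset of the weight lattice $P$. Fix the element $t \in E \cong \h_{\Bbb R}^*{}^*$ used to define the polarization of $R$ (so that $t(\alpha) > 0$ for every $\alpha \in R_+$ and in particular $t(\alpha_i) > 0$ for each simple root $\alpha_i$). Since $P(V)$ is finite and non-empty, I would choose $\lambda \in P(V)$ maximizing $t(\lambda)$ (or, equivalently, a weight $\lambda$ maximal for the partial order $\mu \leq \lambda \iff \lambda - \mu \in \Bbb Z_{\geq 0}\Pi$).

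Next I would pick any nonzero $v \in V[\lambda]$ and verify that $v$ is a highest weight vector. For each simple root $\alpha_i$, the generator $e_i \in \g_{\alpha_i}$ sends $V[\lambda]$ into $V[\lambda + \alpha_i]$. But $t(\lambda + \alpha_i) = t(\lambda) + t(\alpha_i) > t(\lambda)$, contradicting the maximality of $t(\lambda)$ unless $V[\lambda + \alpha_i] = 0$. Hence $e_i v = 0$ for all $i$, so $\n_+ v = 0$ since $\n_+$ is generated by the $e_i$. This gives the first claim.

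For the second claim, suppose $V$ is irreducible and take such a highest weight vector $v \in V[\lambda]$, $v \neq 0$. The subspace $U(\g) v \subset V$ is a nonzero subrepresentation, so by irreducibility $U(\g) v = V$, proving that $V$ is a highest weight representation generated by $v$. There is no serious obstacle here: the only points that need care are (a) ensuring a maximizer of $t$ exists on $P(V)$, which is immediate from finiteness, and (b) checking that maximality of $t$ on $P(V)$ really forces annihilation by every $e_i$, which follows from the fact that $t$ is strictly positive on all of $R_+$, not only on simple roots.
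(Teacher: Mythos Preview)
Your proof is correct and follows essentially the same approach as the paper: both pick a weight in $P(V)$ maximizing a linear functional that is strictly positive on the simple roots (the paper uses the specific choice $(\cdot,\rho^\vee)$, you use the polarization functional $t$), and then conclude $e_iv=0$ from $\lambda+\alpha_i\notin P(V)$. The deduction of the irreducible case is also identical.
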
 

\begin{proof} Note that $P(V)$ is a finite set. Let $\rho^\vee=\sum_{i=1}^r\omega_i^\vee$. Pick $\lambda\in P(V)$ so that $(\lambda,\rho^\vee)$ is maximal. Then $\lambda+\alpha_i\notin P(V)$ for any $i$, since $(\lambda+\alpha_i,\rho^\vee)=(\lambda,\rho^\vee)+1$. Hence for any nonzero $v\in V[\lambda]$ (which exists as $\lambda\in P(V)$) we have 
$e_iv=0$. 

The second statement follows since an irreducible representation is generated by any of its nonzero vectors. 
\end{proof} 

\subsection{Verma modules} Even though we are mostly interested in finite dimensional representations of $\g$, it is useful to consider some infinite dimensional representations, which are called {\bf Verma modules}. 

The Verma module $M_\lambda$ is defined as ``the largest highest weight representation with highest weight $\lambda$". Namely, it is generated by a single highest weight vector $v_\lambda$ 
with {\bf defining relations} $hv=\lambda(h)v$ for $h\in \h$ and $e_iv=0$. More formally speaking, 
we make the following definition. 

\begin{definition} Let $I_\lambda\subset U(\g)$ be the left ideal generated by the elements $h-\lambda(h),h\in \h$ and 
$e_i$, $i=1,...,r$. Then the {\bf Verma module} $M_\lambda$ is the quotient 
$U(\g)/I_\lambda$. 
\end{definition} 

In this realization, the highest weight vector $v_\lambda$ is just the class of the unit $1$ of $U(\g)$. 
 
\begin{proposition} The map $\phi: U(\n_-)\to M_\lambda$ given by $\phi(x)=xv_\lambda$ 
is an isomorphism of left $U(\n_-)$-modules.  
\end{proposition}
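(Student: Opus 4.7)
The map $\phi$ is a morphism of left $U(\n_-)$-modules by construction, since $\phi(yx)=yxv_\lambda=y\phi(x)$ for $y\in U(\n_-)$, $x\in U(\n_-)$. The plan is to reinterpret $M_\lambda$ as an induced module from the Borel subalgebra $\b:=\h\oplus\n_+$, and then invoke the PBW theorem for the decomposition $\g=\n_-\oplus\b$.

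First I would introduce the one-dimensional $\b$-module $\Bbb C_\lambda$ on which $h\in\h$ acts by the scalar $\lambda(h)$ and $\n_+$ acts by zero (well-defined since $[\h,\h]=0$ and $[\h,\n_+]\subset\n_+$ both act as zero on $\Bbb C$), and argue that there is a canonical isomorphism $M_\lambda\cong U(\g)\otimes_{U(\b)}\Bbb C_\lambda$ of $U(\g)$-modules sending $v_\lambda\leftrightarrow 1\otimes 1$. The cleanest route is through universal properties: a $U(\g)$-module map out of either side into a $\g$-module $N$ is the same data as a vector $v\in N$ satisfying $hv=\lambda(h)v$ for all $h\in\h$, together with an annihilation condition, which for $M_\lambda$ reads $e_iv=0$ for $i=1,\dots,r$ and for the induced module reads $\n_+v=0$. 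These two conditions agree: by Theorem \ref{serrel}(i), $\n_+$ is Lie-generated by $e_1,\dots,e_r$, and whenever $Y_1v=Y_2v=0$ one has $[Y_1,Y_2]v=Y_1Y_2v-Y_2Y_1v=0$, so by induction on Lie-word length $e_iv=0$ for all $i$ already forces $Xv=0$ for all $X\in\n_+$.

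With that identification in hand, I would apply Corollary \ref{tenpr} to the vector-space decomposition $\g=\n_-\oplus\b$: the multiplication map $U(\n_-)\otimes_{\Bbb C}U(\b)\to U(\g)$ is a linear isomorphism, and it is manifestly an isomorphism of $(U(\n_-),U(\b))$-bimodules, since left multiplication by $U(\n_-)$ and right multiplication by $U(\b)$ preserve the tensor factors. Tensoring on the right over $U(\b)$ with $\Bbb C_\lambda$ then yields
\[
M_\lambda\;\cong\;U(\g)\otimes_{U(\b)}\Bbb C_\lambda\;\cong\;U(\n_-)\otimes_{\Bbb C}U(\b)\otimes_{U(\b)}\Bbb C_\lambda\;\cong\;U(\n_-)\otimes_{\Bbb C}\Bbb C_\lambda\;\cong\;U(\n_-)
\]
as left $U(\n_-)$-modules, and the chain $v_\lambda=1\otimes 1\mapsto 1\otimes 1\otimes 1\mapsto 1\otimes 1\mapsto 1$ shows that this composite isomorphism is exactly $\phi^{-1}$.

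The main obstacle is the first step, the identification $M_\lambda\cong U(\g)\otimes_{U(\b)}\Bbb C_\lambda$: its content is that annihilation of $v_\lambda$ by the simple positive root vectors $e_i$ alone already forces annihilation by all of $\n_+$, which rests on the nontrivial Serre-type fact that $\n_+$ is Lie-generated by $e_1,\dots,e_r$. Once that is granted, the rest of the argument is formal bookkeeping combined with the PBW isomorphism.
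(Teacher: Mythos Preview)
Your proof is correct and is essentially the paper's argument rephrased in the language of induced modules: the paper works directly with the ideals, showing via PBW for $\g=\n_-\oplus\b$ that $I_\lambda$ corresponds to $U(\n_-)\otimes K_\lambda$ with $K_\lambda=\ker\lambda_+$, and then notes the induced-module interpretation $M_\lambda\cong U(\g)\otimes_{U(\b)}\Bbb C_\lambda$ in the remark immediately following. You are right to flag that $\n_+$ being Lie-generated by the $e_i$ is the one nontrivial input---the paper uses exactly this fact implicitly when it asserts that $K_\lambda$ equals the kernel of the homomorphism $\lambda_+$ determined by $\lambda_+(h)=\lambda(h)$ and $\lambda_+(e_i)=0$.
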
  

\begin{proof} 
By the PBW theorem, the multiplication map 
$$
\xi: U(\n_-)\otimes U(\h\oplus \n_+)\to U(\g)
$$ 
is a linear isomorphism. It is easy to see that $\xi^{-1}(I_\lambda)=U(\n_-)\otimes K_\lambda$, where 
$$
K_\lambda:=\sum_i U(\h\oplus \n_+)(h_i-\lambda(h_i))+\sum_i U(\h\oplus \n_+)e_i
$$
is the kernel of the homomorphism $\lambda_+: U(\h\oplus \n_+)\to \Bbb C$ 
given by $\lambda_+(h)=\lambda(h)$, $h\in \h$, $\lambda_+(e_i)=0$. 
Thus, we have a natural isomorphism of left $U(\n_-)$-modules
$$
U(\n_-)=U(\n_-)\otimes U(\h\oplus \n_+)/K_\lambda\to M_\lambda,
$$
as claimed.
\end{proof} 

\begin{remark} The definition of $M_\lambda$ means that 
it is the {\bf induced module} $U(\g)\otimes_{U(\h\oplus \n_+)}\Bbb C_\lambda$, where 
$\Bbb C_\lambda$ is the one-dimensional representation of $\h\oplus \n_+$ on which it acts via $\lambda_+$. 
\end{remark} 

Recall that $Q_+$ denotes the set of elements $\sum_{i=1}^r k_i\alpha_i$ where $k_i\in \Bbb Z_{\ge 0}$. We obtain

\begin{corollary} $M_\lambda$ has a weight decomposition with 
$P(M_\lambda)=\lambda-Q_+$, $\dim M_\lambda[\lambda]=1$, and weight subspaces of $M_\lambda$ are finite dimensional. 
\end{corollary}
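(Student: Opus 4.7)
The plan is to transport the weight decomposition question on $M_\lambda$ to a question on $U(\n_-)$ via the isomorphism $\phi: U(\n_-) \to M_\lambda$, $x \mapsto x v_\lambda$, established in the preceding proposition. Since $\n_- = \bigoplus_{\alpha \in R_+} \g_{-\alpha}$ is $(-Q_+)$-graded as a Lie algebra (because $[\g_{-\alpha}, \g_{-\beta}] \subset \g_{-\alpha-\beta}$), its universal enveloping algebra $U(\n_-)$ inherits a $(-Q_+)$-grading: $U(\n_-) = \bigoplus_{\beta \in Q_+} U(\n_-)_{-\beta}$. I would first identify these graded pieces explicitly using the PBW theorem applied to an ordered basis of $\n_-$ consisting of root vectors $f_\gamma \in \g_{-\gamma}$ for $\gamma \in R_+$: a PBW basis of $U(\n_-)_{-\beta}$ is given by ordered monomials $f_{\gamma_1}^{n_1} \cdots f_{\gamma_k}^{n_k}$ with $\sum_j n_j \gamma_j = \beta$, each $\gamma_j \in R_+$.

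The second step is to show that $\phi$ carries the $-\beta$-graded piece of $U(\n_-)$ into the weight space $M_\lambda[\lambda - \beta]$. For this I would prove by induction on filtration degree in $U(\g)$ that for any $h \in \h$ and $x \in U(\n_-)_{-\beta}$ we have $hx = xh + [h,x]_{\rm ad}$ in $U(\g)$, where the adjoint action gives $[h, x]_{\rm ad} = -\beta(h)\, x$; this follows because $\ad h$ acts on each root generator $f_\gamma$ by $-\gamma(h)$ and extends as a derivation to $U(\n_-)$. Applying this to $v_\lambda$ and using $h v_\lambda = \lambda(h) v_\lambda$ yields $h(xv_\lambda) = (\lambda - \beta)(h)\, xv_\lambda$, so $\phi(U(\n_-)_{-\beta}) \subset M_\lambda[\lambda-\beta]$. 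Combined with $\bigoplus_\beta U(\n_-)_{-\beta} = U(\n_-)$ and injectivity of $\phi$, this gives the weight decomposition $M_\lambda = \bigoplus_{\beta \in Q_+} M_\lambda[\lambda - \beta]$, hence $P(M_\lambda) \subset \lambda - Q_+$.

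The remaining assertions are then immediate bookkeeping. The space $U(\n_-)_{-\beta}$ has a basis indexed by tuples $(n_\gamma)_{\gamma \in R_+}$ of nonnegative integers with $\sum_\gamma n_\gamma \gamma = \beta$; since $R_+$ is finite and each $n_\gamma$ is bounded by, say, $(\beta, \rho^\vee)$, there are only finitely many such tuples. Therefore $\dim M_\lambda[\lambda-\beta]$ equals the Kostant partition function $\mathcal{P}(\beta)$, which is finite, and in particular nonzero for every $\beta \in Q_+$, giving $P(M_\lambda) = \lambda - Q_+$. For $\beta = 0$ the only tuple is the zero tuple, so $U(\n_-)_0 = \mathbb{C}\cdot 1$ and thus $\dim M_\lambda[\lambda] = 1$, with $v_\lambda$ spanning it.

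The only mildly delicate point is the inductive identity $[h, f_{\gamma_1}\cdots f_{\gamma_k}]_{\rm ad} = -(\gamma_1 + \cdots + \gamma_k)(h)\, f_{\gamma_1}\cdots f_{\gamma_k}$ inside $U(\g)$, which must be derived from $[h, f_\gamma] = -\gamma(h) f_\gamma$ in $\g$ via the Leibniz rule for the adjoint action on the enveloping algebra; once this is in hand, everything else reduces to counting monomials, so no serious obstacle is expected.
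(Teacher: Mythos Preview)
Your proposal is correct and is exactly the natural elaboration of what the paper leaves implicit: the paper states this corollary without proof, as an immediate consequence of the preceding PBW isomorphism $\phi: U(\n_-)\to M_\lambda$, and your argument spells out precisely how the $(-Q_+)$-grading on $U(\n_-)$ translates into the weight decomposition of $M_\lambda$.
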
 

\begin{proposition} (i) (Universal property of Verma modules) 
If $V$ is a representation of $\g$ and $v\in V$ is a vector such that 
$hv=\lambda(h)v$ for $h\in h$ and $e_iv=0$ for $1\le i\le r$ then there is a unique homomorphism 
$\eta: M_\lambda\to V$ such that $\eta(v_\lambda)=v$. In particular, if $V$ is generated by such $v\ne 0$ (i.e., $V$ is a highest weight representation with highest weight vector $v$) then $V$ is a quotient 
of $M_\lambda$.  

(ii) Every highest weight representation  has a weight decomposition into finite dimensional weight subspaces. 
\end{proposition}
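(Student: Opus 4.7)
Here is my plan for proving this proposition.

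For part (i), the approach is to exploit the presentation $M_\lambda = U(\g)/I_\lambda$ directly. Given $V$ and $v$ as in the hypothesis, I would define the linear map $\psi : U(\g) \to V$ by $\psi(x) = xv$, which is automatically a homomorphism of left $U(\g)$-modules. The hypotheses $hv = \lambda(h)v$ and $e_i v = 0$ translate into the statement that $\psi$ kills each of the generators $h - \lambda(h)$ and $e_i$ of the left ideal $I_\lambda$, hence $I_\lambda \subseteq \ker\psi$. Consequently $\psi$ descends to a $U(\g)$-linear map $\eta : M_\lambda \to V$ sending $v_\lambda = 1 + I_\lambda$ to $v$. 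Uniqueness of $\eta$ follows because $M_\lambda$ is generated by $v_\lambda$ as a $U(\g)$-module. The ``in particular'' statement is immediate: if $v$ generates $V$, then the image of $\eta$ is a submodule containing $v$, hence equals $V$, so $\eta$ is surjective.

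For part (ii), by (i) any highest weight representation $V$ with highest weight $\lambda$ is a quotient $M_\lambda/N$ for some $U(\g)$-submodule $N \subseteq M_\lambda$. Since we already know that $M_\lambda$ has a weight decomposition $M_\lambda = \bigoplus_{\mu \in \lambda - Q_+} M_\lambda[\mu]$ with finite-dimensional weight subspaces, the task reduces to showing that $N$ inherits a weight decomposition, i.e.\ that $N = \bigoplus_\mu (N \cap M_\lambda[\mu])$. Once this is established, we obtain $V = \bigoplus_\mu M_\lambda[\mu]/(N \cap M_\lambda[\mu])$, which exhibits a weight decomposition of $V$ with weight subspaces $V[\mu]$ being quotients of the finite-dimensional $M_\lambda[\mu]$.

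The one step requiring genuine argument is the claim that an $\h$-invariant subspace $N$ of a module with weight decomposition is itself a direct sum of its weight components. I would prove this by a standard Vandermonde trick: given $n = \sum_{\mu \in S} m_\mu \in N$ with $S$ finite and $m_\mu \in M_\lambda[\mu]$, choose a single $h \in \h$ such that the scalars $\mu(h)$ for $\mu \in S$ are pairwise distinct (possible because the $\mu$ are distinct linear functionals on $\h$ and $S$ is finite). Then the $|S|$ vectors $h^k n = \sum_\mu \mu(h)^k m_\mu$ for $k = 0, 1, \dots, |S| - 1$ all lie in $N$, and the invertibility of the Vandermonde matrix $(\mu(h)^k)$ expresses each $m_\mu$ as a $\Bbb C$-linear combination of these vectors. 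Hence $m_\mu \in N$, proving the decomposition.

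The main obstacle here is essentially notational rather than conceptual: part (i) is a formal consequence of the construction of $M_\lambda$ as a quotient of $U(\g)$, and part (ii) reduces to the Vandermonde lemma above together with the previously established structure of $M_\lambda$. I do not anticipate any serious difficulty, and the proof should fit comfortably in half a page.
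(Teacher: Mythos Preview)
Your proof is correct and follows essentially the same approach as the paper. For (i) the argument is identical; for (ii) the paper simply asserts that a quotient of a module with weight decomposition again has a weight decomposition, whereas you supply the standard Vandermonde justification that the kernel is itself weight-graded---this is a correct elaboration of a detail the paper leaves implicit.
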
 

\begin{proof} (i) Uniqueness follows from the fact that $v_\lambda$ generates $M_\lambda$. 
To construct $\eta$, note that we have a natural homomorphism of $\g$-modules 
$\widetilde \eta: U(\g)\to V$ given by $\widetilde \eta(x)=xv$. Moreover, $\widetilde\eta|_{I_\lambda}=0$
thanks to the relations satisfied by $v$, so $\widetilde \eta$ descends to a map 
$\eta: U(\g)/I_\lambda=M_\lambda\to V$. Moreover, if $V$ is generated by $v$ then this map is surjective, as desired. 

(ii) This follows from (i) since a quotient of any representation with a weight decomposition 
must itself have a weight decomposition. 
\end{proof} 

\begin{corollary} Every highest weight representation $V$ has a unique highest weight generator, up to scaling. 
\end{corollary}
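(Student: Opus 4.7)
The plan is to leverage the universal property of Verma modules together with the PBW decomposition of $U(\g)$. Let $v$ be the given highest weight generator of $V$ with weight $\lambda$. By the universal property in the preceding proposition, $V$ is a quotient of $M_\lambda$, so $P(V)\subset \lambda-Q_+$ and the image of $v_\lambda$ spans $V[\lambda]$; in particular $\dim V[\lambda]=1$, with $V[\lambda]=\Bbb C v$. The goal is to show that any other highest weight generator $w$ must lie in $V[\lambda]$, and hence be a scalar multiple of $v$.

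First I would suppose $w\in V$ is another highest weight generator, i.e. a nonzero vector with $hw=\mu(h)w$ for some $\mu\in\h^*$ and $e_iw=0$ for all $i$, which generates $V$ as a $\g$-module. Since $w$ is itself a weight vector in $V$, we immediately get $\mu\in P(V)\subset \lambda-Q_+$, so $\mu=\lambda-\sum n_i\alpha_i$ with $n_i\in\Bbb Z_{\ge 0}$.

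Next I would use PBW to show $\mu=\lambda$. The decomposition $U(\g)=U(\n_-)\otimes U(\h)\otimes U(\n_+)$ combined with the conditions $e_iw=0$ and $hw=\mu(h)w$ shows that $U(\g)w=U(\n_-)w$. Since $w$ generates $V$, this forces $P(V)\subset \mu-Q_+$. In particular, $\lambda\in P(V)$ gives $\lambda=\mu-\sum m_i\alpha_i$ with $m_i\in\Bbb Z_{\ge 0}$. Adding the two relations yields $\sum(n_i+m_i)\alpha_i=0$, and by linear independence of the simple roots all $n_i=m_i=0$, so $\mu=\lambda$. Hence $w\in V[\lambda]=\Bbb C v$, proving uniqueness up to scaling.

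I do not expect a serious obstacle: the key input is already packaged in the preceding proposition (weight decomposition, $P(V)\subset \lambda-Q_+$, $\dim V[\lambda]=1$), and the only additional ingredient is the elementary PBW observation that $U(\g)w=U(\n_-)w$ whenever $w$ is annihilated by $\n_+$ and is a weight vector for $\h$. The minor subtlety is ensuring that one cannot cheaply conclude $\mu=\lambda$ just from $\mu\in P(V)$ — one genuinely needs the symmetric inclusion coming from the fact that $w$ generates $V$.
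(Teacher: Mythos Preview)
Your proof is correct and follows essentially the same approach as the paper: both establish $\mu\in\lambda-Q_+$ and $\lambda\in\mu-Q_+$ to force $\mu=\lambda$, then use $\dim V[\lambda]=1$. The only cosmetic difference is that the paper obtains the second inclusion by invoking the symmetry of the roles of $v$ and $w$ (``otherwise switch $\lambda,\mu$'') rather than re-running the PBW argument explicitly for $w$.
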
 

\begin{proof} Suppose $v,w$ are two highest weight generators of $V$ of weights $\lambda,\mu$. 
If $\lambda=\mu$ then they are proportional since $\dim V[\lambda]\le \dim M_\lambda[\lambda]=1$, as $V$ is a quotient of $M_\lambda$. On the other hand, if $\lambda\ne \mu$, then we can assume without loss of generality that $\lambda-\mu\notin Q_+$ (otherwise switch $\lambda,\mu$). Then 
$\mu\notin \lambda-Q_+$, hence $\mu\notin P(V)$, a contradiction. 
\end{proof} 

\begin{proposition} For every $\lambda\in \h^*$, the Verma module $M_\lambda$ has a unique 
irreducible quotient $L_\lambda$. Moreover, $L_\lambda$ is a quotient of every highest weight $\g$-module $V$ with highest weight $\lambda$. 
\end{proposition}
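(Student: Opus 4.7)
The plan is to identify a unique maximal proper submodule of $M_\lambda$ using the one-dimensionality of $M_\lambda[\lambda]$, and then deduce both claims from the universal property of Verma modules.

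First I would establish the key observation: a submodule $N\subset M_\lambda$ is proper if and only if $N[\lambda]=0$. Indeed, any submodule inherits the weight decomposition of $M_\lambda$, so $N=\bigoplus_{\mu}N[\mu]$ with $N[\mu]\subset M_\lambda[\mu]$. If $N[\lambda]\ne 0$ then $N[\lambda]=M_\lambda[\lambda]=\Bbb C v_\lambda$ (since $\dim M_\lambda[\lambda]=1$), and then $N\supset U(\g)v_\lambda=M_\lambda$. Conversely if $N[\lambda]=0$ then $v_\lambda\notin N$, so $N$ is proper.

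Next I would define $N_{\max}$ to be the sum of all proper submodules of $M_\lambda$. Each summand has zero $\lambda$-weight component, so $N_{\max}[\lambda]=0$, whence $N_{\max}$ is itself proper by the observation above. By construction $N_{\max}$ contains every proper submodule, so $L_\lambda:=M_\lambda/N_{\max}$ is irreducible (any nonzero submodule of $L_\lambda$ would lift to a submodule of $M_\lambda$ strictly larger than $N_{\max}$, hence equal to $M_\lambda$). Any other irreducible quotient $M_\lambda/K$ has $K$ a maximal proper submodule, so $K\subset N_{\max}$ by maximality of $N_{\max}$ among proper submodules, and since $K$ is maximal we get $K=N_{\max}$; this proves uniqueness of the irreducible quotient.

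For the second assertion, let $V$ be any highest weight module with highest weight $\lambda$ and highest weight generator $v\in V[\lambda]$. By the universal property there is a surjection $\eta:M_\lambda\twoheadrightarrow V$ with $\eta(v_\lambda)=v$; let $K=\ker\eta$. Since $\eta(v_\lambda)=v\ne 0$, we have $K[\lambda]=0$, so $K$ is a proper submodule and thus $K\subset N_{\max}$. Therefore $N_{\max}/K$ is a submodule of $V=M_\lambda/K$ whose quotient is $M_\lambda/N_{\max}=L_\lambda$, exhibiting $L_\lambda$ as a quotient of $V$. The main obstacle, such as it is, is simply verifying that the sum of proper submodules remains proper; everything else is formal once one exploits that $M_\lambda[\lambda]$ is one-dimensional and generates $M_\lambda$.
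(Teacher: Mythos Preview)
Your proof is correct and follows essentially the same approach as the paper: both arguments identify the unique maximal proper submodule (your $N_{\max}$, the paper's $J_\lambda$) as the sum of all proper submodules, using the fact that any proper submodule must miss the one-dimensional weight space $M_\lambda[\lambda]$, and then derive the second claim from the universal property. Your treatment of uniqueness via maximality of $K$ is slightly more explicit than the paper's, but the underlying idea is identical.
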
 

\begin{proof} Let $Y\subset M_\lambda$ be a proper submodule. Then $Y$ has a weight decomposition, and cannot contain a nonzero multiple of $v_\lambda$ (as otherwise $Y=M_\lambda$), so $P(Y)\subset 
(\lambda-Q_+)\setminus \lbrace \lambda \rbrace$. Now let $J_\lambda$  be the sum of all 
proper submodules $Y\subset M_\lambda$. Then $P(J_\lambda)\subset (\lambda-Q_+)\setminus \lbrace \lambda \rbrace$, so $J_\lambda$ is also a proper submodule of $M_\lambda$  (the maximal one). 
Thus, $L_\lambda:=M_\lambda/J_\lambda$ is an irreducible highest weight module with highest weight $\lambda$. Moreover, if $V$ is any nonzero quotient of $M_\lambda$ then the kernel 
$K$ of the map $M_\lambda\to V$ is a proper submodule, hence contained in $J_\lambda$. 
Thus the surjective map $M_\lambda\to L_\lambda$ descends to a surjective map $V\to L_\lambda$. 
The kernel of this map is a proper submodule of $V$, hence zero if $V$ is irreducible. 
Thus in the latter case $V\cong L_\lambda$. 
\end{proof} 

\begin{corollary} Irreducible highest weight $\g$-modules are classified by their highest weight $\lambda\in \h^*$, via the bijection $\lambda\mapsto L_\lambda$.
\end {corollary}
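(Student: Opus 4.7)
The plan is to establish the two assertions implicit in the statement: that every irreducible highest weight module is isomorphic to some $L_\lambda$ (so the assignment is surjective on isomorphism classes), and that $L_\lambda \cong L_\mu$ forces $\lambda = \mu$ (so the assignment is injective).

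For surjectivity, I would start from an irreducible highest weight $\g$-module $V$ with highest weight vector $v$ of weight $\lambda \in \h^*$. By the universal property of Verma modules (already established in the preceding proposition), there is a surjective $\g$-module homomorphism $\eta: M_\lambda \to V$ sending $v_\lambda$ to $v$. Since $V$ is irreducible, $\ker\eta$ is a proper submodule of $M_\lambda$, hence contained in the unique maximal proper submodule $J_\lambda$. On the other hand, $V$ being irreducible forces $\ker \eta$ to be maximal, so $\ker\eta = J_\lambda$ and $V \cong M_\lambda/J_\lambda = L_\lambda$.

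For injectivity, suppose $\phi: L_\lambda \xrightarrow{\sim} L_\mu$ is an isomorphism of $\g$-modules. Since $L_\lambda$ and $L_\mu$ are both highest weight modules, the previous corollary implies that each has a unique highest weight generator up to scaling, so the highest weights $\lambda$ and $\mu$ are determined by the module structures. More concretely, both $L_\lambda$ and $L_\mu$ admit weight decompositions, and $P(L_\lambda) \subset \lambda - Q_+$ while $P(L_\mu) \subset \mu - Q_+$ (as quotients of the corresponding Verma modules). Since $\phi$ preserves weight subspaces, we have $P(L_\lambda) = P(L_\mu)$. Because $\lambda \in P(L_\lambda) = P(L_\mu) \subset \mu - Q_+$ and symmetrically $\mu \in \lambda - Q_+$, we conclude $\lambda - \mu \in Q_+ \cap (-Q_+) = \{0\}$, hence $\lambda = \mu$.

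There is no real obstacle here: all the substantive work has already been done in constructing $L_\lambda$ as the unique irreducible quotient of $M_\lambda$ and in verifying that highest weight modules have weight decompositions with weights contained in $\lambda - Q_+$. The only minor subtlety to watch out for is to invoke $Q_+ \cap (-Q_+) = \{0\}$, which is immediate from the fact that the simple roots $\alpha_1,\dots,\alpha_r$ are linearly independent in $\h^*$ (Theorem \ref{simbasis}). Thus the corollary is essentially a packaging statement consolidating the existence and uniqueness results proved immediately above it.
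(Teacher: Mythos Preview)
Your argument is correct and matches the paper's approach: surjectivity is exactly the last sentence of the preceding proposition's proof (any irreducible quotient of $M_\lambda$ is $L_\lambda$), and injectivity is immediate from the uniqueness of the highest weight generator established just before. Your explicit weight-set argument $\lambda - \mu \in Q_+ \cap (-Q_+) = \{0\}$ is the same one used in that earlier corollary's proof, so you are effectively reproving it rather than citing it, but there is no harm in that.
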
  

\subsection{Finite dimensional modules} 

Since every finite dimensional irreducible $\g$-module is highest weight, it is of the form $L_\lambda$ for $\lambda$ belonging to some subset $P_F\subset P$, the set of weights $\lambda$ such that $L_\lambda$ is finite dimensional. So to obtain a final classification 
of finite dimensional irreducible representations of $\g$, we should determine the subset $P_F$. 

Let $P_+\subset P$ be the intersection of $P$ with the closure of the dominant Weyl chamber $C_+$; 
i.e., $P_+$ is the set of nonnegative integer linear combinations of the fundamental weights
$\omega_i$. In other words, $P_+$ is the set of $\lambda\in P$ 
such that $(\lambda,\alpha_i^\vee)\in \Bbb Z_{\ge 0}$ for $1\le i\le r$. Weights belonging to $P_+$ are called {\bf dominant integral}. 

\begin{proposition} We have $P_F\subset P_+$. 
\end{proposition}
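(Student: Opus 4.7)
The plan is to show that if $L_\lambda$ is finite dimensional then each inner product $(\lambda, \alpha_i^\vee)$ is a nonnegative integer, which is exactly the condition $\lambda \in P_+$.

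First I would fix a nonzero highest weight vector $v_\lambda \in L_\lambda$ and, for each simple root $\alpha_i$, restrict attention to the root subalgebra $(\mathfrak{sl}_2)_i = \langle e_i, f_i, h_i \rangle$. By the definition of a highest weight vector, $e_i v_\lambda = 0$ and $h_i v_\lambda = (\lambda, \alpha_i^\vee)\, v_\lambda$. Consider the $(\mathfrak{sl}_2)_i$-submodule $W_i \subset L_\lambda$ generated by $v_\lambda$. Since $L_\lambda$ is assumed finite dimensional, so is $W_i$.

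Next I would apply the representation theory of $\mathfrak{sl}_2$ developed in Section \ref{sl2rep}. By Theorem \ref{sl2repth}(ii), in any finite dimensional $\mathfrak{sl}_2$-module the operator $h$ acts diagonalizably on $\mathrm{Ker}(e)$ with nonnegative integer eigenvalues. Since $v_\lambda \in \mathrm{Ker}(e_i) \cap W_i$, the eigenvalue $(\lambda, \alpha_i^\vee)$ of $h_i$ on $v_\lambda$ must be a nonnegative integer. As this holds for every $i = 1, \dots, r$, we conclude $\lambda \in P_+$.

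There is no serious obstacle here; the argument is a direct application of the $\mathfrak{sl}_2$ classification to each simple root subalgebra acting on the finite dimensional module $L_\lambda$. The only point worth verifying carefully is that $v_\lambda$ indeed lies in $\mathrm{Ker}(e_i)$ (immediate from the definition of highest weight vector) and that $W_i$ is finite dimensional (immediate as a subspace of the finite dimensional $L_\lambda$). The converse inclusion $P_+ \subset P_F$ — namely, that every dominant integral weight actually yields a finite dimensional irreducible module — is the harder direction and presumably comes next in the text, typically proved either by an explicit construction via Weyl's theorem or by showing that for $\lambda \in P_+$ the Verma module $M_\lambda$ has a sufficiently large submodule so that $L_\lambda$ becomes finite dimensional with weights lying in a $W$-invariant finite set.
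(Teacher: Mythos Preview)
Your proof is correct and follows essentially the same approach as the paper: restrict to each root $(\mathfrak{sl}_2)_i$, observe that $v_\lambda$ is a highest weight vector for it with $h_i$-eigenvalue $(\lambda,\alpha_i^\vee)$, and invoke the finite-dimensional $\mathfrak{sl}_2$ theory to conclude this eigenvalue is a nonnegative integer. The paper's version is terser but the content is identical.
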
 

\begin{proof} The vector $v_\lambda$ is highest weight for $(\mathfrak{sl}_2)_i$ with highest weight $\lambda(h_i)=(\lambda,\alpha_i^\vee)$. This must be a nonnegative integer for the corresponding $\mathfrak{sl}_2$-module to be finite dimensional. 
\end{proof} 

\begin{lemma} If $\lambda\in P_+$ then in $L_\lambda$, we have $f_i^{\lambda(h_i)+1}v_\lambda=0$. 
\end{lemma}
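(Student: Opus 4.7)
The plan is to exhibit $f_i^{n_i+1} v_\lambda$ (where $n_i := \lambda(h_i) = (\lambda,\alpha_i^\vee) \in \Bbb Z_{\ge 0}$) as a \emph{singular} vector in the Verma module $M_\lambda$, i.e.\ a nonzero highest weight vector of weight $\mu := \lambda - (n_i+1)\alpha_i$ strictly less than $\lambda$. Once this is done, the submodule $U(\g) \cdot f_i^{n_i+1} v_\lambda \subset M_\lambda$ has all its weights in $\mu - Q_+$, and this set does not contain $\lambda$ (since $(n_i+1)\alpha_i \in Q_+ \setminus \{0\}$). Hence this submodule is a proper submodule of $M_\lambda$, so it is contained in the maximal proper submodule $J_\lambda$, and therefore $f_i^{n_i+1} v_\lambda$ maps to zero in $L_\lambda = M_\lambda / J_\lambda$.

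The verification that $f_i^{n_i+1} v_\lambda$ is singular amounts to checking $e_j f_i^{n_i+1} v_\lambda = 0$ for all $j$. For $j \ne i$ this is immediate: the Serre relation $[e_j, f_i] = \delta_{ij} h_i$ gives $[e_j, f_i] = 0$, so $e_j$ commutes with $f_i$ and we can slide it past to hit $v_\lambda$, where it vanishes since $v_\lambda$ is a highest weight vector. For $j = i$, we invoke the $\mathfrak{sl}_2$ calculation carried out in the proof of Theorem \ref{sl2repth}: for any $\mathfrak{sl}_2$-module with a vector $v$ satisfying $e v = 0$ and $h v = n v$, one has the identity
$$
e f^m v = m(n - m + 1) f^{m-1} v.
$$
Applying this inside the $(\mathfrak{sl}_2)_i$-action on $M_\lambda$ with $n = n_i$ and $m = n_i + 1$ yields $e_i f_i^{n_i+1} v_\lambda = (n_i+1)(n_i - n_i) f_i^{n_i} v_\lambda = 0$, as desired.

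Nothing here is a serious obstacle; the only subtle point is the legitimacy of the $\mathfrak{sl}_2$-computation, which requires that the submodule $\Bbb C[f_i] v_\lambda$ of $M_\lambda$ is a bona fide (possibly infinite-dimensional) representation of $(\mathfrak{sl}_2)_i$ on which $v_\lambda$ is a highest weight vector of weight $n_i \in \Bbb Z_{\ge 0}$. This is automatic from the definition of $M_\lambda$, since $e_i v_\lambda = 0$ and $h_i v_\lambda = n_i v_\lambda$, and the identity $e f^m v = m(n-m+1) f^{m-1} v$ is a purely formal consequence of the relations $[e,f] = h$, $[h,f] = -2f$, $[h,e] = 2e$ applied inductively, valid in any $\mathfrak{sl}_2$-module regardless of dimension. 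Thus the proof reduces to these two short computations plus the weight-argument of the first paragraph.
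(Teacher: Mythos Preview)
Your proof is correct and follows essentially the same route as the paper: both verify that $w := f_i^{n_i+1} v_\lambda$ is annihilated by every $e_j$ (using the $\mathfrak{sl}_2$ identity $e f^m v = m(n-m+1) f^{m-1} v$ for $j=i$ and $[e_j,f_i]=0$ for $j\ne i$), and then conclude that $w$ generates a proper submodule. The only cosmetic difference is that you carry out this argument in $M_\lambda$ and then pass to the quotient $L_\lambda = M_\lambda/J_\lambda$, whereas the paper works directly in $L_\lambda$ and invokes irreducibility; these are two phrasings of the same idea.
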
 

\begin{proof} By the representation theory of $\mathfrak{sl}_2$ (Subsection \ref{sl2rep}), 
we have $e_if_i^{\lambda(h_i)+1}v_\lambda=0$. 
Also $e_jf_i^{\lambda(h_i)+1}v_\lambda=0$ for $j\ne i$ since $[e_j,f_i]=0$. 
Thus, $w:=f_i^{\lambda(h_i)+1}v_\lambda$ is a highest weight vector in $L_\lambda$. So $w$ cannot be a generator (as the highest weight generator is unique up to scaling). Thus $w$ generates a proper submodule in $L_\lambda$, which must be zero since $L_\lambda$ is irreducible. 
\end{proof} 

\begin{lemma} \label{locfin} Let $V$ be a $\g$-module with weight decomposition into finite dimensional weight subspaces. If $V$ is a sum of finite dimensional $(\mathfrak{sl}_2)_i$-modules for each $i=1,...,r$, then 
for each $\lambda\in P$ and $w\in W$, 
$\dim V[\lambda]=\dim V[w\lambda]$. In particular, $P(V)$ is $W$-invariant.  
\end{lemma}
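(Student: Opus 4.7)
\medskip

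\noindent\textbf{Proof proposal.} Since $W$ is generated by the simple reflections $s_i$, it suffices to prove $\dim V[\lambda] = \dim V[s_i\lambda]$ for each $i \in \{1,\ldots,r\}$ and each $\lambda \in P$; the $W$-invariance of $P(V)$ is then immediate. Recall that $s_i\lambda = \lambda - \lambda(h_i)\alpha_i$, so $s_i\lambda(h_i) = -\lambda(h_i)$ and $s_i\lambda$ agrees with $\lambda$ on the hyperplane $\ker\alpha_i \subset \h$.

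The plan is to split the action of $\h$ into the part coming from the $\mathfrak{sl}_2$-triple $(e_i,f_i,h_i)$ and a commuting complement. Write $\h = \Bbb C h_i \oplus \h'_i$, where $\h'_i := \ker \alpha_i$. For $h' \in \h'_i$ we have $[h',e_i] = \alpha_i(h')e_i = 0$ and similarly $[h',f_i] = 0$, so $\h'_i$ commutes with the entire Lie subalgebra $(\mathfrak{sl}_2)_i \subset \g$. Therefore, for every linear functional $\mu \in (\h'_i)^*$, the joint $\mu$-eigenspace
\[
V'_\mu := \{\, v \in V : h'v = \mu(h')v \text{ for all } h' \in \h'_i \,\}
\]
is stable under $(\mathfrak{sl}_2)_i$. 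By hypothesis $V$ is a sum of finite-dimensional $(\mathfrak{sl}_2)_i$-modules, and each such finite-dimensional submodule is contained in a finite sum of $V'_\mu$'s (because $\h'_i$ acts semisimply on $V$, its action having the $V[\nu]$'s as eigenspaces). Consequently each $V'_\mu$ is itself a sum of finite-dimensional $(\mathfrak{sl}_2)_i$-modules.

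Now apply the representation theory of $\mathfrak{sl}_2$ (Theorem \ref{sl2repth} and its consequences) to each $V'_\mu$: for any integer $n$, the $h_i$-eigenspace $(V'_\mu)_n$ has the same dimension as $(V'_\mu)_{-n}$, since this equality holds in every finite-dimensional $\mathfrak{sl}_2$-module. Intersecting the $\h'_i$-weight decomposition with the $h_i$-eigenspace decomposition refines the $\h$-weight decomposition of $V$: explicitly,
\[
V[\nu] = \bigl(V'_{\nu|_{\h'_i}}\bigr)_{\nu(h_i)} \qquad \text{for every } \nu \in \h^*.
\]
Setting $\mu = \lambda|_{\h'_i}$ and $n = \lambda(h_i)$, and using that $s_i\lambda|_{\h'_i} = \lambda|_{\h'_i}$ while $s_i\lambda(h_i) = -n$, we conclude
\[
\dim V[\lambda] = \dim (V'_\mu)_n = \dim (V'_\mu)_{-n} = \dim V[s_i\lambda],
\]
which is exactly what we wanted.

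The step that requires the most care is the compatibility between the two decompositions of $V$ (the $\h$-weight decomposition and the decomposition into $(\mathfrak{sl}_2)_i$-isotypic components), i.e.\ checking that each $V'_\mu$ really is a sum of finite-dimensional $(\mathfrak{sl}_2)_i$-modules so that the $\mathfrak{sl}_2$-symmetry between the $\pm n$ weight spaces of $h_i$ applies within a single $\h'_i$-isotypic component. The hypotheses on $V$ (weight decomposition with finite-dimensional weight subspaces, plus local finiteness for each $(\mathfrak{sl}_2)_i$) are exactly designed to guarantee this, but one should state the argument carefully as above rather than just invoking the map $f_i^n \colon V[\lambda] \to V[s_i\lambda]$, since that map need not be injective or surjective on individual weight spaces without the intermediate $\h'_i$-decomposition.
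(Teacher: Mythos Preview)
Your argument is correct, but the paper takes a more direct route that you explicitly dismiss in your closing paragraph---and your dismissal is mistaken. The paper simply observes that for $m=(\lambda,\alpha_i^\vee)\ge 0$ the operator $f_i^m:V[\lambda]\to V[s_i\lambda]$ is injective: given nonzero $v\in V[\lambda]$, the $(\mathfrak{sl}_2)_i$-submodule it generates is finite dimensional by hypothesis, and in any finite dimensional $\mathfrak{sl}_2$-module $f^m$ is injective on the weight-$m$ space. Together with the symmetric statement for $e_i^m$ (and $s_i^2=1$), this gives the equality of dimensions. So the map $f_i^m$ \emph{is} injective on the full weight space $V[\lambda]$, with no need for the $\h_i'$-decomposition; indeed, since both weight spaces are finite dimensional and $e_i^m$ is injective in the other direction, $f_i^m$ is even an isomorphism.

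Your route via the decomposition $V=\bigoplus_\mu V'_\mu$ is a legitimate alternative, and the step you flag as delicate---that each $V'_\mu$ is a sum of finite dimensional $(\mathfrak{sl}_2)_i$-modules---is fine (the $(\mathfrak{sl}_2)_i$-module generated by any $v\in V'_\mu$ is finite dimensional and stays in $V'_\mu$ because $\h_i'$ commutes with $(\mathfrak{sl}_2)_i$). But note that to conclude $\dim(V'_\mu)_n=\dim(V'_\mu)_{-n}$ from ``this holds in every finite dimensional $\mathfrak{sl}_2$-module'' you are implicitly using that a sum of simple modules is a \emph{direct} sum (semisimplicity), or else you are back to the injectivity of $f_i^n$ on $(V'_\mu)_n$---which is exactly the paper's argument applied inside $V'_\mu$. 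Either way the detour through $\h_i'$ buys nothing over the direct approach.
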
 

\begin{proof} Since the Weyl group $W$ is generated by 
the simple reflections $s_i$, it suffices to prove the statement for $w=s_i$, and in fact to prove 
that $\dim V[\lambda]\le \dim V[s_i\lambda]$ (as $s_i^2=1$). 

If $(\lambda,\alpha_i^\vee)=m\ge 0$ 
then consider the operator $f_i^m: V[\lambda]\to V[s_i\lambda]$. We claim that this operator is injective, which implies the desired inequality. Indeed, let $v\in V[\lambda]$ be a nonzero vector
and $E$ be the representation of $(\mathfrak{sl}_2)_i$ generated by $v$. 
Then $E$ is finite dimensional, and $v\in E[m]$, so by the representation theory of $\mathfrak{sl}_2$ (Subsection \ref{sl2rep}), 
$f_i^mv\ne 0$, as claimed. 

Similarly, if $(\lambda,\alpha_i^\vee)=-m\le 0$ then the operator $e_i^m: V[\lambda]\to V[s_i\lambda]$ is injective. This proves the lemma. 
 \end{proof} 

Now we are ready to state the main classification theorem. 

\begin{theorem} For any $\lambda\in P_+$, $L_\lambda$ is finite dimensional; i.e., $P_F=P_+$. Thus finite dimensional irreducible representations of $\g$ are classified, up to an isomorphism, by their highest weight $\lambda\in P_+$, via the bijection $\lambda\mapsto L_\lambda$. 
Moreover, for any $\mu\in P$ and $w\in W$, $\dim L_\lambda[\mu]=\dim L_\lambda[w\mu]$. 
\end{theorem}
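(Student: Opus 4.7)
The plan is to combine the already-established vanishing $f_i^{n_i+1}v_\lambda=0$ in $L_\lambda$ (where $n_i:=\lambda(h_i)\ge 0$) with a local finiteness argument for the action of each $(\mathfrak{sl}_2)_i$, then apply Lemma \ref{locfin} to get $W$-invariance of weights and multiplicities, and finally bound $P(L_\lambda)$ via a short convexity computation.

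First I would show that $L_\lambda$ is a sum of finite-dimensional $(\mathfrak{sl}_2)_i$-submodules for every $i$. Define $N_i\subset L_\lambda$ to be the subspace of vectors $v$ such that $U((\mathfrak{sl}_2)_i)v$ is finite dimensional. It is clearly $(\mathfrak{sl}_2)_i$-stable, and the vanishing lemma together with $e_iv_\lambda=0$ shows $v_\lambda\in N_i$. The key claim is that $N_i$ is in fact $\g$-stable. Given $v\in N_i$ and $y\in\g$, the subspace $W:={\rm ad}(U((\mathfrak{sl}_2)_i))(y)\subset\g$ is finite dimensional (since $\g$ itself is), and an iterated application of the commutator identity $x(yv)=y(xv)+[x,y]v$ for $x\in(\mathfrak{sl}_2)_i$ gives $U((\mathfrak{sl}_2)_i)(yv)\subset W\cdot U((\mathfrak{sl}_2)_i)v$, which is finite dimensional. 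Irreducibility of $L_\lambda$ then forces $N_i=L_\lambda$.

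Second, Lemma \ref{locfin} applies to $V=L_\lambda$: it has a weight decomposition into finite-dimensional weight spaces (as a quotient of the Verma module $M_\lambda$), and by the previous step it is a sum of finite-dimensional $(\mathfrak{sl}_2)_i$-modules for each $i$. This yields the $W$-invariance of $P(L_\lambda)$ and the equality $\dim L_\lambda[\mu]=\dim L_\lambda[w\mu]$, which is the last assertion of the theorem. To finish, it remains to see that $P(L_\lambda)$ is finite. By $W$-invariance it suffices to bound $P(L_\lambda)\cap\overline C_+$. For $\mu$ in this set, $\mu\in\lambda-Q_+$, so $\lambda-\mu=\sum_j m_j\alpha_j$ with $m_j\ge 0$; pairing with $\mu\in\overline C_+$ gives $(\lambda-\mu,\mu)=\sum_j m_j(\alpha_j,\mu)\ge 0$, hence $|\mu|^2\le(\lambda,\mu)\le|\lambda|\cdot|\mu|$ by Cauchy--Schwarz, so $|\mu|\le|\lambda|$. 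Since $P\cap\overline C_+$ meets any ball in a finite set, $P(L_\lambda)$ is finite, and combined with finite-dimensionality of weight spaces this gives $\dim L_\lambda<\infty$.

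The main obstacle is the local-finiteness step establishing $N_i=L_\lambda$. It rests on two inputs that must both be in hand: the vanishing $f_i^{n_i+1}v_\lambda=0$ from the preceding lemma, and the elementary observation that $\g$ is automatically a finite-dimensional $(\mathfrak{sl}_2)_i$-module under the adjoint action, so that ${\rm ad}(U((\mathfrak{sl}_2)_i))(y)$ is finite dimensional for every $y\in\g$. Once $\g$-stability of $N_i$ is in place, the remainder is a direct appeal to Lemma \ref{locfin} and the elementary norm estimate above.
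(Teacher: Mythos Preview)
Your proof is correct and follows essentially the same route as the paper: show local finiteness under each $(\mathfrak{sl}_2)_i$ starting from the vanishing $f_i^{n_i+1}v_\lambda=0$, invoke Lemma~\ref{locfin} for $W$-invariance, then bound the dominant weights of $L_\lambda$. The only substantive difference is the final finiteness estimate: the paper pairs $\mu\in P(L_\lambda)\cap P_+$ with $\rho^\vee$ to get $(\mu,\rho^\vee)\le(\lambda,\rho^\vee)$ and observes $(\omega_i,\rho^\vee)\ge\tfrac12$, whereas you use the inequality $(\lambda-\mu,\mu)\ge 0$ together with Cauchy--Schwarz to obtain $|\mu|\le|\lambda|$---a clean and equally valid alternative.
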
 

\begin{proof} 
Since $f_i^{\lambda(h_i)+1}v_\lambda=0$, we see that $v_\lambda$ generates the irreducible finite dimensional $(\mathfrak{sl}_2)_i$-module of highest weight $\lambda(h_i)$. Also, every nonzero 
element of $\g$ generates a finite dimensional $(\mathfrak{sl}_2)_i$-module. 
But every vector of $L_\lambda$ is a linear combination 
of vectors of the form $a_1...a_Nv_\lambda,a_i\in \g$. 
Hence every vector in $L_\lambda$ generates a finite dimensional $(\mathfrak{sl}_2)_i$-module. 
Thus by Lemma \ref{locfin}, $P(L_\lambda)$ is $W$-invariant. 

Now let $\mu\in P(L_\lambda)\cap P_+$. Then $\mu=\lambda-\beta$, $\beta\in Q_+$, so 
$$
(\mu,\rho^\vee)=(\lambda,\rho^\vee)-(\beta,\rho^\vee)\le (\lambda,\rho^\vee).
$$ 
So if $\mu=\sum_i m_i\omega_i$, $m_i\in \Bbb Z_{\ge 0}$ then 
$\sum_i m_i(\omega_i,\rho^\vee)\le (\lambda,\rho^\vee)$. 
 Since $(\omega_i,\rho^\vee)\ge \frac{1}{2}$, this implies that 
 $P(L_\lambda)\cap P_+$ is finite. But we know that $WP_+=P$, hence $W(P(L_\lambda)\cap P_+)=P(L_\lambda)$, as $P(L_\lambda)$ is $W$-invariant. It follows that $P(L_\lambda)$ is finite, hence 
 $L_\lambda$ is finite dimensional. 
 \end{proof} 
 
 \begin{example} For $\g=\mathfrak{sl}_2$ 
 the dominant integral weights are nonnegative integers $n\in \Bbb Z_{\ge 0}$, and it is easy to see that $L_n=V_n$. 
 \end{example} 
 
 \section{\bf The Weyl character formula} 
 
 \subsection{Characters} 
 
  Let $V$ be a finite dimensional representation of a semisimple Lie algebra $\g$. 
  Recall that the action of $\g$ on $V$ can be exponentiated to the action 
  of the corresponding simply connected complex Lie group $G$. 
  Recall also that the {\bf character} of a finite dimensional representation $V$ 
  of any group $G$ is the function 
  $$
  \chi_V(g)={\rm Tr}|_V(g). 
  $$
  
  Let us compute this character in our case. To this end, let $\h\subset \g$ be a Cartan subalgebra, 
 $h\in \h$, and let us compute $\chi_V(e^h)$. Note that this completely determines $\chi_V$ since 
 it determines $\chi_V(e^x)$ for any semisimple element $x\in \g$ (as such an element can be conjugated into $\h$), and semisimple elements form a dense open set in $\g$ (complement of zeros of some polynomial). So elements of the form 
 $e^x$ as above form a dense open set at least in some neighborhood of $1$ in $G$, and an analytic function on $G$ is determined by its values on any nonempty open set. 
 
We know that 
 $V$ has a weight decomposition: $V=\oplus_{\mu\in P}V[\mu]$. Thus 
we have 
$$
\chi_V(e^h)=\sum_{\mu\in P}\dim V[\mu] e^{\mu(h)}. 
$$  
Consider the group algebra $\Bbb Z[P]$. It sits 
naturally inside the algebra of analytic functions on $\h$ via 
$\lambda\mapsto e^\lambda$, where $e^\lambda(h):=e^{\lambda(h)}$, and we see that 
$\chi_V\in \Bbb Z [P]$, namely 
$$
\chi_V=\sum_{\mu\in P}\dim V[\mu] e^\mu. 
$$
We will call the element $\chi_V$ the {\bf character} of $V$. 

\subsection{Category $\mathcal O$} 
Note that the above definition of character is a purely formal algebraic definition, i.e., $\chi_V$ is simply the generating function of dimensions of weight subspaces of $V$. So it makes sense for any (possibly infinite dimensional) representation $V$ with a weight decomposition into finite dimensional weight subspaces, except we may obtain an infinite sum. More precisely, we make the following definition.

\begin{definition} The category $\mathcal O_{\rm int}$ is the category of representations $V$ of $\g$ with weight decomposition into finite dimensional weight spaces $V=\oplus_{\mu\in P}V[\mu]$, such that $P(V)$ is contained in the union of sets $\lambda^i-Q_+$ for a finite collection of weights $\lambda^1,...,\lambda^N\in P$ (depending on $V$).\footnote{Usually one also adds the condition that $V$ is a finitely generated $U(\g)$-module, but we don't need this condition here, so we won't impose it.} 
\end{definition}

Here the subscript ``int" indicates that we consider only integral weights (i.e., ones in $P$). However, for brevity we will drop this subscript in this section and just denote this category by $\mathcal O$. 

For example, any highest weight module with integral highest weight belongs to $\mathcal O$. 

Let $\mathcal R$ be the ring of series $a:=\sum_{\mu\in P} a_\mu e^\mu$ ($a_\mu\in \Bbb Z$) such that 
the set $P(a)$ of $\mu$ with $a_\mu\ne 0$ is contained in the union of sets $\lambda^i-Q_+$ for a finite collection of weights $\lambda^1,...,\lambda^N\in P$. Then for every $V\in \mathcal{O}$ we can define the character $\chi_V\in \mathcal R$. Moreover, it is easy to see that 
if 
$$
0\to X\to Y\to Z\to 0
$$ 
is a short exact sequence in $\mathcal O$ then $\chi_Y=\chi_X+\chi_Z$, and that for any $V,U\in \mathcal O$ we have $V\otimes U\in \mathcal O$ and 
$\chi_{V\otimes U}=\chi_V\chi_U$. 

\begin{example} Let $V=M_\lambda$ be the Verma module. Recall that as a vector space 
$M_\lambda=U(\n_-)v_\lambda$, and that $U(\n_-)=\otimes_{\alpha\in R_+} \Bbb C[e_{-\alpha}]$ (using the PBW theorem). 
Thus 
$$
\sum_\mu U(\n_-)[\mu] e^\mu=\frac{1}{\prod_{\alpha\in R_+} (1-e^{-\alpha})}
$$
and hence 
$$
\chi_{M_\lambda}=\frac{e^\lambda}{\prod_{\alpha\in R_+} (1-e^{-\alpha})}.
$$
It is convenient to rewrite this formula as follows: 
$$
\chi_{M_\lambda}=\frac{e^{\lambda+\rho}}{\Delta},\ 
\Delta:=\prod_{\alpha\in R_+} (e^{\alpha/2}-e^{-\alpha/2}).
$$
The (trigonometric) polynomial $\Delta$ is called the {\bf Weyl denominator}. 
\end{example} 

Note that we have a homomorphism $\varepsilon: W\to \Bbb Z/2$ given by 
the formula $w\mapsto \det(w|_\h)$, i.e. $w\mapsto (-1)^{\ell(w)}$; it is defined on simple reflections by 
$s_i\mapsto -1$. This homomorphism is called the {\bf sign character}. For example, for type $A_{n-1}$ this is the sign of a permutation in $S_n$. We will say that an element 
$f\in \Bbb C[P]$ is {\bf anti-invariant} under $W$ if $w(f)=(-1)^{\ell(w)}f$ for all $w\in W$. 
 
 \begin{proposition} 
 The Weyl denominator $\Delta$ is anti-invariant under $W$.  
 \end{proposition}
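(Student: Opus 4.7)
The plan is to reduce the statement to the case of a simple reflection and then invoke the fact that simple reflections generate $W$. Since the assignment $w \mapsto w(\Delta)/\Delta$ is a homomorphism from $W$ to $\{\pm 1\}$ (assuming the quotient is well-defined and constant in sign), and since the sign character $\varepsilon$ is determined by its values on the generators $s_i$, it will suffice to verify that $s_i(\Delta) = -\Delta$ for each simple reflection $s_i$.

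To carry this out, I would first recall the key combinatorial fact established earlier in the excerpt: the simple reflection $s_i$ sends $\alpha_i$ to $-\alpha_i$, and permutes $R_+ \setminus \{\alpha_i\}$ among itself. Writing
$$
\Delta = (e^{\alpha_i/2} - e^{-\alpha_i/2}) \prod_{\alpha \in R_+ \setminus \{\alpha_i\}} (e^{\alpha/2} - e^{-\alpha/2}),
$$
I would then apply $s_i$ termwise. The first factor is sent to $e^{-\alpha_i/2} - e^{\alpha_i/2} = -(e^{\alpha_i/2} - e^{-\alpha_i/2})$, contributing a sign $-1$. The remaining product is permuted by $s_i$ (each factor $e^{\alpha/2} - e^{-\alpha/2}$ for $\alpha \in R_+\setminus\{\alpha_i\}$ is sent to the factor corresponding to $s_i(\alpha) \in R_+ \setminus \{\alpha_i\}$), so this product is invariant. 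Combining the two observations yields $s_i(\Delta) = -\Delta$.

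To conclude, given any $w \in W$, fix a reduced decomposition $w = s_{i_1} \cdots s_{i_{\ell(w)}}$. Iterating the simple-reflection identity $\ell(w)$ times gives $w(\Delta) = (-1)^{\ell(w)} \Delta$, which is precisely the anti-invariance statement since $\varepsilon(w) = (-1)^{\ell(w)}$. No step here is genuinely hard: the only point worth handling carefully is the assertion that $s_i$ permutes $R_+ \setminus \{\alpha_i\}$, but this was already recorded explicitly in the discussion of length in the Weyl group, so it can be cited directly rather than re-proved.
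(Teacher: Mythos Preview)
Your proof is correct and follows exactly the same approach as the paper: both argue that $s_i$ sends $\alpha_i$ to $-\alpha_i$ while permuting $R_+\setminus\{\alpha_i\}$, hence $s_i\Delta=-\Delta$, and then extend to all of $W$ via the generators. The paper simply compresses this into a single sentence, whereas you have written out the factorization and the extension step explicitly.
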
 
 
 \begin{proof} Since $s_i$ permutes positive roots not equal to $\alpha_i$ and sends $\alpha_i$ to $-\alpha_i$, it follows that $s_i\Delta=-\Delta$. 
 \end{proof} 
  
\subsection{The Weyl character formula} 

\begin{theorem} (Weyl character formula) For any $\lambda\in P_+$ the character $\chi_\lambda:=\chi_{L_\lambda}$ of the irreducible finite dimensional representation $L_\lambda$ is given by 
$$
\chi_\lambda=\frac{\sum_{w\in W}(-1)^{\ell(w)}e^{w(\lambda+\rho)}}{\Delta}.
$$
\end{theorem}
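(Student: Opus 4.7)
The plan is to combine $W$-invariance of $\chi_\lambda$ with the eigenvalue of the quadratic Casimir on $L_\lambda$ to pin down $\Delta\chi_\lambda$ as the antisymmetrization of $e^{\lambda+\rho}$.

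First, since $L_\lambda$ is finite-dimensional, Lemma \ref{locfin} gives $\dim L_\lambda[\mu]=\dim L_\lambda[w\mu]$ for all $w\in W$, so $\chi_\lambda\in\Bbb Z[P]^W$. Multiplying by the $W$-anti-invariant denominator, $A:=\Delta\chi_\lambda$ is $W$-anti-invariant, i.e.\ $w(A)=(-1)^{\ell(w)}A$. Since $\chi_{M_\mu}=e^{\mu+\rho}/\Delta$ has leading term $e^\mu$ relative to the partial order $\mu\le\nu\Leftrightarrow\nu-\mu\in Q_+$, triangular inversion in the Grothendieck group of $\mathcal{O}$ (or, equivalently, in its support-controlled completion) yields
\[
[L_\lambda]=\sum_{\mu\le\lambda}c_{\lambda\mu}\,[M_\mu],\qquad c_{\lambda\lambda}=1,
\]
and taking characters gives $A=\sum_\mu c_{\lambda\mu}\,e^{\mu+\rho}$. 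Writing $w\cdot\mu:=w(\mu+\rho)-\rho$ for the dot action, the aim is to show $c_{\lambda,w\cdot\lambda}=(-1)^{\ell(w)}$ while all other $c_{\lambda\mu}$ vanish.

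The decisive extra constraint comes from the quadratic Casimir $C=\sum_i h_i h^i+2\sum_{\alpha>0}f_\alpha e_\alpha+2H_\rho$, obtained from the standard Casimir by the commutator relation $[e_\alpha,f_\alpha]=h_\alpha$ together with $\sum_{\alpha>0}h_\alpha=2H_\rho$. Applied to the highest weight vector $v_\mu\in M_\mu$, this shows $C$ acts on $M_\mu$ by the scalar $(\mu+\rho,\mu+\rho)-(\rho,\rho)$; by centrality, every composition factor $L_\nu$ of $M_\mu$ has the same eigenvalue, so $[M_\mu:L_\nu]\ne 0\Rightarrow|\nu+\rho|^2=|\mu+\rho|^2$. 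Inverting the (triangular) composition matrix, $c_{\lambda\mu}\ne 0$ then forces both $\mu\le\lambda$ and $|\mu+\rho|^2=|\lambda+\rho|^2$.

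Combining the two constraints, anti-invariance of $A$ gives $c_{\lambda\mu}=0$ whenever $\mu+\rho$ lies on a reflection hyperplane and $c_{\lambda,w\cdot\mu}=(-1)^{\ell(w)}c_{\lambda\mu}$ in general, so I may restrict to $\mu$ with $\mu+\rho$ strictly dominant. For such $\mu$, set $\beta:=\lambda-\mu\in Q_+$; the identity $|\mu+\rho|^2=|\lambda+\rho|^2$ rearranges to $(\mu+\rho,\beta)=-|\beta|^2/2\le 0$, while strict dominance of $\mu+\rho$ gives $(\mu+\rho,\alpha_i)>0$ for every simple root $\alpha_i$ and hence $(\mu+\rho,\beta)\ge 0$. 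Thus $\beta=0$, i.e., $\mu=\lambda$, and the only nonzero coefficients are $c_{\lambda,w\cdot\lambda}=(-1)^{\ell(w)}$ (the translates $w\cdot\lambda$ are distinct since $\lambda+\rho$ has trivial $W$-stabilizer, and $\lambda-w\cdot\lambda=(\lambda+\rho)-w(\lambda+\rho)\in Q_+$ by the classical fact that $\tau-w\tau\in Q_+$ for dominant $\tau$). Substituting yields the Weyl character formula. The main obstacle is justifying the triangular inversion: one needs either finite length of each $M_\mu$ in $\mathcal{O}$ with composition factors of common Casimir eigenvalue, or equivalent control inside a completion of the Grothendieck group. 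Both are standard but require some setup; once that is in place, the Euclidean inner-product calculation above is entirely self-contained.
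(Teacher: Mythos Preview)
Your proof is correct and follows the same overall strategy as the paper: write $\Delta\chi_\lambda$ as a $W$-anti-invariant combination of exponentials coming from Verma characters, use the Casimir eigenvalue to force $|\mu+\rho|^2=|\lambda+\rho|^2$, and then use a positivity argument to show that the only strictly dominant $\mu+\rho$ surviving is $\lambda+\rho$ itself.

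The one genuine methodological difference is in how the expansion $\chi_\lambda=\sum_\mu c_{\lambda\mu}\chi_{M_\mu}$ is obtained. You invoke triangular inversion in the Grothendieck group of $\mathcal O$, which (as you note) requires finite length of Verma modules or an equivalent completion argument. The paper sidesteps this entirely with an explicit d\'evissage: starting from $K(\emptyset)=L_\lambda$, one repeatedly picks a highest weight vector in the current module, maps in the corresponding Verma module, and records the kernel and cokernel as new modules $K(b0),K(b1)$ indexed by binary strings. This yields $\chi_\lambda=\sum_b(-1)^{\Sigma(b)}\chi_{M_{\nu(b)}}$ without ever invoking finite length, and the Casimir eigenvalue is propagated along the construction by induction on $|b|$. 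So the paper's argument is more self-contained (it needs nothing about $\mathcal O$ beyond what has already been proved), while yours is cleaner once the standard facts about $\mathcal O$ are in hand. Your final inequality $(\mu+\rho,\beta)=-|\beta|^2/2$ is a slight repackaging of the paper's chain $2(\lambda+\rho,\beta)-|\beta|^2>(\lambda+\rho-\beta,\beta)\ge 0$, but the content is identical.
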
 

The proof of this theorem is in the next subsection. 

\begin{corollary}\label{Wdf} (Weyl denominator formula) One has 
$$
\Delta=\sum_{w\in W}(-1)^{\ell(w)}e^{w\rho}.
$$
\end{corollary}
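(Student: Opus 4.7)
The plan is to deduce the Weyl denominator formula as an immediate corollary of the Weyl character formula by specializing to the trivial representation. First I would note that $\lambda = 0 \in P_+$, since $(0, \alpha_i^\vee) = 0 \in \Bbb Z_{\ge 0}$ for all $i$. Thus the Weyl character formula applies to $\lambda = 0$.

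Next, I would identify $L_0$ as the trivial one-dimensional representation of $\g$. Indeed, $L_0$ is generated by a highest weight vector $v_0$ with $h v_0 = 0$ for $h \in \h$ and $e_i v_0 = 0$, and moreover $f_i^{0(h_i)+1} v_0 = f_i v_0 = 0$ by the argument used in classifying finite dimensional representations (the submodule generated by $f_i v_0$ is proper, hence zero). So $L_0 = \Bbb C v_0$ with $\g$ acting by zero, and therefore
$$
\chi_0 = e^0 = 1.
$$

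Finally, I would substitute $\lambda = 0$ into the Weyl character formula to obtain
$$
1 = \chi_0 = \frac{\sum_{w\in W}(-1)^{\ell(w)}e^{w(0+\rho)}}{\Delta} = \frac{\sum_{w\in W}(-1)^{\ell(w)}e^{w\rho}}{\Delta},
$$
and multiplying both sides by $\Delta$ yields the desired identity
$$
\Delta = \sum_{w\in W}(-1)^{\ell(w)}e^{w\rho}.
$$
There is no real obstacle here, as all the work has been done in proving the Weyl character formula itself; the only point worth verifying carefully is that $L_0$ is indeed trivial, which follows from the universal property of Verma modules together with the irreducibility argument for $L_\lambda$ applied at $\lambda=0$.
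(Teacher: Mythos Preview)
Your proof is correct and follows exactly the same approach as the paper: specialize the Weyl character formula to $\lambda=0$, using that $L_0=\Bbb C$ is the trivial representation so $\chi_0=1$. The extra verification you give that $L_0$ is one-dimensional is fine but not strictly necessary, as this was established earlier in the development.
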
 

\begin{proof} This follows from the Weyl character formula by setting 
$\lambda=0$ (as $L_0=\Bbb C$ is the trivial representation).  
\end{proof} 

For example, for $\g=\mathfrak{sl}_n$ Corollary \ref{Wdf} reduces to the usual product formula for the Vandermonde determinant. 

\subsection{Proof of the Weyl character formula} 

Consider the product $\Delta \chi_\lambda\in \Bbb Z[P]$. We know that $\chi_\lambda$ is $W$-invariant, so this product is $W$-anti-invariant. Thus, 
$$
\Delta \chi_\lambda=\sum_{\mu\in P} c_\mu e^\mu,
$$
where $c_{w\mu}=(-1)^{\ell(w)}c_\mu$. Moreover, $c_\mu=0$ unless $\mu\in \lambda+\rho-Q_+$, and $c_{\lambda+\rho}=1$. Thus to prove the Weyl character formula, we need to show 
that $c_\mu=0$ if $\mu\in P_+\cap (\lambda+\rho-Q_+)$ and $\mu\ne \lambda+\rho$. 

To this end, we will construct the above decomposition $\Delta \chi_\lambda$ using representation theory, so that this vanishing property is apparent from the construction. 

First recall from Subsection \ref{vani} that we have the Casimir element $C$ of $U(\g)$ given by the formula 
$C=\sum_i a_ia^i$ for a basis $a_i\in \g$ with dual basis $a^i$ of $\g$ under the Killing form. This element is central, so acts by a scalar on every highest weight (in particular, finite dimensional irreducible) representation. 
We can write $C$ in the form 
$$
C=\sum_j x_j^2+\sum_{\alpha\in R_+}(e_{-\alpha}e_\alpha+e_\alpha e_{-\alpha}),
$$
for an orthonormal basis $x_j$ of $\h$ and $e_\alpha$ are normalized in such a way that  
$[e_\alpha,e_{-\alpha}]=h_\alpha$. Hence
$$
C=\sum_j x_j^2+2\sum_{\alpha\in R_+}e_{-\alpha}e_\alpha+\sum_{\alpha\in R_+}h_\alpha. 
$$
Thus we get 

\begin{lemma} If $V$ is a highest weight representation with highest weight $\lambda$ then 
$C|_V=(\lambda,\lambda+2\rho)=|\lambda+\rho|^2-|\rho|^2$. 
\end{lemma}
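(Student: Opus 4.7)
The plan is to use centrality of $C$ to reduce the computation to evaluating $Cv_\lambda$ on a highest-weight generator, and then to unpack the three-term decomposition of $C$ displayed just above the lemma.

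First I would show that $C$ acts as a scalar on $V$. Since $V$ is highest-weight, $V = U(\g)v_\lambda = U(\n_-)v_\lambda$, and the weight space $V[\lambda]$ is one-dimensional, spanned by $v_\lambda$. Centrality of $C$ implies that $Cv_\lambda$ is itself a weight vector of weight $\lambda$ annihilated by every $e_i$, and therefore lies in $V[\lambda]$; hence $Cv_\lambda = c\, v_\lambda$ for some scalar $c$. For any $u \in U(\g)$, centrality gives $C(uv_\lambda) = u C v_\lambda = c\, uv_\lambda$, so $C|_V = c$.

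Next I would compute $c$ term by term using the displayed decomposition. The middle summand $2\sum_{\alpha \in R_+}e_{-\alpha}e_\alpha$ annihilates $v_\lambda$ since each $e_\alpha \in \n_+$ does. The first summand yields $\sum_j \lambda(x_j)^2 v_\lambda = (\lambda,\lambda)v_\lambda$ since $(x_j)$ is orthonormal with respect to the invariant form used to build $C$, and the form on $\h^*$ is dual to the one on $\h$. The third summand yields $\bigl(\sum_{\alpha \in R_+}\lambda(h_\alpha)\bigr)v_\lambda$, and I would evaluate this by passing through the identification $\h \cong \h^*$ induced by the invariant form: the element $\sum_{\alpha \in R_+}h_\alpha$ corresponds to $\sum_{\alpha \in R_+}\alpha = 2\rho \in \h^*$, so the coefficient becomes $(\lambda, 2\rho) = 2(\lambda,\rho)$.

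Summing the three contributions gives $c = (\lambda,\lambda) + 2(\lambda,\rho) = (\lambda,\lambda+2\rho)$, and the final equality $(\lambda,\lambda+2\rho) = |\lambda+\rho|^2 - |\rho|^2$ is direct expansion. The only step with any real content is the last identification inside the third summand, where one must carefully track how the invariant form on $\h$ induces the pairing used to define $\rho$, together with the standard identity that $2\rho$ equals the sum of the positive roots; everything else is immediate from centrality and the defining properties of a highest-weight vector.
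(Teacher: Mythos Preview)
Your proposal is correct and is exactly the argument the paper has in mind: the paper simply writes ``Thus we get'' after displaying the three-term decomposition of $C$, leaving the evaluation on $v_\lambda$ implicit, and you have spelled out precisely that computation together with the scalar-action step via centrality. There is nothing to add.
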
 

Now we will define a sequence of modules $K(b)$ from category $\mathcal O$ parametrized by some binary strings $b$. This is done inductively. We set $K(\emptyset)=L_\lambda$. Now suppose $K(b)$ is already defined. If $K(b)=0$ then we set $K(b0)=K(b1)=0$. Otherwise, pick a nonzero vector $v_b\in K(b)$, of some weight $\nu(b)\in \lambda-Q_+$ such that the height of $\lambda-\nu(b)$ takes the minimal possible value. Then $v_b$ is a highest weight vector,  and we can consider the corresponding homomorphism 
$$
\xi_b: M_{\nu_b}\to K(b).
$$ 
Let $K(b1),K(b0)$ be the kernel and cokernel of $\xi_b$. We have 
$$
\chi_{K(b1)}-\chi_{M_{\nu(b)}}+\chi_{K(b)}-\chi_{K(b0)}=0.
$$
Thus we have 
$$
\chi_{K(b)}=\chi_{M_{\nu(b)}}-\chi_{K(b1)}+\chi_{K(b0)}.
$$

Now, it is clear that for every $\mu$, every sufficiently long sequence $b$ satisfies $K(b)[\mu]=0$.  
So iterating this formula starting with $b=\emptyset$, we will get 
\begin{equation}\label{seri}
\chi_\lambda=\sum_b (-1)^{\Sigma(b)}\chi_{M_{\nu(b)}}
\end{equation}
where $\Sigma(b)$ is the sum of digits of $b$ (which could a priori be an infinite sum). So 
$$
\Delta \chi_\lambda=\sum_b (-1)^{\Sigma(b)}e^{\nu(b)+\rho}.
$$
Also note that by induction in the length of $b$ we can conclude that the eigenvalue 
of $C$ on $M_{\nu(b)}$ is $|\lambda+\rho|^2-|\rho|^2$ regardless of $b$, which implies 
that 
$$
|\nu(b)+\rho|^2=|\lambda+\rho|^2
$$
for all $b$; in particular, this shows that the sum \eqref{seri} is finite. 

So it remains to show that if $\mu=\lambda+\rho-\beta \in P_+$ with $\beta\in Q_+$ and 
$\beta\ne 0$ then $|\mu|^2<|\lambda+\rho|^2$. 
Indeed,  
$$
|\lambda+\rho|^2-|\mu|^2=|\lambda+\rho|^2-|\lambda-\beta+\rho|^2=
$$
$$
2(\lambda+\rho,\beta)-|\beta|^2>(\lambda+\rho,\beta)-|\beta|^2=(\lambda+\rho-\beta,\beta)\ge 0.
$$
This completes the proof of the Weyl character formula. 

\begin{exercise} Let $Q$ be the root lattice of a simple Lie algebra $\g$, $Q_+$ its positive part. Define the {\bf Kostant partition function} to be the function $p: Q\to \Bbb Z_{\ge 0}$ 
which attaches to $\beta\in Q_+$ the number of ways to write $\beta$ as a sum of positive roots 
of $\g$ (where the order does not matter), and $p(\beta)=0$ if $\beta\notin Q_+$. 

(i) Show that 
$$
\sum_{\beta\in Q_+}p(\beta)e^{-\beta}=\frac{1}{\prod_{\alpha\in R_+}(1-e^{-\alpha})}.
$$

(ii) Prove the {\bf Kostant multiplicity formula}  
$$
\dim L_\lambda[\gamma]=\sum_{w\in W}(-1)^{\ell(w)}p(w(\lambda+\rho)-\rho-\gamma).
$$

(iii) Compute $p(k_1\alpha_1+k_2\alpha_2)$ for $\g=\mathfrak{sl}_3$ and $\g=\mathfrak{sp}_4$. 

(iv) Use (iii) to compute explicitly the weight multiplicities of the irreducible representations 
$L_\lambda$ for $\g=\mathfrak{sl}_3$ and $\g=\mathfrak{sp}_4$. (You should get a sum of 6, respectively 8 terms, not particularly appealing, but easily computable in each special case). 
\end{exercise} 

\subsection{The Weyl dimension formula} 

Recall that the Weyl character formula can be written as a trace formula: for $h\in \h$
$$
\chi_{\lambda}(e^h)=\Tr|_{L_\lambda}(e^h)=\frac{\sum_{w\in W}(-1)^{\ell(w)}e^{(w(\lambda+\rho),h)}}{\prod_{\alpha\in R_+}(e^{\frac{1}{2}(\alpha,h)}-e^{-\frac{1}{2}(\alpha,h)})}. 
$$
The dimension of $L_\lambda$ should be obtained from this formula
when $h=0$. However, we do not immediately get the answer since 
this formula gives the character as a ratio of two trigonometric polynomials which both vanish at $h=0$, giving an indeterminacy. We know the limit exists since the character is a trigonometric polynomial, but 
we need to compute it. This can be done as follows. 

Let us restrict attention to $h=2th_\rho$ where $t\in \Bbb R$ and $h_\rho\in \h$ corresponds to $\rho\in \h^*$ 
using the identification induced by the invariant form. We have 
$$
\chi_{\lambda}(e^{2th_\rho})=\frac{\sum_{w\in W}(-1)^{\ell(w)}e^{2t(w(\lambda+\rho),\rho)}}{\prod_{\alpha\in R_+}(e^{t(\alpha,\rho)}-e^{-t(\alpha,\rho)})}. 
$$
The key idea is that for this specialization the numerator can also be factored using the denominator formula, 
which will allow us to resolve the indeterminacy. Namely, we have 
\begin{equation}\label{princ}
\chi_{L_\lambda}(e^{2th_\rho})=\frac{\prod_{\alpha\in R_+}(e^{t(\alpha,\lambda+\rho)}-e^{-t(\alpha,\lambda+\rho)})}
{\prod_{\alpha\in R_+}(e^{t(\alpha,\rho)}-e^{-t(\alpha,\rho)})}. 
\end{equation} 
Now sending $t\to 0$, we obtain 

\begin{proposition} We have 
$$
\dim L_\lambda=\frac{\prod_{\alpha\in R_+}(\alpha,\lambda+\rho)}{\prod_{\alpha\in R_+}(\alpha,\rho)}.
$$
\end{proposition}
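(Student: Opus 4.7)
The plan is to obtain $\dim L_\lambda$ by taking the limit $t \to 0$ on both sides of the identity \eqref{princ}, which has already been derived. The left-hand side is continuous (indeed analytic) in $t$: since $L_\lambda$ is finite-dimensional, its character extends to an analytic function on $\h$ via $\chi_{L_\lambda}(e^h) = \sum_\mu \dim L_\lambda[\mu]\,e^{\mu(h)}$, and specialization at $h = 0$ gives $\chi_{L_\lambda}(1) = \dim L_\lambda$. Thus $\lim_{t \to 0}\chi_{L_\lambda}(e^{2th_\rho}) = \dim L_\lambda$.

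For the right-hand side of \eqref{princ}, I would first observe that the denominator does not vanish for small $t \ne 0$, which amounts to $(\alpha,\rho) > 0$ for every $\alpha \in R_+$; this holds because $\rho = \sum_i \omega_i$ lies in the interior of the dominant Weyl chamber (we have $(\rho,\alpha_i^\vee) = 1 > 0$ for each simple root, hence $(\rho,\alpha^\vee) > 0$ for each positive root since every positive root is a nonnegative integer combination of simple roots). Then I would apply the Taylor expansion $e^{ts} - e^{-ts} = 2ts\,(1 + O(t^2))$ to each of the $|R_+|$ factors in numerator and denominator. The resulting factors of $(2t)^{|R_+|}$ cancel between numerator and denominator, which resolves the apparent $0/0$ indeterminacy, and the residual $(1+O(t^2))$ corrections tend to $1$ as $t \to 0$, yielding
$$\lim_{t\to 0}\chi_{L_\lambda}(e^{2th_\rho}) = \frac{\prod_{\alpha\in R_+}(\alpha,\lambda+\rho)}{\prod_{\alpha\in R_+}(\alpha,\rho)}.$$
Equating the two limits gives the Weyl dimension formula.

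There is essentially no obstacle at this stage: all the nontrivial content is packed into the derivation of \eqref{princ}, which itself rests on applying the Weyl denominator formula to the numerator of $\chi_\lambda(e^{2th_\rho})$ after using the identity $(w(\lambda+\rho),\rho) = (\lambda+\rho,w^{-1}\rho)$ to swap the roles of $\lambda+\rho$ and $\rho$. The one point that must be checked carefully is the matching of orders of vanishing: since both the numerator and denominator on the right of \eqref{princ} are products over the same index set $R_+$, they vanish to the same order $|R_+|$ as $t \to 0$, making the limit finite and nonzero.
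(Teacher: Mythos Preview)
Your proposal is correct and follows exactly the same approach as the paper, which simply says ``Now sending $t\to 0$, we obtain'' the formula; you have merely supplied the routine details (continuity of the character, positivity of $(\alpha,\rho)$, and the Taylor expansion yielding the cancellation of $(2t)^{|R_+|}$) that the paper leaves implicit.
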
 

Note that this number is an integer, but this is not obvious without its interpretation as the dimension of a representation. 

Formula \eqref{princ} has a meaning even before taking the limit. Namely, the eigenvalues of the element $2h_\rho$ define a $\Bbb Z$-grading on the representation $L_\lambda$ called the {\bf principal grading}, and we obtain a product formula for the Poincar\'e polynomial of this grading.  

\newpage

\centerline{\Large\bf Lie groups and Lie algebras II}

\section{\bf Representations of $GL_n$, I} 

We begin with a more detailed study of finite dimensional representations of semisimple Lie algebras and the corresponding complex Lie groups. 

\subsection{Tensor products of fundamental representations}

The following result shows that if we understand fundamental representations of a semisimple Lie algebra $\g$ (i.e., irreducible representations with fundamental highest weights $\omega_i$), we can gain some insight into general finite dimensional representations. 

\begin{proposition}\label{funda} Let $\lambda=\sum_{i=1}^r m_i\omega_i$ be a dominant integral weight for $\g$. 
Consider the tensor product $T_\lambda:=\otimes_i L_{\omega_i}^{\otimes m_i}$, and let 
$v:=\otimes_i v_{\omega_i}^{\otimes m_i}$ be the tensor product of the highest weight vectors. 
Let $V$ be the subrepresentation of $T_\lambda$ generated by $v$. Then $V\cong L_\lambda$. 
\end{proposition}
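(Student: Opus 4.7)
The plan is to verify that $v$ is a highest weight vector of weight $\lambda$, deduce that $V$ is a highest weight module, and then use complete reducibility of finite-dimensional representations to pin down $V$ as $L_\lambda$ itself rather than some strictly larger quotient of $M_\lambda$.

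First I would check step (1): the vector $v$ has weight $\lambda$. Since weights add under tensor products (the action of $h \in \h$ on a tensor product is $h \otimes 1 \otimes \cdots + 1 \otimes h \otimes \cdots + \cdots$), the weight of $v$ is $\sum_i m_i \omega_i = \lambda$. Likewise, $e_j$ acts on a tensor product by the Leibniz rule, so $e_j v = 0$ because $e_j v_{\omega_i} = 0$ for every $i$. Thus $v$ is a highest weight vector, and by the universal property of Verma modules there is a surjection $M_\lambda \twoheadrightarrow V$ sending $v_\lambda \mapsto v$. In particular $V$ is a highest weight module, so $\dim V[\lambda] = 1$ and $V[\lambda] = \Bbb C v$.

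Next, since $T_\lambda$ is finite dimensional, so is $V$. By Weyl's complete reducibility theorem (Theorem \ref{comred}), $V \cong \bigoplus_k L_{\mu_k}$ for some $\mu_k \in P_+$. All weights of $V$ lie in $\lambda - Q_+$ (as $V$ is a quotient of $M_\lambda$), so each $\mu_k \le \lambda$ in the usual partial order, and only summands with $\mu_k = \lambda$ contribute to $V[\lambda]$. Because $\dim V[\lambda] = 1$, exactly one summand is $L_\lambda$; write $V = L_\lambda \oplus W$ where $W$ is a sum of $L_{\mu_k}$ with $\mu_k \ne \lambda$, and in particular $W[\lambda] = 0$.

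Finally, since $v$ has weight $\lambda$, it lies entirely in the first summand $L_\lambda$. Hence the $\g$-submodule generated by $v$ is contained in $L_\lambda$. But that submodule is $V$ by hypothesis, so $V \subseteq L_\lambda$, forcing $W = 0$ and $V \cong L_\lambda$. There is no real obstacle here: the only substantive ingredient beyond tracking weights under tensor products and the universal property of $M_\lambda$ is complete reducibility, which is available in the semisimple setting, and the argument otherwise amounts to the observation that the $\lambda$-weight space of $V$ is one-dimensional and generates the whole module.
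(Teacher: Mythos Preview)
Your proof is correct, and it takes a somewhat different route from the paper's. Both arguments begin by observing that $v$ is a highest weight vector of weight $\lambda$ and invoke complete reducibility to write $V = L_\lambda \oplus W$ with $W$ a sum of $L_\mu$'s for $\mu \in (\lambda - Q_+) \cap P_+$, $\mu \ne \lambda$. From there the paper uses the Casimir element: since $V$ is a highest weight module, $C$ acts on all of $V$ by the scalar $(\lambda, \lambda + 2\rho)$, whereas on each $L_\mu$ with $\mu \ne \lambda$ it would act by the strictly smaller scalar $(\mu, \mu + 2\rho)$ (the inequality established in the proof of the Weyl character formula), forcing $W = 0$. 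Your argument instead observes directly that $W[\lambda] = 0$, so $v$ lies in the $L_\lambda$ summand, and since $v$ generates $V$ this forces $V \subset L_\lambda$ and hence $W = 0$. Your route is more elementary in that it avoids the Casimir computation and the associated inequality; the paper's route has the advantage of reusing machinery already developed for the character formula and illustrating a technique (separating summands by central eigenvalues) that recurs elsewhere.
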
 

\begin{proof} We have $V=L_\lambda\oplus \bigoplus_{\mu\in (\lambda-Q_+)\cap P_+}N_{\lambda\mu}L_\mu$ where $N_{\lambda\mu}$ are nonnegative integers. Let $C\in U(\g)$ be the Casimir element for  $\g$. Recall that $C|_{L_\mu}=(\mu,\mu+2\rho)$. Thus 
$C|_V=(\lambda,\lambda+2\rho)$. But we have seen in the proof of the Weyl character formula that for any $\mu\in (\lambda-Q_+)\cap P_+$ such that $\mu\ne \lambda$, we have $(\mu,\mu+2\rho)<(\lambda,\lambda+2\rho)$. Therefore we see that $N_{\lambda\mu}=0$ for $\mu\ne \lambda$. 
\end{proof} 

\subsection{Representations of $SL_n(\Bbb C)$} 

Let us now discuss more explicitly the representation theory of $SL_n(\Bbb C)$. We will consider its finite dimensional complex analytic representations as a complex Lie group. We have shown that this is equivalent to considering finite dimensional representations of the Lie algebra $\mathfrak{sl}_n(\Bbb C)$. We have also seen that these are completely reducible and 
the irreducible representations are $L_\lambda$, where $\lambda=\sum_{i=1}^{n-1}m_i\omega_i$, $\omega_i$ are the fundamental weights, and $m_i\in \Bbb Z_{\ge 0}$.  

First let us compute $\omega_i$. Recall that the standard Cartan subalgebra $\h$ is the space $\Bbb C_0^n$ of vectors in $\Bbb C^n$ with zero sum of coordinates (diagonal matrices with trace zero). So elements of $\h^*$ can be viewed as vectors $(x_1,...,x_n)\in \Bbb C^n$ modulo simultaneous shift of all coordinates by the same number (i.e., $\h^*=\Bbb C^n/\Bbb C_{\rm diag}$).  

Recall that the simple coroots are $\alpha_i^\vee=\bold e_i-\bold e_{i+1}$. Thus $\omega_i$ are determined by the conditions 
$$
(\omega_i,\bold e_j-\bold e_{j+1})=\delta_{ij}.  
$$
This means that $\omega_i=(1,...,1,0,...,0)$ where 
there are $i$ copies of $1$. Thus a dominant integral weight $\lambda$ has the form 
$$
\lambda=(m_1+...+m_{n-1},m_2+...+m_{n-1},....,m_{n-1},0). 
$$
So dominant integral weights are parametrized by non-increasing sequences 
$\lambda_1\ge...\ge \lambda_{n-1}$ of nonnegative integers. This agrees with the representation theory of $SL_2(\Bbb C)$ that we worked out before: in this case the sequence has just one term.  

Let us now describe explicitly the fundamental representations $L_{\omega_i}$. 
Consider first the representation $V=\Bbb C^n$ with the usual action of matrices. 
It is called the {\bf vector representation} or the {\bf tautological representation} (as every matrix goes to itself). 
It is irreducible and has a standard basis $v_1,...,v_n$. To find its highest weight, we have to find a vector 
$v\ne 0$ such that $e_iv=0$. As $e_i=E_{i,i+1}$, we have $v=v_1$. It is easy to see that $hv=\omega_1(h)v$, so we see that $v$ has weight $\omega_1$, hence $L_{\omega_1}=V$. 

To construct $L_{\omega_m}$ for $m>1$, consider the exterior power $\wedge^m V$. 
It is easy to show that it is irreducible. A basis of $\wedge^m V$ 
consists of wedges $v_{i_1}\wedge...\wedge v_{i_m}$ where $i_1<...<i_m$. 
The highest weight vector is clearly $v_1\wedge...\wedge v_m$, and it has weight $\omega_m$. 
Thus $L_{\omega_m}=\wedge^m V$. 

Note that $\wedge^n V=\Bbb C$ (the trivial representation) since every matrix in $SL_n(\Bbb C)$ acts by its determinant, which is $1$, and $\wedge^mV=0$ for $m>n$. Also $V^*\cong \wedge^{n-1}V$ since the wedge pairing $V\otimes \wedge^{n-1}V\to \wedge^nV=\Bbb C$ is invariant and nondegenerate. Similarly, 
$\wedge^mV^*\cong \wedge^{n-m}V$. 

We now see from Proposition \ref{funda} that the irreducible representation $L_\lambda$ for $\lambda=\sum_i m_i\omega_i$ is generated inside $\otimes_{i=1}^{n-1} (\wedge^i V)^{\otimes m_i}$ by the tensor product of the highest weight vectors. 

\begin{example} $L_{N\omega_1}=S^NV$, generated by the vector $v_1^{\otimes N}\in V^{\otimes N}$. 
\end{example} 

\subsection{Representations of $GL_n(\Bbb C)$} 

Let us now explain how to extend these results to $GL_n(\Bbb C)$. This is easy to do since $GL_n(\Bbb C)$ is not very different from the direct product $\Bbb C^\times\times SL_n(\Bbb C)$. Namely, 
$GL_n(\Bbb C)=(\Bbb C^\times\times SL_n(\Bbb C))/\mu_n$ where $\mu_n$ is the group of roots of unity of order $n$ embedded as $z\mapsto (z^{-1},z\bold 1_n)$. Indeed, the corresponding covering homomorphism
$\Bbb C^\times\times SL_n(\Bbb C)\to GL_n(\Bbb C)$ is given by $(z,A)\mapsto zA$. 
So it suffices to classify irreducible holomorphic representations 
of the complex Lie group $\Bbb C^\times\times SL_n(\Bbb C)$; the irreducible holomorphic representations of $GL_n(\Bbb C)$ are a subset of them. 

For $n=1$ this is just the problem of describing the holomorphic representations of $\Bbb C^\times$. 
This is easy. The Lie algebra is spanned by a single element $h$ such that $e^{2\pi ih}=1$. 
This element must act in a representation by an operator $H$ such that $e^{2\pi iH}=1$. 
It follows that $H$ is diagonalizable with integer eigenvalues. Thus representations of $\Bbb C^\times$ are completely reducible, with irreducibles $\chi_N$ one-dimensional and labeled by integers $N\in \Bbb Z$, $\chi_N(z)=z^N$.  

The same argument leads to a similar answer for $\Bbb C^\times \times SL_n$: representations are completely reducible with irreducibles being $L_{\lambda,N}=\chi_N\otimes L_\lambda$.  Moreover, the ones factoring through $GL_n$ just have $N=nr+\sum_{i=1}^{n-1} \lambda_i$ for some integer $r$. 

Recall that $GL_n$ has reductive Lie algebra $\mathfrak{gl}_n$ with Cartan subalgebra $\h=\Bbb C^n$. 
The highest weight of $L_{\lambda,nm_n+\sum_{i=1}^{n-1} \lambda_i}$ is easily computed and equals 
$(m_1+...+m_{n-1}+m_n,...,m_{n-1}+m_n,m_n)$. Thus highest weights of finite dimensional representations 
are non-increasing sequences $(\lambda_1,...,\lambda_n)$ of integers which don't have to be positive. 
The fundamental representations are still $L_{\omega_m}=\wedge^m V$, and the only difference with $SL_n$ is that now the top exterior power $\wedge^n V$ is not trivial but rather is a 1-dimensional {\bf determinant character} with highest weight $\omega_n=(1,...,1)$. The highest weight of a finite dimensional representation then has the form $\lambda=\sum_{i=1}^n m_i\omega_i$, where $m_i\ge 0$ for $i\ne n$, while $m_n$ is an arbitrary integer. Consequently, $L_\lambda$ is found inside $\otimes_{i=1}^n (\wedge^i V)^{\otimes m_i}$ 
as the representation generated by the product of highest weight vectors. Note that it makes sense to take $m_n<0$, as for a one-dimensional representation and $k<0$ it is natural to define $\chi^{\otimes k}:=(\chi^*)^{\otimes -k}$. 

The representations with $m_n\ge 0$ are especially important; it is easy to see that these are exactly the ones that occur inside $V^{\otimes N}$ for some $N$ (check it!). These representations are called {\bf polynomial} since their matrix coefficients are polynomial functions of the matrix entries $x_{ij}$ of $X\in GL_n(\Bbb C)$, and consequently they extend by continuity to representations of the semigroup ${\rm Mat}_n(\Bbb C)\supset GL_n(\Bbb C)$. Note that any irreducible representation is a polynomial one tensored with a non-positive power of the determinant character $\wedge^nV$. 

\subsection{Schur-Weyl duality} 
Note that highest weights of polynomial representations are non-increasing sequences of nonnegative integers $(\lambda_1,...,\lambda_n)$, i.e. {\bf partitions} with $\le n$ parts. Namely, they are partitions of $|\lambda|=\sum_i \lambda_i$, which is just the eigenvalue of 
$\bold 1_n\in {\mathfrak{gl}}_n$ on $L_\lambda$ and can also be defined as the number $N$ such that 
$L_\lambda$ occurs in $V^{\otimes N}$. 

Traditionally partitions are encoded by {\bf Young diagrams.} Namely, the Young diagram of a partition $\lambda=(\lambda_1,...,\lambda_n)$ 
consists of $n$ rows of boxes, the $i$-th row consisting of $\lambda_i$ boxes, so that row $i$ is placed directly under row $i-1$ and all rows start on the same vertical line. For example, here are the Young diagrams of the partitions $(4,3,2)$ (left) and $(3,3,2,1)$ (right): 

\includegraphics[scale=0.7]{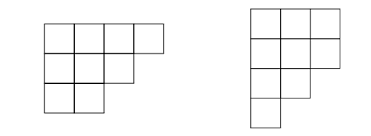}

Thus we have 
$$
V^{\otimes N}=\oplus_{\lambda: |\lambda|=N} L_\lambda\otimes \pi_\lambda, 
$$
where $\pi_\lambda:=\Hom_{GL_n(\Bbb C)}(L_\lambda,V^{\otimes N})$ are multiplicity spaces. Here the summation is over partitions of $N$, and $L_\lambda=0$ if $\lambda$ has more than $n$ parts. To understand the spaces $\pi_\lambda$, note that 
the symmetric group $S_N$ acts on $V^{\otimes N}$ and commutes with $GL_n(\Bbb C)$, so it gets to act on each $\pi_\lambda$. 

Let $A$ be the image of $U(\mathfrak{gl}_n)$ in ${\rm End}_{\Bbb C}(V^{\otimes N})$, and 
$B$ be the image there of $\Bbb CS_N$. The algebras $A,B$ commute.    

\begin{theorem} (Schur-Weyl duality) (i) The centralizer of $A$ is $B$ and vice versa. 

(ii) If $\lambda$ has at most $n$ parts then the representation $\pi_\lambda$ of $B$ (hence $S_N$) is irreducible, and such representations are pairwise non-isomorphic.  

(iii) If $\dim V\ge N$ then $\pi_\lambda$  exhaust all irreducible representations of $S_N$.
\end{theorem}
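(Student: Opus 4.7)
The plan is to deduce the theorem from the double centralizer theorem for semisimple modules, once we have identified the centralizer of $B$ inside $\End_{\Bbb C}(V^{\otimes N})$. The two inputs needed are semisimplicity of $V^{\otimes N}$ as an $A$-module (which holds by complete reducibility for the reductive Lie algebra $\mathfrak{gl}_n$, using Theorem \ref{comred} applied to $\mathfrak{sl}_n$ together with the fact that the scalar center of $\mathfrak{gl}_n$ acts semisimply on $V^{\otimes N}$), and semisimplicity of $V^{\otimes N}$ as a $B$-module (automatic since $\mathbb C S_N$ is semisimple as $S_N$ is finite).

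The main computation, and what I expect to be the only real obstacle, is to show that the centralizer of $B$ in $\End_{\Bbb C}(V^{\otimes N})$ equals $A$. First I would identify $\End_{\Bbb C}(V^{\otimes N})\cong \End(V)^{\otimes N}$, under which the $S_N$-action becomes permutation of tensor factors; thus the centralizer of $B$ is $(\End(V)^{\otimes N})^{S_N}=S^N(\End(V))$. Next I would invoke the polarization identity in characteristic zero: $S^NW$ is spanned by the ``diagonal'' elements $w^{\otimes N}$, $w\in W$; applied to $W=\End(V)$ this shows $S^N(\End(V))$ is spanned by the elements $X^{\otimes N}$, $X\in \End(V)$. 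Since $GL(V)\subset \End(V)$ is Zariski-dense and the assignment $X\mapsto X^{\otimes N}$ is polynomial, the span is unchanged if we restrict to $X=g\in GL(V)$. But $g^{\otimes N}\in A$ for every $g\in GL(V)$ since this is precisely the action of $g$ on $V^{\otimes N}$ (and we may integrate, or equivalently differentiate, to land in the image of $U(\mathfrak{gl}_n)$). Hence $S^N(\End(V))\subset A$, and the reverse inclusion is obvious since $A$ centralizes $B$. This proves the centralizer of $B$ is $A$; the other half of (i) then follows from the double centralizer theorem applied to the semisimple $B$-module $V^{\otimes N}$.

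Given (i) and the semisimplicity of $V^{\otimes N}$ over both $A$ and $B$, the standard double centralizer decomposition yields
$$
V^{\otimes N}\;\cong\;\bigoplus_{\lambda\in \Lambda} L_\lambda\otimes \pi_\lambda
$$
as an $A\otimes B$-module, where $\Lambda$ indexes the isomorphism classes of simple $A$-modules appearing, each $\pi_\lambda$ is a simple $B$-module, and $\lambda\mapsto \pi_\lambda$ is a bijection between simples of $A$ and simples of $B$ that occur in $V^{\otimes N}$. This gives (ii) once one identifies $\Lambda$: the irreducible polynomial $GL_n$-representations appearing in $V^{\otimes N}$ are exactly those $L_\lambda$ with $|\lambda|=N$ and at most $n$ parts, because the character of $V^{\otimes N}$ is $(x_1+\cdots+x_n)^N$ and only polynomial representations of total degree $N$ can appear, all of which do occur since $V^{\otimes N}$ contains highest weight vectors $v_1^{\otimes m_1}\otimes (v_1\wedge v_2)^{\otimes m_2}\otimes\cdots$ of each such weight (cf.\ Proposition \ref{funda}).

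For (iii), when $\dim V=n\ge N$ every partition of $N$ has at most $N\le n$ parts, so $\Lambda$ consists of all partitions of $N$. The number of these equals the number of conjugacy classes in $S_N$, hence the number of isomorphism classes of irreducible $S_N$-representations. Combined with (ii), which gives an injection from $\Lambda$ into the set of irreducible $S_N$-representations, this forces the injection to be a bijection, proving (iii).
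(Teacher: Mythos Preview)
Your proof is correct and shares the paper's architecture (identify the centralizer of $B$ as $S^N(\End V)$, show it equals $A$, apply the double centralizer lemma), but two sub-steps differ. For the inclusion $S^N(\End V)\subset A$, the paper proves algebraically that for any associative algebra $R$ the algebra $S^NR$ is generated by the elements $\Delta_N(x):=x\otimes 1\otimes\cdots\otimes 1+\cdots+1\otimes\cdots\otimes 1\otimes x$, via Newton's identities expressing $x^{\otimes N}$ as a polynomial in $\Delta_N(x),\ldots,\Delta_N(x^N)$; for $R=\End V$ these $\Delta_N(x)$ are exactly the images of $\mathfrak{gl}_n$, so no appeal to the group is needed. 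Your Zariski-density route is valid, though the parenthetical is compressed: the point is that $A$ is finite-dimensional hence closed, and $g^{\otimes N}=\exp(\Delta_N(\log g))\in A$. For (iii), the paper shows $B=\Bbb CS_N$ directly by noting that $\sigma\mapsto\sigma(v_1\otimes\cdots\otimes v_N)$ embeds $\Bbb CS_N$ into $V^{\otimes N}$ when $v_1,\ldots,v_N$ are linearly independent; your counting argument is an equally good alternative. One minor simplification: you need not invoke complete reducibility over $A$ as an input, since the paper's double centralizer lemma (Lemma~\ref{dc}) assumes only that $B$ is semisimple and deduces the same for $A$.
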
  

\begin{proof} We start with 

\begin{lemma}\label{span} If $U$ is a $\Bbb C$-vector space then 
$S^NU$ is spanned by elements $x\otimes...\otimes x$, $x\in U$. 
\end{lemma} 

\begin{proof} It suffices to consider the case when $U$ is finite dimensional. 
Then the span of these vectors is a nonzero subrepresentation in the irreducible $GL(U)$-representation $S^NU$, which implies the statement.
\end{proof} 

\begin{lemma}\label{gene}  For any associative algebra $R$ over $\Bbb C$, the algebra $S^NR:=(R^{\otimes N})^{S_N}$ 
is generated by elements 
$$
\Delta_N(x):=x\otimes 1\otimes...\otimes 1+1\otimes x\otimes...\otimes 1+...+1\otimes...\otimes 1\otimes x
$$
for $x\in R$. 
\end{lemma} 
 
\begin{proof} Let $P_N$ be the Newton polynomial expressing $z_1...z_N$ via $p_k:=\sum_{i=1}^N z_i^k$, $k=1,...,N$ (it exists and is unique by the fundamental theorem on symmetric functions). 
Then we have 
$$
x\otimes...\otimes x=P_N(\Delta_N(x),...,\Delta_N(x^N)).
$$ 
Hence the lemma follows from Lemma \ref{span}.  
\end{proof} 

Let us now show that $A$ is the centralizer $Z_B$ of $B$. Note that $Z_B=S^N(\End V)$. 
Thus the statement follows from Lemma \ref{gene}. 

We will now use the following easy but important lemma (which actually holds over any field). 

\begin{lemma}\label{dc} (Double centralizer lemma) Let $V$ be a finite dimensional vector space and $A,B\subset {\rm End}V$ be subalgebras such that $B$ is isomorphic to a direct sum of matrix algebras and $A$ is the centralizer of $B$. Then $A$ is also isomorphic to a direct sum of matrix algebras, 
and moreover 
$$
V=\oplus_{i=1}^n W_i\otimes U_i,
$$ 
where $W_i$ run through all irreducible $A$-modules and $U_i$ through irreducible $B$-modules. In particular, $B$ is the centralizer of $A$ and we have a natural bijection between irreducible $A$-modules and irreducible $B$-modules which matches $W_i$ and $U_i$.    
\end{lemma} 

\begin{proof} We have $V=\oplus_{i=1}^n W_i\otimes U_i$ 
where $U_i$ run through irreducible representations of $B$ 
and $W_i=\Hom_B(U_i,V)\ne 0$ are multiplicity spaces. Thus 
$A=\oplus_{i=1}^n \End W_i$ and $B=\oplus_{i=1}^n\End U_i$, which 
implies the statement. 
\end{proof} 

Since the algebra $B$ is a direct sum of matrix algebras (by complete reducibility of representations of finite groups), Lemma \ref{dc} yields (i).\footnote{This also gives another proof of the fact that $A$ is a direct sum of matrix algebras, i.e. complete reducibility of $V^{\otimes N}$.}

To prove (ii), it suffices to note that if $\lambda$ has $\le n$ parts then $L_\lambda$ occurs 
in $V^{\otimes N}$, so $\pi_\lambda\ne 0$. The rest follows from (i) and Lemma \ref{dc}. 

(iii) If $\dim V\ge N$ then pick $N$ linearly independent vectors $v_1,...,v_N\in V$. 
It is easy to see that the map $\Bbb CS_N\to V^{\otimes N}$ defined by 
$s\mapsto s(v_1\otimes...\otimes v_N)$ is injective. Thus $B=\Bbb CS_N$. 
This implies the statement. 
\end{proof} 

\begin{remark} The algebra $A$ is called the {\bf Schur algebra} and $B$ the {\bf centralizer algebra}. 
\end{remark}

Thus we see that representations of $S_N$ are labeled by partitions $\lambda$ of $N$, and those that occur in $V^{\otimes N}$ correspond to the partitions that have $\le \dim V$ parts. Moreover, we claim that this labeling of representations by partitions does not depend on $\dim V$. To show this, suppose $\lambda$ has $\le n$ parts and 
$V=\Bbb C^n$. We have the Schur-Weyl decomposition of $GL_{n+1}(\Bbb C)\times S_N$-modules
$$
(V\oplus \Bbb C)^{\otimes N}=\oplus_\mu L_\mu^{(n+1)}\otimes \pi_\mu^{(n+1)},
$$
Let us restrict this sum to $GL_n(\Bbb C)\times S_N$, and consider what happens to the summand $L_\lambda^{(n+1)}\otimes \pi_\lambda^{(n+1)}$. The highest weight vector $v$ in $L_\lambda^{(n+1)}$ tensored with any element $w$ of $\pi_\lambda^{(n+1)}$ sits in $V^{\otimes N}\subset (V\oplus \Bbb C)^{\otimes N}$, since the $n+1$-th component of its weight is  zero. Hence $v\otimes w$ generates a copy of $L_\lambda^{(n)}\otimes \pi_\lambda^{(n)}$ as a $GL_n(\Bbb C)\times S_N$-module. This implies that $\pi_\lambda^{(n+1)}\cong \pi_\lambda^{(n)}$.   

\begin{exercise} Let $R=\Bbb C[x_1,...,x_N,y_1,...,y_N]^{S_N}$ (the algebra of invariant polynomials). Show that $R$ is generated by the elements 
$Q_{rs}:=\sum_{i=1}^N x_i^ry_i^s$ where $r,s\ge 0$, $1\le r+s\le N$. 
\end{exercise} 

\begin{exercise}\label{contentex} Let $\lambda=(\lambda_1,...,\lambda_n)$ be a partition. 
Let us fill the Young diagram of $\lambda$ with numbers, placing $c(i,j):=j-i$ in the $j$-th box in the $i$-th row. Thus the number written in each box depends only on its position $(i,j)$; it is called the {\bf content} of this box. The {\bf content of $\lambda$} is the sum $c(\lambda)$ of contents of all its boxes: 
$$
c(\lambda)=\sum_{(i,j)\in \lambda}c(i,j).
$$ 

(i) Show that 
$$
c(\lambda)=\sum_{i=1}^n\frac{\lambda_i(\lambda_i-2i+1)}{2}.
$$

(ii) Let $\bold c=\sum_{1\le i<j\le N}(ij)\in \Bbb CS_N$ 
be the sum of all transpositions. 
Show that $\bold c$ is a central element of $\Bbb CS_N$ which acts 
on the irreducible representation $\pi_\lambda$ of $S_N$ 
by the scalar $c(\lambda)$. ({\bf Hint:} Consider the action 
of $\bold c$ on $V^{\otimes N}$ and use Schur-Weyl duality 
to relate it to the diagonal action of the quadratic Casimir of $\mathfrak{gl}_n$). 
\end{exercise} 

\section{\bf Representations of $GL_n$, II}

\subsection{Schur functors} 

\begin{definition} For a partition $\lambda$ of $N$ we define the {\bf Schur functor} $S^\lambda$ on the category of complex vector spaces (or complex representations of any group or Lie algebra) by $S^\lambda V=\Hom_{S_N}(\pi_\lambda,V^{\otimes N})$. 
\end{definition} 

Thus we have 
$$
V^{\otimes N}=\oplus_{\lambda}S^\lambda V\otimes \pi_\lambda,
$$
and if $\lambda$ has $\le n$ parts and $V=\Bbb C^n$ then $S^\lambda V=L_\lambda$ as a representation of $GL(V)=GL_n(\Bbb C)$. 

\begin{example} 1. We have $S^{(n)}V=S^nV$, $S^{(1^n)}V=\wedge^nV$.  

2. We have 
$$
V\otimes V= S^{(2)}V\otimes \Bbb C_+\oplus  S^{(1,1)}V\otimes \Bbb C_{-}=S^2V\oplus \wedge^2V
$$
where $S_2$ acts in the first summand trivially and in the second one by sign. 

Consider now the decomposition of $V\otimes V\otimes V$. We have 
$$
V\otimes V\otimes V= S^{(3)}V\otimes\Bbb C_+\oplus  S^{(2,1)}V\otimes\Bbb C^2\oplus S^{(1,1,1)}V\otimes\Bbb C_- 
$$
$$
=S^3V\oplus S^{(2,1)}V\otimes\Bbb C^2\oplus \wedge^3V.
$$
Thus 
$$
S^2V\otimes V=S^3V\oplus S^{(2,1)}V,\ \wedge^2 V\otimes V=\wedge^3V\oplus S^{(2,1)}V.
$$
We conclude that $S^{(2,1)}V$ can be described as the space of tensors symmetric in the first two components whose full symmetrization is zero, or tensors antisymmetric 
on the first two components whose full antisymmetrization is zero. 
\end{example} 

\begin{exercise} 1. Let $V=\Bbb C^n$, $n\ge 4$. Decompose $V^{\otimes 4}$ 
as a direct sum of irreducible representations of $GL_n(\Bbb C)\times S_4$. 
Characterize the occurring Schur functors as spaces of tensors with certain symmetry properties, similarly to the above description of $S^{(2,1)}V$. 
Compute the decompositions of $V\otimes S^3V$, $V\otimes \wedge^3V$, 
$S^2V\otimes S^2V, S^2V\otimes \wedge^2V$ 
and $\wedge^2V\otimes \wedge^2V$ into Schur functors. 

2. Decompose $V\otimes V^*$, $V\otimes V\otimes V^*$ into a direct sum of irreducible representations. Describe the algebra ${\rm End}_{GL_n(\Bbb C)}(V\otimes V^*\otimes V^*)$. 
\end{exercise} 

Let us compute the dimension of $S^\lambda V$ when $\dim V=N$ and $\lambda$ has $k$ parts. We have $\rho=(N-1,N-2,...,1,0)$ 
(for $SL_N$), so the Weyl dimension formula tells us that 
$$
\dim S^\lambda V=\prod_{1\le i<j\le N}\frac{\lambda_i-\lambda_j+j-i}{j-i}=
$$
$$
\prod_{1\le i<j\le k}\frac{\lambda_i-\lambda_j+j-i}{j-i}\prod_{1\le i\le k<j\le N}\frac{\lambda_i+j-i}{j-i}=
$$
$$
\prod_{1\le i<j\le k}\frac{\lambda_i-\lambda_j+j-i}{j-i}\prod_{i=1}^k \frac{(N+1-i)...(N+\lambda_i-i)}{(k+1-i)...(k+\lambda_i-i)}.
$$
We obtain 

\begin{proposition} $\dim S^\lambda V=P_\lambda(N)$
where $P_\lambda$ is a polynomial of degree $|\lambda|$ with rational coefficients 
and integer roots. Moreover, the roots of $P_\lambda$ are all the integers in the interval $[1-\lambda_1,k-1]$ (occurring with multiplicities). 
\end{proposition}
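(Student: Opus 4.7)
The plan is to read off everything from the closed-form product expression for $\dim S^\lambda V$ derived immediately before the statement:
\[
\dim S^\lambda V=\prod_{1\le i<j\le k}\frac{\lambda_i-\lambda_j+j-i}{j-i}\ \prod_{i=1}^k \frac{(N+1-i)(N+2-i)\cdots(N+\lambda_i-i)}{(k+1-i)(k+2-i)\cdots(k+\lambda_i-i)}.
\]
First I would observe that the entire first product and the denominators $(k+1-i)\cdots(k+\lambda_i-i)$ are independent of $N$; moreover, because $1\le i\le k$, the integers $k+1-i,\ldots,k+\lambda_i-i$ are all strictly positive, so these denominators are nonzero rational constants. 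Thus $P_\lambda(N)$ equals a nonzero rational constant times the single polynomial
\[
F_\lambda(N):=\prod_{i=1}^k (N+1-i)(N+2-i)\cdots(N+\lambda_i-i).
\]

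Next I would extract the degree. Each inner factor $(N+1-i)\cdots(N+\lambda_i-i)$ is a polynomial in $N$ of degree $\lambda_i$, so $F_\lambda(N)$ has degree $\sum_{i=1}^k\lambda_i=|\lambda|$, which gives the first assertion. For the roots, the $i$-th factor vanishes precisely at $N=i-1,i-2,\ldots,i-\lambda_i$, i.e.\ at the integers in the interval $[i-\lambda_i,\,i-1]$. Hence the multiset of roots of $P_\lambda$ is
\[
\mathcal{Z}=\bigsqcup_{i=1}^k \{\,i-\lambda_i,\,i-\lambda_i+1,\,\ldots,\,i-1\,\},
\]
a multiset of integers of total size $|\lambda|$, all lying in the interval $[1-\lambda_1,\,k-1]$ (the lower bound comes from $i=1$, and $i-\lambda_i\ge 1-\lambda_1$ in general since $\lambda$ is non-increasing; the upper bound $i-1\le k-1$ is immediate).

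It remains to check that every integer $m\in[1-\lambda_1,\,k-1]$ actually occurs in $\mathcal{Z}$. The clean way is to recognize $\mathcal{Z}$ as the multiset $\{\,i-j:(i,j)\in\lambda\,\}$ of negatives of the contents of the boxes of the Young diagram of $\lambda$ (cf.\ Exercise~\ref{contentex}): the boxes in row $i$ are $(i,1),\ldots,(i,\lambda_i)$, contributing exactly $i-1,i-2,\ldots,i-\lambda_i$. An integer $d\in[1-k,\lambda_1-1]$ arises as $j-i$ for some box $(i,j)\in\lambda$ because for $d\ge 0$ the box $(1,d+1)$ lies in $\lambda$ (as $d+1\le\lambda_1$), and for $d<0$ the box $(1-d,1)$ lies in $\lambda$ (as $1-d\le k$); negating, every $m\in[1-\lambda_1,k-1]$ lies in $\mathcal{Z}$. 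This establishes the remaining claim.

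The proof is essentially bookkeeping; the only subtle point is making sure no apparent root is cancelled by a factor of the denominator, but this is immediate once one notes that both denominator products are independent of $N$ and consist of strictly positive integers. The Young-diagram reinterpretation of $\mathcal{Z}$ is optional but makes the exhaustion of $[1-\lambda_1,k-1]$ transparent.
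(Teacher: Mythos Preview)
Your proof is correct and follows exactly the approach the paper intends: the proposition is stated immediately after the product formula with the words ``We obtain,'' and you have simply unpacked that formula carefully. One tiny terminological slip: the paper defines the content as $c(i,j)=i-j$ (row minus column), so your multiset $\{i-j:(i,j)\in\lambda\}$ is the multiset of contents themselves, not their negatives; your subsequent argument with $d=j-i$ is nonetheless correct as written.
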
 

Moreover, we see that $P_\lambda(N)$ is an integer-valued polynomial, i.e., it takes integer values at integer points (this is equivalent to being an integer linear combination of $\binom{N}{j}$). 

\begin{example} 
$$
P_{(n)}(N)=\dim S^nV=\binom{N+n-1}{n},\ P_{(1^n)}(N)=\dim \wedge^n V=\binom{N}{n}.
$$
Also
$$
P_{(a,b)}(N)=(a-b+1)\frac{N...(N+a-1)\cdot (N-1)...(N+b-2)}{(a+1)!b!}
=
$$
$$
\frac{a-b+1}{a+1}\binom{N+a-1}{a}\binom{N+b-2}{b}
$$
E.g., $P_{(2,1)}(N)=\dim S^{(2,1)}V=\frac{N(N+1)(N-1)}{3}$.
Also, 
$$
P_{(a,a)}(N)=\frac{1}{a+1}\binom{N+a-1}{a}\binom{N+a-2}{a}=
$$
$$
\frac{1}{N+a-1}\binom{N+a-1}{N-1}\binom{N+a-2}{N-2}=
{\rm Nar}(N+a-1,N-1),
$$
the {\bf Narayana numbers}. 
\end{example}

\begin{exercise} Let $g_q$ be the diagonal matrix with diagonal elements $1,q,q^2,...,q^{n-1}$. 
Compute the trace of $g_q$ in $S^\lambda V$ in the product form. Write the answer explicitly (as a polynomial in $q$) with positive coefficients in the case $|\lambda|\le 3$.   
\end{exercise} 

\begin{exercise} Draw the weights of the representation $S^{(2,2)}\Bbb C^3$ of 
$SL(3)$ on the hexagonal lattice, and indicate their multiplicities.   
\end{exercise} 

\subsection{The fundamental theorem of invariant theory}

Suppose we have a finite dimensional vector space $V$ and a collection of tensors 
$T_i\in V^{\otimes m_i}\otimes V^{*\otimes n_i}$, $i=1,...,k$. 
An important problem is to describe ``coordinate free" invariants of such a collection of tensors, i.e., polynomial functions 
$F(T_1,...,T_k)$ which are invariant under the action of 
$GL(V)$. How can we classify such functions? This sounds formidably hard in such generality, but turns out to be very easy using Schur-Weyl duality. 

It suffices to study such functions that have homogeneity degree $d_i$ with respect to each $T_i$. To do so, we will depict each $T_i$ by a vertex with $m_i$ incoming and $n_i$ outgoing arrows. We should think of incoming arrows as $V$-components and outgoing ones as $V^*$-components. Let us draw $d_i$ such vertices for each $i$. To construct an invariant, let us connect the arrows preserving orientation so that all the arrows are used (this will only be possible if the number of incoming arrows equals the number of outgoing ones; otherwise every invariant of the multidegree $(d_1,...,d_k)$ will be zero). To the obtained graph $\Gamma$ we can assign the {\bf convolution} of tensors, which gives an invariant function $F_\Gamma$ of the correct multidegree. 

\begin{theorem}\label{ftit} The functions $F_\Gamma$ for various $\Gamma$ span the space of invariant functions. 
\end{theorem}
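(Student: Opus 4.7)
The plan is to reduce the theorem to a statement about $GL(V)$-invariants in a tensor product of copies of $V$ and $V^*$, and then apply Schur-Weyl duality. I will carry this out in four steps.

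First, I would reduce from arbitrary invariant polynomials to multilinear invariants via polarization. An invariant polynomial function on $\bigoplus_i (V^{\otimes m_i}\otimes V^{*\otimes n_i})$ of multidegree $(d_1,\ldots,d_k)$ is an element of
$$\bigotimes_i S^{d_i}\!\bigl((V^{\otimes m_i}\otimes V^{*\otimes n_i})^*\bigr)^{GL(V)}.$$
In characteristic zero each symmetric power $S^d W$ embeds as a $GL(V)$-stable direct summand of $W^{\otimes d}$ (the image of the symmetrization map), so it suffices to show that every multilinear invariant, i.e.\ every element of
$$\Bigl(\bigotimes_i (V^{\otimes m_i}\otimes V^{*\otimes n_i})^{*\,\otimes d_i}\Bigr)^{GL(V)},$$
is a linear combination of the tensors $F_\Gamma$ (with each $T_i$ replaced by $d_i$ distinct copies).

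Second, after rearranging tensor factors this space equals $(V^{*\otimes A}\otimes V^{\otimes B})^{GL(V)}$, where $A=\sum d_i m_i$ is the total number of incoming arrows and $B=\sum d_i n_i$ the total number of outgoing arrows in the combinatorial picture. Acting by scalar matrices $t\cdot\mathrm{Id}_V$ shows the invariant space is zero unless $A=B$, which matches the graphical observation that no $\Gamma$ exists unless incoming and outgoing arrows match in count. When $A=B$, we use the standard identification
$$\bigl(V^{*\otimes A}\otimes V^{\otimes A}\bigr)^{GL(V)}\;\cong\;\mathrm{End}_{GL(V)}(V^{\otimes A}).$$

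Third, I invoke Schur-Weyl duality (part (i) of the theorem proved in the previous section): the centralizer of the image of $U(\mathfrak{gl}(V))$ in $\mathrm{End}(V^{\otimes A})$ equals the image of $\mathbb{C} S_A$. Hence $\mathrm{End}_{GL(V)}(V^{\otimes A})$ is spanned by the operators $P_\sigma$ for $\sigma\in S_A$, where $P_\sigma$ permutes tensor factors. Translating $P_\sigma$ back into an element of $V^{*\otimes A}\otimes V^{\otimes A}$ via the isomorphism above, one obtains exactly the tensor that pairs the $j$-th $V^*$-slot with the $\sigma(j)$-th $V$-slot using the canonical evaluation pairing $V^*\otimes V\to\mathbb{C}$. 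This is precisely a complete matching of the $A$ incoming arrows with the $A$ outgoing arrows, i.e., the data of a graph $\Gamma$ of the kind allowed in the theorem statement, and the corresponding invariant is exactly $F_\Gamma$.

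Fourth, I would package these identifications together: polarize to reduce to the multilinear case, apply the tensor rearrangement, use the double-centralizer statement to write the multilinear invariant as a sum of $P_\sigma$'s, and observe that the resulting sum of pairings is, after re-symmetrization (averaging over the $\prod_i S_{d_i}$-action on the $d_i$ copies of each $T_i$), a sum of genuine graph invariants $F_\Gamma$. The main obstacle I expect is purely bookkeeping: keeping track of how a permutation $\sigma\in S_A$ corresponds to an oriented perfect matching between the incoming arrows (one for each of the $m_i$ tensor slots of each of the $d_i$ copies of $T_i$) and the outgoing arrows, and verifying that the re-symmetrization over copies produces the graph invariants rather than some slightly different combinatorial object. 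The representation-theoretic content is entirely captured by Schur-Weyl; the rest is index juggling.
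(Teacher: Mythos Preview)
Your proposal is correct and follows essentially the same route as the paper: identify a multidegree-$(d_1,\ldots,d_k)$ invariant with an element of $\bigotimes_i (V^{*\otimes m_i}\otimes V^{\otimes n_i})^{\otimes d_i}\cong \Hom(V^{\otimes M},V^{\otimes N})$, observe this vanishes unless $M=N$, and then invoke Schur--Weyl duality to conclude the invariant space is spanned by permutations, which are exactly the graphs $\Gamma$. The paper compresses your first and fourth steps into a single sentence, while you spell out the polarization and re-symmetrization bookkeeping more carefully; the representation-theoretic core is identical.
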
 
 
\begin{proof} An invariant function may be viewed as an invariant element 
of the space $\bigotimes_{i=1}^k (V^{*\otimes m_i}\otimes V^{\otimes n_i})^{\otimes d_i}$, which we may write as the space of linear maps $V^{\otimes M}\to V^{\otimes N}$, where 
$M=\sum d_im_i$ is the number of incoming arrows and $N=\sum d_in_i$ the number of outgoing arrows. 
If $M\ne N$, there are no nonzero invariant maps. Otherwise, by the Schur-Weyl duality, the space of such maps is spanned by maps defined by permutations. But any such permutation defines a graph $\Gamma$, so the corresponding invariant is just the convolution $F_\Gamma$, which implies the statement.  
\end{proof} 

\begin{remark}\label{indepe} 
Note that this proof also implies that if 
$$
\dim V\ge N=\sum_i m_id_i=\sum_i n_id_i,
$$ 
then the functions $F_\Gamma$ for non-isomorphic graphs $\Gamma$ with $N$ edges are linearly independent, so they form a basis in the degree $N$ part $A_N$ of the algebra $A$ of invariant functions. 
(Here the vertices of $\Gamma$ are colored by $k$ colors corresponding to the types of tensors, and at every vertex of color $i$ the outgoing edges are labeled by $[1,n_i]$ and incoming edges by $[1,m_i]$. Isomorphisms are required to preserve these colorings and labelings). 
\end{remark} 

\begin{example}\label{uniqness} Assume that $m_i=n_i=1$, i.e., $T_1,...,T_k$ are just matrices with $GL_n$ acting by conjugation. Then all graphs that we can get are unions of cycles, so Theorem \ref{ftit} implies that the algebra $A_{k,n}$ of such invariants (where $n=\dim V$) is generated by traces of cyclic words 
$$
F_{j_1,...,j_r}={\rm Tr}(T_{j_1}...T_{j_r})
$$
(here ``cyclic" means that words differing by a cyclic permutation are considered to be the same). Moreover, by Remark \ref{indepe}, these elements are ``asymptotically algebraically independent", i.e. there is no nonzero polynomial of them that vanishes for all sizes of matrices $n$. 

This implies that there are no universal polynomial identities for matrices of all sizes. Indeed, if $P(T_1,...,T_k)=0$ for square matrices $T_1,...,T_k$ of any size $n$ (where $P$ is a fixed nonzero noncommutative polynomial) then adding another matrix $T_{k+1}$, we get 
$$
{\rm Tr}(P(T_1,...,T_k)T_{k+1})=0,
$$ 
which contradicts linear independence 
of $F_{j_1,...,j_r}$.   

In particular, this implies that the universal Lie polynomials $\mu_n(x,y)$ 
of degree $n$ occurring in the Baker-Campbell-Hausdorff formula, i.e., such that 
$$
\log(\exp(x)\exp(y))\sim \sum_{m\ge 1}\frac{\mu_m(x,y)}{m!}
$$
for $x\in {\rm Lie}(G)$ for any Lie group $G$, are unique 
(in fact, they are already unique for the family of groups $GL_n(\Bbb C)$ 
for all $n$). 
 
This is false, however, if the size of matrices is fixed; in this case there are plenty of polynomial identities for each matrix size. For example, for matrices of size $1$ we have $[X,Y]=0$ and for matrices of size $2$ we have $[Z,[X,Y]^2]=0$. For general $n$ there is the Amitsur-Levitzki identity given in Exercise \ref{ali}. 
\end{example} 

\begin{exercise}\label{ali} Let $X_1,...,X_{2n}$ be complex $n$ by $n$ matrices. 
Let $\Lambda=\wedge(\xi_1,...,\xi_{2n})$ be the exterior algebra generated by $\xi_i$ with relations $\xi_i\xi_j=-\xi_j\xi_i,\xi_i^2=0$. Let $X$ be the matrix over $\Lambda$ 
given by 
$$
X:=X_1\xi_1+...+X_{2n}\xi_{2n}.
$$

(i) Let $Y=X^2$. Show that $Y\in {\rm Mat}_n(\Lambda_+)$ 
where $\Lambda_+$ is the commutative subalgebra of $\Lambda$ spanned by 
the elements of even degrees. Compute $Y^n$. 

(ii) Show that ${\rm Tr}(Y^k)=0\in \Lambda_+$ for $k=1,...,n$. 

(iii) Deduce that $Y^n=0$. This should yield the {\bf Amitsur-Levitzki identity} 
$$
\sum_{\sigma\in S_{2n}}{\rm sign}(\sigma)X_{\sigma(1)}...X_{\sigma(2n)}=0.
$$

(iv) Deduce the same identity over any commutative ring $R$. 
\end{exercise}

\section{\bf Representations of $GL_n$, III}

\subsection{Schur polynomials and characters of representations of the symmetric group} 

Using Schur-Weyl duality and the character formula for representations of $GL_n$, we can obtain information about characters of the symmetric group. Namely, 
it follows from the Weyl character formula that the characters of representations of $GL_n$ are given by the formula 
$$
s_\lambda(x_1,...,x_n)=\frac{\sum_{\sigma\in S_n}{\rm sign}(\sigma)x_{\sigma(1)}^{\lambda_1+n-1}...x_{\sigma(n)}^{\lambda_n}}{\prod_{i<j}(x_i-x_j)}=\frac{\det(x_i^{\lambda_j+n-j})}{\prod_{i<j}(x_i-x_j)}.
$$
These symmetric polynomials are called {\bf Schur polynomials}. 
For example, the character of $S^mV$ is 
$$
s_{(m)}(x_1,...,x_n)=\sum_{1\le j_1\le ...\le j_m\le n}x_{j_1}...x_{j_m}=h_m(x_1,...,x_n),
$$
the $m$-th {\bf complete symmetric function}, and the character of $\wedge^mV$ is  
$$
s_{(1^m)}(x_1,...,x_n)=\sum_{1\le j_1<...<j_m\le n}x_{j_1}...x_{j_m}=e_m(x_1,...,x_n),
$$
the $m$-th {\bf elementary symmetric function}. 

Let us now compute the trace in $V^{\otimes N}$ of $x^{\otimes N}\sigma$, where $x={\rm diag}(x_1,...,x_n)$ is a diagonal matrix and $\sigma\in S_N$ a permutation. 
Let $\sigma$ have $m_i$ cycles of length $i$. Then we have 
$$
{\rm Tr}|_{V^{\otimes N}}(x^{\otimes N}\sigma)=\prod_i (x_1^i+...+x_n^i)^{m_i}.
$$
On the other hand, using Schur-Weyl duality, we get 
$$
{\rm Tr}|_{V^{\otimes N}}(x^{\otimes N}\sigma)=\sum_\lambda \chi_\lambda(\sigma)s_\lambda(x),
$$
where $\chi_\lambda(\sigma)={\rm Tr}|_{\pi_\lambda}(\sigma)$ is the character of the representation $\pi_\lambda$ of $S_N$. Thus we have 
$$
\sum_\lambda \chi_\lambda(\sigma)s_\lambda(x)=\prod_i (x_1^i+...+x_n^i)^{m_i}.
$$
Multiplying this by the discriminant, we get 
$$
\sum_\lambda \chi_\lambda(\sigma)\det(x_i^{\lambda_j+n-j})=\prod_{i<j}(x_i-x_j)\cdot \prod_i (x_1^i+...+x_n^i)^{m_i}.
$$
Thus we get 

\begin{theorem}(Frobenius character formula) 
The character value $\chi_\lambda(\sigma)$ is 
the coefficient of $x_1^{\lambda_1+n-1}...x_n^{\lambda_n}$ 
in the polynomial 
$$
\prod_{i<j}(x_i-x_j)\cdot \prod_i (x_1^i+...+x_n^i)^{m_i}.
$$
\end{theorem}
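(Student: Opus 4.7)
My plan is to extract the coefficient of the monomial $x_1^{\lambda_1+N-1}x_2^{\lambda_2+N-2}\cdots x_N^{\lambda_N}$ from both sides of the identity
$$\sum_{\lambda'}\chi_{\lambda'}(\sigma)\det(x_i^{\lambda'_j+N-j}) = \prod_{i<j}(x_i-x_j)\cdot\prod_i(x_1^i+\cdots+x_N^i)^{m_i},$$
which was already derived in the paragraph preceding the theorem. I would specialize to $n=N$, which is legitimate since by Schur--Weyl duality we need $\dim V\geq N$ to see all irreducible $S_N$-representations, and every partition of $N$ has at most $N$ parts (padded with zeros if necessary). Thus the sum on the left ranges over all partitions $\lambda'$ of $N$.

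The first step is the purely combinatorial observation that the sequence $a_j := \lambda_j + N - j$ is strictly decreasing in $j$, because $\lambda_j \geq \lambda_{j+1}$ gives $a_j > a_{j+1}$. In particular, the assignment $\lambda \mapsto (a_1,\ldots,a_N)$ is a bijection between partitions of length at most $N$ and strictly decreasing sequences of nonnegative integers, so the multiset $\{a_j\}$ determines $\lambda$ uniquely.

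The second step is to expand each determinant on the left as
$$\det(x_i^{\lambda'_j+N-j}) = \sum_{s\in S_N}\mathrm{sign}(s)\prod_{i=1}^N x_i^{\lambda'_{s(i)}+N-s(i)},$$
so every monomial appearing has exponent sequence equal to some permutation of $(\lambda'_j + N - j)_{j=1}^N$. For this sequence to match the target $(\lambda_1+N-1,\ldots,\lambda_N)$, the multisets $\{\lambda'_j+N-j\}$ and $\{\lambda_j+N-j\}$ must coincide, which by the first step forces $\lambda'=\lambda$. Moreover, since $(\lambda_j+N-j)$ is already strictly decreasing, only $s=\mathrm{id}$ produces the target monomial in the correct order, contributing coefficient $+1$. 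Therefore the coefficient of $x_1^{\lambda_1+N-1}\cdots x_N^{\lambda_N}$ on the left equals precisely $\chi_\lambda(\sigma)$, and matching with the right-hand side yields the formula.

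I do not expect any real obstacle: the hard work is already absorbed into the Schur--Weyl-based identity and the expression of $\mathrm{Tr}|_{V^{\otimes N}}(x\otimes\sigma)$ as a product of power sums. The only subtlety is the bookkeeping step showing that exactly one $\lambda'$ and exactly one permutation $s$ contribute to the coefficient in question, and this reduces to the elementary strict monotonicity of $(\lambda_j+N-j)$.
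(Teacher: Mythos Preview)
Your argument is correct and is exactly the coefficient-extraction step that the paper leaves implicit: the text derives the identity $\sum_{\lambda'}\chi_{\lambda'}(\sigma)\det(x_i^{\lambda'_j+N-j})=\prod_{i<j}(x_i-x_j)\prod_i(x_1^i+\cdots+x_n^i)^{m_i}$ and then simply says ``Thus we get'' the theorem. Your observation that $(\lambda_j+N-j)_j$ is strictly decreasing, so that only $\lambda'=\lambda$ and $s=\mathrm{id}$ contribute to the target monomial, is precisely the bookkeeping needed to make that step explicit.
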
 

\begin{exercise} Let $V=\Bbb C^2$ be the 2-dimensional tautological representation
of $GL_2(\Bbb C)$. Decompose $V^{\otimes N}$ into a direct sum of irreducible representations of $GL_2(\Bbb C)\times S_N$ and compute the characters and dimensions 
of all the irreducible representations of $GL_2$ and $S_N$ that occur. 
\end{exercise} 

\subsection{Howe duality} \label{howd}

Howe duality is another instance when we have a double centralizer property. Consider two finite dimensional complex vector spaces $V,W$, and consider the symmetric power $S^N(V\otimes W)$ as a representation of 
$GL(V)\times GL(W)$. 

\begin{theorem} (Howe duality) We have a decomposition 
$$
S^N(V\otimes W)=\oplus_{\lambda: |\lambda|=N}S^\lambda V\otimes S^\lambda W.
$$ 
\end{theorem}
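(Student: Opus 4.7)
The plan is to deduce Howe duality from Schur--Weyl duality by taking $S_n$-invariants in the $n$-th tensor power of $V\otimes W$. First I would observe the natural $GL(V)\times GL(W)\times S_n$-equivariant isomorphism
$$
(V\otimes W)^{\otimes n}\;\cong\;V^{\otimes n}\otimes W^{\otimes n},
$$
where the symmetric group $S_n$ acts diagonally, i.e.\ by $\sigma\otimes\sigma$ on the right-hand side. Since the ground field is $\mathbb{C}$, taking $S_n$-invariants recovers the symmetric power, so $S^n(V\otimes W)=\bigl((V\otimes W)^{\otimes n}\bigr)^{S_n}$.

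Next I would apply Schur--Weyl duality separately to each factor, writing
$$
V^{\otimes n}=\bigoplus_{\lambda:|\lambda|=n}S^\lambda V\otimes \pi_\lambda,\qquad W^{\otimes n}=\bigoplus_{\mu:|\mu|=n}S^\mu W\otimes \pi_\mu,
$$
which gives
$$
(V\otimes W)^{\otimes n}=\bigoplus_{\lambda,\mu}\bigl(S^\lambda V\otimes S^\mu W\bigr)\otimes(\pi_\lambda\otimes \pi_\mu),
$$
with $GL(V)\times GL(W)$ acting on the first tensor factor and $S_n$ acting diagonally on the second. Taking $S_n$-invariants therefore yields
$$
S^n(V\otimes W)=\bigoplus_{\lambda,\mu}\bigl(S^\lambda V\otimes S^\mu W\bigr)\otimes(\pi_\lambda\otimes\pi_\mu)^{S_n}.
$$

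The main (and essentially only) step requiring care is the identification of the invariant multiplicity space $(\pi_\lambda\otimes\pi_\mu)^{S_n}$. Here I would use that over $\mathbb{C}$ the irreducible $S_n$-representations $\pi_\lambda$ are self-dual (they are defined over $\mathbb{Q}$, or one can see this directly from the Frobenius character formula, since characters of $S_n$ are real-valued). Consequently, by Schur's lemma,
$$
(\pi_\lambda\otimes\pi_\mu)^{S_n}\;\cong\;\Hom_{S_n}(\pi_\lambda^*,\pi_\mu)\;\cong\;\Hom_{S_n}(\pi_\lambda,\pi_\mu)\;\cong\;\begin{cases}\mathbb{C}&\text{if }\lambda=\mu,\\0&\text{otherwise.}\end{cases}
$$
Substituting this back collapses the double sum to a single sum over partitions $\lambda$ of $n$, giving the desired decomposition
$$
S^n(V\otimes W)\;\cong\;\bigoplus_{\lambda:|\lambda|=n}S^\lambda V\otimes S^\lambda W
$$
as $GL(V)\times GL(W)$-representations. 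The only conceptual subtlety is the self-duality input; everything else is a direct manipulation of Schur--Weyl duality.
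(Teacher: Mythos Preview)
Your proof is correct and follows essentially the same route as the paper: identify $S^n(V\otimes W)$ with $(V^{\otimes n}\otimes W^{\otimes n})^{S_n}$, apply Schur--Weyl duality to each factor, and collapse the double sum using self-duality of the $\pi_\lambda$ together with Schur's lemma. The paper phrases the self-duality step as ``the character of $\pi_\lambda$ is integer-valued,'' which is the same observation you make.
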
 

Note that if $\lambda$ has more parts than $\dim V$ or $\dim W$ then the corresponding summand is zero. 

\begin{proof} We have 
$$
S^N(V\otimes W)=((V\otimes W)^{\otimes N})^{S_N}=
(V^{\otimes N}\otimes W^{\otimes N})^{S_N}
$$
So using the Schur-Weyl duality, we get 
$$
S^N(V\otimes W)=((\oplus_{\lambda: |\lambda|=N}S^\lambda V\otimes \pi_\lambda) \otimes (\oplus_{\mu: |\mu|=N}S^\mu W\otimes \pi_\mu))^{S_N}=
$$
$$
\oplus_{\lambda,\mu: |\lambda|=|\mu|=N}S^\lambda V\otimes S^\mu W\otimes (\pi_\lambda\otimes \pi_\mu)^{S_N}. 
$$
But the character of $\pi_\lambda$ is integer-valued, so $\pi_\lambda\cong \pi_\lambda^*$. 
Thus by Schur's lemma $(\pi_\lambda\otimes \pi_\mu)^{S_N}=\Bbb C^{\delta_{\lambda\mu}}$, and we get 
$$
S^N(V\otimes W)=\oplus_{\lambda: |\lambda|=N}
S^\lambda V\otimes S^\lambda W,
$$
as claimed. 
\end{proof} 

Note that we never used that $V,W$ were finite dimensional, so the statement is valid for any complex vector spaces $V,W$. 

\begin{corollary} (Cauchy identity) 
If $x=(x_1,...,x_r)$ and $y=(y_1,...,y_s)$ then one has 
$$
\sum_\lambda s_\lambda(x)s_\lambda(y)z^{|\lambda|}=\prod_{i=1}^r\prod_{j=1}^s\frac{1}{1-zx_iy_j}.
$$
\end{corollary}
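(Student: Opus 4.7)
The plan is to take characters of both sides of the Howe duality decomposition
$$S^n(V\otimes W) = \bigoplus_{\lambda:\,|\lambda|=n} S^\lambda V \otimes S^\lambda W,$$
specialized to $V=\mathbb{C}^r$ and $W=\mathbb{C}^s$, evaluated at the commuting diagonal operators $g=\mathrm{diag}(x_1,\dots,x_r)\in GL(V)$ and $h=\mathrm{diag}(y_1,\dots,y_s)\in GL(W)$, and then to sum over $n$ with weight $z^n$.

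First I would compute the right-hand side of Howe duality: since $g\otimes h$ acts on $V\otimes W$ with eigenvalues $x_iy_j$ on the basis vectors $e_i\otimes f_j$, and since $S^\lambda V$, $S^\lambda W$ are by definition the polynomial representations of $GL(V)$, $GL(W)$ with characters equal to the Schur polynomials $s_\lambda(x)$, $s_\lambda(y)$, we get
$$\mathrm{Tr}\bigl|_{S^n(V\otimes W)}(g\otimes h) \;=\; \sum_{\lambda:\,|\lambda|=n} s_\lambda(x)\,s_\lambda(y).$$
Multiplying by $z^n$ and summing over $n\ge 0$ produces the left-hand side of the Cauchy identity.

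Next I would compute the same trace in a second way, using the standard generating-function identity for symmetric powers of an operator. If $A$ is a linear operator on a finite-dimensional complex vector space $U$ with eigenvalues $\alpha_1,\dots,\alpha_N$, then on each $S^nU$ the operator induced by $A$ has trace equal to the complete symmetric function $h_n(\alpha_1,\dots,\alpha_N)$, so
$$\sum_{n\ge 0}\mathrm{Tr}\bigl|_{S^nU}(A)\,z^n \;=\; \prod_{k=1}^{N}\frac{1}{1-z\alpha_k}.$$
Applied to $U=V\otimes W$ and $A=g\otimes h$ with eigenvalues $x_iy_j$, this gives
$$\sum_{n\ge 0}\mathrm{Tr}\bigl|_{S^n(V\otimes W)}(g\otimes h)\,z^n \;=\; \prod_{i=1}^{r}\prod_{j=1}^{s}\frac{1}{1-zx_iy_j},$$
and comparison with the expression above yields the Cauchy identity.

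The only potential obstacle is the validity of the operator generating-function identity, but this is elementary: one picks a basis of $U$ in which $A$ is upper-triangular and observes that $S^nU$ has a basis of ordered monomials in this basis, on which the induced operator is upper-triangular with diagonal entries $\alpha_1^{m_1}\cdots\alpha_N^{m_N}$, $m_1+\cdots+m_N=n$, so summing over $n$ gives the claimed product. (Alternatively, one may view both sides as polynomials in $x,y,z$ and, since the identity is a formal-power-series statement in $z$ and symmetric in the $x_i$ and $y_j$, it is enough to check it for diagonalizable $g,h$, where the argument above applies verbatim.)
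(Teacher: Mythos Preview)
Your proof is correct and follows essentially the same route as the paper: both compute $\sum_{n\ge 0}\mathrm{Tr}|_{S^n(V\otimes W)}(g\otimes h)\,z^n$ in two ways, once via Howe duality to get $\sum_\lambda s_\lambda(x)s_\lambda(y)z^{|\lambda|}$, and once via the generating-function identity for symmetric powers (which the paper calls the Molien formula) to get the product $\prod_{i,j}(1-zx_iy_j)^{-1}$. The only difference is cosmetic ordering---the paper states and proves the Molien formula first as a separate lemma, then applies Howe duality, whereas you do the steps in the opposite order.
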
 

\begin{proof} 

\begin{lemma} (Molien formula). Let $A: V\to V$ be a linear operator on a finite dimensional vector space $V$. Denote by $S^NA$ the induced linear operator 
$A^{\otimes N}$ on $S^NV$. Then 
$$
\sum_{N=0}^\infty {\rm Tr}(S^NA)z^N=\frac{1}{\det(1-zA)}. 
$$
\end{lemma} 

\begin{proof} Let $\dim V=r$ and $A$ have eigenvalues $x_1,...,x_r$. Then 
the eigenvalues of $S^N A$ are all possible monomials in $x_i$ of degree $N$. 
Thus ${\rm Tr}(S^N A)$ is the sum of these monomials, which is the complete symmetric function $h_N(x_1,...,x_r)$. So 
 $$
\sum_{N=0}^\infty {\rm Tr}(S^NA)z^N=\sum_{N\ge 0}h_N(x_1,...,x_r)z^N=\prod_{i=1}^r \frac{1}{1-zx_i}=
\frac{1}{\det(1-zA)}. 
$$
\end{proof} 

Now let $X\in GL(V)$ with eigenvalues $x_1,...,x_r$ and $Y\in GL(W)$ 
with eigenvalues $y_1,...,y_s$. Then by Howe duality
$$
{\rm Tr}(S^N(X\otimes Y))=\sum_{\lambda: |\lambda|=N}s_\lambda(x)s_\lambda(y). 
$$
On the other hand, by Molien's formula  
$$
\sum_{N\ge 0}{\rm Tr}(S^N(X\otimes Y))z^N=\frac{1}{\det(1-z(X\otimes Y))}=\prod_{i,j}\frac{1}{1-zx_iy_j}.
$$
Comparing the two formulas, we obtain the statement. 
\end{proof} 

\section{\bf Fundamental and minuscule weights} 

\subsection{Minuscule weights}

Let $\g$ be a simple complex Lie algebra. Minuscule weights for $\g$ are highest weights 
for which irreducible representations are especially simple. 

\begin{definition} A dominant integral weight $\omega$ for $\g$ is called {\bf minuscule} 
if $(\omega,\beta)\le 1$ for all positive coroots $\beta$. 
\end{definition} 

Equivalently, $|(\omega,\beta)|\le 1$ for any coroot $\beta$. 

Obviously, $\omega=0$ is minuscule, but there may exist other minuscule weights. For example, 
for $\g=\mathfrak{sl}_n$, all fundamental weights are minuscule, since 
$(\omega_i, \bold e_j-\bold e_k)=0$ if $j,k\le i$ or $j,k>i$ and 
$(\omega_i, \bold e_j-\bold e_k)=1$ if $j\le i<k$. 

It is easy to see that any minuscule weight $\omega\ne 0$ is fundamental. Indeed, 
we can have $(\omega,\alpha_i^\vee)=1$ only for one $i$, and for all other simple coroots this inner product must be zero. Otherwise we will have $(\omega,\theta^\vee)\ge 2$, where $\theta^\vee$ is the maximal coroot (the maximal root of the dual root system $R^\vee$).\footnote{The maximal coroot $\theta^\vee$ should not be confused with 
the coroot $\widetilde \theta^\vee$ corresponding to the maximal root $\theta$ (highest weight of the adjoint representation) under a $W$-invariant identification $\h^*\cong \h$. In the non-simply-laced case they are not even proportional: e.g., for the root system $B_2$, $\theta^\vee=(1,1)$ while $\widetilde\theta^\vee=(2,0)$. This may be confusing since according to the general coroot notation, $\widetilde\theta^\vee$ should be denoted by $\theta^\vee$. \label{foo}} 

On the other hand, not all fundamental weights are minuscule. In fact, we will see that the simple Lie algebras of types $G_2$, $F_4$ and $E_8$ do not have any nonzero minuscule weights. To formulate a criterion for a fundamental weight to be minuscule, 
recall that $\theta^\vee=\sum_i k_i\alpha_i^\vee$, where $k_i=(\omega_i,\theta^\vee)$ are strictly positive integers.  

\begin{lemma} A fundamental weight $\omega_i$ is minuscule if 
and only if $k_i=1$.  
\end{lemma}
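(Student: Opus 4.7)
The forward direction is immediate. Since $\omega_i$ is minuscule and $\theta^\vee$ is a positive coroot, the defining inequality gives $(\omega_i,\theta^\vee)\le 1$. Expanding $\theta^\vee=\sum_j m_j\alpha_j^\vee$ and using $(\omega_i,\alpha_j^\vee)=\delta_{ij}$ yields $(\omega_i,\theta^\vee)=m_i$, so $m_i\le 1$; combined with $m_i\in\Bbb Z_{>0}$, we obtain $m_i=1$.

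For the reverse direction, assume $m_i=1$. Any positive coroot can be written as $\beta=\sum_j k_j\alpha_j^\vee$ with $k_j\in\Bbb Z_{\ge 0}$, and then $(\omega_i,\beta)=k_i$. So minusculity of $\omega_i$ is equivalent to the bound $k_i\le 1$ for every positive coroot $\beta$. My plan is to prove the stronger coordinate-wise inequality $k_j\le m_j$ for all $j$ simultaneously, i.e., that $\theta^\vee-\beta\in Q^\vee_+$ for every positive coroot $\beta$. Granting this, setting $j=i$ gives $k_i\le m_i=1$, which is exactly what is needed.

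The main content of the plan is therefore to show that $\theta^\vee$ is the maximum element of the poset of positive coroots under the partial order $\beta\le\beta'\Leftrightarrow \beta'-\beta\in Q^\vee_+$. I would argue this by induction on the height of $\theta^\vee-\beta$: if $\beta\ne\theta^\vee$, one must produce a simple reflection $s_j$ such that $s_j\beta=\beta-(\alpha_j,\beta)\alpha_j^\vee$ lies strictly above $\beta$ in the poset, which requires finding $j$ with $(\alpha_j,\beta)<0$. The engine here is the standard fact that in an irreducible reduced root system any non-maximal positive root fails dominance against some simple root; applied to the coroot system $R^\vee$ (which has the $\alpha_j^\vee$ as simple roots and the induced polarization), this produces the required index $j$, so the induction terminates at the unique maximum of the coroot poset.

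The main subtlety --- and the place where I expect the argument to demand the most care --- is justifying that the $\theta^\vee$ of the statement really plays the role of the maximum element of the coroot poset, equivalently the highest root of $R^\vee$. In the simply-laced case this is transparent because $R=R^\vee$ and $\theta^\vee$ literally is the highest root of $R^\vee$. In the non-simply-laced setting one must be more careful: $\theta^\vee$ is dominant in $R^\vee$ (the dominance of $\theta$ in $R$ transfers under the rescaling by $2/(\theta,\theta)$), but dominance alone does not force being the maximum, and one must either verify the identification directly or reinterpret the $m_i$ via the highest coroot. Once this identification is pinned down, the inductive argument above closes the proof.
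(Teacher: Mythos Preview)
Your argument follows the paper's exactly: the paper's proof is a one-liner that asserts the coordinatewise bound $n_j\le m_j$ for every positive coroot and then reads off $n_i\le m_i=1$, which is precisely the domination $\theta^\vee-\beta\in Q^\vee_+$ you set out to justify. Your worry about the non-simply-laced case is well founded: for that bound to hold one needs $\theta^\vee$ to be the \emph{highest coroot} (the highest root of $R^\vee$, equivalently the coroot of the highest short root of $R$), which in types $B$, $C$, $F_4$, $G_2$ differs from the coroot of the highest root $\theta$; the paper's phrase ``maximal coroot dual to the maximal root'' blurs this, but only the former reading makes both the lemma and its proof correct (e.g.\ in $B_2$ the coroot of $\theta$ is $\alpha_1^\vee+\alpha_2^\vee$, giving $m_1=m_2=1$, yet $\omega_1$ is not minuscule).
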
 

\begin{proof} The definition of minuscule means that $k_i\le 1$. 
On the other hand, if $k_i=1$ then given a positive 
coroot $\beta=\sum_j n_j\alpha_j^\vee$, we have $n_j\le k_j$, in particular
$n_i\le 1$, so $\omega_i$ is minuscule.   
\end{proof}

\begin{lemma}\label{minex} Let $\omega\in Q$ and $|(\omega,\beta)|\le 1$ for all coroots $\beta$. 
Then $\omega=0$. 
\end{lemma}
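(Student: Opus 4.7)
The strategy is to exploit the $W$-invariance of both $Q$ and the hypothesis in order to bring $\omega$ into the dominant chamber, and then extract $\omega$ explicitly via two applications of the minuscule bound --- first to the simple coroots and then to the highest coroot $\theta^\vee$. First I would replace $\omega$ by a suitable Weyl translate so that $\omega\in\overline{C_+}$; since both $Q$ and the condition $|(\omega,\beta)|\le 1$ for all coroots $\beta$ are $W$-stable, this is legitimate. Because $Q\subset P$ and $\omega$ is now dominant, $\omega\in P_+$, so $\omega=\sum_{i=1}^r k_i\omega_i$ with $k_i=(\omega,\alpha_i^\vee)\in\Bbb Z_{\ge 0}$, and the hypothesis applied to the simple coroots forces $k_i\in\{0,1\}$. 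Hence $\omega=\sum_{i\in S}\omega_i$ for some subset $S\subseteq\{1,\dots,r\}$.

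Next I would apply the hypothesis to the highest coroot $\theta^\vee=\sum_j m_j\alpha_j^\vee$. Recall from the discussion just before the lemma that $m_j=(\omega_j,\theta^\vee)\in\Bbb Z_{>0}$ for every $j$. Therefore $1\ge(\omega,\theta^\vee)=\sum_{i\in S}m_i$, which, combined with $m_i\ge 1$, forces $|S|\le 1$, and when $|S|=1$ the unique index $i\in S$ must satisfy $m_i=1$. If $S=\emptyset$ then $\omega=0$ and we are done; otherwise $\omega=\omega_i$ with $m_i=1$, and the problem reduces to ruling out this case under the extra hypothesis $\omega\in Q$.

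The remaining step is to show that no fundamental weight $\omega_i$ with $m_i=1$ lies in $Q$; equivalently, every minuscule fundamental weight represents a nontrivial coset in the finite group $P/Q$. For types $G_2$, $F_4$, $E_8$ there is nothing to check, as all $m_i\ge 2$. For the remaining types I would verify this by direct inspection in the standard realizations: for $A_{n-1}$, the traceless representative of $\omega_i$ has all entries with denominator $n$ and is visibly not in $Q$; for $B_n$ and $D_n$ the spin weights $\tfrac{1}{2}(\bold e_1+\dots+\bold e_n)$ have half-integer coordinates while $Q\subset\Bbb Z^n$; for $C_n$, $Q=\{\lambda\in\Bbb Z^n:\sum_i\lambda_i\in 2\Bbb Z\}$ does not contain $\omega_1=\bold e_1$; for $D_n$, $\omega_1=\bold e_1$ again has odd coordinate sum; and for $E_6$ and $E_7$ short calculations in the realizations of Subsections \ref{E6r} and \ref{E7r} finish the job.

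The main obstacle is precisely this last step. The reductions of the first two paragraphs are clean and type-uniform, but excluding $\omega_i\in Q$ for minuscule $\omega_i$ seems to require either a case-by-case check or the nontrivial input that the nonzero minuscule fundamental weights are representatives of the nonzero elements of $P/Q$ (equivalently, of the center of the simply connected group), which itself is most naturally established by classification.
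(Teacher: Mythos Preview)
Your argument is correct, but it takes a different route from the paper's proof. The paper gives a short, type-free descent argument: assume a minimal counterexample $\omega=\sum_i m_i\alpha_i$ (minimizing $\sum_i|m_i|$), use $(\omega,\omega)>0$ to find an index $j$ with $m_j$ and $(\omega,\alpha_j^\vee)$ nonzero of the same sign, normalize so both are positive (hence $(\omega,\alpha_j^\vee)=1$), and observe that $s_j\omega=\omega-\alpha_j$ is then a smaller counterexample.

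Your reductions through the dominant chamber and the highest coroot $\theta^\vee$ are clean and uniform, but the final step --- ruling out $\omega_i\in Q$ for minuscule $\omega_i$ --- is exactly the content that the paper extracts \emph{from} this lemma in the next result (Proposition~\ref{minwei}). So invoking the bijection between minuscule weights and $P/Q$ would be circular, and you are right that a case-by-case check is then forced. Your type-by-type verifications are fine (the $E_6$, $E_7$ cases are indeed short in the given coordinates), but the paper's descent argument is preferable precisely because it avoids the classification entirely and is what makes the subsequent $P/Q$ statement genuinely classification-free.
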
 

\begin{proof} Assume the contrary. Choose a counterexample $\omega=\sum_i m_i\alpha_i$ so that $\sum_i |m_i|$ is minimal possible. We have 
$$
(\omega,\omega)=\sum_i m_i(\omega,\alpha_i)>0.
$$
So there exists $j$ such that $m_j$ and $(\omega,\alpha_j^\vee)$ 
are nonzero and have the same sign. Replacing $\omega$ with $-\omega$ if needed, we may assume that both are positive, then $(\omega,\alpha_j^\vee)=1$. Then $s_j\omega=\omega-\alpha_j=\sum_i m_i'\alpha_i$ where 
$m_j'=m_j-1$ and $m_i'=m_i$ for all $i\ne j$ is another counterexample. But we have $\sum_i |m_i'|=\sum_i |m_i|-1$, a contradiction. 
\end{proof} 

Why are minuscule weights interesting? It is because of the following result.

\begin{proposition} The following conditions on a dominant integral weight $\omega$ are equivalent: 

(1) $\omega$ is minuscule; 

(2) all weights of the representation $L_\omega$ belong to the orbit $W\omega$; 

(3) if $\lambda$ is a dominant integral weight such that $\omega-\lambda\in Q_+$  
then $\lambda=\omega$. 
\end{proposition}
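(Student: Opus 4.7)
The plan is to prove the two separate equivalences $(1) \Leftrightarrow (2)$ and $(2) \Leftrightarrow (3)$. The implications $(3) \Rightarrow (2)$ and $(2) \Rightarrow (1)$ are routine, while $(1) \Rightarrow (2)$ is the main inductive argument; $(2) \Rightarrow (3)$ relies on a standard fact about weights of $L_\omega$.

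For $(3) \Rightarrow (2)$: any weight $\mu$ of $L_\omega$ has a Weyl translate $w\mu \in \overline{C_+} \cap P(L_\omega)$ (since $P(L_\omega)$ is $W$-invariant by Lemma \ref{locfin}); then $w\mu \in P_+$ satisfies $\omega - w\mu \in Q_+$, so by (3) $w\mu = \omega$ and $\mu \in W\omega$. For $(2) \Rightarrow (1)$: for any positive coroot $\alpha^\vee$ with $(\omega, \alpha^\vee) \ge 1$, the representation theory of $(\mathfrak{sl}_2)_\alpha$ from Subsection \ref{sl2rep} gives $f_\alpha v_\omega \ne 0$, so $\omega - \alpha \in P(L_\omega)$; by (2) $\omega - \alpha \in W\omega$, hence $|\omega - \alpha|^2 = |\omega|^2$, which on expansion yields $(\omega, \alpha^\vee) = 1$. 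Thus $(\omega, \alpha^\vee) \in \{0, 1\}$ for every positive coroot, so $\omega$ is minuscule.

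For the main step $(1) \Rightarrow (2)$, I would induct on the height $h(\omega - \mu)$ for $\mu \in P(L_\omega)$. The base $h = 0$ is immediate. For $h \ge 1$, pick nonzero $v \in L_\omega[\mu]$; since $L_\omega$ is irreducible and $\mu \ne \omega$, some $e_i v \ne 0$. Let $K \ge 1$ be maximal with $e_i^K v \ne 0$; then $e_i^K v$ is killed by $e_i$, has weight $\mu + K\alpha_i$ of height $h - K < h$, so by induction $\mu + K\alpha_i \in W\omega$. Since $\omega$ is minuscule, $|(w\omega, \gamma)| \le 1$ for every $w \in W$ and every coroot $\gamma$, so $|(\mu + K\alpha_i, \alpha_i^\vee)| \le 1$; combined with nonnegativity coming from $\mathfrak{sl}_2$-theory applied to $e_i^K v$, setting $c := (\mu, \alpha_i^\vee)$, the value $c + 2K$ lies in $\{0, 1\}$. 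Decomposing $v$ into $(\mathfrak{sl}_2)_i$-irreducible summands $V_{n_\ell}$, the maximal summand has $n_{\ell_0} = c + 2K$: the case $c + 2K = 0$ forces $n_{\ell_0} = 0$, hence $c = 0$ and $K = 0$, contradicting $K \ge 1$; the case $c + 2K = 1$ forces $n_{\ell_0} = 1$, and since $K \ge 1$ is an integer we conclude $K = 1$ and $c = -1$, with all remaining $v_\ell$ lying in copies of $V_1$ at $h_i = -1$. Then $(\mu + \alpha_i, \alpha_i^\vee) = 1$ gives $s_i(\mu + \alpha_i) = \mu$, so $\mu \in W\omega$, completing the induction.

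For $(2) \Rightarrow (3)$: given $\lambda \in P_+$ with $\omega - \lambda \in Q_+$, I would invoke the standard fact that every such $\lambda$ is a weight of $L_\omega$ (provable by a separate induction using $\mathfrak{sl}_2$-strings, or via the Kostant multiplicity formula). By (2) then $\lambda \in W\omega \cap P_+ = \{\omega\}$, forcing $\lambda = \omega$. The main obstacle is the $(\mathfrak{sl}_2)_i$-decomposition analysis inside $(1) \Rightarrow (2)$: the vector $v$ generally lies in several irreducible components simultaneously, and one must carefully exploit the minuscule bound on $\mu + K\alpha_i$ to restrict the possible shapes of these summands and conclude that $K = 1$ with every contributing summand of dimension two.
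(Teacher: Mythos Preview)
Your implications $(3)\Rightarrow(2)$ and $(2)\Rightarrow(1)$ match the paper's, and your $(1)\Rightarrow(2)$ argument by induction on height is correct and genuinely different from the paper's route. The paper instead proves $(1)\Rightarrow(3)$ directly: writing $\omega-\lambda=\sum_k m_k\alpha_k$ with $m_k\ge 0$, it reduces the rank whenever some $m_k=0$, and in the remaining case uses the preparatory Lemma~\ref{minex} (that no nonzero element of $Q$ satisfies $|(\cdot,\beta)|\le 1$ for all coroots $\beta$) to reach a contradiction. Your height induction trades this rank-reduction trick for a clean $\mathfrak{sl}_2$-string analysis, which is arguably more elementary and stays entirely inside the representation $L_\omega$.

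The soft spot is your $(2)\Rightarrow(3)$: you invoke the saturation fact that every dominant $\lambda$ with $\omega-\lambda\in Q_+$ is already a weight of $L_\omega$. This is true, but it is not established in the paper at this point, and proving it (whether via $\mathfrak{sl}_2$-strings or via Kostant's formula) is comparable in work to the paper's $(1)\Rightarrow(3)$. The paper's cycle $(1)\Rightarrow(3)\Rightarrow(2)\Rightarrow(1)$ is self-contained once Lemma~\ref{minex} is in hand, whereas your cycle $(1)\Rightarrow(2)\Rightarrow(3)\Rightarrow(2)$ needs that external saturation result to close. If you want your argument to stand on its own, you should either supply the saturation proof or replace $(2)\Rightarrow(3)$ by a direct $(1)\Rightarrow(3)$.
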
 

\begin{proof} Let us prove that (1) implies (3). If $\omega=0$, there is nothing to prove, since then $-\lambda\in Q_+$, so $(\lambda,\rho)\le 0$, hence $\lambda=0$. So suppose that $\omega=\omega_i$ is minuscule. We have $\omega_i-\lambda=\sum_k m_k\alpha_k$ with $m_k\ge 0$. If $m_k=0$ for some $k\ne i$ then the problem reduces to smaller rank by deleting the vertex $k$ from the Dynkin diagram. So we may assume 
$m_k>0$ for all $k\ne i$. Let $\beta$ be a positive coroot. Then 
$$
(\omega_i-\lambda,\beta)=(\omega_i,\beta)-(\lambda,\beta)\le (\omega_i,\beta)\le 1
$$ 
and if $\alpha_i^\vee$ does not occur in $\beta$ then it is $\le 0$. 
So in particular we have $(\omega_i-\lambda,\alpha_j^\vee)\le 0$ if $j\ne i$. 
If also $(\omega_i-\lambda,\alpha_i^\vee)\le 0$ then $(\omega_i-\lambda,\omega_i-\lambda)\le 0$, so $\omega_i=\lambda$, as claimed. Thus we may assume that $(\omega_i-\lambda,\alpha_i^\vee)=1$, i.e., $m_i>0$, so $m_j>0$ for all $j$. Thus, $(\omega_i-\lambda,\theta^\vee)\ge 1$ (as $\theta^\vee$ is a dominant coweight). Hence
$(\lambda,\theta^\vee)\le 0$, i.e., $\lambda=0$, as $\theta^\vee$ contains all $\alpha_j^\vee$ 
with positive coefficients. Thus $\omega_i\in Q$. But this is impossible by Lemma \ref{minex}.

To see that (3) implies (2), note that if $\mu$ is any weight of $L_\omega$ then for some $w\in W$ the weight $\lambda=w\mu$ is dominant and $\omega-\lambda\in Q_+$, so $\lambda=\omega$ 
and $\mu=w^{-1}\omega$. 

Finally, we show that (2) implies (1). Assume (2) holds. If $\omega$ is not minuscule then there is a positive root $\alpha$ such that $(\omega,\alpha^\vee)>1$, hence $2(\omega,\alpha)>(\alpha,\alpha)$. Then $\omega-\alpha$ is a weight of $L_\omega$ (the weight of the nonzero vector $f_\alpha v_\omega$), and it is not $W$-conjugate to $\omega$, as 
$$
(\omega-\alpha,\omega-\alpha)=(\omega,\omega)-2(\omega,\alpha)+(\alpha,\alpha)<(\omega,\omega).
$$ 
\end{proof}

This immediately implies 

\begin{corollary}\label{charfor} The character of $L_\omega$ with minuscule $\omega$ is 
$$
\chi_\omega=\sum_{\gamma\in W\omega}e^\gamma. 
$$
\end{corollary}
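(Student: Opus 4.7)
The plan is to combine the preceding proposition (specifically the equivalence $(1) \Leftrightarrow (2)$) with the general $W$-invariance of weight multiplicities established in Lemma \ref{locfin}. By definition of the character,
\[
\chi_\omega = \sum_{\mu \in P} \dim L_\omega[\mu]\, e^\mu,
\]
so it suffices to identify the support $P(L_\omega)$ and check that each weight occurs with multiplicity exactly one.

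First I would invoke condition $(2)$ of the preceding proposition, which directly gives $P(L_\omega) \subseteq W\omega$. Conversely, since $\omega \in P(L_\omega)$ and $P(L_\omega)$ is $W$-invariant (by Lemma \ref{locfin}, which applies because $L_\omega$ is finite-dimensional and hence a sum of finite-dimensional $(\mathfrak{sl}_2)_i$-modules for every $i$), we in fact have $P(L_\omega) = W\omega$.

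Next, for the multiplicities: Lemma \ref{locfin} gives $\dim L_\omega[w\omega] = \dim L_\omega[\omega]$ for all $w \in W$. Since $\omega$ is the highest weight of an irreducible highest weight module, $\dim L_\omega[\omega] = 1$. Therefore every weight $\gamma \in W\omega$ satisfies $\dim L_\omega[\gamma] = 1$, and the character formula
\[
\chi_\omega = \sum_{\gamma \in W\omega} e^\gamma
\]
follows immediately by summing $e^\gamma$ over the orbit.

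There is no real obstacle here since all the substantive work has been done in the preceding proposition and in Lemma \ref{locfin}; the corollary is essentially a clean packaging of those two results. The only small point worth stating carefully in the write-up is why Lemma \ref{locfin} applies to $L_\omega$ (finite-dimensionality guarantees the required local finiteness with respect to each root $\mathfrak{sl}_2$-subalgebra), but this is automatic rather than a genuine difficulty.
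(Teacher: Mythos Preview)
Your proof is correct and follows essentially the same approach as the paper, which simply says the corollary follows immediately from the preceding proposition. You have just unpacked the ``immediate'' implication: condition (2) gives the support, and $W$-invariance of weight multiplicities (which you cite as Lemma~\ref{locfin}, though the classification theorem for finite-dimensional $L_\lambda$ states it directly) gives that each multiplicity is $1$.
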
 

\begin{proposition} $\omega\in P_+$ is minuscule if and only if the restriction of $L_\omega$ to any root $\mathfrak{sl}_2$-subalgebra 
of $\g$ is the direct sum of $1$-dimensional and 2-dimensional representations.
\end{proposition}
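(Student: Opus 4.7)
The plan is to prove both implications by analyzing the $h_\alpha$-eigenvalues on $L_\omega$ and invoking the classification of finite-dimensional $\mathfrak{sl}_2$-representations (Subsection \ref{sl2rep}), which says that an irreducible $\mathfrak{sl}_2$-module $V_n$ has dimension $n+1$ and $h$-eigenvalues $\{n,n-2,\ldots,-n\}$. In particular, $V_n$ has dimension $\le 2$ iff $n\in\{0,1\}$ iff all $h$-eigenvalues of $V_n$ lie in $\{-1,0,1\}$. Hence a finite-dimensional $\mathfrak{sl}_2$-module decomposes as a sum of $V_0$'s and $V_1$'s precisely when every eigenvalue of $h$ on it lies in $\{-1,0,1\}$.

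For the forward direction, assume $\omega$ is minuscule. By Corollary \ref{charfor} (more precisely, the characterization proved just above it), the set of weights of $L_\omega$ equals the Weyl group orbit $W\omega$. Fix a root $\alpha$ and consider the root subalgebra $(\mathfrak{sl}_2)_\alpha$. For any weight $\mu$ of $L_\omega$, write $\mu=w\omega$ with $w\in W$; then
$$
(\mu,\alpha^\vee)=(w\omega,\alpha^\vee)=(\omega,w^{-1}\alpha^\vee),
$$
and $w^{-1}\alpha^\vee$ is a coroot. Since minuscule means $|(\omega,\beta)|\le 1$ for every coroot $\beta$, we conclude $(\mu,\alpha^\vee)\in\{-1,0,1\}$. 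Thus the operator $h_\alpha$ has only eigenvalues $-1,0,1$ on $L_\omega$, so $L_\omega$ is a sum of $1$- and $2$-dimensional $(\mathfrak{sl}_2)_\alpha$-modules.

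For the converse, suppose the restriction of $L_\omega$ to every root $\mathfrak{sl}_2$-subalgebra is a sum of $V_0$'s and $V_1$'s. Applied to $(\mathfrak{sl}_2)_\alpha$ for an arbitrary root $\alpha$, the highest weight vector $v_\omega\in L_\omega[\omega]$ is an $h_\alpha$-eigenvector with eigenvalue $(\omega,\alpha^\vee)$, which therefore must lie in $\{-1,0,1\}$. Since $\omega$ is dominant, $(\omega,\alpha^\vee)\ge 0$ for every positive coroot $\alpha^\vee$, so $(\omega,\alpha^\vee)\in\{0,1\}$ for all positive coroots. This is exactly the minuscule condition.

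I expect no serious obstacle: the whole argument is a direct translation between the minuscule inequality and the eigenvalue constraint via Weyl-group invariance of weights and the $\mathfrak{sl}_2$ classification. The only subtle point worth double-checking is that the condition "all weights lie in $W\omega$", used in the forward direction, is genuinely equivalent to minusculeness — but this is exactly what was established in the proposition preceding Corollary \ref{charfor}, so it is available.
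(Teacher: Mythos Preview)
Your proof is correct and takes essentially the same approach as the paper: both directions hinge on reading the $h_\alpha$-eigenvalue of a weight vector as $(\mu,\alpha^\vee)$ and using that minusculeness (via the $W\omega$ characterization) forces these into $\{-1,0,1\}$. The paper states the backward direction contrapositively (exhibiting an $(m{+}1)$-dimensional $(\mathfrak{sl}_2)_\alpha$-summand through $v_\omega$ when $(\omega,\alpha^\vee)=m>1$), but this is just the negation of your direct argument.
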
 

\begin{proof} Let $\omega$ be minuscule and 
$v\in L_\omega$ be a weight vector which is a highest weight vector for $(\mathfrak{sl}_2)_\alpha$. 
Then $h_\alpha v=(w\omega,\alpha^\vee)v=(\omega,w^{-1}\alpha^\vee)v$ for some $w\in W$. Thus $h_\alpha v=0$ or $h_\alpha v=v$, as claimed. 

On the other hand, if $\omega$ is not minuscule then there is a positive root $\alpha$ 
such that $(\omega,\alpha^\vee)=m>1$. So $h_\alpha v_\omega=mv_\omega$
and $v_\omega$ generates the irreducible $m+1$-dimensional representation of 
$(\mathfrak{sl}_2)_\alpha$. 
\end{proof} 

\subsection{Tensor product with a minuscule representation} 

\begin{corollary}\label{tenpromin} If $\omega$ is minuscule then for any dominant integral weight $\lambda$ of $\g$
we have 
$$
L_\omega\otimes L_\lambda=\oplus_{\gamma\in W\omega}L_{\lambda+\gamma},
$$
where if $\lambda+\gamma$ is not dominant then we agree that 
$L_{\lambda+\gamma}=0$. 
\end{corollary}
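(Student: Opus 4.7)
The plan is to compute the character of $L_\omega \otimes L_\lambda$ and compare it with the sum of characters of the $L_{\lambda+\gamma}$, using the Weyl character formula and the very simple character formula for minuscule representations (Corollary \ref{charfor}). Since finite dimensional representations of a semisimple Lie algebra are completely reducible and determined by their character, this will suffice.

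Concretely, I would start from
$$\chi_\omega \cdot \chi_\lambda \cdot \Delta \;=\; \Bigl(\sum_{\gamma\in W\omega} e^\gamma\Bigr)\sum_{w\in W}(-1)^{\ell(w)} e^{w(\lambda+\rho)}.$$
By substituting $\gamma\mapsto w\gamma$ inside each $w$-summand (the set $W\omega$ is $W$-invariant), I would rewrite the right-hand side as
$$\sum_{\gamma\in W\omega}\sum_{w\in W}(-1)^{\ell(w)}\, e^{w(\lambda+\gamma+\rho)}.$$
For each fixed $\gamma$ the inner sum is the $W$-antisymmetrization of $e^{\lambda+\gamma+\rho}$. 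Hence, by the standard dichotomy: either $\lambda+\gamma+\rho$ lies on a root hyperplane $L_{\alpha_i}$ (and the inner sum vanishes, since any simple reflection fixing $\lambda+\gamma+\rho$ gives an involution reversing signs), or else $\lambda+\gamma+\rho$ is $W$-conjugate to a unique strictly dominant weight $\mu+\rho$ with $\mu\in P_+$, in which case the inner sum equals $(-1)^{\ell(w_\gamma)}\chi_\mu\cdot\Delta$.

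The crucial step---and the only place the minuscule hypothesis enters---is the following claim: if $\lambda+\gamma$ is \emph{not} dominant, then $\lambda+\gamma+\rho$ automatically lies on a wall. Suppose $(\lambda+\gamma,\alpha_i^\vee)<0$ for some simple coroot $\alpha_i^\vee$. Since $\lambda$ is dominant, $(\lambda,\alpha_i^\vee)\ge 0$; and since $\omega$ is minuscule and $\gamma\in W\omega$, we have $|(\gamma,\alpha_i^\vee)|=|(\omega,w^{-1}\alpha_i^\vee)|\le 1$, hence $(\gamma,\alpha_i^\vee)\ge -1$. Therefore $(\lambda+\gamma,\alpha_i^\vee)=-1$, forcing $(\lambda,\alpha_i^\vee)=0$ and $(\gamma,\alpha_i^\vee)=-1$; combining with $(\rho,\alpha_i^\vee)=1$ gives $(\lambda+\gamma+\rho,\alpha_i^\vee)=0$, exactly as desired. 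Conversely, if $\lambda+\gamma\in P_+$ then $\lambda+\gamma+\rho$ is strictly dominant, so $w_\gamma=1$ and the contribution to the sum is $+\chi_{\lambda+\gamma}$.

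Putting everything together yields
$$\chi_\omega\cdot\chi_\lambda \;=\; \sum_{\gamma\in W\omega,\ \lambda+\gamma\in P_+}\chi_{\lambda+\gamma},$$
which is the character identity corresponding to the claimed decomposition. I expect the main (minor) obstacle is just packaging the ``wall vs.\ strictly dominant'' dichotomy cleanly; the genuine content is the short numerical argument using $|(\gamma,\alpha_i^\vee)|\le 1$, which is precisely why minusculeness prevents any nontrivial cancellations among distinct $\gamma$'s.
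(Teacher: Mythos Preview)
Your proof is correct and follows essentially the same route as the paper: multiply $\chi_\omega=\sum_{\gamma\in W\omega}e^\gamma$ by the Weyl character formula for $L_\lambda$, reindex via $\gamma\mapsto w\gamma$, and then use the minuscule bound $|(\gamma,\alpha_i^\vee)|\le 1$ to show that whenever $\lambda+\gamma\notin P_+$ one has $(\lambda+\gamma+\rho,\alpha_i^\vee)=0$ for some $i$, so those terms cancel under $w\mapsto ws_i$. Your framing via the general ``wall versus strictly dominant'' dichotomy is a bit more explicit than the paper's, but the substantive argument is identical.
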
 

\begin{proof} By the Weyl character formula and Corollary \ref{charfor}, the character of $L_\omega \otimes L_\lambda$ is 
$$
\chi_{L_\omega\otimes L_\lambda}=
\frac{\sum_{\mu\in W\omega}\sum_{w\in W}(-1)^{\ell(w)}e^{w(\lambda+\rho)+\mu}}
{\prod_{\alpha\in R_+}(e^{\alpha/2}-e^{-\alpha/2})}=
$$
$$
\frac{\sum_{\gamma\in W\omega}\sum_{w\in W}(-1)^{\ell(w)}e^{w(\lambda+\gamma+\rho)}}{\prod_{\alpha\in R_+}(e^{\alpha/2}-e^{-\alpha/2})}. 
$$
If $\lambda+\gamma\notin P_+$ then for some $i$ we have 
$(\lambda+\gamma,\alpha_i^\vee)<0$. But $(\gamma,\alpha_i^\vee)\ge -1$. So 
$(\lambda+\gamma,\alpha_i^\vee)=-1$ and thus $(\lambda+\gamma+\rho,\alpha_i^\vee)=0$. 
So for such $\gamma$, for any $w\in W$ 
the summand for $w$ cancels with the summand for $ws_i$. Thus we get 
$$
\chi_{L_\omega\otimes L_\lambda}=
\frac{\sum_{\gamma\in W\omega: \lambda+\gamma\in P_+}\sum_{w\in W}(-1)^{\ell(w)}e^{w(\lambda+\gamma+\rho)}}{\prod_{\alpha\in R_+}(e^{\alpha/2}-e^{-\alpha/2})}=\sum_{\gamma\in W\omega: \lambda+\gamma\in P_+}\chi_{L_{\lambda+\gamma}}.
$$
\end{proof} 

\begin{example}\label{tensgl} 1. Let $V$ be the vector representation of $GL_n$. 
Then for a partition $\lambda$, 
$V\otimes L_\lambda=\bigoplus_{\mu\in \lambda+\square} L_\mu$, where 
$\mu$ runs over all partitions obtained by adding one {\bf addable} box to the Young diagram of $\lambda$, i.e., such that it remains a Young diagram. 
For example, 
$$
V\otimes S^{(3,3,2,1)}V=S^{(4,3,2,1)}V\oplus S^{(3,3,3,1)}V\oplus S^{(3,3,2,2)}V\oplus S^{(3,3,2,1,1)}V.
$$

2. More generally, $\wedge^mV\otimes L_\lambda=\bigoplus_{\mu\in \lambda+m\square} L_\mu$, 
where we sum over partitions obtained by adding $m$ addable boxes to different rows of the Young diagram of $\lambda$ (going from top to bottom), i.e. a collection of $m$ boxes in different rows after adding which we still have a Young diagram. This follows immediately from Corollary \ref{tenpromin}.
For example, 
$$
\wedge^2V\otimes S^{(3,1)}V=S^{(4,2)}V\oplus S^{(4,1,1)}V\oplus S^{(3,2,1)}V\oplus S^{(3,1,1,1)}V.
$$
\end{example} 

\begin{proposition} (i) Let $\lambda$ be a partition of $N$. Then we have 
$$
\Bbb CS_{N+1}\otimes_{\Bbb CS_N}\pi_\lambda=\bigoplus_{\mu\in \lambda+\square}\pi_\mu.
$$

(ii) Let $\mu$ be a partition of $N+1$. Then we have 
$$
\pi_\mu|_{S_N}=\bigoplus_{\lambda\in \mu-\square}\pi_\lambda.
$$
\end{proposition}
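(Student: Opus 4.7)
The plan is to derive both parts simultaneously from Schur--Weyl duality combined with the branching rule for the vector representation of $GL_n$ established in Example \ref{tensgl}, part 2. Throughout, fix $n \geq N+1$ so that every partition with at most $N+1$ parts is realized as a highest weight for $GL_n(\Bbb C)$, and recall that the multiplicity spaces $\pi_\lambda$ are independent of $n$ in this range.

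First I would prove (ii). Apply Schur--Weyl duality at level $N+1$:
$$
V^{\otimes(N+1)} \;=\; \bigoplus_{\mu} L_\mu \otimes \pi_\mu
$$
as a $GL_n(\Bbb C) \times S_{N+1}$-module. On the other hand, rearranging tensor factors,
$$
V^{\otimes(N+1)} \;=\; V^{\otimes N} \otimes V \;=\; \bigoplus_{\lambda} L_\lambda \otimes V \otimes \pi_\lambda
$$
as a $GL_n(\Bbb C) \times S_N$-module, where $S_N \subset S_{N+1}$ acts on the first $N$ tensor factors. Example \ref{tensgl}(1) gives $V \otimes L_\lambda = \bigoplus_{\mu \in \lambda + \square} L_\mu$, so the right-hand side becomes
$$
\bigoplus_{\lambda} \bigoplus_{\mu \in \lambda+\square} L_\mu \otimes \pi_\lambda
\;=\; \bigoplus_{\mu} L_\mu \otimes \Bigl(\bigoplus_{\lambda \in \mu - \square} \pi_\lambda\Bigr),
$$
where we interchanged the order of summation using the observation that $\mu \in \lambda+\square$ iff $\lambda \in \mu-\square$. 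Comparing the $L_\mu$-isotypic components (and using that the $L_\mu$ are distinct irreducibles of $GL_n$) yields the claimed decomposition of $\pi_\mu|_{S_N}$, proving (ii).

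Next, (i) is a formal consequence of (ii) by Frobenius reciprocity. Since $S_{N+1}$ is a finite group, $\Bbb C S_{N+1} \otimes_{\Bbb C S_N} \pi_\lambda$ is the induced representation $\mathrm{Ind}_{S_N}^{S_{N+1}}\pi_\lambda$, and for any partition $\mu$ of $N+1$,
$$
\Hom_{S_{N+1}}\!\bigl(\mathrm{Ind}_{S_N}^{S_{N+1}}\pi_\lambda,\,\pi_\mu\bigr)
\;=\; \Hom_{S_N}\!\bigl(\pi_\lambda,\,\pi_\mu|_{S_N}\bigr).
$$
By (ii) and Schur's lemma, the right-hand side is one-dimensional when $\mu \in \lambda + \square$ and zero otherwise. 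Since representations of $S_{N+1}$ are completely reducible, this determines the induced module up to isomorphism as $\bigoplus_{\mu \in \lambda+\square} \pi_\mu$, giving (i).

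There is no serious obstacle here; the proof is essentially bookkeeping. The only subtlety is making sure $n$ is chosen large enough that no irreducible $L_\mu$ appearing on either side is zero (so that comparing $L_\mu$-multiplicities is legitimate), and that the labeling of $S_N$- and $S_{N+1}$-irreducibles by partitions via Schur--Weyl duality is compatible under the inclusion $S_N \hookrightarrow S_{N+1}$. Both points follow from the stability argument given in the excerpt just after the Schur--Weyl duality theorem, which shows that $\pi_\lambda^{(n)} \cong \pi_\lambda^{(n+1)}$ canonically, so the branching rule derived for large $n$ is the intrinsic one.
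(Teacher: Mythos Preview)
Your proof is correct and uses the same ingredients as the paper (Schur--Weyl duality, the tensor-product rule $V\otimes L_\lambda=\bigoplus_{\mu\in\lambda+\square}L_\mu$ from Example~\ref{tensgl}(1), and Frobenius reciprocity). The only difference is the order: the paper first proves (i) by applying $\Hom_{S_{N+1}}(-,V^{\otimes N+1})$ and Frobenius reciprocity to reduce to $V\otimes S^\lambda V=\bigoplus_{\mu\in\lambda+\square}S^\mu V$, then deduces (ii) from (i) by Frobenius reciprocity, whereas you prove (ii) first by comparing $L_\mu$-isotypic components and then deduce (i).
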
 

Here in (ii) we sum over all ways to delete a {\bf removable box} from the Young 
diagram of $\mu$, i.e., such that the remaining collection of boxes is still a Young diagram. 

\begin{proof} (i) Let $V$ be a vector space of sufficiently large dimension. 
Using Frobenius reciprocity and Schur-Weyl duality, we have 
$$
\Hom_{S_{N+1}}(\Bbb CS_{N+1}\otimes_{\Bbb CS_N}\pi_\lambda,V^{\otimes N+1})=
\Hom_{S_{N}}(\pi_\lambda,V\otimes V^{\otimes N})=V\otimes S^\lambda V.
$$
On the other hand, again by the Schur-Weyl duality, 
$$
\Hom_{S_{N +1}}(\bigoplus_{\mu\in \lambda+\square}\pi_\mu,V^{\otimes N+1})=
\bigoplus_{\mu\in \lambda+\square}S^\mu V.
$$
So the statement follows from Example \ref{tensgl}(1).

(ii) follows from (i) and Frobenius reciprocity. 
\end{proof} 

Let $\lambda^\dagger$ be the {\bf conjugate partition} to $\lambda$, 
which consists of the boxes $(j,i)$ where $(i,j)\in \lambda$. In other words, 
the Young diagram of $\lambda^\dagger$ is obtained by transposing the Young 
diagram of $\lambda$. For example, 
$(3,3,2,1)^\dagger=(4,3,2)$. 

\begin{corollary}\label{sign} Let $\Bbb C_-$ be the sign representation 
of $S_N$. Then 
$$
\pi_\lambda\otimes \Bbb C_-\cong \pi_{\lambda^\dagger}.
$$ 
\end{corollary}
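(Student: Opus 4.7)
My plan is to prove this by induction on $N$, using the branching rule for restriction from $S_{N+1}$ to $S_N$ established just above (together with its dual, the induction description). The base case $N=1$ is trivial since $S_1$ acts trivially and $(1)^\dagger = (1)$. For the inductive step, let $\mu$ be a partition of $N+1$; I will show that the restrictions of $\pi_\mu \otimes \Bbb C_-$ and $\pi_{\mu^\dagger}$ to $S_N$ coincide, and then invoke a uniqueness argument to conclude the two $S_{N+1}$-representations are themselves isomorphic.

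For the restriction computation, observe first that the sign representation $\Bbb C_-$ of $S_{N+1}$ restricts to the sign representation of $S_N$. Using the branching rule $\pi_\mu|_{S_N} = \bigoplus_{\lambda \in \mu - \square} \pi_\lambda$ and the inductive hypothesis applied summand-by-summand, I get
$$
(\pi_\mu \otimes \Bbb C_-)|_{S_N} \;=\; \bigoplus_{\lambda \in \mu - \square} \pi_\lambda \otimes \Bbb C_- \;=\; \bigoplus_{\lambda \in \mu - \square} \pi_{\lambda^\dagger}.
$$
Since transposition of Young diagrams carries removable corners of $\mu$ bijectively onto removable corners of $\mu^\dagger$ (a box at position $(i,j)$ is a removable corner of $\mu$ iff its reflection $(j,i)$ is a removable corner of $\mu^\dagger$), the set $\{\lambda^\dagger : \lambda \in \mu - \square\}$ equals $\mu^\dagger - \square$. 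Thus $(\pi_\mu \otimes \Bbb C_-)|_{S_N} = \pi_{\mu^\dagger}|_{S_N}$.

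It remains to upgrade this equality of restrictions to an isomorphism of $S_{N+1}$-representations. Since $\Bbb C_-$ is one-dimensional, $\pi_\mu \otimes \Bbb C_-$ is irreducible, so by the Schur-Weyl classification it is isomorphic to $\pi_{\nu}$ for a unique partition $\nu$ of $N+1$; our task is to show $\nu = \mu^\dagger$. Comparing multiplicity-free branching decompositions, the equality of restrictions forces $\nu - \square = \mu^\dagger - \square$ as sets of partitions. The main — and really only — obstacle is then the purely combinatorial claim that a partition is determined by the set obtained by removing one box in all possible ways. This is easy: if the set has at least two elements $\lambda_1 \ne \lambda_2$, then $\nu$ must equal the box-wise maximum $\lambda_1 \cup \lambda_2$ (it contains both, has size $|\lambda_i|+1$, and the union already has that size); if the set has one element, then $\nu$ is a rectangle and the unique element $\lambda = (a^{k-1}, a{-}1)$ determines both $a$ and $k$, hence $\nu = (a^k)$. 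This forces $\nu = \mu^\dagger$ and completes the induction.
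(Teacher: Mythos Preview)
Your inductive strategy via the restriction branching rule is sound and close in spirit to the paper's proof, which also argues by induction on $N$ (using the dual induction rule $\Bbb CS_{N+1}\otimes_{\Bbb CS_N}\pi_\lambda=\bigoplus_{\mu\in\lambda+\square}\pi_\mu$). However, your combinatorial lemma has a small but genuine gap: the claim that the single element $\lambda=(a^{k-1},a-1)$ determines $a$ and $k$ fails for $\lambda=(1)$, since both rectangles $(2)$ and $(1,1)$ have $\{(1)\}$ as their remove-one-box set. Thus your inductive step breaks precisely at $N+1=2$. This is easily repaired by verifying $S_2$ by hand (tensoring with sign swaps the trivial and sign representations, and $(2)^\dagger=(1,1)$); for partitions of size $\ge 3$ your reconstruction argument is correct.

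The paper avoids this boundary issue by identifying the summands differently. Rather than reconstructing $\nu$ from $\nu-\square$, it uses the Jucys--Murphy element $\bold c=\sum_{i<j}(ij)$, which acts on $\pi_\mu$ by the content $c(\mu)$ (Exercise~\ref{contentex}). Since each transposition acts by $-1$ on $\Bbb C_-$, the element $\bold c$ acts on $\pi_\nu\otimes\Bbb C_-$ by $-c(\nu)$; combined with $c(\nu^\dagger)=-c(\nu)$ and the fact that the contents $c(\eta)$ for $\eta\in\lambda^\dagger+\square$ are pairwise distinct, this pins down $\bar\nu=\nu^\dagger$ directly and uniformly in $N$. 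Your route is more elementary in that it avoids the content computation, at the cost of the extra base case; the paper's route is uniform but relies on the earlier exercise about $\bold c$.
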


\begin{proof} We argue by induction in $N=|\lambda|$, with obvious base $N=1$. 
Suppose the statement is known for $N$ and let us prove it for $N+1$. 
Given a partition $\nu$ of $N+1$, let $\lambda$ 
be obtained from $\nu$ by deleting a removable box $(i,j)$. 
Note that we have a natural isomorphism 
$$
\xi: (\Bbb CS_{N+1}\otimes_{\Bbb CS_N}\pi_\lambda)\otimes \Bbb C_-\to \Bbb CS_{N+1}\otimes_{\Bbb CS_N}(\pi_\lambda\otimes \Bbb C_-)=
\Bbb CS_{N+1}\otimes_{\Bbb CS_N}\pi_{\lambda^\dagger}.
$$
This can be written as an isomorphism 
$$
\bigoplus_{\mu\in \lambda+\square}\pi_\mu\otimes \Bbb C_-\cong \bigoplus_{\eta\in \lambda^\dagger+\square}\pi_\eta.
$$
Suppose $\pi_\nu\otimes \Bbb C_-=\pi_{\overline \nu}$. Then $\overline\nu\in  \lambda^
\dagger+\square$. But by Exercise \ref{contentex}, $\pi_\nu$ is the eigenspace of 
 the element $
\bold c\in \Bbb CS_{N+1}$ in $\Bbb CS_{N+1}\otimes_{\Bbb CS_N}\pi_\lambda$ with eigenvalue 
$c(\nu)$ (as $c(\mu)$ are all distinct for $\mu\in \lambda+\square$). Hence the eigenvalue of $\bold c$ on $\pi_{\bar\nu}$ is $-c(\nu)$. This implies that $\bar\nu=\nu^\dagger$, which justifies the induction step.
\end{proof} 

\begin{proposition}  \label{skewhowe} ({\bf Skew Howe duality}) 
Let $V,W$ be complex vector spaces. Then
$$
\wedge^n(V\otimes W)\cong \bigoplus_{\lambda: |\lambda|=n} S^\lambda V\otimes S^{\lambda^\dagger}W
$$
as $GL(V)\times GL(W)$-modules. 
\end{proposition}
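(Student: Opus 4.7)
The plan is to mirror the proof of ordinary Howe duality in Subsection \ref{howd}, replacing the role of the $S_n$-invariants with the sign-isotypic component. The key observation is that if $U$ is any vector space with $S_n$ permuting the tensor factors in $U^{\otimes n}$, then $\wedge^n U \subset U^{\otimes n}$ is precisely the subspace on which $\sigma \in S_n$ acts by $(-1)^{\ell(\sigma)}$; equivalently,
\[
\wedge^n U = \Hom_{S_n}(\Bbb C_-, U^{\otimes n}),
\]
where $\Bbb C_-$ is the sign representation of $S_n$. Applying this to $U = V \otimes W$ and using the natural identification $(V \otimes W)^{\otimes n} = V^{\otimes n} \otimes W^{\otimes n}$ as $S_n$-modules, the problem reduces to extracting the sign component of $V^{\otimes n} \otimes W^{\otimes n}$.

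Next I would apply Schur-Weyl duality to each factor: $V^{\otimes n} = \oplus_{\lambda: |\lambda|=n} S^\lambda V \otimes \pi_\lambda$ and $W^{\otimes n} = \oplus_{\mu: |\mu|=n} S^\mu W \otimes \pi_\mu$. This gives
\[
\wedge^n(V \otimes W) = \bigoplus_{\lambda,\mu: |\lambda|=|\mu|=n} S^\lambda V \otimes S^\mu W \otimes \Hom_{S_n}(\Bbb C_-,\, \pi_\lambda \otimes \pi_\mu).
\]
The remaining task is to compute the multiplicity space $\Hom_{S_n}(\Bbb C_-, \pi_\lambda \otimes \pi_\mu) = (\pi_\lambda \otimes \pi_\mu \otimes \Bbb C_-)^{S_n}$. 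Here I would invoke Corollary \ref{sign}, which gives $\pi_\mu \otimes \Bbb C_- \cong \pi_{\mu^\dagger}$, so the multiplicity space becomes $(\pi_\lambda \otimes \pi_{\mu^\dagger})^{S_n}$. Since the characters of symmetric group representations are integer-valued, $\pi_\nu^* \cong \pi_\nu$, and by Schur's lemma this invariant space equals $\Bbb C^{\delta_{\lambda, \mu^\dagger}}$.

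Assembling everything, only the summands with $\mu = \lambda^\dagger$ survive, giving the desired
\[
\wedge^n(V \otimes W) \cong \bigoplus_{\lambda: |\lambda|=n} S^\lambda V \otimes S^{\lambda^\dagger} W.
\]
There is no real obstacle here since all the heavy lifting is done by Schur-Weyl duality (as in Howe duality) and by Corollary \ref{sign}; the only point that requires attention is the sign-twist computation of the multiplicity space, which is precisely where $S^{\lambda^\dagger}$ rather than $S^\lambda$ enters on the $W$ side. Note that the argument makes no use of finite-dimensionality of $V$ or $W$, exactly as in Howe duality, so the statement holds in that generality.
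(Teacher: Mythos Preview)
Your proof is correct and follows exactly the approach the paper intends: it mirrors the Howe duality argument from Subsection~\ref{howd}, replacing $S_n$-invariants with the sign-isotypic component and invoking Corollary~\ref{sign} to turn $\pi_\mu\otimes\Bbb C_-$ into $\pi_{\mu^\dagger}$, just as the hint suggests.
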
 

\begin{exercise} Prove Proposition \ref{skewhowe}. 

{\bf Hint:} Repeat the proof of the usual Howe duality (Subsection \ref{howd}), using Corollary \ref{sign}. 
\end{exercise} 

\begin{exercise} Compute characters and dimensions of irreducible representations 
$L_{a+b,b,0}$ of $SL_3(\Bbb C)$, where $a,b\ge 0$. Compute the weight multiplicities 
and draw the weights on the hexagonal lattice for \linebreak $a+b\le 3$, indicating the multiplicities. What are the special features of the case $b=0$?   

{\bf Hint.} The best way to do this exercise is to compute the characters 
recursively, using that $V\otimes L_{a+b,b,0}=L_{a+b+1,b,0}\oplus L_{a+b,b+1,0}\oplus
L_{a+b-1,b-1,0}$ (if $a=0$, the second summand drops out and if $b=0$ then the third one drops out), by the ``addable boxes" rule. This allows one to express the characters for $b+1$ in terms of the characters for $b$ and $b-1$. And we know the characters of $L_{a,0,0}$ - they are the complete symmetric functions $h_a$.  
\end{exercise} 

\begin{exercise} Compute the decomposition of 
$\wedge^mV\otimes S^kV$, \linebreak $\wedge^m V\otimes \wedge^k V$, $S^2(\wedge^m V)$, 
$\wedge^2(\wedge^m V)$ 
into irreducible representations of $GL(V)$.
\end{exercise} 

\begin{exercise}\label{intertw} Let $\g$ be a finite dimensional simple complex Lie algebra, and $V$ a finite dimensional representation of $\g$. Given a homomorphism 
$\Phi: L_\lambda\to V\otimes L_\mu$, let $\langle \Phi\rangle:=({\rm Id}\otimes v_\mu^*,\Phi v_\lambda)\in V$, where $v_\lambda$ is a highest weight vector of $L_\lambda$ and 
$v_\mu^*$ the lowest weight vector of $L_\mu^*$. In other words, we have 
$$
\Phi v_\lambda=\langle \Phi\rangle \otimes v_\mu+\text{lower terms}
$$ 
where the lower terms have lower weight than $\mu$ in the second component. 

(i) Show that $\langle \Phi\rangle$ has weight $\lambda-\mu$. 

(ii) Show that $f_i^{(\lambda,\alpha_i^\vee)+1}\langle \Phi\rangle=0$ for all $i$. 

(iii) Let $V[\nu]_\lambda$ be the subspace of vectors $v\in V[\nu]$ of weight $\nu$ which satisfy 
the equalities $f_i^{(\lambda,\alpha_i^\vee)+1}v=0$ for all $i$. Show that the map 
\linebreak $\Phi\mapsto \langle \Phi\rangle$ defines an isomorphism of vector spaces
$\Hom_\g(L_\lambda, V\otimes L_\mu)\cong V[\lambda-\mu]_\lambda$.

{\bf Hint.} Let 
$M_\lambda$ be the Verma module with highest weight $\lambda$, and $\overline M_{-\mu}$ be the {\bf lowest weight} Verma module 
with lowest weight $-\mu$, i.e., generated by a vector $v_{-\mu}$ with defining relations 
$hv_{-\mu}=-\mu(h)v_{-\mu}$ for $h\in \h$ and $f_iv_{-\mu}=0$.  
Show first that the map $\Phi\mapsto \langle \Phi\rangle$ defines an isomorphism 
$\Hom_\g(M_\lambda, V\otimes \overline M_{-\mu}^*)\cong V[\lambda-\mu]$. Next, show that 
$\Phi\in \Hom_\g(M_\lambda, V\otimes \overline M_{-\mu}^*)$ factors through $L_\lambda$ 
iff $\langle \Phi\rangle\in V[\lambda-\mu]_\lambda$, i.e., $f_i^{(\lambda,\alpha_i^\vee)+1}\langle \Phi\rangle=0$ (for this, use that $e_jf_i^{(\lambda,\alpha_i^\vee)+1}v_\lambda=0$, and that the kernel of $M_\lambda\to L_\lambda$ is generated by the vectors $f_i^{(\lambda,\alpha_i^\vee)+1}v_\lambda$). 
This implies that the above map defines an isomorphism 
$\Hom_\g(L_\lambda, V\otimes \overline M_{-\mu}^*)\cong V[\lambda-\mu]_\lambda$.
Finally, show that every homomorphism $L_\lambda\to V\otimes \overline M_{-\mu}^*$
in fact lands in $V\otimes L_\mu\subset V\otimes \overline M_{-\mu}^*$.  

(iv) Let $V$ be the vector representation of $SL_n(\Bbb C)$. Determine the weight subspaces of $S^mV$, and compute the decomposition of $S^mV\otimes L_\mu$ into irreducibles for all $\mu$
(use (iii)). 

(v) For any $\g$, compute the decomposition of $\g\otimes L_\mu$, where 
$\g$ is the adjoint representation of $\g$ (again use (iii)). 

In both (iv) and (v) you should express the answer in terms of the numbers $k_i$ such that $\mu=\sum_i k_i\omega_i$ and the Cartan matrix entries.  
\end{exercise} 

\begin{proposition}\label{minwei} Every coset in $P/Q$ contains a unique minuscule weight. 
This gives a bijection between $P/Q$ and minuscule weights. So the number of 
minuscule weights equals $\det A$, where $A$ is the Cartan matrix. 
\end{proposition}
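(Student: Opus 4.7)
The plan is to prove the two halves of the claimed bijection separately — uniqueness of the minuscule representative in each coset, and existence — and then read off the counting statement from the fact that $[P:Q] = |\det A|$.

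For uniqueness, the key observation is that if $\omega \in P_+$ is minuscule then for every positive coroot $\alpha^\vee$ one has $(\omega, \alpha^\vee) \in \{0,1\}$: the lower bound comes from dominance and integrality, the upper bound from the definition of minuscule. Consequently, if $\omega_1, \omega_2 \in P_+$ are both minuscule, then $(\omega_1 - \omega_2, \alpha^\vee) \in \{-1, 0, 1\}$ for every coroot $\alpha^\vee$. Combined with the assumption $\omega_1 - \omega_2 \in Q$, Lemma \ref{minex} immediately forces $\omega_1 - \omega_2 = 0$.

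For existence, given a coset $c \in P/Q$, I would choose an element $\omega^* \in c$ of minimum Euclidean norm; such an element exists because $c$ is a discrete subset of $E$ and $|\cdot|^2$ is proper. For every root $\alpha$, the inclusions $\omega^* \pm \alpha \in c$ together with minimality of $\omega^*$ give $\pm 2(\omega^*, \alpha) + |\alpha|^2 \ge 0$, equivalently $|(\omega^*, \alpha^\vee)| \le 1$. Next, the Weyl group acts trivially on $P/Q$, because each simple reflection satisfies $s_i\lambda - \lambda = -(\lambda,\alpha_i^\vee)\alpha_i \in Q$; therefore the dominant $W$-translate $\omega := w\omega^*$ still lies in $c$. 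The identity $(\omega, \alpha^\vee) = (\omega^*, w^{-1}\alpha^\vee)$ and the fact that $W$ permutes the coroots show that $|(\omega,\alpha^\vee)| \le 1$ persists, and combined with dominance of $\omega$ this gives $(\omega, \alpha^\vee) \in \{0,1\}$ for every positive coroot — i.e., $\omega$ is minuscule.

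For the counting, pairing $\alpha_j$ with $\alpha_l^\vee$ shows that $\alpha_j = \sum_l a_{lj}\omega_l$, so the transition matrix from the $\alpha_j$-basis of $Q$ to the $\omega_l$-basis of $P$ is the Cartan matrix $A$; hence $[P:Q] = |\det A|$, and this number equals the number of minuscule weights by what we just proved. The hardest step conceptually is the existence half: one must be careful that the minimum-norm element is actually attained, that $W$-translation preserves the bound $|(\omega^*,\alpha^\vee)| \le 1$, and that dominance together with integrality sharpens $[0,1]$ to $\{0,1\}$ — but each of these points is elementary once stated in the right order.
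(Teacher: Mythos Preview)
Your proof is correct. The uniqueness half matches the paper's argument exactly: both appeal to Lemma \ref{minex} applied to $\omega_1-\omega_2\in Q$ with $|(\omega_1-\omega_2,\beta)|\le 1$ for all coroots $\beta$.

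For existence you take a genuinely different route. The paper minimizes the linear functional $(\omega,\rho^\vee)$ over $C\cap P_+$ and then invokes the earlier equivalence of characterizations of minuscule weights (specifically $(2)\Rightarrow(1)$: if every dominant weight of $L_\omega$ equals $\omega$, then $\omega$ is minuscule). Your argument instead minimizes the Euclidean norm over the whole coset $c$, reads off $|(\omega^*,\alpha^\vee)|\le 1$ directly from $|\omega^*\pm\alpha|^2\ge|\omega^*|^2$, and then moves to the dominant chamber by a $W$-translate. This is more self-contained: it uses only the definition of minuscule and nothing about $L_\omega$ or the $(1)\Leftrightarrow(2)\Leftrightarrow(3)$ proposition. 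A minor simplification: since $W$ acts orthogonally, your dominant translate $\omega$ is itself of minimum norm in $c$, so you could apply the norm inequality directly to $\omega$ and skip the step $(\omega,\alpha^\vee)=(\omega^*,w^{-1}\alpha^\vee)$.
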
  

\begin{proof} Let $C:=a+Q\in P/Q$ be a coset, and consider the intersection 
$C\cap P_+$. Let $\omega\in C\cap P_+$ be an element with smallest 
$(\omega,\rho^\vee)$. If $\lambda$ is a dominant weight of $L_\omega$ 
then $\lambda\in C\cap P_+$, so $(\lambda,\rho^\vee)\ge (\omega,\rho^\vee)$, hence 
$(\omega-\lambda,\rho^\vee)\le 0$. But $\omega-\lambda\in Q_+$, so $\lambda=\omega$. 
Thus $\omega$ is minuscule. On the other hand, if $\omega_1,\omega_2\in C$ are minuscule and distinct then $\omega_1-\omega_2\in Q$, so by Lemma \ref{minex}, there is a coroot $\beta$ such that 
$(\omega_1-\omega_2,\beta)\ge 2$. So $(\omega_1,\beta)=1$ and $(\omega_2,\beta)=-1$. 
The first identity implies $\beta>0$ and the second one $\beta<0$, a contradiction. 
\end{proof} 

\subsection{Fundamental weights of classical Lie algebras} 

Let us now determine the fundamental weights of classical Lie algebras of types $B_n,C_n,D_n$. 

{\bf Type $C_n$.} Then $\g=\mathfrak{sp}_{2n}$. The positive roots are $\bold e_i\pm \bold e_j,2\bold e_i$, the simple roots 
$\alpha_1=\bold e_1-\bold e_2,...,\alpha_n=2\bold e_n$, so $\alpha_i^\vee=\alpha_i$ for 
$i\ne n$ and $\alpha_n^\vee=\bold e_n$. So 
$\omega_i=(1,...,1,0,...,0)$ ($i$ ones) for $1\le i\le n$. 

{\bf Type $B_n$.} Then $\g=\mathfrak{so}_{2n+1}$, so we have the same story as for $C_n$ except $\alpha_n=\bold e_n$ and $\alpha_n^\vee=2\bold e_n$, so we have the same $\omega_i$ for $i<n$ but $\omega_n=(\frac{1}{2},...,\frac{1}{2})$. 

{\bf Type $D_n$.} Then $\g=\mathfrak{so}_{2n}$, so the positive roots are $\bold e_i\pm \bold e_j$, the simple roots $\alpha_1=\bold e_1-\bold e_2,...,\alpha_{n-2}=\bold e_{n-2}-\bold e_{n-1}$, $\alpha_{n-1}=\bold e_{n-1}-\bold e_n$, $\alpha_n=\bold e_{n-1}+\bold e_n$. 
So $\omega_i=(1,...,1,0,...,0)$ ($i$ ones) for $i=1,...,n-2$, 
but $\omega_{n-1}=(\frac{1}{2},...,\frac{1}{2},-\frac{1}{2})$, 
$\omega_n=(\frac{1}{2},...,\frac{1}{2},\frac{1}{2})$. 

\subsection{Minuscule weights outside type $A$}

Proposition \ref{minwei} immediately tells us how many minuscule weights we have. 
For type $A$ we saw that all fundamental weights are minuscule. 
For $G_2,F_4,E_8$, $\det A=1$, so the only minuscule weight is $0$. 
For type $B_n$ we have $\det A=2$, so we should have one nonzero minuscule weight, 
and this is the weight $(\frac{1}{2},...,\frac{1}{2})$. The corresponding representation has weights $(\pm\frac{1}{2},...,\pm\frac{1}{2})$, so it has dimension $2^n$. It is called the {\bf spin representation}, denoted $S$. 

For $C_n$ we also have $\det A=2$, so we again have a unique nonzero minuscule weight.
Namely, it is the weight $(1,0,...,0)$ (so the minuscule representation is the tautological 
representation of $\mathfrak{sp}_{2n}$, of dimension $2n$). For $D_n$ we have $\det A=4$, so 
we have three nontrivial minuscule representations, with highest weights
$\omega_1,\omega_{n-1},\omega_n$, of dimensions $2n,2^{n-1},2^{n-1}$. The first one is the tautological representation and the remaining two are the {\bf spin representations} $S_-,S_+$, whose weights are $(\pm\frac{1}{2},...,\pm\frac{1}{2})$ with odd, respectively even number of minuses. 

For $E_6$ there are two nontrivial minuscule representations $V,V^*$ of dimension $27$. 
For $E_7$ there is just one of dimension $56$. These dimensions are computed easily by counting elements in the corresponding Weyl group orbits. 

\section{\bf Fundamental representations of classical Lie algebras} 

\subsection{Type $C_n$} Since the fundamental weights for $\g=\mathfrak{sp}_{2n}$ are 
$\omega_i=(1,...,1,0,..,0)$ ($i$ ones), same as for $\mathfrak{gl}_n$, one may 
think that the fundamental representations are also ``the same", i.e. $\wedge^i V$, where $V$ is the $2n$-dimensional vector representation. 
Indeed, a Cartan subalgebra in $\g$ is the space of matrices 
${\rm diag}(a_1,...,a_n,-a_1,...,-a_n)$, so $L_{\omega_1}=V$, with highest weight vector $e_1$. 
However, the representation $\wedge^2V$ is not irreducible, even though it has the correct highest weight $\omega_2$. Indeed, we have $\wedge^2V=\wedge^2_0V\oplus \Bbb C$, where $\Bbb C$ is the trivial representation spanned by the inverse $B^{-1}=\sum_i e_{i+n}\wedge e_i$ 
of the invariant nondegenerate skew-symmetric form $B=\sum_i e_i^*\wedge e_{i+n}^*\in \wedge^2V^*$ preserved by $\g$, and $\wedge^2_0V$ is the orthogonal complement of $B$. 

It turns out that $\wedge^2_0V$ is irreducible. (You can show it directly or using the Weyl dimension formula). Thus we have $L_{\omega_2}=\wedge^2_0V$ (if $n\ge 2$). 

So what happens for $L_{\omega_j}$ with any $j\ge 2$? To determine this, note that we have 
a homomorphism of representations $\iota_B: \wedge^{i} V\to \wedge^{i-2}V$, which is just the contraction with $B$ (we agree that $\wedge^jV=0$ for $j<0$). So we may consider the subrepresentation $\wedge^i_0V={\rm Ker}(\iota_B|_{\wedge^iV})\subset \wedge^iV$. 

\begin{exercise}
(i) Let $m_B: \wedge^{i-1}V\to \wedge^{i+1}V$ be the operator defined by $m_B(u):=B^{-1}\wedge u$. 
Show that the operators $m_B,\iota_B$ generate a representation of the Lie algebra $\mathfrak{sl}_2$ on $\wedge V:=\oplus_{i=0}^{2n} \wedge^iV$ where they are proportional to the operators $e,f$, such that $h$ acts on $\wedge^i V$ by multiplication by $i-n$. 

(ii) Show that $\iota_B$ is injective when $i\ge n$ and surjective when $i\le n$ (so an isomorphism for $i=n$).    

(iii) Show that ${\rm Ker}(\iota_B|_{\wedge^jV})$ is irreducible for $j\le n$, and 
is isomorphic to $L_{\omega_j}$, where we agree that $\omega_0=0$. Deduce that 
$$
\wedge V=\oplus_{i=0}^n L_{\omega_i}\otimes L_{n-i} 
$$
as a representation of $\mathfrak{sp}_{2n}\oplus \mathfrak{sl}_2$, where $L_m$ is the $m+1$-dimensional irreducible representation of $\mathfrak{sl}_2$ of highest weight $m$. 

(iv) Show that every irreducible representation of $\mathfrak{sp}_{2n}$ occurs in $V^{\otimes N}$ for some $N$.  

Thus we see another instance of the double centralizer property. 
\end{exercise} 

\subsection{Type $B_n$} We have $\g=\mathfrak{so}_{2n+1}$, preserving the quadratic form 
$Q=\sum_{i=1}^n x_ix_{i+n}+x_{2n+1}^2$. A Cartan subalgebra consists of matrices 
${\rm diag}(a_1,...,a_n,-a_1,...,-a_n,0)$. So the representations $\wedge^iV$, $1\le i\le n$, where $V$ is the $2n+1$-dimensional vector representation, have highest weight $(1,...,1,0,...0)$ ($i$ ones), which is $\omega_i$ if $i\le n-1$. 

\begin{exercise} Show that the representation $\wedge^i V$ is irreducible for $0\le i\le n$. 
\end{exercise} 

Thus for $1\le i\le n-1$ we have $\wedge^i V=L_{\omega_i}$. On the other hand, the representation $\wedge^nV$, even though irreducible, is not fundamental. Indeed, its highest weight is $(1,...,1)=2\omega_n$, as $\omega_n=(\frac{1}{2},...,\frac{1}{2})$. 
In fact, we see that the representation $L_{\omega_n}$ does not occur in $V^{\otimes N}$ for any $N$, since coordinates of its highest weight are not integer. As mentioned above, this representation is called the {\bf spin representation} $S$. Vectors in $S$ are called {\bf spinors}. The weights of $S$ are Weyl group translates of $\omega_n$, so they are $(\pm\frac{1}{2},...,\pm\frac{1}{2})$ for any choices of signs, so 
$\dim S=2^n$, and the character of $S$ is given by the formula 
$$
\chi_S(x_1,...,x_n)=(x_1^{\frac{1}{2}}+x_1^{-\frac{1}{2}})...(x_n^{\frac{1}{2}}+x_n^{-\frac{1}{2}}).
$$ 
This is supposed to be the trace of $\diag(x_1,...,x_n,x_1^{-1},...,x_n^{-1},1)\in SO_{2n+1}(\Bbb C)$, which does not make sense since the square roots on the right hand side are defined only up to sign. This shows that the spin representation $S$ {\bf does not lift} to the group $SO_{2n+1}(\Bbb C)$. Namely, the group $SO_{2n+1}(\Bbb C)$ is not simply connected, and 
the representation $S$ only lifts to the universal covering group $\widetilde{SO}_{2n+1}(\Bbb C)$, which is called the {\bf spin group}, and is denoted ${\rm Spin}_{2n+1}(\Bbb C)$. 

\begin{example} Let $n=1$. Then $\g=\mathfrak{so}_3(\Bbb C)=\mathfrak{sl}_2(\Bbb C)$ and 
$S$ is the 2-dimensional irreducible representation. We know that this representation does not lift to $SO_3(\Bbb C)$ but only to its double cover $SL_2(\Bbb C)$, which is simply connected
(so $\pi_1(SO_3(\Bbb C))=\Bbb Z/2$, demonstrated by the famous {\bf belt trick}). 
So we have ${\rm Spin}_3(\Bbb C)=SL_2(\Bbb C)$. This is related to the spin phenomenon 
in quantum mechanics which we will discuss later. This explains the terminology. 
\end{example} 

\begin{proposition}\label{fung} For $n\ge 3$ we have $\pi_1(SO_n(\Bbb C))=\Bbb Z/2$.  
\end{proposition}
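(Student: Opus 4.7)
The plan is to induct on $n$, propagating the value $\Bbb Z/2$ from the base case $n=3$ via the homotopy long exact sequence of a homogeneous space fibration. The approach mirrors the real case, the twist being that the complex analogue of a sphere is noncompact but still has the homotopy type of a real sphere.

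Fix the form $Q(v) = \sum v_i^2$ on $\Bbb C^n$ and let $X_n := \{v \in \Bbb C^n : Q(v) = 1\}$. A complex version of Witt's theorem (by Gram--Schmidt in a $Q$-orthogonal basis) shows that $SO_n(\Bbb C)$ acts transitively on $X_n$ with stabilizer of $e_1$ equal to the copy of $SO_{n-1}(\Bbb C)$ acting on $e_1^\perp$, yielding a locally trivial fibration
$$SO_{n-1}(\Bbb C) \hookrightarrow SO_n(\Bbb C) \twoheadrightarrow X_n,$$
to which I will apply the long exact sequence of homotopy groups (Remark \ref{longexx}). The key geometric input is that writing $v = x + iy$ with $x,y \in \Bbb R^n$, the equations $|x|^2 - |y|^2 = 1$ and $x\cdot y = 0$ identify $X_n$ diffeomorphically with the tangent bundle $TS^{n-1}$; in particular $X_n$ deformation retracts onto $S^{n-1}$ and $\pi_k(X_n) \cong \pi_k(S^{n-1})$ for all $k$. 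Using the same fibration inductively on $\pi_0$ (with base $SO_1(\Bbb C) = \{1\}$ and $X_n$ connected for $n \geq 2$), one simultaneously obtains that each $SO_n(\Bbb C)$ is connected, which is needed to apply the LES.

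For $n \geq 4$, $\pi_1(S^{n-1}) = 0$ and $\pi_2(S^{n-1}) = 0$, so the relevant segment of the LES collapses to an isomorphism $\pi_1(SO_{n-1}(\Bbb C)) \xrightarrow{\sim} \pi_1(SO_n(\Bbb C))$. For the base case $n = 3$, I would realize $SO_3(\Bbb C) \cong SL_2(\Bbb C)/\{\pm 1\}$ via the complexified adjoint action on $\mathfrak{sl}_2(\Bbb C) \cong \Bbb C^3$ (the complex analogue of Exercise \ref{beltexe}), and then conclude from Proposition \ref{abeli}(ii) once $SL_2(\Bbb C)$ is shown simply connected. For the latter, I would exhibit the fibration $N \to SL_2(\Bbb C) \to \Bbb C^2\setminus\{0\}$ coming from the transitive action on nonzero column vectors with unipotent stabilizer $N \cong \Bbb C$: since $\Bbb C^2\setminus\{0\} \simeq S^3$ has trivial $\pi_1$ and $\pi_2$ and $N$ is contractible, the LES yields $\pi_1(SL_2(\Bbb C)) = 0$.

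The main obstacle I anticipate is the geometric identification $X_n \cong TS^{n-1}$: one must verify that the assignment sending $v = x + iy \in X_n$ to the pair $\left(x/|x|,\,y\right) \in TS^{n-1}$ (with $|x|^2 = 1 + |y|^2$ recovering $|x|$ from $|y|$) is a diffeomorphism, and then contract $TS^{n-1}$ to its zero section. Once this identification and the inductive connectedness of $SO_n(\Bbb C)$ are in hand, everything downstream is a routine run of the long exact sequence coupled with the elementary homotopy groups of spheres.
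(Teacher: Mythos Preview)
Your proof is correct and follows essentially the same route as the paper: the fibration $SO_{n-1}(\Bbb C)\to SO_n(\Bbb C)\to X_n$, a retraction of $X_n$ onto $S^{n-1}$, and induction from $n=3$ via the long exact sequence. The only notable difference is in the base case: where you set up a separate fibration $SL_2(\Bbb C)\to \Bbb C^2\setminus\{0\}$ to get simple connectivity, the paper observes that $X_4$ \emph{is} $SL_2(\Bbb C)$ (the quadric $ad-bc=1$) and so its retraction lemma already gives $\pi_1(SL_2(\Bbb C))=0$ for free.
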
 

\begin{proof} 

\begin{lemma}\label{retra} Let $X_n$ be the hypersurface in $\Bbb C^n$ given by the equation 
$z_1^2+...+z_n^2=1$. Then for any $1\le k\le n-2$ we have $\pi_k(X_n)=0$, i.e., every continuous map $S^k\to X_n$ contracts to a point. E.g., $X_n$ is connected (=0-connected) for $n\ge 2$, simply connected (=1-connected) for $n\ge 3$, 2-connected for $n\ge 4$, etc.
\end{lemma} 

\begin{proof} The surface $X_n$ is the complexification of the $n-1$-sphere, $X_n^{\Bbb R}:=X_n\cap \Bbb R^n=S^{n-1}$. We will define 
a continuous family of maps \linebreak $f_t: X_n\to X_n$ such that $f_1={\rm Id}$ and $f_0$ 
lands in $X_n^{\Bbb R}$, with $f_t|_{X_n^{\Bbb R}}={\rm Id}$. This will show that $X_n^{\Bbb R}$ is a retract of $X_n$, so $X_n$ has the required properties since so does $X_n^{\Bbb R}$ (indeed, any map $\gamma=f_1\circ \gamma: S^k\to X_n$ is homotopic to the map $f_0\circ \gamma$ in $X_n^{\Bbb R}$, the homotopy being $f_t\circ \gamma$). 

Let $z=x+iy\in X_n$, where $x,y\in \Bbb R^n$. Then $z^2=1$, so we have $x^2-y^2=1, xy=0$. Hence
$$
(x+tiy)^2=x^2-t^2y^2=1+(1-t^2)y^2\ge 1. 
$$
So we may define 
$$
f_t(z):=\frac{x+tiy}{\sqrt{x^2-t^2y^2}}.
$$
Then $f_t(z)^2=1$, $f_1(z)=z$, and $f_0(z)=\frac{x}{|x|}$ lands in the sphere $S^{n-1}$, as needed. 
\end{proof}  

In particular, for $n=4$, changing coordinates, we see that the surface $ad-bc=1$ is doubly connected, i.e., $SL_2(\Bbb C)$ is doubly connected and thus $\pi_1(SO_3(\Bbb C))=\Bbb Z/2$ (which we already knew). 

Now, the group $SO_n(\Bbb C)$ acts on $X_n$ transitively with stabilizer 
$SO_{n-1}(\Bbb C)$, so we have a fibration 
$SO_n\to X_n$ with fiber $SO_{n-1}$. Therefore, we have an 
exact sequence 
$$
\pi_2(X_n)\to \pi_1(SO_{n-1}(\Bbb C))\to \pi_1(SO_{n}(\Bbb C))\to \pi_1(X_n)
$$
(a portion of the long exact sequence of homotopy groups). 
By Lemma \ref{retra}, the first and the last group in this sequence are trivial for $n\ge 4$
which implies that in this case $\pi_1(SO_{n-1}(\Bbb C))\cong \pi_1(SO_{n}(\Bbb C))$, so we conclude by induction that $\pi_1(SO_{n}(\Bbb C))=\Bbb Z/2$ for all $n\ge 3$ (using the case $n=3$ as the base). 
\end{proof} 

\begin{corollary} For $n\ge 1$ the simply connected group ${\rm Spin}_{2n+1}(\Bbb C)$ is a double cover 
of $SO_{2n+1}(\Bbb C)$. 
\end{corollary} 

\begin{exercise} (i) Use a similar argument to show that the groups $SL_{n+1}(\Bbb C)$ 
and $Sp_{2n}(\Bbb C)$ are simply connected for $n\ge 1$ (consider their action on nonzero vectors in the vector representation and compute the stabilizer). 

(ii) Generalize this argument to show that for any $k\ge 1$ the higher homotopy group $\pi_k$ 
for the classical groups $SL_{n+1}(\Bbb C)$, $SO_n(\Bbb C)$, $Sp_{2n}(\Bbb C)$ 
stabilizes (i.e., becomes independent of  $n$) when $n$ is large enough. How large does 
$n$ have to be for that?   
\end{exercise} 

\subsection{Type $D_n$} We have $\g=\mathfrak{so}_{2n}$, preserving the quadratic form 
$$
Q=\sum_{i=1}^n x_ix_{i+n}.
$$
A Cartan subalgebra consists of matrices 
${\rm diag}(a_1,...,a_n,-a_1,...,-a_n)$. So the representations $\wedge^iV$, $1\le i\le n$, where $V$ is the $2n$-dimensional vector representation, have highest weight $(1,...,1,0,...0)$ ($i$ ones), which is $\omega_i$ if $i\le n-2$. 

\begin{exercise} Show that the representation $\wedge^i V$ is irreducible for $0\le i\le n-1$. 
\end{exercise} 

Thus $L_{\omega_i}=\wedge^i V$ for $i\le n-2$. On the other hand, while the representation $L_{(1,...,1,0)}$ is irreducible, it is not fundamental, as 
$(1,...,1,0)=\omega_{n-1}+\omega_n$, where  
$\omega_{n-1}=(\frac{1}{2},...,\frac{1}{2}, -\frac{1}{2})$ and 
$\omega_n=(\frac{1}{2},...,\frac{1}{2}, \frac{1}{2})$. 
The fundamental representations $L_{\omega_{n-1}},L_{\omega_n}$ are 
called the {\bf spin representations} and denoted $S_-,S_+$; their elements are called {\bf spinors}. 
Similarly to the odd dimensional case, they have dimensions 
$2^{n-1}$ and characters 
$$
\chi_{S_\pm}=\left((x_1^{\frac{1}{2}}+x_1^{-\frac{1}{2}})...(x_n^{\frac{1}{2}}+x_n^{-\frac{1}{2}})\right)_\pm
$$
where the subscript $\pm$ means that we take the 
monomials with odd (for --), respectively even (for +) number of minuses.
This shows that, similarly to the odd dimensional case, 
$S_+,S_-$ don't occur in $V^{\otimes N}$ and don't lift 
to $SO_{2n}(\Bbb C)$ but require the universal covering ${\rm Spin}_{2n}(\Bbb C)=\widetilde{SO}_{2n}(\Bbb C)$, called the {\bf spin group}. Proposition \ref{fung} implies 

\begin{corollary} For $n\ge 2$ the group ${\rm Spin}_{2n}(\Bbb C)$ is a double cover of $SO_{2n}(\Bbb C)$. 
\end{corollary} 

\begin{example} Consider the spin groups and representations for small dimensions. 
We have seen that ${\rm Spin}_3=SL_2$, $S=\Bbb C^2$. We also have ${\rm Spin}_4=SL_2\times SL_2$, with $S_+,S_-$ being the 2-dimensional representations of the factors. 
We have ${\rm Spin}_5={\rm Sp}_4$, with $S$ being the $4$-dimensional vector representation. So $SO_5={\rm Sp}_4/(\pm 1)$. Finally, ${\rm Spin}_6=SL_4$, 
with $S_+,S_-$ being the $4$-dimensional representation $V$ and its dual $V^*$. 
Thus $SO_6=SL_4/(\pm 1)$. 
\end{example} 

\begin{exercise}\label{orthgr} Let $V$ be a finite dimensional vector space with a nondegenerate inner product. Consider the algebra $SV$ of polynomial functions on $V^*$. 
Let $x_1,...,x_n$ be an orthonormal basis of $V$, so that $SV\cong \Bbb C[x_1,...,x_n]$, 
and let $R^2:=\sum_{i=1}^n x_i^2\in S^2V$ be the ``squared radius".
Also let $\Delta=\sum_{i=1}^n\frac{\partial^2}{\partial x_i^2}$ be the {\bf Laplace operator.} 
Note that the Lie algebra $\mathfrak{so}(V)$ acts on $SV$ by automorphisms and 
$R^2$ and $\Delta$ are $\mathfrak{so}(V)$-invariant. A polynomial $P\in SV$ is called {\bf harmonic} if $\Delta P=0$.  

(i) Show that the operator of multiplication by $R^2$ and the Laplace operator $\Delta$ define an action of $\mathfrak{sl}_2$ on $SV$ which commutes with $\mathfrak{so}(V)$. Namely, they are proportional to $f,e$ respectively. Compute the operator $h$ (it will be a first order differential operator in $x_i$). 

(ii) Let $H_m\subset S^mV$ be the space of harmonic polynomials of degree $m$ 
(a representation of $\mathfrak{so}(V)$). 
Show that as an $\mathfrak{so}(V)\oplus\mathfrak{sl}_2$-module, $SV$ decomposes as 
$$
SV=\oplus_{m=0}^\infty H_m\otimes W_m,
$$
where $W_m$ are irreducible (infinite dimensional) representations of $\mathfrak{sl}_2$.   
Find the dimensions of $H_m$. 

(iii) Show that $H_m$ is irreducible, in fact $H_m=L_{m\omega_1}$. 
Decompose $S^mV$ into a direct sum of irreducible representations 
of $\mathfrak{so}(V)$. 

(iv) Show that $W_m$ are Verma modules and compute their highest weights.  

(v) For $s\in \Bbb C$ consider the algebra 
$$
A_s:=\Bbb C[x_1,...,x_n]/(x_1^2+...+x_n^2-s), 
$$
the algebra of polynomial functions on the hypersurface $x_1^2+...+x_n^2=s$ (here $(f)$ denotes the principal ideal generated by $f$). This algebra has a natural action of $\mathfrak{so}(V)$. Decompose $A_s$ into a direct sum of irreducible representations of $\mathfrak{so}(V)$.
\end{exercise}

\subsection{The Clifford algebra} 

It is important to be able to realize the spin representations explicitly. The reason it is somewhat tricky 
is that these representations don't occur in tensor powers of $V$ (as they have half-integer weights). However, the tensor product of a spin representation with its dual, $S\otimes S^*$, has integer weights and does express in terms of $V$. So we need to extract "the square root" from this representation, in the sense that ``the space of vectors of size $n$ is the square root of the space of square matrices of size $n$". This is the idea behind the Clifford algebra construction. 

\begin{definition} Let $V$ be a finite dimensional vector space over a field ${\bf k}$ of characteristic $\ne 2$ with a nondegenerate symmetric inner product $(,)$. The {\bf Clifford algebra} ${\rm Cl}(V)$ is the algebra generated 
by vectors $v\in V$ with defining relations 
$$
v^2=\tfrac{1}{2}(v,v), v\in V.
$$ 
\end{definition} 
Thus for $a,b\in V$ we have 
$$
ab+ba=(a+b)^2-a^2-b^2=\tfrac{1}{2}((a+b,a+b)-(a,a)-(b,b))=(a,b).
$$ 
This is a deformation of the exterior algebra $\wedge V$ which is defined in the same way 
but $v^2=0$. More precisely, ${\rm Cl}(V)$ has a filtration (defined by setting $\deg(v)=1$, $v\in V$) such that the associated graded algebra receives a surjective map 
$\phi: \wedge V\to {\rm gr}{\rm Cl}(V)$. We will show that this is a nice (``flat") deformation, in the sense that $\dim {\rm Cl}(V)=\dim \wedge V=2^{\dim V}$, so that $\phi$ is an isomorphism. 
This is a kind of Poincar\'e-Birkhoff-Witt theorem (namely, it is similar to the PBW theorem for Lie algebras, and in fact a special case of one if you pass from Lie algebras to more general Lie superalgebras). Namely, we have the following theorem. 

\begin{theorem}\label{repcli} If $\k$ is algebraically closed then the algebra ${\rm Cl}(V)$ is isomorphic to ${\rm Mat}_{2^n}(\bold k)$ if $\dim V=2n$ and to ${\rm Mat}_{2^n}(\bold k)\oplus {\rm Mat}_{2^n}(\bold k)$ if $\dim V=2n+1$.
\end{theorem}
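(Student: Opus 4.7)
\smallskip

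The plan is to establish both the Poincar\'e--Birkhoff--Witt type statement (that the natural surjection $\phi: \wedge V \to \mathrm{gr}\,\mathrm{Cl}(V)$ is an isomorphism, so $\dim\mathrm{Cl}(V) = 2^{\dim V}$) and the structure statement simultaneously, by sandwiching a dimension count between an algebraic upper bound and a representation-theoretic lower bound.

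First I would prove the upper bound $\dim\mathrm{Cl}(V) \le 2^{\dim V}$. Pick an orthogonal basis $e_1,\dots,e_m$ of $V$. The defining relations $e_i e_j = -e_j e_i + (e_i,e_j)$ and $e_i^2 = \tfrac{1}{2}(e_i,e_i)$ let one rewrite any monomial in the $e_i$ as a linear combination of ordered products $e_{i_1}\cdots e_{i_k}$ with $i_1 < \cdots < i_k$, of which there are exactly $2^m$. Thus $\mathrm{Cl}(V)$ is spanned by these monomials.

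Next, for $\dim V = 2n$, I would construct a faithful representation of dimension $2^n$. Choose a polarization $V = L \oplus L'$ with $L, L'$ maximal isotropic, so that the bilinear form restricts to a perfect pairing $L \times L' \to \mathbf{k}$. Put $S := \wedge L$ and let $\ell \in L$ act by exterior multiplication $\ell \wedge (-)$ and $\ell' \in L'$ by contraction $\iota_{\ell'}$ (using the pairing with $L$). The anticommutation identities for wedge and contraction translate precisely into the Clifford relations $u v + v u = (u,v)$ for $u,v \in V$ (the three cases $u,v\in L$, $u,v\in L'$, and the mixed case are all immediate). This gives a homomorphism $\rho: \mathrm{Cl}(V) \to \mathrm{End}(S)$. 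To see $\rho$ is surjective, note that products of exterior multiplications and contractions by elements of a basis generate all elementary matrices with respect to the basis of $\wedge L$ consisting of pure wedges $e_{i_1}\wedge\cdots\wedge e_{i_k}$ (creation/annihilation arguments). Combining: $2^{2n} \ge \dim\mathrm{Cl}(V) \ge \dim\mathrm{End}(S) = 2^{2n}$, so $\rho$ is an isomorphism $\mathrm{Cl}(V) \cong \mathrm{Mat}_{2^n}(\mathbf{k})$. As a byproduct, the PBW statement holds in this case.

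For $\dim V = 2n+1$, write $V = V_0 \oplus \mathbf{k} e$ with $V_0$ nondegenerate of dimension $2n$ and $(e,e) = 1$. Pick an orthogonal basis $e_1,\dots,e_{2n}$ of $V_0$ and consider $\omega := c\cdot e_1 e_2 \cdots e_{2n} e \in \mathrm{Cl}(V)$ for a scalar $c$ chosen so that $\omega^2 = 1$ (this is possible because a direct computation using $e_i^2 = 1/2$ and anticommutativity shows $(e_1\cdots e_{2n}e)^2$ is a nonzero scalar). Because $\dim V$ is odd, $\omega$ commutes with each $e_i$ and with $e$, hence is central. Therefore $p_\pm := (1\pm\omega)/2$ are orthogonal central idempotents summing to $1$, decomposing $\mathrm{Cl}(V) = p_+\mathrm{Cl}(V) \oplus p_-\mathrm{Cl}(V)$. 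On each summand the equation $\omega = \pm 1$ lets one solve for $e$ as a scalar multiple of $(e_1\cdots e_{2n})^{-1}$ (which is invertible in $\mathrm{Cl}(V_0)$), so each summand is generated by the image of $V_0$, i.e.\ is a quotient of $\mathrm{Cl}(V_0) \cong \mathrm{Mat}_{2^n}(\mathbf{k})$. Since the latter is simple, each summand is either zero or $\mathrm{Mat}_{2^n}(\mathbf{k})$. Both summands are nonzero (e.g.\ $\rho_\pm: \mathrm{Cl}(V) \to \mathrm{End}(S)$ obtained by letting $e$ act as $\pm c^{-1}\rho(e_1\cdots e_{2n})^{-1}$ gives two homomorphisms distinguished by the action of $\omega$), which forces $\mathrm{Cl}(V) \cong \mathrm{Mat}_{2^n}(\mathbf{k}) \oplus \mathrm{Mat}_{2^n}(\mathbf{k})$ and also yields the PBW dimension $2^{2n+1}$.

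The main obstacle will be the irreducibility/surjectivity step in the even case, which is what converts the a priori upper bound on $\dim\mathrm{Cl}(V)$ into the exact value; the rest is then bookkeeping. I would handle it by exhibiting explicit operators corresponding to matrix units $E_{IJ}$ sending $e_J := e_{j_1}\wedge\cdots\wedge e_{j_k}$ to $e_I$ as compositions of creations $e_i\wedge$ and annihilations $\iota_{e_i^*}$, applied in an order determined by the symmetric difference of the index sets $I$ and $J$.
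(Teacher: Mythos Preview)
Your argument is correct. In the even case your approach is essentially the paper's: both construct the spinor module $S = \wedge L$ via creation and annihilation operators on a maximal isotropic $L$, and both sandwich $\dim\mathrm{Cl}(V)$ between the PBW upper bound $2^{2n}$ and $\dim\End(S) = 2^{2n}$. The only cosmetic difference is that the paper phrases the lower bound as ``the operators $\rho(c_{IJ})$ are linearly independent'' (injectivity of $\rho$), whereas you phrase it as ``$\rho$ hits all matrix units'' (surjectivity of $\rho$); these are dual to each other given the dimension count.

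In the odd case you take a genuinely different route. The paper simply writes $V = V_0 \oplus \mathbf{k}z$ with $z^2 = 1$ and directly builds two representations $M_\pm$ on the same space $\wedge L$, letting $z$ act by $\pm(-1)^{\deg}$; it then checks (again as an exercise) that $\rho_+ \oplus \rho_-$ is an isomorphism onto $\End(M_+) \oplus \End(M_-)$. You instead introduce the volume element $\omega = c\,e_1\cdots e_{2n}e$, observe it is central with $\omega^2 = 1$ because $\dim V$ is odd, and split $\mathrm{Cl}(V)$ by the idempotents $\tfrac{1}{2}(1\pm\omega)$; on each piece the relation $\omega = \pm 1$ eliminates $e$, making the piece a quotient of the simple algebra $\mathrm{Cl}(V_0)$, hence all of $\mathrm{Mat}_{2^n}(\mathbf{k})$. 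Your approach buys you the role of the center and the volume element up front (useful conceptually and for later applications), at the cost of a few scalar normalizations to track; the paper's approach is more uniform with the even case and avoids those normalizations, at the cost of hiding why there are exactly two summands. The two arguments are in fact compatible: your $\rho_\pm$ coincide with the paper's $M_\pm$, since $\rho(e_1\cdots e_{2n})^{-1}$ on $\wedge L$ is a scalar multiple of the parity operator $(-1)^{\deg}$.
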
 

\begin{proof} Let us start with the even case. Pick a basis 
$a_1,...,a_n,b_1,...,b_n$ of $V$ so that the inner product is given by 
$$
(a_i,a_j)=(b_i,b_j)=0,\ (a_i,b_j)=\delta_{ij}. 
$$ 
We have $a_ia_j+a_ja_i=0$, $b_ib_j+b_jb_i=0$, $b_ia_j+a_jb_i=\delta_{ij}$. 
Define the ${\rm Cl}(V)$-module $M=\wedge (a_1,...,a_n)$ 
with the action of ${\rm Cl}(V)$ defined by 
$$
\rho(a_i)w=a_iw,\ \rho(b_i)w=\tfrac{\partial w}{\partial a_i},
$$
where 
$$
\tfrac{\partial}{\partial a_i}a_{k_1}...a_{k_r}=(-1)^{j-1} a_{k_1}...\widehat{a_{k_j}}...a_{k_r}
$$
if $i=k_j$ for some $j$ (where hat means that the term is omitted), and otherwise 
the result is zero. It is easy to check that this is indeed a representation. 

Now for $I=(i_1<...<i_k),J=(j_1<...<j_m)$ 
consider the elements $c_{IJ}=a_{i1}...a_{i_k}b_{j_1}...b_{j_m}\in {\rm Cl}(V)$. 
It is easy to see that these elements span ${\rm Cl}(V)$. 
Also it is not hard to do the following exercise. 

\begin{exercise} Show that the operators $\rho(c_{IJ})$ are linearly 
independent.
\end{exercise} 

 Thus $\rho: {\rm Cl}(V)\to \End M$ is an isomorphism, which 
proves the proposition in even dimensions. 

Now, if $\dim V=2n+1$, we pick a basis as above plus an additional element $z$ such that 
$(z,a_i)=(z,b_i)=0$, $(z,z)=2$. So we have 
$$
za_i+a_iz=0,\ zb_i+b_iz=0, \ z^2=1. 
$$
Now we can define the module $M_\pm$ 
on which $a_i,b_i$ act as before and $zw=\pm (-1)^{\deg w}w$. 
It is easy to see as before that the map 
$$
\rho_+\oplus \rho_-: {\rm Cl}(V)\to \End M_+\oplus \End M_-
$$
is an isomorphism. This takes care of the odd case. 
\end{proof} 

We will now construct an inclusion of the Lie algebra $\mathfrak{so}(V)$ into the Clifford algebra. This will allow us to regard representations of the Clifford algebra as representations of $\mathfrak{so}(V)$, which will give us a construction of the spin representations. 

Consider the linear map $\xi: \wedge^2V=\mathfrak{so}(V)\to {\rm Cl}(V)$ given by the formula 
$$
\xi(a\wedge b)=\tfrac{1}{2}(ab-ba)=ab-\tfrac{1}{2}(a,b).
$$
Then 
$$
[\xi(a\wedge b),\xi(c\wedge d)]=[ab,cd]=abcd-cdab=(b,c)ad-acbd-cdab=
$$
$$
(b,c)ad-(b,d)ac+acdb-cdab=
$$
$$
(b,c)ad-(b,d)ac+(a,c)db-cadb-cdab=
$$
$$
(b,c)ad-(b,d)ac+(a,c)db-(a,d)cb=
$$
$$
(b,c)\xi(a\wedge d)-(b,d)\xi(a\wedge c)+(a,c)\xi(d\wedge b)-(a,d)\xi(c\wedge b)=
\xi([a\wedge b,c\wedge d]). 
$$
Thus $\xi$ is a homomorphism of Lie algebras and we can define the representations
$\xi^*M$ for even $\dim V$ and $\xi^*M_{\pm}$ for odd $\dim V$ by $\rho_{\xi^*M}(a):=\rho_M(\xi(a))$. 

The representation $\xi^*M$ is reducible, namely 
$$
\xi^*M=(\xi^*M)_0\oplus (\xi^*M)_1,
$$
where subscripts $0$ and $1$ indicate the even and odd degree parts. 

\begin{exercise} (i) Show that for even $\dim V$, the representations $(\xi^*M)_0,(\xi^*M)_1$ are isomorphic to $S_+,S_-$ respectively.

(ii) Show that for odd $\dim V$, the representations $\xi^*M_+$ and $\xi^*M_-$ are both isomorphic to $S$.  

{\bf Hint.} Find the highest weight vector 
for each of these representations and compute the weight of this vector. 
Then compare dimensions. 
\end{exercise} 

\section{\bf Maximal root, exponents, Coxeter numbers, dual representations} 

\subsection{Duals of irreducible representations}

Now let $\g$ be any complex semisimple Lie algebra. How to compute the dual of the irreducible representation $L_\lambda$? It is clear that the highest weight of $L_\lambda^*$ equals 
$-\mu$, where $\mu$ is the lowest weight of $L_\lambda$, so we should compute the latter. 
For this purpose, recall that the Weyl group $W$ of $\g$ contains a unique element $w_0$ which maps dominant weights to antidominant weights, i.e., maps positive roots to negative roots. 
This is the maximal element, which is the unique element whose length is $|R_+|$. For example, if $-1\in W$ then clearly $w_0=-1$. It is easy to see that the lowest weight of $L_\lambda$ is $w_0\lambda$.  

Thus we get 

\begin{proposition} $L_\lambda^*=L_{-w_0\lambda}$. 
\end{proposition}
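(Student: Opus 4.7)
The plan is to identify the highest weight of $L_\lambda^*$ by computing its weights explicitly, which reduces the problem to finding the lowest weight of $L_\lambda$ itself. By definition $\rho_{V^*}(x) = -\rho_V(x)^*$, so if $v_1,\dots,v_k$ is a basis of $V[\mu]$ with dual functionals $v_1^*,\dots,v_k^*$, then for $h\in\h$ we have $(hv_i^*)(v_j) = -v_i^*(hv_j) = -\mu(h)\delta_{ij}$, showing that $v_i^*\in V^*[-\mu]$. Hence $P(L_\lambda^*) = -P(L_\lambda)$ with matching multiplicities, and consequently the highest weight of $L_\lambda^*$ is the negative of the \emph{lowest} weight of $L_\lambda$.

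Next I would show that the lowest weight of $L_\lambda$ is $w_0\lambda$. That $w_0\lambda$ is a weight follows from the already-established fact that $P(L_\lambda)$ is $W$-invariant. To see it is the lowest, take any weight $\mu$ of $L_\lambda$. Then $w_0\mu$ is also a weight, so $\lambda - w_0\mu \in Q_+$, i.e.\ $w_0\mu = \lambda - \beta$ with $\beta \in Q_+$. Applying $w_0$ and using $w_0^2 = 1$ (Corollary \ref{w0}), this gives $\mu = w_0\lambda - w_0\beta$. Since $w_0$ sends $R_+$ to $R_-$, it sends $Q_+$ to $-Q_+$, so $-w_0\beta \in Q_+$. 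Thus $\mu - w_0\lambda \in Q_+$ for every weight $\mu$, confirming that $w_0\lambda$ is the unique minimal (hence lowest) weight.

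Therefore the highest weight of $L_\lambda^*$ equals $-w_0\lambda$. I would then check that $-w_0\lambda$ is dominant integral: since $\lambda \in \overline{C}_+$ and $w_0$ maps $\overline{C}_+$ bijectively onto $\overline{C}_-$, we have $w_0\lambda \in \overline{C}_-$, so $-w_0\lambda \in \overline{C}_+$; integrality is preserved by $W$. Finally, $L_\lambda^*$ is finite dimensional and irreducible (the dual of an irreducible is irreducible by Schur's lemma applied via the natural isomorphism $L_\lambda^{**} \cong L_\lambda$), so it must be isomorphic to the unique finite dimensional irreducible representation with highest weight $-w_0\lambda$, namely $L_{-w_0\lambda}$.

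There is essentially no major obstacle: the only nontrivial content is the identification of the lowest weight as $w_0\lambda$, which rests on combining the $W$-invariance of $P(L_\lambda)$ with the basic partial-order property $\lambda - \mu \in Q_+$, and on the key fact $w_0(Q_+) = -Q_+$. Everything else is bookkeeping about how duality acts on weights and on the Weyl chamber structure.
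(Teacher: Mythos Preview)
Your proof is correct and follows exactly the approach the paper indicates: identify the highest weight of $L_\lambda^*$ as minus the lowest weight of $L_\lambda$, then recognize the lowest weight as $w_0\lambda$. You have simply fleshed out the details (the weight computation for duals, the argument that $w_0\lambda$ is minimal via $w_0(Q_+)=-Q_+$, and the check of dominance and irreducibility) that the paper leaves as ``easy to see.''
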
 

The map $-w_0$ permutes fundamental (co)weights and simple (co)roots, so it is induced by an automorphism of the Dynkin diagram of $\g$. So if $\g$ is simple and its Dynkin diagram has no nontrivial automorphisms, we have $w_0=-1$, so $-w_0=1$ and thus $L_\lambda^*=L_\lambda$ for all $\lambda$. 
This happens for $A_1$, $B_n$, $C_n$, $G_2$, $F_4$, $E_7$ and $E_8$. 
In general, note that $s_i$ and hence the whole Weyl group $W$ acts trivially on $P/Q$, which implies that $-w_0$ acts on $P/Q$ by inversion.  
Thus we see that for $A_{n-1}$, $n\ge 3$, when $P/Q=\Bbb Z/n$, the map
$-w_0$ is the flip of the chain. Another way to see it is to note that $L_{\omega_1}^*=V^*=\wedge^{n-1}V=L_{\omega_{n-1}}$
(as $\dim V=n$). For $E_6$, $P/Q=\Bbb Z/3$, so $-w_0$ must exchange the two nonzero minuscule weights and thus must also be the flip.

\begin{exercise} (i) Show that for $D_{2n+1}$ we have $S_+^*=S_-$ while for 
$D_{2n}$ we have $S_+^*=S_+$, $S_-^*=S_-$. ({\bf Hint:} Show that in the first case 
$P/Q\cong\Bbb Z/4$ while in the second case $P/Q\cong(\Bbb Z/2)^2$.) 

(ii) Show that the restriction of the spin representation $S$ of $\mathfrak{so}_{2n+1}$ 
to $\mathfrak{so}_{2n}$ is $S_+\oplus S_-$. 

(iii) Show that there exist unique up to scaling nonzero {\bf Clifford multiplication} homomorphisms 
$$
V\otimes S\to S,\ V\otimes S_+\to S_-,\ V\otimes S_-\to S_+.
$$ 

(iv) Compute the decomposition of the tensor products 
$$
S\otimes S^*,\ S_+\otimes S_+^*,\
S_-\otimes S_-^*,\ S_+\otimes S_-^*
$$ 
into irreducible representations. 

{\bf Hint.} In the odd dimensional case, use that ${\rm Cl}(V)=2S\otimes S^*$ as an $\mathfrak{so}(V)$-module, that ${\rm gr}{\rm Cl}(V)=\wedge V$, and that representations of $\mathfrak{so}(V)$ 
are completely reducible. 

The even case is similar: 
$$
{\rm Cl}(V)=S_+\otimes S_+^*\oplus S_-\otimes S_-^*\oplus S_-\otimes S_+^*\oplus S_+\otimes S_-^*.
$$ 
If $\dim V=2n$ and $n$ is even, use that all representations of 
$\mathfrak{so}(V)$ are selfdual to conclude that the last two summands are isomorphic. 
(If $n$ is odd, they will not be isomorphic). 

Also in this case you need to pay attention to the middle exterior power - it should split into two parts. 
Namely, if $\dim V=2n$ then on $\wedge^n V$ we have two 
invariant bilinear forms: one symmetric coming from the one on $V$, 
denoted $B(\xi,\eta)$, and the other given by wedge product 
$\wedge : \wedge^nV\times \wedge^nV\to \wedge^{2n}V=\Bbb C$, which is symmetric  
for even $n$ and skew-symmetric for odd $n$. Since the wedge product form is nondegenerate, there is a unique linear operator $*:\wedge^nV\to \wedge^nV$ called the {\bf Hodge *-operator} such that $B(\xi,\eta)=\xi\wedge *\eta$. You should show that $*^2=1$ in the even case and $*^2=-1$ in the odd case (use an orthonormal basis of $V$). Thus we have an eigenspace decomposition $\wedge^nV=\wedge^n_+V\oplus \wedge^n_-V$,  into eigenspaces of $*$ with eigenvalues $\pm 1$ in the even case (called {\bf selfdual} and {\bf anti-selfdual} forms respectively) and $\pm i$ in the odd case. You will see that these pieces are irreducible and non-isomorphic, and that one of them (which?) goes into $S_+\otimes S_+^*$ and the other into $S_-\otimes S_-^*$. 
\end{exercise} 

\subsection{The maximal root} \label{maxroo}

Let $\g$ be a complex simple Lie algebra and $\theta$ be the maximal root of $\g$, 
i.e., the highest weight of the adjoint representation. For example, for $\g=\mathfrak{sl}_n$ the adjoint representation is generated by the highest weight vector of $V\otimes V^*$, where $V=\Bbb C^n$ is the vector representation. Thus we have 
$$
\theta=\omega_1+\omega_{n-1}=(2,1,...,1,0)=(1,0,...,0,-1),
$$
the sum of the highest weights of $V$ and $V^*$ (recall that weights for $\mathfrak{sl}_n$ are $n$-tuples of complex numbers modulo simultaneous translation by the same number). Thus, $\theta$ is not fundamental. Similarly, for $\g=\mathfrak{sp}_{2n}$, 
we have $\g=S^2V$ where $V$ is the vector representation, so 
$\theta=2\omega_1$ is again not fundamental. 
Nevertheless, we have the following proposition.

\begin{proposition} 
For any simple Lie algebra $\g\ne \mathfrak{sl}_n,\mathfrak{sp}_{2n}$, $\theta$ is a fundamental weight. 
\end{proposition}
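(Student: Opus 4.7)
The proof of this proposition is a case-by-case verification using the explicit root system data assembled earlier in the paper. Before the case analysis, I would record two preliminary observations. Since $\theta$ is the highest root, $\theta+\alpha_i$ is not a root for any simple root $\alpha_i$, so the $(\mathfrak{sl}_2)_{\alpha_i}$ representation theory (Subsection \ref{sl2rep}) applied to the adjoint representation gives $(\theta,\alpha_i^\vee)\ge 0$ for all $i$; hence $\theta\in P_+$. Writing $\theta=\sum_i a_i\omega_i$ with $a_i=(\theta,\alpha_i^\vee)\in\Bbb Z_{\ge 0}$, the claim is that exactly one $a_i$ is nonzero and equals $1$.

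For the classical types covered by the statement, namely $B_n$ and $D_n$, the standard polarization gives $\theta=\bold e_1+\bold e_2$; pairing this with the simple coroots listed in Example \ref{simroocla} yields $a_i=\delta_{i,2}$, so $\theta=\omega_2$. For the exceptional types I would handle each case using the explicit realizations given in Sections \ref{F4r}, \ref{E8r}, \ref{E7r}, \ref{E6r}. For $G_2$, the highest positive root is $\theta=2\alpha+3\beta$, and the computation (using $\alpha^\vee=\alpha/3$, $\beta^\vee=\beta$) gives $\theta=\omega_1$. For $F_4$, the maximal root is the long root $\theta=\bold e_1+\bold e_2$, which pairs nontrivially only with $\alpha_4^\vee=\bold e_2-\bold e_3$, so $\theta=\omega_4$. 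For $E_8$, the maximum of $(t,\alpha)$ over the roots (taken with $t_1\gg\cdots\gg t_8>0$) is attained at $\bold e_1+\bold e_2$, and the pairings with the simple coroots give $\theta=\omega_8$. For $E_7$, intersecting the $E_8$ root system with the hyperplane $x_1+x_2=0$ leaves $\theta=\bold e_1-\bold e_2=\omega_1$. For $E_6$, further intersecting with $x_2-x_3=0$ forces the highest root to be the spinor-type root $\tfrac{1}{2}(\bold e_1-\bold e_2-\bold e_3+\bold e_4+\cdots+\bold e_8)$, and this evaluates to $\omega_2$.

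The main obstacle is not any single step (each verification is a short inner-product computation) but the absence of a unified conceptual argument: one must handle all seven simple Dynkin types not of type $A$ or $C$ individually. A compact unifying viewpoint, not strictly part of the proof, is that the coefficients $a_i$ record how the affine node of the extended Dynkin diagram (corresponding to $-\theta$) attaches to the original diagram; the exceptional behavior of types $A_n$ and $C_n$ is then visible from the list of affine Dynkin diagrams, where in $\widetilde A_n$ the extra node attaches to two distinct vertices and in $\widetilde C_n$ it attaches via a double edge, whereas in all remaining types it attaches to a single vertex by a single simple edge.
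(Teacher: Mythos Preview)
Your proposal is correct and follows essentially the same case-by-case approach as the paper, arriving at the same values of $\theta$ in each exceptional type via the explicit coordinate realizations. The only minor difference is that for types $B_n$ and $D_n$ the paper invokes the representation-theoretic identification $\g=\wedge^2 V=L_{\omega_2}$ rather than computing $(\theta,\alpha_i^\vee)$ directly, and the paper does not spell out the preliminary argument that $\theta\in P_+$ (this being immediate from $\theta$ being a highest weight); your closing affine-diagram remark is a nice addition not present in the paper.
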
  

\begin{proof} If $\g=\mathfrak{so}_N$, $N\ge 7$ (i.e. of type $B$ or $D$ but not $A$ or $C$) then $\g=\wedge^2V=L_{\omega_2}$, so $\theta=\omega_2$. 

If $\g=G_2$, $\alpha_1=\alpha$ is the long simple root and $\alpha_2=\beta$ is the short one, then we easily see that $\theta=2\alpha_1+3\alpha_2=\omega_1$. 

If $\g=F_4$ then using the conventions of Subsection \ref{F4r}, 
we have $\theta=\bold e_1+\bold e_2=\omega_4$. 

If $\g=E_8$ then using the conventions of Subsection \ref{E8r}, 
we have $\theta=\bold e_1+\bold e_2=\omega_8$. 

If $\g=E_7$ then using the conventions of Subsection \ref{E7r}, 
we have $\theta=\bold e_1-\bold e_2=\omega_1$.

If $\g=E_6$ then using the conventions of Subsection \ref{E6r}, we have
$$
\theta=\tfrac{1}{2}(\bold e_1-\bold e_2-\bold e_3+\sum_{i=4}^8 \bold e_i)=\omega_2.
$$
\end{proof}

\subsection{Principal $\mathfrak{sl}_2$, exponents}\label{expone} 

Let $\g$ be a simple Lie algebra and let $e=\sum_i e_i$ and $h\in \h$ be such that 
$\alpha_i(h)=2$ for all $i$ (i.e., $h=2\rho^\vee$). We have $[h,e]=2e$ and 
$h=\sum_i (2\rho^\vee,\omega_i)h_i$. So defining 
$f:=\sum_i (2\rho^\vee,\omega_i)f_i$, we have $[h,f]=-2f$, $[e,f]=h$. So $e,f,h$ 
span an $\mathfrak{sl}_2$-subalgebra of $\g$ called the {\bf principal $\mathfrak{sl}_2$-subalgebra.} 

\begin{exercise} Let $\g=\mathfrak{sl}_{n+1}$. Show that the restriction of 
the $n+1$-dimensional vector representation $V$ of $\g$ to the principal $\mathfrak{sl}_2$-subalgebra 
is the irreducible representation $L_n$. 
\end{exercise} 

Consider now $\g$ as a module over its principal $\mathfrak{sl}_2$-subalgebra. 
How does it decompose? To see this, we can look at the weight decomposition of $\g$ under $h$. 
We have $\g=\n_-\oplus \h\oplus \n_+$, and these summands correspond to negative, zero and positive weights, respectively. Moreover, all weights are even, and for $m>0$, 
$\dim \g[2m]=r_m$ is the number of positive roots of height $m$, i.e., representable as a sum of $m$ simple roots, while $\g[0]=\h$ (as $\rho^\vee$ is a regular coweight), so $\dim \g[0]=r$, the rank of $\g$. 

\begin{definition} $m$ is called an {\bf exponent} of $\g$ if $r_m>r_{m+1}$. 
The multiplicity of $m$ is $r_m-r_{m+1}$. 
\end{definition}  

Since $r_m$ is zero for large $m$ while $r_0=r$,  there are $r$ exponents counting multiplicities. The exponents of $\g$ are denoted $m_i$ and are arranged in non-decreasing order: $m_1\le m_2\le...\le m_r$ (including multiplicities). Note that roots of height $2$ are $\alpha_i+\alpha_j$ where $i,j$ are connected by an edge. Thus we have $r_0=r_1=r$, $r_2=r-1$ (as the Dynkin diagram of $\g$ is a tree), so $m_1=1$ and $m_2>1$. 
We also have $m_r=(\rho^\vee,\theta):={\rm h}_\g-1$, where 
$\theta$ is the maximal root. The number ${\rm h}_\g$ is called the {\bf Coxeter number} of  $\g$. Finally, we have $\sum_{i=1}^r m_i=|R_+|$.  

\begin{proposition} The restriction of $\g$ to the principal $\mathfrak{sl}_2$-subalgebra 
decomposes as $\oplus_{i=1}^rL_{2m_i}$. 
\end{proposition}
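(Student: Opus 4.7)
The plan is to exploit the fact that once we know the $h$-weight multiplicities of $\g$ under the principal $\mathfrak{sl}_2$, complete reducibility of finite-dimensional $\mathfrak{sl}_2$-modules determines the decomposition entirely. Recall that $h = 2\rho^\vee$ acts on $\g_\alpha$ by the scalar $\alpha(h) = 2\,\mathrm{ht}(\alpha)$, where heights of negative roots are counted as negative, while $h$ acts by $0$ on $\h$. So from the root decomposition I immediately read off the $h$-spectrum of $\g$: the weight $0$ occurs with multiplicity $r = \mathrm{rank}\,\g$, and for each nonzero integer $m$, the weight $2m$ occurs with multiplicity $r_{|m|}$ (the number of positive roots of height $|m|$). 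In particular, every $h$-weight on $\g$ is even.

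Next I would invoke complete reducibility of $\g$ as a module over the principal $\mathfrak{sl}_2$ (Theorem~\ref{sl2repth}(iv)), so that $\g \cong \bigoplus_{k\ge 0} n_k L_{2k}$ for some nonnegative integers $n_k$ (only even highest weights appear, since all $h$-weights are even). The weight structure of $L_{2k}$ shows that it contributes $1$ to each of the weight spaces $\g[2j]$ with $|j|\le k$ and nothing to weight spaces of larger absolute value. Therefore
\[
\dim \g[2j] \;=\; \sum_{k \ge |j|} n_k, \qquad j\in\Bbb Z.
\]
Subtracting consecutive terms yields $n_j = \dim\g[2j] - \dim\g[2j+2]$ for $j\ge 0$. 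In particular $n_0 = r - r_1 = r - r = 0$ (no trivial summands), and $n_m = r_m - r_{m+1}$ for all $m\ge 1$.

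By the definition of an exponent, $n_m$ is exactly the multiplicity with which $m$ appears in the list $m_1\le\cdots\le m_r$. Listing the summands according to this multiplicity gives
\[
\g \;\cong\; \bigoplus_{i=1}^{r} L_{2m_i}
\]
as $\mathfrak{sl}_2$-modules, which is the claim (up to the dimension-versus-highest-weight convention used in the statement). As a sanity check on the total count, $\sum_{m\ge 1}(r_m - r_{m+1}) = r_1 = r$ gives $r$ irreducible summands, and $\sum_{i=1}^r (2m_i+1) = 2|R_+| + r = \dim\g$ since $\sum m_i = |R_+|$, matching the dimension count stated earlier in Subsection~\ref{expone}.

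There is no serious obstacle here; the only point requiring mild justification is that $r_m \ge r_{m+1}$ for all $m\ge 1$ (so that the ``multiplicities'' in the definition of exponents are nonnegative), but this is a consequence of the same argument: the formula $n_m = r_m - r_{m+1}$ forces this, once one knows that $\g$ admits \emph{some} decomposition into $L_{2k}$'s. Equivalently, it follows from the $\mathfrak{sl}_2$-theory fact that $\mathrm{ad}\,e : \g[2m] \to \g[2m+2]$ is surjective for $m\ge 1$ (each summand $L_{2k}$ with $k\ge m+1$ surjects from its $2m$-weight space onto its $(2m+2)$-weight space).
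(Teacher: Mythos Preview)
Your proof is correct and is precisely the argument the paper intends: the paper's proof is a one-liner (``This easily follows from the representation theory of $\mathfrak{sl}_2$ and the definition of $m_i$''), and you have faithfully unpacked that one-liner by computing the $h$-weight multiplicities from the root decomposition and reading off the irreducible multiplicities $n_m = r_m - r_{m+1}$. Your remark about the dimension-versus-highest-weight convention is also apt: in this proposition the paper is using $L_{2m_i+1}$ to denote the $(2m_i+1)$-dimensional irreducible (i.e., highest weight $2m_i$), as confirmed later in Theorem~\ref{coho}.
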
 

\begin{proof} This easily follows from the representation theory of $\mathfrak{sl}_2$ (Subsection \ref{sl2rep}) and the definition of $m_i$.  
\end{proof}

\begin{example} The exponents of $\mathfrak{sl}_n$ are $1,2,...,n-1$. 
\end{example} 

\begin{exercise} (i) Show that the exponents of $\mathfrak{so}_{2n+1}$ and $\mathfrak{sp}_{2n}$ are $1,3,...,2n-1$, and the exponents of $\mathfrak{so}_{2n+2}$ are $1,3,...,2n-1$ and $n$ (so in the latter case, when $n$ is odd, the exponent $n$ has multiplicity $2$). 

(ii) Show that the exponents of $G_2$ are $1$ and $5$. 
\end{exercise} 

\begin{exercise} Show that the exponents of $F_4$ are $1,5,7,11$, the exponents of 
$E_6$ are $1,4,5,7,8,11$, the exponents of $E_7$ are 
$1,5,7,9,11,13,17$, and the exponents of $E_8$ are 
$1,7,11,13,17,19,23,29$. 

{\bf Hint:} For $m\ge 1$, use the data from Subsections \ref{F4r},\ref{E8r},\ref{E7r},\ref{E6r} to count roots satisfying the equation $(\rho^\vee,\alpha)=m$, and find $m$ where the number of such roots drops as $m$ is increased. 
\end{exercise} 

\begin{exercise} Use the Weyl character formula for the adjoint representation and the Weyl denominator formula to prove the following identity for a simple Lie algebra $\g$: 
$$
\sum_{i=1}^r \frac{q^{2m_i+1}-q^{-2m_i-1}}{q-q^{-1}}=
\prod_{\alpha\in R_+: (\theta,\alpha^\vee)>0}\frac{q^{(\theta+\rho,\alpha^\vee)}-q^{-(\theta+\rho,\alpha^\vee)}}{q^{(\rho,\alpha^\vee)}-q^{-(\rho,\alpha^\vee)}}.
$$
({\bf Hint:} Compute the character of $\g$ as a module over the principal $\mathfrak{sl}_2$-subalgebra in two different ways.)
\end{exercise} 

\subsection{The Coxeter number and the dual Coxeter number} 

We have defined the Coxeter number of a simple complex Lie algebra $\g$ (or a reduced irreducible root system $R$) to be ${\rm h}_R={\rm h}_\g:=(\theta,\rho^\vee)+1=m_r+1$, where $m_r$ is the largest exponent of $\g$. 
One can also define the {\bf dual Coxeter number} of $\g$ (or $R$) 
as ${\rm h}_R^\vee={\rm h}_\g^\vee:=(\widetilde\theta^\vee,\rho)+1$, cf. footnote \ref{foo} (clearly, ${\rm h}_R^\vee={\rm h}_R$ if $R$ is simply laced). So the dual Coxeter number is the eigenvalue $\frac{1}{2}(\theta,\theta+2\rho)$ of $\frac{1}{2}C$ on the adjoint representation $\g$, where $C\in U(\g)$ is the quadratic Casimir element defined using the inner product in which $(\theta,\theta)=2$ (or, equivalently, long roots have squared length $2$). Indeed, if we identify $\h$ and $\h^*$ using this inner product then $\theta$ gets identified with $\widetilde\theta^\vee$.  

Using the formulas from Subsections \ref{rhorhov} and \ref{maxroo}, 
we get
$$
{\rm h}_{A_{n-1}}=n,
$$
$$
{\rm h}_{B_n}=2n,\ {\rm h}_{B_n}^\vee=2n-1,
$$
$$
{\rm h}_{C_n}=2n,\ {\rm h}_{C_n}^\vee=n+1,
$$
$$
{\rm h}_{D_n}=2n-2,
$$
$$
{\rm h}_{G_2}=(2\alpha+3\beta,5\alpha^\vee+3\beta^\vee)+1=6,\   h_{G_2}^\vee=\frac{1}{3}(2\alpha+3\beta,3\alpha+5\beta)+1=4,
$$
$$
{\rm h}_{F_4}=(8,3,2,1)\cdot (1,1,0,0)+1=12,\ 
{\rm h}_{F_4}^\vee=(\tfrac{11}{2},\tfrac{5}{2},\tfrac{3}{2},\tfrac{1}{2})\cdot (1,1,0,0)+1=9, 
$$
$$
{\rm h}_{E_8}=(23,6,5,4,3,2,1,0)\cdot (1,1,0,0,0,0,0,0)+1=30,
$$
$$
{\rm h}_{E_7}=(\tfrac{17}{2},-\tfrac{17}{2},5,4,3,2,1,0)\cdot (1,-1,0,0,0,0,0,0)+1=18, 
$$
$$
{\rm h}_{E_6}=(4,-4,-4,4,3,2,1,0)\cdot \tfrac{1}{2}(1,-1,-1,1,1,1,1,1)+1=12.
$$
Note that we always have ${\rm h}_R={\rm h}_{R^{\vee}}$, but if $R$ is not simply laced then, as we see, the numbers
${\rm h}_R$, ${\rm h}_{R^\vee}^\vee$, ${\rm h}_R^\vee$ are different, in general. 

\subsection{Representations of complex, real and quaternionic type}

\begin{definition} An irreducible finite dimensional $\Bbb C$-representation $V$ of a group $G$ or Lie algebra $\g$   
is {\bf complex type} when $V\ncong V^*$, {\bf real type} if there is a symmetric isomorphism 
$V\to V^*$ (i.e., an invariant symmetric inner product on $V$), and {\bf quaternionic type} 
if there is  a skew-symmetric isomorphism 
$V\to V^*$ (i.e., an invariant skew-symmetric inner product of $V$). 
\end{definition} 

It is easy to see that any irreducible finite dimensional representation is of exactly one of these three types (check it!). 

\begin{exercise} Let $V$ be an irreducible finite dimensional representation of a finite group $G$. 

(i) Show that ${\rm End}_{\Bbb R G}V$ is $\Bbb C$ for complex type, ${\rm Mat}_2(\Bbb R)$ 
for real type and the quaternion algebra $\Bbb H$ for quaternionic type. This explains the terminology. 

(ii) Show that $V$ is of real type if and only if in some basis of $V$ the matrices of all elements of $G$ 
have real entries. 

You may find helpful to look at \cite{E}, Problem 5.1.2 (it contains a hint). 
\end{exercise}

\begin{example}\label{realquatsl2} Let $L_n$ be the irreducible representation of 
$\mathfrak{sl}_2(\Bbb C)$ with highest weight $n$ (i.e., of dimension $n+1$). 
Then $L_n$ is of real type for even $n$ and quaternionic type for odd $n$. Indeed, $L_n=S^nV$, where $V=L_1=\Bbb C^2$, so the invariant form on $L_n$ is $S^nB$, where $B$ is the invariant form on $V$, which is skew-symmetric. 
\end{example} 

Now let $\g$ be any simple Lie algebra and $\lambda\in P_+$ be such that $\lambda=-w_0\lambda$, so that $L_\lambda$ is selfdual. How to tell if it is of real or quaternionic type?  

\begin{proposition}\label{typ} $L_\lambda$ is of real type if $(2\rho^\vee,\lambda)$ is even and of quaternionic type if it is odd. 
\end{proposition}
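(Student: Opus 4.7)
My strategy is to reduce the symmetry computation to the $\mathfrak{sl}_2$-case of Example \ref{realquatsl2} by restricting $L_\lambda$ to the principal $\mathfrak{sl}_2$-subalgebra $\mathfrak{s} = \langle e, f, h\rangle \subset \g$ of Subsection \ref{expone}, where $h = 2\rho^\vee$. Since $L_\lambda$ is selfdual, there is a nondegenerate $\g$-invariant bilinear form $B$ on $L_\lambda$, unique up to scalar, and either symmetric or skew-symmetric; let $\epsilon(B) \in \{\pm 1\}$ denote its symmetry. Setting $N := (2\rho^\vee, \lambda)$, the goal is to prove $\epsilon(B) = (-1)^N$.

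First I would observe that $v_\lambda$ is automatically a highest weight vector for $\mathfrak{s}$ of $h$-weight $N$: indeed $e \cdot v_\lambda = \sum_i e_i v_\lambda = 0$ and $h \cdot v_\lambda = N v_\lambda$. The key structural step is to show that the $L_N$-isotypic component of $L_\lambda|_\mathfrak{s}$ has multiplicity exactly one. For this, any $\h$-weight $\mu$ of $L_\lambda$ satisfies $\lambda - \mu \in Q_+$, so $(2\rho^\vee, \mu) = N - 2\,\mathrm{ht}(\lambda - \mu) \le N$ with equality only when $\mu = \lambda$; hence the entire $h$-weight-$N$ subspace of $L_\lambda$ is $L_\lambda[\lambda] = \Bbb C v_\lambda$, and the $L_N$-multiplicity, which equals $\dim(\ker e \cap L_\lambda[h=N])$, is one. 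Next I would argue orthogonality: viewing $B$ as a $\g$-equivariant map $L_\lambda \to L_\lambda^*$, it is in particular $\mathfrak{s}$-equivariant, and since each finite-dimensional $\mathfrak{sl}_2$-module is selfdual, $B$ preserves the $\mathfrak{s}$-isotypic decomposition; distinct isotypic components admit no nonzero $\mathfrak{s}$-homomorphisms between them and hence are mutually $B$-orthogonal, so $B$ restricts to a nondegenerate form on each component. Applied to the $L_N$-component -- which by the multiplicity-one step is a single copy of $L_N$ -- this yields a nonzero $\mathfrak{s}$-invariant bilinear form on $L_N$. By Example \ref{realquatsl2} that form is symmetric for $N$ even and skew-symmetric for $N$ odd, so $\epsilon(B) = (-1)^N$.

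The step I would expect to worry about -- and the one that makes the whole argument work cleanly -- is the multiplicity-one observation. Without it, the restriction of $B$ to the $L_N$-isotypic component $L_N \otimes V_N$ would have the form $B_N \otimes Q$ for some bilinear form $Q$ on the multiplicity space $V_N$ whose own symmetry type depends on $\epsilon(B)$; in particular a skew $Q$ could vanish on the specific $L_N$-submodule generated by $v_\lambda$, leaving no direct way to read $\epsilon(B)$ off from this submodule. Multiplicity one reduces $V_N$ to a line, so $Q$ is automatically a nonzero scalar and the computation becomes tautological.
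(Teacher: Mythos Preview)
Your proof is correct and follows essentially the same approach as the paper: restrict to the principal $\mathfrak{sl}_2$, observe that the top $h$-eigenvalue $N=(2\rho^\vee,\lambda)$ occurs with multiplicity one (so the $L_N$-summand is unique), and then read off the symmetry of $B$ from its nondegenerate restriction to that summand via Example~\ref{realquatsl2}. Your version is a bit more careful about the orthogonality/nondegeneracy step than the paper's terse ``must be nonzero'', but the argument is the same.
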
 

\begin{proof} The number $n:=(2\rho^\vee,\lambda)$ is the eigenvalue of the element $h$ of the principal 
$\mathfrak{sl}_2$-subalgebra on the highest weight vector $v_\lambda$. All the other eigenvalues 
are strictly less. Thus the restriction of $L_\lambda$ to the principal $\mathfrak{sl}_2$-subalgebra 
is of the form $L_n\oplus \bigoplus_{m<n}k_mL_m$, i.e., $L_n$ occurs with multiplicity $1$. Hence the nondegenerate invariant form on $L_\lambda$ restricts to a nondegenerate invariant form on $L_n$, so by Example \ref{realquatsl2} it is skew-symmetric if $n$ is odd and symmetric if $n$ is even.  
\end{proof} 

\begin{example} Consider $\g=\mathfrak{so}_{2n}$. 
Then we have 
$$
\rho^\vee=\rho=\sum_i \omega_i=(n-1,n-2,...,1,0).
$$ 
So $(2\rho^\vee,\omega_{n-1})=(2\rho^\vee,\omega_n)=\frac{n(n-1)}{2}$. 
This is odd if $n=2,3$ modulo $4$ and even if $n=0,1$ modulo $4$. 
Thus $S_\pm$ carry a symmetric form when $n=0$ mod $4$ and a skew-symmetric form 
if $n=2$ mod $4$, while for $n=1,3$ mod $4$ we have $S_+^*=S_-$, so $S_+,S_-$ are of complex type.  

Consider now $\g=\mathfrak{so}_{2n+1}$. Then $\rho^\vee=\sum_i \omega_i^\vee=(n,n-1,...,1)$. 
So $(2\rho^\vee,\omega_n)=\frac{n(n+1)}{2}$. So $S$ carries a skew-symmetric form if $n=1,2$ mod $4$ and 
a symmetric form if $n=0,3$ mod $4$.   
\end{example} 

We obtain the following result. 

\begin{theorem} (Bott periodicity for spin representations) The behavior of the spin representations of the orthogonal Lie algebra $\mathfrak{so}_m$
is determined by the remainder $r$ of $m$ modulo $8$. 
Namely: 

For $r=1,7$, $S$ is of real type.

For $r=3,5$, $S$ is of quaternionic type. 

For $r=0$, $S_+,S_-$ are of real type.  

For $r=2,6$, $S_+^*=S_-$ (complex type).

For $r=4$, $S_+,S_-$ are of quaternionic type. 
\end{theorem}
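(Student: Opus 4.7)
\medskip

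\noindent\textbf{Proof plan.} The plan is to reduce everything to a parity calculation via Proposition \ref{typ}. First I would dispose of the complex type cases, which are exactly those where the spin representation is not self-dual: for $m=2n$ with $n$ odd (types $D_{4k+1}$ and $D_{4k+3}$, i.e.\ residues $r=2,6$), the automorphism $-w_0$ exchanges $\omega_{n-1}$ and $\omega_n$, so $S_+^*=S_-$. In all other cases (all $B_n$, since $B_n$ has no nontrivial diagram automorphism, and $D_n$ for $n$ even, since then $-w_0=-1$), the spin representations are self-dual, and Proposition \ref{typ} applies.

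Next I would compute $(2\rho^\vee,\lambda)$ for $\lambda$ equal to the highest weight of the spin representation and determine its parity modulo~$2$. Using the explicit formulas for $\rho^\vee$ recorded in Subsection \ref{rhorhov}, namely $\rho^\vee_{B_n}=(n,n-1,\dots,1)$ and $\rho^\vee_{D_n}=(n-1,n-2,\dots,1,0)$, together with the spin weights $\omega_n=(\tfrac12,\dots,\tfrac12)$ for $B_n$ and $\omega_{n-1},\omega_n=(\tfrac12,\dots,\tfrac12,\pm\tfrac12)$ for $D_n$, a direct inner product calculation gives
\[
(2\rho^\vee_{B_n},\omega_n)=\tfrac{n(n+1)}{2},\qquad
(2\rho^\vee_{D_n},\omega_{n-1})=(2\rho^\vee_{D_n},\omega_n)=\tfrac{n(n-1)}{2},
\]
where in the $D_n$ case the last coordinate of $2\rho^\vee$ is zero so the sign on $\omega_{n-1}$ vs.\ $\omega_n$ is immaterial.

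Finally I would tabulate the parities of these triangular numbers against $m\bmod 8$. For $B_n$ write $m=2n+1$; then $n(n+1)/2$ is even when $n\equiv 0,3\pmod 4$ (i.e.\ $r=1,7$), odd when $n\equiv 1,2\pmod 4$ (i.e.\ $r=3,5$), matching real vs.\ quaternionic as claimed. For $D_n$ with $n$ even write $m=2n$; then $n(n-1)/2$ is even when $n\equiv 0\pmod 4$ (i.e.\ $r=0$) and odd when $n\equiv 2\pmod 4$ (i.e.\ $r=4$), again matching. The only potential subtlety is to confirm that all eight residues are accounted for and that the selfdual/nonselfdual dichotomy aligns with the $n$ even/odd dichotomy in type $D$; this is immediate from $m=2n$ giving $m\equiv 0,4\pmod 8$ iff $n$ is even. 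There is no real obstacle—the argument is just bookkeeping once Proposition \ref{typ}, the formulas for $\rho^\vee$, and the earlier identification of the complex-type cases in $D_{\mathrm{odd}}$ are in hand; the only place one has to be a bit careful is in tracking the correspondence between $r$ and $n\bmod 4$ in each of the $B$ and $D$ series.
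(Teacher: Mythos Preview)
Your proposal is correct and follows essentially the same approach as the paper: the paper likewise handles the complex-type cases via the action of $-w_0$ in $D_n$ with $n$ odd (Exercise preceding the theorem) and then computes $(2\rho^\vee,\omega_n)$ using the explicit $\rho^\vee$ to determine parity in the self-dual cases (the Example immediately before the theorem). One tiny slip: you wrote ``$-w_0=-1$'' for $D_n$ with $n$ even, but you mean $w_0=-1$ (equivalently $-w_0=\mathrm{id}$); your conclusion of self-duality is of course correct.
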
 

\section{\bf Differential forms, partitions of unity} 

Now we want to develop an integration theory on Lie groups. First we need to recall the basics about integration on manifolds. 

\subsection{Locally compact spaces} 

A Hausdorff topological space $X$ is called {\bf locally compact} if every point has a neighborhood whose closure is compact. For example, $\Bbb R^n$ and thus every manifold is locally compact. 

\begin{lemma}\label{loccom} If $X$ is a locally compact topological space with a countable base then it can be represented as a nested union of compact subsets: 
$X=\cup_{n\in \Bbb N}K_n$, $K_i\subset K_{i+1}$, such that every point $x\in X$ 
has a neighborhood $U_x$ contained in some $K_n$. 
\end{lemma}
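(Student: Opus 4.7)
The plan is to prove this by constructing the $K_n$ inductively so that each $K_n$ is contained in the interior of $K_{n+1}$; the condition that every point has a neighborhood contained in some $K_n$ will then follow automatically.

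First I would refine the countable base. Let $\{W_j\}_{j\in\mathbb{N}}$ be a countable base of $X$. By local compactness, every $x\in X$ has a neighborhood $U_x$ with $\overline{U_x}$ compact, and picking $W_j$ with $x\in W_j\subset U_x$ gives a basic open set whose closure is compact (as a closed subset of the compact set $\overline{U_x}$). Thus the subcollection $\mathcal{B}=\{V_i\}_{i\in\mathbb{N}}$ consisting of those $W_j$ with compact closure is still a countable base of $X$.

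Next I would build the $K_n$ by induction. Set $K_1:=\overline{V_1}$. Suppose $K_n$ has been constructed as a compact set. Since $K_n$ is compact and covered by the open sets $V_1,V_2,\ldots$, finitely many of them suffice; choose $m_n\in\mathbb{N}$ with $m_n\ge n+1$ such that $K_n\subset V_1\cup\cdots\cup V_{m_n}$, and define
\[
K_{n+1}:=\overline{V_1}\cup\overline{V_2}\cup\cdots\cup\overline{V_{m_n}}.
\]
This is compact as a finite union of compact sets, contains the open set $U_n:=V_1\cup\cdots\cup V_{m_n}\supset K_n$, and in particular $K_n\subset K_{n+1}$, giving the desired nesting.

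Finally I would verify the exhaustion property. By construction $K_n\subset U_n\subset K_{n+1}$, so $K_n$ lies in the interior of $K_{n+1}$. Moreover, since $m_n\ge n+1$, we have $V_{n+1}\subset K_{n+1}$, so every basic open set eventually sits inside some $K_n$. For an arbitrary $x\in X$, pick $V_i\in\mathcal{B}$ containing $x$; then $V_i\subset K_{\max(i,n)+1}$ for any suitable $n$, so $V_i$ is the desired neighborhood of $x$ contained in some $K_n$. This also shows $X=\bigcup_n K_n$. The only real subtlety is the initial step of extracting a countable base of precompact sets; once that is in hand the induction is automatic, so I do not foresee a serious obstacle.
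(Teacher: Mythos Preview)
Your proof is correct and shares the paper's core idea: extract a countable family of precompact open sets covering $X$ and take finite unions of their closures. The paper, however, is more direct. After obtaining a countable cover $\{W_i\}$ by precompact open sets (via Lemma~\ref{counsub}), it simply sets $K_n=\overline{W_0}\cup\cdots\cup\overline{W_n}$; since each $x$ lies in some $W_i\subset K_i$, the neighborhood condition is immediate. Your inductive step ensuring $K_n\subset\mathrm{int}(K_{n+1})$ is unnecessary for the statement as written (though it does yield the slightly stronger conclusion of a compact exhaustion in the strict sense), and your final sentence about ``$V_i\subset K_{\max(i,n)+1}$ for any suitable $n$'' is a bit garbled---what you actually have is $V_i\subset K_i$ for every $i$, directly from $m_{i-1}\ge i$.
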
 

\begin{proof} For each $x\in X$ fix a neighborhood $U_x$ of $x$ such that $\overline{U}_x$ is compact. By Lemma \ref{counsub} the open cover $\lbrace U_x\rbrace$ of $X$ has a countable subcover $\lbrace W_i,i\in \Bbb N\rbrace$. Then the sets $K_n=\cup_{i=0}^n \overline{W_i}$
form a desired nested sequence of compact subsets of $X$. 
\end{proof} 

An open cover of a topological space $X$ is said to be {\bf locally finite} if every point of $X$ has a neighborhood intersecting only finitely many members of this cover. 

\begin{lemma}\label{locfi} Let $X$ be a locally compact topological space with a countable base. Then every base of $X$ has a countable, locally finite subcover. 
\end{lemma}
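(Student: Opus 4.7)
The plan is to combine the hypothesis that $\mathcal{B}$ is a base with the compact exhaustion provided by Lemma \ref{loccom}, so that the desired subcover arises as the union of finitely many basis elements chosen on each ``shell'' of the exhaustion. The local finiteness will then follow from the fact that a basis element chosen on the $m$-th shell is disjoint from the $(m-2)$-nd compact set in the exhaustion.

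More precisely, by Lemma \ref{loccom} we may write $X=\bigcup_{n\ge 1}K_n$ with $K_n$ compact, $K_n\subset K_{n+1}$, and every point possessing a neighborhood inside some $K_n$; equivalently, the interiors $V_n:=\operatorname{int}(K_n)$ cover $X$. The first step will be to pass to a subsequence to arrange $K_n\subset V_{n+1}$: since each compact $K_n$ is covered by the open sets $V_m$, finitely many suffice, and $K_n\subset V_m$ for sufficiently large $m$. Set $V_0=K_0=K_{-1}=\emptyset$. The second step is to define, for each $n\ge 1$, the compact ``annulus'' $A_n:=K_n\setminus V_{n-1}$ and the open ``fattened annulus'' $W_n:=V_{n+1}\setminus K_{n-2}$; then $A_n\subset W_n$, and $X=\bigcup_n A_n$ because $K_n=A_1\cup\cdots\cup A_n$ by induction.

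The third step uses the base hypothesis: for each $x\in A_n$, because $W_n$ is an open neighborhood of $x$ and $\mathcal{B}$ is a base, pick $B_{x,n}\in\mathcal{B}$ with $x\in B_{x,n}\subset W_n$. Since $A_n$ is compact, finitely many of these, say $B_{n,1},\dots,B_{n,k_n}$, cover $A_n$. Let $\mathcal{B}'$ be the collection of all $B_{n,j}$ over all $n\ge 1$ and $1\le j\le k_n$; this is a countable subcollection of $\mathcal{B}$, and it covers $X$ because its members cover every $A_n$.

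Finally, I will verify local finiteness. Given $x\in X$, pick $n$ with $x\in V_n$. If $B_{m,j}\in\mathcal{B}'$ meets $V_n$, then since $B_{m,j}\subset W_m\subset X\setminus K_{m-2}$ and $V_n\subset K_n$, one must have $K_n\not\subset K_{m-2}$, which forces $m\le n+1$. Hence $V_n$ meets only the finitely many basis elements chosen from $A_1,\dots,A_{n+1}$, proving local finiteness. The only genuinely delicate point, and thus the main thing to be careful with, is the bookkeeping that guarantees $K_n\subset V_{n+1}$ after reindexing, since without this inclusion the shells $A_n$ and fattened shells $W_n$ need not interlock the way the local-finiteness argument requires.
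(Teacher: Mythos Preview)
Your proof is correct. Both you and the paper start from the compact exhaustion of Lemma~\ref{loccom}, but you organize the construction differently. The paper proceeds by an iterative \emph{removal} process: at stage $n$ it picks finitely many basis sets covering $K_n$, then discards from the base every remaining set that meets $K_n$; the key observation is that what is left still covers $X$ (any point outside $K_n$ lies in a basis set contained in the open set $X\setminus K_n$, which was not discarded), so the process can continue, and by construction a set chosen at stage $>n$ misses $K_n$. You instead use the standard \emph{annulus} method: refine the exhaustion so that $K_n\subset\operatorname{int}K_{n+1}$, form the compact shells $A_n$ and open fattened shells $W_n$, and select finitely many basis sets inside $W_n$ covering $A_n$. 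Your approach is slightly more explicit and makes the local-finiteness bound ($m\le n+1$) immediately visible from the geometry of $W_m$, at the cost of the extra reindexing step you flagged; the paper's approach avoids that reindexing but requires the (easy, but unstated) check that removal preserves the covering property.
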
 

\begin{proof} Use Lemma \ref{loccom} to write $X$ as a nested union of compact sets $K_n$ such that every point is contained in some $K_n$ together with its neighborhood. 
We construct the required subcover inductively as follows. Choose finitely many sets $U_1,...,U_{N_0}$ of the base covering $K_0$, and remove all other members of the base which meet $K_0$. The remaining collection of open sets is no longer a base but still an open cover of $X$. So add finitely many new sets $U_{N_0+1},...,U_{N_1}$ from this cover (all necessarily disjoint from $K_0$) to our list 
so that it now covers $K_1$, and remove all other members that meet $K_1$, and so on. 
The remaining sequence $U_1,U_2,...$ has only finitely many members which meet 
every $K_n$, so every point of $X$ has a neighborhood meeting only finitely many $U_i$. 
\end{proof} 

\subsection{Reminder on differential forms} 

Let $M$ be a real smooth $n$-dimensional manifold. Recall that a differential $k$-form on $M$ 
is a smooth section of the vector bundle $\wedge^kT^*M$, i.e., a skew-symmetric $(0,k)$-tensor field (see Subsection \ref{diffor}). Thus, for example, 
a 1-form is a section of $T^*M$. If $x_1,...,x_n$ are local coordinates on $M$ near some point $p\in M$ 
then the differentials $dx_1,...,dx_n$ form a basis in fibers of $T^*M$ near this point, so a general $1$-form 
in these coordinates has the form 
$$
\omega=\sum_{i=1}^nf_i(x_1,...,x_n)dx_i.
$$
If we change the coordinates $x_1,...,x_n$ to $y_1,...,y_n$ then $x_i$ are smooth functions of $y_1,...,y_n$ and in the new coordinates $\omega$ looks like
$$
\omega=\sum_{i,j=1}^n f_i(x_1,...,x_n)\frac{\partial x_i}{\partial y_j}dy_j. 
$$
Similarly, a differential $k$-form in the coordinates $x_i$ looks like 
$$
\omega=\sum_{1\le i_1<...< i_k\le n}f_{i_1,...,i_k}(x_1,...,x_n)dx_{i_1}\wedge...\wedge dx_{i_k}
$$
where $f_{i_1,...,i_k}$ are smooth functions, and in the coordinates $y_j$ it looks like 
$$
\omega=\sum_{1\le i_1<...<i_k\le n}\sum_{1\le j_1<...<j_k\le n}f_{i_1,...,i_k}(x_1,...,x_n)\det\left(\frac{\partial x_{i_r}}{\partial y_{j_s}}\right)dy_{j_1}\wedge...\wedge dy_{j_k}. 
$$
The space of differential $k$-forms on $M$ is denoted 
$\Omega^k(M)$. For instance, $\Omega^0(M)=C^\infty(M)$ and $\Omega^k(M)=0$ for $k>n$.  
Consider now the extremal case $k=n$. The bundle $\wedge^n T^*M$ is a line bundle (a vector bundle of rank $1$), so locally any differential $n$-form in coordinates $x_i$ has the form 
$$
\omega=f(x_1,...,x_n)dx_1\wedge...\wedge dx_n, 
$$
which in coordinates $y_j$ takes the form 
$$
\omega=f(x_1,...,x_n)\det\left(\frac{\partial x_i}{\partial y_j}\right)dy_1\wedge...\wedge dy_n.
$$

We have a canonical differentiation operator $d: \Omega^0(M)\to \Omega^1(M)$ 
given in local coordinates by 
$$
df=\sum_{i=1}^n \frac{\partial f}{\partial x_i}dx_i.
$$
It is easy to check that this operator does not depend on the choice of coordinates (this becomes obvious if 
you define it without coordinates, $df(v)=\partial_v f$ for $v\in T_pM$). Also $\Omega^\bullet(M):=\oplus_{k=0}^n \Omega^k(M)$ is a graded algebra under wedge product, and $d$ naturally extends to a degree $1$ derivation $d: \Omega^\bullet(M)\to \Omega^\bullet (M)$ defined in coordinates by 
$$
d(fdx_{i_1}\wedge...\wedge dx_{i_k})=df\wedge dx_{i_1}\wedge ...\wedge dx_{i_k}.
$$ 
Namely, this is independent of  choices and gives rise to a derivation in the ``graded" sense: 
$$
d(a\wedge b)=da\wedge b+(-1)^{\deg a}a\wedge db.
$$
A form $\omega$ is {\bf closed} if $d\omega=0$ and {\bf exact} if $\omega=d\eta$ for some $\eta$. 
It is easy to check that $d^2=0$, so any exact form is closed. However, not every closed form is exact: 
on the circle $S^1=\Bbb R/\Bbb Z$ the form $dx$ is closed but the function $x$ is defined only up to adding integers, so $dx$ is not exact. The space $\Omega^k_{\rm closed}(M)/\Omega^k_{\rm exact}(M)$ is 
called the $k$-th {\bf de Rham cohomology} of $M$, denoted $H^k(M)$. 

If $f: M\to N$ is a differentiable mapping then for a differential form $\omega\in \Omega^k(N)$ 
we can define the pullback $f^*\omega\in \Omega^k(M)$, given by 
$(f^*\omega)(v_1,...,v_k)=\omega(f_*v_1,...,f_*v_k)$ for $v_1,...,v_k\in T_pM$. 
This operation commutes with wedge product and the differential, and $(f\circ g)^*=g^*\circ f^*$. 

\subsection{Partitions of unity} 

Let $M$ be a manifold and $\lbrace{U_i, i\in I\rbrace}$ 
be an open cover of $M$. 

\begin{definition} A smooth {\bf partition of unity} subordinate to 
$\lbrace{U_i, i\in I\rbrace}$ is a collection $\lbrace f_s, s\in S\rbrace$ of smooth nonnegative functions on $M$ such that

(i) for all $s$ the support of $f_s$ is contained in $U_i$ for some $i=i(s)$;

(ii) Any $y\in M$ has a neighborhood in which all but finitely many $f_s$ are zero;  

(iii) $\sum_s f_s=1$. 
\end{definition} 

Note that the sum in (iii) makes sense because of condition (ii). 

Note also that given any partition of unity $\lbrace f_s\rbrace$ subordinate to $\lbrace U_i\rbrace$, we can define 
$$
F_i:=\sum_{s: i(s)=i}f_s,
$$
and this is a new partition of unity subordinate to the same cover now labeled by the set $I$, with the support of $F_i$ contained in $U_i$. 

Finally, note that in every partition of unity on $M$, the set of $s$ such that $f_s$ is not identically zero is countable, and moreover finite if $M$ is compact. This follows from the fact that by Lemma \ref{counsub}, any open cover of a manifold $M$ has a countable subcover, and moreover a finite one if $M$ is compact (applied to the neighborhoods from condition (ii)). 

\begin{proposition}\label{subor} Any open cover $\lbrace{U_i,i\in I\rbrace}$ of a manifold $M$ admits a partition of unity subordinate to this cover. 
\end{proposition}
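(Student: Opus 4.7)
The plan is to combine the standard bump function construction in $\mathbb{R}^n$ with the local finiteness result from Lemma \ref{locfi}. The manifold $M$ is locally compact with a countable base (by definition), so we have the infrastructure of Lemmas \ref{loccom} and \ref{locfi} available.

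First I would construct the local tool. Using the classical smooth function
$$
h(t) = \begin{cases} e^{-1/t}, & t > 0, \\ 0, & t \le 0, \end{cases}
$$
one builds a smooth nonnegative function $\psi\colon \mathbb{R}^n \to \mathbb{R}$ which is positive on the open ball $B(0,1)$ and identically zero outside it; explicitly one can take $\psi(x) = h(1-|x|^2)$. Next, for every point $p \in M$, I would choose an index $i(p) \in I$ with $p \in U_{i(p)}$ and a coordinate chart $(V_p, \phi_p)$ with $p \in V_p \subset U_{i(p)}$ such that $\phi_p(p) = 0$ and $\phi_p(V_p)$ contains the closed unit ball. Transporting $\psi$ back via $\phi_p$ and extending by zero outside $V_p$, I obtain a smooth nonnegative function $g_p\colon M \to \mathbb{R}$ whose support lies in $V_p \subset U_{i(p)}$ and which is strictly positive on an open neighborhood $W_p$ of $p$ (the $\phi_p$-preimage of $B(0,1)$).

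The collection $\{W_p\}_{p \in M}$ is an open cover of $M$, and in fact a base for a topology that refines the given one, because every $W_p$ is contained in some $U_i$ and we may shrink $V_p$ (hence $W_p$) freely. Applying Lemma \ref{locfi}, I extract a countable locally finite subcover $\{W_{p_s}\}_{s \in S}$ of $M$ (where $S \subset \mathbb{N}$). Set $g_s := g_{p_s}$; each $g_s$ is supported in $U_{i(p_s)}$, every point of $M$ has a neighborhood meeting only finitely many supports (by local finiteness), and the sum
$$
G := \sum_{s \in S} g_s
$$
is well-defined and smooth on $M$ because it reduces locally to a finite sum of smooth functions. Moreover $G > 0$ everywhere, since every $x \in M$ lies in some $W_{p_s}$ where $g_s(x) > 0$. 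The functions $f_s := g_s / G$ then form the desired partition of unity: they are smooth, nonnegative, supported in $U_{i(s)}$, locally finite, and sum to $1$.

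The only genuinely nontrivial step is the construction of the bump function $\psi$, which is a standard calculus lemma; everything else is a routine combination of Lemma \ref{locfi} with the ability to multiply smooth functions. The main conceptual point to get right is ensuring that the shrinking from $V_p$ to $W_p$ still produces a cover, and that the refinement $\{W_{p_s}\}$ can be chosen locally finite (which is exactly what Lemma \ref{locfi} provides, since $\{W_p\}$ is a base).
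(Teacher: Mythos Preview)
Your proof is correct and follows essentially the same route as the paper: construct a smooth bump on the unit ball, transport it into charts subordinate to the cover, invoke Lemma \ref{locfi} to extract a countable locally finite refinement, sum, and normalize. One small point of presentation: as written, the family $\{W_p\}_{p\in M}$ with a single choice per point is an open cover but need not be a base, which is what Lemma \ref{locfi} requires; the paper avoids this by taking \emph{all} closed balls whose interiors lie in some $U_i$. Your remark that ``we may shrink $V_p$ freely'' is the right fix---just make explicit that you are allowing all such shrinkings, so that the resulting family really is a base.
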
 

\begin{proof} 
Define a function $h: \Bbb [0,\infty)\to\Bbb R$ given 
by $h(t)=0$ for $t\ge 1$ and $h(t)=\exp(\frac{1}{t-1})$ for $t<1$. 
It is easy to check that $h$ is smooth. Thus we can define 
the smooth {\bf hat function} $H(x):=h(|x|^2)$ on $\Bbb R^n$, supported on the closed unit ball $\overline{B(0,1)}$. 

If $\phi: \overline{B(0,1)}\to M$ is a $C^\infty$-map which is a diffeomorphism 
onto the image, we will say that the image of $\phi$ is a {\bf closed ball} in $M$. 
Thus given a closed ball $\overline B$ on $M$ (equipped with a diffeomorphism $\phi: \overline{B(0,1)}\to \overline B$), we have a hat function $H_B(y):=H(\phi^{-1}(y))$ on $\overline B$, which we extend by zero to a smooth function 
on $M$ whose support is $\overline B$ and which is strictly positive in its interior $B\subset \overline{B}$.  

Now let $\lbrace\overline B_s, s\in J\rbrace$ be the collection of all closed balls in $M$ such that their interiors $B_s$ are contained in some $U_i$. Then $\lbrace B_s, s\in J\rbrace$ is clearly a base for $M$. Thus by Lemma \ref{locfi}, this base has a countable, locally finite subcover $\lbrace B_s, s\in S\rbrace$. Picking diffeomorphisms $\phi_s: \overline{B(0,1)}\to \overline B_s, s\in S$, we can define the smooth function $F(y):=\sum_{s\in S} H_{B_s}(y)$, which is strictly positive on $M$ since $B_s$ cover $M$ (this makes sense by the local finiteness). Now define the smooth functions 
$f_s(y):=\frac{H_{B_s}(y)}{F(y)}$. This collection 
is a partition of unity subordinate to the cover $\lbrace U_i\rbrace$, as desired. 
\end{proof} 

\section{\bf Integration on manifolds}

\subsection{Integration of top differential forms on oriented manifolds} 
An important operation with top degree differential forms is {\bf integration}. Namely, if 
$\omega$ is a differential $n$-form on an open set $U\subset \Bbb R^n$ (with the usual orientation), $\omega=f(x_1,...,x_n)dx_1\wedge...\wedge dx_n$, then we can set
$$
\int_{U}\omega:=\int_{U}f(x_1,...,x_n)dx_1...dx_n.
$$
(provided this integral is absolutely convergent). 
This, however, is not completely canonical: if we change coordinates 
(so that $U$ maps diffeomorphically to $U'$), 
the change of variable formula in a multiple integral tells us that 
$$
\int_{U}f(x_1,...,x_n)dx_1\wedge...\wedge dx_n=\int_{U'} f(x_1(\bold y),...,x_n(\bold y))\bigg|\det\left(\tfrac{\partial x_i}{\partial y_j}\right)\bigg| dy_1\wedge...\wedge dy_n,
$$
while the transformation law for $\omega$ is the same but without the absolute value. This shows that our definition is invariant only under orientation preserving transformations of coordinates, i.e., ones 
whose Jacobian $\det\left(\frac{\partial x_i}{\partial y_j}\right)$ is positive. 
Consequently, we will only be able to define integration of top differential forms 
on {\bf oriented manifolds}, i.e., ones equipped with an atlas of charts in which transition maps have a positive Jacobian; such an atlas defines an {\bf orientation} on $M$. To fix an orientation, we just need to say which local coordinate systems (or bases of tangent spaces) are right-handed, and do so in a consistent way. But this cannot always be done globally (the classic counterexamples are M\"obius strip and Klein bottle).
 
 Now let us proceed to define integration of a continuous top form $\omega$ over an oriented manifold $M$. For this pick an atlas of local charts $\lbrace U_i,i\in I\rbrace$ on $M$ and pick a partition of unity $\lbrace f_s\rbrace$ subordinate to this cover, which is possible by Proposition \ref{subor}. First assume 
that $\omega$ is nonnegative, i.e., $\omega(v_1,...,v_n)\ge 0$ for a right-handed basis 
$v_i$ of any tangent space of $M$. Then define 
\begin{equation}\label{defint}  
 \int_M \omega:=\sum_{s} \int_{U_{i(s)}}f_s\omega
 \end{equation} 
where in each $U_i$ we use a right-handed coordinate system 
to compute the corresponding integral. This makes sense (as a nonnegative real number or $+\infty$), and is also independent of  the choice of a partition of unity. Indeed, it is easy to see that for two atlases 
$\lbrace U_i\rbrace$, $\lbrace V_j\rbrace$ and two partitions of unity $\lbrace f_s\rbrace ,\lbrace g_t\rbrace$ the answer is the same, by comparing both to the answer for the atlas $\lbrace U_i\cap V_j\rbrace$ and partition of unity $\lbrace f_sg_t\rbrace$. In fact, this makes sense for any measurable $\omega$ (i.e., given by a measurable function in every local chart) if we use Lebesgue integration. 

Now, if $\omega$ is not necessarily nonnegative, we may define 
the nonnegative form $|\omega|$ which is $\omega$ at points where $\omega$ is nonnegative and $-\omega$ otherwise. Then, if 
$$
\int_M|\omega|<\infty, 
$$
we can define $\int_M\omega$ by the same formula \eqref{defint} which will now be 
a not necessarily positive but absolutely convergent series (a finite sum in the compact case).  

Importantly, the same definition works for manifolds $M$ with boundary $\partial M$ (an $n-1$-manifold); the only difference is that at boundary points the manifold locally looks like $\Bbb R^n_+$ (the space of vectors with nonnegative last coordinate) rather than $\Bbb R^n$. Note that the boundary of an oriented manifold carries a canonical orientation as well (a basis of $T_p\partial M$ is right-handed if adding at the beginning a tangent vector directed outside $M$ produces a right-handed basis of $T_pM$). 

\begin{remark} If the manifold $M$ is non-orientable, we cannot integrate 
top differential forms on $M$. However, we can integrate {\bf densities} on $M$, which are sections of the line bundle $|\wedge^nT^*M|$, the absolute value of the orientation bundle. This bundle is defined by transition functions $|g_{ij}(x)|$, where $g_{ij}(x)$ are the transition functions of $\wedge^nT^*M$. Thus its sections, called densities on $M$, transform under changes of coordinates according to the rule
$$
f(x_1,...,x_n)|dx_1\wedge...\wedge dx_n|=f(x_1(\bold y),...,x_n(\bold y))\vert\det\left(\tfrac{\partial x_i}{\partial y_j}\right)\vert\cdot |dy_1\wedge...\wedge dy_n|,
$$
i.e., exactly the one needed for the integral to be defined canonically. This procedure actually makes sense for any manifold, and in the oriented case reduces to integration of top forms described above. 

Using partitions of unity, it is not hard to show that the bundle $|\wedge^nT^*M|$ 
is trivial (check it!). A positive smooth section of this bundle (i.e., positive in every chart) therefore exists and is nothing but a {\bf positive smooth measure} on $M$, and any two such measures 
differ by multiplication by  a positive smooth function. Moreover, given such a measure $\mu$ 
and a measurable function $f$ on $M$ such that $\int_M |f|d\mu<\infty$ 
(i.e., $f\in L^1(M,\mu)$), we can define $\int_Mfd\mu$ as usual. 
\end{remark}

\subsection{Nonvanishing forms}  
Let us say that a top degree continuous differential form $\omega$ on $M$ is {\bf non-vanishing} 
if for any $x\in M$, $\omega_x\in \wedge^nT_x^*M$ is nonzero. 
In this case, $\omega$ defines an orientation on $M$ by 
declaring a basis $v_1,...,v_n$ of $T_xM$ right-handed 
if $\omega(v_1,...,v_n)>0$ (in particular, there are no non-vanishing top forms on non-orientable manifolds). Thus we can integrate top differential forms on $M$, and in particular $\omega$ defines a positive measure $\mu=\mu_\omega$ on $M$, namely
$$
\mu(U)=\int_U\omega
$$
for an open set $U\subset M$ (this integral may be $+\infty$, but is finite 
if $U$ is a small enough neighborhood of any point $x\in M$). 
Thus we can integrate functions on $M$ with respect to this measure: 
$$
\int_M fd\mu=\int_M f\omega. 
$$
This, of course, only makes sense if $f$ is measurable and 
$\int_M |f|d\mu<\infty$, i.e., if $f\in L^1(M,\mu)$. 
Note also that if $\lambda\in \Bbb R^\times$ then $\mu_{\lambda\omega}=|\lambda|\mu_\omega$. 
 
\begin{example} If $M$ is an open set in $\Bbb R^n$ with the usual orientation 
and $\omega=dx_1\wedge...\wedge dx_n$ then $\int_M\omega=\int_Mdx_1...dx_n$ is just the volume of $M$. For this reason top differential forms are often called {\bf volume forms}, especially when they are non-vanishing and thus define an orientation and a measure on $M$, and in the latter case $\int_M\omega$, if finite, is called the {\bf volume of $M$} with respect to $\omega$.  
\end{example}   
  
\begin{proposition} If $M$ is compact and $\omega$ is non-vanishing 
then $M$ has finite volume under the measure $\mu=\mu_\omega$, and every bounded measurable (in particular, any continuous) function on $M$ is in $L^1(M,\mu)$. 
\end{proposition}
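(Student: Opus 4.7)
The plan is to exploit compactness to reduce the problem to a finite sum of ordinary Lebesgue integrals of bounded functions over bounded sets in $\Bbb R^n$.

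First I would cover $M$ by coordinate charts $(U_\alpha,\phi_\alpha)$ chosen so that each $\phi_\alpha(U_\alpha)\subset \Bbb R^n$ is bounded, which is possible because every point of $M$ has a neighborhood with this property (shrink any given chart to a small open ball). By compactness, a finite subcollection $(U_1,\phi_1),\dots,(U_N,\phi_N)$ already covers $M$. Next, apply Proposition \ref{subor} to obtain a partition of unity $\{f_s\}$ subordinate to this cover; grouping terms as noted in the paragraph following the definition of partitions of unity, I may assume the partition is indexed by $\{1,\dots,N\}$ with $\mathrm{supp}(f_i)\subset U_i$. Since $M$ is compact, only finitely many of these functions are nonzero, so without loss of generality the collection is $f_1,\dots,f_N$.

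On each chart $U_i$, write $\omega=g_i(x_1,\dots,x_n)\,dx_1\wedge\cdots\wedge dx_n$ with $g_i$ continuous (and positive after possibly reversing orientation in the chart, since $\omega$ is non-vanishing). The function $(f_i\circ\phi_i^{-1})\cdot g_i$ is continuous on $\phi_i(U_i)$ and compactly supported inside the bounded open set $\phi_i(U_i)\subset\Bbb R^n$, hence bounded and supported on a set of finite Lebesgue measure. Therefore
$$
\int_{U_i}f_i\,\omega=\int_{\phi_i(U_i)}(f_i\circ\phi_i^{-1})(x)\,g_i(x)\,dx_1\cdots dx_n<\infty.
$$
Summing over $i=1,\dots,N$ gives $\mu(M)=\int_M\omega=\sum_{i=1}^N\int_{U_i}f_i\,\omega<\infty$, which is the first claim.

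For the second claim, let $f\in C(M)$. Since $M$ is compact, $f$ is bounded, say $|f|\le C$. Applying the construction of $\int_Mf\omega$ via the same partition of unity, each local integral is absolutely convergent because $|f\cdot f_i\cdot g_i|\le C\,f_i\,g_i$ is bounded and compactly supported in $\phi_i(U_i)$, and summing yields $\int_M|f|\,d\mu\le C\mu(M)<\infty$, so $f\in L^1(M,\mu)$. The main (mild) obstacle is simply the bookkeeping of reducing to a finite sum and verifying boundedness of the local densities on the relevant supports; no real analytic difficulty arises because compactness does all the work.
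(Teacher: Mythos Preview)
Your argument is correct and follows the same strategy as the paper: use compactness to pass to finitely many charts with finite local volume, then bound $\int_M|f|\,d\mu$ by $\max|f|\cdot\mu(M)$. The paper's version is slightly more economical---it skips the partition of unity and simply observes that each point has a neighborhood $U_x$ with $\mu(U_x)<\infty$, extracts a finite subcover $U_1,\dots,U_N$, and uses subadditivity $\mu(M)\le\sum_i\mu(U_i)<\infty$---but the content is the same.
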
   
  
\begin{proof} For each $x\in M$ choose a neighborhood $U_x$ of $x$ such that $\mu(U_x)<\infty$. The collection of sets $U_x$ forms an open cover of $M$, so it has a finite subcover $U_1,...,U_N$, and $\mu(M)\le \mu(U_1)+...+\mu(U_N)<\infty$. 
Then $\int_M |f|d\mu\le\mu(M){\rm sup} |f| <\infty$ for bounded measurable $f$. 
\end{proof}   

\subsection{Stokes formula} 
  
A central result about integration of differential forms is  
 
\begin{theorem}(Stokes formula) If $M$ is an $n$-dimensional oriented manifold with boundary 
 and $\omega$ a differential $n-1$-form on $M$ of class $C^1$ then 
 $$
 \int_M d\omega=\int_{\partial M}\omega.
 $$ 
 \end{theorem}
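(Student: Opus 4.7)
The plan is to proceed by the classical reduction to local coordinates via a partition of unity. First I would observe that both sides of Stokes' formula are linear in $\omega$, so upon choosing an atlas of coordinate charts $\lbrace(U_i,\phi_i)\rbrace$ on $M$ (where boundary charts are modeled on the upper half space $\Bbb R^n_+ = \lbrace x : x_n\ge 0\rbrace$) and a subordinate partition of unity $\lbrace f_s\rbrace$ (which exists by Proposition~\ref{subor}), I would write $\omega = \sum_s f_s\omega$ and apply Stokes' formula term by term. Of course, one implicitly assumes here either that $M$ is compact, or that $\omega$ has compact support, or at least enough decay that $\int_M d\omega$ and $\int_{\partial M}\omega$ converge absolutely; I would state this hypothesis explicitly and note that the sum over $s$ then has only finitely many nonzero terms near every point, so the interchange of sum and integral is legitimate. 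Thus it suffices to prove the identity for a $C^1$-form $\omega$ compactly supported inside a single chart.

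Next I would transport $\omega$ to $\Bbb R^n$ (interior chart) or to $\Bbb R^n_+$ (boundary chart) via $\phi_i$. Writing $\omega = \sum_{j=1}^n (-1)^{j-1} g_j\, dx_1\wedge\cdots\wedge\widehat{dx_j}\wedge\cdots\wedge dx_n$ with $g_j\in C^1_c$, a direct computation gives $d\omega = (\sum_{j=1}^n \tfrac{\partial g_j}{\partial x_j})\,dx_1\wedge\cdots\wedge dx_n$. For an interior chart, compact support ensures $g_j$ vanishes outside a box, and Fubini together with the fundamental theorem of calculus shows that each term $\int \tfrac{\partial g_j}{\partial x_j}\,dx_1\cdots dx_n = 0$; meanwhile $\omega$ vanishes on $\partial M$ for such a chart, so both sides are zero. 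For a boundary chart modeled on $\Bbb R^n_+$, the same computation yields zero for $j<n$, while the $j=n$ term gives
$$
\int_{\Bbb R^{n-1}}\int_0^\infty \tfrac{\partial g_n}{\partial x_n}\,dx_n\,dx_1\cdots dx_{n-1} = -\int_{\Bbb R^{n-1}} g_n(x_1,\ldots,x_{n-1},0)\,dx_1\cdots dx_{n-1},
$$
using compact support and the fundamental theorem of calculus. One then checks that this equals $\int_{\partial M}\omega$ in the chart, because restricting $\omega$ to the hyperplane $x_n=0$ kills all summands except $j=n$.

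The main obstacle, and the step which requires real care rather than routine calculation, is the orientation bookkeeping on $\partial M$. The convention stated earlier in the text is that a basis $(v_1,\ldots,v_{n-1})$ of $T_p\partial M$ is right-handed exactly when $(\nu, v_1,\ldots,v_{n-1})$ is right-handed in $T_pM$ for $\nu$ any inward-pointing vector. In the model $\Bbb R^n_+$ with $\partial\Bbb R^n_+ = \lbrace x_n=0\rbrace$, the inward normal is $+\partial/\partial x_n$, so the induced orientation on the boundary is given by $(-1)^n\, dx_1\wedge\cdots\wedge dx_{n-1}$ (one must insert $\partial/\partial x_n$ in the first slot and move it past $n-1$ other vectors, or equivalently compare with the ``outward normal first'' convention). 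The minus sign produced by the fundamental theorem of calculus above then matches this orientation sign exactly, yielding $\int_{\partial M}\omega$ with the correct sign. I would verify this sign explicitly in low dimensions ($n=1,2$) as a sanity check and then record the general case as a direct computation; once the sign convention is pinned down, the rest of the argument is mechanical.
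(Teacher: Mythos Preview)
Your proposal is correct and follows exactly the approach the paper sketches: reduce via a partition of unity to a form compactly supported in a single chart (a box in $\Bbb R^n$ or $\Bbb R^n_+$), then apply Fubini and the fundamental theorem of calculus. The paper's own proof is only a one-sentence outline of this same strategy, so your write-up is in fact considerably more detailed; your explicit flagging of the orientation sign as the point requiring care (and the need for a compact-support or convergence hypothesis) is a welcome addition.
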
  
 
In particular, if $M$ is closed (has no boundary) then $\int_M d\omega=0$, and 
if $\omega$ is closed ($d\omega=0$) then $\int_{\partial M}\omega=0$. 
 
 When $M$ is an interval in $\Bbb R$, this reduces to the fundamental theorem of calculus. If $M$ is a region in $\Bbb R^2$, this reduces to Green's formula. 
 If $M$ is a surface in $\Bbb R^3$, this reduces to the classical Stokes formula from vector calculus. Finally, if $M$ is a region in $\Bbb R^3$ then this reduces to 
 the Gauss formula (Divergence theorem). 
 
The proof of the Stokes formula is not difficult. Namely, by writing $\omega$ as $\sum_s f_s\omega$ for some partition of unity, it suffices to prove the formula for $M$ being a box in $\Bbb R^n$, which easily follows from the fundamental theorem of calculus.  
 
\subsection{Integration on Lie groups} Now let $G$ be a real Lie group of dimension $n$. In this case given any $\xi\in \wedge^n\g^*$, we can extend it to a left-invariant skew-symmetric tensor field (i.e., top differential form) $\omega_\xi$ on $G$. 
Also, if $\xi\ne 0$ then $\omega=\omega_\xi$ is non-vanishing and thus defines an orientation and a left-invariant positive measure $\mu_\omega$ on $G$.  Note that $\xi$ is unique up to scaling by a real number $\lambda\in \Bbb R^\times$. So, since $\mu_{\lambda \omega}=|\lambda|\mu_\omega$, we see that $\mu_\omega$ is defined uniquely up to scaling by positive numbers. 
This measure is called the {\bf left-invariant Haar measure} 
and we'll denote it just by $\mu_L$ (assuming that the normalization has been chosen somehow). 

In a similar way we can define the {\bf right invariant Haar measure} $\mu_R$ on $G$. One may ask if these measures coincide (or, rather, are proportional, since they are defined only up to normalization). This question is answered by the following proposition. 

Given a 1-dimensional real representation $V$ of a group $G$, let 
$|V|$ be the representation of $G$ on the same space 
with $\rho_{|V|}(g)=|\rho_V(g)|$, where $\rho: G\to {\rm Aut}(V)=\Bbb R^\times$. 

\begin{proposition} $\mu_L=\mu_R$ if and only if $|\wedge^n\g^*|$ (or, equivalently, $|\wedge^n\g|$) is a trivial representation of $G$. 
\end{proposition}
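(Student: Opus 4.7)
The strategy is to compute explicitly how right translation acts on a left-invariant top form, extract a one-dimensional character of $G$, and identify it with the determinant character of $|\wedge^n\g^*|$.

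First I would reduce the question to a computation at the identity. Fix a nonzero $\xi\in\wedge^n\g^*$ and let $\omega_\xi$ be the corresponding left-invariant $n$-form, so that $\mu_L=\mu_{\omega_\xi}$ up to a positive normalization. For any $g\in G$, since $L_h$ and $R_g$ commute, the pullback $R_g^*\omega_\xi$ is again left-invariant, hence equals $\omega_{\eta(g)}$ for a unique $\eta(g)\in\wedge^n\g^*$. Evaluating at $1$ gives
$$\eta(g)(v_1,\ldots,v_n) \;=\; \omega_\xi\big|_g\big((R_g)_*v_1,\ldots,(R_g)_*v_n\big) \;=\; \xi\big(\mathrm{Ad}(g^{-1})v_1,\ldots,\mathrm{Ad}(g^{-1})v_n\big)$$
for $v_i\in\g=T_1G$, where I have used left-invariance to transport $\omega_\xi|_g$ back to $1$ via $L_{g^{-1}}$, producing $L_{g^{-1}*}\circ R_{g*}=\mathrm{Ad}(g^{-1})$. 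Since $\wedge^n\g^*$ is one-dimensional, this gives $\eta(g)=\chi(g)\,\xi$ with $\chi(g)=\det\mathrm{Ad}(g^{-1})=(\det\mathrm{Ad}(g))^{-1}\in\Bbb R^\times$.

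Next I would translate this back into a statement about measures. By definition $\chi:G\to\Bbb R^\times$ is precisely the character by which $G$ acts on the one-dimensional real representation $\wedge^n\g^*$ via the (coadjoint-extended) adjoint action. The identity $R_g^*\omega_\xi=\chi(g)\omega_\xi$ together with the change-of-variables relation between a top form and the positive measure it induces yields
$$\mu_L(Eg)\;=\;|\chi(g)|\,\mu_L(E)$$
for every Borel set $E\subset G$ and every $g\in G$. Hence $\mu_L$ is also right-invariant (and therefore, by uniqueness up to positive scalar, equal to $\mu_R$ after normalization) if and only if $|\chi(g)|=1$ for all $g\in G$.

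Finally I would match the two sides. The representation $|\wedge^n\g^*|$ has, by construction, character $g\mapsto|\chi(g)|$, so being trivial is exactly the condition $|\chi(g)|\equiv 1$. The equivalent statement for $|\wedge^n\g|$ follows because the adjoint action of $g$ on $\wedge^n\g$ has character $\det\mathrm{Ad}(g)=\chi(g)^{-1}$, whose absolute value is $1$ iff $|\chi(g)|=1$. The main (mild) obstacle is the bookkeeping of orientation signs and the distinction between $\chi$ and $|\chi|$: the form-level identity $R_g^*\omega_\xi=\chi(g)\omega_\xi$ involves a signed scalar, but only its absolute value survives when one passes to the associated positive measure, which is precisely why the relevant representation is $|\wedge^n\g^*|$ rather than $\wedge^n\g^*$ itself.
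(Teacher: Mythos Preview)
Your proof is correct and follows the same approach as the paper's: both compute that right translation acts on a left-invariant top form via the adjoint character on $\wedge^n\g^*$, then pass to absolute values to get the measure statement. The paper compresses this to a few sentences (``$\omega$ is right invariant up to sign $\Leftrightarrow$ $\omega_1$ is ${\rm Ad}$-invariant up to sign''), while you spell out explicitly that $R_g^*\omega_\xi=\chi(g)\omega_\xi$ with $\chi(g)=\det{\rm Ad}(g)^{-1}$ and carefully track the passage from signed scalar to absolute value---but the underlying argument is identical.
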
 

\begin{proof} It is clear that $\mu_L=\mu_R$ if and only if 
the left-invariant top volume form $\omega$ on $G$ is also right invariant up to sign. 
This is equivalent to saying that $\omega$ is conjugation invariant up to sign, 
i.e., that $\omega_1\in \wedge^n\g^*$ is invariant up to sign under the action of $G$. 
This implies the statement. 
\end{proof} 

If $\mu_L=\mu_R$ then $G$ is called {\bf unimodular}. 
In this case we have a {\bf bi-invariant Haar measure} $\mu=\mu_L=\mu_R$ on $G$ (under some normalization).  

In particular, we see that if $G$ has no nontrivial continuous characters $G\to \Bbb R^+$ 
then it is unimodular.

\begin{example} If $G$ is a discrete countable group then $G$ is unimodular and $\mu$ 
is the counting measure: $\mu(U)=|U|$ (number of elements in $U$). 
\end{example} 

\begin{exercise} (i) Let us say that a finite dimensional real Lie algebra $\g$ 
of dimension $n$ is unimodular if $\wedge^n\g$ is a trivial representation of $\g$. 
Show that a connected Lie group $G$ is unimodular if and only if so is ${\rm Lie}G$. 

(ii) Show that a perfect Lie algebra (such that $\g=[\g,\g]$) is unimodular. In particular, a semisimple Lie algebra is unimodular. 

(iii) Show that a nilpotent (in particular, abelian) Lie algebra is unimodular. 

(iv) Show that if $\g_1,\g_2$ are unimodular then so is $\g_1\oplus \g_2$. Deduce that a reductive Lie algebra is unimodular.   

(v) Show that the Lie algebra of upper triangular matrices of size $n$ 
is {\bf not} unimodular for $n>1$. Give an example of 
a Lie algebra $\g$ and ideal $I$ such that $I$ and $\g/I$ are unimodular but $\g$ is not. 

(vi) Give an example of a non-unimodular Lie group $G$ such that 
its connected component of the identity $G^\circ$ is unimodular (try groups of the form $\Bbb Z\ltimes \Bbb R$). 
\end{exercise} 

For a unimodular Lie group $G$, we will sometimes denote the integral of a function $f$
with respect to the Haar measure by 
$$
\int_G f(g)dg. 
$$ 

\begin{proposition} A compact Lie group is unimodular. 
\end{proposition}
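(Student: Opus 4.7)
The plan is to apply the criterion from the previous proposition: it suffices to show that the one-dimensional real representation $|\wedge^n \g^*|$ of $G$ is trivial, where $n = \dim G$. Equivalently, I must show that the continuous homomorphism
$$
\chi : G \to \Bbb R^\times_{>0}, \qquad \chi(g) := |\det \mathrm{Ad}(g)|
$$
is identically equal to $1$. Here $\chi$ is well-defined and smooth because $\mathrm{Ad} : G \to GL(\g)$ is a smooth representation and taking the absolute value of the determinant is smooth on $GL(\g)$.

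The first step is to observe that since $G$ is compact, the image $\chi(G) \subset \Bbb R_{>0}$ is a compact subgroup of $\Bbb R_{>0}$. The second step is the key elementary fact: the only compact subgroup of $(\Bbb R_{>0}, \times)$ is the trivial one $\{1\}$. Indeed, if $a \in \Bbb R_{>0}$ and $a \ne 1$, then the sequence $\{a^n : n \in \Bbb Z\}$ is unbounded in $\Bbb R_{>0}$ (either $a^n \to \infty$ or $a^{-n} \to \infty$), so the cyclic subgroup generated by $a$ has noncompact closure, and hence no compact subgroup of $\Bbb R_{>0}$ can contain any $a \ne 1$.

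Combining these, $\chi(G) = \{1\}$, so $|\wedge^n \g^*|$ is trivial as a representation of $G$, and the previous proposition then yields $\mu_L = \mu_R$, proving that $G$ is unimodular. There is no serious obstacle in this proof; the only step that requires any thought is the structural fact about compact subgroups of $\Bbb R_{>0}$, and that follows immediately from the fact that nontrivial cyclic subgroups of $\Bbb R_{>0}$ are unbounded. (One could equivalently transport the problem to $(\Bbb R,+)$ via $\log$, where the same statement -- no compact subgroups except $\{0\}$ -- is obvious.)
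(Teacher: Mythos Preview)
Your proof is correct and follows exactly the same approach as the paper: observe that the map $G\to \Bbb R_{>0}$ defining the representation $|\wedge^n\g^*|$ has compact image, and use that the only compact subgroup of $\Bbb R_{>0}$ is trivial. The paper states this last fact without justification, whereas you spell it out, but otherwise the arguments are identical.
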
 

\begin{proof} The representation of $G$ on $|\wedge^n\g^*|$  defines a continuous homomorphism $\rho: G\to \Bbb R^+$. Since $G$ is compact, the image $\rho(G)$ of $\rho$ is a compact subgroup of $\Bbb R^+$. But the only such subgroup is the trivial group. This implies the statement. 
\end{proof} 

Thus, on a compact Lie group we have a (bi-invariant) Haar measure $\mu$. 
Moreover, in this case $\int_Gd\mu={\rm Volume}(G)<\infty$, so we have a canonical normalization of $\mu$ by the condition that it is a probability measure: 
$$
\int_G d\mu=1.
$$
E.g., for finite groups this normalization is the averaging measure, 
which is $|G|^{-1}$ times the counting measure. This is the normalization we will use if $G$ is compact.   

\section{\bf Representations of compact Lie groups} 

\subsection{Unitary representations} \label{unrep}

Now we can extend to compact groups the result that representations of finite groups are unitary. Namely, let $V$ be a finite dimensional (continuous) complex representation of a compact Lie group $G$.  

\begin{proposition} $V$ admits a $G$-invariant unitary structure. 
\end{proposition}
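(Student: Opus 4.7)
The plan is to mimic the finite-group averaging argument from Subsection~\ref{unrep}, replacing the sum over $G$ by integration against the Haar measure $\mu$ whose existence for compact $G$ was just established and normalized to be a probability measure. Concretely, I would start with any positive definite Hermitian inner product $B(\cdot,\cdot)$ on $V$ (which exists because $V$ is a finite dimensional complex vector space), and then define
\[
\widehat{B}(v,w) := \int_G B(gv, gw)\, d\mu(g), \qquad v,w \in V.
\]

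The first step is to check this is well defined. The map $g \mapsto B(gv, gw)$ is continuous on $G$ (being a composition of the regular action map and the continuous form $B$), and $G$ is compact, so the integrand is bounded and the integral converges. Sesquilinearity of $\widehat{B}$ is immediate from sesquilinearity of $B$ and linearity of the integral, and the Hermitian symmetry $\widehat{B}(w,v) = \overline{\widehat{B}(v,w)}$ is inherited from $B$.

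The second step is positive definiteness. For $v\ne 0$, the function $g \mapsto B(gv,gv)$ is continuous and nonnegative, and is strictly positive at $g = 1$ since $B$ is positive definite and $1 \cdot v = v \ne 0$. Hence it is strictly positive on a neighborhood $U$ of $1$, and as $\mu$ is a positive measure with $\mu(U) > 0$ (any nonempty open subset of $G$ has positive Haar measure, since the translates of $U$ cover the compact $G$ by finitely many sets of equal measure), we conclude $\widehat B(v,v) > 0$.

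The third step is $G$-invariance: for any $h \in G$,
\[
\widehat{B}(hv, hw) = \int_G B((gh)v, (gh)w)\, d\mu(g) = \int_G B(g'v, g'w)\, d\mu(g') = \widehat{B}(v,w),
\]
by the right-invariance of $\mu$ under the substitution $g' = gh$. This is where compactness enters decisively, since it provided the (bi-)invariant Haar measure. There is no serious obstacle here: the entire argument rests on the prior construction of the Haar measure on compact $G$, and on the elementary observation that continuous nonnegative functions on $G$ that are positive somewhere have strictly positive integral; the only point requiring minor care is verifying that nonempty open sets have positive Haar measure, which is an immediate compactness/translation argument.
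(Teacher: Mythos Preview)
Your proof is correct and follows essentially the same averaging argument as the paper. One minor simplification: for positive definiteness you need only observe that $gv \ne 0$ for \emph{every} $g$ (since $\rho_V(g)$ is invertible), so the integrand $B(gv,gv)$ is strictly positive everywhere on $G$, making the detour through nonempty open sets having positive Haar measure unnecessary.
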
 

\begin{proof} Fix a positive Hermitian form $B$ on $V$ and define a new Hermitian form on $V$ by 
$$
B_{\rm av}(v,w)=\int_G B(\rho_V(g)v,\rho_V(g)w)dg.
$$
This form is well defined since $G$ is compact and is $G$-invariant by construction (since the measure $dg$ is invariant). Also $B_{\rm av}(v,v)>0$  for $v\ne 0$ 
since $B(w,w)>0$ for any $w\ne 0$. 
\end{proof} 

\begin{corollary} Every finite dimensional representation $V$ of a compact Lie group $G$ 
is completely reducible. 
\end{corollary}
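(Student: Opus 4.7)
The plan is to combine the preceding proposition (which endows $V$ with a $G$-invariant positive Hermitian form $B_{\rm av}$) with the general fact, established earlier in the notes for unitary representations, that any finite dimensional unitary representation decomposes as an orthogonal direct sum of irreducible unitary subrepresentations. Once we have an invariant inner product, complete reducibility is essentially automatic.

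More concretely, I would proceed as follows. First, invoke the proposition just proved to equip $V$ with a $G$-invariant positive definite Hermitian form $B_{\rm av}$ constructed by averaging an arbitrary Hermitian form against the Haar measure on $G$. Second, observe that for any $G$-subrepresentation $W \subset V$, the $B_{\rm av}$-orthogonal complement $W^\perp$ is again $G$-stable: indeed, for $v \in W^\perp$, $w \in W$, and $g \in G$, we have $B_{\rm av}(\rho_V(g)v,w) = B_{\rm av}(v,\rho_V(g^{-1})w) = 0$, since $\rho_V(g^{-1})w \in W$. Since $B_{\rm av}$ is positive definite, $V = W \oplus W^\perp$ as $G$-modules.

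Third, induct on $\dim V$. The base case $\dim V \le 1$ is immediate. For the inductive step, if $V$ is irreducible we are done; otherwise choose a nonzero proper subrepresentation $W$, decompose $V = W \oplus W^\perp$ using the previous step, and apply the induction hypothesis to $W$ and $W^\perp$, each of which inherits a $G$-invariant unitary structure by restriction of $B_{\rm av}$.

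I do not foresee a significant obstacle: the argument is exactly parallel to the finite group case recalled earlier in the text, with the averaging sum $\sum_{g \in G}$ replaced by the Haar integral $\int_G dg$. The only place where compactness of $G$ really enters is the construction of $B_{\rm av}$ itself (needed to guarantee that the integral converges and that the resulting form is actually positive definite and $G$-invariant), and this has already been handled in the preceding proposition. Thus the corollary reduces to a short formal consequence of that proposition together with the orthogonal complement argument.
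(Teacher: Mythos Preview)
Your proposal is correct and follows essentially the same route as the paper: invoke the invariant Hermitian form from the preceding proposition, observe that the orthogonal complement of any subrepresentation is again $G$-stable, and conclude complete reducibility by the standard induction on dimension. The paper's proof is terser (it compresses the induction into ``which implies the statement,'' relying on the earlier general result that unitary representations decompose as orthogonal direct sums of irreducibles), but the underlying argument is identical to yours.
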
 

\begin{proof} Let $W\subset V$ be a subrepresentation and $B$ be an invariant positive Hermitian form on $V$. Let $W^\perp\subset V$ be the orthogonal complement of $W$ 
under $B$. Then $V=W\oplus W^\perp$, which implies the statement.  
\end{proof} 

In particular, this applies to the special unitary group $SU(n)$. Recall that 
$SU(n)/SU(n-1)=S^{2n-1}$, which implies that $SU(n)$ is simply connected. 
Thus (smooth) representations of $SU(n)$ are the same thing as representations
of the Lie algebra $\mathfrak{su}(n)$ or its complexification
$\mathfrak{sl}_n$. Thus we get a new, analytic proof that finite dimensional representations 
of $\mathfrak{sl}_n$ are completely reducible (this is called {\bf Weyl's unitary trick}). 
In fact, we will see that complete reducibility of finite dimensional representations of all semisimple Lie algebras can be proved in this way. 

\subsection{Matrix coefficients} Let $V$ be a finite dimensional continuous complex representation of a Lie group $G$. A {\bf matrix coefficient} of $V$ is a function $G\to \Bbb C$ of the form $(f,\rho_V(g)v)$ 
for some $v\in V$ and $f\in V^*$. Obviously, such a function is continuous. 

\begin{proposition} Matrix coefficients are smooth. 
\end{proposition} 

\begin{proof} Let us say that $v\in V$ is smooth if the function $f(\rho_V(g)v)$ is smooth for any $f\in V^*$; it is clear that such vectors form a subspace $V_{\rm sm}$ of $V$. Our job is to show that, in fact, $V_{\rm sm}=V$. To this end let us first construct some smooth vectors. For this let $\phi: G\to \Bbb C$ be a smooth function with compact support, and let 
$$
w=w(\phi,v):=\int_G \phi(g)\rho_V(g)vdg,
$$ 
where $dg$ is a left-invariant Haar measure on $G$ and $v\in V$. 
We claim that $w$ is a smooth vector. Indeed, 
$$
f(\rho_V(h)w)=f\left(\rho_V(h)\int_G \phi(g)\rho_V(g)vdg\right)=
$$
$$
\int_G f(\phi(g)\rho_V(hg)v)dg=
\int_G f(\phi(h^{-1}g)\rho_V(g)v)dg,
$$
and this is manifestly smooth in $h$ (we can differentiate indefinitely under the integral sign). 

Define a {\bf delta-like sequence} (or a {\bf Dirac sequence}) around a point $x_0\in M$ on a manifold $M$ with a smooth measure $dx$  to be a sequence of continuous functions $\phi_n$ on $M$ such that for every neighborhood $U$ of $x_0$ the supports of almost all $\phi_n$ are contained in $U$, and $\int_M \phi_n(x)dx=1$. The ``hat" function construction implies that delta-like sequences exist and can be chosen non-negative and smooth. Namely, we can pick a sequence of non-negative smooth functions satisfying the first condition and then normalize it to satisfy the second one. 

Now let $\phi_n$ be a smooth delta-like sequence around $1$ on $G$ with left-invariant Haar measure. Let $w_n:=w(\phi_n,v)$. It is obvious that $w_n\to v$ as $n\to \infty$. Thus $V_{\rm sm}$ is dense in $V$. Since $V$ is finite dimensional, it follows that $V_{\rm sm}=V$, as claimed. 
\end{proof} 

Now let $V$ be an irreducible representation of a compact Lie group $G$. 
As shown above, it has an invariant positive Hermitian inner product, which we'll denote by $(,)$. Moreover, this product is unique up to scaling. Pick an orthonormal basis $v_1,...,v_n$
of $V$ under this inner product, and let $v_1^*,...,v_n^*$ be the dual basis of $V^*$. Now consider the matrix coefficients of  
$V$ in this basis: 
$$
\psi_{V,ij}(g):=v_j^*(\rho_V(g)v_i)=(\rho_V(g)v_i,v_j). 
$$
Note that these functions are independent of  the normalization of $(,)$. 

Suppose now that we also have another such representation $W$ with orthonormal basis $w_i$.  
 
\begin{theorem} (Orthogonality of matrix coefficients) We have 
$$
\int_G\psi_{V,ij}(g)\overline{\psi_{W,kl}(g)}dg=0 
$$
if $V$ is not isomorphic to $W$. Also 
$$
\int_G\psi_{V,ij}(g)\overline{\psi_{V,kl}(g)}dg=\frac{\delta_{ik}\delta_{jl}}{\dim V}.
$$ 
\end{theorem}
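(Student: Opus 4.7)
The plan is to prove these orthogonality relations via a ``group averaging" trick, reducing them to Schur's lemma together with the fact that, by the normalization chosen in Subsection \ref{unrep}, the Haar measure on a compact $G$ is a probability measure. Given any linear map $A: V\to W$ between two (finite dimensional) irreducible unitary representations, I will consider the averaged operator
$$
\widetilde A := \int_G \rho_W(g^{-1})\, A\, \rho_V(g)\, dg,
$$
which is well defined as a $W$-valued integral since $G$ is compact and the integrand is continuous. The left/right-invariance of $dg$ (which holds because $G$ is compact, hence unimodular) implies that $\widetilde A$ intertwines the $G$-actions on $V$ and $W$, i.e.\ $\widetilde A \in \Hom_G(V,W)$.

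Now Schur's lemma does the main work. If $V\ncong W$ then $\widetilde A = 0$ for every $A$. If $V = W$, then $\widetilde A = \lambda(A)\cdot {\rm Id}_V$ for some scalar $\lambda(A)$; taking traces and using $\Tr(\rho_V(g^{-1})A\rho_V(g)) = \Tr(A)$ together with $\int_G dg=1$, we get $\lambda(A)= \Tr(A)/\dim V$. Next I will specialize $A$ to the rank-one operator $Av := (v,v_i)w_j$ (defined using the invariant Hermitian forms on $V,W$), and compute the matrix entry $(\widetilde A v_k, w_l)$ in two ways. Using unitarity, $\rho_W(g^{-1})=\rho_W(g)^*$, so that $(\rho_W(g^{-1})w_j,w_l) = \overline{\psi_{W,lj}(g)}$, together with $(\rho_V(g)v_k,v_i)=\psi_{V,ki}(g)$, this matrix entry comes out to
$$
(\widetilde A v_k, w_l) = \int_G \psi_{V,ki}(g)\,\overline{\psi_{W,lj}(g)}\, dg.
$$
Comparing this with the conclusion of Schur's lemma ($0$ in the non-isomorphic case; in the case $V=W$ equal to $\lambda(A)\delta_{kl} = \delta_{ij}\delta_{kl}/\dim V$), and relabeling indices, yields exactly the two claimed formulas.

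In terms of difficulty, the conceptual steps (forming $\widetilde A$, showing it intertwines, and applying Schur's lemma) are entirely routine; the main fiddly point I expect is a careful bookkeeping of complex conjugates and index placements when converting statements about the operator $\widetilde A$ into statements about the scalar integrals $\int_G \psi_{V,ij}\overline{\psi_{W,kl}}\, dg$. Getting this right requires the systematic use of the identity $\rho(g^{-1}) = \rho(g)^*$ coming from unitarity, and choosing the ``test operator" $A$ so that the resulting matrix entry isolates precisely the integral in the statement; this is the only real obstacle, and it is purely bureaucratic rather than conceptual.
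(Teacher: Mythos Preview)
Your proof is correct and follows essentially the same approach as the paper: both form an averaged $G$-equivariant object and apply Schur's lemma. The only packaging difference is that the paper works with the projector $P=\int_G \rho_{V\otimes W^*}(g)\,dg$ on $V\otimes \overline W$ and identifies it (when $V=W$) as the orthogonal projector onto $\Bbb C\bold u$ with $\bold u=\sum_k v_k\otimes v_k$, whereas you work in $\Hom(V,W)$ with $\widetilde A=\int_G \rho_W(g^{-1})A\rho_V(g)\,dg$ and extract the scalar via the trace; under the identification $V\otimes W^*\cong \Hom(W,V)$ these are the same computation, and your index bookkeeping checks out.
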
 

\begin{proof} We have 
$$
\int_G\psi_{V,ij}(g)\overline{\psi_{W,kl}(g)}dg=\int_G((\rho_V(g)\otimes \rho_{\overline W}(g))(v_i\otimes w_k),v_j\otimes w_l)dg=
$$
$$
(P(v_i\otimes w_k),v_j\otimes w_l)
$$
where
$$
P:=\int_G \rho_V(g)\otimes \rho_{\overline W}(g)dg=\int_G \rho_{V\otimes \overline{W}}(g)dg.
$$
Since $W$ is unitary, $\overline{W}\cong W^*$, so we have 
$$
P=\int_G \rho_{V\otimes W^*}(g)dg :V\otimes W^*\to V\otimes W^*. 
$$
By construction, ${\rm Im}(P)\subset (V\otimes W^*)^G$, which is zero if $V\ncong W$. 
Thus we have proved the proposition in this case. 

It remains to consider the case $V=W$. In this case $V\otimes W^*=V\otimes V^*=V\otimes \overline{V}$, and the only invariant 
in this space up to scaling is $\bold u:=\sum_k v_k\otimes v_k$. Also $P$ is conjugation invariant under $G$, so by decomposing $V\otimes V^*$ into irreducibles we see that it is the orthogonal projector to $\Bbb C\bold u$: 
$$
P\bold x=\frac{(\bold x,\bold u)}{(\bold u,\bold u)}\bold u=\frac{(\bold x,\bold u)\bold u}{\dim V}.
$$ 
In particular, 
$$
(P(v_i\otimes w_k),v_j\otimes w_l)=\frac{\delta_{ik}\delta_{jl}}{\dim V},
$$
as claimed.
\end{proof} 

\subsection{The Peter-Weyl theorem} 

Thus we see that the functions $\psi_{V,ij}$ for various $V,i,j$ form an orthogonal system in the Hilbert space $L^2(G)=L^2(G,dg)$ of measurable functions 
$f: G\to \Bbb C$ such that 
$$
||f||^2=\int_G |f(g)|^2dg<\infty. 
$$
A fundamental result about compact Lie groups is that this system is, in fact, complete: 

\begin{theorem}\label{pweyl} (Peter-Weyl theorem) The functions $\psi_{V,ij}$ form an orthogonal basis of $L^2(G)$. 
\end{theorem}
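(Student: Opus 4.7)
\medskip

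The plan is to prove completeness, since orthogonality has already been established. Let $\mathcal{M}\subset L^2(G)$ denote the closed linear span of all matrix coefficients $\psi_{V,ij}$ of finite dimensional irreducible unitary representations. The goal is to show $\mathcal{M}^\perp=0$, and the strategy is to produce finite dimensional $G$-invariant subspaces inside any candidate nonzero element of $\mathcal{M}^\perp$ via the spectral theorem for compact self-adjoint operators applied to convolution operators.

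First I would set up the key tool: for $\phi\in C(G)$, define the convolution operator $T_\phi: L^2(G)\to L^2(G)$ by $(T_\phi f)(g)=\int_G \phi(gh^{-1})f(h)\,dh$. Using compactness of $G$, the kernel $K(g,h):=\phi(gh^{-1})$ lies in $L^2(G\times G)$, so $T_\phi$ is Hilbert-Schmidt, hence compact. Moreover $T_\phi$ commutes with right translation $R_x$ (by invariance of $dg$), so every eigenspace of $T_\phi$ is right-$G$-invariant. Choosing $\phi$ with $\phi(g)=\overline{\phi(g^{-1})}$ makes $T_\phi$ self-adjoint. Finally, $T_\phi$ sends $L^2(G)$ into $C(G)$ when $\phi$ is continuous, so any eigenvector with nonzero eigenvalue is automatically a continuous function.

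Next I would verify that $\mathcal{M}$ is right-translation invariant, which follows at once from $\psi_{V,ij}(gh)=\sum_k\psi_{V,ik}(g)\psi_{V,kj}(h)$. Consequently $\mathcal{M}^\perp$ is also right-invariant, and together with self-adjointness of $T_\phi$ this shows $T_\phi(\mathcal{M}^\perp)\subset\mathcal{M}^\perp$. Now suppose for contradiction that there exists $0\ne f\in\mathcal{M}^\perp$. Construct an approximate identity: a sequence $\phi_n\in C(G)$ with $\phi_n\ge 0$, $\phi_n=\phi_n^*$, $\int_G\phi_n\,dg=1$, and $\mathrm{supp}(\phi_n)$ shrinking to $\{e\}$. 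Standard continuity arguments give $T_{\phi_n}f\to f$ in $L^2$, so for some $n$ the operator $T_{\phi_n}$ is nonzero on $\mathcal{M}^\perp$.

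The spectral theorem for compact self-adjoint operators applied to $T_{\phi_n}\!\mid_{\mathcal{M}^\perp}$ then produces a nonzero eigenvalue $\lambda$ with a finite dimensional eigenspace $E_\lambda\subset\mathcal{M}^\perp$, consisting of continuous functions. Since $T_{\phi_n}$ commutes with right translation, $E_\lambda$ is a finite dimensional $G$-subrepresentation under right translation. Decompose it into irreducibles and pick a nonzero irreducible summand $W\subset E_\lambda$. For any $w\in W$ the equality $w(g)=(R_g w)(e)$ expresses $w$ as a matrix coefficient of the irreducible representation $W$ (pairing $w$ with the evaluation-at-$e$ functional, which is nonzero on some $w$ because the elements of $W$ are continuous and not all vanishing at $e$; if they all vanished at $e$, right invariance would force $W=0$). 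Hence $W\subset\mathcal{M}$, contradicting $W\subset\mathcal{M}^\perp$. The main obstacle is this last clean extraction of a matrix coefficient lying simultaneously in $\mathcal{M}$ and $\mathcal{M}^\perp$ — in particular, ensuring the eigenfunctions are honest continuous functions so that pointwise evaluation at $e$ makes sense — but this is secured by the smoothing property of convolution with the continuous kernel $\phi_n$.
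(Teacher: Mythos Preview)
Your proposal is correct and follows essentially the same route as the paper: convolution against a symmetric approximate identity gives compact self-adjoint operators commuting with right translation, and the Hilbert--Schmidt spectral theorem then produces finite-dimensional right-invariant subspaces, which consist of matrix coefficients. The only cosmetic difference is that the paper argues directly (the image of each $B_N$ lies in $\overline{L^2_{\rm alg}(G)}$, and $B_N\psi\to\psi$ first for continuous $\psi$, then for all of $L^2$ by density), whereas you package the same ingredients as a contradiction argument on $\mathcal{M}^\perp$ and spell out explicitly the evaluation-at-$e$ functional that realizes eigenfunctions as matrix coefficients.
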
 

Theorem \ref{pweyl} will be proved in Section \ref{ppw}. 

\subsection{An alternative formulation of the Peter-Weyl theorem} 
Given a finite dimensional irreducible representation $V$ of $G$, consider the space $\Hom_G(V,L^2(G))$ of $G$-homomorphisms for the action of $G$ on $L^2(G)$ by left translations. 
We have an obvious inclusion 
$$
\iota_V: V^*\hookrightarrow \Hom_G(V,L^2(G))
$$ 
via the matrix coefficient map $f\mapsto [v\mapsto (\rho_{V^*}(-)f)(v)]$. Clearly, this is a map of $G$-modules, where now $G$ acts on $L^2(G)$ by right translations.  
We claim that $\iota_V$ is surjective, i.e., an isomorphism. For this, note that an element $\phi\in \Hom_G(V,L^2(G))$ can be viewed a left $G$-equivariant $L^2$-function 
$\phi: G\to V^*$, i.e. such that for almost all $g\in G$ (with respect to the Haar measure) we have 
\begin{equation}\label{invaa}
\phi(x)=\rho_{V^*}(xg^{-1})\phi(g)
\end{equation} for almost all $x\in G$. But then by changing $\phi$ on a set of measure zero if needed, we may replace it by a continuous function (the right hand side of \eqref{invaa}). Then, setting $g=1$, we have $\phi(x)=\rho_{V^*}(x)\phi(1)$, as claimed.  

Let \scriptsize
$$
\iota=\oplus_{V\in {\rm Irrep}(G)}\sqrt{\dim V}{\rm Id}_V\otimes \iota_V: \oplus_{V\in {\rm Irrep}(G)}V\otimes V^*\to \oplus_{V\in {\rm Irrep}(G)}V\otimes \Hom_G(V,L^2(G)). 
$$ \normalsize
Then $\iota$ defines an isometric embedding of $G\times G$-modules
$$
\xi: \oplus_{V\in {\rm Irrep}(G)}V\otimes V^*\hookrightarrow L^2(G).
$$
We will denote the image of $\xi$ by $L^2_{\rm alg}(G)$ (the ``algebraic part" of $L^2(G)$). Note that if $\psi\in L^2(G)$ generates a finite dimensional representation $V$ under the action of $G$ by left translations then $\psi$ belongs to the image of a homomorphism $V\to L^2(G)$, hence to $L^2_{\rm alg}(G)$. Thus $L^2_{\rm alg}(G)$ is just the subspace of $\psi\in L^2(G)$ which generate 
a finite dimensional representation under left translations by $G$. We also see that 
it may be equivalently characterized as the subspace of $\psi\in L^2(G)$ which generate 
a finite dimensional representation under right translations by $G$.

\begin{theorem} (Peter-Weyl theorem, alternative formulation) The space $L^2_{\rm alg}(G)$ is dense in $L^2(G)$. 
In other words, the map $\xi$ gives rise to an isomorphism 
$$
\widehat\oplus_{V\in {\rm Irrep}(G)}V\otimes V^*\to L^2(G)
$$
where the first copy of $G$ acts on $V$ and the second one on $V^*$ and the hat denotes the Hilbert space completion of the direct sum. 
\end{theorem}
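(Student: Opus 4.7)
The strategy is the classical spectral-theoretic proof: apply the spectral theorem for compact self-adjoint operators to a family of convolution operators on $L^2(G)$. Since $L^2_{\rm alg}(G)$ is a linear subspace, density reduces to showing that the orthogonal complement $(L^2_{\rm alg}(G))^\perp$ in $L^2(G)$ is $0$. I would use the intrinsic characterization noted just before the theorem statement: $L^2_{\rm alg}(G)$ equals the union of all finite-dimensional right-$G$-invariant subspaces of $L^2(G)$.

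The main tool: for each $\phi\in C(G)$ satisfying $\overline{\phi(g^{-1})}=\phi(g)$, define the convolution operator
$$(T_\phi f)(g)=\int_G \phi(gh^{-1})f(h)\,dh.$$
I would verify that (i) $T_\phi$ is bounded and self-adjoint on $L^2(G)$ (the symmetry condition on $\phi$ translates to $K(g,h)=\overline{K(h,g)}$ for the kernel $K(g,h)=\phi(gh^{-1})$); (ii) $K$ is continuous on the compact space $G\times G$, so $T_\phi$ is Hilbert-Schmidt and hence compact, and moreover $T_\phi$ maps $L^2(G)$ into $C(G)$; (iii) $T_\phi$ commutes with the right regular representation of $G$, which follows by the change of variables $h\mapsto hg_0$ together with right-invariance of the Haar measure (valid since $G$ is compact and hence unimodular). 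By the spectral theorem, $L^2(G)=\ker T_\phi\oplus\bigoplus_{\lambda\ne 0}V_\lambda$ with each nonzero eigenspace $V_\lambda$ finite-dimensional. By (iii) each $V_\lambda$ is right-$G$-invariant and, by (ii), lies in $C(G)$; by the complete reducibility proved in Subsection \ref{unrep}, $V_\lambda$ splits into irreducible right-$G$-subrepresentations, each of which is a space of matrix coefficients via the tautological identification $\Hom_G(V,L^2(G))\cong V^*$ already noted in the excerpt. Hence $V_\lambda\subset L^2_{\rm alg}(G)$ for every $\lambda\ne 0$.

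To close the argument, let $f\in (L^2_{\rm alg}(G))^\perp$. Then $f$ is orthogonal to every $V_\lambda$ with $\lambda\ne 0$, so the spectral expansion of $f$ places it entirely in $\ker T_\phi$; that is, $T_\phi f=0$ for every admissible $\phi$. Now choose an approximate identity $\{\phi_\varepsilon\}$: a net of nonnegative continuous functions with $\phi_\varepsilon(g^{-1})=\phi_\varepsilon(g)$, supported in shrinking symmetric neighborhoods of $1\in G$, with $\int_G\phi_\varepsilon\,dg=1$. Such nets exist on any compact Lie group by taking bump functions and symmetrizing under $g\mapsto g^{-1}$. A standard estimate using uniform continuity of translation on $L^2(G)$ yields $T_{\phi_\varepsilon}f\to f$ in $L^2(G)$, which forces $f=0$.

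The main obstacle I expect is the identification of the eigenspaces $V_\lambda$ as spaces of matrix coefficients, since one must verify both that elements of $V_\lambda$ are genuine continuous functions (not mere $L^2$-classes) and that the right $G$-action on $V_\lambda$ matches the one under which matrix coefficients naturally transform. The first point follows from (ii): because $T_\phi=\lambda\cdot{\rm Id}$ on $V_\lambda$ and $T_\phi(L^2(G))\subset C(G)$, every element of $V_\lambda$ agrees almost everywhere with a continuous function. The second is just a bookkeeping calculation with the formula $\psi_{V,ij}(g)=(\rho_V(g)v_i,v_j)$ and the identification $V\otimes V^*\hookrightarrow L^2(G)$; however, one must be careful with conventions to ensure that finite-dimensional right-invariant subspaces correspond to matrix coefficients of $V$ rather than of $V^*$, and that the Schur-orthogonality statement already proved fits the same conventions.
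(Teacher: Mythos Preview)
Your proposal is correct and is essentially the same argument as the paper's: convolution by a continuous symmetric kernel gives a compact self-adjoint operator commuting with right translations, the nonzero eigenspaces are finite-dimensional right-invariant and hence lie in $L^2_{\rm alg}(G)$, and an approximate identity finishes the job. The only cosmetic difference is in the endgame: you argue via the orthogonal complement and $L^2$-continuity of translation, while the paper shows the closure of $L^2_{\rm alg}(G)$ contains $C(G)$ (via uniform convergence $B_N\psi\to\psi$ for continuous $\psi$) and then invokes density of $C(G)$ in $L^2(G)$.
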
 

Note that this is again an instance of the double centralizer property! Namely, it expresses representation-theoretically 
the fact that the centralizer of the group of left translations on $G$ is the group of right translations on $G$, and vice versa. 

For example, let $G=S^1$. Then the irreducible representations of $G$ are the characters 
$\psi_n(\theta)=e^{in\theta}$. So the Peter-Weyl theorem in this case says that 
$\lbrace e^{in\theta}\rbrace$ is an orthonormal basis of $L^2(S^1)$ with norm
$$
||f||^2:=\frac{1}{2\pi}\int_0^{2\pi}|f(\theta)|^2d\theta, 
$$
which is the starting point for Fourier analysis. So the Peter-Weyl theorem is similarly a starting point for {\bf nonabelian Fourier (or harmonic) analysis}. 

\begin{exercise}\label{homospace} Let $G$ be a compact Lie group and $H\subset G$ a closed subgroup. 
Then we have a compact homogeneous space $G/H$ and the Haar measure on $G$ defines a probability measure on $G/H$. So we can define 
the infinite dimensional unitary representation $L^2(G/H)$ of $G$. 

(i) Show that we have a decomposition 
$$
L^2(G/H)=\widehat\oplus_{V\in {\rm Irrep}G}N_H(V)V,
$$
where $N_H(V)=\dim V^H$, the dimension of the space of $H$-invariants of 
$V$. 

(ii) Let $G=SO(3)$, so the irreducible representations are $L_{2m}$ for $m\ge 0$. 
Thus 
$$
L^2(G/H)=\widehat\oplus_{m\ge 0}N_H(m)L_{2m}.
 $$
Compute this decomposition (i.e., the numbers $N_H(m)$) for $H=\Bbb Z/n\Bbb Z$ acting by rotations around an axis by angles $2\pi k/n$ (rotations of a regular $n$-gon).  

(iii) Do the same for the dihedral group $H=\bold D_n$ of symmetries of the regular $n$-gon (where reflections in the plane are realized as rotations around a line in this plane).

(iv) Do the same for the groups $H=SO(2)$ and $H=O(2)$ of rotations and symmetries of the circle. 
 
(v) Do the same for $H$ being the group of symmetries of a platonic solid (tetrahedron, cube, icosahedron).  

It may be more convenient to give $N_H(m)$ in the form of the generating function 
$\sum_m N_H(m)t^m$. 
\end{exercise} 

\begin{exercise} Let $G=GL_n(\Bbb C)$. A {\bf regular algebraic function} on $G$ 
is a polynomial of $X_{ij}$ and $\det(X)^{-1}$ for $X\in G$. Denote by 
$\mathcal{O}(G)$ the algebra of regular algebraic functions on $G$. 

(i) Show that $G\times G$ acts on $\mathcal{O}(G)$ by left and right multiplication. 

(ii) (Algebraic Peter-Weyl theorem) Show that as a $G\times G$-module, we have 
$$
\mathcal O(G)=\oplus_{V\in {\rm Irrep}(G)}V\otimes V^*,
$$
where ${\rm Irrep}G$ is the set of isomorphism classes of irreducible algebraic representations of $G$. 

{\bf Hint.} Compute $\Hom_G(V,\mathcal O(G))$ where $G$ acts on $\mathcal O(G)$ by right translations. For this, interpret elements of this space as equivariant functions $G\to V^*$ and show that such functions are automatically regular algebraic. 

(iii) Generalize (i) and (ii) to orthogonal and symplectic groups.  
\end{exercise} 

\subsection{Orthogonality and completeness of characters} 
\begin{corollary} Let $\chi_V(g)={\rm Tr}(\rho_V(g))$ be the character of $V$. 
Then $\lbrace{\chi_V(g),V\in {\rm Irrep}G\rbrace}$ is an orthonormal basis of $L^2(G)^G$, 
the space of conjugation-invariant functions in $L^2(G)$ (i.e., such that $f(gxg^{-1})=f(x)$).  
\end{corollary}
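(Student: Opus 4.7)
The plan is to deduce this corollary from the orthogonality of matrix coefficients and the Peter--Weyl theorem, which are both available. The argument splits naturally into (a) orthonormality and (b) density/completeness, with (b) being the main content.

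For orthonormality, I first note that $\chi_V = \sum_i \psi_{V,ii}$ is a finite sum of matrix coefficients, so $\chi_V \in L^2(G)$ and in fact $\chi_V \in L^2_{\rm alg}(G)$. Clearly $\chi_V(hgh^{-1}) = \chi_V(g)$, so $\chi_V \in L^2(G)^G$. Using the matrix coefficient orthogonality relations, I compute
\[
\int_G \chi_V(g)\overline{\chi_W(g)}\,dg = \sum_{i,k}\int_G \psi_{V,ii}(g)\overline{\psi_{W,kk}(g)}\,dg,
\]
which vanishes when $V \not\cong W$, and when $V=W$ equals $\sum_{i,k}\delta_{ik}/\dim V = 1$. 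So the $\chi_V$ form an orthonormal system in $L^2(G)^G$.

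For completeness I invoke the Peter--Weyl decomposition
\[
L^2(G) \;\cong\; \widehat{\bigoplus_{V \in \mathrm{Irrep}(G)}} V \otimes V^*
\]
as $G\times G$-modules, where the two factors act by left and right translation. The conjugation action of $G$ on $L^2(G)$ corresponds under this isomorphism to the diagonal action on $V \otimes V^*$, i.e.\ to the tensor product action. Taking $G$-invariants commutes with the (Hilbert-space) direct sum, giving
\[
L^2(G)^G \;\cong\; \widehat{\bigoplus_{V \in \mathrm{Irrep}(G)}} (V \otimes V^*)^G.
\]
By Schur's lemma, $(V \otimes V^*)^G \cong \mathrm{End}_G(V) \cong \mathbb{C}$, spanned by the identity $\mathrm{Id}_V \in \mathrm{End}(V) = V \otimes V^*$. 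It remains to identify the function on $G$ corresponding to $\mathrm{Id}_V$: under the embedding $V \otimes V^* \hookrightarrow L^2(G)$, the element $\sum_i v_i \otimes v_i^*$ (where $\{v_i\}$ is an orthonormal basis with dual basis $\{v_i^*\}$) is sent to the function $g \mapsto \sum_i \langle \rho_V(g)v_i, v_i\rangle = \chi_V(g)$ (up to the conventional sign in $g$ versus $g^{-1}$, which only affects $\chi_V$ by complex conjugation and does not change the conclusion). Hence each summand $(V\otimes V^*)^G$ is spanned by $\chi_V$, and the $\chi_V$ form an orthonormal basis of $L^2(G)^G$.

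The main obstacle is the bookkeeping in identifying the one-dimensional invariant subspace $(V \otimes V^*)^G$ with $\mathbb{C}\cdot\chi_V$, since one has to track the precise form of the matrix coefficient embedding $V \otimes V^* \hookrightarrow L^2(G)$ (and in particular check that the invariant vector $\mathrm{Id}_V$ is sent to a nonzero scalar multiple of $\chi_V$ rather than some other central function). Everything else is a direct application of Peter--Weyl and Schur's lemma, so once this identification is verified the result follows.
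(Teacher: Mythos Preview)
Your argument is correct and the orthonormality half is identical to the paper's. For completeness you take a slightly different path: you pass to $G$-invariants directly inside the Peter--Weyl decomposition, asserting that $L^2(G)^G \cong \widehat{\bigoplus_V}(V\otimes V^*)^G$. The paper instead shows that $L^2_{\rm alg}(G)^G=\bigoplus_V\Bbb C\chi_V$ and then proves this is dense in $L^2(G)^G$ by an averaging trick: given $\psi\in L^2(G)^G$, choose $\psi_n\in L^2_{\rm alg}(G)$ with $\psi_n\to\psi$, set $\psi_n^{\rm av}(x)=\int_G\psi_n(gxg^{-1})\,dg$, and observe $\psi_n^{\rm av}\in L^2_{\rm alg}(G)^G$ with $\|\psi_n^{\rm av}-\psi\|\le\|\psi_n-\psi\|$. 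Your route is cleaner, but note that the step ``taking $G$-invariants commutes with the Hilbert direct sum'' is exactly what the averaging argument establishes; to justify it directly you should observe that the conjugation action of $G$ on $L^2(G)$ is unitary and preserves each block $V\otimes V^*$, so the orthogonal projections onto the blocks are $G$-equivariant and hence send invariants to invariants. Once that is said, your identification of $(V\otimes V^*)^G$ with $\Bbb C\chi_V$ via Schur's lemma matches the paper's, and the worry you flag about which function corresponds to $\mathrm{Id}_V$ is harmless: any nonzero conjugation-invariant matrix coefficient of $V$ must be a scalar multiple of $\chi_V$, since the space is one-dimensional.
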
 

\begin{proof} We have $\chi_V(g)=\sum_i \psi_{V,ii}(g)$, so 
by orthogonality of matrix coefficients $\chi_V$ 
are orthonormal in $L^2(G)^G$. So it remains 
to show that they are complete. For this observe 
that $L^2_{\rm alg}(G)^G=\xi(\oplus_V (V\otimes V^*)^G)=\oplus_V \Bbb C\chi_V$. 
Thus our job is to show that $L^2_{\rm alg}(G)^G$ is dense in $L^2(G)^G$. 
To this end, for $\psi\in L^2(G)^G$ fix a sequence $\psi_n\in L^2_{\rm alg}(G)$
such that $\psi_n\to \psi$ as $n\to \infty$. Such a sequence exists 
by the Peter-Weyl theorem. Let 
$$
\psi_n^{\rm av}(x)=\int_G\psi_n(gxg^{-1})dg.
$$
It is easy to see that $\psi_n^{\rm av}\in L^2_{\rm alg}(G)$. 
Also 
$||\psi_n^{\rm av}-\psi||\le ||\psi_n-\psi||\to 0$, $n\to \infty$, as claimed. 
\end{proof} 

\section{\bf Proof of the Peter-Weyl theorem} \label{ppw} 

\subsection{Compact operators and the Hilbert-Schmidt theorem} 

To prove the Peter-Weyl theorem, we will use the Hilbert-Schmidt theorem -- the spectral theorem for compact self-adjoint operators in a Hilbert space. 

Recall that a {\bf bounded} operator $A: H\to H$ on a Hilbert space $H$ is a linear operator such that for some $C\ge 0$ we have $||A\bold v||\le C||\bold v||$, $\bold v\in H$. The smallest constant $C$ with this property is called the {\bf norm} of $A$ and denoted $||A||$.  Recall also that $A$ is {\bf compact} if there is a sequence of finite rank operators $A_n: H\to H$ such that $||A_n-A||\to 0$ as $n\to \infty$. In other words, the space $K(H)$ of compact operators on $H$  is the closure of the space $K_f(H)$ of finite rank operators under the norm $A\mapsto ||A||$ on the space of bounded operators $B(H)$. 

\begin{lemma}\label{conve} If $A$ is compact then it maps bounded sets to pre-compact sets (i.e., ones whose closure is compact). In other words, for every bounded sequence 
$\bold v_n\in H$, the sequence $A\bold v_n$ has a convergent subsequence.\footnote{The converse statement also holds, but we will not need it.}  
\end{lemma}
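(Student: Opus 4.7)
The plan is to use the standard diagonal subsequence trick together with the defining property that $A$ is a norm limit of finite-rank operators. Concretely, given a bounded sequence $\bold v_n \in H$ with $\|\bold v_n\| \le M$, and finite-rank operators $A_k$ with $\|A - A_k\| \to 0$, I will extract a subsequence $\bold w_n$ of $\bold v_n$ such that $A\bold w_n$ is Cauchy, and then invoke completeness of $H$.

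First I would deal with each $A_k$ separately. Since $A_k$ has finite-dimensional image, the sequence $\{A_k\bold v_n\}_n$ lies in a bounded subset of the finite-dimensional space $\mathrm{Im}(A_k)$ (because $\|A_k\bold v_n\| \le \|A_k\|\cdot M$). By the Bolzano-Weierstrass theorem this sequence has a convergent subsequence. I would now iterate: choose a subsequence $\bold v_n^{(1)}$ of $\bold v_n$ such that $A_1 \bold v_n^{(1)}$ converges, then a subsequence $\bold v_n^{(2)}$ of $\bold v_n^{(1)}$ such that $A_2\bold v_n^{(2)}$ converges, and so on. The diagonal sequence $\bold w_n := \bold v_n^{(n)}$ is then a subsequence of the original $\bold v_n$ with the property that $A_k\bold w_n$ converges as $n \to \infty$ for every fixed $k$.

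Next I would show that $A\bold w_n$ is Cauchy. Given $\varepsilon > 0$, choose $k$ so large that $\|A - A_k\| < \varepsilon/(3M)$. For all $n,m$ one has
\[
\|A\bold w_n - A\bold w_m\| \le \|A\bold w_n - A_k\bold w_n\| + \|A_k\bold w_n - A_k\bold w_m\| + \|A_k\bold w_m - A\bold w_m\|,
\]
and the first and third terms are each bounded by $\|A - A_k\|\cdot M < \varepsilon/3$. Since $A_k\bold w_n$ converges, it is Cauchy, so there exists $N$ such that the middle term is less than $\varepsilon/3$ for $n,m \ge N$. Hence $\|A\bold w_n - A\bold w_m\| < \varepsilon$ for $n,m \ge N$, and $\{A\bold w_n\}$ is Cauchy. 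By completeness of $H$ it converges, which is exactly what was claimed.

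There is no serious obstacle here; the proof is a routine application of the diagonal trick plus an $\varepsilon/3$ argument. The only point that deserves a moment of care is making sure the diagonal sequence $\bold w_n$ is eventually a subsequence of every $\bold v_n^{(k)}$ (which it is, from index $k$ onward), so that $A_k\bold w_n$ indeed converges for each $k$. Everything else is controlled by the approximation $\|A - A_k\| \to 0$ and the uniform bound $\|\bold w_n\| \le M$.
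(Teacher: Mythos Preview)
Your proof is correct and follows essentially the same approach as the paper: the diagonal subsequence trick applied to a sequence of finite-rank approximations $A_k$, followed by a triangle-inequality (here $\varepsilon/3$) estimate to show $A\bold w_n$ is Cauchy. The only cosmetic difference is that the paper normalizes to $\|\bold v_n\|\le 1$ and chooses $\|A-A_n\|<\tfrac{1}{n}$ explicitly, but the argument is the same.
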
 

\begin{proof} Let $\bold v_n\in H$, $||\bold v_n||\le 1$. 
Pick a sequence of finite rank operators $A_n$ such that $||A_n-A||<\frac{1}{n}$. 
Let $\bold v_n^1$ be a subsequence of $\bold v_n$ such that $A_1\bold v_n^1$ is convergent. 
Let $\bold v_n^2$ be a subsequence of $\bold v_n^1$ such that $A_2\bold v_n^2$ is convergent, and so on. Finally, let $\bold w_n=\bold v_n^n$. Note that 
$$
||A\bold v_i^k-A\bold v_j^k||\le ||A_k\bold v_i^k-A_k\bold v_j^k||+||A-A_k||\cdot ||\bold v_i^k-\bold v_j^k||
$$
$$
\le ||A_k\bold v_i^k-A_k\bold v_j^k||+\tfrac{2}{k}-\varepsilon_k.
$$
for some $\varepsilon_k>0$. 
Since $A_k\bold v_i^k, i\ge 1$ is convergent, it is a Cauchy sequence, so
 there is $M_k$ such that for $i,j\ge M_k$, $||A_k\bold v_i^k-A_k\bold v_j^k||<\varepsilon_k$, hence
$$
||A\bold v_i^k-A\bold v_j^k||< \tfrac{2}{k}. 
$$
But $\bold w_n$ is a subsequence of $\bold v_n^k$ starting from the $k$-th term. So 
there is $N_k$ such that 
$$
||A\bold w_i-A\bold w_j||< \tfrac{2}{k},\ i,j\ge N_k.
$$
In other words, the sequence $A\bold w_n$ is Cauchy. Hence it is convergent, as desired. 
\end{proof} 

\begin{proposition}\label{compa} Let $M$ be a compact manifold with positive smooth probability measure $d\bold x$
and $K(\bold x,\bold y)$ a continuous function on $M\times M$. Then the operator 
$$
(A\psi)(\bold y):=\int_M K(\bold x,\bold y)\psi(\bold x)d\bold x.
$$
on $L^2(M)$ is compact.
\end{proposition}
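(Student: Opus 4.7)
The plan is to approximate $A$ in operator norm by finite rank operators, which by definition establishes compactness. The approximations will come from approximating the kernel $K$ uniformly by ``separable'' kernels of the form $\sum_i f_i(\bold x)g_i(\bold y)$.

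First I would set up the basic analytic estimate. For any continuous kernel $L$ on $M\times M$, Cauchy--Schwarz gives
$$
\left|\int_M L(\bold x,\bold y)\psi(\bold x)d\bold x\right|^2 \le \int_M |L(\bold x,\bold y)|^2 d\bold x \cdot \|\psi\|_{L^2}^2,
$$
and integrating in $\bold y$ shows that the integral operator with kernel $L$ is bounded on $L^2(M)$ with norm at most $\|L\|_{L^2(M\times M)} \le \|L\|_{\infty}$, where the last inequality uses that $d\bold x \otimes d\bold y$ is a probability measure. In particular $A$ itself is bounded, and this same estimate applied to $L=K-K_n$ will reduce everything to uniform approximation of $K$.

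Next I would produce the approximating kernels. Let $\mathcal{A}\subset C(M\times M)$ be the subalgebra generated by functions of the form $f(\bold x)g(\bold y)$ with $f,g\in C(M)$. This subalgebra contains the constants, is closed under complex conjugation, and separates points of $M\times M$ (given $(\bold x_1,\bold y_1)\ne(\bold x_2,\bold y_2)$, one coordinate differs, and $C(M)$ separates points of the compact Hausdorff space $M$). By the Stone--Weierstrass theorem, $\mathcal{A}$ is dense in $C(M\times M)$ in the sup norm, so I can find $K_n\in \mathcal{A}$ with $\|K-K_n\|_\infty \to 0$. Writing $K_n(\bold x,\bold y)=\sum_{i=1}^{N_n} f_{n,i}(\bold x)g_{n,i}(\bold y)$, the associated operator acts by
$$
(A_n\psi)(\bold y)=\sum_{i=1}^{N_n} g_{n,i}(\bold y)\int_M f_{n,i}(\bold x)\psi(\bold x)\,d\bold x,
$$
so its image lies in the span of $g_{n,1},\dots,g_{n,N_n}$ and $A_n$ has rank at most $N_n$.

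Finally, combining the two steps, $\|A-A_n\|_{B(L^2)} \le \|K-K_n\|_\infty \to 0$, so $A$ is a norm limit of finite rank operators and hence compact. There is no real obstacle here; the only point requiring a little care is the verification of Stone--Weierstrass applicability (separation of points of $M\times M$ by product functions) and the Cauchy--Schwarz bound tying the operator norm to the sup norm of the kernel, both of which are routine and rely crucially on compactness of $M$ and finiteness of the measure.
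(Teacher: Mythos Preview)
Your proof is correct. Both your argument and the paper's produce finite rank approximants $A_n$ whose kernels converge uniformly to $K$, and both convert uniform kernel convergence into operator norm convergence; the difference lies in how the approximating kernels are built. The paper reduces via a partition of unity to a hypercube $[0,1]^n$ and then approximates $K$ by step functions constant on a grid of pixels, invoking uniform continuity (Cantor's theorem) to control the error. You instead stay on $M$ and use Stone--Weierstrass to approximate $K$ by finite sums $\sum_i f_i(\bold x)g_i(\bold y)$, which immediately yields finite rank operators. Your route is cleaner in that it avoids local charts and works verbatim on any compact Hausdorff space with a finite Borel measure; the paper's route is more hands-on and avoids appealing to Stone--Weierstrass (though that theorem is used elsewhere in the notes anyway). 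Both are standard and perfectly acceptable here.
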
 

\begin{proof} By using a partition of unity, the problem can be reduced to the case when $M$ is replaced by the hypercube $[0,1]^n$. Let us split it in $m^n$ pixels of sidelength $\frac{1}{m}$ and approximate $K(\bold x,\bold y)$ by its value in the midpoint of each of the $m^{2n}$ pixels in 
$[0,1]^{2n}$. Denote the corresponding approximation by $K_m(\bold x,\bold y)$ and the corresponding operator by $A_m$; it has rank $\le m^n$. Let $\varepsilon_m:={\rm sup}|K-K_m|$, then $||A-A_m||\le \varepsilon_m$. Finally, by Cantor's theorem,\footnote{Cantor's theorem says that any continuous function on a compact set $X$ is uniformly continuous.}  
$K$ is uniformly continuous, which implies that $\varepsilon_m\to 0$ as $m\to \infty$, hence the statement. 
\end{proof} 
 
 Recall that a bounded operator $A$ is {\bf self-adjoint} if $(A\bold v,\bold w)=(\bold v,A\bold w)$ for $\bold v,\bold w\in H$.

\begin{theorem} (Hilbert-Schmidt) Let $A: H\to H$ be a compact self-adjoint operator. 
Then there is an orthogonal decomposition 
$$
H={\rm Ker}A\oplus \widehat \bigoplus_\lambda H_\lambda,  
$$
where $\lambda$ runs over non-zero eigenvalues of $A$, and $A|_{H_\lambda}=\lambda\cdot {\rm Id}$. Moreover, the spaces $H_\lambda$ are finite dimensional and the eigenvalues $\lambda$ are real and either form a finite set or a sequence going to $0$. 
\end{theorem}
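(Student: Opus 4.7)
My plan is to reduce the theorem to a single key step: the existence of a nonzero eigenvalue when $A\ne 0$, from which the full decomposition follows by a routine iteration.

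First I would dispose of the easy structural facts. Reality of eigenvalues is the usual one-line check: if $A\bold v=\lambda \bold v$ with $\bold v\ne 0$, then $\lambda(\bold v,\bold v)=(A\bold v,\bold v)=(\bold v,A\bold v)=\overline\lambda(\bold v,\bold v)$. Orthogonality of distinct eigenspaces $H_\lambda,H_\mu$ follows from $\lambda(\bold v,\bold w)=(A\bold v,\bold w)=(\bold v,A\bold w)=\mu(\bold v,\bold w)$. Finite dimensionality of $H_\lambda$ for $\lambda\ne 0$, and the claim that only finitely many eigenvalues lie outside any $(-\varepsilon,\varepsilon)$, both follow from the same observation: if there were an infinite orthonormal sequence of eigenvectors $\bold v_n$ with $|A\bold v_n|\ge \varepsilon$, then $\|A\bold v_n-A\bold v_m\|^2\ge 2\varepsilon^2$ for $n\ne m$, so $A\bold v_n$ has no convergent subsequence, contradicting Lemma \ref{conve}. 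This also forces the list of nonzero eigenvalues to be finite or a null sequence.

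The heart of the argument is then the following claim: if $A$ is a nonzero compact self-adjoint operator then $A$ has a nonzero eigenvalue, and in fact $\lambda=\pm\|A\|$ is an eigenvalue for at least one sign. First I would prove the functional-analytic identity $\|A\|=\sup_{\|\bold v\|=1}|(A\bold v,\bold v)|$ for self-adjoint $A$, by the standard polarization trick applied to $(A(\bold v+\bold w),\bold v+\bold w)-(A(\bold v-\bold w),\bold v-\bold w)=4\,\mathrm{Re}(A\bold v,\bold w)$. Choosing the sign so that $\lambda:=\sup(A\bold v,\bold v)$ or $-\inf(A\bold v,\bold v)$ equals $\pm\|A\|$, pick $\bold v_n$ with $\|\bold v_n\|=1$ and $(A\bold v_n,\bold v_n)\to\lambda$. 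Then the expansion
\[
\|A\bold v_n-\lambda\bold v_n\|^2=\|A\bold v_n\|^2-2\lambda(A\bold v_n,\bold v_n)+\lambda^2\le 2\lambda^2-2\lambda(A\bold v_n,\bold v_n)\to 0
\]
shows $A\bold v_n-\lambda\bold v_n\to 0$. By compactness (Lemma \ref{conve}), some subsequence $A\bold v_{n_k}$ converges to a vector $\bold w$; then $\lambda\bold v_{n_k}\to\bold w$ as well, so $\bold v_{n_k}\to \bold w/\lambda$, and applying $A$ and passing to the limit gives $A\bold w=\lambda\bold w$ with $\bold w\ne 0$. This is the step I expect to be the main obstacle, in the sense that it is where compactness is essential; once this is in place, the rest is bookkeeping.

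To conclude, I would iterate. Note that if $W\subset H$ is a closed $A$-invariant subspace then $W^\perp$ is also $A$-invariant, by self-adjointness, and $A|_{W^\perp}$ is still compact and self-adjoint. Let $H_{\rm eig}$ denote the closed span of all eigenvectors of $A$ with nonzero eigenvalue; by the orthogonality and finite-dimensionality statements already proved, $H_{\rm eig}=\widehat\bigoplus_\lambda H_\lambda$. It is $A$-invariant, so $H_{\rm eig}^\perp$ is $A$-invariant as well. If $A|_{H_{\rm eig}^\perp}$ were nonzero, the key claim would produce a nonzero eigenvector in $H_{\rm eig}^\perp$, contradicting the definition of $H_{\rm eig}$. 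Hence $A$ vanishes on $H_{\rm eig}^\perp$, i.e.\ $H_{\rm eig}^\perp\subset\mathrm{Ker}\,A$, and since $\mathrm{Ker}\,A\perp H_{\rm eig}$, we obtain the desired orthogonal decomposition $H=\mathrm{Ker}\,A\oplus \widehat\bigoplus_\lambda H_\lambda$.
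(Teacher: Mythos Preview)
Your proof is correct and follows the classical textbook route; the paper takes a genuinely different path. The paper first proves the theorem for $A^2$ rather than $A$: it picks finite-rank self-adjoint approximations $A_n\to A$, takes a top-eigenvalue eigenvector $\bold v_n$ of $A_n^2$ (which exists by finite-dimensional linear algebra), and then uses compactness to pass to a limit eigenvector of $A^2$ with eigenvalue $\|A\|^2$. After iterating on orthogonal complements to get the spectral decomposition of $A^2$, it transfers to $A$ by diagonalizing $A$ on each finite-dimensional eigenspace of $A^2$. Your approach instead goes straight for $A$: you invest in the polarization identity to prove $\|A\|=\sup_{\|\bold v\|=1}|(A\bold v,\bold v)|$, and then extract an eigenvector at $\pm\|A\|$ from an approximate-eigenvector sequence. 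Your route is more direct and avoids the $A^2$ detour; the paper's route trades the polarization lemma for an explicit reduction to the finite-dimensional spectral theorem via the $A_n$, which some may find conceptually cleaner since compactness is literally built from finite-rank pieces.
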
 

Note that for finite rank operators, this obviously reduces to the standard theorem in linear algebra: a self-adjoint (Hermitian) operator on a finite dimensional space $V$ with a positive Hermitian form has an orthogonal eigenbasis, and its eigenvalues are real.  

\begin{proof} We first prove the theorem for the operator $A^2$. 
Let $\beta:=||A||^2={\rm sup}_{||\bold v||=1}(A^2\bold v,\bold v)\ge 0$. We may assume without loss of generality that $\beta\ne 0$. Let $A_n$ be a sequence of self-adjoint finite rank operators converging to $A$, and let $\beta_n=||A_n||^2$, which is also the maximal eigenvalue of $A_n^2$. We have $\beta_n\to \beta$. Let $\bold v_n$ 
be a sequence of unit vectors in $H$ such that $A_n^2\bold v_n=\beta_n \bold v_n$. 
By Lemma \ref{conve}, the sequence $A^2\bold v_n$ has a convergent subsequence, so passing to this subsequence we may assume that $A^2\bold v_n$ is convergent to some $\bold w\in H$. Hence $A_n^2\bold v_n\to \bold w$, so $\bold v_n\to \beta^{-1}\bold w$. Thus $A^2\bold w=\beta \bold w$. 
We can now replace $H$ with the orthogonal complement of $\bold w$ and iterate this procedure. 

As a result we'll get a sequence of numbers $\beta_1> \beta_2>....> 0$, 
which is either finite (in which case the theorem is obvious) or tends to $0$ (by compactness of $A^2$), and the corresponding sequence of finite dimensional orthogonal eigenspaces $H_{\beta_k}$ (also by compactness of $A^2$). Let $\bold v$ be a vector orthogonal to all $H_{\beta_k}$. Then $||A\bold v||^2\le \beta_k ||\bold v||^2$ for all $k$, so if $\beta_k$ is an infinite sequence going to $0$, it follows that $A\bold v=0$, as desired. 
 
Now, we have $H={\rm Ker}A^2\oplus \widehat \bigoplus_n H_{\beta_n}$, and 
$A$ preserves this decomposition, acting by $0$ on  ${\rm Ker}A^2$ and with eigenvalues 
$\pm \sqrt{\beta_n}$ on $H_{\beta_n}$. This implies the theorem. 
\end{proof} 

\subsection{Proof of the Peter-Weyl theorem} 

Let $G$ be a compact Lie group and $h_N$ a delta-like sequence around $1$ on $G$. 
By replacing $h_N(x)$ with $\frac{1}{2}(h_N(x)+h_N(x^{-1}))$, we may assume that 
$h_N$ is invariant under inversion. 
Define the {\bf convolution operators} $B_N$ on $L^2(G)$ by 
$$
(B_N\psi)(y)=\int_G h_N(x)\psi(x^{-1}y)dx=\int_G h_N(yz^{-1})\psi(z)dz.
$$
By Proposition \ref{compa}, these operators are compact (as the kernel $K(y,z):=h_N(yz^{-1})$ is continuous). Moreover, they are clearly self-adjoint (as $h_N(x)=h_N(x^{-1})$ and $h_N$ is real) and commute with right translations by $G$. So by the Hilbert-Schmidt theorem, we have the corresponding spectral decomposition 
$$
L^2(G)={\rm Ker}B_N\oplus \widehat\bigoplus_\lambda H_{N,\lambda}
$$
invariant under right translations. Since $H_{N,\lambda}$ are finite dimensional and invariant under right translations, they are contained in $L^2_{\rm alg}(G)$ (this is the key step of the proof). 
Thus the closure $\overline{L^2_{\rm alg}(G)}$ contains the image of $B_N$. So for any $\psi\in L^2(G)$ we can find $\psi_N\in L^2_{\rm alg}(G)$ 
such that $||B_N\psi-\psi_N||<\frac{1}{N}$. 

Now let $\psi\in C(G)$. By Cantor's theorem, $\psi$ is uniformly continuous. 
It follows that $B_N\psi$ uniformly converges to $\psi$ as $N\to \infty$ (check it!). Thus
$$
||\psi-\psi_N||\le ||\psi-B_N\psi||+||B_N\psi-\psi_N||<||\psi-B_N\psi||+\tfrac{1}{N}\to 0
$$ 
as $N\to \infty$. So $\overline{L^2_{\rm alg}(G)}$ contains $C(G)$. 
But $C(G)$ is dense in $L^2(G)$ (namely, by using a partition of unity
this reduces to the case of a box in $\Bbb R^n$, where it is well known). 
Thus $\overline{L^2_{\rm alg}(G)}=L^2(G)$. This completes the proof 
of the Peter-Weyl theorem. 

\subsection{Existence of faithful representations} 

\begin{lemma}\label{nes} Let $G$ be a compact Lie group and $G=G_0\supset G_1\supset...$ 
be a nested sequence of closed subgroups without repetitions. Then this sequence is finite.\end{lemma}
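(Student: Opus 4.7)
The plan is to attach to each $G_i$ two numerical invariants that together form a well-founded quantity, and show that the pair must strictly decrease (in an appropriate sense) at every step. The two invariants are the dimension $d_i := \dim G_i$ and the number of connected components $c_i := |G_i/G_i^\circ|$. Since each $G_i$ is a closed subgroup of the compact Lie group $G$, Theorem \ref{closedlie} tells us $G_i$ is itself a (compact) Lie subgroup, so $d_i$ is a well-defined nonnegative integer, and $G_i/G_i^\circ$ is discrete and compact, hence $c_i$ is a finite positive integer.

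The key step is the following dichotomy for each inclusion $G_{i+1} \subsetneq G_i$: either $d_{i+1} < d_i$, or $d_{i+1} = d_i$ and $c_{i+1} < c_i$. To prove this, suppose $d_{i+1} = d_i$. Then $G_{i+1}^\circ$ is a connected Lie subgroup of $G_i^\circ$ (since it is a connected subset of $G_i$ containing $1$) whose dimension equals $\dim G_i^\circ = d_i$. As an embedded submanifold of full dimension, $G_{i+1}^\circ$ is open in $G_i^\circ$; it is also closed, nonempty, and $G_i^\circ$ is connected, forcing $G_{i+1}^\circ = G_i^\circ$. Consequently $G_{i+1}/G_{i+1}^\circ = G_{i+1}/G_i^\circ$ injects into $G_i/G_i^\circ$, and this injection is strict because $G_{i+1} \subsetneq G_i$. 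Hence $c_{i+1} < c_i$.

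Given this dichotomy, finiteness of the sequence is immediate. The dimensions $d_0 \geq d_1 \geq \cdots \geq 0$ form a non-increasing sequence of integers bounded above by $\dim G$, so they eventually stabilize: there is $N$ with $d_i = d_N$ for all $i \geq N$. For $i \geq N$, the dichotomy forces $c_i > c_{i+1} > \cdots$, a strictly decreasing sequence of positive integers, which must terminate. Therefore the nested sequence itself must terminate.

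The main obstacle is the dimension-rigidity argument in the key step: specifically, the assertion that $G_{i+1}^\circ$ is open in $G_i^\circ$ when the dimensions coincide. This uses that a closed subgroup of a Lie group is automatically an embedded submanifold (Theorem \ref{closedlie}, whose full proof is deferred in the text), together with the elementary fact that a submanifold of the same dimension as the ambient connected manifold is open. Everything else is routine bookkeeping with the two invariants $(d_i, c_i)$.
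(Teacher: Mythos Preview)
Your proof is correct and follows essentially the same approach as the paper: stabilize the dimension so that the identity components $G_i^\circ$ all coincide, then reduce to a strictly decreasing chain of finite quotient groups. The paper phrases this as a contradiction after passing to a tail of constant dimension, whereas you package it as a lexicographic descent on $(d_i,c_i)$, but the underlying argument is the same.
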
 

\begin{proof}  Assume the contrary, i.e. that it is infinite. The dimensions must stabilize, so we may assume that $\dim G_n$ are all the same. Then $K=G_n^\circ$ is independent of  $n$, and we have a nested sequence 
$$
G_0/K\supset G_1/K\supset...
$$ 
of finite groups, without repetitions. But such a sequence can't have length bigger than $|G_0/K|$, contradiction. 
\end{proof}  

\begin{corollary}\label{exfaith} Any compact Lie group has a faithful finite dimensional representation, 
so it is isomorphic to a closed subgroup of the unitary group $U(n)$. 
\end{corollary}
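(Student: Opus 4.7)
My plan is to combine the Peter--Weyl theorem with Lemma \ref{nes} to produce a faithful finite-dimensional unitary representation of $G$, and then check that it embeds $G$ as a closed Lie subgroup of $U(n)$.

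First I will show that finite-dimensional representations separate points: for every $g \in G$ with $g \ne 1$ there exists an irreducible $V$ with $\rho_V(g) \ne \mathrm{Id}$. Consider the left-translation operator $L_g$ on $L^2(G)$, $(L_g\psi)(x) := \psi(g^{-1}x)$. Since $G$ is Hausdorff and $g^{-1} \ne 1$, I can choose a continuous $\psi$ with $\psi(1) \ne \psi(g^{-1})$; two continuous functions that agree almost everywhere with respect to the (fully supported) Haar measure agree everywhere, so $L_g\psi \ne \psi$ in $L^2(G)$. The Peter--Weyl theorem gives a $G\times G$-equivariant Hilbert-space decomposition $L^2(G) = \widehat\bigoplus_V V\otimes V^*$, which $L_g$ preserves and on which it acts through the action of $g$ on the $V^*$-factor. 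If $\rho_V(g) = \mathrm{Id}$ for every irreducible $V$, then $L_g$ would act trivially on every summand and hence be the identity on $L^2(G)$, a contradiction.

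I then iterate this to extract a single faithful representation. Set $G_0 := G$, and as long as $G_n \ne \{1\}$, pick $g_n \in G_n\setminus\{1\}$ and a finite-dimensional irreducible $V_{n+1}$ with $g_n \notin \mathrm{Ker}\,\rho_{V_{n+1}}$; define $G_{n+1} := G_n \cap \mathrm{Ker}\,\rho_{V_{n+1}}$, which is a closed subgroup of $G$ strictly contained in $G_n$. By Lemma \ref{nes} this chain must terminate, say at $G_N = \{1\}$, so on $V := V_1 \oplus \cdots \oplus V_N$, equipped with a $G$-invariant positive Hermitian form via the averaging construction of Subsection \ref{unrep}, the combined representation $\rho : G \to U(n)$ (with $n = \dim V$) is an injective smooth Lie group homomorphism. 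Since $\mathrm{Ker}\,\rho = \{1\}$, Proposition \ref{firsthom} shows that $\rho_*$ at $1$ is injective, so $\rho$ is an immersion; and a continuous injection from the compact space $G$ into the Hausdorff space $U(n)$ is automatically a homeomorphism onto its image, so $\rho$ is an embedding and $\rho(G) \subset U(n)$ is a closed Lie subgroup isomorphic to $G$.

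The main obstacle is the first step: the substantive input needed is the completeness of matrix coefficients in $L^2(G)$ from Peter--Weyl, which is what ensures that finite-dimensional representations are sensitive enough to detect every nontrivial element of $G$. The remaining work --- passing from an infinite to a finite intersection of kernels via Lemma \ref{nes}, and upgrading a faithful immersion from a compact source to a closed embedding --- is then routine.
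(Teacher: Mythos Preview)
Your argument is correct and follows essentially the same route as the paper: build a descending chain of closed subgroups by intersecting kernels of finite-dimensional representations, invoke Lemma \ref{nes} to force termination, and use Peter--Weyl to see that the terminal subgroup is trivial. You are simply more explicit than the paper about why Peter--Weyl yields point-separation (the $L_g\ne\mathrm{Id}$ argument on $L^2(G)$) and about why the resulting injective homomorphism is a closed embedding into $U(n)$; the paper compresses both of these into one line.
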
 

\begin{proof} Pick a nontrivial finite dimensional representation $V_1$ of $G=G_0$, and let $G_1$ be the kernel of this representation. Now pick another representation $V_2$ of $G$ which is nontrivial 
as a $G_1$-representation, and let $G_2$ be the kernel of $V_2$ in $G_1$, and so on. By Lemma \ref{nes}, at some point we will have a subgroup $G_k\subset G$ such that every finite dimensional representation of $G$ is trivial when restricted to $G_k$. But then by the Peter-Weyl theorem, $G_k$ acts trivially on $L^2(G)$, so $G_k=1$. Thus $V_1\oplus...\oplus V_k$ is a faithful $G$-representation. 
\end{proof} 

\begin{remark} Conversely, any closed subgroup of $U(n)$ is a compact Lie group, see Exercise \ref{closedlieex} below. 
\end{remark} 

\begin{remark} Corollary \ref{exfaith} is false for non-compact Lie groups, even for connected ones. For example, 
let $G$ be the universal cover of $SL_2(\Bbb R)$ (it has fiber $\Bbb Z=\pi_1(SL_2(\Bbb R)))$. Indeed, any finite dimensional continuous representation $V$ of $G$ is smooth, so gives a finite dimensional representation of the Lie algebra $\mathfrak{sl}_2(\Bbb R)$, hence of $\mathfrak{sl}_2(\Bbb C)$, which is therefore a direct sum of $L_n$. So $V$ exponentiates to $SL_2(\Bbb C)$, and thus its restriction to $\mathfrak{sl}_2(\Bbb R)$ exponentiates to $SL_2(\Bbb R)$, so is not faithful for $G$. 
\end{remark} 

\begin{exercise} Show that any compact Lie group admits a structure 
of a metric space such that the metric is invariant under left and right translations.
\end{exercise} 

\subsection{Density in continuous functions} 

In fact, we can now prove an even stronger version of the Peter-Weyl theorem. For this note that $L^2_{\rm alg}(G)$ is a unital algebra.

\begin{theorem}\label{densco} The algebra $L^2_{\rm alg}(G)$ is dense in the algebra of continuous functions
$C(G)$ in the supremum norm
$$
||f||=\max_{g\in G}|f(g)|.
$$  
\end{theorem}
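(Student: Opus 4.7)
The plan is to combine uniform convergence of the convolutions $B_N f$ (already established in the proof of Theorem \ref{pweyl}) with the $L^2$ Peter-Weyl theorem, using Cauchy--Schwarz to upgrade $L^2$-approximation to uniform approximation. Fix $f \in C(G)$ and $\varepsilon > 0$, and let $h_N$ be a delta-like sequence around $1 \in G$ as in the proof of Theorem \ref{pweyl}, with associated convolution operators $B_N$. Since $G$ is compact, $f$ is uniformly continuous by Cantor's theorem, so $\|B_N f - f\|_\infty \to 0$ as $N \to \infty$, and we may fix $N$ with $\|B_N f - f\|_\infty < \varepsilon/2$.

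Next, apply the (already proved) $L^2$ Peter-Weyl theorem to find $\phi \in L^2_{\rm alg}(G)$ with $\|f - \phi\|_{L^2} < \varepsilon/(2\|h_N\|_{L^2})$. By Cauchy--Schwarz applied pointwise, for every $y \in G$
$$
|B_N f(y) - B_N \phi(y)| = \Bigl|\int_G h_N(yz^{-1})\bigl(f(z) - \phi(z)\bigr)\,dz\Bigr| \le \|h_N\|_{L^2}\,\|f - \phi\|_{L^2} < \varepsilon/2,
$$
so $\|B_N f - B_N \phi\|_\infty < \varepsilon/2$, and combined with the previous estimate $\|f - B_N \phi\|_\infty < \varepsilon$. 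It remains to verify that $B_N \phi \in L^2_{\rm alg}(G)$, and then $B_N \phi$ is the desired algebraic approximant.

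For this final step, recall that $L^2_{\rm alg}(G)$ is precisely the set of $\psi \in L^2(G)$ generating a finite dimensional space under left (equivalently, right) translations. Since $\phi \in L^2_{\rm alg}(G)$, the span $W$ of the left-translates $L_x \phi$, $x \in G$, is finite dimensional and contained in $L^2_{\rm alg}(G)$. Writing
$$
B_N \phi \;=\; \int_G h_N(x)\, L_x\phi \,dx
$$
exhibits $B_N\phi$ as a $W$-valued integral: since $W$ is finite dimensional (hence closed in $L^2(G)$) and the integrand takes values in $W$, the integral also lies in $W \subset L^2_{\rm alg}(G)$.

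The main obstacle is the last step, namely giving a clean justification that $B_N\phi$ stays inside $L^2_{\rm alg}(G)$. Everything else is either already contained in the proof of Theorem \ref{pweyl} or is a straightforward application of Cauchy--Schwarz; the content lies in observing that convolution by $h_N$ on the left can be written as an integral of left-translations, which preserve the finite dimensional $G$-invariant subspace generated by $\phi$.
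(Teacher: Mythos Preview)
Your argument is correct and takes a genuinely different route from the paper. The paper observes that $L^2_{\rm alg}(G)$ is a unital subalgebra of $C(G)$ closed under complex conjugation and, by the existence of faithful finite-dimensional representations (Corollary~\ref{exfaith}), separates points; the result then follows from the Stone--Weierstrass theorem. Your approach instead bootstraps directly from the $L^2$ Peter--Weyl theorem via the same convolution operators $B_N$ used in its proof, upgrading $L^2$-closeness to uniform closeness by Cauchy--Schwarz, and then using that $B_N$ preserves finite-dimensional left-invariant subspaces. This is more self-contained: it avoids Stone--Weierstrass, does not require checking that $L^2_{\rm alg}(G)$ is an algebra, and does not invoke the existence of a faithful representation (which in the paper's development itself relies on the Peter--Weyl theorem). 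The paper's argument, on the other hand, is shorter and highlights the algebra structure on matrix coefficients, which is of independent interest.

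One small point worth making explicit in your write-up: the equality $\|h_N(y\,\cdot^{-1})\|_{L^2}=\|h_N\|_{L^2}$ uses that the Haar measure on a compact group is invariant under translation and inversion. Also, in the last step you may note that elements of $L^2_{\rm alg}(G)$ are finite linear combinations of matrix coefficients and hence continuous, so the vector-valued integral $\int_G h_N(x)\,L_x\phi\,dx$ can be interpreted pointwise without any subtlety.
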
 

\begin{proof} Consider the closure $\mathcal A$ of $L^2_{\rm alg}(G)$ inside $C(G)$ (under the supremum norm).  Then $\mathcal A$ is a closed subalgebra invariant under complex conjugation, and by Corollary \ref{exfaith} it separates points on $G$.  Therefore, by the Stone-Weierstrass theorem, $\mathcal A=C(G)$.  
\end{proof} 

\begin{remark} If $G=S^1$, this is the usual theorem of uniform approximation of continuous functions on the circle by trigonometric polynomials. If we restrict to even functions, this will be just the usual Weierstrass theorem on approximation of continuous functions on an interval by polynomials. 
\end{remark} 

\begin{corollary}\label{corooo} Let $A\subset L^2_{\rm alg}(G)$ be a left-invariant subalgebra stable under complex 
conjugation and separating points on $G$. Then $A=L^2_{\rm alg}(G)$. 
\end{corollary} 

\begin{proof} By the Stone-Weierstrass theorem, $A$ is dense in $C(G)$ in uniform metric, hence in $L^2(G)$ 
in the Hilbert norm. Thus for every irreducible representation $V$ of $G$, 
$\Hom_G(V,A)$ must be dense in the space $\Hom_G(V,L^2(G)_{\rm left})=V^*$. So $\Hom_G(V,A)=V^*$, hence $A=L^2_{\rm alg}(G)$.   
\end{proof} 

Let us call a finite dimensional representation $V$ of a group $G$ {\bf unimodular} if $\wedge^{\dim V}V\cong \Bbb C$ is the trivial representation. 

\begin{proposition}\label{dirsum} Let $V$ be a faithful finite dimensional representation of a compact Lie group $G$. Then:

(i) If $V$ is unimodular then the subalgebra $A\subset C(G)$ generated by matrix coefficients $f(\rho_V(g)v)$, $v\in V$, $f\in V^*$, coincides with $L^2_{\rm alg}(G)$. 

(ii) If $Y$ is an irreducible finite dimensional representation of $G$, then for some $n,m$, the representation $Y$ is contained as a direct summand in $V^{\otimes n}\otimes V^{*\otimes m}$. 
Moreover, if $V$ is unimodular then one may take $m=0$. 
\end{proposition} 

\begin{proof} (i) Let $d:=\dim(V)$. It is clear that $A\subset L^2_{\rm alg}(G)$ is $G$-invariant and $A$  
separates points on $G$, since $V$ is faithful.  Also choosing a $G$-invariant unitary structure on $V$ we can realize $G$ as a closed subgroup of $SU(V)\subset V\otimes V^*$ (as $V$ is unimodular), and for a unitary matrix with determinant $1$ one has $g^\dagger=g^{-1}=\wedge^{d-1}g$. Thus $A$ is invariant under complex conjugation. So by Corollary \ref{corooo} 
$A=L^2_{\rm alg}(G)$. 

(ii) It suffices to establish the unimodular case since in general we may replace 
$V$ with the unimodular representation $V\oplus V^*$. But then by (i), $L^2_{\rm alg}(G)$ is a quotient 
of $S(V\otimes V^*)$, which implies the statement.    
\end{proof} 

\begin{exercise}\label{closedlieex} In this exercise you will show that a closed subgroup of a Lie group $G$ 
is a closed Lie subgroup (Theorem \ref{closedlie}). 

Clearly, it suffices to assume that $G$ is connected. 
Let $\g={\rm Lie}G$ and $H\subset G$ be a closed subgroup. 

(i) Let $\h$ be the set of vectors $a\in \g$ such that 
there is a sequence $h_n\in H$, $h_n\to 1$, and nonzero real numbers $c_n$ such that 
$$
c_n\log h_n\to a,\ n\to \infty. 
$$
This is clearly a subset of $\g$ invariant under scalar multiplication (since we can rescale $c_n$). Show that $\h$ consists of all 
$a\in \g$ for which the \linebreak 1-parameter subgroup $\exp(ta)$ is contained in $H$. 
(Consider the elements $h_n^{[c_n]}$, where $[c]$ is the floor of $c$). 

(ii) Show that $\h$ is a subspace of $\g$. (For $a,b\in \h$ consider the elements 
$h_N:=\exp(\frac{a}{N})\exp(\frac{b}{N})$ to show that $a+b\in \h$). 

(iii) Show that $\h$ is a Lie subalgebra of $\g$. (For $a,b\in \h$ consider 
the elements 
$$
h_N:=\exp(\tfrac{a}{N})\exp(\tfrac{b}{N})\exp(-\tfrac{a}{N})\exp(-\tfrac{b}{N})
$$
to show that $[a,b]\in \h$). 

(iv) Let $H_0\subset G$ be the connected Lie subgroup with Lie algebra $\h$. 
Given a sequence $h_N\in H$, $h_N\to 1$, show that $h_N\in  H_0$ for $N\gg 1$. 
To this end, pick a transverse slice $S\subset G$ to $H_0$ near $1$, and write
$h_N=s_Nh_{N,0}$, where $h_{N,0}\in H_0$, $s_N\in S$. Look at the asymptotics of $\log s_N$ 
as $N\to \infty$, and deduce that $s_N=1$ for large enough $N$. 

(v) Conclude that $G/H$ is a manifold, and $S$ defines a local chart on this manifold 
near $1$. Deduce that $H$ is a closed Lie subgroup of $G$, and $H_0=H^\circ$. 
\end{exercise} 

\section{\bf Representations of compact topological groups} 

\subsection{Existence of the Haar measure} 
One can generalize integration theory to arbitrary compact and even to locally compact topological groups. For simplicity we will describe this generalization in the case of compact topological groups with a countable base. 

Namely, let $X$ be a compact Hausdorff topological space with a countable base. For compact Hausdorff spaces this is equivalent to being metrizable. Let $C(X,\Bbb R)$ be the space of continuous real-valued functions on $X$. This is a real Banach space with norm 
$$||f||=\max_{x\in X}|f(x)|.$$ Recall that by the {\bf Riesz-Markov-Kakutani representation theorem}, 
a finite Borel measure $\mu$ on $X$ is the same thing as a positive continuous linear functional $I: C(X,\Bbb R)\to \Bbb R$ (i.e., such that $I(f)\ge 0$ for $f\ge 0$), namely, 
$$
I(f)=\int_X fd\mu.
$$
Moreover, $\mu$ is a probability measure if and only if $I(1)=1$, and any $\mu\ne 0$ 
has positive volume and so can be normalized to be a probability measure. 

Now let $G$ be a compact topological group with a countable base. It acts on $C(G,\Bbb R)$ by left and right translations, so acts on nonnegative probability measures of $G$. 

\begin{theorem} (Haar, von Neumann) $G$ admits a unique left-invariant probability measure.  
\end{theorem}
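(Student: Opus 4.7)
The plan is to construct the left-invariant probability measure as a positive linear functional on $C(G,\Bbb R)$ via the Riesz-Markov-Kakutani theorem, and then deduce uniqueness from a Fubini argument using a companion right-invariant measure. Throughout I use that a compact Hausdorff group with a countable base is metrizable, so every $f\in C(G,\Bbb R)$ is uniformly continuous (in both the left and right uniform structures), and $C(G,\Bbb R)$ is a separable Banach space.

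For existence, fix $f\in C(G,\Bbb R)$ and consider the orbit $\mathcal O(f)=\{L_gf:g\in G\}$ inside $C(G,\Bbb R)$. Uniform continuity of $f$ makes $\mathcal O(f)$ uniformly equicontinuous and bounded, so by Arzel\`a-Ascoli its closed convex hull $W(f)$ is compact, convex, and left-translation invariant. The key step is to show that $W(f)$ contains a unique constant function $I(f)$. Existence of at least one constant comes from a combinatorial averaging argument: if $h_0\in W(f)$ minimises $h\mapsto \max h$ and is nonconstant, pick $\eps>0$ so that $U_\eps:=\{x:h_0(x)<\max h_0-\eps\}$ is nonempty and open, cover the compact $G$ by finitely many left translates $g_1U_\eps,\ldots,g_nU_\eps$, and form $\bar h:=\tfrac{1}{n}\sum_i L_{g_i}h_0\in W(f)$; at every point at least one summand is at most $\max h_0-\eps$ while the rest are at most $\max h_0$, so $\max\bar h\le\max h_0-\eps/n$, contradicting minimality. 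Hence the minimiser is a constant, and iterating the same averaging simultaneously against $\max h$ and $-\min h$ upgrades this to uniqueness of the constant $I(f)\in W(f)$.

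Left-invariance, positivity, homogeneity and $I(1)=1$ are clear. Additivity is the subtle point. Given $\eps>0$, choose a convex combination $A=\sum_i a_iL_{g_i}$ with $\|Af_1-I(f_1)\|_\infty<\eps$; then choose a further convex combination $B=\sum_j b_jL_{k_j}$ so that $\|B(Af_2)-I(f_2)\|_\infty<\eps$ (possible because $I(Af_2)=I(f_2)$ and the existence step applies to $Af_2$). Since $B$ is a convex combination of translations, it fixes constants and is a sup-norm contraction, so $\|BAf_1-I(f_1)\|_\infty<\eps$ as well. Adding and using the triangle inequality, $BA(f_1+f_2)=BAf_1+BAf_2\in W(f_1+f_2)$ lies within $2\eps$ of the constant $I(f_1)+I(f_2)$; since the unique constant in $W(f_1+f_2)$ can be further approached by convex averagings starting from $BA(f_1+f_2)$ (which only shrink the sup-norm distance to any constant), $|I(f_1+f_2)-I(f_1)-I(f_2)|\le 2\eps$ for every $\eps>0$. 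Applying the Riesz-Markov-Kakutani theorem to $I$ produces the desired left-invariant probability Borel measure $\mu$.

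For uniqueness, the mirror construction applied to right translations yields a right-invariant probability measure $\nu$. Then for any left-invariant probability measure $\mu$ and any $f\in C(G,\Bbb R)$, Fubini's theorem gives
\[
\int_G f\,d\mu
=\int_G\!\!\int_G f(x)\,d\mu(x)\,d\nu(y)
=\int_G\!\!\int_G f(yx)\,d\mu(x)\,d\nu(y)
=\int_G\!\!\int_G f(yx)\,d\nu(y)\,d\mu(x)
=\int_G f\,d\nu,
\]
using left-invariance of $\mu$ in the second equality, Fubini in the third, and right-invariance of $\nu$ in the fourth. Hence $\mu=\nu$, so the left-invariant probability measure is unique and in fact bi-invariant. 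The main obstacle is proving that $W(f)$ contains a \emph{unique} constant: the simple averaging argument shows only that $c^+:=\inf_{h\in W(f)}\max h$ and $c^-:=\sup_{h\in W(f)}\min h$ are each attained by a constant with $c^+\le c^-$, and collapsing this interval to a point requires the iterated two-sided averaging sketched above, which is the one place where one genuinely exploits the group structure of $G$ rather than just weak-$*$ compactness of the convex set $P(G)$.
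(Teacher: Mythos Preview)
Your convex-hull approach is the classical von Neumann argument and genuinely differs from the paper's: the paper fixes a dense sequence $(g_i)$ with weights $(p_i)$, iterates the single operator $(Af)(x)=\sum_i p_i f(xg_i)$, and extracts a constant limit of $A^nf$ by Arzel\`a--Ascoli, whereas you work with the whole closed convex hull $W(f)$ of the left orbit and an extremality argument on $\max h$. Your version is tidier in that it avoids choosing an auxiliary sequence; the paper's buys actual convergence $A^nf\to I(f)$ at the end. Both routes, however, need a left--right pairing to pin down $I$.

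That is where your gap lies. Your claim that uniqueness of the constant in $W(f)$ follows from ``iterated two-sided averaging'' of $\max h$ and $\min h$ using only \emph{left} translations is not justified. Left-averaging never increases $\max h$ and never decreases $\min h$, so it shrinks the oscillation; this yields existence of a constant and the inequality $c^+\le c^-$, but not $c^+=c^-$. Concretely, if $A,B$ are left-averaging operators with $Af$ near the constant $c^+$ and $Bf$ near $c^-$, there is no relation between $ABf$ and $BAf$ since left translations need not commute, so you cannot compare the two constants. The fix is exactly the mechanism you already deploy at the end: run the existence argument with \emph{right} translations to put a constant in the right-orbit hull $W_R(f)$, and then use that any left-averaging $A$ and any right-averaging $B$ \emph{do} commute, so $ABf=BAf$ is simultaneously close to both constants, forcing them to agree. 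This is precisely how the paper handles well-definedness (construct a right-invariant $I_*$ and show every left-invariant $J$ equals $I_*$). Once you insert this step, your additivity argument goes through verbatim, and your final Fubini computation becomes a reformulation of the same left--right trick rather than an independent ingredient.
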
 

This measure is also automatically right-invariant (since it is unique) and 
is called the {\bf Haar measure} on $G$. 

\begin{remark} A unique up to scaling left-invariant regular Haar measure (albeit of infinite volume and not always right-invariant in the non-compact case) exists more generally for any locally compact group $G$ (not necessarily having a countable base).\footnote{Note that a finite Borel measure on a compact Hausdorff space with a countable base is necessarily regular.} We will not prove this here, but we remark that Haar measures on Lie groups that we have constructed using top differential forms are a special case of this. 
\end{remark} 

\begin{proof} Let $g_i, i\ge 1$ be a dense sequence in $G$ (it exists since $G$ has a countable base, hence is separable, as you can pick a point in every open set of this base). Let $p_i$ be a sequence of positive numbers such that $\sum_i p_i=1$. To this data attach the {\bf averaging operator} \linebreak $A: C(G,\Bbb R)\to C(G,\Bbb R)$ given by 
$$
(Af)(x)=\sum_i p_if(xg_i).
$$
This operator can be interpreted as follows: we have a Markov chain with states being points of 
$G$ and the transition probability from $x$ to $xg_i$ equal to $p_i$, then $(Af)(x)$ 
is the expected value of $f$ after one transition starting from $x$. 
It is clear that $A$ is a left-invariant bounded operator (of norm $1$). Moreover, 
$A$ acts by the identity on the line $L\subset C(G,\Bbb R)$ 
of constant functions. 

For $f\in C(G,\Bbb R)$ denote by 
$\nu(f)$ the distance from $f$ to $L$, i.e., 
$$
\nu(f)=\tfrac{1}{2}(\max f-\min f). 
$$
Then $\nu(Af)<\nu(f)$ unless $f\in L$. Indeed, if $f$ is not constant and $x\in G$, pick 
$j$ such that $f(xg_j)<\max f$ (exists since the sequence $xg_i$ is dense in $G$), then 
$$
(Af)(x)=\sum_i p_if(xg_i)\le (1-p_j)\max f+p_jf(xg_j)<\max f.
$$ 
So $\max(Af)<\max f$.
Similarly, $\min(Af)>\min f$. 

Now fix $f\in C(G,\Bbb R)$ and consider the sequence $f_n:=A^nf$, $n\ge 0$. 
This means that we let our Markov chain run for $n$ steps. We know 
that for finite Markov chains there is an asymptotic distribution, 
and we'll show that this is also the case in the situation at hand, giving rise to a construction of the invariant integral. 

Obviously, the sequence $f_n$ is uniformly bounded by $\max |f|$. 
Also it is {\bf equicontinuous}: for any 
$\varepsilon>0$ there exists a neighborhood \linebreak $1\in U\subset G$
such that for any $x\in G$ and $u\in U$, 
$$
|f_n(x)-f_n(ux)|<\varepsilon.
$$ 
Indeed, it suffices to show that $f$ is uniformly continuous, i.e., 
for any $\varepsilon$ find $U$ such that for all $x\in G,u\in U$ we have 
$|f(x)-f(ux)|<\varepsilon$; this $U$ will then work for all $f_n$. 
But this is guaranteed by Cantor's theorem. Namely, 
assume the contrary, that there is no such $U$. Then there are two sequences 
$x_i,u_i\in G$, $u_i\to 1$, with $|f(x_i)-f(u_ix_i)|\ge \varepsilon$.  
The sequence $x_i$ has a convergent subsequence, so 
we may assume without loss of generality that $x_i\to x\in G$. 
Then taking the limit $i\to \infty$, we get that $\varepsilon\le 0$, a contradiction.

Therefore, by the {\bf Ascoli-Arzela theorem} the sequence 
$f_n$ has a convergent subsequence. Let us remind the proof of this theorem. 
We construct subsequences $f_n^k$ of $f_n$ inductively by picking $f_n^k$ from 
$f_n^{k-1}$ so that $f_n^k(g_k)$ converges (with $f_n^0=f_n$), which can be done by the boundedness assumption, and then set $h_m:=f_m^m=f_{n(m)}$. Then $h_m(g_i)$ converges, hence Cauchy, for all $i$, which by equicontinuity implies that $h_m(x)$ is a Cauchy sequence in $C(G,\Bbb R)$, hence converges to some $h\in \Bbb C(G,\Bbb R)$. 

We claim that $h\in L$. Indeed, we have 
$$
\nu(f_{n(m)})\ge \nu(f_{n(m)+1})=\nu(Af_{n(m)})\ge \nu(f_{n(m+1)}),
$$
so taking the limit when $m\to \infty$, we get 
$$
\nu(h)\ge \nu(Ah)\ge \nu(h),
$$
i.e., $\nu(Ah)=\nu(h)$. Moreover, since $\max A^nf$ decreases, $\min A^nf$ increases, and any subsequential limit of $A^nf$ is constant, $h$ is independent of the chosen convergent subsequence, i.e., the whole sequence $A^n f$ converges uniformly to $h$. The assignment $f\mapsto h$ is therefore 
a continuous left-invariant positive linear functional $I: C(G,\Bbb R)\to L=\Bbb R$, 
and $I(1)=1$, as claimed. 

Similarly, we may construct a right-invariant integral 
$$
I_*: C(G,\Bbb R)\to L=\Bbb R
$$ 
with $I_*(1)=1$, and 
by construction for any left invariant integral $J$ we have $J(f)=J(I_*(f))$. Thus for every 
left invariant integral $J$ with $J(1)=1$ we have $J(f)=I_*(f)$; in particular $I(f)=I_*(f)$.  
This shows that $I$ is unique, invariant on both sides and independent of  the choice of $g_i,p_i$, and hence that $A^nf\to I(f)$ as $n\to \infty$. 
\end{proof} 

\begin{example} A basic example of a compact topological group with countable base which is, in general, not a Lie group, is a {\bf profinite group}. Namely, let 
$G_0,G_1,...$ be finite groups and $\phi_i: G_{i+1}\to G_i$ be surjective homomorphisms.  
Then the {\bf inverse limit} $G:=\underleftarrow{\rm lim} G_n$ is the group 
consisting of sequences $g_0\in G_0,g_1\in G_1,...$ where 
$\phi_i(g_{i+1})=g_i$. This group $G$ has projections 
$p_n: G\to G_n$ and a natural topology, for which 
a base of neighborhoods of $1$ consists of ${\rm Ker}(p_n)$. 
(This topology can be defined by a bi-invariant metric: 
$d(\bold a,\bold b)=C^{n(\bold a,\bold b)}$, where $n(\bold a,\bold b)$ 
is the first position at which $\bold a,\bold b$ differ, and $0<C<1$). 
A sequence $\bold a^n$ converges to $\bold a$ in this topology if 
for each $k$, $a^n_k$ eventually stabilizes to $a_k$. 
It is easy to show that $G$ is compact. 

Profinite groups are ubiquitous in mathematics. For example, 
the $p$-{\bf adic integers} $\Bbb Z_p$ for a prime $p$ form a profinite group, namely the inverse limit of $\Bbb Z/p^n\Bbb Z$; in fact, it is a profinite ring. The multiplicative group of this ring $\Bbb Z_p^\times$ is also a profinite group. One may also consider non-abelian profinite groups $GL_n(\Bbb Z_p)$, $O_n(\Bbb Z_p)$, $Sp_{2n}(\Bbb Z_p)$, etc. 
Finally, absolute Galois groups, such as ${\rm Gal}(\overline{\Bbb Q}/\Bbb Q)$, are (very complicated) profinite groups. 

Note that infinite profinite groups are uncountable and {\bf totally disconnected}, i.e., $G^\circ=1$. 

More generally, the inverse limit makes sense if $G_i$ are compact Lie groups. 
In this case $G$ is equipped with the product topology, so also compact (by Tychonoff's theorem). 
For example, consider the sequence of Lie groups $G_n=\Bbb R/\Bbb Z$ and maps 
$\phi_i: G_{i+1}\to G_i$ given by $\phi_i(x)=px$ for a prime $p$. We can realize $G_n$ as $\Bbb R/p^n\Bbb Z$, then  
$\phi_i(y)=y\text{ mod }p^i$. Let $G:= \underleftarrow{\rm lim} G_n$. We have projections $p_n: G\to G_n$, 
and an element $a\in {\rm Ker}(p_1)$ is a sequence of elements $a_n\in \Bbb Z/p^n$ such that $a_{n+1}$ 
projects to $a_n$, i.e., ${\rm Ker}(p_1)=\Bbb Z_p$. Thus we have a short exact sequence of compact topological groups
$$
0\to \Bbb Z_p\to G\to \Bbb R/\Bbb Z\to 0
$$
(non-split, as $G$ is connected). In fact, we can obtain $G$ as a quotient $(\Bbb R\times \Bbb Z_p)/\Bbb Z$
where $\Bbb Z$ is embedded diagonally. 
\end{example} 

\begin{corollary} Finite dimensional (continuous) representations of a compact topological group $G$ with a countable base are unitary and completely reducible. 
\end{corollary}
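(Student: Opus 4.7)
The plan is to imitate the argument from Subsection \ref{unrep} for compact Lie groups, with the Haar integral that we have just constructed playing the role of $\int_G\,dg$. The only new wrinkle is that the Haar measure is now defined merely as a positive linear functional $I$ on $C(G,\Bbb R)$, so I need to check that averaging a continuous tensor-valued function of $g\in G$ over $G$ still makes sense.

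First, given a finite-dimensional continuous representation $\rho_V\colon G\to GL(V)$, I would pick any positive Hermitian form $B$ on $V$ and attempt to define
\[
B_{\mathrm{av}}(v,w):=I\bigl(g\mapsto B(\rho_V(g)v,\rho_V(g)w)\bigr).
\]
For each pair $(v,w)$ the integrand is a continuous $\Bbb C$-valued function on $G$ (decompose into real and imaginary parts and apply $I$ to each), so $B_{\mathrm{av}}$ is well-defined, sesquilinear, and Hermitian. Positivity of $I$ and of $B$ forces $B_{\mathrm{av}}(v,v)\ge 0$, and strict positivity follows because $g\mapsto B(\rho_V(g)v,\rho_V(g)v)$ is a nonnegative continuous function on $G$ which is strictly positive at $g=1$ when $v\neq 0$, hence has strictly positive integral (a standard fact for a probability measure of full support, which follows from left-invariance of $I$: if $I(f)=0$ for a nonnegative continuous $f$ with $f(1)>0$, translate $f$ so that its positive neighborhood covers all of $G$ and sum). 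Left-invariance of $I$ together with the substitution $g\mapsto hg$ gives $B_{\mathrm{av}}(\rho_V(h)v,\rho_V(h)w)=B_{\mathrm{av}}(v,w)$, so $V$ is unitary.

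Complete reducibility then follows exactly as in the Lie group case: if $W\subset V$ is a subrepresentation, the orthogonal complement $W^\perp$ under $B_{\mathrm{av}}$ is also $G$-invariant, giving $V=W\oplus W^\perp$, and we finish by induction on $\dim V$.

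The main (small) obstacle is the technical point that $I$ was constructed on $C(G,\Bbb R)$, and one should verify that the bilinear-form-valued averaging is legitimate. I would handle this either by arguing componentwise in a fixed basis of $V$ (so that everything reduces to applying $I$ to scalar continuous functions) and then checking that the result is independent of the basis, or equivalently by noting that the space of Hermitian forms on $V$ is a finite-dimensional real vector space and extending $I$ to vector-valued continuous functions by choosing coordinates. The positivity claim $B_{\mathrm{av}}(v,v)>0$ for $v\neq 0$ is the only nontrivial input and is handled by the left-invariance argument above.
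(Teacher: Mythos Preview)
Your proposal is correct and follows exactly the approach the paper takes: the paper's proof consists of a single sentence stating that the argument is the same as for Lie groups once the Haar integral is available, and you have simply spelled out that argument in detail. The extra care you take with extending $I$ to complex- and vector-valued functions and with the strict positivity of $B_{\mathrm{av}}$ is a welcome elaboration of what the paper leaves implicit.
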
 

The proof is the same as for Lie groups, once we have the integration theory, which we now do. 

\subsection{The Peter-Weyl theorem for compact topological groups} 

\begin{theorem} (i) (Peter-Weyl theorem) Let $G$ be a compact topological group with a countable base. 
Then the set ${\rm Irrep}G$ is countable, and 
$$
L^2(G)=\widehat\oplus_{V\in {\rm Irrep}(G)}V\otimes V^*
$$
as a $G\times G$-module. 

(ii) The subspace $L^2_{\rm alg}(G)=\oplus_{V\in {\rm Irrep}(G)}V\otimes V^*$ is dense in $C(G)$ in the supremum norm. 
\end{theorem}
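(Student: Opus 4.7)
The plan is to adapt the proof of the Peter-Weyl theorem for Lie groups essentially verbatim, with three points requiring modification: the construction of the ``delta-like sequence'' (which previously used smooth bump functions), the compactness of the convolution operators (Proposition \ref{compa} was stated for compact manifolds), and the extraction of countability for ${\rm Irrep}(G)$. The orthogonality of matrix coefficients, the fact that $G$-invariant finite-dimensional subspaces of $L^2(G)$ lie in $L^2_{\rm alg}(G)$, and all the representation-theoretic facts used in the Lie case transfer immediately to the topological setting now that the Haar measure has been established.

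First, since $G$ is compact Hausdorff with a countable base, it is normal (compact Hausdorff) and in fact metrizable by the Urysohn metrization theorem. So Urysohn's lemma produces continuous nonnegative functions supported in arbitrarily small neighborhoods of $1$; symmetrize by $h_N(x)\mapsto \tfrac{1}{2}(h_N(x)+h_N(x^{-1}))$ and normalize by $\int_G h_N\,dg=1$ to obtain an inversion-invariant delta-like sequence. The proof of Proposition \ref{compa} uses only the finiteness of the measure, the continuity of the kernel, and the Cantor-type uniform continuity theorem on a compact Hausdorff space, so it applies directly: the convolution operator $B_N:L^2(G)\to L^2(G)$ with kernel $K(y,z)=h_N(yz^{-1})$ is compact and self-adjoint, commuting with right translations. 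The Hilbert-Schmidt theorem yields $L^2(G)={\rm Ker}(B_N)\oplus\widehat\bigoplus_\lambda H_{N,\lambda}$ with $H_{N,\lambda}$ finite-dimensional and right-$G$-invariant, hence contained in $L^2_{\rm alg}(G)$. The proof that $B_N\psi\to\psi$ uniformly for $\psi\in C(G)$ goes through verbatim, using only uniform continuity of continuous functions on a compact topological group (a standard consequence of compactness, not of smoothness). Since $C(G)$ is dense in $L^2(G)$, this establishes (i) modulo countability. For countability of ${\rm Irrep}(G)$: $G$ being compact metric, $C(G,\mathbb{R})$ is separable (e.g., apply Stone-Weierstrass to the countable algebra generated by rational constants and the distance functions $x\mapsto d(x,x_n)$ for $\{x_n\}$ a countable dense set), hence $L^2(G)$ is separable, and a separable Hilbert space cannot contain uncountably many nonzero pairwise orthogonal summands.

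For (ii), the extra step is the following ``$B_N$ twice'' trick. Observe first that $B_N$ preserves $L^2_{\rm alg}(G)$: if $\phi\in L^2_{\rm alg}(G)$ generates a finite-dimensional right-$G$-invariant subspace $V$, then $B_NV$ is finite-dimensional and right-$G$-invariant (since $B_N$ commutes with right translations), so $B_NV\subset L^2_{\rm alg}(G)$. Next, $B_N:L^2(G)\to C(G)$ is bounded: by Cauchy-Schwarz, $\|B_N\psi\|_\infty\le \|h_N\|_{L^2}\|\psi\|_{L^2}$, and for $\phi\in C(G)$ we have $\|B_N\phi\|_\infty\le \|h_N\|_{L^1}\|\phi\|_\infty=\|\phi\|_\infty$. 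Given $\psi\in C(G)$, pick $\phi_n\in L^2_{\rm alg}(G)$ with $\phi_n\to B_N\psi$ in $L^2$ (possible by (i)); then $B_N\phi_n\to B_N^2\psi$ in supremum norm, and each $B_N\phi_n$ lies in $L^2_{\rm alg}(G)$, so $B_N^2\psi$ lies in the sup-norm closure of $L^2_{\rm alg}(G)$. Finally
\[
\|B_N^2\psi-\psi\|_\infty \le \|B_N(B_N\psi-\psi)\|_\infty + \|B_N\psi-\psi\|_\infty \le 2\|B_N\psi-\psi\|_\infty \to 0,
\]
so $\psi$ is a uniform limit of elements of the sup-norm closure of $L^2_{\rm alg}(G)$ and hence lies in it, proving (ii).

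The main obstacle is psychological rather than mathematical: one must resist the temptation to stop at the naive observation that the image of $B_N$ lies in the $L^2$-closure of $L^2_{\rm alg}(G)$, which is only enough for density in $L^2$. The essential point is to notice that applying a smoothing operator $B_N$ to an $L^2$-approximation of $B_N\psi$ upgrades it to a sup-norm approximation of $B_N^2\psi$, at which point the hat-function approximate identity does the rest. Aside from this twist, the entire proof is a straightforward translation of the Lie-group argument, with Urysohn's lemma and metrizability playing the role previously played by smooth bump functions and manifold charts.
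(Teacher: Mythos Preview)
Your proof of (i) is correct and follows the same route as the paper: the paper explicitly takes $h_N(x)=c_N\max(\tfrac{1}{N}-d(x,1),0)$ for a metric $d$ defining the topology, and then invokes the Lie-group argument verbatim. Your remarks on why the integral operator is compact on a general compact metric measure space, and your separability argument for countability of ${\rm Irrep}(G)$, fill in details the paper leaves implicit.

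For (ii) the approaches diverge. The paper's route, by analogy with the Lie case, is Stone--Weierstrass: once (i) is known, finite-dimensional representations separate points (if some $g\ne 1$ acted trivially in every finite-dimensional representation it would act trivially on the $L^2$-dense subspace $L^2_{\rm alg}(G)$, hence on all of $L^2(G)$ by continuity of the unitary action, contradicting the fact that continuous functions separate points), and $L^2_{\rm alg}(G)$ is a conjugation-closed unital subalgebra of $C(G)$. Your ``$B_N$ twice'' trick is a genuinely different and correct argument: you avoid Stone--Weierstrass entirely by exploiting that $B_N$ is bounded $L^2\to C(G)$ (Cauchy--Schwarz) and contractive $C(G)\to C(G)$, so an $L^2$-approximation of $B_N\psi$ is upgraded by another application of $B_N$ to a sup-norm approximation of $B_N^2\psi$. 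Your approach is more self-contained within the analytic machinery already set up and does not require knowing that $L^2_{\rm alg}(G)$ is an algebra; the paper's approach has the side benefit of exhibiting the point-separation property, which feeds into the inverse-limit-of-Lie-groups corollary that follows.
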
 

Again, the proof is analogous to Lie groups, using a delta-like sequence of continuous hat functions. Namely, we may take 
$$
h_N(x)=c_N\max (\tfrac{1}{N}-d(x,1),0),
$$ 
where $d$ is some metric defining the topology of $G$, and $c_N>0$ are normalization constants such that $\int_G h_N(x)dx=1$. 

\begin{remark} If $G$ is profinite then finite dimensional representations of $G$ 
are just representations of $G_n$ for various $n$: 
$$
{\rm Irrep}G=\cup_{n\ge 1} {\rm Irrep}G_n
$$
(nested union). 
\end{remark}  

\begin{corollary} Any compact topological group with countable base is 
an inverse limit of a sequence of compact Lie groups $...\to G_1\to G_0$, where the maps
$G_{i+1}\to G_i$ are surjective.
\end{corollary} 

\begin{proof} Let $V_1,V_2,...$ be the irreducible representations of $G$. Let $K_m={\rm Ker}(\rho_{V_1}\oplus...\oplus \rho_{V_m})\subset G$, a closed normal subgroup. Then 
$G/K_m\subset U(V_1\oplus...\oplus V_m)$ is a compact Lie group, and $\cap_m K_m=1$, so $G$ is the inverse limit of $G/K_m$.   
\end{proof}  

\begin{exercise} (i) Let $\Bbb Q_p=\Bbb Z_p[1/p]$ be the field of $p$-adic numbers, i.e., 
the field of fractions of $\Bbb Z_p$. Construct the Haar measure $|dx|$ on the additive group of $\Bbb Q_p$ in which the volume of $\Bbb Z_p$ is $1$ using the Haar measure on $\Bbb Z_p$. 

(ii) Show that $\Bbb Q\subset \Bbb Q_p$ and $\Bbb Q_p=\Bbb Q+\Bbb Z_p$, and 
use this to define an embedding $\Bbb Q_p/\Bbb Z_p\to \Bbb Q/\Bbb Z$. 
Show that $\Bbb Q/\Bbb Z=\oplus_{p\text{ prime}} \Bbb Q_p/\Bbb Z_p$. 

(iii) Define the additive character $\psi: \Bbb Q_p\to U(1)\subset \Bbb C^\times$ 
by $\psi(x):=\exp(2\pi i\overline{x})$, where $\overline{x}$ 
is the image of $x$ in $\Bbb Q/\Bbb Z$. Use $\psi$ to label 
the characters (=irreducible representations) of $\Bbb Z_p$
by $\Bbb Q_p/\Bbb Z_p$. 

(iv) Let $|x|$ be the $p$-adic norm of $x\in \Bbb Q_p$ 
($|x|=p^{-n}$ if $x\in p^n\Bbb Z_p$ but $x\notin p^{n+1}\Bbb Z_p$, and $|0|=0$). 
For which $s\in \Bbb C$ is the function $|x|^s$ in $L^2(\Bbb Z_p)$? 

(v) The Peter-Weyl theorem in particular implies that any $L^2$ function $f$
on a compact abelian group $G$ with a countable base can be expanded in a Fourier series 
$$
f(x)=\sum_j c_j\psi_j(x),
$$
where $\psi_j$ are the characters of $G$. Write the Fourier expansion 
of $|x|^s$ when it is in $L^2(\Bbb Z_p)$. 

(vi) Show that $\frac{|dx|}{|x|}$ is a Haar measure on the multiplicative group $\Bbb Q_p^\times=GL_1(\Bbb Q_p)$. 
More generally, show that $|dX|:=\frac{\prod_{1\le i,j\le n}|dx_{ij}|}{|\det(X)|^n}$ 
is a Haar measure on $GL_n(\Bbb Q_p)$ (where $X=(x_{ij})$). 

(vii) Classify characters of $\Bbb Z_p^\times$. 

(viii) Let $S$ be the space of locally constant functions 
on $\Bbb Q_p$ with compact support (i.e., linear combinations of indicator functions of sets 
of the form $a+p^n\Bbb Z_p$, $a\in \Bbb Q_p$). Show that the Fourier transform operator 
$$
\mathcal F(f)=\int_{\Bbb Q_p}\psi(xy)f(y)|dy|
$$
maps $S$ to itself, and $(\mathcal{F}^2f)(x)=f(-x)$. 
Show that $\mathcal F$ preserves the integration 
pairing on $S$, $(f,g)=\int_{\Bbb Q_p}f(x)\overline{g(x)}|dx|$, 
and therefore extends to a unitary operator $L^2(\Bbb Q_p)\to L^2(\Bbb Q_p)$. 
\end{exercise} 

\section{\bf The hydrogen atom, I} 

\subsection{The Schr\"odinger equation}  
Let us now apply our knowledge of non-abelian harmonic analysis to solve a basic problem in quantum mechanics -- describe the dynamics of the hydrogen atom. 

The mechanics of the hydrogen atom is determined by motion of a charged quantum particle (electron) in  a rotationally invariant attracting electric field. The potential of such a field is 
$-\frac{1}{r}$, where $r^2=x^2+y^2+z^2$ (since this theory does not have nontrivial dimensionless quantities, we may choose the units of measurement so that all constants are equal to $1$). Thus, the wave function $\psi(x,y,z,t)$ for our particle obeys the {\bf Schr\"odinger equation}
$$
i\partial_t\psi=H\psi, 
$$
where $H$ is the {\bf quantum Hamiltonian}
$$
H:=-\frac{1}{2}\Delta-\frac{1}{r},
$$
and $\Delta=\partial_x^2+\partial_y^2+\partial_z^2$ is the Laplace operator. 
Recall also that for each $t$, the function $\psi(-,-,-,t)$ is in $L^2(\Bbb R^3)$ and $||\psi||=1$. 
The problem is to solve this equation given the initial value $\psi(x,y,z,0)$.\footnote{Recall that $\psi$ determines the probability $p(U,t)$ to find the electron in a region $U\subset \Bbb R^3$ 
at a time $t$, which is given by the formula $p(U,t)=\int_U |\psi(x,y,z,t)|^2dxdydz$.}

The Schr\"odinger equation can be solved by separation of variables as follows. 
Suppose we have an orthonormal basis $\psi_N$ of $L^2(\Bbb R^3)$ such that 
$H\psi_N=E_N\psi_N$. Then if 
$$
\psi(x,y,z,0)=\sum_N c_N\psi_N(x,y,z)
$$
(i.e., $c_N=(\psi,\psi_N)$)
then 
$$
\psi(x,y,z,t)=\sum_N c_Ne^{-iE_Nt}\psi_N(x,y,z),
$$
So our job is to find such basis 
$\psi_N$, i.e., diagonalize the self-adjoint operator $H$. 

Note that the operator $H$ is unbounded and defined only on a dense subspace of $L^2(\Bbb R^3)$, and although it is symmetric ($(H\psi,\eta)=(\psi,H\eta)$ for compactly supported functions), 
it is very nontrivial to say what precisely it means that $H$ is self-adjoint. Also, 
this operator turns out to have both discrete and continuous spectrum, which means that there is actually {\it no basis} with the desired properties -- eigenfunctions of $H$ which lie in $L^2(\Bbb R^3)$ span 
a {\it proper} closed subspace of this Hilbert space. However, this will not be a problem for our calculation. 

\subsection{Bound states} 
We first focus on {\bf bound states}, i.e., solutions of the {\bf stationary Schr\"odinger equation} 
$$
H\psi=E\psi
$$
which belong to $L^2(\Bbb R^3)$ and thus decay at infinity in the sense of $L^2$-norm (this is the situation when the electron does not have enough energy to escape from the nucleus, i.e., it is ``bound" to it and thus unlikely to be found far from the origin, which explains the terminology). In particular, such eigenfunctions must have negative energy, $E<0$. 
To do so, let us utilize the rotational symmetry and 
write this equation in spherical coordinates. For this we just need to write the Laplacian $\Delta$ in spherical coordinates. Let us write $\bold r=r\bold u$, where 
$\bold u\in S^2$ (i.e., $|\bold u|=1$). 
We have  
$$
\Delta=\Delta_r+\tfrac{1}{r^2}\Delta_{\rm sph} 
$$
where 
$$
\Delta_{\rm sph}=\tfrac{1}{\sin^2\phi}\partial_\theta^2+\tfrac{1}{\sin \phi}\partial_\phi \sin\phi \partial_\phi
$$
 is a differential operator on $S^2$ (the {\bf spherical Laplacian}, or the {\bf Laplace-Beltrami operator}) and 
$$
\Delta_r=\partial_r^2+\tfrac{2}{r}\partial_r
$$
is the {\bf radial part} of $\Delta$ (check it!). 
So our equation looks like 
$$
\partial_r^2\psi+\tfrac{2}{r}\partial_r\psi+\tfrac{2}{r}\psi+\tfrac{1}{r^2}\Delta_{\rm sph}\psi=-2E\psi.
$$
This equation can be solved by again applying separation of variables. Namely, we look for solutions in the form 
$$
\psi(r,\bold u)=f(r)\xi(\bold u), 
$$
where 
\begin{equation}\label{sphe} 
\Delta_{\rm sph} \xi+\lambda \xi=0.
\end{equation} 
Then we obtain the following equation for $f$: 
\begin{equation}\label{radialeq} 
f''(r)+\tfrac{2}{r}f'(r)+(\tfrac{2}{r}-\tfrac{\lambda}{r^2}+2E)f(r)=0.
\end{equation} 
So now we have to solve equation \eqref{sphe} and in particular determine 
which values of $\lambda$ occur. 

To this end, recall that the operator $\Delta_{\rm sph}$ is rotationally invariant, so it preserves 
the space $L^2_{\rm alg}(S^2)$ of functions on $S^2$ belonging to finite dimensional representations of $SO(3)$. Moreover,  it preserves the decomposition $L^2_{\rm alg}(S^2)=\oplus_{\ell\ge 0}L_{2\ell}$ of this space  into irreducible representations of $SO(3)$ (Exercise \ref{homospace}(ii)), and on each $L_{2\ell}$ it acts by a certain scalar $-\lambda_\ell$. To compute this scalar, consider the vector $Y_\ell^0$ in $L_{2\ell}$ of weight zero. This vector is invariant under $SO(2)$ changing $\theta$, so it depends only on $\phi$; in fact, it is a polynomial of degree $\ell$ in $\cos\phi$: $Y_\ell^0=P_\ell(\cos\phi)$. Also orthogonality of the decomposition implies that 
$$
\int_{-1}^1 P_k(z)P_n(z)dz=0,\ k\ne n.
$$
This means that $P_n$ are the {\bf Legendre polynomials}. Also 
$$
\Delta_{\rm sph}P_\ell(z)=\partial_z(1-z^2)\partial_zP_\ell(z)=-\lambda_\ell P_\ell(z),
$$
which shows (by looking at the leading term) that 
$$
\lambda_\ell=\ell(\ell+1),\ \ell\in \Bbb Z_{\ge 0},
$$
and the space of solutions of \eqref{sphe} with $\lambda=\lambda_\ell$ 
is $2\ell+1$-dimensional and is isomorphic to $L_{2\ell}$ as an $SO(3)$-module. 

Consider now the vector $Y_\ell^m\in L_{2\ell}$ of any integer weight 
$-\ell\le m\le \ell$. We will be interested in these vectors up to scaling. 
We have 
$$
Y_\ell^m(\theta,\phi)=e^{im\theta}P_\ell^m(\cos \phi),
$$
where $P_{\ell}^m$ are certain functions. These functions are called {\bf spherical harmonics}. Moreover, it follows from representation theory of $SO(3)$ that $Y_\ell^m$ are trigonometric polynomials which are even for even $m$ and odd for odd $m$ (check it!), so $P_\ell^m(z)$ are polynomials in $z$ when $m$ is even and are of the form $(1-z^2)^{1/2}$ times a polynomial 
in $z$ when $m$ is odd. 
 
Let us calculate the functions $P_\ell^m$. Since they are eigenfunctions of the spherical Laplacian, 
we  obtain that $P_\ell^m$ satisfy the {\bf Legendre differential equation} 
$$
\partial_z (1-z^2)\partial_zP-\frac{m^2}{1-z^2}P+\ell(\ell+1)P=0.
$$

\begin{exercise} Show that this equation has a unique up to scaling continuous solution on $[-1,1]$ when $-\ell\le m\le \ell$ and $m$ is an integer, given by the formula 
$$
P_\ell^m(z)=(1-z^2)^{m/2}\partial_z^{\ell+m}(1-z^2)^\ell. 
$$
\end{exercise} 

These functions are called {\bf associated Legendre polynomials} (even though they are not quite polynomials when $m$ is odd). 

Now we can return to equation \eqref{radialeq}. It now has the form 
\begin{equation}\label{radialeq1} 
f''(r)+\tfrac{2}{r}f'(r)+(\tfrac{2}{r}-\tfrac{\ell(\ell+1)}{r^2}+2E)f(r)=0.
\end{equation} 

To simplify this equation, write 
$$
f(r)=r^\ell e^{-\frac{r}{n}}h(\tfrac{2r}{n}),
$$ 
where $n$ can be chosen at our convenience. Then for $h$ we get 
the equation 
$$
\rho h''(\rho)+(2\ell+2-\rho)h'(\rho)+(n-\ell-1+\tfrac{1}{4}(1+2En^2)\rho)h(\rho)=0. 
$$
We see that the equation simplifies when $n=\frac{1}{\sqrt{-2E}}$, i.e., $E=-\frac{1}{2n^2}$, so let us make this choice. 
Then we have 
$$
\rho h''(\rho)+(2\ell+2-\rho)h'(\rho)+(n-\ell-1)h(\rho)=0, 
$$
which is the {\bf generalized Laguerre equation}. Moreover, we have $||\psi||^2<\infty$, 
which translates to 
\begin{equation} \label{integrability}
\int_0^\infty \rho^{2\ell+2}e^{-\rho}|h(\rho)|^2d\rho<\infty
\end{equation} 
(the factor $\rho^2$ comes from the Jacobian of the spherical coordinates). 

How do solutions of the generalized Laguerre equation behave at $\rho=0$? 
Let us look for a solution of the form $\rho^s(1+o(1))$. 
The characteristic equation for $s$ then has the form 
$$
s(s+2\ell+1)=0,
$$
which gives $s=0$ or $s=-2\ell-1$. 
Thus, for $\ell\ge 1$ the solution 
$\rho^{-2\ell-1}(1+o(1))$ does not satisfy \eqref{integrability}, 
so we are left with a unique solution $h_n(\rho)$ 
which is regular at $\rho=0$ and $h_n(0)=1$. On the other hand, if 
$\ell=0$, the solution $\rho^{-1}(1+o(1))$, even though it satisfies \eqref{integrability}, 
gives rise to a rotationally invariant function $\psi\sim \frac{1}{r}$ as $r\to 0$, 
so we don't get $H\psi=E\psi$, but rather get $H\psi=E\psi+C\delta_0$, 
where $\delta_0$ is the delta function concentrated at zero. So $\psi$ does not really satisfy 
the stationary Schr\"odinger equation as a distribution and has to be discarded, leaving us, as before, with the unique solution $h_n(\rho)$ such that $h_n(0)=1$. 

Using the power series method, we obtain 
$$
h_n(\rho)=\sum_{k=0}^\infty \frac{(1+\ell-n)...(k+\ell-n)}{(2\ell+2)...(2\ell+1+k)}\frac{\rho^k}{k!}.
$$
It is easy to see that this series converges for all $\rho$ and
$$
\lim_{\rho\to +\infty}\frac{\log h_n(\rho)}{\rho}=1
$$
 {\bf unless the series terminates}, which happens   
iff $n-\ell-1$ is a nonnegative integer.
(To check the latter, show that the Taylor coefficients $a_k$ of $h_n$ 
are bounded below by $\frac{1}{(k+N)!}$ for some $N$).  
So it fails \eqref{integrability} unless $n-\ell-1\in \Bbb Z_{\ge 0}$. In this case, 
$$
h_n(\rho)=\sum_{k=0}^{n-\ell-1} \frac{(1+\ell-n)...(k+\ell-n)}{(2\ell+2)...(2\ell+1+k)}\frac{\rho^k}{k!}=
L^{2\ell+1}_{n-\ell-1}(\rho),
$$
the $n-\ell-1$-th {\bf generalized Laguerre polynomial} with parameter $\alpha=2\ell+1$, a polynomial of degree $n-\ell-1$. Namely, the generalized Laguerre polynomials $L^\alpha_N$ are defined by the formula 
$$
L_N^\alpha(\rho):=\sum_{k=0}^{N} (-1)^k\frac{N...(N-k+1)}{(\alpha+1)...(\alpha+k)}\frac{\rho^k}{k!}.
$$
Thus we obtain the following theorem. 

\begin{theorem}\label{boundst} The bound states of the hydrogen atom, up to scaling, are 
$$
\psi_{n\ell m}(r,\theta,\phi)=r^\ell e^{-\frac{r}{n}}L_{n-\ell-1}^{2\ell+1}(\tfrac{2r}{n})Y_\ell^m(\theta,\phi),
$$
where $Y_\ell^m(\theta,\phi)=e^{im\theta}P_\ell^m(\cos\phi)$ are spherical harmonics, 
$n\in \Bbb Z_{>0}$, $\ell$ an integer between $0$ and $n-1$, and $-\ell\le m\le \ell$ is an integer.  
The energy of the state $\psi_{n\ell m}$ is $E_n=-\frac{1}{2n^2}$. 
\end{theorem}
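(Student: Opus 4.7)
The plan is to exploit the $SO(3)$-symmetry of the Hamiltonian $H=-\tfrac{1}{2}\Delta-\tfrac{1}{r}$ via separation of variables in spherical coordinates. First I would write $\Delta=\Delta_r+\tfrac{1}{r^2}\Delta_{\rm sph}$ with $\Delta_{\rm sph}$ the Laplace--Beltrami operator on $S^2$. Since $\Delta_{\rm sph}$ commutes with the $SO(3)$-action on $L^2(S^2)$, by Schur's lemma it acts by a scalar on each isotypic component of the decomposition $L^2(S^2)=\bigoplus_{\ell\ge 0}L_{2\ell}$ (an application of the compact-group theory developed earlier). I would identify this scalar as $-\ell(\ell+1)$ by testing on the unique $SO(2)$-invariant vector $P_\ell(\cos\phi)$, which by orthogonality must be the Legendre polynomial. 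Joint eigenfunctions of $\Delta_{\rm sph}$ and the rotation generator $\partial_\theta$ are then the spherical harmonics $Y_\ell^m=e^{im\theta}P_\ell^m(\cos\phi)$, with $P_\ell^m$ determined uniquely up to scaling as the continuous solution on $[-1,1]$ of the Legendre ODE.

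Next I would write $\psi=f(r)Y_\ell^m(\phi,\theta)$, reducing the stationary Schr\"odinger equation to the radial ODE \eqref{radialeq1}. The ansatz $f(r)=r^\ell e^{-r/n}h(2r/n)$ with $n:=1/\sqrt{-2E}$ (this specific choice of $n$ clearing the linear-in-$r$ and constant terms simultaneously) converts it to the generalized Laguerre equation $\rho h''+(2\ell+2-\rho)h'+(n-\ell-1)h=0$. The $L^2$-condition on $\psi$ translates via the Jacobian $r^2\sin\phi$ into $\int_0^\infty \rho^{2\ell+2}e^{-\rho}|h(\rho)|^2d\rho<\infty$.

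I would then analyze behavior at both endpoints. The indicial equation at $\rho=0$ yields exponents $s=0$ and $s=-2\ell-1$; for $\ell\ge 1$ the singular solution violates the integrability condition, while for $\ell=0$ the $1/r$-singular solution, though $L^2$, would produce a delta function at the origin under $-\tfrac{1}{2}\Delta$, failing the eigenvalue equation as a distribution. This uniquely selects the regular solution $h_n$ with $h_n(0)=1$, computed by a power-series method as $\sum_{k\ge 0}\tfrac{(1+\ell-n)\cdots(k+\ell-n)}{(2\ell+2)\cdots(2\ell+1+k)}\tfrac{\rho^k}{k!}$.

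The main obstacle, and the decisive quantization step, is the asymptotic argument at infinity: I must show that unless $n-\ell-1\in\Bbb Z_{\ge 0}$ (i.e.\ the series terminates), $h_n(\rho)$ grows like $e^{\rho}(1+o(1))$ and hence violates the integrability condition. The idea, following the hint in the text, is a lower bound $a_k\ge 1/(k+N)!$ on the Taylor coefficients once the numerators stabilize in sign; the recursion shows that eventually $a_{k+1}/a_k\sim 1/k$, matching that of $e^\rho$. Termination forces $n-\ell-1\in\Bbb Z_{\ge 0}$, whence $n\in\Bbb Z_{>0}$ and $0\le \ell\le n-1$, and identifies $h_n$ with the generalized Laguerre polynomial $L_{n-\ell-1}^{2\ell+1}$; moreover $E=-1/(2n^2)$ by definition of $n$. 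Collecting the radial and angular factors yields the stated list of bound states $\psi_{n\ell m}$.
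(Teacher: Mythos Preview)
Your proposal is correct and follows essentially the same approach as the paper: separation of variables via the $SO(3)$-symmetry to reduce to spherical harmonics, the ansatz $f(r)=r^\ell e^{-r/n}h(2r/n)$ with $n=1/\sqrt{-2E}$ converting the radial equation to the generalized Laguerre equation, the indicial analysis at $\rho=0$ (including the $\delta$-function argument for $\ell=0$), and the termination/growth dichotomy forcing $n-\ell-1\in\Bbb Z_{\ge 0}$. The structure, the key choices, and the technical details all match the paper's argument.
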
 

\begin{exercise}\label{angmom}
Let $\bold r=(x,y,z)$ and 
$\bold p=(-i\partial_x,-i\partial_y,-i\partial_z)$ be the position and momentum operators in $\Bbb R^3$ (these are actually vectors whose components are operators on functions in $\Bbb R^3$). 
Let $\bold L=\bold r\times \bold p$ be the {\bf angular momentum operator}. We have 
$\bold L=(L_x,L_y,L_z)$ where 
$$
L_x=-i(y\partial_z-z\partial_y),\ L_y=-i(z\partial_x-x\partial_z),\ L_z=-i(x\partial_y-y\partial_x)
$$
Let $r=|\bold r|=\sqrt{x^2+y^2+z^2}$ (the operator of multiplication by this function) and $H=\frac{1}{2}\bold p^2+U(r)=-\frac{1}{2}\Delta+U(r)$ be a rotationally symmetric Schr\"odinger operator on $\Bbb R^3$ with potential $U(r)$ (smooth for $r>0$).
Show that the components of $i\bold L$ are vector fields that 
define the action of the Lie algebra ${\rm Lie}(SO(3))$ 
on functions on $\Bbb R^3$ induced by rotations. 
Deduce that $[\bold L,\bold p^2]=0$ (componentwise).
\end{exercise}

\section{\bf The hydrogen atom, II} 

\subsection{Quantum numbers} 
The number $n$ in Theorem \ref{boundst} is called the {\bf principal quantum number}; it characterizes the energy of the state. 
The number $\ell$ is called the {\bf azimuthal quantum number}; it characterizes the eigenvalue 
of the spherical Laplacian $\Delta_{\rm sph}$, which has the physical interpretation as (minus) the {\bf orbital angular momentum operator} $\bold L^2=L_x^2+L_y^2+L_z^2$. By Exercise \ref{angmom}, the operators 
$iL_x$, $iL_y$ and $iL_z$ are just the generators of the Lie algebra ${\rm Lie}(SO(3))$ acting on $\Bbb R^3$, i.e., we have 
$$
[L_x,L_y]=-iL_z,\ [L_y,L_z]=-iL_x,\ [L_z,L_x]=-iL_y
$$
Thus, $\bold L^2$ is simply a Casimir of ${\rm Lie}(SO(3))$. Namely, recall that the standard Casimir $C$ acts 
on $L_{2\ell}$ as $\frac{2\ell(2\ell+2)}{4}=\ell(\ell+1)$, so $\bold L^2=C$. 

Finally, $m$ is called the {\bf magnetic quantum number}, and it is the eigenvalue of $L_z=-i\partial_\theta$ (in spherical coordinates). 

\begin{corollary} The space $W_n$ of states with principal quantum number $n$ has dimension
$n^2$. 
\end{corollary}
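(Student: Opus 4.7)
The plan is to deduce the dimension count directly from Theorem \ref{boundst}, which has already enumerated a basis of bound states indexed by triples $(n,\ell,m)$. Since by that theorem the functions $\psi_{n\ell m}$ are eigenstates of $H$ with eigenvalue $E_n=-\frac{1}{2n^2}$ depending only on $n$, the space $W_n$ is spanned by $\{\psi_{n\ell m}\}$ as $(\ell,m)$ range over the allowed values. Fixing $n$, the allowed values are $0\le \ell\le n-1$ and $-\ell\le m\le \ell$.

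First I would verify linear independence of the $\psi_{n\ell m}$ for fixed $n$. This follows because for different $\ell$ the angular factors $Y_\ell^m$ lie in distinct isotypic components $L_{2\ell}\subset L^2(S^2)$ under $SO(3)$, hence are orthogonal; for fixed $\ell$ and varying $m$ they are orthogonal as weight vectors of distinct weights under the action of $SO(2)\subset SO(3)$. (The radial factor $r^\ell e^{-r/n}L_{n-\ell-1}^{2\ell+1}(2r/n)$ is nonzero, so it does not affect independence.) Thus the given list really is a basis of $W_n$.

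The dimension count is then the elementary identity
\begin{equation*}
\dim W_n \;=\; \sum_{\ell=0}^{n-1}(2\ell+1) \;=\; n^2,
\end{equation*}
where the $2\ell+1$ counts the values $m\in\{-\ell,-\ell+1,\dots,\ell\}$, matching the dimension of $L_{2\ell}$.

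There is essentially no hard step here, as the enumeration is already done in Theorem \ref{boundst}; the only subtlety is ensuring that $W_n$ is exhausted by these states, i.e.\ that no other eigenstates of $H$ with eigenvalue $E_n$ appear. But this is built into Theorem \ref{boundst}, which classifies \emph{all} bound states. One could also remark, as foreshadowing of the next subsection, that the equality $\dim W_n=n^2$ hints at a hidden symmetry larger than $SO(3)$ (namely an $SO(4)$-symmetry generated by the Runge--Lenz vector) under which $W_n$ is a single irreducible representation, since $n^2$ is the dimension of the irreducible $SO(4)$-representation $L_{n-1}\boxtimes L_{n-1}$ via $\mathfrak{so}_4\cong \mathfrak{sl}_2\oplus\mathfrak{sl}_2$.
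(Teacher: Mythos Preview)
Your proof is correct and follows essentially the same approach as the paper: invoke Theorem \ref{boundst} to enumerate the states and compute $\sum_{\ell=0}^{n-1}(2\ell+1)=n^2$. The paper's proof is this one-line computation; your added remarks on linear independence and the hidden $SO(4)$ symmetry are correct elaborations but not required for the corollary.
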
 

\begin{proof} By Theorem \ref{boundst}, this dimension is $\sum_{\ell=0}^{n-1}(2\ell+1)=n^2$. 
\end{proof} 

In fact, this analysis applies not just to hydrogen but to other chemical elements whose 
nucleus has charge $>1$, if we neglect interaction between electrons. Thus it can potentially 
be used to explain patterns of the periodic table. 

\subsection{Coulomb waves} 
We note, however, that $\psi_{n\ell m}$ {\bf do not} form a basis of $L^2(\Bbb R^3)$. Instead, they span (topologically) a proper closed subspace $L^2_0(\Bbb R^3)$ of $L^2(\Bbb R^3)$ on which the operator $H$ is bounded and negative definite. So if 
a smooth function $\varphi$ on $\Bbb R^3$ (say, with compact support away from the origin) satisfies 
$(H\varphi,\varphi)\ge 0$ then $\varphi\notin L^2_0(\Bbb R^3)$. It is easy to construct such examples: let $\varphi$ be a hat function and $\varphi_s(\bold r)=\varphi(\bold r+s\bold a)$, where $\bold a$ is any nonzero vector. We then have 
$$
(H\varphi_s,\varphi_s)=\frac{1}{2}\int_{\Bbb  R^3} |\nabla \varphi(\bold r)|^2dV-\int_{\Bbb R^3}\frac{|\varphi(\bold r)|^2}{|\bold r-s\bold a|}dV,
$$
and we observe that the first term is positive and the second one goes to zero as $s\to \infty$, so for large $s$ this expression is positive. This happens because besides bound states the hydrogen atom also has {\bf continuous spectrum} $[0,\infty)$ corresponding to free electrons which are not bound by the nucleus. This part of the spectrum can be computed similarly to the discrete (bound state) spectrum, except that the energy will take arbitrary {\bf nonnegative} values. The corresponding wavefunctions are not normalizable (i.e., not in $L^2$), and are given by similar formulas to bound states but with imaginary $n$. Their continuous linear combinations satisfying appropriate boundary conditions are called {\bf Coulomb waves}. 

\subsection{Spin}
Also, the answer $n^2$ for the number of states in the $n$-th energy level does not quite agree with the periodic table, which suggests it should rather be $2n^2$: the numbers of electrons 
at each level are $2,8,18,32...$. This is because the Schr\"odinger model which we computed is not quite right, as it does not take into account an additional degree of freedom called {\bf spin} (a sort of intrinsic angular momentum). Namely, it turns out that the space of states of an electron is not $L^2(\Bbb R^3)$ but rather $L^2(\Bbb R^3)\otimes \Bbb C^2$, with the same Hamiltonian as before but the Lie algebra ${\rm Lie}(SO(3))$ acting diagonally (where $\Bbb C^2$ is the 2-dimensional irreducible representation of this Lie algebra). Thus the space of states of the $n$-th energy level taking spin into account is 
$$
V_n=(L_0\oplus L_2\oplus...\oplus L_{2n-2})\otimes L_1=2L_1\oplus 2L_3\oplus...\oplus 2L_{2n-3}\oplus L_{2n-1}
$$
and $\dim V_n=2n^2$. 
In other words, we have the additional spin-$z$ operator
\[
S_z=\begin{pmatrix}
\frac12&0\\
0&-\frac12
\end{pmatrix}
\]
acting on the $\mathbb C^2$ factor in the standard basis $\bold e_+,\bold e_-$. Thus the
$z$-component of total angular momentum is
\[
J_z=L_z+S_z,
\]
and the product states
\[
\psi_{n\ell m,+}:=\psi_{n\ell m}\otimes \bold e_+,\qquad
\psi_{n\ell m,-}:=\psi_{n\ell m}\otimes \bold e_-
\]
are eigenvectors of $J_z$ with eigenvalues $m+\frac12$ and $m-\frac12$,
respectively.

Note also that $V_n$ is {\bf not} a representation of $SO(3)$ but is only a representation of its double cover $SU(2)$ where $-{\rm Id}$ acts by $-1$. However, this {\bf anomaly} does not mean a violation of the $SO(3)$ symmetry, since true quantum states are unit vectors in the Hilbert space {\bf up to a phase factor}. 

\subsection{The Pauli exclusion principle} 
Suppose now that we have $k$ electrons, each at the $n$-th energy level. If the electrons had been marked, the space of states for them would have been $V_n^{\otimes k}$. But in real life they are indistinguishable, 
so we need to mod out by permutations. So we might think the space of states is $S^kV_n$. 
However, as electrons are {\bf fermions}, this answer turns out to be not correct: the correct answer is 
$\wedge^k V_n$ rather than $S^k V_n$. In other words, when two identical electrons are switched, the corresponding vector changes sign. This is another example of a sign which does not violate symmetry  
since states are well defined only up to a phase factor.

In particular, this implies that if $k>2n^2$ then the space of states is zero, i.e., there cannot be more than $2n^2$ electrons at the $n$-th energy level (the {\bf Pauli exclusion principle}). This is exactly the kind of pattern we see in the periodic table.

\includegraphics[scale=0.1]{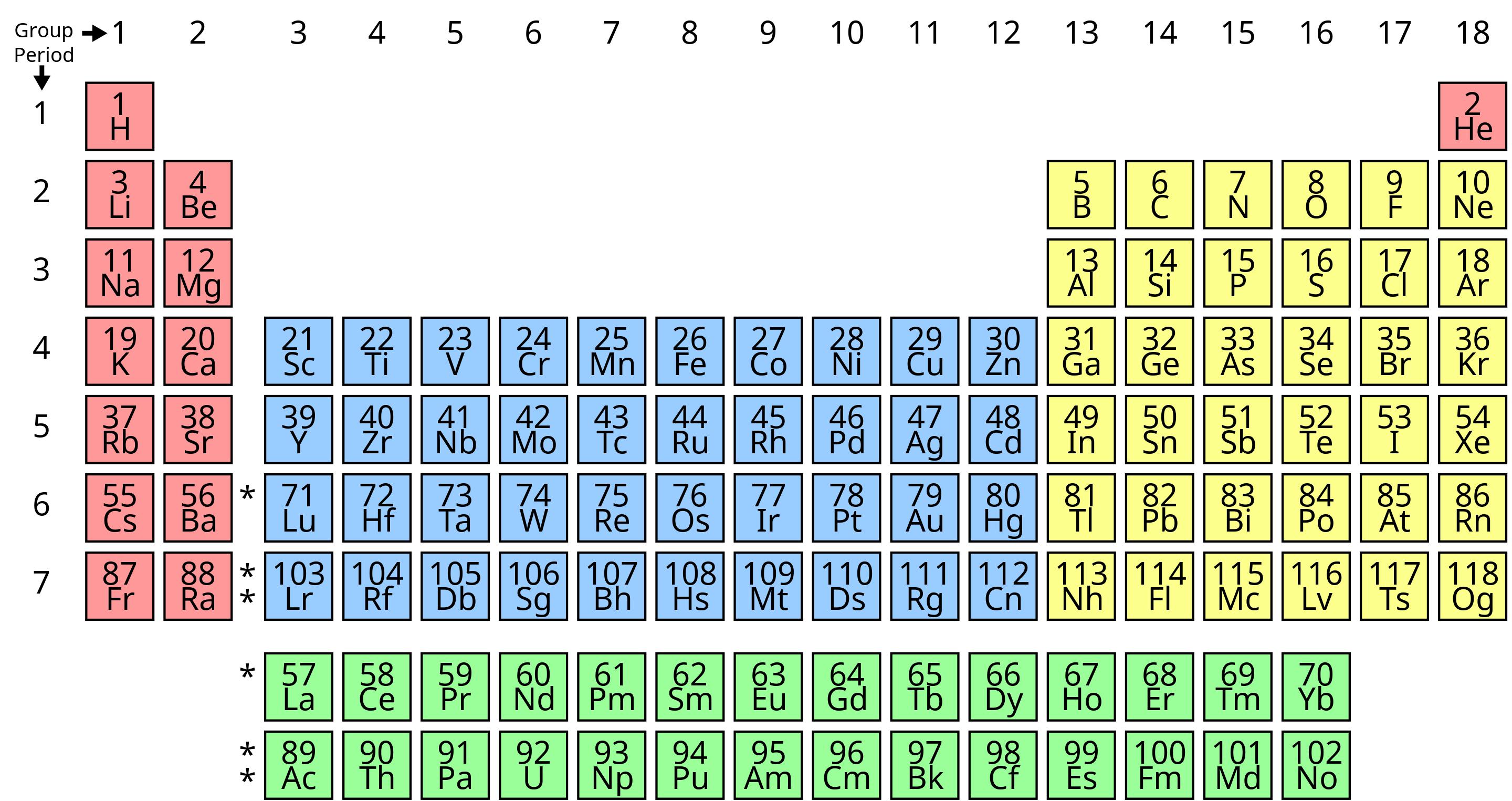}

 Namely, the first energy level has two slots (the first row, or period, of the table), and the second one has 8 slots (the second period of the table). Further down interactions between electrons start to matter and the picture is modified (giving still 8 slots in the next period instead of 18), but we still see a similar pattern: 8 slots in the third period, 18 in periods 4,5, and 32 in periods 6,7. This arrangement is justified by the fact that the columns (groups) of elements, which have the same number of electrons at the last level, have similar chemical properties. For example, in the first column we have alkali metals (except hydrogen) and in the last one we have inert gases.

\begin{exercise} \label{rungelenz} Keep the notation of Exercise \ref{angmom}.

(i) Let $\bold A_0=\frac{1}{2}(\bold p\times \bold L-\bold L\times \bold p)$. 
Show that $[\bold A_0,\bold p^2]=0$ (componentwise). 

(ii) Let $\bold A:=\bold A_0+\phi(r)\bold r$. Show that there exists a function $\phi$ such that 
$[\bold A,H]=0$ if and only if $U$ is the Coulomb potential $\frac{C}{r}+D$, and then $\phi$ is uniquely determined, and compute $\phi$. The corresponding operator $\bold A$ is called the {\bf quantum Laplace-Runge-Lenz vector}.\footnote{In the classical mechanics setting, the existence of this conservation law is the reason why orbits for Coulomb potential are periodic (Kepler's law), while this is not so for other rotationally invariant potentials, except harmonic oscillator. It was discovered many times over the last 300 years. This is one of the most basic examples of ``hidden symmetry".}  

(iii) (Hidden symmetry of the hydrogen atom). By virtue of (ii), the components of $\bold A$ act (by second order differential operators) on functions on $\Bbb R^3$ commuting with $H$. In particular, they act on each $W_n$ (note that in this problem we ignore spin). Use these components to define an action of $\mathfrak{so}_4\cong \mathfrak{so}_3\oplus \mathfrak{so}_3\cong \mathfrak{sl}_2\oplus \mathfrak{sl}_2$  on $W_n$ so that the geometric one (generated by the components of $\bold L$) is the diagonal copy.  

(iv) Show that $W_n=L_{n-1}\boxtimes L_{n-1}$ as a representation of $\mathfrak{sl}_2\oplus \mathfrak{sl}_2$.  

(v) Now include spin by tensoring with the representation $\Bbb C^2$ of $SU(2)$ 
and show that $V_n=L_{n-1}\boxtimes L_{n-1}\boxtimes L_1$ 
as a representation of $\mathfrak{so}_4\oplus \mathfrak{su}_2=\mathfrak{sl}_2\oplus \mathfrak{sl}_2\oplus \mathfrak{sl}_2$. 
This representation is irreducible, which explains why the $n$-th energy level of $H$ 
is degenerate, with multiplicity (i.e., dimension) $2n^2$. 
\end{exercise} 

\begin{exercise} Let $H=-\frac{1}{2}\Delta+\frac{1}{2}r^2$ be the Hamiltonian of the quantum harmonic oscillator in $\Bbb R^n$, where $r=\sqrt{x_1^2+...+x_n^2}$. Compute the eigenspaces of $H$ in $L^2(\Bbb R^n)$ as representations of $SO(n)$ and find the eigenvalues of $H$ with multiplicities and an orthogonal eigenbasis. 

{\bf Hint.} Show that the operator $e^{r^2/2}\circ H\circ e^{-r^2/2}$ preserves the space of polynomials $\Bbb C[x_1,...,x_n]$, and find an eigenbasis $P_{i_1i_2...i_n}$ for this operator in this space (these should express via Hermite polynomials; use that $H=H_1+...+H_n$ is the sum of operators $H_i$ depending only on $x_i$). This will give orthogonal eigenfunctions 
$$
\psi_{i_1...i_n}(\bold r)=P_{i_1...i_n}(\bold r)e^{-r^2/2}
$$ 
in $L^2(\Bbb R^n)$. Using properties of Hermite polynomials, conclude that these are complete. Then use Exercise \ref{orthgr}. 
\end{exercise} 

\section{\bf Forms of semisimple Lie algebras over an arbitrary field} 

\subsection{Automorphisms of semisimple Lie algebras} \label{autom}

We showed in Corollary \ref{auto} that for a complex semisimple $\g$, the group ${\rm Aut}(\g)$ is a Lie group with Lie algebra $\g$. We also showed in Theorem \ref{conjcar} that 
its connected component of the identity ${\rm Aut}(\g)^\circ$ acts transitively on 
the set of Cartan subalgebras in $\g$. This group is called the {\bf adjoint group} attached to $\g$, and we will denote it by $G_{\rm ad}$. 

Let $\h\subset \g$ be a Cartan subalgebra, and $H\subset G_{\rm ad}$ be the corresponding connected Lie subgroup. This subgroup can be viewed as the group of linear operators 
$\g\to \g$ which act by $1$ on $\h$ and by $e^{\alpha(x)}$, $x\in \h$, on each $\g_\alpha$. 
Thus the exponential map $\h\to H$ defines an isomorphism 
$\h/2\pi iP^\vee\cong H$. The group $H$ is called the {\bf maximal torus} of $G_{\rm ad}$ corresponding to $\h$.

\begin{proposition}\label{norma} The normalizer $N(H)$ of $H$ in $G_{\rm ad}$ 
coincides with the stabilizer of $\h$ and contains $H$ as a normal subgroup, so that 
$N(H)/H$ is naturally isomorphic to the Weyl group $W$. 
\end{proposition}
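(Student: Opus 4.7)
The plan is to establish, in order: $N(H)=\mathrm{Stab}_{G_{\rm ad}}(\h)$, normality of $H$ in $N(H)$, and the isomorphism $N(H)/H\cong W$.

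For $N(H)=\mathrm{Stab}_{G_{\rm ad}}(\h)$: if $g\in N(H)$, conjugation by $g$ is an automorphism of $H$ whose differential at $1$ is an automorphism of $\h=\mathrm{Lie}(H)$, and this coincides with the action of $g$ on $\h$; hence $g\cdot\h=\h$. Conversely, if $g\in G_{\rm ad}$ preserves $\h$, then $gHg^{-1}$ is a connected Lie subgroup of $G_{\rm ad}$ with Lie algebra $\h$, so it equals $H$ by uniqueness. Normality of $H$ in $N(H)$ is tautological, and $H\subset N(H)$ because $H$ is abelian.

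For the quotient, I introduce the conjugation homomorphism $\phi\colon N(H)\to\mathrm{GL}(\h)$ and compute its kernel and image. If $g\in\ker\phi$, then $g$ preserves each $\g_\alpha$ and acts by a scalar $c_\alpha\in\mathbb{C}^\times$; compatibility with the bracket forces $\alpha\mapsto c_\alpha$ to be a multiplicative character of the root lattice $Q$. Since $\Pi$ is a basis of $\h^*$, such a character has the form $\alpha\mapsto e^{\alpha(h)}$ for some $h\in\h$, and then $g$ and $\exp(h)$ agree on $\h$ and on each $\g_\alpha$, so $g=\exp(h)\in H$. Thus $\ker\phi=H$. The image lies in $\mathrm{Aut}(R)$ because $g\in N(H)$ permutes the $\g_\alpha$. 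Moreover $W$ is contained in the image: the elements $\tilde s_i:=\exp(e_i)\exp(-f_i)\exp(e_i)\in G_{\rm ad}$ lie in $N(H)$ and act as the simple reflection $s_i$ on $\h$, by a direct computation inside the root subalgebra $(\mathfrak{sl}_2)_i$.

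The main obstacle is the reverse inclusion $\mathrm{Im}(\phi)\subseteq W$. Given $g\in N(H)$ inducing $\sigma\in\mathrm{Aut}(R)$, the transitivity of $W$ on Weyl chambers allows me to compose $g$ with a suitable product of the $\tilde s_i$'s to reduce to the case where $\sigma$ preserves $\Pi$, i.e.\ $\sigma$ is a diagram automorphism. The remaining task is to show that no nontrivial such $\sigma$ can be realized by an element of the connected group $G_{\rm ad}$. Setting $\tilde\sigma\in\mathrm{Aut}(\g)$ to be the canonical lift sending $e_i\mapsto e_{\sigma(i)}$, $f_i\mapsto f_{\sigma(i)}$, $h_i\mapsto h_{\sigma(i)}$, the kernel computation above applied to $g\tilde\sigma^{-1}$ shows $\tilde\sigma\in G_{\rm ad}$; so the problem reduces to proving that the natural map $\mathrm{Aut}(\Pi)\to\mathrm{Aut}(\g)/G_{\rm ad}$ is injective. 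This is the crux of the argument, and I would verify it by inspection for the types admitting nontrivial diagram automorphisms ($A_n$ for $n\ge 2$, $D_n$, and $E_6$), showing in each case by explicit computation (e.g.\ for $A_n$, that $X\mapsto -X^T$ on $\mathfrak{sl}_{n+1}$ cannot equal any $\mathrm{Ad}(A)$, by a direct matrix analysis using $A E_{ij}=-E_{ji}A$) that the canonical lift $\tilde\sigma$ is not inner.
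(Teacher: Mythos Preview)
Your argument is correct and follows the same overall architecture as the paper: build lifts $\tilde s_i$ of the simple reflections, identify $\ker\phi=H$, and reduce the hard direction to showing that no nontrivial diagram automorphism lies in $G_{\rm ad}$.

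The difference is in how you handle that last step. You propose a case-by-case verification for $A_n$, $D_n$, $E_6$, which would work (your $A_n$ sketch is fine), but for $E_6$ a bare-hands computation is awkward. The paper instead gives a one-line conceptual argument: an element of the connected group $G_{\rm ad}$ acts trivially on the set of isomorphism classes of finite-dimensional representations of $\g$, so in particular it stabilizes each fundamental representation $L_{\omega_i}$; but a nontrivial diagram automorphism permutes the $\omega_i$ nontrivially, hence sends some $L_{\omega_i}$ to a nonisomorphic $L_{\omega_{p(i)}}$. This handles all types uniformly and avoids any casework. Your approach buys concreteness; the paper's buys uniformity and brevity.
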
 

\begin{proof} First note that since $SL_2(\Bbb C)$ is simply connected, 
for any simple root $\alpha_i$ we have a homomorphism 
$\eta_i: SL_2(\Bbb C)\to G_{\rm ad}$ which identifies ${\rm Lie}(SL_2(\Bbb C))$ with the $\mathfrak{sl}_2$-subalgebra of $\g$ corresponding to this simple root. Let 
\begin{equation}\label{Si}
S_i:=\eta_i\left(\begin{pmatrix} 0 & 1\\ -1 & 0\end{pmatrix}\right).
\end{equation}  
Given $w\in W$, pick a decomposition $w=s_{i_1}...s_{i_n}$, and let $\widetilde w:=S_{i_1}...S_{i_n}\in  G_{\rm ad}$.\footnote{The element $\widetilde w$ in general depends on the decomposition of $w$ as a product of simple reflections. One can show it does not if we take only reduced decompositions, but we will not need this.} Note that $\widetilde w$ acts on $\h$ by $w$. 
So if $w=w_1w_2\in W$ then $\widetilde w=\widetilde w_1\widetilde w_2 h$, where $h$ preserves the root decomposition and acts trivially on $\h$. Thus if $h|_{\g_{\alpha_j}}=\exp(b_j)$ then 
$h=\exp(\sum_j b_j\omega_j^\vee)\in H$. So the elements $\widetilde w$ and $H$ generate 
a subgroup $N\subset N(H)$ of $G_{\rm ad}$ such that $N/H\cong W$. 

It remains to show that $N(H)=N$. To this end, for $x\in N(H)$, let $\alpha_i'=x(\alpha_i)$. 
Then $\alpha_i'$ form a system of simple roots, so there exists $w\in W$ such that $w(\alpha_i')=\alpha_{p(i)}$, where $p$ 
is some permutation. Then $\widetilde w x(\alpha_i)=\alpha_{p(i)}$. So $\widetilde w x$ defines a Dynkin diagram automorphism of $\g$.
Since this automorphism is defined by an element of $G_{\rm ad}$, 
it stabilizes all fundamental representations, so $p={\rm id}$, hence $\widetilde w x\in H$, as claimed.  
\end{proof} 

In particular, we see that $H$ is a maximal commutative subgroup 
of $G_{\rm ad}$, hence the terminology ``maximal torus".

\begin{remark} Note that in general $N(H)$ is {\bf not} isomorphic to $W\ltimes H$: it can be a non-split extension of $W$ by $H$. 
\end{remark}

Another obvious subgroup of ${\rm Aut}(\g)$ is the finite group ${\rm Aut}(D)$ of automorphisms 
of the Dynkin diagram of $\g$, which just permutes the generators $e_i,f_i,h_i$ in the Serre presentation. Thus we have a natural homomorphism 
$$
\xi: {\rm Aut}(D)\ltimes G_{\rm ad}\to {\rm Aut}(\g), 
$$
which is the identity map on the connected components of $1$. This homomorphism is clearly injective, since any nontrivial element of ${\rm Aut}(D)$ nontrivially permutes fundamental representations of $\g$. 

\begin{proposition}\label{auto1} $\xi$ is an isomorphism.  
\end{proposition}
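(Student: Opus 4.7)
My plan is to show surjectivity of $\xi$ by successively composing an arbitrary automorphism $\phi \in {\rm Aut}(\g)$ with elements of ${\rm Aut}(D) \ltimes G_{\rm ad}$ until we reduce it to the identity. Injectivity has already been established in the paragraph preceding the statement.

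First I would reduce to the case $\phi(\h) = \h$. Since $\phi(\h)$ is a Cartan subalgebra of $\g$ (automorphisms preserve the property of being toral, and $\phi$ preserves centralizers), Theorem \ref{conjcar} gives $g \in G_{\rm ad}$ with $g\phi(\h) = \h$. Replacing $\phi$ by $g\phi$, we may assume $\phi(\h) = \h$. Then $\phi$ acts on $\h$ and dualy on $\h^*$, preserving the root system $R$ and the pairing $\alpha \mapsto \alpha^\vee$, hence it permutes the set of Weyl chambers. Since $W$ acts simply transitively on Weyl chambers, we can find $w \in W$ such that $w\phi$ preserves the dominant Weyl chamber $C_+$, and thus the set of simple roots $\Pi = \lbrace \alpha_1, \ldots, \alpha_r\rbrace$. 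By Proposition \ref{norma}, $w$ lifts to an element $\widetilde w \in N(H) \subset G_{\rm ad}$, and replacing $\phi$ by $\widetilde w \phi$ we may assume $\phi$ permutes $\Pi$ according to some permutation $p$.

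The permutation $p$ must preserve the Cartan matrix (since $\phi$ preserves the pairing $\alpha_i^\vee(\alpha_j)$), so it arises from a unique Dynkin diagram automorphism $\sigma \in {\rm Aut}(D)$. Then $\psi := \sigma^{-1} \phi$ fixes each simple root $\alpha_i$ as an element of $\h^*$. Since $\h$ is spanned by the $h_{\alpha_i}$, which are determined by the $\alpha_i$ via the invariant form, $\psi$ acts as the identity on $\h$. Consequently $\psi$ preserves each root space $\g_\alpha$ and acts there by a scalar $c_\alpha \in \Bbb C^\times$, with $c_0 := 1$.

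The final step, which is the main technical point, is to show that $\psi \in H$. The condition that $\psi$ be a Lie algebra automorphism forces $c_{\alpha+\beta} = c_\alpha c_\beta$ whenever $\alpha, \beta, \alpha+\beta \in R \cup \lbrace 0 \rbrace$; in particular $c_{-\alpha} = c_\alpha^{-1}$. Since the simple roots generate $Q$ freely as a lattice, the values $c_i := c_{\alpha_i}$ uniquely determine a group homomorphism $c : Q \to \Bbb C^\times$, and one checks that the scalars $c_\alpha$ obtained from $\psi$ indeed agree with $c(\alpha)$ on all roots (since every root is an integer combination of simple roots reachable by a chain of additions within $R$). Now choose $\xi_i \in \Bbb C$ with $e^{\xi_i} = c_i$, and set $h := \exp(\sum_i \xi_i \omega_i^\vee) \in H$, using the isomorphism $\h/2\pi i P^\vee \cong H$. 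Then for $\alpha = \sum n_i \alpha_i \in R$, the element $h$ acts on $\g_\alpha$ by $e^{\alpha(\sum \xi_i \omega_i^\vee)} = e^{\sum n_i \xi_i} = \prod c_i^{n_i} = c(\alpha)$. Thus $h$ and $\psi$ agree on every $\g_\alpha$ and on $\h$, so $\psi = h \in H \subset G_{\rm ad}$, and $\phi = \sigma \psi$ lies in the image of $\xi$. Tracking back the modifications, the original $\phi$ is recovered as a product of an element of $G_{\rm ad}$ and $\sigma \in {\rm Aut}(D)$, completing the proof.
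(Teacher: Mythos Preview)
Your proof is correct and follows essentially the same approach as the paper: reduce to $\phi(\h)=\h$ via conjugacy of Cartan subalgebras, then use $N(H)$ and ${\rm Aut}(D)$ to arrange that the automorphism fixes $\h$ pointwise, and finally identify the remaining scalar action on root spaces with an element of $H$. The paper's version is terser, absorbing your explicit computation of $h\in H$ into the phrase ``multiply by an element of ${\rm Aut}(D)\cdot N(H)$ to make $a$ act trivially on $\h$ and $\g_{\alpha_i}$, then $a=1$,'' but the substance is the same.
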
 

\begin{proof} Our job is to show that $\xi$ is surjective, i.e. for $a\in {\rm Aut}(\g)$ show that
$a\in {\rm Im}\xi$. By Theorem \ref{conjcar}, we may assume without loss of generality 
that $a$ preserves a Cartan subalgebra $\h\subset \g$ (indeed, this can be arranged by multiplying by an element of $G_{\rm ad}$, since $G_{\rm ad}$ acts transitively on Cartan subalgebras of $\g$). Then by multiplying by an element of ${\rm Aut}(D)\cdot N(H)$ we can make sure that $a$ acts trivially on $\h$ and $\g_{\alpha_i}$. Then $a=1$, which implies the proposition. 
\end{proof}

\subsection{Forms of semisimple Lie algebras} 

We have classified semisimple Lie algebras over $\Bbb C$, but what about other fields (say of characteristic zero), notably $\Bbb R$ (the case relevant to the theory of Lie groups)? 

To address this question, note that the Serre presentation of a semisimple Lie algebra is defined over $\Bbb Q$, so it defines a Lie algebra of the same dimension over any such field, by imposing the same generators 
and relations. Such a Lie algebra is called {\bf split}. So for example, over an algebraically closed field of characteristic zero, any semisimple Lie algebra is automatically split.  

Now let $\g$ be a semisimple Lie algebra over a field $K$ of characteristic zero which splits over a Galois extension 
$L$ of $K$, i.e., $\g\otimes_K L=\g_L$ is split (corresponds to a Dynkin diagram via Serre's presentation). Can we classify such $\g$? 

To this end, let $\Gamma={\rm Gal}(L/K)$ be the Galois group of $L$ over $K$ and observe that we can recover $\g$ as the subalgebra of invariants $\g_L^\Gamma$. So $\g$ is determined by the action of $\Gamma$ on the split semisimple Lie algebra $\g_L$. Note that this action is {\bf twisted-linear}, i.e., additive and $g(\lambda x)=g(\lambda)g(x)$ for $x\in \g_L$, $\lambda\in L$, $g\in \Gamma$. The simplest example of such an action is the action $\rho_0(g)$ which preserves all the generators $e_i,f_i,h_i$ and just acts on the scalars, which corresponds to the split form of $\g$. So any twisted-linear action $\rho$ can be written as 
$$
\rho(g)=\eta(g)\rho_0(g)
$$
for some map 
$$
\eta: \Gamma\to {\rm Aut}(\g_L). 
$$
In order that $\rho$ be a homomorphism, we need 
$$
\eta(gh)\rho_0(gh)=\eta(g)\rho_0(g)\eta(h)\rho_0(h), 
$$
which is equivalent to 
$$
\eta(gh)=\eta(g)\cdot g(\eta(h)),
$$
where for $a\in {\rm Aut}(\g_L)$, $g(a):=\rho_0(g)a\rho_0(g)^{-1}$.
In other words, $\eta$ is a {\bf $1$-cocycle}.  
We will denote the Lie algebra attached to such cocycle $\eta$ by $\g_\eta$. 

It remains to determine when $\g_{\eta_1}$ is isomorphic to $\g_{\eta_2}$. This will happen exactly when the corresponding representations $\rho_1$ and $\rho_2$ are isomorphic, i.e., there is 
$a\in {\rm Aut}(\g_L)$ such that $\rho_1(g)a=a\rho_2(g)$, i.e., 
$$
\eta_1(g)\rho_0(g)a=a\eta_2(g)\rho_0(g), 
$$
or 
$$
\eta_1(g)=a\eta_2(g)g(a)^{-1}.
$$
Two 1-cocycles related in this way are called {\bf cohomologous} (obviously, an equivalence relation), and the set of equivalence classes of cohomologous cocycles is called the {\bf first Galois cohomology} of $\Gamma$ with coefficients in ${\rm Aut}(\g_L)$ and denoted by $H^1(\Gamma,{\rm Aut}(\g_L))$. 
Note that this is cohomology with coefficients in a nonabelian group, so it is just a set and not a group. 

So we obtain 

\begin{proposition} 
Semisimple Lie algebras $\g$ over $K$ which split over a Galois extension $L$ of $K$ are classified by  the first Galois cohomology 
$H^1(\Gamma,{\rm Aut}(\g_L))$. 
\end{proposition}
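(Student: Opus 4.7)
The plan is to organize the argument into three steps, matching the computation the author has already sketched in the paragraphs preceding the proposition: first produce a cocycle from a form, then show the equivalence relation on cocycles matches isomorphism of forms, and finally prove every cocycle arises from a form (Galois descent).

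First I would make precise the association $\g \mapsto \eta$. Given a form $\g$ with $\g \otimes_K L \cong \g_L$, the Galois group $\Gamma$ acts $K$-linearly on $\g_L$ through its action on the second tensor factor $L$, with fixed subalgebra $\g$. Fixing once and for all the split reference action $\rho_0$ coming from the standard Serre generators $e_i, f_i, h_i$ (which live in a split $\Bbb Q$-form, hence make sense over any field), the new action $\rho$ differs from $\rho_0$ by an $L$-linear automorphism $\eta(g) := \rho(g) \rho_0(g)^{-1} \in \Aut(\g_L)$. The computation already displayed in the excerpt shows $\eta(gh) = \eta(g) \cdot g(\eta(h))$, so $\eta \in Z^1(\Gamma, \Aut(\g_L))$; this yields a well-defined map from forms to $Z^1$.

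Next, I would verify that two forms $\g_{\eta_1}$ and $\g_{\eta_2}$ are isomorphic over $K$ if and only if $\eta_1$ and $\eta_2$ are cohomologous. An isomorphism $\g_{\eta_1} \to \g_{\eta_2}$ over $K$ extends uniquely to an $L$-linear isomorphism $a \colon \g_L \to \g_L$ intertwining $\rho_1$ and $\rho_2$; the intertwining relation $\rho_1(g) a = a\, \rho_2(g)$ rearranges exactly into $\eta_1(g) = a \,\eta_2(g)\, g(a)^{-1}$, as the excerpt already records. Conversely any $a$ satisfying this cohomology relation transports the $K$-subspace of $\rho_1$-invariants onto that of $\rho_2$-invariants and respects the Lie bracket (which is $\rho_0$-equivariant and hence compatible with both twists), giving a $K$-isomorphism.

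The main obstacle, and the step I would treat most carefully, is the surjectivity: every cocycle $\eta \in Z^1(\Gamma, \Aut(\g_L))$ comes from a genuine $K$-form. Given such $\eta$, define $\rho(g) := \eta(g) \rho_0(g)$; the cocycle condition forces $\rho$ to be a twisted-linear action of $\Gamma$ on $\g_L$ by Lie algebra automorphisms. Set $\g := \g_L^{\rho(\Gamma)}$. The content here is Galois descent for vector spaces: for any $L$-vector space $V$ carrying a semilinear $\Gamma$-action, the natural map $V^\Gamma \otimes_K L \to V$ is an isomorphism. In the Galois case this is classical (Speiser's lemma, the triviality of $H^1(\Gamma, GL_n(L))$), and I would cite it; the finite-dimensionality of $\g_L$ reduces us to the finite Galois case, while for infinite $\Gamma$ one uses that $\eta$ factors through a finite quotient because $\Aut(\g_L)$ is a linear algebraic group of finite type and $\eta$ is continuous (locally constant) in the profinite topology. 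Applying descent to $\g_L$ with its twisted $\Gamma$-action yields $\g \otimes_K L \cong \g_L$ as Lie algebras over $L$, so $\g$ is a form whose associated cocycle is $\eta$ by construction. Combined with the previous two steps, this gives the desired bijection between isomorphism classes of forms and $H^1(\Gamma, \Aut(\g_L))$.
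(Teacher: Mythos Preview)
Your proposal is correct and follows exactly the approach of the paper, which in fact presents the entire argument in the paragraphs \emph{preceding} the proposition rather than in a separate proof. The paper derives the cocycle condition and the coboundary relation just as you do, then simply writes ``So we obtain'' and states the result.

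You are actually more careful than the paper on two points. First, you verify both directions of the equivalence ``isomorphic $\Leftrightarrow$ cohomologous,'' whereas the paper only spells out the forward direction. Second, and more substantively, you take seriously the surjectivity step (every cocycle arises from a form) and invoke Galois descent / Hilbert 90 for $GL_n$ to show that $\g_L^{\rho(\Gamma)} \otimes_K L \cong \g_L$. The paper glosses over this entirely: it introduces the notation $\g_\eta$ for ``the Lie algebra attached to such cocycle $\eta$'' without checking that the fixed subspace has the right dimension. This is a genuine (if standard) ingredient, and your remark that one can reduce to a finite Galois extension is also a point the paper does not address.
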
 

\begin{remark} There is nothing special about semisimplicity or about Lie algebras here -- this works for any kind of linear algebraic structures, such as associative algebras, algebraic varieties, schemes, etc. 
\end{remark}

\subsection{Real forms of a semisimple Lie algebra} \label{realfo1}
Let us now make this classification more concrete in the case $K=\Bbb R$, $L=\Bbb C$, which is relevant to classification of real semisimple Lie groups. In this case, 
$\Gamma=\Bbb Z/2$ generated by complex conjugation $s\mapsto \overline s$ and, as we have shown, ${\rm Aut}(\g_L)={\rm Aut}(D)\ltimes G_{\rm ad}$, where $D$ is the Dynkin diagram of $\g$ and $G_{\rm ad}$ is the corresponding connected adjoint complex Lie group. Also since we always have $\eta(1)=1$, the cocycle $\eta$ 
is determined by the element $s=\eta(-1)\in {\rm Aut}(D)\ltimes G_{\rm ad}$. Moreover, 
$s$ must satisfy the cocycle condition
$$
s\overline s=1
$$
and the corresponding real Lie algebra, up to isomorphism, depends only 
on the cohomology class of $s$, which is the equivalence class 
modulo transformations $s\mapsto as \overline{a}^{-1}$. 
We thus obtain the following theorem. 

\begin{theorem} 
Real semisimple Lie algebras whose complexification is $\g$ (i.e., {\bf real forms} 
of $\g$) are classified by $s\in {\rm Aut}(D)\ltimes G_{\rm ad}$ such that 
$s \overline{s}=1$ modulo equivalence $s\mapsto as \overline{a}^{-1}$, 
$a\in {\rm Aut}(\g)$, where complex conjugation acts trivially on ${\rm Aut}(D)$.
\end{theorem}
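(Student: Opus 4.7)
The plan is to deduce this theorem directly from the general Galois-descent classification established just before the statement, specialized to $K = \mathbb{R}$, $L = \mathbb{C}$, $\Gamma = \mathrm{Gal}(\mathbb{C}/\mathbb{R}) = \mathbb{Z}/2$, combined with the description of $\mathrm{Aut}(\g)$ from Proposition \ref{auto1}. First I would invoke the preceding framework: real forms of $\g$ with fixed complexification are in bijection with $H^1(\Gamma, \mathrm{Aut}(\g))$, where $\mathrm{Aut}(\g)$ is endowed with the twisted-linear action of $\Gamma$ coming from the standard action of complex conjugation on scalars (relative to the split form $\rho_0$).

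Next I would unpack what a $1$-cocycle is when $\Gamma = \{1, -1\}$. Since $\eta(1) = 1$ is forced, $\eta$ is determined by the single element $s := \eta(-1)$, and the cocycle identity $\eta(gh) = \eta(g) \cdot g(\eta(h))$ specialized to $g = h = -1$ reads
\[
1 = \eta(1) = \eta(-1)\cdot \overline{\eta(-1)} = s \overline{s},
\]
which is exactly the condition in the statement. All other instances of the cocycle identity are automatic. Similarly, two cocycles $\eta_1, \eta_2$ are cohomologous iff there exists $a \in \mathrm{Aut}(\g)$ with $\eta_1(g) = a\,\eta_2(g)\,g(a)^{-1}$; setting $g = -1$ gives $s_1 = a s_2 \overline{a}^{-1}$, recovering the stated equivalence relation.

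Then I would substitute $\mathrm{Aut}(\g) = \mathrm{Aut}(D) \ltimes G_{\mathrm{ad}}$ from Proposition \ref{auto1}. This produces the parameter set in the theorem. The one remaining thing to verify — and this is the only genuinely content-bearing point — is that complex conjugation acts trivially on the $\mathrm{Aut}(D)$ factor. The reason is that the Serre presentation of $\g_{\mathbb{C}}$ is defined over $\mathbb{Q}$ (in fact over $\mathbb{Z}$): the generators $e_i, f_i, h_i$ are fixed by $\rho_0(-1)$, and a diagram automorphism is by definition the automorphism permuting these generators according to an automorphism of $D$. Hence every element of $\mathrm{Aut}(D) \subset \mathrm{Aut}(\g_{\mathbb{C}})$ commutes with $\rho_0(-1)$, i.e.\ is fixed by the twisted-linear action of complex conjugation. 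This justifies the parenthetical remark at the end of the theorem.

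The main (minor) obstacle I anticipate is bookkeeping the semidirect product structure carefully: when one writes $s = (\sigma, g) \in \mathrm{Aut}(D) \ltimes G_{\mathrm{ad}}$, one must check that the condition $s \overline{s} = 1$ and the equivalence $s \mapsto a s \overline{a}^{-1}$ are well-defined with respect to the semidirect product multiplication, using that $\Gamma$ fixes the $\mathrm{Aut}(D)$ factor pointwise and acts on $G_{\mathrm{ad}}$ by complex conjugation on matrix entries (in any faithful representation). No deeper input is required beyond Proposition \ref{auto1} and the general $H^1$ classification.
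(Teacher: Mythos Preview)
Your proposal is correct and follows essentially the same route as the paper: the theorem is stated immediately after the discussion that specializes the general Galois-descent classification (over $K$, $L$, $\Gamma$) to $K=\Bbb R$, $L=\Bbb C$, $\Gamma=\Bbb Z/2$, using $\mathrm{Aut}(\g)=\mathrm{Aut}(D)\ltimes G_{\rm ad}$ from Proposition~\ref{auto1}. Your added justification that diagram automorphisms permute the Serre generators (which are defined over $\Bbb Q$) and hence commute with $\rho_0(-1)$ is exactly the reason behind the paper's parenthetical clause about complex conjugation acting trivially on $\mathrm{Aut}(D)$.
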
 

We denote the real form of $\g$ corresponding to $s$ by $\g_{(s)}$. 
Namely, $\g_{(s)}=\lbrace x\in \g: \overline{x}=s(x)\rbrace$. 
For example, $\g_{(1)}$ is the split form, consisting of real $x\in \g$, i.e., such that $\overline x=x$. 

Alternatively, one may define the {\bf antilinear involution} $\sigma_s(x)=\overline{s(x)}$, and 
$\g_{(s)}$ is the set of fixed points of $\sigma_s$ in $\g$.   

In particular, such $s$ defines an element $s_0\in {\rm Aut}(D)$ 
such that $s_0^2=1$. Note that the conjugacy class of $s_0$
is invariant under equivalences. The element $s_0$ permutes connected components of $D$, 
preserving some and matching others into pairs. Thus every semisimple real Lie algebra 
is a direct sum of simple ones, and each simple one either has a connected Dynkin diagram 
$D$ (i.e., the complexified Lie algebra $\g$ is still simple) or 
consists of two identical components (i.e., the complexified Lie algebra 
is $\g=\a\oplus \a$ for some simple complex $\a$). In the latter case 
$s=(g,\overline g^{-1})s_0$ where $s_0$ is the transposition and 
$g\in {\rm Aut}(\a)$, so $s$ is cohomologous to $s_0$ by taking $a=(g,1)$. 
Thus in this case $\g_{(s)}=\g_{(s_0)}=\a$, a complex simple Lie algebra regarded as a real Lie algebra. 

It remains to consider the case when $D$ is connected, i.e., $\g$ is simple. 

\begin{definition} (i) A real form $\g_{(s)}$ of a complex simple Lie algebra $\g$ 
is said to be {\bf inner} to $\g_{(s')}$ if $s'=gs$ up to equivalence, where 
$g\in {\rm G}_{\rm ad}$ (i.e., $s$ and $s'$ differ by an inner automorphism). 
The {\bf inner class} of $\g_{(s)}$ is the collection of all real forms inner to $\g_{(s)}$. 
In particular, an {\bf inner form} is a form inner to the split form. 

(ii) $\g_{(s)}$ is called {\bf quasi-split} if $s=s_0\in {\rm Aut}(D)$ (modulo equivalence).  
\end{definition} 

So in particular any real form is inner to a unique quasi-split form, and a real form that is both inner and quasi-split is split. 

\begin{exercise}\label{cartancla} Let $\g_{\Bbb R}$ be a real semisimple Lie algebra and $\h_{\Bbb R}\subset \g_{\Bbb R}$ a Cartan subalgebra (the centralizer of a regular semisimple element of $\g_{\Bbb R}$). Let $\h\subset \g$ be their complexifications, 
and $H\subset G_{\rm ad}$ the corresponding complex Lie groups. Let $\bold K$ be the kernel of the natural map of Galois cohomology sets $H^1(\Bbb Z/2,N(H))\to H^1(\Bbb Z/2,G_{\rm ad})$ (i.e., the preimage of the unit element), where $\Bbb Z/2$ acts on $G_{\rm ad}$ by complex conjugation associated to the real form $\g_{\Bbb R}$ of $\g$. 

(i) Show that conjugacy classes of Cartan subalgebras in $\g_{\Bbb R}$ are bijectively labeled by elements of $\bold K$, with the unit element corresponding to $\h_{\Bbb R}$.

(ii) Show that $\bold K$ is a finite set.\footnote{For classical Lie algebras the set $\bold K$ will be computed explicitly in Exercise \ref{carclas}. The explicit answer is known for exceptional Lie algebras as well, but we will not discuss it here.} 
\end{exercise} 

\section{\bf Classification of real forms of semisimple Lie algebras} 

\subsection{The compact real form} 

An important example of a real form of simple complex Lie algebra $\g$ is the {\bf compact real form}. 
It is determined by the automorphism $\tau$ (called the {\bf Cartan involution}) defined by the formula 
$$
\tau(h_j)=-h_j,\ \tau(e_j)=-f_j,\ \tau(f_j)=-e_j.
$$
Let us denote this real form $\g_{(\tau)}$ by $\g^c$. 

\begin{proposition} The Killing form of $\g^c$ is negative definite.  
\end{proposition}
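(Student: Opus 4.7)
The plan is to exhibit an explicit $\Bbb R$-basis of $\g^c$ on which the Killing form $K$ of $\g$ is diagonal with negative diagonal entries, using the root decomposition $\g = \h \oplus \bigoplus_{\alpha \in R}\g_\alpha$. Starting from root vectors $e_\alpha \in \g_\alpha$ chosen real with respect to the split form (i.e.\ $\bar e_\alpha = e_\alpha$, which is possible because the Serre presentation is defined over $\Bbb Q$), I would set $e_{-\alpha} := -\tau(e_\alpha)$; since $\tau$ fixes the $\Bbb Q$-structure it commutes with complex conjugation, so $e_{-\alpha}$ is also real and $\tau(e_\alpha) = -e_{-\alpha}$ by construction. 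Together with a real basis $h_1,\dots,h_r$ of $\h_{\Bbb R}=\mathrm{span}_{\Bbb R}(h_{\alpha_j})$ this yields the real basis
\[
\mathcal{B} := \{ih_1,\dots,ih_r\} \cup \{e_\alpha - e_{-\alpha},\ i(e_\alpha + e_{-\alpha}) : \alpha \in R_+\}
\]
of $\g^c$: each vector $v\in\mathcal{B}$ satisfies $\bar v = \tau(v)$, and there are $r+2|R_+| = \dim_{\Bbb C}\g = \dim_{\Bbb R}\g^c$ of them.

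Next I would exploit the orthogonality properties from Proposition \ref{toral1}: $K(\g_\beta,\g_\gamma)=0$ unless $\beta+\gamma=0$, and $K(\h,\g_\alpha)=0$ for $\alpha\ne 0$. This immediately shows $\mathcal{B}$ is $K$-orthogonal across different root pairs and across Cartan/root blocks; within the pair $\{e_\alpha-e_{-\alpha},\ i(e_\alpha+e_{-\alpha})\}$ orthogonality follows from $K(e_\alpha,e_\alpha)=K(e_{-\alpha},e_{-\alpha})=0$. For the diagonal entries, $K(ih_i,ih_i)=-K(h_i,h_i)<0$ because $K$ is positive definite on $\h_{\Bbb R}$ (the corollary after Theorem \ref{rootde}), while a direct expansion gives
\[
K(e_\alpha - e_{-\alpha}, e_\alpha - e_{-\alpha}) = K(i(e_\alpha + e_{-\alpha}), i(e_\alpha + e_{-\alpha})) = -2K(e_\alpha, e_{-\alpha}),
\]
so the proposition reduces to the single claim $K(e_\alpha, e_{-\alpha}) > 0$.

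The core step is establishing this positivity. I would do so by restricting to the subalgebra $\mathfrak{s}_\alpha := \mathrm{span}_{\Bbb C}(e_\alpha, e_{-\alpha}, [e_\alpha, e_{-\alpha}]) \cong \mathfrak{sl}_2(\Bbb C)$. Since $\mathfrak{s}_\alpha$ is simple, the invariant form $K_\g|_{\mathfrak{s}_\alpha}$ is a scalar multiple $c\cdot K_{\mathfrak{s}_\alpha}$ of its intrinsic Killing form. Evaluating on $h_\alpha\in\h_{\Bbb R}\cap\mathfrak{s}_\alpha$ gives
\[
c\cdot K_{\mathfrak{s}_\alpha}(h_\alpha,h_\alpha) = K_\g(h_\alpha,h_\alpha) = \sum_{\beta \in R}\beta(h_\alpha)^2,
\]
a sum of real squares containing the nonzero term $\alpha(h_\alpha)^2 = 4$, whence $c>0$. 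Moreover $\mathfrak{s}_\alpha\cap\g^c$ is the $\mathfrak{su}_2$ real form of $\mathfrak{s}_\alpha$ (cut out by the restriction of $\tau$, which on $\mathfrak{s}_\alpha$ is the Chevalley involution), and the intrinsic Killing form of $\mathfrak{sl}_2(\Bbb C)$ is negative definite on $\mathfrak{su}_2$ by an explicit $3\times 3$ trace computation. Combining $c>0$ with this negativity gives $K_\g<0$ on $\mathfrak{s}_\alpha\cap\g^c$, and in particular $K(e_\alpha-e_{-\alpha},e_\alpha-e_{-\alpha})<0$, which forces $K(e_\alpha,e_{-\alpha})>0$. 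The main obstacle is precisely this sign determination: it is where the argument stops being formal bookkeeping, and the cleanest resolution is the $\mathfrak{sl}_2$-reduction that isolates the positivity into the verified $\mathfrak{su}_2$ case.
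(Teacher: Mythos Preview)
Your overall architecture is sound and parallels the paper's: decompose $\g^c$ orthogonally into the Cartan piece and the root pieces $(\g_\alpha\oplus\g_{-\alpha})\cap\g^c$, dispose of the Cartan piece via positivity of $K$ on $\h_{\Bbb R}$, and reduce the root pieces to an $\mathfrak{sl}_2$ question. The gap is in the core step, where the argument becomes circular. You assert that $\mathfrak{s}_\alpha\cap\g^c$ is the $\mathfrak{su}_2$ real form because $\tau|_{\mathfrak{s}_\alpha}$ ``is the Chevalley involution''. But $\tau$ satisfies $\tau(e_\alpha)=-e_{-\alpha}$, $\tau(h_\alpha)=-h_\alpha$ only in the basis $(e_\alpha,e_{-\alpha},h_\alpha)$, which is \emph{not} a standard $\mathfrak{sl}_2$ triple: one has $[e_\alpha,e_{-\alpha}]=c' h_\alpha$ with $c'=K_\g(e_\alpha,e_{-\alpha})\tfrac{(\alpha,\alpha)}{2}$ of as-yet-unknown sign. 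A direct computation of $K_{\mathfrak{s}_\alpha}$ on the real span of $e_\alpha-e_{-\alpha}$, $i(e_\alpha+e_{-\alpha})$, $ih_\alpha$ gives diagonal entries $-8c',-8c',-8$, so this real form is $\mathfrak{su}_2$ precisely when $c'>0$, i.e.\ precisely when $K_\g(e_\alpha,e_{-\alpha})>0$ --- the very inequality you are trying to establish. The passage from ``$\tau$ looks like a Chevalley involution in some basis'' to ``the real form is compact'' is exactly the missing sign.

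The paper breaks this circle by first treating simple roots, where $(e_{\alpha_i},e_{-\alpha_i},h_{\alpha_i})=(e_i,f_i,h_i)$ \emph{is} the Serre triple (so $c'=1$ and $(\mathfrak{sl}_2)_{\alpha_i}\cap\g^c=\mathfrak{su}_2$ by inspection). It then observes that the matrix $S=\begin{pmatrix}0&1\\-1&0\end{pmatrix}$ lies in $SU(2)$, so each lift $S_i\in G_{\rm ad}$ preserves $\g^c$; hence every $\widetilde w$ does, and since any root is $W$-conjugate to a simple root, negative definiteness on $(\g_\alpha\oplus\g_{-\alpha})\cap\g^c$ follows by transport. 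Your $c>0$ computation is correct and useful (it is essentially the paper's remark that trace forms of $\mathfrak{su}_2$-representations are negative semidefinite), but you still need a mechanism --- Weyl group transport, or an induction on height --- to pin down the sign at non-simple roots.
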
 

\begin{proof} We have an orthogonal decomposition
$$
\g^c=(\h\cap \g^c)\oplus \bigoplus_{\alpha\in R_+} (\g_\alpha\oplus \g_{-\alpha})\cap \g^c.
$$
Moreover, the Killing form is clearly negative definite 
on $\h\cap \g^c$, since the inner product on the coroot lattice is positive definite, and 
$\lbrace i\alpha_j^\vee\rbrace$ is a basis of $\h\cap \g^c$. 
So it suffices to show that the Killing form is negative definite on 
$(\g_\alpha\cap \g_{-\alpha})\cap \g^c$ for any $\alpha\in R_+$. 
 
First consider the case $\g=\mathfrak{sl}_2$. 
Then $\g^c$ is spanned by the Pauli matrices $ih$, $e-f$, $i(e+f)$, 
so $\g^c=\mathfrak{su}(2)$. It follows that the trace form 
of any finite dimensional representation of $\g^c$ is negative definite. 
  
 Thus for a general $\g$, the elements $S_i$ given by \eqref{Si} preserve $\g^c$; this follows 
 since the matrix $S:=\begin{pmatrix} 0 & 1\\ -1 & 0\end{pmatrix}$ belongs to $SU(2)$, and 
 ${\rm Lie}(SU(2)_i)\subset \g^c$.  It follows that for any $w\in W$ the element $\widetilde w$ preserves $\g^c$. Thus the restriction of the Killing form of $\g^c$ to $\g^c\cap (\mathfrak{sl}_2)_\alpha$ is negative definite for any root $\alpha$ (since it is so for simple roots, as follows from the case of $\mathfrak{sl}_2$). 
This implies the statement.  
\end{proof} 

Now consider the group ${\rm Aut}(\g^c)$. Since the Killing form 
on $\g^c$ is negative definite, it is a closed subgroup in the orthogonal group $O(\g^c)$, hence is compact. Moreover, it is a Lie group with Lie algebra $\g^c$. Thus we obtain

\begin{corollary} Let $G_{\rm ad}^c={\rm Aut}(\g^c)^\circ$. Then $G_{\rm ad}^c$ is a connected compact Lie group with Lie algebra 
 $\g^c$. 
\end{corollary}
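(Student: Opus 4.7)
The plan is to assemble three ingredients, all of which are essentially already in hand from earlier results. First, by Corollary \ref{auto}, for any real or complex semisimple Lie algebra $\mathfrak{a}$, the group ${\rm Aut}(\mathfrak{a})\subset GL(\mathfrak{a})$ is a Lie group whose Lie algebra is ${\rm Der}(\mathfrak{a})=\mathfrak{a}$ (using Proposition \ref{g=derg}). Applying this to $\mathfrak{a}=\g^c$ gives that ${\rm Aut}(\g^c)$ is a Lie group with Lie algebra $\g^c$, hence so is its identity component $G_{\rm ad}^c={\rm Aut}(\g^c)^\circ$. Connectedness is automatic by definition.

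The content of the corollary is therefore compactness, and this follows directly from the preceding proposition. Every automorphism of $\g^c$ preserves the Killing form $K$, which by the proposition is negative definite. Hence the positive definite inner product $-K$ on the finite dimensional real vector space $\g^c$ is preserved by ${\rm Aut}(\g^c)$, giving an inclusion
$$
{\rm Aut}(\g^c)\ \subset\ O(\g^c,-K).
$$
The orthogonal group on the right is a compact Lie group. Moreover, ${\rm Aut}(\g^c)$ is cut out inside $GL(\g^c)$, hence inside $O(\g^c,-K)$, by the polynomial equations expressing compatibility with the bracket, $A[x,y]=[Ax,Ay]$, so it is closed in $O(\g^c,-K)$. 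A closed subgroup of a compact Lie group is compact, so ${\rm Aut}(\g^c)$ is compact, and $G_{\rm ad}^c$, being a closed subgroup (in fact an open-and-closed subset) of ${\rm Aut}(\g^c)$, is compact as well.

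There is no serious obstacle here, since the negative definiteness of the Killing form already did all the work; the proof is really just the one-line observation that the automorphism group of a Lie algebra with negative definite Killing form sits inside an orthogonal group. The only points worth being careful about are (a) citing Corollary \ref{auto} to identify the Lie algebra of ${\rm Aut}(\g^c)$ with $\g^c$, which requires semisimplicity of $\g^c$ (automatic, since semisimplicity is detected by the Killing form, which is nondegenerate on $\g^c$ by the preceding proposition), and (b) noting that passage from ${\rm Aut}(\g^c)$ to its identity component preserves both the Lie algebra and compactness.
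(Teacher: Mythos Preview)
Your proof is correct and follows essentially the same approach as the paper: the paper's argument (given in the paragraph immediately preceding the corollary) is precisely that ${\rm Aut}(\g^c)$ is a closed subgroup of $O(\g^c)$ because the Killing form is negative definite, hence compact, and is a Lie group with Lie algebra $\g^c$. Your write-up is simply a more detailed version of the same one-line observation, with the auxiliary citations and the passage to the identity component made explicit.
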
 

In particular, this gives a new proof that representations of a finite dimensional semisimple Lie algebra are completely 
reducible (by using Weyl's unitary trick, see Subsection \ref{unrep}). Indeed, let $G$ be the universal cover of $G_{\rm ad}$, $Z$ the center of $G$, and $U,V$ two irreducible representations of $G$ (equivalently, of $\mathfrak g$). If $Z$ acts on $U,V$ by different characters then obviously ${\rm Ext}^1_{\mathfrak g}(U,V)=0$ (see Subsection \ref{extee}). Otherwise, ${\rm Ext}_{\mathfrak g}^1(U,V)={\rm Ext}^1_{\mathfrak g}(\Bbb C,{\rm Hom}_{\Bbb C}(U,V))$, which is zero since both $\Bbb C$ and ${\rm Hom}_{\Bbb C}(U,V)$ carry a trivial action of $Z$ and therefore are representations of $G_{\rm ad}$, hence of the compact group $G_{\rm ad}^c$. 

\begin{exercise} (i) Show that if $\g=\mathfrak{sl}_n$ then $G^c_{\rm ad}=PSU(n)=SU(n)/\mu_n$, where $\mu_n$ is the group of roots of unity of order $n$. 

(ii) Show that if $\g=\mathfrak{so}_n$ then $G^c_{\rm ad}=SO(n)$ for odd $n$ and 
$SO(n)/\pm 1$ for even $n$. 

(iii) Show that if $\g=\mathfrak{sp}_{2n}$ then $G^c_{\rm ad}=U(n,\Bbb H)/\pm 1$, where 
$U(n,\Bbb H)$ is the quaternionic unitary group $Sp_{2n}(\Bbb C)\cap U(2n)$ (see Exercise \ref{quatgrou}). 
\end{exercise} 

\begin{exercise} (i) Compute the signature of the Killing form of the split form $\g^{\rm spl}$ of a complex simple Lie algebra $\g$ in terms of its dimension and rank, and show that the compact form is never split. 

(ii) Show that the compact form is inner to the quasi-split form defined by 
the flip of the Dynkin diagram corresponding to taking the dual representation (i.e., induced by $-w_0$), but is never quasi-split itself (show that the quasi-split form contains nonzero nilpotent elements). For which simple Lie algebras is the compact form inner? 
\end{exercise} 

\subsection{Other examples of real forms} 

So let us list real forms of simple Lie algebras that we know so far. 

1. Type $A_{n-1}$. We have the split form $\mathfrak{sl}_n(\Bbb R)$, 
the compact form $\mathfrak{su}(n)$, and also for $n>2$ the quasi-split form 
associated to the automorphism $s(A)=-JA^TJ^{-1}$, where $J_{ij}=(-1)^i\delta_{i,n+1-j}$
(this automorphism sends $e_i,f_i,h_i$ to $e_{n+1-i},f_{n+1-i},h_{n+1-i}$). So the corresponding real Lie algebra is the Lie algebra of traceless matrices preserving the hermitian or skew-hermitian form defined by the matrix $J$, which has signature $(p,p)$ if $n=2p$ and $(p+1,p)$ or $(p,p+1)$ if $n=2p+1$. Thus 
in the first case we have $\mathfrak{su}(p,p)$ and in the second case we have
$\mathfrak{su}(p+1,p)$. Note that for $n=2$ we have $\mathfrak{su}(1,1)=\mathfrak{sl}_2(\Bbb R)$, so in this special case this form is not new. 
We also observe that for $n\ge 4$ there are other forms, e.g. $\mathfrak{su}(n-p,p)$ with $1\le p\le \frac{n}{2}-1$.  

2. Type $B_n$. We have the split form $\mathfrak{so}(n+1,n)$, the compact form 
$\mathfrak{so}(2n+1)$. The Dynkin diagram has no nontrivial automorphisms, 
so there are no non-split quasi-split forms. In particular, since $A_1=B_1$, 
we have $\mathfrak{so}(3)=\mathfrak{su}(2)$ and 
$\mathfrak{so}(2,1)=\mathfrak{su}(1,1)$. 

3. Type $C_n$. We have the split form $\mathfrak{sp}_{2n}(\Bbb R)$ and compact form 
$\mathfrak{u}(n,\Bbb H)$. The Dynkin diagram has no nontrivial automorphisms, 
so there are no non-split quasi-split forms. The equality $B_2=C_2$ 
implies that $\mathfrak{so}(3,2)=\mathfrak{sp}_4(\Bbb R)$ 
and $\mathfrak{so}(5)=\mathfrak{u}(2,\Bbb H)$. 

4. Type $D_n$. We have the split form $\mathfrak{so}(n,n)$, the compact form 
$\mathfrak{so}(2n)$. Moreover, in this case we have a unique nontrivial involution
of the Dynkin diagram. More precisely, this is true for $n\ne 4$, while for $n=4$ we have ${\rm Aut}(D)=S_3$, but there is still a unique non-trivial involution up to conjugation.  
So we also have a non-split quasi-split form. To compute it, recall that the split form is defined 
by the equation $A=-JA^TJ^{-1}$ where $J_{ij}=\delta_{i,2n+1-j}$. 
The quasi-split form is obtained by replacing $J$ by $J'=gJ$, where 
$g$ permutes $e_n$ and $e_{n+1}$ (this is the automorphism that switches $\alpha_{n-1}$ and $\alpha_n$ while keeping other simple roots fixed). The signature of the form defined by $J'$ is 
$(n+1,n-1)$, so we get that the non-split quasi-split form is $\mathfrak{so}(n+1,n-1)$. 
In particular, since $D_2=A_1+A_1$, for $n=2$ we get 
$$
\mathfrak{so}(4)=\mathfrak{su}(2)\oplus \mathfrak{su}(2),\ \mathfrak{so}(2,2)=\mathfrak{su}(1,1)\oplus \mathfrak{su}(1,1),\ 
\mathfrak{so}(3,1)=\mathfrak{sl}_2(\Bbb C)
$$ 
(the Lie algebra of the Lorentz group of special relativity). Also, since $D_3=A_3$, 
for $n=3$ we get  $\mathfrak{so}(6)=\mathfrak{su}(4)$, $\mathfrak{so}(3,3)=\mathfrak{sl}_4(\Bbb R)$, 
and $\mathfrak{so}(4,2)=\mathfrak{su}(2,2)$. 

5. Type $G_2$. We have the split and compact forms $G_2(\Bbb R),G_2^c$. 

6. Type $F_4$. We have the split and compact forms $F_4(\Bbb R),F_4^c$. 

7. Type $E_6$. We have the split and compact forms $E_6(\Bbb R),E_6^c$ and the quasi-split form 
$E_6^{qs}$ attached to the non-trivial automorphism. 

8. Type $E_7$. We have the split and compact forms $E_7(\Bbb R),E_7^c$.

9. Type $E_8$. We have the split and compact forms $E_8(\Bbb R),E_8^c$. 

\subsection{Classification of real forms}

However, we are not done with the classification of real forms yet, as we still need to find all real forms and show there are no others. To this end, consider a complex simple Lie algebra $\g=\g^c\otimes_{\Bbb R}\Bbb C$.  
We have the compact antilinear involution $\omega=\sigma_\tau$ of $\g$ whose set of fixed points is $\g^c$. 
 Another real structure on $\g$ is then defined by the antilinear involution 
 $\sigma=\omega\circ g$, where $g\in {\rm Aut}(\g)$ is such that ${\omega(g)}g=1$. 
 But it is easy to see that 
 $$
 \omega(g)=(g^\dagger)^{-1},
 $$ 
 where $x^\dagger$ is the adjoint to $x\in \End(\g)$ under the negative definite Hermitian 
 form $(X,Y)={\rm Tr}({\rm ad}X{\rm ad}\omega(Y))$ (the Hermitian extension of the Killing form on $\g^c$ to $\g$). It follows that the operator $g$ is self-adjoint. Thus it is diagonalizable 
 with real eigenvalues, and we have a decomposition 
 $$
 \g=\oplus_{\gamma\in \Bbb R} \g(\gamma),
 $$ 
 where $\g(\gamma)$ is the $\gamma$-eigenspace of $g$, such that
 $[\g(\beta),\g(\gamma)]=\g(\beta\gamma)$. Now consider 
 the operator $|g|^t$ for any $t\in \Bbb R$. It acts on $\g(\gamma)$ 
 by $|\gamma|^t$, so $|g|^t=\exp(t\log|g|)\in G_{\rm ad}$ is a 1-parameter subgroup. Now 
 define $\theta:=g|g|^{-1}$. We have $\theta\circ \omega=\omega\circ \theta$ and  $\theta^2=1$. Also $g$ and $\theta$ define the same real structure since $\theta=|g|^{-1/2}g\omega(|g|^{1/2})$.  This shows that without loss of generality we may assume that $g=\theta$ with 
 $\theta\circ\omega=\omega\circ\theta$ (i.e., $\theta\in \Aut(\g^c)$) 
and $\theta^2=1$.\footnote{The advantage of passing from $g$ to $\theta$ 
is that the equation $\theta^2=1$ is much easier to solve than $g\omega(g)=1$, as it just means that we have a decomposition of $\g$ into the $+1$- and $-1$-eigenspaces of $\theta$.}

Moreover, another such element $\theta'$ defines 
the same real form if and only if $\theta'=x\theta\omega(x)^{-1}$ for some $x\in {\rm Aut}(\g)$. 
So we get 
$$
x\theta\omega(x)^{-1}=\omega(x)\theta x^{-1},
$$
so setting $z:=\omega(x)^{-1}x$, we get $\omega(z)=z^{-1}$, $\theta z=z^{-1}\theta$. 
Note that $z=x^\dagger x$ is positive definite. 
So setting $y=xz^{-1/2}$, we have 
$$
\omega(y)=\omega(x)z^{1/2}=xz^{-1/2}=y
$$
 i.e., $y\in {\rm Aut}(\g^c)$ 
and 
$$
\theta'=x\theta\omega(x)^{-1}=x\theta zx^{-1}=xz^{-1/2}\theta z^{1/2}x^{-1}=y\theta y^{-1}.
$$ 
Thus we obtain 

\begin{theorem}\label{realfo2} Real forms of $\g$ are in bijection with conjugacy classes 
of involutions $\theta\in {\rm Aut}(\g^c)$, via $\theta\mapsto \omega_\theta:=\theta\circ \omega=\omega\circ\theta$.  
\end{theorem}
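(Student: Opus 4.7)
The plan is to show both surjectivity (every real form arises from such a $\theta$) and injectivity (different conjugacy classes give non-isomorphic real structures), following the line of argument that is already sketched in the paragraphs preceding the theorem. The whole strategy is essentially a polar-decomposition trick applied to the element $g \in \mathrm{Aut}(\g)$ that measures the deviation of a given antilinear involution $\sigma$ from the compact one $\omega$.

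First I would set up the dictionary between real forms of $\g$ and antilinear involutions $\sigma$ of $\g$: the real form is $\g^\sigma = \{x\in\g : \sigma(x)=x\}$, and two antilinear involutions give isomorphic real forms iff they are conjugate by an element of $\mathrm{Aut}(\g)$. Given any antilinear involution $\sigma$, I would write $\sigma = \omega\circ g$ for a unique $g\in \mathrm{Aut}(\g)$, and rewrite the involution condition $\sigma^2 = 1$ as $\omega(g)\,g = 1$. The next step, which is the technical heart, is the identity $\omega(g) = (g^\dagger)^{-1}$, where $\dagger$ is the adjoint with respect to the positive-definite Hermitian form $(X,Y) = -\mathrm{Tr}(\mathrm{ad}\,X \cdot \mathrm{ad}\,\omega(Y))$ on $\g$ (negative Killing form twisted by $\omega$). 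This gives $g^\dagger = g$, i.e.\ $g$ is self-adjoint, hence diagonalizable with real eigenvalues, so we get a grading $\g = \bigoplus_{\gamma\in\mathbb R^\times}\g(\gamma)$ with $[\g(\beta),\g(\gamma)]\subset \g(\beta\gamma)$.

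Now comes the polar decomposition: define the 1-parameter subgroup $|g|^t := \exp(t\log|g|) \in G_{\mathrm{ad}}$ acting by $|\gamma|^t$ on $\g(\gamma)$, and set $\theta := g|g|^{-1}$. Then $\theta^2 = 1$ and $\theta$ commutes with $\omega$ (because both $g$ and $|g|$ do, as they are $\omega$-invariant up to inversion in the right way), i.e.\ $\theta \in \mathrm{Aut}(\g^c)$. A short calculation shows $\theta = |g|^{-1/2}\, g\, \omega(|g|^{1/2})$, which means $\theta$ and $g$ define the same real form via the conjugation $x\mapsto |g|^{1/2}x|g|^{-1/2}$ in $\mathrm{Aut}(\g)$. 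This proves surjectivity: every real form is realized by some involutive $\theta \in \mathrm{Aut}(\g^c)$, and the associated antilinear involution is $\omega_\theta = \omega\circ\theta = \theta\circ\omega$.

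For the injectivity/well-definedness part, I would take two involutions $\theta,\theta' \in \mathrm{Aut}(\g^c)$ defining isomorphic real forms, so $\theta' = x\theta\omega(x)^{-1}$ for some $x\in \mathrm{Aut}(\g)$. Setting $z := \omega(x)^{-1}x = x^\dagger x$, I would observe that $z$ is positive-definite self-adjoint and satisfies $\omega(z) = z^{-1}$ and $\theta z = z^{-1}\theta$. Then $y := xz^{-1/2}$ lies in $\mathrm{Aut}(\g^c)$ (since $\omega(y) = y$) and $\theta' = y\theta y^{-1}$, so $\theta$ and $\theta'$ are conjugate in $\mathrm{Aut}(\g^c)$. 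Conversely, if $\theta' = y\theta y^{-1}$ with $y\in\mathrm{Aut}(\g^c)$, then $\omega_{\theta'} = y\omega_\theta y^{-1}$, so the real forms are isomorphic. The main obstacle to fill in rigorously is verifying the key identity $\omega(g) = (g^\dagger)^{-1}$ and the compatibility of $\omega$ with the functional calculus used to define $|g|^t$ and $|g|^{1/2}$; both rest on the fact that $\omega$ preserves the Hermitian form $(-,-)$ up to the appropriate twist, and that the spectral decomposition of the self-adjoint operator $g$ is $\omega$-equivariant in the sense that $\omega$ swaps $\g(\gamma)$ with $\g(\gamma^{-1})$. Once these compatibilities are checked, the rest is formal.
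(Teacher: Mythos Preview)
Your proposal is correct and follows essentially the same argument as the paper: the polar decomposition of the self-adjoint $g$ to produce the involution $\theta=g|g|^{-1}$, the cohomology relation $\theta=|g|^{-1/2}g\,\omega(|g|^{1/2})$ for surjectivity, and the $z=x^\dagger x$, $y=xz^{-1/2}$ trick for injectivity are exactly the steps the paper carries out in the paragraphs preceding the theorem statement. The only cosmetic difference is your sign convention on the Hermitian form (you take the positive-definite $-\mathrm{Tr}$ version, the paper keeps the negative-definite one), which of course does not affect adjoints or the spectral argument.
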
 

Theorem \ref{realfo2} provides a different classification of real forms 
from the one given in Subsection \ref{realfo1}, obtained by ``counting" from the compact form rather than the split form (as we did in Subsection \ref{realfo1}). We denote the real form of $\g$ assigned in Theorem \ref{realfo2} to an involution $\theta: \g\to \g$ by $\g_\theta$. 
For example, $\g_1=\g^c=\g_{(\tau)}$. 

Thus we have a canonical (up to automorphisms of $\g^c$) decomposition $\g=\mathfrak{k}\oplus \p$, into the eigenspaces of $\theta$ with eigenvalues $1$ and $-1$, such that $\mathfrak{k}$ is a Lie subalgebra, 
$\p$ is a module over $\mathfrak{k}$ and $[\p,\p]\subset \mathfrak{k}$. 
We also have the corresponding decomposition for the underlying real Lie algebra 
$\g^c=\mathfrak{k}^c\oplus \p^c$. Moreover, 
the corresponding real form $\g_\theta$ 
is just $\g_\theta=\mathfrak{k}^c\oplus \p_\theta$, where $\p_\theta:=i\p^c$. 

\begin{exercise} Show that $\mathfrak{k}$ is a reductive Lie algebra. Does it have to be semisimple? 
\end{exercise}

\begin{proposition}\label{maxcomp} There exists a Cartan subalgebra $\h$ in $\g$ invariant under $\theta$, such that $\h\cap \mathfrak k$ is a Cartan subalgebra in $\mathfrak k$. 
\end{proposition}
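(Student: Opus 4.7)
The plan is to construct $\h$ by passing to the compact real form $\g^c$ and then complexifying. Throughout I will use that the involution $\theta$, commuting with the compact structure $\omega = \sigma_\tau$, restricts to an involution of the compact real Lie algebra $\g^c$, whose fixed subalgebra is $\mathfrak{k}^c := \mathfrak{k} \cap \g^c$; since $\mathfrak{k}^c$ is the Lie algebra of a closed subgroup of the compact group $G_{\rm ad}^c = \mathrm{Aut}(\g^c)^\circ$, it is itself compact. Likewise $\p^c = \p \cap \g^c$ is the $-1$ eigenspace of $\theta$ on $\g^c$, and $\g^c = \mathfrak{k}^c \oplus \p^c$.

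First I would choose a maximal abelian subalgebra $\t \subset \mathfrak{k}^c$. This is a Cartan subalgebra of the compact Lie algebra $\mathfrak{k}^c$, and maximality yields $C_{\mathfrak{k}^c}(\t) = \t$. Form the centralizer $\mathfrak{m} := C_{\g^c}(\t)$. Since $\theta$ fixes $\t$ pointwise, $\mathfrak{m}$ is $\theta$-stable, with eigenspace decomposition $\mathfrak{m} = \mathfrak{m}^+ \oplus \mathfrak{m}^-$, where $\mathfrak{m}^+ = C_{\mathfrak{k}^c}(\t) = \t$ and $\mathfrak{m}^- \subset \p^c$.

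The key step is to show that $\mathfrak{m}$ is abelian. Because $[\p,\p] \subset \mathfrak{k}$ and $[\mathfrak{m},\mathfrak{m}]$ is $\theta$-stable, we have $[\mathfrak{m}^-, \mathfrak{m}^-] \subset \mathfrak{m}^+ = \t$; together with $[\mathfrak{m}^+, \mathfrak{m}] = [\t,\mathfrak{m}] = 0$ (by definition of $\mathfrak{m}$), this gives $[\mathfrak{m},\mathfrak{m}] \subset \t$ and $[[\mathfrak{m},\mathfrak{m}],\mathfrak{m}] = 0$, so $\mathfrak{m}$ is two-step nilpotent. On the other hand, as the centralizer of a subalgebra in the compact Lie algebra $\g^c$, $\mathfrak{m}$ is itself compact, hence reductive. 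A reductive nilpotent Lie algebra is abelian, so $\mathfrak{m}$ is abelian.

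Any abelian subalgebra of $\g^c$ containing $\mathfrak{m}$ must commute with $\t \subset \mathfrak{m}$, hence lie in $C_{\g^c}(\t) = \mathfrak{m}$, so $\mathfrak{m}$ is maximal abelian in $\g^c$, i.e., a Cartan subalgebra. Setting $\h := \mathfrak{m} \otimes_{\mathbb{R}} \mathbb{C} \subset \g$ then produces a $\theta$-invariant Cartan subalgebra of $\g$ (Cartans of compact real forms complexify to Cartans), and $\h \cap \mathfrak{k}$ is the $+1$ eigenspace of $\theta$ on $\h$, namely $\t \otimes_{\mathbb{R}} \mathbb{C}$, which is a Cartan subalgebra of $\mathfrak{k} = \mathfrak{k}^c \otimes_{\mathbb{R}} \mathbb{C}$. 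The main technical input I expect to lean on is the reductivity of centralizers in compact Lie algebras, which is what powers the ``two-step nilpotent plus reductive implies abelian'' dichotomy that drives the argument.
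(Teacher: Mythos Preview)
Your proof is correct and follows essentially the same strategy as the paper: start from a Cartan subalgebra $\t$ of $\mathfrak{k}^c$ and enlarge it to a Cartan of $\g^c$, then complexify. The only difference is in packaging: the paper picks a maximal abelian extension $\h_-^c\subset\p^c$ of $\t$ and checks directly that $\t\oplus\h_-^c$ is maximal abelian in $\g^c$, whereas you take the full centralizer $\mathfrak m=C_{\g^c}(\t)$ and argue it is abelian via the ``two-step nilpotent plus reductive'' dichotomy. The resulting Cartan is the same in both cases. Your justification of reductivity (a subalgebra of a compact Lie algebra inherits a positive definite invariant form, hence is reductive) is the only point worth spelling out a bit more explicitly, but it is standard and available from the material earlier in these notes.
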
 

\begin{proof} Take a generic $t\in \mathfrak{k}^c$; as $\mathfrak{k}$ is reductive, it is regular semisimple. Let $\h_+^c$ 
be the centralizer of $t$ in $\mathfrak{k}^c$. Then $\h_+:=\h_+^c\otimes_{\Bbb R}\Bbb C\subset \mathfrak{k}$ is a Cartan subalgebra. Let $\h_-^c$ be a maximal subspace of 
$\p^c$ for the property that $\h^c:=\h_+^c\oplus \h_-^c$ 
is a commutative Lie subalgebra of $\g^c$. 

We claim that $\h:=\h^c\otimes_{\Bbb R}\Bbb C$ 
is a Cartan subalgebra in $\g$. Indeed, it obviously consists of semisimple elements (as all elements 
in $\g^c$ are semisimple, being anti-hermitian operators on $\g^c$). Now, if 
$z\in \g$ commutes with $\h$ then $z=z_++z_-$, $z_+\in \mathfrak{k}$ and $z_-\in \p$, and 
both $z_+,z_-$ commute with $\h$. Thus $z_+\in \h_+$ and 
$z_-=x+iy$, where $x,y\in \p^c$ and both commute with $\h$. 
Hence $x,y\in \h_-^c$ by the definition of $\h_-^c$. Thus $z\in \h$, as claimed. 
It is clear that $\h$ is $\theta$-stable, so the proposition is proved.  
\end{proof} 

Thus we have a decomposition $\h=\h_+\oplus \h_-$, and 
$\theta$ acts by $1$ on $\h_+$ and by $-1$ on $\h_-$. 

\begin{lemma}\label{nocor} The space $\h_-$ does not contain any coroots of $\g$. 
\end{lemma}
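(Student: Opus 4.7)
I would argue by contradiction: assume some coroot $\alpha^\vee$ of $\g$ lies in $\h_-$, and build a reductive subalgebra of $\g$ that cannot exist. The first observation is translating the hypothesis: since the Killing form $K$ on $\h$ is $\theta$-invariant and the eigenspaces $\h_\pm$ of the involution $\theta|_\h$ are therefore orthogonal, $H_\alpha\in \h_-$ forces $\alpha(h)=K(H_\alpha,h)=0$ for every $h\in \h_+$. So the assumption is equivalent to saying that $\alpha$ vanishes identically on $\h_+$.

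\textbf{The key player.} Consider the centralizer $\mathfrak m:=\mathfrak z_\g(\h_+)$ of $\h_+$ in $\g$. Because $\h_+$ is a toral subalgebra, $\mathfrak m$ is reductive, and its explicit root decomposition is
\[
\mathfrak m = \h\oplus\bigoplus_{\beta\in R,\,\beta|_{\h_+}=0}\g_\beta.
\]
By the previous paragraph, $\g_\alpha\subset \mathfrak m$, so in particular $\mathfrak m\supsetneq \h$. Since $\theta$ fixes $\h_+$ pointwise, $\theta$ preserves $\mathfrak m$, giving a decomposition $\mathfrak m = (\mathfrak m\cap \mathfrak k)\oplus (\mathfrak m\cap \p)$. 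I will write $\mathfrak m_\p:=\mathfrak m\cap\p$ for the second summand.

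\textbf{Main computation.} I claim $\mathfrak m\cap \mathfrak k=\h_+$. By Proposition \ref{maxcomp}, $\h_+^c$ is a Cartan subalgebra of the compact (hence reductive) Lie algebra $\mathfrak k^c$; complexifying, $\h_+$ is a Cartan subalgebra of $\mathfrak k$, and so it is self-centralizing there. Thus $\mathfrak m\cap \mathfrak k=\mathfrak z_{\mathfrak k}(\h_+)=\h_+$. Now I compute $[\mathfrak m,\mathfrak m]$ using $\mathfrak m=\h_+\oplus \mathfrak m_\p$: the brackets $[\h_+,\h_+]$ and $[\h_+,\mathfrak m_\p]$ are both zero (the second because $\mathfrak m_\p$ centralizes $\h_+$ by definition), while
\[
[\mathfrak m_\p,\mathfrak m_\p]\subset [\p,\p]\cap \mathfrak m\subset \mathfrak k\cap\mathfrak m=\h_+.
\]
Hence $[\mathfrak m,\mathfrak m]\subset \h_+$ is abelian. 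Since $\mathfrak m$ is reductive its derived subalgebra is semisimple; being also abelian it must vanish, so $\mathfrak m$ itself is abelian.

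\textbf{Finishing.} This is the contradiction: $\mathfrak m$ contains $\h$ and the root space $\g_\alpha$, and picking any $h\in \h$ with $\alpha(h)\ne 0$ gives $[h,e_\alpha]=\alpha(h)e_\alpha\ne 0$, violating the abelianness of $\mathfrak m$. So no coroot can lie in $\h_-$. The main (conceptual) obstacle is organizing the argument so that it really uses the special feature of $\h$ coming from Proposition \ref{maxcomp}, namely that $\h_+$ is a full Cartan of $\mathfrak k$ rather than just a toral piece of one; without this, the equality $\mathfrak m\cap\mathfrak k=\h_+$ fails and the semisimple-versus-abelian dichotomy that drives the contradiction breaks down. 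Everything else is essentially bookkeeping with the decomposition $\g=\mathfrak k\oplus\p$ and the bracket relations $[\mathfrak k,\mathfrak k]\subset \mathfrak k$, $[\mathfrak k,\p]\subset \p$, $[\p,\p]\subset \mathfrak k$.
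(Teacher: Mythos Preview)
Your argument is correct. Both your proof and the paper's rest on the same key input from Proposition~\ref{maxcomp}: that $\h_+$ is a Cartan subalgebra of $\mathfrak k$, hence self-centralizing there. The paper, however, is considerably more direct: from $\alpha^\vee\in\h_-$ it deduces $\theta(\g_\alpha)=\g_{-\alpha}$, sets $x=e_\alpha+\theta(e_\alpha)\in\mathfrak k$, notes $[\h_+,x]=0$ (since $\alpha|_{\h_+}=0$) and $x\notin\h_+$, immediately contradicting the maximality of $\h_+$ as a commutative subalgebra of $\mathfrak k$. Your route via the full centralizer $\mathfrak m=\mathfrak z_\g(\h_+)$ proves the stronger structural fact that $\mathfrak m$ is abelian (so equals $\h$), which is nice to know but more than is needed---the paper's single element $x$ already witnesses the failure of self-centralization.
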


\begin{proof} Suppose that $\alpha^\vee\in \h_-$ is a coroot. Thus $\theta(\alpha^\vee)=-\alpha^\vee$, 
so $\theta(e_\alpha)=e_{-\alpha}$ and $\theta(e_{-\alpha})=e_\alpha$ for some 
nonzero $e_{\pm \alpha}\in \g_{\pm \alpha}$. Let $x=e_\alpha+e_{-\alpha}$. We have 
$\theta(x)=x$, so $x\in \mathfrak{k}$. On the other hand, $x\notin \h_+$ (as $x$ is orthogonal to $\h_+$ and nonzero) and $[\h_+,x]=0$ since $\alpha$ vanishes on $\h_+$. This is a contradiction, since $\h_+$ 
is a maximal commutative subalgebra of $\mathfrak{k}$. 
\end{proof} 

By Lemma \ref{nocor}, a generic element $t\in \h_+$ is regular in $\g$. So let us pick one for which ${\rm Re}(t,\alpha^\vee)$ is nonzero for any coroot $\alpha^\vee$ of $\g$, and use 
it to define a polarization of $R$: set $R_+:=\lbrace\alpha\in R:{\rm Re}(t,\alpha^\vee)>0\rbrace$. 
Then $\theta(R_+)=R_+$. So $\theta(\alpha_i)=\alpha_{\theta(i)}$, where 
$\theta(i)$ is the action of $\theta$ on the Dynkin diagram $D$ of 
$\g$. Thus if $\theta(i)=i$ then $\theta(e_i)=\pm e_i$, $\theta(h_i)=h_i$, $\theta(f_i)=\pm f_i$ while 
if $\theta(i)\ne i$, we can normalize $e_i,e_{\theta(i)},f_i,f_{\theta(i)}$ so that $\theta(e_i)=e_{\theta(i)}$, $\theta(f_i)=f_{\theta(i)}$, $\theta(h_i)=h_{\theta(i)}$. Thus $\theta$ can be encoded in a marked Dynkin diagram of $\g$: we connect vertices $i$ and $\theta(i)$ if $\theta(i)\ne i$ 
and paint a $\theta$-stable vertex $i$ white if $\theta(e_i)=e_i$ (i.e., $e_i\in \mathfrak{k}$, a {\bf compact root}), and black if $\theta(e_i)=-e_i$ (i.e., $e_i\in \p$, a {\bf non-compact root}). Such a decorated Dynkin diagram is called a {\bf Vogan diagram}. So we see that every Vogan diagram gives rise to a real form, and every real form is defined by some Vogan diagram. 

\begin{exercise} (i) Show that the signature of the Killing form of 
a real form $\g_\theta$ of a complex semisimple Lie algebra $\g$ corresponding to involution $\theta$ equals $(\dim{\mathfrak p},\dim \mathfrak{k})$. In particular, the Killing form of $\g_\theta$ is negative definite if and only if $\theta=1$, i.e., 
$\g_\theta=\g^c$ is the compact form.  

(ii) Deduce that for the split form $\dim \mathfrak{k}=|R_+|$, the number of positive roots of $
\g$. 

(iii) Show that for a real form of $\g$ in the compact inner class, we have 
${\rm rank}(\mathfrak{k})={\rm rank}\g$. 
\end{exercise} 

\subsection{Real forms of classical Lie algebras} 

We are not finished yet with the classification of real forms since different Vogan diagrams can define the same real form (they could arise from different choices of $R_+$ coming from different choices of the element $t$). 
However, we are now ready to classify real forms of classical Lie algebras. 

{\bf 1. Type $A_{n-1}$, compact inner class.} In this case $\theta$ is an inner automorphism, conjugation by an element of order $\le 2$ in $PSU(n)$. Obviously, such an element can be lifted 
to $g\in U(n)$ such that $g^2=1$, so $\theta(x)=gxg^{-1}$. Thus
$g={\rm Id}_p\oplus (-{\rm Id}_q)$ where $p+q=n$ and we may assume that $p\ge q$. 
It is easy to see that this defines the real form $\g_\theta=\mathfrak{su}(p,q)$, and 
$\mathfrak{k}=\mathfrak{sl}_p\oplus \mathfrak{gl}_q$. These are all pairwise non-isomorphic since the corresponding automorphisms $\theta$ are not conjugate to each other. So we get $[\frac{n}{2}]+1$ real forms. Note that for $n=2$ this exhausts all real forms, so we have only two -- $\mathfrak{su}(2)$ and $\mathfrak{su}(1,1)=\mathfrak{sl}_2(\Bbb R)$ with $\mathfrak{k}=\mathfrak{gl}_1$. 

{\bf 2. Type $A_{n-1}$, $n>2$, the split inner class.} If $n$ is odd, there is no choice as all the vertices of the Vogan diagram are connected into pairs, so we only get the split form $\g_\theta=\mathfrak{sl}_n(\Bbb R)$. However,  
if $n=2k$ is even, there is one unmatched vertex in the middle of the Vogan diagram, which can be either white or black. It is easy to check that in the first case (white vertex) $\mathfrak{k}=\mathfrak{sp}_{2k}$ and in the second one (black vertex) $\mathfrak{k}=\mathfrak{so}_{2k}$. So the first case is $\g_\theta=\mathfrak{sl}(k,\Bbb H)$, the Lie algebra of quaternionic matrices of size $k$ whose trace has zero real part (See Subsection \ref{morecla}), while the second case is the split form $\g_\theta=\mathfrak{sl}_n(\Bbb R)$. 

{\bf 3. Type $B_n$.} Then $\theta$ is an inner automorphism, given by an element 
of order $\le 2$ in $SO(2n+1)$. So $\theta={\rm Id}_{2p+1}\oplus (-{\rm Id}_{2q})$ where $p+q=n$. 
Thus all the real forms are $\mathfrak{so}(2p+1,2q)$ (all distinct), $\mathfrak{k}=\mathfrak{so}_{2p+1}\oplus \mathfrak{so}_{2q}$. 

{\bf 4. Type $C_n$.} Then $\theta$ is an inner automorphism, given by an element 
$g\in {\rm Sp}_{2n}(\Bbb C)$ such that $g^2=1$ or $g^2=-1$. 
In the first case the $1$-eigenspace of $g$ has dimension $2p$ and the $-1$-eigenspace has dimension $2q$ (since they are symplectic), where $p+q=n$, and we may assume $p\ge q$ (replacing $g$ by $-g$ if needed). So the real form we get is $\g_\theta=\mathfrak{u}(p,q,\Bbb H)$, the quaternionic pseudo-unitary Lie algebra for a quaternionic Hermitian form (see Subsection \ref{morecla}). In this case $\mathfrak{k}=\mathfrak{sp}_{2p}\oplus \mathfrak{sp}_{2q}$. 
On the other hand, if $g^2=-1$ then $\Bbb C^{2n}=V(i)\oplus V(-i)$ 
(eigenspaces of $g$, which in this case are Lagrangian subspaces), so $\mathfrak{k}=\mathfrak{gl}_n(\Bbb C)$. 
The corresponding real form is the split form $\g_\theta=\mathfrak{sp}_{2n}(\Bbb R)$.  

{\bf 5. Type $D_n$, compact inner class.} We again have an inner automorphism $\theta$ 
given by $g\in SO(2n)$ such that $g^2=\pm 1$. If $g^2=1$ then 
$\Bbb C^{2n}=V(1)\oplus V(-1)$, the direct sum of eigenspaces, 
and since $\det(g)=1$, the eigenspaces are even-dimensional, of dimensions 
$2p$ and $2q$ where $p+q=n$, and, as in the case of type $C_n$, we may assume $p\ge q$. So the corresponding real form 
is $\g_\theta=\mathfrak{so}(2p,2q)$ with $\mathfrak{k}=\mathfrak{so}_{2p}\oplus \mathfrak{so}_{2q}$. On the other hand, if $g^2=-1$ then we have
$\Bbb C^{2n}=V(i)\oplus V(-i)$, and these are Lagrangian subspaces of dimension $n$. 
So $\mathfrak{k}=\mathfrak{gl}_n(\Bbb C)$. The corresponding real form 
is the quaternionic orthogonal Lie algebra (symmetries of a quaternionic skew-Hermitian form), $\g_\theta=\mathfrak{so}^*(2n)$ (see Subsection \ref{morecla}). 
 
{\bf 6. Type $D_n$, the other inner class.} In this case $\theta$ is given by an element $g$ 
of $O(2n)$ such that $\det(g)=-1$ and $g^2=\pm 1$. Note that if $g^2=-1$ then , as shown above, $\det(g)=1$, so in the case at hand we always have $g^2=1$. Then $\Bbb C^{2n}=V(1)\oplus V(-1)$, but now the dimensions of these spaces are odd, $2p+1$ and $2q+1$ where 
$p+q=n-1$, and we may assume that $p\ge q$. So the real form is $\g_\theta=\mathfrak{so}(2p+1,2q+1)$, with $\mathfrak{k}=\mathfrak{so}_{2p+1}\oplus\mathfrak{so}_{2q+1}$. 
Note that for $n=3$, $D_3=A_3$, so we have $\mathfrak{so}(5,1)=\mathfrak{sl}(2,\Bbb H)$. Note also that this agrees with what we found before: the split form $\mathfrak{so}(n,n)$ is in the compact inner class 
for even $n$ and in the other one for odd $n$, and the quasi-split form $\mathfrak{so}(n+1,n-1)$ 
the other way around. 
 
\begin{exercise} Compute the subalgebras $\mathfrak{k}$ for all the real forms of classical simple Lie algebras. 
\end{exercise}  
 
\begin{exercise} Compute the correspondence between Vogan diagrams and real forms for classical simple Lie algebras. 
\end{exercise}  
 
\section{\bf Real forms of exceptional Lie algebras} 

\subsection{Equivalence of Vogan diagrams} 
For exceptional Lie algebras, it is convenient to make a more systematic use of Vogan diagrams (we could do this also for classical Lie algebras, but there we can also do everything explicitly 
using linear algebra). Recall that any real form comes from a certain Vogan diagram, but 
different Vogan diagrams may be equivalent, i.e., define the same real form. 
So our job is to describe this equivalence relation. 

First consider the case of the compact inner class. In this case 
the Vogan diagram is just the Dynkin diagram with black and white vertices 
(i.e., no matched vertices). Moreover, the case of all white vertices 
corresponds to the compact form, while the case when there are black vertices 
to noncompact forms. So let us focus on the latter case. 
Thus we have an element $\theta\in H\subset G_{\rm ad}$ such that 
$\theta\ne 1$ but $\theta^2=1$, but we are allowed to conjugate $\theta$ 
by elements of $N(H)$, i.e., transform it by elements of the Weyl group $W$. So how do simple reflections 
$s_i$ act on $\theta$ (in terms of its Vogan diagram)?

The Vogan diagram of $\theta$ is determined by the numbers $\alpha_j(\theta)=\pm 1$: 
if this number is $1$ then $j$ is white, and if it is $-1$ then $j$ is black. Now, we have 
$$
\alpha_j(s_i(\theta))=(s_i\alpha_j)(\theta)=(\alpha_j-a_{ij}\alpha_i)(\theta)=\alpha_j(\theta)\alpha_i(\theta)^{-a_{ij}}.
$$
This equals $\alpha_j(\theta)$ unless $\alpha_i(\theta)=-1$ and $a_{ij}$ is odd. 
Thus we obtain the following lemma. 

\begin{lemma} Suppose the Vogan diagram of $\theta$ contains a black vertex $i$. 
Then changing the colors of all neighbors $j$ of $i$ such that $a_{ij}$ is odd gives an equivalent Vogan diagram. 
\end{lemma}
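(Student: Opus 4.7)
The plan is to realize the color change as conjugation by a specific lift of the simple reflection $s_j$, and to verify that this lift lies in a subgroup preserving the compact form $\g^c$. Concretely, I would use the element $S_j \in G_{\rm ad}$ defined in \eqref{Si} as the image under $\eta_j : SL_2(\Bbb C) \to G_{\rm ad}$ of the matrix $\begin{pmatrix} 0 & 1 \\ -1 & 0\end{pmatrix}$. Two things need to be checked: first, that conjugation by $S_j$ preserves the equivalence class of real forms in the sense of Theorem \ref{realfo2}; and second, that this conjugation acts on the Vogan diagram by exactly the stated rule.

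For the first point, observe that the matrix $\begin{pmatrix} 0 & 1 \\ -1 & 0\end{pmatrix}$ lies in $SU(2) \subset SL_2(\Bbb C)$, and as remarked in the proof of Proposition \ref{norma}, the image of this $SU(2)$ under $\eta_j$ is contained in $G_{\rm ad}^c = \Aut(\g^c)^\circ$. Hence $S_j$ preserves $\g^c$, i.e., commutes with the compact antilinear involution $\omega$. Consequently, writing $\sigma_\theta = \omega\circ\theta$ for the antilinear involution cut out by $\theta$, conjugation by $S_j$ sends $\sigma_\theta$ to $S_j \sigma_\theta S_j^{-1}$; by Theorem \ref{realfo2}, this does not change the isomorphism class of the real form. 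Moreover, since $S_j \in N(H)$ by construction, $\theta' := S_j \theta S_j^{-1}$ is again an involution in $H$ whose Vogan diagram is well-defined; by Proposition \ref{norma}, $S_j$ acts on $H$ by the simple reflection $s_j$.

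The second point is a direct computation. Each vertex $i$ of the Dynkin diagram corresponds to a character $\alpha_i : H \to \Bbb C^\times$, which is well-defined since $\alpha_i \in Q \subset P$. The color of $i$ is $+$ (white) if $\alpha_i(\theta) = 1$ and $-$ (black) if $\alpha_i(\theta) = -1$. Now
$$
\alpha_i(\theta') = \alpha_i(s_j \cdot \theta) = (s_j \alpha_i)(\theta) = (\alpha_i - a_{ji}\alpha_j)(\theta) = \alpha_i(\theta)\cdot \alpha_j(\theta)^{-a_{ji}}.
$$
Since $\alpha_j(\theta) = -1$, this equals $\alpha_i(\theta)\cdot(-1)^{a_{ji}}$, so the color of $i$ flips precisely when $a_{ji}$ (equivalently, for our purposes, the Cartan matrix entry between $i$ and $j$) is odd. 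Noting that $a_{jj}=2$ is even, the vertex $j$ itself remains black, as it should.

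Honestly, there is no serious obstacle here: once the right lift $S_j$ is identified, everything follows from the two standard facts recalled above (Proposition \ref{norma} for the identification $N(H)/H \cong W$, and the elementary fact that $\eta_j$ carries $SU(2)$ into $G_{\rm ad}^c$). The closest thing to a subtle point is making sure that conjugation by $S_j$ does not disturb the choice of positive root system used to draw the Vogan diagram — but this is automatic, because changing $\theta$ to $\theta'$ while keeping $(\h,R_+)$ fixed is exactly what the formula above encodes, and the whole discussion takes place inside the single Cartan $H$.
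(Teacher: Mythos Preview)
Your proposal is correct and follows essentially the same approach as the paper: both act on $\theta \in H$ by the simple reflection $s_j$ and compute $(s_j\alpha_i)(\theta) = \alpha_i(\theta)\cdot(-1)^{a_{ji}}$. You are more explicit than the paper in justifying why this Weyl group action preserves the real-form equivalence class (by observing that the lift $S_j$ lies in $G_{\rm ad}^c$), but the core argument is identical.
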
 

The same lemma holds, with the same proof, in the case of the other inner class (which for exceptional Lie algebras is possible only for $E_6$), except we should ignore the vertices
matched into pairs (so $i$ and $j$ should be $\theta$-stable vertices).    

\subsection{Classification of real forms} We are now ready to classify real forms of exceptional Lie algebras. 

{\bf 1. Type $G_2$.} We have two color configurations up to equivalence: $\circ\circ$ and $(\bullet\circ,\circ\bullet,\bullet\bullet)$. The first corresponds to the compact form $G_2^c$ and the second to the split form $G_2^{\rm spl}$. It is easy to check that in the second case $\mathfrak{k}=\mathfrak{sl}_2\oplus \mathfrak{sl}_2$ (indeed, it has dimension $6$ and rank $2$). So we don't have other real forms. 

{\bf 2. Type $F_4$.} Let $\alpha_1,\alpha_2$ be short roots and $\alpha_3,\alpha_4$ long roots. Then all nonzero off-diagonal $a_{ij}$ are odd except $a_{23}=-2$. So we may change the colors 
of the neighbors of any black vertex, except that if the black vertex is $2$ then we should not change the color of $3$. By such changes, we can bring the colors at $3,4$ into the form 
$\circ\circ$ or $\circ\bullet$, and then bring the colors at $1,2$ to the form $\circ\circ$ or $\bullet\circ$. So we are down to four configurations: 
$$
\circ\circ\circ\circ,\ \bullet\circ\circ\circ,\ \circ\circ\circ\bullet,\ \bullet\circ\circ\bullet  
$$ 

Moreover, the fourth case, $\bullet\circ\circ\bullet$, is actually equivalent to the third one, $\circ\circ\circ\bullet$.
This is seen from the chain of equivalences
$$
\circ\circ\circ\bullet=\circ\circ\bullet\bullet=\circ\bullet\bullet\circ=
\bullet\circ\bullet\circ=\bullet\bullet\bullet\bullet=\bullet\bullet\circ\bullet=\bullet\circ\circ\bullet
$$   
Thus we are left with three variants, 
$$
\circ\circ\circ\circ, \bullet\circ\circ\circ, \circ\circ\circ\bullet.
$$
The first configuration, $\circ\circ\circ\circ$,
corresponds to the compact form $F_4^c$. 

In the second case, $\bullet\circ\circ\circ$, $\alpha(\theta)=-1$
exactly when the root $\alpha$ has half-integer coordinates (recall that there are 16 such roots, see Subsection \ref{F4r}). 
Thus the Lie algebra $\mathfrak{k}$ is comprised by the root subspaces for roots with integer coordinates and the Cartan subalgebra, i.e., $\mathfrak{k}=\mathfrak{so}_9$ 
(type $B_4$). Also in this case $\p=S$, the spin representation 
of $\mathfrak{so}_9$. This is not the split form, since for the split form $\dim \mathfrak{k}$ 
should be $24$ and here it is $36$. Let us denote this form $F_4^1$.   

Thus, the third case, $\circ\circ\circ\bullet$, must be the split form, $F_4^{\rm spl}$.
We see that $\mathfrak{k}$ contains the 21-dimensional Lie algebra 
$\mathfrak{sp}_6=C_3$ (generated by the simple roots $\alpha_1,\alpha_2,\alpha_3$), so 
given that $\mathfrak{k}$ has rank $4$ and dimension $24$, we have 
$\mathfrak{k}=\mathfrak{sp}_6\oplus \mathfrak{sl}_2$. 

{\bf 3. Type $E_6$, split inner class.} In this case in the Vogan diagram two pairs of vertices are connected, so we can only color the two remaining vertices. So we have two equivalence classes of colorings -- $\circ\circ$ and $(\bullet\bullet,\bullet\circ,\circ\bullet)$. Let us show that they correspond to two different real forms. Consider first the $\circ\circ$ case. In this case $\theta$ is simply the diagram automorphism, so we have $\mathfrak{k}=F_4$, as the Dynkin diagram of $F_4$ is obtained by folding the Dynkin diagram of $E_6$ (check it!). This is not the split form since $\dim \mathfrak{k}=52$, but for the split form it is $36$; denote this form by $E_6^1$. 
So the split form $E_6^{\rm spl}$ corresponds to the second 
equivalence class $(\bullet\bullet,\bullet\circ,\circ\bullet)$. One can show that in this case 
$\mathfrak{k}=\mathfrak{sp}_8$, i.e., type $C_4$ (check it!).  

{\bf 4. $E_6,E_7,E_8$, compact inner class.} In this case the Vogan diagram 
has no arrows and just is the usual Dynkin diagram with vertices colored black and white.
One option is that all vertices are white, this corresponds to the compact forms $E_6^c, E_7^c,E_8^c$ ($\theta=1$). If there is at least one black vertex, then by using equivalence transformations we can make sure that the nodal vertex is black. Then flipping the color of its neighbors if needed, we can make sure that the vertex on the shortest leg is also black. This allows us to change the color of the nodal vertex whenever we want (as long as the vertex on the shortest leg remains black). 

We now want to unify the coloring of the long leg. 
We can bring the long leg to the following normal forms: 

$E_6$: $\circ\circ, \bullet\circ=\bullet\bullet=\circ\bullet$. But by flipping the colors on the neighbors of the nodal vertex, 
we see that $\bullet\circ$ and $\circ\circ$ are equivalent, so all patterns are equivalent to $\bullet\bullet$. 

$E_7$: $\circ\circ\circ, \bullet\circ\circ=\bullet\bullet\circ=\circ\bullet\bullet=\circ\circ\bullet$, 
$\bullet\circ\bullet=\bullet\bullet\bullet=\circ\bullet\circ$. But by flipping the colors on the neighbors of the nodal vertex, we see that all patterns are equivalent to $\bullet\bullet\bullet$. 

$E_8$: $
\circ\circ\circ\circ, \bullet\circ\circ\circ=\bullet\bullet\circ\circ=\circ\bullet\bullet\circ=\circ\circ\bullet\bullet=\circ\circ\circ\bullet,$ 
$$ \bullet\circ\circ\bullet=\bullet\circ\bullet\bullet=\bullet\bullet\bullet\circ=\begin{cases}\bullet\circ\bullet\circ\\ \circ\bullet\circ\circ\end{cases}
$$
$$
=\bullet\bullet\bullet\bullet=\bullet\bullet\circ\bullet=
\circ\bullet\bullet\bullet=\begin{cases}\circ\bullet\circ\bullet \\ \circ\circ\bullet\circ\end{cases}.
$$
But by flipping the colors on the neighbors of the nodal vertex, we see that all patterns are equivalent to $\bullet\bullet\bullet\bullet$. 

Thus we can always arrange all vertices on the long leg except possibly 
the neighbor of the node to be black, while the short leg and the node also remain black. 
In addition, as seen from the pictures above, in the cases $E_6$ and $E_8$ these two configurations are equivalent by transformations 
inside the leg. 

Now we can consider the configurations on the remaining leg (of length 2). 
The equivalence classes are $\circ\circ$ and $\bullet\circ=\circ\bullet=\bullet\bullet$. 

So in the case of $E_6$ and $E_8$ we get 
just two cases. It turns out that both for $E_6$ and $E_8$ 
these give two different real forms, one of which is split in the case of $E_8$. 
 
Consider first the $E_6$ case. One option is to take the Vogan diagram with just one black vertex, 
at the end of the long leg:
$$
\boxed{\begin{matrix}\circ & \circ & \circ &\circ &\bullet\\  &  & \circ &   &   &\end{matrix}}
$$
 Then $\mathfrak{k}=\mathfrak{so}_{10}\oplus \mathfrak{so}_2$ (as the black vertex corresponds to a minuscule weight). We denote this real form by $E_6^2$. 
On the other hand, if there is only one black vertex on the short leg, 
$$
\boxed{\begin{matrix}\circ & \circ & \circ &\circ &\circ \\  &  & \bullet &   &   &\end{matrix}}
$$
then 
$\mathfrak{k}$ contains $\mathfrak{sl}_6$, so this real form is different (as $\mathfrak{sl}_6$ is not a Lie subalgebra of $\mathfrak{so}_{10}$).   
It's not difficult to show that in this case $\mathfrak{k}=\mathfrak{sl}_6\oplus \mathfrak{sl}_2$. 
We denote this real form by $E_6^3$. 

Now consider the $E_8$ case. Again one option is the Vogan diagram with just one black vertex, 
at the end of the long leg: 
$$
\boxed{\begin{matrix}\circ & \circ & \circ &\circ &\circ &\circ &\bullet\\  &  & \circ &   &   &\end{matrix}}
$$
Then $\mathfrak{k}$ contains $E_7$, so this is not the split form since $\dim \mathfrak{k}\ge 133$ but for the split form it should be 120. In fact, it is not hard to see that $\mathfrak{k}=E_7\oplus \mathfrak{sl}_2$. We denote this real form by $E_8^1$. The second form is the split one, $E_8^{\rm spl}$. It can, for example, be obtained if we color black only one vertex, at the end of the middle leg:
$$
\boxed{\begin{matrix}\bullet & \circ & \circ &\circ &\circ &\circ &\circ \\  &  & \circ &   &   &\end{matrix}}
$$
 In fact, it's not hard to show that the algebra $\mathfrak{k}$ in this case is $\mathfrak{so}_{16}$. 

Finally, consider the $E_7$ case. In this case we have four options, but two of them end up being equivalent. Namely, 
we have 
$$
\boxed{\begin{matrix}\bullet & \circ & \bullet &\circ &\bullet&\bullet\\  &  & \bullet &   &   &\end{matrix}}=
\boxed{\begin{matrix}\bullet & \bullet & \bullet &\bullet &\bullet&\bullet\\  &  & \circ &   &   &\end{matrix}}=
\boxed{\begin{matrix}\circ & \bullet & \circ &\bullet &\bullet&\bullet\\  &  & \circ &   &   &\end{matrix}}=
$$
$$
\boxed{\begin{matrix}\circ & \bullet & \bullet &\bullet &\circ&\bullet\\  &  & \circ &   &   &\end{matrix}}=
\boxed{\begin{matrix}\circ & \circ & \bullet &\circ &\circ&\bullet\\  &  & \bullet &   &   &\end{matrix}}=
\boxed{\begin{matrix}\circ & \circ & \bullet &\circ &\bullet&\bullet\\  &  & \bullet &   &   &\end{matrix}}.
$$
So we are left with three cases, which all turn out different. The first one is just one black vertex 
at the end of the long leg:
$$
\boxed{\begin{matrix}\circ & \circ & \circ &\circ &\circ &\bullet\\  &  & \circ &   &   &\end{matrix}}
$$ 
In this case $\mathfrak{k}$ contains $E_6$, so this is not the split form, as $\dim \mathfrak{k}\ge 78$ but for the split form it is $63$. It is easy to see that 
$\mathfrak{k}=E_6\oplus \mathfrak{so}_2$ in this case (the black vertex corresponds to the minuscule weight). We denote this real form by $E_7^1$. The second option is a black vertex at the end of the middle leg:
$$
\boxed{\begin{matrix}\bullet & \circ & \circ &\circ &\circ &\circ \\  &  & \circ &   &   &\end{matrix}}
$$ 
Then $\mathfrak{k}$ contains $\mathfrak{so}_{12}$, of dimension 
$66$, so again not the split form. One can show that for this form $\mathfrak{k}=\mathfrak{so}_{12}\oplus \mathfrak{sl}_2$. We denote it by $E_7^2$. Finally, the split form $E_7^{\rm spl}$ is obtained when one colors black just the end of the short leg:
$$
\boxed{\begin{matrix}\circ & \circ & \circ &\circ &\circ &\circ \\  &  & \bullet &   &   &\end{matrix}}
$$ 
 Then $\mathfrak{k}$ contains $\mathfrak{sl}_7$ and one can show that $\mathfrak{k}=\mathfrak{sl}_8$. 

\begin{exercise} Work out the details of computation of $\mathfrak{k}$ 
for real forms of exceptional Lie algebras. 
\end{exercise} 

\begin{exercise}\label{G2ex} Let $\g$ be the complex Lie algebra of type $G_2$, and $G$ the corresponding Lie group. Let $\mathfrak{sl}_3\subset \g$ be the Lie subalgebra generated by long root elements and $SU(3)\subset G^c$ be the corresponding subgroup. 
Show that $G^c/SU(3)\cong S^6$. Use this to construct embeddings $G^c\hookrightarrow SO(7)$ and $G^c\hookrightarrow {\rm Spin}(7)$. 

{\bf Hint.} Consider the 7-dimensional irreducible representation of $G^c$. Show that it is of real type
(obtained by complexifying a real representation $V$) and then consider the action of $G^c$  on the set of unit vectors in $V$ under a positive invariant inner product. Then compute the Lie algebra of the stabilizer and use that the sphere is simply connected.
\end{exercise} 

\begin{exercise} Keep the notation of Exercise \ref{G2ex}. Show that one has ${\rm Spin}(7)/G^c=S^7$ and $SO(7)/G^c=\Bbb R\Bbb P^7$. 

{\bf Hint.} Let $S$ be the spin representation of ${\rm Spin}(7)$. Use that it is of real type (this can be deduced from Proposition \ref{typ}) and then consider the action of ${\rm Spin}(7)$ on vectors of norm $1$ in $S_{\Bbb R}$. Compute the Lie algebra of the stabilizer and use that the sphere is simply connected.
\end{exercise} 

\begin{remark} More generally, one can classify automorphisms 
of a simple complex Lie algebra $\g$ of arbitrary finite order. This was done by V. Kac using diagrams now known as {\bf Kac diagrams}, see \cite{OV}, Subsection 4.7. In particular, 
this approach can be applied to classify automorphisms 
of order $2$ which correspond to real forms of $\g$, see \cite{OV}, Subsection 5.5.
\end{remark} 

\section{\bf Classification of connected compact and complex reductive groups} 

\subsection{Connected compact Lie groups} 

We are now ready to classify connected compact Lie groups. We start with the following exercise.

\begin{exercise}\label{redu} Show that if $K^c$ is a compact Lie group then 
$\mathfrak{k}:={\rm Lie}(K^c)_{\Bbb C}$ is a reductive Lie algebra. 

{\bf Hint.} First use integration over $K^c$ to show that $\mathfrak{k}$ has a $K^c$-invariant 
positive definite Hermitian form. Then show that if $I$ is an ideal in $\mathfrak{k}$ 
then its orthogonal complement $I^\perp$ is also an ideal.  
\end{exercise} 

Now we can proceed. We already know many examples of compact connected Lie groups -
namely tori $(S^1)^r$ and also groups $G_{\rm ad}^c$ where $G_{\rm ad}={\rm Aut}(\g)^\circ$ for a semisimple Lie algebra $\g$. We can also consider products 
$(S^1)^r\times G_{\rm ad}^c$. Exercise \ref{redu} shows that the Lie algebra of any compact Lie group is isomorphic to one of such a product, so this should be an exhaustive list up to taking coverings and quotients by finite central subgroups. It thus remains to understand the nature of these coverings, which reduces to understanding $\pi_1(G_{\rm ad}^c)$. So our next task is to compute this group. In particular, we will show that it is finite. 

So let $\g$ 
be a semisimple complex Lie algebra and $G$ 
the corresponding simply connected complex Lie group 
(the universal cover of $G_{\rm ad}$). Let $Z$ be the kernel of the 
covering map $G\to G_{\rm ad}$, which is also $\pi_1(G_{\rm ad})$ and the center of $G$. 
The finite dimensional representations of $G$ are the same as those of $\g$, so the irreducible ones are $L_\lambda$, $\lambda\in P_+$. The center $Z$ acts by a certain character $\bchi_\lambda: Z\to \Bbb C^\times$ on each $L_\lambda$. Since $L_{\lambda+\mu}$ is contained in $L_\lambda\otimes L_\mu$, we have $\bchi_{\lambda+\mu}=\bchi_\lambda\bchi_\mu$, so $\bchi$ uniquely extends to a homomorphism $\bchi: P\to \Hom(Z,\Bbb C^\times)$. Also, by definition $\bchi_\theta=1$
(since the maximal root $\theta$ is the highest weight of the adjoint representation on which $Z$ acts trivially). 

Now, by Exercise \ref{intertw}, if $\lambda(h_i)$ are sufficiently large then for every root 
$\alpha$ of $\g$ we have $L_{\lambda+\alpha}\subset L_\lambda\otimes \g$. Thus 
$\bchi_{\lambda+\alpha}=\bchi_\lambda$, hence $\bchi_\alpha=1$. So $\bchi$ 
is trivial on the root lattice $Q$, i.e., defines a homomorphism $P/Q\to \Hom(Z,\Bbb C^\times)$, or, equivalently, $Z\to P^\vee/Q^\vee$. 

Note that the same argument works for $G_{\rm ad}^c$, its 
universal cover $G^c$, and its center $Z^c$  instead of $G_{\rm ad}$, $G$, $Z$. 

\begin{proposition}\label{repGad} 
A representation $L_\lambda$ of $\g$ of highest weight $\lambda\in P_+$
lifts to a representation of $G_{\rm ad}$ (or, equivalently, $G_{\rm ad}^c$) 
if and only if $\lambda \in P_+\cap Q$.   
\end{proposition}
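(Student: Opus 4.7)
The plan is to reduce the lifting question to the vanishing of the central character $\bchi_\lambda$, then identify the center $Z = \ker(G\to G_{\rm ad})$ explicitly as $P^\vee/Q^\vee$, and finally compute $\bchi_\lambda$ by a direct exponential-map calculation.

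First, since $G$ is simply connected, every finite dimensional $\g$-module integrates uniquely to $G$ by the second fundamental theorem of Lie theory, so $L_\lambda$ lifts to $G_{\rm ad} = G/Z$ if and only if $Z$ acts trivially. As $Z$ is central and $L_\lambda$ is irreducible, Schur's lemma implies that $Z$ acts through a character $\bchi_\lambda : Z \to \Bbb C^\times$, and the condition becomes $\bchi_\lambda = 1$.

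Next I would pin down $Z$ concretely using the exponential map on the Cartan. The maximal torus $\widetilde H \subset G$ is the image of $\exp: \h \to G$, and its kernel is the set of $h \in \h$ on which $e^{\mu(h)}=1$ for every weight $\mu$ of every finite dimensional representation of $G$. Because $G$ is simply connected, these weights exhaust $P$ (for instance the fundamental weights are realized by the $L_{\omega_i}$), so the kernel is exactly $2\pi i Q^\vee$ (using $(P)^* = Q^\vee$), giving $\widetilde H = \h/2\pi i Q^\vee$. The same argument for $G_{\rm ad}$, whose finite dimensional representations are the $L_\mu$ with $\mu \in Q$ by the discussion preceding the proposition, yields maximal torus $H = \h/2\pi i P^\vee$. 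Therefore the kernel of the covering $\widetilde H \to H$ is the finite abelian group $Z = P^\vee/Q^\vee$, which equals the center of $G$ because $Z$ centralizes a maximal torus and the regular semisimple locus is dense.

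With this in hand, the character $\bchi_\lambda$ is computed as follows. Given $z \in Z$ lifted to $2\pi i \mu^\vee \in \h$ with $\mu^\vee \in P^\vee$, the element $z = \exp(2\pi i \mu^\vee) \in \widetilde H$ acts on the highest weight line $L_\lambda[\lambda]$ by $e^{2\pi i \lambda(\mu^\vee)} = e^{2\pi i (\lambda,\mu^\vee)}$, and by centrality this is $\bchi_\lambda(z)$ on all of $L_\lambda$. Hence $\bchi_\lambda$ is trivial if and only if $(\lambda,\mu^\vee) \in \Bbb Z$ for every $\mu^\vee \in P^\vee$, which by definition of the root lattice $Q = (P^\vee)^*$ is precisely the condition $\lambda \in Q$. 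This proves the statement for $G_{\rm ad}$. For the equivalence with $G_{\rm ad}^c$, note that its universal cover $G^c$ has the same finite center $Z = P^\vee/Q^\vee$ (it sits inside the compact maximal torus of $\widetilde H$), so the same criterion applies; alternatively, a holomorphic representation of $G_{\rm ad}$ restricts to one of $G_{\rm ad}^c$ and is determined by this restriction (since $G_{\rm ad}^c$ is a real form whose complexified Lie algebra recovers $\g$).

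No step is genuinely hard; the main thing to be careful about is the identification $\ker(\exp|_\h) = 2\pi i Q^\vee$ for simply connected $G$, where I must genuinely use simple-connectedness (otherwise I would only get $2\pi i L$ for some sublattice of $P^\vee$ containing $Q^\vee$) — this is precisely where the availability of all fundamental representations $L_{\omega_i}$ on $G$ (guaranteed by the third fundamental theorem / Serre construction) enters, ensuring that $P$ and not a smaller lattice appears.
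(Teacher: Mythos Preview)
Your argument is correct for the harder direction but takes a genuinely different route from the paper, and there is one subtle point worth flagging.

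\textbf{Comparison with the paper.} The paper's proof is two lines: the ``if'' direction ($\lambda\in Q\Rightarrow L_\lambda$ lifts) was already established in the paragraph preceding the proposition by showing that $\bchi$ is multiplicative and $\bchi_\alpha=1$ for every root $\alpha$ (via $L_{\lambda+\alpha}\subset L_\lambda\otimes\g$ for generic $\lambda$). For the ``only if'' direction the paper invokes Proposition~\ref{dirsum}, which rests on the Stone--Weierstrass/Peter--Weyl machinery for the compact group $G_{\rm ad}^c$: if $L_\lambda$ lifts, it must sit inside some $\g^{\otimes n}$, forcing $\lambda\in Q$. Your exponential-map computation bypasses Peter--Weyl entirely for ``only if'': since $\exp(2\pi i P^\vee)\subset Z$ is immediate, triviality of $\bchi_\lambda$ on these elements already forces $(\lambda,\mu^\vee)\in\Bbb Z$ for all $\mu^\vee\in P^\vee$, hence $\lambda\in Q$. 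This is a cleaner and more elementary argument for that direction.

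\textbf{The subtle point.} Your ``if'' direction, as written, relies on the identification $Z=P^\vee/Q^\vee$, which in turn requires $Z\subset\widetilde H$. Your justification (``$Z$ centralizes a maximal torus and the regular semisimple locus is dense'') does not quite close this: the centralizer of a regular $t\in\widetilde H$ in $G$ is the full preimage of $H$, namely $\widetilde H\cdot Z$, so you have not shown it is connected. In the paper's logic, the equality $Z=P^\vee/Q^\vee$ is precisely what the \emph{next} theorem establishes, and it uses Proposition~\ref{repGad} as input. So folding both directions into the single exponential computation is mildly circular. The fix is easy: for ``if'' simply cite the preceding tensor-product argument (which you already reference), and reserve your exponential computation for ``only if''. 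With that adjustment your proof is complete and, for the nontrivial direction, more direct than the paper's.
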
 

\begin{proof} We have just shown that if $\lambda\in P_+\cap Q$ then $L_\lambda$ lifts. 
The converse follows from Proposition \ref{dirsum} applied to $V=\g$.  
\end{proof} 

Now we can proceed with the classification of semisimple compact connected Lie groups. 
We begin with the following lemma from topology (see e.g. \cite{M}, Supplementary exercises to Chapter 13, p.500, Exercise 4). 

\begin{lemma}\label{fund} If $X$ is a connected compact manifold then the fundamental group 
$\pi_1(X)$ is finitely generated. 
\end{lemma}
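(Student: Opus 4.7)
The plan is to reduce this to the fact that $\pi_1$ of a finite connected graph is finitely generated, by replacing $X$ with a suitable combinatorial model via a finite ``good cover''. First I would choose a finite open cover $\{U_1,\dots,U_n\}$ of $X$ with the properties that each $U_i$ is diffeomorphic to an open ball in $\Bbb R^{\dim X}$ (so in particular simply connected), and every nonempty intersection $U_{i_1}\cap\cdots\cap U_{i_k}$ is itself path-connected (in fact simply connected). Such a ``good cover'' exists: pick any auxiliary Riemannian metric on $X$, and cover $X$ by geodesically convex balls small enough that finite intersections are again geodesically convex, hence diffeomorphic to balls; compactness of $X$ lets us extract a finite subcover.

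Next I would set up a combinatorial skeleton encoding how the $U_i$ fit together. Pick a base point $x_i$ in each $U_i$, and for each pair $(i,j)$ with $U_i\cap U_j\neq\varnothing$, pick a point $x_{ij}\in U_i\cap U_j$ together with a path in $U_i$ from $x_i$ to $x_{ij}$ and a path in $U_j$ from $x_j$ to $x_{ij}$. Let $\Gamma$ be the graph with vertex set $\{1,\dots,n\}$ and an edge between $i$ and $j$ whenever $U_i\cap U_j\neq\varnothing$; choose a spanning tree $T\subset\Gamma$, which exists because $\Gamma$ is connected (since $X$ is). The tree $T$ lets me identify all $x_i$ with a single base point $x_0:=x_1$ via a specific choice of path, and for each edge $(i,j)$ of $\Gamma$ not in $T$ the corresponding ``intersection path'' produces a loop $\gamma_{ij}$ at $x_0$. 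There are only finitely many such loops.

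I would then argue that these finitely many $\gamma_{ij}$ generate $\pi_1(X,x_0)$. This is an iterated application of the Seifert--van Kampen theorem to the open cover $\{U_1,\dots,U_n\}$: since each $U_i$ is simply connected and each pairwise intersection $U_i\cap U_j$ is path-connected, induction on $n$ gives that $\pi_1(X,x_0)$ is a quotient of the free product of trivial groups amalgamated along free groups on the path components of intersections, which in our setup simplifies to a quotient of the fundamental group of the graph $\Gamma$, a finitely generated free group. (The triple intersections $U_i\cap U_j\cap U_k$ contribute relations, but do not introduce new generators.) Hence $\pi_1(X)$ is finitely generated.

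The main obstacle is producing the good cover and justifying the inductive van Kampen step cleanly. The good cover is where all the geometry enters; once we use compactness plus a Riemannian metric (or alternatively, plain compactness plus a careful choice of shrinking coordinate balls so that overlaps are starlike, hence simply connected), the remaining argument is purely combinatorial. If one prefers to avoid van Kampen entirely, an alternative route is to invoke that any compact smooth manifold admits a finite triangulation (or finite CW structure), and then use the standard fact that $\pi_1$ of a finite connected CW complex is finitely generated by the edges of its 1-skeleton outside a maximal tree; but the good-cover approach above is more self-contained in the present setting.
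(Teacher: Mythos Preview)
Your proposal is correct and follows essentially the same approach as the paper's proof, which is only a one-sentence sketch: cover $X$ by small balls, pass to a finite subcover, connect the centers, and observe that the resulting finite graph's fundamental group surjects onto $\pi_1(X)$. You supply the details the paper omits---namely, the good-cover condition ensuring path-connected intersections, and the Seifert--van~Kampen justification for the surjection---so your argument is a fleshed-out version of the same idea rather than a genuinely different route.
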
 

\begin{proof} (sketch) Cover $X$ by small balls, pick a finite subcover, connect the centers. 
We get a finite graph whose fundamental group maps surjectively to $\pi_1(X)$.  
\end{proof} 

\begin{theorem} Let $\g$ be a semisimple complex Lie algebra and $G^c_{\rm ad}$
the corresponding adjoint compact group. Then $\pi_1(G^c_{\rm ad})=P^\vee/Q^\vee$.
Thus the universal cover $G^c$ of $G^c_{\rm ad}$
is a compact Lie group. 
\end{theorem}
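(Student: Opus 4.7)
The plan is to first identify the maximal torus $T^c\subset G^c_{\rm ad}$ and compute $\pi_1(T^c)$ explicitly, then use the fibration $T^c\to G^c_{\rm ad}\to G^c_{\rm ad}/T^c$ to transport this computation to $\pi_1(G^c_{\rm ad})$. Compactness of $G^c$ will then follow once $\pi_1(G^c_{\rm ad})$ is shown to be finite.

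First I would observe that the compact Cartan subalgebra inside $\g^c$ is $i\h^\vee_{\Bbb R}$, where $\h^\vee_{\Bbb R}=\Bbb R\langle\alpha_i^\vee\rangle$ is the real span of the coroots, and that its image under the exponential map equals $T^c=H\cap G^c_{\rm ad}$. Since $H=\h/2\pi iP^\vee$ in the complex adjoint group, we get $T^c = i\h^\vee_{\Bbb R}/2\pi iP^\vee$, with $i\h^\vee_{\Bbb R}$ as its (contractible) universal cover. Hence $\pi_1(T^c)=2\pi P^\vee\cong P^\vee$.

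Next I would establish $\pi_1(G^c_{\rm ad})\cong P^\vee/Q^\vee$ in three substeps. (i) The inclusion $T^c\hookrightarrow G^c_{\rm ad}$ induces a surjection on $\pi_1$: this follows from simple connectedness of $G^c_{\rm ad}/T^c$, which can be identified with the complex flag variety $G_{\rm ad}/B$ (via Iwasawa-type decomposition) and admits a Bruhat cell decomposition with cells only in even real dimension. (ii) For each simple coroot, the loop $\theta\mapsto\exp(i\theta\alpha_i^\vee)$, $\theta\in[0,2\pi]$, is the image under the homomorphism $\eta_i:SU(2)\to G^c_{\rm ad}$ from the proof of Proposition~\ref{norma} of a loop in the maximal torus of the simply connected $SU(2)$, so it bounds a disk in $G^c_{\rm ad}$; hence $2\pi Q^\vee$ is contained in the kernel. (iii) For the reverse inclusion, use the long exact sequence
$$\pi_2(G^c_{\rm ad}/T^c)\to \pi_1(T^c)\to \pi_1(G^c_{\rm ad})\to 1,$$
identifying $\pi_2(G^c_{\rm ad}/T^c)\cong H_2(G^c_{\rm ad}/T^c)\cong Q^\vee$ from the Bruhat decomposition (the $2$-cells are in bijection with simple reflections) and checking that the boundary map sends the $2$-cell for $s_i$ to the loop $\exp(i\theta\alpha_i^\vee)$. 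The image is then exactly $2\pi Q^\vee$, giving the desired isomorphism. Finally, since $P^\vee$ and $Q^\vee$ are both lattices of full rank $r$ in $\h^\vee_{\Bbb R}$ (the coroots and the fundamental coweights both form bases of $\h^\vee_{\Bbb R}$), the quotient $P^\vee/Q^\vee$ is a finite abelian group, so $G^c\to G^c_{\rm ad}$ is a finite covering and $G^c$ is therefore compact.

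The main obstacle is substep (iii): matching the image of the connecting homomorphism $\partial:\pi_2(G^c_{\rm ad}/T^c)\to\pi_1(T^c)$ with $2\pi Q^\vee$ requires genuine input from the topology of flag varieties, namely the Bruhat cell structure and the description of attaching maps of the $2$-cells. An alternative route that partially sidesteps this would be to construct $G^c$ directly as the closure of $\langle\exp(\g^c)\rangle$ inside $\prod_i U(L_{\omega_i})$, which is manifestly compact as a closed subgroup of a compact group, and then show its center has order $|P^\vee/Q^\vee|$ by exhibiting enough central elements acting on each $L_\lambda$ via the characters $\bchi_\lambda$; but this merely relocates the difficulty to producing sufficiently many central elements.
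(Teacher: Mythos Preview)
Your argument is correct, but the route is genuinely different from the paper's. You proceed topologically: compute $\pi_1(T^c)\cong P^\vee$, invoke the long exact sequence of the fibration $T^c\to G^c_{\rm ad}\to G^c_{\rm ad}/T^c$, and identify the image of $\partial:\pi_2(G^c_{\rm ad}/T^c)\to\pi_1(T^c)$ with $Q^\vee$ via the Bruhat cell structure of the flag variety. The paper, by contrast, argues representation-theoretically: for each finite cover $G^c_*$ of $G^c_{\rm ad}$ with kernel $Z_{G^c_*}$, the central characters $\bchi_\lambda$ give an injection $P(G^c_*)/Q\hookrightarrow Z_{G^c_*}^\vee$, and the Peter--Weyl theorem forces this to be an isomorphism; finiteness of $\pi_1$ is then deduced from the bound $|Z_{G^c_*}|\le|P/Q|$ together with the fact that $\pi_1$ of a compact manifold is finitely generated.

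What each buys: the paper's proof uses only the Peter--Weyl theorem and Proposition~\ref{repGad}, both already in hand at this point in the text, and avoids any topology of the flag variety. Your approach is more geometric and yields explicit generators and relations for $\pi_1(G^c_{\rm ad})$ (coroot loops bound disks supplied by the $SU(2)$-subgroups), but it relies on the identification $G^c_{\rm ad}/T^c\cong G_{\rm ad}/B$ and the Schubert cell decomposition, which in this paper are established only later (Proposition~\ref{iwas} and Theorem~\ref{bd1}). Your own diagnosis of the main obstacle is accurate: pinning down the connecting map $\partial$ requires knowing the attaching maps of the $2$-cells, which is exactly the content of the $SL_2$-reduction underlying the Bott--Samelson construction. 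None of this is circular, but within the paper's logical order it is a forward reference; the paper's argument is tailored to avoid that dependency.
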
 

\begin{proof} Let ${G^c_*}$ be a finite cover of $G_{\rm ad}^c$, 
and $Z_{G^c_*}\subset {G^c_*}$ be the kernel of the projection ${G^c_*}
\to G_{\rm ad}^c$. Then finite dimensional irreducible representations 
of ${G^c_*}$ are a subset of finite dimensional irreducible representations of $\g$, labeled by a subset $P_+({G^c_*})\subset P_+$ containing $P_+\cap Q$ (as by Proposition \ref{repGad} these are highest weights of representations of $G_{\rm ad}^c$). Let $P({G^c_*})\subset P$ be generated by $P_+({G^c_*})$. Let $\bchi_\lambda$ be the character by which $Z_{G^c_*}$ acts on the irreducible representation $L_\lambda$ of ${G^c_*}$. By Proposition \ref{repGad}, $\bchi$ defines an  injective homomorphism $\xi: P({G^c_*})/Q\to Z_{G^c_*}^\vee$. Since ${G^c_*}$ is compact, by the Peter-Weyl theorem this homomorphism is surjective, hence is an isomorphism.  

It remains to show that $\pi_1(G_{\rm ad}^c)$ is finite (then we can take ${G^c_*}$ to be the universal cover of $G_{\rm ad}^c$, in which case $P({G^c_*})=P$, so we get $P/Q\cong Z^\vee$, hence $Z=\pi_1(G_{\rm ad})\cong P^\vee/Q^\vee$). To this end, note that by Lemma \ref{fund}, $\pi_1(G_{\rm ad}^c)$ is a finitely generated abelian group. Take a subgroup of finite index $N$ in $\pi_1(G_{\rm ad}^c)$ and let ${G^c_*}$ be the corresponding cover. As we have shown, then $N=|Z_{G^c_*}|\le |P({G^c_*})/Q|\le |P/Q|$. But for finitely generated abelian groups this implies that 
the group is finite.  
\end{proof} 

This immediately implies the following corollary. 

\begin{corollary} (i) If $\g$ is a simple complex Lie algebra then the simply connected Lie group 
$G^c$ corresponding to the Lie algebra $\g^c$ is compact, and its center is 
$P^\vee/Q^\vee$, which also equals $\pi_1(G_{\rm ad}^c)$. 

(ii) Let $\Gamma\subset P^\vee/Q^\vee$ be a subgroup. Then the 
irreducible representations of $G^c/\Gamma$ are $L_\lambda$ 
such that $\lambda$ defines the trivial character of $\Gamma$. 

(iii) Let $G_i^c$ be the simply connected compact Lie group corresponding to a simple summand $\g_i$ of a semisimple Lie algebra $\g=\oplus_{i=1}^n\g_i$. Then any connected Lie group with Lie algebra $\g^c$ is compact and has the form $(\prod_{i=1}^nG_i^c)/Z$, where $Z=\pi_1(G^c)$ is a subgroup of $\prod_i Z_i$, and $Z_i=P_i^\vee/Q_i^\vee$ are the centers of 
$G_i^c$. Moreover, every semisimple connected compact Lie group has this form. 
\end{corollary}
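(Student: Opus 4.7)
\medskip

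\noindent\textbf{Proof proposal.} The strategy is to reduce everything to the preceding theorem, which already contains the essential compactness and fundamental-group computation, and then invoke Peter--Weyl together with Proposition \ref{abeli} to handle the remaining bookkeeping.

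For (i), the theorem says that for any semisimple $\g$ the universal cover $G^c$ of $G^c_{\rm ad}$ is compact and $\pi_1(G^c_{\rm ad})=P^\vee/Q^\vee$. When $\g$ is simple I note that this universal cover is precisely the simply connected Lie group with Lie algebra $\g^c$, since $\g^c$ has the same complexification $\g$ as $\g^c$ itself and is real-simple. The kernel of $G^c\to G^c_{\rm ad}$ is central (as it is a discrete normal subgroup of a connected group) and coincides with $\pi_1(G^c_{\rm ad})$ by the general theory of covering spaces, giving $Z(G^c)=P^\vee/Q^\vee$.

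For (ii), the quotient $G^c/\Gamma$ is a connected compact Lie group, so by Peter--Weyl its finite dimensional irreducible representations are exactly those finite dimensional irreducible representations of $G^c$ which factor through $G^c/\Gamma$, i.e.\ those on which $\Gamma\subset Z(G^c)$ acts trivially. Combining this with the character computation recalled just before the theorem, where $\bchi$ was shown to give a well-defined isomorphism $P/Q\to \Hom(Z,\Bbb C^\times)$ that determines the action of $Z=P^\vee/Q^\vee$ on each $L_\lambda$, the condition becomes $\bchi_\lambda|_\Gamma=1$, i.e.\ $\lambda$ induces the trivial character of $\Gamma$.

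For (iii), decompose $\g=\oplus_i\g_i$ with $\g_i$ simple. The simply connected Lie group attached to the real Lie algebra $\g^c=\oplus_i\g_i^c$ is the product $\prod_i G_i^c$ by the standard factorization of simply connected groups (second fundamental theorem of Lie theory, Theorem \ref{second}), and each factor is compact by (i); hence $\prod_i G_i^c$ is compact with center $\prod_i Z_i=\prod_i P_i^\vee/Q_i^\vee$. By Proposition \ref{abeli}, any connected Lie group $K$ with Lie algebra $\g^c$ is of the form $(\prod_i G_i^c)/Z$ for some discrete central subgroup $Z\subset\prod_i Z_i$; since $\prod_i Z_i$ is finite this quotient is automatically compact, and $Z=\pi_1(K)$. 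Conversely, any semisimple connected compact Lie group $K$ has semisimple Lie algebra, so its universal cover is of the form $\prod_i G_i^c$ with $K=(\prod_i G_i^c)/\pi_1(K)$, of the asserted form.

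There is essentially no hard obstacle here, because all the analytic content is hidden in the preceding theorem (compactness of $G^c$ and finiteness of $P^\vee/Q^\vee$); the remaining work is organizational. The one point that deserves slight care is verifying in (i) that the simply connected cover of $\g^c$ really coincides with the universal cover of $G^c_{\rm ad}$ rather than some other finite cover, but this is immediate from the fact that the universal cover has trivial fundamental group and the correct Lie algebra.
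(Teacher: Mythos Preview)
Your proposal is correct and matches the paper's intent: the paper simply states that the corollary follows immediately from the preceding theorem, and you have supplied exactly the organizational details needed. The only point worth tightening is in (i): when you conclude $Z(G^c)=P^\vee/Q^\vee$, you need not just that $\ker(G^c\to G^c_{\rm ad})$ is central but that it equals the \emph{full} center of $G^c$; this holds because $G^c_{\rm ad}=\Aut(\g^c)^\circ$ acts faithfully on $\g^c$ and hence has trivial center, so the image of $Z(G^c)$ is trivial.
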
 

In particular, it follows that simply connected semisimple compact Lie groups are of the form 
$\prod_{i=1}^nG_i^c$, where $G_i^c$ are simply connected and simple.\footnote{We say that a connected Lie group $G$ is {\bf simple} if so is its Lie algebra. Thus this does not quite mean that $G$ is simple as an abstract group: it may have a finite center (e.g., $G=SU(2)$ or $SL_2(\Bbb C)$). For this reason such ``simple" groups are sometimes called {\bf almost simple}. However, the corresponding adjoint group $G_{\rm ad}$ is indeed simple as an abstract group.} 

\begin{corollary} Any connected compact Lie group is the quotient of $T\times C$ by a finite central subgroup, where $T=(S^1)^m$ is a torus and $C$ is compact, semisimple and simply connected.  
\end{corollary}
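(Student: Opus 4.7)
The plan is to construct an explicit finite covering homomorphism $\phi: T\times C \to K$ and then check that its kernel is central. First I would note that by the exercise preceding this corollary, $\mathfrak{k} := \mathrm{Lie}(K)$ is reductive, and moreover its Killing form is negative semidefinite (from a $K$-invariant inner product argument as in the compact form discussion). By Corollary \ref{redualg} there is a Lie algebra decomposition $\mathfrak{k} = \mathfrak{z}(\mathfrak{k}) \oplus [\mathfrak{k},\mathfrak{k}]$, where $\mathfrak{z}(\mathfrak{k})$ is abelian and $[\mathfrak{k},\mathfrak{k}]$ is semisimple with negative definite Killing form, i.e., a compact form of its complexification.

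Next I would build $T$ and $C$ separately. Let $T := Z(K)^\circ$ be the identity component of the center of $K$. By the proposition computing the Lie algebra of $Z(K)$ for connected $K$, we have $\mathrm{Lie}(T) = \mathfrak{z}(\mathfrak{k})$; then $T$ is a connected compact abelian Lie group of dimension $m := \dim \mathfrak{z}(\mathfrak{k})$, so $T \cong (S^1)^m$. For $C$, I would invoke the preceding corollary: complexifying $[\mathfrak{k},\mathfrak{k}]$ to a semisimple Lie algebra $\mathfrak{g}'$, the simply connected complex group has a compact real form whose universal cover $C$ is a simply connected compact Lie group with $\mathrm{Lie}(C) = [\mathfrak{k},\mathfrak{k}]$.

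I would then define $\phi: T \times C \to K$ by $\phi(t,c) = t \cdot \iota(c)$, where $\iota: C \to K$ is obtained from the Lie algebra inclusion $[\mathfrak{k},\mathfrak{k}] \hookrightarrow \mathfrak{k}$ by the second fundamental theorem (Theorem \ref{second}), using that $C$ is simply connected. Since $T$ lies in the center of $K$, the map $\phi$ is a group homomorphism. Its differential at $(1,1)$ is the canonical isomorphism $\mathfrak{z}(\mathfrak{k}) \oplus [\mathfrak{k},\mathfrak{k}] \to \mathfrak{k}$, so by Proposition \ref{generati}(ii) the map $\phi$ is surjective, and its kernel $\Gamma$ is discrete. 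Since $T\times C$ is compact, $\Gamma$ is finite. To see $\Gamma$ is central, note that for each $\gamma \in \Gamma$ the conjugation map $(t,c) \mapsto (t,c)\,\gamma\,(t,c)^{-1}$ is a continuous map from the connected space $T\times C$ to the discrete set $\Gamma$, hence constant, equal to $\gamma$ at $(1,1)$; so $\gamma$ commutes with every element.

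The proof is essentially routine once one has the right ingredients. The only substantive step is the existence of the simply connected compact $C$, which is precisely what the preceding corollary provides; the main potential subtlety is verifying that $[\mathfrak{k},\mathfrak{k}]$ really is a compact real form of its complexification (so that the preceding corollary applies to it) rather than an arbitrary real semisimple algebra, but this is immediate from negative-definiteness of the Killing form on $\mathfrak{k}$, which in turn follows from compactness of $K$ via averaging an inner product.
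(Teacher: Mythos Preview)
Your proof is correct and follows essentially the same approach as the paper: decompose $\mathfrak{k}=\mathfrak{z}(\mathfrak{k})\oplus[\mathfrak{k},\mathfrak{k}]$, build $T$ and $C$ with these Lie algebras, and multiply. The paper takes both $T$ and $C$ as connected Lie subgroups of $K$ and argues they are closed (hence compact), whereas you take $C$ as the abstract simply connected compact group and map it in via the second fundamental theorem; your route is slightly cleaner in that it delivers a simply connected $C$ directly, and your explicit check that the kernel is central fills in a detail the paper leaves implicit.
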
 

\begin{proof} Let $L$ be such a group, $\mathfrak{l}$ its Lie algebra. It is reductive, so we can uniquely decompose $\mathfrak{l}$ as $\mathfrak{t}\oplus\mathfrak{c}$ where $\mathfrak{t}$ is the center 
and $\mathfrak{c}$ is semisimple. Let $T,C\subset L$ be the connected Lie subgroups 
corresponding to $\mathfrak{t},\mathfrak{c}$. It is clear that 
${\rm Lie}\overline T=\mathfrak{t}={\rm Lie}T$, so $T$ is closed, hence compact, hence a torus. Also since $\mathfrak{c}$ is semisimple with negative Killing form, $C$ is compact, hence closed. Thus we have a surjective 
homomorphism $T\times C\to L$ whose kernel is finite, as desired. 
\end{proof} 

\subsection{Polar decomposition} 

Now let us study the structure of the Lie subgroup $G_{{\rm ad},\theta}\subset G_{\rm ad}$ corresponding to the real form $\g_\theta\subset \g$ of a semisimple complex Lie algebra $\g$, namely, the group of fixed points 
of the antiholomorphic involution $\omega_\theta=\omega\circ \theta$ in $G_{\rm ad}$. 
It is clear that this subgroup is closed (${\rm Lie}\overline{G_{{\rm ad},\theta}}=\g_\theta={\rm Lie}G_{{\rm ad},\theta}$), but it may be disconnected: e.g. if $\g_\theta=\mathfrak{sl}_2(\Bbb R)$ then $G_{\rm ad}=PGL_2(\Bbb C)$, so $G_{{\rm ad},\theta}=PGL_2(\Bbb R)$, the quotient of 
$GL_2(\Bbb R)$ by scalars, which has two components. However,  
the results below apply mutatis mutandis to the connected group $G_{{\rm ad},\theta}^\circ$. 

Let $K^c\subset G_{{\rm ad},\theta}$ be the subgroup of elements acting on $\g$ by unitary operators; namely, $K^c$ is the set of fixed points of $\omega_\theta$ 
on $G_{\rm ad}^c$.\footnote{Of course, the group $K^c$ depends on $\theta$, but for simplicity we will not indicate this dependence in the notation.} This is a closed (possibly disconnected) subgroup of $G_{\rm ad}^c$ since ${\rm Lie}\overline{K^c}=\mathfrak{k}^c={\rm Lie}K^c$, hence it is compact. Also let $P_\theta:=\exp(\p_\theta)\subset G_{{\rm ad},\theta}$ (note that it is not a subgroup!). Since $\p_\theta$ acts on $\g$ by Hermitian operators, the exponential map 
$\exp: \p_\theta\to P_\theta$ is a diffeomorphism, so $P_\theta\subset G_{{\rm ad},\theta}$ 
is a closed embedded submanifold (the set of elements acting on $\g$ by positive Hermitian operators). 

\begin{theorem}\label{pdec} (Polar decomposition for $G_{{\rm ad},\theta}$) The multiplication map $\mu: K^c\times P_\theta\to G_{{\rm ad},\theta}$ is a diffeomorphism. Thus $G_{{\rm ad},\theta}\cong K^c\times \Bbb R^{\dim\p}$ as a manifold (in particular, $G_{{\rm ad},\theta}$ is homotopy equivalent to $K^c$). 
\end{theorem}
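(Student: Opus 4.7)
The strategy is to deduce the decomposition from the classical polar decomposition in $GL(\g)$ via the adjoint representation. First I would equip $\g$ with the positive definite Hermitian form $\langle X,Y\rangle := -B(X,\omega(Y))$, which is positive because the Killing form $B$ is negative definite on $\g^c$. Using $\Ad$-invariance of $B$ and the identity $\omega\circ\Ad(g)=\Ad(\omega(g))\circ\omega$, a short computation shows $\Ad(g)^\dagger = \Ad(\omega(g)^{-1})$; thus under $\Ad:G_{\rm ad}\hookrightarrow GL(\g)$ the Hermitian adjoint corresponds to the group involution $g\mapsto\omega(g)^{-1}$.

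Next I would verify that $G_{{\rm ad},\theta}$ is a closed $*$-stable subgroup of $GL(\g)$. Closedness is automatic since it is the fixed set of the continuous involution $\omega_\theta$; $*$-stability follows from $\theta(g)=\omega(g)$ (the defining condition for membership in $G_{{\rm ad},\theta}$) together with the commutation $\theta\omega=\omega\theta$. One also checks that $\ad(Y)$ is skew-Hermitian for $Y\in\g^c$, so $\Ad(k)$ is unitary for $k\in K^c$ and $\Ad(\exp X)$ is positive Hermitian for $X\in\p_\theta=i\p^c$; in particular $\mu$ does land in $G_{{\rm ad},\theta}$.

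The core step is to invert $\mu$: given $g\in G_{{\rm ad},\theta}$, apply the classical polar decomposition in $GL(\g)$ to write $\Ad(g)=UH$ with $U$ unitary and $H=(\Ad(g)\Ad(g)^\dagger)^{1/2}$ positive Hermitian, depending smoothly on $\Ad(g)$. Since $\omega_\theta$ is an antiunitary involution commuting with $\dagger$, uniqueness of polar decomposition in $GL(\g)$ forces $\omega_\theta(U)=U$ and $\omega_\theta(H)=H$. Once one shows that $H=\Ad(\exp X)$ for some $X\in i\g^c$ (the key obstacle, addressed below), one can write $U=\Ad(k)$ for $k=g\exp(-X)\in G_{\rm ad}$; unitarity of $U$ then gives $\omega(k)=k$, and combined with $\omega_\theta(k)=k$ this yields $\theta(k)=k$, so $k\in K^c$. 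The $\omega_\theta$-invariance of $X$ together with $\omega(X)=-X$ forces $\theta(X)=-X$, placing $X$ in $\p_\theta$. Smoothness of $\mu^{-1}$ then follows from smoothness of the Hermitian square root and logarithm.

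The main obstacle is identifying the positive Hermitian factor $H$ with an element of the form $\Ad(\exp X)$ for $X\in i\g^c$. The key point is that $G_{\rm ad}=\Aut(\g)^\circ$ is a complex algebraic subgroup of $GL(\g)$ (cut out by the polynomial equations preserving the Lie bracket), and $\Ad$ embeds it as a Zariski-closed subgroup since $G_{\rm ad}$ has trivial center. Now $H^2=\Ad(g\omega(g)^{-1})$ lies in $\Ad(G_{\rm ad})$; let $T$ be the smallest algebraic subgroup of $GL(\g)$ containing $H^2$. Then $T\subset\Ad(G_{\rm ad})$, and using the spectral decomposition $H^2=\sum \lambda_j^2 P_j$ with $\lambda_j>0$, one checks that any algebraic character of $T$ vanishing on $H^2$ also vanishes on $(H^2)^t$ for every $t\in\Bbb R$ (since $\sum a_j\log(\lambda_j^2)=0$ implies $\sum t a_j\log(\lambda_j^2)=0$). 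Hence $H=(H^2)^{1/2}\in T\subset\Ad(G_{\rm ad})$, and writing $H=\Ad(h)$, we get $\log\Ad(h)\in\Lie(G_{\rm ad})=\ad(\g)$ by Proposition \ref{g=derg}. Hermiticity of this logarithm forces $h=\exp(X)$ with $X\in i\g^c$, completing the proof.
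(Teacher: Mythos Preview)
Your proof is correct and follows the same overall strategy as the paper: embed $G_{{\rm ad},\theta}$ into $GL(\g)$ via the adjoint representation, equip $\g$ with the Hermitian form coming from $-B(\cdot,\omega(\cdot))$, and read off the polar decomposition from the classical one for matrices. The paper's version is much terser --- it simply asserts that since $g^\dagger g$ is an automorphism of $\g$, so is $(g^\dagger g)^{1/2}$, and then concludes.

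The one place where your argument genuinely diverges is the justification of that key step, which the paper leaves to the reader. Your route via algebraic groups (the Zariski closure of $\langle H^2\rangle$ and its character lattice) is correct but heavier than needed. A more direct argument, which is presumably what the paper has in mind: since $A=g^\dagger g$ is a positive Hermitian automorphism, its eigenspace decomposition $\g=\bigoplus_\lambda \g_\lambda$ (all $\lambda>0$) satisfies $[\g_\lambda,\g_\mu]\subset\g_{\lambda\mu}$, so the operator $A^t$ acting by $\lambda^t$ on $\g_\lambda$ is an automorphism for every real $t$. The path $t\mapsto A^t$ from $1$ to $A^{1/2}$ then places $A^{1/2}$ in $\Aut(\g)^\circ=G_{\rm ad}$, and $\log A^{1/2}=\tfrac12\log A$ is a Hermitian derivation, hence equals $\ad X$ for some $X\in i\g^c$ by Proposition~\ref{g=derg}. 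This bypasses the need to invoke that $G_{\rm ad}$ is an algebraic subgroup.

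One small slip: for $A=UH$ with $U$ unitary and $H$ positive Hermitian, the correct formula is $H=(A^\dagger A)^{1/2}$, not $(AA^\dagger)^{1/2}$. This does not affect your argument.
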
 

\begin{proof} Recall that every invertible complex matrix $A$ can be uniquely written as a product 
$A=U_AR_A$, where $U=U_A$ is a unitary matrix and $R=R_A$ a positive Hermitian matrix, namely 
$R=(A^\dagger A)^{1/2}$, $U=A(A^\dagger A)^{-1/2}$ (the classical polar decomposition). 
Let us consider this decomposition for $g\in G_{{\rm ad},\theta}\subset {\rm Aut}(\g)\subset GL(\g)$. Since $g^\dagger g$ is an automorphism of $\g$ with positive eigenvalues, so is $(g^\dagger g)^{1/2}=R_g$, so $R_g\in P_\theta$ (a positive self-adjoint element in $G_{{\rm ad},\theta}$). Also since $U_g$ is unitary, it belongs to $K^c$. Thus 
the regular map $g\mapsto (U_g,R_g)$ is the inverse to $\mu$ (using the uniqueness of the polar decomposition).  
\end{proof} 

In particular, applying Theorem \ref{pdec} to complex Lie groups, we get 

\begin{corollary} The multiplication map defines a diffeomorphism 
$$
G_{\rm ad}^c\times\bold P\cong G_{\rm ad},
$$
 where $\bold P$
is the set of elements of $G_{\rm ad}$ acting on $\g$ by positive Hermitian operators.  
In particular, $\pi_1(G_{\rm ad})=\pi_1(G_{\rm ad}^c)=P^\vee/Q^\vee$. 
\end{corollary}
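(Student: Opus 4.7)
My plan is to essentially transcribe the proof of the preceding polar decomposition theorem, but now applied to the ambient complex group $G_{\rm ad}$ itself rather than to a real form $G_{{\rm ad},\theta}$. First I would fix the positive definite Hermitian form on $\g$ given by $(X,Y)=-K(X,\omega(Y))$, so that by the same computation as before the antilinear Cartan involution $\omega$ restricts to the map $g\mapsto (g^\dagger)^{-1}$ on $\Aut(\g)\subset GL(\g)$. In particular, $G_{\rm ad}$ is stable under taking Hermitian adjoints.

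The key step is then to factor $g\in G_{\rm ad}$ using the classical polar decomposition as $g=U_gR_g$ with $U_g$ unitary and $R_g=(g^\dagger g)^{1/2}$ positive Hermitian, and to show that both factors automatically lie in $G_{\rm ad}$. Here $g^\dagger g\in G_{\rm ad}$ since $G_{\rm ad}$ is $\omega$-stable, and I would argue that $R_g$ lies in $\exp(i\g^c)=\bold P$ as follows: the eigenspace decomposition of $g^\dagger g$ is a grading of $\g$ compatible with the bracket (as $g^\dagger g$ is an automorphism), so the Hermitian operator $X:=\tfrac{1}{2}\log(g^\dagger g)\in\End(\g)$ acts on $\g$ as a derivation, and by Proposition \ref{g=derg} every derivation of the semisimple Lie algebra $\g$ is inner, i.e., $X\in\g$. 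Since $X$ is Hermitian and $\g=\g^c\oplus i\g^c$ with $\g^c$ acting anti-Hermitianly on $\g$, we get $X\in i\g^c$ and hence $R_g=\exp(X)\in\bold P$. Then $U_g=gR_g^{-1}\in G_{\rm ad}$ is unitary, so it lies in $G_{\rm ad}^c$.

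The assignment $g\mapsto (U_g,R_g)$ is smooth (using smoothness of the principal square root on positive Hermitian matrices) and is manifestly inverse to the multiplication map $\mu:G_{\rm ad}^c\times\bold P\to G_{\rm ad}$, so $\mu$ is a diffeomorphism. For the fundamental group statement, $\bold P$ is diffeomorphic to the real vector space $i\g^c$ via $\exp$, and hence contractible; consequently $G_{\rm ad}$ deformation retracts onto $G_{\rm ad}^c$, which yields $\pi_1(G_{\rm ad})=\pi_1(G_{\rm ad}^c)=P^\vee/Q^\vee$ by the earlier theorem computing the fundamental group of the adjoint compact form.

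The main obstacle, beyond routine reuse of the previous theorem's template, is the identification $\bold P=\exp(i\g^c)$: one must show that every positive Hermitian element of $G_{\rm ad}$ actually arises as $\exp(X)$ with $X\in\g$ (rather than just with $X$ some Hermitian endomorphism of $\g$). This is precisely the point at which Proposition \ref{g=derg} on the inner nature of derivations of a semisimple Lie algebra is essential, and once this is in hand the rest of the argument is purely linear-algebraic.
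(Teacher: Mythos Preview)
Your proof is correct and follows the same approach as the paper: the Corollary is obtained by running the proof of the preceding Theorem verbatim with $G_{\rm ad}$ in place of $G_{{\rm ad},\theta}$, using the classical polar decomposition inside $GL(\g)$ and checking that both factors land in $G_{\rm ad}$. You are in fact more careful than the paper at one point: the paper's proof simply asserts that $(g^\dagger g)^{1/2}$ is again an automorphism of $\g$, whereas you justify this explicitly via the derivation argument and Proposition~\ref{g=derg}, which is exactly the right way to fill that gap.
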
 

\begin{corollary}\label{conta} If $G$ is a semisimple complex Lie group then the center $Z$ of $G$ is contained in $G^c$, i.e., coincides with the center $Z^c$ of $G^c$. Thus 
the restriction of finite dimensional representations from $G$ to $G^c$ 
is an equivalence of categories. 
\end{corollary}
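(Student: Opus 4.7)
The plan is to first upgrade the polar decomposition from $G_{\rm ad}$ to $G$ itself, and then exploit uniqueness of this decomposition together with the semisimplicity of $\g$ to force every central element of $G$ to lie in $G^c$. The equivalence of categories will then follow formally from the simple connectedness of both groups.

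For the polar decomposition of $G$, note that the covering map $p: G \to G_{\rm ad}$ has finite kernel $Z$, and by the preceding corollary $\pi_1(G_{\rm ad}) = \pi_1(G_{\rm ad}^c) = P^\vee/Q^\vee$. Since $\bold P$ is contractible, the inclusion $G_{\rm ad}^c \hookrightarrow G_{\rm ad}$ is a $\pi_1$-isomorphism, so pulling back $p$ along it yields the universal cover $G^c \to G_{\rm ad}^c$; thus $G^c$ embeds in $G$ as $p^{-1}(G_{\rm ad}^c)$. Pulling back along the simply connected $\bold P$ gives a trivial cover, so the lift $\widetilde{\bold P} \subset G$ through $1$ is diffeomorphic to $\bold P$ via $p$, and coincides with $\exp(i\mathfrak{k}) \subset G$ since $\exp$ is compatible with $p$. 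Lifting the whole polar decomposition yields a diffeomorphism $G^c \times \exp(i\mathfrak{k}) \xrightarrow{\sim} G$ given by multiplication.

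Now let $z \in Z$ and write $z = k\cdot p$ with $k \in G^c$ and $p = \exp(ix)$, $x \in \mathfrak{k}$. For any $u \in G^c$, the identity $uzu^{-1} = z$ rewrites as $(uku^{-1})(upu^{-1}) = kp$, and since $uku^{-1} \in G^c$ and $upu^{-1} = \exp(i\,\Ad(u)x) \in \exp(i\mathfrak{k})$, uniqueness of the polar decomposition forces $upu^{-1} = p$. Hence $\Ad(p)$ fixes $\mathfrak{k}$ pointwise; by $\Bbb C$-linearity of $\Ad(p)$ on $\g = \mathfrak{k} \oplus i\mathfrak{k}$, we obtain $\Ad(p) = \Id_\g$, equivalently $\exp(\ad(ix)) = \Id$. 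Therefore $\ad(ix)$ is diagonalizable with eigenvalues in $2\pi i\,\Bbb Z$. On the other hand, fixing a $G^c$-invariant positive Hermitian form on $\g$, the operator $\ad(x)$ is skew-Hermitian, so its eigenvalues are purely imaginary, which makes those of $\ad(ix)$ real. Together these force $\ad(ix) = 0$, and since $\g$ is centerless, $x = 0$ and $p = 1$, so $z = k \in G^c$. This establishes $Z \subset G^c$, hence $Z \subset Z^c$; the reverse inclusion $Z^c \subset Z$ follows from the same $\Bbb C$-linearity argument applied directly to the relation $\Ad(z)|_\mathfrak{k} = \Id$ for $z \in Z^c$.

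For the equivalence of categories, the restriction functor $\Rep G \to \Rep G^c$ is fully faithful because a $\Bbb C$-linear map between complex representations is $G$-equivariant iff $\g$-equivariant iff (by $\Bbb C$-linearity and $\g = \mathfrak{k}_{\Bbb C}$) $\mathfrak{k}$-equivariant iff $G^c$-equivariant, using connectedness of both groups. It is essentially surjective because any representation of $G^c$ differentiates to a representation of $\mathfrak{k}$, extends $\Bbb C$-linearly to $\g$, and integrates to $G$ by simple connectedness, restricting back to the original representation of $G^c$ by matching differentials at $1$. The main obstacle is the lifting of the polar decomposition: without the $\pi_1$-isomorphism $\pi_1(G_{\rm ad}^c) \cong \pi_1(G_{\rm ad})$, the preimage of $G_{\rm ad}^c$ in $G$ could fail to be connected and one could not sensibly identify $G^c$ as a subgroup of $G$. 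Once that identification is secured, the eigenvalue argument is the standard mechanism for propagating centrality across the compact and non-compact directions of the polar decomposition.
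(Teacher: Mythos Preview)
Your proof is correct, but the eigenvalue argument in the second paragraph is doing work that the first paragraph has already finished. Once you have established $G^c = p^{-1}(G_{\rm ad}^c)$, the inclusion $Z \subset G^c$ is immediate: $Z = \ker p = p^{-1}(1) \subset p^{-1}(G_{\rm ad}^c) = G^c$. This one-line deduction from the $\pi_1$-isomorphism is exactly what the paper intends, as the corollary is stated without proof directly after the computation $\pi_1(G_{\rm ad}) = \pi_1(G_{\rm ad}^c)$. Your argument via uniqueness of the polar factorization and the reality of the spectrum of $\ad(ix)$ is a valid and self-contained alternative, but it is a detour once the covering-space identification is in hand.

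Two minor slips worth noting. First, the phrase ``yields the universal cover $G^c \to G_{\rm ad}^c$'' and your essential-surjectivity argument (``integrates to $G$ by simple connectedness'') both presuppose that $G$ is simply connected, which the statement does not assume. For general connected semisimple $G$ the pullback $p^{-1}(G_{\rm ad}^c)$ is still connected (this is all you need, and it follows from the $\pi_1$-isomorphism), and essential surjectivity for non-simply-connected $G$ is obtained by passing to the simply connected cover $\widetilde G$ and using that $\ker(\widetilde G \to G) \subset Z(\widetilde G) \subset \widetilde G^c$---which is precisely the first assertion applied to $\widetilde G$. Second, and more cosmetic: in the last paragraph you refer to ``simple connectedness of both groups'' for full faithfulness, but only connectedness is used there.
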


This also implies that by taking coverings the polar decomposition applies verbatim 
to the real form $G_\theta=G^{\omega_\theta}\subset G$ of any connected complex semisimple Lie group $G$ instead of $G_{\rm ad}$. We note, however, that if $G$ is simply connected, then $G_\theta^\circ$ need not be. 
In fact, its fundamental group could be infinite. The simplest example is 
$G=SL_2(\Bbb C)$, then for the split form $G_\theta=SL_2(\Bbb R)$, which as we showed is homotopy equivalent to $SO(2)=S^1$, i.e. its fundamental group is $\Bbb Z$. 

\begin{example} 1. For $G_\theta=SL_n(\Bbb C)$ we have $K^c=SU(n)$ and 
$P_\theta$ is the set of positive Hermitian matrices of determinant $1$, so the polar decomposition  
in this case is the usual polar decomposition of complex matrices. 

2. For $G_\theta=SL_n(\Bbb R)$ we have $K^c=SO(n)$ 
and $P_\theta$ is the set of positive symmetric matrices of determinant $1$, so the polar decomposition in this case is the usual polar decomposition of real matrices. 
\end{example} 

\subsection{Connected complex reductive groups} \label{ccrg}

\begin{definition} A connected complex Lie group $G$ is {\bf reductive} if it is of the form 
$((\Bbb C^\times)^m\times G_{\rm ss})/Z$ where $G_{\rm ss}$ is semisimple and $Z$ is a finite central subgroup. 
A complex Lie group $G$ is reductive if $G^\circ$ is reductive and $G/G^\circ$ is finite. 
\end{definition}

\begin{example} $GL_n(\Bbb C)=(\Bbb C^\times\times SL_n(\Bbb C))/\mu_n$ is reductive. 
\end{example} 

It is clear that the Lie algebra ${\rm Lie}G$ of any complex reductive Lie group $G$ is reductive, and any complex reductive Lie algebra is the Lie algebra of a connected complex reductive Lie group. 
However, a simply connected complex Lie group with a reductive Lie algebra need not be reductive
(e.g. $G=\Bbb C$). 

If $G=((\Bbb C^\times)^m\times G_{\rm ss})/Z$ is a connected complex reductive Lie group 
then by Corollary \ref{conta}, $Z\subset (S^1)^m\times G_{\rm ss}^c\subset (\Bbb C^\times)^m\times G_{\rm ss}$, so we can define the compact subgroup $G^c\subset G$ by $G^c:=((S^1)^m\times G_{\rm ss}^c)/Z$, and vice versa. It is easy to see that this gives rise to a {\bf complexification functor} $K\mapsto K_{\Bbb C}$
\scriptsize
$$
\lbrace\text{connected compact Lie groups}\rbrace \to \lbrace\text{connected reductive complex Lie groups}\rbrace
$$
\normalsize such that $(G^c)_{\Bbb C}=G$, which defines a bijection between isomorphism classes. 
Moreover, restriction of finite dimensional representations from $G$ to $G^c$ is an equivalence, so representations of $G$ are completely reducible. The irreducible representations are parametrized by collections 
$(n_1,...,n_m,\lambda)$, $\lambda\in P_+(G_{\rm ss})$, $n_i\in \Bbb Z$, which define the trivial character of $Z$. 

We also see that the preimage $H\subset G$ of the standard maximal torus $H_{\rm ad}\subset G_{\rm ad}$ (see Subsection \ref{autom}) is a torus $H=\exp(\h)\cong (\Bbb C^\times)^r\subset G$ which contains the center $Z$ of $G$, and if $G$ is semisimple simply connected then the exponential map defines an isomorphism 
$\h/2\pi i Q^\vee\cong H$. 

\subsection{Linear groups} 

A connected Lie group $G$ (real or complex) is called {\bf linear} if it can be realized as a Lie subgroup of $GL_n(\Bbb R)$, respectively $GL_n(\Bbb C)$. We have seen that any complex semisimple group is linear. 
However, for real semisimple groups this is not so (e.g. the universal cover of $SL_2(\Bbb R)$ 
is not linear, see Exercise \ref{unico}). In fact, we see that we can characterize connected real semisimple linear groups as follows.

\begin{proposition} Suppose $\g_\theta$ 
is a real form of a semisimple complex Lie algebra $\g$, $G$ a connected complex Lie group with Lie algebra $\g$, and 
$G_\theta=G^{\omega_\theta}$. Then 
$G_\theta, G_\theta^\circ$ are linear groups. Moreover, every connected real semisimple linear Lie group is of the form $G_\theta^\circ$ for some connected complex semisimple Lie group $G$ and real form $G_\theta$.
\end{proposition}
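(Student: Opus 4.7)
The plan is to establish the two directions separately, using the first fundamental theorem of Lie theory (Theorem \ref{first}) as the main tool.

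\textbf{Forward direction: $G_\theta$ and $G_\theta^\circ$ are linear.} First I would show that any connected complex semisimple $G$ with Lie algebra $\g$ is linear. Such $G$ has finite center $Z_0$, since it is a quotient of the simply connected cover $\widetilde G$ by a (discrete, central) subgroup of the finite center $Z \cong P^\vee/Q^\vee$. Because the characters $\bchi_\lambda : Z \to \Bbb C^\times$ separate points of $Z$ as $\lambda$ ranges over $P_+$, we can choose finitely many $\lambda_1,\ldots,\lambda_k \in P_+$ with $\bigcap_i \ker(\bchi_{\lambda_i}|_{Z})=Z_0$, making $V:=\bigoplus_i L_{\lambda_i}$ a faithful representation of $G$. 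This realizes $G$ as a Lie subgroup of $GL(V)\cong GL_n(\Bbb C)\subset GL_{2n}(\Bbb R)$. Now $\omega_\theta:G\to G$ is an antiholomorphic involution, so $G_\theta$ is the fibre over $1$ of the smooth map $g\mapsto \omega_\theta(g)g^{-1}$, and by the orbit-stabiliser-type argument of Theorem \ref{closedlie1} it is a (closed) Lie subgroup of $G$ with Lie algebra $\g_\theta$. Its identity component $G_\theta^\circ$ is therefore also a Lie subgroup of $GL_{2n}(\Bbb R)$, hence linear.

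\textbf{Converse direction.} Suppose $H\subset GL_N(\Bbb R)\subset GL_N(\Bbb C)$ is a connected real semisimple linear Lie group with $\mathfrak h = \mathrm{Lie}(H)\subset \mathfrak{gl}_N(\Bbb R)$. Set $\g := \mathfrak h\otimes_{\Bbb R}\Bbb C \subset \mathfrak{gl}_N(\Bbb C)$, a complex semisimple Lie subalgebra. By Theorem \ref{first} applied inside $GL_N(\Bbb C)$, there is a unique connected complex Lie subgroup $G\subset GL_N(\Bbb C)$ with $\mathrm{Lie}(G)=\g$. Complex conjugation $\omega$ on $GL_N(\Bbb C)$ is an antiholomorphic involutive group automorphism, and since $\mathfrak h$ is its fixed locus on $\mathfrak{gl}_N(\Bbb R)$ we have $\omega(\g)=\g$. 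Hence $\omega(G)$ is a connected complex Lie subgroup of $GL_N(\Bbb C)$ with Lie algebra $\g$, and uniqueness forces $\omega(G)=G$. Thus $\omega|_G$ is an antiholomorphic involution of $G$, which on the Lie algebra is the antilinear involution $\omega_\theta$ whose fixed points are exactly $\mathfrak h = \g_\theta$.

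It remains to identify $H$ with $G_\theta^\circ$. The Lie algebra of the closed Lie subgroup $G_\theta = G^{\omega}$ is the fixed locus of $d\omega_1=\omega_\theta$ on $\g$, namely $\mathfrak h$, so $G_\theta^\circ$ is a connected Lie subgroup of $GL_N(\Bbb C)$ with Lie algebra $\mathfrak h$. On the other hand $H\subset GL_N(\Bbb R)\subset G^\omega = G_\theta$ (since elements of $H$ are $\omega$-fixed real matrices and lie in $GL_N(\Bbb C)$ with Lie algebra in $\g$), and $H$ is connected, so $H\subset G_\theta^\circ$; applying Theorem \ref{first} one more time to connected Lie subgroups of $GL_N(\Bbb R)$ with Lie algebra $\mathfrak h$ yields $H=G_\theta^\circ$.

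\textbf{Main obstacle.} The delicate point is step 3 of the converse: verifying that the abstractly defined connected complex Lie subgroup $G\subset GL_N(\Bbb C)$ with Lie algebra $\g$ is actually stable under complex conjugation $\omega$. This is not automatic from $\omega(\g)=\g$ alone, because $G$ need not be closed in $GL_N(\Bbb C)$; the key input is the uniqueness clause in Theorem \ref{first}, which lets us conclude $\omega(G)=G$ from the fact that $\omega$ is a homeomorphism sending a connected Lie subgroup with Lie algebra $\g$ to another one. Everything else is a clean application of the first fundamental theorem together with the constructions of $G_\theta$ and the faithful representation $V$.
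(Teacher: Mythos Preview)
The paper states this proposition without proof, so there is nothing to compare against; your overall strategy is sound and is the natural way to fill in the details. The forward direction is fine: you correctly use that any connected complex semisimple group is linear (established earlier via faithful representations of the compact form and polar decomposition), and that a closed subgroup of a linear group is linear.

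There is one genuine slip in the converse direction. You write
\[
H\subset GL_N(\Bbb R)\subset G^\omega = G_\theta,
\]
but $GL_N(\Bbb R)$ is certainly not contained in $G^\omega$: the group $G^\omega$ sits inside $G$, which is in general a small subgroup of $GL_N(\Bbb C)$. The parenthetical explanation (``elements of $H$ \dots\ lie in $GL_N(\Bbb C)$ with Lie algebra in $\g$'') does not establish $H\subset G$ either. What you actually need is the uniqueness clause of Theorem \ref{first} once more, applied inside the \emph{real} Lie group $GL_N(\Bbb C)$: both $H$ and $G_\theta^\circ$ are connected Lie subgroups of $GL_N(\Bbb C)$ with real Lie algebra $\mathfrak h$, hence they coincide. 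Equivalently, observe first that $G_\theta^\circ\subset G^\omega\subset (GL_N(\Bbb C))^\omega=GL_N(\Bbb R)$, and then apply Theorem \ref{first} inside $GL_N(\Bbb R)$ to the two connected subgroups $H$ and $G_\theta^\circ$, both having Lie algebra $\mathfrak h$. Your final sentence gestures at this, but the preceding bogus inclusion chain should be removed. With that correction the argument is complete.
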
 

\begin{exercise} Classify simply connected real semisimple {\bf linear} Lie groups. 
\end{exercise} 

\subsection{Strongly regular elements in connected complex reductive groups}

In this subsection we would like to generalize the results of
Subsections \ref{strreg} and \ref{ccsub} on strongly regular elements
from Lie algebras to Lie groups. We repeat the arguments from these
subsections with suitable changes.

Let $G$ be a connected complex reductive group of rank $r$.

\begin{lemma}\label{ger} For every $g\in G$,
\[
\dim{\rm Ker}(\Ad_g-1)\ge r.
\]
\end{lemma}

\begin{proof} This holds on a nonempty open subset in a sufficiently small
neighborhood of the identity: if $g=\exp x$ with $x$ regular semisimple and
sufficiently small, then
\[
{\rm Ker}(\Ad_g-1)={\rm Ker}(\ad x),
\]
which is a Cartan subalgebra and hence has dimension $r$. Equivalently,
all $(\dim\g-r+1)$-minors of $\Ad_g-1$ vanish on this subset. Since these
minors are holomorphic functions of $g\in G$, they vanish identically by
the identity theorem, hence the statement holds for all $g\in G$.
\end{proof}

Now let $P_g(t)$ be the characteristic polynomial of the operator $\Ad_g$ on
$\g$. By Lemma \ref{ger}, $P_g(t+1)$ is divisible by $t^r$ for all
$g\in G$. Thus
\begin{equation}\label{ptp1}
P_g(t+1)=t^r(t^m+b_{m-1}(g)t^{m-1}+\cdots+b_0(g)),
\end{equation}
where $b_i$ are regular functions on $G$, in fact polynomials in the matrix
coefficients of the adjoint representation, and $m=\dim\g-r$.

\begin{definition}
An element $g\in G$ is called {\bf strongly regular} if $b_0(g)\ne 0$, i.e.,
if the generalized eigenspace of $\Ad_g$ with eigenvalue $1$ has dimension
$r$. The set of strongly regular elements of $G$ is denoted by $G^{\rm sr}$.
\end{definition}

\begin{proposition}\label{dimker}
(i) If the $1$-eigenspace of $\Ad_g$ for $g\in G$ coincides with its generalized
$1$-eigenspace, then $\g^g$ is a reductive Lie algebra.

(ii) If $g\in G^{\rm sr}$, then the $1$-eigenspace of $\Ad_g$ coincides
with its generalized $1$-eigenspace.

(iii) If $g\in G^{\rm sr}$, then $\g^g$ is a reductive Lie algebra.

(iv) $G^{\rm sr}$ is a dense open subset of $G$.
\end{proposition}

\begin{proof}
(i) Without loss of generality, we may assume that $G$ is semisimple.
Indeed, the center of $\g$ is contained in $\g^g$, and after quotienting by
it the assertion reduces to the semisimple case.

By the assumption on $g$, we have an orthogonal decomposition
\[
\g=\g^g\oplus \g',
\]
with respect to the Killing form of $\g$, where $\g'$ is the sum of the
generalized eigenspaces of $\Ad_g$ with eigenvalues different from $1$.
Hence the Killing form is nondegenerate on $\g^g$, so $\g^g$ is reductive
by Proposition \ref{reducri}.

Part (ii) immediately follows from Lemma \ref{ger}, and part (iii) follows
from (i) and (ii).

(iv) If $U$ is a small enough ball centered at $0$ in $\g$, then
\[
\exp(U)\cap G^{\rm sr}=\exp(U\cap\g^{\rm sr}),
\]
so $b_0$ is not identically zero. Since $b_0$ is a holomorphic function on
$G$, it follows that $G^{\rm sr}$ is open and dense.
\end{proof}

\begin{remark}
The set $G^{\rm sr}$ is connected, since the nonvanishing locus of a
nonzero holomorphic function in a connected complex manifold is connected
(this is not hard to deduce from the Weierstrass preparation theorem).
\end{remark}

\begin{proposition}\label{caaa}
If $g\in G^{\rm sr}$, then $\g^g\subset\g$ is a Cartan subalgebra.
\end{proposition}

\begin{proof}
It suffices to consider the case when $G$ is semisimple. We have the
orthogonal decomposition
\[
\g=\g^g\oplus \g'
\]
under the Killing form. Let $y\in\g^g$. Since $\g^g$ is a Lie subalgebra,
$\ad y$ preserves $\g^g$; by invariance of the Killing form it also
preserves the orthogonal complement $\g'$. Moreover, $\Ad_g$ commutes with
$\exp(t\ad y)$. Hence the operator
\[
\Ad_{g\exp(ty)}-1,
\]
where $t\in\Bbb C$, preserves this decomposition and is invertible on
$\g'$ for small $t$, since it is so for $t=0$.

Thus by \eqref{ptp1}, applied to $g\exp(ty)$, the zero-eigenvalue
contribution to $\Ad_{g\exp(ty)}-1$ has algebraic multiplicity at least
$r=\dim\g^g$. Since the operator is invertible on $\g'$, its restriction
to $\g^g$ has only the eigenvalue $0$, hence is nilpotent. Since $g$ acts
trivially on $\g^g$, this says that
\[
\Ad_{\exp(ty)}-1=\exp(t\ad y)-1
\]
is nilpotent on $\g^g$. Therefore $\ad y$ is nilpotent on $\g^g$. By
Engel's theorem, $\g^g$ is a nilpotent Lie algebra. Since $\g^g$ is
reductive, it follows that $\g^g$ is abelian.

Let $x\in\g^g$ and let $x=x_s+x_n$ be its Jordan decomposition inside the
reductive Lie algebra $\g^g$. Then for every $y\in\g^g$ we have
\[
{\rm tr}_\g(\ad x_n\,\ad y)=0,
\]
since $\ad x_n$ is nilpotent and commutes with $\ad y$. Since the Killing
form of $\g$ is nondegenerate on $\g^g$, this implies $x_n=0$. Hence
$\g^g$ consists of semisimple elements. As $\dim\g^g=r$, it is a Cartan
subalgebra of $\g$.
\end{proof}

\section{\bf Maximal tori in compact groups, Cartan decomposition}

\subsection{Maximal tori in connected compact Lie groups} 

Let $\g$ be a complex semisimple Lie algebra, $\g^c$ its compact form, $G$ 
a connected Lie group with Lie algebra $\g$, $G^c\subset G$ its compact part (the connected Lie subgroup with Lie algebra $\g^c$), as above. 

A {\bf Cartan subalgebra} $\h^c\subset \g^c$ is a maximal commutative Lie subalgebra 
(note that it automatically consists of semisimple elements since all elements of $\g^c$ are semisimple). 
In other words, it is a subspace such that $\h^c\otimes_{\Bbb R}\Bbb C$ is a Cartan subalgebra of $\g$. 

Recall that all Cartan subalgebras of $\g$ are conjugate, 
even if equipped with a system of simple roots (Theorem \ref{conjcar}). Namely, given two such subalgebras 
$(\h,\Pi)$ and $(\h',\Pi')$, there is $g\in G$ such that 
${\rm Ad}_g (\h,\Pi)=(\h',\Pi')$. It turns out that the same result holds for $\g^c$. 

\begin{lemma}\label{compconj} 
Any two Cartan subalgebras in $\g^c$ equipped with systems of simple roots 
are conjugate under $G^c$. 
\end{lemma}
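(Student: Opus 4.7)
The plan is to adapt the proof of Theorem \ref{conjcar} to the compact setting, replacing the use of connectedness of the regular semisimple locus (which used Zariski density over $\Bbb C$) with a compactness/minimization argument on $G^c$. The proof will split into two steps: first, show that any two Cartan subalgebras in $\g^c$ are conjugate under $G^c$ (ignoring the simple roots); second, show that once the Cartan subalgebra is fixed, the normalizer $N_{G^c}(\h^c)$ surjects onto the Weyl group $W$, which acts simply transitively on systems of simple roots.

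For the first step, I would fix $\h^c \subset \g^c$ with a regular semisimple element $x \in \h^c$ (one whose complexification is regular in $\g$; these form a dense open subset of $\h^c$ by Proposition \ref{dimeq}), and pick a regular $x'$ in another Cartan subalgebra $\h'^c$. Equip $\g^c$ with the invariant positive-definite inner product $\langle \cdot,\cdot\rangle := -K(\cdot,\cdot)$ and consider the continuous function
$$
f: G^c \to \Bbb R, \qquad f(g) := \langle \mathrm{Ad}(g)x' - x,\ \mathrm{Ad}(g)x' - x\rangle.
$$
By compactness of $G^c$, the function $f$ attains its minimum at some $g_0$. The first-order condition $\tfrac{d}{dt}|_{t=0} f(e^{ty}g_0) = 0$ for every $y \in \g^c$ becomes, after using invariance of $\langle\cdot,\cdot\rangle$, the identity $\langle y, [\mathrm{Ad}(g_0)x',x]\rangle = 0$ for all $y$. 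Nondegeneracy forces $[\mathrm{Ad}(g_0)x', x] = 0$. Since $x$ is regular, its centralizer in $\g$ is $\h$, so $\mathrm{Ad}(g_0)x' \in \h \cap \g^c = \h^c$; as $\mathrm{Ad}(g_0)x'$ is itself regular, its centralizer in $\g^c$ equals $\mathrm{Ad}(g_0)\h'^c$ as well as $\h^c$, giving $\mathrm{Ad}(g_0)\h'^c = \h^c$.

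For the second step, after replacing $(\h'^c, \Pi')$ by $(\h^c, \mathrm{Ad}(g_0)\Pi')$, I would show that two systems of simple roots in the same Cartan subalgebra $\h^c$ are conjugate by an element of $N_{G^c}(\h^c)$. The proof of Proposition \ref{norma} already produced explicit lifts $\widetilde w = S_{i_1}\cdots S_{i_n} \in G_{\mathrm{ad}}$ of every $w \in W$, where each $S_i = \eta_i\bigl(\begin{smallmatrix}0&1\\-1&0\end{smallmatrix}\bigr)$ lies in the compact subgroup $SU(2)_i \subset G_{\mathrm{ad}}^c$, hence in $G^c$ after lifting along the covering $G^c \to G_{\mathrm{ad}}^c$. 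Thus $W$ is realized inside $N_{G^c}(\h^c)/H^c$, and since $W$ acts simply transitively on systems of simple roots for $\h$, the two systems are conjugate by such an element.

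The main obstacle I expect is the critical-point analysis in the first step: it is the one place where compactness of $G^c$ is genuinely used, and one must be careful that the first-order condition really does produce a commutator (this requires the full strength of invariance of the Killing form together with the skew-symmetry of $\mathrm{ad}\,y$ on $\g^c$). Everything else is formal or was already set up in the complex case.
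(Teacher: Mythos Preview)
Your proof is correct and takes a genuinely different route from the paper's. The paper instead leverages the already-proved complex conjugacy (Theorem \ref{conjcar}): given $(\h^c,\Pi)$ and $(\h'^c,\Pi')$, pick $g\in G$ with $g(\h^c,\Pi)g^{-1}=(\h'^c,\Pi')$; then $\overline g:=\omega(g)$ does the same job, so $\overline g^{-1}g$ fixes $(\h^c,\Pi)$ and hence lies in $H$. Writing the polar decomposition $g=kp$ with $k\in G^c$, $p\in\bold P$, one finds $h=p^2$, so $p=h^{1/2}\in H$ commutes with $\h^c$, and therefore $k\in G^c$ already conjugates $(\h^c,\Pi)$ to $(\h'^c,\Pi')$. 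This is shorter and handles the Cartan subalgebra and the system of simple roots simultaneously, but it relies on the polar decomposition established just before. Your variational argument is more self-contained---indeed it is essentially the same idea the paper deploys later in Proposition \ref{maxab}(iii) to conjugate maximal abelian subspaces of $\p_\theta$---and avoids polar decomposition entirely, at the cost of treating the Weyl-group step separately. One small simplification for your step two: rather than lifting $S_i$ along $G^c\to G_{\rm ad}^c$, note directly that $e_i-f_i\in\g^c$, so $S_i=\exp_G\bigl(\tfrac{\pi}{2}(e_i-f_i)\bigr)$ already lies in $G^c$.
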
 

\begin{proof} Given $(\h^c,\Pi)$ 
and $(\h^{c'},\Pi')$, there is $g\in G$ such that 
${\rm Ad}_g (\h^c,\Pi)=(\h^{c'},\Pi')$. Then we also have 
${\rm Ad}_{\overline g}(\h^c,\Pi)=(\h^{c'},\Pi')$, 
where $\overline g:=\omega(g)$. So 
$\overline g^{-1}g$ commutes with $\h^c$ and preserves $\Pi$, i.e., 
$\overline g h=g$, $h\in H:= \exp(\h^c_{\Bbb C})$. 
Writing $g=kp$, where $k\in G^c$, 
$p\in \bold P$, we have 
$kp^{-1}h=kp$, so $h=p^2$. Since $p$ is positive, 
$p=h^{1/2}$, so it commutes with $\h^c$ and preserves $\Pi$, thus 
${\rm Ad}_k (\h^c,\Pi)=(\h^{c'},\Pi')$, as claimed. 
\end{proof} 

Note that for every Cartan subalgebra $\h^c\subset \g^c$, 
$H^c=\exp(\h^c)\subset G^c$ is a torus, which is clearly a {\bf maximal torus}: the Lie algebra of any torus $T\subset G^c$ containing $H^c$ would be a commutative subalgebra of $\mathfrak g^c$ containing $\h^c$, so it must coincide with $\mathfrak h^c$, hence $T=H^c$. 
Conversely, if $H^c\subset G^c$ is a maximal torus then 
${\rm Lie}(H^c)$ can be included in a Cartan subalgebra, hence it is itself a Cartan subalgebra. 
So the exponential map defines a bijection between Cartan subalgebras in $\g^c$ and maximal tori in 
$G^c$. The same statements apply to Cartan subalgebras in $\g$ and maximal tori in 
$G$.  Moreover, by the last paragraph of Subsection \ref{ccrg}, $H^c,H$ are maximal abelian subgroups of $G^c,G$, since the same is true for $G_{\rm ad}$ (Subsection \ref{autom}). 

\begin{corollary}\label{conjuu} (i) Any two maximal tori in $G$ or $G^c$ equipped with 
systems of simple roots are conjugate.  

(ii) Any element $g\in G^{\rm rs}$ is contained in a maximal torus $H\subset G$. 
\end{corollary} 

\begin{proof} Part (i) follows from the discussion preceding the corollary. 
For (ii), let $\h=\g^g$. By Proposition \ref{caaa} it is a Cartan subalgebra
of $\g$, hence $H:=\exp(\h)$ is a maximal torus of $G$ which commutes with $g$. 
Since $H$ is a maximal abelian subgroup of $G$, it follows that $g\in H$. 
\end{proof} 

We also have 
 
\begin{theorem}\label{thmco} (i) Every element of a connected compact Lie group $K$ is contained in a maximal torus.

(ii) All maximal tori in $K$ are conjugate (even when equipped with 
systems of simple roots).  
\end{theorem}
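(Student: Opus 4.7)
I would begin by handling the conjugacy assertion. Since $\Lie K$ is reductive, the structure theorem already proved identifies $K$ as a finite quotient of $Z(K)^\circ \times K_{\rm ss}$ with $Z(K)^\circ$ a torus and $K_{\rm ss}$ connected compact semisimple. Every maximal torus of $K$ descends from $Z(K)^\circ \times T_{\rm ss}$ for a maximal torus $T_{\rm ss}$ of $K_{\rm ss}$, and all root-theoretic data live on the semisimple factor. Conjugacy of maximal tori equipped with a system of simple roots thus reduces to the corresponding statement for $K_{\rm ss}$, which is Lemma~\ref{compconj}, and the conjugating element descends from $K_{\rm ss}$ to $K$.

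For the first assertion --- every $g \in K$ lies in some maximal torus --- fix a maximal torus $T \subset K$ with Weyl group $W = N_K(T)/T$, and consider for each $g \in K$ the map $L_g : K/T \to K/T$, $kT \mapsto gkT$. Connectedness of $K$ furnishes a path from $e$ to $g$ and hence a homotopy $\id_{K/T} \simeq L_g$, so the Lefschetz numbers match: $L(L_g) = L(\id_{K/T}) = \chi(K/T)$. If $\chi(K/T) \neq 0$, the Lefschetz fixed-point theorem supplies a fixed point $kT$ of $L_g$, i.e., $k^{-1} g k \in T$, and therefore $g \in kTk^{-1}$ belongs to a maximal torus. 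The whole problem thus reduces to showing $\chi(K/T) \neq 0$.

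To compute $\chi(K/T)$, I would apply the Lefschetz--Hopf formula to $L_t$ for a regular $t \in T$, i.e., one with $\alpha(\log t) \notin 2\pi i\Bbb Z$ for every root $\alpha$. The fixed-point equation $twT = wT$ forces $w^{-1}tw \in T$; by regularity the only maximal torus containing $t$ is $T = C_K(t)^\circ$, so $wTw^{-1} = T$ and $w \in N_K(T)$, yielding exactly $|W|$ fixed points, one for each class in $W$. In the chart $X \mapsto w\exp(X)T$, the differential of $L_t$ at the fixed point indexed by $w$ is $\Ad(w^{-1}tw)$ on $\mathfrak{k}/\mathfrak{t}$; pairing the root decomposition of $(\mathfrak{k}/\mathfrak{t}) \otimes_{\Bbb R} \Bbb C$ into real $2$-planes from $\pm\alpha$, the determinant of $\Ad(w^{-1}tw) - \id$ splits as a product of factors $2(1 - \cos\theta_\alpha) > 0$, with $\theta_\alpha$ determined by $\alpha(\log w^{-1}tw) = i\theta_\alpha$. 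Hence the local index at every fixed point equals $+1$ and $\chi(K/T) = |W| > 0$.

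I expect the principal obstacle to be the positivity step of the previous paragraph: it is crucial that all $|W|$ contributions to the Lefschetz number carry the same sign, since otherwise cancellations could render $\chi(K/T) = 0$ and destroy the argument. The positivity rests on the compactness of $K$ --- roots are imaginary-valued on $\mathfrak{t}$, so $\Ad(t)$ acts on each real root $2$-plane as a genuine rotation --- and any attempt to adapt the proof to non-compact real forms fails precisely here, since $\det(\Ad(t) - \id)$ could then be indefinite.
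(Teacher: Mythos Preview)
Your proof is correct and follows the classical Hopf--Samelson/Weil argument via the Lefschetz fixed-point theorem, which is genuinely different from the paper's route. The paper instead argues directly: letting $K'$ denote the set of elements lying in some maximal torus, the conjugacy of maximal tori shows $K'$ is the image of the compact set $K\times T$ under $(k,t)\mapsto ktk^{-1}$, hence closed; on the other hand the set of regular elements (those whose centralizer in $\g^c$ has dimension $\le\mathrm{rank}(\g)$) is open, nonempty, and contained in $K'$, while its complement is cut out by polynomial equations, forcing $K\setminus K'=\varnothing$.

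Each approach has its merits. The paper's argument is self-contained within the course, avoiding any input from algebraic topology beyond point-set facts. Your argument requires the Lefschetz machinery but is arguably cleaner and yields the bonus fact $\chi(K/T)=|W|$, which the paper does not extract. Your emphasis on the positivity of the local indices is well placed: it is exactly the step that exploits compactness, and your remark that this is what fails for noncompact real forms is the right diagnosis.
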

\begin{proof} We may assume without loss of generality that $K$ is semisimple, i.e., $K=G^c$ for a connected semisimple complex Lie group $G$, which implies (ii). To prove (i), note that since the preimage in $G^c$ of a maximal torus of $G^c_{\rm ad}$ is a maximal torus of $G^c$, it suffices to consider the case $K=G_{\rm ad}^c$. Let $K'\subset K$ be the set of elements contained in a maximal torus. 
Fix a maximal torus $T\subset K$ and consider the
map $f: K\times T\to K$ given by $f(k,t)=ktk^{-1}$, whose image is $K'$. 
This implies that $K'$ is compact, hence closed, so $K\setminus K'$ is open. 

On the other hand, by Corollary \ref{conjuu}(ii), $(K\setminus K')\cap G^{\rm sr}=\emptyset$, so 
for all $g\in K\setminus K'$ we have $b_0(g)=0$. Thus $K\setminus K'$ is empty and $K'=K$.
\end{proof} 

This immediately implies

\begin{corollary}
The exponential map $\exp: \g^c\to G^c$ is surjective.\footnote{Here is another proof of this corollary. Let $B(x,y)$ be the Killing form of $\g^c$. Since $B$ is negative definite, the form $-B$ extends to a bi-invariant Riemannian metric on $G_c$. Since $G^c$ is compact, the Hopf-Rinow theorem guarantees that for any $g\in G^c$ there is a geodesic on $G^c$ in this metric connecting $1$ and $g$. But it is easy to see that this geodesic is a segment of a one-parameter subgroup of $G^c$, which implies the statement.}
\end{corollary} 

\begin{exercise} Is the exponential map surjective for the group $SL_2(\Bbb C)$? 
\end{exercise} 

\subsection{Semisimple and unipotent elements}

Let $G$ be a connected reductive complex Lie group. 
An element $g\in G$ is called {\bf semisimple} if it acts in every finite dimensional representation of $G$ by a semisimple (=diagonalizable) operator, and {\bf unipotent} if it acts in every finite dimensional representation of $G$ by a unipotent operator (all eigenvalues are $1$). For example, Corollary \ref{conjuu}(ii) implies that 
every regular semisimple element is semisimple. 

\begin{exercise} Let $Y$ be a faithful finite dimensional representation of $G$ 
(it exists by Corollary \ref{exfaith}). Show that $g\in G$ is semisimple if and only if it acts semisimply 
on $Y$, and unipotent if and only if it acts unipotently on $Y$. 

{\bf Hint:} Use Proposition \ref{dirsum}.  
\end{exercise}

\begin{exercise} Show that if $G$ is semisimple then 
the exponential map defines a homeomorphism between 
the set of nilpotent elements in $\g={\rm Lie} G$ and the set of unipotent elements in $G$. 
\end{exercise} 

\begin{exercise}\label{redu1} Let $Z$ be the center of a connected complex reductive group $G$. 

(i) Show that the homomorphism $\pi: G\to G/Z$ defines a bijection between unipotent elements 
of $G$ and unipotent elements of $G/Z$. 

(ii) Show that the set of semisimple elements of $G$ is the preimage under $\pi$ 
of the set of semisimple elements of $G/Z$. 
\end{exercise} 

\begin{proposition}\label{Jo} (i) (Jordan decomposition in $G$). Every element 
\linebreak $g\in G$ has a unique factorization $g=g_sg_u$, where 
$g_s\in G$ is semisimple, $g_u\in G$ is unipotent 
and $g_sg_u=g_ug_s$. 

(ii) $g\in G$ is contained in a maximal torus if and only if it is semisimple.
\end{proposition}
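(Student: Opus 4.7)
The strategy is to first establish the decomposition for connected adjoint semisimple groups $G = G_{\rm ad}$ by translating into linear algebra via the adjoint embedding $\Ad : G_{\rm ad} \hookrightarrow \Aut(\g) \subset GL(\g)$, whose image equals the identity component of $\Aut(\g)$ by Proposition \ref{auto1}, and then bootstrap to the general connected reductive case via the projection onto the adjoint quotient, exploiting the bijection between unipotent elements of $G$ and of $G/Z(G)$ given in Exercise \ref{redu}.

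For existence when $G = G_{\rm ad}$, given $g \in G$ I would apply the classical Jordan decomposition in $GL(\g)$ to the operator $\Ad(g)$ to obtain $\Ad(g) = S \cdot U$ with $S$ semisimple, $U$ unipotent, and $SU = US$, both polynomials in $\Ad(g)$. The key step is that $S$ and $U$ lie in $\Aut(\g)$, not merely in $GL(\g)$: decomposing $\g = \bigoplus_\lambda \g_\lambda$ into generalized eigenspaces of $\Ad(g)$, a computation analogous to Lemma \ref{gra} yields the multiplicative grading property $[\g_\lambda, \g_\mu] \subset \g_{\lambda\mu}$, and since $S$ acts on $\g_\lambda$ by the scalar $\lambda$ it manifestly respects the bracket; consequently so does $U = S^{-1}\Ad(g)$. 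Next, $U = \exp(N)$ with $N := \log U$ a nilpotent derivation of $\g$, which is inner by Proposition \ref{g=derg}, so $U \in \Aut(\g)^\circ = G_{\rm ad}$; hence $S = \Ad(g) U^{-1} \in G_{\rm ad}$ as well. Injectivity of $\Ad$ on $G_{\rm ad}$ then produces unique elements $g_s, g_u \in G_{\rm ad}$ with $\Ad(g_s) = S$ and $\Ad(g_u) = U$, and the identities $g = g_s g_u$ and $g_s g_u = g_u g_s$ follow from the corresponding identities for $\Ad$. Uniqueness holds because any factorization of $g$ satisfying the requirements transports through $\Ad$ to a Jordan decomposition of $\Ad(g)$ in $GL(\g)$, which is unique.

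For the general reductive case, I would write $G = Z^\circ \cdot G_{\rm ss}$ with $Z^\circ$ the connected center (a complex torus, hence consisting entirely of semisimple elements) and $G_{\rm ss} = [G,G]$ semisimple; decomposing $g = z h$ reduces the problem to Jordan-decomposing $h \in G_{\rm ss}$. For this I would project $h$ via $\pi : G_{\rm ss} \to G_{\rm ad}$, apply the previous paragraph to obtain $\pi(h) = \bar h_s \bar h_u$, lift $\bar h_u$ uniquely to a unipotent element $h_u \in G_{\rm ss}$ using Exercise \ref{redu}(i), and set $h_s := h h_u^{-1}$; the element $h_s$ is semisimple by Exercise \ref{redu}(ii), since $\pi(h_s) = \bar h_s$ is. The hardest part of the whole argument, which this route is designed to sidestep, is to show that $h_s$ and $h_u$ genuinely commute in $G_{\rm ss}$ rather than merely modulo the (possibly nontrivial) finite center; the key trick is that $h h_u h^{-1}$ is again unipotent and projects to $\bar h \bar h_u \bar h^{-1} = \bar h_u$ in $G_{\rm ad}$ (as $\bar h$ and $\bar h_u$ commute there), so by uniqueness of the unipotent lift $h h_u h^{-1} = h_u$, forcing $h_u$ to commute with $h$ and hence with $h_s$. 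Assembling, $g = (z h_s) \cdot h_u$ is the Jordan decomposition in $G$, and uniqueness is inherited from the adjoint case by the same projection argument.
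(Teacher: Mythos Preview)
Your proof is correct and follows essentially the same approach as the paper's hint: reduce to the adjoint group via Exercise \ref{redu}, then use the classical Jordan decomposition of $\Ad(g)$ in $GL(\g)$ and show that the resulting semisimple and unipotent parts are automorphisms of $\g$ lying in the identity component. Your write-up supplies the details the hint omits---notably the multiplicative eigenspace grading showing $S\in\Aut(\g)$, the observation that $\log U$ is a nilpotent derivation (hence inner) forcing $U\in G_{\rm ad}$, and the commutativity argument for the unipotent lift via uniqueness in Exercise \ref{redu}(i)---and your intermediate factorization $g=zh$ through $Z^\circ\cdot G_{\rm ss}$ is a minor organizational variant of the paper's direct projection $G\to G/Z(G)$.
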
 

\begin{proof} The proof of (i) is Exercise \ref{jordec}. 
For (ii) we only need to prove the ``if" direction. So suppose $g\in G$ is semisimple, 
then $\g^g$ is reductive by Proposition \ref{dimker}(i). 
 Taking a regular semisimple $y\in \g^g$, we find that 
for small $t$ the Lie subalgebra $\h:=\g^{g\exp(ty)}=(\g^g)^{\exp(ty)}$ 
is a Cartan subalgebra of $\g^g$. But $\dim\h=\dim\g^{g\exp(ty)}\ge r$. 
Thus $\dim \h=r$ and $\h$ is a Cartan subalgebra of $\g$. 
Thus the maximal torus $H:=\exp(\h)$ is a maximal abelian subgroup of $G$
which commutes with $g$. Hence $g\in H$. 
\end{proof} 

Note that the same argument applied to $G^c,\g^c$ instead of $G,\g$ gives another proof of Theorem \ref{thmco}(ii). 

\begin{exercise}\label{jordec} Prove Proposition \ref{Jo}(i). 

{\bf Hint.} Use Exercise \ref{redu1} to reduce to the case when $G=G_{\rm ad}$ is a semisimple adjoint group. In this case, write ${\rm Ad}_g$ as $su$, where $s$ is a semisimple and $u$ a unipotent operator 
with $su=us$ (Jordan decomposition for matrices). Show that $s={\rm Ad}_{g_s}$ and $u={\rm Ad}_{g_u}$ 
for some commuting $g_s,g_u\in G_{\rm ad}$. Then establish uniqueness using the uniqueness of Jordan decomposition of matrices. 
\end{exercise} 

\begin{proposition}\label{srss} Any strongly regular element $g\in G$ is semisimple. 
\end{proposition} 

\begin{proof}
Let $g=g_sg_u$
be the Jordan decomposition of $g$. Write
$g_u=\exp x$
with \(x\in\g\) nilpotent. Since \(g\) commutes with \(g_u\), it fixes \(x\);
hence
$x\in \g^g$.
But by Proposition \ref{caaa}, \(\g^g\) is a Cartan subalgebra, hence
consists of semisimple elements. Thus \(x\) is both nilpotent and
semisimple, so \(x=0\). Therefore \(g_u=\exp x=1\), and \(g\) is
semisimple.
\end{proof} 

In view of Proposition \ref{srss}, strongly regular elements are also called {\bf regular semisimple}.

\subsection{Maximal abelian subspaces of $\p_\theta$}

Let $G$ be a connected complex semisimple group, $G_\theta\subset G$ a real form, 
$\g_\theta\subset \g$ their Lie algebras. We have the polar decomposition 
$G_\theta=K^c P_\theta$ and the additive version 
$\g_\theta=\mathfrak{k}^c\oplus \p_\theta$, with $\p_\theta=i\p^c$.
Also $\g^c=\mathfrak{k}^c\oplus \p^c$. 

\begin{proposition}\label{maxab} (i) Let $\a$ be a maximal abelian subspace of $\p_\theta$. Then the centralizer $\mathfrak z$ of $\a$ in $\g^c$ has the form $\mathfrak{m}\oplus \mathfrak{a}$, where $\mathfrak{m}$ is a reductive Lie algebra contained in $\mathfrak{k}^c$. Moreover, if $\mathfrak{t}$ is a Cartan subalgebra 
of $\mathfrak{m}$ then $\mathfrak{t}\oplus i\mathfrak{a}$ is a Cartan subalgebra of $\g^c$ and $\mathfrak{t}\oplus \mathfrak{a}$ is a Cartan subalgebra of $\g_\theta$.

(ii) If $a\in \a$ is sufficiently generic then the centralizer of $a$ in $\p_\theta$ is $\a$. 

(iii) For any $p\in \p_\theta$ there exists $k\in K^c$ such that ${\rm Ad}_k(p)\in \a$.

(iv) All maximal abelian subspaces of $\p_\theta$ are conjugate by $K^c$. 
\end{proposition}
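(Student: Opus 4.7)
The central observation is that every $a\in\a\subset\p_\theta=i\p^c$ acts on $\g$ by an operator ${\rm ad}(a)$ that is self-adjoint with respect to the positive definite Hermitian form on $\g$ coming from the compact real form $\g^c$; hence ${\rm ad}(a)$ is diagonalizable with real eigenvalues. Consequently $\{{\rm ad}(a):a\in\a\}$ is a commuting family of simultaneously diagonalizable operators, and since $\theta$ and $\omega$ both preserve $\a$ as a set, the joint kernel $C_\g(\a)$ is stable under $\theta$ and $\omega_\theta$ and decomposes compatibly with $\g=\mathfrak{k}\oplus\p$ and with the various real forms in play.

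For (i), I read $\z$ as $\mathfrak{m}:=C_{\mathfrak{k}^c}(\a)$, since the full centralizer $C_{\g^c}(\a)$ automatically contains $i\a\subset\p^c$ and must be stripped of it to land in $\mathfrak{k}^c$. Maximality of $\a$ in $\p_\theta$ forces the $\p^c$-part of $C_{\g^c}(\a)$ to be exactly $i\a$: if $p\in\p^c$ centralizes $\a$ then $ip\in\p_\theta$ does too, forcing $ip\in\a$. Thus $C_{\g^c}(\a)=\mathfrak{m}\oplus i\a$ with $i\a$ a central abelian ideal. Because $\a$ consists of semisimple elements, its complexification is a toral subalgebra of $\g$ and $C_\g(\a)$ is reductive; as $\omega$ stabilises $C_\g(\a)$, the fixed-point subalgebra $C_{\g^c}(\a)$ is reductive, and therefore so is the quotient $\mathfrak{m}$. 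For a Cartan $\mathfrak{t}\subset\mathfrak{m}$, the subspace $\mathfrak{t}\oplus i\a$ is abelian and toral; and if $x=m+ib\in C_{\g^c}(\a)$ commutes with $\mathfrak{t}$, then $m\in C_\mathfrak{m}(\mathfrak{t})=\mathfrak{t}$, forcing $x\in\mathfrak{t}\oplus i\a$, so $\mathfrak{t}\oplus i\a$ is a Cartan of $\g^c$. The identical argument with $\a$ in place of $i\a$ yields that $\mathfrak{t}\oplus\a$ is a Cartan of $\g_\theta$.

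For (ii), we have $\bigcap_{a\in\a}\ker({\rm ad}(a)|_{\p_\theta})=C_{\p_\theta}(\a)=\a$ by maximality, while $a\mapsto\dim\ker({\rm ad}(a)|_{\p_\theta})$ on $\a$ is integer-valued and upper semi-continuous; its infimum, necessarily $\dim\a$, is attained on an open dense subset. For (iii) I use a max-principle argument on the compact orbit $K^c\cdot p$. Equip $\g_\theta$ with the positive definite form $\langle X,Y\rangle:=-B(X,\theta Y)$, which agrees with $B$ on $\p_\theta$ (positive since $B|_{\p^c}<0$) and with $-B$ on $\mathfrak{k}^c$; using ${\rm ad}$-invariance of $B$ together with $\theta[a_0,Y]=-[a_0,\theta Y]$ for $a_0\in\p$, one checks that ${\rm ad}(a_0)$ is self-adjoint on $\g_\theta$ with respect to $\langle,\rangle$. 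Fix a generic $a_0\in\a$ as in (ii) and maximize $f(k):=\langle a_0,{\rm Ad}_k p\rangle$ over compact $K^c$; at a maximum $k_0$, the stationarity condition $\tfrac{d}{dt}|_{t=0}f(\exp(tX)k_0)=0$ for all $X\in\mathfrak{k}^c$ gives $\langle[a_0,X],{\rm Ad}_{k_0}p\rangle=0$, so ${\rm Ad}_{k_0}p$ is orthogonal to $[a_0,\mathfrak{k}^c]$ in $\p_\theta$. Self-adjointness of ${\rm ad}(a_0)$ gives $[a_0,\mathfrak{k}^c]^\perp\cap\p_\theta=\ker({\rm ad}(a_0)|_{\p_\theta})=C_{\p_\theta}(a_0)$, which equals $\a$ by (ii).

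Part (iv) follows quickly: given maximal abelian $\a_1,\a_2\subset\p_\theta$, pick $a_2\in\a_2$ generic in the sense of (ii) applied to $\a_2$ and apply (iii) with $\a$ replaced by $\a_1$ to obtain $k\in K^c$ with ${\rm Ad}_k a_2\in\a_1$. Then ${\rm Ad}_k\a_2={\rm Ad}_k C_{\p_\theta}(a_2)=C_{\p_\theta}({\rm Ad}_k a_2)$, and every element of $\a_1$ commutes with ${\rm Ad}_k a_2\in\a_1$, giving $\a_1\subset{\rm Ad}_k\a_2$; since $\a_1$ is maximal abelian and ${\rm Ad}_k\a_2$ is abelian, equality follows. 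The main obstacle is the max-principle computation in (iii) -- in particular the sign-bookkeeping needed to verify self-adjointness of ${\rm ad}(a_0)$ with respect to $\langle,\rangle$ and thereby identify the orthogonal complement of $[a_0,\mathfrak{k}^c]$ in $\p_\theta$ as $C_{\p_\theta}(a_0)$ -- while (i), (ii), and (iv) are largely formal consequences once this structural input is in place.
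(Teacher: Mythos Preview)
Your proof is correct and follows essentially the same route as the paper: the same max-principle argument for (iii) using invariance of the Killing form to pass from the stationarity condition to $[a_0,{\rm Ad}_{k_0}p]=0$, and the same reduction of (iv) to (ii) and (iii). Two small differences worth flagging: for (ii) you use upper semi-continuity of $\dim\ker({\rm ad}\,a|_{\p_\theta})$ together with $\bigcap_a\ker=\a$, whereas the paper argues via density of a generic one-parameter subgroup in the compact torus $\exp(i\a)$; both are fine. For (i) you correctly observe that the literal centralizer of $\a$ in $\g^c$ contains $i\a\subset\p^c$ and so is $\mathfrak{m}\oplus i\a$ rather than a subalgebra of $\mathfrak{k}^c$; your reading $\z=\mathfrak{m}=C_{\mathfrak{k}^c}(\a)$ is the intended one (and indeed is what makes the statement ``$\mathfrak{t}\oplus i\a$ is Cartan'' come out right), so your extra care there is warranted.
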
 

\begin{proof} (i)
Let $x\in \g^c$, $[x,\a]=0$. Write $x=x_++x_-$, $x_+\in \mathfrak{k}^c,x_-\in \p^c$. Then $[x_\pm,\a]=0$, thus $x_-\in \a$ by maximality of $\a$. So 
$x\in \mathfrak{k}^c\oplus \a$. Thus $\mathfrak z=\mathfrak{m}\oplus i\a$ where $\mathfrak m\subset \mathfrak k^c$ 
is a reductive Lie algebra. Moreover, if $\mathfrak{t}\subset \mathfrak{m}$ 
is a Cartan subalgebra then $\mathfrak{t}\oplus i\mathfrak{a}$ is a maximal 
abelian subalgebra of $\g^c$, hence is a Cartan subalgebra. Similarly, $\mathfrak{t}\oplus \mathfrak{a}$ is a Cartan subalgebra of $\g_\theta$.

(ii) Consider the group $T_\a:=\exp(i\a)\subset G^c$. It is clear from (i) that this is a compact torus. Thus for a generic enough $a\in \a$, the 1-parameter subgroup 
$e^{ita}$ is dense in $T_\a$. So if $p\in \p_\theta$ and $[p,a]=0$ 
then $e^{ita}$ commutes with $p$, hence so do $T_\a$ and $\a$. 
So by maximality of $\a$ we have $p\in \a$.  

(iii) Let $a\in \a$ be generic enough as in (ii). Then by (ii), ${\rm Ad}_k(p)\in \a$ 
if and only if $[{\rm Ad}_k(p),a]=0$. 

Consider the function 
$f: K^c\to \Bbb R$ given by $f(b):=({\rm Ad}_b(p),a)$. 
This function is continuous, so attains a maximum
on the compact group $K^c$. Suppose $k$ is a maximum point of $f$. 
Let $p_0:={\rm Ad}_k(p)$. Differentiating $f$ at $k$, we get 
$([x,p_0],a)=0$ for all $x\in \mathfrak{k}^c$. Thus 
$(x,[p_0,a])=0$ for all $x\in \mathfrak{k}^c$. But $[p_0,a]\in \mathfrak{k}^c$ and 
the inner product on $\mathfrak{k}^c$ is nondegenerate. 
Thus $[p_0,a]=0$, as desired. 

(iv) Let $\a,\a'$ be maximal abelian subspaces of $\p_\theta$. 
Pick a generic element $p\in \a'$ as in (ii). By (iii) we can find $k\in K^c$ such that ${\rm Ad}_k(p)=a\in \a$. Moreover, $a$ is generic in  ${\rm Ad}_k(\a')$. So for every $b\in \a$ we have $[b,{\rm Ad}_k(\a')]=0$ (as $[b,a]=0$). By maximality
of $\a'$ this implies that $b\in {\rm Ad}_k(\a')$, i.e., $\a\subset {\rm Ad}_k(\a')$. 
Thus $\dim \a\le \dim \a'$. Switching $\a,\a'$, we also get $\dim \a'\le \dim \a$, hence $\dim \a=\dim \a'$ and $\a={\rm Ad}_k(\a')$, as claimed. 
\end{proof} 

\subsection{The Cartan decomposition of semisimple linear groups}

Let $\a\subset \p_\theta$ be a maximal abelian subspace and $A=\exp(\a)\subset P_\theta \subset G_\theta$. This is a subgroup isomorphic to $\Bbb R^n$, where $n=\dim \a$. 

\begin{theorem}\label{cardec} (The Cartan decomposition) 
We have $G_\theta=K^c AK^c$. 
In other words, every element $g\in G_\theta$
has a factorization $g=k_1ak_2$, $k_1,k_2\in K^c$, 
$a\in A$.\footnote{This factorization is not unique.} 
\end{theorem}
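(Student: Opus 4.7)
The plan is to derive the Cartan decomposition directly from two tools already in hand: the polar decomposition $G_\theta = K^c P_\theta$, and the $K^c$-conjugacy statement of Proposition \ref{maxab}(iii), which lets us bring any element of $\mathfrak{p}$ into the maximal abelian subspace $\mathfrak{a}$. The rough strategy: use polar decomposition to reduce to showing that $P_\theta \subset K^c A K^c$, then use the exponential map together with the $K^c$-action on $\mathfrak{p}_\theta$ to reduce further to showing that every element of $\mathfrak{p}_\theta$ is $K^c$-conjugate into $\mathfrak{a}$.

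First I would take $g \in G_\theta$ and apply the polar decomposition $g = kp$ with $k \in K^c$ and $p \in P_\theta$. Since $\exp: \mathfrak{p}_\theta \to P_\theta$ is a diffeomorphism, I can write $p = \exp(x)$ for a unique $x \in \mathfrak{p}_\theta$. The key step is then to invoke Proposition \ref{maxab}(iii) (applied to $\mathfrak{p}_\theta$ in place of $\mathfrak{p}^c$ — the proof there is $K^c$-equivariant and transfers through the $K^c$-equivariant isomorphism $\mathfrak{p}^c \to \mathfrak{p}_\theta$ given by multiplication by $i$, since $\mathfrak{a}\subset\mathfrak{p}_\theta$ and $i\mathfrak{a}\subset\mathfrak{p}^c$ correspond under this isomorphism) to produce $k' \in K^c$ with $\mathrm{Ad}_{k'}(x) =: y \in \mathfrak{a}$.

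Once I have $y \in \mathfrak{a}$, I use the naturality of $\exp$ under conjugation:
\[
k' \exp(x) k'^{-1} = \exp(\mathrm{Ad}_{k'}(x)) = \exp(y) \in A.
\]
Setting $a := \exp(y)$, this rearranges to $p = \exp(x) = k'^{-1} a k'$, and therefore
\[
g = kp = (k k'^{-1})\, a\, k' \in K^c \cdot A \cdot K^c,
\]
which is the desired factorization with $k_1 = k k'^{-1}$ and $k_2 = k'$.

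The routine verifications (that $A = \exp(\mathfrak{a})$ is a subgroup isomorphic to $\mathbb{R}^{\dim \mathfrak{a}}$, that the polar decomposition is available, and that $\mathrm{Ad}$ intertwines $\exp$ with conjugation) are all in place from earlier sections, so there is no substantial remaining obstacle. The only point that requires genuine care, and which I would flag as the main subtlety, is the legitimate application of Proposition \ref{maxab}(iii): as stated it concerns $\mathfrak{p}^c$, but the conclusion we need is for $\mathfrak{p}_\theta = i\mathfrak{p}^c$. This is harmless because the $K^c$-action commutes with scalar multiplication by $i$ and the maximal abelian subspaces of $\mathfrak{p}^c$ and $\mathfrak{p}_\theta$ correspond under this scalar. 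Modulo this observation, the Cartan decomposition is immediate from the polar decomposition and the conjugacy result.
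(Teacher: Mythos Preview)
Your proof is correct and follows essentially the same route as the paper's: reduce via the polar decomposition to showing every element of $P_\theta$ is $K^c$-conjugate into $A$, then pull back by $\log$ and invoke Proposition~\ref{maxab}(iii). Your remark about the $\mathfrak{p}^c$ versus $\mathfrak{p}_\theta$ discrepancy is apt (the paper is a bit loose on this point), and your resolution via the $K^c$-equivariant identification $\mathfrak{p}_\theta = i\mathfrak{p}^c$ is exactly right.
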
 

\begin{proof} Recall that we have the polar decomposition 
$G_\theta=K^cP_\theta$. Thus it suffices to show that every $K^c$-orbit on  
$P_\theta$ intersects $A$. To do so, take $Y\in P_\theta$ 
and let $y=\log Y\in \p_\theta$. By Proposition 
\ref{maxab} there is $k\in K^c$ 
such that ${\rm Ad}_k(y)\in\a$. 
Then ${\rm Ad}_k(Y)\in A$, as claimed.  
\end{proof} 

\begin{remark} Theorem \ref{cardec} has a straightforward generalization to reductive groups. 
\end{remark} 

\begin{example} 1. For $G_\theta=GL_n(\Bbb C)$, Theorem \ref{cardec} reduces to a classical theorem in linear algebra: any invertible complex matrix can be written as $U_1DU_2$, where $U_1,U_2$ are unitary and 
$D$ is diagonal with positive entries. 

2. Similarly, for $G_\theta=GL_n(\Bbb R)$, Theorem \ref{cardec} says that 
any invertible real matrix can be written as $O_1DO_2$, where $O_1,O_2$ are orthogonal and $D$ is diagonal with positive entries. 
\end{example} 

\subsection{Maximal compact subgroups}

\begin{theorem}\label{conjcom} (E. Cartan) Let $G_\theta$ be a real form of a connected semisimple complex group $G$. Then any compact subgroup $L$ of $G_\theta$ is conjugate to a subgroup of $K^c$ by an element of $P_\theta$. Also every compact subgroup of $G_\theta$ is contained in a maximal one. Thus all maximal compact subgroups of $G_\theta$ are conjugate (to $K^c$). 
\end{theorem}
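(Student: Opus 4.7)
The plan is to reduce the theorem to a fixed point statement on the homogeneous space $G_\theta/K^c$, and then exploit the fact that this space carries the structure of a Riemannian symmetric space of non-positive curvature. The key observation is that if $g\in G_\theta$ is such that the coset $gK^c\in G_\theta/K^c$ is fixed by $L$, then for every $\ell\in L$ we have $\ell g K^c=gK^c$, i.e.\ $g^{-1}Lg\subset K^c$. Thus it suffices to show that every compact subgroup $L\subset G_\theta$ has a fixed point in its action on $G_\theta/K^c$.

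First I would recall from the polar decomposition $G_\theta=K^c P_\theta$ that the map $g\mapsto g\theta(g)^{-1}$ induces a $G_\theta$-equivariant diffeomorphism $G_\theta/K^c\xrightarrow{\sim}P_\theta$, where $G_\theta$ acts on $P_\theta$ by $g\cdot q=gq\theta(g)^{-1}$; composing with $\exp^{-1}$, the underlying manifold is just $\mathfrak{p}_\theta\cong\Bbb R^{\dim \mathfrak{p}_\theta}$, so it is contractible and in particular simply connected. Next, I would put on $G_\theta/K^c$ the $G_\theta$-invariant Riemannian metric whose value at the identity coset is the restriction of the Killing form of $\g_\theta$ to $\mathfrak{p}_\theta$; since $\mathfrak{p}_\theta=i\mathfrak{p}^c$ and the Killing form on $\g^c$ is negative definite, this restriction is positive definite and $K^c$-invariant, so the construction is well defined. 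The resulting Riemannian manifold is a symmetric space whose curvature tensor at the origin is given by $R(X,Y)Z=-[[X,Y],Z]$ for $X,Y,Z\in\mathfrak{p}_\theta$, which yields non-positive sectional curvature
$$\kappa(X,Y)=-K_\g([X,Y],\theta[X,Y])\le 0$$
on orthonormal pairs, since $[X,Y]\in\mathfrak{k}^c$ where the Killing form is negative definite. Combined with completeness (which follows from $G_\theta$-homogeneity) and simple connectedness, this makes $G_\theta/K^c$ a Cartan--Hadamard manifold.

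The central tool is then Cartan's fixed point theorem for compact groups acting by isometries on a Cartan--Hadamard manifold. Concretely, I would pick any base point $x_0\in G_\theta/K^c$ and consider the function
$$f(x):=\int_L d(x,\ell x_0)^2\,d\ell,$$
where $d\ell$ is normalized Haar measure on $L$ and $d$ is the Riemannian distance; on a Cartan--Hadamard manifold $d(\,\cdot\,,y)^2$ is strictly convex, so $f$ is strictly convex and proper, hence attains a unique minimum $x^*$. Uniqueness forces $\ell\cdot x^*=x^*$ for all $\ell\in L$, producing the desired fixed point, and hence $g\in G_\theta$ with $g^{-1}Lg\subset K^c$. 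The maximality statement follows immediately: any compact subgroup containing $K^c$ is conjugate into $K^c$, but then a volume/dimension comparison using the Haar measure on a compact group forces equality, so $K^c$ is maximal compact; conjugacy of all maximal compacts is then automatic. The main obstacle is the non-positive curvature assertion together with the resulting strict convexity of $d^2$: the curvature computation is classical for symmetric spaces but requires some care with signs under the passage from $\mathfrak{p}^c$ to $\mathfrak{p}_\theta=i\mathfrak{p}^c$, and the strict convexity of $d^2$ (equivalently, the fact that geodesics spread apart at least linearly) depends on a Jacobi field comparison argument which is the real geometric content of the proof.
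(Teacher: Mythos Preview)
Your argument is correct and is essentially Cartan's original geometric proof. The paper takes a genuinely different route, explicitly noting that it gives ``a simplified version of Cartan's proof, due to G.~D.~Mostow.'' Both proofs share the same skeleton: identify $G_\theta/K^c$ with $P_\theta$, produce an $L$-invariant strictly convex function on this space, and use uniqueness of its minimum to extract an $L$-fixed point. The difference is in implementation. You use the Riemannian structure of the symmetric space $G_\theta/K^c$, compute that it has non-positive curvature, and invoke the strict convexity of $d(\cdot,y)^2$ via Jacobi field comparison; your averaged function is $\int_L d(x,\ell x_0)^2\,d\ell$. Mostow's argument bypasses Riemannian geometry entirely: it works with the linear-algebraic function $f(X)={\rm Tr}({\rm Ad}_X\cdot S)$ on $P_\theta$, where $S=\int_L {\rm Ad}_h^\dagger{\rm Ad}_h\,dh$, and establishes the needed convexity by the elementary observation that $t\mapsto {\rm Tr}(\exp(ta)M)$ is strictly convex for symmetric $a\ne 0$ and positive definite $M$. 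The paper's approach is thus more self-contained within the linear-algebraic framework already set up (no curvature tensors, no Jacobi fields, no Cartan--Hadamard theorem), while yours is more conceptual and makes transparent why the result should hold for any group acting properly by isometries on a Hadamard manifold.
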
 

\begin{proof} We give a simplified version of Cartan's proof, due to G. D. Mostow. 

First note that $K^c$ is a maximal compact subgroup of $G_\theta$. Indeed, if $L\supset K^c$ 
is a compact subgroup then the polar decomposition implies that $L=K^c\cdot (P_\theta\cap L)$. But if 
$Y\in P_\theta\cap L$ and $Y\ne 1$ then the sequence $Y^n\in L$ has no convergent subsequence (which is clear by looking at the eigenvalues of $Y^n$ on $\g_\theta$).
Thus $L=K^c$. 

It remains to prove that every compact subgroup $L\subset G_\theta$ can be conjugated into $K^c$ by an element 
of $P_\theta$. The idea of proof is to define an $L$-invariant continuous real-valued function $f$ on $P_\theta$
and show that it has a unique minimum $Y$ using a convexity argument. Then a required conjugating 
element is $Y^{-\frac{1}{2}}$. 

So let us proceed with this plan. Recall that we have a decomposition of the Lie algebra 
$\g_\theta:={\rm Lie}(G_\theta)$ given by $\g_\theta=\mathfrak{k}^c\oplus \p_\theta$, which is the eigenspace decomposition of $\theta$, and 
that the Killing form $B=B_\g$ is positive on $\p_\theta$, negative 
on $\mathfrak{k}^c$, and $\theta$-invariant. Thus we have a positive definite inner product on the real vector space $\g_\theta$ given by 
$$
B_\theta(x,y):=-B(x,\theta(y)).
$$ 
Denote by $A^\dagger$ the adjoint operator to $A\in {\rm End}(\g_\theta)$ 
under this inner product. Then $A:={\rm Ad}_{g}$ is orthogonal ($A^\dagger=A^{-1}$) 
for $g\in K^c$, while for $g\in P_\theta$ it is self-adjoint ($A^\dagger=A$), unimodular and positive definite as its eigenvalues are positive). So if $g=kp$ with $k\in K^c$, $p\in P_\theta$ then $\overline g=kp^{-1}$, hence 
\begin{equation}\label{Adkp}
{\rm Ad}_g^\dagger={\rm Ad}_{kp}^\dagger={\rm Ad}_{p}^\dagger{\rm Ad}_{k}^\dagger=
{\rm Ad}_{p}{\rm Ad}_{k}^{-1}={\rm Ad}_{pk^{-1}}={\rm Ad}{\overline g^{-1}}.
\end{equation}

Let 
$$
S:=\int_{L} {\rm Ad}_h^\dagger {\rm Ad}_hdh\in {\rm End}(\g_\theta).
$$
Then $S$ is a self-adjoint positive definite operator. So it admits 
an orthonormal eigenbasis $v_i$ with eigenvalues $\lambda_i>0$.
Let $\lambda_{\rm min}$ be the smallest of these eigenvalues. 

Consider the function $f: P_\theta\to \Bbb R$ given by 
$$
f(X):={\rm Tr}({\rm Ad}_X\cdot S)=\sum_i \lambda_iB_{\theta}({\rm Ad}_Xv_i,v_i).
$$
So, since ${\rm Ad}_X$ is positive definite, we have 
\begin{equation}\label{inequa} 
f(X)\ge \lambda_{\rm min}{\rm Tr}({\rm Ad}_X).
\end{equation} 
Note also that the group $G_\theta$ acts on $P_\theta$ by 
$g\circ X=gX\overline{g}^{-1}$, and by \eqref{Adkp} 
the function $f$ is $L$-invariant. 

Recall that for any $R>0$ the set of unimodular positive symmetric matrices 
$A$ with ${\rm Tr}(A)\le R$ is compact, since so is its subset of diagonal matrices, and any such matrix can be diagonalized by an orthogonal transformation. Since ${\rm Ad}_X$ is a positive self-adjoint operator on $\g_\theta$ with respect to $B_\theta$, it follows from \eqref{inequa} that 
the set of $X\in P_\theta$ with $f(X)\le R$ is compact. This implies that $f$, being continuous, attains a minimum on $P_\theta$. Suppose it attains a minimum at 
the point $Y=\exp(y)$, $y\in \p_\theta$. 

\begin{proposition} This minimum point is unique. 
\end{proposition} 

\begin{proof} Suppose $Z=\exp(z)$, $z\in \p_\theta$ is another minimum point. Consider the polar decomposition of the element $\exp(-\tfrac{z}{2})\exp(\tfrac{y}{2})\in G_\theta$: 
$$
\exp(\tfrac{z}{2})\exp(-\tfrac{y}{2})=k\exp(\tfrac{x}{2}),
$$
$k\in K^c$, $x\in \mathfrak p_\theta$. It follows that 
$$
\exp(\tfrac{x}{2})=\exp(-\tfrac{y}{2})\exp(\tfrac{z}{2})k=k^{-1}\exp(\tfrac{z}{2})\exp(-\tfrac{y}{2}),
$$
so multiplying, we get
$$
\exp(x)=\exp(-\tfrac{y}{2})\exp(z)\exp(-\tfrac{y}{2})
$$
and thus 
\begin{equation}\label{t1}
\exp(z)=\exp(\tfrac{y}{2})\exp(x)\exp(\tfrac{y}{2}).
\end{equation}
Consider the function 
$$
F(t)=f(\exp(\tfrac{y}{2})\exp(tx)\exp(\tfrac{y}{2})),\ t\in \Bbb R. 
$$
This function has a global minimum at $t=0$, and also at $t=1$ in view of \eqref{t1}. 
Thus the function $F$ is not strictly convex. On the other hand, we have the following lemma. 

\begin{lemma}\label{lina} Let $a,M$ be symmetric real matrices such that $M$ is positive definite. Then the function 
$$
\phi(t):={\rm Tr}(\exp(ta)M),\ t\in \Bbb R
$$
is convex, and is strictly convex if $a\ne 0$. 
\end{lemma} 

\begin{proof} Conjugating $a,M$ simultaneously by an orthogonal matrix, we may assume that $a$ is diagonal, with diagonal entries $a_i$. Then we have  
$$
\phi(t):=\sum_i M_{ii}\exp(ta_i).
$$
Since $M$ is positive definite, $M_{ii}>0$ and the statement follows. 
\end{proof} 

Using Lemma \ref{lina} for $a:={\rm ad}x$ and $M:=\exp(\frac{{\rm ad}y}{2})S\exp(\frac{{\rm ad}y}{2})$ and the fact that $F(t)$ is not strictly convex, we get that ${\rm ad}x=0$, hence $x=0$ (as $\g$ is semisimple) and $y=z$, as claimed. 
\end{proof} 

Now, since the function $f$ has a unique minimum point and is $L$-invariant, this minimum point must also be $L$-invariant. Thus we have 
$h\exp(y)=\exp(y)\overline h$ for all $h\in L$. It follows that 
$$
\exp(-\tfrac{y}{2})h\exp(\tfrac{y}{2})=\exp(\tfrac{y}{2})\overline h\exp(-\tfrac{y}{2})=
\overline{\exp(-\tfrac{y}{2})h\exp(\tfrac{y}{2})}.
$$
Thus the element $p:=\exp(-\tfrac{y}{2})=Y^{-\frac{1}{2}}$ conjugates $L$ into $K^c$. 
\end{proof} 

\subsection{Cartan subalgebras in real semisimple Lie algebras} 

Recall that a Cartan subalgebra of a real semisimple Lie algebra $\g_\theta$ is a maximal commutative subalgebra consisting of semisimple elements. We have seen that Cartan subalgebras in a complex semisimple Lie algebra are conjugate, but this is not so for real semisimple Lie algebras, as demonstrated by the following exercise. 

\begin{exercise}\label{carclas} (i) Let $\g=\mathfrak{sl}_n(\Bbb R)$. For $0\le m\le \frac{n}{2}$, let $\h_m$ be the space of matrices of the form 
$$
A=\bigoplus_{i=1}^m \begin{pmatrix} a_i& b_i\\ -b_i & a_i\end{pmatrix}\oplus {\rm \diag}(c_1,...,c_{n-2m})
$$
such that ${\rm Tr}(A)=0$. Show that $\h_m$ is a Cartan subalgebra of $\g$ and that 
$\h_m$ is not conjugate to $\h_{m'}$ when $m\ne m'$ (look at eigenvalues of elements of $\h_m$ in the vector representation). Conclude that Lemma \ref{compconj} does not necessarily hold for non-compact forms of $\g$. 

(ii) Show that every Cartan subalgebra in $\g$ is conjugate to one of the form $\h_m$ for some $m$. 

(iii) Classify Cartan subalgebras in other classical real simple Lie algebras (up to conjugacy). 
\end{exercise} 

Let us say that a semisimple element of $\g_\theta$ is {\bf split} if it acts on $\g_\theta$ with real eigenvalues, and say that a commutative Lie subalgebra 
of $\g_\theta$ is a {\bf split subalgebra} if it consists of split elements. An invariant of a Cartan subalgebra $\h\subset \g_\theta$ under conjugation is the dimension 
$s(\h)$ of the largest split subalgebra of $\h$ (consisting of all split elements of $\h$). For example, a split real form $\g_\theta$ has a split Cartan subalgebra with $s(\h)=r={\rm rank}\g$, and conversely, a real form that admits a split Cartan subalgebra is split. Also, in Exercise \ref{carclas}, $s(\h_m)=n-1-m$. 

Let us say that $\h$ is {\bf maximally split} if $s(\h)$ is the largest possible, and {\bf maximally compact} if $s(\h)$ is the smallest possible. For example, in 
Exercise \ref{carclas}, $\h_0$ is maximally split and $\h_{[n/2]}$ is maximally compact (where $[n/2]$ is the floor of $n/2$). Also, a split Cartan subalgebra 
is maximally split and a compact one (i.e., one for which $\exp(\h)$ is a compact torus) is maximally compact, if they exist. Finally, the Cartan subalgebra 
$\h_+^c\oplus i\h_-^c$, where $\h_+^c,\h_-^c$ are as in the proof of Proposition \ref{maxcomp}, is maximally compact. 

Note that $s(\h)$ may also be interpreted as the signature of the Killing form restricted to $\h$, which equals 
$(s(\h),r-s(\h))$.  

\begin{theorem} 
(i) A $\theta$-stable Cartan subalgebra $\h\subset \g_\theta$ is maximally split iff $\h_-:=\h\cap \p_\theta$ is a maximal abelian subspace in $\p_\theta$.

(ii) A $\theta$-stable Cartan subalgebra $\h\subset \g_\theta$  is maximally compact 
iff $\h_+:=\h\cap \mathfrak{k}^c$ is a Cartan subalgebra in $\mathfrak{k}^c$, and in this case 
$s(\h)={\rm rank}\g-{\rm rank}(\mathfrak{k})$. 

(iii) Any two maximally split $\theta$-stable Cartan subalgebras are conjugate by $K^c$.  

(iv) Any two maximally compact $\theta$-stable Cartan subalgebras 
are conjugate by $K^c$.

(v) Any Cartan subalgebra in $\g_\theta$ is conjugate to a 
$\theta$-stable one by an element of $G_\theta$ (or, equivalently, $P_\theta$). 
\end{theorem}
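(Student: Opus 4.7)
The plan is to first record the key observation that underlies parts (i)--(iv): for a $\theta$-stable Cartan subalgebra $\h = \h_+ \oplus \h_-$ (with $\h_\pm = \h \cap (\pm 1$-eigenspace of $\theta$)), the split rank is exactly $s(\h) = \dim \h_-$. This is because elements of $\mathfrak{k}^c \subset \g^c$ act on $\g$ with purely imaginary spectrum (being anti-Hermitian with respect to $-K_{\g^c}$), while elements of $\p_\theta = i\p^c$ act with real spectrum. Hence if $x = x_+ + x_- \in \h$ has ${\rm ad}\, x$ with real spectrum, simultaneous diagonalization of the commuting operators ${\rm ad}\, x_+$ and ${\rm ad}\, x_-$ forces ${\rm ad}\, x_+ = 0$ on every joint eigenspace, i.e.\ $x_+ \in \mathfrak{z}(\g) = 0$.

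Granting (v) for the moment, (i) is immediate: by (v) the maximum of $s$ over all Cartans equals the maximum over $\theta$-stable Cartans, and by Proposition \ref{maxab}(i) together with the observation above this maximum is $\dim \a$ for $\a \subset \p_\theta$ maximal abelian. For (ii), $\h_+$ is an abelian subalgebra of $\mathfrak{k}^c$, and a dimension count gives $s(\h) = \dim \h - \dim \h_+ = {\rm rank}(\g) - \dim \h_+$; this is minimized exactly when $\dim \h_+$ is maximized, i.e.\ when $\h_+$ is a Cartan subalgebra of $\mathfrak{k}^c$, yielding the formula $s(\h) = {\rm rank}(\g) - {\rm rank}(\mathfrak{k})$. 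The fact that this minimum is actually attained among $\theta$-stable Cartans is the content of Proposition \ref{maxcomp}, and combined with (v) it gives the statement for all Cartans.

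For (iii), given two maximally split $\theta$-stable Cartans $\h, \h'$, Proposition \ref{maxab}(iv) supplies $k \in K^c$ with ${\rm Ad}_k\, \h'_- = \h_-$; after replacing $\h'$, both $\h_+$ and $\h'_+$ are Cartan subalgebras of the compact Lie algebra $\mathfrak{m} := \mathfrak{z}_{\mathfrak{k}^c}(\h_-)$, so Lemma \ref{compconj} applied inside the compact connected subgroup $M \subset K^c$ with Lie algebra $\mathfrak{m}$ conjugates $\h'_+$ to $\h_+$ while fixing $\h_-$. Part (iv) runs symmetrically: first use Lemma \ref{compconj} to conjugate $\h'_+$ to $\h_+$ by $K^c$; then by (ii) both $\h_-$ and $\h'_-$ are maximal abelian in $\mathfrak{z}_{\p_\theta}(\h_+)$, and one applies the conjugacy statement of Proposition \ref{maxab}(iv) in the symmetric pair obtained by restricting to the centralizer of $\h_+$.

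The main obstacle is (v). The plan is a Mostow-style variational argument on $P_\theta$ analogous to the proof of Theorem \ref{conjcom}. Pick a regular element $x \in \h$, so $Z_{\g_\theta}(x) = \h$ and any $\theta$-invariance of the centralizer is equivalent to $\theta(y) \in Z_{\g_\theta}(y)$ for $y$ a conjugate of $x$. Define
$$
f \colon P_\theta \to \Bbb R_{\ge 0}, \qquad f(p) := B_\theta\bigl([\theta({\rm Ad}_p x),\, {\rm Ad}_p x],\, [\theta({\rm Ad}_p x),\, {\rm Ad}_p x]\bigr),
$$
which measures the failure of $\theta({\rm Ad}_p\h) = {\rm Ad}_p\h$. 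One shows $f$ is proper on $P_\theta$ by estimating its growth along rays $\exp(t\xi)$, $\xi \in \p_\theta$, using the ${\rm ad}(\xi)$-eigenspace decomposition of $x$ and the fact that $\theta$ negates $\p_\theta$. Hence $f$ attains a minimum at some $p_0 = \exp(y_0)$; applying Lemma \ref{lina} to the function $t \mapsto f(\exp(ty_0))$ (whose strict convexity would force $y_0 = 0$ unless the minimum value is zero) and analyzing the Euler--Lagrange condition at $p_0$ using regularity of $x$, one concludes $f(p_0) = 0$, so ${\rm Ad}_{p_0}\h$ is $\theta$-stable. The delicate point is arranging the convexity/properness estimate carefully enough to force the minimum to be zero; this requires decomposing $x$ into $\pm 1$-eigenvectors of $\theta$ and tracking how ${\rm ad}(y_0)$ acts on the commutator $[\theta(x),x]$, which is where the regularity hypothesis is consumed.
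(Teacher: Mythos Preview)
Your treatment of parts (i)--(iv) is essentially the paper's argument, organized a bit differently but with the same ingredients (Proposition \ref{maxab}, Lemma \ref{compconj}, and the observation $s(\h)=\dim\h_-$ for $\theta$-stable $\h$).

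Part (v), however, has a genuine gap: the function $f$ you define is \emph{not} proper on $P_\theta$, so the variational step ``$f$ attains a minimum'' is unjustified. Take $\g_\theta=\mathfrak{sl}_2(\Bbb R)$ with $\theta(X)=-X^T$, so $\p_\theta$ consists of symmetric traceless matrices. Let $x=h+e$, a regular element generating a (split, non-$\theta$-stable) Cartan. For $p_t=\exp(th)\in P_\theta$ one has $\Ad_{p_t}x=h+e^{2t}e$, hence
\[
[\theta(\Ad_{p_t}x),\,\Ad_{p_t}x]=e^{4t}h-2e^{2t}e-2e^{2t}f,
\]
so $f(p_t)\to 0$ as $t\to -\infty$ while $p_t\to\infty$ in $P_\theta$. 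Thus $f$ is not proper; the infimum along this ray is $0$ but is not attained on the ray. (A conjugating $p$ does exist here, but not via your mechanism.) The appeal to Lemma \ref{lina} is also off: that lemma concerns functions of the form $t\mapsto\Tr(\exp(ta)M)$, whereas your $f$ is quartic in $\Ad_p x$ and has no reason to be convex along rays.

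The paper's proof of (v) is entirely different and avoids any variational argument. One inducts on $\mathrm{rank}(\g_\theta)$. Decompose $\h=\h_+\oplus\h_-$ into elements with purely imaginary, respectively real, $\ad$-spectrum (this makes sense for any Cartan, not just $\theta$-stable ones). Then $\exp(\h_+)$ is a compact torus in $G_\theta$, so by Theorem \ref{conjcom} it can be conjugated into $K^c$; thus one may assume $\h_+\subset\mathfrak{k}^c$. Now pass to the centralizer $Z_+$ of $\h_+$ in $G_\theta$: if $\h_+\ne 0$ the problem for $\h_-$ lives in $\mathfrak{z}_+/\h_+$, which has strictly smaller rank, and induction applies. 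If $\h_+=0$ then $\h$ is split, forcing $\g_\theta$ to be split, where conjugacy to the standard (hence $\theta$-stable) Cartan is immediate from the classification of Cartans with a polarization.
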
 

\begin{proof} (i) It is clear that if $\h_-$ is a maximal abelian subspace 
of $\p_\theta$ then $\h$ is maximally split, since 
by Proposition \ref{maxab} any abelian subspace  
of $\p_\theta$ can be conjugated into $\h_-$. Conversely, 
if $\h$ is maximally split, suppose that $a\in \p_\theta, a\notin \h_-$ 
with $[a,\h_-]=0$. Then $\h_-'=\h_-\oplus \Bbb R a$, and let $\h'$ 
be a Cartan subalgebra of $\g_\theta$ containing $\h'_-$. 
Then $s(\h')>s(\h)$, a contradiction. 

(ii) It is clear that if $\h_+$ is a Cartan subalgebra of $\mathfrak{k}^c$ 
then $\h$ is maximally compact. Also given a Cartan 
subalgebra $\mathfrak{\h_+}\subset \mathfrak{k}^c$, take a 
Cartan subalgebra $\h$ of $\g_\theta$ containing $\h_+$. 
Then $s(\h)\le {\rm rank}\g-{\rm rank}(\mathfrak{k})$. 
This implies that for any maximally compact $\h$, we have that 
$\h\cap \mathfrak{k}^c$ is a Cartan subalgebra in  $\mathfrak{k}^c$, and 
$s(\h)={\rm rank}\g-{\rm rank}(\mathfrak{k})$. 

(iii) Let $\h,\h'$ be maximally split $\theta$-stable Cartan subalgebras in $\g_\theta$. Then $\h_-,\h_-'$ are maximal abelian subspaces of $\p_\theta$. 
So they are conjugate by $K^c$ by Proposition \ref{maxab}, thus we may assume that $\h_-=\h_-'$. Let $Z_-^c$ be the centralizer of $\h_-$ in $K^c$. 
It is a compact group,  and it is clear that $\h_+,\h_+'\subset {\rm Lie}(Z_-^c)$  
are Cartan subalgebras. Hence they are conjugate by an element of $Z_-^c$, as desired. 

(iv) Let $\h,\h'$ be maximally compact $\theta$-stable Cartan subalgebras in $\g_\theta$. Then $\h_+,\h_+'$ are Cartan subalgebras of $\mathfrak{k}^c$, so they are conjugate by $K^c$ and we may assume that $\h_+=\h_+'$. Let $Z_+$ be the centralizer of $\h_+$ in $G_\theta$ and $\z_+={\rm Lie}(Z_+)$. This is a $\theta$-stable reductive subalgebra of $\g_\theta$ containing $\h,\h'$ whose center contains $\h_+$. Thus $\h_-,\h_-'\subset {\rm Lie}(Z_+)/\h_+$ are $\theta$-stable split Cartan subalgebras, so they are conjugate by $Z_+^c:=Z_+\cap K^c$ owing to (iii). This implies the statement.  

(v) The proof is by induction in the rank $r$ of $\g_\theta$, with obvious base $r=0$. 
Suppose the statement is known for rank $<r$ and let us prove it for rank $r$. 
Let $\h\subset \g_\theta$ be a Cartan subalgebra. We have $\h=\h_+\oplus \h_-$ where $\h_+,\h_-$ are 
the subspaces of elements with imaginary and real eigenvalues on the adjoint representation, respectively. The Lie group $H_+=\exp(\h_+)$ is a compact torus, so it is contained in a maximal compact subgroup. Hence by Theorem \ref{conjcom} $H_+$ is conjugate to a subgroup of $K^c$. We may thus assume that $\h_+\subset \mathfrak{k}^c$. 

As in (iv), let $Z_+\subset G_\theta$ be the centralizer of $\h_+$ and $\z_+={\rm Lie}(Z_+)$. It suffices to show that $\h$ is conjugate to a $\theta$-stable Cartan subalgebra under $Z_+$. This is equivalent to saying that 
$\h_-$ is conjugate to a $\theta$-stable Cartan subalgebra of $\z_+/\h_+$
under $Z_+/H_+$. So if $\h_+\ne 0$ then the statement follows by the induction assumption, since the rank of $\z_+/\h_+$ is smaller than $r$. On the other hand, if $\h_+=0$ then $\h$ is split, so $\g_\theta$ is split. In this case, let $\h_0$ be the standard Cartan subalgebra of $\g_\theta$. Fixing systems of simple roots 
$\Pi$ for $\h$ and $\Pi_0$ for $\h_0$, there exists an isomorphism 
$\phi: (\g_\theta,\h,\Pi)\to (\g_\theta,\h_0,\Pi_0)$ which is given by an inner 
automorphism of $\g_\theta$, i.e., an element $g\in G_{{\rm ad},\theta}$, which completes the induction step and the proof. 
\end{proof}  

\subsection{Integral form of the Weyl character formula}

\begin{proposition}\label{wcf} Let $f$ be a conjugation-invariant continuous function 
on a compact connected Lie group $K$ with a maximal torus $T\subset K$
and Haar probability measure $dk$. 
Then 
$$
\int_K f(k)dk=\frac{1}{|W|}\int_T f(t)|\Delta(t)|^2dt, 
$$
where $\Delta(t)$ is the Weyl denominator,\footnote{Note that the function $\rho(t)$ may be multivalued, but its branches differ from each other by a root of unity, so 
the function $|\Delta(t)|$ is well defined. Namely, $|\Delta(t)|=|\Delta_0(t)|$ where 
$\Delta_0(t)=\prod_{\alpha\in R^+}(\alpha(t)-1)$.} 
$$
\Delta(t)=\rho(t)^{-1}\prod_{\alpha\in R^+}(\alpha(t)-1).
$$
\end{proposition}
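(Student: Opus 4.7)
The plan is to apply the change of variables formula to the conjugation map
$$\phi: K/T \times T \to K, \quad \phi(kT, t) = ktk^{-1},$$
which is well defined since $T$ is abelian. Since $T$ is a maximal torus and all maximal tori in $K$ are conjugate, and every element of $K$ lies in some maximal torus, the map $\phi$ is surjective. The idea is to pull $\int_K f\,dk$ back to $K/T\times T$ via $\phi$, use that $f(ktk^{-1})=f(t)$, and compute the resulting Jacobian.

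First I would analyze the generic fiber. For $t_0\in T$ regular (i.e., $\alpha(t_0)\ne 1$ for all $\alpha\in R$), the centralizer of $t_0$ in $K$ equals $T$, so $\phi(kT,t)=t_0$ forces $k\in N(T)$ and $t=k^{-1}t_0k$. By Proposition \ref{norma}, $N(T)/T=W$, and the fiber consists of exactly $|W|$ points $(\widetilde w T, w^{-1}(t_0))$, $w\in W$. Since regular elements are open and dense in both $T$ and $K$, the map $\phi$ is a $|W|$-fold covering over an open dense set.

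Next I would compute the Jacobian. By $K$-equivariance on the left, $\phi(g\cdot kT, t)=g\phi(kT,t)g^{-1}$, so it suffices to compute $d\phi$ at $(eT,t)$. Identifying $T_{eT}(K/T)\cong \mathfrak{k}/\mathfrak{t}$, $T_tT\cong\mathfrak{t}$, and $T_tK\cong \mathfrak{k}$ (via left translation by $t^{-1}$), one computes
$$d\phi_{(eT,t)}(X,H)=(\mathrm{Ad}(t^{-1})-1)X+H,\quad X\in\mathfrak{k}/\mathfrak{t},\ H\in\mathfrak{t}.$$
This operator is the identity on $\mathfrak{t}$, so its determinant equals $\det\bigl((\mathrm{Ad}(t^{-1})-1)|_{\mathfrak{k}/\mathfrak{t}}\bigr)$. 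Complexifying, $\mathfrak{k}/\mathfrak{t}\otimes\mathbb{C}=\bigoplus_{\alpha\in R}\mathfrak{g}_\alpha$ and $\mathrm{Ad}(t^{-1})-1$ acts on $\mathfrak{g}_\alpha$ by $\alpha(t)^{-1}-1$. Using that $|\alpha(t)|=1$ on the compact torus, so $\alpha(t)^{-1}=\overline{\alpha(t)}$, one obtains
$$|J(kT,t)|=\Bigl|\prod_{\alpha\in R}(\alpha(t)^{-1}-1)\Bigr|=\prod_{\alpha\in R^+}|\alpha(t)-1|^2=|\Delta(t)|^2.$$

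Finally, I would normalize the $K$-invariant probability measure $d\bar k$ on $K/T$ so that the fibration $K\to K/T$ with fiber $T$ decomposes the Haar measure as $dk=d\bar k\cdot dt$. The change of variables formula for a $|W|$-fold covering (applied off the measure-zero singular locus) gives, for any continuous $K$-invariant $f$,
$$|W|\int_K f(k)\,dk=\int_{K/T\times T}f(\phi(kT,t))\,|J(kT,t)|\,d\bar k\,dt=\int_{K/T}d\bar k\cdot\int_T f(t)|\Delta(t)|^2\,dt,$$
and since $\int_{K/T}d\bar k=1$, the formula follows. The only delicate point is the Jacobian computation in step two; once the differential is written down, the rest is bookkeeping, but one must be careful about the signs/orientations hidden in taking $|\det|$ and about showing that the singular set (where $|\Delta(t)|^2=0$) genuinely has measure zero and can be ignored.
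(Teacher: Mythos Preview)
Your argument is the classical derivation of the Weyl integration formula via the change of variables along the conjugation map $K/T\times T\to K$, and it is correct. The one place to be slightly careful is the fiber count: to conclude that the fiber over a regular $t_0\in T$ has exactly $|W|$ points you implicitly use that the centralizer $Z_K(t_0)$ is connected (hence equals $T$), which is true for connected compact $K$ but is not quite the content of Proposition~\ref{norma} as stated (that proposition concerns the complex adjoint group). If you want to sidestep this, you can instead compute the degree of $\phi$ by integrating the Jacobian against $f\equiv 1$ and invoking the Weyl denominator formula to get $\int_T|\Delta(t)|^2\,dt=|W|$.

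The paper's proof is entirely different and much shorter in context: since characters of irreducible representations span a dense subspace of conjugation-invariant continuous functions (Peter--Weyl), it suffices to check the identity for $f=\chi_\lambda$; then the left side is $\delta_{\lambda,0}$ by orthogonality of characters, and the right side is computed directly from the Weyl character formula and the Weyl denominator formula. So the paper leverages the algebraically proved character formula to \emph{deduce} the integration formula, whereas your route is self-contained differential geometry and does not need the character formula at all. Your approach explains the factor $|\Delta(t)|^2$ as a Jacobian and would in fact allow one to \emph{prove} orthogonality of characters on the torus side; the paper's approach is quicker here only because those tools are already in hand.
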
 

\begin{proof} Since characters of irreducible representations span a dense subspace in the space of conjugation-invariant continuous functions on $K$, 
it suffices to check this for $f=\chi_\lambda$, the character of the irreducible representation $L_\lambda$. Then the left hand side is $\delta_{0\lambda}$ by orthogonality of characters. 
On the other hand, the Weyl character formula implies that the right hand side also equals 
$\delta_{0\lambda}$. 
\end{proof} 

\begin{example} Let $f$ be a conjugation-invariant continuous function on $U(n)$. Then 
$$
\int_{U(n)}f(k)dk=
$$
$$
\frac{1}{(2\pi)^nn!}\int_{|z_1|=...=|z_n|=1} f({\rm diag}(z_1,...,z_n))\prod_{m<j}|z_m-z_j|^2d\theta_1....d\theta_n
$$
where $z_j=e^{i\theta_j}$. 
\end{example} 

Thus we see that the orthogonality of characters can be written 
as 
$$
\frac{1}{|W|}\int_T \chi_\lambda(t)\overline {\chi_\mu(t)}|\Delta(t)|^2dt=\delta_{\lambda,\mu}.
$$

\begin{exercise} (i) Let $\mathfrak{k}={\rm Lie}K$ with Cartan subalgebra 
$\mathfrak{t}$ and $f$ be a compactly 
supported $K$-invariant continuous function on $\mathfrak{k}$. Show that 
$$
\int_{\mathfrak{k}} f(a)da=\frac{1}{|W|}\int_{\mathfrak{t}} f(u)|\Delta_{\rm rat}(u)|^2du
$$
(for suitable normalization of 
$da$, $du$), where $\Delta_{\rm rat}(u)=\prod_{\alpha\in R_+}\alpha(u)$ is the rational version 
of the Weyl denominator. 

{\bf Hint.} In Proposition \ref{wcf}, make a change of variable 
$k=\exp(\varepsilon a)$, $t=e^{\varepsilon u}$ for small $\varepsilon>0$ and then send $\varepsilon$ to zero. 

(ii) Write explicitly the identity you get if you set $f(a):=e^{B_{\mathfrak k}(a,a)}$ and compute the mutual normalization of $da,du$ in (i). 

{\bf Hint.} For the right hand side, use 
that $\Delta_{\rm rat}(u)=\lim_{\varepsilon\to 0}\varepsilon^{-|R_+|}\Delta(\varepsilon u)$ and 
the Weyl denominator formula. Then compute the Gaussian integral on both sides, use the Weyl 
denominator formula again, and take the limit $\varepsilon\to 0$. 
\end{exercise} 

\section{\bf Topology of Lie groups and homogeneous spaces, I} 

\subsection{The Chevalley-Eilenberg complex of a compact connected Lie group} 

We would now like to study topology of connected Lie groups. The Cartan decomposition implies that 
any real semisimple Lie group $G_\theta$ is diffeomorphic to the product of its 
maximal compact subgroup $K^c$ and a Euclidean space. This combined with weak Levi decomposition (Theorem \ref{wld})
implies that topology of connected Lie groups essentially reduces to topology of compact ones, as any simply-connected solvable Lie group has a filtration by normal subgroups with successive quotients being the 
1-dimensional group $\Bbb R$, hence is diffeomorphic to $\Bbb R^n$ (cf. Theorem \ref{solvthir}, Corollary \ref{homtype} below).  

So let us study cohomology of compact connected Lie groups. 

We first recall some generalities on cohomology of manifolds. As we mentioned before, the cohomology 
of an $n$-dimensional manifold $M$ can be computed by the {\bf de Rham complex}
$$
0\to \Omega^0(M)\to \Omega^1(M)\to...\to \Omega^n(M)\to 0,
$$
where $\Omega^i(M)$ is the space of smooth (complex-valued) differential $i$-forms on $M$. 
The maps in this complex are given by the differential $d: \Omega^i(M)\to \Omega^{i+1}(M)$, which satisfies the equation $d^2=0$. Namely, we define the $i$-th {\bf de Rham cohomology} 
of $M$ as the quotient 
$$
H^i(M,\Bbb C):=\Omega^i_{\rm closed}(M)/\Omega^i_{\rm exact}(M)
$$
where $\Omega^i_{\rm closed}(M)\subset \Omega^i(M)$ 
is the space of {\bf closed forms} (such that $d\omega=0$) and 
$\Omega^i_{\rm exact}(M)\subset \Omega^i(M)$ is the space of {\bf exact forms} 
(such that $\omega=d\eta$ for some $\eta\in \Omega^{i-1}(M)$). 

If $M$ is compact then the spaces $H^i(M,\Bbb C)$ are known to be finite dimensional, 
so we can define the {\bf Betti numbers} of $M$, $b_i(M):=\dim H^i(M,\Bbb C)$. 
Note that $b_0(M)$ is the number of connected components of $M$, 
so if $M$ is connected then $b_0(M)=1$. 

The wedge product of differential forms descends to the cohomology, which makes 
$H^{\bullet}(M,\Bbb C):=\oplus_{i=0}^n H^i(M,\Bbb C)$ into a graded algebra. This algebra is associative and {\bf graded-commutative}: $ab=(-1)^{\deg(a)\deg(b)}ba$ (since the wedge product of differential forms 
has these properties). Moreover, if $f: M\to N$ is a differentiable map of manifolds then 
we have the pullback map $f^*:\Omega^i(N)\to \Omega^i(M)$ which commutes with $d$ 
and hence descends to the cohomology. Also $f^*$ preserves the wedge product, hence defines 
a graded algebra homomorphism $f^*: H^{\bullet}(N,\Bbb C)\to H^{\bullet}(M,\Bbb C)$. 

\begin{exercise} Let $f: [0,1]\times M\to N$ be a differentiable map and 
$f_t: M\to N$ be given by $f_t(x)=f(t,x)$. Then $f_0^*=f_1^*$ on $H^{\bullet}(N,\Bbb C)$. 
In other words, $f^*$ is invariant under (smooth) homotopies of $f$. 
\end{exercise} 

Recall that for a vector field $v$ on $M$, the {\bf Lie derivative}
$$
L_v: \Omega^\bullet(M)\to \Omega^\bullet(M)
$$ 
is the unique derivation of the algebra of differential forms 
which commutes with the de Rham differential and equals the usual derivative 
of a function along $v$ on $\Omega^0(M)$. 

\begin{lemma} (Cartan's magic formula) Let $v$ 
be a vector field on $M$, $L_v: \Omega^i(M)\to \Omega^i(M)$ the Lie derivative 
and $\iota_v: \Omega^i(M)\to \Omega^{i-1}(M)$ the contraction operator. Then 
$$
L_v=\iota_v d+d\iota_v.
$$
\end{lemma}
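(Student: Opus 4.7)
The plan is to prove the identity by showing that both sides are derivations of degree zero on $\Omega^\bullet(M)$ that commute with $d$, and then checking that they agree on $\Omega^0(M) = C^\infty(M)$. Since any such operator on differential forms is determined by its action on functions (because locally $\Omega^\bullet(M)$ is generated, as a graded-commutative algebra, by $\Omega^0(M)$ and its image $d\Omega^0(M)$ under $d$), this will be enough.

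First I would set up the algebraic structure. The exterior derivative $d$ is a graded derivation of degree $+1$: $d(\alpha\wedge\beta) = d\alpha\wedge\beta + (-1)^{\deg\alpha}\alpha\wedge d\beta$. The contraction $\iota_v$ is a graded derivation of degree $-1$: $\iota_v(\alpha\wedge\beta) = \iota_v\alpha\wedge\beta + (-1)^{\deg\alpha}\alpha\wedge\iota_v\beta$. A standard (routine) fact about graded derivations is that the graded commutator of two graded derivations is again a graded derivation, whose degree is the sum of the degrees. Applied here, the graded commutator $\{d,\iota_v\} := d\iota_v + \iota_v d$ is a graded derivation of degree $0$, i.e.\ an ordinary derivation of $\Omega^\bullet(M)$ with respect to $\wedge$. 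On the other hand, $L_v$ is by construction a derivation of degree $0$ of the wedge product (this follows from its definition as the infinitesimal action of the flow of $v$ by pullback, combined with the fact that pullback by diffeomorphisms is a graded algebra automorphism).

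Next I would check that both sides commute with $d$. For $\iota_v d + d\iota_v$ this is immediate from $d^2=0$: $d(d\iota_v + \iota_v d) = d\iota_v d = (d\iota_v + \iota_v d)d$. For $L_v$ this follows because the flow $g_t$ of $v$ satisfies $g_t^* d = d g_t^*$, and $L_v = \frac{d}{dt}\big|_{t=0}g_t^*$. Now I check equality on $f\in\Omega^0(M)$: on the one hand $L_v f = \partial_v f$ by definition of the Lie derivative on functions; on the other hand $\iota_v f = 0$, so $(d\iota_v + \iota_v d)f = \iota_v df = (df)(v) = \partial_v f$. Hence $L_v$ and $d\iota_v + \iota_v d$ agree on $\Omega^0(M)$.

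The remaining step is the uniqueness principle: two derivations $D_1, D_2$ of $\Omega^\bullet(M)$ that commute with $d$ and agree on $\Omega^0(M)$ are equal. Locally, in a coordinate chart $(x_1,\ldots,x_n)$, every $k$-form is a $C^\infty$-linear combination of $dx_{i_1}\wedge\cdots\wedge dx_{i_k}$. A derivation commuting with $d$ is then forced to act by $D(f\, dx_{i_1}\wedge\cdots\wedge dx_{i_k}) = D(f)\, dx_{i_1}\wedge\cdots\wedge dx_{i_k} + \sum_j (-1)^{j-1}f\, dx_{i_1}\wedge\cdots\wedge d(D x_{i_j})\wedge\cdots\wedge dx_{i_k}$, which depends only on $D|_{\Omega^0}$. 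Applying this with $D = L_v - \iota_v d - d\iota_v$ gives the identity. The only real subtlety I anticipate is making sure the uniqueness claim is phrased correctly — in particular that one uses commutation with $d$, not merely the derivation property, since there exist nonzero derivations (e.g.\ $\iota_v$ itself up to sign conventions) vanishing on functions. Once that is set up, everything else is a short check.
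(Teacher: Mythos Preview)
Your proof is correct and follows essentially the same approach as the paper: both verify that the two sides are degree-zero derivations and then check agreement on the local generators $f$ and $df$. The paper phrases this slightly more directly by checking on $f$ and $df$ separately, while you package the $df$ check into the ``commutes with $d$'' property plus a uniqueness principle, but the underlying argument is the same.
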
 

\begin{proof} It suffices to check this identity on local charts. 
It is easy to see that both sides are derivations, so it suffices to check 
the equation on functions ($0$-forms) and on $1$-forms of the form 
$df$ where $f$ is a function. For functions we have $L_vf=\iota_vdf$, which is essentially the definition of $L_v$, while for $\omega=df$ we have 
$$
L_v(df)=d(L_vf)=d\iota_v(df)=(\iota_v d+d\iota_v)(df),
$$
since $d^2=0$. 
\end{proof} 

\begin{corollary} $L_v$ maps closed forms to exact forms, hence acts trivially in cohomology.  
\end{corollary}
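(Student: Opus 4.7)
The plan is to deduce this corollary immediately from the Cartan magic formula $L_v = \iota_v d + d \iota_v$ just proved. The argument is essentially a one-line computation, but I will lay it out carefully.

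First I would take an arbitrary closed form $\omega \in \Omega^i(M)$, so that $d\omega = 0$. Applying the magic formula, I get
\[
L_v \omega = \iota_v d\omega + d \iota_v \omega = d(\iota_v \omega),
\]
since the first term vanishes by closedness. Thus $L_v \omega$ is exact, being the differential of the $(i-1)$-form $\iota_v \omega$. This proves the first assertion.

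For the second assertion, I would note that the class $[L_v \omega] \in H^i(M,\Bbb C)$ of $L_v \omega$ is, by definition, the image of $L_v \omega \in \Omega^i_{\rm closed}(M)$ in the quotient $\Omega^i_{\rm closed}(M)/\Omega^i_{\rm exact}(M)$ (one should check that $L_v$ does preserve closed forms, but this is automatic since $L_v$ commutes with $d$: indeed $dL_v = d\iota_v d = L_v d$ from the magic formula). Since $L_v \omega$ is exact, $[L_v\omega] = 0$, so $L_v$ acts as zero on $H^\bullet(M,\Bbb C)$.

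There is no real obstacle here, as the whole corollary is a direct consequence of the magic formula; the only small subtlety worth pointing out is the implicit claim that $L_v$ descends to cohomology at all, which follows from $[L_v, d] = 0$ (a trivial consequence of the magic formula and $d^2 = 0$).
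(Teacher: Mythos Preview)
Your proof is correct and is exactly the intended argument: the paper states this corollary immediately after the Cartan magic formula without proof, leaving the one-line computation $L_v\omega = \iota_v d\omega + d\iota_v\omega = d(\iota_v\omega)$ for closed $\omega$ as implicit.
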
 

\begin{corollary}\label{trivac} If a connected Lie group $G$ acts on a manifold $M$ 
then $G$ acts trivially on $H^{\bullet}(M,\Bbb C)$. 
\end{corollary}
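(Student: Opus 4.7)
The plan is to combine the preceding corollary (that the Lie derivative $L_v$ acts as zero on $H^\bullet(M,\mathbb{C})$) with the fact that a connected Lie group is generated by a neighborhood of the identity (Proposition \ref{generati}). Alternatively, one can give an even more direct argument via homotopy invariance of pullbacks, which was recorded as an exercise just before the Cartan magic formula.

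First I would show that every element of the form $\exp(x)$, $x \in \mathfrak{g}$, acts trivially on $H^\bullet(M,\mathbb{C})$. For each $x \in \mathfrak{g}$ the action produces a vector field $v_x$ on $M$ whose flow $\phi_t$ is the diffeomorphism $y \mapsto \exp(-tx)\cdot y$ (up to a sign convention). For any closed form $\omega \in \Omega^k(M)$, Cartan's magic formula gives
\[
\tfrac{d}{dt}\phi_t^*\omega \;=\; \phi_t^* L_{v_x}\omega \;=\; \phi_t^*\bigl(\iota_{v_x}d\omega + d\iota_{v_x}\omega\bigr) \;=\; d\bigl(\phi_t^*\iota_{v_x}\omega\bigr),
\]
so integrating from $0$ to $1$ yields
\[
\exp(x)^*\omega - \omega \;=\; d\!\int_0^1 \phi_t^*\iota_{v_x}\omega\,dt,
\]
an exact form. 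Hence $\exp(x)^*$ induces the identity on $H^\bullet(M,\mathbb{C})$.

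Next I would upgrade this to all of $G$. By Proposition \ref{generati}, $G$ is generated by any neighborhood of $1$, in particular by the image of the exponential map, so every $g \in G$ can be written as a finite product $\exp(x_1)\cdots\exp(x_N)$. Since $g \mapsto g^*$ is an antihomomorphism from $G$ to the group of graded algebra automorphisms of $H^\bullet(M,\mathbb{C})$, and each factor acts trivially by the previous step, $g^*$ acts trivially as well.

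The argument is essentially routine once the preceding corollary is in hand; the only point requiring a little care is verifying that $\int_0^1 \phi_t^*\iota_{v_x}\omega\,dt$ genuinely defines a smooth differential form on $M$ (which follows from smooth dependence of $\phi_t$ on $t$ and the compactness of $[0,1]$). As an alternative route, one can bypass the Lie derivative entirely: since $G$ is a connected manifold it is smoothly path-connected, so for any $g \in G$ one chooses a smooth path $g_t$ from $1$ to $g$, and the map $F \colon [0,1] \times M \to M$, $F(t,y) = g_t \cdot y$, is a smooth homotopy between $\mathrm{id}_M$ and the action of $g$; by the homotopy-invariance exercise, $g^* = \mathrm{id}$ on $H^\bullet(M,\mathbb{C})$. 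No real obstacle is expected.
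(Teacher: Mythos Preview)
Your proposal is correct and matches the paper's intended approach: the corollary is stated without proof, immediately after the fact that $L_v$ acts trivially on cohomology, so the implied argument is exactly your first route (flow of $v_x$ via Cartan's formula, then Proposition~\ref{generati} to pass from $\exp(\g)$ to all of $G$). Your alternative via the homotopy-invariance exercise is equally valid and is also set up in the paper just before the Cartan magic formula.
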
 

Suppose now that a compact connected Lie group $G$ acts on a manifold $M$. 
Then we have the averaging operator $P: \Omega^{\bullet}(M)\to \Omega^{\bullet}(M)$ over $G$
which commutes with $d$ and satisfies the equation $P^2=P$, so we have a decomposition of complexes
$$
\Omega^{\bullet}(M)=\Omega^{\bullet}(M)^G\oplus \Omega^{\bullet}(M)_0
$$
where the first summand is the image of $P$ and the second one is the kernel of $P$.  

\begin{theorem}\label{exactness}  The complex $\Omega^{\bullet}(M)_0$ is exact. Thus the 
cohomology $H^{\bullet}(M,\Bbb C)$ is computed by the complex 
of invariant differential forms $\Omega^{\bullet}(M)^G$.
\end{theorem} 

\begin{proof} If $\omega\in \Omega^i(M)_0$ is closed then by Corollary \ref{trivac} the cohomology class $[\omega]$ 
of $\omega$ coincides with the cohomology class of 
$[g\omega]$ for all $g\in G$. Thus 
$$
[\omega]=\int_G [g\omega]dg=\big[\int_G g\omega dg\big]=0.
$$
It follows that $\omega=d\eta$ for some $\eta\in \Omega^i(M)$. Then $\omega=(1-P)\omega=d(1-P)\eta$, and $(1-P)\eta\in \Omega^i(M)_0$. So the complex $\Omega^{\bullet}(M)_0$ is exact, which implies the statement. 
\end{proof} 

\begin{corollary} If $G$ is a compact connected Lie group then 
$H^{\bullet}(G,\Bbb C)$ is computed by the complex $\Omega^{\bullet}(G)^G$ 
of left-invariant differential forms on $G$. 
\end{corollary}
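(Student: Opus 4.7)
\medskip

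The plan is to deduce this corollary as an immediate application of the preceding theorem, specialized to the natural action of $G$ on itself by left translations. First I would set $M=G$, where $G$ is viewed as a manifold, and take the action $a\colon G\times G\to G$, $a(g,x)=gx$, which is smooth and by diffeomorphisms. When $G$ is connected (the main case), the preceding theorem applies verbatim: a compact connected Lie group acts on the manifold $M=G$, so $H^{\bullet}(G,\mathbb{C})$ is computed by the subcomplex $\Omega^{\bullet}(G)^{G}$ of $a$-invariant forms. By construction, a form $\omega\in\Omega^{\bullet}(G)$ satisfies $L_g^{*}\omega=\omega$ for all $g\in G$ precisely when it is left-invariant in the sense of Subsection on left- and right-invariant tensor fields, so the invariant subcomplex is exactly the Chevalley--Eilenberg complex of left-invariant forms, which by Proposition \ref{spre} is canonically identified with $\wedge^{\bullet}\mathfrak{g}^{*}$ (with $\mathfrak{g}=\mathrm{Lie}(G)$) equipped with its induced differential.

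The only genuinely non-trivial input in this deduction is the previous theorem itself, which in turn relies on Corollary \ref{trivac} (a connected Lie group acts trivially on cohomology) together with averaging over $G$ using the Haar measure. Both of these are already available, so nothing new is required beyond assembling the statement in this special case.

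For the case that $G$ is compact but not connected, the argument needs a small supplement, and this is the only point where some care is required. One first applies the theorem to the connected group $G^{\circ}$ acting on $M=G$ by left translations, obtaining that $H^{\bullet}(G,\mathbb{C})$ is computed by $\Omega^{\bullet}(G)^{G^{\circ}}$. Inside this, the subcomplex $\Omega^{\bullet}(G)^{G}$ of fully left-invariant forms is the subspace of $(G/G^{\circ})$-invariants, where the finite group $G/G^{\circ}$ acts on $\Omega^{\bullet}(G)^{G^{\circ}}$ through translations by coset representatives. Averaging over this finite group (which commutes with $d$) shows the inclusion $\Omega^{\bullet}(G)^{G}\hookrightarrow \Omega^{\bullet}(G)^{G^{\circ}}$ induces an isomorphism on cohomology onto $H^{\bullet}(G,\mathbb{C})^{G/G^{\circ}}$; combined with the fact that the translation action of $G/G^{\circ}$ on $H^{\bullet}(G,\mathbb{C})$ is trivial (being the action of a subgroup of the connected group obtained by enlarging to a homotopy, or more directly by Corollary \ref{trivac} applied componentwise after identifying components with $G^{\circ}$), one recovers the full $H^{\bullet}(G,\mathbb{C})$ from $\Omega^{\bullet}(G)^{G}$.

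The main obstacle, if any, is purely bookkeeping: verifying that the invariance condition used in the theorem matches the standard notion of left-invariance, and in the disconnected case handling the component group cleanly. There is no new analytic or topological content beyond what the theorem already supplies.
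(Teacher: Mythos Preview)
Your treatment of the connected case is correct and is exactly the paper's (implicit) argument: set $M=G$ with the left-translation action and invoke the preceding theorem. Nothing more is needed there.

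The disconnected case, however, contains a genuine error, and in fact the statement is false for disconnected $G$. Your key claim is that the left-translation action of $G/G^{\circ}$ on $H^{\bullet}(G,\mathbb{C})$ is trivial. It is not: left translation by $g\notin G^{\circ}$ permutes the connected components of $G$ nontrivially, hence permutes the corresponding direct summands of $H^{\bullet}(G,\mathbb{C})\cong H^{\bullet}(G^{\circ},\mathbb{C})^{\oplus |\pi_0(G)|}$. Corollary~\ref{trivac} does not apply to $G/G^{\circ}$ (which is discrete, not connected), and ``applying it componentwise'' cannot work because the action does not preserve components. Concretely, $\Omega^{0}(G)^{G}=\mathbb{C}$ (constant functions), whereas $H^{0}(G,\mathbb{C})=\mathbb{C}^{|\pi_0(G)|}$, so the two do not match once $|\pi_0(G)|>1$. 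The corollary should be read with $G$ connected, as stated at the outset of the subsection; your connected-case argument is then complete.
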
 

The complex $\Omega^{\bullet}(G)^G$ is called the {\bf Chevalley-Eilenberg complex} of $G$.

\subsection{Cohomology of Lie algebras} 
It turns out that the Chevalley-Eilenberg complex of $G$ can be described purely algebraically in terms of the Lie algebra $\g={\rm Lie}(G)_{\Bbb C}$. To this end, we will need another lemma from basic differential geometry. 

\begin{lemma} (Cartan differentiation formula) Let $\omega\in \Omega^m(M)$ 
and $v_0,...,v_m$ be vector fields on $M$. Then 
$$
d\omega(v_0,...,v_m)=\sum_i (-1)^i L_{v_i}(\omega(v_0,...,\widehat v_i,...,v_m))+
$$
$$
\sum_{i<j}(-1)^{i+j}\omega([v_i,v_j],v_0,...,\widehat v_i,...,\widehat{v_j},...,v_m)
$$
(where the hats indicate the omitted terms). 
\end{lemma}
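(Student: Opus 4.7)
The plan is to exploit the fact that both sides of the claimed identity are $\mathbb{R}$-multilinear and alternating in $(v_0,\ldots,v_m)$, and then reduce to a pointwise verification in local coordinates. The first and crucial step is to show that both sides are in fact $C^\infty(M)$-multilinear (i.e., tensorial in the vector fields). For the LHS this is automatic: $d\omega$ is a differential form of degree $m+1$, so its value at a point depends only on the values $v_i(p)\in T_pM$. For the RHS the tensoriality is not obvious because both $L_{v_i}$ and $[v_i,v_j]$ fail to be $C^\infty$-linear in their arguments.

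Second, once tensoriality is established, I would choose local coordinates $x_1,\ldots,x_n$ around a point $p$ and evaluate both sides on coordinate vector fields $v_i=\partial_{x_{k_i}}$. Then $[v_i,v_j]=0$, so the second sum on the RHS vanishes, and each Lie derivative $L_{v_i}$ reduces to a partial derivative $\partial_{x_{k_i}}$ applied to the coefficient $\omega(v_0,\ldots,\widehat{v_i},\ldots,v_m)$ of $\omega$ in coordinates. The resulting expression is precisely the standard coordinate definition of $d\omega$ applied to these coordinate vectors, so the two sides agree at $p$. Since $p$ is arbitrary and both sides are tensorial, this finishes the proof.

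The main technical step, and the place where the bookkeeping is delicate, is verifying $C^\infty$-linearity of the RHS. By the alternating symmetry (which itself needs a short verification using $[v_i,v_j]=-[v_j,v_i]$ together with skew-symmetry of $\omega$), it suffices to check linearity in the slot $v_0$. Replacing $v_0$ by $fv_0$ for $f\in C^\infty(M)$, the term $i=0$ in the first sum gives $L_{fv_0}(\omega(v_1,\ldots,v_m))=f L_{v_0}(\omega(v_1,\ldots,v_m))$ because $L_{fv_0}$ acts as $fv_0$ on functions. For $i\ge 1$, the Leibniz rule produces an unwanted term
\[
(-1)^i v_i(f)\,\omega(v_0,v_1,\ldots,\widehat{v_i},\ldots,v_m).
\]
In the second sum, the pairs $(0,j)$ with $j\ge 1$ contribute an unwanted term from $[fv_0,v_j]=f[v_0,v_j]-v_j(f)v_0$, namely
\[
-(-1)^j v_j(f)\,\omega(v_0,v_1,\ldots,\widehat{v_j},\ldots,v_m),
\]
while the pairs $(i,j)$ with $i\ge 1$ are $C^\infty$-linear in $v_0$ automatically. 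The two stray sums cancel term-by-term, proving $f$-linearity in $v_0$.

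The hard part will be the sign bookkeeping in this cancellation, and in particular making sure that the alternating property of the RHS is verified cleanly (one must handle both the transposition of two neighboring arguments among the $v_i$ in the first sum and the more subtle effect in the second sum where the bracketed entry $[v_i,v_j]$ occupies a privileged first slot). Once this algebraic bookkeeping is done, the reduction to coordinate vector fields is essentially immediate, and no analytic input beyond the local coordinate formula for $d$ is required.
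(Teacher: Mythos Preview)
Your approach is correct and essentially identical to the paper's: first establish $C^\infty(M)$-linearity of the RHS in each $v_i$ (the paper phrases this as ``the first derivatives of the function cancel out''), then reduce to coordinate vector fields where the second sum vanishes and the first sum becomes the coordinate formula for $d\omega$. The only difference is that you spell out the cancellation in the tensoriality check explicitly, whereas the paper leaves this as a routine verification.
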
 

\begin{proof} It is easy to show that the right hand side is linear over functions 
on $M$ with respect to each $v_i$ (the first derivatives of the function cancel out). 
Therefore, it suffices to assume that $v_i=\frac{\partial}{\partial x_{k_i}}$ (in local coordinates), and 
$\omega=fdx_{j_1}\wedge...\wedge dx_{j_m}$. Then the second summand on the RHS vanishes and the verification is straightforward. 
\end{proof} 

\begin{corollary} Let $G$ be a Lie group and $\omega\in \Omega^m(G)^G$ 
be a left-invariant differential form. Then for any left-invariant vector fields
$v_0,...,v_m$ we have 
\begin{equation}\label{carfor} 
d\omega(v_0,...,v_m)=
\sum_{i<j}(-1)^{i+j}\omega([v_i,v_j],v_0,...,\widehat v_i,...,\widehat{v_j},...,v_m).
\end{equation}
\end{corollary}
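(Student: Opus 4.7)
The plan is to derive the corollary as an immediate consequence of the Cartan differentiation formula stated just above it, by showing that the first sum on the right-hand side vanishes term by term when $\omega$ and all the $v_i$ are left-invariant.

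First I would recall that if $\omega \in \Omega^m(G)^G$ is left-invariant and $v_0, \ldots, v_m$ are left-invariant vector fields on $G$, then the scalar function $\omega(v_0, \ldots, \widehat{v_i}, \ldots, v_m)$ on $G$ is left-invariant. Indeed, for any $g \in G$, the pullback $L_g^* \omega = \omega$ and $L_{g*} v_j = v_j$ for each $j$, so
\[
(\omega(v_0, \ldots, \widehat{v_i}, \ldots, v_m))(gh) = \omega_{gh}((v_0)_{gh}, \ldots) = \omega_h((v_0)_h, \ldots) = (\omega(v_0, \ldots, \widehat{v_i}, \ldots, v_m))(h),
\]
which means this function is constant on $G$.

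Next, since the Lie derivative of a constant function along any vector field is zero, we get $L_{v_i}(\omega(v_0, \ldots, \widehat{v_i}, \ldots, v_m)) = 0$ for every $i$. Therefore the first sum in the Cartan differentiation formula disappears entirely, and only the second sum survives, which is exactly the formula \eqref{carfor} we wish to prove.

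There is no real obstacle here; the corollary is essentially a direct specialization of the general Cartan differentiation formula, and the only thing to verify is the constancy of the functions $\omega(v_0, \ldots, \widehat{v_i}, \ldots, v_m)$, which follows immediately from left-invariance of all the inputs. The formula \eqref{carfor} then shows that $d\omega$ on left-invariant vector fields is determined purely algebraically by the Lie bracket, which is the starting point for identifying the Chevalley--Eilenberg complex $\Omega^{\bullet}(G)^G$ with a purely Lie-algebraic complex built from $\wedge^{\bullet} \g^*$.
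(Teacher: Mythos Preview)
Your proof is correct and follows exactly the same approach as the paper: the paper's one-line proof simply states that the functions $\omega(v_0,\ldots,\widehat v_i,\ldots,v_m)$ are constant, and you have spelled out why this constancy follows from left-invariance and why it kills the first sum in the Cartan differentiation formula.
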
 

\begin{proof} This follows since the functions $\omega(v_0,...,\widehat v_i,...,v_m)$ are constant. 
\end{proof} 

Now observe that $\Omega^m(G)^G=\wedge^m\g^*$. Thus we get 

\begin{corollary}\label{ce} For any Lie group $G$ the complex $\Omega^{\bullet}(G)^G$
coincides with the complex 
$$
0\to \Bbb C\to \g^*\to (\wedge^2\g)^*\to...(\wedge^m\g)^*\to...
$$
with differential defined by \eqref{carfor}, where $\g={\rm Lie}(G)_{\Bbb C}$. \end{corollary}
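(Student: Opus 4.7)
The plan is to observe that this corollary follows almost immediately from the preceding Cartan differentiation formula \eqref{carfor}, together with the standard identification of left-invariant tensor fields with elements of the corresponding tensor power of the Lie algebra at the identity. So the proof amounts to assembling two facts already established in the excerpt, and the task is really to make sure that the identification of spaces intertwines the two differentials.

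First I would invoke Proposition \ref{spre}: evaluation at $1 \in G$ gives an isomorphism of vector spaces $\Omega^m(G)^G \to (\wedge^m \g)^* \cong \wedge^m \g^*$, sending a left-invariant $m$-form $\omega$ to the alternating multilinear functional $\omega_1$ on $\g = T_1 G$. This is because the tangent bundle $TG$ is trivialized by left translations, so left-invariant sections of $\wedge^m T^*G$ are determined by their value at $1$, and conversely any $\tau \in \wedge^m \g^*$ spreads uniquely to a left-invariant $m$-form $\mathbf{L}_\tau$.

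Next I would transport the de Rham differential $d$ through this isomorphism. For $\omega \in \Omega^m(G)^G$ and any $z_0,\dots,z_m \in \g$, pick the left-invariant vector fields $v_i = \mathbf{L}_{z_i}$, whose Lie bracket $[v_i,v_j] = \mathbf{L}_{[z_i,z_j]}$ by the identification of $\Vect_L(G)$ with $\g$ as a Lie algebra (established in the preceding section). Applying formula \eqref{carfor} and evaluating the constant functions $\omega(v_0,\dots,\widehat{v_i},\dots,v_m)$ at $1$, one finds
\[
(d\omega)_1(z_0,\dots,z_m) = \sum_{i<j}(-1)^{i+j}\omega_1([z_i,z_j],z_0,\dots,\widehat{z_i},\dots,\widehat{z_j},\dots,z_m),
\]
a formula depending only on the bracket on $\g$ and on $\omega_1 \in \wedge^m \g^*$. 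In particular $d$ preserves left invariance, and under the identification $\Omega^\bullet(G)^G \cong \wedge^\bullet \g^*$ it becomes exactly the purely algebraic differential of the stated complex.

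There is really no main obstacle here; the only thing to keep track of is bookkeeping, namely that the Lie brackets appearing in \eqref{carfor} are the brackets of left-invariant vector fields, which match the Lie algebra bracket on $\g$ (not its negative) under the identification $\Vect_L(G) \cong \g$ used earlier in the excerpt. Once this sign convention is consistent, the isomorphism of complexes is immediate, and $d^2 = 0$ on the algebraic side is then automatic, reflecting the Jacobi identity in $\g$ (it could also be verified directly from the formula as a sanity check, but it is not logically needed since $d^2 = 0$ already holds on $\Omega^\bullet(G)$).
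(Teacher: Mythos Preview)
Your proposal is correct and takes essentially the same approach as the paper: the paper simply observes that $\Omega^m(G)^G=\wedge^m\g^*$ (by evaluation at the identity, as in Proposition~\ref{spre}) and then invokes the preceding corollary (formula~\eqref{carfor}) to identify the differential. Your write-up is more explicit about the bookkeeping, in particular the sign convention for the bracket of left-invariant vector fields, but the argument is identical in substance.
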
 

This purely algebraic complex can be defined for any Lie algebra $\g$ 
over any field (the equality $d^2=0$ follows from the Jacobi identity).\footnote{Note that if $\g$ is finite dimensional then $\wedge^i\g^*=(\wedge^i\g)^*$.} It is called the {\bf standard complex} or the {\bf Chevalley-Eilenberg complex} of $\g$, denoted $CE^{\bullet}(\g)$, and its cohomology is called the {\bf Lie algebra cohomology} of $\g$, denoted $H^{\bullet}(\g)$.\footnote{Note that $H^1(\g)$ already appeared earlier in Section 18.}

Also note that the complex $CE^{\bullet}(\g)$ has wedge product multiplication, 
which descends to the cohomology. Thus $H^{\bullet}(\g)$ is a graded-commutative associative algebra. 
Furthermore, if $\g={\rm Lie}(G)_{\Bbb C}$ for a compact connected Lie group $G$ 
then $H^{\bullet}(\g)\cong H^{\bullet}(G,\Bbb C)$ as a graded algebra. However, this may fail even at the level of vector spaces (i.e., Betti numbers) if $G$ is not compact.  

\begin{example} Let $\g$ be abelian, $\dim\g<\infty$. Then $CE^{\bullet}(\g)=\wedge^{\bullet} \g^*$, with zero differential, so 
$H^{\bullet}(\g)=\wedge^{\bullet} \g^*$. So if $G=(S^1)^n$ is a torus then we get 
$H^{\bullet}(G,\Bbb C)=\wedge^{\bullet} \g^*=\wedge^{\bullet}(\xi_1,...,\xi_n)$ where $\xi_i$ have degree $1$. 
In particular, $H^{\bullet}(S^1)=\wedge^{\bullet}(\xi)$. However, for the universal cover $\Bbb R$ of $S^1$ this is clearly false. 
\end{example} 

\begin{remark}\label{warn} Corollary \ref{ce} implies that for compact Lie groups $K_1,K_2$
the map $\Omega^{\bullet}(K_1)\otimes \Omega^{\bullet}(K_2)\to \Omega^{\bullet}(K_1\times K_2)$ 
(i.e., in components, $\Omega^i(K_1)\otimes \Omega^j(K_2)\to \Omega^{i+j}(K_1\times K_2)$)
defines an isomorphism of cohomology rings $H^{\bullet}(K_1,\Bbb C)\otimes H^{\bullet}(K_2,\Bbb C)\to H^{\bullet}(K_1\times K_2,\Bbb C)$.
This is a special case of the {\bf K\"unneth theorem}, which actually holds for any manifolds (and more generally for sufficiently nice topological spaces), which need not have any group structure. We warn the reader, however, that the {\bf tensor product of algebras here is in the graded sense}, i.e. 
$$
(a\otimes b)(a'\otimes b')=(-1)^{\deg(b)\deg(a')}(aa'\otimes bb').
$$
\end{remark} 

\begin{theorem} If $G$ is a connected compact Lie group with 
${\rm Lie}(G)_\Bbb C=\g$ then $H^{\bullet}(G,\Bbb C)\cong (\wedge^{\bullet} \g^*)^\g$ as a ring. 
\end{theorem}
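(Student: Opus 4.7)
The plan is to run the averaging argument from the preceding theorem twice: once for the left-translation action (which has already been done, giving $H^\bullet(G,\mathbb{C}) \cong H^\bullet(\Omega^\bullet(G)^G)$ with $\Omega^\bullet(G)^G \cong \wedge^\bullet\mathfrak{g}^*$ as in Corollary \ref{ce}), and a second time for the conjugation action of $G$ on itself, restricted to the left-invariant subcomplex. Then I will show that the Chevalley--Eilenberg differential vanishes identically on the bi-invariant part $(\wedge^\bullet\mathfrak{g}^*)^{\mathfrak{g}}$, which forces the cohomology to be this space itself.

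For the second averaging step, I would first check that conjugation $c_g(x)=gxg^{-1}$ preserves $\Omega^\bullet(G)^G$: from $c_g = L_g\circ R_{g^{-1}}$ one has $c_g^*\omega = R_{g^{-1}}^*\omega$ on left-invariant $\omega$, and right translation commutes with left translation, so the result is again left-invariant. Under the identification $\Omega^\bullet(G)^G = \wedge^\bullet\mathfrak{g}^*$ this $G$-action is the coadjoint action on the exterior algebra, whose fixed points are $(\wedge^\bullet\mathfrak{g}^*)^{\mathrm{Ad}(G)} = (\wedge^\bullet\mathfrak{g}^*)^{\mathfrak{g}}$ by connectedness of $G$. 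The inclusion $\Omega^\bullet(G)^G \hookrightarrow \Omega^\bullet(G)$ is a $G$-equivariant quasi-isomorphism (with respect to conjugation), so Corollary \ref{trivac} implies that conjugation acts trivially on $H^\bullet(\Omega^\bullet(G)^G)$. Repeating verbatim the argument of the preceding theorem — decompose $\Omega^\bullet(G)^G = (\wedge^\bullet\mathfrak{g}^*)^{\mathfrak{g}} \oplus W$ where $W$ is the kernel of the conjugation-averaging projector $P$, and observe that any closed class in $W$ equals its own $P$-average, hence is zero in cohomology, hence exact, hence exact within $W$ via $(1-P)\eta$ — shows that $W$ is acyclic and the bi-invariant subcomplex computes $H^\bullet(G,\mathbb{C})$.

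The remaining step, which I expect to be the main obstacle (though it turns out to be a clean two-line argument), is to show that the Chevalley--Eilenberg differential vanishes on $(\wedge^\bullet\mathfrak{g}^*)^{\mathfrak{g}}$. I will do this via the inversion map $\iota\colon G\to G$, $\iota(g)=g^{-1}$. Since $d\iota_1 = -\mathrm{Id}$, an $m$-form $\omega$ satisfies $\iota^*\omega|_1 = (-1)^m\omega|_1$. Using the identities $L_g\circ\iota = \iota\circ R_{g^{-1}}$ and $R_g\circ\iota=\iota\circ L_{g^{-1}}$ one checks that $\iota^*$ exchanges left- and right-invariance, hence preserves bi-invariance; since bi-invariant forms are determined by their value at $1$, this forces $\iota^*\omega = (-1)^m\omega$ globally for any bi-invariant $\omega$. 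Because $d$ commutes with all $L_g^*$ and $R_g^*$, $d\omega$ is again bi-invariant, and applying $\iota^*$ in two different ways gives
\[
(-1)^{m+1}d\omega = \iota^*(d\omega) = d(\iota^*\omega) = (-1)^m d\omega,
\]
forcing $d\omega=0$.

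Combining the three steps, the inclusion $(\wedge^\bullet\mathfrak{g}^*)^{\mathfrak{g}} \hookrightarrow \Omega^\bullet(G)^G$ is a quasi-isomorphism whose source has zero differential, so $H^\bullet(G,\mathbb{C}) \cong (\wedge^\bullet\mathfrak{g}^*)^{\mathfrak{g}}$ as graded vector spaces. The multiplicative structure comes for free: every map in sight (the inclusion $\wedge^\bullet\mathfrak{g}^* \hookrightarrow \Omega^\bullet(G)$, the averaging projectors, the passage to cohomology) is compatible with the wedge product, so the identification is an isomorphism of graded-commutative algebras.
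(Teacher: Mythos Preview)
Your argument is correct. The route differs from the paper's in two minor but noticeable ways.

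First, the paper applies the averaging theorem once, to the action of the compact connected group $G\times G$ on $G$ by left and right translations, obtaining $\Omega^\bullet(G)^{G\times G}=(\wedge^\bullet\g^*)^G=(\wedge^\bullet\g^*)^\g$ in one stroke. You instead average in two stages (left translation, then conjugation on the left-invariant subcomplex). This is equivalent---on left-invariant forms, conjugation is exactly right translation---but the single $G\times G$ application is a bit cleaner and avoids the separate check that conjugation acts trivially on $H^\bullet$ of the subcomplex.

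Second, to kill the differential on $(\wedge^\bullet\g^*)^\g$ the paper simply invokes the explicit Chevalley--Eilenberg formula \eqref{carfor}: each term of $d\omega(v_0,\dots,v_m)$ involves inserting a bracket $[v_i,v_j]$ into $\omega$, and summing the terms with fixed $i$ reproduces (up to sign) the expression for $({\rm ad}^*v_i\cdot\omega)(v_0,\dots,\widehat v_i,\dots,v_m)$, which vanishes by invariance; summing over $i$ gives $d\omega=-d\omega$. Your inversion-map argument is the other classical proof of the same vanishing, and is arguably more conceptual (it works verbatim on any Lie group, not just on the algebraic complex). Either way the multiplicative statement follows exactly as you say, since every map in the chain respects the wedge product.
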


\begin{proof} We have an action of $G\times G$ on $G$, so 
the cohomology of $G$ is computed by the complex of invariants 
$\Omega^{\bullet}(G)^{G\times G}=(\wedge^{\bullet} \g^*)^G$. So our job is to show that the differential in this complex is actually zero. But this follows immediately from the definition of the differential in $\wedge^{\bullet} \g^*$. 
\end{proof} 

We also have

\begin{proposition} If $G$ is a connected Lie group, $\Gamma\subset G$ 
a finite subgroup, and $\pi: G\to G/\Gamma$ is the canonical map then $\pi^*$ defines an isomorphism $H^{\bullet}(G/\Gamma,\Bbb C)\to H^{\bullet}(G,\Bbb C)$. 
\end{proposition}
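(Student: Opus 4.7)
The plan is to combine the covering space interpretation of $\pi$ with an averaging argument identical in spirit to the one already used in the excerpt for compact group actions, replacing integration over a compact connected group by finite averaging over $\Gamma$. Since $\Gamma$ is finite, $\pi\colon G\to G/\Gamma$ is a finite covering map, hence a local diffeomorphism. Viewing $\Gamma$ as acting on $G$ by right translations, pullback along $\pi$ identifies $\Omega^{\bullet}(G/\Gamma)$ with the subcomplex $\Omega^{\bullet}(G)^{\Gamma}$ of $\Gamma$-invariant forms, and this identification commutes with $d$. Thus it suffices to show that the inclusion $\Omega^{\bullet}(G)^{\Gamma}\hookrightarrow \Omega^{\bullet}(G)$ induces an isomorphism on cohomology.

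The key input is that $\Gamma$ acts trivially on $H^{\bullet}(G,\mathbb{C})$. This is where one uses connectedness of $G$: the right-translation action of $\Gamma$ on $G$ is the restriction of the right-translation action of the whole connected Lie group $G$ on itself, and by Corollary \ref{trivac} any such connected group action induces the trivial action on cohomology. So every $\gamma \in \Gamma$ acts by the identity on $H^{\bullet}(G,\mathbb{C})$.

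Now form the averaging projector
\[
P = \frac{1}{|\Gamma|}\sum_{\gamma\in \Gamma}\gamma^{\ast}\colon \Omega^{\bullet}(G)\to \Omega^{\bullet}(G)^{\Gamma},
\]
which commutes with $d$ and gives a $d$-stable decomposition $\Omega^{\bullet}(G) = \Omega^{\bullet}(G)^{\Gamma}\oplus \ker P$. If $\omega\in \ker P$ is closed, then by the previous paragraph $[\omega] = [\gamma^{\ast}\omega]$ in $H^{\bullet}(G,\mathbb{C})$ for every $\gamma$, so $[\omega] = [P\omega] = 0$. Hence $\omega = d\eta$ for some $\eta$, and then $\omega = (1-P)\omega = d(1-P)\eta$ with $(1-P)\eta \in \ker P$. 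Thus the subcomplex $\ker P$ is acyclic, and the inclusion $\Omega^{\bullet}(G)^{\Gamma}\hookrightarrow \Omega^{\bullet}(G)$ is a quasi-isomorphism.

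The main obstacle, such as it is, is the one conceptual step in the middle: justifying that the finite group $\Gamma$ acts trivially on $H^{\bullet}(G,\mathbb{C})$. One might initially worry that Corollary \ref{trivac} does not apply because $\Gamma$ itself is not connected, but the trick is to observe that the action of $\Gamma$ is simply the restriction of the right-translation action of the connected group $G$, which does satisfy the hypotheses. Once this point is recognized, the rest of the argument is formal and parallels the averaging argument already carried out for compact group actions on manifolds.
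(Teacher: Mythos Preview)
Your proof is correct and follows the same approach as the paper: identify $\Omega^\bullet(G/\Gamma)$ with $\Omega^\bullet(G)^\Gamma$, then use that $\Gamma\subset G$ acts trivially on $H^\bullet(G,\mathbb C)$ because the action is the restriction of the right-translation action of the connected group $G$ (Corollary~\ref{trivac}). The paper compresses the averaging step into the single assertion that $\pi^*$ is an isomorphism onto $H^\bullet(G,\mathbb C)^\Gamma$; you have spelled out why taking $\Gamma$-invariants commutes with cohomology via the projector $P$, which is exactly the justification that step needs.
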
 

\begin{proof} The map $\pi^*$ is an isomorphism $H^{\bullet}(G/\Gamma,\Bbb C)\to H^{\bullet}(G,\Bbb C)^\Gamma$, but $\Gamma$, being a subgroup of $G$, acts trivially on $H^{\bullet}(G,\Bbb C)$. 
\end{proof} 

Thus it suffices to determine the cohomology of simple, simply connected compact Lie groups. 

\section{\bf Topology of Lie groups and homogeneous spaces, II}\label{top2}

\subsection{The coproduct on the cohomology ring} 
To understand the algebra $R:=H^{\bullet}(G)=H^{\bullet}(G,\Bbb C)$ better, note that the multiplication 
map $G\times G\to G$ induces the graded algebra homomorphism 
$\Delta: H^{\bullet}(G)\to H^{\bullet}(G\times G)=H^{\bullet}(G)\otimes H^{\bullet}(G)$, which is coassociative: 
$$
(\Delta\otimes \id)\circ \Delta=(\id\otimes \Delta)\circ \Delta.
$$ 
(Note that the warning in Remark \ref{warn} about tensor product in the graded sense still applies here!)
Such a map $\Delta$ is called a {\bf coproduct} since 
it defines an algebra structure on the dual space $R^*$ (see Subsection \ref{copro}). We also have the augmentation map 
$\varepsilon: R\to \Bbb C$ such that 
$$
(\varepsilon\otimes 1)(\Delta(x))=(1\otimes \varepsilon)(\Delta(x))=x
$$
for all 
$x\in R$. Such a structure is called a {\bf graded bialgebra}.\footnote{Moreover, we have an algebra homomorphism $S: R\to R$ induced by the inversion map $G\to G$ called the {\bf antipode}. This makes $R$ into what is called a {\bf graded Hopf algebra}.} 

\begin{exercise} (Hopf theorem) Let $R$ be a finite dimensional graded-commutative bialgebra over a field $\bold k$ of characteristic zero, and $R[0]=\bold k$ (where the grading is by nonnegative integers). Show that $R$ 
is a {\bf free} graded commutative algebra on some homogeneous generators of odd degrees, i.e., $R=\wedge_{\bold k}^{\bullet}(\xi_1,...,\xi_r)$ 
with $\deg \xi_i=2m_i+1$ for some nonnegative integers $m_i$. 
Thus $\dim R=2^r$. 

{\bf Hint.} Recall from Subsection \ref{primi} that an element $x\in R$ is {\bf primitive} if $\Delta(x)=x\otimes 1+1\otimes x$. Show that any homogeneous primitive $x$ has odd degree (use that $\dim R<\infty$), thus $x^2=0$, and that 
$R$ is generated by homogeneous primitive elements. Then show that linearly independent primitive elements in $R$ cannot satisfy any nontrivial relation (take a relation of lowest degree, compute its coproduct and find a relation of even lower degree, getting a contradiction). 

For more hints see \cite{C}, Subsection 2.4. 
\end{exercise} 

Let us now determine the number $r$. We have $2^r=\dim (\wedge^{\bullet}\g^*)^\g$. 
But this dimension can be computed using the Weyl character formula. Namely, the character 
of $\wedge^{\bullet} \g^*$ is 
$$
\chi_{\wedge^{\bullet}\g^*}(t)=2^{{\rm rank}\g}\prod_{\alpha>0}(1+\alpha(t))(1+\alpha(t)^{-1}), 
$$
where $T\subset G$ is a maximal torus and $t\in T$. 
So 
$$
\dim (\wedge^{\bullet}\g^*)^\g=\frac{2^{{\rm rank}\g}}{|W|}\int_T\prod_{\alpha>0}(\alpha(t^2)-1)(1-\alpha(t^{-2}))dt
=2^{{\rm rank}\g}.
$$
So $r={\rm rank}\g$. 

Thus we have  
$$
H^{\bullet}(G)=H^{\bullet}(\g)=(\wedge^{\bullet} \g^*)^\g=\wedge^{\bullet} (\xi^{(1)},...,\xi^{(r)}),
$$
where $r={\rm rank}\g$ and $\deg(\xi^{(i)})=2m_i+1$. Moreover, it suffices to consider 
the case when $\g$ is simple. 
What are the numbers $m_i$ in this case? 

Let us order $m_i$ as follows: $m_1\le m_2\le...\le m_r$. 
We know that $r+2\sum m_i=\dim \g$, so $\sum_i m_i=|R_+|$. 
Also it is not hard to see that $m_1=1$, $m_2>1$:

\begin{exercise} Show that for a simple Lie algebra $\g$ we have 
$(\wedge^3\g^*)^\g=\Bbb C$, spanned by the triple product $([x,y],z)$. 

{\bf Hint.} Let $\omega\in (\wedge^3\g^*)^\g$. 

1. Show that
$$
\omega(e_i,[f_i,h_i],h)+\omega(e_i,h_i,[f_i,h])=0
$$
for $h\in \h$ and deduce that 
$$
\omega(e_i,f_i,h)=\tfrac{1}{2}\alpha_i(h)\omega(e_i,f_i,h_i). 
$$

2. Take $y,z\in \h$ and show that
$$
\omega(h_i,y,z)+\omega(f_i,[e_i,y],z)+\omega(f_i,y,[e_i,z])=0.
$$
Deduce that $\omega(x,y,z)=0$ for $x,y,z\in \h$. Conclude that 
$\omega$ is completely determined by $\omega(e_\alpha,e_{-\alpha},h)$ for all roots $\alpha$ and $h\in \h$. Use the Weyl group to reduce to $\omega(e_i,f_i,h)$ and then to $\omega(e_i,f_i,h_i)$. 

3. Finally, use that 
$$
\omega([e_i,e_j],f_i,f_j)=\omega(e_j,f_j,h_i)=\omega(e_i,f_i,h_j)
$$
to show that all possible $\omega$ are proportional.
\end{exercise} 

In particular, we see that for a simple compact connected 
Lie group $G$, one has $H^3(G,\Bbb C)\cong \Bbb C$. Thus, the sphere $S^n$ admits a Lie group structure if and only if $n=0,1,3$. 

\begin{example} We get $m_2=2$ for $A_2$, $m_2=3$ for $B_2=C_2$, $m_2=5$ for $G_2$. Thus the Poincar\'e polynomials 
$P_\g(q):=\sum_{n\ge 0}\dim H^n(G,\Bbb C)q^n$ 
for compact simple Lie groups of rank $\le 2$ are: 
$$
P_{A_1}(q)=1+q^3,\ P_{A_2}(q)=(1+q^3)(1+q^5),\ 
$$
$$
P_{B_2}(q)=(1+q^3)(1+q^7),\
P_{G_2}(q)=(1+q^3)(1+q^{11}).
$$
\end{example}

\subsection{The cohomology ring of a simple compact connected Lie group}

In fact, we have the following classical theorem, which we will not prove in general, but will prove below for type $A$ and also in exercises for classical groups and $G_2$. 

\begin{theorem}\label{coho} Let $G$ be a simple compact Lie group with complexified Lie algebra $\g$. Then the numbers $m_i$ are the exponents of $\g$ defined in Subsection \ref{expone}. In other words, the degrees $2m_i+1$ of generators of the cohomology ring are the dimensions 
of simple modules occurring in the decomposition of $\g$ over its principal $\mathfrak{sl}_2$-subalgebra. Thus the cohomology ring 
$H^\bullet(G,\Bbb C)$ is the exterior algebra 
$\wedge^\bullet(\xi_{2m_1+1},...,\xi_{2m_r+1})$, where 
$\xi_j$ has degree $j$. 
\end{theorem}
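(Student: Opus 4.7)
The plan is to identify the space $P\subset H^\bullet(G,\Bbb C)$ of primitive cohomology classes (elements $\xi$ with $\Delta\xi=\xi\otimes 1+1\otimes\xi$) and show that, as a graded vector space, $P$ has a homogeneous basis in the degrees $2m_i+1$. Since we have already established that $H^\bullet(G,\Bbb C)=(\wedge^\bullet\g^*)^\g$ is an exterior algebra on $r=\text{rank}\,\g$ odd-degree generators, and since the exterior generators of a finite-dimensional graded-commutative connected Hopf algebra form a basis for $P$, this will finish the theorem. The idea is to transport the problem to the algebra of $G$-invariant polynomials on $\g$ via the transgression.

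First I would prove Chevalley's restriction theorem: the restriction map $(S^\bullet\g^*)^G\to(S^\bullet\h^*)^W$ is an isomorphism of graded algebras. Surjectivity uses averaging over $G$ to extend a $W$-invariant polynomial from $\h$, while injectivity uses that the union of $G$-translates of $\h$ is dense in $\g$, a consequence of our earlier result that every element of the compact group lies in a maximal torus together with density of regular semisimple elements. Then I would invoke the Shephard-Todd-Chevalley theorem: since $W$ acts on $\h$ as a real reflection group, $(S^\bullet\h^*)^W$ is a polynomial algebra on $r$ homogeneous generators of certain degrees $d_1,\dots,d_r$. Finally, the Coxeter-Kostant identification $d_i=m_i+1$ is obtained by comparing the identity $\sum_{w\in W}q^{\ell(w)}=\prod_i\frac{1-q^{d_i}}{1-q}$ with the graded character of the coinvariant algebra, and matching it against the principal $\mathfrak{sl}_2$-decomposition $\g=\bigoplus_{i=1}^r L_{2m_i+1}$ used to define the exponents.

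The second step is to construct a transgression homomorphism $\tau$ from the space of indecomposable positive-degree $G$-invariant polynomials on $\g$ to $P$, homogeneous of total degree $-1$ on the doubled grading (so a polynomial generator of degree $d$ produces a primitive class of degree $2d-1$), and to prove that $\tau$ is a bijection. The algebraic construction uses the Weil algebra $\mathcal{W}(\g)=S^\bullet\g^*\otimes\wedge^\bullet\g^*$ with its Koszul-type differential and the contraction/Lie-derivative operators of $\g$: the horizontal-basic subcomplex computes $(S^\bullet\g^*)^G$, the purely exterior quotient computes $(\wedge^\bullet\g^*)^G=H^\bullet(G,\Bbb C)$, and the connecting homomorphism of the resulting short exact sequence restricts to $\tau$. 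Equivalently, one can invoke Chern-Weil theory and the path-loop fibration $\Omega G\to PG\to BG$ with contractible $PG$. Combined with Step 1, this produces primitive cohomology generators in degrees $2d_i-1=2m_i+1$, proving the theorem.

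The main obstacle is the construction of the transgression and the verification that it induces a bijection between indecomposable invariants and primitive cohomology classes. This requires introducing the Weil algebra (or equivalent equivariant-cohomology input), which is not developed in the earlier sections, so some genuinely new machinery must be built. A secondary difficulty is the Coxeter-Kostant identification $d_i=m_i+1$: while it can be verified case-by-case using the lists of exponents computed in the earlier exercises on $A_n,B_n,C_n,D_n,G_2,F_4,E_6,E_7,E_8$, a uniform proof requires analyzing a Coxeter element on $\h$ and matching its eigenvalues $\exp(2\pi i m_j/h_\g)$ against the weights of the principal element $2\rho^\vee$, which is a nontrivial additional piece of structure theory for $W$.
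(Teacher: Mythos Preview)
The paper does not give a general proof of this theorem: it explicitly says the result ``we will not prove in general'' and refers to \cite{R}. What the paper actually proves is only the type $A$ case, by computing the Poincar\'e polynomial of $(\wedge^\bullet(\mathfrak{gl}_n)^*)^{\mathfrak{gl}_n}$ via skew Howe duality and counting self-conjugate partitions. The classical types $B,C,D$ and $G_2$ are then left to exercises, where the method is geometric rather than algebraic: one uses the fibrations $SU(n)/SU(n-1)=S^{2n-1}$, $U(n,\Bbb H)/U(n-1,\Bbb H)=S^{4n-1}$, $SO(2n+1)/SO(2n-1)=\mathrm{St}_{2n+1,2}(\Bbb R)$, etc., to build cell decompositions and read off the Betti numbers inductively.

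Your proposal is therefore not a reproduction of the paper's argument but an outline of the uniform proof that the paper deliberately avoids. The ingredients you list (Chevalley restriction, the polynomiality of $(S\h^*)^W$, the transgression from indecomposable invariant polynomials to primitive cohomology classes, and Kostant's identification $d_i=m_i+1$) are indeed the standard components of such a proof, and you have correctly located the two genuine difficulties: constructing the Weil-algebra transgression and showing it is bijective onto primitives, and establishing $d_i=m_i+1$ uniformly rather than by inspection of each type. Both of these lie outside what the preceding sections develop, which is precisely why the paper opts for the case-by-case route. The trade-off is clear: your approach is conceptual and type-independent but imports substantial machinery (equivariant cohomology or the Koszul complex of the Weil algebra, plus Coxeter-element eigenvalue analysis), while the paper's approach is elementary and self-contained for each classical series but gives no insight into the exceptional cases beyond $G_2$.
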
 

A modern general proof of this theorem can be found in \cite{R}.  

\begin{remark} The Poincar\'e polynomial $P_\g(q)$ of $(\wedge^{\bullet} \g^*)^\g$ 
is given by the formula 
$$
P_\g(q)=\frac{(1+q)^r}{|W|}\int_T \prod_{\alpha\in R} (1+q\alpha(t))\prod_{\alpha>0}(\alpha(t)^{\frac{1}{2}}-\alpha(t)^{-\frac{1}{2}})^2dt. 
$$ 
So Theorem \ref{coho} is equivalent to the statement that this integral equals 
$\prod_i (1+q^{2m_i+1})$. 
\end{remark} 

We will prove Theorem \ref{coho} in the case of type $A$. 

\begin{corollary} For $\g=\mathfrak{sl}_n$ we have 
$m_i=i$. Equivalently, the same is true for $\g=\mathfrak{gl}_n$ if 
we add $m_0=0$. 
\end{corollary}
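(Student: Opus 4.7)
The plan is to exhibit explicit generators of $H^\bullet(U(n),\Bbb C)$ as bi-invariant differential forms via the Maurer-Cartan construction. Let $G=U(n)$ with complexified Lie algebra $\g=\mathfrak{gl}_n(\Bbb C)$, and let $\theta=g^{-1}\,dg$ be the matrix-valued Maurer-Cartan form. For each $k=0,1,\dots,n-1$, define
$$
\omega_k:=\mathrm{Tr}(\theta^{\wedge(2k+1)})\in\Omega^{2k+1}(G),
$$
where $\theta^{\wedge m}$ combines matrix multiplication with the wedge of scalar forms.

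First I would verify that each $\omega_k$ is bi-invariant and closed. Bi-invariance holds because $\theta$ is left-invariant and transforms by $\mathrm{Ad}_{h^{-1}}$ under right translation by $h$, while trace is conjugation-invariant. Closedness follows from the Maurer-Cartan equation $d\theta=-\theta\wedge\theta$ together with the vanishing $\mathrm{Tr}(\theta^{\wedge m})=0$ for even $m$ (which itself follows from the graded cyclicity of trace applied to matrix-valued forms). Hence each $\omega_k$ defines a class in $H^{2k+1}(G,\Bbb C)$, and at the identity $\omega_k$ becomes the element of $(\wedge^{2k+1}\g^*)^\g$ given by antisymmetrization of the multilinear form $(X_1,\dots,X_{2k+1})\mapsto\mathrm{Tr}(X_1\cdots X_{2k+1})$.

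The main step is to show that $[\omega_0],\dots,[\omega_{n-1}]$ generate a free exterior subalgebra of $H^\bullet(G,\Bbb C)$ of dimension $2^n$. Since they live in odd degrees, $[\omega_k]^2=0$ is automatic, so it suffices to show that the top-degree product
$$
\Omega:=[\omega_0\wedge\omega_1\wedge\cdots\wedge\omega_{n-1}]\in H^{n^2}(G,\Bbb C)
$$
is nonzero; note that $1+3+\cdots+(2n-1)=n^2=\dim G$ confirms that this is a top form. Since $\Omega$ is represented by a bi-invariant top form, its nonvanishing is equivalent to the corresponding element of $\wedge^{n^2}\g^*=\det\g^*$ being nonzero at the identity. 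This is the heart of the argument, and I expect it to be the main obstacle. The cleanest route is to invoke the transgression interpretation: the polynomials $p_{k+1}(X)=\mathrm{Tr}(X^{k+1})$ for $k=0,\dots,n-1$ are algebraically independent generators of $S(\g^*)^\g$ for $\mathfrak{gl}_n$ (by the fundamental theorem on symmetric functions applied to eigenvalues), and the Chern--Weil/transgression map sends each $p_{k+1}$ to $[\omega_k]$ up to a nonzero scalar; algebraic independence of the $p_{k+1}$ then translates into nonvanishing of $\Omega$. A more pedestrian alternative is to evaluate $\omega_0\wedge\cdots\wedge\omega_{n-1}$ on the ordered basis of $\g$ consisting of matrix units $E_{ij}$ (diagonal ones first, off-diagonal after) and verify that the resulting alternating sum of traces of products does not vanish.

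Once nonvanishing is established, the conclusion is immediate. By Hopf's theorem applied to $R=H^\bullet(G,\Bbb C)\cong(\wedge^\bullet\g^*)^\g$, the cohomology is a free exterior algebra on $r=\mathrm{rank}(G)=n$ odd-degree generators, and therefore has total dimension $2^n$ (as also confirmed by the Weyl integration formula in the previous section). The free exterior subalgebra generated by $[\omega_0],\dots,[\omega_{n-1}]$ already has dimension $2^n$, so it is everything, and the generators occur in degrees $1,3,5,\dots,2n-1$. Passing to $SU(n)$ (equivalently, working with $\mathfrak{sl}_n$ rather than $\mathfrak{gl}_n$), the form $\omega_0$ vanishes identically because $\mathrm{Tr}=0$ on $\mathfrak{su}(n)$, leaving generators $[\omega_1],\dots,[\omega_{n-1}]$ of degrees $3,5,\dots,2n-1$; this is precisely the statement $m_i=i$ for $i=1,\dots,n-1$, with $m_0=0$ recovered for $\mathfrak{gl}_n$. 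As a fallback, if the direct nonvanishing proves delicate, one can use the fibration $U(n-1)\hookrightarrow U(n)\twoheadrightarrow S^{2n-1}$ and observe that its Serre spectral sequence must degenerate at $E_2$ for dimension reasons (both $H^\bullet(S^{2n-1})\otimes H^\bullet(U(n-1))$ and $H^\bullet(U(n))$ have dimension $2^n$), then read off the degrees of the generators by induction on $n$.
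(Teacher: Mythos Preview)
Your strategy is classical and ultimately correct, but it is genuinely different from the paper's argument, and the step you yourself flag as ``the main obstacle'' --- nonvanishing of $\omega_0\wedge\cdots\wedge\omega_{n-1}$ --- is not actually established. Invoking Chern--Weil transgression imports a theorem (the correspondence between generators of $(S\g^*)^\g$ and primitive cohomology classes) that is not proved in these notes, and the direct matrix-unit evaluation, while feasible, is not carried out. Your fallback via the fibration $U(n-1)\hookrightarrow U(n)\to S^{2n-1}$ with spectral-sequence degeneration is valid and is exactly the route taken in Exercise~\ref{classcoh}(i) a few lines later, so that would close the argument --- but then the Maurer--Cartan forms play no role.

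The paper's own proof is purely combinatorial and avoids any nonvanishing check. One works with $\g=\mathfrak{gl}_n$, $V=\Bbb C^n$, and must compute the Poincar\'e polynomial of $(\wedge^\bullet(V\otimes V^*))^\g$. Skew Howe duality (Proposition~\ref{skewhowe}) gives
\[
\wedge^m(V\otimes V^*)\cong\bigoplus_{|\lambda|=m}S^\lambda V\otimes S^{\lambda^\dagger}V^*,
\]
so the $\g$-invariants are one-dimensional precisely when $\lambda=\lambda^\dagger$ with at most $n$ parts. Every such self-conjugate partition decomposes uniquely as a nested sequence of hooks $(k,1^{k-1})$, each contributing $2k-1$ boxes, with $k$ ranging over an arbitrary subset of $\{1,\dots,n\}$. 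This immediately yields $P_{\mathfrak{gl}_n}(q)=\prod_{k=1}^n(1+q^{2k-1})$.

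The trade-off: the paper's route gives the Poincar\'e polynomial in three lines with no analysis, while your route (once the gap is filled) gives explicit differential-form representatives $\omega_k=\mathrm{Tr}(\theta^{2k+1})$ for the generators, which is more geometric information at the cost of a harder verification.
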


\begin{proof} Let $\g=\mathfrak{gl}_n$, $V=\Bbb C^n$. We need to compute the Poincar\'e polynomial of 
$\wedge^{\bullet}(V\otimes V^*)^\g$. The skew Howe duality (Proposition \ref{skewhowe}) implies that this Poincar\'e polynomial is 
$$
P(q)=\sum_{\lambda=\lambda^t}q^{|\lambda|}, 
$$
where the summation is over $\lambda$ with $\le n$ parts. 
But there are exactly $2^n$ such symmetric partitions $\lambda$: 
they consist of a sequence of hooks $(k,1^{k-1})$ 
with decreasing values of $k$, with each of them either present or not. The degree of such a hook is $2k-1$, which implies that 
\begin{equation}\label{ppgl}
P_{\mathfrak{gl}_n}(q)=(1+q)(1+q^3)(1+q^5)...(1+q^{2n-1}). 
\end{equation}  
\end{proof} 

Thus we get that the cohomology $H^{\bullet}(U(n),\Bbb C)=H^{\bullet}(GL_n(\Bbb C),\Bbb C)$ is $\wedge^{\bullet}(\xi_1,\xi_3,...,\xi_{2n-1})$ (where subscripts are degrees) with Poincar\'e polynomial \eqref{ppgl}, and 
$H^{\bullet}(SU(n),\Bbb C)=H^{\bullet}(SL_n(\Bbb C),\Bbb C)=
\wedge^{\bullet}(\xi_3,...,\xi_{2n-1})$ with Poincar\'e polynomial 
$(1+q^3)(1+q^5)...(1+q^{2n-1})$. 

In the next exercise and the following subsections 
we will use the notions of a {\bf cell complex} and 
its {\bf cellular homology and cohomology} with coefficients in any commutative ring, 
and the fact that if a manifold is equipped with a 
cell decomposition (i.e., represented as a disjoint union of cells) then its cellular cohomology with $\Bbb C$-coefficients (=dual to the cellular homology) is canonically isomorphic to the de Rham cohomology via the integration pairing (the {\bf de Rham theorem}). More details can be found, for instance, in \cite{H}.  

\begin{exercise}\label{classcoh} (i) Give another proof of Theorem \ref{coho} for type $A_{n-1}$ as follows. Use that $SU(n)/SU(n-1)=S^{2n-1}$ to construct a cellular decomposition of 
$SU(n)$ into $2^{n-1}$ cells (use the decomposition of $S^{2n-1}$ into a point and its complement). Then show that the differential in the corresponding cochain complex with $\Bbb C$-coefficients is zero (compare its dimension to the dimension of the cohomology). Derive Theorem \ref{coho} for $SU(n)$ by induction in $n$.

(ii) Use the same idea and the fact that $U(n,\Bbb H)/U(n-1,\Bbb H)=S^{4n-1}$ 
to establish Theorem \ref{coho} in type $C_n$. Conclude that the cohomology ring of $U(n,\Bbb H)$ (and ${\rm Sp}_{2n}(\Bbb C)$) is $\wedge(\xi_3,\xi_7,...,\xi_{4n-1})$ with Poincar\'e polynomial
 $(1+q^3)(1+q^7)...(1+q^{4n-1})$.

(iii) Show that these Poincar\'e polynomials are valid for cohomology of the same Lie groups with any coefficients.\footnote{A similar idea can be used 
to find the cohomology of ${\rm Spin}(n)$ (see Exercise \ref{stief} below) but it is a bit more complicated since there is no cell decomposition with zero boundary map, and thus any cell decomposition has strictly more than $2^r$ cells for sufficiently large $n$
(as there is 2-torsion in the integral cohomology).}
\end{exercise} 

\subsection{Cohomology of homogeneous spaces} 
Let $G$ be a connected compact Lie group, $\g={\rm Lie}(G)_{\Bbb C}$, $K\subset G$ a closed subgroup, $\mathfrak{k}={\rm Lie}(K)_\Bbb C$, and consider the homogeneous space $G/K$. How to compute the cohomology $H^{\bullet}(G/K,\Bbb C)$?

Since the group $G$ acts on $G/K$, this cohomology is computed by 
the complex $\Omega^{\bullet}(G/K)^G=(\wedge^{\bullet} (\g/\mathfrak{k})^*)^K$. 
Let us denote this complex by $CE^{\bullet}(\g,K)$. It is called the {\bf relative 
Chevalley-Eilenberg complex}. 

For example, if $K=\Gamma$ is finite, this is just the $\Gamma$-invariant part of the usual Chevalley-Eilenberg complex. But $\Gamma$ acts trivially on the cohomology, so we get $H^{\bullet}(G/\Gamma)=H^{\bullet}(G)$ (as already noted above). 

But what happens if $\dim K>0$? Can we describe the differential in this complex algebraically as we did for $K=1$? 

This question is answered by the following proposition. Let $\mathfrak{k}\subset \g$ be a pair of Lie algebras (not necessarily finite dimensional, over any field). Denote by $CE^i(\g,\mathfrak{k})$ the spaces $(\wedge^i(\g/\mathfrak{k})^*)^\mathfrak{k}$. 

\begin{proposition}\label{subcom}  $CE^{\bullet}(\g,\mathfrak{k})$ is a subcomplex 
of $CE^{\bullet}(\g)$. 
\end{proposition}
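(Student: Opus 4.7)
The plan is to interpret $CE^{\bullet}(\g,\mathfrak{k})$ as a subspace of $CE^{\bullet}(\g) = \wedge^{\bullet}\g^*$ cut out by two families of conditions, and then check that both conditions are preserved by the Chevalley--Eilenberg differential $d$. Specifically, identify $(\g/\mathfrak{k})^*$ with the annihilator of $\mathfrak{k}$ in $\g^*$, so that $\wedge^{\bullet}(\g/\mathfrak{k})^* \subset \wedge^{\bullet}\g^*$ coincides with the \emph{horizontal} forms $\omega$ satisfying $\iota_x \omega = 0$ for all $x \in \mathfrak{k}$, where $\iota_x$ is the usual contraction. Taking $\mathfrak{k}$-invariants adds the condition $L_x\omega = 0$ for all $x \in \mathfrak{k}$, where $L_x$ denotes the natural action of $x$ on $\wedge^{\bullet}\g^*$ obtained by extending the coadjoint action on $\g^*$ as a derivation. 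Thus
\[
CE^{\bullet}(\g,\mathfrak{k}) = \{\,\omega \in \wedge^{\bullet}\g^* : \iota_x\omega = 0 \text{ and } L_x\omega = 0 \text{ for all } x \in \mathfrak{k}\,\}.
\]

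Next, I would establish the purely algebraic analogues of the Cartan identities inside $CE^{\bullet}(\g)$:
\begin{itemize}
\item[(a)] $[L_x, d] = 0$ for every $x \in \g$;
\item[(b)] $L_x = d\,\iota_x + \iota_x\, d$ for every $x \in \g$ (the algebraic Cartan magic formula);
\item[(c)] $[L_x, \iota_y] = \iota_{[x,y]}$ for $x,y \in \g$.
\end{itemize}
Each of these is a routine verification from the explicit formula
\[
d\omega(v_0,\ldots,v_m) = \sum_{i<j}(-1)^{i+j}\omega([v_i,v_j], v_0, \ldots, \widehat{v_i}, \ldots, \widehat{v_j}, \ldots, v_m)
\]
given in Corollary \ref{ce}: both sides of each identity are graded derivations of $\wedge^{\bullet}\g^*$ (with appropriate degree shifts), so it suffices to check them on the generators $\g^* \subset \wedge^{\bullet}\g^*$, where the computation reduces to the definition of the coadjoint action and the Jacobi identity.

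With these tools in hand, the conclusion is immediate. Suppose $\omega \in CE^{\bullet}(\g,\mathfrak{k})$ and $x \in \mathfrak{k}$. By (a), $L_x(d\omega) = d(L_x\omega) = d(0) = 0$, so $d\omega$ remains $\mathfrak{k}$-invariant. By (b), $\iota_x(d\omega) = L_x\omega - d(\iota_x\omega) = 0 - d(0) = 0$, so $d\omega$ remains horizontal. Hence $d\omega \in CE^{\bullet}(\g,\mathfrak{k})$, which is exactly the subcomplex property.

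There is no serious obstacle here; the only non-trivial point is verifying the algebraic Cartan identities (a) and (b) directly from the combinatorial formula for $d$, and the mildly delicate bookkeeping in (b) involves tracking signs when commuting $\iota_x$ past $d$ on elements of $\g^*$ and using $\iota_x\iota_y + \iota_y\iota_x = 0$. Once those identities are in place, the closure of horizontal and invariant forms under $d$ is a one-line argument.
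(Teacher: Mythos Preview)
Your proposal is correct and is exactly the standard argument one would give: identify $CE^{\bullet}(\g,\mathfrak{k})$ inside $\wedge^{\bullet}\g^*$ as the horizontal $\mathfrak{k}$-invariant forms, then use the algebraic Cartan identities $[L_x,d]=0$ and $L_x = d\iota_x + \iota_x d$ to show that $d$ preserves both conditions. The paper leaves this proposition as an exercise and does not supply its own proof, so there is nothing further to compare against.
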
 

\begin{exercise} Prove Proposition \ref{subcom}. 
\end{exercise} 

\begin{definition} 
The complex $CE^{\bullet}(\g,\mathfrak{k})$ is called the {\bf relative Chevalley-Eilenberg complex}, and its cohomology  is called the {\bf relative Lie algebra cohomology}, denoted by $H^{\bullet}(\g,\mathfrak{k})$. 
\end{definition} 

Now note that, going back to the setting of compact Lie groups,
we have $CE^\bullet(\g,K)=CE^\bullet(\g,\mathfrak{k})^{K/K^\circ}$, so we obtain 

\begin{corollary} $H^\bullet(G/K,\Bbb C)\cong H^\bullet(\g,\mathfrak{k})^{K/K^\circ}$ as algebras. 
\end{corollary}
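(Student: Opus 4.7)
The plan is to chain together four essentially formal steps: an averaging argument, an identification of invariant forms with the relative Chevalley--Eilenberg complex, the splitting of $K$-invariance as $\mathfrak{k}$-invariance together with $K/K^\circ$-invariance, and the fact that taking invariants under a finite group in characteristic zero commutes with cohomology. The fact that the isomorphism is compatible with the wedge product will be visible at each stage.

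First, I would apply the averaging argument already used in the excerpt to the action of the compact connected group $G$ on the manifold $G/K$. Exactly as in the proof for $G$ acting on itself, any closed form in $\Omega^\bullet(G/K)$ is cohomologous to a $G$-invariant one via the projector $P=\int_G g\cdot(-)\,dg$, and any exact $G$-invariant form is the differential of a $G$-invariant form. Hence $H^\bullet(G/K,\Bbb C)$ is computed by the subcomplex $\Omega^\bullet(G/K)^G$, with its ring structure inherited from the wedge product.

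Next I would identify $\Omega^\bullet(G/K)^G$ algebraically. A $G$-invariant form $\omega$ is determined by its value at the basepoint $eK\in G/K$, which is an element of $\wedge^\bullet T_{eK}^*(G/K)=\wedge^\bullet(\g/\mathfrak{k})^*$ that must be invariant under the isotropy action of $K$ on $\g/\mathfrak{k}$ (the quotient adjoint action). This gives a linear isomorphism
\[
\Omega^\bullet(G/K)^G\ \cong\ (\wedge^\bullet(\g/\mathfrak{k})^*)^K
\]
compatible with wedge product. Pulling invariant forms back to $G$ via the projection $\pi\colon G\to G/K$ realizes them as left-invariant, right-$K$-invariant forms on $G$ annihilating $\mathfrak{k}$; restricting the computation of $d$ from the Cartan differentiation formula \eqref{carfor} shows the de Rham differential on $\Omega^\bullet(G/K)^G$ corresponds under this identification precisely to the restriction of the Chevalley--Eilenberg differential on $\wedge^\bullet\g^*$. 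Thus $\Omega^\bullet(G/K)^G$ is a differential graded subalgebra of $CE^\bullet(\g)$ supported on $(\wedge^\bullet(\g/\mathfrak{k})^*)^K$.

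Now I would split the $K$-invariance. Since $K^\circ$ is connected, an element of $\wedge^\bullet(\g/\mathfrak{k})^*$ is $K^\circ$-invariant if and only if it is annihilated by $\mathfrak{k}$, so
\[
(\wedge^\bullet(\g/\mathfrak{k})^*)^K\ =\ \bigl((\wedge^\bullet(\g/\mathfrak{k})^*)^\mathfrak{k}\bigr)^{K/K^\circ}\ =\ CE^\bullet(\g,\mathfrak{k})^{K/K^\circ},
\]
an equality of differential graded subalgebras of $CE^\bullet(\g)$ by Proposition \ref{subcom}. Finally, since $K/K^\circ$ is a finite group acting on a $\Bbb C$-complex, the exact functor of taking $K/K^\circ$-invariants commutes with cohomology (via the averaging idempotent $\tfrac{1}{|K/K^\circ|}\sum_{g\in K/K^\circ}g$), giving
\[
H^\bullet\bigl(CE^\bullet(\g,\mathfrak{k})^{K/K^\circ}\bigr)\ =\ H^\bullet(\g,\mathfrak{k})^{K/K^\circ}
\]
as graded algebras, which combined with the previous steps yields the claim. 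The only step requiring any real care is the second one, namely verifying that the de Rham differential on invariant forms on $G/K$ matches the purely algebraic Chevalley--Eilenberg differential; this is where one must invoke the Cartan differentiation formula and check that the restriction from $\g^*$ to $(\g/\mathfrak{k})^*$ is compatible with $d$ precisely because $\mathfrak{k}$ acts trivially on $(\wedge^\bullet(\g/\mathfrak{k})^*)^\mathfrak{k}$.
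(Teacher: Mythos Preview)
Your proof is correct and follows essentially the same route as the paper: the paper has already established steps 1--3 in the discussion leading up to the corollary (the identification $\Omega^\bullet(G/K)^G=(\wedge^\bullet(\g/\mathfrak{k})^*)^K=CE^\bullet(\g,K)$ and the equality $CE^\bullet(\g,K)=CE^\bullet(\g,\mathfrak{k})^{K/K^\circ}$), and then states the corollary as an immediate consequence. You have simply unpacked these steps more explicitly and added the standard observation that taking invariants under the finite group $K/K^\circ$ commutes with cohomology over $\Bbb C$, which the paper leaves implicit.
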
 

Thus, the computation of the cohomology of $G/K$ reduces to the computation of the relative Lie algebra cohomology, which is again a purely algebraic problem. 

\begin{corollary} Suppose $s \in K$ is an element that acts by $-1$ on $\g/\mathfrak{k}$. 
Then $(\wedge^i(\g/\mathfrak{k})^*)^{K}=0$ for odd $i$. Hence the differential 
in $CE^\bullet(\g,K)$ vanishes and thus $H^\bullet(G/K,\Bbb C)\cong (\wedge^\bullet (\g/\mathfrak{k})^*)^{K}$, with cohomology present only in even degrees. 
\end{corollary}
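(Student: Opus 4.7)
The plan is to exploit the element $z \in K$ to force vanishing of the odd-degree pieces of the relative Chevalley-Eilenberg complex, after which the vanishing of the differential is automatic.

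First I would observe that since $z \in K$ acts by $-1$ on $\g/\mathfrak{k}$, it acts by $-1$ on the dual space $(\g/\mathfrak{k})^*$, and therefore by $(-1)^i$ on the exterior power $\wedge^i(\g/\mathfrak{k})^*$. For odd $i$, this means that $z$ acts by the scalar $-1$ on $\wedge^i(\g/\mathfrak{k})^*$. Now any $\omega \in (\wedge^i(\g/\mathfrak{k})^*)^K$ satisfies $z \cdot \omega = \omega$ by $K$-invariance and $z \cdot \omega = -\omega$ by the previous observation; subtracting gives $2\omega = 0$, hence $\omega = 0$. Thus $CE^i(\g,K) = (\wedge^i(\g/\mathfrak{k})^*)^K = 0$ for all odd $i$.

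Next, I would note that the differential $d : CE^i(\g,K) \to CE^{i+1}(\g,K)$ must vanish for trivial reasons: if $i$ is odd, the source is zero, while if $i$ is even, then $i+1$ is odd and the target is zero. In either case $d = 0$, so the complex $CE^\bullet(\g,K)$ has trivial differential and its cohomology equals the complex itself:
\[
H^i(\g, \mathfrak{k})^{K/K^\circ} \;=\; H^i(CE^\bullet(\g,K)) \;=\; CE^i(\g,K) \;=\; (\wedge^i(\g/\mathfrak{k})^*)^K.
\]
Combining with the preceding corollary that $H^\bullet(G/K,\Bbb C) \cong H^\bullet(\g,\mathfrak{k})^{K/K^\circ}$ as algebras, we conclude that $H^\bullet(G/K,\Bbb C) \cong (\wedge^\bullet(\g/\mathfrak{k})^*)^K$ with cohomology concentrated in even degrees.

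There is essentially no obstacle here: the only thing to be slightly careful about is that we are working over $\Bbb C$ (characteristic zero), so that $2\omega = 0$ really does imply $\omega = 0$. Everything else is a direct consequence of the $K$-equivariance of the exterior power construction and the fact that $CE^\bullet(\g,K)$ is by definition the $K$-invariants inside $\wedge^\bullet(\g/\mathfrak{k})^*$.
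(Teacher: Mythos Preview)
Your proof is correct and is exactly the argument the paper intends; in fact the paper does not spell out a proof for this corollary at all, treating it as immediate from the preceding setup, and your write-up simply fills in those straightforward details.
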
 

\begin{exercise}\label{stief} 
The real {\bf Stiefel manifold} ${\rm St}_{n,k}(\Bbb R)$, $k<n$,  
is the manifold of all orthonormal $k$-tuples of vectors in $\Bbb R^n$. 
For example, ${\rm St}_{n,1}(\Bbb R)=S^{n-1}$ and ${\rm St}_{n,n-1}(\Bbb R)=SO(n)$. 

(i) Show that ${\rm St}_{n,k}(\Bbb R)=SO(n)/SO(n-k)$ and hence 
$\dim {\rm St}_{n,k}(\Bbb R)=k(n-k)+\frac{k(k-1)}{2}$. 

(ii) Show that for $n\ge 3$, the manifold ${\rm St}_{n,2}(\Bbb R)$ is a fiber bundle over $S^{n-1}$ with fiber $S^{n-2}$. Conclude that ${\rm St}_{n,2}(\Bbb R)$ has a cell decomposition with four cells of dimensions $0,n-2,n-1,2n-3$. Show that 
the boundary of the $n-1$-dimensional cell is zero if $n$ is even and 
twice the $n-2$-dimensional cell if $n$ is odd. Compute the cohomology groups
of ${\rm St}_{n,2}(\Bbb R)$ with any coefficient ring. In particular, show that if $n$ is odd then the cohomology groups with coefficients in any field of characteristic $\ne 2$ are the same as for the sphere $S^{2n-3}$. 

(iii) Use the relative Chevalley-Eilenberg complex to compute 
the cohomology $H^*({\rm St}_{n,2}(\Bbb R),\Bbb C)$ in another way. Compare to (ii).  
\end{exercise} 

\begin{exercise} (i) Prove Theorem \ref{coho} for type $B_n$ 
using the method of Exercise \ref{classcoh}. Namely, use that 
$SO(2n+1)/SO(2n-1)={\rm St}_{2n+1,2}(\Bbb R)$ and Exercise \ref{stief}(ii) or (iii). 
Conclude that the cohomology ring of $SO(2n+1)$ (and $SO_{2n+1}(\Bbb C)$) over $\Bbb C$ is $\wedge^\bullet(\xi_3,\xi_7,...,\xi_{4n-1})$ with Poincar\'e polynomial
is $(1+q^3)(1+q^7)...(1+q^{4n-1})$.

(ii) Use the conclusion of (i) for $B_{n-1}$ and that $SO(2n)/SO(2n-1)=S^{2n-1}$ to prove Theorem \ref{coho} for type $D_{n}$ (again
using the method of Exercise \ref{classcoh}). 
Conclude that the cohomology ring of $SO(2n)$ (and $SO_{2n}(\Bbb C)$) over $\Bbb C$ is $\wedge^\bullet(\xi_3,\xi_7,...,\xi_{4n-5},\eta_{2n-1})$ with Poincar\'e polynomial
having the form $(1+q^3)(1+q^7)...(1+q^{4n-5})\cdot (1+q^{2n-1})$.

(iii) Show that these Poincar\'e polynomials are valid for cohomology of the same Lie groups with coefficients in any ring containing $\frac{1}{2}$. 
\end{exercise} 

\section{\bf Topology of Lie groups and homogeneous spaces, III}

\subsection{Grassmannians} Let $G=U(m+n), K=U(n)\times U(m)$, so that 
$G/K$ is the {\bf Grassmannian} ${\rm G}_{m+n,n}(\Bbb C)\cong {\rm G}_{m+n,m}(\Bbb C)$ (the manifold of $m$-dimensional or $n$-dimensional subspaces of $\Bbb C^{m+n}$). 
The element $s=I_n\oplus (-I_m)$ acts by $-1$ on $\g/\mathfrak{k}=V\otimes W^*\oplus W\otimes V^*$, where $V,W$ are the tautological representations 
of $U(n)$ and $U(m)$. So we get that the Grassmannian has cohomology only 
in even degrees, and 
$$
H^{2i}({\rm G}_{m+n,m}(\Bbb C))=\wedge^{2i}(V\otimes W^*\oplus W\otimes V^*)^{U(n)\times U(m)}. 
$$
We can therefore use the skew Howe duality (Proposition \ref{skewhowe}) to see that 
$$
\dim H^{2i}({\rm G}_{m+n,m}(\Bbb C))=N_i(n,m),
$$
where $N_i(n,m)$ is the number of partitions $\lambda=(\lambda_1,...,\lambda_k)$ whose Young diagram has $i$ boxes and fit into the rectangle $m\times n$ (i.e., such that $k\le m,\lambda_1\le n$).

To compute $N_i(m,n)$, consider the generating function 
$$
f_{n,m}(q)=\sum_i N_i(n,m)q^{i}. 
$$
Then, denoting by $p_i$ the jumps $\lambda_i-\lambda_{i+1}$ of $\lambda$ (with $p_0=n-\lambda_1$), we have 
$$
\sum_{n\ge 0}f_{n,m}(q)z^n=
$$
$$
\sum_{p_0,p_1,...,p_m\ge 0} z^{p_0+p_1+...+p_m}q^{p_1+2p_2+...+mp_m}=\prod_{j=0}^m \frac{1}{1-q^jz}.
$$
So the Betti numbers of Grassmannians are the coefficients of this series. 
For example, if $m=1$ we get 
$$
\sum_{n\ge 0}f_{n,m}(q)z^n=\frac{1}{(1-z)(1-qz)}=\sum_n (1+q+...+q^n)z^n.
$$
So we recover the Poincar\'e polynomial $1+q+...+q^n$ of the complex projective space $\Bbb C\Bbb P^n$. More precisely, this is the Poincar\'e polynomial 
evaluated at $q^{\frac{1}{2}}$, which is actually a polynomial in $q$ since we have nontrivial 
cohomology only in even degrees. 

The polynomials $f_{n,m}(q)$ are called the {\bf Gaussian binomial coefficients} 
and they can be computed explicitly. Namely, we have 

\begin{proposition} \label{gausbin}
$$
f_{m,n}(q)=\binom{m+n}{n}_q=\binom{m+n}{m}_q=\frac{[m+n]_q!}{[m]_q![n]_q!},
$$
where $[m]_q:=\frac{q^m-1}{q-1}$ and $[m]_q!:=[1]_q...[m]_q$. 
\end{proposition}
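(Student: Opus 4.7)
The plan is to prove the proposition by establishing a recursion satisfied by $f_{n,m}(q)$, verifying that the claimed closed form $\binom{m+n}{m}_q$ satisfies the same recursion with the same initial data, and concluding by a double induction on $(m,n)$.

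First I would work combinatorially with the definition of $f_{n,m}(q)$ as the generating function counting partitions $\lambda$ that fit inside the $m\times n$ rectangle, weighted by $q^{|\lambda|}$. Such a $\lambda$ falls into exactly one of two disjoint cases: either $\lambda_m=0$, in which case $\lambda$ fits inside the $(m-1)\times n$ rectangle and contributes to $f_{n,m-1}(q)$; or $\lambda_m\ge 1$, in which case subtracting $1$ from each of the $m$ parts gives a bijection with partitions fitting in an $m\times(n-1)$ rectangle, shifting the weight by $q^m$. This yields the recursion
$$
f_{n,m}(q)=f_{n,m-1}(q)+q^{m}f_{n-1,m}(q),
$$
with the obvious initial conditions $f_{n,0}(q)=f_{0,m}(q)=1$.

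Next I would verify the $q$-Pascal identity for $g(n,m):=\dfrac{[m+n]_q!}{[m]_q!\,[n]_q!}$, namely $g(n,m)=g(n,m-1)+q^m g(n-1,m)$. This is a direct calculation: pulling out $\dfrac{[m+n-1]_q!}{[m]_q!\,[n]_q!}$ from both summands reduces the identity to
$$
[m]_q+q^m[n]_q=\frac{(q^m-1)+q^m(q^n-1)}{q-1}=\frac{q^{m+n}-1}{q-1}=[m+n]_q,
$$
which is immediate. Since $g(n,0)=g(0,m)=1$ as well, the recursion and the base cases determine both $f_{n,m}(q)$ and $g(n,m)$ uniquely, so $f_{n,m}(q)=g(n,m)$ by induction on $m+n$.

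I do not expect a serious obstacle here: the recursion is forced by a clean combinatorial dissection of Young diagrams inside a rectangle, and the $q$-Pascal identity is a one-line manipulation. The only mild subtlety is being careful with the boundary cases where $\lambda_m$ is not defined (i.e.\ $m=0$), which is handled by declaring the empty partition to contribute $1$; alternatively, one may instead derive the recursion directly from the generating function by writing $\prod_{j=0}^{m}(1-q^j z)^{-1}=(1-q^m z)^{-1}\prod_{j=0}^{m-1}(1-q^j z)^{-1}$ and matching coefficients of $z^n$, which gives the same relation without any combinatorial bookkeeping.
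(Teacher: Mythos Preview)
Your proof is correct and complete: the combinatorial recursion is sound, the $q$-Pascal verification is valid, and the induction closes. However, the paper takes a different route. Having already established $\sum_{n\ge 0}f_{n,m}(q)z^n=\prod_{j=0}^m(1-q^jz)^{-1}$ before stating the proposition, the paper simply invokes the $q$-binomial theorem
\[
\sum_{n\ge 0}\binom{m+n}{n}_q z^n=\prod_{j=0}^m\frac{1}{1-q^jz},
\]
and matches the two generating functions; the $q$-binomial theorem itself is deferred to an exercise, to be proved via a $q$-difference equation in the variable $z$ (relating $F(qz)$ to $F(z)$). So the paper's argument is shorter on the page but relies on a named identity proved separately, while your argument is self-contained and elementary, running entirely through the $q$-Pascal recursion in $(m,n)$. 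Your alternative remark about deriving the recursion from the product factorization is closer in spirit to the paper's generating-function viewpoint, but still differs: the paper compares the full series to a known expansion rather than extracting a recursion from it.
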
 

\begin{proof} This follows immediately from the {\bf $q$-binomial theorem}\footnote{Note that setting $q=1$ in the $q$-binomial theorem, we get the familiar formula from calculus, often called the binomial theorem: 
$$
\sum_{n\ge 0}\binom{m+n}{m}z^n= \frac{1}{(1-z)^{m+1}}. 
$$
}
\begin{equation}\label{qbt} 
\sum_{n\ge 0}\binom{m+n}{n}_qz^n=\prod_{j=0}^m \frac{1}{1-q^jz}. 
\end{equation} 
\end{proof} 

\begin{exercise} Prove \eqref{qbt}. 

{\bf Hint.} Let $F(z)$ be the RHS of this identity. Write a $q$-difference equation expressing  $F(qz)$ in terms of $F(z)$. Show that this equation has a unique solution 
such that $F(0)=1$. Then prove that the LHS satisfies the same equation. 
\end{exercise} 

\begin{exercise} Compute the Betti numbers of ${\rm G}_{N,2}(\Bbb C)$. 
\end{exercise}

\subsection{Schubert cells} 
There is actually a more geometric way to obtain the same result. This way is based on decomposing the Grassmannians into {\bf Schubert cells}. Namely, let $F_i\subset \Bbb C^{m+n}$ be spanned by the first $i$ basis vectors $e_1,...,e_i$; thus 
$$
0=F_0\subset F_1\subset...\subset F_{m+n}=\Bbb C^{m+n}.
$$  
Given an $m$-dimensional subspace $V\subset \Bbb C^{m+n}$, let $\ell_j$ be the smallest integer for which $\dim (F_{\ell_j}\cap V)=j$. Then 
$$
1\le \ell_1<\ell_2<...<\ell_m\le m+n,
$$ 
which defines a partition with parts 
$$
\lambda_1=\ell_m-m, \lambda_2=\ell_{m-1}-m+1,...,\lambda_m=\ell_1-1
$$ 
fitting in the $m\times n$ box. Let $S_\lambda\subset {\rm G}_{m+n,m}(\Bbb C)$ be the 
set of $V$ giving such numbers $\lambda_i$. 

\begin{exercise} Show that $S_\lambda$ is a locally closed embedded complex submanifold of the Grassmannian isomorphic to the affine space $\Bbb C^{|\lambda|}$ of dimension $|\lambda|=\sum_i \lambda_i$ (i.e., a closed embedded submanifold in an open subset of the Grassmannian).   

{\bf Hint.} Show that for $V\in S_\lambda$, the elements $f_k:=e_{\ell_k}^*|_V$ form a basis of $V^*$. For $\ell_{j}+1\le i\le \ell_{j+1}$ (with $\ell_{m+1}:=m+n$), show that $e_i^*|_V$ is a linear combination of $f_k$, $j+1\le k\le m$, and denote the corresponding coefficients by $a_{ik}(V)$. 
Show that the assignment $V\mapsto (a_{ik}(V))$ is an isomorphism $S_\lambda\cong \Bbb C^{|\lambda|}$. 
\end{exercise} 

\begin{definition} The subset $S_\lambda$ of the Grassmannian is called the {\bf Schubert cell} corresponding to $\lambda$. 
\end{definition} 

So we see that ${\rm G}_{m+n,m}(\Bbb C)$ has a {\bf cell decomposition} into a disjoint union of Schubert cells. 

Now we can rederive the same formula for the Poincar\'e polynomial of the Grassmannian
from the following well-known fact from algebraic topology: 

\begin{proposition}
If $X$ is a connected cell complex which only has even-dimensional cells, then  
the cohomology of $X$ vanishes in odd degrees, and 
the groups $H^{2i}(X,\Bbb Z)$ are free abelian groups of ranks $b_{2i}(X)$, where 
the Betti number $b_{2i}(X)$ is just the number of cells in $X$ of dimension $2i$. 
Moreover, $X$ is simply connected. 
\end{proposition}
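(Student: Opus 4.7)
The plan is to use the machinery of cellular (co)homology, which is particularly clean in this situation because the absence of odd-dimensional cells will force all boundary and coboundary maps to vanish.

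First, I would recall that for a CW-complex $X$, the cellular chain complex $C_\bullet(X,\mathbb{Z})$ has $C_n(X,\mathbb{Z})$ equal to the free abelian group on the $n$-cells, with boundary maps $\partial_n \colon C_n(X,\mathbb{Z}) \to C_{n-1}(X,\mathbb{Z})$ computed via degrees of attaching maps, and that its homology computes $H_\bullet(X,\mathbb{Z})$. Since by hypothesis $X$ has no cells of odd dimension, the group $C_n(X,\mathbb{Z}) = 0$ whenever $n$ is odd, so every boundary map $\partial_n$ lands in or starts from a zero group, hence vanishes. Therefore $H_n(X,\mathbb{Z}) = C_n(X,\mathbb{Z})$ for every $n$, which immediately gives $H_{\text{odd}}(X,\mathbb{Z}) = 0$ and $H_{2i}(X,\mathbb{Z}) \cong \mathbb{Z}^{b_{2i}(X)}$, where $b_{2i}(X)$ is the number of $2i$-cells. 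By the universal coefficient theorem, since all the homology groups are free, we get $H^{2i}(X,\mathbb{Z}) \cong \operatorname{Hom}(H_{2i}(X,\mathbb{Z}),\mathbb{Z})$ with no $\mathrm{Ext}$-contribution, yielding the stated description of cohomology. The Betti numbers $b_{2i}(X)$ are thus honestly just the number of $2i$-cells, and $H^{\text{odd}}(X,\mathbb{Z}) = 0$.

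For simple connectedness, I would use that $\pi_1$ of a CW-complex depends only on its $2$-skeleton $X^{(2)}$ (by cellular approximation). Since $X$ has no $1$-cells, the $1$-skeleton $X^{(1)}$ coincides with the $0$-skeleton $X^{(0)}$, a discrete set of points. The key observation is that attaching cells of dimension $\geq 2$ does not merge path components, since each such cell is attached via a map from a connected sphere $S^{n-1}$ to the existing skeleton. Hence the path components of $X$ are in bijection with those of $X^{(0)}$, so connectedness of $X$ forces $X^{(0)}$ to consist of a single point. Consequently $X^{(1)}$ is a point, and $X^{(2)}$ is a wedge of $2$-spheres (one for each $2$-cell), which is manifestly simply connected. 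Therefore $\pi_1(X) = \pi_1(X^{(2)}) = 1$.

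The argument is essentially routine once the right framework is invoked; there is no real obstacle. The only place where one must be slightly careful is the $\pi_1$ argument, namely the claim that attaching higher cells preserves $\pi_0$ and that a wedge of $2$-spheres is simply connected (the latter by a Van Kampen calculation, each $2$-cell giving a trivial generator attached along a null-homotopic loop). Everything else follows mechanically from the vanishing of cellular differentials forced by the parity hypothesis.
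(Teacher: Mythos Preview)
Your argument is correct and matches the paper's approach: the paper likewise observes that the cellular boundary maps must vanish, and for simple connectedness argues that $\pi_1(X)$ is a quotient of $\pi_1$ of the $1$-skeleton, which is a single point. Your treatment is slightly more elaborate---you pass through $X^{(2)}$ as a wedge of $2$-spheres rather than just invoking that attaching $2$-cells only adds relations---and you make explicit the universal coefficient step from homology to cohomology that the paper leaves implicit, but the substance is the same.
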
 

Indeed, the boundary map in this cell complex has to be zero, and its fundamental group must be trivial, as it is a quotient of the fundamental group of the 1-skeleton of $X$, which is a single point (why?).  

So we obtain an even stronger statement than before: 

\begin{corollary}
 $H^{2i}({\rm G}_{m+n,n}(\Bbb C),\Bbb Z)$ are free abelian groups of ranks given by coefficients of $\binom{m+n}{m}_q$, and the odd cohomology groups are zero.  
 Moreover, Grassmannians are simply connected. 
\end{corollary}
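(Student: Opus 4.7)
The plan is to apply the proposition on cell complexes with only even-dimensional cells directly to the Schubert cell decomposition just constructed. First I would verify the hypotheses of the proposition: the Grassmannian ${\rm G}_{m+n,m}(\Bbb C)$ is connected (indeed, path-connected) because it is the homogeneous space $U(m+n)/(U(m)\times U(n))$ of a connected Lie group, and the Schubert cells $S_\lambda$ form a cell decomposition in which each $S_\lambda$ is, by the preceding exercise, diffeomorphic to $\Bbb C^{|\lambda|}$ and hence has real dimension $2|\lambda|$, which is even. The partitions $\lambda$ indexing the cells are exactly those fitting into the $m\times n$ rectangle.

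Next I would apply the proposition to conclude that the odd cohomology vanishes, that $H^{2i}({\rm G}_{m+n,m}(\Bbb C),\Bbb Z)$ is free abelian of rank equal to the number of Schubert cells of real dimension $2i$, and that the Grassmannian is simply connected. The rank in question is the number $N_i(n,m)$ of partitions of $i$ fitting into the $m\times n$ box, which was computed earlier in the section. Combining this with Proposition \ref{gausbin}, which identifies the generating function $\sum_i N_i(n,m)q^i$ with the Gaussian binomial coefficient $\binom{m+n}{m}_q$, gives the claimed ranks.

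The only genuinely nontrivial point to address is verifying that the Schubert stratification is actually a CW decomposition in the sense required by the proposition, i.e., that the closures of cells attach correctly, so that one may legitimately invoke cellular cohomology. This would be the main obstacle if one wanted a fully self-contained argument. In practice one checks that the closure of $S_\lambda$ is the union of the $S_\mu$ with $\mu \subseteq \lambda$ (a standard fact about Schubert varieties), so that the $2i$-skeleton is the union of $S_\mu$ with $|\mu|\le i$, and each closed cell is the image of a closed ball under a continuous map extending the chart $\Bbb C^{|\lambda|}\cong S_\lambda$. Granted this, the statement follows immediately, with no further computation needed beyond matching coefficients with the $q$-binomial coefficient already obtained via the skew Howe duality computation.
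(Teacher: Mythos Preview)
Your proposal is correct and follows essentially the same route as the paper: apply the proposition on connected cell complexes with only even-dimensional cells to the Schubert decomposition, then identify the cell count in each dimension with the coefficients of the Gaussian binomial via Proposition~\ref{gausbin}. Your extra care in flagging that one must check the Schubert stratification is genuinely a CW structure (via the closure relation $\overline{S_\lambda}=\bigcup_{\mu\subseteq\lambda}S_\mu$) is a point the paper leaves implicit.
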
 

In particular, this gives Betti numbers over any field (including positive characteristic), not just $\Bbb C$. 

\subsection{Flag manifolds} \label{flman}
The {\bf flag manifold} $\mathcal F_n(\Bbb C)$ is the space of all {\bf complete flags} 
$0=V_0\subset V_1\subset...\subset V_n=\Bbb C^n$, where 
$\dim V_i=i$. Note that the flag manifold is a homogeneous space: 
${\mathcal F}_n=G/T$, where $G=U(n)$ and $T=U(1)^n$ is a maximal torus in $G$.
It can also be written as $G_{\Bbb C}/B$, where $G_{\Bbb C}=GL_n(\Bbb C)$ 
and $B=B_n$ is the subgroup of upper triangular matrices.  

We have fibrations $\pi: \mathcal F_n(\Bbb C)\to \Bbb C\Bbb P^{n-1}$ 
sending $(V_1,...,V_{n-1})$ to $V_{n-1}$, whose fiber is the space of flags in $V_{n-1}$, i.e., 
$\mathcal F_{n-1}(\Bbb C)$. This shows, by induction, that flag manifolds can be decomposed into even-dimensional cells isomorphic to $\Bbb C^k$.

More precisely, to define actual cells, we need to trivialize 
the fibration $\pi$ over each cell in $\Bbb C\Bbb P^{n-1}$. These cells 
are $C_{in}$, $i=1,...,n$, where $C_{in}$ is the set of hyperplanes 
$E\subset \Bbb C^n$ defined by an equation 
$a_1x_1+...+a_nx_n=0$ where the first nonzero coefficient is $a_i$ (so  $C_{in}\cong \Bbb C^{n-i}$). 
This means that for $(x_1,...,x_n)\in E$, 
the coordinates $x_j,j\ne i$ can be chosen arbitrarily, and then $x_i$ 
is uniquely determined. So we may identify $E$ with $\Bbb C^{n-1}$ 
by sending $(x_1,...,x_n)$ to $(x_1,...,x_{i-1},x_{i+1},...,x_n)$, 
which defines the required trivialization. 
 
Thus we obtain a stratification of $\mathcal{F}_n$ into cells $C_w$ labeled 
by permutations $w\in S_n$, which we'll represent as orderings of $1,2,...,n$. 
Namely, this stratification and labeling are defined by induction in $n$: 
for $w\in S_{n-1}$, $C_w\times C_{in}=C_{w_i'}$, where $w_i'\in S_n$ 
is obtained from $w$ by inserting $n$ in the $i$-th place (namely, 
$w_i'=w\circ (i,i+1,...,n))$. By analogy with the Grassmannian, the cells $C_w$ are  called {\bf Schubert cells}. 
  
It follows that the Betti numbers of $\mathcal{F}_n$ vanish in odd degrees, and in even degrees are given by the generating function
$$
\sum b_{2i}(\mathcal{F}_n)q^i=[n]_q!=(1+q)(1+q+q^2)...(1+q+...+q^{n-1}). 
$$
Moreover, it is easy to see that $\dim_{\Bbb C} C_w=\ell(w)$, so we get the identity
$$
\sum_{w\in S_n}q^{\ell(w)}=[n]_q!
$$
Finally, note that the group $B_n$ of upper triangular matrices preserves each $C_w$. In fact, it is easy to check by induction in $n$ that $C_w$ are simply  $B_n$-orbits on ${\mathcal F}_n$. 

\begin{remark} We have a map $\pi_m: \mathcal{F}_{m+n}(\Bbb C)\to {\rm G}_{m+n,m}(\Bbb C)$ 
sending $(V_1,...,V_{m+n-1})$ to $V_m$. This is a fibration with fiber 
$\mathcal{F}_m(\Bbb C)\times \mathcal F_{n}(\Bbb C)$. This gives another proof 
of the formula for Betti numbers of the Grassmannian (Proposition \ref{gausbin}). 
\end{remark}

We can also define the {\bf partial flag manifold} $\mathcal F_S(\Bbb C)$, where 
$S\subset [1,n-1]$ is a subset, namely the space of {\bf partial flags} $(V_s, s\in S)$, $V_s\subset \Bbb C^n$, $\dim V_s=s$, $V_s\subset V_t$ if $s<t$. 

\begin{exercise} Let $S=\lbrace n_1,n_1+n_2,...,n_1+...+n_{k-1}\rbrace$, and $n_k=n-n_1-...-n_{k-1}$. Show that the even Betti numbers of the partial flag manifold are the coefficients of the polynomial 
$$
P_S(q):=\frac{[n]_q!}{[n_1]_q!...[n_k]_q!}
$$
called the {\bf Gaussian multinomial coefficient} (and the odd Betti numbers vanish). Show that the partial flag manifold is simply connected. 
\end{exercise} 

\section{\bf Levi decomposition} 

\subsection{Cohomology of Lie algebras with coefficients}

The definition of cohomology of Lie algebras may be generalized 
to define the cohomology with coefficients in a module, so that the cohomology considered above is the one for the trivial module.

Let $\g$ be a Lie algebra and $V$ a $\g$-module. The {\bf Chevalley-Eilenberg (or standard) complex of $\g$ with coefficients in $V$} is defined by
$$
CE^\bullet(\g,V):=\Hom(\wedge^\bullet\g,V)
$$
with differential defined by the full Cartan formula (without dropping the first term): 
$$
d\omega(a_0,...,a_m)=\sum_i (-1)^i a_i\omega(a_0,...,\widehat a_i,...,a_m)+
$$
$$
\sum_{i<j}(-1)^{i+j}\omega([a_i,a_j],a_0,...,\widehat a_i,...,\widehat{a_j},...,a_m).
$$
The cohomology of this complex is called the {\bf cohomology of $\g$ with coefficients in $V$} and denoted $H^\bullet(\g,V)$. Note that the previously defined cohomology $H^\bullet(\g)$ is $H^\bullet(\g,\Bbb C)$. 

If $\g$ is the Lie algebra of a Lie group $G$ (or its complexification) and $V$ is finite dimensional, then we simply have $CE^\bullet(\g,V):=(\Omega^\bullet(G)\otimes V)^G$ (and the differential is just the de Rham differential). So in particular by Theorem \ref{exactness} we have (using that if $\g$ is semisimple then the smallest $i>0$ such that $H^i(\g,\Bbb C)\ne 0$ is $3$): 

\begin{proposition} (i) If $G$ is compact and $V$ is a nontrivial irreducible representation then 
$$
H^i(\g,V)=0,\ i>0.
$$ 
In particular, this is so for any non-trivial irreducible finite dimensional representation $V$ of a semisimple Lie algebra $\g$. 

(ii) (Whitehead's theorem) For semisimple $\g$ and any finite dimensional $V$ we have $H^1(\g,V)=H^2(\g,V)=0$.\footnote{Note that $H^1(\g,V)$ appeared earlier in Section 18 and Whitehead's theorem in the case of $H^1$ was proved in Subsection \ref{Whi}.}
\end{proposition}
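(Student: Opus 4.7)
The plan is to prove (i) using the compatibility of two natural actions of the center $Z(U(\g))$ on the cohomology $H^\bullet(\g, V) \cong \Ext^\bullet_{U(\g)}(\Bbb C, V)$, and to derive (ii) from (i) by Weyl complete reducibility plus direct computation of $H^1$ and $H^2$ with trivial coefficients.

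For (i) with $\g$ semisimple and $V$ nontrivial irreducible, Lemma \ref{leem2} supplies a central $C \in Z(U(\g))$ acting by $0$ on $\Bbb C$ and by a nonzero scalar $\lambda$ on $V$. I will construct two chain endomorphisms of $CE^\bullet(\g, V)$ both implementing this $C$: the diagonal Lie-derivative action $L_C = \sum_i L_{a_i} L_{a^i}$, which is null-homotopic via $\theta = \sum_i \iota_{a_i} L_{a^i}$ (since each $L_x = d\iota_x + \iota_x d$ and $L_{a^i}$ commutes with $d$), and the chain map $C_V = \id \otimes \rho(C) = \lambda \cdot \id$, which is well-defined because $\rho(C): V \to V$ is $\g$-equivariant by centrality. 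The first acts as zero on $H^\bullet(\g, V)$, the second as multiplication by $\lambda$. Once these two induced maps on cohomology are identified, we conclude $\lambda \cdot H^i(\g, V) = 0$, hence $H^i(\g, V) = 0$ for all $i$. For a general compact $G$, write $\g = \z \oplus \mathfrak{s}$ reductive; if $V$ is nontrivial as an $\mathfrak{s}$-module the semisimple case applies, and otherwise the center $\z \subset Z(U(\g))$ contains an element with nonzero eigenvalue on $V$, so the same argument runs.

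For (ii), decompose $V = V^\g \oplus V'$ with $V'$ a sum of nontrivial irreducibles (Theorem \ref{comred}). Part (i) gives $H^i(\g, V') = 0$ for $i > 0$, while $H^1(\g, \Bbb C) = (\g/[\g,\g])^* = 0$ by Corollary \ref{gperf}, and $H^2(\g, \Bbb C) = (\wedge^2 \g^*)^\g = 0$ because every invariant bilinear form on a semisimple Lie algebra is symmetric (proportional on each simple summand to the Killing form), so the skew-symmetric part carries no invariants.

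The main obstacle is the rigorous identification of the two chain-level actions of $C$ on $H^\bullet(\g, V)$. At the chain level $L_C$ and $C_V$ differ by cross-terms of the form $L_C^\wedge + 2\sum_i \rho(a_i) L_{a^i}^\wedge$ (writing $L_x = L_x^\wedge + \rho(x)$ on $\wedge^\bullet \g^* \otimes V$), and this difference is not manifestly null-homotopic via $\theta$. The cleanest way to finish is to identify $CE^\bullet(\g, V)$ with $\Hom_{U(\g)}(P_\bullet, V)$ for the standard Koszul resolution $P_\bullet = U(\g) \otimes \wedge^\bullet \g$ of $\Bbb C$, and to observe that left multiplication by $C$ on $P_\bullet$ implements both actions simultaneously, so they coincide on $\Ext^\bullet$. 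This is a special case of the general principle that for any central element $z$ of an associative algebra $A$ and any $A$-modules $M, N$, the two induced actions of $z$ on $\Ext^\bullet_A(M, N)$—from $\End_A(M)$ and from $\End_A(N)$—agree.
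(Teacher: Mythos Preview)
Your argument is correct, but it takes a different route from the paper's intended one. The paper's proof is purely the de Rham averaging argument: identify $CE^\bullet(\g,V)$ with the $G$-invariants of $(\Omega^\bullet(G)\otimes V,\, d\otimes \id)$ under the action $g\cdot(\omega\otimes v)=L_g^*\omega\otimes\rho(g)v$; since $G$ is compact, the functor of $G$-invariants is exact, so $H^\bullet(\g,V)=(H^\bullet(G)\otimes V)^G$. But $G$ acts trivially on $H^\bullet(G)$ (Corollary~\ref{trivac}), so this equals $H^\bullet(G)\otimes V^G$, which vanishes for nontrivial irreducible $V$. Part (ii) then follows by complete reducibility and the already established identification $H^\bullet(\g,\Bbb C)=(\wedge^\bullet\g^*)^\g$, exactly as you do.

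Your Casimir/$\Ext$ argument is the standard algebraic proof and is perfectly valid. It has the advantage of working uniformly over any field of characteristic zero for semisimple (or reductive) $\g$, without ever invoking the compact group or integration; in particular you never need the de Rham interpretation at all. Your resolution of the ``two actions of $C$'' issue via the Koszul resolution is clean and correct: for $f\in\Hom_{U(\g)}(P_n,V)$ one has $\rho(C)\circ f=f\circ(C\cdot)$ by $U(\g)$-linearity, and left multiplication by $C$ on $P_\bullet$ is a chain lift of $C|_{\Bbb C}=0$, hence null-homotopic. You could in fact drop the entire discussion of $L_C$ and $\theta$ and go straight to this $\Ext$ argument. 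Your computation $(\wedge^2\g^*)^\g=0$ (all invariant bilinear forms on a semisimple Lie algebra are symmetric) is also fine and, importantly, non-circular with respect to Levi decomposition.
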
 

However, this cohomology is non-trivial in general if $\g$ is not semisimple or $V$ is infinite dimensional. 

Let us explore the meaning of $H^i(\g,V)$ for small $i$. 

{\bf 1.} We have $H^0(\g,V)=V^\g$, the $\g$-invariants in $V$. 

{\bf 2.}  $H^1(\g,V)$ is the quotient of the space $Z^1(\g,V)$ of 1-cocycles \linebreak
$\omega: \g\to V$, i.e., linear maps satisfying
$$
\omega([x,y])=x\omega(y)-y\omega(x)
$$
by the space of 1-coboundaries $B^1(\g,V)$, of the form $\omega(x)=xv$ for some $v\in V$. 

\begin{proposition} (i) If $V,W$ are representations of $\g$ then 
${\rm Ext}^1(V,W)=H^1(\g,\Hom_{\bf k}(V,W))$. 

(ii) Consider the action of the additive group of $V$ on the Lie algebra 
$\g\ltimes V$ (with trivial commutator on $V$) by 
$$
v\circ(x,w)=(x,w+xv).
$$
Then $H^1(\g,V)$ classifies Lie algebra homomorphisms $\g\to \g\ltimes V$ of the form 
$x\mapsto (x,\omega(x))$ modulo this action. 
\end{proposition}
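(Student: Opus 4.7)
The plan is to deduce both statements by direct computation from the definitions of $Z^1(\g,-)$, $B^1(\g,-)$ and the semidirect-product bracket, invoking the extension-theoretic analysis already carried out in Subsection~16.1.

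For (i), I would observe that the derivation in Subsection~16.1 proves exactly this. Given an extension $0 \to W \to U \to V \to 0$ of $\g$-modules, a choice of linear splitting $i\colon V \to U$ identifies $U \cong W \oplus V$ as a vector space, and the $\g$-action then reads $x\cdot(w,v) = (xw + a(x)v,\ xv)$ for some linear map $a\colon \g \to \Hom_{\bf k}(V,W)$. The $\g$-module structure on $\Hom_{\bf k}(V,W)$ is $(x\cdot A)(v) = x\,A(v) - A(xv)$, and associativity of the action on $(0,v)$ translates into the $1$-cocycle condition $a([x,y]) = x\cdot a(y) - y\cdot a(x)$. Changing the splitting by $A \in \Hom_{\bf k}(V,W)$ modifies $a$ by $dA(x) = x\cdot A$, i.e.\ by a coboundary. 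This identifies $\Ext^1(V,W)$ with $Z^1/B^1 = H^1(\g, \Hom_{\bf k}(V,W))$.

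For (ii), I would first verify that the prescription $v \circ (x,w) = (x, w + xv)$ really defines an action by Lie algebra automorphisms of $\g \ltimes V$. A direct bracket computation, using that $V$ is abelian and the Leibniz rule for the $\g$-action on $V$, gives
\[
[\,v\circ(x,w_1),\ v\circ(y,w_2)\,] = ([x,y],\ xw_2 - yw_1 + [x,y]v) = v\circ[(x,w_1),(y,w_2)],
\]
which is also the conjugation action of the element corresponding to $(0,v)$ inside the associated Lie group. Next, for a linear map $\omega\colon \g \to V$, the section $\sigma_\omega\colon x \mapsto (x,\omega(x))$ is a Lie algebra homomorphism iff
\[
[(x,\omega(x)),(y,\omega(y))] = ([x,y],\ x\omega(y) - y\omega(x))
\]
equals $([x,y],\omega([x,y]))$, i.e.\ iff $\omega \in Z^1(\g,V)$. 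Finally, $v \circ \sigma_\omega = \sigma_{\omega'}$ holds precisely when $\omega'(x) = \omega(x) + xv$, so two homomorphic sections are $V$-equivalent iff $\omega' - \omega \in B^1(\g,V)$. This gives the bijection with $H^1(\g,V)$.

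There is no real obstacle here: both parts reduce to unwinding definitions. The only point to be a little careful about is the bookkeeping of signs and of the two distinct $\g$-module structures appearing in (i) (the canonical one on $\Hom_{\bf k}(V,W)$ versus the one coming from the splitting), which is already handled cleanly in Subsection~16.1 and which I would simply cite rather than redo.
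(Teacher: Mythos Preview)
Your proposal is correct and matches the paper's approach exactly: for (i) the paper carries out the same splitting-and-matrix computation (essentially a repeat of Subsection~16.1, which you rightly cite), and for (ii) the paper simply leaves the statement as an exercise, so your explicit verification---that $v\circ(-)$ is a Lie algebra automorphism, that $\sigma_\omega$ is a homomorphism iff $\omega\in Z^1(\g,V)$, and that the $V$-action shifts $\omega$ by $dv$---is precisely the intended solution.
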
 

\begin{proof} (i) Suppose the space $W\oplus V$ is equipped with the action 
of $\g$ so that $W$ is a submodule and $V$ the quotient. Thus the action of $\g$ 
on $W\oplus V$ is given by 
$$
\rho(x)=\begin{pmatrix} \rho_W(x) & \omega(x) \\ 0 & \rho_V(x)\end{pmatrix},
$$
where $\omega: \g\to \Hom_{\bf k}(V,W)$. So the identity $\rho([x,y])=[\rho(x),\rho(y)]$ 
translates into 
$$
\omega([x,y])=\rho_W(x)\omega(y)-\omega(y)\rho_V(x)-\rho_W(y)\omega(x)+\omega(x)\rho_V(y).
$$
i.e., $\rho\in Z^1(\g,\Hom_{\bf k}(V,W))$. Also it is easy to check that 
for two such representations $\rho_1,\rho_2$ there is an isomorphism
$\rho_1\to \rho_2$ acting trivially on $W$ and $V/W$ if and only if 
the corresponding maps $\omega_1,\omega_2$ differ by a coboundary: 
$\omega_1-\omega_2\in B^1(\g,\Hom_{\bf k}(V,W))$. This implies the statement. 

(ii) We leave this to the reader as an exercise. 
\end{proof} 

{\bf 3.} $Z^1(\g,\g)$ is the Lie algebra of derivations of $\g$, and 
$B^1(\g,\g)$ is the ideal of inner derivations. So $H^1(\g,\g)$ 
is the Lie algebra of {\bf outer derivations}, the quotient 
of all derivations by inner derivations. In  particular, we rederive the 
fact proved earlier that all derivations of a semisimple complex Lie algebra $\g$ are inner ($H^1(\g,\g)=0$).

{\bf 4.} Suppose we want to define an {\bf abelian extension} $\widetilde \g$ 
of $\g$ by $V$, i.e., a Lie algebra which can be included in the short exact sequence 
$$
0\to V\to \widetilde \g\to \g\to 0
$$
where $V$ is an abelian ideal. To classify such extensions, pick a vector space splitting 
$\widetilde \g=\g\oplus V$, then the commutator looks like 
$$
[(x,v),(y,w)]=([x,y],xw-yv+\omega(x,y)),
$$
where $\omega: \wedge^2\g\to V$ is a linear map. The Jacobi identity is then equivalent 
to $\omega$ being in the space $Z^2(\g,V)$ of 2-cocycles. Moreover, it is easy to check 
that for two such extensions $\widetilde \g_1,\widetilde \g_2$ there is an isomorphism 
$\phi: \widetilde \g_1\to \widetilde \g_2$ which acts trivially on $V$ and $\g$ if and only if the corresponding cocycles $\omega_1,\omega_2$ differ by a coboundary: 
$\omega_1-\omega_2\in B^2(\g,V)$. Thus, we get 

\begin{proposition} Abelian extensions  of $\g$ by $V$ modulo isomorphisms 
which act trivially on $V$ and $\g$ are classified by $H^2(\g,V)$. 
For example, the space $H^2(\g,\Bbb C)$ classifies 
{\bf $1$-dimensional central extensions} of $\g$: 
$$
0\to \Bbb C\to \widetilde \g\to \g\to 0.
$$
\end{proposition}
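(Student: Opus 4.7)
The plan is to build the bijection by a standard splitting argument in four steps, matching the algebraic structure of abelian extensions against the data of a $2$-cocycle.

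First, starting from an abelian extension $0 \to V \to \widetilde\g \to \g \to 0$, I would choose a vector space section $s : \g \to \widetilde\g$ and transport the Lie bracket of $\widetilde\g$ to $\g \oplus V$ via $s$. Because $V$ is an abelian ideal and $\g$ acts on $V$ through the given module structure (induced by the adjoint action of $\widetilde\g$, which is independent of $s$), the bracket must take the form
$$[(x,v),(y,w)] = \bigl([x,y],\ xw - yv + \omega(x,y)\bigr),$$
for a unique skew-symmetric bilinear map $\omega : \wedge^2 \g \to V$, i.e.\ an element of $CE^2(\g,V)$. This reduces the problem to analyzing how $\omega$ depends on $s$ and when two such $\omega$'s give isomorphic extensions.

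Next I would show that the Jacobi identity for $\widetilde\g$ is equivalent to $d\omega = 0$. This is the computational core of the proof and the place I expect to spend the most care: one substitutes the formula above into the cyclic sum $\sum_{\text{cyc}} [[(x,v),(y,w)],(z,u)]$. The terms that are purely module-theoretic (containing $u,v,w$) cancel by the Jacobi identity for $\g$ acting on $V$, and what remains in the $V$-component is, up to sign, exactly
$$x\omega(y,z) - y\omega(x,z) + z\omega(x,y) - \omega([x,y],z) + \omega([x,z],y) - \omega([y,z],x),$$
which is $(d\omega)(x,y,z)$ in the convention of the Chevalley--Eilenberg complex recalled earlier. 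So the extensions with a chosen splitting are parametrized by $Z^2(\g,V)$.

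Then I would verify that changing the splitting changes $\omega$ by a coboundary. If $s'(x) = s(x) + \phi(x)$ for some linear $\phi : \g \to V$, a direct computation using abelianness of $V$ gives $\omega'(x,y) - \omega(x,y) = x\phi(y) - y\phi(x) - \phi([x,y]) = -(d\phi)(x,y)$. Finally, I would check that cohomologous cocycles yield isomorphic extensions and vice versa: for $\omega' = \omega - d\phi$ the map $(x,v) \mapsto (x, v+\phi(x))$ is a Lie algebra isomorphism $\widetilde\g_\omega \to \widetilde\g_{\omega'}$ restricting to the identity on $\g$ and on $V$; conversely, any isomorphism of extensions acting trivially on the ends sends a splitting to a splitting and so identifies the cohomology classes. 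This gives the desired bijection between isomorphism classes of abelian extensions and $H^2(\g,V)$, and specializes to the claim about $1$-dimensional central extensions by taking $V = \Bbb C$ with the trivial $\g$-action.
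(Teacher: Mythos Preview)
Your proof is correct and follows exactly the approach the paper takes in the discussion preceding the proposition: choose a linear splitting, identify the bracket via a cochain $\omega$, check that Jacobi is equivalent to the cocycle condition, and show that changing the splitting or applying an isomorphism trivial on the ends alters $\omega$ by a coboundary. (One tiny slip: with the paper's sign conventions, $x\phi(y)-y\phi(x)-\phi([x,y])$ equals $+(d\phi)(x,y)$, not $-(d\phi)(x,y)$, but this does not affect the conclusion.)
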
 

\begin{example} Let $\g=\Bbb C^2$ be the 2-dimensional abelian Lie algebra. 
Then we have seen that the Poincar\'e polynomial of the cohomology of $\g$ 
is $1+2q+q^2$ (cohomology of the 2-torus). So $H^2(\g,\Bbb C)=\Bbb C$. 
The only cocycle up to scaling is given by $\omega(x,y)=1$, where $x,y$ 
is a basis of $\g$, and all coboundaries are zero. So we have a central extension 
of $\g$ defined by this cocycle with basis $x,y,c$ and 
$[x,y]=c$, $[x,c]=[y,c]=0$. This is the {\bf Heisenberg Lie algebra}, 
which is isomorphic to the Lie algebra of strictly upper-triangular $3$ by $3$ matrices.     
\end{example} 

{\bf 5.} Let us now study deformations of Lie algebras. Suppose $\g$ is a Lie algebra over a field $\bold k$ and we want to deform the bracket, with deformation parameter $t$. So the new bracket will be 
$$
[x,y]_t=[x,y]+tc_1(x,y)+t^2c_2(x,y)+...,
$$
where $c_i: \wedge^2\g\to \g$ are linear maps. This bracket should satisfy the Jacobi identity, i.e., define a new Lie algebra structure on $\g[[t]]$ (over $\bold k[[t]]$). Such deformations are distinguished up to 
linear isomorphisms
$$
a=1+ta_1+t^2a_2+...
$$
where $a_i\in \End_{\bf k}(\g)$. 

In particular, in first order, i.e., modulo $t^2$, 
we get a new Lie algebra structure on $\g[t]/t^2\g[t]=\g\oplus t\g$ such that this Lie algebra can be included in the short exact sequence 
$$
0\to t\g\to \g\oplus t\g\to \g\to 0
$$
where $t\g\cong \g$ is an abelian ideal with adjoint action of $\g$ (note that this Lie algebra structure is automatically $\bold k[t]/t^2$-linear). So this is an abelian extension 
of $\g$ by $t\g$, and we know that such extensions are classified by $H^2(\g,\g)$. 
So we obtain 

\begin{proposition} 
First-order deformations of $\g$ as a Lie algebra are classified by $H^2(\g,\g)$. 
\end{proposition}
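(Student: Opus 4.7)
The plan is to reduce the classification of first-order deformations to the classification of abelian extensions of $\g$ by the adjoint module $\g$, which was already shown in item 4 above to be controlled by $H^2(\g,\g)$. A first-order deformation means a Lie algebra structure on the $\bold k[t]/t^2$-module $\g\oplus t\g$ whose bracket reduces modulo $t$ to the original bracket on $\g$. Such a bracket is determined by a single bilinear map $c_1\colon \wedge^2\g\to\g$ via
$$
[x,y]_t = [x,y] + tc_1(x,y) \pmod{t^2},
$$
since bilinearity over $\bold k[t]/t^2$ forces $[x,tu]_t = t[x,u]\pmod{t^2}$ and $[tu,tv]_t\equiv 0\pmod{t^2}$.

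First I would observe that these last two equalities say exactly that $t\g\subset \g\oplus t\g$ is an abelian ideal on which $\g$ acts via the adjoint action, so the short exact sequence
$$
0 \to t\g \to \g\oplus t\g \to \g \to 0
$$
realizes $\g\oplus t\g$ as an abelian extension of $\g$ by the adjoint module (under the identification $t\g\cong\g$). Then I would expand the Jacobi identity for $[,]_t$ in powers of $t$; the $t^0$ coefficient holds by the Jacobi identity in $\g$, and the $t^1$ coefficient is exactly the 2-cocycle condition
$$
[x,c_1(y,z)] + c_1(x,[y,z]) + \text{cyclic} = 0,
$$
i.e.\ $c_1\in Z^2(\g,\g)$. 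Conversely, any such $c_1$ defines a valid first-order deformation.

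Next I would analyze the equivalence relation. A linear isomorphism of the form $a = 1 + ta_1$ (with $a_1\in\End_{\bold k}(\g)$) intertwines two deformations $[,]_t$ and $[,]_t'$ iff $a[x,y]_t = [ax,ay]_t'\pmod{t^2}$, which expands to
$$
c_1(x,y) - c_1'(x,y) = [a_1(x),y] + [x,a_1(y)] - a_1([x,y]).
$$
The right-hand side is precisely $(da_1)(x,y)$ in $CE^\bullet(\g,\g)$, so $c_1$ and $c_1'$ are equivalent iff they differ by a coboundary. Thus the assignment $[,]_t\mapsto [c_1]$ descends to a well-defined bijection between equivalence classes of first-order deformations and $H^2(\g,\g)$, which is what we wanted.

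There is no real obstacle here: everything follows from a direct bookkeeping of the Jacobi identity and the equivalence relation, plus the already-established classification of abelian extensions. The one thing to be careful about is verifying that the Chevalley–Eilenberg differential used to define $H^2(\g,\g)$ agrees, on 1-cochains valued in $\g$ with the adjoint action, with the formula $(da_1)(x,y) = [x,a_1(y)] - [y,a_1(x)] - a_1([x,y])$ appearing above; this is just the definition of $d$ with coefficients given in the previous section.
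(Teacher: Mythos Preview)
Your proof is correct and follows exactly the paper's approach: identify a first-order deformation with an abelian extension of $\g$ by the adjoint module via the short exact sequence $0\to t\g\to \g\oplus t\g\to \g\to 0$, then invoke the classification of such extensions by $H^2(\g,\g)$ from item 4. You have in fact supplied more detail than the paper does, explicitly unwinding the Jacobi identity to the cocycle condition and the equivalence $a=1+ta_1$ to the coboundary condition, whereas the paper simply cites the extension result.
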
 

Thus if $H^2(\g,\g)=0$, every deformation is isomorphic to the trivial one, with $c_1=c_2=...=0$. Indeed, applying automorphisms \newline $a=1+ta_1+t^2a_2+...$, we can kill successively $c_1$, then $c_2$, then $c_3$, and so on. Thus from Whitehead's theorem we obtain

\begin{corollary} If $\g$ is semisimple then it is rigid, i.e., has no nontrivial Lie algebra deformations. 
\end{corollary}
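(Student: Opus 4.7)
The plan is to combine the deformation-cohomology dictionary stated just above the corollary with Whitehead's theorem $H^2(\g,V)=0$ for semisimple $\g$ and finite dimensional $V$, applied to $V=\g$ (the adjoint representation). More precisely, given any formal deformation $[x,y]_t = [x,y] + \sum_{n\ge 1} t^n c_n(x,y)$ of the bracket, I will inductively construct an automorphism $a = 1 + \sum_{n\ge 1} t^n a_n$ of the vector space $\g[[t]]$ (with $a_n \in \End_{\mathbf k}(\g)$) so that the transported bracket $[x,y]_t' := a^{-1}[a(x),a(y)]_t$ has all $c_n'=0$, which is the desired trivialization.

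The induction step is the following: suppose $c_1 = \cdots = c_{n-1}=0$, so $[x,y]_t \equiv [x,y] + t^n c_n(x,y) \pmod{t^{n+1}}$. Expanding the Jacobi identity
\[
[[x,y]_t,z]_t + [[y,z]_t,x]_t + [[z,x]_t,y]_t = 0
\]
to order $t^n$ yields exactly the cocycle condition on $c_n$, namely
\[
[c_n(x,y),z] + c_n([x,y],z) + \text{cyclic} = 0,
\]
which is the statement $c_n \in Z^2(\g,\g)$. By Whitehead's theorem $H^2(\g,\g)=0$, so $c_n = d\beta$ for some $\beta : \g \to \g$. Setting $a := 1 - t^n \beta$ (so $a_n = -\beta$ and higher $a_m$ zero), a direct calculation shows that the transported bracket is $[x,y] + t^{n+1}(\text{new terms})$, i.e., the $t^n$ term has been killed while lower terms remain zero. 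Iterating, the composition $\prod_{n\ge 1}(1 - t^n \beta_n)$ makes sense as a formal automorphism of $\g[[t]]$ because each factor modifies the deformation only in degrees $\ge n$, and therefore produces a well-defined isomorphism between the deformed Lie algebra and the trivial deformation $\g[[t]]$.

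The main obstacle is really just bookkeeping: one must verify cleanly that (i) the degree-$n$ component of the Jacobi identity, under the assumption that $c_1,\dots,c_{n-1}$ vanish, is precisely the Chevalley--Eilenberg cocycle condition $dc_n = 0$ for the adjoint-valued cochain $c_n$, and (ii) conjugating the bracket by $1 - t^n\beta$ changes $c_n$ exactly by the coboundary $d\beta$ and does not disturb the lower-order terms. Both are elementary but need to be written out with care to match the sign conventions of the differential in $CE^\bullet(\g,\g)$ given by the full Cartan formula. Once this dictionary between deformations and $H^2(\g,\g)$ is in place, the result is immediate from $H^2(\g,\g)=0$, which is the content of Whitehead's theorem (Theorem \ref{vanish}) proved earlier in the notes.
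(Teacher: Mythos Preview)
Your proposal is correct and follows exactly the same approach as the paper: use $H^2(\g,\g)=0$ to inductively kill each $c_n$ by an automorphism of the form $1 - t^n\beta_n$, with the infinite product converging formally since each factor only affects degrees $\ge n$. The only small correction is that Theorem~\ref{vanish} in these notes is the $H^1$ version of Whitehead's theorem; the vanishing of $H^2(\g,V)$ that you need is stated separately (as part (ii) of the proposition at the start of the section on cohomology with coefficients), so you should cite that instead.
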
 

\begin{example} Let $\g$ be the 2-dimensional abelian Lie algebra over $\Bbb C$. 
Then $H^2(\g,\g)=\Bbb C^2$, and we get a 2-parameter family of deformations
with bracket $[x,y]=tx+sy$. These, however, turn out to be all equivalent (for $(t,s)\ne (0,0)$)
under the action of $GL_2(\Bbb C)$: they are all isomorphic to the Lie algebra with basis 
$x,y$ and commutator $[x,y]=y$. 
\end{example} 

However, not all first order deformations of a Lie algebra lift to second order, i.e., modulo $t^3$.
Namely, the Jacobi identity in the second order tells us that $dc_2=[c_1,c_1]$, where 
$[c_1,c_1]$ is the {\bf Schouten bracket} of $c_1$ with itself:
$$
[c_1,c_1](x,y,z)=c_1(c_1(x,y),z)+c_1(c_1(y,z),x)+c_1(c_1(z,x),y).
$$
This expression is automatically a cocycle (check it!), but we need it to be a coboundary. 
So the cohomology class of $[c_1,c_1]$ in $H^3(\g,\g)$ is an obstruction to lifting the deformation modulo $t^3$. Thus the space $H^3(\g,\g)$ is the home for {\bf obstructions to deformations}. For example, 
if $\g$ is abelian then $H^2(\g,\g)=\Hom_{\bold k}(\wedge^2\g,\g)$, 
and the obstruction to extending $c=tc_1$ modulo $t^3$ 
is 
$$
{\rm Jacobi}(c_1):=[c_1,c_1]\in H^3(\g,\g)=\Hom_{\bold k}(\wedge^3\g,\g). 
$$

{\bf 6.} In a similar way we can study deformations $V[[t]]$ of a module $V$ over $\g$:
$$
\rho_t(x)=\rho(x)+t\rho_1(x)+t^2\rho_2(x)+...
$$
Modulo $t^2$ we get a $\g$-module structure on $V[t]/t^2V[t]=V\oplus tV$ such that 
we have a short exact sequence 
$$
0\to tV\to V\oplus tV\to V\to 0.
$$
Thus first order deformations of $V$ are classified by ${\rm Ext}^1_\g(V,V)=H^1(\g,\End_{\bf k}V)$. Again, lifting of this deformation modulo $t^3$ is not automatic, and we get an obstruction in $\Ext^2_\g(V,V)=H^2(\g,\End_\k(V))$. 

\begin{exercise} (i) Let $\a,\g$ be Lie algebras and $\phi: \a\to \g$ a homomorphism. 
Show that first order deformations of $\phi$ are classified by $H^1(\a,\g)$, where
$a\in \a$ acts on $\g$ by ${\rm ad}\phi(a)$. 

(ii) Show that if $\a$ is semisimple and $\g$ finite dimensional over $\Bbb C$ then 
$H^1(\a,\g)=0$. 

(iii) Show that if $\a,\g$ are semisimple complex Lie algebras then 
there are only finitely many homomorphisms $\a\to \g$ up to conjugation by $G_{\rm ad}$. ({\bf Hint}: Consider the affine algebraic variety $X\subset {\rm Hom}_{\Bbb C}(\a,\g)$ of all homomorphisms and show that the tangent space $T_\phi X$ is $Z^1(\a,\g)$, the space of 1-cocycles. Then use (ii) to deduce that $X$ is the union of finitely many orbits of $G_{\rm ad}$.) 

(iv) How many conjugacy classes do we have in (iii) if $\a=\mathfrak{sl}_2$ 
and $\g=\mathfrak{sl}_n,\mathfrak{so}_n,\mathfrak{sp}_{2n}$?  
\end{exercise} 

\subsection{Levi decomposition} \label{levide}

\begin{theorem}\label{levi1} (Levi decomposition, Theorem \ref{levi}) Over real or complex numbers  
we have $\g\cong {\rm rad}(\g)\oplus \g_{\rm ss}$, 
where $\g_{\rm ss}\subset \g$ is a semisimple subalgebra (but not necessarily an ideal); i.e., 
$\g$ is isomorphic to the semidirect product $\g_{\rm ss}\ltimes {\rm rad}(\g)$. 
In other words, the projection $p: \g\to \g_{\rm ss}$ admits an (in general, 
non-unique) splitting $q: \g_{\rm ss}\to \g$, i.e., a Lie algebra map such that $p\circ q={\rm Id}$. 
\end{theorem}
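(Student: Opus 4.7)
The plan is to deduce the Levi decomposition from the vanishing theorem $H^2(\g_{\rm ss},V)=0$ for semisimple $\g_{\rm ss}$ and finite-dimensional $V$ (Whitehead's theorem, stated in the preceding subsection). The whole statement is equivalent to saying that the short exact sequence
$$0\to {\rm rad}(\g)\to \g\to \g_{\rm ss}\to 0$$
admits a Lie algebra splitting. I will prove this by induction on $\dim {\rm rad}(\g)$, with trivial base case ${\rm rad}(\g)=0$.

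For the inductive step, the first move is to reduce to the case of \emph{abelian} radical. Let $r$ be the solvable length of ${\rm rad}(\g)$ and set $V:=D^{r-1}{\rm rad}(\g)$, the last nonzero term of the derived series. Since derived series are characteristic, $V$ is an ideal of $\g$, and by construction $V$ is abelian and nonzero. Consider $\g/V$: its radical is ${\rm rad}(\g)/V$, which has strictly smaller dimension, and the corresponding semisimple quotient is still $\g_{\rm ss}$. By the inductive hypothesis, $\g/V$ admits a Levi subalgebra $\bar\s$. Let $\g'\subset \g$ be the preimage of $\bar\s$ under $\g\to \g/V$; then $\g'$ is a Lie subalgebra of $\g$ fitting into the short exact sequence
$$0\to V\to \g'\to \g_{\rm ss}\to 0,$$
with $V$ abelian. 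A splitting of this smaller sequence yields a complement $\s\subset \g'\subset \g$ with $\s\cap {\rm rad}(\g)=0$ and $\s+{\rm rad}(\g)=\g$, which is precisely the desired Levi decomposition of $\g$.

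It remains to handle the abelian radical case, i.e.\ the sequence $0\to V\to \g'\to \g_{\rm ss}\to 0$ with $V$ an abelian ideal. Because $V$ is abelian, its adjoint action on itself is trivial, so the adjoint action of $\g'$ on $V$ factors through the quotient $\g'/V\cong \g_{\rm ss}$, giving $V$ the structure of a finite-dimensional $\g_{\rm ss}$-module. The isomorphism class of this abelian extension is then encoded by a class in $H^2(\g_{\rm ss},V)$, as described in the discussion of abelian extensions earlier in the excerpt. By Whitehead's theorem this cohomology group vanishes, so the extension is equivalent to the trivial one and admits a splitting $\g_{\rm ss}\hookrightarrow \g'$. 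This completes the induction.

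The main obstacle I anticipate is organizational rather than technical: setting up the two-step reduction (general radical $\leadsto$ abelian radical $\leadsto$ vanishing cohomology) so that the inductive hypothesis is applied to a Lie algebra of strictly smaller radical dimension, and verifying that the lifted subalgebra $\g'\subset \g$ obtained from $\bar\s\subset \g/V$ really has $V$ as its radical (which is automatic here since $\g'/V\cong \g_{\rm ss}$ is semisimple and $V$ is solvable, forcing ${\rm rad}(\g')=V$). Once these bookkeeping points are in place, the input from Whitehead's theorem does the real work.
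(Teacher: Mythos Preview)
Your proof is correct and follows essentially the same route as the paper: reduce by induction to the last nonzero derived term $V=D^{r-1}{\rm rad}(\g)$ (an abelian ideal), apply the inductive hypothesis to $\g/V$, and then kill the remaining abelian extension using Whitehead's theorem $H^2(\g_{\rm ss},V)=0$. The paper phrases the inductive step as ``adjust the vector-space splitting so that the cocycle $\omega$ lands in $D^n$'' rather than ``pass to the preimage subalgebra $\g'$'', but these are two ways of saying the same thing.
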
 
  
\begin{proof} We can write $\g=\g_{ss}\oplus {\rm rad}(\g)$ as a vector space. Then the commutator looks like 
$$
[(a,x),(b,y)]=([x,b]-[y,a]+[a,b]+\omega(x,y),[x,y]),\ x,y\in \g_{ss}, a,b\in {\rm rad}(\g).  
$$

Let ${\rm rad}(\g)=D^0\supset D^1\supset...$ be the derived series of ${\rm rad}(\g)$, i.e., $D^{i+1}=[D^i,D^i]$. Suppose $D^n\ne 0$ but $D^{n+1}=0$ (so $D^n$ is an abelian ideal). 
Using induction in dimension of $\g$ and replacing $\g$
by $\g/D^n$, we may assume that $\omega(x,y)\in D^n$. But then  
$\omega\in Z^2(\g_{ss},D^n)$, which equals $B^2(\g_{ss},D^n)$ 
by Whitehead's theorem, i.e., $\omega=d\eta$. Using $\eta$, we can modify 
the splitting $\g=\g_{ss}\oplus {\rm rad}(\g)$ to make sure that $\omega=0$.    
This implies the statement.\footnote{In other words, we have reduced to the case when ${\rm rad}(\g)=V$ is abelian, and we have shown above that abelian extensions are classified by $H^2(\g_{ss},V)$, which is zero by Whitehead's theorem.}    
\end{proof} 

\section{\bf The third fundamental theorem of Lie theory} \label{thirdlie}

\subsection{Exponentiating nilpotent and solvable Lie algebras and the third fundamental theorem of Lie theory}

The following theorem implies the third fundamental theorem of Lie theory for solvable Lie algebras. Let $\g$ be a finite dimensional solvable Lie algebra over $\Bbb K=\Bbb R$ or $\Bbb C$ of dimension $n$. 

\begin{theorem}\label{solvthir} There is a simply connected Lie group $G$ over $\Bbb K$ with ${\rm Lie}(G)=\g$, diffeomorphic to $\Bbb K^n$. Moreover, if $\g$ is nilpotent then the exponential map $\exp: \g\to G$ is a diffeomorphism, and if we use it to identify $G$ with $\g$ then 
the multiplication map $\mu: \g\times \g\to \g$ is polynomial.  
 \end{theorem}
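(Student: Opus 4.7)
The plan is to prove the nilpotent case first by exploiting the Baker--Campbell--Hausdorff formula directly on $\g$, and then obtain the general solvable case by induction on $\dim\g$, extending at each step by a one-dimensional semidirect factor.

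For the nilpotent case, let $\g = D_0 \supset D_1 \supset \cdots \supset D_N = 0$ be the lower central series. Recall from Subsection \ref{bchf} that the BCH series
$$
\mu(x,y) = \log(\exp(x)\exp(y)) = x + y + \tfrac{1}{2}[x,y] + \cdots
$$
is a formal sum of iterated Lie brackets in $x$ and $y$. Since every iterated commutator of length $\ge N$ vanishes on $\g$, the series $\mu(x,y)$ truncates, giving a polynomial map $\mu : \g \times \g \to \g$. I would set $G := \g$ as a manifold with multiplication $x\cdot y := \mu(x,y)$. The group axioms (associativity, identity $0$, inverse $-x$) follow from the corresponding formal identities in the degree completion of the free associative algebra on three generators, where they are manifest from $\mu = \log(\exp(x)\exp(y))$ and the equality $\log(\exp(x)\exp(y)\exp(z)) = \mu(\mu(x,y),z) = \mu(x,\mu(y,z))$; by the Lie polynomial character of $\mu$ established in Subsection \ref{bchf}, these identities descend to genuine identities in any nilpotent $\g$. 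The manifold $G \cong \Bbb K^n$ is simply connected, the exponential map is the identity $\g \to G = \g$, and the Lie bracket on $T_0G$ agrees with the original bracket on $\g$ by Proposition \ref{commprop}(iv) applied to the BCH product.

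For the general solvable case, I induct on $n = \dim\g$, with trivial base $n = 0$. Since $\g$ is solvable and nonzero, $\g \ne [\g,\g]$, so $\g/[\g,\g]$ is a nonzero abelian Lie algebra. Pick any hyperplane in $\g/[\g,\g]$; its preimage $I \subset \g$ is an ideal of codimension $1$, and choose $x \in \g \setminus I$ so that $\g = I \oplus \Bbb K x$ as a vector space. The derivation $d := \ad(x)|_I \in \Der(I)$ generates a one-parameter subgroup $t \mapsto e^{td}$ in the Lie group $\Aut(I) \subset GL(I)$. By the induction hypothesis there is a simply connected Lie group $H$ diffeomorphic to $\Bbb K^{n-1}$ with $\Lie(H) = I$. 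Since $H$ is simply connected, the second fundamental theorem of Lie theory (Theorem \ref{second}) lifts each Lie algebra automorphism $e^{td}$ to a Lie group automorphism $\phi_t \in \Aut(H)$, smoothly depending on $t$. I then form the semidirect product $G := H \rtimes_\phi \Bbb K$ with multiplication
$$
(h_1,t_1)\cdot(h_2,t_2) := (h_1\,\phi_{t_1}(h_2),\, t_1 + t_2).
$$
Then $G$ is diffeomorphic to $H \times \Bbb K \cong \Bbb K^n$, hence simply connected, and a direct computation of the bracket on $\Lie(G) = I \oplus \Bbb K x$ recovers the bracket of $\g$ (the cross term being $[x,a] = d(a)$ for $a \in I$), so $\Lie(G) \cong \g$.

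The main technical obstacle I anticipate is the rigorous verification of associativity in the nilpotent step: one must check that truncating the formal BCH series at degree $N$ preserves the identity $\mu(\mu(x,y),z) = \mu(x,\mu(y,z))$. The cleanest route is to verify this equality once and for all as an identity of Lie polynomials in three variables in the completion of the free associative algebra, and then specialize to any nilpotent $\g$, where both sides become honest polynomial maps thanks to the truncation. A secondary concern is ensuring that the lift $t \mapsto \phi_t$ in the inductive step is jointly smooth in $(t,h)$, which follows because $\Aut(H)$ inherits a Lie group structure from its identification with an open subgroup of $\Aut(I)$ (a closed subgroup of $GL(I)$), making $t \mapsto \phi_t$ a smooth homomorphism and hence the resulting action $\Bbb K \times H \to H$ smooth.
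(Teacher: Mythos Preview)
Your proof is correct. The solvable case is essentially identical to the paper's: induction on $\dim\g$, peel off a codimension-one ideal, lift the derivation to a one-parameter group of automorphisms via Theorem \ref{second}, and form the semidirect product with $\Bbb K$.

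The nilpotent case, however, takes a genuinely different route. You invoke the Baker--Campbell--Hausdorff formula directly: since iterated brackets of length $\ge N$ vanish, the BCH series truncates to a polynomial map $\mu:\g\times\g\to\g$, and the group axioms follow from the formal BCH identities in the free Lie algebra (established in Subsection \ref{bchf}), with $\exp$ being literally the identity map. The paper instead proves the nilpotent statement \emph{within the same induction} used for the solvable case: assuming $\exp:\g_0\to G_0$ is a polynomial diffeomorphism, it writes the product on $G=\Bbb K\ltimes G_0$ explicitly as $(X,t)*(Y,s)=(\mu_0(X,e^{td}Y),t+s)$, observes that $e^{td}$ is polynomial in $t$ since $d$ is nilpotent, and then computes $\exp(X,t)=\bigl(\tfrac{e^{td}-1}{td}X,\,t\bigr)$ to see it is a polynomial diffeomorphism. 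Your approach is more conceptual and gets the polynomial structure and the diffeomorphism property for free, at the cost of relying on the nontrivial fact that BCH is a Lie series; the paper's approach is more hands-on and self-contained, keeping everything inside one inductive argument without appealing to BCH.
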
 

\begin{proof} The proof is by induction in $n$, with trivial base $n=0$. Namely, 
fix a nonzero homomorphism $\chi: \g\to \Bbb K$ (which exists since $\g$ is solvable), 
and let $\g_0={\rm Ker}\chi$.  Then we have $\g=\Bbb K\bold d\ltimes \g_0$, the semidirect product, where $\bold d\in \g$ acts as a derivation $d$ on $\g_0$. Let $G_0$ be the simply connected Lie group corresponding to $\g_0$, which is defined by the induction assumption. So we have a 1-parameter group of automorphisms $e^{td}: \g_0\to \g_0$ which by the second fundamental theorem of Lie theory gives rise to a 1-parameter group of automorphisms $e^{td}: G_0\to G_0$. 
Thus we can define a group structure on $G:=G_0\times \Bbb K$ by the formula 
$$
(x,t)\cdot (y,s)=(x\cdot e^{td}(y),t+s),\ x,y\in G_0,\ t,s\in \Bbb K.
$$
Otherwise formulated, $G=\Bbb K\ltimes G_0$. 
This gives a desired group $G$ with Lie algebra $\g$. 

Moreover, if $\g$ is nilpotent then by the induction assumption the exponential map 
$\g_0\to G_0$ is a diffeomorphism, and if we use it to identify $\g_0$ with $G_0$ then the multiplication $\mu_0: \g_0\times \g_0\to \g_0$ is polynomial. So we may realize $G$ as $\g=\g_0\times \Bbb K$ with multiplication law 
$$
(X,t)*(Y,s)=\mu((X,t),(Y,s))=(\mu_0(X,e^{td}(Y)),t+s),\ X,Y\in \g_0,\ t,s\in \Bbb K.
$$
By nilpotency $d^N=0$ for some $N$, so 
$$
e^{td}(Y)=\sum_{n=0}^{N-1}
\frac{t^nd^n(Y)}{n!},
$$
so we see that $\mu$ is polynomial. Also 
$$
\exp(X,t)=(\exp(X_t),t),
$$
where 
$$
X_t=\frac{e^{td}-1}{td}(X)=\sum_{n= 1}^N \frac{t^{n-1}d^{n-1}(X)}{n!}.
$$
Thus 
$$
X=\left(\sum_{n=1}^N \frac{t^{n-1}d^{n-1}}{n!}\right)^{-1}(X_t),
$$
which makes sense since $d^N=0$. This implies that the exponential map for $\g$ is a diffeomorphism. 
\end{proof}  

\begin{example} Let $\g$ be the Heisenberg Lie algebra, i.e. the Lie algebra of strictly upper triangular 3-by-3 matrices. Then under such identification the multiplication map in the corresponding Heisenberg group $G$ has the form 
$$
(x,y,z)*(x',y',z')=(x+x',y+y',z+z'+\tfrac{1}{2}(xy'-x'y)).
$$
\end{example} 

\begin{exercise} Show that if $\g$ is the 2-dimensional non-abelian complex 
Lie algebra and $G$ the corresponding simply connected Lie group then 
$\exp: \g\to G$ is not injective. 
\end{exercise}

\begin{definition} The simply connected Lie group whose Lie algebra is nilpotent is called {\bf unipotent}.\footnote{The reason for this terminology is that these groups act by unipotent operators on the adjoint representation.}  
\end{definition} 

\begin{corollary} (Third fundamental theorem of Lie theory, Theorem \ref{third}) For any finite dimensional Lie algebra $\g$ over $\Bbb R$ or $\Bbb C$ there is a simply connected Lie group $G$ with ${\rm Lie}(G)=\g$. 
\end{corollary}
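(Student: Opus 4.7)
The plan is to reduce the general case to two ingredients already essentially available: Theorem \ref{solvthir} for solvable Lie algebras, and the Levi decomposition (Theorem \ref{levi1}), so that the only new work is constructing a simply connected Lie group with Lie algebra $\g_{\rm ss}$ for semisimple $\g_{\rm ss}$, and then forming a semidirect product.

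First I would treat the semisimple case directly, without invoking Ado's theorem. If $\g$ is semisimple, its center is trivial, so the adjoint representation $\operatorname{ad}\colon \g\to\mathfrak{gl}(\g)$ is an injective Lie algebra homomorphism. By the first fundamental theorem (Theorem \ref{first}) applied inside $GL(\g)$, there is a unique connected Lie subgroup $G_{\rm ad}\subset GL(\g)$ whose Lie algebra is $\operatorname{ad}(\g)\cong\g$; this is of course the adjoint group already identified in Corollary \ref{auto}. Its universal cover $\widetilde{G_{\rm ad}}$ is then a simply connected Lie group (by Proposition \ref{abeli}) with the same Lie algebra $\g$. This argument works identically over $\mathbb{R}$ and $\mathbb{C}$.

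Next, for a general finite dimensional $\g$, apply the Levi decomposition to write $\g=\g_{\rm ss}\ltimes\r$, where $\r=\operatorname{rad}(\g)$. Let $R$ be the simply connected Lie group with $\operatorname{Lie}(R)=\r$ provided by Theorem \ref{solvthir}; recall $R$ is diffeomorphic to $\mathbb{K}^{\dim\r}$. Let $S$ be the simply connected Lie group with $\operatorname{Lie}(S)=\g_{\rm ss}$ from the previous paragraph. The Levi complement action gives a Lie algebra homomorphism $\rho_*\colon\g_{\rm ss}\to\operatorname{Der}(\r)=\operatorname{Lie}(\operatorname{Aut}(R))$, and since $S$ is simply connected, the second fundamental theorem (Theorem \ref{second}) lifts $\rho_*$ to a homomorphism of Lie groups $\rho\colon S\to\operatorname{Aut}(R)$. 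Form the semidirect product $G:=S\ltimes_\rho R$. Then $\operatorname{Lie}(G)=\g_{\rm ss}\ltimes\r=\g$, and as a manifold $G\cong S\times R$ is simply connected because both factors are.

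The main technical point to verify carefully is that $\operatorname{Aut}(R)$ is a Lie group with Lie algebra $\operatorname{Der}(\r)$, so that the second fundamental theorem is applicable. Since $R$ is simply connected, restriction to Lie algebras identifies $\operatorname{Aut}(R)$ with $\operatorname{Aut}(\r)$ (as abstract groups) via the second fundamental theorem; and $\operatorname{Aut}(\r)$ is cut out inside $GL(\r)$ by the closed condition of preserving the Lie bracket, so it is a closed Lie subgroup of $GL(\r)$ with Lie algebra $\operatorname{Der}(\r)$. This is the place where one must be slightly careful in the real case that everything remains regular rather than merely continuous, but closedness of $\operatorname{Aut}(\r)\subset GL(\r)$ makes this routine. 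Once this identification is in hand, the semidirect product construction proceeds formally and produces the desired simply connected Lie group, completing the proof of the third fundamental theorem and, a fortiori, the corollary as stated.
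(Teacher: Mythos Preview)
Your proof is correct and follows essentially the same route as the paper: construct the simply connected semisimple group via the adjoint representation and its universal cover (the paper relies implicitly on Corollary \ref{auto} for this), construct the simply connected solvable group via Theorem \ref{solvthir}, and glue them together as a semidirect product using the Levi decomposition and the second fundamental theorem. You have in fact been more careful than the paper in spelling out why $\operatorname{Aut}(R)\cong\operatorname{Aut}(\r)$ is a Lie group with Lie algebra $\operatorname{Der}(\r)$, which the paper leaves tacit.
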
 

\begin{proof} By Theorem \ref{solvthir}, we have such a group $A$ for $\mathfrak{a}={\rm rad}(\g)$. Moreover, by the Levi decomposition theorem, the simply connected semisimple group 
$G_{ss}$ corresponding to $\g_{ss}$ acts on ${\rm rad}(\g)$. Hence by 
the second fundamental theorem of Lie theory, $G_{ss}$ acts on $A$, and the simply connected Lie group $G_{ss}\ltimes A$ has the Lie algebra $\g_{ss}\ltimes {\rm rad}(\g)=\g$. 
\end{proof} 

\begin{corollary}\label{homtype} A simply connected complex Lie group $G$ is of the form $G_{ss}\ltimes A$, where $A$ is solvable simply connected, hence 
diffeomorphic to $\Bbb C^n$, and $G_{ss}$ is a simply connected semisimple 
complex Lie group. Thus $G$ has the homotopy type of 
$G_{ss}^c$. 
\end{corollary}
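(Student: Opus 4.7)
The plan is to combine the Levi decomposition of Lie algebras with the machinery just assembled (the third fundamental theorem for solvable groups, plus the polar decomposition from the earlier section on maximal tori). The argument will split into three pieces, mirroring the three assertions of the statement: lifting the Levi decomposition of $\g$ to a semidirect product decomposition of $G$; identifying the solvable factor with $\CC^n$; and extracting the homotopy equivalence with $G_{\rm ss}^c$.

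First, I would apply the Levi decomposition (Theorem \ref{levi1}) to write $\g=\g_{\rm ss}\ltimes \mathrm{rad}(\g)$. Let $G_{\rm ss}$ be the simply connected complex Lie group with Lie algebra $\g_{\rm ss}$ (which exists by the semisimple case of the third fundamental theorem, proved via Serre's presentation), and let $A$ be the simply connected complex Lie group with Lie algebra $\mathrm{rad}(\g)$, which exists by Theorem \ref{solvthir}. The action of $\g_{\rm ss}$ on $\mathrm{rad}(\g)$ by derivations gives a Lie algebra map $\g_{\rm ss}\to\mathrm{Der}(\mathrm{rad}(\g))=\mathrm{Lie}\,\mathrm{Aut}(A)$, which by the second fundamental theorem (using that $G_{\rm ss}$ is simply connected) lifts uniquely to a holomorphic action $G_{\rm ss}\to\mathrm{Aut}(A)$. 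Forming $G_{\rm ss}\ltimes A$ yields a simply connected complex Lie group with Lie algebra $\g$, so by uniqueness of the simply connected group with a given Lie algebra it is isomorphic to $G$.

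Next, Theorem \ref{solvthir} applied to the solvable Lie algebra $\mathrm{rad}(\g)$ over $\CC$ gives that $A$ is diffeomorphic to $\CC^n$, where $n=\dim_{\CC}\mathrm{rad}(\g)$. As a smooth manifold, the underlying space of the semidirect product is just the Cartesian product, so $G\cong G_{\rm ss}\times A\cong G_{\rm ss}\times \CC^n$ as manifolds. Since $\CC^n$ is contractible, this projection gives a homotopy equivalence $G\simeq G_{\rm ss}$.

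Finally, I would invoke the polar decomposition for connected complex semisimple groups established after Theorem \ref{cardec}: the multiplication map $G_{\rm ss}^c\times \bold P\to G_{\rm ss}$ is a diffeomorphism, where $\bold P$ is the set of elements of $G_{\rm ss}$ acting on $\g_{\rm ss}$ by positive Hermitian operators, and $\bold P$ is diffeomorphic to a Euclidean space via the exponential map. Combining with the previous step, $G$ is diffeomorphic to $G_{\rm ss}^c\times\bold P\times\CC^n$, and contracting the two Euclidean factors yields the desired homotopy equivalence $G\simeq G_{\rm ss}^c$. The only step that requires genuine care is the first one, namely verifying that the group semidirect product $G_{\rm ss}\ltimes A$ really recovers $G$; this hinges on the simple connectedness of $G_{\rm ss}$ to lift the derivation action, and on the uniqueness statement in the third fundamental theorem to conclude the isomorphism. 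Once this is in hand, parts two and three are essentially direct appeals to results already proved.
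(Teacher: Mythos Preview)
Your proof is correct and follows essentially the same approach as the paper: the corollary is an immediate consequence of the construction in the proof of the third fundamental theorem (which builds $G$ explicitly as $G_{\rm ss}\ltimes A$ with $A$ simply connected solvable), combined with Theorem \ref{solvthir} to identify $A$ with $\Bbb C^n$ and the polar decomposition to reduce $G_{\rm ss}$ to $G_{\rm ss}^c$. You have simply made explicit the steps that the paper leaves implicit.
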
 

\subsection{Formal groups} 

The third fundamental theorem of Lie theory assigns a simply connected Lie group $G$ to any finite dimensional Lie algebra $\g$ over $\Bbb R$ or $\Bbb C$, such that ${\rm Lie}G=\g$. But what about infinite dimensional Lie algebras? There are some examples when this is possible, for instance for $\g={\rm Vect}(M)$, the Lie algebra of vector fields for a smooth manifold $M$, we can take $G$ to be the universal cover of ${\rm Diff}_0(M)$, the group of diffeomorphisms of $M$ homotopic to the identity, 
and for $\g=C^\infty(S^1,\mathfrak{k})$ for a finite dimensional Lie algebra $\mathfrak{k}$ we can take $G=C^\infty(S^1,K)$, where $K$ is the simply connected 
Lie group corresponding to $\mathfrak{k}$ (although we would need to explain in what sense $G$ is a Lie group and ${\rm Lie}G=\g$). However, for a general infinite dimensional $\g$, such an assignment is typically impossible and a suitable group $G$ does not exist. 

However, this assignment becomes possible (and in fact not just over $\Bbb R$ and $\Bbb C$ but over any field of characteristic zero) if we replace the notion of a Lie group with a purely algebraic notion of a {\bf formal group}. Roughly speaking, the notion of a formal group is the analog of the notion of a real or complex analytic Lie group where analytic functions are replaced by formal power series, and we don't worry about their convergence. This allows us to work with infinite dimensional Lie algebras and over arbitrary fields of characteristic zero. 
 
Let us give a precise definition. Given a vector space $V$ 
over a field ${\bf k}$ of characteristic zero, define the algebra   
${\bf k}[[V]]$ of {\bf formal regular functions} on $V$ to be $(SV)^*$, the dual of the symmetric algebra of $V$. Since $SV$ has a bialgebra structure $\Delta_0: SV\to SV\otimes SV$ defined by $\Delta_0(v)=v\otimes 1+1\otimes v$ for $v\in V$, 
the dual map $\Delta_0^*$ gives a commutative associative product 
on ${\bf k}[[V]]$, which is continuous in the weak topology of the dual space.\footnote{Recall that if $E$ is a vector space then the dual space $E^*$ carries the {\it weak topology} whose basis of neighborhoods of zero is given by orthogonal complements of 
finite dimensional subspaces of $E$.}
If $x_i,i\in I$ is a linear coordinate system on $V$ corresponding to a basis $v_i,i\in I$, then we have a natural identification ${\bf k}[[V]]\cong {\bf k}[[x_i,i\in I]]$ of ${\bf k}[[V]]$ with the algebra of formal power series in $x_i$. Note that here $I$ can be a set of any cardinality, not necessarily finite or countable. Moreover, if $\dim V<\infty$ then ${\bf k}[[V]]=\prod_{n\ge 0}S^nV^*$. 

Finally, note that we have the augmentation homomorphism (counit) $\varepsilon: {\bf k}[[V]]\to {\bf k}$ given by $\varepsilon(f)=f(0)$, i.e., obtained by taking the quotient by the maximal ideal $\mathfrak{m}\subset {\bf k}[[V]]$. 

\begin{definition} A {\bf formal group structure} on $V$ is a (topological) coproduct 
$\Delta: {\bf k}[[V]]\to {\bf k}[[V\oplus V]]$, i.e., a continuous\footnote{Note that if $\dim V<\infty$, any such homomorphism is automatically continuous.} homomorphism which is coassociative and compatible with the counit: 
$$
(\Delta\otimes {\rm Id})\circ \Delta(f)=({\rm Id}\otimes \Delta)\circ \Delta(f),\ (\varepsilon\otimes {\rm Id})\circ \Delta(f)=({\rm Id}\otimes\varepsilon)\circ \Delta(f)=f.
$$
 A {\bf formal group} over ${\bf k}$ is a pair $G=(V,\Delta)$. 
We will denote ${\bf k}[[V]]$ by $\O(G)$ and call it the {\bf algebra of regular functions} on $G$. We define the {\bf dimension} of $G$ by $\dim G:=\dim V$.  
\end{definition} 

A {\bf (homo)morphism of formal groups} $\phi: G_1\to G_2$ 
is a (continuous)  algebra homomorphism $\O(G_2)\to \O(G_1)$ preserving the coproduct.\footnote{Thus we forget the linear structure on $V$ (it does not have to be preserved by homomorphisms). In other words, to specify a formal group, we don't need to specify a vector space $V$ but only need to specify a (topological) ring isomorphic to ${\bf k}[[V]]$ for some $V$ and equipped with a coproduct.}  

For example, a 1-dimensional formal group is defined by a power series 
$F(x,y)\in {\bf k}[[x,y]]$, $F(x,y)=x+y+...$, where ... denotes quadratic and higher terms, which is associative:
$$
F(F(x,y),z)=F(x,F(y,z)).
$$
Such a series $F$ is called a {\bf formal group law}. Namely, the map 
$\Delta: {\bf k}[[x]]\to {\bf k}[[x_1,x_2]]$ 
is defined by the formula 
$$
\Delta(f)(x_1,x_2)=f(F(x_1,x_2)).
$$  
Higher-dimensional formal groups $G$ can also be presented in this way, with $F,x,y$ being vectors with $\dim G$ entries rather than scalars.

\begin{example} 1. The {\bf additive formal group}: $\Delta(f)=f\otimes 1+1\otimes f$, $f\in V^*$ (extended by multiplicativity and continuity to $\k[[V]]$). In other words, $F(x,y)=x+y$ and $\Delta(f)(x,y)=f(x+y)$. 

2. Let $G$ be a real or complex Lie group. Then the multiplication map 
$G\times G\to G$ is smooth. So we can take its Taylor expansion 
at the unit element, which defines a formal group $G_{\rm formal}$ 
called the {\bf formal completion of $G$ at the identity.} Its coproduct is defined by the formula $\Delta(f)(x,y)=f(x\circ y)$ where $(x,y)\mapsto x\circ y$ denotes the group law of $G$. The same construction is valid for an algebraic group over any field. 
\end{example} 

So what does it have to do with groups? In fact, a lot: if $G$ is a formal group 
then it defines a functor from the category of local commutative finite dimensional ${\bf k}$-algebras to the category of groups, 
$$
R\mapsto G(R)=\Hom_{\rm continuous}(\O(G),R) 
$$
(where the topology on $R$ is discrete).\footnote{Again, continuity is automatic if $\dim G<\infty$.}  Namely, the group law on such homomorphisms is defined by 
$$
(a\circ b)(f)=(a\otimes b)(\Delta(f)). 
$$ 
This makes sense even though $\Delta(f)$ does not belong 
to ${\bf k}[[V]]\otimes {\bf k}[[V]]$ but only to its completion 
${\bf k}[[V\oplus V]]$ since $R$ is finite dimensional. 

\begin{exercise} Show that $(G(R),\circ)$ is a group. 
\end{exercise} 

Moreover, any (homo)morphism of formal groups 
$G_1\to G_2$ defines a morphism 
of functors $G_1(?)\to G_2(?)$, and this assignment is compatible with 
composition. Furthermore, it is not hard to show that this assignment 
can be inverted, which allows us to define formal groups as representable 
functors from local finite dimensional commutative algebras to groups. 

Any formal group $G$ defines a Lie algebra ${\rm Lie}G$, 
which as a vector space is the continuous dual $\g:=({\mathfrak{m}}/{\mathfrak{m}^2})^*$. In other words, it is the underlying vector space $V$ of $G=(V,\Delta)$. Note that by compatibility of $\Delta$ with $\varepsilon$, for $f\in \mathfrak m$ the element $\Delta(f)-f\otimes 1-1\otimes f$ belongs to the completed tensor product $\mathfrak{m}\widehat \otimes \mathfrak{m}$, thus projects to a well defined element of 
$(\g\otimes \g)^*$. Thus the same is true for the element 
$\Delta(f)-\Delta^{\rm op}(f)$ (where $\Delta^{\rm op}$ is obtained from $\Delta$ by swapping components); in fact, it defines an element 
of $(\wedge^2\g)^*$. Moreover,  this element only depends 
on the residue $\overline f$ of $f$ in $\g^*= {\mathfrak{m}}/{\mathfrak{m}^2}$ (check it!). Denote the projection of $\Delta(f)-\Delta^{\rm op}(f)$ to $(\wedge^2\g)^*$
by $\delta(\overline f)$. Then $\delta: \g^*\to (\wedge^2\g)^*$ 
is continuous, so it is dual to the map $[,]=\delta^*: \wedge^2\g\to \g$, and it is easy to show that $[,]$ is a Lie bracket on $\g$; namely, the Jacobi identity follows from the coassociativity of $\Delta$ (check it!). 

Conversely, given a Lie algebra $\g$ over ${\bf k}$ (not necessarily finite dimensional), we can use the Baker-Campbell-Hausdorff formula (Subsection \ref{bchf}) to assign a formal group to $\g$. Namely, 
take $V=\g$ and define $\Delta: {\bf k}[[\g]]\to {\bf k}[[\g\oplus \g]]$
by 
$$
\Delta(f)(x,y)=f(\mu(x,y)),
$$
where $\mu(x,y)=x+y+\frac{1}{2}[x,y]+...$ is the Baker-Campbell-Hausdorff series. 
Then the coassociativity of $\Delta$ follows from the associativity of $\mu$. 
In other words, we define $G$ by setting its formal group law $F$ to be equal to $\mu$. 

\begin{example} Let $\g$ be a Lie algebra and $G$ be the corresponding formal group. Let $R$ be a finite dimensional local commutative algebra with maximal ideal $\mathfrak{m}_R$. Then $G(R)=\mathfrak{m}_R\otimes \g$ with group law 
$$
(x,y)\mapsto \mu(x,y)
$$
(which makes sense since the series terminates). 
\end{example} 

\begin{theorem} (The fundamental theorems of Lie theory for formal groups) These assignments are mutually inverse equivalences between the category 
of formal groups over ${\bf k}$ and the category of Lie algebras over ${\bf k}$.  
\end{theorem}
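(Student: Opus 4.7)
The plan is to realize both categories inside an auxiliary category of ``symmetric-type'' cocommutative Hopf algebras over $\k$, and check that each of the two functors in the theorem corresponds to a fully faithful embedding with the same essential image. First I would introduce, for any formal group $G=(V,\Delta)$, the distribution algebra $U(G):=\bigoplus_{n\ge 0} S^nV$, i.e.\ the graded dual of $\O(G)=(SV)^*$. Dualizing $\Delta$ gives $U(G)$ an associative unital product, and dualizing the commutative product on $\O(G)$ gives a cocommutative coproduct under which $U(G)$ is canonically identified with the symmetric coalgebra on $V$. By Lemma \ref{primel} (which only needs $\mathrm{char}\,\k=0$), the subspace of primitives of this coalgebra is exactly $V$, and a direct order-two computation matches the commutator bracket in $U(G)$ restricted to $V$ with the bracket defined via $\delta=\Delta-\Delta^{\mathrm{op}}$ in the excerpt. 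Since $\O(G)$ and $\Delta$ are recovered from $U(G)$ by graded dualization, the functor $G\mapsto U(G)$ is fully faithful.

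Next I would characterize its essential image as the class of cocommutative Hopf algebras $H$ whose coproduct identifies $H$ with the symmetric coalgebra on its space of primitives; call these \emph{symmetric-type}. By the Poincar\'e--Birkhoff--Witt theorem (Theorem \ref{PBWth}) together with Lemma \ref{primel}, the ordinary enveloping algebra $U(\g)$ with its standard coproduct $\Delta(x)=x\otimes 1+1\otimes x$ is symmetric-type, with primitives $\g$ and commutator bracket equal to the original Lie bracket. The universal property of $U(\g)$ makes $\g\mapsto U(\g)$ fully faithful from Lie algebras to symmetric-type Hopf algebras, and conversely any symmetric-type $H$ is canonically isomorphic to $U(\mathrm{Prim}\,H)$ by matching universal properties. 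Composing, I obtain mutually inverse equivalences between Lie algebras, symmetric-type Hopf algebras, and formal groups.

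To confirm that the two resulting compositions match the concrete ones in the theorem, I would check the round-trips. The composition $\g\mapsto G_\g\mapsto \mathrm{Lie}(G_\g)$ is the identity by the explicit expansion $\mu(x,y)=x+y+\tfrac12[x,y]+\cdots$, which makes $\delta$ recover the bracket. For the other direction, the Hopf-algebra equivalence gives an isomorphism $U(\mathrm{Lie}(G))\cong U(G)$ of symmetric-type Hopf algebras; dualizing produces an isomorphism of formal groups $G_{\mathrm{Lie}(G)}\cong G$ which, unwound, is precisely the claimed natural isomorphism. The morphism half of the equivalence is automatic once both arrows are fully faithful onto the same subcategory.

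The hardest technical step will be verifying that $U(G_\g)$ — the distribution algebra of the formal group built from the BCH law on $\g$ — really agrees with $U(\g)$ as Hopf algebras. Order by order in the filtration, the symmetrization map $\sigma: S\g\to U(\g)$ from the corollary to the PBW theorem is the coalgebra isomorphism identifying the two sides, and by construction of BCH the product dual to $f\mapsto f\circ\mu$ matches associative multiplication in $U(\g)$ after transport along $\sigma$; this essentially reformulates the computation carried out in Subsection \ref{bchf} showing $\log(\exp x\exp y)=\mu(x,y)$ inside the completed free associative algebra. A secondary subtlety, worth flagging but not deep, is that $V$ may be infinite-dimensional, so throughout one must use completed tensor products and the graded (rather than topological) dual pairing between $\O(G)=\prod_n (S^nV)^*$ and $U(G)=\bigoplus_n S^nV$.
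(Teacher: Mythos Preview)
Your approach is correct and takes a genuinely different route from the paper's. The paper's ``proof'' simply says to mimic the arguments for the first two fundamental theorems of Lie theory for ordinary Lie groups, stripping out the analysis; in other words, one would prove full faithfulness by a formal-power-series analogue of the graph construction (Section on the second fundamental theorem) and essential surjectivity via the BCH series. You instead pass through an auxiliary category of cocommutative Hopf algebras of ``symmetric type'' and invoke PBW and Lemma~\ref{primel} to identify both images there---this is the Cartier (or Milnor--Moore) philosophy, and it is arguably the more natural argument in the purely algebraic setting.

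What each approach buys: the paper's suggested route keeps the parallel with the Lie-group story developed earlier in the notes, so a reader who has internalized the Frobenius-theorem and graph-subgroup arguments can simply transcribe them. Your route is more intrinsic and makes the role of characteristic zero completely transparent (it enters only through PBW and the primitivity lemma), and it extends without change to the infinite-dimensional case, which the Lie-group-style argument would have to handle with some care. One small caveat: the step you flag as hardest---identifying $U(G_\g)$ with $U(\g)$ as Hopf algebras via the symmetrization map---is indeed where the work lies, and Subsection~\ref{bchf} only gives you that $\mu$ is a Lie series, not quite the full statement that convolution of distributions under the BCH law transports to the associative product in $U(\g)$. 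This is standard (it amounts to the fact that left-invariant differential operators on a formal group form its distribution algebra), but you should expect to write out a short explicit verification rather than just cite that subsection.
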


\begin{proof} The proof is analogous to the proof of the first two fundamental theorems for usual Lie groups (but without the analytic details), and we leave it as an exercise. Note that the third theorem, which was the hardest for usual Lie groups, assigning a group to a Lie algebra, has already been proved above by using the series $\mu(x,y)$.   
\end{proof} 

\begin{corollary}\label{alliso} Every $1$-dimensional formal group $G$ over a field of characteristic zero is isomorphic 
to the additive formal group, with $\Delta(f)(x,y)=f(x+y)$. 
\end{corollary}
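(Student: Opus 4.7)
The plan is to deduce this as an immediate consequence of the equivalence of categories just proved, between formal groups over $\mathbf{k}$ and Lie algebras over $\mathbf{k}$.

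First I would observe that a $1$-dimensional formal group $G$ corresponds under this equivalence to a $1$-dimensional Lie algebra $\mathfrak{g} = \mathrm{Lie}\, G$. Since $[x,x] = 0$ and the bracket is bilinear, every $1$-dimensional Lie algebra over $\mathbf{k}$ is abelian: for any basis vector $e$, the bracket $[e,e] = 0$, so $[ae, be] = ab[e,e] = 0$. Next I would note that the additive formal group $G_a$ on a $1$-dimensional space also has $\mathrm{Lie}\, G_a$ equal to the abelian $1$-dimensional Lie algebra (this is clear from the definition $\Delta(f) = f \otimes 1 + 1 \otimes f$, which gives $\Delta(f) - \Delta^{\mathrm{op}}(f) = 0$, so the bracket vanishes).

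Therefore $\mathrm{Lie}\, G \cong \mathrm{Lie}\, G_a$ as Lie algebras. By the fundamental theorems of Lie theory for formal groups (the equivalence of categories in the theorem immediately preceding the corollary), an isomorphism of Lie algebras lifts to an isomorphism $G \cong G_a$ of formal groups. This gives the result.

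I do not expect any real obstacle, since all the work has already been done in proving the equivalence of categories; the only substantive input is the trivial classification of $1$-dimensional Lie algebras. If one wanted an explicit formula for the isomorphism $\phi \colon G \to G_a$ (i.e., a series $\phi(x) = x + \cdots$ with $\phi(F(x,y)) = \phi(x) + \phi(y)$), it can be written down via the Baker--Campbell--Hausdorff construction used to prove the third fundamental theorem: since $\mathrm{Lie}\, G$ is abelian, the BCH series for $\mathrm{Lie}\, G$ is simply $\mu(x,y) = x+y$, and the isomorphism $\phi$ is the change of coordinates identifying the given $F$ with this BCH normal form. Equivalently, $\phi$ is the formal integral $\phi(x) = \int_0^x \frac{dt}{F_2(t,0)}$ where $F_2 = \partial_y F$, but deriving this explicit form is unnecessary given the categorical argument above.
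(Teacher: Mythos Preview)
Your argument is correct and is exactly the intended one: the corollary is stated immediately after the equivalence-of-categories theorem precisely because it follows at once from the trivial classification of $1$-dimensional Lie algebras. The additional remarks about an explicit logarithm series are true but, as you note, unnecessary.
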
 

Over a field of positive characteristic (or over a commutative ring, such as $\Bbb Z$), much, but not all, of this story extends; let us for simplicity consider the finite dimensional case over a field. Namely, the definition of a formal group structure (say, on a finite dimensional space) is the same: it's a coproduct on ${\bf k}[[x_1,...,x_n]]$ with the same properties as above.\footnote{More precisely, instead of $SV$ we should take the {\bf symmetric algebra with divided powers} $\Gamma V$, defined by $\Gamma^mV:=(S^mV^*)^*$. Note that in characteristic $p$, 
$\Gamma^mV$ is not naturally isomorphic to $S^mV$ for $m\ge p$.} 
 The definition of the Lie algebra of a formal group also goes along for the ride. However, the reverse assignment fails, since the series $\mu(x,y)$ is only defined over $\Bbb Q$ and has all primes occurring in denominators of its coefficients. As a result, not any Lie algebra gives rise to a formal group, and the fundamental theorems of Lie theory for formal groups don't hold. 

In particular, there are many non-isomorphic 1-dimensional formal groups.  For example, we have the 
additive group law $F(x,y)=x+y$ as above, but also 
the {\bf multiplicative group law} $F(x,y)=x+y+xy$, 
which is called so because this means that 
$1+F(x,y)=(1+x)(1+y)$. In characteristic zero these are isomorphic 
by the map 
$$
x\mapsto e^x-1=\sum_{n\ge 1}\frac{x^n}{n!},
$$
(not surprisingly in view of Corollary \ref{alliso}), but in positive characteristic this series does not make sense and in fact the additive and multiplicative formal groups are not isomorphic (check it!). There are also many other 1-dimensional formal group laws, commutative and not. Such (commutative) formal group laws are very important in algebraic topology, since they parametrize (complex-oriented) cohomology theories. For example, the additive group law corresponds to ordinary cohomology and the multiplicative one to $K$-theory. In characteristic zero the isomorphism between the additive and multiplicative formal groups leads to the {\bf Chern character map} which identifies cohomology and $K$-theory of a topological space with $\Bbb Q$-coefficients.

\section{\bf Ado's theorem} \label{adothm} 

\subsection{The nilradical} 

Consider now a solvable Lie algebra $\a$ over $\Bbb C$ and its adjoint representation. By Lie's theorem, in some basis $\a$ acts in this representation by upper triangular matrices. Let $\n\subset \a$ be the subset of nilpotent elements 
(the {\bf nilradical} of $\a$). Thus $\n$ is the set of $x\in \a$ that act in this basis by strictly upper triangular matrices. In particular, $\n\supset [\a,\a]$, so $\a/\n$ is abelian. 

\begin{proposition}\label{trivact} If $d: \a\to \a$ is a derivation then $d(\a)\subset \n$. Thus if $\a={\rm rad}(\g)$ is the radical of $\g$ then $\g$ acts trivially on $\a/\n$.
\end{proposition} 
 
\begin{proof}  The derivation $d$ defines a solvable Lie algebra $\widetilde \a:=\Bbb Cd\ltimes \a$, so $[\widetilde \a,\widetilde \a]\subset \a$ consists of nilpotent elements. In particular it lies in $\n$.\footnote{Here is another proof of this proposition.  
The one-parameter group $e^{td}$ of automorphisms of $\a$ preserves the set of characters of $\a$ occurring in its adjoint representation. Hence it must preserve each of them individually, as there are finitely many and this group is connected. But by definition of $\n$ these characters span $(\a/\n)^*$. Thus $d$ acts trivially on $\a/\n$.}
\end{proof}

\subsection{Algebraic Lie algebras} 

Let us say that a finite dimensional complex Lie algebra $\g$ is {\bf algebraic} if 
$\g$ is the Lie algebra of a group $G=K\ltimes N$, where $K$ is a reductive group and $N$ a unipotent group. It turns out that this is equivalent to being the Lie algebra of an {\bf affine algebraic group} over $\Bbb C$ (i.e., a closed subgroup in $GL_n(\Bbb C)$ defined by polynomial equations), which motivates the terminology.  

A finite dimensional complex Lie algebra need not be algebraic: 

\begin{example}\label{nonal} Let $\g_1$ be a 3-dimensional Lie algebra 
with basis $d,x,y$ and $[x,y]=0$, $[d,x]=x$, $[d,y]=\sqrt{2}y$. 
Similarly, let $\g_2$ have basis $d,x,y$ with $[x,y]=0$, $[d,x]=x$, $[d,y]=y+x$. 
Then $\g_1,\g_2$ are not algebraic (check it!). 
\end{example} 

Nevertheless, we have the following proposition. 

\begin{proposition}\label{embe} Any finite dimensional complex Lie algebra 
is a Lie subalgebra of an algebraic one. 
\end{proposition}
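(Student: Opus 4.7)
The plan is to embed $\g$ into a Lie algebra of the form $\mathfrak{l}\ltimes \tilde{\n}$ with $\mathfrak{l}$ reductive and $\tilde{\n}$ nilpotent, by adjoining to $\g$ suitable outer semisimple derivations. Example \ref{nonal} illustrates the obstruction: in $\g_1$ the eigenvalues $1$ and $\sqrt{2}$ of $\ad(d)$ are $\Bbb C$-proportional but not $\Bbb Q$-proportional, so the one-parameter subgroup they generate on the nilradical is not contained in any algebraic torus of dimension one. The remedy is to split the action of $d$ into two independent semisimple derivations, so that after enlargement the relevant torus has an integral character lattice.

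First I would apply Levi decomposition (Theorem \ref{levi1}) to write $\g=\g_{\rm ss}\ltimes \r$ with $\r={\rm rad}(\g)$, and recall that $[\g,\r]\subseteq \n$ where $\n$ is the nilradical. Applying Lie's theorem (Corollary \ref{Liethm2}) to the adjoint action of $\r$ on $\g$, I decompose $\g=\bigoplus_{\lambda\in\Sigma}\g_\lambda$ into generalized weight spaces for $\r$, indexed by a finite set $\Sigma\subset \r^*$ of characters. By Lemma \ref{gra} this decomposition satisfies $[\g_\lambda,\g_\mu]\subseteq \g_{\lambda+\mu}$, and $\g_\lambda\subseteq \n$ for every $\lambda\ne 0$ because $\r$ acts trivially on $\g/\n$. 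Let $\Lambda\subseteq \r^*$ be the finitely generated abelian subgroup generated by $\Sigma$. I would then define the abelian Lie algebra $\mathfrak{t}:=\Hom_{\Bbb Z}(\Lambda,\Bbb C)$, embedded in $\Der(\g)$ by letting $\tau\in\mathfrak{t}$ act on $\g_\lambda$ as the scalar $\tau(\lambda)$; the Leibniz rule for such $\tau$ reduces to the additivity $\tau(\lambda)+\tau(\mu)=\tau(\lambda+\mu)$, which is automatic. By construction, for each $x\in\r$ the semisimple part $(\ad(x))_s$---which is a derivation of $\g$ by the argument underlying Proposition \ref{jordecLie}---coincides with the element of $\mathfrak{t}$ given by evaluation $\lambda\mapsto \lambda(x)$.

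Next I would form $\tilde\g:=\g\oplus\mathfrak{t}$ as a Lie subalgebra of $\g\rtimes\Der(\g)$ and verify that it is algebraic. For each $x\in\r$ the element $\tilde{x}:=x-(\ad(x))_s\in\tilde\g$ acts on $\g$ by the nilpotent derivation $(\ad(x))_n$ and on $\mathfrak{t}$ trivially, so it is ad-nilpotent in $\tilde\g$. Setting $\tilde\n$ to be the span of $\n$ together with $\{\tilde{x}:x\in\r\}$, one verifies that $\tilde\n$ is a nilpotent ideal of $\tilde\g$ and that $\tilde\g=(\g_{\rm ss}\oplus\mathfrak{t})\ltimes \tilde\n$. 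The reductive factor $\g_{\rm ss}\oplus\mathfrak{t}$ is the Lie algebra of $G_{\rm ss}\times T$ with $T=\Hom_{\Bbb Z}(\Lambda,\Bbb C^\times)$ an algebraic torus whose character lattice is precisely $\Lambda$, while $\tilde\n$ integrates to a unipotent group $N$ by Theorem \ref{solvthir}. This exhibits $\tilde\g$ as ${\rm Lie}((G_{\rm ss}\times T)\ltimes N)$, as required.

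The main obstacle will be verifying that $\tilde\g$ genuinely has the claimed reductive--unipotent structure, where several compatibilities must be checked simultaneously. Three points are the most delicate: first, confirming that $\mathfrak{t}$ commutes with $\g_{\rm ss}$, which should follow from $[\g_{\rm ss},\r]\subseteq \n$ combined with complete reducibility of $\g_{\rm ss}$-modules but requires care in handling generalized rather than ordinary weight spaces; second, verifying that using the $\Bbb Z$-span $\Lambda$ of $\Sigma$ rather than its $\Bbb C$-span produces an algebraic torus even when the characters are $\Bbb C$-linearly dependent but $\Bbb Q$-linearly independent---this rationality point is exactly what forces the enlargement in Example \ref{nonal} and is the heart of the argument; and third, confirming $[\g_{\rm ss}\oplus\mathfrak{t},\tilde\n]\subseteq \tilde\n$, using the defining property of $\tilde{x}$ together with $[\g,\r]\subseteq \n$.
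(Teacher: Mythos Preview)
Your core idea---adjoin a torus of semisimple derivations so that each $x\in\r$ becomes ad-nilpotent after subtracting its image in $\mathfrak{t}$---is exactly what the paper does. The difference is that the paper proceeds \emph{inductively}: it introduces the notion of ``$n$-algebraic'' (meaning $\dim(\a/\n)\le n$), picks a single $K$-invariant element $d\in\a\setminus\n$, adjoins a torus $T$ built from the eigenvalues of $\ad d$ on $\a$ alone, and shows this reduces $n$ by one. You attempt the whole construction in one global step, and this is where a genuine gap appears.

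The decomposition $\g=\bigoplus_\lambda\g_\lambda$ into simultaneous generalized weight spaces is guaranteed only when the acting algebra is \emph{nilpotent}; for merely solvable $\r$ the operators $\ad x$ do not commute, nor do their semisimple parts, so no common primary decomposition need exist. Already for $\r=\Bbb Cd\oplus\Bbb Ce$ with $[d,e]=e$ acting on itself, the element $d$ lies in no $\g_\lambda$: from $(\ad d)d=0$ one is forced to $\lambda(d)=0$, yet $(\ad(d+e))^Nd=-e\ne0$ for every $N\ge1$. Thus your $\mathfrak{t}$-action on $\g$ (``$\tau$ acts by $\tau(\lambda)$ on $\g_\lambda$'') is ill-defined, and the claimed coincidence $(\ad x)_s=\tau_x$ fails: if $x'-x\in\n$ then $\tau_{x'}=\tau_x$, but $(\ad x)_s$ and $(\ad x')_s$ can have different eigenspace decompositions. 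The paper avoids all of this by decomposing only under a \emph{single} operator $\ad d$ at each stage. Your route can be salvaged by decomposing instead under an abelian subalgebra $\mathfrak c\subset\r$ complementary to $\n$ and centralizing $\g_{\rm ss}$, but producing such a $\mathfrak c$ is precisely the missing step, and the paper's induction is one clean way to supply it.
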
 

\begin{proof} Let us say that $\g$ is {\bf $n$-algebraic} if it is the Lie algebra 
of a group $G:=K\ltimes A$, where $K$ is reductive and $\a={\rm Lie}(A)$ is solvable with $\dim(\a/\n)\le n$, where $\n$ is the nilradical of $\a$. Thus $0$-algebraic is the same as algebraic. Note that for any $\g$ we have the Levi decomposition $\g=\g_{ss}\ltimes \a$, where 
$\a={\rm rad}(\g)$, which shows that any $\g$ is $n$-algebraic for some $n$. 
So it suffices to show that any $n$-algebraic Lie algebra for $n>0$ 
embeds into an $n-1$-algebraic one. 

To this end, let $\g={\rm Lie}(G)$ be $n$-algebraic, with $G=
K\ltimes A$ and $A$ simply connected. Let $\a={\rm Lie}(A)$, so $\dim(\a/\n)=n$. Pick $d\in \a$, $d\notin \n$ such that $d$ is $K$-invariant. 
This can be done since by Proposition \ref{trivact} $K$ acts trivially on $\a/\n$ and its representations are completely reducible. We have a decomposition $\a={\oplus_{i=1}^r} \a[\beta_i]$ of $\a$ into generalized eigenspaces of $d$. It is clear that $K$ preserves each $\a[\beta_i]$. Pick a character $\chi: \a\to \Bbb C$ such that $\chi(\mathfrak{n})=0$ and $\chi(d)=1$. 

Consider the subgroup $\Gamma$ of $\Bbb C$ generated by $\beta_i$ and let $\alpha_1,...,\alpha_m$ be a basis of $\Gamma$, so that $\beta_i=\sum_j b_{ij}\alpha_j$ for $b_{ij}\in \Bbb Z$.  
Let $T=(\Bbb C^\times)^m$ and make $T$ act on $G$ so that it commutes with $K$ and acts on $\a[\beta_i]$ by $(z_1,...,z_m)\mapsto \prod_j z_j^{b_{ij}}$. Now consider the group $\widetilde G:=(K\times T)\ltimes A$.
Let $\a'\subset {\rm Lie}(T)\ltimes \a\subset {\rm Lie}(\widetilde G)$ be spanned by ${\rm Ker}\chi$ and $d-\alpha$ where $\alpha=(\alpha_1,...,\alpha_m)\in {\rm Lie}(T)$. Then the nilradical 
$\n'$ of $\a'$ is spanned by $\n$ and $d-\alpha$ (as the latter is nilpotent). Moreover, if $A'$ is the simply connected group corresponding to $\a'$, then $(K\times T)\ltimes A\cong (K\times T)\ltimes A'$. Thus, the Lie algebra $\widetilde \g:={\rm Lie}(\widetilde G)$ is $n-1$-algebraic (as $\dim(\a'/\n')=n-1$), and it contains $\g$, as claimed. 
\end{proof} 
  
\begin{example} The Lie algebras $\g_1,\g_2$ in the Example \ref{nonal} are 
$1$-algebraic. 

To embed $\g_1$ into an algebraic Lie algebra, 
add element $\delta$ with $[\delta,x]=0$, $[\delta,y]=y$, 
$[\delta,d]=0$. Then the Lie algebra $\g_1'$ spanned by 
$\delta,d,x,y$ is $\b\oplus \b$, where $\b$ is the non-abelian 
2-dimensional Lie algebra (so it is algebraic). Namely, the first copy of $\b$ 
is spanned by $\delta,y$ and the second by $d-\sqrt{2}\delta,x$.  

To embed $\g_2$ into an algebraic Lie algebra, 
add element $\delta$ with $[\delta,x]=0$, $[\delta,y]=x$, 
$[\delta,d]=0$. Then the Lie algebra $\g_2'$ spanned by 
$\delta,d,x,y$ is $\Bbb C\ltimes \mathcal H$, where $\mathcal H$ is the 3-dimensional  Heisenberg Lie algebra with basis $\delta,x,y$, and $\Bbb C$ is spanned by $d-\delta$ (so it is algebraic, as $d-\delta$ acts diagonalizably with integer eigenvalues). 
\end{example} 

\subsection{Faithful representations of nilpotent Lie algebras} 

Let $\n$ be a finite dimensional nilpotent Lie algebra over $\Bbb C$. In this subsection we will show that $\n$ has a finite dimensional faithful representation. 

To this end, recall that by Theorem \ref{solvthir}, $\n={\rm Lie}(N)$ where $N$ is a simply connected Lie group, and the exponential map $\exp: \n\to N$ is bijective. Moreover, 
the multiplication law of $N$, when rewritten on $\n$ using the exponential map,  
is given by polynomials. 

\begin{proposition} Let $\O(N)$ be the space of polynomial functions on $N\cong \n$ (identified using the exponential map). Then $\O(N)$ is invariant 
under the action of $\n$ by left-invariant vector fields. Moreover, we have a canonical filtration $\O(N)=\cup_{n\ge 1}V_n$, where $V_n\subset \O(N)$ are finite dimensional subspaces such that $V_1\subset V_2\subset...$ and 
$\n V_n\subset V_{n-1}$.  
\end{proposition}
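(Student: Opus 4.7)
The plan is to exploit the polynomial structure of the group law on $N\cong\n$ established in Theorem~\ref{solvthir}. Since $\mu:\n\times\n\to\n$ is polynomial in exponential coordinates, for each $x\in\n$ the map $y\mapsto\mu(y,tx)$ is polynomial in $y$ and $t$, so the identity $(\bold L_x f)(\exp y)=\partial_t|_{t=0}f(\exp\mu(y,tx))$ shows that $\bold L_x$ acts as a polynomial differential operator in these coordinates and therefore preserves $\O(N)$. More explicitly, the Baker-Campbell-Hausdorff formula gives
$$
(\bold L_{y_i}f)(\exp y)=(df)_y\cdot P(y,y_i),\qquad P(y,y_i)=y_i+\tfrac12[y,y_i]+\tfrac1{12}[y,[y,y_i]]+\cdots,
$$
a finite sum by nilpotency of $\n$.

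For the filtration, I would grade by the lower central series of $\n$. Let $\n=D_1\supset D_2\supset\cdots\supset D_{M+1}=0$, choose a basis $y_1,\ldots,y_r$ of $\n$ adapted to this filtration, and let $w_i$ denote the largest integer with $y_i\in D_{w_i}$. Writing $c_1,\ldots,c_r$ for the dual linear coordinates on $\n$, set
$$
\deg^w(c_1^{a_1}\cdots c_r^{a_r}):=\sum_i a_i w_i,\qquad V_n:=\{f\in\O(N):\deg^w f\le n-1\}.
$$
Then each $V_n$ is finite dimensional, $V_1=\Bbb C\subset V_2\subset\cdots$, and $\bigcup_n V_n=\O(N)$ because every polynomial has finite weighted degree.

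The key point, and the principal obstacle, is verifying $\n V_n\subset V_{n-1}$. The naive filtration by ordinary polynomial degree fails because higher commutator terms in the BCH expansion can strictly increase ordinary degree, but the weighted degree is designed to compensate exactly. Writing $P(y,y_i)=\sum_j P_j(y)\,y_j$ in the adapted basis, the crucial observation is that every iterated bracket $[y_{k_1},[y_{k_2},\ldots,[y_{k_p},y_i]\ldots]]$ lies in $D_{w_{k_1}+\cdots+w_{k_p}+w_i}$, so its $y_j$-coefficient vanishes unless $w_j\ge w_{k_1}+\cdots+w_{k_p}+w_i$. Consequently $\deg^w P_j\le w_j-w_i$ (with $P_j=0$ when $w_j<w_i$). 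Combined with the elementary bound $\deg^w(\partial_{c_j}f)\le\deg^w f-w_j$, this gives
$$
\deg^w(\bold L_{y_i}f)=\deg^w\Big(\sum_j P_j\,\partial_{c_j}f\Big)\le\deg^w f-w_i\le\deg^w f-1,
$$
so $\bold L_{y_i}V_n\subset V_{n-1}$, and linearity in $x=\sum c_iy_i\in\n$ completes the proof.
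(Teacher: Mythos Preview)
Your proof is correct and follows the paper's strategy: both use the lower central series to weight the linear coordinates on $\n$ and show that left-invariant vector fields lower the resulting filtration degree on $S\n^*$. Your linear weighting $\deg^w c_i=w_i$ is in fact sharper than the paper's exponential weighting $d^{w_i}$ (for ``sufficiently large $d$''), since by directly invoking $[D_a,D_b]\subset D_{a+b}$ you obtain the precise bound that $\bold L_{y_i}$ lowers weighted degree by at least $w_i\ge 1$, whereas the paper only argues that the degree drops once $d$ is chosen large enough.
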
 

\begin{proof} Let $\mu: \n\times \n\to \n$ be the polynomial multiplication law. 
Let $x\in \n$ and $L_x$ be the corresponding left-invariant 
vector field. Let $f\in \O(N)=S\n^*$. Then for $y\in \n$ we have 
$$
(L_xf)(y)=\frac{d}{dt}|_{t=0}f(\mu(y,tx)).
$$ 
Since $f$ and $\mu$ are polynomials, this is clearly a polynomial in $y$. 
Thus $L_x: \O(N)\to \O(N)$. 

We have a lower central series filtration on $\n$: 
$$
\n=D_0(\n)\supset [\n,\n]=D_1(\n)\supset...\supset D_m(\n)=0.
$$
This gives an ascending filtration 
$$
0=D_0(\n)^\perp\subset....\subset D_m(\n)^\perp=\n^*.
$$ 
We assign to $D_j(\n)^\perp$ filtration degree $d^j$, where 
$d$ is a sufficiently large positive integer. This gives rise to an ascending filtration 
$F^\bullet$ on $S\n^*=\O(N)$. Note that 
$$
\mu(x,y)=x+y+\sum_{i\ge 1}Q_i(x,y), 
$$
where $Q_i:\n\times \n\to [\n,\n]$ has degree $i$ in $y$. 
Thus 
$$
(L_xf)(y)=(\partial_x f)(y)+(\partial_{Q_1(x,y)}f)(y).
$$
The first term clearly lowers the degree, and so does the second one if 
$d$ is large enough. So we may take $V_n=F_n(S\n^*)$ to be the space of polynomials of degree $\le n$, then $L_xV_n\subset V_{n-1}$, as claimed. 
\end{proof} 

\begin{example} We illustrate this proof on the example of the Heisenberg algebra $\mathcal H=\langle x,y,c\rangle$ with $[x,y]=c$ and $[x,c]=[y,c]=0$. 
In this case 
$$
e^{tx}e^{sy}=e^{tx+sy+\frac{1}{2}tsc}, 
$$
so writing $u=px+qy+rc\in \mathcal H$, we get 
$$
\mu((p_1,q_1,r_1),(p_2,q_2,r_2))=(p_1+p_2,q_1+q_2,r_1+r_2+\tfrac{1}{2}(p_1q_2-p_2q_1)).
$$
Thus 
$$
L_c=\partial_r,\ L_x=\partial_p-\tfrac{1}{2}q\partial_r,\ L_y=\partial_q+\tfrac{1}{2}p\partial_r.
$$
We have $D_1(\mathcal H)=\Bbb Cc$, so $D_1(\mathcal H)^\perp$ is spanned by $p,q$. 
Thus we have $\deg(p)=\deg(q)=d$, $\deg(r)=d^2$. So for any $d>1$, 
$L_c,L_x,L_y$ lower the degree. So setting $V_n=F_n(S\mathcal H^*)$ to be the (finite dimensional) space of polynomials of degree $\le n$, we see that $L_c,L_x,L_y$ map $V_n$ to $V_{n-1}$. 
\end{example} 

\begin{corollary}\label{fai} Every finite dimensional nilpotent Lie algebra $\n$ over $\Bbb C$ has a faithful finite dimensional representation where all its elements act by nilpotent operators. Thus $\n$ is isomorphic to a subalgebra of 
the Lie algebra of strictly upper triangular matrices of some size.   
\end{corollary}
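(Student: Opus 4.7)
The plan is to promote the representation of $\n$ on $\O(N)$ by left-invariant vector fields, constructed in the preceding proposition, to a faithful finite-dimensional representation by acting nilpotently, and then to invoke Engel's theorem. Let $N$ be the simply connected Lie group with Lie algebra $\n$ (which exists by Theorem \ref{solvthir}), and let $V_1 \subset V_2 \subset \cdots$ be the filtration of $\O(N)$ by finite-dimensional subspaces supplied by the preceding proposition, satisfying $\n V_n \subset V_{n-1}$.

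First I would check that the representation of $\n$ on the whole algebra $\O(N)$ is faithful. Indeed, if $x \in \n$ and $L_x = 0$ on $\O(N)$, then evaluating $L_x f$ at the identity for a coordinate function $f$ corresponding to any nonzero $\xi \in \n^*$ yields $\xi(x) = 0$, forcing $x = 0$. Next, I would pass to a single finite-dimensional piece. Let $K_n \subset \n$ be the kernel of the induced map $\rho_n : \n \to \End(V_n)$. The $K_n$ form a decreasing sequence of subspaces of the finite-dimensional space $\n$ with $\bigcap_n K_n = 0$ (by faithfulness on $\O(N) = \bigcup V_n$), so the sequence stabilizes at $0$: there exists $N$ with $K_N = 0$, i.e., the representation $\rho_N$ on $V := V_N$ is faithful.

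The key remaining point is nilpotency. Setting $V_0$ to be the subspace of constants (which is killed by every derivation), the relation $\n V_n \subset V_{n-1}$ iterated $N+1$ times gives $\n^{N+1} \cdot V = 0$. In particular, for every $x \in \n$ the operator $\rho_N(x)$ satisfies $\rho_N(x)^{N+1} = 0$, so every element of $\n$ acts nilpotently on $V$. Engel's theorem (applied to the image $\rho_N(\n) \subset \mathfrak{gl}(V)$, which consists of nilpotent operators) then provides a basis of $V$ in which every element of $\rho_N(\n)$ is strictly upper triangular. Combined with the injectivity of $\rho_N$, this realizes $\n$ as a Lie subalgebra of the strictly upper triangular matrices of size $\dim V$.

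The only place where care is needed is the dimension count ensuring a single $N$ works for faithfulness; but as noted, this is automatic because $\n$ is finite dimensional. The main conceptual content is really the existence of the filtered polynomial model $(\O(N), V_n)$ from the preceding proposition, which is what forces nilpotent action and is the reason we needed to first exponentiate $\n$ to the algebraic group $N$ via Theorem \ref{solvthir}.
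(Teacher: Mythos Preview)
Your proof is correct and follows exactly the paper's approach: use the filtered polynomial model $\O(N)=\bigcup V_n$ from the preceding proposition, pass to a single $V_n$ large enough to be faithful, and read off nilpotency from $\n V_n\subset V_{n-1}$. The paper's proof is extremely terse (two sentences), and you have simply filled in the details it leaves implicit---the descending kernel argument for faithfulness on some $V_N$, the iteration of the filtration inequality for nilpotency, and the appeal to Engel's theorem (Theorem~\ref{auxth}) for the strictly upper triangular form.
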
 

\begin{proof} By definition, $\O(N)$ is a faithful $\n$-module. 
Hence so is $V_n$ for some $n$. 
\end{proof} 

\subsection{Faithful representations of general finite dimensional Lie algebras} 

\begin{theorem} (Ado's theorem) Every finite dimensional Lie algebra over $\Bbb C$ has a finite dimensional faithful representation. 
\end{theorem}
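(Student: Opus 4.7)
The plan is to combine the two preceding results: Proposition \ref{embe} reduces the problem to algebraic Lie algebras, and Corollary \ref{fai} handles the nilpotent case. Namely, by Proposition \ref{embe}, $\g$ embeds into an algebraic Lie algebra $\widetilde\g$, so it suffices to construct a faithful finite dimensional representation of $\widetilde\g$ (and restrict). Hence I may assume from the start that $\g = \mathfrak k \ltimes \n$, where $\mathfrak k$ is reductive and $\n$ is nilpotent (and is the nilradical of $\g$).

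The goal is then to exhibit a faithful finite dimensional representation in the form $V \oplus W$, where $W$ is faithful on $\mathfrak k$ (with $\n$ acting trivially) and $V$ is a $\g$-module on which $\n$ acts faithfully. First, $W$ is easy to build: since $\mathfrak k = \mathfrak z \oplus \mathfrak k_{\mathrm{ss}}$ with $\mathfrak z$ abelian and $\mathfrak k_{\mathrm{ss}}$ semisimple, take $W$ to be the direct sum of the adjoint representation of $\mathfrak k_{\mathrm{ss}}$ and of one-dimensional representations corresponding to a basis of $\mathfrak z^*$; this is plainly faithful on $\mathfrak k$. Extend $W$ to a $\g$-module by decreeing $\n \cdot W = 0$; this is consistent because $[\mathfrak k, \n] \subset \n$ then also acts by zero. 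Granting the construction of $V$, faithfulness of $V \oplus W$ is immediate: if $(a,x) \in \mathfrak k \ltimes \n$ acts trivially, restriction to $W$ forces $a = 0$, and then restriction to $V$ forces $x = 0$.

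The main construction, and the main obstacle, is producing $V$ as a $\g$-module (not merely as an $\n$-module). My plan is to take $V$ to be a finite dimensional $\g$-invariant subquotient of $\mathcal O(N)$, where $N$ is the simply connected unipotent group with $\operatorname{Lie}(N)=\n$, exactly as in the proof of Corollary \ref{fai}. Specifically, let $V_n = F_n(S\n^*) \subset \mathcal O(N)$ be the filtration built there from the lower central series of $\n$. By Corollary \ref{fai}, for $n$ large enough $\n$ acts faithfully on $V_n$ via left-invariant vector fields. I then need two compatibilities:
\begin{enumerate}
\item[(a)] $V_n$ is $\mathfrak k$-stable, and
\item[(b)] the $\mathfrak k$-action and the $\n$-action combine into a genuine $\g$-action, i.e.\ $[k, L_x] = L_{[k,x]}$ on $V_n$ for $k \in \mathfrak k$ and $x \in \n$.
\end{enumerate}
Both follow from the fact that $K$ acts on $N$ by Lie group automorphisms (coming from the semidirect product structure of $K \ltimes N$): such automorphisms preserve the lower central series of $\n$, hence the defining filtration on $S\n^* = \mathcal O(N)$, giving (a); and the identity $k \circ L_x \circ k^{-1} = L_{\mathrm{Ad}(k)x}$ for $k \in K$, obtained because $K$ acts by group automorphisms and $L_x$ is left-invariant, differentiates at the identity of $K$ to yield (b).

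The hardest step will be (b), i.e.\ verifying that the two actions on $V_n$ satisfy the semidirect product relation; once this is in place, everything else is bookkeeping. A cleaner, more algebraic alternative to $\mathcal O(N)$ (which avoids invoking the third fundamental theorem for $N$) is to take $V = U(\n)/I^k$ for $k \geq 2$, where $I = U(\n)\cdot\n$ is the augmentation ideal: this quotient is finite dimensional, $\n$ acts faithfully on it by left multiplication (since $\n \cap I^k = 0$ for $k \geq 2$ by PBW), and $\mathfrak k$ acts by extending its derivation action on $\n$ to a derivation action on $U(\n)$, which preserves $I$ and hence $I^k$. A direct computation then shows $k(xu) - x(ku) = [k,x]\cdot u$ on $V$, confirming the $\g$-module structure and providing a self-contained completion of the proof.
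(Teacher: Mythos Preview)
Your proof is correct and follows the same essential strategy as the paper: reduce to the algebraic case via Proposition~\ref{embe}, take the faithful $\n$-module $V_n\subset\mathcal O(N)$ from Corollary~\ref{fai}, and upgrade it to a $\g$-module using the $K$-action on $N$ by group automorphisms. Your verifications of (a) and (b) are exactly what is needed, and the paper does the same thing (stated as ``$\n$ acts by left-invariant vector fields and $\mathfrak{k}$ by the adjoint action'').

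The one genuine difference is how faithfulness on all of $\g$, not just $\n$, is obtained. You add a separate summand $W$ that is faithful on $\mathfrak{k}$ with $\n$ acting trivially, and conclude that $V_n\oplus W$ is faithful by a two-step kernel check. The paper instead first reduces to the case $\g$ indecomposable (so $\mathfrak{k}$ acts faithfully on $\n$), and then shows that $V_n$ \emph{alone} is already faithful for $\g$: if $\a\subset\g$ is the kernel, then $\a\cap\n=0$, and the projection $\overline\a\subset\mathfrak{k}$ is an ideal acting nilpotently on $\n$, hence by reductivity of $\mathfrak{k}$ acts by zero, forcing $\a=0$. Your route is more elementary and avoids the indecomposability reduction; the paper's route produces a single module and illustrates a useful principle (a reductive ideal acting nilpotently must act trivially). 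Your alternative via $U(\n)/I^k$ is also valid, though the assertion $\n\cap I^k=0$ is perhaps not quite ``by PBW'' alone---it follows most cleanly from the primitive-element characterization of $\n\subset U(\n)$ (Lemma~\ref{primel}).
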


\begin{proof} Let $\g$ be a finite dimensional complex Lie algebra. 
By Proposition \ref{embe}, $\g$ can be embedded into an algebraic Lie algebra, so we may assume without loss of generality that $\g$ is algebraic. Thus 
$\g={\rm Lie}(G)$ where $G=K\ltimes N$ 
for reductive $K$ and unipotent $N$. Also we may assume that $\g\ne \g'\oplus \g''$ for $\g',\g''\ne 0$, otherwise the problem reduces to a smaller algebraic Lie algebra (indeed if $V',V''$ are faithful representations of $\g',\g''$ then $V'\oplus V''$ is a faithful representation of $\g'\oplus \g''$). 
If $\mathfrak n=0$, then $\g$ is reductive, hence has a faithful finite-dimensional representation, so we are done. So we may assume that $\mathfrak{n}\ne 0$, in  which case $\mathfrak{k}={\rm Lie}(K)$ acts faithfully on $\n={\rm Lie}(N)$.

Now, $\g$ acts on $\O(N)$ preserving the subspaces $V_n$ ($\n={\rm Lie}(N)$ acts by left invariant vector fields and $\mathfrak{k}$ by the adjoint action). As we have shown in the proof of Corollary \ref{fai}, $\n$ acts faithfully on $V_n$ for some $n$. We claim that this $V_n$ is, in fact, a faithful representation of the whole $\g$, which implies the theorem. Indeed, let $\a\subset \g$ be the ideal of elements acting by zero on $V_n$, and let $\overline{\a}$ 
be the projection of $\a$ to $\mathfrak{k}$ (an ideal in $\mathfrak{k}$). 
Since $\n$ acts faithfully on $V_n$, we have $\a\cap \n=0$. Given $a\in \a$, we have $a=\overline{a}+b$ where $\overline{a}\in \overline{\a}$ is the projection of $a$ and $b\in \n$. For $x\in \n$ we have $[a,x]\in \a\cap \n=0$. Thus $[\overline{a},x]=-[b,x]$. Hence the operator $x\mapsto [\overline{a},x]$ on $\n$ is nilpotent. So 
$\overline{\a}$ acts on $\n$ by nilpotent operators. Since $K$ is reductive and $\overline{\a}\subset \mathfrak{k}$ is an ideal, this means that $\overline{\a}$ acts on $\n$ by zero. Thus $\overline{\a}=0$ and $\a\subset \n$. Hence $\a=0$.   
\end{proof} 

\section{\bf Borel subgroups and the flag manifold of a complex reductive Lie group} 

\subsection{Borel subgroups and subalgebras} Let $G$ be a connected complex reductive Lie group, $\g={\rm Lie}(G)$. Fix a Cartan subalgebra $\h\subset \g$ with a system of simple positive roots $\Pi$, and consider the corresponding triangular decomposition $\g=\mathfrak{n}_-\oplus\h\oplus \mathfrak{n}_+$, where 
$\n_+$ is spanned by positive root elements and $\n_-$ by negative root elements. 
Let $H$ be the maximal torus in $G$ corresponding to $\h$, $N_+$ the unipotent subgroup of $G$ corresponding to $\n_+$, and $B_+=HN_+$ 
the solvable subgroup with ${\rm Lie}(B_+)=\b_+:=\h\oplus \n_+$; these are all closed Lie subgroups. 

\begin{definition} A {\bf Borel subalgebra} of $\g$ is a Lie subalgebra conjugate to $\b_+$. A {\bf Borel subgroup} of $G$ is a Lie subgroup conjugate to $B_+$. 
\end{definition}  

Since all pairs $(\h,\Pi)$ are conjugate, this definition does not depend on the choice of $(\h,\Pi)$. 

\begin{lemma}\label{ownnorm} $B_+$ is its own normalizer in $G$. 
\end{lemma}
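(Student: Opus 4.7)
The plan is to exploit the root decomposition to show first that the Lie algebra $\b_+$ is its own normalizer in $\g$, and then use conjugacy of Cartan subalgebras in the solvable group $B_+$ together with Proposition \ref{norma} to finish off at the group level.

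First I will establish the Lie-algebraic analogue: if $x \in \g$ satisfies $[x,\b_+] \subset \b_+$, then $x \in \b_+$. Writing $x = h_0 + \sum_{\alpha \in R} x_\alpha$ with $h_0 \in \h$ and $x_\alpha \in \g_\alpha$, suppose some $x_{-\alpha} \ne 0$ for a positive root $\alpha$. Since $h_\alpha \in \b_+$, we need $[x,h_\alpha] \in \b_+$; but the $\g_{-\alpha}$-component of $[x,h_\alpha]$ equals $2x_{-\alpha}\neq 0$, contradicting $\b_+ \cap \g_{-\alpha} = 0$. Hence $x_{-\alpha} = 0$ for every positive root $\alpha$, so $x \in \b_+$.

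Now let $g \in G$ normalize $B_+$. Then $\Ad(g)$ preserves $\b_+$, so $\mathfrak{c} := \Ad(g)\h$ is again a Cartan subalgebra of $\b_+$. The key step is to produce $n \in N_+$ with $\Ad(n^{-1})\mathfrak{c} = \h$. Since elements of $\mathfrak{c}$ are semisimple in $\g$ while elements of $\n_+$ are nilpotent, the projection $p : \b_+ \to \h$ restricts to a linear isomorphism $\mathfrak{c} \xrightarrow{\sim} \h$; write $\mathfrak{c} = \{h + \nu(h) : h \in \h\}$ with $\nu : \h \to \n_+$ linear. The abelianness of $\mathfrak{c}$, decomposed into root components, forces $\nu(h)_\alpha = \alpha(h)\,c_\alpha$ for constants $c_\alpha \in \g_\alpha$. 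An inductive calculation on the height filtration of $\n_+$ then produces $u \in \n_+$ with $\Ad(\exp u)\h = \mathfrak{c}$; take $n = \exp u$. (Equivalently, one can appeal to the standard fact that all Cartan subalgebras of the solvable Lie algebra $\b_+$ are $N_+$-conjugate.)

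So $n^{-1}g$ normalizes $\h$, hence normalizes $H = \exp(\h)$, giving $n^{-1}g \in N_G(H)$. By Proposition \ref{norma} (applied to the reductive $G$, which differs from $G_{\mathrm{ad}}$ only by a central isogeny, so the same argument applies verbatim), $N_G(H)/H \cong W$, so we may write $n^{-1}g = h \widetilde w$ with $h \in H$ and $\widetilde w$ a lift of some $w \in W$. Thus $g = (nh)\widetilde w$ with $nh \in B_+$, so $\widetilde w B_+ \widetilde w^{-1} = B_+$; differentiating, $w$ preserves $\b_+$, hence permutes the positive roots to positive roots. The only such Weyl group element is $w = 1$, so $g = nh \in B_+$, as desired. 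The main obstacle is the conjugacy claim for Cartan subalgebras of $\b_+$; this is standard but was not proved in the text, and the cleanest argument is the explicit root-by-root construction of $u$ sketched above.
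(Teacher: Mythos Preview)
Your proof is correct and follows the same outline as the paper's: conjugate $\Ad(g)H$ back to $H$ inside $B_+$, then use $N(H)/H \cong W$ and the fact that only the identity of $W$ preserves $R_+$. The paper simply asserts the conjugacy step (``it is easy to show that we can conjugate $H'$ back into $H$ inside $B_+$'') where you supply a concrete height-induction sketch; your opening paragraph showing $\b_+$ is self-normalizing in $\g$ is correct but not actually invoked in the remainder of your argument.
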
 

\begin{proof} Let $\gamma\in G$ be such that $\gamma B_+ \gamma^{-1}=B_+$. 
Let $H'={\rm Ad}\gamma(H)\subset B_+$. It is easy to show that we can conjugate 
$H'$ back into $H$ inside $B_+$, so we may assume without loss of generality 
that $H'=H$. Then $\gamma\in N(H)$, and it preserves positive roots. 
Hence the image of $\gamma$ in $W$ is $1$, so $\gamma\in H\subset B_+$, as claimed.  
\end{proof} 

\subsection{The flag manifold of a connected complex reductive group}\label{flam} 
Thus the set of all Borel subalgebras (or subgroups) in $G$ 
is the homogeneous space $G/B_+$, a complex manifold. 
It is called the {\bf flag manifold} of $G$. Note that it only depends 
on the semisimple part $\g_{ss}\subset \g$ and does not depend on the choice of 
the Cartan subalgebra and triangular decomposition. 

Let $G^c\subset G$ be the compact form of $G$, with Lie algebra $\g^c\subset \g$. 
It is easy to see that $\g^c+\b_+=\g$. Thus the $G^c$-orbit $G^c\cdot 1$ of $1\in G/B_+$ contains a neighborhood of $1$ in $G/B_+$. Hence the same holds for any point of this orbit, i.e., $G^c\cdot 1\subset G/B_+$ is an open subset. But it is also compact, since $G^c$ is compact, hence closed. As $G/B_+$ is connected, we get that $G^c\cdot 1=G/B_+$, i.e., $G^c$ acts transitively 
on $G/B_+$.   

Also the Cartan involution $\omega$ maps positive root elements to negative ones, so $G^c\cap B_+\subset w_0(B_+)\cap B_+=H$. Thus $G^c\cap B_+=H^c$, a maximal torus in $G^c$. So we get 

\begin{proposition}\label{iwas} We have $G/B_+=G^c/H^c$. In particular, $G/B_+$ 
is a compact complex manifold of dimension $|R_+|=\frac{1}{2}(\dim \g-{\rm rank}\g)$. 
\end{proposition}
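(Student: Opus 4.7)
The proof plan is to realize $G/B_+$ as a $G^c$-orbit through the identity coset, and then identify the stabilizer. Concretely, I plan to show: (a) the natural map $G^c \to G/B_+$ sending $k \mapsto kB_+$ is surjective; (b) the fiber over the identity coset is exactly $H^c$; and (c) this induces a $G^c$-equivariant diffeomorphism $G^c/H^c \xrightarrow{\sim} G/B_+$. Since the excerpt already sketches most of this right before the proposition, my job is mainly to fill in the two analytic inputs used implicitly: the real-vector-space identity $\g = \g^c + \b_+$ and the equality $w_0(B_+) \cap B_+ = H$.

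First I would verify that $\g = \g^c + \b_+$ as real vector spaces. Decomposing $\g = \h \oplus \bigoplus_{\alpha \in R_+}(\g_\alpha \oplus \g_{-\alpha})$, the part $\h \oplus \bigoplus_{\alpha \in R_+}\g_\alpha = \b_+$ is already in $\b_+$, so only the negative root spaces need to be accounted for. For each $\alpha \in R_+$, pick nonzero $e_\alpha \in \g_\alpha$ and $f_\alpha \in \g_{-\alpha}$ paired by the Cartan involution so that the elements $e_\alpha - f_\alpha$ and $i(e_\alpha + f_\alpha)$ lie in $\g^c$; then $f_\alpha = -(e_\alpha - f_\alpha) + e_\alpha$ shows $\g_{-\alpha} \subset \g^c + \b_+$. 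Hence $\g = \g^c + \b_+$. Using this, the differential at $1 \in G^c$ of the map $G^c \to G/B_+$ is surjective, so the $G^c$-orbit of the identity coset is open; being the continuous image of a compact set it is also closed; and since $G/B_+$ is connected (as the quotient of the connected group $G$ by a connected subgroup, via the fibration $G \to G/B_+$), the orbit is all of $G/B_+$. This gives surjectivity of $G^c \to G/B_+$.

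Next I would compute the stabilizer $G^c \cap B_+$. The opposite Borel $B_- := w_0(B_+)$ corresponds to $\b_- = \h \oplus \n_-$, and $\b_- \cap \b_+ = \h$, so $B_- \cap B_+ = H$ by a standard argument (if $g \in B_- \cap B_+$, Jordan decomposition places its semisimple part in $H$ and its unipotent part in $N_- \cap N_+ = \{1\}$). Since the Cartan involution defining $\g^c$ swaps positive and negative root spaces, $G^c$ is stable under the opposition $B_+ \leftrightarrow B_-$, so any element of $G^c \cap B_+$ also lies in $B_-$, hence in $H$. Intersecting with $G^c$ gives $G^c \cap B_+ \subset H \cap G^c = H^c$. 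The reverse inclusion is clear since $H^c \subset H \subset B_+$. By Lemma \ref{ownnorm} applied inside $G$ (which shows $B_+$ is self-normalizing, and in particular the coset stabilizer is exactly $B_+$), we conclude that the induced bijection $G^c/H^c \to G/B_+$ is $G^c$-equivariant; a standard argument with the implicit function theorem (using that both sides are homogeneous spaces of Lie groups and the map is a smooth equivariant bijection of the same real dimension) promotes it to a diffeomorphism.

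Finally, for the dimension statement, as a complex manifold $\dim_{\Bbb C}(G/B_+) = \dim_{\Bbb C}\g - \dim_{\Bbb C}\b_+ = \dim_{\Bbb C}\n_+ = |R_+| = \tfrac{1}{2}(\dim \g - \mathrm{rank}\,\g)$. Compactness follows immediately from the identification with $G^c/H^c$, since $G^c$ is compact. The main obstacle I anticipate is being fully rigorous about the transfer from set-theoretic bijection to diffeomorphism — one needs to check that $H^c$ really is the full preimage of the identity coset (not just a subgroup of it) and that the resulting map has everywhere surjective differential, both of which reduce cleanly to the identity $\g = \g^c + \b_+$ together with $\g^c \cap \b_+ = \mathfrak{h}^c$ (the latter being the infinitesimal form of $G^c \cap B_+ = H^c$).
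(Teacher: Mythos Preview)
Your proof is correct and follows essentially the same approach as the paper: the text preceding the proposition already records that $\g^c + \b_+ = \g$ makes the $G^c$-orbit open, compactness plus connectedness makes it all of $G/B_+$, and the Cartan involution $\omega$ (which fixes $G^c$ and sends $B_+$ to $w_0(B_+)=B_-$) forces $G^c \cap B_+ \subset B_+ \cap B_- = H$, hence $G^c \cap B_+ = H^c$. You have simply filled in the details the paper left implicit, including the Jordan-decomposition argument for $B_+ \cap B_- = H$ and the passage from bijection to diffeomorphism.
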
  

\begin{example} 1. For $G=SL_2$ we have $G/B_+=SU(2)/U(1)=S^2$, the Riemann sphere.    

2. For $G=GL_n$ we have $G/B_+=U(n)/U(1)^n=\mathcal F_n$, the set of flags in $\Bbb C^n$ that we considered in Subsection \ref{flman}. 
\end{example} 

Another realization of the flag manifold is  
as the $G$-orbit of the line spanned by the highest weight vector 
in an irreducible representation with a regular highest weight. Namely, let $\lambda\in P_+$ be a dominant integral weight with $\lambda(h_i)\ge 1$ for all $i$ (i.e., $\lambda=\mu+\rho$ for $\mu\in P_+$). Let $L_\lambda$ be the corresponding irreducible representation with highest weight vector $v_\lambda$. We have $\b_+\cdot \Bbb Cv_\lambda=\Bbb Cv_\lambda$, but $e_{-\alpha} v_\lambda\ne 0$ for any $\alpha\in R_+$ (as $e_\alpha e_{-\alpha}v_\lambda=h_\alpha v_\lambda=(\lambda,\alpha^\vee)v_\lambda$, and $(\lambda,\alpha^\vee)>0$). Moreover, these vectors have different weights, so are linearly independent. Thus $\b_+$ is the stabilizer of $\Bbb Cv_\lambda$ in $\g$. Hence any $g\in G$ which preserves $\Bbb Cv_\lambda$ 
belongs to the normalizer of $\b_+$ (or, equivalently, $B_+$), i.e., $g\in B_+$. 
Thus $\mathcal O:=G\cdot \Bbb Cv_\lambda\subset \Bbb P L_\lambda$ is 
identified with $G/B_+$. This shows that $\mathcal O$ is compact, hence closed, i.e., $\mathcal O=G/B_+$ is a smooth complex projective variety.  

Let $A=\exp(i\h^c)\subset H$, $K=G^c$, $N=N_+$. Proposition \ref{iwas} immediately implies

\begin{corollary}\label{iwas2} (The {\bf Iwasawa decomposition} of $G$)  The multiplication map 
$K\times A\times N\to G$ is a diffeomorphism. In particular, we have 
$G=KAN$.
\end{corollary}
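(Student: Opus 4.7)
The plan is to deduce the Iwasawa decomposition from Proposition \ref{iwas} combined with the structure of the Borel subgroup $B_+$. The starting point is that Proposition \ref{iwas} gives a diffeomorphism $K/H^c \to G/B_+$, where $H^c = K \cap B_+$. Equivalently, the multiplication map $K \times B_+ \to G$ is smooth, surjective, and its fibers are precisely the orbits of the free $H^c$-action $(k,b) \cdot h = (kh, h^{-1}b)$; therefore it descends to a diffeomorphism
$$
\Phi \colon K \times_{H^c} B_+ \xrightarrow{\ \sim\ } G.
$$

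The next step is to analyze $B_+$ itself. Since $\h = \h^c \oplus i\h^c$ as a real vector space and the two summands commute, exponentiation gives $H = H^c \cdot A$ with $H^c \cap A = \{1\}$, so the multiplication map $H^c \times A \to H$ is a diffeomorphism. Combined with the semidirect product structure $B_+ = H \ltimes N$, we obtain a diffeomorphism $H^c \times A \times N \xrightarrow{\sim} B_+$ via $(h,a,n) \mapsto han$, under which the $H^c$-action on $B_+$ by left multiplication becomes left multiplication on the first factor. Feeding this into the balanced product kills the $H^c$ factor: $K \times_{H^c} (H^c \times A \times N) \cong K \times A \times N$, where the isomorphism sends the class of $(k,h,a,n)$ to $(kh, a, n)$ (equivalently, chooses the representative with $h = 1$). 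Composing with $\Phi$ gives exactly the multiplication map $(k,a,n) \mapsto kan$, which is therefore a diffeomorphism.

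I would need to spell out one subtlety to make this airtight: that the identification $H \cong H^c \times A$ really is a diffeomorphism of Lie groups and, in particular, that the decomposition $b = h^c \cdot a \cdot n$ of an element $b \in B_+$ is unique with smooth dependence on $b$. For uniqueness, if $h^c_1 a_1 n_1 = h^c_2 a_2 n_2$, then using $B_+ = H \ltimes N$ we first match the $N$-parts to get $n_1 = n_2$, then match the $H$-parts and use the direct product decomposition $H = H^c \times A$ (which follows from $\h^c \cap i\h^c = 0$) to conclude $h^c_1 = h^c_2$ and $a_1 = a_2$. Smoothness is automatic because the inverse maps on $H$ and on $B_+$ are algebraic operations in exponential coordinates.

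The main obstacle I anticipate is not logical but notational: making sure that the quotient identification $K \times_{H^c} (H^c \times A \times N) = K \times A \times N$ is stated cleanly, and that the resulting composition with $\Phi$ is correctly identified with $(k,a,n) \mapsto kan$ rather than some twisted version. This is really just bookkeeping, but it is where one could easily make a sign/order error. As a corollary, one reads off $G = KAN$ immediately, and the factorization of each $g \in G$ is unique with smooth dependence on $g$, as claimed.
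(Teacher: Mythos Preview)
Your proof is correct and is exactly the argument the paper intends: the text simply asserts that Proposition~\ref{iwas} ``immediately implies'' the corollary, and you have faithfully unpacked that implication via the associated bundle $K \times_{H^c} B_+ \cong G$ together with the decomposition $B_+ \cong H^c \times A \times N$. Your care about the uniqueness of $H = H^c \times A$ is appropriate (and can also be seen as the polar decomposition restricted to $H$), so there is nothing to add.
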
  

A similar theorem holds for {\it real} reductive groups (Theorem \ref{iwas1}).  

\subsection{The Borel fixed point theorem} Let $V$ be a finite dimensional representation of a finite dimensional $\Bbb C$-Lie algebra $\a$, and $X\subset \Bbb PV$ be a subset. We will say that $X$ is $\a$-invariant (or fixed by $\a$) if it is $\exp(\a)$-invariant. 

\begin{theorem} Let $\a$ be a solvable Lie algebra over $\Bbb C$, 
$V$ a finite dimensional $\a$-module. Let $X\subset \Bbb P V$ be a closed $\a$-invariant subset. 
Then there exists $x\in X$ fixed by $\a$.  
\end{theorem}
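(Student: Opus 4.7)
The plan is to proceed by induction on $\dim \a$, using the weight space machinery from Lemma~\ref{liethlem} to successively peel off a codimension-one ideal, ultimately reducing to the case of a single one-parameter group acting on a closed subset of projective space. The base case $\dim \a = 0$ is trivial, since every point of $X$ is already fixed.

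For the inductive step, since $\a$ is solvable and nonzero we have $[\a,\a] \subsetneq \a$; choose any codimension-one subspace $\h \supset [\a,\a]$ (automatically an ideal in $\a$) and write $\a = \h \oplus \Bbb C x$. By the inductive hypothesis applied to $\h$ acting on $V$ (with the same closed $\h$-invariant set $X$), there exists $[v_0] \in X$ with $\h \cdot v_0 \subset \Bbb C v_0$; equivalently, $v_0$ lies in the weight space
\[
V_\lambda := \{v \in V : av = \lambda(a)v \text{ for all } a \in \h\}
\]
for some character $\lambda: \h \to \Bbb C$. By Lemma~\ref{liethlem}(ii)--(iii), $V_\lambda$ is in fact $\a$-invariant in characteristic zero, so $X' := X \cap \Bbb P V_\lambda$ is closed, nonempty, and $\a$-invariant. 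Since $\h$ acts on $V_\lambda$ by the scalar $\lambda$, it acts trivially on $\Bbb P V_\lambda$, and the problem reduces to the following one-dimensional case: the closed nonempty subset $X' \subset \Bbb P V_\lambda$, invariant under the complex one-parameter group $\{e^{tx}\}_{t \in \Bbb C}$, must contain an $x$-fixed point.

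To handle this one-dimensional case I would use the Jordan decomposition $x = x_s + x_n$ on $V_\lambda$ together with a limit argument in projective space. Decompose $V_\lambda = \bigoplus_\mu V_{(\mu)}$ into generalized $x$-eigenspaces, so that $x|_{V_{(\mu)}} = \mu \cdot \mathrm{Id} + N_\mu$ with $N_\mu$ nilpotent. Pick any $[v] \in X'$ and write $v = \sum_\mu v_\mu$; set $S = \{\mu : v_\mu \neq 0\}$. Choose $\alpha \in \Bbb C$ generically so that the real numbers $\mathrm{Re}(\alpha \mu)$ are pairwise distinct for $\mu \in S$, and let $\mu_0 \in S$ be the maximizer of $\mathrm{Re}(\alpha \mu)$. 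Using that $e^{r\alpha N_\mu}v_\mu$ is polynomial in $r$ while $e^{r\alpha(\mu-\mu_0)}$ decays exponentially for $\mu \ne \mu_0$, a direct estimate shows that, after dividing by the dominant factor $(r\alpha)^k e^{r\alpha \mu_0}$ with $k := \max\{j : N_{\mu_0}^j v_{\mu_0} \neq 0\}$, one obtains
\[
[e^{r\alpha x} v] \longrightarrow \bigl[ N_{\mu_0}^k v_{\mu_0} \bigr] \quad \text{as } r \to +\infty.
\]
Since $X'$ is closed and each $[e^{r\alpha x}v]$ lies in $X'$, the limit lies in $X'$; moreover, $N_{\mu_0}^k v_{\mu_0}$ is an eigenvector of $x$ with eigenvalue $\mu_0$ (since $N_{\mu_0}^{k+1} v_{\mu_0} = 0$), so $[N_{\mu_0}^k v_{\mu_0}]$ is fixed by $x$ and hence, together with the trivial action of $\h$ on $\Bbb P V_\lambda$, by all of $\a$.

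The main obstacle is precisely the limit argument just sketched: polynomial growth contributed by the nilpotent parts must be controlled by the exponential separation between distinct generalized eigenvalues, and the genericity of $\alpha$ is essential to break ties among $\mu$'s that happen to have the same real part, guaranteeing a uniquely dominant eigenvalue. A secondary subtlety is the reading of ``$\a$-invariant'': since $X$ is closed, this is equivalent to invariance under the integrated complex one-parameter subgroups $\{e^{ta}\}_{t \in \Bbb C}$ for $a \in \a$, which is precisely what lets us use $t = r\alpha$ with $r$ real and $\alpha$ a specific nonreal complex constant.
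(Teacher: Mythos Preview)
Your proof is correct and follows essentially the same architecture as the paper's: induction on $\dim\a$, peel off a codimension-one ideal, and reduce to a single one-parameter group, where a limit in projective space produces a fixed point. Your reduction via $X\cap\Bbb P V_\lambda$ (invoking Lemma~\ref{liethlem}) is a minor variant of the paper's reduction via the full fixed-point set $X^{\a'}$, and your treatment of the one-dimensional case spells out the nilpotent-part bookkeeping that the paper compresses into ``it is easy to see that the limit exists.''
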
 

\begin{proof} The proof is by induction in $n=\dim \a$. 
The base $n=0$ is trivial, so we only need to justify the induction step. 
Since $\a$ is solvable, it has an ideal $\a'$ of codimension $1$. 
By the induction assumption, $Y:=X^{\a'}$ (the set of $\exp(\a')$-fixed points in $X$)  is a nonempty closed subset 
of $X$, so it suffices to show that the 1-dimensional Lie algebra 
$\a/\a'$ has a fixed point on $Y$. Thus it suffices to prove the theorem for $n=1$. 

So let $\a$ be 1-dimensional, spanned by $a\in \a$. We can choose the normalization of $a$ so that distinct eigenvalues of $a$ on $V$ have different real parts. 
Fix $x_0\in X$ and consider the curve $e^{ta}x_0$ for $t\in \Bbb R$. 
It is easy to see that there exists $x:=\lim_{t\to \infty}e^{ta}x_0\in \Bbb PV$. 
Then $x\in X$ as $X$ is closed, and $x$ is fixed by $\a$, as desired. 
\end{proof} 

Note that for $X=\Bbb PV$, the Borel fixed point theorem reduces to Lie's theorem (Theorem \ref{liethm}). 

\subsection{Parabolic and Levi subalgebras} 

A Lie subalgebra $\p\supset \b$ of a reductive Lie algebra $\g$ containing some Borel subalgebra $\b\subset \g$ 
is called a {\bf parabolic subalgebra} of $\g$. The corresponding connected 
Lie subgroup $P\subset G$ is called a {\bf parabolic subgroup}. It is easy to see that 
$P\subset G$ is necessarily closed (check it!). 

\begin{exercise}\label{par} Show that parabolic subalgebras $\p$ containing $\b_+$ 
are in bijection with subsets $S\subset \Pi$ of the set of simple roots
of $\b_+$, namely, $\p$ is sent to the set $S_\p$ of $i\in \Pi$ such that
$f_i\in \p$, and $S$ is sent to the Lie subalgebra $\p_S$ of $\g$ generated by $\b_+$ and $f_i,i\in S$.   
\end{exercise}  

Let $P\subset G$ be a parabolic subgroup with Lie algebra $\p$. 
Let $\u\subset \p$ be the nilpotent radical of $\p$; for instance, if $\p\supset \b_+$ then $\u$ is the Lie subalgebra spanned by $e_\alpha$ such that $e_{-\alpha}\notin \p$. It is easy to see that there exists a (non-unique) Lie subalgebra $\mathfrak l\subset \p$ complementary to $\u$, which therefore projects isomorphically to $\p/\u$; indeed, if $\p\supset \b_+$ then we can take $\mathfrak l$ to be the Lie subalgebra spanned by $\h$ and $e_\alpha,e_{-\alpha}$ where $\alpha$ runs through positive roots for which $e_{-\alpha}\in \p$. Such a subalgebra $\mathfrak l$ is called a {\bf Levi subalgebra} of $\p$, and we have $\p=\mathfrak l\ltimes \u$, which is $\mathfrak l\oplus \u$ as a vector space. 

Let $U=\exp(\u)$. The quotient $P/U$ is a reductive group with Lie algebra $\p/\u$. 
A {\bf Levi subgroup} of $P$ is a subgroup $L$ in $P$ such that $\mathfrak l:={\rm Lie}(L)$ 
is a Levi subalgebra of $\p$; equivalently, $L$ projects isomorphically to $P/U$, so 
we have $P=L\ltimes U$, written shortly as $P=LU$. It is not difficult to show that all Levi subgroups 
of $P$ (or, equivalently, all Levi subalgebras of $\p$) are conjugate by the action of $U$ (check it!). 

For example, $L$ is a maximal torus if and only if $P$ is a Borel subgroup, and $L=G$ if and only if $P=G$.  

\begin{example} Let $n=n_1+...+n_k$ where $n_i$ are positive integers. 
Then the subgroup $P$ of block upper triangular matrices with diagonal 
blocks of size $n_1,...,n_k$ is a parabolic subgroup of $GL_n(\Bbb C)$, 
and the subgroup $L$ of block diagonal matrices in $P$ is a Levi subgroup. 
The unipotent radical $U$ of $P$ is the subgroup of block upper triangular matrices with identity matrices on the diagonal. 
\end{example} 

\subsection{Maximal solvable and maximal nilpotent subalgebras}

Note that $\b_+$ is a maximal solvable subalgebra of $\g$; indeed, any bigger parabolic subalgebra contains a negative root vector, hence the corresponding root $\mathfrak{sl}_2$-subalgebra, so it is not solvable. Moreover, 
$B_+$ is a maximal solvable subgroup of $G$: if $P\supset B_+$ then 
some element $g\in P$ does not normalize $\b_+$, so 
${\rm Lie}(P)$ has to be larger than $\b_+$, hence not solvable. 
Thus any Borel subalgebra (subgroup) is a maximal solvable one. 
It turns out that the converse also holds. 

\begin{proposition}\label{solsu} Any solvable Lie subalgebra of $\g$ (respectively, connected solvable subgroup of $G$)
is contained in a Borel subalgebra (subgroup).   
\end{proposition}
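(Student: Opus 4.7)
\medskip

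\noindent\textbf{Proof proposal for Proposition \ref{solsu}.}

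The plan is to deduce this from the Borel fixed point theorem applied to the flag manifold $X := G/B_+$, using the projective realization described in Subsection \ref{flam}. First I would reduce the group statement to the Lie algebra statement: if $S \subset G$ is a connected Lie subgroup with Lie algebra $\mathfrak{s}$ solvable, and if we manage to show $\mathfrak{s} \subset \b$ for some Borel subalgebra $\b = \mathrm{Lie}(B)$, then $S$ is generated by $\exp(\mathfrak{s}) \subset \exp(\b) \subset B$ (using that $B$ is closed and connected), so $S \subset B$. Thus it suffices to prove the Lie algebra version: any solvable Lie subalgebra $\mathfrak{s} \subset \g$ is contained in a Borel subalgebra.

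Next, I would fix a regular dominant integral weight $\lambda$ (e.g.\ $\lambda = \rho$) and recall from Subsection \ref{flam} that the orbit map $g \mapsto g \cdot \Bbb C v_\lambda$ identifies $X = G/B_+$ with a closed $G$-invariant subset of $\Bbb P L_\lambda$, and that the stabilizer in $\g$ of the line $\Bbb C v_\lambda$ is exactly $\b_+$. By $G$-equivariance, the stabilizer in $\g$ of the point $g B_+ \in X$ is $\mathrm{Ad}(g)(\b_+)$, a Borel subalgebra; conversely every Borel subalgebra arises this way. Hence finding a Borel containing $\mathfrak{s}$ is the same as finding a point of $X$ fixed (in the infinitesimal sense) by $\mathfrak{s}$.

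Now the Borel fixed point theorem applies directly: $\mathfrak{s}$ is a finite-dimensional solvable complex Lie algebra, $L_\lambda$ is a finite-dimensional $\mathfrak{s}$-module, and $X \subset \Bbb P L_\lambda$ is a closed subset which is $G$-invariant, hence in particular $\mathfrak{s}$-invariant. The theorem produces $x \in X$ with $\mathfrak{s} \cdot x = 0$, i.e.\ $\mathfrak{s}$ is contained in the stabilizer of $x$, which by the previous paragraph is a Borel subalgebra. This completes the Lie algebra statement, and by the first paragraph the group statement follows.

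The only real subtlety is making sure the realization of $X$ as a closed subvariety of $\Bbb P L_\lambda$ is available and that the stabilizer computation is correct; both were carried out in Subsection \ref{flam} (the orbit is compact hence closed, and the stabilizer identification uses that $e_\alpha v_\lambda \ne 0$ for $\alpha \in R_-$ together with Lemma \ref{ownnorm}). Once those are in hand, the argument is essentially a one-line application of the Borel fixed point theorem, and I do not anticipate any further obstacle.
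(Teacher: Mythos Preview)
Your proposal is correct and follows essentially the same route as the paper: apply the Borel fixed point theorem to the flag manifold $G/B_+$ (realized projectively in $\Bbb P L_\lambda$), and use that the stabilizer of a point of $G/B_+$ is a Borel subalgebra. The paper's proof is a terse three lines (``$\a$ has a fixed point $\b\in G/B_+$, hence $\a$ normalizes $\b$, hence $\a\subset\b$''), leaving implicit precisely the projective realization and the stabilizer/normalizer identification that you spelled out; your additional reduction of the group statement to the Lie algebra statement is also left tacit in the paper.
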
 

\begin{proof} Let $\a\subset \g$ be a solvable Lie subalgebra. 
By the Borel fixed point theorem, $\a$ has a fixed point $\b\in G/B_+$. 
Thus $\a$ normalizes $\b$. Hence $
\a\subset \b$, as claimed. 
\end{proof} 

\begin{corollary} Any element of $\g$ is contained in a Borel subalgebra $\b\subset \g$.   
\end{corollary}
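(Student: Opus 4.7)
The plan is to reduce the statement to the immediately preceding Proposition \ref{solsu}, which guarantees that every solvable Lie subalgebra of $\g$ is contained in some Borel subalgebra. Thus, given any $x\in \g$, it suffices to exhibit a solvable Lie subalgebra of $\g$ that contains $x$, and then invoke the proposition.

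The obvious candidate is the one-dimensional subspace $\Bbb C x\subset \g$ (or $\lbrace 0\rbrace$ in the trivial case $x=0$). This is a Lie subalgebra since $[x,x]=0$, and it is abelian, hence solvable (its derived series terminates immediately). Applying Proposition \ref{solsu} to $\a:=\Bbb C x$, we obtain a Borel subalgebra $\b\subset \g$ with $\a\subset \b$, and in particular $x\in \b$.

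There is essentially no obstacle: the nontrivial content has already been packaged into Proposition \ref{solsu}, whose proof in turn relies on the Borel fixed point theorem applied to the flag manifold $G/B_+$, realized as in Subsection \ref{flam}. The only thing worth a brief remark in the writeup is that the statement requires no assumption of semisimplicity or regularity on $x$; in particular, $x$ need not be semisimple, nilpotent, or regular, since Proposition \ref{solsu} applies to an arbitrary solvable subalgebra. So the final proof is a one-line deduction: take $\b\supset \Bbb C x$ as provided by Proposition \ref{solsu}.
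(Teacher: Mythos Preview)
Your proposal is correct and is exactly the intended argument: the paper states this corollary without proof, so the implicit reasoning is precisely to observe that $\Bbb C x$ is abelian (hence solvable) and apply Proposition~\ref{solsu}.
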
 

Let us say that a Lie subalgebra $\a\subset \g$ is {\bf a nilpotent subalgebra} 
if it consists of nilpotent elements. Note that this is a stronger condition than just being nilpotent as a Lie algebra; for example, a Cartan subalgebra is a nilpotent Lie algebra (since it is abelian) but it is not a nilpotent subalgebra of $\g$. 

\begin{corollary} Any nilpotent subalgebra of $\g$ is conjugate to a Lie subalgebra of $\n_+$. Thus $\n_+$ is a maximal nilpotent subalgebra of $\g$, and any maximal nilpotent subalgebra of $\g$ is conjugate to $\n_+$. 
\end{corollary}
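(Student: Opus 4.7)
The plan is to reduce to the case where the nilpotent subalgebra $\a$ sits inside the standard Borel $\b_+$, and then show by a direct eigenvalue computation that every ad-nilpotent element of $\b_+$ already lies in $\n_+$. The rest of the corollary is then essentially formal.

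First I would observe that a nilpotent subalgebra $\a \subset \g$ is automatically nilpotent as an abstract Lie algebra: every $x \in \a$ is ad-nilpotent on $\g$, hence its restriction to $\a$ is nilpotent, so Engel's theorem gives that $\a$ is nilpotent (in particular solvable). Proposition \ref{solsu} then places $\a$ inside some Borel subalgebra $\b$, and since all Borels are $G$-conjugate we may, after applying a suitable ${\rm Ad}(g)$, assume outright that $\a \subset \b_+ = \h \oplus \n_+$.

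The core step, and the only one that requires genuine work, is to show that every ad-nilpotent element of $\b_+$ actually lies in $\n_+$. Given $x = h + n$ with $h \in \h$ and $n \in \n_+$, I would introduce the bounded height grading $\g = \bigoplus_k \g[k]$ with $\g[0] = \h$ and $\g[k] = \bigoplus_{{\rm ht}(\alpha) = k}\g_\alpha$ for $k \ne 0$. Then ${\rm ad}(h)$ preserves the grading and acts on $\g_\alpha$ by the scalar $\alpha(h)$, while ${\rm ad}(n)$ is a linear combination of ${\rm ad}(e_\beta)$ with $\beta > 0$ and therefore strictly raises the grading. Ordering a weight basis of $\g$ so that the height is non-decreasing, the matrix of ${\rm ad}(x)$ becomes upper triangular with diagonal entries $0$ on $\h$ and $\alpha(h)$ on each $\g_\alpha$-block. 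Hence the eigenvalues of ${\rm ad}(x)$ are exactly $0$ together with the collection of $\alpha(h)$ for $\alpha \in R$. If $x$ is nilpotent these all vanish, forcing $\alpha(h) = 0$ for every root, and since the roots span $\h^*$ we conclude $h = 0$ and $x = n \in \n_+$. This gives $\a \subset \n_+$, which is the first and main assertion.

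Finally, $\n_+$ itself is a nilpotent subalgebra: the same grading shift shows that ${\rm ad}(m)$ strictly raises a bounded grading for every $m \in \n_+$, so it is a nilpotent operator on $\g$. Maximality and uniqueness up to conjugation are now automatic: if $\a \supseteq \n_+$ is a nilpotent subalgebra, the first part produces a conjugate of $\a$ inside $\n_+$, forcing $\dim \a \le \dim \n_+$ and so $\a = \n_+$; and if $\a$ is any maximal nilpotent subalgebra, some conjugate lies in $\n_+$ and maximality then forces that conjugate to equal $\n_+$. Thus the whole argument rests on the upper-triangularization of ${\rm ad}(x)$ on $\b_+$; everything else is Engel, Proposition \ref{solsu}, and a dimension count.
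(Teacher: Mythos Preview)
Your proof is correct and follows the same route as the paper: use Proposition~\ref{solsu} to conjugate $\a$ into $\b_+$, then observe that nilpotent elements of $\b_+$ must lie in $\n_+$. The paper's proof is a one-liner that leaves this last observation implicit, whereas you spell it out via the height-grading upper-triangularization of ${\rm ad}(x)$; that is a perfectly good way to justify it, and the maximality arguments you add at the end are also fine.
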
 

\begin{proof} By Proposition \ref{solsu} there is $g\in G$ such that ${\rm Ad}_g\a\subset \b_+$, 
but since $\a$ is nilpotent we actually have $ {\rm Ad}_g\a\subset \n_+$. 
\end{proof} 

A similar result holds for groups, with the same proof: 

\begin{corollary} Any unipotent subgroup of $G$ is conjugate to a (closed) Lie subgroup of $N_+$. Thus $N_+$ is a maximal unipotent subgroup of $G$, and any maximal unipotent subgroup of $G$ is conjugate to $N_+$. 
\end{corollary}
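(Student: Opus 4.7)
The plan is to reduce the group statement to the already-established Lie algebra statement about nilpotent subalgebras, exploiting the fact that on the unipotent side the exponential map is a global diffeomorphism.

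First I would fix a unipotent subgroup $U \subset G$ (which I interpret as a connected Lie subgroup all of whose elements act unipotently in every finite-dimensional representation) and pass to its Lie algebra $\u = \mathrm{Lie}(U)$. For any $X \in \u$, the one-parameter subgroup $\exp(tX)$ lies in $U$ and hence acts unipotently, so $X$ acts nilpotently, i.e.\ $\u$ is a nilpotent subalgebra of $\g$ in the sense of the preceding corollary. Applying that corollary, there exists $g \in G$ with $\mathrm{Ad}_g(\u) \subset \n_+$.

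Next I would upgrade this to an inclusion of groups. The conjugate $gUg^{-1}$ is a connected Lie subgroup of $G$ with Lie algebra $\mathrm{Ad}_g(\u) \subset \n_+ = \mathrm{Lie}(N_+)$, so by the first fundamental theorem of Lie theory (Theorem \ref{first}) it is the unique connected Lie subgroup of $N_+$ with that Lie algebra; in particular $gUg^{-1} \subset N_+$. For closedness, I would appeal to Theorem \ref{solvthir}: since $\n_+$ is nilpotent, the exponential map $\exp : \n_+ \to N_+$ is a diffeomorphism, so the subalgebra $\mathrm{Ad}_g(\u)$ (which is closed in $\n_+$) exponentiates to a closed Lie subgroup of $N_+$, and this closed subgroup must coincide with the connected Lie subgroup $gUg^{-1}$ generated by $\exp(\mathrm{Ad}_g(\u))$.

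It then remains to observe that $N_+$ itself is a maximal unipotent subgroup: if $U' \supsetneq N_+$ were a strictly larger connected unipotent subgroup, then $\mathrm{Lie}(U')$ would be a nilpotent subalgebra strictly containing $\n_+$, contradicting the maximality of $\n_+$ among nilpotent subalgebras established in the previous corollary. Finally, the conjugacy of maximal unipotent subgroups is immediate: if $U$ is maximal, we have just shown $gUg^{-1} \subset N_+$ for some $g$, and maximality forces equality $gUg^{-1} = N_+$.

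The only genuinely delicate point I anticipate is the transition from the inclusion of Lie algebras $\mathrm{Ad}_g(\u) \subset \n_+$ to the inclusion $gUg^{-1} \subset N_+$ as \emph{closed} subgroups, since for general Lie subgroups the analogous statement can fail (as illustrated by Example \ref{wind}). This is where unipotence is essential: the diffeomorphism $\exp : \n_+ \xrightarrow{\sim} N_+$ together with the Baker--Campbell--Hausdorff formula (which terminates in the nilpotent setting) guarantees that Lie subalgebras of $\n_+$ are in bijection with closed connected Lie subgroups of $N_+$, so no pathological dense-winding phenomena can occur.
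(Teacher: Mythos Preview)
Your proof is correct and follows essentially the same approach as the paper. The paper's proof is simply the remark ``A similar result holds for groups, with the same proof,'' which means: apply Proposition~\ref{solsu} (the group version) to conjugate the unipotent subgroup $U$ into $B_+$, then observe that unipotence forces $gUg^{-1}\subset N_+$. You take the slightly different but equivalent route of first passing to the Lie algebra $\u$, applying the already-proved Lie-algebra corollary to get $\mathrm{Ad}_g(\u)\subset\n_+$, and then lifting back to groups via the uniqueness in Theorem~\ref{first}. Your version has the advantage of making the closedness claim explicit (via the diffeomorphism $\exp:\n_+\to N_+$ from Theorem~\ref{solvthir}), which the paper's one-line proof leaves to the reader.
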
 

We also have 

\begin{proposition}
The normalizer of $\n_+$ and $N_+$ in $G$ is $B_+$. Thus 
every maximal nilpotent subalgebra (unipotent subgroup) is contained in a unique Borel subgroup. Hence such subalgebras (subgroups) are parametrized by the flag manifold $G/B_+$. 
\end{proposition}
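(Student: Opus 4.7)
The plan is to prove the proposition in three stages: first compute the normalizer at the Lie algebra level, then lift to the group level using connectedness together with Lemma \ref{ownnorm}, and finally deduce the uniqueness and parametrization statements from the fact that any maximal nilpotent subalgebra coincides with the unipotent radical of any Borel containing it.

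For the Lie algebra step, I would show $N_\g(\n_+) = \b_+$. The inclusion $\b_+ \subseteq N_\g(\n_+)$ is immediate since $\n_+$ is an ideal in $\b_+$. For the reverse, take $x \in N_\g(\n_+)$ and decompose $x = x_- + x_0 + x_+$ with respect to $\g = \n_- \oplus \h \oplus \n_+$. Since $x_0 + x_+ \in \b_+ \subseteq N_\g(\n_+)$, also $x_- \in N_\g(\n_+)$. Writing $x_- = \sum_{\beta \in R_+} c_\beta e_{-\beta}$, for each $\alpha \in R_+$ the bracket $[x_-, e_\alpha]$ must lie in $\n_+$. Its $\h$-component is precisely $c_\alpha h_\alpha$ (the contributions from $\beta \ne \alpha$ land in root spaces, not in $\h$), so $c_\alpha = 0$ for every $\alpha \in R_+$, forcing $x_- = 0$.

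To lift this to the group, take $g \in N_G(\n_+)$. Then ${\rm Ad}_g$ preserves $\n_+$, hence also preserves $N_\g(\n_+) = \b_+$, so ${\rm Ad}_g \b_+ = \b_+$. Since $B_+ = H N_+$ is connected, the connected Lie subgroup with Lie algebra ${\rm Ad}_g \b_+$ is $g B_+ g^{-1} = B_+$, and by Lemma \ref{ownnorm} this forces $g \in B_+$. The equality $N_G(N_+) = N_G(\n_+)$ follows because conjugation by $g$ preserves $N_+$ iff its differential preserves ${\rm Lie}(N_+) = \n_+$ (using that $N_+$ is the image of $\n_+$ under the diffeomorphism $\exp$, which intertwines ${\rm Ad}$ with the induced action on $N_+$).

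For the parametrization, let $\n \subseteq \g$ be any maximal nilpotent subalgebra; by a previous corollary there exists $g \in G$ with ${\rm Ad}_g \n_+ = \n$. Suppose $\b'$ is a Borel subalgebra containing $\n$. Then $\b' = {\rm Ad}_h \b_+$ for some $h$, and the unipotent radical of $\b'$ equals ${\rm Ad}_h \n_+$, which is itself a maximal nilpotent subalgebra of $\g$ of dimension $|R_+|$. Since $\n$ is contained in the set of ad-nilpotent elements of $\b'$, which by Lie's theorem coincides with this unipotent radical, and both $\n$ and ${\rm Ad}_h \n_+$ have dimension $|R_+|$, we get $\n = {\rm Ad}_h \n_+$. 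Thus $\n$ is an ideal of $\b'$, so $\b' \subseteq N_\g(\n) = {\rm Ad}_g \b_+$, and by dimension $\b' = {\rm Ad}_g \b_+$. This gives the uniqueness; the bijection between the flag manifold $G/B_+$ (viewed as the set of Borel subalgebras) and maximal nilpotent subalgebras is then $\b \mapsto $ (unipotent radical of $\b$), with inverse sending $\n$ to the unique Borel containing it. The statement for unipotent subgroups follows by exponentiating.

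The only step requiring genuine care is the equivalence of normalizing $N_+$ and normalizing $\n_+$ at the group level, and the use of connectedness of $B_+$ to pass from ${\rm Ad}_g \b_+ = \b_+$ to $gB_+g^{-1} = B_+$; everything else reduces to the root-space computation in the first paragraph together with the already-proven self-normalizing property of $B_+$.
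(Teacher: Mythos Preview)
Your proof is correct and takes a somewhat different route from the paper's. The paper observes that $N_G(N_+)$ contains $B_+$, hence is a parabolic subgroup, and then invokes the classification of parabolics containing $B_+$ (Exercise~\ref{par}): any strictly larger parabolic contains some root $\mathfrak{sl}_2$-subalgebra $(\mathfrak{sl}_2)_i$, and the corresponding $SL_2$ visibly fails to normalize $\n_+$. Your approach instead computes $N_\g(\n_+)=\b_+$ directly from the root-space decomposition, then lifts to the group by noting that any $g$ normalizing $\n_+$ must normalize $N_\g(\n_+)=\b_+$, hence $B_+$ by connectedness, and then applies Lemma~\ref{ownnorm}. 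Your route is more self-contained (it avoids the parabolic classification, which in the text is left as an exercise) at the cost of being slightly longer; the paper's route is quicker once that classification is in hand. You also spell out the uniqueness/parametrization step more carefully than the paper does: your observation that any maximal nilpotent $\n$ contained in a Borel $\b'$ must coincide with the nilradical of $\b'$ (via Lie's theorem and a dimension count), and hence $\b'\subseteq N_\g(\n)$, makes the ``Thus\dots Hence\dots'' of the paper explicit.
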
 

\begin{proof} Clearly $B_+$ is contained in the normalizer of $N_+$, 
so this normalizer is a parabolic subgroup. We have seen that 
such a subgroup, if larger than $B_+$, 
must have a Lie algebra larger than $\b_+$, so it must be  $\p_S$ for some $S\ne \emptyset$, hence contains 
some root $\mathfrak{sl}_2$-subalgebra. But the group corresponding to such a subalgebra does not normalize $\n _+$, a contradiction.  
\end{proof} 

\subsection{Iwasawa decomposition of a real semisimple linear group}  

Let $G_\theta=K^cP_\theta$ 
be the polar decomposition of a real form of a complex semisimple group $G$,  
$\g_\theta=\mathfrak{k}^c\oplus \p_\theta$ the additive version, 
$\a\subset \p_\theta$ a maximal abelian subspace. Let $A=\exp(\a)\subset P_\theta$ be the corresponding abelian subgroup of $G_\theta$. Pick a generic element $a\in \a$. Let $\z=\g_\theta^a$ be the centralizer of $a$ in $\g_\theta$ and let $\n_{a,\pm}$ be the (nilpotent) Lie subalgebras of $\g_\theta$ spanned by eigenvectors of ${\rm ad}a$ with positive, respectively negative eigenvalues, so that $\g_\theta=\n_{a-}\oplus \z\oplus \n_{a+}$. Let $N_{a\pm}=\exp(\n_{a\pm})$.

The following theorem is a generalization of Proposition \ref{iwas2}.

\begin{theorem}\label{iwas1} (Iwasawa decomposition)
The multiplication map $K^c\times A\times N_{a+}\to G_\theta$ 
is a diffeomorphism. 
\end{theorem}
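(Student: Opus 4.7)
The plan is to adapt the argument underlying Proposition \ref{iwas} and Corollary \ref{iwas2} to the present, non-compact setting, working with the $\ad(a)$-weight decomposition $\g_\theta=\n_{a-}\oplus\z\oplus\n_{a+}$ provided in the statement in place of the full root decomposition. The argument will break into four substeps: (i) a vector-space Iwasawa decomposition $\g_\theta = \mathfrak{k}^c \oplus \a \oplus \n_{a+}$; (ii) showing that $\mu$ is a local diffeomorphism everywhere; (iii) proving injectivity by reducing it to $K^c \cap AN_{a+} = \{1\}$; and (iv) deducing surjectivity by showing the image of $\mu$ is open and closed in the connected $G_\theta$.

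For (i) I would first note that the centralizer $\z$ is $\theta$-stable (since $\theta(a)=-a$), so $\z = (\z \cap \mathfrak{k}^c) \oplus \a$ by maximality of $\a$ as in Proposition \ref{maxab}(i); and since $\theta$ flips the sign of $a$, it carries $\n_{a+}$ to $\n_{a-}$. For every $x \in \n_{a-}$ the identity $x = (x + \theta x) - \theta x$ writes $x$ as an element of $\mathfrak{k}^c$ minus one of $\n_{a+}$, which combined with the decomposition of $\z$ gives $\g_\theta = \mathfrak{k}^c + \a + \n_{a+}$. Directness follows by applying $\theta$ to a putative relation $k + h + n = 0$ with $k \in \mathfrak{k}^c$, $h \in \a$, $n \in \n_{a+}$ and subtracting: this yields $2h + n - \theta n = 0$, and since $\a \subset \z$ meets $\n_{a+} \oplus \n_{a-}$ trivially, both $h$ and $n - \theta n$ must vanish; hence $n = \theta n \in \n_{a+} \cap \n_{a-} = 0$ and then $k = 0$.

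Step (ii) is then formal, since $d\mu_{(1,1,1)}$ is the addition map $\mathfrak{k}^c \oplus \a \oplus \n_{a+} \to \g_\theta$, an isomorphism by (i); the corresponding statement at an arbitrary point $(k_0, a_0, n_0)$ follows by translating with left multiplication by $k_0$ on $G_\theta$ and right multiplication by $a_0 n_0$, using that $A$ normalizes $N_{a+}$. For (iii), both $A = \exp(\a)$ and $N_{a+} = \exp(\n_{a+})$ are diffeomorphic to Euclidean spaces by Theorem \ref{solvthir} (the latter since $\n_{a+}$ is nilpotent), and $A \times N_{a+} \to AN_{a+}$ is a diffeomorphism because $A$ normalizes $N_{a+}$ and their Lie algebras meet trivially; hence $AN_{a+}$ is a connected solvable Lie group diffeomorphic as a manifold to some $\Bbb R^N$. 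Now $K^c \cap AN_{a+}$ has trivial Lie algebra by (i), so it is discrete; being closed in the compact group $K^c$ it is also finite; but a finite subgroup of a manifold diffeomorphic to Euclidean space must be trivial, so $K^c \cap AN_{a+} = \{1\}$.

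For (iv), the image $K^c \cdot AN_{a+}$ is open by (ii), and it will be closed provided $AN_{a+}$ is closed in $G_\theta$: if $k_m s_m \to g$ with $s_m \in AN_{a+}$, compactness of $K^c$ lets us pass to a subsequence with $k_m \to k \in K^c$, whereupon $s_m \to k^{-1} g$ lies in $AN_{a+}$, so $g \in K^c \cdot AN_{a+}$; combined with connectedness of $G_\theta$ this gives surjectivity. The main obstacle I foresee is proving closedness of $AN_{a+}$ in $G_\theta$ cleanly. The slickest route I envisage is to exhibit $AN_{a+}$ as the $G_\theta$-stabilizer of a Borel subalgebra of the complexification $\g$ containing the complexification of $\a \oplus \n_{a+}$, mirroring the identification $G/B_+ = G^c/H^c$ in Proposition \ref{iwas}; alternatively one may realize $AN_{a+}$ as the preimage of a closed subgroup of $GL(V)$ under a finite-dimensional representation by choosing a highest-weight line that $AN_{a+}$ scales and $K^c$ preserves only in a controlled way.
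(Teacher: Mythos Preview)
Your approach is essentially the paper's: additive Iwasawa decomposition, then use that $K^c$ is compact and $\a\oplus\n_{a+}$ sits inside a parabolic to get the global statement. The paper packages steps (iii) and (iv) together by working with the minimal parabolic subgroup $\Bbb P = MAN_{a+}$ (where $M$ is the centralizer of $a$ in $K^c$) and showing directly that $K^c$ acts transitively on $G_\theta/\Bbb P$ with stabilizer $M$, mirroring Subsection~\ref{flam}; this yields $G_\theta = K^c AN_{a+}$ and the diffeomorphism in one stroke.

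Two small points. First, your step (iii) tacitly uses closedness of $AN_{a+}$: you need $K^c\cap AN_{a+}$ closed in $K^c$ to conclude it is compact, hence finite. This is the same obstacle you flag in (iv), so as written the argument is slightly circular. You can bypass it by arguing directly: if $g=an\in K^c\cap AN_{a+}$ then $\Ad(g)$ is unitary, while $\Ad(a)$ is positive self-adjoint and $\Ad(n)$ is unipotent (and they are simultaneously triangularizable), forcing $\Ad(a)=\Ad(n)=1$ and hence $a=n=1$.

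Second, your stabilizer idea for (iv) is exactly the paper's route, but the stabilizer of the relevant Borel (or parabolic) subalgebra in $G_\theta$ is $MAN_{a+}$, not $AN_{a+}$. This is why the paper works with $\Bbb P=MAN_{a+}$ throughout. Once $MAN_{a+}$ is known to be closed, closedness of $AN_{a+}$ follows since $M\times A\times N_{a+}\to MAN_{a+}$ is a diffeomorphism (same argument as your $A\times N_{a+}\to AN_{a+}$, using $M\cap AN_{a+}=\{1\}$).
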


Theorem \ref{iwas1} is proved in the following exercise. 

\begin{exercise}  (i) Let $\m=\z\cap \mathfrak{k}^c$. Show that $\z=\m\oplus \a$ 
(use Proposition \ref{maxab}(ii)). 
  
(ii) Given $x\in \p_\theta$, write $x=x_-+x_0+x_+$, $x_\pm\in \n_{a\pm}$, $x_0\in \z$. 
Show that $\theta(x_\pm)=-x_\mp$, $\theta(x_0)=-x_0$. 
Deduce the {\bf additive Iwasawa decomposition}
$\g_\theta=\mathfrak{k}^c\oplus \a\oplus \n_{a+}$ (write $x$ as $(x_--x_+)+x_0+2x_+$). 

(iii) Show that $\z\oplus \n_{a+}=\m\oplus \a\oplus \n_{\a+}$ 
is a parabolic subalgebra in $\g_\theta$ with Levi subalgebra $\z$ (i.e., their complexifications are a parabolic subalgebra in $\g$ and its Levi subalgebra) 
and its unipotent radical is $\n_{a+}$. 

(iv) Let $M$ be the centralizer of $a$ in $K^c$. Show that $\Bbb P:=MAN_{a+}$ is a subgroup of $G_\theta$ and $X:=G_\theta/\Bbb P$ is a compact homogeneous space. 

(v) Show that $K^c$ acts transitively on $X$, and $X\cong K^c/M$ as a homogeneous space for $K^c$ (generalize the argument in Subsection \ref{flam}). Deduce Theorem \ref{iwas1}. 
\end{exercise} 

\subsection{The Bruhat decomposition} 

Let $G$ be a connected complex reductive group, $H\subset G$ a maximal torus, 
$B=B_+\supset H$ a Borel subgroup. The Bruhat decomposition 
is the decomposition of $G$ into double cosets of $B$. 

Let $N(H)$ be the normalizer of $H$ in $G$ and $W=N(H)/H$ be the Weyl group. 
Given $w\in W$, let $\widetilde w$ be a lift of $w$ to $N(H)$ and consider the double coset $B\widetilde w B\subset G$. Since any two lifts of $w$ differ by an element of $H$ which is contained in $B$, the set $B\widetilde w B$ does not depend on the choice of $\widetilde w$, so we will denote it by $BwB$. 

\begin{proposition}\label{disj} The double cosets $BwB$, $w\in W$ are disjoint.
\end{proposition}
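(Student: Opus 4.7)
The plan is to test the supposed identity $B\widetilde w B = B\widetilde{w'} B$ by applying both sides to the highest weight line of an irreducible representation with regular highest weight, and to read off the Weyl group element from the weight that appears.

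Concretely, if $\widetilde w \in B\widetilde{w'} B$, write $\widetilde w = b_1 \widetilde{w'} b_2$ with $b_1, b_2 \in B$. Decomposing $b_i = h_i n_i$ with $h_i \in H$ and $n_i \in N_+$, and using that $\widetilde{w'}$ normalizes $H$ while $H$ normalizes $N_+$, I can push the $H$-factor of $b_2$ across $\widetilde{w'}$ and absorb it into the leading $H$-factor, arriving at a normal form
$$\widetilde w \;=\; h\, n\, \widetilde{w'}\, n_2, \qquad h \in H,\ n, n_2 \in N_+.$$

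Next, fix a regular dominant integral weight, for instance $\rho = \sum_i \omega_i$, and let $v_\rho$ be a highest weight vector of the irreducible representation $L_\rho$, regarded as a $G$-module via the semisimple part $G_{\rm ss}$. Apply both sides of the normal form to $v_\rho$. Since $N_+$ annihilates $v_\rho$ we have $n_2 v_\rho = v_\rho$, so
$$\widetilde w\, v_\rho \;=\; h\, n\, \widetilde{w'}\, v_\rho.$$
The left-hand side is a pure weight vector of weight $w\rho$. On the right, $\widetilde{w'} v_\rho$ is a weight vector of weight $w'\rho$; writing $n = \exp(x)$ with $x \in \n_+$ and expanding the exponential exhibits $n\widetilde{w'} v_\rho$ as $\widetilde{w'} v_\rho$ plus a sum of vectors whose weights lie in $w'\rho + (Q_+\setminus\{0\})$, and the action of $h$ merely rescales each weight space. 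In particular, the $w'\rho$-weight component of the right-hand side is the nonzero vector $(w'\rho)(h)\cdot \widetilde{w'} v_\rho$.

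Comparing weight-space components on both sides forces $w\rho = w'\rho$, and since $\rho$ is regular its $W$-stabilizer is trivial, giving $w = w'$. The only substantive step is the normal-form reduction in the first paragraph, which is a bookkeeping exercise in the semidirect product $B = H \ltimes N_+$; once the right-hand side is in the form $h\, n\, \widetilde{w'}\, n_2$, the weight argument on $L_\rho$ makes the conclusion immediate.
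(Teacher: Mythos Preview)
Your argument is correct and is essentially the paper's own proof: both apply a relation $b_1\widetilde w = \widetilde{w'}b_2$ (your normal form $\widetilde w = h\,n\,\widetilde{w'}\,n_2$ is just an explicit rewriting of this) to the highest weight vector of an irreducible module with regular highest weight, then read off $w\lambda = w'\lambda$ from the weight decomposition. One small caveat: for a general reductive $G$ the specific weight $\rho$ need not be $G$-integral (e.g.\ $G=PGL_2$), so your parenthetical about $G_{\rm ss}$ does not quite patch this; the paper instead takes an arbitrary regular dominant $\lambda$ integral for $G$, and with that substitution your proof matches the paper's.
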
 

\begin{proof} Let $w_1,w_2\in N(H)$ be such that $Bw_1B=Bw_2B$. 
Then there exist elements $b_1,b_2\in B$ such that 
$b_1w_1=w_2b_2$. Let us apply this identity to a highest weight vector $v_\lambda$ of an irreducible representation $L_\lambda$ of $G$, where
$\lambda\in P_+$ is regular. We have $w_2b_2v_\lambda=Cv_{w_2\lambda}$ 
for some $C\in \Bbb C^\times$, where $v_{w_2\lambda}$ is an extremal vector of weight $w_2\lambda$. On the other hand, 
$b_1w_1v_\lambda=C'b_1v_{w_1\lambda}$ for some $C'\in \Bbb C^\times$. 
Thus $Cv_{w_2\lambda}=C'b_1v_{w_1\lambda}$.
But $b_1v_{w_1\lambda}$ equals $C''v_{w_1\lambda}$ 
plus terms of weight $>w_1\lambda$, where $C''\in \Bbb C^\times$. 
It follows that $w_1\lambda=w_2\lambda$, hence $w_1=w_2h$, $h\in H$.
\end{proof} 

\begin{theorem}\label{bd} (Bruhat decomposition)  The
union of the double cosets $BwB$, $w\in W$ is the entire group $G$.
Thus they define a partition of $G$ into double cosets of $B$.  
\end{theorem}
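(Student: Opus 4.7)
The plan is to establish the reverse inclusion $G \subset \bigcup_{w \in W} BwB$, from which the theorem follows by combining with Proposition \ref{disj}. Setting $X := \bigcup_{w \in W} BwB \subset G$, note that $X$ is $B$-bi-invariant since each double coset is. I would prove $X = G$ by showing that $X$ is closed under right multiplication by the minimal parabolic subgroups $P_i = B \cup B s_i B$ (one for each simple reflection $s_i$), and then verify that $B$ together with the $P_i$ generate all of $G$.

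The starting ingredient is the Bruhat decomposition for the rank-one root subgroup $G_i := \langle e_i, f_i, h_i \rangle \cong SL_2$: an explicit factorization
$$
\begin{pmatrix} a & b \\ c & d \end{pmatrix} = \begin{pmatrix} 1 & a/c \\ 0 & 1 \end{pmatrix} \begin{pmatrix} 0 & -1/c \\ c & 0 \end{pmatrix} \begin{pmatrix} 1 & d/c \\ 0 & 1 \end{pmatrix}
$$
whenever $c \ne 0$ shows that $SL_2 = B_{SL_2} \sqcup B_{SL_2} s B_{SL_2}$. Since $P_i = B \cdot G_i$ and $B \cap G_i$ is the upper-triangular Borel of $G_i$, this promotes to $P_i = B \sqcup B s_i B$ inside $G$.

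The key technical lemma is: for any $w \in W$ and any simple reflection $s_i$,
$$
BwB \cdot B s_i B \;\subset\; BwB \cup B w s_i B .
$$
To prove it, I would use the decomposition $B s_i B = B' \cdot U_{-\alpha_i} \cdot s_i \cdot B$, where $B' = H \cdot N_+'$, with $N_+' = \prod_{\alpha \in R_+,\ \alpha \ne \alpha_i} U_\alpha$ and $U_{-\alpha_i} = \exp(\Bbb C f_i)$. This is based on the fact that $N_+'$ is normal in $N_+$ (since $[U_{\alpha_i}, U_\alpha] \subset N_+'$ for every $\alpha \in R_+ \setminus \{\alpha_i\}$) and that $s_i$ permutes $R_+ \setminus \{\alpha_i\}$, so $s_i N_+' s_i^{-1} = N_+'$. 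Absorbing $B'$ into $B$ on the left, one reduces to analyzing $w \cdot U_{-\alpha_i} \cdot s_i B$. Either $U_{-\alpha_i}$ gets absorbed into $s_i B$ (contributing $B w s_i B$), or the $SL_2$-Bruhat decomposition within $G_i$ writes a nontrivial element of $U_{-\alpha_i} \cdot s_i$ as an element of $B_{SL_2}$, which after translating by $w$ contributes $BwB$.

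Granted the key lemma, $X \cdot P_i \subset X$ for every $i$; hence $X$ contains the subgroup $H' \subset G$ generated by $B$ and the $P_i$. But ${\rm Lie}(H')$ contains $\b_+$ and each $f_i$, and by the Serre presentation (Theorem \ref{serrel}) the $f_i$ generate $\n_-$ as a Lie algebra, so ${\rm Lie}(H') = \g$. Since $G$ is connected, Proposition \ref{generati} gives $H' = G$, and therefore $X = G$. The main obstacle is proving the key lemma cleanly: the bookkeeping of how $s_i$ conjugates the root subgroups of $B$, and in particular the careful isolation of the single ``problematic'' direction $U_{-\alpha_i}$ where the $SL_2$ Bruhat case-split occurs, is the heart of the argument.
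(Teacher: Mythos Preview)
Your overall architecture is correct and is the standard ``BN-pair'' approach: the key lemma
$$
BwB\cdot Bs_iB\subset BwB\cup Bws_iB
$$
together with the fact that $B$ and the minimal parabolics $P_i$ generate $G$ does yield the Bruhat decomposition. This is essentially the same engine as the paper's exercise: part (i) there is precisely your key lemma (phrased on the left and in terms of Schubert cells $C_w$), and parts (iii)--(v) package the iteration geometrically via Bott--Samelson varieties rather than as ``$X$ is stable under right multiplication by $P_i$.''

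However, your proof of the key lemma does not work as written. The identity $Bs_iB=B'\cdot U_{-\alpha_i}\cdot s_i\cdot B$ is false: since $s_i$ normalises $B'=HN_+'$ and $U_{-\alpha_i}s_i=s_iU_{\alpha_i}\subset s_iB$, the right-hand side collapses to $s_iB'B=s_iB$, a single left coset, not the whole double coset. Consequently the case split you describe never occurs: every element of $U_{-\alpha_i}s_i$ lies in $s_iB$ (in $SL_2$, $\bigl(\begin{smallmatrix}1&0\\t&1\end{smallmatrix}\bigr)\bigl(\begin{smallmatrix}0&-1\\1&0\end{smallmatrix}\bigr)=\bigl(\begin{smallmatrix}0&-1\\1&-t\end{smallmatrix}\bigr)$ has $(2,1)$-entry $1$, so it is never upper-triangular). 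The genuine dichotomy is on the sign of $w\alpha_i$. Using the correct parametrisation $Bs_iB=U_{\alpha_i}s_iB$ one finds: if $w\alpha_i>0$ then $wU_{\alpha_i}w^{-1}\subset N_+$ and $BwB\cdot Bs_iB=Bws_iB$; if $w\alpha_i<0$, write $w=w's_i$ with $\ell(w')=\ell(w)-1$, so $BwB\cdot Bs_iB=Bw'B\cdot(Bs_iB)^2$, and the $SL_2$ computation $(B_is_iB_i)^2=G_i$ gives $(Bs_iB)^2=P_i=B\cup Bs_iB$, whence $BwB\cdot Bs_iB=Bw'B\cup BwB$. This is exactly what the paper's exercise (i) asks you to verify, and with this correction your argument goes through.
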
 

Theorem \ref{bd} can be reformulated as a classification of $B$-orbits on the flag manifold $G/B$. Namely, given $w\in W$, the set $BwB/B$ 
is an orbit of $B$ on $G/B$, which we will denote by $C_w$. By Theorem 
\ref{disj}, $C_w$ are disjoint, and Theorem \ref{bd} is equivalent to 

\begin{theorem}\label{bd1} (Schubert decomposition)  $C_w,w\in W$ give the partition of $G/B$ into $B$-orbits. 
\end{theorem}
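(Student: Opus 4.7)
The claim is that the $B$-orbits $C_w = B\widetilde{w}B/B$, $w \in W$, partition the flag manifold $G/B$; disjointness is already Proposition \ref{disj}, so the task is to show that these orbits cover $G/B$. The plan is to decompose $G/B$ by an attracting flow under a generic one-parameter subgroup of $H$ and then identify the resulting attracting cells with the $C_w$.

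First I would pin down the fixed points of $H$ and of $B$ on $G/B$. If $HgB = gB$, then $g^{-1}Hg$ is a maximal torus of $B$, hence $B$-conjugate to $H$, so $g \in N(H)\,B$; therefore the $H$-fixed locus on $G/B$ is exactly $\{\widetilde{w}B : w \in W\}$. By the same circle of ideas, combined with Lemma \ref{ownnorm}, the unique $B$-fixed point on $G/B$ is $eB$: any inclusion $g^{-1}Bg \subseteq B$ forces equality by dimension, so $g$ normalizes $B$ and hence lies in $B$.

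Next, fix a cocharacter $\mu : \mathbb{C}^\times \to H$ with $\langle \alpha, \mu \rangle > 0$ for every $\alpha \in R_+$. Embedding $G/B$ as a closed subvariety of $\mathbb{P}(L_\rho)$ via the highest weight construction of Subsection \ref{flam}, the limit $p(x) := \lim_{t \to 0}\mu(t)\cdot x$ exists by compactness and is $\mu$-fixed, hence $H$-fixed by regularity of $\mu$, so $p(x) = \widetilde{w}B$ for a unique $w \in W$. This produces a set-theoretic partition $G/B = \bigsqcup_{w \in W} X_w$ with $X_w := p^{-1}(\widetilde{w}B)$. The inclusion $C_w \subseteq X_w$ is direct: writing $x = hn\widetilde{w}B$ with $h \in H$ and $n \in N_+$, the conjugate $\mu(t)n\mu(t)^{-1}$ tends to $1$ as $t \to 0$ because $\mathrm{Ad}(\mu(t))$ acts on $\mathfrak{n}_+$ with strictly positive weights, so $\mu(t)\cdot x \to h\widetilde{w}B = \widetilde{w}B$. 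A tangent-space computation at $\widetilde{w}B$ moreover yields $\dim X_w = |R_+ \cap wR_-| = \ell(w) = \dim C_w$, so $C_w$ is open in $X_w$.

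The main obstacle is upgrading $C_w \subseteq X_w$ to an equality. I would argue as follows. Suppose some $y \in X_w \setminus C_w$ exists, and let $\mathcal{O}' = By$, a $B$-orbit disjoint from $C_w$. Its closure $\overline{\mathcal{O}'}$ in $G/B$ is $B$-invariant, and since $\mu(t)\cdot y \to \widetilde{w}B$ it contains $\widetilde{w}B$; by $B$-invariance it contains $C_w$, so $\dim \overline{\mathcal{O}'} \geq \ell(w)$. Combined with $\dim \overline{\mathcal{O}'} = \dim \mathcal{O}' \leq \dim X_w = \ell(w)$ and irreducibility of $\overline{\mathcal{O}'}$ (as the closure of the orbit of the connected group $B$), both $C_w$ and $\mathcal{O}'$ are locally closed subsets of full dimension in the irreducible variety $\overline{\mathcal{O}'}$, hence open in it; but an irreducible variety cannot contain two disjoint nonempty open subsets, a contradiction. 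Therefore $X_w = C_w$ for every $w \in W$, and together with $G/B = \bigsqcup_w X_w$ this yields the desired partition $G/B = \bigsqcup_w C_w$. Once one has the retraction $p$ onto the fixed point set, the rigidity forced by $B$-invariance does the job; without such an attracting flow (or an equivalent device, such as the Bruhat multiplication lemma $B\widetilde{s}_\alpha B \cdot B\widetilde{w}B \subseteq B\widetilde{w}B \cup B\widetilde{s}_\alpha\widetilde{w}B$ combined with generation of $G$ by $B$ and simple reflections), it is not obvious that the orbits $C_w$ exhaust $G/B$.
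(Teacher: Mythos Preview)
Your argument is correct and takes a genuinely different route from the paper's. The paper proves the covering statement via Bott--Samelson varieties: one shows that multiplication gives $Bs_iB\cdot BwB\subseteq BwB\cup Bs_iwB$, builds the iterated fiber product $X_{\overline{w}}=P_{i_1}\times_B\cdots\times_B P_{i_l}/B$ for a reduced word $\overline{w_0}$, and observes that the multiplication map $X_{\overline{w_0}}\to G/B$ is surjective with image contained in $\bigcup_w C_w$. Your approach is instead the Bia{\l}ynicki-Birula decomposition: flow by a regular one-parameter subgroup $\mu$, identify the $\mu$-fixed points with the $T$-fixed points $\{\widetilde wB\}$, and show each attracting cell $X_w$ is exactly the $B$-orbit $C_w$ by the dimension-and-irreducibility squeeze. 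The endgame---two disjoint $B$-orbits of full dimension cannot both be open in the irreducible closure $\overline{\mathcal O'}$---is clean, and the observation that $X_w$ is $B$-stable because $\mu(t)$ contracts $N_+$ is exactly what makes that squeeze possible.

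One point deserves more care than ``a tangent-space computation'': the assertion that $X_w$ is a locally closed subvariety with $\dim X_w=\ell(w)$ is the content of the Bia{\l}ynicki-Birula theorem (or a direct local analysis in the projective embedding), not a formal consequence of knowing the $\mu$-weights on $T_{\widetilde wB}(G/B)$. Without it, $X_w$ is a priori just a subset, and the inequality $\dim\mathcal O'\le\dim X_w$ has no meaning. You should name this input explicitly. Once that is granted, your argument is complete; the paper's Bott--Samelson approach avoids this black box at the cost of a more hands-on inductive construction, while your approach is more conceptual and immediately explains why the cells are affine spaces indexed by $W$. Your closing parenthetical about the multiplication lemma $Bs_\alpha B\cdot BwB\subseteq BwB\cup Bs_\alpha wB$ is precisely the paper's mechanism.
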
 

The sets $BwB$ are called {\bf Bruhat cells} and the sets $C_w$ are called {\bf Schubert cells.}\footnote{We note that Bruhat cells, unlike Schubert cells, are not literally cells in the topological sense -- they are not homeomorphic to an affine space, but are homeomorphic to the product of an affine space and a torus.} 

Note that for type $A_{n-1}$ ($G=SL_n(\Bbb C)$ or its quotient), we have already proved Theorem \ref{bd1} in Subsection \ref{flman}, where we decomposed the flag manifold $\mathcal F_n$ into Schubert cells labeled by permutations. 

A proof of Theorem \ref{bd1} can be found, for example, in the textbook \cite{CG}. 
It is also sketched in the following exercise. 

\begin{exercise} 
(i) Let $B=B_+$ and $w\in W$. Consider the multiplication map 
$\mu_{i,w}: Bs_iB\times_B C_w\to G/B$. 
Show that if $\ell(s_iw)=\ell(w)+1$ 
then $\mu_{i,w}$ is an isomorphism onto $C_{s_iw}$, 
while if $\ell(s_iw)=\ell(w)-1$ then the image 
of $\mu_{i,w}$ consists of $C_w$ and $C_{s_iw}$.  

{\bf Hint:} Reduce to the $SL_2$-case.  

(ii) For $i\in \Pi$ let $P_i$ be the {\bf minimal parabolic} 
subgroup of $G$ generated by $B$ and the 1-parameter subgroup $\exp(tf_i)$. Show that $P_i/B=C_{s_i}\cup C_1\cong \Bbb C\Bbb P^1\subset G/B$
(where $C_1$ is a point and $C_{s_i}\cong \Bbb C$). 

(iii) Let $w=s_{i_1}...s_{i_l}$ be a reduced decomposition of 
$w\in W$ (so $l=\ell(w)$); denote this decomposition by $\overline{w}$. 
The product $\prod_{k=1}^l P_{i_k}$ carries 
a free action of $B^l$ via 
$$
(b_1,...,b_l)\circ (p_1,...,p_l):=(p_1b_1^{-1},b_1p_2b_2^{-1},...,b_{l-1}p_lb_l^{-1}).
$$
Define the {\bf Bott-Samelson variety} $X_{\overline w}:=(\prod_{k=1}^l P_{i_k})/B^l$. Use (ii) to show that if $\overline w=s_i\overline u$ then 
$X_{\overline w}$ fibers over $\Bbb C\Bbb P^1$ with fiber $X_{\overline u}$.
Deduce that $X_{\overline w}$ is a smooth projective variety of dimension $\ell(w)$. 

(iv) Define the {\bf Bott-Samelson map} 
$$
\mu_{\overline w}: X_{\overline w}\to G/B 
$$
given by multiplication. Use (i) to show that the image of $\mu_{\overline w}$ 
is the {\bf Schubert variety} $\overline C_w$, the closure of $C_w$ 
in $G/B$. Moreover, show that $\overline{C_w}\setminus C_w$ is the union of $C_u$ over some $u\in W$ with $\ell(u)<\ell(w)$. 

(v) Apply (iv) to the maximal element $w=w_0\in W$. In this case, show that 
$\mu_{\overline w}$ is surjective, and deduce Theorem \ref{bd1}.  
\end{exercise}

Let us derive some corollaries of Theorem \ref{bd1}.

\begin{corollary} (i) Any pair of Borel subgroups of $G$
is conjugate to the pair $(B,w(B))$
 for a unique $w\in W$. In particular, 
 any two Borel subgroups of $G$ share a maximal torus.
 
 (ii) The cell $C_w$ is isomorphic to $\Bbb C^{\ell(w)}$.  
 \end{corollary}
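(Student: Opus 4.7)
For part (i), the plan is to convert the classification of pairs of Borel subgroups under diagonal $G$-conjugation into the classification of $B$-orbits on the flag manifold. Since all Borel subgroups are conjugate in $G$, any ordered pair $(B_1,B_2)$ can be moved by $G$-conjugation to $(B,B')$ with the first component fixed as $B=B_+$. The residual freedom is conjugation by the $G$-stabilizer of $B$, which equals $N_G(B)=B$ by Lemma~\ref{ownnorm}. Hence $G$-orbits on ordered pairs of Borels correspond bijectively to $B$-orbits on the set of Borel subgroups, i.e.\ to $B$-orbits on $G/B$. By Theorem~\ref{bd1} these are precisely the Schubert cells $C_w$, $w\in W$, and the Borel represented by the base point $\widetilde w B\in C_w$ is $wBw^{-1}$. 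This gives both existence and uniqueness of $w$ in the statement $(B,w(B))$. The ``shared maximal torus'' claim is then immediate: in the canonical representative pair $(B,wBw^{-1})$, since $\widetilde w\in N_G(H)$ we have $wHw^{-1}=H\subset wBw^{-1}$ while $H\subset B$, so $H$ lies in both; transporting back by the conjugating element gives a common torus for the original pair.

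For part (ii), I would compute $C_w$ as the $B$-orbit of $wB\in G/B$ via the orbit-stabilizer theorem. The stabilizer of $wB$ in $B$ is $B\cap wBw^{-1}$, so $C_w\cong B/(B\cap wBw^{-1})$. The next step is to produce an explicit transversal. Let
\[
R_+(w):=\{\alpha\in R_+ : w^{-1}\alpha\in R_-\},
\]
a set of cardinality $\ell(w)$ (this is the length formula recorded in the notes), and let $U_w\subset N_+$ be the Lie subgroup whose Lie algebra is $\bigoplus_{\alpha\in R_+(w)}\g_\alpha$. Its complement $U_w':=N_+\cap wN_+w^{-1}$ corresponds to $R_+\setminus R_+(w)=R_+\cap wR_+$, and the Lie algebra identity $\n_+=\bigl(\bigoplus_{\alpha\in R_+(w)}\g_\alpha\bigr)\oplus\bigl(\bigoplus_{\alpha\in R_+\cap wR_+}\g_\alpha\bigr)$ together with the nilpotent-group structure theorem (Theorem~\ref{solvthir}, giving $\exp$ as a polynomial diffeomorphism of a unipotent Lie algebra with its group) yields a biregular product decomposition $N_+=U_w\cdot U_w'$ with $U_w\cap U_w'=\{1\}$. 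Since $B=HN_+$ and $H\subset B\cap wBw^{-1}$, we get $B=U_w\cdot(B\cap wBw^{-1})$ as a direct product of varieties. Therefore the orbit map $u\mapsto uwB$ is a biregular isomorphism $U_w\xrightarrow{\sim}C_w$, and the final step is to note that $U_w$, being a unipotent group with ordered root-subgroup factorization given by any enumeration of $R_+(w)$, is isomorphic as a complex manifold to $\mathbb{C}^{\ell(w)}$ via the product of one-parameter subgroups, proved inductively using Theorem~\ref{solvthir}.

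The main obstacle I anticipate is verifying that the subspace $\bigoplus_{\alpha\in R_+(w)}\g_\alpha$ actually integrates to a \emph{closed} Lie subgroup $U_w$ of $N_+$, i.e.\ that $R_+(w)$ is closed under addition of roots within $R_+$. This is a combinatorial statement about root systems: if $\alpha,\beta\in R_+(w)$ and $\alpha+\beta\in R$, then $\alpha+\beta\in R_+(w)$, because $w^{-1}(\alpha+\beta)=w^{-1}\alpha+w^{-1}\beta$ is a sum of two negative roots and hence negative. The same remark applies to $R_+\cap wR_+$. Once this closure is in hand, the rest of the argument is the bookkeeping described above, and the isomorphism $U_w\cong\mathbb{C}^{\ell(w)}$ follows from the polynomial nature of exponential coordinates on unipotent groups established earlier in the notes.
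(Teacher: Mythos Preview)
Your proposal is correct and follows essentially the same approach as the paper. For (i) both arguments reduce to the Schubert/Bruhat decomposition; for (ii) both identify $C_w\cong B/(B\cap wBw^{-1})$ and then pass to $N_+/(N_+\cap wN_+w^{-1})$, with your version supplying the explicit transversal $U_w$ and the closure check on $R_+(w)$ that the paper leaves implicit.
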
 
 
 \begin{proof} (i) Let $(B_1,B_2)$ be a pair of Borel subgroups in $G$. 
 Then we can conjugate $B_1$ to $B$, and $B_2$ will be conjugated 
 to some Borel subgroup $B_3$. This subgroup is conjugate to $B$, i.e., is of the form $gBg^{-1}$ for some $g\in G$. By Bruhat decomposition, we can 
 write $g$ as $g=b_1\widetilde w b_2$, $b_1,b_2\in B$, $\widetilde w\in N(H)$. So conjugating by $b_1^{-1}$, we will bring our pair to the required form $(B,w(B))$, where $w$ is the image of $\widetilde w$ in $W$. Uniqueness follows from Proposition \ref{disj}.  
 
 (ii) By (i) we have $C_w\cong B/(B\cap w(B))$. 
 Since $B=NH$, where $N=[B,B]$ and $B\cap w(B)\supset H$, 
 we get $C_w=N/(N\cap w(B))=N/(N\cap w(N))$. This is a complex affine space 
whose dimension is the number of positive roots mapped to negative roots 
by $w$, i.e., $\ell(w)$.   
 \end{proof} 

\begin{corollary} The Poincar\'e polynomial of the flag manifold $G/B$ is 
$\sum_{i\ge 0}b_{2i}(G/B)q^i=\sum_{w\in W}q^{\ell(w)}.$
\end{corollary}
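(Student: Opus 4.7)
The plan is to promote the Schubert decomposition $G/B = \bigsqcup_{w \in W} C_w$ of Theorem \ref{bd1} into an honest cell decomposition, and then invoke the general fact already used for Grassmannians: a connected manifold with a cell decomposition consisting only of even-dimensional cells has vanishing boundary maps, so its Poincar\'e polynomial is just the generating function counting the cells by dimension. Since $C_w \cong \mathbb{C}^{\ell(w)}$ has real dimension $2\ell(w)$, summing $q^{\ell(w)}$ over $w \in W$ will give exactly $\sum_{2i \ge 0} b_{2i}(G/B) q^i$.

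The key point requiring verification is that the Schubert cells really do form a CW decomposition of the compact complex manifold $G/B$. For this I would use the Bott--Samelson map $\mu_{\overline{w}}: X_{\overline{w}} \to G/B$ introduced in part (iv) of the exercise following Theorem \ref{bd1}: since $X_{\overline{w}}$ is a smooth projective (hence compact) variety of complex dimension $\ell(w)$ and $\mu_{\overline{w}}$ is continuous with image the Schubert variety $\overline{C_w}$, the set $\overline{C_w}$ is compact, and by the same exercise $\overline{C_w} \setminus C_w$ is a finite union of cells $C_u$ with $\ell(u) < \ell(w)$. Thus the closure of each $2\ell(w)$-dimensional cell meets only cells of strictly lower dimension, which is precisely the closure-finiteness axiom of a CW complex. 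Applying $\mu_{\overline{w}}$ to the fundamental ball in $X_{\overline{w}}$ (via a triangulation of this smooth compact variety) provides the required attaching maps.

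Once the CW structure is in place, I would argue as in the Grassmannian case. The cellular cochain complex $C^{\bullet}_{\rm cell}(G/B, \mathbb{C})$ has $C^n_{\rm cell} = 0$ for odd $n$, because every cell has even real dimension. Hence all coboundary maps $C^n_{\rm cell} \to C^{n+1}_{\rm cell}$ vanish (one of the two groups is zero), and the cellular cohomology coincides term-by-term with $C^{\bullet}_{\rm cell}$. By the de Rham theorem (or, equivalently, the comparison with singular cohomology), $H^{2i}(G/B, \mathbb{C})$ is of dimension equal to the number of Schubert cells $C_w$ with $\ell(w) = i$, while $H^{2i+1}(G/B, \mathbb{C}) = 0$.

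The anticipated obstacle is the cleanest justification of the CW structure, because one has to be slightly careful that the Schubert decomposition, which is a priori only a set-theoretic partition into locally closed pieces, satisfies the closure-finiteness and weak-topology axioms of a CW complex. The Bott--Samelson resolutions furnish the attaching maps in a geometric way and simultaneously control the closures, so invoking them should make the argument go through cleanly. Given all this, the final computation is a one-liner:
\[
\sum_{i \ge 0} b_{2i}(G/B)\, q^i \;=\; \sum_{w \in W} q^{\dim_{\mathbb{C}} C_w} \;=\; \sum_{w \in W} q^{\ell(w)},
\]
and all odd Betti numbers vanish.
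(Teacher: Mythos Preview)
Your proposal is correct and follows essentially the same route as the paper: invoke the Schubert decomposition (Theorem \ref{bd1}), the isomorphism $C_w\cong\Bbb C^{\ell(w)}$ from the preceding corollary, and the general fact (already recorded in the Grassmannian discussion) that a cell complex with only even-dimensional cells has Poincar\'e polynomial given by counting cells. The paper treats the corollary as immediate from these ingredients; your extra care in justifying the CW axioms via the Bott--Samelson resolutions is correct and welcome but not strictly needed at the level of detail the paper maintains.
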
 

\begin{remark} Similarly to the type $A$ case, one can show that 
this polynomial can also be written as $\prod_{i=1}^r [m_i+1]_q$, where $m_i$ are the exponents of $G$, but we will not give a proof of this identity.   
\end{remark}

\end{document}